\setlist[enumerate,1]{label=\textup{(\arabic*)}}
\renewcommand*{\PrintDOI}[1]{\href{http://dx.doi.org/\detokenize{#1}}{doi: \detokenize{#1}}}
\newcommand{\comment}[1]{}  
\newcommand{\edit}[1]{\marginpar{\footnotesize{#1}}}
\newcommand{\adj}[4]{#1\negmedspace: #2\rightleftarrows #3:\negmedspace #4}
\theoremstyle{plain}
\newtheorem{theorem}{Theorem}[section]
\newtheorem*{theorem*}{Theorem}
\newtheorem*{definition*}{Definition}
\newtheorem{lemma}[theorem]{Lemma}
\newtheorem{corollary}[theorem]{Corollary}
\newtheorem{proposition}[theorem]{Proposition}
\theoremstyle{remark}
\newtheorem{remark}[theorem]{Remark}
\theoremstyle{question}
\theoremstyle{definition}
\newtheorem{definition}[theorem]{Definition}
\newtheorem{example}[theorem]{Example}
\numberwithin{theorem}{section}
\newcommand\C{\mathbb C}
\newcommand{\fC}{\mathsf{C}}
\newcommand\N{\mathbb N}
\newcommand\Q{\mathbb Q}
\newcommand\R{\mathbb R}
\newcommand\Z{\mathbb Z}
\newcommand{\heart}{\heartsuit}
\newcommand{\bA}{\mathbf{A}}
\newcommand{\bD}{\mathbf{D}}
\newcommand{\bC}{\mathbf{C}}
\newcommand{\bdd}{\mathcal B}
\newcommand{\coma}{\widehat}
\newcommand{\defeq}{\mathrel{:=}} 
\newcommand*{\onto}{\twoheadrightarrow}
\newcommand{\colim}{\mathrm{colim}}
\DeclarePairedDelimiter{\abs}{\lvert}{\rvert}
\DeclarePairedDelimiter{\norm}{\lVert}{\rVert}
\DeclarePairedDelimiter{\gen}{\langle}{\rangle}
\DeclarePairedDelimiter{\floor}{\lfloor}{\rfloor}
\DeclarePairedDelimiter{\ceil}{\lceil}{\rceil}
\DeclarePairedDelimiterX{\setgiven}[2]{\{}{\}}{#1\,{:}\,\mathopen{}#2}
\newcommand{\rig}{\mathrm{rig}}
\newcommand\haotimes{\mathbin{\coma{\otimes}}}
\DeclareMathOperator{\Spec}{Spec}
\newcommand*{\an}{\mathrm{an}}
\newcommand*{\cont}{\mathrm{cont}}
\newcommand{\cU}{\mathcal{U}}
\newcommand{\op}{\mathrm{op}}
\newcommand{\ev}{\mathrm{ev}}
\newcommand{\dvgen}{\pi}
\newcommand{\tf}{\mathrm{tf}}
\DeclareMathAlphabet{\mathpzc}{OT1}{pzc}{m}{it}
\DeclareMathOperator{\Hom}{Hom}
\begin{document}
\title{Localising invariants in derived bornological geometry}

\author{Jack Kelly}
\email{jack.kelly@lincoln.ox.ac.uk}

\author{Devarshi Mukherjee}
\email{Devarshi.Mukherjee@maths.ox.ac.uk}

\thanks{We thank Maxime Ramzi and Thomas Nikolaus for several discussions surrounding this article. We also thank Federico Bambozzi and Ralf Meyer for pointing out useful references in earlier stages of this project, and Zhouhang Mao for some useful comments. The second named author was funded by a DFG Eigenestelle (project number 534946574) and the Deutsche Forschungsgemeinschaft (DFG, German Research Foundation) under Germany's Excellence Strategy EXC 2044390685587, Mathematics Münster: Dynamics Geometry Structure and a UK Research and Innovation Horizon Europe Guarantee MSCA Postdoctoral Fellowship award.}

\begin{abstract}
We study several categories of analytic stacks relative to the category of bornological modules over a Banach ring. When the underlying Banach ring is a non-Archimedean valued field, this category contains derived rigid analytic spaces as a full subcategory. When the underlying field is the complex numbers, it contains the category of derived complex analytic spaces. In the second part of the paper, we consider localising invariants of rigid categories associated to bornological algebras. The main results in this part include Nisnevich descent for derived analytic spaces and a version of the Grothendieck-Riemann-Roch Theorem for derived dagger analytic spaces over an arbitrary Banach ring. 
\end{abstract}

\maketitle

\tableofcontents

\section{Introduction}\label{sec:intro}

In this article, we build on the general machinery developed in \cite{ben2024perspective} to further study categories of analytic stacks and their invariants from the point of view of bornological spaces. In particular, in both the Archimedean and non-Archimedean settings, we define the (bornological) continuous $K$-theory of a scheme, and prove descent for $K$-theory in the Nisnevich topology.

Very briefly, the idea as propounded by To\"en-Vezzosi (\cite{toen2004homotopical}) is to start with a symmetric monoidal presentable stable \(\infty\)-category \(\bC\), equip a suitable  category \(\bA\subset(\mathbf{DAlg}(\mathbf{C}_{\ge0}))^{op}\) of connective derived commutative algebra objects in \(\bC\) with a topology \(\tau\) arising from the monoidal structure, and then take sheaves relative to the topology. Thereafter, one considers covers of sheaves by representables in a manner that morphisms in the cover lie in a distinguished class \(\mathbf{P}\) depending on the type of geometry that is desired. We call such a tuple \((\bC, \bA, \tau, \mathbf{P})\) a \emph{relative pre-geometry tuple} and denote the geometric stacks relative to it by \(\mathbf{Stk}_{geom}(\bA,\tau, \mathbf{P})\). When one starts with the category \(\bC = \bD(\Z)\) and the category \(\bA\) that is opposite to animated commutative rings, equipped with the \'etale topology, and smooth maps as the distinguished class of morphisms, one arrives at algebraic stacks. In earlier work (\cites{MR3626003,BaBK, bambozzi2016dagger, ben2017non, kelly2022analytic}, \cite{ben2024perspective}) of the authors, Bambozzi, Ben-Bassat and Kremnizer, we have shown that the To\"en-Vezzosi machinery may be applied towards a theory of derived analytic geometry, if one starts with the derived category \(\bD(R)\) of complete bornological modules over a base Banach ring \(R\) as the initial input. This entails proving that the category \(\bD(R)\) is a derived and homotopical algebraic context.

In the complex analytic setting, in \cite{ben2024perspective}, the first author together with Ben-Bassat and Kremnizer showed that Porta and Yue Yu's model of derived complex analytic geometry \cites{porta2015derived,porta2016higher} (with some additional mild finiteness assumptions) embeds fully faithfully in geometric stacks relative to a certain relative pre-geometry tuple based on bornological $\mathbb{C}$-vector spaces. In this relative pre-geometry tuple, we regarded algebras of functions on Stein spaces as affines. To see the usual covers from complex analytic geometry, we are required to allow countable covers. However, this countability assumption means that descent statements for quasi-coherent sheaves are difficult to formulate, and somewhat ad hoc. In the present work we take the perspective that the affines should be (opposite to) quotients of complex dagger affinoid algebras (algebras of overconvergent functions on a closed polydisc, see e.g., \cite{bambozzi2016dagger}). We topologise the category \(\bA_\C^\dagger\) of such affines using rational localisations. In this setup, the affines are in some sense compact Steins, and so we are justified in restricting to finite covers. To summarise, we get a relative pre-geometry tuple \((\bD_{\geq 0}(\C), \bA_\C^\dagger, \tau_{\mathbb{C}}^{rat}, \mathbf{P}^{o-sm})\), where $ \mathbf{P}^{o-sm}$ is a suitably defined class of smooth maps. Moreover, we show that complex analytic spaces do still embed fully faithfully as schemes on this site. This program can also be carried out for non-trivially valued non-Archimedean fields, and in this case one gets an embedding of partially proper rigid analytic spaces.

In the non-Archimedean setting, in \cite{soor2024derived}, following \cite{ben2024perspective}, Soor defined the category of derived rigid analytic spaces over a non-trivially valued non-Archimedean Banach field $K$ as a full subcategory of the category stacks relative to bornological $K$-vector spaces. The category of derived rigid analytic spaces contains rigid analytic spaces as a full subcategory. In this article, we explain why Soor's derived analytic stacks may in fact be regarded as schemes (in particular, as geometric stacks) relative to the relative pre-geometry tuple \((\bD_{\geq 0}(K), \bA_K, \tau_K^{rat}, \mathbf{P}^{o-sm})\), where $\bA_{K}$ is the category of derived affinoids, $\tau_K^{rat}$ is the rational localisation topology, and $\mathbf{P}^{o-sm}$ is a certain class of smooth maps. This proof is not difficult - it is essentially a matter of untangling the two definitions.

To summarise, we get the following.

\begin{theorem*}
We have the following inclusions.
    \begin{enumerate}
        \item The category of derived rigid analytic spaces embeds fully faithfully inside the category of geometric stacks \(\mathbf{Stk}_{geom}(\bA_K,\tau_{K}^{rat}, \mathbf{P}^{o-sm})\) relative to \((\bD_{\geq 0}(K), \bA_K, \tau_K^{rat}, \mathbf{P}^{o-sm})\);
        \item The category of partially proper rigid analytic spaces embeds fully faithfully inside the category of geometric stacks \(\mathbf{Stk}_{geom}(\bA_K^\dagger, \tau_K^{rat}, \mathbf{P}^{sm})\) relative to \((\bD_{\geq 0}(K), \bA_K^\dagger, \tau_K^{rat}, \mathbf{P}^{o-sm})\);
        \item The category of derived Stein spaces embeds fully faithfully inside geometric stacks \(\mathbf{Stk}_{geom}(\bA_\C^\dagger, \tau_\C^{rat}, \mathbf{P}^{o-sm})\) relative to \((\bD_{\geq 0}(\C), \bA_\C^\dagger, \tau_\C^{rat}, \mathbf{P}^{o-sm})\).
    \end{enumerate}
\end{theorem*}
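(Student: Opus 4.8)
The plan is to prove all three embeddings by the same three-step argument: (i) establish full faithfulness on the affine local models; (ii) identify, under this embedding, the Grothendieck topology defining the analytic category with the rational-localisation topology $\tau^{rat}$ on the site of local models; and (iii) bootstrap from affines to all spaces by descent along atlases. Write $\Phi$ for the candidate functor, sending an affinoid $\mathrm{Sp}(A)$ (resp.\ a dagger affinoid, resp.\ a quotient of a complex polydisc algebra) to the representable geometric stack $\mathrm{Spec}(A)=\mathrm{Map}_{\bA}(A,-)$. Since a derived rigid analytic space (resp.\ partially proper rigid space, resp.\ derived Stein space) $X$ admits by construction a \v{C}ech or hypercover by disjoint unions of such affine models, and $\mathrm{Map}_{\mathbf{Stk}}(-,\Phi Y)$ carries the colimit $X=\colim X_\bullet$ to the totalization of $\mathrm{Map}_{\mathbf{Stk}}(\Phi X_\bullet,\Phi Y)$ once $\Phi Y$ is a sheaf, it suffices to treat $X$ affine together with the sheaf comparison of step (ii).

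For step (i) I would invoke the results recalled in \cite{ben2024perspective} (building on \cites{BaBK,bambozzi2016dagger,ben2017non,kelly2022analytic}) that $\bD_{\geq 0}(K)$ and $\bD_{\geq 0}(\C)$ are relative derived algebraic contexts and that the completed bornological tensor product computes the analytic one on the relevant nuclear objects, so that $K\langle\underline{x}\rangle\haotimes_K K\langle\underline{y}\rangle\simeq K\langle\underline{x},\underline{y}\rangle$ and likewise for the overconvergent Tate algebras $K\langle\underline{x}\rangle^{\dagger}$ and $\C\langle\underline{x}\rangle^{\dagger}$. Every connective derived affinoid (resp.\ dagger affinoid) algebra is obtained from such a free algebra by a finite sequence of derived quotients by Koszul complexes on bounded sequences of elements, and these are computed the same way in $\bA$ and in the analytic category because the relevant resolutions are strict with nuclear terms; hence the forgetful functor from the analytic local models with bounded algebra homomorphisms into $\mathbf{CAlg}(\bD_{\geq 0})$ is fully faithful, which is exactly the statement that $\mathrm{Map}_{\bA}(A,B)$ recovers the analytic mapping space. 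For the dagger cases one must additionally track that the bornology on an overconvergent algebra is a filtered colimit of the bornologies of its affinoid ``thickenings'', and that this presentation is compatible with derived quotients.

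Step (ii) is the heart of the matter. One inclusion is immediate: classical rational subdomains are, by step (i), precisely the rational localisations appearing in $\tau^{rat}$, so admissible covers map to $\tau^{rat}$-covers; in the dagger case the same holds for overconvergent rational covers, and for Stein spaces the relatively compact Stein exhaustion inclusions are Weierstrass-type localisations. For the converse, the crucial input is a bornological derived form of Tate acyclicity --- that the \v{C}ech complex of a rational cover is a resolution in $\bD(K)$ (resp.\ $\bD(\C)$) --- which makes $\tau^{rat}$ subcanonical, combined with quasi-compactness of (dagger) affinoids to refine an arbitrary $\tau^{rat}$-cover by a finite rational one. This identifies the two topologies on the common site of local models, hence the respective hypercomplete sheaf $\infty$-topoi.

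Finally, for step (iii): each derived rigid analytic space, partially proper rigid space or derived Stein space is by step (ii) a hypersheaf for $\tau^{rat}$ admitting an atlas by affine local models whose transition maps are (overconvergent) rational localisations or Stein exhaustion inclusions; all of these are open immersions, in particular homotopy monomorphisms of finite presentation, and hence lie in the distinguished class $\mathbf{P}^{o-sm}$ --- here it is essential that we restrict to \emph{partially proper} rigid spaces and to Stein spaces, since these are exactly the classes that can be glued from overconvergent affine local models. Thus $\Phi$ lands in $\mathbf{Stk}_{geom}$, and full faithfulness follows from step (i) and the colimit argument: for $X=\mathrm{Sp}(A)$ affine one has $\mathrm{Map}_{\mathbf{Stk}}(\Phi X,\Phi Y)=(\Phi Y)(A)=Y(A)=\mathrm{Map}(X,Y)$, the middle equality being the equivalence of sheaf topoi from step (ii), and the general case follows by writing $X$ as the colimit of its atlas. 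The main obstacle is step (ii): matching the intrinsically defined topology $\tau^{rat}$ with the classical Tate (and overconvergent, and complex-analytic Stein) topologies requires the derived bornological Tate acyclicity theorem and a careful comparison of derived rational localisations of bornological algebras with their analytic counterparts, handling non-noetherian and higher-connective phenomena absent from the classical picture.
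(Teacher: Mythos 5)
Your three-step template (full faithfulness on local models, comparison of topologies, descent along atlases) is the right shape for part (1), where the paper's argument is essentially tautological: Soor's $\mathbf{dRig}_K$ is \emph{defined} as the $1$-schemes for the very tuple $(\mathbf{Aff}_{\bD_{\geq 0}(K)},\tau^{rat},\mathbf{P}^{o-sm},\bA_K)$, so the only real content is Lemma \ref{lem:Arun-top} identifying $\tau^{rat}$-covers of derived affinoids with derived strong maps that are classical rational covers on $\pi_0$ (proved via descendability of $A\to\prod_i B_i$ and the homotopy-epimorphism characterisation of rational covers, i.e.\ exactly the derived Tate acyclicity input you invoke), together with Proposition \ref{prop:rigid-geometry-tuple} and the full faithfulness of $i_!$ from Lemma \ref{lem:subgeom}.

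For parts (2) and (3), however, there are genuine gaps. For (2), your step (ii) only compares the intrinsic topology $\tau^{rat}$ with the classical topology \emph{on a fixed site}; it does not address the change of site from Tate affinoids to dagger affinoids. A partially proper rigid space is covered admissibly by ordinary affinoids, not overconvergent ones, and the assertion that it "can be glued from overconvergent affine local models" is precisely Grosse-Kl\"onne's theorem (\cite{grosse2000rigid}*{Theorem 2.27}) identifying partially proper rigid spaces with partially proper dagger spaces. This is the essential non-formal input for (2) in the paper, and your proposal neither proves nor cites it. For (3), the local models of a derived Stein space — dagger Stein algebras — are \emph{not} objects of the site $\bA_\C^\dagger$ (they are countable limits of dagger affinoids, not dagger affinoids), so your step (i) claim that $\mathrm{Map}_{\bA}(A,B)$ "recovers the analytic mapping space" does not apply: $\Phi(\mathrm{Sp}(A))$ is not representable and "$(\Phi Y)(A)$" is not evaluation on an object of the site. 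The paper instead uses geometric presentations $(A,A_n,\overline{A}_n)$ to write $\widetilde{\mathrm{Spec}}(A)\cong\colim_n\mathrm{Spec}(\overline{A}_n)$, and the two key facts your proposal is missing are: (a) any map from $A$ to a dagger affinoid factors through some finite stage $\overline{A}_n$ (\cite{ben2024perspective}*{Corollary 5.4.112}), which is what makes $\{\mathrm{Spec}(\overline{A}_n)\}$ a $0$-atlas and the coproduct an effective epimorphism; and (b) the resulting computation $\mathbf{Map}(\widetilde{\mathrm{Spec}}(A),\widetilde{\mathrm{Spec}}(B))\simeq\lim_n\mathbf{Map}(B,\overline{A}_n)\simeq\mathbf{Map}(B,A)$. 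Without (a), your descent bootstrap in step (iii) has nothing to run on in the Stein case.
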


\comment{
The focus of this article is to obtain rigid analytic spaces and complex manifolds as full subcategories of geometric stacks relative to bornological modules. To formalise what we mean, we set up some notation.  Let \(K\) be a non-trivially valued non-Archimedean Banach field of characteristic zero and \(\mathcal{O}_K\) its ring of integers with pseudo-uniformiser \(\dvgen\). Consider the categories \(\bA_K\) and \(\bA_{\mathcal{O}_K}^{form}\) of (connective derived) affinoid \(K\)-algebras and (connective derived) admissible formal \(\mathcal{O}_K\)-algebras, viewed as full subcategories of \(\mathbf{DAlg}(\bD_{\geq 0}(K))\) and \(\mathbf{DAlg}(\bD_{\geq 0}(\mathcal{O}_K))\). Then equipped with the topologies \(\tau_K^{rat}\) and \(\tau_{\mathcal{O}_K}^{et}\) of (derived) rational localisations and adic \'etale maps, and \(\mathbf{P}^{sm}\) the class of smooth maps, we obtain relative pre-geometry tuples \((\bD(K), \bA_K, \tau_K^{rat}, \mathbf{P}^{sm})\) and \((\bD(\mathcal{O}_K), \bA_{\mathcal{O}_K}, \tau_{\mathcal{O}_K}^{et}, \mathbf{P}^{sm})\). Over the complex numbers, Denote the corresponding relative pre-geometry tuple by \((\bD_{\geq 0}(\C), \bA_\C^\dagger, \tau_{\mathbb{C}}^{rat}, \mathbf{P}^{sm})\). Our first main result demonstrating that these are indeed appropriate categories for the purposes of analytic geometry is following:

}


We then consider descent results for quasi-coherent sheaves. Given a complete bornological algebra \(A \in \mathbf{DAlg}(\bD_{\geq 0}(R))\), we denote by \(\bD(A) \defeq \mathbf{Mod}_A(\bD(R))\) the derived category of \(A\)-modules relative to \(\bD(R)\). In what follows, denote by \(\mathbf{Pr}_{st}^{L}\) the category of presentably stable \(\infty\)-category with left adjoint functors as morphisms. We wish to show that $\mathbf{Mod}(-):(\mathbf{DAlg}(\bD_{\geq 0}(R)))^{op}\rightarrow\mathbf{Pr}_{st}^{L}$ is a sheaf for the various topologies we consider. For the finite homotopy monomorphism topology, which includes all rational topologies, this is not hard. In fact it has appeared in various guises in \cite{ben2024perspective} and \cite{soor2024derived} (and hark back to similar results in \cites{MR3626003,BaBK, bambozzi2016dagger, ben2017non}, and \cite{ben2024perspective}). Our more significant contribution happens in the non-Archimedean world. In particular, we prove that quasi-coherent sheaves on derived rigid analytic spaces satisfy descent for the \'{e}tale topology rather than just the rational localisation topology. More precisely, we define the \'{e}tale topology for general derived algebraic contexts (and the associated datum of a Lawvere theory) in such a way that it automatically satisfies descent, and show that this topology restricts to the usual \'{e}tale topology on affinoids. Significantly, this requires a major detour through the world of (affine) formal geometry. To formalise what we mean, we set up some notation.  Let \(K\) be a non-trivially valued non-Archimedean Banach field of characteristic zero and \(\mathcal{O}_K\) its ring of integers with pseudo-uniformiser \(\dvgen\). Consider the categories \(\bA_K\) and \(\bA_{\mathcal{O}_K}^{form}\) of (connective derived) affinoid \(K\)-algebras and (connective derived) admissible formal \(\mathcal{O}_K\)-algebras, viewed as full subcategories of \(\mathbf{DAlg}(\bD_{\geq 0}(K))\) and \(\mathbf{DAlg}(\bD_{\geq 0}(\mathcal{O}_K))\) respectively. First, we define a topology $\tau_{ad}^{rig-\'{e}t}$ on $(\mathbf{DAlg}(\bD_{\geq 0}(\mathcal{O}_K)))^{op}$ which restricts to the usual rig-\'{e}tale topology on discrete admissible $\mathcal{O}_{K}$-algebras. Then we show that this topology is descendable - this uses \textit{adic descent} techniques, based on similar results of \cite{mann2022p} in the condensed setting. Finally, we show that any \'{e}tale map of (derived) affinoids has a $\tau_{ad}^{rig-\'{e}t}$-formal model. Since the base-change of a descendable morphism is descendable, this proves descendability of \'{e}tale maps of (derived) affinoids. To summarise, we get the following.

\begin{theorem*}
     Let \(\bA_R\) be one of \(\bA_K\), or \(\bA_\C^\dagger\).
    \begin{enumerate}
        \item     The assignment \[\bA_R \to \mathbf{DAlg}(\mathbf{Pr}_{st}^{L}), \quad A \mapsto \bD(A)\] is a sheaf of \(\infty\)-categories for the topologies \(\tau_K^{\'{e}t}\) and \(\tau_{\C}^{rat}\).
        \item    For \(A \in \bA_R\), the representable functor \(\bA_R \to \mathbf{An}\), \(B \mapsto \mathbf{Hom}(A,B)\) is a hypersheaf.
    \end{enumerate}    
\end{theorem*}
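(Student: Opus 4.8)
The plan is to reduce both statements to descendability of the cover morphisms and then to invoke general descent machinery. \textbf{Part (1).} Fix one of the relative derived contexts and write $\tau$ for its topology. Every object of $\bA_R$ is quasi-compact and $\tau$ is generated by a pretopology of finite covers; since it suffices to verify Čech descent along the covers of a generating pretopology, one is reduced to checking descent along each finite cover $\{A\to A_i\}_{i\in I}$. Setting $B\defeq\prod_{i\in I}A_i$ in $\mathbf{CAlg}(\bD_{\geq 0}(R))$, the term in cosimplicial degree $n$ of the Čech nerve of $A\to B$ is $\prod_{(i_0,\dots,i_n)}A_{i_0}\haotimes_A\cdots\haotimes_A A_{i_n}$, whose factors again lie in $\bA_R$, so what must be shown is exactly that $A\to B$ is of effective descent for modules in $\bD(R)$:
\[
\bD(A)\ \simeq\ \tot\bigl(\bD\bigl(B^{\haotimes_A\bullet+1}\bigr)\bigr).
\]
The passage from $\Pst$ to $\mathbf{CAlg}(\Pst)$ is then automatic, as $\mathbf{CAlg}(\Pst)\to\Pst$ is conservative and limit-preserving and the Čech diagram consists of symmetric monoidal $\infty$-categories and base-change functors.

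By Lurie's $\infty$-categorical descent theorem (see also Mathew), the displayed equivalence holds as soon as $A\to B$ is \emph{descendable}, i.e.\ $A$ lies in the thick $\haotimes$-ideal of $\mathbf{Mod}_A(\bD(R))$ generated by $B$. This is the crux, and I would verify it cover-class by cover-class. A single rational (or overconvergent/dagger) localisation $A\to A\langle f/g\rangle$ is a homotopy epimorphism, $A\langle f/g\rangle\haotimes_A A\langle f/g\rangle\weq A\langle f/g\rangle$, in the bornological formalism \cite{ben2024perspective,kelly2022analytic}; its Čech nerve degenerates, so it is descendable of exponent $2$. For a finite rational/Laurent cover attached to $f_1,\dots,f_n\in A$ generating the unit ideal, the derived Tate acyclicity theorem in this setting \cite{ben2017non} presents the Čech complex of the structure algebra as a finite resolution of $A$ built from the $A\langle f_i/f_j\rangle$ and their tensor powers, exhibiting $A$ in the thick $\haotimes$-ideal generated by $B$ with exponent $\le n$. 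For the (adic) étale topologies $\tau_{\mathcal{O}_K}^{\mathrm{et}}$ and $\tau_K^{\mathrm{et}}$ I would combine the local structure theory of (adic) étale morphisms with the finite Krull dimension of affinoid (resp.\ admissible formal) algebras to see that every surjective étale cover is descendable with exponent bounded in terms of the dimension, reducing to the rational case already handled and to a single finite étale morphism, for which $B\haotimes_A B$ splits off a copy of $B$ along the diagonal idempotent. As the cover is jointly conservative (faithful flatness of $A\to B$), descendability of $A\to B$ yields the desired equivalence in each case.

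\textbf{Part (2).} Let $\bA_R$ be $\bA_K$ or $\bA_\C^\dagger$, fix $A$, and put $h_A=\mathbf{Hom}(A,-)$. That $h_A$ is a $\tau$-sheaf is subcanonicity of $\tau$, which follows from the Tate-acyclicity inputs above together with a presentation of $A$ as a quotient of a Tate (resp.\ dagger) algebra — equivalently, it is Part (1) for the unit module. To upgrade the sheaf property to the hypersheaf property I would show that the $\infty$-topos of $\tau$-sheaves on $\bA_R$ is hypercomplete: by the bounds obtained in Part (1), every $\tau$-cover of an affinoid (resp.\ dagger affinoid) refines to a finite one of uniformly bounded descendability exponent, so each object of the site has finite covering dimension; hence the topos is locally of finite homotopy dimension and therefore hypercomplete by Lurie's criterion, and every sheaf — in particular $h_A$ — is a hypersheaf.

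\textbf{Main obstacle.} The hard part will be the descendability input of Part (1) for the (adic) étale topologies: homotopy-epimorphism and Tate acyclicity for rational covers are available in the literature, but controlling étale covers — producing a uniform descendability exponent while simultaneously accommodating the connective/derived and the admissible-formal variants — requires care with the local structure theory of étale morphisms, and it is precisely this uniform bound that also feeds the finite-dimensionality statement used for hypercompleteness in Part (2).
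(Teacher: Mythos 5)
Your overall architecture for Part (1) — reduce to a finite cover, form the total map \(A\to B=\prod_i A_i\), prove it is descendable, and invoke Mathew's comonadicity theorem — is exactly the paper's strategy, and your treatment of rational/dagger covers (homotopy epimorphisms, essentially finite \v{C}ech nerve, splitting) reproduces Lemma \ref{lem:hepi-descendable} in substance. The divergence, and the genuine gap, is in the \'etale case. The paper does not prove that \'etale covers are descendable "with exponent bounded in terms of the Krull dimension"; no such dimension-dependent bound appears or is needed. Its mechanism is: (i) descendability is built into the \emph{definition} of the \'etale (pre)topology (Definitions \ref{def:\'etale-abstract} and \ref{def:etale-desc}), so descent for \(\mathbf{QCoh}\) is immediate from Proposition \ref{prop:descendability-descent}; and (ii) the real content, Lemma \ref{lem:classical-etale} and Lemma \ref{lem:etale-pi_0}, is that classical \'etale covers lie in this topology. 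That is done by passing, rationally locally, to a faithfully flat formal model (Bosch--L\"utkebohmert), observing that a faithfully flat map of derived rings is descendable of index \(2\) independently of any dimension (Mathew, Proposition 3.31), hence the map is \emph{adically} descendable of index \(2\), and then invoking the theorem (due to Mann, reproved in the paper) that an adically descendable map out of an adically complete ring is descendable. Your proposal never mentions formal models, which is the actual engine for the rigid \'etale case, and the "local structure theory + finite Krull dimension" route you sketch is not substantiated: a general \'etale cover of affinoids does not globally factor as a rational cover followed by a finite \'etale map, and even where it does, the descendability exponent you would extract has nothing to do with dimension.

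For Part (2) your route is also different from the paper's and is not adequately justified. You propose hypercompleteness of the whole \'etale \(\infty\)-topos via "locally of finite homotopy dimension", deducing finite covering dimension from the bounded descendability exponents of Part (1). Descendability exponents do not control homotopy dimension, and the site consists of \emph{derived} affinoids with unbounded Postnikov towers, so this inference is a non sequitur as stated. The paper instead proves hyperdescent for the subfunctor \(\mathbf{Coh}_{+}\) of bounded-below coherent complexes (reducing, via derived strength and Theorem \ref{thm:refetalecoverequivalent}, to classical \'etale descent for coherent sheaves on affinoids \`a la Conrad), and then applies a criterion from \cite{ben2024perspective} (Proposition 7.2.41) saying that representables of affines whose structure sheaf and cotangent complex are coherent are hypersheaves whenever \(\mathbf{Coh}_{+}\) hyperdescends. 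If you want to keep a topos-theoretic hypercompleteness argument you would need a genuinely finite-dimensionality input (e.g.\ finite \'etale cohomological dimension of affinoids), not the descendability bounds from Part (1).
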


The importance of incorporating the \'{e}tale topology is that it allows us to both define and prove descent for the Nisnevich topology. Our definition of the Nisnevich topology requires yet another detour. Precisely, we need to consider properties of the spaces of prime and maximal filters associated to the locale of subspaces of derived schemes in our various settings. In the derived rigid analytic setting, this was previously considered in \cite{soor2024derived}/ \cite{soor2025privatecommunicationhesis}. This involves a general discussion of what we call \textit{Berkovich locales}.


We then consider localising invariants of analytic spaces. Classically, a \emph{localising invariant} was defined as a functor \(E \colon \mathbf{Cat}^{\mathrm{perf}} \to \bD\) from the category of small stable \(\infty\)-categories and exact functors to a stable \(\infty\)-category, preserving Verdier localisation sequences. For a quasi-compact, quasi-separated scheme \(X\), the derived category \(\bD(X)\) of quasi-coherent sheaves of \(\mathcal{O}_X\)-modules is a compactly generated presentable stable \(\infty\)-category. One then applies \(E\) to the category of compact objects \(\mathbf{Perf}(X)\), which is small. Here a major difference arises from the purely algebraic setting: in the derived category of complete bornological modules, we have split exact sequences \[l^1(\N,R) \to l^1(\N,R) \to R,\] so that the functor \(A \mapsto l^1(A)\) on Banach spaces (which are the compact objects in \(\mathsf{Ind}(\mathsf{Ban}_R)\)) satisfies \(F(A) = A \oplus F(A)\), so that we would have Eilenberg swindle if we defined localising invariants using perfect complexes. 

From a functional analytic point of view, a more appropriate finiteness condition is nuclearity. It turns out that there is a functorial way to associate to a complete bornological \(R\)-module a \emph{nuclear module} - defined as a colimit of trace-class maps between Banach \(R\)-modules, and this functor is right adjoint to the inclusion of the full subcategory of nuclear \(R\)-modules \(\mathsf{Nuc}(R) \subset \mathsf{CBorn}_R\) inside bornological \(R\)-modules (\cites{hogbe2011nuclear, Meyer-Mukherjee:HL}). In the non-Archimedean setting, one first forces nuclearity on a Banach \(\mathcal{O}_K\)-algebra, and then applies the analytic cyclic homology functor to define local cyclic homology(\cites{Cortinas-Meyer-Mukherjee:NAHA, Meyer-Mukherjee:HL}). This process ensures that local cyclic homology only depends on the reduction mod \(p\) of a Banach algebra. Remarkably, one can do this completely categorically by associating to a symmetric monoidal category \(\bC\) a presentable category \(\bC^{\rig}\) that is built out of sequential colimits of trace-class maps when the tensor unit is compact(\cite{nkp}).  When applied to our setting \(\bC = \bD(R)\), we get the following:

\begin{theorem*}
   \begin{enumerate}
\item Let \(\bD(\mathsf{Nuc}^\infty(\C))\) be the full subcategory of \(\bD(\C)\) generated under colimits by sequential colimits of Banach spaces with order \(0\) trace-class transition maps. Then we have a fully faithful inclusion \[\bD(\mathsf{Nuc}^\infty(\C))\hookrightarrow \bD(\C)^\rig\] of presentable stable \(\infty\)-categories.  
\item When \(R\) is a non-Archimedean Banach ring, we have an equivalence \[\bD(R)^\rig \simeq \bD(\mathbf{Nuc}(R)).\] 
\end{enumerate}
\end{theorem*}

While the theorem above shows that the assignment \(\mathbf{C} \mapsto \mathbf{C}^\rig\) in some sense recovers the internal nuclearisation \((-)^\rig \colon \mathsf{CBorn}_R \to \mathsf{Nuc}^{\infty}(R)\) of a bornological module, we do not yet know whether the category of nuclear modules is a legitimate input category for \(K\)-theory. Indeed, the rigidification \(\bD(R)^\rig\) is not compactly generated but only \emph{dualisable}, and therefore in particular, \(\omega_1\)-compactly generated. Fortunately, by Efimov's theorem, any localising invariant extends uniquely to the category \(\mathbf{Cat}^{\mathrm{dual}}\) of dualisable categories. This leads to the following:

\begin{definition*}
    Let \(E\) be a localising invariant \(\mathbf{Cat}^{\mathrm{perf}} \to \bD\) into a stable \(\infty\)-category. Its associated \emph{bornological invariant} is defined as the functor \[E^\an \colon \bA_R \to \bD, \quad \mathsf{Spec}(A) \mapsto  E^\cont(\mathbf{Mod}_{A^\rig}(\bD(R)^\rig)),\] where \(E^\cont\) is the continuous extension of \(E\) to dualisable categories.  
\end{definition*}

The second part of the paper focusses on descent properties of these invariants. We first observe that the definition of a bornological  invariant globalises as the assignment \(A \mapsto \mathbf{Mod}_{A}(\mathbf{C}^{rig})\) satisfies \(\tau_R^{rat}\) and \(\tau_R^{et}\)-descent (in the category of dualisable categories) when \(A \in \bD(R)^\rig\). The latter condition holds for derived dagger affinoid \(R\)-algebras, and derived admissible \(\mathcal{O}_K\)-algebras.


Our next result is the following generalisation of Thomason-Trobaugh's result to the analytic setting:

\begin{theorem*}
    For any localising invariant \(E \colon \mathbf{Cat}^{\mathrm{dual}} \to \bD\), its associated bornological invariant satisfies Nisnevich descent. More specifically, consider a commuting diagram 
    \begin{equation*}
\begin{tikzcd}
U \times_X V \arrow{r}{} \arrow{d}{} & V \arrow{d}{p}\\
U \arrow{r}{e} & X,
\end{tikzcd}
\end{equation*} of derived analytic spaces such that \(e\) is an open immersion, \(p\) is \'etale and spatial on the complement of \(U\),  and \(|V|_{Ber} \times_|X|_{Ber} (|X|_{Ber} \setminus |U|_{Ber}) \to |X|_{Ber} \setminus |U|_{Ber}\) is a homeomorphism on Berkovich points (see \ref{eq:preNis} for precise definition). Then we have the following pullback square 
 \begin{equation*}
\begin{tikzcd}
E^\an(X) \arrow{r}{} \arrow{d}{} & E^\an(U) \arrow{d}{p}\\
E^\an(V) \arrow{r}{e} & E^\an(U \times_X V),
\end{tikzcd}
\end{equation*} in \(\bD\).

\end{theorem*}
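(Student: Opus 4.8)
The plan is to extend $E^\an$ to a functor on derived analytic spaces and identify the square in question with the image of a map of Verdier localisation sequences. Concretely, the assignment $\Spec(A)\mapsto \mathbf{Mod}_{A^\rig}(\bD(R)^\rig)$ satisfies $\tau_R^{rat}$- and $\tau_R^{et}$-descent in $\mathbf{Cat}^{\mathrm{dual}}$ (the globalisation observation recorded after the descent theorems), so it glues to a $\mathbf{Cat}^{\mathrm{dual}}$-valued sheaf $X\mapsto \mathbf{QCoh}^\rig(X)$ on derived analytic spaces, and by definition $E^\an(X)=E^\cont(\mathbf{QCoh}^\rig(X))$. The theorem then follows from two facts: \textbf{(A)} for a quasi-compact open immersion $e\colon U\hookrightarrow X$, the restriction $e^\ast\colon \mathbf{QCoh}^\rig(X)\to\mathbf{QCoh}^\rig(U)$ is a Verdier localisation of dualisable categories, whose kernel I denote $\mathbf{QCoh}^\rig_Z(X)$, with $Z=X\setminus U$; and \textbf{(B)} base-change $p^\ast$ restricts to an equivalence $\mathbf{QCoh}^\rig_Z(X)\xrightarrow{\ \sim\ }\mathbf{QCoh}^\rig_{Z'}(V)$, where $Z'=V\setminus(U\times_X V)$. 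Granting these, apply $E^\cont$ to the commutative diagram
\begin{equation*}
\begin{tikzcd}
\mathbf{QCoh}^\rig_Z(X) \arrow{r}\arrow{d}{p^\ast} & \mathbf{QCoh}^\rig(X)\arrow{r}{e^\ast}\arrow{d}{p^\ast} & \mathbf{QCoh}^\rig(U)\arrow{d} \\
\mathbf{QCoh}^\rig_{Z'}(V)\arrow{r} & \mathbf{QCoh}^\rig(V)\arrow{r} & \mathbf{QCoh}^\rig(U\times_X V)
\end{tikzcd}
\end{equation*}
to get a map of fibre sequences in $\bD$ whose left vertical arrow is an equivalence; since $\bD$ is stable, the right-hand square is then a pullback, which is the assertion. (That $p^\ast$ does preserve the kernels is immediate from $p^{-1}(U)=U\times_X V$ and flat base change.)

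For \textbf{(A)}: a rational localisation $A\to A\langle\tfrac{f}{g}\rangle$ of (derived dagger affinoid or admissible formal) algebras is an idempotent/smashing map, so $\mathbf{Mod}_{A^\rig}(\bD(R)^\rig)\to\mathbf{Mod}_{(A\langle f/g\rangle)^\rig}(\bD(R)^\rig)$ is a Verdier quotient of dualisable categories; gluing over a finite affinoid chart of $X$ and the induced rational subdomains of $U$ — using that $\mathbf{QCoh}^\rig$ is a $\mathbf{Cat}^{\mathrm{dual}}$-valued sheaf and that $U,X$ are quasi-compact — shows $e^\ast$ is itself a Verdier localisation of dualisable categories, with kernel the full dualisable subcategory $\mathbf{QCoh}^\rig_Z(X)$ of objects vanishing on $U$. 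For \textbf{(B)}, the role of the hypotheses on $p$ (étale, spatial away from $U$, and a homeomorphism on Berkovich points over $|X|_{Ber}\setminus|U|_{Ber}$, cf.\ \ref{eq:preNis}) is to force $p$ to be an isomorphism on every ``tubular'' thickening of $Z$: writing $Z_n$ for the pro-system of such thickenings inside $X$, an étale map which is a spatial bijection on Berkovich points over $Z$ restricts over each $Z_n$ to an isomorphism $V\times_X Z_n\iso Z_n$. One then shows $\mathbf{QCoh}^\rig_Z(X)$ is generated under ($\omega_1$-)filtered colimits by objects scheme-theoretically supported on the $Z_n$, so that $p^\ast$ is an equivalence on each generator, hence on all of $\mathbf{QCoh}^\rig_Z(X)$; the same dévissage over $Z'$ matches the two sides.

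The main obstacle is step \textbf{(B)}, and within it the dévissage: one must show that the dualisable category of $\rig$-modules supported on $Z$ depends only on the pro-system of tubular neighbourhoods of $Z$ — that it is ``formal along $Z$'' — and that this identification is natural in étale maps. Several features make this harder than the classical Thomason--Trobaugh argument. First, $\bD(R)^\rig$ is only $\omega_1$-compactly generated, so ``generated under colimits by the $Z_n$'' must be read in the $\omega_1$-sense, and one must check the relevant subcategories are dualisable and that the localisation sequences genuinely land in $\mathbf{Cat}^{\mathrm{dual}}$ — this is where Efimov's continuity of $E$, rather than merely the localisation-sequence axiom, is used. Second, since the complement $Z$ need not be Zariski-closed but only closed for the Berkovich topology, one needs the theory of tubes/adic germs along $Z$ both to define the $Z_n$ and to see that $\mathbf{QCoh}^\rig_Z$ is insensitive to replacing $X$ by such a germ; the Berkovich-point hypothesis is precisely what makes the germs of $X$ and $V$ along $Z$, $Z'$ agree. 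Third, the implication ``étale $+$ homeomorphism on Berkovich points of the complement $\Rightarrow$ isomorphism of henselisations/completions along $Z$'' is an input from non-archimedean (and, over $\C$, complex/dagger) analytic geometry, a form of topological invariance of the étale site. Once these are in place, assembling the diagram of fibre sequences and invoking stability of $\bD$ is purely formal.
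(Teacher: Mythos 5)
Your overall skeleton --- exhibit the square as the image under $E^{\cont}$ of a map of Verdier sequences of dualisable categories whose induced map on kernels is an equivalence, then use stability of $\bD$ --- is exactly the paper's final step, and your step \textbf{(A)} (that $e^{\ast}$ is a Verdier localisation with fully faithful, colimit-preserving right adjoint, so the kernel is again dualisable) matches the paper's Lemma \ref{lem:nuc-Verdier} and Corollary \ref{cor:openimmersembedding}. The divergence, and the gap, is in step \textbf{(B)}.

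The paper never proves \textbf{(B)} by dévissage. Instead it observes that $\mathbf{Nuc}^{\infty}(-)$ is already known to be an \'etale sheaf of categories (Corollary \ref{cor:nuc-analytic-et-descent}), hence a Nisnevich sheaf, and that the spatial Nisnevich squares form a \emph{complete and regular} cd-structure (Theorem \ref{thm:Voevodsky-analytic}); Voevodsky's criterion (Lemma \ref{lem:Nisnevich-sheaves}) then says the square of categories
$\mathbf{Nuc}^{\infty}(X)\to\mathbf{Nuc}^{\infty}(U),\mathbf{Nuc}^{\infty}(V)\to\mathbf{Nuc}^{\infty}(U\times_X V)$
is a pullback, and (after checking via \cite{ramzi2024dualizable} that this finite limit of rigid categories along internal left adjoints is also a limit in $\mathbf{Cat}^{\mathrm{dual}}$) the equivalence of the kernels of the two horizontal localisations is a purely formal consequence of the pullback property. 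This is precisely where the hypotheses ``spatial on the complement'' and ``homeomorphism on Berkovich points'' are consumed: they go into verifying regularity and completeness of the cd-structure and that $U\coprod V\to X$ is an effective epimorphism, not into any statement about formal neighbourhoods of $Z$.

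Your proposed route to \textbf{(B)} --- tubular thickenings $Z_n$ of $Z$, topological invariance of the \'etale site along $Z$, and generation of $\mathbf{QCoh}^{\rig}_Z(X)$ by objects supported on the $Z_n$ --- is not carried out, and as you yourself note it is the main obstacle. It is also not clear it can be carried out in the paper's generality: over an arbitrary Banach ring the complement $Z=|X|_{Ber}\setminus|U|_{Ber}$ is merely a closed subset of the Berkovich space, not a Zariski-closed analytic subspace, so the pro-system $Z_n$ has no obvious meaning; the claim that an \'etale map inducing a homeomorphism on Berkovich points over $Z$ becomes an isomorphism on ``thickenings'' is a nontrivial analytic input (a form of Elkik/topological invariance) that the paper nowhere establishes; and the assertion that the kernel is generated under $\omega_1$-filtered colimits by such supported objects is unproven. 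So as written the proposal has a genuine gap at its central lemma; the fix is to replace the dévissage by the descent-plus-cd-structure argument, which the paper has set up precisely so that the equivalence of kernels comes for free.
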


Finally, we come to the Grothendieck-Riemann-Roch Theorem for derived analytic spaces. Let \(f \colon X \to Y\) be a quasi-separated morphism of quasi-compact, quasi-separated derived dagger \(R\)-analytic spaces. 

\begin{theorem*}
We have the following commuting diagram of spectra
\[
\begin{tikzcd}
K^\an(X) \arrow{r}{\mathrm{ch}} \arrow{d}{f_{!}} & TC^{- \text{ }\an}(X) \arrow{d}{f_*} \\
K^\an(Y) \arrow{r}{\mathrm{ch}} & TC^{- \text{ }\an}(Y),
\end{tikzcd}
\] where \(\mathrm{ch}\) is the Chern character and \(TC^{- \text{ }\an}\) denotes the continuous extension of negative topological cyclic homology. 
\end{theorem*}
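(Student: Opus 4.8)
The plan is to realise the square as the naturality square of a natural transformation of continuous localising invariants, evaluated at one exact functor of dualisable $\infty$-categories — the shriek pushforward along $f$ — so that, granting the constructions below, the Grothendieck--Riemann--Roch compatibility becomes formal. First I would collect the categorical inputs. Algebraic $\K$-theory and negative cyclic homology $TC^-$ are localising invariants from $\mathbf{Cat}^{\mathrm{perf}}$ to spectra, and the Chern character $\mathrm{ch}$ (a natural transformation $\K \to TC^-$) is a natural transformation of such invariants. By Efimov's theorem, $\K$, $TC^-$ and $\mathrm{ch}$ extend uniquely to continuous functors, resp.\ a natural transformation, on $\mathbf{Cat}^{\mathrm{dual}}$; write $\K^\cont$, $TC^{-,\cont}$ and $\mathrm{ch}^\cont \colon \K^\cont \Rightarrow TC^{-,\cont}$. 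Combining the definition of the bornological invariant with the rational and \'etale descent established above, for a quasi-compact quasi-separated derived dagger $R$-analytic space $X$ one obtains identifications, natural in $X$, $\K^\an(X) \simeq \K^\cont(\bD(X)^\rig)$ and $TC^{-,\an}(X) \simeq TC^{-,\cont}(\bD(X)^\rig)$, where $\bD(X)^\rig$ is the dualisable $\infty$-category glued from $\mathbf{Mod}_{A^\rig}(\bD(R)^\rig)$ along a rational hypercover of $X$.

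Next I would produce the pushforward as a morphism of dualisable categories. Pullback of nuclear quasi-coherent sheaves is a symmetric monoidal colimit-preserving functor $f^* \colon \bD(Y)^\rig \to \bD(X)^\rig$. Using that $f$ is quasi-compact and quasi-separated — equivalently, invoking the six-functor formalism for analytic stacks relative to $\bD(R)$, under which dagger (partially proper) spaces behave like proper ones — I would construct the exceptional pushforward $f_! \colon \bD(X)^\rig \to \bD(Y)^\rig$, the left adjoint of $f^!$. Being a left adjoint, $f_!$ preserves colimits, hence is a morphism in $\mathbf{Cat}^{\mathrm{dual}}$; it is compatible with rigidification because $(-)^\rig$ is a symmetric monoidal localisation while $f_!$ is assembled from $\otimes$, $f^*$ and proper pushforward, each of which commutes with $(-)^\rig$. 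One then checks that applying $\K^\cont$, resp.\ $TC^{-,\cont}$, to $f_!$ recovers the maps labelled $f_!$ and $f_*$ in the statement — the map on $TC^-$ being the honest derived pushforward of modules, which in the dualisable setting coincides with $f_!$ — and that the whole construction can be verified locally on $Y$ via the descent results above, reducing to a statement about bornological $A$-modules.

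Granting these two steps, the square of the theorem is, after the identifications just described, precisely the image under $\mathrm{ch}^\cont$ of the morphism $f_! \colon \bD(X)^\rig \to \bD(Y)^\rig$ of $\mathbf{Cat}^{\mathrm{dual}}$:
\[
\begin{tikzcd}
\K^\cont(\bD(X)^\rig) \arrow{r}{\mathrm{ch}^\cont} \arrow{d}{\K^\cont(f_!)} & TC^{-,\cont}(\bD(X)^\rig) \arrow{d}{TC^{-,\cont}(f_!)} \\
\K^\cont(\bD(Y)^\rig) \arrow{r}{\mathrm{ch}^\cont} & TC^{-,\cont}(\bD(Y)^\rig),
\end{tikzcd}
\]
which commutes by naturality of $\mathrm{ch}^\cont$.

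The hard part will be the second step: constructing $f_!$ as a well-behaved morphism in $\mathbf{Cat}^{\mathrm{dual}}$ after rigidification. I expect the main obstacles to be (i) a gluing statement ensuring that $\bD(X)^\rig$ assembled from a finite rational cover remains dualisable and that pushforward is compatible with this gluing; (ii) an analytic analogue of the classical fact that $Rf_*$ preserves colimits for quasi-compact quasi-separated morphisms, now for nuclear quasi-coherent sheaves, together with base change along open immersions so that the construction is local on $Y$; and (iii) checking that nuclearisation destroys neither the adjunction $(f_!, f^!)$ nor the projection formula $f_!(f^*N \otimes M) \simeq N \otimes f_! M$ used to pin down the vertical maps. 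Once the six-functor formalism for derived bornological analytic stacks is available in this generality, the remaining verifications — and in particular the Grothendieck--Riemann--Roch square itself — are formal.
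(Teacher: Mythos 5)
Your reduction of the square to ``naturality of $\mathrm{ch}^{\cont}$ applied to a single morphism $f_{!}$ in $\mathbf{Cat}^{\mathrm{dual}}$'' has a genuine gap, and it is not the route the paper takes. For the naturality argument to apply you must exhibit the pushforward as a \emph{morphism of} $\mathbf{Cat}^{\mathrm{dual}}$, i.e.\ a strongly continuous functor (a left adjoint whose right adjoint is again a left adjoint): only then do the Efimov extensions $\K^{\cont}$ and $TC^{-,\cont}$ act on it functorially. You never address this, and it is not automatic: for a general quasi-separated morphism of qcqs dagger spaces the paper only establishes that $f_{*}$ preserves colimits and satisfies the projection formula, not that its right adjoint does. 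Moreover you lean on a six-functor formalism ($f^{!}$, $f_{!}$ as its left adjoint, ``dagger spaces behave like proper ones'') that the paper neither constructs nor needs -- the theorem makes no properness assumption -- and you assert without argument that the vertical map on $TC^{-}$ induced by your sheaf-level $f_{!}$ agrees with the one in the statement, which in the paper is defined quite differently, namely as $\mathcal{L}(f)^{R}$, the right adjoint of the induced map on the categorified loop space $\mathcal{L}(\mathbf{Nuc}^{\infty}(-))$.

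The paper's actual argument sidesteps strong continuity entirely. The key input is that for $f$ qcqs the pullback $f^{*}\colon \mathbf{Nuc}^{\infty}(Y)\to\mathbf{Nuc}^{\infty}(X)$ is a \emph{rigid} symmetric monoidal functor between rigid categories (this is what the K\"unneth formulae and the passability lemma buy: the diagonal pushforward and $f_{*}$ are linear, colimit-preserving, and satisfy base change). One then applies the categorified trace formalism of Hoyois--Scherotzke--Sibilla: their Theorem 4.3 gives a commuting square relating $\mathbf{Mod}^{\mathrm{dual}}_{\mathbf{Nuc}^{\infty}(X)}$ and $\mathbf{Mod}^{\mathrm{dual}}_{\mathbf{Nuc}^{\infty}(Y)}$ via the $S^{1}$-equivariant Chern character and the forgetful functor $f_{*}$. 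The wrong-way map on $\K$-theory is then extracted by corepresentability of continuous $\K$-theory in $\bC$-linear noncommutative motives ($\mathbf{NCMot}(\bC)(\mathcal{U}(\bC),\mathcal{U}(\bA))\simeq \K^{\cont}(\bA)$), taking $\mathcal{T}=\mathbf{Nuc}^{\infty}(X)$ and $\mathcal{T}'=\mathbf{Nuc}^{\infty}(Y)$, with the composite $\bD\to f_{*}\bC\to f_{*}\mathcal{T}\to\mathcal{T}'$ built from the right adjoint $f_{*}$ and the projection formula. The commutativity is thus a nontrivial statement about functoriality of traces under rigid base change, not a formal naturality square; your proposal is missing precisely the piece of structure (rigidity of $f^{*}$ and the trace formalism, or alternatively strong continuity of the pushforward) that makes the diagram commute.
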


\subsection*{Relation to condensed mathematics}

Several results in this article have analogues in the condensed world. In this section, we discuss how our approach is similar and different to that of Clausen-Scholze (\cite{clausenscholze3}). To begin with, both approaches lead to categories of analytic stacks for which quasi-coherent sheaves satisfy descent. They also develop \(K\)-theory as continuous \(K\)-theory of suitable rigid categories associated to \(\mathbf{QCoh}\). The main point of difference lies in the following: we use a uniform tensor product throughout our theory. In the archimedean setting, our category of nuclear modules \(\mathbf{Nuc}(\C)\) seems closely related to nuclear objects in the liquid setting (in fact we believe that these categories are, on the nose, equivalent) In any case, in both settings these categories of nuclear objects are not rigid. If instead, in the condensed setting, one works with gaseous $\mathbb{C}$-vector spaces and the gaseous tensor product, the category of nuclear modules \textit{is} rigid. However, in the bornological setting we still need to further rigidify our category of nuclear modules (this also coincides the rigidification of the original category). It is possible that the rigidifcation we describe is related to nuclear gaseous spaces, but we have not thought about this in detail. Furthermore, the solid framework is used in \cite{andreychev2023k} to prove Nisnevich descent and Grothendieck-Riemann-Roch Theorem for analytic adic spaces. In the present article, we use the bornological framework to prove versions of these results for derived dagger analytic spaces over an arbitrary base Banach ring. The general definition of Nisnevich covers at first glance seems difficult to check (and over general Banach rings it probably is), but we show that it includes many examples from complex geometry and rigid analytic geometry.

\comment{\subsection*{What this article does not do}

While we do not compare our theories, some comparison results seem to automatically emerge. For instance, the \(\omega_1\)-compact objects in the complex (resp. rigid) setting coincide in both cases to strongly dual nuclear Frechet (resp. sequential colimits of Banach) spaces. In the formal scheme setting, our nuclear category is equivalent to \(\widetilde{\mathbf{Nuc}}(R)\), which in turn is \(\mathbf{NcMot}\)-equivalent to the nuclear solid category of Clausen-Scholze. We will take up comparisons between bornological modules and various categories of condensed modules in a successor article. Finally, in this article we do not treat bornological ring spectra; this will be done in a future article.  }

\section{Derived geometry relative to a derived algebraic context - generalities}\label{sec:der-context}

In this section, we lay out a very general framework for algebraic geometry relative to a \emph{derived algebraic context} introduced by Raksit in \cite{raksit2020hochschild}, following the work of \cite{brantnermathew}. In the first three subsections, we largely recall material from \cite{ben2024perspective} and \cite{kelly2022analytic}.

\begin{definition}\label{def:DAC}
A \emph{derived algebraic context} consists of the following data
$$(\mathbf{C},\mathbf{C}_{\ge0},\mathbf{C}_{\le0},\mathbf{C}^{0})$$
where 
\begin{enumerate}
    \item $\mathbf{C}$ is a presentably symmetric monoidal stable $\infty$-category.
    \item $(\mathbf{C}_{\ge0},\mathbf{C}_{\le0})$ is a right complete $t$-structure on $\mathbf{C}$.
    \item $\mathbf{C}^{0}\subseteq\mathbf{C}^{\heartsuit}$ is a set of compact projective generators of $\mathbf{C}_{\ge0}$ - that is for each $P\in\mathbf{C}^{0}$ the functor
    $$\mathbf{Map}(P,-):\mathbf{C}_{\ge0}\rightarrow\mathbf{An}$$
    commutes with sifted colimits, and every object of $\mathbf{C}_{\ge0}$ can be written as a sifted colimit of objects of $\mathbf{C}^{0}$.
\end{enumerate}

such that

\begin{enumerate}
\item
the monoidal unit $\mathbb{I}$ is in $\mathbf{C}^{0}$
    \item 
    $\mathbf{C}_{\le0}$ is closed under filtered colimits in $\mathbf{C}$
    \item 
    if $X,Y\in\mathbf{C}_{\ge0}$ then so is $X\otimes Y$,
    \item 
    if $P,Q\in\mathbf{C}^{0}$ then so is $P\otimes Q$
    \item 
    if $P\in\mathbf{C}^{0}$ then each $1$-categorical symmetric power (i.e., computed in the heart $\mathbf{C}^{\heartsuit})$), $\mathrm{Sym}^{n}(P)$, is in $\mathbf{C}^{0}$. 
\end{enumerate}

\end{definition}

\begin{example}\label{ex:initial}
Let \(R\) be a (discrete) commutative unital ring. Then the derived \(\infty\)-category \(\bD(R)\) with the projective \(t\)-structure yields a derived algebraic context \(\mathbf{C}_R \defeq (\bD(R), \bD_{\geq 0}(R), \bD_{\leq 0}(R), \mathbf{Mod}_R^{ffg})\), where \(\mathbf{Mod}_R^{ffg}\) is the full subcategory of \(\bD(R)\) generated by free \(R\)-modules of finite rank.   
\end{example}

Using techniques developed by \cite{brantnermathew} the functor $\mathbf{C}^{0}\rightarrow\mathbf{C}^{\heartsuit}, P\mapsto\mathrm{Sym}(P)$ extends to a sifted-colimit preserving functor 
$$\mathbf{C}_{\ge0}\rightarrow\mathbf{C}_{\ge0}$$
and then to a functor
$$\mathbf{C}\rightarrow\mathbf{C}$$
This functor has a natural structure as a monad, which we denote by $\mathbb{L}\mathrm{Sym}$. The category of algebras over this monad will be denoted $\mathbf{DAlg}(\mathbf{C})$, and the category of algebras whose underlying object lies in $\mathbf{C}_{\ge0}$ will be denoted $\mathbf{DAlg}^{cn}(\mathbf{C})$. There is also the usual monad $\mathbf{Comm}$ associated to the commutative operad. There is a natural map of monads $\mathbf{Comm}\rightarrow\mathbb{L}\mathrm{Sym}$ which is an equivalence whenever $\mathbf{C}$ is enriched over $\mathbb{Q}$. The map of monads $\mathbf{Comm}\rightarrow\mathbb{L}\mathrm{Sym}$ induces a forgetful functor $\theta:\mathbf{DAlg}(\mathbf{C})\rightarrow\mathbf{Comm}(\mathbf{C})$. For $A\in\mathbf{DAlg}(\mathbf{C})$ we write $\mathbf{D}(A)=\mathbf{Mod}_A(\mathbf{C})=\mathbf{Mod}_{\theta(A)}(\mathbf{C})$.

Let \(\mathbf{Pr}_{st}^L\) denote the \(\infty\)-category of  presentable, stable \(\infty\)-categories with left adjoint functors as morphisms. Note that this is itself a presentably symmetric monoidal stable $\infty$-category. The assignment \[\mathbf{DAlg}(\bC) \to \mathbf{CAlg}(\mathsf{Pr}_{st}^L), \quad A \mapsto \mathbf{D}(A)\] can be upgraded to a symmetric monoidal functor. 

To `globalise' this definition, in this section we introduce several Grothendieck topologies on \emph{affine stacks relative to \(\bC\)} denoted by $\mathbf{Aff}_{\mathbf{C}}=(\mathbf{DAlg}^{cn}(\mathbf{C}))^{op}$. For $A\in\mathbf{DAlg}^{cn}(\mathbf{C})$ we denote by \(\mathsf{Spec}(A)\) the same object viewed in the opposite category $\mathbf{Aff}^{cn}_{\mathbf{C}}$. We say that a \emph{topology \(\tau\) satisfies descent for quasi-coherent sheaves} if whenever $\{\mathrm{Spec}(B_{i})\rightarrow\mathrm{Spec}(A)\}_{i\in\mathcal{I}}$ is a cover, the natural functor
$$\mathbf{QCoh}(\mathrm{Spec}(A))\rightarrow\lim_{(i_{1},\ldots,i_{n})\in\mathcal{I}^{n}}\mathbf{QCoh}(\mathrm{Spec}(B_{i_{1}}\otimes_{A}B_{i_{2}}\otimes_{A}\ldots\otimes_{A}B_{i_{n}})$$
is an equivalence.

\subsubsection{Descent for covers of stacks}

Let $(\mathbf{Aff}_\bC,\tau,\mathbf{P},\mathbf{A})$ be a relative $(\infty,1)$-geometry tuple. 

\begin{definition}
    The \textit{effective epimorphism topology on} $\mathbf{Stk}(\mathbf{Aff}_\bC,\tau)$ consists of collections of maps $\{f:\mathcal{X}\rightarrow\mathcal{Y}\}$ which are effective epimorphisms. We denote this topology by $\tau_{eff}$.
\end{definition}

Suppose that $\mathbf{QCoh}$ satisfies descent. The following is immediate.

\begin{theorem}\label{thm:adicdescendable}
    Let $f:\mathcal{X}\rightarrow\mathcal{Y}$ be an effective epimorphism of stacks. Then the natural map
    $$\mathbf{QCoh}(\mathcal{Y})\rightarrow\mathbf{lim}_{n}\mathbf{QCoh}(\mathcal{X}^{\times_{\mathcal{Y}}n})$$
    is an equivalence. In particular $\mathbf{QCoh}$ satisfies descent for $\tau_{eff}$.
\end{theorem}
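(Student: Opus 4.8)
The plan is to reduce the statement about arbitrary effective epimorphisms of stacks to the already-assumed descent for covers in the relative geometry tuple, via the standard ``unwinding the Čech nerve'' argument that works in any $\infty$-topos equipped with a descent-satisfying functor to a presentable category.

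First I would recall that, by hypothesis, $\mathbf{QCoh}$ satisfies descent on $\mathbf{Aff}_\bC$, i.e. it sends $\tau$-covers to limit diagrams over the associated Čech nerve; combined with the fact that every stack in $\mathbf{Stk}(\mathbf{Aff}_\bC,\tau)$ is a colimit of representables and that $\mathbf{QCoh}$ is by definition the right Kan extension (limit-preserving extension) of $A\mapsto\mathbf{D}(A)$ along the Yoneda embedding, one gets that $\mathbf{QCoh}\colon \mathbf{Stk}(\mathbf{Aff}_\bC,\tau)^{\op}\to\mathbf{Pr}_{st}^L$ already satisfies descent for $\tau$ itself. So the content of the theorem is purely: an effective epimorphism $f\colon\mathcal{X}\to\mathcal{Y}$ of stacks gives rise, via its Čech nerve $\mathcal{X}^{\times_\mathcal Y\bullet}$, to a colimit presentation $\mathcal{Y}\simeq \colim_{\Delta^{\op}}\mathcal{X}^{\times_\mathcal Y n}$ in the $\infty$-topos $\mathbf{Stk}(\mathbf{Aff}_\bC,\tau)$. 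This is precisely the statement that effective epimorphisms in an $\infty$-topos are exactly the maps whose Čech nerve is a colimit diagram (Lurie, $\mathit{HTT}$ 6.2.3.4, or the hypercomplete/$1$-topos variants), so it applies verbatim here since $\mathbf{Stk}(\mathbf{Aff}_\bC,\tau)$ is an $\infty$-topos (a left exact localization of presheaves).

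Next I would apply $\mathbf{QCoh}$ to this colimit diagram. Since $\mathbf{QCoh}$ is valued in $\mathbf{Pr}_{st}^L$ and, being a limit of the $\mathbf{D}(A)$, carries colimits of stacks to limits of categories, we obtain
\[
\mathbf{QCoh}(\mathcal{Y})\;\simeq\;\mathbf{QCoh}\bigl(\colim_{\Delta^{\op}}\mathcal{X}^{\times_\mathcal Y n}\bigr)\;\simeq\;\lim_{\Delta}\mathbf{QCoh}\bigl(\mathcal{X}^{\times_\mathcal Y n}\bigr),
\]
which is exactly the claimed equivalence $\mathbf{QCoh}(\mathcal{Y})\xrightarrow{\ \sim\ }\lim_n\mathbf{QCoh}(\mathcal{X}^{\times_\mathcal Y n})$. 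The ``in particular'' clause is then formal: a $\tau_{eff}$-cover is by definition a collection of effective epimorphisms, and a collection $\{f_i\colon\mathcal{X}_i\to\mathcal{Y}\}$ of maps is jointly an effective epimorphism iff $\coprod_i\mathcal{X}_i\to\mathcal{Y}$ is, so one reduces to the single-map case after replacing $\mathcal{X}$ by the coproduct and using that $\mathbf{QCoh}$ sends coproducts of stacks to products of categories.

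The main obstacle — really the only non-formal point — is justifying that $\mathbf{QCoh}$ on $\mathbf{Stk}(\mathbf{Aff}_\bC,\tau)$ genuinely converts stack-colimits into category-limits, i.e. that it is defined as (and behaves as) the limit extension of $A\mapsto\mathbf{D}(A)$ and that this extension is insensitive to the $\tau$-localization precisely because $\mathbf{D}(-)$ satisfies $\tau$-descent on affines. Granting the running hypothesis ``$\mathbf{QCoh}$ satisfies descent'', this is indeed immediate, as the statement of the theorem asserts; one should just be slightly careful that the Čech nerve $\mathcal{X}^{\times_\mathcal Y\bullet}$ of a map of (possibly non-representable) stacks is formed in $\mathbf{Stk}$ and that each $\mathcal{X}^{\times_\mathcal Y n}$ again lies in $\mathbf{Stk}$, which holds since $\mathbf{Stk}$ is closed under limits. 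No finiteness or geometricity hypothesis on $f$ is needed, which is why the proof is so short.
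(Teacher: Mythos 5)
Your argument is correct and is exactly the reasoning the paper has in mind: the paper offers no written proof, declaring the theorem ``immediate'' under the standing hypothesis that $\mathbf{QCoh}$ satisfies descent, and your unwinding (effective epimorphisms have colimiting Čech nerves in the $\infty$-topos of stacks, and $\mathbf{QCoh}$, being the limit extension of $A\mapsto\mathbf{D}(A)$, carries stack-colimits to category-limits) is precisely why. No discrepancy to report.
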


Effective epimorphisms can be tested by pulling back to an atlas. More generally we have the following.

\begin{lemma}
    Let $f:\mathcal{X}\rightarrow\mathcal{Y}$ be a map of stacks. Suppose there exists an effective epimorphism $V\rightarrow\mathcal{X}$ and a morphism $U\rightarrow\mathcal{Y}$ such that the morphism $U\times_{\mathcal{X}}V\rightarrow V$ is an effective epimorphism. Then $f$ is an effective epimorphism.
\end{lemma}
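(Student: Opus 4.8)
\emph{Proposal.} The $\infty$-category $\mathbf{Stk}(\mathbf{Aff}_\bC,\tau)$ is an $\infty$-topos—the category of $\tau$-sheaves on the site $\mathbf{Aff}_\bC$—and I would prove this purely formally, using only that in any $\infty$-topos the class of effective epimorphisms is closed under composition, stable under base change, and right-cancellable: if $\beta\circ\alpha$ is an effective epimorphism then so is $\beta$. The fibre products appearing in the hypothesis are to be taken over $\mathcal Y$.

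The idea is to produce an effective epimorphism into $\mathcal Y$ that visibly factors through $f$, and then cancel $f$ off. Write $g\colon V\to\mathcal X$ for the given effective epimorphism and equip $V$ with the composite structure map $f\circ g\colon V\to\mathcal Y$. Form the fibre product $W\defeq U\times_{\mathcal Y}V$ along $U\to\mathcal Y$ and $f\circ g$, with projections $p\colon W\to U$ and $q\colon W\to V$; its defining square gives the identity $f\circ g\circ q=(U\to\mathcal Y)\circ p$. The hypothesis on the pullback square in the statement says precisely that the projection $p\colon W\to U$—the base change of $f\circ g$ along the atlas $U\to\mathcal Y$—is an effective epimorphism. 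Since $U\to\mathcal Y$ is itself an effective epimorphism, closure under composition makes $W\xrightarrow{p}U\to\mathcal Y$ an effective epimorphism; by the displayed identity this composite equals
\[
W\xrightarrow{\ q\ }V\xrightarrow{\ g\ }\mathcal X\xrightarrow{\ f\ }\mathcal Y .
\]
Thus $f$ is precomposed with $g\circ q$ to yield an effective epimorphism, and right-cancellation forces $f$ itself to be an effective epimorphism.

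The argument carries no real difficulty—it is a diagram chase—and the only point that takes a moment is the bookkeeping of the fibre product $W$: it must be formed using the composite $f\circ g\colon V\to\mathcal Y$ rather than $g$, and one must check that the resulting square genuinely identifies the two composites $W\to\mathcal Y$ (through $U$, and through $V$ and then $\mathcal X$) as the same morphism. Once that is in place, nothing about the topology $\tau$, the saturated class $\mathbf P$, or $\mathbf A$ enters, and the statement holds verbatim in any $\infty$-topos. Specialising to $V=\mathcal X$ and $g=\id$ recovers the immediately preceding remark that effective epimorphisms can be detected after pulling back along an atlas of $\mathcal Y$, of which this lemma is the announced generalisation.
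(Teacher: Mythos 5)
Your argument is sound, but it proves a repaired version of the lemma rather than the one printed, and you present the repair as if it were a literal reading. Be explicit about two departures from the statement. First, the printed hypothesis puts the effective-epimorphism condition on the projection to $V$ — your $q$ — not on $p\colon W\to U$; and it only calls $U\to\mathcal{Y}$ ``a morphism'', whereas your proof needs it to be an effective epimorphism. With the only fibre product that typechecks (over $\mathcal{Y}$, as you note) and the condition literally placed on $q$, the lemma is in fact false: take $V=\mathcal{X}$, $g=\mathrm{id}$, and $U=\mathcal{X}\xrightarrow{f}\mathcal{Y}$; then $q\colon\mathcal{X}\times_{\mathcal{Y}}\mathcal{X}\to\mathcal{X}$ is split by the diagonal, hence an effective epimorphism, for arbitrary $f$. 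So some repair is unavoidable, and yours — $U\to\mathcal{Y}$ an effective epimorphism, $p\colon U\times_{\mathcal{Y}}V\to U$ an effective epimorphism, with the effective-epimorphism assumption on $V\to\mathcal{X}$ never used — is a correct lemma. You should simply say you are correcting the statement rather than claiming it ``says precisely'' this.

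As for the comparison: the paper's own proof also silently repairs the statement, but with the letters interchanged — it treats $V\to\mathcal{Y}$ (not $\mathcal{X}$) as the given effective epimorphism, $U\to\mathcal{X}$ (not $\mathcal{Y}$) as the auxiliary morphism, and $U\times_{\mathcal{Y}}V\to V$ as the effective epimorphism; that is exactly your version with $U\leftrightarrow V$ swapped. Methodologically the two proofs differ only in packaging. The paper unwinds the definition of effective epimorphism in the sheaf topos: given an affine $W\to\mathcal{Y}$, it refines twice by covers so that the resulting $W_{ik}\to\mathcal{Y}$ factor through $f$ — which is precisely a coordinatized proof of the two facts you invoke abstractly, namely closure of effective epimorphisms under composition and right-cancellability. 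Your formulation is cleaner and makes transparent that nothing about $\tau$, $\mathbf{P}$ or $\mathbf{A}$ enters; the paper's is self-contained at the level of covers. Either is acceptable once the statement is fixed.
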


\begin{proof}
    Let $W\rightarrow\mathcal{Y}$ with $W$ affine. There exists a cover $\{W_{k}\rightarrow W\}$ such that each map $W_{k}\rightarrow\mathcal{X}$ factors through $V\rightarrow\mathcal{X}$. But then there for each $k$ there is a cover $\{W_{ik}\rightarrow W_{i}\}$ such that each $W_{ik}\rightarrow V$ factors through $U\times_{\mathcal{X}}V$. In particular each $W_{ik}$ factors through $f:\mathcal{X}\rightarrow\mathcal{Y}$. Now $\{W_{ik}\rightarrow W\}$ is a cover, and this proves that $f$ is an epimorphism of sheaves.
\end{proof}

\subsection{Classes of maps}

We now introduce the various classes of maps we will primarily be interested in to define topologies on \(\mathbf{Aff}_\bC\). 

\subsubsection{Derived strong maps}

\begin{definition}[\cite{ben2024perspective}*{Definition 2.3.71}]
    A map $f:A\rightarrow B$ in $\mathbf{DAlg}(\mathbf{C})$ is said to be \textit{derived strong} if the natural map of graded objects
    $$\pi_{0}(B)\hat{\otimes}^{\mathbb{L}}_{\pi_{0}(A)}\pi_{*}(A)\rightarrow\pi_{*}(A)$$
    is an equivalence.
\end{definition}

\subsubsection{Formally \'{e}tale maps}
Recall that a map $A\rightarrow B$ in $\mathbf{DAlg}(\mathbf{C})$ has a cotangent complex $\mathbb{L}_{B\big\slash A}$ satisfying the usual properties (see \cite{ben2024perspective} for details).

\begin{definition}
    We call a map $A\rightarrow B$ in $\mathbf{DAlg}(\mathbf{C})$ \textit{formally \'{e}tale} if $\mathbb{L}_{B\big\slash A}\cong 0$.
\end{definition}

Let $f:A\rightarrow B$ be a map in $\mathbf{CAlg}(\mathbf{C}^{\heart})$, We write 
$$\Omega^{1}_{B\big\slash A}=\pi_{0}(\mathbb{L}_{B\big\slash A}).$$


\begin{lemma}
Let $f:A\rightarrow B$ be a map in $\mathbf{CAlg}(\mathbf{C}^{\heartsuit})$ such that 
\begin{enumerate}
    \item $\Omega^{1}_{B\big\slash A}\cong0;$
    \item 
    the map $B\otimes^{\mathbb{L}}_{A}B\rightarrow B\otimes_{A}B$
    is an equivalence;
    \item 
    the kernel $I$ of the map $B\otimes_{A}B\rightarrow B$ is finitely generated.
\end{enumerate}
Then $f$ is formally \'{e}tale.
\end{lemma}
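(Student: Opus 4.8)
The plan is to show that $\mathbb{L}_{B/A} \cong 0$ by using the standard identification of the cotangent complex of $B/A$ with (a shift of) the relative cotangent complex computed via the diagonal, together with the hypotheses controlling the module of differentials and the flatness of $B$ over $A$. First I would recall the transitivity/base-change formula: for the multiplication map $\mu\colon B \otimes_A B \to B$ one has $\mathbb{L}_{B/A} \cong \mathbb{L}_{B/(B\otimes_A B)}[-1]$ whenever the two factor maps $B \to B\otimes_A B$ behave well, and more precisely $\mathbb{L}_{B/(B\otimes_A B)} \cong I/I^2[1]$ when $\mu$ is such that its kernel $I$ is "nice". So the strategy is: reduce the vanishing of $\mathbb{L}_{B/A}$ to the vanishing of $I/I^2$, where $I = \ker(B\otimes_A B \to B)$.

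The key steps, in order, would be: (1) Use hypothesis (2), that $B \otimes^{\mathbb{L}}_A B \to B\otimes_A B$ is an equivalence, to conclude that $B$ is a flat (indeed Tor-independent) $A$-algebra in the relevant sense, so that the derived and underived fiber products agree and the diagonal $\Spec(B) \to \Spec(B)\times_{\Spec A}\Spec(B)$ is well-behaved; this lets us compute $\mathbb{L}_{B/A}$ honestly via the conormal sequence of the diagonal. (2) Identify $\mathbb{L}_{B/A}$ with $I/I^2[1]$ (or rather with the cotangent complex $\mathbb{L}_{B/(B\otimes_A B)}$ shifted), using that by hypothesis (2) there is no higher Tor contribution to obstruct this identification. (3) Observe that $\pi_0$ of $\mathbb{L}_{B/(B\otimes_A B)}$ is $I/I^2$, and that $I/I^2 \cong \Omega^1_{B/A}$ up to the shift, so hypothesis (1) gives $\pi_0(\mathbb{L}_{B/A}) = 0$. (4) Use hypothesis (3), that $I$ is finitely generated, together with hypothesis (2): since $B\otimes_A B \to B$ is a surjection in the heart with finitely generated kernel $I$ and $B$ is flat over $B\otimes_A B$ after the relevant base change (again by hypothesis (2)), conclude that $I$ is generated by an idempotent, hence $I = I^2$ and in fact $I$ is a direct summand; this forces $\mathbb{L}_{B/(B\otimes_A B)}$ to vanish entirely, not just in degree $0$. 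Putting these together yields $\mathbb{L}_{B/A} \cong 0$.

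The main obstacle will be step (4): going from "$I/I^2 = 0$ and $I$ finitely generated" to "$I = 0$ as an ideal, equivalently $I$ is generated by an idempotent and the corresponding summand of $\Spec(B\otimes_A B)$ is disjoint", in the generality of a derived algebraic context $\mathbf{C}$ rather than ordinary commutative rings. In the classical setting this is Nakayama-type reasoning localised at each prime, but here we do not have points to localise at, so I would instead argue structurally: hypothesis (2) says $\mathrm{Tor}^A_{>0}(B,B) = 0$, which combined with $I$ finitely generated and $I/I^2 = 0$ should let one show $I$ is an idempotent ideal that is also finitely generated and flat-ish, hence split off, using the description of $B$ as a compact/dualisable object or using that $B\otimes_A B \to B$ being of finite presentation with $I/I^2=0$ makes it an open-closed immersion on $\Spec$. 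One then needs that $\mathbb{L}$ of an open-closed immersion vanishes, which is formal. Care is needed because $\mathbf{C}^{\heart}$ need not be a category of honest modules, so I would phrase the idempotent argument in terms of the finitely presented quotient $B\otimes_A B \twoheadrightarrow B$ splitting as algebras, which is exactly what $I/I^2 = 0$ plus finite generation plus flatness delivers.
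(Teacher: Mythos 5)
Your proposal is correct and follows essentially the same route as the paper: use $\Omega^1_{B/A}=I/I^2=0$ together with finite generation of $I$ to split off an idempotent, so that $B\otimes_A B\to B$ is a Zariski localisation (hence a homotopy epimorphism, hence formally \'etale), then use hypothesis (2) to replace $B\otimes_A B$ by $B\otimes^{\mathbb{L}}_A B$ and deduce $\mathbb{L}_{B/A}\cong 0$ from the transitivity/shift identity. The step you flag as the main obstacle -- performing the Nakayama/idempotent argument in a general derived algebraic context -- is exactly the point the paper outsources to the cited Lemma 2.6.154 of the reference, so your instinct about where the real work lies is accurate.
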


\begin{proof}
    We have $\Omega^{1}_{B\big\slash A}\cong0$. In particular the map is discrete formally unramified. By \cite{ben2024perspective}*{Lemma 2.6.154}, this implies that $B\otimes_{A}B\rightarrow B$ is a Zariski localisation, and hence homotopy epimorphism. Thus it is in particular formally \'{e}tale. Moreover by assumption $B\otimes^{\mathbb{L}}_{A}B\rightarrow B\otimes_{A}B$ is an equivalence, so $B\otimes^{\mathbb{L}}_{A}B\rightarrow B$ is formally \'{e}tale. Hence $A\rightarrow B$ is formally \'{e}tale.
\end{proof}

Most of our formally \'{e}tale maps will arise in the following fashion. 

\begin{definition}\label{def:generatingclass}
    Let $\underline{\Lambda}$ be a filtered category. A $\underline{\Lambda}$-\textit{generating class of algebras} is a functor $\mathrm{T}:\underline{\Lambda}^{op}\rightarrow\mathbf{CAlg}(\mathbf{C}^{\heart})$, together with maps
    $$p_{\lambda} \colon \mathrm{Sym}(\mathbb{I})\rightarrow\mathrm{T}(\lambda)$$
    such that  
    \begin{enumerate}
        \item 
         for all maps $\lambda\rightarrow\gamma$ in $\underline{\Lambda}$ the diagram below commutes
         \begin{displaymath}
             \xymatrix{
             \mathrm{Sym}(\mathbb{I})\ar[r]^{p_{\gamma}}\ar[dr]^{p_{\lambda}} &\mathrm{T}(\gamma)\ar[d]\\
             & \mathrm{T}(\lambda);
             }
             \end{displaymath}
             \item
each $\mathrm{T}(\lambda)$ is flat as an object of $\mathbf{C}^{\heart}$.
    \end{enumerate}
    The class is said to be \textit{of \'{e}tale polynomial type} if each $p_{\lambda}$ is formally \'{e}tale. 
\end{definition}

When we specialise to bornological algebras in the next section, $\underline{\Lambda}$ will either be the one element category or $\mathbb{R}_{>0}$. The latter case will arise when we want to consider algebras on discs of different radii.  In \cite{ben2024perspective} these generating classes were framed in terms of Lawvere theories, but we will not need the full formalism in this article. 

Let $\mathrm{T}$ be a $\underline{\Lambda}$-generating class of algebras of \'{e}tale polynomial type. We define
$$\mathrm{T}(\lambda_{1},\ldots,\lambda_{n})=\mathrm{T}(\lambda_{1})\otimes\ldots\otimes\mathrm{T}(\lambda_{n}).$$

We recall from \cite{ben2024perspective} what the Jacobian of a collection of maps is, and what it means for a map to be $\mathrm{T}$-\'{e}tale.

Let $A\in\mathbf{DAlg}^{cn}(\mathbf{C})$, and let $f_{1},\ldots,f_{n}:\mathbb{I}\rightarrow A\otimes^{\mathbb{L}}\sf T(\lambda_{1},\ldots,\lambda_{n})$ be a collection of maps. This gives a map of algebras
$$\mathbb{L}\sf{Sym}_{A}(A^{\oplus n})\rightarrow A\otimes^{\mathbb{L}}\sf T(\lambda_{1},\ldots,\lambda_{n}).$$
Passing to cotangent complexes, this gives a map 
\begin{align*}
A\otimes^{\mathbb{L}}\sf T(\lambda_{1},\ldots,\lambda_{n})\otimes_{\mathbb{L}\sf{Sym}_{A}(A^{\oplus n})}\mathbb{L}_{\mathbb{L}\sf{Sym}_{A}(A^{\oplus n})\big\slash A} & \cong (A\otimes^{\mathbb{L}}\sf T(\lambda_{1},\ldots,\lambda_{n}))^{\oplus n}\\
&\rightarrow \mathbb{L}_{A\otimes^{\mathbb{L}}\sf T(\lambda_{1},\ldots,\lambda_{n})\big\slash A}\\
&\cong  (A\otimes^{\mathbb{L}}\sf T(\lambda_{1},\ldots,\lambda_{n}))^{\oplus n},
\end{align*}
where in the last equivalence we have used that $\mathbb{L}\mathrm{Sym}_{A}(A^{\oplus n})\rightarrow A\otimes^{\mathbb{L}}T(\lambda_{1},\ldots,\lambda_{n})$ is formally \'{e}tale.
This corresponds to an element of 
$$\sf{Hom}(\mathbb{I},\pi_{0}(A\otimes^{\mathbb{L}}\sf T(\lambda_{1},\ldots,\lambda_{n})))^{n^{2}}.$$
We call this element the \textit{Jacobian} of $(f_{1},\ldots,f_{n})$, and denote it by $J(f_{1},\ldots,f_{n})$. We recall the following from \cite{ben2024perspective}.

\begin{proposition}\label{prop:Tstandard}
Let $\mathrm{T}$ be a $\underline{\Lambda}$-generating class of algebras of \'{e}tale polynomial type.  Let  $f_{1},\ldots,f_{n}:\mathbb{I}\rightarrow\pi_{0}(A)\otimes^{\mathbb{L}} T(\lambda_{1},\ldots,\lambda_{n}))$ be such that the image of $J(f_{1},\ldots,f_{n})$ in $\mathrm{Hom}(\mathbb{I},\pi_{0}(B))$ is a unit . Then
   $$A\rightarrow A\otimes^{\mathbb{L}}\sf{T}(\lambda_{1},\ldots,\lambda_{n}))\big\slash\big\slash(f_{1},\ldots,f_{n})$$
   is formally \'{e}tale.
\end{proposition}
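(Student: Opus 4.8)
The plan is to reduce the claim to the computation of the cotangent complex of the algebra
\[
B \defeq A \otimes^{\mathbb L} \mathrm T(\lambda_1,\dots,\lambda_n) /\!/ (f_1,\dots,f_n),
\]
and to show it vanishes. First I would recall that forming the Koszul-type quotient $/\!/(f_1,\dots,f_n)$ is, by construction, the derived tensor product
\[
B \simeq \bigl(A \otimes^{\mathbb L} \mathrm T(\lambda_1,\dots,\lambda_n)\bigr) \otimes^{\mathbb L}_{\mathbb L\mathrm{Sym}_A(A^{\oplus n})} A,
\]
where the map $\mathbb L\mathrm{Sym}_A(A^{\oplus n}) \to A$ is the augmentation killing the $n$ generators and the map $\mathbb L\mathrm{Sym}_A(A^{\oplus n}) \to A \otimes^{\mathbb L}\mathrm T(\lambda_1,\dots,\lambda_n)$ is the one classifying $(f_1,\dots,f_n)$. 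Using base change for the cotangent complex along this pushout, together with the fact (noted in the excerpt) that $\mathbb L\mathrm{Sym}_A(A^{\oplus n}) \to A \otimes^{\mathbb L}\mathrm T(\lambda_1,\dots,\lambda_n)$ is formally étale — which I would deduce from $\mathrm T$ being of étale polynomial type, since $\mathbb L_{\mathrm T(\lambda)/\mathrm{Sym}(\mathbb I)} \simeq 0$ and cotangent complexes are stable under tensor product and base change — one gets a cofiber sequence expressing $\mathbb L_{B/A}$ in terms of $\mathbb L_{A/(A\otimes^{\mathbb L}\mathrm T)} $ pulled back and the relative cotangent complex of the augmentation.

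The second step is to unwind this into a concrete statement: $\mathbb L_{B/A}$ is computed by the two-term complex $B^{\oplus n} \xrightarrow{J} B^{\oplus n}$, where the map is precisely multiplication by the Jacobian matrix $J(f_1,\dots,f_n)$ of the excerpt (this is exactly the map on cotangent complexes described just before the Proposition, base-changed along $A\otimes^{\mathbb L}\mathrm T \to B$). Here the first $B^{\oplus n}$ is the contribution of the $n$ equations $f_i$ (the conormal bundle of the augmentation) and the second is $\mathbb L_{A \otimes^{\mathbb L}\mathrm T / A}$, which is free of rank $n$ because each $p_\lambda\colon \mathrm{Sym}(\mathbb I)\to \mathrm T(\lambda)$ is formally étale, so $\mathbb L_{\mathrm T(\lambda)/\mathrm{Sym}(\mathbb I)} \simeq 0$ and hence $\mathbb L_{\mathrm T(\lambda)/\mathbb I} \simeq \mathrm T(\lambda)\otimes \mathbb L_{\mathrm{Sym}(\mathbb I)/\mathbb I} \simeq \mathrm T(\lambda)$ — and these assemble over the $n$ factors $\lambda_1,\dots,\lambda_n$ after tensoring up to $B$.

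The third step is then purely about invertibility: by hypothesis the image of $J(f_1,\dots,f_n)$ in $\mathrm{Hom}(\mathbb I, \pi_0(B))$ is a unit, so the $n\times n$ matrix $J$ is invertible over $\pi_0(B)$, hence invertible over $B$ (an $n\times n$ matrix over a connective $\mathbb E_\infty$-ring that is invertible on $\pi_0$ is invertible, since $\mathrm{GL}_n(B)\to \mathrm{GL}_n(\pi_0 B)$ is surjective — or more concretely its determinant is a unit in $\pi_0(B)$, hence in $B$, and one can invert by the adjugate). Therefore the map $B^{\oplus n}\xrightarrow{J} B^{\oplus n}$ is an equivalence and its cofiber $\mathbb L_{B/A}$ vanishes, which is the definition of $A \to B$ being formally étale.

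The main obstacle I anticipate is the bookkeeping in the first two steps: correctly identifying the quotient $/\!/(f_1,\dots,f_n)$ as the derived pushout along $\mathbb L\mathrm{Sym}_A(A^{\oplus n})$, invoking the right base-change/transitivity triangles for the cotangent complex in the setting of $\mathbf{DAlg}(\mathbf C)$ (as opposed to ordinary rings), and checking that the connecting map produced by transitivity is genuinely the Jacobian map constructed in the excerpt rather than merely isomorphic to something of the right shape — this compatibility is what makes the unit hypothesis usable. The flatness of each $\mathrm T(\lambda)$ and the étale-polynomial-type assumption are exactly what is needed to keep all the relevant tensor products underived where it matters and to guarantee the rank-$n$ freeness; once those are in place, the invertibility argument at the end is routine.
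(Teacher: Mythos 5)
Your proposal matches the paper's proof essentially verbatim: the pushout presentation of $B$ over $\mathbb L\mathrm{Sym}_A(A^{\oplus n})$, base change for the cotangent complex together with the transitivity fibre--cofibre sequence, identification of $\mathbb L_{B/A}$ as the cofibre of $B^{\oplus n}\xrightarrow{J}B^{\oplus n}$ with $J$ the Jacobian, and the observation that invertibility of $J$ on $\pi_0(B)$ forces invertibility over $B$. The only slip is the phrase ``$\mathbb L_{A/(A\otimes^{\mathbb L}\mathrm T)}$'', which is not a well-posed cotangent complex (no map $A\otimes^{\mathbb L}\mathrm T\to A$ is in play), but your subsequent identification of the two $B^{\oplus n}$ terms makes clear you mean the transitivity triangle for $A\to\mathrm{Sym}_A(A^{\oplus n})\to A\otimes^{\mathbb L}\mathrm T$, exactly as in the paper.
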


\begin{proof}
     We have a homotopy pushout diagram
\begin{displaymath}
\xymatrix{
\mathrm{Sym}_{A}(A^{\oplus n})\ar[d]\ar[r] & A\otimes\mathrm{T}(\lambda_{1},\ldots,\lambda_{n})\ar[d]\\
A\ar[r] & B.
}
\end{displaymath}
Thus 
$$\mathbb{L}_{B\slash A}\cong B\otimes^{\mathbb{L}}_{A\otimes\mathrm{T}(\lambda_{1},\ldots,\lambda_{n})}\mathbb{L}_{A\otimes\mathrm{T}(\lambda_{1},\ldots,\lambda_{n})\slash\mathrm{Sym}_{A}(A^{\oplus n})}.$$
Now we have 
$$\mathbb{L}_{\mathrm{Sym}_{A}(A^{\oplus n})\slash A}\cong \mathrm{Sym}_{A}(A^{\oplus n})^{\oplus n},$$
and 
$$\mathbb{L}_{A\otimes\mathrm{T}(\lambda_{1},\ldots,\lambda_{n})\slash A}\cong A\otimes\mathrm{T}(\lambda_{1},\ldots,\lambda_{n})^{\oplus n}.$$
We get the standard fibre-cofibre sequence 
$$A\otimes\mathrm{T}(\lambda_{1},\ldots,\lambda_{n})^{\oplus n}\rightarrow A\otimes\mathrm{T}(\lambda_{1},\ldots,\lambda_{n})^{\oplus n}\rightarrow\mathbb{L}_{A\otimes\mathrm{T}(\lambda_{1},\ldots,\lambda_{n})\slash\mathrm{Sym}_{A}(A^{\oplus n})
},$$
where the first morphism is $J$.
Thus $B\otimes^{\mathbb{L}}\mathbb{L}_{A\otimes\mathrm{T}(\lambda_{1},\ldots,\lambda_{n})\slash\mathrm{Sym}_{A}(A^{\oplus n})}$ is the cofibre of the map
$$B^{\oplus n}\rightarrow B^{\oplus n}$$
induced by the Jacobian. The Jacobian is a unit in 
$$\mathrm{Hom}(\mathbb{I}^{\oplus n},\pi_{0}(B)^{\oplus n})\cong\pi_{0}\mathbf{Map}(\mathbb{I}^{\oplus n},B^{\oplus n})\cong\pi_{0}\mathbf{Map}_{B}(B^{\oplus n},B^{\oplus n}).$$
In particular the map is an equivalence, so 
$$\mathbb{L}_{B\slash A}\cong B\otimes^{\mathbb{L}}_{A\otimes\mathrm{T}(\lambda_{1},\ldots,\lambda_{n})}\mathbb{L}_{A\otimes\mathrm{T}(\lambda_{1},\ldots,\lambda_{n})\slash\mathrm{Sym}_{A}(A^{\oplus n})}\cong0$$ as required. 
\end{proof}

\begin{definition}[\cite{ben2024perspective}*{Definition 4.5.84}]\label{def:T-standard-et}
    We call a map $$A\rightarrow A\otimes\mathrm{T}(\lambda_{1},\ldots,\lambda_{n})\big\slash\big\slash(f_{1},\ldots,f_{n})$$ as in Proposition \ref{prop:Tstandard}, a $\mathrm{T}$-\textit{standard \'{e}tale map}.
\end{definition}

When we come to bornological algebras in the next section, we will give several examples of interest in analytic geometry. However one example that works in full generality is the case that $\underline{\Lambda}=\{*\}$ is the one-element category. We then set $\mathrm{T}(*)=\mathbb{L}\mathrm{Sym}(\mathbb{I})$. The map $p_{*}:\mathbb{L}\mathrm{Sym}(\mathbb{I})\rightarrow\mathrm{T}(*)$ is the identity. For the derived algebraic context $\mathbf{C}_{R}$ of complexes of $R$-modules over a ring $R$, a standard $\mathrm{T}$-\'{e}tale map is nothing but a standard \'{e}tale map over $R$ in the usual sense. 

\subsubsection{Homotopy epimorphisms}

An important subclass of the \'{e}tale morphisms are the homotopy monomorphisms.

\begin{lemma}\label{lem:hepi-equivalent}
    Let $f:\mathrm{Spec}(B)\rightarrow\mathrm{Spec}(A)$ be a map in $\mathbf{Aff}_{\mathbf{C}}$. The following are equivalent:
    \begin{enumerate}
\item The counit of the adjunction \(f^* f_* \to \mathrm{id}\) is a natural isomorphism;
\item the functor \(f_* \colon \mathbf{D}(B) \to \mathbf{D}(A)\) is fully faithful;
\item the natural map \(B \otimes_A B \to B\) is an equivalence in \(\mathbf{D}(A)\).  
\end{enumerate} 
\end{lemma}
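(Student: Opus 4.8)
The plan is to establish the cycle $(1)\Leftrightarrow(2)$, then $(1)\Rightarrow(3)$, then $(3)\Rightarrow(1)$.

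The equivalence $(1)\Leftrightarrow(2)$ is pure adjunction formalism. The map $f\colon A\to B$ in $\mathbf{DAlg}(\mathbf{C})$ induces the extension/restriction of scalars adjunction $f^{*}\dashv f_{*}$ between $\mathbf{D}(A)=\mathbf{Mod}_{A}(\mathbf{C})$ and $\mathbf{D}(B)=\mathbf{Mod}_{B}(\mathbf{C})$, with $f^{*}=B\otimes_{A}(-)$. For any adjunction the right adjoint is fully faithful exactly when the counit is an equivalence, so $f_{*}$ is fully faithful iff $f^{*}f_{*}\to\mathrm{id}$ is a natural isomorphism; I would simply invoke this. For $(1)\Rightarrow(3)$ I would evaluate the counit at the object $B\in\mathbf{D}(B)$: here $f_{*}B$ is $B$ regarded as an $A$-module, $f^{*}f_{*}B\simeq B\otimes_{A}B$, and under these identifications the counit $\epsilon_{B}$ is the multiplication map $B\otimes_{A}B\to B$, so $(1)$ forces $(3)$.

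For $(3)\Rightarrow(1)$ the idea is a standard generation argument. Both $f^{*}f_{*}$ and $\mathrm{id}_{\mathbf{D}(B)}$ are exact and preserve all colimits: $f^{*}$ as a left adjoint, and $f_{*}$ because colimits in $\mathbf{Mod}_{A}(\mathbf{C})$ and $\mathbf{Mod}_{B}(\mathbf{C})$ are detected by the forgetful functors to $\mathbf{C}$, with which $f_{*}$ is compatible. Therefore the full subcategory of objects of $\mathbf{D}(B)$ on which the counit is an equivalence is a localising subcategory. Since $f^{*}$ and $f_{*}$ are both $\mathbf{C}$-linear, the counit is a $\mathbf{C}$-linear natural transformation, so for $P\in\mathbf{C}^{0}$ the counit at $B\otimes P$ is identified with $\epsilon_{B}\otimes\mathrm{id}_{P}\colon(B\otimes_{A}B)\otimes P\to B\otimes P$, which is an equivalence by $(3)$. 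As the objects $B\otimes P$, $P\in\mathbf{C}^{0}$, generate $\mathbf{D}(B)$ under colimits and shifts, the counit is a natural isomorphism, which is $(1)$.

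The steps needing (minor) care are the identification of $\epsilon_{B}$ with the multiplication map $B\otimes_{A}B\to B$, the fact that $f_{*}$ preserves colimits, and the generation of $\mathbf{D}(B)$ by $\{B\otimes P : P\in\mathbf{C}^{0}\}$; all three are standard in the relative framework of \cite{ben2024perspective}. The main — and still mild — obstacle is keeping track of the $\mathbf{C}$-linearity so as to reduce the final generation step to the single object $B$.
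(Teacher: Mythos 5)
Your proof is correct, and it is the standard argument: (1)$\Leftrightarrow$(2) is formal adjunction theory, (1)$\Rightarrow$(3) is evaluation of the counit at $B$, and (3)$\Rightarrow$(1) follows because the counit is a $\mathbf{C}$-linear colimit-preserving transformation that is an equivalence on the free modules $B\otimes P$, $P\in\mathbf{C}^{0}$, which generate $\mathbf{D}(B)$ under colimits and shifts. The paper itself offers no proof but simply cites \cite{ben2017non}*{Section 4.3}, which contains essentially this argument, so your write-up matches the intended approach and correctly fills in the details the paper leaves to the reference.
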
 

\begin{proof}
See \cite{ben2017non}*{Section 4.3}, for instance. 
\end{proof}

\begin{definition}\label{def:hepi}
We call a morphism \(f \colon A \to B\) in \(\mathbf{DAlg}^{cn}(\mathbf{C})\)  (or dually, \(\Spec(B) \to \Spec(A)\) in \(\mathbf{Aff}_{\mathbf{C}}\)) satisfying any of the equivalent conditions of Lemma \ref{lem:hepi-equivalent} a \emph{homotopy epimorphism} (or dually, \emph{homotopy monomorphism}).
\end{definition}

\begin{proposition}[\cite{ben2024perspective}*{Lemma 2.1.42}]
    Homotopy epimorphisms are formally \'{e}tale.
\end{proposition}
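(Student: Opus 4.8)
The plan is to unwind the definition of \emph{formally \'{e}tale}, i.e.\ that the relative cotangent complex $\mathbb{L}_{B/A}$ vanishes, and reduce it to condition (3) of Lemma~\ref{lem:hepi-equivalent}: that the multiplication map $m\colon B\otimes_A B\to B$ is an equivalence. The only tools required are the standard formal properties of the cotangent complex recalled before Definition~\ref{def:generatingclass}: base change along homotopy pushouts of algebras, the transitivity fibre sequence, the vanishing $\mathbb{L}_{B/B}\cong 0$, and more generally the vanishing of $\mathbb{L}_{S/R}$ whenever $R\to S$ is an equivalence.

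First I would invoke base change along the homotopy pushout square with corners $A$, $B$, $B$ and $B\otimes_A B$, both maps out of $A$ being $f$. This yields $\mathbb{L}_{(B\otimes_A B)/B}\cong \mathbb{L}_{B/A}\otimes_B(B\otimes_A B)$, where one of the two coprojections $B\to B\otimes_A B$ serves as the base for the relative cotangent complex and the other supplies the $B$-module structure along which we base change. Next, both coprojections are sections of $m$, so the composite $B\to B\otimes_A B\xrightarrow{\,m\,}B$ is the identity; applying the transitivity fibre sequence to it gives
\[
\mathbb{L}_{(B\otimes_A B)/B}\otimes_{B\otimes_A B}B \longrightarrow \mathbb{L}_{B/B} \longrightarrow \mathbb{L}_{B/(B\otimes_A B)}.
\]
The middle term is $0$, and the right-hand term is $0$ because $m$ is an equivalence by hypothesis; hence $\mathbb{L}_{(B\otimes_A B)/B}\otimes_{B\otimes_A B}B\cong 0$. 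Finally, restricting the base change identity along $m$ and using once more that $B\to B\otimes_A B\xrightarrow{m}B$ is the identity collapses $\mathbb{L}_{(B\otimes_A B)/B}\otimes_{B\otimes_A B}B$ to $\mathbb{L}_{B/A}$, so $\mathbb{L}_{B/A}\cong 0$ and $f$ is formally \'{e}tale.

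I do not expect a genuine obstacle: the argument is a formal manipulation of the cotangent complex. The one place that warrants care is the bookkeeping of which of the two coprojections $B\to B\otimes_A B$ is being used at each stage---in the base change formula versus in the transitivity sequence---and verifying that the induced $B$-module structures match up so that the iterated tensor products collapse as claimed. Since both coprojections are sections of $m$, the identifications go through regardless of the choices, but this point should be spelled out rather than left implicit.
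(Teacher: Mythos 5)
Your argument is correct and is exactly the standard elaboration of what the paper leaves implicit: the paper's proof simply observes that for a homotopy epimorphism the multiplication $B\otimes^{\mathbb{L}}_{A}B\to B$ is an equivalence and declares the conclusion immediate, and your base-change/transitivity manipulation of the cotangent complex is the routine way to fill in that step. (One could shorten it slightly by noting that since $m$ is an equivalence its section $B\to B\otimes_A B$ is too, so $\mathbb{L}_{(B\otimes_A B)/B}\cong 0$ directly, but your route via transitivity is equivalent.)
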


\begin{proof}
    This follows immediately from the fact that for a homotopy epimorphism $A\rightarrow B$, $B\otimes^{\mathbb{L}}_{A}B\rightarrow B$ is in fact an equivalence. 
\end{proof}

\begin{definition}
Let $\underline{\Lambda}$ be a filtered category. A $\underline{\Lambda}$-generating class of algebras $\mathrm{T}$ is said to be \textit{of homotopy polynomial type} if each map $p_{\lambda}:\mathrm{Sym}(\mathbb{I})\rightarrow\mathrm{T}(\lambda)$ is a homotopy epimorphism.
\end{definition}

An important class of homotopy epimorphisms arise from so-called $\mathrm{T}$-\textit{rational localisations}. Recall that a collection of maps $(f_{1},\ldots,f_{n}):\mathbb{I}\rightarrow A$ is said to \textit{generate the unit ideal} if the induced map of $A$-modules
$$A^{\oplus n}\rightarrow A$$
is an epimorphism.

\begin{definition}[\cite{ben2022analytification}*{Definition 6.9}]\label{defn:Trat}
Let $A\in\mathbf{DAlg}^{cn}(\mathbf{C})$, let $f_{0},f_{1},\ldots,f_{n}:\mathbb{I}\rightarrow A$ be maps generating the unit ideal in $\pi_{0}(A)$, and let $(\lambda_{1},\ldots,\lambda_{n})$ be a sequence of elements of $\Lambda$. The $\mathrm{T}$-\textit{rational localisation of} $A$ \textit{at } $(f_{0},f_{1},\ldots,f_{n},(\lambda_{1},\ldots,\lambda_{n}))$ is the map
$$A\rightarrow A\otimes^{\mathbb{L}}T(\lambda_{1},\ldots,\lambda_{n}))\big\slash\big\slash(f_{0}y_{\lambda_{1}}-f_{1},\ldots,f_{0}y_{\lambda_{n}}-f_{n}).$$
The class of all $\mathrm{T}$-rational localisations is denoted $\mathbf{rat}^{\mathrm{T}}$. 
\end{definition}

\begin{lemma}\label{lem:rational-loc-etale}
    A $\mathrm{T}$-rational localisation is a homotopy epimorphism, and a standard $\mathrm{T}$-\'{e}tale morphism.
\end{lemma}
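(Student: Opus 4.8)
**Plan for proving Lemma \ref{lem:rational-loc-etale} (a $\mathrm{T}$-rational localisation is a homotopy epimorphism and a standard $\mathrm{T}$-\'etale morphism).**

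The plan is to recognise that a $\mathrm{T}$-rational localisation is, by its very construction, already almost in the form required by Proposition \ref{prop:Tstandard}, so that the ``standard $\mathrm{T}$-\'etale'' assertion is essentially a matter of identifying the relevant Jacobian as a unit, and then to leverage Lemma \ref{lem:hepi-equivalent} to upgrade formal \'etaleness to the homotopy epimorphism statement. Let me write $B \defeq A\otimes^{\mathbb{L}}\mathrm{T}(\lambda_{1},\ldots,\lambda_{n})\slash\slash(f_{0}y_{\lambda_{1}}-f_{1},\ldots,f_{0}y_{\lambda_{n}}-f_{n})$, where $y_{\lambda_i}$ denote the canonical coordinate maps $\mathbb{I}\to\mathrm{T}(\lambda_i)$. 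First I would observe that the elements $g_i \defeq f_0 y_{\lambda_i} - f_i \colon \mathbb{I}\to A\otimes^{\mathbb{L}}\mathrm{T}(\lambda_1,\ldots,\lambda_n)$ have Jacobian matrix (with respect to the coordinates $y_{\lambda_1},\ldots,y_{\lambda_n}$) equal to $f_0 \cdot \mathrm{Id}_{n\times n}$ modulo the ideal generated by the $g_i$, since $\partial g_i/\partial y_{\lambda_j} = f_0\delta_{ij}$ and the $y$-derivative of $f_i$ vanishes (the $f_i$ being pulled back from $A$). Thus $J(g_1,\ldots,g_n) = f_0^n$ in $\pi_0(B)$ up to a unit.

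The key point is then that $f_0$ becomes a unit in $\pi_0(B)$: indeed, in $\pi_0(B)$ one has $f_0 y_{\lambda_i} = f_i$ for all $i$, and since $(f_0,f_1,\ldots,f_n)$ generate the unit ideal in $\pi_0(A)$, their images $(f_0, f_0y_{\lambda_1},\ldots,f_0 y_{\lambda_n})$ generate the unit ideal in $\pi_0(B)$; but all of the latter are multiples of $f_0$, so $f_0$ itself generates the unit ideal, i.e.\ is a unit. (This is the one genuinely arithmetic step, and it is the standard argument showing a rational localisation inverts the ``denominator''.) With $f_0$ a unit in $\pi_0(B)$, the Jacobian $J(g_1,\ldots,g_n)$ is a unit, so Proposition \ref{prop:Tstandard} applies verbatim and exhibits $A\to B$ as formally \'etale, and by Definition \ref{def:T-standard-et} as a $\mathrm{T}$-standard \'etale map.

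It remains to show $A\to B$ is a homotopy epimorphism, for which by Lemma \ref{lem:hepi-equivalent} it suffices to check that $B\otimes^{\mathbb{L}}_A B \to B$ is an equivalence. Here I would argue that the functor $\mathrm{T}(\lambda_1,\ldots,\lambda_n)$ applied fibrewise, together with the Koszul-type quotient $\slash\slash(g_1,\ldots,g_n)$, produces an idempotent algebra: concretely, $B\otimes^{\mathbb{L}}_A B \cong A\otimes^{\mathbb{L}}\mathrm{T}(\lambda_1,\ldots,\lambda_n,\lambda_1,\ldots,\lambda_n)\slash\slash(f_0 y^{(1)}_{\lambda_i}-f_i, f_0 y^{(2)}_{\lambda_i}-f_i)$, and since $f_0$ is already a unit in $\pi_0(B)$ the two sets of coordinates $y^{(1)}, y^{(2)}$ are both forced to equal $f_i/f_0$, so the diagonal $y^{(1)}_{\lambda_i} = y^{(2)}_{\lambda_i}$ is automatically imposed and the multiplication map is an equivalence. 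The main obstacle I anticipate is making this last identification fully rigorous at the derived level — one must be careful that imposing $f_0 y - f = 0$ via the derived quotient $\slash\slash$ genuinely kills the relevant part of the cotangent complex and that no higher homotopy obstructs the collapse of the two coordinate systems; in the reference \cite{ben2022analytification} this is presumably handled by the descendability/transitivity properties of such localisations, and I would cite that rather than reprove it. Alternatively, one can sidestep the explicit computation by noting that formally \'etale plus the fact that $\mathrm{Spec}(B)\to\mathrm{Spec}(A)$ is a monomorphism on $\pi_0$-points (being cut out by forcing $y_{\lambda_i} = f_i/f_0$) forces the homotopy epimorphism property by a deformation-theoretic argument, since a formally \'etale monomorphism of affine stacks is a homotopy monomorphism.
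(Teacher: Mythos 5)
First, note that the paper does not actually prove this lemma: it is imported verbatim as \cite{ben2022analytification}*{Lemma 6.10}, so there is no internal proof to compare against. Judged on its own, your argument for the \emph{standard \'etale} half is correct and is surely the intended one: since the $f_i$ are pulled back from $A$, their images in $\mathbb{L}_{A\otimes^{\mathbb{L}}\mathrm{T}(\underline{\lambda})/A}$ vanish, so the Jacobian of $(f_0y_{\lambda_1}-f_1,\dots,f_0y_{\lambda_n}-f_n)$ is $f_0\cdot\mathrm{Id}_n$; and $f_0$ becomes a unit in $\pi_0(B)$ because $(f_0,f_0y_{\lambda_1},\dots,f_0y_{\lambda_n})$ generate the unit ideal there and are all multiples of $f_0$ (using that $\mathbb{I}$ is compact projective to extract an actual relation $1=\sum a_if_i$). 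Proposition \ref{prop:Tstandard} and Definition \ref{def:T-standard-et} then apply.

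The homotopy epimorphism half, however, has a genuine gap. The identification $B\otimes^{\mathbb{L}}_AB\simeq B\otimes^{\mathbb{L}}\mathrm{T}(\underline{\lambda})/\!/(f_0y_{\lambda_i}-f_i)$ reduces the claim to showing that this Koszul-type quotient of $B$ collapses back to $B$, and that is precisely the nontrivial analytic content of the lemma — it is not a formal consequence of $f_0$ being invertible. To produce the retraction $y_{\lambda_i}\mapsto f_i/f_0$ one needs a map of algebras $\mathrm{T}(\lambda_i)\to B$ hitting $f_i/f_0$, i.e.\ one needs the hypothesis in Definition \ref{defn:Trat} that the map $\mathrm{Sym}_A(A^{\oplus n})\to A$ induced by the $f_i$ factors through $A\otimes^{\mathbb{L}}\mathrm{T}(\lambda_1,\dots,\lambda_n)$ (``the solution lies in the disc of radius $\underline{\lambda}$''), together with the fact that $\mathrm{Sym}(\mathbb{I})\to\mathrm{T}(\lambda)$ is a homotopy epimorphism (homotopy polynomial type). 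Your proposal never invokes either, and without the radius condition the statement is simply false. Deferring this step to \cite{ben2022analytification} is circular here, since that step \emph{is} the cited lemma. Your fallback argument is also not sound as stated: formal \'etaleness plus injectivity on $\pi_0$-points yields (via \cite{ben2024perspective}*{Lemma 2.6.154}) only that the diagonal $B\otimes_AB\to B$ is a Zariski localisation, not that $B\otimes^{\mathbb{L}}_AB\to B$ is an equivalence, which is what the homotopy epimorphism condition of Lemma \ref{lem:hepi-equivalent} requires.
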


\begin{proof}
Write $B\cong A\otimes^{\mathbb{L}}T(\lambda_{1},\ldots,\lambda_{n}))\big\slash\big\slash(f_{0}y_{\lambda_{1}}-f_{1},\ldots,f_{0}y_{\lambda_{n}}-f_{n})$.
    The Jacobian is $\mathrm{diag}(g,\ldots,g)$. Since $g$ is a unit in the quotient, this matrix is invertible in the quotient. The fact it is a homotopy epimorphism is \cite{ben2022analytification}*{Lemma 6.10}.
\end{proof}

\begin{lemma}
   For each map $\lambda\rightarrow\gamma$ in $\underline{\Lambda}$ the map
   $$\mathrm{T}(\gamma)\rightarrow\mathrm{T}(\lambda)$$
   is a $\mathrm{T}$-rational localisation.
\end{lemma}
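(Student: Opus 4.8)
The plan is to exhibit explicit $\mathrm{T}$-rational localisation data on $A\defeq\mathrm{T}(\gamma)$ whose associated localisation is the transition map $\phi\defeq\mathrm{T}(\lambda\to\gamma)\colon\mathrm{T}(\gamma)\to\mathrm{T}(\lambda)$, and then to identify the resulting derived quotient with $\mathrm{T}(\lambda)$ by a purely formal pushout manipulation. Write $x\colon\mathbb{I}\to\mathrm{Sym}(\mathbb{I})$ for the universal degree-one element and $y_\gamma\defeq p_\gamma(x)$, $y_\lambda\defeq p_\lambda(x)$ for the induced ``coordinates'' of $\mathrm{T}(\gamma)$ and $\mathrm{T}(\lambda)$. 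Take $n=1$, $f_0\defeq 1\colon\mathbb{I}\to A$ the unit, $f_1\defeq y_\gamma\colon\mathbb{I}\to A$, and $\lambda_1\defeq\lambda$. Then $f_0,f_1$ generate the unit ideal in $\pi_0(A)$ trivially (as $f_0$ is a unit), and the admissibility condition of Definition \ref{defn:Trat} must be verified for this choice of $\lambda_1$: this is the only genuinely nonformal point, and it is exactly where one uses the existence of a morphism $\lambda\to\gamma$ in $\underline{\Lambda}$, the relevant input being the commuting triangle of Definition \ref{def:generatingclass} relating $p_\gamma$, $p_\lambda$ and $\phi$. Granting this, Definition \ref{defn:Trat} gives the $\mathrm{T}$-rational localisation
\[
A\longrightarrow A\otimes^{\mathbb{L}}\mathrm{T}(\lambda)/\!/(f_0 y_\lambda-f_1)\;=\;\mathrm{T}(\gamma)\otimes^{\mathbb{L}}\mathrm{T}(\lambda)/\!/(y_\lambda-y_\gamma),
\]
the derived quotient identifying the two coordinates.

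Next I would rewrite this quotient as a relative tensor product over the free algebra. Since $y_\gamma$ and $y_\lambda$ are the images of the single generator $x$ of $\mathrm{Sym}(\mathbb{I})$ under $p_\gamma$ and $p_\lambda$, imposing $y_\lambda=y_\gamma$ is precisely the pushout of $\mathrm{T}(\gamma)\xleftarrow{p_\gamma}\mathrm{Sym}(\mathbb{I})\xrightarrow{p_\lambda}\mathrm{T}(\lambda)$; concretely
\[
\mathrm{T}(\gamma)\otimes^{\mathbb{L}}\mathrm{T}(\lambda)/\!/(y_\lambda-y_\gamma)\;\simeq\;\mathrm{T}(\gamma)\otimes^{\mathbb{L}}_{\mathrm{Sym}(\mathbb{I})}\mathrm{T}(\lambda),
\]
by the standard description of a pushout of commutative algebras as the tensor product modulo the diagonal relation $x\otimes 1-1\otimes x$. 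Now use the commuting triangle: $\phi\circ p_\gamma=p_\lambda$, so $\mathrm{T}(\lambda)$ as an $\mathrm{Sym}(\mathbb{I})$-algebra is $\mathrm{T}(\gamma)\otimes^{\mathbb{L}}_{\mathrm{T}(\gamma)}\mathrm{T}(\lambda)$ along $\phi$; associativity of the relative tensor product gives
\[
\mathrm{T}(\gamma)\otimes^{\mathbb{L}}_{\mathrm{Sym}(\mathbb{I})}\mathrm{T}(\lambda)\;\simeq\;\bigl(\mathrm{T}(\gamma)\otimes^{\mathbb{L}}_{\mathrm{Sym}(\mathbb{I})}\mathrm{T}(\gamma)\bigr)\otimes^{\mathbb{L}}_{\mathrm{T}(\gamma)}\mathrm{T}(\lambda).
\]
Since $p_\gamma$ is a homotopy epimorphism — which holds whenever $\mathrm{T}$ is of homotopy polynomial type, the case of interest (for discs of varying radii this is the overconvergence property) — Lemma \ref{lem:hepi-equivalent} identifies $\mathrm{T}(\gamma)\otimes^{\mathbb{L}}_{\mathrm{Sym}(\mathbb{I})}\mathrm{T}(\gamma)\simeq\mathrm{T}(\gamma)$ via the multiplication map, and the right-hand side collapses to $\mathrm{T}(\gamma)\otimes^{\mathbb{L}}_{\mathrm{T}(\gamma)}\mathrm{T}(\lambda)\simeq\mathrm{T}(\lambda)$. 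Finally I would chase the structure map $A=\mathrm{T}(\gamma)\to\mathrm{T}(\gamma)\otimes^{\mathbb{L}}_{\mathrm{Sym}(\mathbb{I})}\mathrm{T}(\lambda)$ (inclusion of the first factor) through these equivalences — using that the homotopy-epimorphism equivalence sends $a\otimes 1\mapsto a$ and the last one is the unit — to see that it becomes exactly $\phi$. Hence $\mathrm{T}(\gamma)\to\mathrm{T}(\lambda)$ is, as an arrow under $\mathrm{T}(\gamma)$, the $\mathrm{T}$-rational localisation at the data above. (When $\underline{\Lambda}=\{\ast\}$ the only morphism is the identity, and the computation degenerates to $\mathrm{T}(\ast)\otimes^{\mathbb{L}}_{\mathrm{Sym}(\mathbb{I})}\mathrm{T}(\ast)\simeq\mathrm{T}(\ast)$ with structure map the identity, consistent with $\mathrm{id}_{\mathrm{T}(\ast)}$ being trivially a $\mathrm{T}$-rational localisation.)

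The main obstacle is the verification of the admissibility/factorisation condition in Definition \ref{defn:Trat} for $\lambda_1=\lambda$; once that is in place, everything is a routine manipulation of derived pushouts together with the homotopy-epimorphism property packaged into ``homotopy polynomial type''.
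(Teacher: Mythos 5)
Your proposal is, in essence, the same argument the paper gives: both proofs reduce the lemma to establishing the equivalence $\mathrm{T}(\gamma)\otimes^{\mathbb{L}}\mathrm{T}(\lambda)/\!/(y_\lambda-y_\gamma)\simeq\mathrm{T}(\gamma)\otimes^{\mathbb{L}}_{\mathrm{Sym}(\mathbb{I})}\mathrm{T}(\lambda)\simeq\mathrm{T}(\lambda)$, with the second equivalence coming from $p_\gamma$ being a homotopy epimorphism. The only real difference is in how the first equivalence is produced. The paper writes down the explicit two-term flat resolution
$\mathrm{Sym}(\mathbb{I})\otimes\mathrm{T}(\lambda)\xrightarrow{\times(x-y)}\mathrm{Sym}(\mathbb{I})\otimes\mathrm{T}(\lambda)\to\mathrm{T}(\lambda)$
of $\mathrm{T}(\lambda)$ over $\mathrm{Sym}(\mathbb{I})$ (flatness of $\mathrm{T}(\lambda)$ is part of Definition \ref{def:generatingclass}), then base-changes it along $p_\gamma$. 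You instead invoke the general fact that the derived quotient by $y_\lambda-y_\gamma$ computes the pushout of commutative algebras and then use associativity of relative tensor products. These are the same calculation in different clothing; the paper's chain-level version makes the use of the flatness hypothesis completely transparent, while yours is slicker once one accepts the formal identification of $/\!/(y_\lambda-y_\gamma)$ with $\otimes^{\mathbb{L}}_{\mathrm{Sym}(\mathbb{I})}$ in the derived setting.

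One point of contrast worth flagging: you explicitly isolate the admissibility condition of Definition \ref{defn:Trat} — that the map $\mathrm{Sym}_A(A)\to A$ determined by $f_1=y_\gamma$ factor through $\mathrm{Sym}_A(A)\to A\otimes^{\mathbb{L}}\mathrm{T}(\lambda)$ — as the one nonformal ingredient, and you attribute it to the commuting triangle $p_\lambda=\phi\circ p_\gamma$, but you then write ``granting this'' and move on. The paper's own proof does not address this condition either; it computes the derived quotient and declares the result ``as required,'' implicitly treating the choice $(f_0,f_1,\lambda_1)=(1,y_\gamma,\lambda)$ as admissible. So you have not introduced any new gap, but neither proof supplies the factorisation map $A\otimes^{\mathbb{L}}\mathrm{T}(\lambda)\to A$ witnessing admissibility, and the commuting triangle alone does not obviously produce it (it gives a map $\mathrm{T}(\gamma)\to\mathrm{T}(\lambda)$, which is the wrong direction for the factorisation). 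If you want your write-up to be strictly self-contained you should either exhibit that factorisation explicitly or point to where in \cite{ben2022analytification} it is established; as a comparison with the paper's proof, though, you are on equal footing, and you deserve credit for at least naming the issue.

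A small side remark: your parenthetical gloss ``for discs of varying radii this is the overconvergence property'' slightly misattributes the homotopy-polynomial-type hypothesis; the statement that $\mathrm{Sym}(\mathbb{I})\to\mathrm{T}(\lambda)$ is a homotopy epimorphism is proved for both Tate and dagger algebras in \cite{ben2022analytification} and is not a feature specific to overconvergence. The sanity check at $\underline{\Lambda}=\{\ast\}$ is a nice addition that the paper does not include.
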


\begin{proof}
    Consider the exact sequence 
    \begin{displaymath}
        \xymatrix{
\mathrm{Sym}(\mathbb{I})\otimes\mathrm{Sym}(\mathbb{I})\ar[r]^{\times (x-y)}&\mathrm{Sym}(\mathbb{I})\otimes\mathrm{Sym}(\mathbb{I})\ar[r] & \mathrm{Sym}(\mathbb{I}).
        }
    \end{displaymath}
    This is a sequence of flat $\mathrm{Sym}(\mathbb{I})$-modules so tensoring on the right with $\mathrm{T}(\lambda)$ gives an exact sequence
    \begin{displaymath}
        \xymatrix{
\mathrm{Sym}(\mathbb{I})\otimes\mathrm{T}(\lambda)\ar[r]^{\times (x-y)}&\mathrm{Sym}(\mathbb{I})\otimes\mathrm{T}(\lambda)\ar[r] & \mathrm{T}(\lambda).
        }
    \end{displaymath}
    This is a resolution of $\mathrm{T}(\lambda)$ by flat $\mathrm{Sym}(\mathbb{I})$-modules. Then tensoring on the left with $\mathrm{T}(\gamma)$ gives that $\mathrm{T}(\gamma)\otimes^{\mathbb{L}}_{\mathrm{Sym}(\mathbb{I})}\mathrm{T}(\lambda)$ is computed by the complex
       \begin{displaymath}
        \xymatrix{
\mathrm{T}(\gamma)\otimes\mathrm{T}(\lambda)\ar[r]^{\times (x-y)}&\mathrm{T}(\gamma)\otimes\mathrm{T}(\lambda)\ar[r] & \mathrm{T}(\lambda).
        }
    \end{displaymath}
    But $\mathrm{Sym}(\mathbb{I})\rightarrow\mathrm{T}(\gamma)$ is a homotopy epimorphism so 
    $$\mathrm{T}(\gamma)\otimes^{\mathbb{L}}_{\mathrm{Sym}(\mathbb{I})}\mathrm{T}(\lambda)\cong\mathrm{T}(\gamma)\otimes^{\mathbb{L}}_{\mathrm{T}(\gamma)}\mathrm{T}(\lambda)\cong\mathrm{T}(\lambda).$$
    Thus 
    $$\mathrm{T}(\lambda)\cong\mathrm{T}(\gamma)\otimes^{\mathbb{L}}\mathrm{T}(\lambda)\big\slash\big\slash(x-y)$$ as required.
\end{proof}

\subsection{The Descendable Topology}

A topology satisfying descent which will play a central role is the \textit{descendable topology}.  In \cite{mathew2016galois} Mathew introduced the notion of a \textit{descendable} map $f:A\rightarrow B$ in $\mathbf{CAlg}(\mathbf{C})$. Consider the cosimplicial object in $\mathbf{CAlg}(\mathbf{C})$ given by the bar complex 
 $$\mathrm{CB}_{n}(f)\defeq B^{\otimes_{A}n+1},$$
 and its augmented version $\mathrm{CB}^{aug}_{\bullet}(f)$ with $\mathrm{CB}^{aug}_{-1}(f)=A$. Consider the pro-cosimplicial object given the tower of partial totalisations
 $$\{\mathrm{Tot}_{n}(\mathrm{CB}_{\bullet}(f))\}_{n\ge0}.$$
 
  Recall that an object of $\mathbf{Pro(C)}$ is constant (\cite{mathew2016galois} Definition 3.9) if it is equivalent to an object in the image of $\mathbf{C}\rightarrow\mathbf{Pro(C)}$. 
 
 \begin{definition}[\cite{mathew2016galois} Definition 3.18/ Proposition 3.20]
A morphism $A\rightarrow B$ is said to be \textit{descendable} if it is a constant pro-object which converges to $A$, i.e. 
 $$A\rightarrow\varprojlim_{[n] \in \Delta}\mathrm{Tot}_{n}(\mathrm{CB}_{\bullet}(f))$$
 is an equivalence.
 \end{definition}

 Descendability can also be phrased in terms of the augmented complex. For each $n$, write
 $$T_{aug}^{n}(f)\defeq\mathrm{cofib}(A\rightarrow\mathrm{Tot}_{n}(\mathrm{CB}_{\bullet}(f)).$$

 Then $A\rightarrow B$ is descendable precisely if $\{T_{aug}^{n}(f)\}_{n\ge0}$ is pro-constant and converges to the zero object.
 \comment{
  \begin{definition}
 A tower $\{X_{n}\}_{n\ge0}$ is said to be \textit{nilpotent of degree} $N$ if there exists an $N$ such that for all $n\ge0$ the map $X_{n+N}\rightarrow X_{n}$ is null-homotopic. 
 \end{definition}
 
 As explained in \cite{mathew2016galois}, nilpotent objects are pro-constant and converge to $0$, but in general nilpotence is in fact much stronger than this. 
 
 We also recall that an object $X=``\lim_{n}''X_{n}\in\mathrm{Tot}(\mathbf{C})$ is \textit{nilpotent of degree }$k$ if for each $n$, $X_{n+k}\rightarrow X_{n}$ is nilpotent, and \textit{strongly constant of degree }$k$ if the augmented object 
 $$\overline{X}=``\lim_{n}''\mathrm{cofib}(\lim_{m}X_{m}\rightarrow X_{n})$$
 is nilpotent of degree $k$.

 \begin{lemma}
     Let $X_{\bullet,\bullet}\in\mathrm{Fun}(\mathbb{Z}^{op}_{\ge0}\times,\mathbb{Z}^{op}_{\ge0},\mathbf{C})$. Suppose that there is $k$ such that for each $n$, the towers $X_{n,\bullet}$ and $X_{\bullet,m}$ are strongly constant of degree $k$. Then the diagonal tower $\mathrm{diag}(X)_{n}=X_{n,n}$ is strongly constant of degree $k$. 
 \end{lemma}

 \begin{proof}
Write $H_{n}=\lim_{m}X_{n,m}$. Define $\overline{X}_{n,m}\defeq\mathrm{cofib}(H_{n}\rightarrow X_{n,m})$. Then $\overline{X}_{n,\bullet}$ is, by definition, nilpotent of degree $n$. 
\end{proof}
}
We have the following useful criterion, which we will use to prove \'{e}tale descendability later.

\begin{proposition}\cite{mathew2016galois}*{Proposition 3.22}\label{prop:descendability-descent}
If $f:A\rightarrow B$ is descendable then the adjunction
$$B\otimes^{\mathbb{L}}_{A}(-) \colon \mathbf{D}(A) \to \bD(B) \colon {|-|}$$
is comonadic. Thus the functor
$$\mathbf{Mod}_A(\mathbf{C})\rightarrow\varprojlim_{n\in \Delta}\mathbf{Mod}_{B^{\otimes_{A}n+1}}(\bC)$$
is an equivalence. 
\end{proposition}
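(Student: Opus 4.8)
The plan is to deduce the statement directly from Mathew's comonadic descent machinery, applied to the symmetric monoidal functor $A \mapsto \mathbf{D}(A)$ into $\mathbf{CAlg}(\mathsf{Pr}_{st}^L)$. First I would recall that for a descendable map $f \colon A \to B$ in $\mathbf{CAlg}(\mathbf{C})$, the augmented bar complex $\mathrm{CB}^{aug}_\bullet(f)$ has the property that the tower of cofibres $\{T^n_{aug}(f)\}_{n \ge 0}$ is pro-constant converging to $0$; equivalently, $A \to \varprojlim_n \mathrm{Tot}_n(\mathrm{CB}_\bullet(f))$ is an equivalence. The first key step is to observe that the base change functor $B \otimes^{\mathbb{L}}_A (-) \colon \mathbf{D}(A) \to \mathbf{D}(B)$ is a colimit-preserving symmetric monoidal functor, with right adjoint the forgetful functor $|-|$, and that the thick tensor-ideal generated by $B$ inside $\mathbf{D}(A)$ is all of $\mathbf{D}(A)$ — this is precisely the module-theoretic reformulation of descendability (Mathew's characterisation of descendable algebras in terms of the unit being in the thick tensor-ideal generated by $B$).

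The second step is to invoke the comonadicity criterion: since $\mathbf{D}(A)$ is generated as a thick tensor-ideal by $B$, the Barr–Beck–Lurie conditions for the adjunction $(B \otimes^{\mathbb{L}}_A (-), |-|)$ are satisfied — the left adjoint is conservative on the relevant subcategory and preserves the totalisations of $|-|$-split cosimplicial objects. This is exactly the content of \cite{mathew2016galois}*{Proposition 3.22}, which we may cite. Comonadicity then gives that $\mathbf{D}(A)$ is equivalent to the category of comodules over the comonad $B \otimes^{\mathbb{L}}_A |-|$ on $\mathbf{D}(B)$. The final step is to identify this category of comodules with the limit $\varprojlim_{[n] \in \Delta} \mathbf{Mod}_{B^{\otimes_A n+1}}(\mathbf{C})$: the cobar construction of the comonad is computed by the cosimplicial diagram $[n] \mapsto \mathbf{D}(B^{\otimes_A n+1})$ with the evident base-change transition maps, and the totalisation of this cosimplicial object in $\mathsf{Pr}_{st}^L$ (equivalently, the limit in $\mathbf{Cat}_\infty$) is by definition the comodule category. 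Here one uses that $\mathbf{Mod}_{B^{\otimes_A n+1}}(\mathbf{C}) = \mathbf{D}(B^{\otimes_A n+1})$ and that the functor $A \mapsto \mathbf{D}(A)$ sends the coaugmented cosimplicial algebra $\mathrm{CB}^{aug}_\bullet(f)$ to the relevant limit diagram.

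The main obstacle is the passage from comonadicity to the explicit limit formula — that is, verifying that the category of comodules over the descent comonad really is computed by the honest cosimplicial limit $\varprojlim_{[n]\in\Delta} \mathbf{D}(B^{\otimes_A n+1})$ rather than some a priori larger totalisation. This requires knowing that limits of cosimplicial diagrams of presentable stable categories along left adjoints agree with limits in $\mathbf{Cat}_\infty$, and that the comparison functor from $\mathbf{D}(A)$ into this limit coincides, levelwise, with the base-change functors $\mathbf{D}(A) \to \mathbf{D}(B^{\otimes_A n+1})$; both are standard but need to be assembled carefully. Everything else — the symmetric monoidality of $A \mapsto \mathbf{D}(A)$, the identification of the right adjoint with the forgetful functor, and the Barr–Beck verification — is formal given the results already recalled in the excerpt, so the proof is short: cite \cite{mathew2016galois}*{Proposition 3.22} for comonadicity and then unwind the cobar construction.
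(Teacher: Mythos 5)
The paper does not supply its own proof here; the statement is a direct citation of \cite{mathew2016galois}*{Proposition 3.22}, so there is no internal argument to compare against. Your sketch is a faithful summary of Mathew's actual argument: descendability means the unit of $\mathbf{D}(A)$ lies in the thick $\otimes$-ideal generated by $B$, which yields the hypotheses of Barr--Beck--Lurie for the adjunction $(B\otimes^{\mathbb{L}}_A(-),\,|-|)$, and the identification of the comodule category with the cosimplicial limit $\varprojlim_{[n]\in\Delta}\mathbf{D}(B^{\otimes_A n+1})$ then follows from Lurie's comparison of comonadic adjunctions with totalisations of cobar resolutions. The "main obstacle" you flag — checking that the category of comodules agrees with the honest cosimplicial limit, levelwise given by base change — is indeed the only part requiring care, but it is standard and is already addressed in the cited reference (and in Lurie's general treatment of descent via comonadicity), so nothing further is needed.
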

 
 \begin{remark}
 There are maps of algebras which satisfy descent, in that the functor
$$\mathbf{Mod}_A(\mathbf{C})\rightarrow\varprojlim_{n\in \Delta}\mathbf{Mod}_{B^{\otimes_{A}n+1}}(\bC)$$
is an equivalence, but which are not descendable in the sense of Mathew. A recent counterexample to this was provided in \cite{aoki2024cohomology}. 
 \end{remark}
 
 The class of descendable morphisms of commutative monoid objects forms a topology. This follows immediately from the following two results.
 
 \begin{proposition}\cite{mathew2016galois}*{Proposition 3.23}
 Let $A\rightarrow B$ and $B\rightarrow C$ be maps in $\mathbf{CAlg(C)}$. Then
 \begin{enumerate}
 \item
 If $A\rightarrow B$ and $B\rightarrow C$ are descendable, then so is $A\rightarrow C$.
 \item
 If $A\rightarrow C$ is descendable then so is $A\rightarrow B$.
 \end{enumerate}
 \end{proposition}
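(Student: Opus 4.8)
The plan is to avoid working directly with the tower definition and instead use the reformulation of descendability in terms of thick tensor ideals (\cite{mathew2016galois}*{Proposition 3.20}): a map $R\to S$ in $\mathbf{CAlg}(\mathbf{C})$ is descendable if and only if $R$ lies in $\langle S\rangle_R$, the smallest thick subcategory of $\mathbf{Mod}_R(\mathbf{C})$ containing $S$ and closed under $-\otimes_R M$ for every $M\in\mathbf{Mod}_R(\mathbf{C})$. Two formal remarks I would record at the outset. First, since $C$ is an algebra, $\{C\otimes_R N : N\in\mathbf{Mod}_R(\mathbf{C})\}$ is already closed under tensoring with arbitrary objects and contains the unit's image $C$, so $\langle C\rangle_R$ is just the thick subcategory (shifts, cofibres, retracts, no further tensoring) generated by these objects; this is what lets one ``unwind'' membership into finitely many building blocks. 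Second --- the one lemma I would isolate --- for \emph{any} $P\in\mathbf{Mod}_C(\mathbf{C})$ the restriction $P|_A$ lies in $\langle C\rangle_A$: the triangle identity for the base-change/restriction adjunction $C\otimes_A(-)\dashv(-)|_A$ exhibits $P|_A$ as a retract of $C|_A\otimes_A P|_A$, which lies in the thick tensor ideal $\langle C\rangle_A$ since $C|_A$ does, and thick subcategories are retract-closed.

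For (2) the argument is then short. As $A\to C$ factors through $B$, the algebra $C$ is a retract of $B\otimes_A C$ in $\mathbf{Mod}_A(\mathbf{C})$: the multiplication $B\otimes_A C\to C$, $b\otimes c\mapsto \phi(b)c$ (with $\phi\colon B\to C$), has the section $C=A\otimes_A C\to B\otimes_A C$ coming from the unit $A\to B$, and the composite is $\mathrm{id}_C$ by unitality. Hence $C|_A\in\langle B\rangle_A$ (a retract of $B\otimes_A C$, which lies in the thick tensor ideal generated by $B$), so $\langle C\rangle_A\subseteq\langle B\rangle_A$; if $A\to C$ is descendable then $A\in\langle C\rangle_A\subseteq\langle B\rangle_A$, i.e.\ $A\to B$ is descendable.

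For (1), assume $A\to B$ and $B\to C$ are both descendable, so $A\in\langle B\rangle_A$ and $B\in\langle C\rangle_B$; it suffices to prove $B\in\langle C\rangle_A$, because then $\langle B\rangle_A\subseteq\langle C\rangle_A$ and $A\in\langle B\rangle_A$ forces $A\in\langle C\rangle_A$. By the first remark, $B\in\langle C\rangle_B$ means $B$ is obtained from finitely many objects $C\otimes_B N$ by shifts, cofibres and retracts. Applying the exact functor $(-)|_A\colon\mathbf{Mod}_B(\mathbf{C})\to\mathbf{Mod}_A(\mathbf{C})$, the $A$-module $B|_A$ is built from the objects $(C\otimes_B N)|_A$ in the same way; each $(C\otimes_B N)|_A$ is the restriction of a $C$-module, hence lies in $\langle C\rangle_A$ by the lemma, and since $\langle C\rangle_A$ is thick it contains $B|_A$. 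This gives $B\in\langle C\rangle_A$ and completes (1).

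The only genuine obstacle is the transitivity in (1): arguing it straight from the tower definition would require tracking descendability indices of composites, which is clumsy, so the proof really hinges on having Mathew's thick-tensor-ideal reformulation available in the present generality (an arbitrary presentably symmetric monoidal stable $\mathbf{C}$), together with the observation --- packaged in the lemma --- that restriction of scalars along $B\to C$ carries $\langle C\rangle_B$ into $\langle C\rangle_A$.
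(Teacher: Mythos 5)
The paper does not prove this proposition; it simply quotes it from \cite{mathew2016galois}*{Proposition 3.23}. Your argument is correct and is essentially Mathew's own: you pass to the thick $\otimes$-ideal characterisation of descendability (his Proposition 3.20), prove (2) by exhibiting $C$ as a retract of $B\otimes_A C$ so that $\langle C\rangle_A\subseteq\langle B\rangle_A$, and prove (1) by showing restriction of scalars carries $\langle C\rangle_B$ into $\langle C\rangle_A$ via the retract $P|_A\hookrightarrow C|_A\otimes_A P|_A$ coming from the triangle identity. Nothing to add.
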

 
 \begin{proposition}\cite{mathew2016galois}*{Corollary 3.21}
 Let $F:\mathbf{C}\rightarrow\mathbf{C'}$ be a symmetric monoidal functor between symmetric monoidal, stable $\infty$-categories. Let $f:A\rightarrow B$ be a descendable map in $\mathbf{CAlg(C)}$. Then $F(f):F(A)\rightarrow F(B)$ is a descendable map in $\mathbf{CAlg(C')}$.
 \end{proposition}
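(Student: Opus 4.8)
The plan is to avoid the infinite totalisation appearing in the definition and instead use Mathew's reformulation of descendability in terms of thick tensor-ideals. Concretely, I would first recall \cite{mathew2016galois}*{Proposition 3.20}: the map $f\colon A\to B$ is descendable if and only if $A$ lies in the smallest thick subcategory of $\mathbf{Mod}_A(\mathbf{C})$ that contains $B$ and is closed under tensoring with arbitrary $A$-modules; equivalently, writing $I=\mathrm{fib}(A\to B)\in\mathbf{Mod}_A(\mathbf{C})$, there is an integer $N\ge1$ such that the natural composite $I^{\otimes_A N}\to A^{\otimes_A N}=A$ is nullhomotopic. (For the equivalence of these two: if that composite is null then $A$ is a retract of $\mathrm{cofib}(I^{\otimes_A N}\to A)$, and the latter is built in finitely many cofibre-sequence steps out of the objects $I^{\otimes_A k}\otimes_A B$, each of which lies in the thick tensor-ideal generated by $B$; the converse is also due to Mathew.)

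Next I would observe that the symmetric monoidal functor $F$ — which we take to be exact and colimit-preserving, as it is in all our applications — induces a strong symmetric monoidal, exact functor $F_A\colon\mathbf{Mod}_A(\mathbf{C})\to\mathbf{Mod}_{F(A)}(\mathbf{C}')$ sending $A\mapsto F(A)$ and $B\mapsto F(B)$; strong monoidality of $F_A$ uses that $F$ commutes with the bar-construction colimit computing the relative tensor product, which is where cocontinuity enters. The key formal point is then that an exact strong symmetric monoidal functor carries thick tensor-ideals into thick tensor-ideals: the preimage under $F_A$ of the thick tensor-ideal of $\mathbf{Mod}_{F(A)}(\mathbf{C}')$ generated by $F(B)$ is closed under shifts and cofibres (exactness), under retracts (formal), and under tensoring with arbitrary objects (strong monoidality together with the ideal property downstairs), and it contains $B$; hence it contains the entire thick tensor-ideal generated by $B$.

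Putting these together: since $A$ lies in the thick tensor-ideal generated by $B$ in $\mathbf{Mod}_A(\mathbf{C})$, its image $F(A)=F_A(A)$ lies in the thick tensor-ideal generated by $F(B)$ in $\mathbf{Mod}_{F(A)}(\mathbf{C}')$, which by the reformulation above is exactly the assertion that $F(f)\colon F(A)\to F(B)$ is descendable. Alternatively, and even more concretely, one can just apply $F$ to the nullhomotopy of $I^{\otimes_A N}\to A$: since $F$ preserves the fibre sequence $I\to A\to B$ and the relative tensor powers, $F(I)\simeq\mathrm{fib}(F(A)\to F(B))$ and $F(I^{\otimes_A N})\simeq F(I)^{\otimes_{F(A)}N}$ compatibly with the maps to $F(A)$, so the corresponding composite downstairs is $F$ of a nullhomotopic map, hence nullhomotopic.

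The one step I expect to require the most care is the first one in the second paragraph: making precise that $F$ induces a \emph{strong} symmetric monoidal functor on module categories (equivalently, that $F$ commutes with the relevant relative tensor products). Once that is in hand the rest is a soft thick-subcategory argument; and it is worth noting that the proof genuinely uses the finiteness packaged into descendability, since $F$ need not commute with the infinite limit $\varprojlim_n\mathrm{Tot}_n(\mathrm{CB}_\bullet(f))$ from the bare definition.
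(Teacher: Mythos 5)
Your argument is correct and is essentially the proof the paper implicitly relies on: the paper gives no proof of its own but cites Mathew, whose argument is exactly the one you give — descendability is (by \cite{mathew2016galois}*{Proposition 3.20}) the statement that $A$ lies in the thick $\otimes$-ideal of $\mathbf{Mod}_A(\mathbf{C})$ generated by $B$, equivalently that some $I^{\otimes_A N}\to A$ is null, and both conditions are visibly preserved by an exact, strong symmetric monoidal functor on module categories. Your caveat is also the right one: the statement as quoted omits the hypothesis that $F$ be exact and preserve the relevant (sifted) colimits, which is needed for $F$ to commute with the relative tensor products $B^{\otimes_A n}$ and $I^{\otimes_A N}$; this is automatic in Mathew's setting (morphisms of presentably symmetric monoidal stable $\infty$-categories) and in all uses in this paper.
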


\begin{corollary}
Let $\mathbf{C}$ be a derived algebraic context. The collection of maps $\mathrm{Spec}(B)\rightarrow\mathrm{Spec}(A)$ such that $\Theta(A)\rightarrow\Theta(B)$ is descendable defines a pre-topology on $\mathbf{Aff}^{cn}_{\mathbf{C}}$. 
\end{corollary}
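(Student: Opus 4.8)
The plan is to verify the three axioms defining a Grothendieck pre-topology for the class \(\mathcal{D}\) of morphisms \(f\colon\mathrm{Spec}(B)\to\mathrm{Spec}(A)\) with \(\Theta(A)\to\Theta(B)\) descendable in \(\mathbf{CAlg}(\mathbf{C})\) (here the covering families are taken to be the singletons lying in \(\mathcal{D}\)): namely, that \(\mathcal{D}\) contains all equivalences, is closed under composition, and is stable under base change. As a preliminary one records that \(\mathbf{Aff}^{cn}_{\mathbf{C}}\) has the pullbacks needed even to state the base-change axiom: pushouts in \(\mathbf{DAlg}(\mathbf{C})\) exist, and if \(A,B,A'\) are connective then so is \(B\otimes^{\mathbb{L}}_A A'\), since its underlying object is a geometric realization of objects of \(\mathbf{C}_{\ge0}\) and \(\mathbf{C}_{\ge0}\) is closed under \(\otimes\) and under colimits. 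The first two axioms are then immediate: an equivalence \(A\to B\) yields an equivalence \(\Theta(A)\to\Theta(B)\), which is trivially descendable; and for a composite \(\mathrm{Spec}(C)\to\mathrm{Spec}(B)\to\mathrm{Spec}(A)\), functoriality of \(\Theta\) identifies \(\Theta(A)\to\Theta(C)\) with the composite of \(\Theta(A)\to\Theta(B)\) and \(\Theta(B)\to\Theta(C)\), so stability of descendable maps under composition (\cite{mathew2016galois}) applies.

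The only point with content is stability under base change. Given \(\mathrm{Spec}(B)\to\mathrm{Spec}(A)\) in \(\mathcal{D}\) and an arbitrary \(\mathrm{Spec}(A')\to\mathrm{Spec}(A)\), I would first establish the compatibility \(\Theta(B\otimes^{\mathbb{L}}_A A')\simeq\Theta(B)\otimes^{\mathbb{L}}_{\Theta(A)}\Theta(A')\) in \(\mathbf{CAlg}(\mathbf{C})\). This holds because \(\mathbf{Mod}_A(\mathbf{C})=\mathbf{Mod}_{\Theta(A)}(\mathbf{C})\) and relative tensor products of algebras are presented, on underlying objects, by the same bar construction in either category; equivalently, both \(\mathbf{DAlg}(\mathbf{C})\) and \(\mathbf{CAlg}(\mathbf{C})\) are monadic over \(\mathbf{C}\) via monads preserving sifted colimits, so \(\Theta\) preserves the (sifted-colimit) presentation of the pushout. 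Granting this, the base-change functor \(\Theta(A')\otimes^{\mathbb{L}}_{\Theta(A)}(-)\colon\mathbf{CAlg}(\mathbf{C})_{\Theta(A)/}\to\mathbf{CAlg}(\mathbf{C})_{\Theta(A')/}\) is induced by the symmetric monoidal functor \(\mathbf{Mod}_{\Theta(A)}(\mathbf{C})\to\mathbf{Mod}_{\Theta(A')}(\mathbf{C})\). Since symmetric monoidal functors send descendable maps to descendable maps (\cite{mathew2016galois}) — unpacked, they carry thick \(\otimes\)-ideals to thick \(\otimes\)-ideals and the unit to the unit, which is Mathew's characterization of descendability — descendability of \(\Theta(A)\to\Theta(B)\) forces descendability of \(\Theta(A')\to\Theta(B)\otimes^{\mathbb{L}}_{\Theta(A)}\Theta(A')\simeq\Theta(B\otimes^{\mathbb{L}}_A A')\), i.e. \(\mathrm{Spec}(B\otimes^{\mathbb{L}}_A A')\to\mathrm{Spec}(A')\) lies in \(\mathcal{D}\).

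I do not expect a serious obstacle: once the two stability results of \cite{mathew2016galois} quoted just above are in hand, everything is formal. The only steps requiring a moment's care are the bookkeeping identification \(\Theta(B\otimes^{\mathbb{L}}_A A')\simeq\Theta(B)\otimes^{\mathbb{L}}_{\Theta(A)}\Theta(A')\) — that forming underlying commutative algebras commutes with pushouts of derived algebras — and, relatedly, confirming that pushouts of connective derived algebras remain connective so that \(\mathbf{Aff}^{cn}_{\mathbf{C}}\) genuinely possesses the relevant pullbacks. Both are routine consequences of the derived algebraic context axioms (\(\mathbf{C}_{\ge0}\) closed under \(\otimes\) and colimits, \(\mathbb{L}\mathrm{Sym}\) preserving connectivity) and should simply be stated explicitly.
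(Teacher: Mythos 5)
Your proposal is correct and follows essentially the same route the paper intends: the paper states the corollary as an immediate consequence of Mathew's two results (closure of descendable maps under composition, and preservation of descendability under symmetric monoidal functors applied to base change), together with the observation — made explicit in the paper only later, in the proof of Proposition \ref{prop:etale-top} — that \(\theta\) commutes with small colimits, which is exactly your identification \(\Theta(B\otimes^{\mathbb{L}}_A A')\simeq\Theta(B)\otimes^{\mathbb{L}}_{\Theta(A)}\Theta(A')\). Your additional care about connectivity of pushouts is a reasonable explicit check of a point the paper leaves tacit.
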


\begin{proposition}\label{prop:descfull}
    Let $\mathbf{D}$ be a presentable, stable, closed symmetric monoidal $\infty$-category. Let $\mathbf{C}\subset\mathbf{D}$ be a full subcategory which is closed under finite colimits and tensor products. Let $f:A\rightarrow B$ be a map in $\mathbf{Comm}(\mathbf{C})$ which is descendable as a map in $\mathbf{Comm}(\mathbf{D})$. Then it is descendable in $\mathbf{Comm}(\mathbf{C}).$
\end{proposition}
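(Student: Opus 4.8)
The plan is to reduce descendability to a \emph{finite-stage} condition and then transfer that condition along the fully faithful, symmetric monoidal inclusion $\iota\colon\mathbf{C}\hookrightarrow\mathbf{D}$. Recall from \cite{mathew2016galois} that $f\colon A\to B$ is descendable if and only if $A$ lies in the thick $\otimes$-ideal generated by $B$ inside $\mathbf{Mod}_A$, equivalently if and only if the tower $\{T_{aug}^{n}(f)\}_{n\ge0}$ is pro-zero; and that, when this holds, it is witnessed at a finite stage -- there is an $N$ for which the transition map $T_{aug}^{N}(f)\to T_{aug}^{0}(f)=\overline{B}$ (and, using the multiplicative structure on this tower, $T_{aug}^{n+N}(f)\to T_{aug}^{n}(f)$ for all $n$) is nullhomotopic. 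The upshot is that descendability is equivalent to the vanishing of a single map between objects that are assembled from $A$ and $B$ using only finite colimits and (relative) tensor products.

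\textbf{Step 1: the witnessing data lives in $\mathbf{C}$.} I would first check that all objects and maps entering this finite-stage criterion already lie in $\mathbf{C}$. Since $A,B\in\mathbf{C}$ and $\mathbf{C}$ is closed under tensor products, the terms $B^{\otimes_A(n+1)}$ of the bar complex $\mathrm{CB}_\bullet(f)$ lie in $\mathbf{C}$; the conormalization of this coaugmented cosimplicial algebra is computed by finite colimits (the codegeneracies split, so the relevant fibers agree with cofibers), so its terms lie in $\mathbf{C}$ as well; and the partial totalizations $\mathrm{Tot}_n(\mathrm{CB}_\bullet(f))$ up to the finite stage $N$, together with the cofibers $T_{aug}^{n}(f)$, are then built from these by finitely many finite colimits, hence again lie in $\mathbf{C}$. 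Because $\iota$ is symmetric monoidal and preserves finite colimits, it carries each of these constructions performed in $\mathbf{C}$ to the corresponding construction performed in $\mathbf{D}$; in particular $\iota$ identifies the descendability ``witness map'' of $f$ formed in $\mathbf{Comm}(\mathbf{C})$ with that of $f$ formed in $\mathbf{Comm}(\mathbf{D})$.

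\textbf{Step 2: transfer.} This step is formal. Since $\iota$ is fully faithful, so is the induced functor $\mathbf{Mod}_A(\mathbf{C})\to\mathbf{Mod}_A(\mathbf{D})$ (module mapping spaces are assembled from the underlying ones), so a map in $\mathbf{Mod}_A(\mathbf{C})$ is nullhomotopic as soon as its image in $\mathbf{Mod}_A(\mathbf{D})$ is. By hypothesis $f$ is descendable in $\mathbf{Comm}(\mathbf{D})$, so the witness map vanishes there; by Step 1 it is the image of the witness map over $\mathbf{C}$, which therefore vanishes, and the finite-stage criterion now yields descendability of $f$ in $\mathbf{Comm}(\mathbf{C})$. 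The one place where genuine work is needed is Step 1 -- establishing a formulation of Mathew's criterion using only finite colimits and tensor products, and checking that the bar complex, its partial totalizations and the associated cofibers genuinely remain inside $\mathbf{C}$; this is exactly what the hypotheses ``closed under finite colimits and tensor products'' are for, and once it is in hand the rest is bookkeeping together with full faithfulness of $\iota$.
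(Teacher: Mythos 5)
Your proposal is correct and is essentially the paper's own argument, just spelled out in more detail: the paper likewise observes that all terms of the augmented tower $T^{n}_{aug}(f)$ already lie in $\mathbf{C}$ and that pro-zero-ness transfers along the fully faithful inclusion $\mathbf{C}\hookrightarrow\mathbf{D}$. The only point worth flagging (present implicitly in the paper's proof as well) is that the partial totalizations $\mathrm{Tot}_n$ are finite \emph{limits}, so placing them in $\mathbf{C}$ really uses that a full subcategory of a stable category closed under finite colimits and desuspensions is closed under the relevant finite limits.
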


\begin{proof}
    
    All terms in the augmented complex $T^{n}_{aug}(f)$ are in $\mathbf{C}$. Since this pro-object is pro-zero in $\mathbf{D}$, it is evidently pro-zero in $\mathbf{C}$.
\end{proof}

\subsubsection{The homotopy monomorphism topology}

An important example of a refinement of the descendable topology is the \textit{homotopy monomorphism topology}. This is a specific example of the topology defined in \cite{ben2024perspective}*{Section 7.2.3}. The characterisation of analytic topologies in terms of the homotopy monomorphism topology was developed in \cites{MR3626003,BaBK, bambozzi2016dagger}. Fix a derived algebraic context $\mathbf{C}$. The following is \cite{ben2024perspective}*{Definition 7.2.31} for $\kappa=\aleph_{0}$, $\mathbf{P}$ is the class of homotopy monomorphisms, and $\mathbf{N}(-)=\mathbf{QCoh}(-).$

\begin{definition}
A collection of maps $\{f_{i}:\mathrm{Spec}(B_{i})\rightarrow\mathrm{Spec}(A)\}_{i\in\mathcal{I}}$ in $\mathbf{DAlg}(\mathbf{C})^{op}$ is said to be a \textit{finite homotopy monomorphism cover} if the following two conditions are satisfied
\begin{enumerate}
    \item 
    each map $A\rightarrow B_{i}$ is a homotopy epimorphism;
    \item 
    there is a finite subset $\mathcal{J}\subset\mathcal{I}$ such that if $M\in\mathbf{D}(A)$ satisfies $B_{j}\otimes^{\mathbb{L}}_{A}M\cong 0$ for all $j\in\mathcal{J}$, then $M\cong 0$.
\end{enumerate}
Note in particular for such a $\mathcal{J}$,  $\{f_{j}:\mathrm{Spec}(B_{j})\rightarrow\mathrm{Spec}(A)\}_{j\in\mathcal{J}}$ is a homotopy monomorphism cover.
\end{definition}

The proof that this topology satisfies descent is \cite{ben2024perspective}*{Proposition 7.1.14}. \textit{Descendability}, which is strictly stronger, was mentioned in \cite{ben2024perspective}*{Example 7.2.22} but not proven. In \cite{soor2023quasicoherent}*{Proposition 2.19}, descendability of this topology when restricted to discrete algebras in so-called homotopy transversal pairs (\cite{soor2023quasicoherent}*{Definition 2.13}) was established. Here we give the proof for derived algebras. 

\begin{lemma}\label{lem:hepi-descendable}
Let $\{f_{i}:\mathrm{Spec}(B_{i})\rightarrow\mathrm{Spec}(A)\}_{i\in\mathcal{I}}$ be a finite collection of homotopy monomorphisms. Then it is a homotopy monomorphism cover if and only if the map 
$$A\rightarrow\prod_{i\in\mathcal{I}}B_{i}$$
is descendable.
\end{lemma}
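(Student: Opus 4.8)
The statement is an "if and only if" relating a covering condition to descendability of the product map $A \to \prod_{i \in \mathcal{I}} B_i$. Since $\mathcal{I}$ is finite, $\prod_i B_i = \bigoplus_i B_i$ as an $A$-module, and I would work throughout with the object $B \defeq \prod_{i\in\mathcal{I}} B_i$ in $\mathbf{DAlg}^{cn}(\mathbf{C})$.

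\medskip
\textbf{The ``if'' direction.} Suppose $A \to B$ is descendable. I would first recall (Proposition \ref{prop:descendability-descent}) that this makes the base-change/forgetful adjunction comonadic, so in particular the functor $B \otimes^{\mathbb{L}}_A (-) \colon \mathbf{D}(A) \to \mathbf{D}(B)$ is conservative. Hence if $M \in \mathbf{D}(A)$ satisfies $B \otimes^{\mathbb{L}}_A M \cong 0$, then $M \cong 0$. Since $B \otimes^{\mathbb{L}}_A M \cong \prod_{i} (B_i \otimes^{\mathbb{L}}_A M)$ and the product is finite, $B \otimes^{\mathbb{L}}_A M \cong 0$ is equivalent to $B_i \otimes^{\mathbb{L}}_A M \cong 0$ for all $i \in \mathcal{I}$. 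Taking $\mathcal{J} = \mathcal{I}$ (itself finite), this is exactly condition (2) of a finite homotopy monomorphism cover; condition (1) holds by hypothesis. So the family is a homotopy monomorphism cover.

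\medskip
\textbf{The ``only if'' direction.} This is the substantive direction. Assume the family is a homotopy monomorphism cover, so after shrinking to the finite subset $\mathcal{J}$ we may as well assume $\mathcal{I}$ itself is finite and that conservativity of $\bigoplus_{i} B_i \otimes^{\mathbb{L}}_A(-)$ holds on the nose. I want to upgrade this conservativity to descendability of $A \to B$. The key point is that descendability of $A \to B$ is equivalent to the statement that the thick $\otimes$-ideal of $\mathbf{D}(A)$ generated by $B$ contains the unit $A$ (this is the characterization Mathew gives via the augmented bar tower $\{T^n_{aug}(f)\}$ being pro-zero; membership of $A$ in the thick ideal generated by $B$ forces the tower to be nilpotent). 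So I must show: conservativity of $-\otimes^{\mathbb{L}}_A B$ implies $A \in \langle B \rangle^{\otimes\text{-thick}}$.

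\medskip
Here I would exploit that each $A \to B_i$ is a homotopy epimorphism, so $B_i \otimes^{\mathbb{L}}_A B_i \cong B_i$, and consequently $B_i$ is an idempotent $A$-algebra; equivalently, $B_i \cong A/I_i$ for an idempotent ideal, and there is a fiber sequence $J_i \to A \to B_i$ with $J_i$ also idempotent (a ``complementary'' idempotent object). The claim becomes: if $J_1 \otimes^{\mathbb{L}}_A \cdots \otimes^{\mathbb{L}}_A J_n \cong 0$ (which is the reformulation of conservativity — an $M$ killed by all $B_i$ is a module over $J_1 \otimes \cdots \otimes J_n$, and one checks the converse), then $A$ lies in the thick tensor-ideal generated by $\{B_i\}$. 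I would prove this by induction on $n$: the cofiber sequence $J_1 \otimes \cdots \otimes J_n \to A \to (\text{cone})$ where the cone is built by a finite iterated extension from the $B_i$'s (a Mayer–Vietoris / inclusion–exclusion style filtration of $A$ along the idempotents $B_i$), together with $J_1 \otimes \cdots \otimes J_n \cong 0$, exhibits $A$ as a finite extension of tensor products of the $B_i$, hence as descendable. Concretely, descendability is stable under finite colimits, retracts, and tensor products in the category of $A$-algebras/modules, and $A$ built as a finite iterated cofiber of the $B_i \otimes^{\mathbb{L}}_A \cdots$ is therefore descendable over itself... I should instead phrase: $\mathrm{id}_A$ factors through a finite extension of the functors $-\otimes^{\mathbb{L}}_A B_i$, which is precisely Mathew's criterion (\cite{mathew2016galois}*{Proposition 3.20 and around}) for $A \to B$ to be descendable.

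\medskip
\textbf{Main obstacle.} The delicate step is the ``only if'' direction: turning the mere conservativity (condition (2) of the cover) into the genuinely stronger descendability. Conservativity is about detecting zero objects, whereas descendability demands a \emph{uniform finite bound} — the augmented bar tower must be nilpotent of some finite degree, not merely pro-zero. The reason this works here, and the crux of the argument, is that the $B_i$ are \emph{homotopy epimorphisms}: this rigidity makes each $B_i$ an idempotent $\mathbb{E}_\infty$-$A$-algebra, so the $\otimes$-ideal combinatorics collapse to a finite inclusion–exclusion over the finite index set $\mathcal{J}$, automatically producing the finite nilpotence degree. I would make this precise by induction on $|\mathcal{J}|$, using at each stage the octahedral axiom on the fiber sequences $J_i \to A \to B_i$ to build a finite filtration of $A$ whose graded pieces are tensor products of the $B_i$, and whose ``error term'' is $\bigotimes_i J_i \cong 0$. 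The routine verifications — that descendability passes through finite colimits and base change (already recorded in \cite{mathew2016galois}*{Propositions 3.21, 3.23}), and that an $A$-module killed by every $B_i$ is exactly a module over $\bigotimes_i^{\mathbb{L}} J_i$ — I would cite or dispatch quickly.
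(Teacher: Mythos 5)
Your proof is correct, and the ``if'' direction coincides with the paper's. For the substantive ``only if'' direction, however, you take a genuinely different route. The paper works directly with the augmented \v{C}ech conerve of $A\rightarrow\prod_{i}B_{i}$: because each $A\rightarrow B_{i}$ is a homotopy epimorphism, the tensor powers $(\prod_{i}B_{i})^{\otimes_{A}n}$ take only finitely many values, so the pro-object of partial totalisations is eventually constant and pro-constancy is automatic; descendability then reduces to convergence of the totalisation to $A$, which is checked after base change along each (conservative, limit-preserving because the limit is essentially finite) functor $B_{j}\otimes^{\mathbb{L}}_{A}(-)$, where the conerve acquires a splitting from the factor $B_{j}\otimes^{\mathbb{L}}_{A}B_{j}\cong B_{j}$. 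You instead invoke Mathew's thick-$\otimes$-ideal characterisation of descendability, pass to the complementary idempotents $J_{i}=\mathrm{fib}(A\rightarrow B_{i})$, observe that conservativity forces $J_{1}\otimes^{\mathbb{L}}_{A}\cdots\otimes^{\mathbb{L}}_{A}J_{n}\cong 0$ (it is a module over all the $B_{i}$-torsion idempotents, and is killed by every $B_{i}$), and exhibit $A$ as a finite iterated extension of objects $\bigotimes_{i\in S}B_{i}\otimes\bigotimes_{i\notin S}J_{i}$, each of which lies in the ideal generated by $B$ since every $B_{i}$ is a retract of the finite product. Both arguments exploit exactly the same rigidity --- idempotency of the $B_{i}$ --- to produce the finite nilpotence degree that separates descendability from mere conservativity; your version makes the ``inclusion--exclusion'' structure explicit and is closer in spirit to Mathew's own treatment of covers by idempotent algebras (and to the discrete case in the reference to Soor cited just before the lemma), whereas the paper's is a more hands-on bar-complex computation. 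Your one muddled sentence about stability of descendability under finite colimits is harmless, since you correctly replace it with the thick-ideal criterion in the next clause.
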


\begin{proof}
Suppose the map is descendable. Then the map $ A\rightarrow\prod_{i\in\mathcal{I}}B_{i}$ is evidently conservative. But for any $A$-module $M$ we have 
$$(\prod_{i\in\mathcal{I}}B_{i})\otimes_{A}^{\mathbb{L}}M\cong\prod_{i\in\mathcal{I}}(B_{i}\otimes_{A}^{\mathbb{L}}M),$$
so clearly if $B_{i}\haotimes^{\mathbb{L}}_{A}M\cong 0$ for all $i\in\mathcal{I}$ then $M\cong 0$.

Conversely suppose $\{f_{i}:\mathrm{Spec}(B_{i})\rightarrow\mathrm{Spec}(A)\}_{i\in\mathcal{I}}$ is a cover in the homotopy monomorphism topology. Consider the limit
$$\varprojlim_{n\in \Delta}(\prod_{i\in\mathcal{I}}B_{i}^{\otimes^{\mathbb{L}}_{A}n}).$$
Note that this is in fact a corealisation of an essentially finite cosimplicial object, as the tensor powers eventually stabilise. Thus the proof amounts to showing that the map 
$$A\rightarrow\varprojlim_{n\in \Delta}(\prod_{i\in\mathcal{I}}B_{i}^{\otimes^{\mathbb{L}}_{A}n})$$
is an equivalence. For this it suffices to prove that for each $j$, the map
$$B_{j}\rightarrow B_{j}\otimes_{A}^{\mathbb{L}}\varprojlim_{n\in \Delta}(\prod_{i\in\mathcal{I}}B_{i})^{\otimes^{\mathbb{L}}_{A}n})$$
is an equivalence. Now since the limit is essentially finite, we have 
$$B_{j}\otimes_{A}^{\mathbb{L}}\varprojlim_{n\in \Delta}(\prod_{i\in\mathcal{I}}B_{i})^{\otimes^{\mathbb{L}}_{A}n})\cong\varprojlim_{n\in \Delta}B_{j}\otimes_{A}^{\mathbb{L}}(\prod_{i\in\mathcal{I}}B_{i})^{\otimes^{\mathbb{L}}_{A}n})\cong\varprojlim_{n\in \Delta}(\prod_{i\in\mathcal{I}}B_{j}\otimes_{A}^{\mathbb{L}}B_{i})^{\otimes^{\mathbb{L}}_{B_{j}}n}),$$
where we have used that $A\rightarrow B_{i}$ is a homotopy epimorphism. Now the limit is the corealisation of the \v{C}ech co-nerve of the map $\{B_{j}\rightarrow\prod_{i\in\mathcal{I}}B_{j}\otimes_{A}^{\mathbb{L}}B_{i}\}$. But this is split, and hence descendable. 
\end{proof}

\begin{corollary}\label{cor:QCoh-descent}
The functor \(\mathbf{QCoh} \colon (\mathbf{Aff}_\bC)^\op \to \mathbf{CAlg}(\mathbf{Pr}_{st}^L)\) satisfies descent with respect to the finite homotopy monomorphism topology. More generally, if \(\bA\) is a full subcategory of \(\mathbf{Aff}_\bC\) closed under pushouts, then \(\mathbf{QCoh} \colon \bA^\op \to  \mathbf{CAlg}(\mathbf{Pr}_{st}^L)\) satisfies descent with respect to the finite homotopy monomorphism topology restricted to \(\bA\). 
\end{corollary}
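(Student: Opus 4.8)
The plan is to obtain the statement by combining Lemma~\ref{lem:hepi-descendable} with Mathew's descent criterion (Proposition~\ref{prop:descendability-descent}), after reducing an arbitrary cover to its finite ``conservative'' subcover. So let $\{f_i \colon \mathrm{Spec}(B_i) \to \mathrm{Spec}(A)\}_{i \in \mathcal{I}}$ be a finite homotopy monomorphism cover, and let $\mathcal{J} \subseteq \mathcal{I}$ be a finite subset as in the definition, so that $\{f_j\}_{j \in \mathcal{J}}$ is itself a homotopy monomorphism cover. Put $B \defeq \prod_{j \in \mathcal{J}} B_j \in \mathbf{DAlg}^{cn}(\mathbf{C})$. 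By Lemma~\ref{lem:hepi-descendable} the map $A \to B$ is descendable.

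By Proposition~\ref{prop:descendability-descent} this gives an equivalence
\[
\mathbf{QCoh}(\mathrm{Spec}(A)) = \mathbf{Mod}_A(\mathbf{C}) \xrightarrow{\ \sim\ } \varprojlim_{[n] \in \Delta} \mathbf{Mod}_{B^{\otimes_{A} n+1}}(\mathbf{C}).
\]
Since $\mathcal{J}$ is finite, $-\otimes_A-$ commutes with the product over $\mathcal{J}$, so $B^{\otimes_{A} n+1} \cong \prod_{(j_0,\ldots,j_n) \in \mathcal{J}^{n+1}} B_{j_0} \otimes_A \cdots \otimes_A B_{j_n}$, whence $\mathbf{Mod}_{B^{\otimes_{A} n+1}}(\mathbf{C}) \simeq \prod_{(j_0,\ldots,j_n)} \mathbf{QCoh}(\mathrm{Spec}(B_{j_0} \otimes_A \cdots \otimes_A B_{j_n}))$. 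Thus the displayed equivalence is precisely the descent equivalence for the subfamily $\{f_j\}_{j \in \mathcal{J}}$, and it upgrades to $\mathbf{CAlg}(\mathbf{Pr}_{st}^L)$ since all functors in sight are symmetric monoidal.

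It remains to pass from $\mathcal{J}$ back to $\mathcal{I}$, i.e.\ to check that restriction along $\mathcal{J} \hookrightarrow \mathcal{I}$ identifies the \v{C}ech limit of $\{f_i\}_{i \in \mathcal{I}}$ with that of $\{f_j\}_{j \in \mathcal{J}}$. The key point is that the previous paragraph is stable under base change: for any iterated fibre product $C = B_{i_0} \otimes_A \cdots \otimes_A B_{i_n}$, the pulled-back family $\{C \to C \otimes_A B_j\}_{j \in \mathcal{J}}$ is again a finite homotopy monomorphism cover --- each $C \to C \otimes_A B_j$ is a base change of the homotopy epimorphism $A \to B_j$, and joint conservativity of $\{B_j \otimes_A -\}_{j \in \mathcal{J}}$ transports from $\mathbf{D}(A)$ to $\mathbf{D}(C)$ because the forgetful functor $\mathbf{D}(C) \to \mathbf{D}(A)$ is conservative. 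Applying the previous paragraph to $C$ in place of $A$, each $\mathbf{QCoh}(\mathrm{Spec}(C))$ is the limit over its $\mathcal{J}$-intersections. A gluing argument then concludes: the composite $\mathbf{QCoh}(\mathrm{Spec}(A)) \to \varprojlim_{\mathcal{I}^\bullet} \mathbf{QCoh} \to \varprojlim_{\mathcal{J}^\bullet} \mathbf{QCoh}$ is the equivalence just found, so the first arrow is fully faithful, and it is essentially surjective because a compatible family over the $\mathcal{I}$-intersections is reconstructed from its restriction to the $\mathcal{J}$-intersections --- one recovers its value on each $\mathrm{Spec}(C)$ by descent along $\{C \to C \otimes_A B_j\}_{j \in \mathcal{J}}$, using that restriction through a $\mathcal{J}$-index is governed by the $\mathcal{J}$-descent already established. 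This step --- the reduction of an infinite cover to its finite subcover, with the attendant coherences in the \v{C}ech diagram --- is where I expect the only genuine work to lie; everything else is a formal assembly of Lemma~\ref{lem:hepi-descendable} and Proposition~\ref{prop:descendability-descent}.

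For the relative statement, observe that if $\bA \subseteq \mathbf{Aff}_{\mathbf{C}}$ is closed under pushouts then, for a cover inside $\bA$, every term $B_{i_1} \otimes_A \cdots \otimes_A B_{i_n}$ of the \v{C}ech diagram lies in $\bA$; since $\mathbf{QCoh} \colon \bA^{\op} \to \mathbf{CAlg}(\mathbf{Pr}_{st}^L)$ is merely the restriction of $\mathbf{QCoh}$ on $\mathbf{Aff}_{\mathbf{C}}$, the \v{C}ech diagram and its limit are unchanged, so the equivalence established above applies verbatim. Note that the auxiliary product $\prod_{j \in \mathcal{J}} B_j$ is only a device and need not lie in $\bA$, so no extra hypothesis (such as closure under finite products, or an appeal to Proposition~\ref{prop:descfull}) is needed here.
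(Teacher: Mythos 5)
Your proof is correct and is exactly the argument the paper intends: the corollary is stated without proof as an immediate consequence of Lemma~\ref{lem:hepi-descendable} together with Proposition~\ref{prop:descendability-descent}, which is precisely your first two paragraphs. The only content you add beyond the paper is the careful reduction from the infinite index set $\mathcal{I}$ to the finite conservative subfamily $\mathcal{J}$ (and the observation that the auxiliary product $\prod_{j}B_j$ need not lie in $\bA$), both of which are handled correctly.
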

\begin{lemma}\label{lem:derstrongdescendable}
Let $A\rightarrow B$ be a map of connective algebras which is derived strong. If $A\rightarrow B$ is descendable, then $\pi_{0}(A)\rightarrow\pi_{0}(B)$ is descendable. Conversely, if $\pi_{0}(A)\rightarrow\pi_{0}(B)$ is descendable and each $\pi_{n}(B)$ is transverse to $\pi_{0}(B)$ over $\pi_{0}(A)$, then the map
$$A\rightarrow\varprojlim_{n\in \Delta} B^{\otimes^{\mathbb{L}}_{A}n}$$ is an equivalence. 
\end{lemma}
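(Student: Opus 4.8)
The plan is to treat the two implications separately.

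\medskip

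\textbf{Forward direction.} I would base change along the truncation $A\to\pi_0(A)$. The functor $(-)\otimes^{\mathbb{L}}_A\pi_0(A)\colon\mathbf{Mod}_A(\mathbf{C})\to\mathbf{Mod}_{\pi_0(A)}(\mathbf{C})$ is symmetric monoidal and exact, so by stability of descendability under symmetric monoidal functors (\cite{mathew2016galois}*{Corollary 3.21}) it carries the descendable map $A\to B$ to a descendable map $\pi_0(A)\to B\otimes^{\mathbb{L}}_A\pi_0(A)$. It then suffices to identify $B\otimes^{\mathbb{L}}_A\pi_0(A)\simeq\pi_0(B)$. I would do this by comparing the two via the canonical map $B\otimes^{\mathbb{L}}_A\pi_0(A)\to\pi_0(B)$ (induced by $B\to\pi_0(B)$ over $A\to\pi_0(A)$) and checking it on homotopy objects: the derived-strong hypothesis identifies $\pi_*(B)$ with the completed derived base change $\pi_0(B)\hat{\otimes}^{\mathbb{L}}_{\pi_0(A)}\pi_*(A)$, and substituting this into the base-change spectral sequence for $\pi_*\bigl(B\otimes^{\mathbb{L}}_A\pi_0(A)\bigr)$ collapses it, by associativity of the tensor product, onto $\pi_0(B)$ concentrated in degree $0$. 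Hence $B\otimes^{\mathbb{L}}_A\pi_0(A)\simeq\pi_0(B)$ and $\pi_0(A)\to\pi_0(B)$ is descendable.

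\medskip

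\textbf{Converse.} Write $f\colon A\to B$ and let $\mathrm{CB}_\bullet(f)$, with $\mathrm{CB}_k(f)=B^{\otimes^{\mathbb{L}}_A k+1}$, be the \v{C}ech conerve; the goal is to show $A\to\varprojlim_{[n]\in\Delta}\mathrm{CB}_\bullet(f)$ is an equivalence. The strategy is a d\'evissage along the Postnikov filtration of $A$, reducing to the discrete case. Since $\pi_0(A)\to\pi_0(B)$ is descendable, Proposition \ref{prop:descendability-descent} gives that every $\pi_0(A)$-module — in particular each $\pi_n(B)$, viewed as a discrete $\pi_0(A)$-module along $\pi_0(A)\to\pi_0(B)$ — satisfies \v{C}ech descent for $\pi_0(A)\to\pi_0(B)$, with the corresponding totalisation tower pro-constant. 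The transversality hypothesis is exactly what is needed to see the homotopy of the iterated tensor powers $B^{\otimes^{\mathbb{L}}_A k+1}$ in terms of these discrete data: levelwise, the base-change spectral sequences computing $\pi_*\bigl(B^{\otimes^{\mathbb{L}}_A k+1}\bigr)$ acquire no higher $\mathrm{Tor}$ contributions, so $\pi_*\bigl(B^{\otimes^{\mathbb{L}}_A k+1}\bigr)$ is assembled out of the $\pi_n(B)$'s and the underived tensor powers $(\pi_0 B)^{\otimes_{\pi_0 A}k+1}$. Feeding this into the Bousfield--Kan spectral sequence for $\varprojlim_\Delta\mathrm{CB}_\bullet(f)$, the descendability of $\pi_0(A)\to\pi_0(B)$ both identifies the abutment with $\pi_*(A)$ and supplies the strong (pro-constant) convergence; equivalently, the associated graded of the fibre of $A\to\varprojlim_\Delta\mathrm{CB}_\bullet(f)$ for the Postnikov filtration of $A$ is computed by augmented Amitsur complexes of $\pi_0(A)\to\pi_0(B)$, which are acyclic by descendability, so this fibre vanishes and $A\to\varprojlim_\Delta\mathrm{CB}_\bullet(f)$ is an equivalence.

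\medskip

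\textbf{Main obstacle.} The only non-formal point is the converse's claim that transversality of the \emph{individual} homotopy groups $\pi_n(B)$ against $\pi_0(B)$ over $\pi_0(A)$ propagates through the cobar construction so as to control $\pi_*\bigl(B^{\otimes^{\mathbb{L}}_A k+1}\bigr)$ for all $k$, and that this can then be combined with the quantitative (pro-constant) convergence of the $\pi_0$-level cobar tower. This is the phenomenon isolated in \cite{soor2023quasicoherent} for discrete algebras in homotopy transversal pairs, and the derived statement should follow by the same d\'evissage; making the bookkeeping precise — in particular verifying that the transversality condition imposed on the $\pi_n(B)$ is genuinely sufficient, rather than also needing transversality of the tensor powers $(\pi_0 B)^{\otimes_{\pi_0 A}k+1}$ — is where the real work lies.
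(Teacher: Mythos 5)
Your proposal is correct and follows essentially the same route as the paper: the forward direction is the base change of the descendable map along $A\to\pi_{0}(A)$ together with the identification $B\otimes^{\mathbb{L}}_{A}\pi_{0}(A)\simeq\pi_{0}(B)$ for derived strong maps (which the paper simply cites from \cite{ben2024perspective}*{Proposition 2.3.87} rather than reproving), and the converse applies descendability of $\pi_{0}(A)\to\pi_{0}(B)$ to the homotopy modules, uses transversality and strength to identify $\pi_{*}(B^{\otimes^{\mathbb{L}}_{A}n})$ with $\pi_{*}(A)\otimes^{\mathbb{L}}_{\pi_{0}(A)}\pi_{0}(B)^{\otimes^{\mathbb{L}}_{\pi_{0}(A)}n}$, and concludes by the limit spectral sequence. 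The bookkeeping issue you flag at the end is present in the paper's proof as well, which disposes of it with the same appeal to transversality plus derived strength.
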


\begin{proof}
Suppose $A\rightarrow B$ is descendable and derived strong. By \cite{ben2024perspective}*{Proposition 2.3.87} $\pi_{0}(A)\rightarrow\pi_{0}(B)$ is the base change of $A\rightarrow B$ along $A\rightarrow\pi_{0}(A)$, and is therefore descendable.

For the partial converse, by descendability on $\pi_{0}$ we have that
  $$\pi_{*}(A)\rightarrow \varprojlim_{n\in \Delta}\pi_{*}(A)\otimes^{\mathbb{L}}_{\pi_{0}(A)}\pi_{0}(B)^{\otimes^{\mathbb{L}}_{\pi_{0}(A)}n}$$
 is an equivalence. By transversality and strength we get
$$\pi_{*}(A)\rightarrow\varprojlim_{n\in \Delta}\pi_{*}(B^{\otimes_{A}^{\mathbb{L}}n})$$
  is an equivalence. An easy spectral sequence argument now proves the claim.\qedhere
   
   

\end{proof}

Derived strength is local for strong descendable morphisms. 

\begin{proposition}\label{prop:stronglocstrong}
    Let $f:A\rightarrow B$ be a map of connective algebras. Suppose that there is a descendable map $g:B\rightarrow C$ such that the map $g\circ f:A\rightarrow C$ is derived strong.
 Then $A\rightarrow B$ is derived strong.
\end{proposition}

\begin{proof}
   Note that by base change, $\pi_{0}(B)\rightarrow\pi_{0}(C)$ is descendable. Consider the natural map 
   $$\pi_{0}(B)\otimes^{\mathbb{L}}_{\pi_{0}(A)}\pi_{n}(A)\rightarrow\pi_{n}(B).$$
   We need to show that this is an equivalence. By descent, it suffices to show that 
   $$\pi_{0}(C)\otimes^{\mathbb{L}}_{\pi_{0}(B)}\pi_{0}(B)\otimes^{\mathbb{L}}_{\pi_{0}(A)}\pi_{n}(A)\cong\pi_{0}(C)\otimes^{\mathbb{L}}_{\pi_{0}(A)}\pi_{n}(A)\rightarrow\pi_{0}(C)\otimes^{\mathbb{L}}_{\pi_{0}(B)}\pi_{n}(B)\cong\pi_{n}(C)$$
   is an equivalence. But this follows from the fact that $A\rightarrow C$ is derived strong.
\end{proof}

\begin{corollary}\label{lem:derstrongconver}
    Let $\{\mathrm{Spec}(A_{i})\rightarrow\mathrm{Spec}(A)\}$ be a finite collection of homotopy epimorphisms of connective algebras such that
    \begin{enumerate}
        \item 
        each $A\rightarrow A_{i}$ is derived strong;
        \item 
        the map $\pi_{0}(A)\rightarrow\prod_{i}\pi_{0}(A_{i})$ is conservative.
    \end{enumerate}

    Then the map $A\rightarrow\prod_{i}A_{i}$ is descendable.
\end{corollary}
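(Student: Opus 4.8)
The plan is to combine the two preceding results, Lemma \ref{lem:hepi-descendable} and Lemma \ref{lem:derstrongdescendable}, in essentially the obvious way. Write $B \defeq \prod_i A_i$, which makes sense since the index set is finite. Condition (1) says each $A \to A_i$ is derived strong, and since derived strength is detected termwise and finite products commute with the relevant homotopy groups, the map $A \to B$ is derived strong as well (alternatively, one argues on each factor separately throughout). So it suffices to produce descendability of $A \to B$, and by Lemma \ref{lem:derstrongdescendable} it is enough to check that $\pi_0(A) \to \pi_0(B) = \prod_i \pi_0(A_i)$ is descendable together with the transversality hypothesis on the higher homotopy groups — but the cleanest route is to bypass the converse direction of that lemma and instead reduce directly to Lemma \ref{lem:hepi-descendable}.

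Concretely, first I would observe that each $A \to A_i$ is a homotopy epimorphism by hypothesis, so $\{\Spec(A_i) \to \Spec(A)\}$ is a finite collection of homotopy monomorphisms. By Lemma \ref{lem:hepi-descendable}, descendability of $A \to \prod_i A_i$ is equivalent to this collection being a homotopy monomorphism cover, i.e.\ to the existence of a finite $\mathcal J$ (here we may take $\mathcal J = \mathcal I$) such that $M \in \mathbf D(A)$ with $A_i \otimes_A^{\mathbb L} M \cong 0$ for all $i$ forces $M \cong 0$. Thus the entire problem reduces to verifying this conservativity statement at the level of derived categories. The hypothesis (2) only gives conservativity of $\pi_0(A) \to \prod_i \pi_0(A_i)$, so the work is in bootstrapping this to all of $\mathbf D(A)$.

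For that bootstrap I would use derived strength. Since $A \to A_i$ is derived strong, for any $M \in \mathbf D(A)$ there is a base-change identification relating $A_i \otimes_A^{\mathbb L} M$ to $\pi_0(A_i) \otimes_{\pi_0(A)}^{\mathbb L}(\text{something built from } \pi_*(M))$; more precisely, by \cite{ben2024perspective}*{Proposition 2.3.87} the map $\pi_0(A) \to \pi_0(A_i)$ is the base change of $A \to A_i$ along $A \to \pi_0(A)$, so $\pi_0(A_i) \otimes_{\pi_0(A)}^{\mathbb L} \pi_0(M) \cong \pi_0(A_i \otimes_A^{\mathbb L} M)$ when $M$ is connective, and an analogous statement on each homotopy group via the strength identification $\pi_0(A_i) \haotimes^{\mathbb L}_{\pi_0(A)} \pi_*(A) \cong \pi_*(A_i)$. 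Now suppose $A_i \otimes_A^{\mathbb L} M \cong 0$ for all $i$; I would argue that $\pi_0(A_i) \otimes_{\pi_0(A)}^{\mathbb L} \pi_n(M) \cong 0$ for all $n$ (reducing to the connective truncations of $M$ and running the spectral sequence of the skeletal filtration, exactly as in the proof of Lemma \ref{lem:derstrongdescendable}), hence each $\pi_n(M)$, being a $\pi_0(A)$-module killed by $- \otimes_{\pi_0(A)}^{\mathbb L} \pi_0(A_i)$ for all $i$, vanishes by conservativity of $\pi_0(A) \to \prod_i \pi_0(A_i)$ in the classical sense; therefore $M \cong 0$. This gives the homotopy monomorphism cover condition, and Lemma \ref{lem:hepi-descendable} then yields descendability of $A \to \prod_i A_i$.

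The main obstacle is the step passing from vanishing of the derived tensor products $A_i \otimes_A^{\mathbb L} M$ to vanishing of the classical tensor products $\pi_0(A_i) \otimes_{\pi_0(A)} \pi_n(M)$: one must be careful that the only input on higher homotopy is derived strength of the \emph{algebra} maps, not any flatness of $M$, so the argument has to proceed by truncating $M$ and inducting, using that each truncation's bottom homotopy group tensors correctly because derived strength makes $\pi_0(A) \to \pi_0(A_i)$ a genuine base change. Once one has the termwise vanishing, the reduction to the classical conservativity hypothesis (2) is formal, and invoking Lemma \ref{lem:hepi-descendable} closes the proof; transversality, which appeared in the converse half of Lemma \ref{lem:derstrongdescendable}, is not needed here precisely because homotopy epimorphisms already make the relevant tensor products collapse.
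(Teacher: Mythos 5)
Your overall route is genuinely different from the paper's. The paper's proof is two lines: since the $A\to A_i$ are finitely many homotopy epimorphisms, the tower of partial totalisations of the bar complex of $A\to\prod_i A_i$ is eventually constant (exactly as in the proof of Lemma \ref{lem:hepi-descendable}), so descendability reduces to the single convergence statement $A\simeq\varprojlim_{n}(\prod_i A_i)^{\otimes^{\mathbb{L}}_A n}$, which is the converse half of Lemma \ref{lem:derstrongdescendable}; the hypotheses of that converse (descendability of $\pi_0(A)\to\prod_i\pi_0(A_i)$ and transversality) are supplied by applying Lemma \ref{lem:hepi-descendable} to the discrete homotopy epimorphisms $\pi_0(A)\to\pi_0(A_i)$ together with derived strength. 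You instead apply Lemma \ref{lem:hepi-descendable} once, directly at the derived level, and shift all the work into lifting joint conservativity from $\pi_0$ to $\mathbf{D}(A)$. That is a legitimate alternative and would give a self-contained argument not routing through the spectral-sequence convergence step.

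However, the lifting step as written has a gap. From $A_i\otimes^{\mathbb{L}}_A M\simeq 0$ and the Postnikov fibre sequence $\tau_{\geq n_0+1}M\to M\to\Sigma^{n_0}\pi_{n_0}(M)$ (for $M$ bounded below with bottom homotopy in degree $n_0$) one only gets $A_i\otimes^{\mathbb{L}}_A\Sigma^{n_0}\pi_{n_0}(M)\simeq\Sigma\bigl(A_i\otimes^{\mathbb{L}}_A\tau_{\geq n_0+1}M\bigr)$, which is $(n_0+2)$-connective but by no means zero; the only thing forced to vanish is its bottom homotopy group, i.e.\ the \emph{underived} tensor $\pi_0(A_i)\otimes_{\pi_0(A)}\pi_{n_0}(M)$. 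So the intermediate claim that $\pi_0(A_i)\otimes^{\mathbb{L}}_{\pi_0(A)}\pi_n(M)\simeq 0$ for all $n$ is not what the truncation/spectral-sequence argument delivers, and without it you cannot invoke conservativity of $\pi_0(A)\to\prod_i\pi_0(A_i)$ (in the derived sense in which hypothesis (2) is used in the paper, e.g.\ in Lemma \ref{lem:Arun-top}) on each $\pi_n(M)$ separately. The repair is to avoid working homotopy-group-wise: set $N\defeq\pi_0(A)\otimes^{\mathbb{L}}_A M$ and observe by transitivity of base change that $\pi_0(A_i)\otimes^{\mathbb{L}}_{\pi_0(A)}N\simeq\pi_0(A_i)\otimes^{\mathbb{L}}_{A_i}\bigl(A_i\otimes^{\mathbb{L}}_A M\bigr)\simeq 0$ for every $i$; hypothesis (2) then gives $N\simeq 0$, and the standard Nakayama-type connectivity induction shows that $\pi_0(A)\otimes^{\mathbb{L}}_A(-)$ is conservative, so $M\simeq 0$. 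With that substitution your argument closes via Lemma \ref{lem:hepi-descendable} as you intended.
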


\begin{proof}
    The descendability tower is in fact eventually constant. We just need to check it converges to $A$. But this follows from Lemma \ref{lem:derstrongdescendable}.
\end{proof}

\subsubsection{The \'{e}tale and $G$-topologies}

We now define topologies that will be relevant for analytic geometry. 

\begin{definition}[\cite{ben2024perspective} Definition 8.2.23]
    Let $\mathrm{T}$ be a $\underline{\Lambda}$-generating class of algebras of homotopy polynomial type. The finite $G-T$-\textit{topology} (or $\mathrm{T}$-\textit{rational topology}) is the topology consisting of covers $\{\mathrm{Spec}(B_{i})\rightarrow\mathrm{Spec}(A)\}$ in the homotopy monomorphism topology such that each map $A\rightarrow B_{i}$ is a $\mathrm{T}$-rational localisation.  We denote this topology by \(\tau^{rat}\). 
\end{definition}

\begin{definition}
     Let $\mathrm{T}$ be a $\underline{\Lambda}$-generating class of algebras of homotopy polynomial type. A map $f:A\rightarrow B$ in $\mathbf{DAlg}^{cn}(\mathbf{C})$ is said to be $\mathrm{T}$-\textit{\'{e}tale} if there is a finite $G-T$ cover $\{\mathsf{Spec}(B_{i})\rightarrow \mathsf{Spec}(B)\}$ such that each composite map $A\rightarrow B\rightarrow B_{i}$ is $\mathrm{T}$-standard \'{e}tale. 
\end{definition}

\begin{definition}[\cite{ben2024perspective}*{Definition 8.2.25}]\label{def:\'etale-abstract}
A collection $\{\mathrm{Spec}(B_i)\rightarrow\mathrm{Spec}(A)\}_{i=0}^n$ in \(\mathbf{Aff}_\bC\) is said to be a $\mathrm{T}$-\textit{descendable \'{e}tale pre-cover} if each map $A\rightarrow B_i$ is
\begin{enumerate}
    \item 
    $\mathrm{T}$-\'{e}tale, 
    \item 
    and descendable.
\end{enumerate}
\end{definition}

Denote the collection of morphisms in a \(\mathrm{T}\)-\'etale cover above by \(\tau^{pre}_{desc-et}\). 

\begin{proposition}\label{prop:etale-top}
The collection \(\tau^{pre}_{desc-et}\) defines a Grothendieck pre-topology on \(\mathbf{Aff}_\bC^{cn}\).
\end{proposition}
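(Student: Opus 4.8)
The plan is to verify the three standard axioms for a Grothendieck pre-topology — stability of covers under base change (pullback), the local character / transitivity (composition of covers is a cover), and the condition that isomorphisms are covers — for the collection $\tau^{pre}_{desc-et}$ of $\mathrm{T}$-descendable étale pre-covers. Since a cover consists of maps that are simultaneously $\mathrm{T}$-étale and descendable, the strategy is to check each axiom separately for the two defining properties and then combine them. Isomorphisms are trivially descendable (the bar complex is constant already at stage zero) and trivially $\mathrm{T}$-étale, so the third axiom is immediate.

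For \textbf{stability under base change}, suppose $\{\mathrm{Spec}(B_i)\to\mathrm{Spec}(A)\}$ is a $\mathrm{T}$-descendable étale pre-cover and $\mathrm{Spec}(A')\to\mathrm{Spec}(A)$ is an arbitrary map; I must show $\{\mathrm{Spec}(A'\otimes^{\mathbb{L}}_A B_i)\to\mathrm{Spec}(A')\}$ is again such a pre-cover. Descendability is preserved under base change because $A'\otimes^{\mathbb{L}}_A(-)$ is a symmetric monoidal functor $\mathbf{D}(A)\to\mathbf{D}(A')$, so Proposition \cite{mathew2016galois}*{Corollary 3.21} (recorded above) applies directly: the augmented bar tower $T^n_{aug}$ base changes, and pro-zero is preserved. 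For $\mathrm{T}$-étaleness, I would argue that $\mathrm{T}$-standard étale maps are stable under base change — the Jacobian condition in Proposition \ref{prop:Tstandard} is preserved because $A\otimes^{\mathbb{L}}\mathrm{T}(\lambda_1,\ldots,\lambda_n)\slash\slash(f_1,\ldots,f_n)$ is built by a homotopy pushout, and pushouts compose with pushouts — and then that being $\mathrm{T}$-étale (existence of a finite $G$-$\mathrm{T}$ cover making things $\mathrm{T}$-standard étale) is likewise stable, using that the finite homotopy monomorphism topology $\tau^{rat}$ is itself stable under base change (implicit in Corollary \ref{cor:QCoh-descent} and the cited results of \cite{ben2024perspective}).

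For \textbf{transitivity}, suppose $\{\mathrm{Spec}(B_i)\to\mathrm{Spec}(A)\}_{i}$ is a pre-cover and for each $i$ we have a pre-cover $\{\mathrm{Spec}(C_{ij})\to\mathrm{Spec}(B_i)\}_j$; I must show the composite family $\{\mathrm{Spec}(C_{ij})\to\mathrm{Spec}(A)\}$ is a pre-cover. Descendability of each composite $A\to B_i\to C_{ij}$ follows from Proposition \cite{mathew2016galois}*{Proposition 3.23}(1): descendable maps are closed under composition. That $A\to C_{ij}$ is $\mathrm{T}$-étale follows since $\mathrm{T}$-étale maps are closed under composition — one refines the given finite $G$-$\mathrm{T}$ cover of $C_{ij}$ over $B_i$ and the one of $B_i$ over $A$, using that $\mathrm{T}$-rational localisations and $\mathrm{T}$-standard étale maps compose appropriately (this is essentially the content of how $\mathrm{T}$-étale maps are set up in \cite{ben2024perspective}, and I would cite the relevant closure lemma there, or spell out the refinement: pull the $G$-$\mathrm{T}$ cover of $B_i$ back along $B_i\to C_{ij}$ and compose with the $G$-$\mathrm{T}$ cover of $C_{ij}$, and $G$-$\mathrm{T}$ covers are transitive since the homotopy monomorphism topology is and $\mathrm{T}$-rational localisations compose to $\mathrm{T}$-rational localisations after reindexing radii).

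\textbf{The main obstacle} I anticipate is the bookkeeping in the transitivity argument for $\mathrm{T}$-étaleness: unwinding the definition, one must compose a finite $G$-$\mathrm{T}$ cover with another and check that $\mathrm{T}$-standard étale morphisms compose to something admitting a $G$-$\mathrm{T}$ refinement into $\mathrm{T}$-standard étale maps, which requires the compatibility of the Jacobian criterion (Proposition \ref{prop:Tstandard}) under composition and the transitivity of the $\mathrm{T}$-rational topology $\tau^{rat}$. The descendability half, by contrast, is essentially formal given the three cited results of Mathew. If the paper has already established (in \cite{ben2024perspective}) that $\mathrm{T}$-étale maps form a class stable under base change and composition, then this proposition reduces to intersecting that class with the descendable class and invoking the Mathew stability/composition results — in which case the proof is short, and the only real work is verifying that the intersection of two pre-topologies (on the same category, here $\tau^{rat}$-refined $\mathrm{T}$-étale maps and descendable maps) is again a pre-topology, which is a general and routine fact.
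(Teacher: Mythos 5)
Your proposal is correct and follows essentially the same route as the paper: the paper's proof simply cites the closure of $\mathrm{T}$-\'{e}tale maps under isomorphisms, compositions and pushouts (\cite{ben2024perspective}*{Proposition 6.1.4}) together with Mathew's closure results for descendable maps, which is exactly the reduction you anticipate in your final paragraph. The only ingredient the paper records that you omit is the observation that the forgetful functor $\theta$ commutes with small colimits, so that pushouts in $\mathbf{DAlg}(\mathbf{C})$ agree with those in $\mathbf{Comm}(\mathbf{C})$, where descendability is defined.
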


\begin{proof}
That $\mathrm{T}$-\'{e}tale maps are closed under isomorphisms, compositions, and pushouts is \cite{ben2024perspective}*{Proposition 6.1.4}.

\comment{
We first show that \(T\)-\'etale maps are closed under isomorphisms, compositions and pushouts. It is easy to see that isomorphisms are \(T\)-\'etale. To see closure under compositions, let \(f \colon A \to B\) and \(g \colon B \to C\) be \(T\)-\'etale maps. Then there are finite \(G\)-\(T\)-\'etale covers \(\{B \to B_i\}\) and \(\{C \to C_j\}\) such that each composites \(A \to B \to B_i\) and \(B \to C \to C_j\) are \(T\)-standard \'etale. Since \(T\)-rational localisations are closed under pushouts, we get a finite \(G\)-\(T\)-\'etale cover \(\{C \to C_j \otimes_B B_i\}\). And since the maps \(B \to C_j\) and \(A \to B_i\) are \(T\)-standard \'etale, each composition \(A \to B \to B_i \otimes_B C_j\) is \(T\)-standard \'etale. For pushouts, consider \(T\)-\'etale maps \(A \to C\) and \(A \to B\). Then there is a finite \(G\)-\(T\)-cover \(\{B \to B_i\}\) such that each \(B \to B_i\) is a \(T\)-rational localisation and \(A \to B \to B_i\) is \(T\)-standard \'etale. Since \(T\)-rational localisations are closed under pushouts, the map \(C \otimes_A B \to C \otimes_A B_i\) is a \(T\)-rational localisation. A simple computation then yields that \(C \to C \otimes_A \to B \to C \otimes_A B_i\) is a \(T\)-standard \'etale map. Symmetrically, the map \(B \to B \otimes_A C\) is a \(T\)-standard \'etale map, as required. }By \cite{mathew2016galois}*{Proposition 3.24, Corollary 3.21}, descendable maps are closed under isomorphisms, compositions and pushouts. The result now follows from the observation that \(\theta\) commutes with small colimits.   
\end{proof}

We shall from here on refer to the Grothendieck topology generated by descendable \'{e}tale covers by \(\tau^{desc-et}\) as the \emph{\'{e}tale topology}. Note that since descendability is not a local property, covers in $\tau^{desc-et}$ may not be descendable. However they will still satisfy descent for $\mathbf{QCoh}$, as we shall later see. \comment{Again, as a consequence of Proposition \ref{prop:descendability-descent}, we have the following:

\begin{corollary}\label{cor:QCoh-etale-descent}
The functor \(\mathbf{QCoh} \colon (\mathbf{Aff}_\bC^{cn})^\op \to \mathbf{CAlg}(\mathbf{Pr}_{st}^L)\) satisfies descent with respect to the \'etale topology. 
\end{corollary}

\begin{proof}
It suffices to show that \(T\)-standard \'etale descendable covers satisfy descent for quasi-coherent sheaves.  Consider a \(T\)-standard \'etale cover \(\{\mathsf{Spec}(B_i) \to \mathsf{Spec}(A)\}_{i=0}^n\), where each map \(A \to B_i\) is descendable. Then the hypotheses ensure that the total map \(A \to \prod_{i=0}^n B_i = B\) is descendable. Now use Proposition \ref{prop:descendability-descent}.
\end{proof}}

Finally we define \textit{smooth maps}. 

\begin{definition}
    A morphism $g:\mathrm{Spec}(B)\rightarrow \mathrm{Spec}(A)$ in $\mathbf{Aff}^{cn}_{\mathbf{C}}$ is said to be \textit{standard} $\mathrm{T}$-\textit{smooth} (resp. \textit{standard open} $\mathrm{T}$-\textit{smooth}) if the corresponding morphism $f:A\rightarrow B$ factors as 
    $$A\rightarrow A\otimes^{\mathbb{L}}T(\lambda_{1},\ldots,\lambda_{k})\rightarrow B,$$
    where $A\otimes^{\mathbb{L}}T(\lambda_{1},\ldots,\lambda_{k})\rightarrow B$ is $\mathrm{T}$-standard \'{e}tale (resp. $\mathrm{T}$-rational embedding). $g$ is said to be $\mathrm{T}$-\textit{smooth} (resp. $\mathrm{T}$-\textit{open smooth}) if there is an \'{e}tale (resp. rational) cover $\{\mathrm{Spec}(B_{i})\rightarrow\mathrm{Spec}(B)\}$ such that each composite $\mathrm{Spec}(B_{i})\rightarrow\mathrm{Spec}(B)$ is standard $\mathrm{T}$-smooth (resp. standard open $\mathrm{T}$-smooth).
\end{definition}

We denote the class of $\mathrm{T}$-smooth maps by $\mathbf{P}^{sm}$, and open $\mathrm{T}$-smooth maps by $\mathbf{P}^{o-sm}$.

\comment{\subsection{$\mathcal{F}$-descent topologies}

Let $\mathbf{D},\mathbf{E}$ be presentable $(\infty,1)$-categories with $\mathbf{D}$ being small, $\tau$ be a topology on $\mathbf{D}$, and let $\mathcal{F}\in\mathbf{PreShv}(\mathbf{D};\mathbf{E})$ be a $\mathbf{E}$-valued presheaf. When $\mathbf{E}=\mathbf{An}$ we just write $\mathbf{PreShv}(\mathbf{D})$. Note that $\mathcal{F}:\mathbf{D}^{op}\rightarrow\mathbf{E}$ extends by colimits to a functor
$$\mathbf{PreShv}(\mathbf{D})\rightarrow\mathbf{PreShv}(\mathbf{D};\mathbf{E})$$

Let $f:\mathcal{X}\rightarrow\mathcal{Z}$ be a map in $\mathbf{PreShv}(\mathbf{D})$. We say that $f$ \textit{satisfies} $\mathcal{F}$-\textit{descent} if 
the map
$$\mathcal{F}(\mathcal{Z})\rightarrow\lim_{n}\mathcal{F}(\mathcal{X}^{\times_{\mathcal{Z}n}})$$
is an isomorphism.

We say that $f$ \textit{satisfies universal} $\mathcal{F}$-\textit{descent} if for any affine $U\rightarrow\mathcal{Z}$, the map
$$f':U\times_{\mathcal{Z}}\mathcal{X}\rightarrow U$$
satisfies $\mathcal{F}$-descent.

\begin{proposition}
    If $f:\mathcal{X}\rightarrow\mathcal{Z}$ satisfies universal $\mathcal{F}$-descent then for any map $\mathcal{Y}\rightarrow\mathcal{Z}$ the map 
    $$\mathcal{X}\times_{\mathcal{Z}}\mathcal{Y}\rightarrow\mathcal{Y}$$
    satisfies $\mathcal{F}$-descent. In particular $f$ satisfies $\mathcal{F}$-descent.
\end{proposition}

\begin{proof}
    Write $$\mathcal{Y}\cong\mathrm{colim}_{\mathbf{D}_{\big\slash\mathcal{Y}}}U$$
    By universality of colimits,
    $$\mathcal{X}\times_{\mathcal{Z}}\mathcal{Y}\cong\mathrm{colim}_{\mathbf{D}_{\big\slash\mathcal{Y}}}U\times_{\mathcal{Z}}\mathcal{X}$$
    Now we have
    \begin{align*}
        \mathcal{F}(U)&\cong\lim _{n}\mathcal{F}((U\times_{\mathcal{{Z}}}\mathcal{X})^{\times_{U}n})\\
        &\cong\lim _{n}\mathcal{F}(U\times_{\mathcal{{Z}}}\mathcal{X}^{\times_{Z}n})
    \end{align*}
    So
    \begin{align*}
        \mathcal{F}(\mathcal{Y})&\cong\lim_{\mathbf{D}_{\big\slash\mathcal{Y}}}\mathcal{F}(U)\\
&\cong\lim_{\mathbf{D}_{\big\slash\mathcal{Y}}}\lim _{n}\mathcal{F}(U\times_{\mathcal{{Z}}}\mathcal{X}^{\times_{Z}n})\\
        &\cong\lim _{n}\lim_{\mathbf{D}_{\big\slash\mathcal{Y}}}\mathcal{F}(U\times_{\mathcal{{Z}}}\mathcal{X}^{\times_{Z}n})\\
&\cong\lim_{n}\mathcal{F}(\mathcal{Y}\times_{\mathcal{Z}}(\mathcal{X}^{\times_{\mathcal{Z}n}}))\\
&\cong\lim_{n}\mathcal{F}((\mathcal{Y}\times_{\mathcal{Z}}\mathcal{X})^{\times_{\mathcal{Y}n}})
    \end{align*}
    as required.
\end{proof}

We say that a square 
\begin{displaymath}
\xymatrix{
\mathcal{X}\times_{\mathcal{Z}}\mathcal{Y}\ar[r]\ar[d]  & \mathcal{Y}\ar[d]^{p}\\
\mathcal{X}\ar[r]^{e} & \mathcal{Z}
}
\end{displaymath}

is a \textit{(universal)} $\mathcal{F}$-\textit{descent square} if the map
$$\mathcal{X}\coprod\mathcal{Y}\rightarrow\mathcal{Z}$$
is a (universal) $\mathcal{F}$-descent morphism.

Denote by $\chi^{\mathcal{F}-desc}$ the class of $\mathcal{F}$-descent squares. Then $R(\chi^{\mathcal{F}-desc})$ consists of universal $\mathcal{F}$-descent squares such that $e$ is a monomorphism. 

Let $\tau^{\mathcal{F}-cd}$ denote the topology generated by the cd-structure $R(\chi^{\mathcal{F}-desc})$.

\begin{lemma}
Let $\mathbf{D}$ be equipped with a topology $\tau$.
    Let $\mathbf{E}=\mathbf{Cat}^{L}$ be the category of $(\infty,1)$-categories with maps being left adjoint functors, and let $\mathbf{Q}\in\mathbf{PreShv}(\mathbf{D};\mathbf{Cat}^{L})$ satisfy descent for $\tau$. Let $f:\mathcal{X}\rightarrow\mathcal{Y}$ and $g:\mathcal{Y}\rightarrow\mathcal{Z}$ be maps in $\mathbf{Shv}(\mathbf{D})$ with $f$ being an effective epimorphism of stacks. Then $g\circ f$ satisfies $\mathbf{Q}$-descent if and only if $g$ satisfies $\mathbf{Q}$-descent.
\end{lemma}

\begin{proof}
    Since $f:\mathcal{X}\rightarrow\mathcal{Y}$ is an effective epimorphism in the topos of sheaves, we have 
    $$\overline{\mathrm{im}}(g\circ f)\cong\overline{\mathrm{im}}(g)$$
    for any $g$, where $\overline{\mathrm{im}}(h)$ denotes the image of $h$ in the topos of sheaves. Since $\mathbf{Q}$ satisfies descent for $\tau$ we have
    $$\mathbf{Q}(\overline{\mathrm{im}}(h))\cong\mathbf{Q}(\mathrm{im}(h))$$
    Thus to verify that a map of sheaves 
    $$h:\mathcal{V}\rightarrow\mathcal{W}$$ 
    satisfies descent it suffices to prove that the map
    $$\mathbf{Q}(\mathcal{W})\rightarrow\mathbf{Q}(\overline{\mathrm{im}}(h))$$
    is an equivalence. Now by the above we have 
      $$\mathbf{Q}(\overline{\mathrm{im}}(g\circ f))\cong\mathbf{Q}(\overline{\mathrm{im}}(g))$$
      so clearly 
       $$\mathbf{Q}(\mathcal{Z})\rightarrow\mathbf{Q}(\overline{\mathrm{im}}(g))$$
       is an equivalence if and only if 
          $$\mathbf{Q}(\mathcal{Z})\rightarrow\mathbf{Q}(\overline{\mathrm{im}}(g\circ f))$$
          is.
\end{proof}

The main purpose of this result is the following situation. We know that some $\mathbf{Q}$ satisfies descent for class of covers of affines by affines, and we wish to `globalise' this. That is we consider representable maps of stacks $\mathcal{X}\rightarrow\mathcal{Y}$ such that for any map $U\rightarrow\mathcal{Y}$ with $U$ affine, $\mathcal{X}\times_{\mathcal{Y}}U$ admits an atlas $\{U_{i}\rightarrow\mathcal{X}\times_{\mathcal{Y}}U\}$ such that the composite cover $\{U_{i}\rightarrow U\}$ is in a class for which we know $\mathbf{Q}$ satisfies descent. Now $\coprod_{i}U_{i}\rightarrow\mathcal{X}\times_{\mathcal{Y}}U$ is an effective epimorphism of stacks. It then follows that $\mathbf{Q}$ satisfies descent for $\mathcal{X}\times_{\mathcal{Y}}U\rightarrow U$. Thus $\mathcal{X}\rightarrow\mathcal{Y}$ satisfies universal $\mathbf{Q}$-descent. 

\textcolor{red}{Complete?}

\subsubsection{Descent and rigidification}

Let $\mathbf{D}$ be equipped with a topology $\tau$.
    Let $\mathbf{E}=\mathbf{Cat}^{L}$ be the category of $(\infty,1)$-categories with maps being left adjoint functors, and let $\mathbf{Q}\in\mathbf{PreShv}(\mathbf{D};\mathbf{Cat}^{L})$ satisfy descent for $\tau$. Suppose further that each $\mathbf{Q}(U)$ is a dualisable category. Finally, we let $\mathbf{E}'$ denote the full subcategory $(\mathbf{Pr}^{L})^{dlb}$ of dualisable categories.

\textcolor{red}{Complete?}}

\subsection{Globalisation and geometric stacks}

Let \(\bC\) be a derived algebraic context and \(\mathbf{Aff}_\bC\) the opposite category of connective derived commutative algebra objects in \(\bC\), equipped with a Grothendieck topology \(\tau\). Denote by \(\mathbf{PSh}(\bC)\) the functor \(\infty\)-category \(\mathbf{Fun}(\mathbf{Aff}_\bC^\op, \mathbf{An})\) of \emph{prestacks on \(\bC\)} with values in anima. We say that a prestack \(\mathcal{F} \in \mathbf{PSh}(\bC)\) is a \emph{stack relative to \((\bC,\tau)\)} if it satisfies descent for covers in \(\tau\). For a derived commutative algebra object \(A \in \mathbf{DAlg}^{cn}(\bC)\), the prestack \[\mathbf{QCoh} \colon A \mapsto \mathbf{Mod}_A(\bC)\] satisfies Cech descent (that is, descent for Cech hypercovers) for the descendable topology and its refinements. 

\comment{In general, this does not imply hyperdescent for quasi-coherent sheaves. We do however have hyperdescent under conditions which we establish below. The credit for the following result belongs to Lucas Mann (\cite{mann2022p}*{Theorem 3.1.27}), who proves it in the case of solid quasi-coherent sheaves on perfectoid spaces, with the topology being the $v$-topology. However the proof works identically in the more general context we have in mind.

\begin{theorem}[\cite{mann2022p}*{Theorem 3.1.27}]\label{thm:hyperdescent}
Let \(\tau\) be a Grothendieck topology on \(\mathbf{Aff}_{\bC} = \mathbf{DAlg}^{cn}(\bC)^\op\) that is generated from a pre-topology $\tau'$. Suppose there is an $n\ge0$ such that all morphisms in any covering family in $\tau'$ have finite Tor-dimension $j$.  Then \(\mathbf{QCoh} \colon \mathbf{Aff}_\bC^\op \to \mathbf{CAlg}(\mathbf{Pr}_{st}^L)\) satisfies hyperdescent if it satisfies Cech descent. 
\end{theorem}

\begin{proof}
 By \cite{mann2022p}*{A.3.21} it suffices to show that if \(\mathbf{QCoh}\) satisfies descent, then under the hypotheses of the statement of the Theorem, for a hypercover \(U_\bullet \to X\), the canonical map \(\bD(X) \to \varprojlim_{[n] \in \Delta} \bD(U_n)\) is fully faithful.

Let $\mathrm{Spec}(B^{\bullet})=Y_{\bullet}\rightarrow X=\mathrm{Spec}(A)$ be a hypercover. Since each map $A\rightarrow B^{n}$ is of finite Tor-dimension, the functor $B^{n}\otimes^{\mathbb{L}}_{A}(-)$ commutes with Postnikov limits. Totalisation commutes with all limits. Hence in showing that map
$$M\rightarrow\varprojlim_{[n] \in \Delta} |B^{n}\otimes^{\mathbb{L}}_{A}M|$$
is an equivalence, we may assume that $M$ is concentrated in degrees $\le 0$. 

Note moreover that if $M$ is concentrated in degrees $\le0$, then each $B^{n}\otimes^{\mathbb{L}}_{A}M$ is concentrated in degrees $\le n$. Thus
$$\pi_{k}\mathrm{Tot}(M^{\bullet})\cong\pi_{k}\mathrm{Tot}_{m}(M^{\bullet})$$
for $k>-m+n$.
where $\mathrm{Tot}_{m}$ denotes the $m$th piece of the totalisation filtration.

Fix $n_{0}\ge0$ and let $\tilde{Y}_{n}=cosk_{n_{0}}Y_{0}=\mathrm{Spec}(\tilde{B}^{\bullet})$. Write $\tilde{M}^{\bullet}=\tilde{B}^{\bullet}\otimes^{\mathbb{L}}_{A}M$. Note that for $m\le n_{0}$ we have $\tilde{Y}_{m}=Y_{m}$ and hence $\tilde{M}^{m}=M^{m}$. Now by \v{C}ech descent and \cite{lurie2009higher}*{Lemma 6.5.3.9} we have that 
$$M^{\bullet}\rightarrow\mathrm{Tot}(\tilde{M}^{\bullet})$$
is an equivalence. Now again for $k>-n_{0}+n$ we have
$$\pi_{k}\mathrm{Tot}(M^{\bullet})=\pi_{k}\mathrm{Tot}_{n_{0}}(M^{\bullet})\cong\pi_{k}\mathrm{Tot}_{n_{0}}(\tilde{M}^{\bullet})\cong\pi_{k}\mathrm{Tot}(\tilde{M}^{\bullet})\cong\pi_{k}M,$$
and this completes the proof.
\end{proof}

}

The \(\infty\)-category \(\mathbf{Stk}(\bC, \tau)\) of stacks relative to \((\bC,\tau)\) is a reflective subcategory of \(\mathbf{PSh}(\bC)\), where the left adjoint is the \emph{stackification} functor. 

For our applications to derived analytic geometry, we keep track of two additional data: a full subcategory \(\bA \subset \mathbf{Aff}_{\bC}\) of affines; a distinguished collection \(\mathbf{P}\) of morphisms formally playing the role of smooth maps. This is packaged into the following:

\begin{definition}\label{def:relative-geometry-tuple}
A \emph{relative pre-geometry tuple} is a tuple \((\mathbf{Aff}_\bC, \tau, \mathbf{P}, \bA)\) consisting of a Grothendieck topology on derived commutative algebra objects in a derived algebraic context; a full subcategory \(\bA \subset \mathbf{Aff}_\bC\) and a class of morphisms \(\mathbf{P}\) satisfying:

\begin{enumerate}
\item if \(\{\mathsf{Spec}(B_i) \to \mathsf{Spec}(A)\}\) is a cover in \(\tau\), then each \(\mathsf{Spec}(B_i) \to \mathsf{Spec}(A) \in \mathbf{P}\);
\item if \(f \colon \mathsf{Spec}(B) \to \mathsf{Spec}(A)\) is a map in \(\mathbf{Aff}_\bC\) and \(\{\mathsf{Spec}(B_i) \to \mathsf{Spec}(B)\}\) is a \(\tau\)-covering family such that each composition \(\mathsf{Spec}(B_i) \to \mathsf{Spec}(A) \in \mathbf{P}\), then \(f \in \mathbf{P}\);
\item if \(f \colon \mathsf{Spec}(C) \to \mathsf{Spec}(A) \in \mathbf{P} \cap \bA\) and \(\mathsf{Spec}(B) \to \mathsf{Spec}(A)\) is any map with \(\mathsf{Spec}(B) \in \bA\), then the pullback \(\mathsf{Spec}(B) \times_{\mathsf{Spec}(A)} \mathsf{Spec}(C) \in \bA\).    
\end{enumerate}

A relative pre-geometry tuple is said to be \emph{strong} if whenever \(\{\mathsf{Spec}(A_i) \to \mathsf{Spec}(A)\}\) is a cover in \(\tau\) (not necessarily in \(\bA\)), and \(\mathsf{Spec}(B) \to \mathsf{Spec}(A)\) is a morphism in \(\mathsf{Spec}(B) \in \bA\), then \(\{\mathsf{Spec}(B \otimes_A A_i) \to \mathsf{Spec}(A)\}\) is a cover in \(\tau_{\vert \bA}\).  
\end{definition}

\begin{remark}\label{rem:forcestrong}
If a relative pre-geometry tuple is not strong, we may force this by defining \(\mathbf{P}_\bA\) to be the collection of all \(f \colon \mathsf{Spec}(B) \to \mathsf{Spec}(A) \in \mathbf{P}\) such that if \(\mathsf{Spec}(C) \to \mathsf{Spec}(A)\) is a map with \(\mathsf{Spec}(C) \in \bA\), then \(\mathsf{Spec}(C \otimes_A B) \in \bA\), and \(\tau_\bA\) the class of all covers in \(\tau\) such that each morphism belongs to \(\mathbf{P}_\bA\). Then \((\mathbf{Aff}_\bC, \tau_\bA, \mathbf{P}_\bA, \bA)\) is a strong relative pre-geometry tuple. 
\end{remark}

\begin{definition}\label{def:admissible}
Let \((\mathbf{Aff}_\bC, \tau, \mathbf{P}, \bA)\) be a relative pre-geometry tuple. We call an object \(X =\mathsf{Spec}(A) \in \mathbf{Aff}_\bC\) \emph{\(\bA\)-admissible} if the representable functor \[\mathbf{Map}_{\mathbf{Aff}_\bC}(-,X) \colon \bA^\op \to \mathbf{An}\] is a stack for the restriction of the topology \(\tau\) to \(\bA\). A relative pre-geometry tuple is called a \emph{relative geometry tuple} if every object of \(\bA\) is \(\bA\)-admissible.  
\end{definition}

\comment{The restriction to a full subcategory \(\bA\) of objects in \(\mathbf{Aff}_\bC\) necessitates that we isolate a convenient collection of full subcategories of \(\mathbf{QCoh}_{\vert \bA}\) - this is required to globalise cotangent complexes to stacks and prove descent for subcategories we will use to define \(K\)-theory.

\begin{definition}\label{def:good-system}
A \emph{good system of modules on \(\bA\)} is an assignment \(\mathbf{M}\) to each \(\mathsf{Spec}(A) \in \bA\), a full subcategory \(\mathbf{M}_A \subseteq \bD(A)\) satisfying the following conditions:
\begin{itemize}
\item \(A \in \mathbf{M}_A\) for each \(\mathsf{Spec}(A) \in \bA\);
\item the contangent complex \(\mathbb{L}_A \in \mathbf{M}_A\) for each \(\mathsf{Spec}(A) \in \bA\);
\item \(\mathbf{M}_A\) is closed under isomorphisms, finite colimits and retracts;
\item the functor \(\mathbf{M} \colon A \mapsto \mathbf{M}_A\) satisfies weak Cech descent;
\item for any \(f \colon \mathsf{Spec}(C) \to \mathsf{Spec}(B)\), we have \(C \otimes_B M \in \mathbf{M}_C\) for all \(M \in \mathbf{M}_B\).  
\end{itemize}

\end{definition}}

Now consider a relative geometry tuple \((\mathbf{Aff}_\bC, \tau, \mathbf{P}, \bA)\). Denote by \(\mathbf{PSh}(\bA)\) and \(\mathbf{Stk}(\bA,\tau_{\vert \bA})\) the \(\infty\)-categories of prestacks and stacks on \(\bA^\op\). The inclusion functor \(i \colon \bA^\op  \to \mathbf{Aff}_\bC^\op\) induces a restriction-left Kan extension adjoint pair 

\[i^* \colon \mathbf{PSh}(\bC) \rightleftarrows \mathbf{PSh}(\bA) \colon i_{!}.\] The restriction functor \(\mathbf{Stk}(\bC,\tau) \rightleftarrows \mathbf{Stk}(\bA, \tau_{\vert \bA})\) also has a left adjoint given by the composition of the stackification functor with \(i\). 

\comment{

The following is \cite{savage2024representability}*{Proposition 2.3.11}. Our statement is a minor variation, as the proof of the statement in fact uses less than what is required by the corresponding statement in loc. cit.

\begin{proposition}\label{prop:locallystack}
Suppose that whenever 
    $$\{\mathrm{Spec}(A_{i})\rightarrow\mathrm{Spec}(A)\}_{i\in\mathcal{I}}$$
    is a finite cover in $\tau$ with each $\mathrm{Spec}(A_{i_{1}}\otimes^{\mathbb{L}}_{A}\ldots\otimes^{\mathbb{L}}_{A}A_{i_{n}})\in\mathbf{A}$, then $\mathrm{Spec}(A)\in\mathbf{A}$.
 Let $\mathcal{X}\rightarrow\mathrm{Spec}(C)$ be a morphism with $\mathrm{Spec}(C)\in\mathbf{A}$. Suppose there exists a cover $\{\mathrm{Spec}(B_{i})\rightarrow\mathrm{Spec}(C)\}$  in $\tau$ such that each $\mathcal{X}\times_{\mathrm{Spec}(C)}\mathrm{Spec}(B_{i})$ is in $\mathbf{A}$. Then $\mathcal{X}\in\mathbf{A}$.
\end{proposition}

\begin{definition}
    Let $\mathbf{G}=(\mathbf{Aff}_{\mathbf{C}},\tau,\mathbf{P},\mathbf{A})$ be a strong relative pre-geometry tuple. We say that $\mathbf{G}$ is \textit{tempered} 
\end{definition}

}

\begin{lemma}\cite{kelly2022analytic}\label{lem:subgeom}
Let \((\mathbf{Aff}_\bC, \tau, \mathbf{P}, \bA)\) be a strong relative pre-geometry tuple. Then the functor \(i_{!} \colon \mathbf{Stk}(\bA, \tau_{\vert A}) \to \mathbf{Stk}(\bC, \tau)\) is fully faithful.  
\end{lemma}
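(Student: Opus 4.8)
The plan is to deduce full faithfulness of the left adjoint $i_!$ from the fact that its unit $\mathrm{id}\to i^*i_!$ is an equivalence on $\mathbf{Stk}(\bA,\tau_{\vert\bA})$, bootstrapping from the corresponding statement for prestacks.

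First I would assemble the formal inputs. Because $\bA\subseteq\mathbf{Aff}_\bC$ is a \emph{full} subcategory, the prestack-level left Kan extension $i_!\colon\mathbf{PSh}(\bA)\to\mathbf{PSh}(\bC)$ is already fully faithful, so the unit $\mathrm{id}\to i^*i_!$ is an equivalence on $\mathbf{PSh}(\bA)$. Next, restriction $i^*$ carries $\tau$-stacks to $\tau_{\vert\bA}$-stacks: a $\tau_{\vert\bA}$-cover is a fortiori a $\tau$-cover, and, applying axioms (1) and (3) of Definition \ref{def:relative-geometry-tuple} inductively, the \v{C}ech nerve of a $\tau_{\vert\bA}$-cover formed in $\mathbf{Aff}_\bC$ already lies in $\bA$, so $\tau_{\vert\bA}$-descent for $i^*\mathcal F$ is literally $\tau$-descent for $\mathcal F$ along that cover. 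Hence $i^*$ restricts to a functor $\mathbf{Stk}(\bC,\tau)\to\mathbf{Stk}(\bA,\tau_{\vert\bA})$ whose left adjoint is the composite of the prestack-level $i_!$ with $\tau$-stackification $L$; this composite is the functor $i_!$ of the statement. For $\mathcal F\in\mathbf{Stk}(\bA,\tau_{\vert\bA})$ the stack-level unit then factors as $\mathcal F\simeq i^*i_!\mathcal F\to i^*(L\,i_!\mathcal F)$, the second arrow being $i^*$ applied to the stackification unit $i_!\mathcal F\to L\,i_!\mathcal F$. Its source ($\simeq\mathcal F$) and target are both $\tau_{\vert\bA}$-stacks, so it is enough to show that $i^*$ sends the $\tau$-local equivalence $i_!\mathcal F\to L\,i_!\mathcal F$ to a $\tau_{\vert\bA}$-local equivalence; a local equivalence between stacks is an equivalence, so the unit is then an equivalence and the lemma follows.

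The core of the argument — and the one place \emph{strongness} enters — is that $i^*$ preserves local equivalences. By the adjunction $i^*\dashv i_*$ with $i_*=\mathrm{Ran}_{i^\op}$ the right Kan extension, this is equivalent to the statement that $i_*$ carries $\tau_{\vert\bA}$-stacks to $\tau$-stacks, which I would prove directly. Fix a $\tau_{\vert\bA}$-stack $K$; the pointwise formula for the right Kan extension gives $(i_*K)(X)\simeq\varprojlim_{(W\to X)}K(W)$, the limit over all $W\in\bA$ with a map $W\to X$ in $\mathbf{Aff}_\bC$. Let $\{X_\beta\to X\}$ be a $\tau$-cover, and write $X_{\ul\beta}$ for $X_{\beta_0}\times_X\cdots\times_X X_{\beta_n}$. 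For any $W\to X$ with $W\in\bA$, strongness says $\{W\times_X X_\beta\to W\}$ is a $\tau_{\vert\bA}$-cover, whose \v{C}ech nerve $[n]\mapsto\coprod_{\ul\beta}W\times_X X_{\ul\beta}$ has all its terms in $\bA$ by axioms (1), (3); descent for $K$ then gives, naturally in $W$, $K(W)\simeq\varprojlim_{[n]\in\Delta}\prod_{\ul\beta}K(W\times_X X_{\ul\beta})$. Taking limits over all $W\to X$ and interchanging, one is reduced to identifying $\varprojlim_{(W\to X)}K(W\times_X X_{\ul\beta})$ with $(i_*K)(X_{\ul\beta})$; but the pullback functor $W\mapsto W\times_X X_{\ul\beta}$, from $\bA$-objects over $X$ to $\bA$-objects over $X_{\ul\beta}$, admits a left adjoint (compose the structure map with $X_{\ul\beta}\to X$), so a limit over the target slice is computed along it, which is exactly this identification. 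Therefore $(i_*K)(X)\simeq\varprojlim_{[n]}\prod_{\ul\beta}(i_*K)(X_{\ul\beta})$, i.e.\ $i_*K$ satisfies $\tau$-descent. Consequently $i^*$ commutes with stackification, so $i^*(L\,i_!\mathcal F)\simeq L_\bA(i^*i_!\mathcal F)\simeq\mathcal F$ and the unit is an equivalence.

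The step I expect to be the main obstacle is the bookkeeping in this last computation: getting the variance on the slice categories of $\bA$-objects over an object of $\mathbf{Aff}_\bC$ right (recall $K$ is contravariant), checking the adjunction that produces the identification of limits, and ensuring the descent equivalences for $K$ are natural enough in $W$ to commute with the outer limit. A slicker but less self-contained alternative is to observe that $i\colon\bA\hookrightarrow\mathbf{Aff}_\bC$ is a fully faithful morphism of $\infty$-sites that is both continuous (as $\tau_{\vert\bA}\subseteq\tau$) and cocontinuous (since, by strongness, every $\tau$-cover of an object of $\bA$ is a $\tau_{\vert\bA}$-cover), and then to invoke the $\infty$-categorical comparison lemma for such functors, which simultaneously gives that $i^*$ commutes with sheafification and that the induced left adjoint on sheaves is fully faithful.
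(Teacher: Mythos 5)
The paper does not actually prove this lemma — it is imported verbatim from \cite{kelly2022analytic} with no argument given — so there is nothing internal to compare against; judged on its own, your proof is correct and is the standard continuous/cocontinuous comparison-of-sites argument that the cited reference also uses. The two load-bearing points are both handled properly: (i) Čech nerves of $\tau_{\vert\bA}$-covers, formed in $\mathbf{Aff}_\bC$, stay in $\bA$ by axioms (1) and (3) of Definition \ref{def:relative-geometry-tuple} (the induction works because at each stage one leg of the pullback is an original cover member, hence in $\mathbf{P}\cap\bA$), so $i^*$ preserves stacks; and (ii) strongness enters exactly once, to show $i_*K$ satisfies $\tau$-descent, where your cofinality step is sound since the pullback functor $\bA_{/X}\to\bA_{/X_{\ul\beta}}$ has the forgetful functor as left adjoint and right adjoints are final, so restricting the limit along it is an equivalence. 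One tiny remark: the paper's definition of \emph{strong} contains a typo (the pulled-back family should cover $\mathsf{Spec}(B)$, not $\mathsf{Spec}(A)$); you have implicitly read it the correct way, which is the reading needed for your argument.
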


We will use Lemma \ref{lem:subgeom} in later sections to embed known categories of analytic stacks inside bornological stacks. 

\subsubsection{Geometric Stacks}

In this subsection we recall the definitions of geometric stacks and schemes. These are stacks which are glued together from affines in a controllable way. 

\begin{definition}[\cite{toen2008homotopical}*{Definition 1.3.3.1}]
Let \(\bC\) be a derived algebraic context and  $(\mathbf{Aff}_\bC,\tau,\mathbf{P},\mathbf{A})$ a relative $(\infty,1)$-pre-geometry tuple.
 
\begin{enumerate}
\item
A prestack $\mathcal{X}$ in $\mathbf{Stk}(\mathbf{A},\tau|_{\mathbf{A}})$ is $(-1)$-\textit{geometric} if it is of the form $\mathcal{X}\cong\mathbf{Map}(-,M)$ for some $\mathbf{A}$-admissible $M\in\mathbf{Aff}_{\bC}$.
\item
A morphism $f:\mathcal{X}\rightarrow\mathcal{Y}$ in $\mathbf{Stk}(\mathbf{A},\tau|_{\mathbf{A}})$ is $(-1)$-\textit{representable} if for any map $U\rightarrow\mathcal{Y}$ with $U\in\mathbf{A}$ the pullback $\mathcal{X}\times_{\mathcal{Y}}U$ is $(-1)$-geometric.
\item
A morphism $f:\mathcal{X}\rightarrow\mathcal{Y}$ in  $\mathbf{Stk}(\mathbf{A},\tau|_{\mathbf{A}})$ is in $(-1)-\mathbf{P}$ if it is $(-1)$-representable and for any map $U\rightarrow\mathcal{Y}$ with $U\in\mathbf{A}$, the map $\mathcal{X}\times_{\mathcal{Y}}U\rightarrow U$ is in $\mathbf{P}$.
\end{enumerate}
\end{definition}

\begin{definition}[\cite{toen2008homotopical}*{Definition 1.3.3.1}]\label{defn:ngeom}
Let $n\ge0$ and let $\mathcal{X}$ be in  $\mathbf{Stk}(\mathbf{A},\tau|_{\mathbf{A}})$
\begin{enumerate}
\item
An $n$\textit{-atlas} for $\mathcal{X}$ is a set of morphisms $\{U_{i}\rightarrow \mathcal{X}\}_{i\in\mathcal{I}}$ such that 
\begin{enumerate}
\item
Each $U_{i}$ is $(-1)$-geometric.
\item
Each map $U_{i}\rightarrow\mathcal{X}$ is in $(n-1)$-$\mathbf{P}$.
\item
$\coprod_{i\in\mathcal{I}}U_{i}\rightarrow\mathcal{X}$ is an epimorphism of stacks.
\end{enumerate}
\item
A stack $\mathcal{X}$ is $n$-\textit{geometric}  if
\begin{enumerate}
\item
The map $\mathcal{X}\rightarrow\mathcal{X}\times\mathcal{X}$ is $(n-1)$-representable.
\item
$\mathcal{X}$ admits an $n$-atlas.
\end{enumerate}
\item
A morphism of stacks $f:\mathcal{X}\rightarrow\mathcal{Y}$ is $n$-\textit{representable} if for any map  $U\rightarrow \mathcal{Y}$ with $U\in\mathbf{A}$ the pullback $\mathcal{X}\times_{\mathcal{Y}}U$ is $n$-geometric.
\item
A morphism of stacks $\mathcal{X}\rightarrow\mathcal{Y}$ is in $n$-$\mathbf{P}$ if it is $n$-representable, and for any map $U\rightarrow\mathcal{Y}$ with $U\in\mathbf{A}$, there is an $n$-atlas $\{U_{i}\rightarrow\mathcal{X}\times_{\mathcal{Y}}U\}_{i\in\mathcal{I}}$ such that each map $U_{i}\rightarrow U$ is in $\mathbf{P}$.
\end{enumerate}
\end{definition}

The full subcategory of $\mathbf{Stk}(\mathbf{A},\tau|_{\mathbf{A}})$ consisting of $n$-geometric stacks is denoted $\mathbf{Stk}_{n}(\mathbf{Aff}_\bC,\tau,\mathbf{P},\mathbf{A})$. As in \cite{toen2008homotopical}*{Proposition 1.3.3.6}, we have  $\mathbf{Stk}_{m}(\mathbf{Aff}_\bC,\tau,\mathbf{P},\mathbf{A})\subseteq \mathbf{Stk}_{n}(\mathbf{Aff}_\bC,\tau,\mathbf{P},\mathbf{A})$ for $-1\le m\le n$. The full subcategory of geometric stacks is $\mathbf{Stk}_{geom}(\mathbf{Aff}_\bC,\tau,\mathbf{P},\mathbf{A})\defeq\bigcup_{n=-1}^{\infty}\mathbf{Stk}_{n}(\mathbf{Aff}_\bC,\tau,\mathbf{P},\mathbf{A})$. As in \cite{toen2008homotopical}*{Corollary 1.3.3.5}, $\mathbf{Stk}_{n}(\mathbf{Aff}_\bC,\tau,\mathbf{P},\mathbf{A})$ is closed under pullbacks.

If $(\mathbf{Aff}_\bC,\tau,\mathbf{P})$ is an $(\infty,1)$-geometry triple then $(\mathbf{Aff}_\bC,\tau,\mathbf{P},\mathbf{Aff}_\bC)$ is a relative $(\infty,1)$-pre-geometry tuple. In this case we write
$$\mathbf{Stk}_{m}(\mathbf{Aff}_\bC,\tau,\mathbf{P})\defeq\mathbf{Stk}_{m}(\mathbf{Aff}_\bC,\tau,\mathbf{P},\mathbf{Aff}_\bC),$$
and we say that map $f:\mathcal{X}\rightarrow\mathcal{Y}$ is $n$-representable, or in $n-\mathbf{P}$ if it is so in the sense of Definition \ref{defn:ngeom} for the relative $(\infty,1)$-pre-geometry tuple $(\mathbf{M},\tau,\mathbf{P},\mathbf{M})$. The following is immediate:

\begin{proposition}
Let $(\mathbf{Aff}_\bC,\tau,\mathbf{P},\mathbf{A})$ be a strong relative geometry tuple. Then for each $m\ge -1$ we have $\mathbf{Stk}_{m}(\mathbf{A},\tau|_{\mathbf{A}},\mathbf{P}|_{\mathbf{A}})\subseteq \mathbf{Stk}_{m}(\mathbf{Aff}_\bC,\tau,\mathbf{P},\mathbf{A})$.
\end{proposition}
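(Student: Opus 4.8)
The plan is to prove this by induction on $m$, establishing simultaneously three statements at each level: that an $m$-geometric stack for the inner tuple $(\mathbf{A},\tau|_{\mathbf{A}},\mathbf{P}|_{\mathbf{A}},\mathbf{A})$ is $m$-geometric for the outer tuple $(\mathbf{Aff}_\bC,\tau,\mathbf{P},\mathbf{A})$; that an $m$-representable morphism for the inner tuple is $m$-representable for the outer one; and that a morphism in $m$-$\mathbf{P}|_{\mathbf{A}}$ for the inner tuple lies in $m$-$\mathbf{P}$ for the outer one. These three have to be carried along together because the geometricity, representability and $\mathbf{P}$-membership conditions of Definition \ref{defn:ngeom} are mutually recursive.

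The key preliminary observation is that the two tuples share the same fourth slot $\mathbf{A}$ and the same restricted topology $\tau|_{\mathbf{A}}$, so both constructions take place inside a single $\infty$-topos $\mathbf{Stk}(\mathbf{A},\tau|_{\mathbf{A}})$; in particular fibre products, the diagonal $\mathcal{X}\to\mathcal{X}\times\mathcal{X}$ and the notion of effective epimorphism of stacks are computed identically in both settings. I would first check that the inner tuple is itself a (strong) relative geometry tuple: this uses strongness of the outer tuple to see that $\mathbf{A}$ is closed under the pullbacks appearing in axiom (3) of Definition \ref{def:relative-geometry-tuple}, and uses the fact that every object of $\mathbf{A}$ is $\mathbf{A}$-admissible to see that $\tau|_{\mathbf{A}}$ is subcanonical on $\mathbf{A}$, so that every object of $\mathbf{A}$ is again $\mathbf{A}$-admissible relative to $\mathbf{A}$ itself. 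This is also what makes the left-hand side of the desired inclusion well-defined.

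For the base case $m=-1$: a $(-1)$-geometric stack for the inner tuple is $\mathbf{Map}(-,M)$ with $M\in\mathbf{A}$, and since every object of $\mathbf{A}$ is $\mathbf{A}$-admissible and $\mathbf{A}\subseteq\mathbf{Aff}_\bC$, the same prestack is $(-1)$-geometric for the outer tuple; since the test objects $U$ in the definitions of $(-1)$-representability and $(-1)$-$\mathbf{P}$ range over $\mathbf{A}$ in both cases, and $\mathbf{P}|_{\mathbf{A}}\subseteq\mathbf{P}$, the other two statements at level $-1$ are immediate. For the inductive step I would reuse verbatim the atlas data witnessing inner $m$-geometricity: each chart $U_i\to\mathcal{X}$ is outer $(-1)$-geometric by the base case, lies in outer $(m-1)$-$\mathbf{P}$ by the inductive hypothesis, and $\coprod U_i\to\mathcal{X}$ is still an effective epimorphism; the diagonal is outer $(m-1)$-representable by the inductive hypothesis; then $m$-representability for the outer tuple follows by pulling back along test objects $U\in\mathbf{A}$ and invoking the $m$-geometric statement just proved, and $m$-$\mathbf{P}$ membership follows by transporting the atlas $\{U_i\to\mathcal{X}\times_{\mathcal{Y}}U\}$ with $U_i\to U$ in $\mathbf{P}|_{\mathbf{A}}\subseteq\mathbf{P}$.

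I do not expect a genuine obstacle here — the statement is flagged as immediate, and the work is entirely a bookkeeping unwinding of Definition \ref{defn:ngeom}, parallel to \cite{toen2008homotopical}*{Proposition 1.3.3.6}. The one point requiring care is the preliminary verification that the inner tuple is a legitimate strong relative geometry tuple and that its $\infty$-topos of stacks coincides on the nose with that of the outer tuple; this is precisely where strongness of $(\mathbf{Aff}_\bC,\tau,\mathbf{P},\mathbf{A})$ and the $\mathbf{A}$-admissibility of all objects of $\mathbf{A}$ are used, and once it is in place the induction runs with no further input.
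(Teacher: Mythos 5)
Your proof is correct and is precisely the unwinding the paper's "the following is immediate" is alluding to: since both sides live in the same $\infty$-topos $\mathbf{Stk}(\mathbf{A},\tau|_{\mathbf{A}})$, the mutual induction on $m$-geometricity, $m$-representability and $m$-$\mathbf{P}$ membership, with $\mathbf{A}\subseteq\mathbf{Aff}_{\bC}$ and $\mathbf{P}|_{\mathbf{A}}\subseteq\mathbf{P}$ feeding the base case and each inductive step, goes through without obstruction. The preliminary check that the inner tuple is a legitimate (strong) relative geometry tuple, and that $\mathbf{A}$-admissibility for the two tuples agrees on objects of $\mathbf{A}$ because $\mathbf{A}\hookrightarrow\mathbf{Aff}_{\bC}$ is full, is exactly the right hygiene; the paper elides it, but it is what justifies that the left-hand side is well-defined and that the charts in an inner atlas are outer $(-1)$-geometric.
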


If $(\mathbf{Aff}_\bC,\tau,\mathbf{P},\mathbf{A})$ is a relative $(\infty,1)$-geometry tuple such that $(\mathbf{Aff}_\bC,\tau,\mathbf{P})$ is an $(\infty,1)$-geometry triple (i.e. all objects of $\mathbf{Aff}_\bC$ are admissible) then we have a permanence property for geometric stacks.
Consider the functor $i_!:\mathbf{Stk}(\mathbf{A},\tau|_{\mathbf{A}})\rightarrow\mathbf{Stk}(\mathbf{Aff}_\bC,\tau)$.

\comment{\begin{lemma}\cite{kelly2022analytic}*{Lemma 6.12}
Let $(\mathbf{Aff}_\bC,\tau,\mathbf{P},\mathbf{A})$ be a relative $(\infty,1)$-geometry tuple. Let $f:\mathcal{X}\rightarrow\mathcal{Y}$ be a map in $\mathbf{Stk}_{n}(\mathbf{A},\tau|_{\mathbf{A}},\mathbf{P}|_{\mathbf{A}})$ which is in $n-\mathbf{P}|_{\mathbf{A}}$. If $\mathbf{A}$ has all finite limits which are preserved by the inclusion $\mathbf{A}\rightarrow \mathbf{Aff}_\bC$ (or more generally if the left Kan extension commutes with finite limits) then $i_!(f)$ is in $n-\mathbf{P}$.
\end{lemma}
\begin{proof}
The proof is by induction on $n$. For $n=-1$ the claim is clear. Suppose the claim has been proven for some $(n-1)$ with $n\ge1$, and consider $n$. Let $\mathbf{Map}(-,X)\rightarrow i_!(\mathcal{Y})$ be a map. There is a cover $\{\mathbf{Map}(-,X_{i})\rightarrow\mathbf{Map}(-,X)\}_{i\in\mathcal{I}}$ such that each $\mathbf{Map}(-,X_{i})\rightarrow i_!(\mathcal{Y})$ factors through $i(\mathcal{Y})\rightarrow i_!(\mathcal{Y})$. But then each map $\mathbf{Map}(-,X_{i})\rightarrow i(\mathcal{Y})$ factors through some map $\mathbf{Map}(-,A_{i})\rightarrow i(\mathcal{Y})$ with $A_{i}\in\mathbf{A}$. Consider the fibre product $\mathbf{Map}(-,A_{i})\times_{\mathcal{Y}}\mathcal{X}$. There is an an $n$-atlas $\mathbf{Map}(-,A_{ij})\rightarrow\mathbf{Map}(-,A_{i})\times_{\mathcal{Y}}\mathcal{X}$ such that each composition $\mathbf{Map}(-,A_{ij})\rightarrow\mathbf{Map}(-,A_{i})$ is in $\mathbf{P}$.  Now both the left Kan extension and stackification commute with finite limits. Thus $\mathbf{Map}(-,A_{ij})\rightarrow\mathbf{Map}(-,A_{i})\times_{i^{\#}(\mathcal{Y})}i_!(\mathcal{X})$ is an $n$-atlas. Write $X_{ij}\defeq X_{i}\times_{A_{i}}A_{ij}$. Then $\{\mathbf{Map}(-,X_{ij})\rightarrow\mathbf{Map}(-,X_{i})\times_{i_!(\mathcal{Y})}i_!(\mathcal{X})\}$ is an $n$-atlas, and the composition $\mathbf{Map}(-,X_{ij})\rightarrow\mathbf{Map}(-,X_{i})$ is in $\mathbf{P}$. The result now follows from \cite{toen2008homotopical}*{Proposition 1.3.3.4}.
\end{proof}
}

\begin{lemma}\cite{kelly2022analytic}*{Corollary 6.13}
Let $(\mathbf{Aff}_\bC,\tau,\mathbf{P},\mathbf{A})$ be a relative $(\infty,1)$-geometry tuple. If $\mathbf{A}$ has all finite limits which are preserved by the inclusion $\mathbf{A}\rightarrow\mathbf{Aff}_\bC$ (or more generally if the left Kan extension commutes with finite limits)  then the functor  $i_!:\mathbf{Stk}(\mathbf{A},\tau|_{\mathbf{A}})\rightarrow\mathbf{Stk}(\mathbf{Aff}_\bC,\tau)$ induces a functor 
 $$i^{\#}:\mathbf{Stk}_{n}(\mathbf{A},\tau|_{\mathbf{A}},\mathbf{P}|_{\mathbf{A}})\rightarrow\mathbf{Stk}_{n}(\mathbf{Aff}_\bC,\tau,\mathbf{P})$$
 for each $n$, which is fully faithful if the relative $(\infty,1)$-geometry tuple is strong.
\end{lemma}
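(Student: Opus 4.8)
The plan is to establish the statement by a simultaneous induction on $n \ge -1$, proving at each stage the two assertions (a) $i_!$ carries $n$-geometric stacks relative to $(\mathbf{A}, \tau|_\mathbf{A}, \mathbf{P}|_\mathbf{A})$ to $n$-geometric stacks relative to $(\mathbf{Aff}_\bC, \tau, \mathbf{P})$, and (b) $i_!$ carries morphisms in $n$-$\mathbf{P}|_\mathbf{A}$ to morphisms in $n$-$\mathbf{P}$. The two structural facts that make the induction run are: first, since $i$ is fully faithful, $i_!$ restricted to representables is $\mathbf{Map}_\mathbf{A}(-,M)\mapsto\mathbf{Map}_{\mathbf{Aff}_\bC}(-,M)$; and second, under the hypothesis that $\mathbf{A}$ has finite limits preserved by $i$ (equivalently, that the left Kan extension is left exact), both the left Kan extension $\mathbf{PSh}(\mathbf{A})\to\mathbf{PSh}(\mathbf{Aff}_\bC)$ and stackification preserve finite limits, hence so does $i_!$.

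For the base case $n=-1$, a $(-1)$-geometric object is $\mathbf{Map}_\mathbf{A}(-,M)$ for an $\mathbf{A}$-admissible $M$; by the first fact its image is $\mathbf{Map}_{\mathbf{Aff}_\bC}(-,M)$, which is $(-1)$-geometric because in the present setting all objects of $\mathbf{Aff}_\bC$ are admissible. For $(-1)$-representability and membership in $(-1)$-$\mathbf{P}$ of a morphism $f$, I would pull $i_!(f)$ back along an arbitrary affine $U\to i_!(\mathcal{Y})$; after passing to a $\tau$-cover such a map factors through $i(\mathcal{Y})$ and then through some $\mathbf{Map}_\mathbf{A}(-,A)\to i(\mathcal{Y})$ with $A\in\mathbf{A}$, and left-exactness of $i_!$ identifies the pullback with $i_!$ applied to the corresponding pullback inside $\mathbf{A}$, reducing the claim to the hypothesis on $f$. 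The strongness hypothesis enters precisely here, to guarantee that the covers obtained after this reduction genuinely lie in $\tau|_\mathbf{A}$ and not merely in $\tau$.

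For the inductive step I would take $\mathcal{X}\in\mathbf{Stk}_n(\mathbf{A},\tau|_\mathbf{A},\mathbf{P}|_\mathbf{A})$ with an $n$-atlas $\{U_i\to\mathcal{X}\}$. Left-exactness of $i_!$ turns the $(n-1)$-representable diagonal $\mathcal{X}\to\mathcal{X}\times\mathcal{X}$ into the $(n-1)$-representable diagonal $i_!(\mathcal{X})\to i_!(\mathcal{X})\times i_!(\mathcal{X})$ via the inductive hypothesis for morphisms; and $\{i_!(U_i)\to i_!(\mathcal{X})\}$ is an $n$-atlas since each $i_!(U_i)$ is $(-1)$-geometric, each map is in $(n-1)$-$\mathbf{P}$ by induction, and $\coprod_i i_!(U_i)\to i_!(\mathcal{X})$ remains an effective epimorphism of stacks as $i_!$ is a left adjoint. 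For morphisms in $n$-$\mathbf{P}|_\mathbf{A}$ I would repeat the reduction from the base case: pull $i_!(f)$ back to an affine over $i_!(\mathcal{Y})$, reduce to a map into $i(\mathcal{Y})$ factoring through some $\mathbf{Map}_\mathbf{A}(-,A_i)$, choose an $n$-atlas $\mathbf{Map}_\mathbf{A}(-,A_{ij})\to\mathbf{Map}_\mathbf{A}(-,A_i)\times_\mathcal{Y}\mathcal{X}$ whose composites to $\mathbf{Map}_\mathbf{A}(-,A_i)$ lie in $\mathbf{P}$, base change along $X_i\to A_i$, transport the atlas upstairs using left-exactness of $i_!$ and of stackification, and conclude with \cite{toen2008homotopical}*{Proposition 1.3.3.4}.

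Finally, the full faithfulness of $i^\#$ when the tuple is strong is inherited directly from Lemma \ref{lem:subgeom}, which already gives full faithfulness of $i_!\colon\mathbf{Stk}(\mathbf{A},\tau|_\mathbf{A})\to\mathbf{Stk}(\mathbf{Aff}_\bC,\tau)$; the functor $i^\#$ is simply its restriction to the full subcategories of $n$-geometric stacks. I expect the main obstacle to be the careful verification that $i_!$ is left exact and preserves effective epimorphisms of stacks, together with checking that every cover produced in the reduction steps really lies in $\tau|_\mathbf{A}$ rather than in $\tau$ — this is exactly where the closure of $\mathbf{A}$ under finite limits inside $\mathbf{Aff}_\bC$ and the strongness of the tuple are used, and the induction would not close without them.
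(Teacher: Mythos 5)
Your argument is correct and follows essentially the same route as the source: the paper defers to \cite{kelly2022analytic}*{Corollary 6.13}, whose proof is exactly the induction on $n$ you describe — using left-exactness of the Kan extension and stackification to transport atlases and pullbacks, reducing a map from an affine into $i_!(\mathcal{Y})$ to one factoring through $i(\mathcal{Y})$ after passing to a cover, base-changing the atlas, and concluding with \cite{toen2008homotopical}*{Proposition 1.3.3.4} — with full faithfulness inherited from Lemma \ref{lem:subgeom}.
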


Finally, we come to the definition of schemes.

\begin{definition}
Let $(\mathbf{Aff}_\bC,\tau,\mathbf{P},\mathbf{A})$ be a relative $(\infty,1)$-geometry tuple. An $n$-geometric stack $\mathcal{X}$ in \(\mathbf{Stk}(\bA, \tau_{\vert \bA})\) is said to be an $n$-\textit{weak scheme} if 
\begin{enumerate}
\item
it is a $(-1)$-geometric stack for $n=-1$.
\item
for $n\ge 0$ it has an  $n$-atlas $\{U_{i}\rightarrow\mathcal{X}\}_{i\in\mathcal{I}}$ such that 
\begin{enumerate}
\item
each $U_{i}\rightarrow\mathcal{X}$ is a monomorphism in $\mathbf{PreStk}(\mathbf{A},\tau|_{\mathbf{A}})$, that is, \(U_i \to U_i \times_{\mathcal{X}} U_i\) is an equivalence;
\item
for each $m\in\mathbb{N}$ and each $(i_{1},\ldots,i_{m})\in\mathcal{I}^{m}$, each $U_{i_{1}}\times_{\mathcal{X}}U_{i_{2}}\times_{\mathcal{X}}\ldots\times_{\mathcal{X}}U_{i_{m}}$ is a weak $(n-1)$-scheme.
\end{enumerate}
\end{enumerate}
\end{definition}

\begin{definition}
Let $(\mathbf{Aff}_\bC,\tau,\mathbf{P},\mathbf{A})$ be a strong relative $(\infty,1)$-geometry tuple. An $n$-geometric stack $\mathcal{X}$ in \(\mathbf{Stk}(\bA, \tau_{\vert \bA})\) is said to be an $n$-\textit{scheme} if 
\begin{enumerate}
\item
it is a $(-1)$-geometric stack for $n=-1$.
\item
for $n\ge 0$ it has an  $n$-atlas $\{U_{i}\rightarrow\mathcal{X}\}_{i\in\mathcal{I}}$ such that 
\begin{enumerate}
\item
each $U_{i}\rightarrow\mathcal{X}$ is a monomorphism in $\mathbf{PreStk}(\mathbf{A},\tau|_{\mathbf{A}})$, that is, \(U_i \to U_i \times_{\mathcal{X}} U_i\) is an equivalence;
\item
the diagonal map $\mathcal{X}\rightarrow\mathcal{X}\times\mathcal{X}$ is representable by an $(n-1)$-scheme. 
\end{enumerate}
\end{enumerate}
\end{definition}

\begin{remark}
    Weak schemes are what we called schemes in \cite{kelly2022analytic}*{Section 6.2}. This definition was precisely what we required to prove the global HKR theorem. However, the definition of scheme above is better behaved. One way in which it is better behaved is the next result.
\end{remark}

The following can be proven identically to \cite{toen2008homotopical}*{Corollary 1.3.3.5}.

\begin{proposition}\label{prop:fibrescheme}
    Let $f:X\rightarrow Z$, $g:Y\rightarrow Z$ be morphisms of $n$-schemes. Then $X\times_{Z}Y$ is an $n$-scheme.
\end{proposition}

\begin{corollary}
Let $\mathcal{X}$ be a stack. The map $\mathcal{X}\rightarrow\mathcal{X}\times\mathcal{X}$ is representable by a scheme if and only if for all maps $\mathrm{Spec}(A)\rightarrow\mathcal{X},\mathrm{Spec}(B)\rightarrow\mathcal{X}$, we have that $\mathrm{Spec}(A)\times_{\mathcal{X}}\mathrm{Spec}(B)$ is a scheme. 
\end{corollary}

\begin{proof}
    Suppose that $\mathcal{X}\rightarrow\mathcal{X}\times\mathcal{X}$ is representable by a scheme. We have the following fibre product diagram
    \begin{displaymath}
        \xymatrix{
        \mathrm{Spec}(A)\times_{\mathcal{X}}\mathrm{Spec}(B)\ar[d]\ar[r] & \mathcal{X}\ar[d]\\
        \mathrm{Spec}(A\otimes^{\mathbb{L}}B)\ar[r] & \mathcal{X}\times\mathcal{X}.
        }
    \end{displaymath}
    Since the diagonal is representable by a scheme, $\mathrm{Spec}(A)\times_{\mathcal{X}}\mathrm{Spec}(B)$ is a scheme.

    Conversely suppose that for all maps $\mathrm{Spec}(A)\rightarrow\mathcal{X},\mathrm{Spec}(B)\rightarrow\mathcal{X}$, we have that $\mathrm{Spec}(A)\times_{\mathcal{X}}\mathrm{Spec}(B)$ is a scheme. Consider the following diagram in which all squares are pullbacks
    \begin{displaymath}
        \xymatrix{
        \mathrm{Spec}(B)\times_{\mathcal{X}\times\mathcal{X}}\mathcal{X}\ar[d]\ar[r] & \mathrm{Spec}(B)\times_{\mathcal{X}}\mathrm{Spec}(B)\ar[d]\ar[r] & \mathcal{X}\ar[d]\\
        \mathrm{Spec}(B)\ar[r] & \mathrm{Spec}(B\otimes^{\mathbb{L}}B)\ar[r] & \mathcal{X}\times\mathcal{X}.
        }
    \end{displaymath}
    $\mathrm{Spec}(B)\times_{\mathcal{X}}\mathrm{Spec}(B)$ is a scheme by assumption. Since schemes are closed under fibre products by Proposition \ref{prop:fibrescheme}, $\mathrm{Spec}(B)\times_{\mathcal{X}\times\mathcal{X}}\mathcal{X}$ is a scheme. 
\end{proof}

\begin{definition}
    Let $X$ be an $n$-scheme. A \textit{subscheme of }$X$ is a map $Y\rightarrow X$ in $\mathbf{Stk}(\mathbf{A},\tau|_{\mathbf{A}})$
 which is a homotopy monomorphism, is in $\mathbf{P}$, and for any $U\rightarrow X$ with $U$ affine, $U\times_{X}Y$ is a scheme. 
 \end{definition}

\begin{proposition}\label{prop:subschemen}
Let $X$ be an $n$-scheme for $n\ge0$. A map $f:Y\rightarrow X$ is a subscheme if and only if $Y$ is an $n$-scheme and the map $f$ is a homotopy monomorphism in $\mathbf{P}$.
\end{proposition}

\begin{proof}
Suppose that $f:Y\rightarrow X$ is a subscheme. Pick an atlas $\{U_{i}\rightarrow X\}_{i\in\mathcal{I}}$. Each map $U_{i}\times_{X}Y\rightarrow Y$ is a homotopy monomorphism and in $\mathbf{P}$.  Pick schematic atlases $\{V_{j_{i}}\rightarrow U_{i}\times_{X}Y\}_{j_{i}\in\mathcal{J}_{i}}$ for each $i\in\mathcal{I}$. Note that each composite $V_{j_{i}}\rightarrow Y$ is a homotopy monomorphism and in $\mathbf{P}$. Then $\coprod_{j_{i}\in\mathcal{J}_{i},i\in\mathcal{I}}V_{j_{i}}\rightarrow Y$ is an effective epimorphism, and $\{V_{j_{i}}\rightarrow Y\}_{j_{i}\in\mathcal{J}_{i},i\in\mathcal{I}}$ is an atlas. Finally, for any affine $U\rightarrow X$ and any affine $V\rightarrow X$, $U\times_{Y}V\cong U\times_{X}V$ is an $(n-1)$-scheme. 

Conversely, suppose that $Y$ is an $n$-scheme, and $f:Y\rightarrow X$ is a homotopy monomorphism in $\mathbf{P}$. By Proposition \ref{prop:fibrescheme}, for any affine $U\rightarrow X$, $U\times_{X}Y$ is a scheme. Hence $Y\rightarrow X$ is a subscheme. 
\end{proof}

\comment{
Using Proposition \ref{prop:locallystack}, the following can be proven exactly as in \cite{toen2008homotopical}*{Proposition 1.3.3.4}.

\begin{proposition}\label{prop:descentforschemes}
Suppose that $\mathbf{G}$ is tempered.
    Let $f:\mathcal{X}\rightarrow\mathcal{Y}$ be a morphism in $\mathbf{Stk}(\mathbf{A},\tau|_{\mathbf{A}})$ such that $G$ is an $n$-scheme. Suppose there exists an $n$-atlas $\{U_{i}\rightarrow\mathcal{Y}\}_{i\in\mathcal{I}}$ of $\mathcal{Y}$ such that each $\mathcal{X}\times_{\mathcal{Y}}U_{i}$ is an $n$-scheme. Then $\mathcal{X}$ is an $n$-scheme in $\mathbf{Stk}(\mathbf{A},\tau|_{\mathbf{A}})$.

    Moreover, if each projection $\mathcal{X}\times_{\mathcal{Y}}U_{i}\rightarrow U_{i}$ is in $n-\mathbf{P}$ and representable by a scheme, then $f$ is in $n-\mathbf{P}$ and representable by a scheme.
\end{proposition}
}

The following definition of a `derived space' is the obvious generalisation of \cite{soor2023quasicoherent}*{Definition 2.17}.

\begin{definition}\label{Defn:soorspace}
    \begin{enumerate}
        \item An \textit{affine derived} $\mathbf{G}$-\textit{space} is an object of $\mathbf{Stk}(\mathbf{A},\tau|_{\mathbf{A}})$ which is isomorphic to $\mathrm{Spec}(A)$ for some $\mathrm{Spec}(A)\in\mathbf{A}$.
        \item Let $X=\mathrm{Spec}(A)$ be an affine derived $\mathbf{G}$-space. A $\mathbf{G}$-\textit{subspace} is a homotopy monomorphism $U\rightarrow X$ such that there exists a small collection $\{\mathrm{Spec}(A_{i})\rightarrow U\}_{i\in\mathcal{I}}$ of derived $\mathbf{G}$-spaces and an effective epimorphism $\coprod_{i\in\mathcal{I}}\mathrm{Spec}(A_{i})\rightarrow U$ such that each $\mathrm{Spec}(A_{i})\rightarrow U\rightarrow X$ is a homotopy monomorphism in $\mathbf{P}$.
        \item Let $X\in\mathbf{Stk}(\mathbf{A},\tau|_{\mathbf{A}})$. A $\mathbf{G}$-\textit{subspace} $Y\rightarrow X$ is a homotopy monomorphism such that for every map $\mathrm{Spec}(A)\rightarrow X$ with $\mathrm{Spec}(A)\in\mathbf{A}$, $Y\times_{X}\mathrm{Spec}(A)\rightarrow\mathrm{Spec}(A)$ is a derived $\mathbf{G}$-subspace in the sense of (2).
        \item A \textit{derived} $\mathbf{G}$-\textit{space} is an object $X\in\mathbf{Stk}(\mathbf{A},\tau|_{\mathbf{A}})$ such that there exists a small collection $\{\mathrm{Spec}(A_{i})\rightarrow X\}_{i\in\mathcal{I}}$ of $\mathbf{G}$-subspaces which are affine, and such that $\coprod_{i\in\mathcal{I}}\mathrm{Spec}(A_{i})\rightarrow X$ is an effective epimorphism.
    \end{enumerate}
\end{definition}

The reader may worry that we are going somewhat overboard with various notions of spaces. However this definition of a derived space is purely for comparison with \cite{soor2024derived}*{Definition 2.17}. 

\begin{proposition}
    A scheme is a derived $\mathbf{G}$-space.
\end{proposition}

\begin{proof}
    Let $X$ be a $2$-scheme. Pick an atlas $\{\mathrm{Spec}(A_{i})\rightarrow X\}_{i\in\mathcal{I}}$. By definition, $\coprod_{i\in\mathcal{I}}\mathrm{Spec}(A_{i})\rightarrow X$ is an effective epimorphism, and each $\mathrm{Spec}(A_{i})\rightarrow X$ is a homotopy monomorphism in $\mathbf{P}$. Let $\mathrm{Spec}(A)\rightarrow X$ be arbitrary, and consider $\mathrm{Spec}(A)\times_{X}\mathrm{Spec}(A_{i})$. We need to show this is a $\mathbf{G}$-subspace of $\mathrm{Spec}(A)$. Now the map $\mathrm{Spec}(A)\times_{X}\mathrm{Spec}(A_{i})\rightarrow\mathrm{Spec}(A)$ is a homotopy monomorphism in $\mathbf{P}$, since the class of maps having this property is stable by pullback. By Proposition \ref{prop:fibrescheme} $\mathrm{Spec}(A)\times_{X}\mathrm{Spec}(A_{i})$ is a scheme. Thus it has an atlas $\mathrm{Spec}(B_{j_{i}})\rightarrow\mathrm{Spec}(A)\times_{X}\mathrm{Spec}(A_{i})$. These are homotopy monomorphisms in $\mathbf{P}$. Hence each composite $\mathrm{Spec}(B_{j_{i}})\rightarrow\mathrm{Spec}(A)$ is a homotopy monomorphism in $\mathbf{P}$. Hence $\{\mathrm{Spec}(B_{j_{i}})\rightarrow \mathrm{Spec}(A)\times_{X}\mathrm{Spec}(A_{i})\}_{j_{i}\in\mathcal{J}_{i}}$ gives the set of maps required by Definition \ref{Defn:soorspace} (2).
\end{proof}

\begin{proposition}\label{prop:derg0sub}
      A derived $\mathbf{G}$-subspace $U\rightarrow\mathrm{Spec}(C)$ of an affine scheme is a $0$-scheme. Moreover, the map $\rightarrow\mathrm{Spec}(C)$ is a homotopy monomorphism in $\mathbf{P}$.
\end{proposition}

\begin{proof}
 For any maps $\mathrm{Spec}(A)\rightarrow U,\mathrm{Spec}(B)\rightarrow U$, we have $\mathrm{Spec}(A)\times_{U}\mathrm{Spec}(B)\cong\mathrm{Spec}(A)\times_{\mathrm{Spec}(C)}\mathrm{Spec}(B)$ is affine. From this one deduces immediately that the diagonal map $U\rightarrow U\times U$ is representable by an (affine) scheme, and that the collection $\{\mathrm{Spec}(A_{i})\rightarrow U\}$ furnished by Definition \ref{Defn:soorspace} (2) is an atlas. Hence, $U$ is a scheme. The fact that $U\rightarrow\mathrm{Spec}(C)$ it is a homotopy monomorphism in $\mathbf{P}$ follows immediately from the definitions.
\end{proof}

\begin{proposition}\label{prop:derg0}
  Suppose the topology $\tau$ consists of homotopy monomorphisms in $\mathbf{P}$. Let $X$ be a derived $\mathbf{G}$-space admitting a cover $\{\mathrm{Spec}(A_{i})\rightarrow X\}$ as in Definition \ref{Defn:soorspace} (2). Suppose further that for any affine $\mathrm{Spec}(B)\rightarrow X$ we have that $\mathrm{Spec}(B)\times_{X}\mathrm{Spec}(A_{i})$ is affine. Then $X$ is a $1$-scheme.
\end{proposition}

\begin{proof}
   By assumption, for any affine $\mathrm{Spec}(B)\rightarrow X$, $\mathrm{Spec}(B)\times_{X}\mathrm{Spec}(A_{i})$ is an affine scheme, and hence a scheme. Moreover, the projection to $\mathrm{Spec}(A_{i})$ is (a homotopy monomorphism) in $\mathbf{P}$. In particular, each map $\mathrm{Spec}(A_{i})\rightarrow X$ is representable by an affine scheme, and is a map in $\mathbf{P}$.
    
    It remains to prove that $X\rightarrow X\times X$ is representable by a $0$-scheme.  Let $\mathrm{Spec}(A)\rightarrow X$ be any map. By assumption $\mathrm{Spec}(A)\times_{X}\mathrm{Spec}(A_{i})$ is a $\mathbf{G}$-subspace of $\mathrm{Spec}(A)$, and hence a $0$-scheme. We need to show that $\mathrm{Spec}(A)\times_{X}\mathrm{Spec}(B)$ is a scheme for any $\mathrm{Spec}(B)\rightarrow X$. There is a cover $\{\mathrm{Spec}(B_{j})\rightarrow \mathrm{Spec}(B)\}_{j\in\mathcal{J}}$ in $\tau$ such that each $\mathrm{Spec}(B_{j})\rightarrow X$ factors through some $\mathrm{Spec}(A_{j(i)})$. Then $\{\mathrm{Spec}(A)\times_{X}\mathrm{Spec}(B_{i})\rightarrow\mathrm{Spec}(A)\times_{X}\mathrm{Spec}(B)\}$ is a collection of homotopy monomorphisms in $\mathbf{P}$. We have 
    $$\mathrm{Spec}(A)\times_{X}\mathrm{Spec}(B_{i})\cong(\mathrm{Spec}(A)\times_{X}\mathrm{Spec}(A_{i(j)}))\times_{\mathrm{Spec}(A_{i(j)})}\mathrm{Spec}(B_{i}).$$ 
By assumption $\mathrm{Spec}(A)\times_{X}\mathrm{Spec}(A_{i(j)})$ is an affine scheme. Thus $\mathrm{Spec}(A)\times_{X}\mathrm{Spec}(B_{i})$ is an affine scheme. Finally, for any $\mathrm{Spec}(C)\rightarrow\mathrm{Spec}(A)\times_{X}\mathrm{Spec}(B)$, we have $\mathrm{Spec}(C)\times_{\mathrm{Spec}(A)\times_{X}\mathrm{Spec}(B)}(\mathrm{Spec}(A)\times_{X}\mathrm{Spec}(B_{i}))\cong\mathrm{Spec}(C)\times_{\mathrm{Spec}(B)}\mathrm{Spec}(B_{i})$ which is affine. Hence this gives a $0$-schematic atlas. 
    

\end{proof}

\begin{proposition}
  Suppose the topology $\tau$ consists of homotopy monomorphisms in $\mathbf{P}$. Let $X$ be a derived $\mathbf{G}$-space. Then $X$ is a $2$-scheme.
\end{proposition}

\begin{proof}
Let $\{\mathrm{Spec}(A_{i})\rightarrow X\}_{i\in\mathcal{I}}$ be a cover as in Definition \ref{Defn:soorspace} (2). Let $\mathrm{Spec}(A)\rightarrow X$ be any map with $\mathrm{Spec}(A)\in\mathbf{A}$. Then $\mathrm{Spec}(A_{i})\times_{X}\mathrm{Spec}(A)\rightarrow\mathrm{Spec}(A)$ is a derived $\mathbf{G}$-subspace. By Proposition \ref{prop:derg0sub}, $\mathrm{Spec}(A_{i})\times_{X}\mathrm{Spec}(A)$ is a $0$-scheme, and $\mathrm{Spec}(A_{i})\times_{X}\mathrm{Spec}(A)\rightarrow\mathrm{Spec}(A)$ is a homotopy monomorphism in $\mathbf{P}$.

It remains to prove that the diagonal $X\rightarrow X\times X$ is representable by a $1$-scheme. 

By assumption $\mathrm{Spec}(A)\times_{X}\mathrm{Spec}(A_{i})$ is a $\mathbf{G}$-subspace of $\mathrm{Spec}(A)$, and hence a $0$-scheme. We need to show that $\mathrm{Spec}(A)\times_{X}\mathrm{Spec}(B)$ is a $1$-scheme for any $\mathrm{Spec}(B)\rightarrow X$. By Proposition \ref{prop:derg0} it suffices to find an atlas consisting of homotopy monomorphisms in $\mathbf{P}$ which are representable by affines. There is a cover $\{\mathrm{Spec}(B_{j})\rightarrow \mathrm{Spec}(B)\}_{j\in\mathcal{J}}$ in $\tau$ such that each $\mathrm{Spec}(B_{j})\rightarrow X$ factors through some $\mathrm{Spec}(A_{i(j)})$. Then $\{\mathrm{Spec}(A)\times_{X}\mathrm{Spec}(B_{j})\rightarrow\mathrm{Spec}(A)\times_{X}\mathrm{Spec}(B)\}_{j\in\mathcal{J}}$ is a collection of homotopy monomorphisms in $\mathbf{P}$. For any $\mathrm{Spec}(C)\rightarrow\mathrm{Spec}(A)\times_{X}\mathrm{Spec}(B)$, we have $\mathrm{Spec}(C)\times_{\mathrm{Spec}(A)\times_{X}\mathrm{Spec}(B)}(\mathrm{Spec}(A)\times_{X}\mathrm{Spec}(B_{j}))\cong\mathrm{Spec}(C)\times_{\mathrm{Spec}(B)}\mathrm{Spec}(B_{j})$ which is affine.
\end{proof}

In the specific situation considered in \cite{soor2023quasicoherent}, the topology does indeed consist of homotopy monomorphisms. Thus, in this case, spaces coincide with $2$-schemes. Since we have already shown that all schemes are derived $\mathbf{G}$-spaces, we have the following.

\begin{corollary}
   Suppose the topology $\tau$ consists of homotopy monomorphisms in $\mathbf{P}$. Then any $n$-scheme is a $2$-scheme.
\end{corollary}

\begin{proposition}\label{prop:imagegeom}
    Let $f:X\rightarrow Y$ be a morphism in $\mathbf{P}$ between geometric stacks. Then $\mathrm{im}(f)\rightarrow Y$, computed in $\mathbf{Stk}(\mathbf{Aff}_{\mathbf{C}},\tau)$, is a geometric stack. Moreover, the map $i_{f}:\mathrm{im}(f)\rightarrow Y$ is a homotopy monomorphism in $\mathbf{P}$.
\end{proposition}

\begin{proof}
    The stack $\mathrm{im}(f)$ is computed as the nerve of $f$. This is the quotient of the Segal groupoid $n\mapsto X^{\times_{Y}n}$. The map $X\times_{Y}X\rightarrow X$ is in $\mathbf{P}$. This implies that $X\rightarrow\mathrm{im}(f)$ is a geometric stack by \cite{toen2008homotopical}*{Proposition 1.3.4.2}. The image is a sub-object of the target in any topos. 
\end{proof}

\begin{proposition}\label{prop:schunion}
    Let $X$ be a scheme and $\{U_{i}\rightarrow X\}_{i\in\mathcal{I}}$ a collection of sub-schemes - that is, each $U_{i}\rightarrow X$ is a homotopy monomorphism in $\mathbf{P}$. Then $\bigcup_{i\in\mathcal{I}}U_{i}\defeq\mathrm{im}(\coprod_{i\in\mathcal{I}}U_{i}\rightarrow X)$ is a subscheme.
\end{proposition}

\begin{proof}
   Write $U=\coprod_{i\in\mathcal{I}}U_{i}$. The stack $\mathrm{im}(\coprod_{i\in\mathcal{I}}U_{i}\rightarrow X)$ is the geometric realisation of the Segal groupoid $U^{\times_{X}n}$. This is a geometric stack by Proposition \ref{prop:imagegeom}. Moreover, by construction, $\{U_{j}\rightarrow\bigcup_{i\in\mathcal{I}}U_{i}\}_{j\in\mathcal{J}}$ is an atlas. The maps $U_{j}\rightarrow\bigcup_{i\in\mathcal{I}}U_{i}\rightarrow X$ are all homotopy monomorphisms, so $U_{j}\rightarrow \bigcup_{i\in\mathcal{I}}U_{i}$ is as well. Let $V\rightarrow \bigcup_{i\in\mathcal{I}}U_{i}$, $W\rightarrow \bigcup_{i\in\mathcal{I}}U_{i}$ be any morphism with $V$ affine. Then $U_{j}\times_{\bigcup_{i\in\mathcal{I}}U_{i}}V\cong U_{j}\times_{X}V$ is a scheme. 
\end{proof}

\noindent The full subcategory of $\mathbf{Stk}(\mathbf{A},\tau|_{\mathbf{A}})$ consisting of schemes is denoted $\mathbf{Sch}(\mathbf{A}, \tau_{\vert \bA},\mathbf{P}_{|\mathbf{A}})$.

\comment{
\noindent The full subcategory of $\mathbf{Stk}(\mathbf{A},\tau|_{\mathbf{A}})$ consisting of $n$-schemes is denoted $\mathbf{Sch}_{n}(\mathbf{A}, \tau_{\vert \bA})$. We also write $\mathbf{Sch}(\mathbf{A}, \tau_{\vert \bA}) \defeq\bigcup_{n=-1}^{\infty}\mathbf{Sch}_{n}(\mathbf{A}, \tau_{\vert \bA})$.

\begin{proposition}
    Let $X$ be an $n$-scheme for $n\ge 1$, and let $\{V_{i}\rightarrow X\}_{i=1}^{m}$ be homotopy monomorphisms with each $V_{i}$ affine. Then $V_{1}\times_{X}\ldots_{X}V_{m}$ is an $(n-1)$-scheme.
\end{proposition}

\begin{proof}
    The proof is by induction on $n$. Let $\{U_{i}\rightarrow X\}_{i\in\mathcal{I}}$ be an $n$-atlas. Write 
\end{proof}

}
\subsubsection{Qcqs morphisms and schemes}

Following \cite{soor2024derived}*{Definition 2.19}, we define quasi-compact and quasi-separated morphisms.

\begin{definition}
\begin{enumerate}
    \item 
    A scheme $X$ is said to be \textit{quasi-compact} if it admits an atlas $\coprod_{i=1}^{n}\mathrm{Spec}(U_{i})\rightarrow X$ with each $U_{i}$ affine.
    \item 
    A morphism $f:\mathcal{X}\rightarrow\mathcal{Y}$ of stacks is said to be \textit{quasi-compact} if for every map $Z\rightarrow Y$ with $Z$ affine, $Z\times_{\mathcal{Y}}\mathcal{X}$ is a quasi-compact scheme.
    \item A morphism $f:\mathcal{X}\rightarrow\mathcal{Y}$ of stacks is said to be \textit{quasi-separated} if its diagonal $\Delta_{f}:\mathcal{X}\rightarrow\mathcal{X}\times_{\mathcal{Y}}\mathcal{X}$ is quasi-compact.
    \end{enumerate}
\end{definition}

A quasi-compact and quasi-separated morphism will be called a \textit{qcqs morphism}. 

\begin{definition}
    A scheme $X$ is said to be \textit{quasi-separated} if the map $X\rightarrow\mathrm{Spec}(R)$ is quasi-separated.
\end{definition}

The following is clear.

\begin{proposition}
    Let $X$ be quasi-separated, and let $U\rightarrow X, V\rightarrow X$ be open immersions with $U$ and $V$ affine. Then $U\times_{X}V$ is quasi-affine.
\end{proposition}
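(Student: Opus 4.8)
The plan is to exhibit $U\times_X V$ as a quasi-compact open subscheme of an affine scheme. Write $U=\mathrm{Spec}(A)$ and $V=\mathrm{Spec}(B)$, and let $\mathrm{Spec}(R)$ denote the terminal object of $\mathbf{Aff}_\bC$. The first step is the easy one: the projection $U\times_X V\to U$ is the base change of $V\to X$ along $U\to X$, so since open immersions are monomorphisms that are stable under base change, $U\times_X V\to U$ is again an open immersion. Hence $U\times_X V$ is an open subscheme of the affine scheme $U$; in particular it is a scheme, and to conclude that it is quasi-affine it remains only to show it is quasi-compact.

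For quasi-compactness I would invoke quasi-separatedness of $X$ through the usual identification of a fibre product over $X$ with a base change of the diagonal. Namely, there is a pullback square
\begin{equation*}
\begin{tikzcd}
U\times_X V \arrow{r}\arrow{d} & X \arrow{d}{\Delta_{X/\mathrm{Spec}(R)}}\\
U\times_{\mathrm{Spec}(R)}V \arrow{r} & X\times_{\mathrm{Spec}(R)}X,
\end{tikzcd}
\end{equation*}
whose bottom arrow is the product of the open immersions $U\to X$ and $V\to X$. Since $X$ is quasi-separated, $\Delta_{X/\mathrm{Spec}(R)}$ is quasi-compact by definition; quasi-compact morphisms are stable under base change (immediate from the definition given above, since a base change of a pullback square is a pullback square), so $U\times_X V\to U\times_{\mathrm{Spec}(R)}V$ is quasi-compact. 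Now $U\times_{\mathrm{Spec}(R)}V\cong\mathrm{Spec}(A\otimes_R B)$ is affine, because the tensor product of connective derived algebras is connective by the axioms of a derived algebraic context. Applying the definition of quasi-compact morphism to the identity map of the affine $\mathrm{Spec}(A\otimes_R B)$, we conclude that $U\times_X V$ is a quasi-compact scheme; combined with the first step it is a quasi-compact open subscheme of an affine scheme, i.e. quasi-affine.

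This is exactly the classical argument that the intersection of two affine opens in a quasi-separated scheme is quasi-affine, so I do not anticipate real obstacles; the work is bookkeeping. The point that needs the most care is confirming that in the present framework an open immersion of schemes is a monomorphism and is stable under arbitrary base change of schemes — this should follow from the way open immersions are built from $\mathrm{T}$-rational localisations and $\mathrm{T}$-standard \'etale maps together with the base-change stability of the class $\mathbf{P}^{o-sm}$, but it is worth pinning down the precise reference in \cite{ben2024perspective} or \cite{kelly2022analytic}. One should also fix the working definition of quasi-affine used here, namely ``quasi-compact open subscheme of an affine scheme'', and check that the pullback square above — which is purely formal once Yoneda is known to preserve limits — really computes $U\times_X V$; with these in place the proof is complete.
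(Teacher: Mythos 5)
Your proof is correct and takes essentially the same approach as the paper: the paper's one-line argument is precisely the two observations you spell out, namely that quasi-separatedness of $X$ (via the diagonal pullback square) makes $U\times_X V$ quasi-compact, and that $U\times_X V\to U$ is an open immersion. You have simply filled in the bookkeeping the paper leaves implicit.
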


\begin{proof}
    The quasi-separated assumption means that the intersection is quasi-compact. We evidently have an open immersion $U\times_{X}V\rightarrow U$.
\end{proof}

\begin{proposition}\label{prop:qcqscompact}
    Let $X$ be a quasi-compact scheme, and let $\{U_{i}\rightarrow X\}_{i\in\mathcal{I}}$ be sub-schemes such that $\bigcup_{i\in\mathcal{I}}U_{i}=X$. Then there is a finite subset $\mathcal{J}\subset\mathcal{I}$ such that $X=\bigcup_{j\in\mathcal{J}}U_{j}$.
\end{proposition}

\begin{proof}
    Since $X$ is quasi-compact, there exists a finite atlas $\{V_{k}\rightarrow X\}_{k=1}^{n}$. The map $\coprod_{i\in\mathcal{I}}U_{i}\rightarrow X$ is an effective epimorphism. This means that for each $k$ there is a finite cover $\{W_{l_{k}}\rightarrow V_{k}\}_{l_{k}\in\mathcal{L}_{k}}$ such that each $W_{l_{k}}\rightarrow V_{k}\rightarrow X$ factors through some $U_{i_{l_{k}}}$. Then $\coprod_{1\le k\le n,l_{k}\in\mathcal{L}_{k}} W_{l_{k}}\rightarrow X$ is an affective epimorphism, so $\coprod_{1\le k\le n,l_{k}\in\mathcal{L}_{k}}U_{i_{l_{k}}}\rightarrow X$ is an effective epimorphism. This implies $X=\bigcup_{1\le k\le n,l_{k}\in\mathcal{L}_{k}}U_{i_{l_{k}}}$.
\end{proof}

\begin{proposition}
    Let $f:\mathcal{X}\rightarrow\mathcal{Y}$ be a quasi-compact morphism, and $Z\rightarrow\mathcal{Y}$ a morphism with $Z$ a quasi-compact scheme. Then $Z\times_{\mathcal{Y}}\mathcal{X}$ is a quasi-compact scheme.
\end{proposition}

\begin{proof}
    If $Z$ is affine then this is by definition of quasi-compactness. In general, choose an atlas $\{\mathrm{Spec}(A_{i})\rightarrow Z\}_{i=1}^{n}$. For each $i$, $\mathrm{Spec}(A_{i})\times_{\mathcal{Y}}\mathcal{X}$ is a quasi-compact scheme. Thus it admits an atlas $\{\mathrm{Spec}(B_{j_{i}})\rightarrow\mathrm{Spec}(A_{i})\times_{\mathcal{Y}}\mathcal{X}\}_{j_{i}\in\mathcal{J}_{i}}$ with $\mathcal{J}_{i}$ a finite set. Then $\{\mathrm{Spec}(B_{j_{i}})\rightarrow\mathrm{Spec}(A_{i})\times_{\mathcal{Y}}\mathcal{X}\rightarrow Z\times_{\mathcal{Y}}\mathcal{X}\}_{j_{i}\in\mathcal{J}_{i},1\le i\le n}$ is a finite atlas. 
\end{proof}

\subsection{Derived spaces and the underlying topological space}

Here we generalise the discussion from \cite{soor2024derived}*{Section 2.3} and \cite{soor2025privatecommunicationhesis} (which in turn followed \cite{clausenscholze2}) defining the underlying space, and the Berkovich space, of certain schemes.
 Consider a strong geometry tuple of the form
$$\mathbf{G}=(\mathbf{Aff}_{\mathbf{C}},\tau,\mathbf{P},\mathbf{A})$$
with $\tau$ a quasi-compact topology. Write $\mathbf{dSch_{G}}=\mathbf{Sch}(\mathbf{Aff}_{\mathbf{C}},\tau,\mathbf{P})$ for the category of schemes. Fix $X\in\mathbf{dSch_{G}}$, and write $\mathrm{Sub}_{\mathbf{G}}(X)$ for the category of sub-schemes of $X$. By Proposition \ref{prop:schunion}, this is a sub-locale of the locale of subobjects of $X$ in the topos $\mathbf{Stk}(\mathbf{Aff_{C}},\tau)$.

\begin{remark}
    Within the setup of derived $\mathbf{G}$-spaces (in the specific instance of derived rigid analytic geometry), Soor has previously given a different proof that subspaces form a sub-locale of the locale of subobjects, in \cite{soor2025privatecommunicationhesis}.
\end{remark}

We let 
    $$|X|=\mathrm{pt}(\mathrm{Sub}_{\mathbf{G}}(X))$$
    denote the topological space of points of $X$, i.e., the set of completely prime filters on the locale.

\subsubsection{Interlude: Locales, and Berkovich locales}

Before discussing the spaces $|X|$ in detail, let us here develop some generalities concerning what we call \textit{Berkovich locales}. Our main reference concerning spaces associated to lattices is \cite{MR0698074}.

In this part, by a \textit{pre-locale}, we mean a distributive lattice which has all finite meets and joins. In particular, the lattice is bounded. Let $L$ be a pre-locale. We define its associated locale $\bigvee L$ as follows. Let $\tilde{L}$ be the distributive lattice obtained from $L$ by freely adding all joins. To get to $\bigvee L$, we quotient $\tilde{L}$ by all relations needed so that the map $L\rightarrow\tilde{L}\rightarrow\bigvee L$ is a map of distributive lattices that commutes with all joins.

    \begin{definition}
        Let $L$ be a pre-locale. 
        \begin{enumerate}
            \item 
                An object $V\in L$ is said to be \textit{quasi-compact} if, whenever $V\subseteq\bigcup_{i\in\mathcal{I}} U_{i}$, then there is a finite subset $\mathcal{J}\subset\mathcal{I}$ such that $V\subseteq\bigcup_{j\in\mathcal{J}}U_{j}$.
                \item 
                    An object $V\in L$ is said to be \textit{co-compact} if, whenever $\bigcap_{i\in\mathcal{I}} U_{i}\subseteq V$, then there is a finite subset $\mathcal{J}\subset\mathcal{I}$ such that $\bigcap_{j\in\mathcal{J}}U_{j}\subseteq V$.
        \end{enumerate}
    
    \end{definition}

\begin{proposition}
    Let $L$ be a locale, and $B\subset L$ a distributive sub-lattice closed under finite meets and finite joins, and such that for every $B_{1},B_{2}\in B$, there is $C\in B$ such that $C\le B_{1}\cap B_{2}$. Suppose further that elements of $B$ are quasi-compact, and that $B$ is a basis for $L$. Then there are bijections between
    \begin{enumerate}
        \item 
        prime filters of $B$ and completely prime filters of $L$;
        \item 
        maximally prime filters of $B$ and maximally completely prime filters of $L$;
        \item 
        minimally prime filters of $B$, and minimally completely prime filters of $L$.
    \end{enumerate}
\end{proposition}

\begin{proof}
\begin{enumerate}
    \item 
        Let $\mathcal{F}$ be a prime filter of $B$. Let $\mathcal{F}'=\{U\in L:\exists V\subset U, V\in \mathcal{F}\}$. Clearly this is a proper filter. We claim that it is completely prime. Suppose that $\bigcup_{i\in\mathcal{I}}U_{i}\in\mathcal{F}'$. There is some $b\in \mathcal{F}$, $b\subset\bigcup_{i\in\mathcal{I}}U_{i}$. Write $U_{i}=\bigcup_{i\in\mathcal{I}}\bigcup_{j_{i}\in\mathcal{J}_{i}}b_{j_{i}}$, where each $b_{j_{i}}\in B$. By quasi-compactness, $b$ is a subset of a finite union $\bigcup_{k=1}^{n}b_{k}$ of the $b_{j_{i}}$. Thus $\bigcup_{k=1}^{n}b_{k}\in\mathcal{F}$, and so some $b_{k'}\in\mathcal{F}$. But $b_{k'}$ is a subset of one of the $U_{i}$, so this $U_{i}\in\mathcal{F}'$.

        Going the other way, let $\mathcal{G}$ be a completely prime filter on $L$. Let $\tilde{\mathcal{G}}=\{b\in B:b\in L\}$. This is a prime filter. Indeed, it is non-empty, since for $U\in\mathcal{G}$, we may write $U=\bigcup_{i\in\mathcal{I}}b_{i}$ for $b_{i}\in B$. By complete primality, some $b_{i}\in\mathcal{G}$. It is easily seen that $\tilde{\mathcal{G}}$ is a prime filter on $B$, and that the constructions
        $$\mathcal{F}\mapsto\mathcal{F}'$$
        $$\mathcal{G}\mapsto\tilde{\mathcal{G}}$$
        are in bijection with each other.
        \item 
        This follows immediately from the first claim, as the bijection is order preserving.
\item 
        This follows immediately from the first claim, as the bijection is order preserving.

\end{enumerate}
\end{proof}

For $L$ a distributive lattice, we let $|L|$ denote the space consisting of prime filters on $L$. A basis of open subsets of $|L|$ is given by sets of the form $|L_{\big\slash M}|=\{\mathcal{F}\in |L|:M\in\mathcal{F}\}$, as $M$ ranges over all elements of $L$. For $L$ a locale, we let $\mathrm{pt}(L)$ be the space of completely prime filters of $L$, with open subsets analogous to the ones above. If $L$ is a pre-locale consisting of quasi-compact objects, then we have $|L|\cong \mathrm{pt}(\bigvee L)$. 

Consider the basis of open sets given by $B_{M}=\{|L_{\big\slash M}|:M\in L\}$. This basis $B_{L}$ may itself be regarded as a distributive lattice. It is a pre-locale if $L$ is. Moreover, there is a morphism of lattices
$$L\rightarrow B_{L}.$$

\begin{definition}
    A distributive lattice $L$ is said to be \textit{spatial} if the morphism $L\rightarrow B_{L}$ is an isomorphism. 
\end{definition}

For $L$ a distributive lattice, we also let $|L|_{Ber}$ denote the subset of $|L|$ consisting of maximal filters. If $L$ is a locale, we let $\mathrm{pt}(L)_{Ber}$ denote the set of maximally completely prime filters. Again if $L$ is a pre-locale consisting of quasi-compact objects, then we have $|L|_{Ber}=\mathrm{pt}(\bigvee L)_{Ber}$.

Let $L$ be a lattice and $M\in L$. We get a map of lattices $i_{M}:L\rightarrow L_{\big\slash M}$, and hence a map of spaces
$$|i_{M}|:|L_{\big\slash M}|\rightarrow |L|.$$
From now on we will write $|M|\defeq |L_{\big\slash M}|$

\begin{proposition}\label{prop:imageberksub}
    Let $L$ be a distributive lattice, let $M\in L$ and consider the lattice $L_{\big\slash M}$.  The map of spaces $|i|:|M|\rightarrow |L|$ restricts to a map
    $$|i|_{Ber}:|M|_{Ber}\rightarrow |L|_{Ber},$$
    identifying maximal filters in $L_{\big\slash M}$ with maximal filters in $L$ containing $M$.  
\end{proposition}

\begin{proof}
    Let $\mathcal{F}\in|M|$ be a maximal filter, and let $i(\mathcal{F})\subset\mathcal{G}$, where $\mathcal{G}$ is a proper filter. We first need to show that $i(\mathcal{F})=\mathcal{G}$. Consider $\tilde{\mathcal{G}}=\{M\cap V:V\in\mathcal{G}\}$. If this is proper, then it is a filter containing $\mathcal{F}$. Suppose $\tilde{\mathcal{G}}$ were not proper. Then it would contain $0$. But then for some $V\in\mathcal{G}$, $V\cap M=0$. Since $M\in i(\mathcal{F})\subset\mathcal{G}$, this would give $0\in\mathcal{G}$, a contradiction. Hence $\tilde{\mathcal{G}}=i(\mathcal{F})$. Since for all $G\in\mathcal{G},G\cap M\in\tilde{\mathcal{G}}=\mathcal{F}$. Thus $G\in i(\mathcal{F})$. 

Clearly now if $\mathcal{F}$ is a maximal filter in $L_{\big\slash M}$ then $i(\mathcal{F})$ is a maximal filter in $L$ containing $M$. On the other hand, let $\mathcal{G}$ be a maximal filter in $L$ containing $M$. Again consider $\tilde{\mathcal{G}}=\{M\cap V:V\in\mathcal{G}\}$. By the same proof as above, $\tilde{\mathcal{G}}$ is a maximal filter in $L_{\big\slash M}$, and $i(\tilde{\mathcal{G}})=\mathcal{G}$.
\end{proof}

Let $L$ be a lattice. We topologise $|L|_{Ber}$ be declaring the set $\{|L|_{Ber}\setminus |M|_{Ber}:M\in L\}$ to be a basis of open sets. 

We also let $\mathrm{Spec}(L)$ denote the set of prime ideals, topologised as in \cite{MR0698074}*{Section II.3.4}. Finally, we let $\mathrm{MaxSpec}(L)$ be the subspace of $\mathrm{Spec}(L)$ consisting of maximal ideals, equipped with the subspace topology.

Note that the order reversing identity map
$$L\rightarrow L^{op}$$
interchanges (prime) filters with (prime) ideals. It also interchanges maximal filters with maximal ideals. In particular, we get a bijection
$$c:|L|\rightarrow \mathrm{Spec_{Zar}}(L^{op})$$
which restricts to a bijection
$$c:|L|_{Ber}\rightarrow\mathrm{MaxSpec_{Zar}}(L^{op}).$$
We equip $\mathrm{MaxSpec_{Zar}}(L^{op})$ with the subspace topology. $ \mathrm{Spec_{Zar}}(L^{op})$ has a basis of open subsets defined as follows. Let $M\in L^{op}$, and consider the ideal in $L^{op}$, $I_{M}=\{V\in L^{op}:V\le^{op} M\}=\{V\in L:V\ge M\}$. Then $D(I_{M})=\{\mathcal{F}:I_{M}\not\subset\mathcal{F}\}=\{\mathcal{F}:M\notin\mathcal{F}\}=c(|X|_{Ber}\setminus |M|_{Ber})$. We therefore have the following.

\begin{corollary}
    The map 
    $$c:|L|_{Ber}\rightarrow\mathrm{MaxSpec_{Zar}}(L^{op})$$
    is a homeomorphism.
\end{corollary}

\begin{definition}
    A distributive lattice $L$ is said to be \textit{co-normal} if its opposite lattice $L^{op}$ is normal (\cite{MR0698074}*{Definition 3.6}).
\end{definition}

If $L$ is co-normal then $L^{op}$ is normal.
The following is now a direct consequence of \cite{MR0698074}*{Section II.3.6}.

\begin{corollary}
    Let $L$ be a distributive lattice. Then $|L|_{Ber}$ is compact. If $L$ is co-normal, then it is Hausdorff.
\end{corollary}

The following is proven exactly as in \cite{MR1329448}*{Remark 2.1}.

\begin{proposition}[\cite{MR1329448}*{Remark 2.1}]
    Let $L$ be a distributive lattice, and let $S$ be any family of elements of $L$, closed with respect to finite joins. Assume there is a filter $\mathcal{F}$ on $L$ such that $\mathcal{F}\cap S=\emptyset$. Then there is a filter $\mathcal{G}$ on $L$ containing $\mathcal{F}$, which is maximal with respect to $\mathcal{G}\cap S=\emptyset$. Moreover $\mathcal{G}$ is prime.
\end{proposition}

Using this, the following can then be proven identically to \cite{MR1329448}*{Lemma 3.1 (i)}.

\begin{lemma}
    Let $L$ be a distributive lattice. Then $|L|$ is compact.
\end{lemma}

For a conormal lattice, the inclusion $|L|_{Ber}\rightarrow |L|$ has a section $r_{L}$, sending a prime filter $\mathcal{F}$ to the unique maximal filter containing it.

\begin{proposition}
    The map $r_{L}$ is continuous. In particular, it is a quotient map.
\end{proposition}

\begin{proof}
    We claim that $r_{L}^{-1}(|L|_{Ber}\setminus |L_{\big\slash M}|_{Ber})=Cl(|M|)$ is the closure of $|M|$ in $|L|$. Indeed, let $\mathcal{F}\in r_{L}^{-1}(|M|_{Ber})$, and let $|N|$ be a basic open containing $\mathcal{F}$ with $N\in\mathcal{F}$. Let $\mathcal{F}_{x}$ be the unique maximal filter containing $\mathcal{F}$. Then $M\in\mathcal{F}_{x}$ by assumption. On the other hand, $N\in\mathcal{F}\subset\mathcal{F}_{x}$, so $\mathcal{F}_{x}\in |M|\cap|N|$.

Conversely, let $\mathcal{F}\in Cl(|M|)$. Then for any $N\in\mathcal{F}$, $N\cap M\neq0$. In particular $\mathcal{F}_{M}=\{V|\exists N\in\mathcal{F}:(N\cap M)\subset V\}$ does not contain $0$. This is then a proper filter. It is contained in some maximal filter $\mathcal{F}_{x}$. Now $M\in\mathcal{F}_{x}$. Moreover, clearly $\mathcal{F}\subset\mathcal{F}_{M}\subset\mathcal{F}_{x}$. Thus $\mathcal{F}_{x}$ is the unique maximal filter containing $\mathcal{F}$, and $r(\mathcal{F})\in |M|_{Ber}$. 

    Since $|L|$ is compact, and $|L|_{Ber}$ is compact Hausdorff, $r_{L}$ is a quotient map.
\end{proof}

Let $f:K\rightarrow L$ be a morphism of conormal lattices. We define $|f|_{Ber}:|K|_{Ber}\rightarrow |L|_{Ber}$ by $r_{L}\circ |f|\circ i_{K}$, where $r_{L}:|L|\rightarrow |L|_{Ber}$ is the retraction, and $i_{K}:|K|_{Ber}\rightarrow |K|$ is the inclusion.

\begin{proposition}
Let $f:K\rightarrow L$ be a morphism in $\mathrm{Lattice}^{op}$, with corresponding map of lattices $\phi$. Then $|f|_{Ber}$ is continuous.
\end{proposition}

\begin{proof}
It suffices to prove that $|f|_{Ber}^{-1}(|M|_{Ber})=|\phi(M)|_{Ber}$ for any $M\in L$. But we have 
$$|f|^{-1}(r_{L}^{-1}(|M|_{Ber}))=|f|^{-1}(Cl(|M|))\subset Cl(|f|^{-1}|M|)=Cl(|\phi(M)|)=r_{K}^{-1}(|\phi(M)|_{Ber}).$$
Thus if $\mathcal{F}\in |f|_{Ber}^{-1}(|M|_{Ber})$, then $i_{K}(\mathcal{F})\in r_{K}^{-1}(|\phi(M)|_{Ber})$. Hence, $\mathcal{F}=r_{K}(i_{K}(\mathcal{F}))\in|\phi(M)|_{Ber}$. 

Conversely, if $\mathcal{F}\in|\phi(M)|_{Ber}$ then, by definition, $|f|(\mathcal{F})$ contains $M$. Thus $r_{L}(|f|(\mathcal{F}))\in|M|_{Ber}$. 
\end{proof}

\begin{proposition}
    Let $f:K\rightarrow L$ and $g:L\rightarrow M$ be morphisms of co-normal distributive lattices. Then $|g|_{Ber}\circ |f|_{Ber}=|g\circ f|_{Ber}$. 
\end{proposition}

\begin{proof}
  
  Let $\mathcal{F}\in|K|_{Ber}$. Then $|f|_{Ber}(\mathcal{F})$ is the unique maximal filter containing $f(\mathcal{F})$. Hence $f(\mathcal{F})\subset |f|_{Ber}(\mathcal{F})$, and $g(f(\mathcal{F}))\subset g(|f|_{Ber}(\mathcal{F}))$. Therefore the unique maximal filter containing $g(|f|_{Ber}(\mathcal{F}))$ coincides with the unique maximal filter containing $g(f(\mathcal{F}))$. These are $|g|_{Ber}(|f|_{Ber}(\mathcal{F}))$ and $|g\circ f|_{Ber}(\mathcal{F})$ respectively.
\end{proof}

\begin{definition}
    Let $\mathrm{CHaus}_{\mathrm{Ber}}$ denote the category defined as follows. An object is a pair $(X,\mathcal{P}(X))$ where

\begin{enumerate}
    \item 
    $X$ is a compact Hausdorff space;
    \item 
    $\mathcal{P}(X)$ is a distributive lattice of closed subsets of $X$, closed under finite joins and meets, such that $\{X\setminus V:V\in\mathcal{P}(X)\}$ forms a base for the topology on $X$.
\end{enumerate}

A morphism $f:(X,\mathcal{P}(X))\rightarrow (Y,\mathcal{P}(Y))$ in $\mathrm{CHaus_{Ber}}$ is a continuous map of compact Hausdorff spaces $f:X\rightarrow Y$ such that $f^{-1}(V)\in\mathcal{P}(X)$ for every $V\in\mathcal{P}(Y)$. 
\end{definition}

We have a functor
$$|-|_{Ber}:\mathrm{preLoc}_{\mathrm{con}}\rightarrow\mathrm{CHaus_{Ber}}$$
sending $L$ to $(|L|_{Ber},\mathcal{P}(L)=\{|V|_{Ber}:V\in L\})$.

There is also a functor
$$\Omega_{Ber}:\mathrm{CHaus_{Ber}}\rightarrow\mathrm{preLoc}_{\mathrm{con}}$$
sending $(X,\mathcal{P}(X))$ to the lattice $\mathcal{P}(X)$. Note that there is a natural morphism of lattices
$$L\rightarrow\mathcal{P}(L)$$
sending $M$ to $|M|_{Ber}$. This gives a natural morphism
$$\Omega_{Ber}\circ |-|_{Ber}\rightarrow\mathrm{Id}$$
of functors 
$$\mathrm{preLoc}_{\mathrm{con}}\rightarrow \mathrm{preLoc}_{\mathrm{con}}.$$
This is the counit of an adjunction
$$\adj{\Omega_{Ber}}{\mathrm{CHaus_{Ber}}}{\mathrm{preLoc}_{\mathrm{con}}}{|-|_{Ber}}.$$

\begin{proposition}
    The unit
    $$(X,\mathcal{P}(X))\rightarrow|-|_{Ber}\circ\Omega_{Ber}$$
    is an isomorphism. In particular, $\Omega_{Ber}$ is fully faithful.
\end{proposition}

\begin{proof}
    We prove that $|\mathcal{P}(X)|_{Ber}\cong X$. First, let us identify the points of $|\mathcal{P}(X)|_{Ber}$. An element is a maximal filter $\mathcal{F}$. Consider $C=\bigcap_{F\in\mathcal{F}}F$. This is a closed, non-empty set by the finite intersection property. Let $x,y\in C$. There are disjoint open sets $U=X\setminus M$, $V=X\setminus N$ with $x\in U,y\in V$, and $M,N\in\mathcal{P}(X)$. Now, $M\cup N=X\in\mathcal{F}$, so either $M\in\mathcal{F}$ or $N\in\mathcal{F}$. But then either $x\notin C$ or $y\notin C$, a contradiction. Hence $C$ consists of a single point $x$. Moreover $\mathcal{F}_{x}=\{M\in\mathcal{P}(X):x\in M\}$ is a maximal filter containing $\mathcal{F}$, so that $\mathcal{F}=\mathcal{F}_{x}$. This gives a bijection between $X$ and $|\mathcal{P}(X)|_{Ber}$. One checks easily that this bijection sends $X\setminus M$ to $|X|_{Ber}\setminus |M|_{Ber}$ for $M\in\mathcal{P}(L)$. In particular, it is a homeomorphism. 
\end{proof}

\begin{definition}
    A pre-locale $L$ is said to be \textit{Berkovich} if 
    \begin{enumerate}
        \item 
        the lattice $L$ is a co-normal distributive lattice with finite meets and joins;
        \item 
        whenever $M,N\in L$ are such that $|M|_{Ber}\le |N|_{Ber}$, then $M\le N$. 
    \end{enumerate}
    A locale $L$ is said to be \textit{Berkovich} if it has a Berkovich basis consisting of quasi-compact objects.
\end{definition}

Being Berkovich is a local condition.

\begin{proposition}\label{prop:localberk}
    Let $L$ be a pre-locale with terminal object $1$, and let $\{U_{i}\}_{i=1}^{n}$ be a finite collection of objects of $L$ such that $1=\bigcup_{i=1}^{n}U_{i}$. 
    \begin{enumerate}
        \item 
           Suppose that each $L_{\big\slash U_{i}}$ is conormal. Then $L$ is conormal.
           \item 
           Suppose that each $L_{\big\slash U_{i}}$ consists of quasi-compact objects. Then $L$ consists of quasi-compact objects.
           \item 
           Suppose that each $L_{\big\slash U_{i}}$ is Berkovich. Then $L$ is Berkovich.
    \end{enumerate}
 
\end{proposition}

\begin{proof}
\begin{enumerate}
    \item 
       Let $b_{1},b_{2}\in L$ be such that $b_{1}\cap b_{2}=0$. Then for each $i$, $(b_{1}\cap U_{i})\cap (b_{2}\cap U_{i})=0$. Thus there are $c_{1i},c_{2i}\le U_{i}$ with $c_{1i}\cup c_{2i}=1$, $c_{1i}\cap b_{2}\cap U_{i}=c_{1i}\cap b_{2}=0$, and $b_{1}\cap c_{2i}\cap U_{i}=b_{1}\cap c_{2i}=0$. Put $c_{1}=\bigcup_{i=1}^{n}c_{1i}$ and $c_{2}=\bigcup_{i=1}^{n}c_{2i}$. Then clearly $c_{1}\cup c_{2}=\bigcup_{i=1}^{n}U_{i}=1$, $c_{1}\cap b_{2}=0$, and $c_{2}\cap b_{1}=0$.
        \item 
        Let $M=\bigcup_{j\in\mathcal{J}}V_{i}\in L$. Then for each $i$, $M\cap U_{i}=\bigcup_{j\in\mathcal{J}}(V_{j}\cap U_{i}).$ There is thus a finite subset $\mathcal{J}_{i}\subset\mathcal{J}$ such that $M\cap U_{i}=\bigcup_{j_{i}\in\mathcal{J}_{i}}(V_{j_{i}}\cap U_{i})$. Then $M=\bigcup_{i=1}^{n}\bigvee_{j_{i}\in\mathcal{J}_{i}}V_{j_{i}}$ and this is a finite union. 
        \item
        We have already seen that $L$ is conormal. Now let $M,N\in L$ be such that $|M|_{Ber}\le |N|_{Ber}$. 
        Let $M,N\in L$ with $|M|_{Ber}\le|N|_{Ber}$. By Proposition \ref{prop:imageberksub} we have $|M|_{Ber}\cap|U_{i}|_{Ber}=|M|_{Ber}\cap|U_{i}\cap M|_{Ber}$ and similarly for $N$. By the Berkovich condition for each $U_{i}$, we get $U_{i}\cap M\le U_{i}\cap N$ for each $1\le i\le n$. This gives $M\le N$. 
\end{enumerate}

\end{proof}

\begin{proposition}
    A Berkovich pre-locale is spatial.
\end{proposition}

\begin{proof}
    Let $M,N\in L$ be such that $|M|\le |N|$. Then $|M|_{Ber}\le |N|_{Ber}$, so that $M\le N$.
\end{proof}

\begin{proposition}
    A pre-locale $L$ is Berkovich if and only if $L$ is co-normal, and the unit
    $$L\rightarrow|-|_{Ber}\circ\Omega_{Ber}(L)$$
    is an isomorphism.
\end{proposition}

\begin{proof}
    Suppose that $L\rightarrow|-|_{Ber}\circ\Omega_{Ber}(L)$ is an isomorphism. To prove that $L$ is Berkovich, it suffices to prove that whenever $(X,\mathcal{P}(X))\in\mathrm{CHaus_{Ber}},$ then $\mathcal{P}(X)$ is Berkovich.  The opposite lattice to $\mathcal{P}(L)$ identifies with $B=\{X\setminus V:V\in\mathcal{P}(X)\}$. This is normal, since $B$ is a basis of a Hausdorff space. By definition, objects of $\mathcal{P}(X)$ are compact, and $X\in\mathcal{P}(X)$.  Now suppose $M,N\in\mathcal{P}(X)$. Write $\mathcal{P}(M)=\mathcal{P}(X)_{\big\slash M}$, and similarly for $\mathcal{P}(N)$.  Since $|\mathcal{P}(M)|_{Ber}=M, |\mathcal{P}(N)|_{Ber}=N$, we have $|\mathcal{P}(M)|_{Ber}\subset|\mathcal{P}(N)|_{Ber}$ implies $M\le N$.  

    Conversely, suppose that $L$ is Berkovich.  The space $X_{L}=|L|_{Ber}$ is compact Hausdorff, with base of open sets $B=\{|L|_{Ber}\setminus |M|_{Ber}:M\in\mathcal{P}(L)\}$. Let $\widetilde{\mathcal{P}(L)}=\{|M|_{Ber}:M\in\mathcal{P}(L)\}$. This is a distributive lattice of closed sets in $X_{L}$, closed under finite meets and joins. Moreover it is a co-normal lattice. Indeed, its opposite lattice can be identified with $B$. The map
$$\mathcal{P}(L)\rightarrow\widetilde{\mathcal{P}(L)}$$
$$M\mapsto|M|_{Ber}$$
is a map of lattices. It is evidently surjective. The second Berkovich condition implies that it is injective. 
\end{proof}


\begin{definition}
    Let $f:L\rightarrow K$ be a morphism of lattices. We say that $f$ is an \textit{open immersion} if there is an element $U$ of $L$ such that $f$ factors through the projection $L\rightarrow L_{\big\slash U}$ followed by an isomorphism $L_{\big\slash U}\rightarrow K$.
\end{definition}

\begin{proposition}\label{prop:spatlocaleop}
     Let $f:K\rightarrow L$ be a morphism of pre-locales. If $f$ is an open immersion then $|f|:|K|\rightarrow |L|$ is an open immersion. If $K$ and $L$ are spatial locales then the converse is true. 
\end{proposition}

\begin{proof}
    Suppose that $f$ is an open immersion. By definition, for $U\in L$, $|U|$ is an open subset of $|L|$. Then $|K|\cong |U|$. Clearly, $|K|\rightarrow |L|$ is then an open immersion.

  Conversely, suppose that $K$ and $L$ are spatial locales, and that $|f|$ is an open immersion.  By assumption, $K$ is isomorphic to the lattice of open subsets of $|K|$. Also by assumption, $|K|\cong U$ for some open subset $U$ of $|L|$. Such an open subset is of the form $U|$. Note that $L_{\big\slash U}$ is also spatial, so that the lattice of open subsets of $|U|$ is isomorphic to $L_{\big\slash U}$.
\end{proof}

\begin{definition}
    Let $(X,\mathcal{P}(X))$, $(Y,\mathcal{P}(Y))$ be in $\mathrm{CHaus_{Ber}}$. We say that a morphism $f:(X,\mathcal{P}(X))\rightarrow (Y,\mathcal{P}(Y))$ is a \textit{closed immersion} if it restricts to a homeomorphism $f:X\cong V$ for some $V\in\mathcal{P}(Y)$.
\end{definition}

\begin{proposition}\label{prop:Berkclosed}
Let $f:K\rightarrow L$ be a morphism of pre-locales, with $|K|$ quasi-compact, and all elements of $K$ and $L$ quasi-compact. If $f$ is an open immersion then $|f|_{Ber}$ is a closed immersion. 
\end{proposition}

\begin{proof}
    First, suppose that $K\rightarrow L$ is an open immersion. Then
    $$\mathrm{pt}(\bigcup K)\cong |K|\rightarrow |L|\cong\mathrm{pt}(\bigvee L)$$
    is an open immersion. Therefore $|K|\cong\bigcup_{i\in\mathcal{I}}|U_{i}|$ for some collection $\{U_{i}\}$ of elements of $L$. Since $|K|$ is quasi-compact, $|K|=\bigcup_{i=1}^{n}|U_{i}|\cong|bigcup_{i=1}^{n}U_{i}|$
    for some finite collection $\{U_{1},\ldots, U_{n}\}$. But then by spatiality, $K\cong L_{\big\slash \bigcup_{i=1}^{n}U_{i}}$, and $|K|_{Ber}\cong |\bigcup_{i=1}^{n}U_{i}|_{Ber}$ which by definition is a closed subset of $|L|_{Ber}$. \qedhere
    
\end{proof}

\comment{
\begin{example}
    For $R$ a unital commutative ring let $\mathrm{Spec}
    _{Zar}(R)$ denote the topological space of prime ideals of $R$ with the Zariski topology. Let $\{\mathrm{Spec}(f):\mathrm{Spec}_{Zar}(S)\rightarrow\mathrm{Spec}_{Zar}(R))\}$ be a flat morphism of schemes which restricts to a surjective map $\mathrm{MaxSpec}_{Zar}(S)\rightarrow\mathrm{MaxSpec}_{Zar}(R)$. Then $\{\mathrm{Spec}_{Zar}(S)\rightarrow\mathrm{Spec}_{Zar}(R))\}$ is a faithfully flat cover, i.e., $\mathrm{Spec}_{Zar}(S)\rightarrow\mathrm{Spec}_{Zar}(R)$ is surjective. It suffices to prove that the flat morphism $R\rightarrow S$ is faithfully flat. Let $\mathfrak{m}$ be a maximal ideal of $R$.  Thus there is a $\mathfrak{n}\in\mathrm{MaxSpec}_{Zar}(S)_{Ber}$ such that $\mathrm{Spec}(f)(\mathfrak{n})=\mathfrak{m}$. Then $S\otimes_{R}R\big\slash\mathfrak{n}\cong S\big\slash\mathfrak{n}\neq0$.
\end{example}
}

\comment{

}

\subsubsection{The locale of subschemes}

Now we return to locales defined by schemes. 

   Let $X$ be a qcqs scheme. Note that, in this case a base for the locale $\mathrm{Sub}_{\mathbf{G}}(X)$ is given by finite unions of affine $\mathbf{G}$-subspaces. We call such finite unions \textit{special sub-spaces} The set $\mathrm{Special}_{\mathbf{G}}(X)$ of these special subspaces forms a distributive sub-lattice of $\mathrm{Sub}_{\mathbf{G}}(X)$. This sub-lattice consists of compact objects by .Proposition \ref{prop:qcqscompact}. In particular, $\mathrm{Sub}_{\mathbf{G}}(X)$ is a coherent, and hence spatial, locale.

\comment{
\begin{definition}
\begin{enumerate}
    \item 
        An object $X$ of $\mathbf{Stk}(\mathbf{Aff}_{\mathbf{C}},\tau)$ is said to be a \textit{derived} $\mathbf{G}$-\textit{scheme} if it is a $0$-scheme.
    \item 
        An object $X$ of $\mathbf{Stk}(\mathbf{Aff}_{\mathbf{C}},\tau)$ is said to be a \textit{derived} $\mathbf{G}$-\textit{space} if it is a $1$-scheme.
        \item 
        Let $f:U\rightarrow X$ be a morphism in $\mathbf{Stk}(\mathbf{Aff}_{\mathbf{C}},\tau)$. The map $f$ is said to be a \textit{derived} $\mathbf{G}$-\textit{subspace} if
        \begin{enumerate}
            \item 
            it is a homotopy monomorphism;
            \item
            for any $Z\rightarrow X$ with $Z$ in $\mathbf{A}$, the fibre product $Z\times_{X}U$ is a derived $\mathbf{G}$-space, and the morphism $f':Z\times_{X}U\rightarrow Z$ is in $\mathbf{P}$.
        \end{enumerate}
\end{enumerate}
\end{definition}
}

    As in \cite{soor2024derived}*{Remark 2.28} we have the following.

\begin{remark}
    \begin{enumerate}
        \item 
        $\mathrm{pt}_{\mathbf{G}}(X)$ is sober.
        \item 
        $\mathrm{pt}(\mathrm{Spec}(A))_{\mathbf{G}}$ is spectral. 
    \end{enumerate}
\end{remark}

\begin{definition}
    A tuple $\mathbf{G}$ is said to be \textit{Berkovich} if $\mathrm{Special}_{\mathbf{G}}(\mathrm{Spec}(A))$ is a Berkovich pre-locale for any affine $\mathrm{Spec}(A)$.
\end{definition}

The following is an immediate consequence of Proposition \ref{prop:localberk}.

\begin{proposition}
    Let $\mathbf{G}$ be Berkovich, and let $X$ be a qcqs scheme. Then $\mathrm{Sub}_{\mathbf{G}}(X)$ is a Berkovich pre-locale. 
\end{proposition}

The next result directly follows from Proposition \ref{prop:spatlocaleop} and Proposition \ref{prop:Berkclosed}.

\begin{proposition}
    Let $h:V\rightarrow W$ be an open immersion of $\mathbf{G}$-spaces. 
    \begin{enumerate}
        \item 
        The map $|h|:|V|\rightarrow |W|$ is an open immersion.
        \item 
        If $\mathbf{G}$ is Berkovich and the spaces are qcqs, then $|h|_{Ber}$ is a closed immersion.
    \end{enumerate}
\end{proposition}

\begin{proposition}
    Let $g:X\rightarrow Y$ be a morphism of qcqs $\mathbf{G}$-spaces, and $f:U\rightarrow Y$ an open immersion of qcqs $\mathbf{G}$-spaces. 
    
    \begin{enumerate}
        \item
        The natural morphism
    $$\pi:|U\times_{Y}X|\rightarrow |U|\times_{|Y|}|X|$$
    is a homeomorphism.
    \item 
  The map
       $$\pi:|U\times_{Y}X|_{Ber}\rightarrow |U|_{Ber}\times_{|Y|_{Ber}}|X|_{Ber}$$
       is a closed immersion. 
    \end{enumerate}
\end{proposition}

\begin{proof}
\begin{enumerate}
    \item 
    Denote by $f':U\times_{Y}X\rightarrow X$ and $g':U\times_{Y}X\rightarrow U$ the pullback maps.
First observe that $|U\times_{Y}X|\rightarrow |X|$ is an open immersion. This factors as $|U\times_{Y}X|\rightarrow |U|\times_{|Y|}|X|\rightarrow |X|$. The map $|U|\times_{|Y|}|X|\rightarrow |X|$ is also an open immersion. It follows easily that $|U\times_{Y}X|\rightarrow |U|\times_{|Y|}|X|$ is an open immersion. It remains to prove that $\pi$ is surjective.

Let $\mathcal{F}_{U}$ and $\mathcal{F}_{X}$ be completely prime filters such that $f(\mathcal{F}_{U})=g(\mathcal{F}_{X})$.  Consider $\mathcal{F}=\{V\rightarrow U\times_{Y}X:V\rightarrow X\in\mathcal{F}_{X}\}$. Since $U\times_{Y}X\rightarrow X$ is an open immersion, this $\mathcal{F}$ well-defined completely prime filter as long as it is non-empty. Clearly $U\rightarrow Y\in f(\mathcal{F}_{U})$. Therefore $U\times_{Y}X\rightarrow X\in\mathcal{F}_{X}$. But $U\times_{Y}X\rightarrow X$ factors through $U\rightarrow X$, and this suffices to prove the claim. Note that also $\mathcal{F}_{X}$ contains $U\rightarrow X$.

We claim that $f'(\mathcal{F})=\mathcal{F}_{X}$ and $g'(\mathcal{F})=\mathcal{F}_{U}$.

If $S\rightarrow Y$ is an open then 
$$S\times_{Y}X\rightarrow X\in\mathcal{F}_{X}\Leftrightarrow U\times_{Y}S\rightarrow U\in\mathcal{F}_{U}.$$
Now for any open immersion $S\rightarrow U$ we have $S\times_{X}U\cong S$. It follows that $\mathcal{F}_{U}=\{S\rightarrow U:S\times_{Y}X\rightarrow X\in\mathcal{F}_{X}\}$. 

Now 
\begin{align*}
    g'(\mathcal{F})&=\{S\rightarrow U: U\times_{Y}X\times_{U}S\cong S\times_{Y}X\rightarrow U\times_{Y}X\in\mathcal{F}\}\\
    &=\{S\rightarrow U: S\times_{Y}X\rightarrow X\in\mathcal{F}_{X}\}\\
    &=\mathcal{F}_{U}.
\end{align*}

We also have
\begin{align*}
    f'(\mathcal{F})&=\{S\rightarrow X:S\times_{X}(U\times_{Y}X)\rightarrow U\times_{Y}X\in\mathcal{F}\}\\
    &=\{S\rightarrow X:S\times_{X}(U\times_{Y}X)\rightarrow X\in\mathcal{F}_{X}\}.
\end{align*}

Now if $V\rightarrow U\times_{Y}X\in\mathcal{F}$ then $V\rightarrow X\in\mathcal{F}_{X}$, and since $f':U\times_{Y}X\rightarrow X$ is an open immersion, $V\times_{X}(U\times_{Y}X)\cong V$. Thus $V\rightarrow X\in\mathcal{F}_{X}$, so $f'(\mathcal{F})\subseteq\mathcal{F}_{X}$. Let $W\rightarrow X\in\mathcal{F}_{X}$. Since $U\rightarrow X\in\mathcal{F}_{X}$, $W\times_{X}U\rightarrow X\in\mathcal{F}_{X}$. Thus $W\times_{X}U\rightarrow X\times_{Y}U\in\mathcal{F}$. This completes the proof.
\item 
First observe that $|U\times_{Y}X|_{Ber}\rightarrow |X|_{Ber}$ is a closed immersion. This factors as $|U\times_{Y}X|_{Ber}\rightarrow |U|_{Ber}\times_{|Y|_{Ber}}|X|_{Ber}\rightarrow |X|_{Ber}$. The map $|U|_{Ber}\times_{|Y|_{Ber}}|X|_{Ber}\rightarrow |X|_{Ber}$ is also a closed immersion. It follows easily that $|U\times_{Y}X|_{Ber}\rightarrow |U|_{Ber}\times_{|Y|_{Ber}}|X|_{Ber}$ is a closed immersion. 
\end{enumerate}

\end{proof}

\begin{definition}
    The tuple $\mathbf{G}$ is said to be \textit{stably Berkovich}  if for $g:X\rightarrow Y$ any morphism of qcqs $\mathbf{G}$-spaces, and $f:U\rightarrow Y$ any open immersion $f:U\rightarrow Y$ of qcqs $\mathbf{G}$-spaces, the morphism
     $$\pi:|U\times_{Y}X|_{Ber}\rightarrow |U|_{Ber}\times_{|Y|_{Ber}}|X|_{Ber}$$
     is a homeomorphism. 
\end{definition}

\subsubsection{Immersions}
Here we define some class of immersions, and study their properties.

\begin{definition}
    A morphism $f:X\rightarrow Y$ is said to be a \textit{local open immersion} if we can write $X=\bigcup_{i\in\mathcal{I}}U_{i}$, with $U_{i}\rightarrow X$ an open immersion, and 
    $$f|_{U_{i}}:U_{i}\rightarrow Y$$
    is an open immersion.
\end{definition}

\begin{proposition}\label{prop:locopimm}
    Let $f:X\rightarrow Y$ be a locally open immersion. It is an open immersion if and only if $|f|:|X|\rightarrow |Y|$ is injective.
\end{proposition}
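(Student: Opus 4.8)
The plan is to prove the two directions separately, the content being in the converse. For the ``only if'' direction: if $f$ is an open immersion then, by the proposition immediately preceding the statement (open immersions of $\mathbf{G}$-spaces induce open embeddings on underlying spaces), the map $|f|\colon |X|\to |Y|$ is an open embedding, hence injective.

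For the converse, suppose $f$ is a local open immersion with $|f|$ injective, and write $X=\bigcup_{i\in\mathcal{I}}U_i$ with each $U_i\to X$ and each $f|_{U_i}\colon U_i\to Y$ an open immersion. I claim first that $f$ is a homotopy monomorphism, i.e.\ that $\Delta_f\colon X\to X\times_Y X$ is an equivalence. Since $\{U_i\to X\}$ is a cover, the family $\{U_i\times_Y U_j\to X\times_Y X\}_{i,j}$ is a cover, and equivalences of stacks can be tested after base change along a cover; the base change of $\Delta_f$ along $U_i\times_Y U_j\to X\times_Y X$ is the canonical map $g_{ij}\colon U_i\times_X U_j\to U_i\times_Y U_j$ induced by $f$ (a direct Yoneda computation). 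Now both $U_i\times_X U_j$ and $U_i\times_Y U_j$ are open subspaces of $U_i$: their first projections to $U_i$ are the base changes of the open immersions $U_j\to X$, respectively $f|_{U_j}\colon U_j\to Y$. I will show that these two open subspaces have the same set of points inside $|U_i|$; since $\mathrm{Op}_{\mathbf{G}}(U_i)$ is spatial, it follows that they coincide as open subspaces of $U_i$, so $g_{ij}$ is an equivalence, and hence so is $\Delta_f$. To compare point sets, apply the preceding proposition on underlying spaces of fibre products along open immersions to get homeomorphisms $|U_i\times_X U_j|\cong |U_i|\times_{|X|}|U_j|$ and $|U_i\times_Y U_j|\cong |U_i|\times_{|Y|}|U_j|$, compatibly with the projections to $|U_i|$. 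A point of the latter is a pair $(a,b)\in |U_i|\times |U_j|$ with equal image in $|Y|$; viewing $a,b$ as points of $|X|$ via the open embeddings $|U_i|,|U_j|\hookrightarrow |X|$, this equal image is exactly $|f|(a)=|f|(b)$, so injectivity of $|f|$ forces $a=b$, i.e.\ $(a,b)\in |U_i|\times_{|X|}|U_j|$; the reverse inclusion is automatic. Hence the two point sets inside $|U_i|$ agree.

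Granting that $f$ is a homotopy monomorphism, a local-to-global step upgrades it to an open immersion. For any $Z\to Y$ with $Z\in\mathbf{A}$, the base change $Z\times_Y X\to Z$ is still a homotopy monomorphism, and it is covered by the open immersions $Z\times_Y U_i\hookrightarrow Z$ (base changes of the $f|_{U_i}$). Let $O\defeq\bigcup_i(Z\times_Y U_i)\in\mathrm{Op}_{\mathbf{G}}(Z)$ be their join; the resulting map $Z\times_Y X\to O$ is a monomorphism (both sides are mono over $Z$) and an effective epimorphism (the $Z\times_Y U_i$ cover both), hence an equivalence. Thus $Z\times_Y X\cong O$ is an open subspace of $Z$ and $Z\times_Y X\to Z$ lies in $\mathbf{P}$, so $f$ is an open immersion.

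The crux of the argument is the passage, via spatiality of the locales $\mathrm{Op}_{\mathbf{G}}(-)$, from the purely set-theoretic hypothesis that $|f|$ is injective to the geometric statement that $U_i\times_X U_j$ and $U_i\times_Y U_j$ literally coincide; identifying the base change of $\Delta_f$ and checking that the various maps are open immersions or effective epimorphisms is routine, relying only on stability of these notions under base change and on the already-established behaviour of $|{-}|$ on fibre products along open immersions.
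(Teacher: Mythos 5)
Your proof is correct and follows essentially the same route as the paper's: both reduce the converse to showing the diagonal $X\to X\times_Y X$ is an equivalence by pulling it back along a cover of $X\times_Y X$ built from the $U_i$ (the paper uses $\{U_i\times_Y X\}$, you use $\{U_i\times_Y U_j\}$) and then identifying the two resulting open subspaces by comparing their point sets, with injectivity of $|f|$ and spatiality of the locale $\mathrm{Op}_{\mathbf{G}}(-)$ doing the work. Your closing local-to-global step upgrading the homotopy monomorphism to a genuine open immersion is a detail the paper's proof leaves implicit, and is a welcome bit of extra care rather than a different method.
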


\begin{proof}
    Clearly if $f$ is an open immersion then $|f|$ is injective. Conversely, suppose that $|f|$ is injective. We need to prove that the map $X\rightarrow X\times_{Y}X$ is an equivalence. Write $X=\bigcup U_{i}$ where each $U_{i}\rightarrow Y$ is an open immersion. It is enough to prove that $U_{i}\times_{Y}X\rightarrow U_{i}$ is an equivalence.  Note that $U_{i}\times_{Y}X\rightarrow X$ and $U_{i}\rightarrow X$ are open immersions. Thus it is enough to prove that $|U_{i}|\cong |U_{i}\times_{X}Y|$ as sets. But we have $ |U_{i}\times_{X}Y|\cong |U_{i}|\times_{|Y|}|X|$. The claim now follows immediately from the fact that $|X|\rightarrow |Y|$ is injective.
\end{proof}

\begin{definition}[c.f. \cite{soor2024derived}*{Lemma 2.34}]
    A morphism $f:X\rightarrow Y$ in $\mathbf{dSch}_{\mathbf{G}}$ is said to be an \textit{algebraically closed immersion} (resp. a \textit{finitely presented algebraically closed immersion}) if there exists a covering $\{U_{i}\rightarrow Y\}_{i\in\mathcal{I}}$ of $Y$ by subspaces $U_{i}\cong\mathrm{Spec}(A_{i})\in\mathbf{A}$, such that 
            for each $i\in\mathcal{I}$, the pullback $Y\times_{X}U_{i}$ is represented by some $\mathrm{Spec}(B_{i})\in\mathbf{A}$, and the induced morphism $A_{i}\rightarrow B_{i}$ is a surjection on $\pi_{0}$ (resp. a surjection on $\pi_{0}$ with finitely generated kernel).
           It is said to be a \textit{locally closed immersion} (resp. a \textit{finitely presented locally closed immersion}) if in addition $|f|:|X|\rightarrow |Y|$ is a locally closed immersion of spaces.
 \end{definition}

 \begin{definition}
     A map $e:X\rightarrow Y$ in $\mathbf{dSch}_{\mathbf{G}}$ is said to be a \textit{Zariski open immersion} if there is a closed immersion $j:Z\rightarrow Y$ such that $|e|$ determines a homeomorphism $|X|\cong |Y|\setminus |Z|$.
 \end{definition}

\subsubsection{Underlying spaces and truncations}

Now, let us impose the following conditions on 
$$\mathbf{G}=(\mathbf{Aff}_{\mathbf{C}},\tau,\mathbf{P},\mathbf{A}).$$
Let $\mathbf{A}^{\heart}$ be a class of discrete objects in $\mathbf{Aff}_{\mathbf{C}}$. That is, each $\mathrm{Spec}(A)\in\mathbf{A}^{\heart}$ is such that $A\in\mathrm{Comm}(\mathbf{C}^{\heart})$.  For each $\mathrm{Spec}(A)\in\mathbf{A}^{\heart}$ fix a full subcategory $\mathbf{F}_{A}\subset\mathbf{Mod}(A)^{\heart}$. Finally, let $\mathbf{A}\subset\mathbf{Aff}_{\mathbf{C}}$ be the class of objects $\mathrm{Spec}(C)$ such that
\begin{enumerate}
    \item 
    $\mathrm{Spec}(\pi_{0}(C))\in\mathbf{A}$;
    \item 
    each $\pi_{n}(C)\in\mathbf{F}_{\pi_{0}(C)}$.
\end{enumerate}

We assume that:

\begin{enumerate}
    \item 
     if $f:B\rightarrow A$ is a morphism in $\mathbf{DAlg}^{cn}(\mathbf{C})$ with $\mathrm{Spec}(A)\in\mathbf{A}^{\heart}$, and $\mathrm{Spec}(f)\in\mathbf{P}$, then $\mathrm{Spec}(B)\in\mathbf{A}^{\heart}$;
     \item 
     if $f:B\rightarrow A$ is a morphism in $\mathbf{DAlg}^{cn}(\mathbf{C})$ with $\mathrm{Spec}(A)\in\mathbf{A}^{\heart}$, $\mathrm{Spec}(f)\in\mathbf{P}$, and $M\in\mathbf{F}_{A}$, then $B\otimes_{A}^{\mathbb{L}}M\in\mathbf{F}_{B}$ (in particular we are assuming that maps in $\mathbf{P}$ between objects in $\mathbf{A}^{\heart}$ are transverse to objects in $\mathbf{F}_{A}$);
     \item 
     a map $\mathrm{Spec}(f):\mathrm{Spec}(A)\rightarrow\mathrm{Spec}(B)$ in between objects in $\mathbf{A}$ is in $\mathbf{P}$ if and only if
     \begin{enumerate}
         \item 
         the map $\mathrm{spec}(\pi_{0}(f))$ is in $\mathbf{P}$,
         \item 
         the map $f$ is derived strong.
     \end{enumerate}
\end{enumerate}

Note that in this case we have two geometry contexts

$$\mathbf{G}=(\mathbf{Aff}_{\mathbf{C}},\tau,\mathbf{P},\mathbf{A})$$
and
$$\mathbf{G}^{\heart}=(\mathbf{Aff}_{\mathbf{C}},\tau,\mathbf{P},\mathbf{A}^{\heart}).$$

A $\mathbf{G}^{\heart}$-scheme/ space will be called a \textit{classical} $\mathbf{G}$-\textit{scheme/space}. This is a full subcategory of $\mathbf{dSch}_{\mathbf{G}}$, which we denote by $\mathbf{Sp}_{\mathbf{G}}$.
Exactly as in \cite{soor2024derived}*{Lemma 2.27} we have the following.

\begin{lemma}
    The inclusion $\mathbf{Sp}_{\mathbf{G}}\rightarrow\mathbf{dSch}_{\mathbf{G}}$ admits a right adjoint $X\mapsto X_{0}$ extending the functor $\mathrm{Spec}(A)\mapsto\mathrm{Spec}(\pi_{0}(A))$, for $\mathrm{Spec}(A)\in\mathbf{G}$ Moreover, $Y\rightarrow X$ is a subspace if and only if $Y_{0}\rightarrow X_{0}$ is a subspace.
\end{lemma}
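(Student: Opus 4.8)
The plan is to run the proof of \cite{soor2024derived}*{Lemma 2.27}, which goes through in the present generality thanks to the three standing assumptions on $\mathbf{G}$; I describe the steps and flag the one genuinely delicate point.

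First, I would record that $t\colon\mathbf{A}\to\mathbf{A}^{\heart}$, $\mathrm{Spec}(C)\mapsto\mathrm{Spec}(\pi_{0}(C))$, is well defined (by the two clauses defining $\mathbf{A}$) and is right adjoint to the fully faithful inclusion $\iota\colon\mathbf{A}^{\heart}\hookrightarrow\mathbf{A}$. This is dual to the statement that connective truncation $\pi_{0}\colon\mathbf{DAlg}^{cn}(\mathbf{C})\to\mathrm{Comm}(\mathbf{C}^{\heart})$ is left adjoint to the inclusion of discrete algebras, a formal consequence of the $t$\nb-structure; concretely $\mathbf{Map}(\mathrm{Spec}(B),\mathrm{Spec}(C))\simeq\mathbf{Map}(\mathrm{Spec}(B),\mathrm{Spec}(\pi_{0}(C)))$ whenever $\mathrm{Spec}(B)\in\mathbf{A}^{\heart}$ and $\mathrm{Spec}(C)\in\mathbf{A}$.

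Next I would globalise. Restriction along $\iota$ gives $\iota^{*}\colon\mathbf{PSh}(\mathbf{A})\to\mathbf{PSh}(\mathbf{A}^{\heart})$, which preserves all small limits and colimits and agrees with $t$ on representables. The transversality assumption on $\mathbf{P}$\nb-maps between objects of $\mathbf{A}^{\heart}$ forces iterated fibre products over such maps to remain discrete; this makes restriction along $\iota$ preserve sheaves, so $\iota^{*}$ descends to a limit-preserving functor $\mathbf{Stk}(\mathbf{A},\tau|_{\mathbf{A}})\to\mathbf{Stk}(\mathbf{A}^{\heart},\tau|_{\mathbf{A}^{\heart}})$ that is right adjoint to the (sheafified left Kan extension) inclusion of stacks, and it makes the inclusion $\mathbf{Sp}_{\mathbf{G}}\hookrightarrow\mathbf{dSp}_{\mathbf{G}}$ well defined since then a $\mathbf{G}^{\heart}$\nb-atlas is also a $\mathbf{G}$\nb-atlas. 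The remaining point is that $\iota^{*}$ carries a derived $\mathbf{G}$\nb-space $X$ to a classical one, which I would prove by induction on the geometric level: for a $\mathbf{G}$\nb-atlas $\{U_{i}=\mathrm{Spec}(A_{i})\to X\}$, limit-preservation of $\iota^{*}$ identifies $\iota^{*}(U_{i_{1}}\times_{X}\cdots\times_{X}U_{i_{n}})$ with the fibre product of the $\mathrm{Spec}(\pi_{0}(A_{i}))$ over $X_{0}\defeq\iota^{*}(X)$, an $(n-1)$\nb-scheme for $\mathbf{G}^{\heart}$ by induction; each $\mathrm{Spec}(\pi_{0}(A_{i}))\to X_{0}$ then lies in $(n-1)$\nb-$\mathbf{P}$ for $\mathbf{G}^{\heart}$ because the corresponding $\mathbf{G}$\nb-map does and, by the standing criterion, a $\mathbf{P}$\nb-map between objects of $\mathbf{A}$ induces one on $\pi_{0}$; and $\coprod_{i}\mathrm{Spec}(\pi_{0}(A_{i}))\to X_{0}$ is an effective epimorphism, since any $\mathrm{Spec}(B)\to X_{0}$ with $\mathrm{Spec}(B)\in\mathbf{A}^{\heart}$ lifts (as $B$ is discrete) to $\mathrm{Spec}(B)\to X$ and is covered there by opens factoring through the $U_{i}$. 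Restricting the resulting adjunction to the full subcategories $\mathbf{Sp}_{\mathbf{G}}$ and $\mathbf{dSp}_{\mathbf{G}}$ produces the right adjoint $X\mapsto X_{0}$; it extends $\mathrm{Spec}(A)\mapsto\mathrm{Spec}(\pi_{0}(A))$ by the first step, and being a right adjoint it is canonical.

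Finally I would treat subspaces, equivalently the statement that $Y\mapsto Y_{0}$ identifies $\mathrm{Op}_{\mathbf{G}}(X)$ with $\mathrm{Op}_{\mathbf{G}^{\heart}}(X_{0})$. If $Y\to X$ is a subspace, applying $\iota^{*}$ --- which preserves the fibre products witnessing a homotopy monomorphism and sends $\mathbf{P}$\nb-maps to $\mathbf{P}$\nb-maps --- shows $Y_{0}\to X_{0}$ is a subspace; injectivity holds because such a map is locally a derived strong homotopy epimorphism, and a derived strong homotopy epimorphism is determined by its truncation. For surjectivity one must reconstruct $Y$ from $Y_{0}\to X_{0}$: over an affine chart $\mathrm{Spec}(B)\to X$ in $\mathbf{A}$ the pulled-back classical subspace is a $\mathbf{P}$\nb-homotopy monomorphism $\mathrm{Spec}(B')\to\mathrm{Spec}(\pi_{0}(B))$, and I would form its derived strong lift $B\to\widetilde{B}$, namely the $B$\nb-algebra with $\pi_{0}(\widetilde{B})=B'$ and $\pi_{n}(\widetilde{B})\cong\pi_{n}(B)\otimes^{\mathbb{L}}_{\pi_{0}(B)}B'$, which lies in $\mathbf{P}$ by the derived-strong criterion and is a homotopy epimorphism because its truncation is one and it is derived strong (the spectral sequence argument of Lemma \ref{lem:derstrongdescendable}). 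Uniqueness of the derived strong lift makes these charts glue to a derived $\mathbf{G}$\nb-subspace of $X$ with truncation $Y_{0}\to X_{0}$. I expect this last step --- checking that the derived strong lift is a homotopy epimorphism and that the charts are mutually compatible --- to be the main obstacle; the rest is a formal game with adjunctions and the standing hypotheses on $\mathbf{G}$.
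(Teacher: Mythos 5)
The paper itself offers no proof of this lemma --- it simply asserts that Soor's argument (\cite{soor2024derived}*{Lemma 2.27}) carries over verbatim under the three standing hypotheses on $\mathbf{G}$ --- so your proposal is being measured against the expected adaptation rather than a written argument. Your construction of the right adjoint is sound and is the intended one: the affine adjunction is dual to $\pi_{0} \dashv \iota$ on connective algebras, restriction along $\iota$ preserves sheaves because transversality keeps the \v{C}ech nerves of $\mathbf{P}$-covers of discrete objects discrete, and the induction on geometric level showing $\iota^{*}X$ is a classical $\mathbf{G}$-space uses exactly assumptions (1)--(3) as you say. The ``only if'' direction of the subspace claim, by applying $\iota^{*}$, is also fine.

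The gap is in your last paragraph. The lemma's ``if'' direction is a \emph{detection} statement: given a map $Y\rightarrow X$ in $\mathbf{dSp}_{\mathbf{G}}$ whose truncation $Y_{0}\rightarrow X_{0}$ is a subspace, one must show that this \emph{given} map is a subspace. You instead solve the \emph{lifting} problem --- producing some derived subspace $\widetilde{Y}\rightarrow X$ with truncation $Y_{0}\rightarrow X_{0}$ --- which is the content of the \emph{next} lemma in the paper (the analogue of \cite{soor2024derived}*{Lemma 2.30}), and which moreover requires the $\mathrm{T}$-analytic presentation of $\mathbf{P}$-maps to construct the lift; the abstract hypotheses on $\mathbf{G}$ alone do not hand you the algebra $\widetilde{B}$. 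Even granting the lift and its uniqueness among \emph{derived strong} homotopy epimorphisms, identifying $\widetilde{Y}$ with the given $Y$ requires knowing that $Y\rightarrow X$ is locally derived strong --- and by standing assumption (3) derived strength is precisely what separates ``$\pi_{0}(f)\in\mathbf{P}$'' from ``$f\in\mathbf{P}$'', so assuming it begs the question. To close the gap you must argue directly that a map of derived $\mathbf{G}$-spaces whose truncation is a subspace is locally derived strong (or note the implicit hypothesis under which this holds), rather than route through the lifting construction.
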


 \begin{lemma}
     Let \(X \in \mathbf{dSch}_{\mathbf{G}}\), and let \(V \subset \abs{X}\) be an open subset. Then there exists a subspace \(U \subset X\) such that \(\abs{U} = V\) 
 \end{lemma}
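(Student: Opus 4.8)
The plan is to deduce this directly from the fact, recorded just before the statement, that the locale $\mathrm{Op}_{\mathbf{G}}(X)$ of derived $\mathbf{G}$-subspaces of $X$ is spatial, together with the classical compatibility of locale points with open sublocales. Unwinding spatiality, the canonical frame homomorphism
\[
  s \colon \mathrm{Op}_{\mathbf{G}}(X) \longrightarrow \mathcal{O}\bigl(\abs{X}\bigr), \qquad W \longmapsto \{\, \mathcal{F} \in \abs{X} : W \in \mathcal{F}\,\},
\]
is an isomorphism, where $\abs{X} = \mathrm{pt}(\mathrm{Op}_{\mathbf{G}}(X))$ is the space of completely prime filters on the locale (see \cite{MR0698074}). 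So I would set $U \defeq s^{-1}(V)$. Since the locale $\mathrm{Op}_{\mathbf{G}}(X)$ is by construction the poset of derived $\mathbf{G}$-subspaces of $X$ inside $\mathbf{Stk}_{/X}$, this element $U$ is automatically a subspace $U \to X$, and by construction $V = s(U) = \{\,\mathcal{F} \in \abs{X} : U \in \mathcal{F}\,\}$. This is the required subspace, \emph{provided} one checks $\abs{U} = V$.

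For that last point I would use that $\abs{U} \to \abs{X}$ is an open embedding --- which holds because $U \to X$ is an open immersion, so the Proposition above on spaces of points of open immersions applies --- and then identify its image with $V$ by a locale-theoretic computation. Concretely: since derived $\mathbf{G}$-subspaces are stable under composition and base change, and $U \to X$ is a homotopy monomorphism, a derived $\mathbf{G}$-subspace of $X$ that factors through $U$ is the same datum as a derived $\mathbf{G}$-subspace of $U$; hence precomposition with $U \to X$ identifies $\mathrm{Op}_{\mathbf{G}}(U)$ with the down-set $\{\, W \in \mathrm{Op}_{\mathbf{G}}(X) : W \le U\,\}$, i.e.\ with the frame $\mathcal{O}(U)$ of the open sublocale of $\mathrm{Op}_{\mathbf{G}}(X)$ determined by $U$. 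Invoking the classical fact that taking points commutes with passage to an open sublocale (again \cite{MR0698074}), its space of points is $\{\,\mathcal{F} \in \abs{X} : U \in \mathcal{F}\,\}$ with the subspace topology from $\abs{X}$, so
\[
  \abs{U} \;=\; \mathrm{pt}\bigl(\mathrm{Op}_{\mathbf{G}}(U)\bigr) \;=\; \{\,\mathcal{F} \in \abs{X} : U \in \mathcal{F}\,\} \;=\; V .
\]

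The only thing that genuinely needs attention is the translation in the middle of the second paragraph: matching the paper's definition of a derived $\mathbf{G}$-subspace (a homotopy monomorphism that is $\mathbf{P}$-local over affines in $\bA$) and of $\mathrm{Op}_{\mathbf{G}}(-)$ against the abstract frame-theoretic statements --- in particular that $\mathbf{G}$-subspaces compose and that restricting a $\mathbf{G}$-subspace of $X$ along $U \to X$ yields a $\mathbf{G}$-subspace of $U$. These are routine from the axioms on $\mathbf{P}$ and the defining property of homotopy monomorphisms, but should be spelled out. If one prefers to bypass the sublocale bookkeeping, one can build $U$ by hand instead: the affine subspaces form a basis of $\mathrm{Op}_{\mathbf{G}}(X)$, so transporting along $s$ the opens $\abs{W}$ with $W \to X$ an affine subspace form a basis of $\abs{X}$; thus $V = \bigcup_i \abs{W_i}$ for a family of affine subspaces $W_i \to X$ with $\abs{W_i} \subseteq V$, and taking $U \defeq \bigvee_i W_i$ in $\mathrm{Op}_{\mathbf{G}}(X)$ --- again a derived $\mathbf{G}$-subspace --- gives $\abs{U} = \bigcup_i \abs{W_i} = V$ since $s$ preserves joins. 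Either way, the real inputs, namely spatiality of $\mathrm{Op}_{\mathbf{G}}(X)$ and compatibility of $\mathrm{pt}(-)$ with open sublocales, are respectively already established above and classical, so the lemma is close to a restatement of the locale-theoretic setup.
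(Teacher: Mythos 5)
The paper does not spell out a proof here: it simply asserts that the proofs of \cite{soor2024derived}*{Corollary 2.31, Theorem 2.32} go through mutatis mutandis, and those arguments are exactly the locale-theoretic one you give — spatiality of $\mathrm{Op}_{\mathbf{G}}(X)$ makes the frame map $W \mapsto \{\mathcal{F} : W \in \mathcal{F}\}$ an isomorphism onto $\mathcal{O}(\abs{X})$, and one realises $V$ as the join of basic affine subspaces $W_i$ with $\abs{W_i} \subseteq V$. So your proposal is correct and takes essentially the same route; your second, ``by hand'' construction via the basis of affine subspaces is the closest match to the cited proofs, and the routine verifications you flag (composition and restriction of $\mathbf{G}$-subspaces, needed to identify $\mathrm{Op}_{\mathbf{G}}(U)$ with the open sublocale determined by $U$) are indeed the only points requiring the axioms on $\mathbf{P}$ and homotopy monomorphisms.
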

 \begin{proof}
     The proofs of \cite{soor2024derived}*{Corollary 2.31, Theorem 2.32} go through mutatis-mutandis. 
 \end{proof}

\comment{
Let $g:X\rightarrow Y$ be a morphism of derived $\mathbf{G}$-spaces. Any completely prime filter $\mathcal{F}$ on $(\mathbf{dSch}_{\mathbf{G}})_{\big\slash Y}$ can be described as $\mathcal{F}_{y}=\{U\rightarrow Y:y\in |U|\}$ for some point $y\in |Y|$. Similarly any completely prime filter on $(\mathbf{dSch}_{\mathbf{G}})_{\big\slash X}$ is of the form $\mathcal{F}_{x}$ for some point $x\in |X|$. Moreover we have $|G|(\mathcal{F}_{x})=\mathcal{F}_{|g|(x)}$.
}

\subsubsection{Spatial morphisms}

Let us introduce a class of maps generalising open immersions.

\begin{definition}
Let $X,Y$ qcqs schemes.
    A morphism $f:X\rightarrow Y$  is said to be
    \begin{enumerate}
    \item
    \textit{spatial} if for any other morphism $Z\rightarrow Y$, the canonical map 
    $$|X\times_{Y}Z|\rightarrow |X|\times_{|Y|}|Z|$$
    is a homeomorphism;
        \item 
            \textit{Berkovich spatial} if for any other morphism $Z\rightarrow Y$, the canonical map 
    $$|X\times_{Y}Z|_{Ber}\rightarrow |X|_{Ber}\times_{|Y|_{Ber}}|Z|_{Ber}$$
    is a homeomorphism.
    \end{enumerate}

\end{definition}

\begin{proposition}
    Let $L$ be a lattice, and $\{U_{i}:i\in\mathcal{I}\}$ a cover of $L$. Then $|L|=\bigcup_{i\in\mathcal{I}}|U_{i}|$. 
\end{proposition}

\begin{proof}
    This is somewhat obvious. Certainly $\bigcup_{i\in\mathcal{I}}|U_{i}|\subset |L|$ is a subspace. Let $\mathcal{F}\in |L|$. Let $U\in\mathcal{F}$. Then $U\subset\bigcup_{i\in\mathcal{I}}U_{i}$. Thus $\bigcup_{i\in\mathcal{I}}U_{i}\in\mathcal{F}$. By complete primality, some $U_{i}\in\mathcal{F}$. Thus $\mathcal{F}\in U_{i}$.
\end{proof}

The next result is essentially identical.

\begin{proposition}
    Let $L$ be a Berkovich pre-locale, and let $\{U_{i}:i\in\mathcal{I}\}$ be a finite cover of $L$ by quasi-compact elements. Then $|L|_{Ber}=\bigcup_{i\in\mathcal{I}}|U_{i}|_{Ber}$.
\end{proposition}

The following is an immediate consequence of Proposition \ref{prop:localberk}.

\begin{proposition}
    Suppose that affine derived $\mathbf{G}$-spaces $\mathrm{Spec}(A)$ are such that $\mathrm{Special}_{\mathbf{G}}(\mathrm{Spec}(A))$ is a Berkovich pre-locale. Then any qcqs derived $\mathbf{G}$-space $X$ is such that $\mathrm{Special}_{\mathbf{G}}(X)$ is a Berkovich pre-locale.
\end{proposition}

\begin{corollary}
    Spatiality and Berkovich spatiality of morphisms are local conditions. Precisely:
    \begin{enumerate}
        \item 
        Let $f:X\rightarrow Y$ be a morphism of schemes. If $\{U_{i}\rightarrow X\}_{i\in\mathcal{I}}$ is a cover of $X$ by subschemes, and each $U_{i}\rightarrow X\rightarrow Y$ is spatial, then $f$ is spatial.
        \item 
        Let $f:X\rightarrow Y$ be a morphism of qcqs schemes. If $\mathbf{G}$ is stably Berkovich and $\{U_{i}\rightarrow X\}_{i\in\mathcal{I}}$ is a finite cover of $X$ by quasi-compact subschemes, and each $U_{i}\rightarrow X\rightarrow Y$ is Berkovich spatial, then $f$ is Berkovich spatial.
    \end{enumerate}
\end{corollary}


\begin{definition}
    Let $f:X\rightarrow Y$ be a morphism and $U\subset Y$ an open immersion. Then $f$ is said to be 
    
    \begin{enumerate}
    \item
      \textit{spatial on the complement of }$U$ if for every morphism $Z\rightarrow Y$, the natural map
    $$|Z\times_{Y}X|\setminus |Z\times_{Y}X\times_{Y}U|\rightarrow |Z|\times_{|Y|}|X|$$
  is a homeomorphism to

   $$  |Z|\times_{|Y|}|X|\setminus (|Z|\times_{|Y|}|X|\times_{|Y|}|U|).$$
    \item 
    \textit{Berkovich spatial on the complement of }$U$ if for every morphism $Z\rightarrow Y$, the natural map
    $$|Z\times_{Y}X|_{Ber}\setminus |Z\times_{Y}X\times_{Y}U|_{Ber}\rightarrow |Z|_{Ber}\times_{|Y|_{Ber}}|X|_{Ber}$$
  is a homeomorphism to

   $$  |Z|_{Ber}\times_{|Y|_{Ber}}|X|_{Ber}\setminus (|Z|_{Ber}\times_{|Y|_{Ber}}|X|_{Ber}\times_{|Y|_{Ber}}|U|_{Ber}).$$
   \end{enumerate}
\end{definition}

\begin{remark}\label{rem:absolute}
The case when $Z=Y$ (and, in the Berkovich case when $\mathbf{G}$ is stably Berkovich) is automatic. In fact this says that $|X|\setminus|X\times_{Y}U|\cong|X|\setminus |X|\times_{|Y|}|U|.$ In particular this amounts to $|X\times_{Y}U|\cong |X|\times_{|Y|}|U|,$ which simply follows from the fact that $U\rightarrow Y$ is an open immersion. 
\end{remark}

\begin{remark}
    If $f:X\rightarrow Y$ is (Berkovich) spatial on the complement of $U$, then for any $Z\rightarrow Y$, $f':X\times_{Y}Z\rightarrow Z$ is (Berkovich) spatial on the complement of $U\times_{Y}Z$.
\end{remark}

\begin{lemma}
    If $f:X\rightarrow Y$ is spatial and $U\rightarrow Y$ any open immersion, then $f$ is spatial on the complement of $U$. If $\mathbf{G}$ is stably Berkovich, $f:X\rightarrow Y$ is Berkovich spatial, and $U\rightarrow Y$ is an open immersion, then $f$ is Berkovich spatial on the complement of $U$. 
\end{lemma}

\begin{proof}
We prove the spatial claim, the Berkovich spatial claim being identical.
    Let $Z\rightarrow Y$ be a morphism. We have $|X\times_{Y}Z|\cong |X|\times_{|Y|}\times|Z|$ and $|Z\times_{Y}X\times_{Y}U|\cong |Z|\times_{|Y|}|X|\times_{|Y|}|U|$. Now the claim is obvious.
\end{proof}

\begin{lemma}\label{lem:andrspat}
 Let $f:X\rightarrow Y$ be a morphism such that 
    $$X= (X\times_{Y}U)\cup V$$
    where 
    \begin{enumerate}
        \item 
        $U\rightarrow Y$ is an open immersion;
        \item 
       each $V\rightarrow X\rightarrow Y$ is an open immersion. 
    \end{enumerate}
     Then $f$ is spatial on the complement of $U$. If $\mathbf{G}$ is stably Berkovich, then $f$ is Berkovich spatial on the complement of $U$.
\end{lemma}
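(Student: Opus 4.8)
The plan is to argue locally over the open cover $X=(X\times_{Y}U)\cup\bigcup_{i\in\mathcal{I}}V_{i}$, reading the hypothesis as: each $V_{i}\to X$ and each composite $V_{i}\to X\to Y$ is an open immersion (the case of a single $V$ being $\mathcal{I}$ a singleton). I would fix a morphism $Z\to Y$ and set $X_{Z}\defeq Z\times_{Y}X$, $U_{Z}\defeq Z\times_{Y}U$, $V_{Z,i}\defeq Z\times_{Y}V_{i}$. Since open immersions are stable under base change, the cover base-changes to an open cover $X_{Z}=(X_{Z}\times_{Z}U_{Z})\cup\bigcup_{i}V_{Z,i}$ with each $V_{Z,i}\to X_{Z}$ and each composite $V_{Z,i}\to X_{Z}\to Z$ an open immersion, and $X_{Z}\times_{Z}U_{Z}\cong Z\times_{Y}X\times_{Y}U$. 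Writing $\pi\colon|X_{Z}|_{Ber}\to|Z|_{Ber}\times_{|Y|_{Ber}}|X|_{Ber}$ for the natural map, the goal is to see that $\pi$ restricts to a homeomorphism from $|X_{Z}|_{Ber}\setminus|X_{Z}\times_{Z}U_{Z}|_{Ber}$ onto $|Z|_{Ber}\times_{|Y|_{Ber}}|X|_{Ber}\setminus\bigl(|Z|_{Ber}\times_{|Y|_{Ber}}|X|_{Ber}\times_{|Y|_{Ber}}|U|_{Ber}\bigr)$.

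First I would record the point-set input. Applying $|{-}|_{Ber}$ to the open covers of $X$ and $X_{Z}$ — using that open immersions induce open embeddings on Berkovich points and that a point of a Berkovich space, being a completely prime filter, lies in a member of any open cover — yields $|X|_{Ber}=|X\times_{Y}U|_{Ber}\cup\bigcup_{i}|V_{i}|_{Ber}$ and $|X_{Z}|_{Ber}=|X_{Z}\times_{Z}U_{Z}|_{Ber}\cup\bigcup_{i}|V_{Z,i}|_{Ber}$, hence $|X_{Z}|_{Ber}\setminus|X_{Z}\times_{Z}U_{Z}|_{Ber}=\bigcup_{i}\bigl(|V_{Z,i}|_{Ber}\setminus|X_{Z}\times_{Z}U_{Z}|_{Ber}\bigr)$. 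On the target side I would use the homeomorphism between the underlying space of a fibre product with an open immersion and the fibre product of the underlying spaces, together with Remark \ref{rem:absolute} (giving $|X\times_{Y}U|_{Ber}\cong|X|_{Ber}\times_{|Y|_{Ber}}|U|_{Ber}$, and similarly for $V_{i}\times_{Y}U$), to rewrite the target set first as $|Z|_{Ber}\times_{|Y|_{Ber}}\bigl(|X|_{Ber}\setminus|X\times_{Y}U|_{Ber}\bigr)$ and then as $\bigcup_{i}\bigl((|Z|_{Ber}\times_{|Y|_{Ber}}|V_{i}|_{Ber})\setminus(|Z|_{Ber}\times_{|Y|_{Ber}}|V_{i}|_{Ber}\times_{|Y|_{Ber}}|U|_{Ber})\bigr)$.

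The geometric heart of the argument is that each composite $V_{i}\to Y$ is an open immersion, hence spatial: thus the natural map $|V_{Z,i}|_{Ber}\to|Z|_{Ber}\times_{|Y|_{Ber}}|V_{i}|_{Ber}$ is a homeomorphism, and composing it with the open embedding $|Z|_{Ber}\times_{|Y|_{Ber}}|V_{i}|_{Ber}\hookrightarrow|Z|_{Ber}\times_{|Y|_{Ber}}|X|_{Ber}$ (base change of $|V_{i}|_{Ber}\hookrightarrow|X|_{Ber}$) recovers $\pi|_{|V_{Z,i}|_{Ber}}$. Under this homeomorphism the open subspace $|X_{Z}\times_{Z}U_{Z}|_{Ber}\cap|V_{Z,i}|_{Ber}=|Z\times_{Y}(V_{i}\times_{Y}U)|_{Ber}$ corresponds to $|Z|_{Ber}\times_{|Y|_{Ber}}|V_{i}|_{Ber}\times_{|Y|_{Ber}}|U|_{Ber}$, so $\pi$ restricts to a homeomorphism $|V_{Z,i}|_{Ber}\setminus|X_{Z}\times_{Z}U_{Z}|_{Ber}\iso(|Z|_{Ber}\times_{|Y|_{Ber}}|V_{i}|_{Ber})\setminus(|Z|_{Ber}\times_{|Y|_{Ber}}|V_{i}|_{Ber}\times_{|Y|_{Ber}}|U|_{Ber})$. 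Since on the overlaps $V_{i}\cap V_{j}$ — again open immersions into $Y$, hence spatial — these local homeomorphisms agree, they glue; taking unions over $i$ and comparing with the two set-theoretic identities above produces the desired homeomorphism, which is by construction the claimed restriction of $\pi$.

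The hard part will not be any single step but the bookkeeping of how $|{-}|_{Ber}$ interacts with open covers, finite intersections of open subspaces, and fibre products along open immersions — i.e.\ making rigorous the repeated identities $|A\cap B|_{Ber}=|A|_{Ber}\cap|B|_{Ber}$ for open subspaces $A,B$ and $|Z\times_{Y}W|_{Ber}\cong|Z|_{Ber}\times_{|Y|_{Ber}}|W|_{Ber}$ for $W\to Y$ an open immersion. Each of these follows from the proposition on the underlying space of a fibre product with an open immersion and from open immersions inducing open embeddings on Berkovich points, but they must be threaded together consistently; once that is done the gluing step is routine.
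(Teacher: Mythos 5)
Your proposal is correct and follows essentially the same route as the paper's proof: base-change the cover along $Z\to Y$, pass to Berkovich points, reduce the complement of the $U$-part to the $V$-part of the cover, and use that the composites $V\to Y$ and $U\to Y$ are open immersions (hence spatial) to identify both sides with $(|Z|_{Ber}\times_{|Y|_{Ber}}|V|_{Ber})\setminus(|Z|_{Ber}\times_{|Y|_{Ber}}|V|_{Ber}\times_{|Y|_{Ber}}|U|_{Ber})$. The only difference is that you allow an indexed family $\{V_i\}$ and therefore add a gluing step over overlaps, whereas the paper works with a single $V$; this is a harmless (mild) generalisation that does not change the argument.
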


\begin{proof}

Again we prove the spatiality version.
    Let $g:Z\rightarrow Y$ be any morphism, and $f':Z\times_{Y}X\rightarrow Z$ the morphism induced by pullback. We have 
    $$Z\times_{Y}X\cong (Z\times_{Y}X\times_{Y}U)\cup Z\times_{Y}V,$$
    where $Z\times_{Y}V\rightarrow Z$ is an open immersion, and 
    $$(Z\times_{Y}X\times_{Y}U)\cong (Z\times_{Y}X)\times_{Z}(Z\times_{Y}U),$$
    with $Z\times_{Y}U\rightarrow Z$ being an open immersion.
We then have $|Z\times_{Y}X|\cong |Z\times_{Y}X\times_{Y}U|\cup|Z\times_{Y}V|$.
   Then we have

\begin{align*}
       |Z\times_{Y}X|\setminus |Z\times_{Y}X\times_{Y}U| &=|Z\times_{Y}V|\setminus (|Z\times_{Y}X\times_{Y}U|\times_{|Z\times_{Y}X|}|Z\times_{Y}V|)\\
     &  \cong |Z\times_{Y}V|\setminus |(Z\times_{Y}X\times_{Y}U)\times_{Z\times_{Y}X}(Z\times_{Y}V)|\\
     &\cong |Z\times_{Y}V|\setminus |Z\times_{Y}V\times_{Y}U|\\
&\cong(|Z|\times_{|Y|}|V|)\setminus(|Z|\times_{|Y|}|V|\times_{|Y|}|U|),
   \end{align*}
where we have used that $V\rightarrow X$, $V\rightarrow Y$, and $U\rightarrow Y$ are open immersions.
\comment{
    Then we have 
    \begin{multline*}
    |Z\times_{Y}X|\setminus |Z\times_{Y}X\times_{Y}U|=|\tilde{Z}\times_{Y}X|\setminus (|\tilde{Z}|\times_{|Z|} |Z\times_{Y}X\times_{Y}U|) \\
    \cong |\tilde{Z}\times_{Y}X|\setminus |\tilde{Z}\times_{Z}Z\times_{Y}X\times_{Y}U|\cong|\tilde{Z}\times_{Y}X|\setminus |\tilde{Z}\times_{Y}X\times_{Y}U|\\
    \cong|\tilde{Z}|\times_{|Y|}|X|\setminus |\tilde{Z}|\times_{|Y|}|X|\times_{|Y}|U|,
    \end{multline*}
    where we have used that $\tilde{Z}\rightarrow Z$, $\tilde{Z}\rightarrow Y$, and $U\rightarrow Y$ are open immersions. }
On the other hand, a similar (even easier) argument gives
$$ |Z|\times_{|Y|}|X|\setminus |Z|\times_{|Y|}|X|\times_{|Y|}|U|
 \cong|Z|\times_{|Y|}|V|\setminus |Z|\times_{|Y|}|V|\times_{|Y|}|U|.$$

   This completes the proof.
   \end{proof}

Before proceeding, let us introduce one more definition.

\begin{definition}[c.f. \cite{soor2024derived}*{Lemma 2.34}]
    A morphism $f:X\rightarrow Y$ in $\mathbf{dSp}_{\mathbf{G}}$ is said to be an \textit{algebraically closed immersion} if there exists a covering $\{U_{i}\rightarrow Y\}_{i\in\mathcal{I}}$ of $Y$ by subspaces $U_{i}\cong\mathrm{Spec}(A_{i})\in\mathbf{A}$, such that 
            for each $i\in\mathcal{I}$, the pullback $Y\times_{X}U_{i}$ is represented by some $\mathrm{Spec}(B_{i})\in\mathbf{A}$, and the induced morphism $A_{i}\rightarrow B_{i}$ is a surjection on $\pi_{0}$.
           It is said to be a \textit{closed immersion} if in addition $|f|:|X|\rightarrow |Y|$ is closed immersion of spaces.
 \end{definition}

Below we shall show that in many instances, closed immersions are spatial.

\subsubsection{Derived $\mathrm{T}$-analytic subspaces}\label{subsubsec:Tansub}

Let us here specialise again. Let $\mathrm{T}$ be a $\underline{\Lambda}$-generating class of algebras of homotopy polynomial type. Denote the monoidal unit in $\underline{\mathbf{C}}$ by $\mathbb{I}$. Let $p_{\lambda_{1},\ldots,\lambda_{n}}:\mathrm{Sym}(\mathbb{I}^{\oplus n})\rightarrow\mathrm{T}(\lambda_{1},\ldots,\lambda_{n})$ denote the morphisms realising $\mathrm{T}$ as a class of homotopy polynomial type. For an object $A\in\mathrm{Comm}(\mathbf{C}^{\heart})$ we write $\underline{A}=\mathrm{Hom}(\mathbb{I},A)$ for the \textit{underlying algebra of} $A$.

We fix a category $\mathbf{A}_{\mathrm{T}}$ of so-called $\mathrm{T}$-\textit{analytic affinoids}, and let $\mathbf{A}_{\mathrm{T}}^{\heart}$ be the full subcategory of $\mathbf{A}_{\mathrm{T}}$ consisting of those objects $\mathrm{Spec}(A)$ with $A$ in the heart of the $t$-structure on $\mathbf{C}$. For $\tau$ we fix the $\mathrm{T}$-rational topology. 

\begin{definition}
    $\mathbf{A}_{\mathrm{T}}$ is said to be \textit{analytic} if the following conditions hold.
    \begin{enumerate}
    \item 
    There is a consistent system of modules $\mathbf{F}_{A}\subset\mathrm{Mod}(A)$ for $\mathrm{Spec}(A)\in\mathbf{A}_{\mathrm{T}}^{\heart}$ (i.e., whenever $\mathrm{Spec}(B)\rightarrow\mathrm{Spec}(A)$ is a map in $\mathbf{P}\cap\mathbf{A}_{\mathrm{T}}^{\heart}$, and $M\in\mathbf{F}_{A}$, then $B\otimes_{A}^{\mathbb{L}}M\in\mathbf{F}_{B}$), such that a finite collection of rational localisations $\{\mathrm{Spec}(A_{i})\rightarrow\mathrm{Spec}(A)\}_{i\in\mathcal{I}}$ between objects in $\mathbf{A}^{\heart}_{\mathrm{T}}$ is a cover if and only if the restriction of the functors $A_{i}\otimes_{A}(-):\mathrm{Mod}(A)\rightarrow\mathrm{Mod}(A_{i})$ to $\mathbf{F}_{A}$ is a jointly conservative collection.
        \item 
        For any object $\mathrm{Spec}(A)$ of $\mathbf{A}_{\mathrm{T}}^{\heart}$ and any $M$ of $\mathbf{F}_{A}$, the natural morphism
     $$\underline{M}[x_{1},\ldots,x_{k}]\rightarrow \underline{M}[[x_{1},\ldots,x_{n}]]$$
     factors as 
    $$\underline{M}[x_{1},\ldots,x_{k}]\rightarrow\underline{M\otimes\mathrm{T}(\lambda_{1},\ldots,\lambda_{k})}\rightarrow \underline{M}[[x_{1},\ldots,x_{n}]],$$
    where the first map is $\mathrm{Id}_{M}\otimes p_{\lambda_{1},\ldots,\lambda_{k}}$, and the second map is a monomorphism.
       \item 
   Consider any $\mathrm{T}(\lambda)$. The \textit{multiplication by} $z$ morphism
   $$z\times:\mathrm{T}(\lambda)\rightarrow\mathrm{T}(\lambda)$$
   has cokernel isomorphic to $\mathbb{I}$.
    \end{enumerate}
    If objects of objects of $\mathbf{F}_{A}$ are transverse to $\mathrm{T}$-\'{e}tale localisations, then we say that $\mathbf{A}_{\mathrm{T}}$ is \'{e}tale analytic.
\end{definition}

The second condition in particular says that we may regard $C\otimes\mathrm{T}(\lambda_{1},\ldots,\lambda_{k})$ as a power series algebra. The third condition gives a canonical augmentation.

\begin{example}
    Let $R$ be a unital commutative ring, and let $\mathrm{T}$ be given by $\mathrm{T}(n)=R[x_{1},\ldots,x_{n}]$. Let $\mathbf{A}_{\mathrm{T}}$ be the category of all derived $R$-algebras. For $A$ a discrete algebra, let $\mathbf{F}_{A}$ denote the category of all $A$-modules. The first two conditions clearly hold. Now, a rational localisation of underived $R$-algebras is in particular \'{e}tale of finite presentation. In particular it is flat. Hence a rational localisation is a flat homotopy epimorphism, and can therefore be refined by Zariski localisations. It follows that the derived rational localisation topology is just the usual derived Zariski topology. Now a finite collection of Zariski open immersions $\{\mathrm{Spec}(A_{i})\rightarrow\mathrm{Spec}(A)\}$ of discrete $R$-algebras is a cover precisely if it is a conservative collection. 
\end{example}

\begin{example}
    Let $K$ be a non-trivially valued, non-Archimedean Banach field. Let $\mathbf{C}$ be the derived context of complete bornological $K$-modules (Section \ref{sec:bornologies}), and let $\mathrm{T}$ be given by $\mathrm{T}(\lambda_{1},\ldots,\lambda_{n})=K<\frac{x_{1}}{\lambda_{1}},\ldots,\frac{x_{n}}{\lambda_{n}}>$, the Tate algebra of radius $(\lambda_{1},\ldots,\lambda_{n})$. Here each $\lambda_{i}\in |K|\cap\mathbb{R}_{>0}$, where $|K|$ is the image of $|-|:K\rightarrow\mathbb{R}_{\ge0}$. Condition $(3)$ is clearly satisfied. Let $\mathbf{A}_{\mathrm{T}}$ denote the class of objects $\mathrm{Spec}(A)$ such that $\pi_{0}(A)$ is of the form $K<\frac{x_{1}}{\lambda_{1}},\ldots,\frac{x_{n}}{\lambda_{n}}>\big\slash I$ for a (necessarily closed and finitely generated) ideal $I$, and $\pi_{n}(A)$ is a finitely generated $\pi_{0}(A)$-module. As we shall see, $\mathrm{T}$-\'{e}tale maps of discrete objects of $\mathbf{A}^{\heart}_{\mathrm{T}}$ are just \'{e}tale maps of rigid affinoids in the usual sense. Let $\tau$ be the $\mathrm{T}$-rational topology. For $\mathrm{Spec}(A)\in\mathbf{A}_{\mathrm{T}}^{\heart}$, let $\mathbf{F}_{\mathrm{A}}$ denote the class of all complete bornological $A$-modules which are transverse to $\mathrm{T}$-\'{e}tale morphisms. This class of modules clearly satisfies condition $(4)$. Condition $(2)$ is satisfied because the object $K<\frac{x_{1}}{\lambda_{1}},\ldots,\frac{x_{n}}{\lambda_{n}}>$ is strongly flat (i.e., tensoring commutes with kernels). Condition $(1)$ is a consequence of the proof of Theorem \ref{thm:refetalecoverequivalent}, and the fact that maps $A\rightarrow k$ with $k$ a non-trivial complete valued are transverse to $\mathrm{T}$-\'{e}tale morphisms, which follows from Lemma \ref{lem:discetaletrans}. If we were to restrict to localisations instead of worrying about \'{e}tale morphisms, we could instead have used \cite{MR3626003}*{Theorem 5.39}.
\end{example}

\begin{example}
    Let $\mathbf{C}$ be the derived context of complete bornological $\mathbb{C}$-modules (Section \ref{sec:bornologies}). For $(\underline{r})\in\mathbb{R}_{>0}$ let $\mathrm{T}(\underline{r})=W_n(\underline{r})^\dagger$ be the ring of overconvergent power series (Subsection \ref{subsec:overconv}). Let $\mathbf{A}_{\mathrm{T}}$ consist of objects $\mathrm{Spec}(A)$ with $\pi_{0}(A)$ of the form $\mathrm{T}(\underline{r})=W_n(\underline{r})^\dagger\big\slash I$, where $I$ is a (necessarily closed and finitely generated) ideal, and $\pi_{n}(A)$ is finitely presented over $\pi_{0}(A)$.
    Property $(3)$ is clear. For $\mathrm{Spec}(A)\in\mathbf{A}_{\mathrm{T}}$, let $\mathbf{F}_{A}$ denote the class of complete bornological $A$-modules which are transverse to rational localisations, so that Property $(4)$ is satisfied. Property $(2)$ is satisfied because the object $W_n(\underline{r})^\dagger$ is strongly flat (i.e., tensoring commutes with kernels). The joint conservativity condition follows immediately from \cite{bambozzi2016dagger}*{Theorem 5.15}. 
\end{example}

\begin{lemma}[c.f. \cite{soor2024derived}*{Lemma 2.30}]
    Let $X$ be a derived $\mathrm{T}$-analytic space. Let $V\subset t_{\le0}X_{0}$ be open immersion. Then there exists an open immersion $U\subset X$ (resp. an \'{e}tale morphism $U\rightarrow X$) such that $t_{\le0}U_{0}=V$. Moreover $U$ is unique up to equivalence. If $\mathbf{A}_{\mathrm{T}}$ is \'{e}tale analytic this is also true for \'{e}tale morphisms.
\end{lemma}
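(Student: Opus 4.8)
The plan is to reduce, via the affinoid atlas, to lifting the defining data of individual rational localisations (resp. $\mathrm{T}$-standard \'etale maps) of the classical truncation, and then to globalise.

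First I would record the affinoid reduction. Choose an atlas $\{\mathrm{Spec}(A_i)\to X\}$ with $\mathrm{Spec}(A_i)\in\mathbf{A}_{\mathrm{T}}$; then $\{\mathrm{Spec}(\pi_0(A_i))\to t_{\le 0}X_0\}$ is an atlas of $t_{\le 0}X_0$, and (by the analytic hypothesis on $\mathbf{A}_{\mathrm{T}}$, which pins down the topology on classical affinoids as the $\mathrm{T}$-rational topology) $V\times_{t_{\le 0}X_0}\mathrm{Spec}(\pi_0(A_i))$ is covered by finitely many $\mathrm{T}$-rational localisations of $\pi_0(A_i)$ — or, in the \'etale analytic case with $V\to t_{\le 0}X_0$ merely \'etale, by $\mathrm{T}$-standard \'etale maps. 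So the whole statement follows once we know: \textbf{(L)} for $\mathrm{Spec}(A)\in\mathbf{A}_{\mathrm{T}}$, every $\mathrm{T}$-rational localisation $\pi_0(A)\to C$ (resp. every $\mathrm{T}$-standard \'etale map, in the \'etale analytic case) is the image under $\pi_0$ of an essentially unique homotopy epimorphism (resp. $\mathrm{T}$-standard \'etale map) $A\to B$ with $\mathrm{Spec}(B)\in\mathbf{A}_{\mathrm{T}}$. Granting \textbf{(L)}, the local lifts are canonically identified on overlaps by the uniqueness clause and glue — open immersions glue as subobjects inside $X$ in the locale $\mathrm{Op}_{\mathbf{G}}(X)$, \'etale maps by \'etale descent using uniqueness as a cocycle identification — to the required $U\subset X$ (resp. $U\to X$), and $t_{\le 0}U_0=V$ is checked chart by chart.

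For the existence half of \textbf{(L)} in the rational case I would write $C=\pi_0(A)\otimes^{\mathbb{L}}\mathrm{T}(\lambda_1,\ldots,\lambda_n)\big\slash\big\slash(f_0 y_{\lambda_1}-f_1,\ldots,f_0 y_{\lambda_n}-f_n)$ as in Definition \ref{defn:Trat}, lift each $f_j\colon\mathbb{I}\to\pi_0(A)$ along $A\to\pi_0(A)$ to $\tilde f_j\colon\mathbb{I}\to A$ (possible since this map is a bijection on $\mathrm{Hom}(\mathbb{I},-)$), note that the $\tilde f_j$ still generate the unit ideal in $\pi_0(A)$, and set $B\defeq A\otimes^{\mathbb{L}}\mathrm{T}(\lambda_1,\ldots,\lambda_n)\big\slash\big\slash(\tilde f_0 y_{\lambda_1}-\tilde f_1,\ldots)$. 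Then $A\to B$ is a homotopy epimorphism by Lemma \ref{lem:rational-loc-etale}, and since $\mathrm{T}(\lambda_\bullet)$ is flat in $\mathbf{C}^\heart$ and $\pi_0$ is right exact, $\pi_0(B)$ is computed by base change of the same formula along $A\to\pi_0(A)$, whence $\pi_0(B)\cong C$ over $\pi_0(A)$. In the \'etale analytic case one lifts a $\mathrm{T}$-standard \'etale presentation $\pi_0(A)\to\pi_0(A)\otimes^{\mathbb{L}}\mathrm{T}(\lambda_\bullet)\big\slash\big\slash(f_1,\ldots,f_n)$ with unit Jacobian in the same way and invokes Proposition \ref{prop:Tstandard} to conclude that $A\to B$ is formally \'etale. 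For the uniqueness half, $t_{\le 0}X_0\hookrightarrow X$ is a nil-immersion, so two homotopy epimorphisms (resp. $\mathrm{T}$-\'etale maps) of $\mathrm{T}$-affinoids with the same effect on $\pi_0$ agree: a homotopy epimorphism of connective $\mathrm{T}$-affinoids inducing an isomorphism on $\pi_0$ is an equivalence (a deformation-theoretic argument as in \cite{soor2024derived}*{Lemma 2.30}), and \'etale lifts along a nil-immersion are rigid because the relevant relative cotangent complex vanishes — i.e. topological invariance of the \'etale site.

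I expect the only real difficulty to lie in the \emph{membership} assertion inside \textbf{(L)}: that the lift $B$ is again a $\mathrm{T}$-affinoid, i.e. $\pi_0(B)$ is again a quotient of some $\mathrm{T}(\mu)$ and each $\pi_n(B)$ is finitely generated over $\pi_0(B)$ and lies in $\mathbf{F}_{\pi_0(B)}$. This is exactly what the \emph{analytic}/\emph{\'etale analytic} axioms on $\mathbf{A}_{\mathrm{T}}$ are tailored to provide — the power-series factorisation property and flatness of $\mathrm{T}(\lambda_\bullet)$ control $\pi_0(B)$, while transversality of $\mathbf{F}$ to $\mathrm{T}$-\'etale maps propagates the finiteness and membership conditions to the higher homotopy groups; the same transversality underlies the $\pi_0$-rigidity used for uniqueness. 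Everything else is bookkeeping of atlases and of the gluing.
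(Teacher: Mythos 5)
Your proposal is correct and follows essentially the same route as the paper's proof: reduce to an affinoid chart, lift the defining presentation of the rational localisation (resp.\ $\mathrm{T}$-standard \'etale map) from $\pi_0(A)$ to $A$, and obtain uniqueness from the fact that both lifts are formally \'etale over $A$, so the $\pi_0$-isomorphism lifts uniquely up to contractible choice. The paper's argument is just terser — it leaves the element-lifting, the $\mathbf{A}_{\mathrm{T}}$-membership check, and the gluing over an atlas implicit, all of which you spell out correctly.
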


\begin{proof}
We prove the open immersion case, the \'{e}tale case being similar. Since the questions is local, we may assume that $X=\mathrm{Spec}(A)$, $V=\mathrm{Spec}(\tilde{B})$, and $\pi_{0}(A)\rightarrow\tilde{B}$ is a rational localisation. 
  We may write \begin{multline*}
    \tilde{B}\cong\pi_{0}(A)\otimes^{\mathbb{L}}\mathrm{T}(\lambda_{1},\ldots,\lambda_{n})\big\slash\big\slash(gx_{1}-f_{1},\ldots,gx_{n}-f_{n}) \\
    \cong\pi_{0}(A)\otimes\mathrm{T}(\lambda_{1},\ldots,\lambda_{n})\big\slash(gx_{1}-f_{1},\ldots,gx_{n}-f_{n})\end{multline*} where $(g,f_{1},\ldots,f_{n})$ generate the unit ideal. Put $B\defeq A\otimes^{\mathbb{L}}\mathrm{T}(\lambda_{1},\ldots,\lambda_{n})\big\slash\big\slash(gx_{1}-f_{1},\ldots,gx_{n}-f_{n})$. This is a derived rational localisation such that $\pi_{0}(B)\cong\tilde{B}$. Let $A\rightarrow B'$ be another rational localisation such that $\pi_{0}(A\rightarrow B')\cong\pi_{0}(A)\rightarrow\tilde{B}$. Now both $B$ and $B'$ are formally \'{e}tale over $A$. Thus the isomorphism $\pi_{0}(B)\cong\tilde{B}\cong\pi_{0}(B')$ lifts, uniquely up to a contractible choice, to an equivalence $B\cong B'$.
\end{proof}

\begin{corollary}
    The morphism of spaces $|X_{0}|\rightarrow |X|$ is a homeomorphism.
\end{corollary}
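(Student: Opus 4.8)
The plan is to deduce the Corollary directly from the preceding Lemma, by showing that the continuous map $|X_0| \to |X|$ underlies an isomorphism of locales $\mathrm{Op}_{\mathbf{G}}(X) \to \mathrm{Op}_{\mathbf{G}}(X_0)$; since both of these locales are spatial (as recorded after the definition of $|X|_{\mathbf{G}}$), applying $\mathrm{pt}(-)$ then yields the desired homeomorphism. First I would note that the counit $\varepsilon \colon X_0 \to X$ of the adjunction $\mathbf{Sp}_{\mathbf{G}} \hookrightarrow \mathbf{dSp}_{\mathbf{G}}$ induces, by base change of subspaces, a frame homomorphism $\varepsilon^{*} \colon \mathrm{Op}_{\mathbf{G}}(X) \to \mathrm{Op}_{\mathbf{G}}(X_0)$, $U \mapsto U \times_X X_0 \cong U_0$, which is precisely the frame map underlying $|\varepsilon| \colon |X_0| \to |X|$. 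Since $X_0$ lies in $\mathbf{Sp}_{\mathbf{G}}$, it is already $0$-truncated, so $t_{\le 0}X_0 = X_0$ and the Lemma applies verbatim to open subspaces of $X_0$.

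The heart of the argument is that $\varepsilon^{*}$ is a bijection on open subspaces. Surjectivity is immediate: given an open subspace $V \subset X_0 = t_{\le 0}X_0$, the Lemma produces an open immersion $U \subset X$ with $U_0 = V$. Injectivity is the uniqueness clause of the Lemma: if $U, U' \subset X$ are open subspaces with $U_0 = U_0'$, then $U \cong U'$ over $X$. A bijective frame homomorphism is automatically an isomorphism of frames (its set-theoretic inverse again preserves arbitrary joins and finite meets), so $\varepsilon^{*}$ is an isomorphism of locales, and hence $|\varepsilon| = \mathrm{pt}(\varepsilon^{*})$ is a homeomorphism.

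I expect the only mildly delicate bookkeeping to be the identification of the abstract assignment $V \mapsto U$ from the Lemma with base change along $\varepsilon$, together with the verification that the inverse assignment is order-preserving; for the latter, if $V \subset V'$ with lifts $U, U'$, then $U \cap U' \subset U'$ is an open subspace of $X$ whose $0$-truncation is $V \cap V' = V$, so uniqueness forces $U \cap U' = U$, i.e.\ $U \subset U'$. Everything else used — spatiality of $\mathrm{Op}_{\mathbf{G}}(X)$ and $\mathrm{Op}_{\mathbf{G}}(X_0)$, functoriality of $|-|$ under base change of subspaces, and the explicit description of subspaces of affines by rational localisations from the proof of the Lemma — is already available, so the proof is short once the Lemma is in place.
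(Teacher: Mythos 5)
Your argument is correct and is exactly the (implicit) deduction the paper intends: the Corollary is stated without proof as an immediate consequence of the preceding Lemma, whose existence and uniqueness clauses give precisely the bijection $\mathrm{Op}_{\mathbf{G}}(X)\cong\mathrm{Op}_{\mathbf{G}}(X_{0})$ that you upgrade to a locale isomorphism and then pass through $\mathrm{pt}(-)$. Your extra care in checking that the inverse of the bijective frame map is order-preserving (via uniqueness applied to $U\cap U'$) is a correct and worthwhile detail that the paper leaves unsaid.
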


\begin{lemma}\label{lem:closedimlift}
Let $f:X\rightarrow Y$ be a closed immersion. For any open immersion $U\rightarrow X$, there exists an open immersion $V\rightarrow Y$ such that $U\cong V\times_{X}Y$. 
\end{lemma}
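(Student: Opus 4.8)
The plan is to reduce the statement to the underlying topological spaces and then use that a derived $\mathbf{G}$-subspace of a derived $\mathbf{G}$-space is determined by its underlying open subset. First I would unpack the two hypotheses. Since $f$ is a closed immersion, $|f| \colon |X| \to |Y|$ is a closed immersion of topological spaces, so $|X|$ is a closed subset of $|Y|$ and $|Y| \setminus |X|$ is open in $|Y|$. Since $U \to X$ is an open immersion, $|U|$ is (identified with) an open subset of $|X|$, so there is an open $G \subseteq |Y|$ with $G \cap |X| = |U|$. Put $\tilde W \defeq G \cup (|Y| \setminus |X|)$; then $\tilde W$ is open in $|Y|$ and $\tilde W \cap |X| = |U|$.

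Next, by the lemma above that every open subset of the underlying space of a derived $\mathbf{G}$-space is realised by a subspace, I obtain an open immersion $V \to Y$ with $|V| = \tilde W$. I then form the base change $V \times_Y X \to X$ (the pullback of $V$ along $f$). This is a homotopy monomorphism, and in fact a derived $\mathbf{G}$-subspace of $X$: for any $Z \to X$ with $Z \in \mathbf{A}$ one has $Z \times_X (V \times_Y X) \cong Z \times_Y V$, and since $V \to Y$ is a subspace and the composite $Z \to X \to Y$ is a map from an object of $\mathbf{A}$ to $Y$, the map $Z \times_Y V \to Z$ lies in $\mathbf{P}$ and $Z \times_Y V$ is a derived $\mathbf{G}$-space. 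By the proposition computing the underlying space of a fibre product along an open immersion, $|V \times_Y X| \cong |V| \times_{|Y|} |X|$; since $|X| \to |Y|$ is injective, this equals $\tilde W \cap |X| = |U|$.

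Finally, $V \times_Y X$ and $U$ are two derived $\mathbf{G}$-subspaces of $X$ with the same underlying open subset of $|X|_{\mathbf{G}}$; since $\mathrm{Op}_{\mathbf{G}}(X)$ is a spatial locale, i.e.\ $\mathrm{Op}_{\mathbf{G}}(X) \cong \mathrm{Op}(|X|_{\mathbf{G}})$, a subspace of $X$ is determined by its underlying open subset, so $V \times_Y X \cong U$ over $X$, as required. I expect the only delicate point to be checking that $V \times_Y X$ is genuinely a derived $\mathbf{G}$-subspace of $X$ (and not merely a homotopy monomorphism), so that this last step applies; the remainder is a formal manipulation of open subsets. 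Note that the argument uses only the topological fact that $|f|$ is a closed embedding, not the local affine data in the definition of an algebraically closed immersion, and that $V$ is not claimed to be unique — indeed $\tilde W$ may be modified arbitrarily outside $|X|$.
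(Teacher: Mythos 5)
Your proof is correct as a proof of the statement as literally written, but it is a genuinely different argument from the paper's, and the difference matters for how the lemma is used. The paper's proof is purely algebraic: it reduces to the discrete affine case $X=\mathrm{Spec}(B)$, $Y=\mathrm{Spec}(A)$ with $B\cong A/(f_1,\ldots,f_n)$, writes $U=\mathrm{Spec}(C)$ as a rational localisation of $B$ at $(g_0;g_1,\ldots,g_m)$, lifts the $g_i$ to elements $\tilde g_i$ of $A$, and takes $V$ to be the rational localisation of $A$ with denominator $\tilde g_0$ and numerators $\tilde g_1,\ldots,\tilde g_m,f_1,\ldots,f_n$; an explicit ideal computation ($(g_0z_1,\ldots,g_0z_n)=(z_1,\ldots,z_n)$) then shows that base change to $X$ recovers $U$. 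Crucially, that argument uses only the ``algebraically closed immersion'' part of the hypothesis (the surjection on $\pi_0$), never the assumption that $|f|$ is a closed embedding of spaces. Your argument consumes exactly that topological assumption and none of the algebra — you flag this yourself, and it is the one real drawback: the lemma is invoked in the very next theorem to prove that an algebraically closed immersion of discrete spaces \emph{is} a closed immersion (via surjectivity of the frame map $\mathrm{An}_{\mathrm{T}}(Y)\rightarrow\mathrm{An}_{\mathrm{T}}(X)$), i.e.\ at a point where closedness of $|f|$ is not yet available, so with your proof the lemma would be true but unusable there. The individual steps of your route are fine given the earlier results of the section: base change of a derived $\mathbf{G}$-subspace along any morphism is again a subspace (your verification via $Z\times_X(V\times_YX)\simeq Z\times_YV$ is exactly right), $|V\times_YX|\cong|V|\times_{|Y|}|X|$ for the open immersion $V\rightarrow Y$, and spatiality of the locale $\mathrm{Op}_{\mathbf{G}}(X)$ identifies two subspaces with the same underlying open subset. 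What your route buys is brevity and independence from the local affine presentation; what the paper's route buys is the strictly stronger statement actually needed downstream, together with an explicit formula for $V$.
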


\begin{proof}
    We may assume that $X\cong\mathrm{Spec}(B)$ $Y\cong\mathrm{Spec}(A)$, and that they are both discrete. Then $B\cong A\big\slash(f_{1},\ldots,f_{n})$. Moreover we may assume that $U\cong\mathrm{Spec}(C)$ where $B\rightarrow C$ is a $\mathrm{T}$-rational localisation. Write $C\cong B\otimes T(\lambda_{1},\ldots,\lambda_{m})\big\slash(g_{0}x_{1}-g_{1},\ldots,g_{0}x_{m}-g_{m})$ with $(g_{0},\ldots,g_{m})$ generating the unit ideal in $B$. For each $0\le i\le m$ pick some preimage $\tilde{g}_{i}$ of $g_{i}$ in $A$. Then $(\tilde{g}_{0},\ldots,\tilde{g}_{m},f_{1},\ldots,f_{n})$ generate the unit ideal of $A$. Write $$\tilde{C}=A\otimes\mathrm{T}(\lambda_{1},\ldots,\lambda_{m},\gamma_{1},\ldots,\gamma_{n})\big\slash(\tilde{g}_{0}x_{1}-\tilde{g}_{1},\ldots,\tilde{g}_{0}x_{m}-\tilde{g}_{m},\tilde{g}_{0}z_{1}-f_{1},\ldots,\tilde{g}_{0}z_{n}-f_{n}).$$
    This is a rational localisation. Base-changing to $B$ gives the rational localisation
    $$B\otimes\mathrm{T}(\lambda_{1},\ldots,\lambda_{m},\gamma_{1},\ldots,\gamma_{n})\big\slash(g_{0}x_{1}-g_{1},\ldots,g_{0}x_{m}-g_{m},g_{0}z_{1},\ldots,g_{0}z_{n}).$$
    This is isomorphic to 
      $$C\otimes T(\gamma_{1},\ldots,\gamma_{n})\big\slash(g_{0}z_{1},\ldots,g_{0}z_{n}).$$
      We claim that $(g_{0}z_{1},\ldots,g_{0}z_{n})=(z_{1},\ldots,z_{n})$. Let $I$ denote the ideal on the left-hand side of the equation, and $I'$ the ideal on the right-hand side. Clearly $I\subset I'$. We have $g_{0}z_{j}\in I$ for all $j$. Then $[x_{i}]g_{0}z_{j}=g_{i}z_{j}$ for each $i$. Since we can write $1$ as a linear combination of the $g_{i}$s, we get $z_{j}\in I$ for all $j$. Thus $I'\subset I$, as required. 
\end{proof}

Let $f:X\rightarrow Y$ be a morphism of $\mathrm{T}$-analytic spaces. The morphism of frames
$$\pi_{f}:\mathrm{An_{T}}(Y)\rightarrow\mathrm{An_{T}}(X)$$
has a right adjoint given by

$$\pi_{f}^{*}(W)=\bigcup_{\{V\rightarrow Y:V\times_{Y}X\rightarrow W\}}V.$$
We get a nucleus $\pi_{f}^{*}\circ\pi_{f}$. The fixed points of this nuclear consist precisely of those $V\rightarrow Y$ such that whenever $U\rightarrow Y$ is such that $U\times_{Y}X\rightarrow V\times_{Y}X$, then $U\rightarrow V$.

\begin{theorem}
Let $X=\mathrm{Spec}(A\big\slash I)$ and $Y=\mathrm{Spec}(A)$ with $I$ a finitely generated ideal, and $A$ discrete. Consider the map $g:X\rightarrow Y$ corresponding to the quotient map $A\rightarrow A\big\slash I$. Then
    $\mathrm{An}_{\mathrm{T}}(X)$ is a closed sub-locale of $\mathrm{An}_{\mathrm{T}}(Y)$. In particular $f$ is a closed immersion.
\end{theorem}

\begin{proof}
We may assume that $I=(f)$ is principle.
    The morphism of frames $\pi_{X}:\mathrm{An}_{\mathrm{T}}(Y)\rightarrow\mathrm{An}_{\mathrm{T}}(X)$ is a surjection by Lemma \ref{lem:closedimlift}. This proves that $\mathrm{An}_{\mathrm{T}}(X)$ is a sub-locale of $\mathrm{An}_{\mathrm{T}}(Y)$.

Now $\pi_{X}$ has a right adjoint, $\pi_{X}^{*}$, given by 
$$\pi_{X}^{*}(W)=\bigcup\{V\rightarrow Y:V\times_{Y}X\subset W\}.$$
Consider an open immersion $\mathrm{Spec}(C)\rightarrow\mathrm{Spec}(A)$. The fixed points of the nucleus $\pi_{g}^{*}\pi_{g}$ are those $V$ satisfying the property that whenever $\mathrm{Spec}(D)\rightarrow X$ is such that $\mathrm{Spec}(D\big\slash(f))\subset X\times_{Y}V$, then $\mathrm{Spec}(D)\subset V$.

On the other hand consider $U=\mathrm{Spec}(A_{f}^{\mathrm{T}})\subset\mathrm{Spec}(A)$. The nucleus defining the closed complement of $U$ is 
$$j_{CU}:V\mapsto V\cup U.$$
The fixed points are clearly those $V$ such that $U\subset V$.
We need to show that the fixed points of these nuclei coincide. Without loss of generality we may assume that the fixed points in both cases are of the form $V=\mathrm{Spec}(C)$ for some $C$.

Let $\mathrm{Spec}(D)\rightarrow Y$ be such that $\mathrm{Spec}(D\big\slash f)\rightarrow X$ factors through $\mathrm{Spec}(C\big\slash f)$. We claim that $\mathrm{Spec}(D\otimes^{\mathbb{L}}_{A}C)\sqcup\mathrm{Spec}(D^{\mathrm{T}}_{f})\rightarrow\mathrm{Spec}(D)$ is a cover of $\mathrm{Spec}(D)$. They are both clearly open immersions. Let $M$ be a finitely presented $D$-module. Suppose that $C\otimes^{\mathbb{L}}_{A}M\cong 0$ and $D_{f}^{\mathrm{T}}\otimes^{\mathbb{L}}_{D}M\cong D_{f}^{\mathrm{T}}\otimes_{D}M\cong 0$. The second condition means that $(1-xf)$ is a surjective endomorphism of $M\otimes\mathrm{T}(\lambda)$. Let us pass to the underlying module $\underline{M\otimes\mathrm{T}(\lambda)}\cong\underline{M}[[x_{1},\ldots,x_{n}]]$. The surjectivity condition means that for any element $m$ of $M$ there is an element $g_{m}(x)$ of $M\otimes\mathrm{T}(\lambda)$ such that $(1-x f)(g_{m}(x))=m(x)$. Comparing coefficients gives that $g_{m}(x)=m$ and $fm=0$. Thus $f$ acts by zero on $M$. Hence $M\cong (M\big\slash (f)M)$. Then 
\begin{multline*}
0\cong C\otimes^{\mathbb{L}}_{A}M\cong C\otimes^{\mathbb{L}}_{A}(M\big\slash (f)M) \\
\cong C\otimes_{A}(M\big\slash (f)M)\cong C\big\slash(f)\otimes_{A} M\cong D\big\slash(f)\otimes_{A}M\cong M\big\slash(f).\end{multline*}
This proves that 
 $$\mathrm{Spec}(D\otimes_{A}C)\sqcup\mathrm{Spec}(D^{\mathrm{T}}_{f})\rightarrow\mathrm{Spec}(D)$$
 is a cover.

 Thus if $\mathrm{Spec}(A_{f}^{\mathrm{T}})\subset\mathrm{Spec}(C)$, then $\mathrm{Spec}(D)\subset\mathrm{Spec}(C)$. Hence if $\mathrm{Spec}(C)$ is a fixed point of $j_{CU}$, it is also a fixed point of $\pi_{X}^{*}\pi_{X}$.

Conversely suppose that $\mathrm{Spec}(C)$ is a fixed point of $\pi_{X}^{*}\pi_{X}$. We have $X\times_{Y}\mathrm{Spec}(A_{\mathrm{T}^{f}})=\emptyset\subset X\times_{Y}\mathrm{Spec}(C)$. Thus $\mathrm{Spec}(A_{f}^{\mathrm{T}})\subset\mathrm{Spec}(C)$, as required.\qedhere

\comment{
For $U\rightarrow Y$ an open immersion, consider the nucleus $j_{CU}:\mathrm{An}_{\mathrm{T}}(Y)\rightarrow \mathrm{An}_{\mathrm{T}}(Y), V\mapsto V\cup U$. The fixed points of this nucleus are those open immersions $V\rightarrow Y$ such that $U\rightarrow Y$ factors through $V$. Let $f\in A$ and consider the open immersion

$$i:\mathrm{Spec}(A^{\mathrm{T}}_{f})\rightarrow\mathrm{Spec}(A).$$

Write $Y=\mathrm{Spec}(A)$, $X=\mathrm{Spec}(A\big\slash(f))$, and $U=\mathrm{Spec}(A_{f}^{\mathrm{T}})$. First observe that $g$ restricts to a surjection on the fixed points of $j_{CU}$. Indeed if $W\rightarrow X$ is an open immersion then there is an open immersion $\tilde{W}\rightarrow Y$ such that $\tilde{W}\times_{Y}X\cong W$. Now $U\times_{Y}X\cong\emptyset$. Thus $(\tilde{W}\cup U)\times_{Y}X\cong W$.
}

\comment{
Now let $V\rightarrow Y$, $W\rightarrow Y$ be open immersions, both containing $U$, such that $V\times_{Y}X\cong W\times_{Y}X$. By passing to intersections, we may without loss of generality assume that $V\subset W$. }
\end{proof}

\comment{
\begin{lemma}
    Let $\mathrm{T}$ be analytic and $j:X\rightarrow Y$ a closed immersion. Then the image of $|j|:|X|\rightarrow |Y|$ is closed in $|Y|$.
\end{lemma}

\begin{proof}
    Without loss of generality we may assume that $X=\mathrm{Spec}(B)$ and $Y=\mathrm{Spec}(A)$ are both affine. We may further assume that they are both discrete. Write $B\cong A\big\slash(f_{1},\ldots,f_{n})$. Consider the open immersions $$A\rightarrow A^{\mathrm{T}}_{f_{i}}\defeq A\otimes^{\mathbb{L}}\mathrm{T}(\lambda)\big\slash\big\slash (1-f_{i}x)\cong A\otimes\mathrm{T}(\lambda)\big\slash (1-f_{i}x).$$
    Now let $\mathrm{Spec}(\phi):\mathrm{Spec}(C)\rightarrow\mathrm{Spec}(A)$ be a morphism. Suppose that 
    $$C\otimes^{\mathbb{L}}A^{\mathrm{T}}_{f_{i}}\cong C^{\mathrm{T}}_{\phi(f_{i})}\cong 0.$$
    Thus, multiplication by $(1-x f_{i})$ is a surjective endomorphism of $C\otimes\mathrm{T}(\lambda)$. Passing to underlying algebras, this means there is an element $g(x)$ of $C\otimes\mathrm{T}(\lambda)$ such that $(1-x \phi(f_{i}))g(x)=1$. We may regard $g(x)$ as a power series. Then comparing coefficients gives that $g(x)=1$ and $\phi(f_{i})=0$. 
     Hence the morphism $A\rightarrow C$ factors through $A\rightarrow A\big\slash (f_{i})$. Consequently, if $C\otimes^{\mathbb{L}}A^{\mathrm{T}}_{f_{i}}\cong 0$ for all $1\le i\le n$, then $A\rightarrow C$ factors as $A\rightarrow A\big\slash(f_{1},\ldots,f_{n})\rightarrow C$. 

    Now let $\mathcal{F}\in |Y|$. Write 
    $$\mathcal{F}_{X}=\{V\times_{Y}X\rightarrow X:V\rightarrow Y\in\mathcal{F}\}.$$
    We claim that this is a well-defined completely prime filter whenever it does not contain $\emptyset\rightarrow X$. It is clearly non-empty, as it contains $X\rightarrow X$. It also contains finite meets. Let $V\times_{Y}X\rightarrow X\in\mathcal{F}_{X}$, and let $W\rightarrow X$ contain $V\times_{Y}X\rightarrow X$. There is a $\tilde{W}\rightarrow Y$ such that $W=\tilde{W}\times_{Y}X$. By taking the union with $V$, we may assume that $\tilde{W}$ contains $V$. The $\tilde{W}\rightarrow Y$ is in $\mathcal{F}$. Hence $W\rightarrow X\in\mathcal{F}$. Now let $\{V_{i}\times_{Y}X\rightarrow X\}_{i\in\mathcal{I}}$ be a collection of subspaces whose union $(\bigcup_{i\in\mathcal{I}}V_{i})\times_{Y}X\rightarrow X\in\mathcal{F}_{X}$. Then $\bigcup_{i\in\mathcal{I}}V_{i}\rightarrow Y\in\mathcal{F}$. Hence some $V_{i}\rightarrow Y\in\mathcal{F}$, so $V_{i}\times_{Y}X\rightarrow X\in\mathcal{F}_{X}$.


   For the remainder we will assume without loss of generality that $n=1$, and $f_{1}=f$.


    Let $|j|(x)\in\mathrm{im}(|j|)$. Then $|j|(x)\notin |U_{i}|=|\mathrm{Spec}(A_{i})|$, since this would mean $U_{i}\rightarrow Y\in \mathcal{F}_{|j|(x)}$, so $\emptyset=\mathrm{Spec}(0)= U_{i}\times_{Y}X\rightarrow X\in\mathcal{F}_{|j|(x)}$, a contradiction. 

Conversely, suppose $\mathcal{F}_{y}\notin\mathrm{im}(f)$. 

    
\end{proof}

}
\begin{lemma}
    Let $X\rightarrow Y$ be a locally finitely presented closed immersion. Then $X\rightarrow Y$ is spatial.
\end{lemma}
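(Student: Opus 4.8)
The plan is to reduce the statement to the case of discrete affine schemes, where it follows from the description of the underlying locale of a closed immersion given by the theorem above (that an algebraically closed immersion of discrete schemes induces a closed sublocale).

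First I would observe that being spatial is a local condition on the target and can be tested after base change to an affine scheme: if $\{V_k\to Y\}$ is an open cover and each base change $X\times_Y V_k\to V_k$ is spatial, then so is $f\colon X\to Y$, because for any $Z\to Y$ the subsets $|X\times_Y(Z\times_Y V_k)|_{Ber}$ form an open cover of $|X\times_Y Z|_{Ber}$, the subsets $|X|_{Ber}\times_{|Y|_{Ber}}|Z\times_Y V_k|_{Ber}$ form an open cover of $|X|_{Ber}\times_{|Y|_{Ber}}|Z|_{Ber}$, the canonical comparison map respects these covers, and on the $k$-th piece it is identified — using that $V_k\to Y$ is an open immersion, that open immersions are spatial, and the homeomorphism $|U\times_Y X|\cong|U|\times_{|Y|}|X|$ for open immersions $U$ — with the spatiality homeomorphism for $X\times_Y V_k\to V_k$. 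The same device reduces us to base-changing along an affine $Z=\mathrm{Spec}(C)$; applying it to the covering $\{U_i=\mathrm{Spec}(A_i)\to Y\}$ supplied by the definition of an algebraically closed immersion (after covering each $U_i\times_Y Z$ by affines) reduces us further to $Y=\mathrm{Spec}(A)$, $X=\mathrm{Spec}(B)$ with $A\to B$ surjective on $\pi_0$, and an arbitrary morphism $\phi\colon A\to C$. Then $X\times_Y Z=\mathrm{Spec}(B\otimes_A C)$, and since $\pi_0(B\otimes^{\mathbb{L}}_A C)\cong\pi_0(B)\otimes_{\pi_0(A)}\pi_0(C)$ the map $C\to B\otimes_A C$ is again surjective on $\pi_0$. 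Finally, using the corollary that $|X_0|\to|X|$ is a homeomorphism I would replace $A,B,C$ by their $\pi_0$, so that $B=A/I$ and $B\otimes_A C=C/IC$ for an ideal $I\subseteq A$ with image $IC\subseteq C$.

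The core of the argument is then the identification, inside $|\mathrm{Spec}(C)|$, of $|\mathrm{Spec}(C/IC)|$ with $\phi^{-1}(|\mathrm{Spec}(B)|)$ — equivalently, since $|\mathrm{Spec}(B)|\hookrightarrow|\mathrm{Spec}(A)|$ is a closed embedding by the theorem above, with $|\mathrm{Spec}(B)|\times_{|\mathrm{Spec}(A)|}|\mathrm{Spec}(C)|$ viewed as a subspace of $|\mathrm{Spec}(C)|$. The inclusion $\subseteq$ is immediate because $\mathrm{Spec}(C/IC)\to\mathrm{Spec}(C)\to\mathrm{Spec}(A)$ factors through $\mathrm{Spec}(B)$. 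For $\supseteq$ I would use that, by the theorem above, $|\mathrm{Spec}(C/IC)|$ is closed with complement $\bigcup_{f\in I}|\mathrm{Spec}(C^{\mathrm{T}}_{\phi(f)})|$ and $|\mathrm{Spec}(B)|$ has complement $\bigcup_{f\in I}|\mathrm{Spec}(A^{\mathrm{T}}_{f})|$, together with $\phi^{-1}(|\mathrm{Spec}(A^{\mathrm{T}}_{f})|)=|\mathrm{Spec}(C^{\mathrm{T}}_{\phi(f)})|$ (open immersions being spatial); comparing complements gives the claim. The one point that needs the coefficient-comparison argument from the proof of that theorem is that an arbitrary basic open $\mathrm{Spec}(C^{\mathrm{T}}_{g})$ disjoint from $\mathrm{Spec}(C/IC)$, i.e. with some power $g^n\in IC$, is covered by the $\mathrm{Spec}(C^{\mathrm{T}}_{\phi(f_j)})$.

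It remains to pass to Berkovich points and check that the induced map $|\mathrm{Spec}(C/IC)|_{Ber}\to|\mathrm{Spec}(B)|_{Ber}\times_{|\mathrm{Spec}(A)|_{Ber}}|\mathrm{Spec}(C)|_{Ber}$ is a homeomorphism. Writing $T=\phi^{-1}(|\mathrm{Spec}(B)|)$, both sides sit inside $|\mathrm{Spec}(C)|_{Ber}$ — the right-hand side because $|\mathrm{Spec}(B)|_{Ber}\to|\mathrm{Spec}(A)|_{Ber}$ is injective — and the map is the inclusion of $T_{Ber}$. I would then use the characterization that a point $x$ of a sober space is an ultrafilter iff $x\in\overline{W}$ implies $x\in W$ for every open $W$: for $x\in T$, with $T$ closed in $|\mathrm{Spec}(C)|$, this property relative to $|\mathrm{Spec}(C)|$ is equivalent to the same property relative to $T$ (closures and opens of $T$ being traces of those of $|\mathrm{Spec}(C)|$), and similarly an ultrafilter of $|\mathrm{Spec}(C)|$ whose image lies in $|\mathrm{Spec}(B)|$ is carried to an ultrafilter of $|\mathrm{Spec}(B)|$; combined with $T=\phi^{-1}(|\mathrm{Spec}(B)|)$ this matches the two subsets, and the map is a homeomorphism because $|\mathrm{Spec}(C/IC)|\to|\mathrm{Spec}(C)|$ is a closed embedding. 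I expect this last step — reconciling the ultrafilter subspaces across the closed immersion — to be the main obstacle; the reductions are routine but demand some care that the homeomorphisms produced agree on overlaps of the chosen covers.
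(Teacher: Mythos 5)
Your main line of argument coincides with the paper's: reduce to $Y\cong\mathrm{Spec}(A)$, $X\cong\mathrm{Spec}(A/I)$, identify $Z\times_Y X$ as the closed complement of $Z\times_Y U$ where $U=\bigcup_{f\in I}\mathrm{Spec}(A^{\mathrm{T}}_f)$ is the open complement of $X$, and then use that open immersions are spatial (so $|Z\times_Y U|\cong |Z|\times_{|Y|}|U|$) to match the two complements. The paper's proof is exactly this computation, carried out at the level of $|-|$; your extra care about locality of spatiality and the reduction to $\pi_0$ is fine and is subsumed in the paper's "we may reduce to the case\dots".

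The step you yourself flag as the main obstacle, however, rests on a false claim. You assert that for $x$ in a closed subspace $T$ of a sober space $S$, the ultrafilter condition ("$x\in\overline{W}$ implies $x\in W$ for every open $W$") relative to $S$ is equivalent to the same condition relative to $T$. Only one implication holds: $T\cap S_{Ber}\subseteq T_{Ber}$. The converse fails already for the Sierpinski space $S=\{a,b\}$ with opens $\emptyset,\{a\},S$ and the closed subspace $T=\{b\}$: one has $b\in T_{Ber}$ (the only open of $T$ not containing $b$ is empty) but $b\notin S_{Ber}=\{a\}$, since the only open containing $b$ is $S$ itself, which meets $\{a\}$. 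The same phenomenon occurs for $V(p)\subset\mathrm{Spec}_{Zar}(\mathbb{Z})$. So the identification $|Z\times_Y X|_{Ber}\cong |X|_{Ber}\times_{|Y|_{Ber}}|Z|_{Ber}$ cannot be extracted from general topology of closed subspaces of sober spaces; it requires input specific to the analytic locales at hand (for affinoids, the concrete identification of $|\mathrm{Spec}(A)|_{Ber}$ with the Berkovich spectrum $\mathcal{M}(A)$, where bounded multiplicative seminorms on $A/I$ are exactly those on $A$ vanishing on $I$). In fairness, the paper's printed proof works entirely with $|-|$ and never addresses the passage to $|-|_{Ber}$, so your proposal is no less complete than the original argument; but the general equivalence you propose in order to close that gap is not true as stated, and you should either restrict to the concrete description of the Berkovich points in the relevant geometric contexts or find a different argument for that step.
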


\begin{proof}
     We may reduce to the case $Y\cong\mathrm{Spec}(A)$ and $X\cong\mathrm{Spec}(A\big\slash I)$. Let $U=\underset{f \in I}\bigcup\mathrm{Spec}(A_{f})$ be its open complement. Let $\phi:A\rightarrow B$ be a morphism. Put $V=\mathrm{Spec}(B)$. Then $V\times_{Y}X\cong\mathrm{Spec}(B\big\slash\big\slash IB)$. Hence $|V\times_{Y}X|$ is the closed complement of $|\bigcup_{f\in I}\mathrm{Spec}(B_{\phi(f)}^{\mathrm{T}})|$. Now $|\bigcup_{f\in I}\mathrm{Spec}(B_{\phi(f)}^{\mathrm{T}})|\cong|\mathrm{Spec}(B)\times_{\mathrm{Spec}(A)}U|\cong|\mathrm{Spec}(B)|\times_{|\mathrm{Spec}(A)|}|U|$. Therefore its closed complement is $|X|\setminus |X|\times_{|Y|}|U|$ which is precisely $|X|\times_{|Y|}(|Y|\setminus |U|)\cong |X|\setminus (|X|\times_{|Y|}|U|)$.
\end{proof}

\comment{
\subsubsection{Local rings}

Let $\mathbf{A}_{\mathrm{T}}$ be analytic, and let $X$ be a derived $\mathrm{T}$-analytic space. 

Let $\mathrm{Spec}(A)\in\mathbf{A}_{\mathrm{T}}$, and $a\in |\mathrm{Spec}(A)|$. We write
$$A_{a}=\colim_{\mathrm{Spec}(B)\in a}B.$$

\begin{lemma}
    The underlying ring $|A_{a}|$ is a local ring. 
\end{lemma}

\begin{proof}
    Let $f\in |A_{a}|$ be the restriction of some section $\tilde{f}\in |B|$, where $\mathrm{Spec}(B)\in a$. Let $\lambda$ be such that $\tilde{f}\mathrm{Sym}(\mathbb{I})\rightarrow B$ factors through a morphism $\mathrm{T}(\lambda)\rightarrow B$. The pair of maps $\{\mathrm{Spec}(\mathrm{T}(\lambda)_{\tilde{f}}^{\mathrm{T}})\rightarrow\mathrm{Spec}(\mathrm{T}(\lambda)),\mathrm{Spec}(\mathrm{T}(\lambda)_{\tilde{f}}^{\mathrm{T}})\rightarrow\mathrm{Spec}(\mathrm{T}(\lambda))_{1-\tilde{f}}^{\mathrm{T}}\}$ is a strict $\mathrm{T}$-rational cover. In particular there is a rational cover $\{\mathrm{Spec}(B_{i})\rightarrow\mathrm{Spec}(B)\}$ such that each composite $\mathrm{Spec}(B_{i})\rightarrow\mathrm{Spec}(\mathrm{T}(\lambda))$ factors through either $\mathrm{Spec}(\mathrm{T}(\lambda)_{\tilde{f}}^{\mathrm{T}})$ or $\mathrm{Spec}(\mathrm{T}(\lambda)_{1-\tilde{f}}^{\mathrm{T}})$. Let $j$ be such that $\mathrm{Spec}(B_{j})\in a$. Then either $\tilde{f}$ or $1-\tilde{f}$ is invertible in $|B_{j}|$. Hence either $f$ or $1-f$ is invertible in $|A_{a}|$, as required. 
\end{proof}
}

\section{The derived algebraic context of bornological modules}\label{sec:bornologies}

Let \(R\) be a Banach ring, that is, a ring with a norm\footnote{For us a norm is a function \(\abs{-} \colon R \to \mathbb{R}_{\geq 0}\) that is nondegenerate, satisfies the triangle inequality \(\abs{a+b} \leq \abs{a} + \abs{b}\), and \(\abs{ab} \leq C\abs{a}\abs{b}\), for some \(C > 0\).} \(\abs{-} \colon R \to \mathbb{R}_{\geq 0}\) with respect to which it is complete. We call a Banach ring \emph{non-Archimedean} if it satisfies the ultra-metric property \(\abs{a + b} \leq \max\{\abs{a}, \abs{b}\}\). To fix intuition for the rest of this article, one should think of three classes of examples: (1) \(\R\) and \(\C\) with the Euclidean norm (\emph{archimedean fields}); (2) \(\Q_p\) and \(\C_p\) with the \(p\)-adic norm (\emph{non-Archimedean fields}); (3) \(\Z\) with the Euclidean norm and \(\Z_p\) with the \(p\)-adic norm, which we think of as \emph{global} base rings for analytic geometry.

A \emph{Banach \(R\)-module} is an \(R\)-module with a norm with respect to which it is complete. We call a Banach \(R\)-module \emph{non-Archimedean} if its norm satisfies the  ultra-metric property. Implcitly, whenever the base Banach ring \(R\) is non-Archimedean, we will assume that the Banach modules over such rings are non-Archimedean as well. An \(R\)-linear map \(f \colon M \to N\) between Banach \(R\)-modules is called \emph{bounded} if there is a \(C>0\) such that \(\abs{f(m)}_{N} \leq C\abs{m}_M\) for all \(m \in M\). Denote by \(\mathsf{Ban}_R\) the category of Banach \(R\)-modules and bounded \(R\)-linear maps. This is a symmetric monoidal category with the completed projective tensor product \(- \haotimes - \colon \mathsf{Ban}_R \times \mathsf{Ban}_R \to \mathsf{Ban}_R\) defined as the completion of the algebraic tensor product \(M \otimes_R N\) with respect to the norm \[\norm{x} \defeq \inf \setgiven{\sum_{i=0}^n \norm{m_i}_M \norm{n_i}_N}{x = \sum_{i=0}^n m_i \otimes n_i}.\] The completed projective tensor product is universal for bounded bilinear maps into a Banach \(R\)-module, and there is an internal Hom given by the \(R\)-module \[\mathsf{Hom}(M,N) = \setgiven{T \colon M \to N \text{ }R\text{ -linear }}{\norm{T}< \infty}\] with the operator norm. The functor \(M \haotimes -\) is left adjoint to \(\mathsf{Hom}(M,-)\), or in other words, \(\mathsf{Ban}_R\) is a \emph{closed} symmetric monoidal category. 

The category of Banach \(R\)-modules is finitely complete and finitely cocomplete, and is \emph{quasi-abelian} in the sense that it is stable under pullbacks (resp. pushouts) of cokernels (resp. kernels) by arbitrary morphisms. Recall that a quasi-abelian category is said to have \emph{enough projectives} if for every object \(M\), there is a cokernel \(P \to M\) from a projective object \(P\). 

\begin{lemma}\cite{ben2020fr}
The category \(\mathsf{Ban}_R\) has enough flat projectives, and the tensor product of two projectives is projective.
\end{lemma}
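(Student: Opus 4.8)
The plan is to reduce everything to the \emph{free Banach \(R\)-modules}. For a set \(S\) and a weight function \(w \colon S \to \R_{>0}\), let \(l^1_w(S,R)\) denote the completed coproduct \(\widehat{\bigoplus}_{s\in S} R\langle w(s)\rangle\), where \(R\langle c\rangle\) is \(R\) with its norm rescaled by \(c\) (in the nonarchimedean case one takes the \(c_0\)-sum instead). Since \(-\haotimes-\) is a left adjoint in each variable and \(R\haotimes R\cong R\), two facts drop out immediately: a bounded \(R\)-linear map \(l^1_w(S,R)\to M\) is the same datum as a family \((m_s)_{s\in S}\) in \(M\) with \(\sup_s \norm{m_s}_M/w(s)<\infty\); and \(l^1_w(S,R)\haotimes l^1_{w'}(S',R)\cong l^1_{w\cdot w'}(S\times S',R)\), using \(R\langle c\rangle\haotimes R\langle d\rangle\cong R\langle cd\rangle\). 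In particular the class of free modules is closed under \(\haotimes\).

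First I would check that each \(l^1_w(S,R)\) is a \emph{flat projective}. For flatness: commuting \(\haotimes\) with completed coproducts and using that rescaling a norm by a positive real is an autoequivalence of \(\mathsf{Ban}_R\) gives \(M\haotimes l^1_w(S,R)\cong\widehat{\bigoplus}_s M\langle w(s)\rangle\) naturally in \(M\), and this functor carries strict monomorphisms to strict monomorphisms with a uniform equivalence constant, hence preserves strict exact sequences. For projectivity: given a strict epimorphism \(g\colon N\onto N'\), there is a constant \(C\) such that every \(n'\in N'\) admits a preimage \(n\in N\) with \(\norm{n}\le C\norm{n'}\); applying this termwise to the family representing a map \(l^1_w(S,R)\to N'\) produces, thanks to the \emph{uniformity of \(C\)}, an admissible family representing a lift to \(N\). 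Thus \(\mathsf{Hom}(l^1_w(S,R),-)\) preserves strict epimorphisms.

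Next, for \emph{enough (flat) projectives}, given \(M\in\mathsf{Ban}_R\) I would take \(S=M\setminus\{0\}\) with \(w(m)=\norm{m}_M\) and the summation map \(q\colon l^1_w(M\setminus\{0\},R)\to M\), \((a_m)\mapsto\sum_m a_m m\): it is bounded, and \(q(\delta_m)=m\) for all \(m\) shows it is surjective and that the quotient norm it induces on \(M\) is equivalent to \(\norm{-}_M\), i.e.\ \(q\) is a strict epimorphism. The reweighting is essential here: over a Banach ring such as \(\Z_p\), in which every element has norm \(\le1\), the naive map out of the unweighted \(l^1(M^{\le1},R)\) need not be surjective.

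Finally, for \emph{closure of projectives under \(\haotimes\)}: a projective \(P\) receives the strict epimorphism \(l^1_w(S,R)\to P\) just constructed, which splits since \(P\) is projective, so \(P\) is a retract of a free module, and likewise \(Q\) is a retract of some \(l^1_{w'}(S',R)\); applying the bifunctor \(\haotimes\) to the two retraction diagrams exhibits \(P\haotimes Q\) as a retract of \(l^1_w(S,R)\haotimes l^1_{w'}(S',R)\cong l^1_{w\cdot w'}(S\times S',R)\), which is free, hence projective, and a retract of a projective is projective. (The same retract argument, with \(\haotimes\) exact in one variable, shows retracts of flats are flat.) I expect the main friction to be metric bookkeeping rather than anything conceptual: the crucial point is the uniformity of the lifting constant used for projectivity of the free modules; alongside this sit the routine checks that reweight-and-sum preserves strictness and that strict monomorphisms and strict epimorphisms are stable under retracts in \(\mathsf{Ban}_R\) (automatic since \(\mathsf{Ban}_R\) is quasi-abelian), with the archimedean (\(l^1\)) and nonarchimedean (\(c_0\)) cases handled in parallel.
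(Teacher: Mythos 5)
Your proposal is correct and follows essentially the same route as the paper: weighted \(l^1\)-type coproducts as the flat projective generators, the strict epimorphism \(l^1(M)\to M\) indexed by the elements of \(M\) with their norms as weights, the identification \(l^1(X)\haotimes l^1(Y)\cong l^1(X\times Y)\), and the retract argument for closure of projectives under \(\haotimes\). The only difference is that you spell out the flatness and projectivity verifications (uniform lifting constant, exactness of the reweighted completed sum) which the paper delegates to the cited references.
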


\begin{proof}
This is well-known and we direct the reader to \cite{bambozzi2020sheafyness}*{Proposition 3.9, 3.11} for a complete proof of the fact that \(\mathsf{Ban}_R\) has enough flat projectives. For exposition, we spell out what the projectives are: let \((X, *)\) be a pointed set, and \(f \colon X \to \R_{\geq 0}\) a function that takes the value \(0\) only on the base point \(x = *\). For \(x \neq *\), consider the Banach \(R\)-module \(R_{f(x)} = R\) as an \(R\)-module, with the norm \(\abs{r}_{R_{f(x)}} = \frac{\abs{x}_R}{f(x)}\). Then the coproduct \[l^1(X) \defeq \coprod_{x \neq *}^{\leq 1} R_{f(x)}\] in the category \(\mathsf{Ban}_R^{\leq 1}\) of Banach \(R\)-modules with contracting \(R\)-linear maps is a projective object in \(\mathsf{Ban}_R\). It is then easy to see that any Banach \(R\)-module \(M\) is a quotient \(l^1(M) \to M\) of such a projective. To see that the completed projective tensor product of two projectives is projective, we first use the standard identification \(l^1(X) \haotimes l^1(Y) \cong l^1 (X \times Y)\), and then the fact that any projective object in \(\mathsf{Ban}_R\) is a retract of an \(l^1\)-space, and the fact that the tensor product commutes with finite colimits. 
\end{proof}

While \(\mathsf{Ban}_R\) is a useful starting point for analytic geometry - particularly because it contains, for instance, the affinoids of rigid analytic geometry - we are impeded by the lack of infinite limits and colimits. This entirely rules out overconvergent and Stein spaces. To remedy this, one needs to enlarge the category to include (in the very least) Fr\'echet spaces. To this end, consider the category \(\mathsf{Ind}(\mathsf{Ban}_R)\) of inductive systems of Banach \(R\)-modules. This is still a closed symmetric monoidal quasi-abelian category, but now has all limits and colimits. However, the category \(\mathsf{Ind}(\mathsf{Ban}_R)\) is too large, and it is sometimes convenient to work in the smaller concrete category \(\mathsf{Ind}^s(\mathsf{Ban}_R)\) of inductive systems of Banach \(R\)-modules with \emph{monomorphic} structure maps. This category is closed symmetric monoidal, quasi-abelian, complete and cocomplete, but has the convenient feature that when \(R\) is a Banach field (that is, a Banach ring containing \(\Q\)), it admits an alternative description in terms of \emph{bounded subsets}:

\begin{definition}\label{def:bornology}
A \emph{bornology} on a set \(X\) is a collection \(\bdd_X\) of its subsets is closed under taking finite unions and hereditary under inclusions. The elements of \(\bdd_X\) shall be called \emph{bounded subsets}, and the pair \((X, \bdd_X)\) a \emph{bornological set}. A map \(f \colon (X, \bdd_X) \to (Y, \bdd_Y)\) between bounded subsets is called \emph{bounded} if whenever \(S \in \bdd_X)\), \(f(S) \in \bdd_Y\). 
\end{definition}

Typically, one needs and uses some additional structure on the underlying set in applications. In our applications, we typically start with an \(R\)-linear structure which is sufficient to define seminorms that arise from suitable bounded subsets. 

\begin{definition}\label{def:bornological-module}
Let \(R\) be a Banach field with a nontrivial norm. A \emph{bornological \(R\)-module} \(M\) is a module with a bornology such that the addition and scalar multiplication maps \[+ \colon M \times M \to M, \quad R \times M \to M\] are bounded functions. 
\end{definition} 

In what follows, we describe how bornologies arise from modules over a nontrivially valued Banach field. Given such a module \(M\), we may define for each nonempty subset (which we may without loss of generality take to be convex lattices \footnote{A subset \(S \subset M\) being a lattice here means that \(R^{\leq 1} S = M\).})  \(S \in \bdd_M\), the seminorm \[\rho_S(x) \defeq \inf_{\lambda \in R}\setgiven{\abs{\lambda}}{x \in \lambda S},\] called the \emph{gauge seminorm}. The \(R\)-module \(M_S\) generated by \(S\) is a semi-normed \(R\)-module, whose unit ball is precisely \(S\). To distinguish between arbitrary bounded subsets, and those which arise as closed unit balls of seminorms, we refer to the latter as \emph{discs}. We call a bornology on an \(R\)-module \emph{convex} if the collection of bounded discs is cofinal in the original bornology (ordered by inclusion). We shall always assume that our bornologies are convex, and drop this adjective from here onwards. A bornological \(R\)-module is called \emph{separated} if for every bounded disc \(S\), the associated seminorm \(\rho_S\) is a norm. Finally, a bornological \(R\)-module \(M\) is called \emph{complete} if for every bounded disc \(S\), the associated seminormed space \((M_S, \rho_S)\) is a Banach \(R\)-module. Denote by \(\mathsf{Born}_R\) and \(\mathsf{CBorn}_R\) the categories of separated and complete bornological \(R\)-modules.

To relate the concrete definition of bornologies above with inductive systems, we observe that there are canonical functors 

\[
\mathrm{diss} \colon \mathsf{Born}_R \to \mathsf{Ind}(\mathsf{Norm}_R), \quad \mathsf{CBorn}_R \to \mathsf{Ind}(\mathsf{Ban}_R), 
\] taking a (complete) bornological \(R\)-module \(M\) to the formal inductive system \(``\mathrm{colim}" (M_S, \rho_S)\) of (complete) normed \(R\)-modules. We call this the \emph{dissection} functor.

\begin{theorem}\cite{kelly2022analytic}*{Proposition 2.14}\label{thm:dissection}
Let \(R\) be a Banach ring. Then the dissection functor \(\mathrm{diss} \colon \mathsf{CBorn}_R \to \mathsf{Ind}(\mathsf{Norm}_R)\) is faithful. If \(R\) is a nontrivially valued Banach field, then it is fully faithful, and the essential image can be identified with the essentially monomorphic inductive systems of normed \(R\)-modules. Similar claims hold for \(\mathsf{CBorn}_R\) and \(\mathsf{Ind}(\mathsf{Ban}_R)\).  
\end{theorem}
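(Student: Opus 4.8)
The plan is to prove the three assertions in sequence: that $\mathrm{diss}$ is well defined and faithful for any Banach ring; that it is full when $R$ is a nontrivially valued Banach field; and that its essential image is then exactly the essentially monomorphic ind-systems. First I would record the basic structure of the functor. For a complete bornological $R$-module $M$ the bounded discs, being cofinal in the (convex) bornology, form a directed poset $\mathcal{D}_M$ which covers $M$, so $M = \colim_{S \in \mathcal{D}_M} M_S$ as an $R$-module; for $S \subseteq S'$ one has $\rho_{S'} \le \rho_S$ on $M_S$, so the inclusion $M_S \to M_{S'}$ is contracting and in particular bounded, and completeness of $M$ makes each $(M_S, \rho_S)$ a Banach $R$-module, so the dissection is a genuine object of $\mathsf{Ind}(\mathsf{Ban}_R)$; a bounded map $f \colon M \to N$ carries each $S$ into some bounded disc $T$, and the induced maps $M_S \to N_T$ assemble into a morphism of ind-objects. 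Faithfulness is then immediate: since $N$ is separated each $N_T \hookrightarrow N$ is injective, so $\mathrm{diss}(f)$ determines $f|_{M_S}$ for every $S$, and as $M = \bigcup_S M_S$ two bounded maps with the same dissection must coincide. This step uses nothing about $R$ beyond its being a Banach ring (and the standing convention that singletons are bounded, so that the discs cover $M$).

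For fullness, assume $R$ is a nontrivially valued Banach field. A morphism $\mathrm{diss}(M) \to \mathrm{diss}(N)$ of ind-objects is a compatible family of bounded $R$-linear maps $\phi_S \colon M_S \to N_{T(S)}$; composing with the injections $N_{T(S)} \hookrightarrow N$ and using $M = \bigcup_S M_S$, these glue to a single $R$-linear map $f \colon M \to N$ with $f|_{M_S} = \phi_S$. It then remains to see $f$ is bounded. Since $\phi_S$ is a bounded map of normed modules, $\phi_S(S)$ is norm-bounded in $N_{T(S)}$, hence contained in $\lambda\, T(S)$ for some $\lambda \in R$ of sufficiently large absolute value — such $\lambda$ exists precisely because the valuation on $R$ is nontrivial — and $\lambda\, T(S)$ is a bounded subset of $N$, so $f(S) = \phi_S(S)$ is bounded. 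Thus $f$ is a bounded map with $\mathrm{diss}(f)$ equal to the given morphism. This is also the point at which the gauge construction $S \mapsto \rho_S$ is seen to translate faithfully between bounded discs and seminorms, which is why a field with nontrivial valuation is needed here and not merely an arbitrary Banach ring.

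For the essential image, in one direction $\mathrm{diss}(M)$ is by construction represented by the system $(M_S)_{S \in \mathcal{D}_M}$ whose transition maps are the inclusions $M_S \hookrightarrow M_{S'}$, which are monomorphisms, so $\mathrm{diss}(M)$ is essentially monomorphic. Conversely, given an essentially monomorphic ind-system, choose a representative $(N_i)_{i \in I}$ with monic transition maps, form the filtered union $N \defeq \colim_i N_i$ of underlying $R$-modules, and equip $N$ with the convex bornology generated by the images of the unit balls $B_i \subseteq N_i$, saturated so that the discs $S$ with $N_S$ Banach remain cofinal. One then checks that $N$ is a complete (resp. separated, in the $\mathsf{Norm}_R$-variant) bornological $R$-module, and that a cofinality argument between $I$ and $\mathcal{D}_N$ — using that on the image of each $N_i$ the gauge seminorm $\rho_{B_i}$ agrees, after passing to a cofinal subsystem, with the norm of $N_i$ — identifies $\mathrm{diss}(N)$ with the given ind-system. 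The separated/normed and complete/Banach cases run in parallel, replacing $\mathsf{Ban}_R$ by $\mathsf{Norm}_R$ throughout.

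The routine parts are well-definedness and faithfulness, and fullness is a short glue-and-estimate argument once the nontrivial valuation is invoked. I expect the main obstacle to be the essential-image statement: one must (i) pin down the correct bornology on the reconstructed colimit so that the Banach-generating discs are cofinal, and (ii) carry out the interleaving/cofinality argument showing that $\mathcal{D}_N$ is cofinal in — and induces the same seminorms as — the original index category $I$, bearing in mind that the transition maps of $(N_i)$ are only bounded monomorphisms, not isometries or closed immersions. This is exactly where the hypothesis that $R$ is a nontrivially valued field is used most essentially, since it is what allows one to build gauges and to dilate bounded discs freely.
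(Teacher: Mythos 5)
The paper does not prove this statement; it is quoted verbatim with a citation to \cite{kelly2022analytic}*{Proposition 2.14}, so there is no in-text proof to compare against. Your argument is correct and is the standard one from that reference: faithfulness from injectivity of the structure maps $N_T\hookrightarrow N$ of a separated module, fullness by gluing the $\phi_S$ and absorbing operator norms into scalars (which is exactly where the nontrivial valuation enters), and the essential-image claim via the colimit/dissection adjunction together with the mutual-cofinality argument between the index category $I$ and the bounded discs of the reconstructed module — the step you rightly single out as the only delicate one.
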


Using Theorem \ref{thm:dissection}, we will identify the categories \(\mathsf{CBorn}_R\) and \(\mathsf{Ind}^s(\mathsf{Ban}_R)\) from this point onwards. 

As already mentioned, the categories \(\mathsf{CBorn}_R\) and \(\mathsf{Ind}(\mathsf{Ban}_R)\) are quasi-abelian. This means that they have well-behaved derived categories, defined by taking the Verdier quotient of the triangulated homotopy category of unbounded chain complexes by the thick subcategory of exact chain complexes. Furthermore, being quasi-abelian, they have left and right \(t\)-structures. The following result shows that the distinction between inductive systems of Banach spaces and complete bornological spaces disappears upon taking derived categories:

\begin{theorem}\label{thm:elementary}
Let \(R\) be a Banach ring. 
\begin{enumerate}
\item The category of inductive systems of Banach \(R\)-modules \(\mathsf{Ind}(\mathsf{Ban}_R)\) is an elementary quasi-abelian category (in the sense of \cite{Schneiders:Quasi-Abelian});
\item The category of complete bornological \(R\)-modules \(\mathsf{CBorn}_R\) is an \textbf{Admon}-elementary\footnote{This means that \(\mathsf{Hom}(X,-)\) commutes with essentially monomorphic filtered colimits.} quasi-abelian category;
\item The categories \(\mathsf{LH}(\mathsf{Ind}(\mathsf{Ban}_R))\) and \(\mathsf{LH}(\mathsf{CBorn}_R)\) are Grothendieck abelian categories;
\item We have a monoidal equivalence of categories \[\mathsf{D}(\mathsf{Ind}(\mathsf{Ban}_R)) \simeq \mathsf{D}(\mathsf{LH}(\mathsf{Ind}(\mathsf{Ban}_R))) \simeq \mathsf{D}(\mathsf{LH}(\mathsf{CBorn}_R)) \simeq \mathsf{D}(\mathsf{CBorn}_R).\] 
\end{enumerate}
\end{theorem}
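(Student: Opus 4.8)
The plan is to assemble the four statements from Schneiders' general theory of quasi-abelian categories together with the explicit flat projectives $l^1(X)$ produced in the preceding lemma, using Theorem~\ref{thm:dissection} to identify $\mathsf{CBorn}_R$ with the full subcategory $\mathsf{Ind}^s(\mathsf{Ban}_R)\subset\mathsf{Ind}(\mathsf{Ban}_R)$ of essentially monomorphic systems. For (1): $\mathsf{Ban}_R$ is quasi-abelian and has finite colimits, so by the standard theory of ind-completions of quasi-abelian categories, $\mathsf{Ind}(\mathsf{Ban}_R)$ is again quasi-abelian, is complete and cocomplete, and has exact filtered colimits; see \cite{Schneiders:Quasi-Abelian}. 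It remains to produce a small set of compact projective generators. The objects $l^1(X)$ are projective in $\mathsf{Ban}_R$, hence in $\mathsf{Ind}(\mathsf{Ban}_R)$, and — as objects of $\mathsf{Ban}_R$, which is idempotent-complete and therefore coincides with the category of compact objects of its ind-completion — they are compact. Every Banach $R$-module is the filtered colimit of its finitely generated closed submodules, each a bounded quotient of an $l^1$-space on a countable weighted set, so a small set of such $l^1(X)$ already generates, which gives (1).

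For (2) and (3): identifying $\mathsf{CBorn}_R=\mathsf{Ind}^s(\mathsf{Ban}_R)$, one checks directly that it is quasi-abelian, complete and cocomplete, and that the same $l^1(X)$ are projective generators (the dissection of a complete bornological module $M$ is the monomorphic system of its Banach discs $M_S$, each an $l^1$-quotient, so $M$ is a quotient of a coproduct of $l^1$-spaces, cf.\ \cite{kelly2022analytic}). The one difference from (1) is that colimits in $\mathsf{Ind}^s$ are obtained by post-composing the $\mathsf{Ind}$-colimit with the monomorphization left adjoint, which is only right exact; hence arbitrary filtered colimits need not be exact and one obtains only the weaker $\textbf{Admon}$-elementary property, i.e.\ $\mathsf{Hom}(l^1(X),-)$ commutes with \emph{essentially monomorphic} filtered colimits, which follows from compactness of $l^1(X)$ in $\mathsf{Ind}(\mathsf{Ban}_R)$. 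Part (3) then follows from the fact that, for an elementary quasi-abelian category $\mathcal E$, the left heart $\mathsf{LH}(\mathcal E)$ is abelian, has exact filtered colimits inherited from $\mathcal E$, and admits the images of a projective generating set as a generating set, hence is Grothendieck (and even has enough projectives); applied to $\mathsf{Ind}(\mathsf{Ban}_R)$ this is immediate, and for $\mathsf{CBorn}_R$ one reruns the argument observing that the colimits witnessing AB5 may be taken monomorphic, so the $\textbf{Admon}$-elementary property suffices.

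For (4): the outer two equivalences are instances of Schneiders' theorem $\mathsf{D}(\mathcal E)\simeq\mathsf{D}(\mathsf{LH}(\mathcal E))$ applied to $\mathsf{Ind}(\mathsf{Ban}_R)$ and to $\mathsf{CBorn}_R$, and this identification is monoidal because the two tensor products agree on homotopy-projective resolutions. For the middle equivalence, use the left adjoint $L\colon\mathsf{Ind}(\mathsf{Ban}_R)\to\mathsf{CBorn}_R$ of the inclusion. Both $\mathsf{D}(\mathsf{Ind}(\mathsf{Ban}_R))$ and $\mathsf{D}(\mathsf{CBorn}_R)$ are compactly generated by the $l^1(X)$, which are compact in the respective derived categories (being projective and compact in the underived categories, with coproducts agreeing) and which generate; the derived functor $\mathbb{L}L$ preserves colimits, carries $l^1(X)$ to $l^1(X)$, and restricts to a fully faithful functor on the $l^1$'s (the same spaces of bounded maps), so it is an equivalence. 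Monoidality follows since the equivalence is determined by its restriction to the $l^1$'s and $l^1(X)\haotimes l^1(Y)\cong l^1(X\times Y)$, these objects being flat so that underived and derived tensor products coincide on them.

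\textbf{Main obstacle.} The genuinely delicate points are the bornological refinements: verifying that $\mathsf{CBorn}_R$ satisfies only the $\textbf{Admon}$-elementary condition and that this still forces $\mathsf{LH}(\mathsf{CBorn}_R)$ to be Grothendieck (one must check that the non-exactness of monomorphization does not obstruct AB5 in the left heart), and — for (4) — setting up the adjunction $L$ and the compact-generation argument carefully enough that the comparison of \emph{unbounded} derived categories, and in particular its monoidality, goes through. The set-theoretic bookkeeping in (1)--(2), namely extracting an honestly small generating set from the proper class of objects of $\mathsf{Ban}_R$, also needs a little care.
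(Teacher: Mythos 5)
Your overall strategy matches what the paper actually does: the paper gives no argument at all for this theorem, it simply cites \cite{kelly2022analytic}*{Proposition 4.8} for (1)--(2), \cite{bode2021six} for (3), and \cite{bambozzi2020sheafyness}*{Corollary 2.11, Proposition 3.16} for (4), and your sketch is essentially a reconstruction of what those references prove (flat compact projectives \(l^1(X)\), Schneiders' left-heart formalism, and the adjunction \(\varinjlim \dashv \mathrm{diss}\) for the middle equivalence in (4)). Two points, however, need correction.

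First, your generation argument in (1) is wrong as stated. A Banach \(R\)-module is \emph{not} the filtered colimit of its finitely generated closed submodules in \(\mathsf{Ind}(\mathsf{Ban}_R)\): that colimit is the formal ind-object of those submodules, which is not isomorphic to \(M\) (the union is merely dense). More seriously, the coproduct in \(\mathsf{Ind}(\mathsf{Ban}_R)\) of copies of \(R_\delta\) (or of small \(l^1\)-spaces) is the formal colimit of finite direct sums, and the canonical map \(\coprod_{x\in X} R_{f(x)} \to l^1(X)\) has cokernel \(``\varinjlim_{F}" \, l^1(X\setminus F)\), which is not essentially zero; so \(l^1(X)\) for infinite \(X\) is \emph{not} a strict quotient of a coproduct of small generators in \(\mathsf{Ind}(\mathsf{Ban}_R)\). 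The correct elementary input is that \(l^1(M)\to M\) is a strict epimorphism for every Banach module \(M\) and that arbitrary ind-objects are strict quotients of coproducts of their levels; the extraction of a \emph{small} strictly generating set is precisely the delicate bookkeeping that \cite{kelly2022analytic}*{Proposition 4.8} is invoked for, and your shortcut does not supply it.

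Second, in (2) you identify \(\mathsf{CBorn}_R\) with \(\mathsf{Ind}^s(\mathsf{Ban}_R)\) for an arbitrary Banach ring. By the paper's own Theorem \ref{thm:dissection} the dissection functor is only \emph{faithful} in general, and fully faithful with the stated essential image only when \(R\) is a nontrivially valued Banach field; for general \(R\) one must take \(\mathsf{Ind}^s(\mathsf{Ban}_R)\) as the \emph{definition} of the category in question, as the paper does implicitly. With these two repairs the remaining steps (projectivity and compactness of the \(l^1(X)\) in the ind-category, the \textbf{Admon}-elementary property from agreement of essentially monomorphic colimits, Schneiders' \(\mathsf{D}(\mathcal E)\simeq\mathsf{D}(\mathsf{LH}(\mathcal E))\), and the compact-generation argument for the middle equivalence in (4)) are sound and consistent with the cited sources.
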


\begin{proof}
The first two claims are \cite{kelly2022analytic}*{Proposition 4.8}. That the left hearts are Grothendieck quasi-abelian is a consequence of \cite{bode2021six}*{Definition 2.2}. The monoidal derived equivalence between a symmetric monoidal quasi-abelian category and its left heart is purely formal (see \cite{bambozzi2020sheafyness}*{Corollary 2.11}). The monoidal derived equivalence between inductive systems and bornologies is induced by the adjoint pair \[\varinjlim \colon \mathsf{Ind}(\mathsf{Ban}_R) \rightleftarrows \mathsf{CBorn}_R \colon \mathrm{diss},\] whose proof is spelled out in \cite{bambozzi2020sheafyness}*{Proposition 3.16}. 
\end{proof}

We now turn to the \(\infty\)-categorical enhancement of the derived categories defined so far. By Theorem \ref{thm:elementary} and \cite{kelly2016homotopy}*{Theorem 4.3.58}, since \(\mathsf{Ind}(\mathsf{Ban}_R)\) is an elementary quasi-abelian category, the category \(\mathsf{Ch}(\mathsf{Ind}(\mathsf{Ban}_R))\) of unbounded complexes of ind-Banach \(R\)-modules has a \emph{projective model structure}, whose weak equivalences and fibrations are the quasi-isomorphisms and termwise cokernels, respectively. Furthermore, the model structure is stable, simplicial, combinatorial and projectively monoidal. Consequently, localising at the weak equivalences yields \[\bD(\mathsf{Ind}(\mathsf{Ban}_R)) \defeq L^H(\mathsf{Ch}(\mathsf{Ind}(\mathsf{Ban}_R))) =  N(\mathsf{Ch}(\mathsf{Ind}(\mathsf{Ban}_R)))[W^{-1}]\] a presentably stable symmetric monoidal \(\infty\)-category. In fact, as is shown in \cite{kelly2016homotopy}, $\mathrm{Ch}(\mathrm{CBorn}_{R})$ can also be equipped with the projective model structure such that the equivalence of Theorem \ref{thm:elementary} Part (4) can be boosted to a Quillen equivalence. We have even more structure: setting \(\mathcal{P}^0\) as the full subcategory of \(\mathsf{Ch}(\mathsf{Ind}(\mathsf{Ban}_R))\) generated by the compact projective generators under the formation of finite direct sums, tensor and symmetric powers, we have the following:

\begin{theorem}\cite{kelly2022analytic}
With notation as above, the quadruple \[(\mathsf{Ch}(\mathsf{Ind}(\mathsf{Ban}_R)), \mathsf{Ch}_{\geq 0}(\mathsf{Ind}(\mathsf{Ban}_R)), \mathsf{Ch}_{\leq 0}(\mathsf{Ind}(\mathsf{Ban}_R)), \mathcal{P}^0)\] is a model derived algebraic context. Consequently, the localisation \[(L^H(\mathsf{Ch}(\mathsf{Ind}(\mathsf{Ban}_R))), L^H(\mathsf{Ch}_{\geq 0}(\mathsf{Ind}(\mathsf{Ban}_R))), L^H(\mathsf{Ch}_{\leq 0}(\mathsf{Ind}(\mathsf{Ban}_R))), L^H(\mathcal{P}^0))\] is a derived algebraic context.  
\end{theorem}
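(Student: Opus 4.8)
The plan is to verify directly the axioms of a model derived algebraic context --- a model presentation of the data of Definition \ref{def:DAC} --- and then to note that passing to the Dwyer--Kan localisation automatically produces a derived algebraic context. The latter step is formal: localising a stable, combinatorial, simplicial, projectively symmetric monoidal model category equipped with a compatible $t$-structure and a set $\mathcal{P}^{0}$ of cofibrant objects which are compact projective generators closed under $\otimes$ and under $1$-categorical symmetric powers produces exactly a tuple satisfying Definition \ref{def:DAC}, by the machinery of \cite{brantnermathew} and \cite{raksit2020hochschild} used to build $\mathbb{L}\mathrm{Sym}$ and $\mathbf{DAlg}(\mathbf{C})$. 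So the content lies entirely in the model-categorical input.

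The underlying structure is already in hand. By Theorem \ref{thm:elementary}, $\mathsf{Ind}(\mathsf{Ban}_R)$ is elementary quasi-abelian with enough flat projectives (the Lemma of \cite{ben2020fr}), so by \cite{kelly2016homotopy}*{Theorem 4.3.58} the category $\mathsf{Ch}(\mathsf{Ind}(\mathsf{Ban}_R))$ carries a projective model structure that is stable, combinatorial, simplicial, and projectively symmetric monoidal, and the unit $R$ is cofibrant. The relevant $t$-structure has connective part $\mathsf{Ch}_{\geq 0}$ and coconnective part $\mathsf{Ch}_{\leq 0}$, with heart the left heart $\mathsf{LH}(\mathsf{Ind}(\mathsf{Ban}_R))$, which is Grothendieck abelian by Theorem \ref{thm:elementary}(3). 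From this I would extract the easy conditions: $\mathsf{Ch}_{\leq 0}$ is closed under filtered colimits (these are exact in the Grothendieck heart); the $t$-structure is right complete (Grothendieck heart with enough projectives, so Postnikov towers converge); and $X\otimes Y$ is connective whenever $X,Y$ are, since $\haotimes$ is a left Quillen bifunctor sending the connective $l^{1}$-generators to connective objects.

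Next comes $\mathcal{P}^{0}$, the smallest full subcategory of $\mathsf{Ch}(\mathsf{Ind}(\mathsf{Ban}_R))$ containing the projective generators $l^{1}(X)$ (in degree $0$) and closed under finite direct sums, $\haotimes$, and the $1$-categorical symmetric powers $\mathrm{Sym}^{n}$ computed in $\mathsf{LH}(\mathsf{Ind}(\mathsf{Ban}_R))$. I would check: the $l^{1}(X)$ are compact projective generators of $\mathsf{Ch}_{\geq 0}$, and since every object of $\mathsf{Ind}(\mathsf{Ban}_R)$ is a filtered colimit of Banach modules, each of which is a reflexive coequaliser of $l^{1}$-spaces, every connective object is a sifted colimit of objects of $\mathcal{P}^{0}$ (via the bar resolution together with Dold--Kan in the heart), i.e. $\mathsf{Ch}_{\geq 0}\simeq \mathsf{P}_{\Sigma}(\mathcal{P}^{0})$; the unit $R$ lies in $\mathcal{P}^{0}$; closure under $\haotimes$ stays within the compact projectives since $l^{1}(X)\haotimes l^{1}(Y)\cong l^{1}(X\times Y)$ and tensor products of projectives are projective (again the Lemma of \cite{ben2020fr}); and closure under finite direct sums is immediate.

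The step I expect to be the main obstacle is the symmetric-power condition: one must show that $\mathrm{Sym}^{n}(P)=(P^{\otimes n})_{\Sigma_{n}}$, formed as a colimit in the \emph{abelian} left heart rather than in the quasi-abelian category $\mathsf{Ind}(\mathsf{Ban}_R)$, is again a compact projective object --- so that the condition $\mathrm{Sym}^{n}(P)\in\mathbf{C}^{0}$ of Definition \ref{def:DAC} holds. The strategy is to reduce to the generators $P=l^{1}(X)$, where $P^{\haotimes n}\cong l^{1}(X^{n})$ with $\Sigma_{n}$ permuting coordinates; one computes the $\Sigma_{n}$-coinvariants concretely as an $l^{1}(X^{n}/\Sigma_{n})$ with the appropriate quotient weight function, hence again an $l^{1}$-space and so flat projective, and one checks that this strict epimorphism of quasi-abelian objects also computes the coequaliser in $\mathsf{LH}(\mathsf{Ind}(\mathsf{Ban}_R))$ --- using projectivity of $l^{1}(X^{n})$ to compare the colimit taken in $\mathsf{Ind}(\mathsf{Ban}_R)$ with the one taken in its left heart. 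For a general $P\in\mathcal{P}^{0}$ one then propagates compact projectivity using that $\mathrm{Sym}^{n}$ sends retracts to retracts and that $\mathrm{Sym}^{n}(A\oplus B)$ decomposes as a finite direct sum of tensor products of lower symmetric powers. A secondary, routine point is the set-theoretic bookkeeping needed to make the generating family of $l^{1}$-spaces an honest set (bounding the cardinality of $X$), and spelling out right completeness of the $t$-structure.
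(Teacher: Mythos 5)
The paper itself does not give a proof of this theorem --- it simply cites \cite{kelly2022analytic} --- so there is no in-line argument to compare against directly. That said, your sketch is a plausible and essentially correct reconstruction of what such a proof must establish: existence of the projective model structure on $\mathsf{Ch}(\mathsf{Ind}(\mathsf{Ban}_R))$ (from \cite{kelly2016homotopy}), the standard $t$-structure with heart the left heart, identification of the compact projective generators, and then the key nontrivial verification that $\mathcal{P}^0$ is closed under the $1$-categorical symmetric powers computed in the heart. You are right that the symmetric-power condition carries the real content, and your reduction to $l^1(X)$ with the identification $\mathrm{Sym}^n(l^1(X))\cong l^1(X^n/\Sigma_n)$ is the correct move, since $l^1$ is a left adjoint from weighted pointed sets and hence commutes with the set-level quotient.

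One small point where your justification is not quite on target: you say the comparison between the coinvariants computed in $\mathsf{Ind}(\mathsf{Ban}_R)$ and those computed in $\mathsf{LH}(\mathsf{Ind}(\mathsf{Ban}_R))$ follows ``using projectivity of $l^1(X^n)$.'' Projectivity by itself does not force the embedding $I\colon \mathcal{E}\to\mathsf{LH}(\mathcal{E})$ to commute with the cokernel defining the coinvariants: $I$ is a right adjoint (the left adjoint is the cokernel functor on monomorphism presentations), so it need not preserve colimits of arbitrary maps, and indeed there are non-strict maps whose cokernel in $\mathcal{E}$ is wrong (e.g.\ a dense non-closed inclusion). What rescues the argument here is \emph{strictness} of the difference map $\bigoplus_{\sigma\in\Sigma_n} l^1(X^n)\to l^1(X^n)$: because each $\Sigma_n$-orbit in $X^n$ is finite, its image is exactly the (closed) kernel of the split quotient $l^1(X^n)\twoheadrightarrow l^1(X^n/\Sigma_n)$, and $I$ does preserve cokernels of strict morphisms. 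With that justification substituted, the rest of your proposal (closure under $\haotimes$ via $l^1(X)\haotimes l^1(Y)\cong l^1(X\times Y)$, propagation through retracts and the binomial decomposition of $\mathrm{Sym}^n(A\oplus B)$, and the set-theoretic cardinality bound) is correct and should recover the cited result.
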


In order not to overload notation, we will use the identification in the last part of Theorem \ref{thm:elementary} to unambiguously denote by \(\bD(R)\) the derived \(\infty\)-categories of ind-Banach and complete bornological \(R\)-modules. Furthermore, as the equivalence is monoidal, we will denote the derived commutative algebra objects in \(\bD(R)\) by \(\mathbf{DAlg}(R)\), and refer to its objects as \emph{derived complete bornological \(R\)-algebras}.  

\section{Derived analytic geometry from the bornological perspective - complex geometry and the rigid $G$-topology}\label{section:derived-borno-geom}

Building on \cite{ben2024perspective}, in this section, we describe several categories of analytic stacks defined relative to a base Banach ring using the topologies and the general process described in Section \ref{sec:der-context}. When we specialise to the case where our Banach ring \(R\) is  either a nontrivially valued, non-Archimedean Banach field \(K\) of characteristic zero or \(\C\), our category of analytic stacks contains classical complex and rigid analytic spaces as full subcategories. The starting point is the observation that the category \(\mathsf{Ind}(\mathsf{Ban}_R)\) contains the building blocks of different kinds of analytic geometry. To this end, we first note that \(\mathsf{Ind}(\mathsf{Ban}_K)\) contains \(\mathsf{Ban}_K\) as a monoidal full subcategory. It also contains Fr\'{e}chet spaces as a full subcategory by \cite{BaBK}, and nuclear Fr\'{e}chet spaces as a full \textit{monoidal} subcategory by \cite{Meyer:HLHA}. These are also full subcategories of \(\mathsf{CBorn}_K\). Finally, as \(\Q \subset R\), the canonical map \(\theta \colon \mathbf{DAlg}^{cn}(\mathbf{D}(K)) \simeq \mathbf{CAlg}(\bD_{\geq 0}(K))\) is an equivalence. 

To fix some notation, for the various classes of affinoids $\mathrm{Spec}(A)$ and analytic geometry contexts defined below, we will write
$$\mathcal{M}(A)=|\mathrm{Spec}(A)|_{Ber}.$$

\subsection{Rigid analytic spaces}

Let \(K\) be a nontrivially valued non-Archimedean Banach field.

\subsubsection{Tate and affinoid algebras}

Let $|K|_{>0}$ denote the subset of $\mathbb{R}_{>0}$ consisting of the image of the map $|-|:K^{*}\rightarrow\mathbb{R}_{>0}$.
For each \(n>0\), and a polyradius \(r = (r_1, \dotsc, r_n) \in |K|_{>0}\), define the \emph{Tate algebra} \[K \gen{r_1^{-1} x_1 , \dotsc, r_n^{-1} x_n} = \setgiven{\sum_I a_I x^I \in K[[x_1 ,\cdots, x_n]]}{\lim_{\abs{I} \to 0} \abs{a_I}r^I = 0}\] of convergent power series. This is a Banach \(K\)-algebra with the supremum norm \(\abs{\sum_I a_I x^I}_\infty = \sup \abs{a_I} r^I\). 

\begin{proposition}[\cite{ben2022analytification}]
    The natural map 
    $$\mathrm{Sym}(K)\rightarrow K \gen{r^{-1} x}$$
    is a homotopy epimorphism.
\end{proposition}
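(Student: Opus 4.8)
The statement to prove is that $\mathrm{Sym}(K) \to K\langle r^{-1}x\rangle$ is a homotopy epimorphism, where $\mathrm{Sym}(K) = \mathbb{L}\mathrm{Sym}(\mathbb{I})$ is the free derived commutative algebra on the monoidal unit (here $\mathbb{I} = K$), i.e. the ``polynomial ring'' $K[x]$ in the bornological setting, and $K\langle r^{-1}x\rangle$ is the Tate algebra in one variable of radius $r$. By Lemma \ref{lem:hepi-equivalent} (Definition \ref{def:hepi}), it suffices to show that the multiplication map
\[
K\langle r^{-1}x\rangle \haotimes^{\mathbb{L}}_{\mathrm{Sym}(K)} K\langle r^{-1}x\rangle \longrightarrow K\langle r^{-1}x\rangle
\]
is an equivalence in $\bD(K)$. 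The plan is to compute the derived tensor product on the left by means of an explicit flat (even free) resolution of $K\langle r^{-1}x\rangle$ as a $\mathrm{Sym}(K) = K[x]$-module. The natural candidate is the Koszul-type complex arising from a second variable: writing $\mathrm{Sym}(K) \haotimes \mathrm{Sym}(K) = K[x,y]$, one has the two-term complex
\[
\mathrm{Sym}(K)\haotimes\mathrm{Sym}(K) \xrightarrow{\ \times(x-y)\ } \mathrm{Sym}(K)\haotimes\mathrm{Sym}(K) \longrightarrow \mathrm{Sym}(K),
\]
which I would first argue is strictly exact (a resolution by flat $\mathrm{Sym}(K)$-modules, via the left factor), exactly as in the proof that $\mathrm{T}(\gamma)\to\mathrm{T}(\lambda)$ is a $\mathrm{T}$-rational localisation earlier in the paper.

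First I would tensor this resolution on one side with $K\langle r^{-1}x\rangle$. Since the terms are flat $\mathrm{Sym}(K)$-modules, the derived tensor product $K\langle r^{-1}x\rangle \haotimes^{\mathbb{L}}_{\mathrm{Sym}(K)} K\langle r^{-1}x\rangle$ is computed by the two-term complex
\[
K\langle r^{-1}x\rangle \haotimes K\langle r^{-1}y\rangle \xrightarrow{\ \times(x-y)\ } K\langle r^{-1}x\rangle \haotimes K\langle r^{-1}y\rangle \longrightarrow K\langle r^{-1}x\rangle,
\]
where $K\langle r^{-1}x\rangle \haotimes K\langle r^{-1}y\rangle \cong K\langle r^{-1}x, r^{-1}y\rangle$ is the two-variable Tate algebra (this is the standard identification of completed bornological tensor products of Tate algebras). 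So the whole question reduces to showing that multiplication by $(x-y)$ on $K\langle r^{-1}x, r^{-1}y\rangle$ is a strict monomorphism with cokernel $K\langle r^{-1}x\rangle$ — equivalently that the sequence $0 \to K\langle r^{-1}x,r^{-1}y\rangle \xrightarrow{x-y} K\langle r^{-1}x,r^{-1}y\rangle \to K\langle r^{-1}x\rangle \to 0$ is strictly exact. This I would verify by the usual change of coordinates $u = x-y$, $v = y$ (an isometric automorphism of the two-variable Tate algebra of the given polyradius, since $|u|_\infty = \max(|x|_\infty,|y|_\infty) = r$ is compatible), reducing to the obvious statement that multiplication by $u$ on $K\langle r^{-1}u, r^{-1}v\rangle$ is injective with cokernel $K\langle r^{-1}v\rangle$, and that this is a \emph{strict} (bornological, hence admissible) short exact sequence because the quotient norm on $K\langle r^{-1}u,r^{-1}v\rangle / (u)$ agrees with the sup-norm on $K\langle r^{-1}v\rangle$.

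Assembling: the two-term complex above has $H_0 = K\langle r^{-1}x\rangle$ mapping by the identity to the augmentation target, and $H_{-1} = 0$, so $K\langle r^{-1}x\rangle \haotimes^{\mathbb{L}}_{\mathrm{Sym}(K)} K\langle r^{-1}x\rangle \simeq K\langle r^{-1}x\rangle$ via the multiplication map, which is precisely the homotopy epimorphism condition. Alternatively, and perhaps more cleanly, I can package the computation by invoking the already-proved Lemma that $\mathrm{T}(\gamma)\to\mathrm{T}(\lambda)$ is a $\mathrm{T}$-rational localisation together with Lemma \ref{lem:rational-loc-etale} (rational localisations are homotopy epimorphisms): taking $\mathrm{T}(\lambda) = K\langle\lambda^{-1}x\rangle$ and observing $\mathrm{Sym}(K) = ``\mathrm{T}(\infty)"$ as a formal colimit, one sees $K\langle r^{-1}x\rangle \cong \mathrm{Sym}(K)\haotimes^{\mathbb{L}}\mathrm{Sym}(K)/\!/(x-y) \cdot (\text{radius } r)$, which exhibits it as a rational localisation of $\mathrm{Sym}(K)$. \textbf{The main obstacle} I anticipate is the strictness/admissibility bookkeeping: one must be careful that the short exact sequence coming from multiplication by $x-y$ is strict in the bornological (quasi-abelian) sense — i.e. that the relevant maps are kernels and cokernels in $\mathsf{CBorn}_K$, not merely algebraic mono/epimorphisms — and that the coordinate change $u=x-y$ genuinely preserves the bornology of the polyradius-$r$ Tate algebra. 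Granting the standard strong-flatness of Tate algebras (tensoring commutes with kernels, as used in the paper's $\mathrm{T}$-rational localisation lemma), these points are routine but they are where the real content lies.
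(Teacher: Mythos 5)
Your main argument is correct and is essentially the standard proof: the paper itself only cites \cite{ben2022analytification} for this Proposition, and the argument there is exactly your Koszul computation — resolve $\mathrm{Sym}(K)$ by the two-term complex $\mathrm{Sym}(K)\haotimes\mathrm{Sym}(K)\xrightarrow{\times(x-y)}\mathrm{Sym}(K)\haotimes\mathrm{Sym}(K)$ of free modules, base change to identify $K\gen{r^{-1}x}\haotimes^{\mathbb{L}}_{\mathrm{Sym}(K)}K\gen{r^{-1}x}$ with $\bigl[K\gen{r^{-1}x,r^{-1}y}\xrightarrow{\times(x-y)}K\gen{r^{-1}x,r^{-1}y}\bigr]$, and check strict exactness via the isometric change of coordinates $u=x-y$, $v=y$ together with multiplicativity of the Gauss norm. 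Your identification of the strictness bookkeeping as the real content is also right.

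One caveat: the ``alternatively, and perhaps more cleanly'' paragraph is circular and should be dropped. Lemma \ref{lem:rational-loc-etale} (rational localisations are homotopy epimorphisms) and the lemma that $\mathrm{T}(\gamma)\to\mathrm{T}(\lambda)$ is a rational localisation both presuppose that the generating class is of \emph{homotopy polynomial type}, i.e.\ that $\mathrm{Sym}(\mathbb{I})\to\mathrm{T}(\lambda)$ is a homotopy epimorphism — which is precisely the statement being proved. (Your appeal to that lemma's proof merely for the strict exactness of the polynomial Koszul complex is fine, since that step of its proof does not use the homotopy epimorphism hypothesis.)
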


In particular, the functor
$$K\gen{-}:\underline{\mathbb{R}_{>0}}^{op}\rightarrow \mathbf{CAlg}(\bD_{\geq 0}(K))$$
defines a $\underline{\mathbb{R}_{>0}}$-generating class of algebras of homotopy polynomial type. This is the functor \(T\) from Definition \ref{def:generatingclass}.

An \emph{affinoid \(K\)-algebra} is a quotient \(A = K \gen{r_1^{-1} x_1 , \dotsc, r_n^{-1} x_n}/I\) of a Tate algebra for some \(n\) and polyradius \(r\) by an ideal $I$ which is automatically closed and finitely generated. 
\begin{definition}\label{def:derived-affinoid}
An algebra \(A \in \mathbf{CAlg}(\bD_{\geq 0}(K))\) is called a \emph{derived affinoid} if \(\pi_0(A)\) is an affinoid \(K\)-algebra, and \(\pi_n(A)\) is finitely generated as a \(\pi_0(A)\)-module for each \(n \geq 0\). We denote the full subcategory on derived affinoids by \(\mathbf{Afnd}_K \subset \mathbf{CAlg}(\bD_{\geq 0}(K))\). 
\end{definition}

In what follows, we study localisations a between affinoid \(K\)-algebras relative to the generating class \(K\langle - \rangle \colon |K|_{>0} \to  \mathbf{CAlg}(\bD_{\geq 0}(K))\). Later we will discuss \'{e}tale maps. The definition of a \(T\)-rational localisation from Definition \ref{defn:Trat} specialises to a \emph{derived rational localisation} \[A \to A \hat{\otimes}_K^L K\langle x_1r_1^{-1}, \dotsc, x_nr_n^{-1} \rangle // (gx_1 - f_1, \dotsc, gx_n f_n)\] in the sense of \cite{ben2022analytification}.

Now consider the the finite \(G\)-\(T\)-pre-topology \(\tau^{rat}\); this has as covers derived rational localisations that are finitely conservative. On derived affinoids, this topology is the same as the one considered in \cite{ben2024perspective}*{Section 9.2.4}, and \cite{soor2024derived} as the following lemma shows:

The following is \cite{ben2024perspective}*{Theorem 9.2.34}. We include a proof here for completeness.

\begin{lemma}\label{lem:Arun-top}
Let \(\bA_K = \mathbf{Afnd}_K^\op\). A family \(\{(\mathsf{Spec}(B_i) \to \mathsf{Spec}(A)\}\) in \(\bA_K\) is a \(\tau^{rat}\)-cover if and only if each \(A \to B_i\) is derived strong, and \(\{\mathsf{Spec}(\pi_0(B_i)) \to \mathsf{Spec}(\pi_0(A))\}\) are rational localisations that are finitely conservative.
\end{lemma}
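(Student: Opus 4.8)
The plan is to unwind the definition of \(\tau^{rat}\) on \(\bA_K\) and then translate the two formulations of a cover into one another. By definition of the finite \(G\)-\(\mathrm{T}\)-topology with \(\mathrm{T} = K\langle-\rangle\), a \(\tau^{rat}\)-cover of \(\mathsf{Spec}(A)\) is a family \(\{\mathsf{Spec}(B_i) \to \mathsf{Spec}(A)\}\) in which each \(A\to B_i\) is a derived \(\mathrm{T}\)-rational localisation and which admits a finite subfamily \(\mathcal J\) such that \(\{B_j\haotimes^{\mathbb{L}}_A-\}_{j\in\mathcal J}\) is jointly conservative on \(\bD(A)\). So I would prove the two implications separately.

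For the forward implication I would first show that a derived rational localisation \(A \to B_i = A\haotimes^{\mathbb{L}}_K K\langle \vec{\lambda}^{-1}\vec{y}\rangle//(g\vec{y}-\vec{f})\) of a derived affinoid is derived strong. Since the Tate algebra is strongly flat over \(K\), \(\pi_0(A)\haotimes^{\mathbb{L}}_K K\langle\vec{\lambda}^{-1}\vec{y}\rangle = \pi_0(A)\langle\vec{\lambda}^{-1}\vec{y}\rangle\) is discrete, and as the Koszul complex on \((gy_1-f_1,\dotsc,gy_n-f_n)\) over it is a resolution (the regular-sequence statement underlying the acyclicity of Tate covers), the derived quotient agrees with the underived one: \(\pi_0(B_i) = \pi_0(A)\langle\vec{f}/g\rangle\), the classical rational localisation, which is flat over \(\pi_0(A)\). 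Base-changing along \(\pi_0(A)\to A\) then gives \(B_i \simeq A\haotimes^{\mathbb{L}}_{\pi_0(A)}\pi_0(B_i)\), whence \(\pi_*(B_i)\cong\pi_0(B_i)\haotimes^{\mathbb{L}}_{\pi_0(A)}\pi_*(A)\); that is, \(A\to B_i\) is derived strong and \(\mathsf{Spec}(\pi_0(B_i))\to\mathsf{Spec}(\pi_0(A))\) is a rational localisation. For conservativity, I would restrict \(\{B_j\haotimes^{\mathbb{L}}_A-\}_{j\in\mathcal J}\) to discrete modules \(M\in\bD(\pi_0(A))\subseteq\bD(A)\); using \(\pi_0(A)\haotimes^{\mathbb{L}}_A B_j\simeq\pi_0(B_j)\) (derived strength, via \cite{ben2024perspective}*{Proposition 2.3.87}) and associativity of \(\haotimes^{\mathbb{L}}\) one gets \(B_j\haotimes^{\mathbb{L}}_A M\simeq\pi_0(B_j)\haotimes^{\mathbb{L}}_{\pi_0(A)}M\), so joint conservativity on \(\bD(A)\) forces joint conservativity of \(\{\pi_0(B_j)\}_{j\in\mathcal J}\) on \(\pi_0(A)\)-modules, i.e.\ \(\{\mathsf{Spec}(\pi_0(B_i))\to\mathsf{Spec}(\pi_0(A))\}\) is a finitely conservative rational cover.

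For the converse, suppose each \(A\to B_i\) is derived strong with \(\pi_0(A)\to\pi_0(B_i)\) a rational localisation, and let \(\mathcal J\) be a finite subfamily jointly conservative on \(\pi_0(A)\)-modules. Fixing a presentation \((g,\vec{f},\vec{\lambda})\) of \(\pi_0(A)\to\pi_0(B_i)\), form \(\tilde B_i := A\haotimes^{\mathbb{L}}_K K\langle\vec{\lambda}^{-1}\vec{y}\rangle//(g\vec{y}-\vec{f})\); by Lemma \ref{lem:rational-loc-etale} this is a homotopy epimorphism, hence formally \'etale over \(A\), with \(\pi_0(\tilde B_i)\cong\pi_0(B_i)\) over \(\pi_0(A)\). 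Formal \'etaleness gives \(\mathbf{Map}_A(\tilde B_i,B_i)\xrightarrow{\ \sim\ }\mathbf{Map}_{\pi_0(A)}(\pi_0(\tilde B_i),\pi_0(B_i))\) (lift the identity along the Postnikov tower of \(B_i\), the successive obstructions lying in groups built out of \(\mathbb{L}_{\tilde B_i/A}=0\), exactly as in the analytic-subspace lemma above), so the \(\pi_0\)-isomorphism lifts to a map \(\phi\colon\tilde B_i\to B_i\) over \(A\); since both sides are derived strong over \(A\) with the same \(\pi_0\), \(\phi\) is an isomorphism on all homotopy groups, hence an equivalence. Thus each \(A\to B_i\) is a derived \(\mathrm{T}\)-rational localisation, in particular a homotopy epimorphism. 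Finally, \(\{A\to B_j\}_{j\in\mathcal J}\) is a finite family of derived strong homotopy epimorphisms with \(\pi_0(A)\to\prod_{j\in\mathcal J}\pi_0(B_j)\) conservative, so Corollary \ref{lem:derstrongconver} makes \(A\to\prod_{j\in\mathcal J}B_j\) descendable, hence conservative on \(\bD(A)\) by Proposition \ref{prop:descendability-descent}; so the family is a finite homotopy monomorphism cover by \(\mathrm{T}\)-rational localisations, i.e.\ a \(\tau^{rat}\)-cover.

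The step I expect to be the main obstacle is the forward-direction claim that a derived \(\mathrm{T}\)-rational localisation of a derived affinoid is derived strong: it depends on knowing that the Koszul presentation of a rational localisation of an affinoid is concentrated in degree zero (equivalently, that \((gy_j-f_j)\) is a regular sequence in \(\pi_0(A)\langle\vec{\lambda}^{-1}\vec{y}\rangle\)) together with flatness of classical rational localisations, and on checking that this underivedness is preserved by base change along \(A\to\pi_0(A)\). A secondary technical point is to make the two meanings of ``finitely conservative'' correspond precisely; for this the combination of derived strength with Corollary \ref{lem:derstrongconver} is exactly the tool that is needed.
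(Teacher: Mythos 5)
Your proposal is correct in outline and reaches the same conclusion, but it substitutes direct arguments for several of the results the paper simply cites. The paper's proof of the ``if'' direction invokes \cite{ben2017non}*{Theorem 5.37} to identify finitely conservative rational covers of discrete affinoids with homotopy epimorphism covers, then Lemma \ref{lem:hepi-descendable}, \cite{ben2024perspective}*{Lemma 4.5.88} (to see that each \(A\to B_i\) is a derived rational localisation) and Corollary \ref{lem:derstrongconver}; you replace the appeal to Lemma 4.5.88 with an explicit obstruction-theoretic lifting \(\tilde B_i\to B_i\) using formal \'etaleness of the derived rational localisation, which is a legitimate alternative and arguably more self-contained. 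For the ``only if'' direction the paper deduces derived strength from \cite{ben2024perspective}*{Lemma 5.4.106} (transversality of discrete rational localisations to the finitely generated modules \(\pi_n(A)\)) and Proposition 2.6.160(3), whereas you argue via Koszul regularity of \((gy_j-f_j)\) in \(\pi_0(A)\langle\vec\lambda^{-1}\vec y\rangle\) and flatness of the classical rational localisation. You correctly identify this discreteness/regularity statement as the crux; it is exactly the content the paper outsources to the cited results, so your proof is not circular but does lean on the same nontrivial input.

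One step is misstated and should be repaired: ``base-changing along \(\pi_0(A)\to A\) gives \(B_i\simeq A\haotimes^{\mathbb{L}}_{\pi_0(A)}\pi_0(B_i)\)'' is not meaningful, since the structure map goes \(A\to\pi_0(A)\) and \(A\) is not a \(\pi_0(A)\)-algebra. The correct route from your Koszul computation to derived strength is: \(\pi_*(A\haotimes^{\mathbb{L}}K\langle\vec\lambda^{-1}\vec y\rangle)\cong\pi_*(A)\langle\vec\lambda^{-1}\vec y\rangle\) by strong flatness of the Tate algebra, and flatness of \(\pi_0(B_i)\) over \(\pi_0(A)\) forces the Koszul complex on \((gy_j-f_j)\) to remain acyclic in positive degrees after tensoring with each \(\pi_n(A)\), whence \(\pi_*(B_i)\cong\pi_*(A)\otimes_{\pi_0(A)}\pi_0(B_i)\) directly (and the underived tensor agrees with the derived one, again by flatness). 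With that correction, and granting the Tate-acyclicity input you flag, the argument goes through.
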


\begin{proof}
Consider a finite family \(\{(\mathsf{Spec}(B_i) \to \mathsf{Spec}(A)\}\) in \(\bA_K\) such that \(\pi_0(A) \to \pi_0(B_i)\) are rational localisations of affinoids that are finitely conservative, and each \(A \to B_i\) is derived strong. By \cite{ben2017non}*{Theorem 5.37} $\{\mathrm{Spec}(\pi_{0}(B_{i}))\rightarrow\mathrm{Spec}(\pi_{0}(A))\}$ is a cover in the homotopy epimorphism topology. Thus $\pi_{0}(A)\rightarrow\prod_{i}\pi_{0}(B_{i})$ is descendable by Lemma \ref{lem:hepi-descendable}. Then by \cite{ben2024perspective}*{Lemma 4.5.88}, the maps \(\mathsf{Spec}(B_i) \to \mathsf{Spec}(A)\) are rational localisations, and by Lemma \ref{lem:derstrongconver} the map $A\rightarrow\prod_{i}B_{i}$ is descendable. Descendability in particular implies finite conservativity. 

Conversely suppose that \(\{(\mathsf{Spec}(B_i) \to \mathsf{Spec}(A)\}\) is a $\tau^{rat}$-cover of derived affinoids. By \cite{ben2024perspective}{Lemma 5.4.106} each $\pi_{0}(A)\rightarrow\pi_{0}(B_{i})$ is transverse to each $\pi_{n}(A)$. Moreover, each $\pi_{0}(A)\rightarrow\pi_{0}(B_{i})$ is a (discrete) rational localisation and thus a homotopy epimorphism. Hence by \cite{ben2024perspective}*{Proposition 2.6.160 (3)} each map $A\rightarrow B_{i}$ is derived strong. Then by  \cite{ben2024perspective}*{Proposition 2.3.87} we have $\pi_{0}(A)\hat{\otimes}_{A}^{\mathbb{L}}B_{i}\cong\pi_{0}(B_{i})$. Thus the collection \(\{(\mathsf{Spec}(\pi_{0}(B_i)) \to \mathsf{Spec}(\pi_{0}(A))\}\) is a base-change of the cover \(\{(\mathsf{Spec}(B_i) \to \mathsf{Spec}(A)\}\). Again by \cite{ben2017non}*{Theorem 5.37},  \(\{\mathsf{Spec}(\pi_{0}(B_i)) \to \mathsf{Spec}(\pi_{0}(A))\}\) corresponds to a cover of affinoids in the usual sense.\qedhere

\end{proof}

Let \(\mathbf{P}^{o-sm}\) be the collection of open smooth maps in \(\mathbf{CAlg}(\bD_{\geq 0}(K))^\op\).

\begin{proposition}\label{prop:rigid-geometry-tuple}
Let \(K\) be a non-trivially valued non-Archimedean Banach field and \(\bA_K\) the opposite category of \(\mathbf{Afnd}_K\). Then the tuple \((\mathbf{Aff}_{\bD_{\geq 0}(K)}, \tau^{rat}, \mathbf{P}^{o-sm}, \bA_K)\) is a strong relative geometry tuple. 
\end{proposition}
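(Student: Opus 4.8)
The plan is to verify, in order, the three axioms of a relative pre-geometry tuple from Definition \ref{def:relative-geometry-tuple} for $(\mathbf{Aff}_{\bD_{\geq 0}(K)}, \tau^{rat}, \mathbf{P}^{o-sm}, \bA_K)$ --- taking $\mathrm{T} = K\gen{-}$ as the $\underline{\mathbb{R}_{>0}}$-generating class of homotopy polynomial type, and using that $\theta \colon \mathbf{DAlg}^{cn}(\bD_{\geq 0}(K)) \to \mathbf{CAlg}(\bD_{\geq 0}(K))$ is an equivalence so that maps of derived algebras are maps of $E_\infty$-algebras --- and then the admissibility condition of Definition \ref{def:admissible}. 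Axioms (1) and (2) are formal. For (1): a cover in $\tau^{rat}$ consists of $\mathrm{T}$-rational localisations, and any $\mathrm{T}$-rational localisation $A \to B$ is in particular a standard open $\mathrm{T}$-smooth map --- take $k = 0$ Tate variables in the defining factorisation, so that it reduces to $A \to B$ with $A \to B$ a $\mathrm{T}$-rational embedding --- hence lies in $\mathbf{P}^{o-sm}$. For (2): given $f \colon A \to B$ and a $\tau^{rat}$-cover $\{\mathsf{Spec}(B_i) \to \mathsf{Spec}(B)\}$ with each composite $A \to B_i$ open $\mathrm{T}$-smooth, one refines each $B_i$ by a $\mathrm{T}$-rational cover $\{\mathsf{Spec}(B_{ij}) \to \mathsf{Spec}(B_i)\}$ along which $A \to B_{ij}$ becomes standard open $\mathrm{T}$-smooth; since $\mathrm{T}$-rational localisations are closed under composition and $\tau^{rat}$ is a Grothendieck topology, $\{\mathsf{Spec}(B_{ij}) \to \mathsf{Spec}(B)\}$ is again a $\tau^{rat}$-cover, exhibiting $f$ as open $\mathrm{T}$-smooth. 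The closure properties used here are recorded in \cite{ben2024perspective}*{Section 6.1}.

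Axiom (3) --- that $\bA_K$ is stable under base change along open $\mathrm{T}$-smooth maps of derived affinoids --- is the first substantive point. Given such a map $A \to C$ of derived affinoids and an arbitrary $A \to B$ with $\mathsf{Spec}(B) \in \bA_K$, the plan is to show $B \otimes^{\mathbb{L}}_A C$ is again a derived affinoid. Locally on its source, $A \to C$ is a composite of maps adjoining a Tate generator $A \to A\gen{r^{-1}x}$ and derived rational localisations; each of these is flat and derived strong (for the Tate generator this is the computation $\pi_n(A\gen{r^{-1}x}) \cong \pi_n(A)\gen{r^{-1}x}$), both properties are stable under composition and local on the target, and derived strength is preserved by base change, so $A \to C$ and hence $B \to B \otimes^{\mathbb{L}}_A C$ are flat and derived strong. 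Derived strength together with flatness then identifies $\pi_0(B \otimes^{\mathbb{L}}_A C)$ with $\pi_0(B)\hat{\otimes}_{\pi_0(A)}\pi_0(C)$, which is a classical affinoid $\pi_0(B)$-algebra since it is a quotient of a Tate algebra over $\pi_0(B)$, and identifies each $\pi_n(B \otimes^{\mathbb{L}}_A C)$ with $\pi_0(B \otimes^{\mathbb{L}}_A C)\hat{\otimes}_{\pi_0(B)}\pi_n(B)$, which is finitely generated over $\pi_0(B \otimes^{\mathbb{L}}_A C)$. I expect this to be the main obstacle: the delicate part is extracting flatness and derived strength of a general open $\mathrm{T}$-smooth map of derived affinoids from its local description and then keeping track of the homotopy groups of the derived tensor product, and I would appeal to the derived-strength and transversality results of \cite{ben2024perspective}*{Sections 2.3, 9.2} and \cite{soor2024derived} rather than argue this by hand.

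Finally, admissibility requires that for each $\mathsf{Spec}(A) \in \bA_K$ the representable presheaf $\bA_K^{\op} \to \mathbf{An}$, $\mathsf{Spec}(B) \mapsto \mathbf{Map}_{\mathbf{CAlg}(\bD_{\geq 0}(K))}(A, B)$, be a sheaf for $\tau^{rat}|_{\bA_K}$. Given a cover $\{\mathsf{Spec}(B_i) \to \mathsf{Spec}(B)\}$, which by Lemma \ref{lem:Arun-top} we may take to be a finite family, the proof of that lemma shows that $B \to \prod_i B_i$ is descendable, so by the defining property of descendability the canonical map $B \to \varprojlim_{[n] \in \Delta}\bigl(\prod_i B_i\bigr)^{\otimes^{\mathbb{L}}_B n+1}$ is an equivalence in $\mathbf{CAlg}(\bD_{\geq 0}(K))$; distributing the (finite) tensor product over the (finite) product exhibits this cosimplicial algebra as the \v{C}ech nerve of the cover, whose terms lie in $\bA_K$ by the closure established in the verification of axiom (3). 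Applying $\mathbf{Map}_{\mathbf{CAlg}(\bD_{\geq 0}(K))}(A, -)$, which preserves limits, shows that $\mathbf{Map}(A, B) \to \varprojlim_{[n] \in \Delta}\mathbf{Map}\bigl(A, \bigl(\prod_i B_i\bigr)^{\otimes^{\mathbb{L}}_B n+1}\bigr)$ is an equivalence, which is exactly the sheaf condition. This completes the verification that the tuple is a relative geometry tuple.
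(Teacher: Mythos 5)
Your proof-structure matches the paper's --- verify the three axioms of Definition \ref{def:relative-geometry-tuple} and then admissibility --- and your treatment of admissibility via descendability of $A \to \prod_i B_i$ and the limit-preservation of $\mathbf{Map}(A,-)$ is essentially identical to the paper's (the paper phrases it as extracting $A \simeq \varprojlim_{[m]\in\Delta} B^{\hat\otimes_A m+1}$ from the symmetric-monoidal descent equivalence evaluated at the unit, i.e.\ subcanonicity of $\tau^{rat}$, but this is the same argument). The substantive divergence is in axiom (3), which you correctly identify as the main obstacle: you propose a local flat-and-derived-strong analysis of open $\mathrm{T}$-smooth maps --- decompose $A \to C$ rationally-locally into Tate-generator adjunctions followed by rational localisations, argue flatness and derived strength for each factor, check these descend along rational covers, then base change and read off $\pi_*$. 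The paper instead invokes \cite{soor2024derived}*{Lemma 2.3}, which says derived affinoids are closed under pushouts along \emph{arbitrary} morphisms, not just open smooth ones. This is both simpler and stronger: it sidesteps the entire locality/flatness/derived-strength bookkeeping, and in particular the descent steps your argument needs (derived strength being local on the source, flatness descending along a conservative flat family) --- these do go through, but they are exactly the kind of unstated subtleties you flag at the end, and the paper's citation makes them unnecessary. Your route is defensible in outline but is doing significantly more work than required; the direct pushout-closure of $\mathbf{Afnd}_K$ is the lemma to know.
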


\begin{proof}
\comment{By Lemma \ref{lem:rational-loc-etale}, rational localisations are standard \'etale and therefore \'etale. To see that the topology \(\tau^{rat}\) is local, let \(f \colon A \to B\) be a morphism, and \(B \to B_i\) a \(\tau^{rat}\)-cover such that \(A \to B_i\) is \'etale. Then there is a \(\tau^{rat}\)-cover \(\{\mathsf{Spec}(B_{ij}) \to \mathsf{Spec}(B_i)\}\) such that the composition \(A \to B \to B_i \to B_{ij}\) is standard smooth. Since there are finitely many \(i\), taking their intersection yields a common \(\tau^{rat}\)-refinement of the original cover \(\{\mathsf{Spec}(B_i) \to \mathsf{Spec}(B)\}\).Consequently, \(f \colon A \to B\) is smooth.}
The class $\mathbf{P}^{sm}$ is local for $\tau^{rat}$ by definition.
  It is easy to see that derived affinoids are closed under pushouts by arbitrary morphisms (\cite{soor2024derived}*{Lemma 2.3}). The proof that smooth maps of affinoids are closed under pushouts is similar to that of Proposition \ref{prop:etale-top}. Finally, to see that \(\tau^{rat}\) is subcanonical, we observe that the maps \(A \to B_i\) in the covers are homotopy epimorphisms; then by Lemma \ref{lem:hepi-descendable}, the total map \(A \to \prod_i B_i = B\) is descendable, so that we have a symmetric monoidal equivalence \[\bD(A) \overset{\simeq}\to \varprojlim_{[m] \in \Delta} \bD(B^{\hat{\otimes} m+1}).\] The result now follows by evaluating this equivalence at the unit object to get \(A \simeq \varprojlim_{[m] \in \Delta} B^{\hat{\otimes}_A m+1}\) as required.  

  It is strong by construction.
\end{proof}
\comment{
\begin{proposition}
  The tuple \((\mathbf{Aff}_{\bD_{\geq 0}(K)}, \tau^{rat}, \mathbf{P}^{o-sm}, \bA_K)\) is tempered. 
\end{proposition}

\begin{proof}
    Let $\{\mathrm{Spec}(A_{i})\rightarrow\mathrm{Spec}(A)\}_{i=1}^{n}$ be a cover in $\tau$ with each $\mathrm{Spec}(A_{i_{1}}\otimes^{\mathbb{L}}_{A}\ldots\otimes^{\mathbb{L}}_{A}A_{i_{n}})$ a derived affinoid. 
\end{proof}
}

\begin{proposition}\label{prop:ratdomainstronglocal}
    Let $\phi:A\rightarrow B$ be a map of derived affinoids. Suppose there is a cover $\{\mathrm{Spec}(\psi_{i}):\mathrm{Spec}(B_{i}^{0})\rightarrow\mathrm{Spec}(\pi_{0}(B))\}_{i=1}^{n}$ in the rational topology such that each composite $\mathrm{Spec}(B_{i}^{0})\rightarrow\mathrm{Spec}(\pi_{0}(B))\rightarrow\mathrm{Spec}(\pi_{0}(A))$ is a rational localisation. Then there exists a cover $\{\mathrm{Spec}(B_{i})\rightarrow\mathrm{Spec}(B)\}_{i=1}^{n}$ such that
    \begin{enumerate}
        \item 
        each $\mathrm{Spec}(B_{i})\rightarrow\mathrm{Spec}(B)\rightarrow\mathrm{Spec}(A)$ is a rational domain embedding;
        \item 
        for each $1\le i\le n$, $\pi_{0}(B_{i})\cong B_{i}^{0}$.
    \end{enumerate}
\end{proposition}

\begin{proof}
    Write $B_{i}^{0}\cong\pi_{0}(B)\otimes^{\mathbb{L}}K<\lambda^{i}_{1},\ldots,\lambda_{n_{i}}^{i}>\big\slash\big\slash(g_{0}^{i}y^{\lambda^{i}_{1}}-g_{1}^{i},\ldots,g_{0}^{i}y^{\lambda^{i}_{n}}-g_{n}^{i})$, where each $g_{0}^{i},\ldots,g_{n}^{i}$ generates the unit ideal. Now define $B_{i}\defeq B\otimes^{\mathbb{L}}K<\lambda^{i}_{1},\ldots,\lambda_{n_{i}}^{i}>\big\slash\big\slash(g_{0}^{i}y^{\lambda^{i}_{1}}-g_{1}^{i},\ldots,g_{0}^{i}y^{\lambda^{i}_{n}}-g_{n}^{i})$. Then $\mathrm{Spec}(B_{i})\rightarrow\mathrm{Spec}(B)$ is a rational domain embedding. Moreover, we have $\pi_{0}(B_{i})\cong B^{i}_{0}$. The collection $\{\mathrm{Spec}(B_{i})\rightarrow\mathrm{Spec}(B)\}_{i\in\mathcal{I}}$ is a cover in the rational topology by Lemma \ref{lem:derstrongdescendable}. Finally, $A\rightarrow B$ is derived strong by Proposition \ref{prop:stronglocstrong}.
\end{proof}

\begin{proposition}\label{prop:smoothhtpymono}
     Let $f:X=\mathrm{Spec}(B)\rightarrow Y=\mathrm{Spec}(A)$ be a morphism of derived affinoids such that $f$ is in $n-\mathbf{P}^{o-sm}$ and is a homotopy monomorphism. Then $f$ is, derived rationally locally, a derived rational localisation.
\end{proposition}

\begin{proof}
Consider the corresponding map of algebras $\phi:A\rightarrow B$. Since $\phi$ is smooth, it is derived strong. Hence $\pi_{0}(A)\rightarrow\pi_{0}(A)\hat{\otimes}^{\mathbb{L}}_{A}B\cong\pi_{0}(B)$ is a homotopy monomorphism. By \cite{MR3626003}*{Theorem 5.31}, $\pi_{0}(A)\rightarrow\pi_{0}(B)$ is, rationally locally, a rational subdomain. Hence the claim follows from Proposition \ref{prop:ratdomainstronglocal}.
\end{proof}
In other words, a derived rigid space in the sense of \cite{soor2023quasicoherent}*{Definition 2.17} is the same as a $2$-scheme. We record this as a theorem.

\begin{theorem}\label{thm:derived-rigid}
We have a fully faithful embedding \(\mathbf{dRig}_K \hookrightarrow \mathbf{Stk}(\mathbf{Afnd}_K, \tau^{rat})\). 
\end{theorem}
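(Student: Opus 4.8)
The plan is to deduce Theorem \ref{thm:derived-rigid} from the abstract comparison lemma, Lemma \ref{lem:subgeom}, together with the observation that Soor's category $\mathbf{dRig}_K$ of derived rigid spaces and our category of geometric stacks are built from the \emph{same} relative geometry tuple restricted to derived affinoids. First I would recall that by Proposition \ref{prop:rigid-geometry-tuple} the tuple $(\mathbf{Aff}_{\bD_{\geq 0}(K)}, \tau^{rat}, \mathbf{P}^{o-sm}, \bA_K)$ is a relative geometry tuple, and then verify that it is \emph{strong} in the sense of Definition \ref{def:relative-geometry-tuple}: given a $\tau^{rat}$-cover $\{\mathsf{Spec}(A_i) \to \mathsf{Spec}(A)\}$ (with $\mathsf{Spec}(A) \in \mathbf{Aff}_{\bD_{\geq 0}(K)}$ arbitrary) and a map $\mathsf{Spec}(B) \to \mathsf{Spec}(A)$ with $\mathsf{Spec}(B) \in \bA_K$, one must check that $\{\mathsf{Spec}(B \hat\otimes_A A_i) \to \mathsf{Spec}(B)\}$ is a $\tau^{rat}$-cover of derived affinoids. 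Since derived affinoids are closed under pushouts and since derived rational localisations and derived strongness are both stable under base change, the criterion of Lemma \ref{lem:Arun-top} applies; alternatively one invokes Remark \ref{rem:forcestrong} to force strength if needed, which does not change the tuple on $\bA_K$.

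Once strength is in place, Lemma \ref{lem:subgeom} gives that the left Kan extension $i_! \colon \mathbf{Stk}(\bA_K, \tau^{rat}_{\vert \bA_K}) \to \mathbf{Stk}(\mathbf{Aff}_{\bD_{\geq 0}(K)}, \tau^{rat})$ is fully faithful. The next step is to pass from stacks to geometric stacks / schemes: by Lemma \cite{kelly2022analytic}*{Corollary 6.13} (quoted in the excerpt), provided $\bA_K$ has all finite limits preserved by the inclusion into $\mathbf{Aff}_{\bD_{\geq 0}(K)}$ — which holds since derived affinoids are closed under pullbacks by arbitrary morphisms, as noted in the proof of Proposition \ref{prop:rigid-geometry-tuple} — the functor $i_!$ restricts to a fully faithful functor $\mathbf{Stk}_n(\bA_K, \tau^{rat}_{\vert \bA_K}, \mathbf{P}^{o-sm}_{\vert \bA_K}) \hookrightarrow \mathbf{Stk}_n(\mathbf{Aff}_{\bD_{\geq 0}(K)}, \tau^{rat}, \mathbf{P}^{o-sm})$ for each $n$, and hence on the union, on geometric stacks. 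Finally I would restrict further to schemes: $n$-schemes are a full subcategory of $n$-geometric stacks cut out by the conditions in the definition of scheme (atlas by monomorphisms, iterated fibre products being lower schemes), and these conditions are manifestly preserved and reflected by the fully faithful $i_!$ because it commutes with the relevant finite limits and preserves $(-1)$-geometric objects and $\mathbf{P}^{o-sm}$-morphisms. By definition $\mathbf{dRig}_K = \mathbf{Sch}_1(\bA_K, \tau^{rat}_{\vert \bA_K})$ in Soor's sense, so this yields the desired fully faithful embedding $\mathbf{dRig}_K \hookrightarrow \mathbf{Stk}(\mathbf{Afnd}_K, \tau^{rat})$, landing in fact inside $\mathbf{Sch}_1 \subset \mathbf{Stk}_{geom}$.

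The main obstacle I anticipate is not the abstract formalism but the careful matching of definitions: one must confirm that Soor's class of open smooth (or rational) morphisms and his $\tau^{rat}$ agree on the nose with $\mathbf{P}^{o-sm}$ and $\tau^{rat}$ as set up here — this is essentially the content of Lemma \ref{lem:Arun-top} (= \cite{ben2024perspective}*{Theorem 9.2.34}), which identifies the $\tau^{rat}$-covers of derived affinoids with derived-strong finitely-conservative rational localisation covers, exactly Soor's covers — and that the notion of $1$-scheme coincides. A secondary subtlety is that covers in the globalised topology $\tau^{rat}$ on all of $\mathbf{Aff}_{\bD_{\geq 0}(K)}$ need not restrict to covers on $\bA_K$ unless the tuple is strong; this is precisely why the strength verification is the linchpin of the argument. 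I do not expect genuinely new mathematics here beyond assembling these pieces, since all the hard descent and geometry-tuple input has already been established in Proposition \ref{prop:rigid-geometry-tuple} and the cited results.
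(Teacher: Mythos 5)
Your proof is internally consistent, but it is aimed at a stronger (and different) statement than what the paper actually proves here, and it over-engineers the argument.

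The paper's target $\mathbf{Stk}(\mathbf{Afnd}_K, \tau^{rat})$ is the category of stacks on $\bA_K^{\mathrm{op}} = \mathbf{Afnd}_K$, i.e.\ the category written $\mathbf{Stk}(\bA_K, \tau^{rat}_{|\bA_K})$ in the general framework. Soor's $\mathbf{dRig}_K$ is defined as the category $\mathbf{Sch}_1(\bA_K, \tau^{rat}_{|\bA_K})$ of $1$-schemes relative to exactly this tuple, and $n$-schemes are by \emph{definition} a full subcategory of $n$-geometric stacks, which are in turn by definition a full subcategory of $\mathbf{Stk}(\bA_K, \tau^{rat}_{|\bA_K})$. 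So the fully faithful embedding is essentially a recollection of Soor's definition, which is why the paper states it without a separate proof block: the only mathematical content is the observation (recorded just above the theorem) that Soor's building blocks, topology, and class $\mathbf{P}$ coincide on the nose with the ones set up in Proposition \ref{prop:rigid-geometry-tuple}, aided by Lemma \ref{lem:Arun-top}.

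What your argument via Lemma \ref{lem:subgeom} and the left Kan extension $i_!$ actually establishes is the further statement that $\mathbf{dRig}_K$ embeds fully faithfully into $\mathbf{Stk}(\mathbf{Aff}_{\bD_{\geq 0}(K)}, \tau^{rat})$ — stacks on the \emph{ambient} category of all derived bornological $K$-algebras, not just affinoids. That is a genuinely stronger assertion and a useful one, but it is not the stated theorem, and your final sentence silently conflates the two targets: $i_!$ lands in $\mathbf{Stk}(\mathbf{Aff}_{\bD_{\geq 0}(K)}, \tau^{rat})$, not in $\mathbf{Stk}(\mathbf{Afnd}_K, \tau^{rat})$. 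If you want to prove the stronger version, your verification of strength is essentially right, but be careful with the citation: Lemma \ref{lem:Arun-top} characterizes $\tau^{rat}$-covers \emph{between derived affinoids}, whereas the strength condition of Definition \ref{def:relative-geometry-tuple} concerns covers of an arbitrary $\mathsf{Spec}(A) \in \mathbf{Aff}_{\bD_{\geq 0}(K)}$. The correct justification is the one you also sketch — base-change stability of $T$-rational localisations, closure of derived affinoids under pushouts, and preservation of joint conservativity under base change — none of which requires that lemma. The scheme-preservation step via commutation with finite limits needs a bit more care than "manifestly", since the definition of $n$-scheme involves inductive conditions on iterated fibre products and $i_!$ is a left adjoint; but the cited Corollary from \cite{kelly2022analytic} does cover it under the hypothesis that the inclusion preserves finite limits, which you have verified.
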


\comment{
\begin{proof}
    Suppose that $X$ is a $1$-scheme. Pick an atlas $\{U_{i}\rightarrow X\}_{i\in\mathcal{I}}$ with each $U_{i}\rightarrow X$ a monomorphism in $\mathbf{P}$. Let $V\rightarrow X$ be any map with $V$ affine. Then $V\times_{X}U_{i}\rightarrow V$ is a monomorphism in $\mathbf{P}$ with each $V\times_{X}U_{i}$ a $0$-scheme. In particular, it has an atlas $\{V_{j}\rightarrow V\times_{X}U_{i}\}_{j\in\mathcal{J}}$ with each $V_{j}\rightarrow V\times_{X}U_{i}$ a monomorphism in $\mathbf{P}$. In particular, each $V_{j}\rightarrow V_{j}\times_{X}U_{i}\rightarrow U_{i}$ is a homotopy monomorphism in $\mathbf{P}$. In particular, it can be refined to a rational subdomain embedding. This means that each $U_{i}\rightarrow X$ is an analytic subspace in the sense of \cite{soor2023quasicoherent}*{Definition 2.17}, and hence $X$ is a derived rigid space in the sense of \cite{soor2023quasicoherent}*{Definition 2.17}.

Conversely, suppose that $X$ is a derived rigid space in the sense of \cite{soor2023quasicoherent}*{Definition 2.17}. Pick an effective epimorphism $\coprod_{i\in\mathcal{I}}U_{i}\rightarrow X$ with each $U_{i}$ affine, and each $U_{i}\rightarrow X$ an analytic subspace in the sense of \cite{soor2023quasicoherent}*{Definition 2.17}. Let $V\rightarrow X$ be any map with $V$ affine. Then $V\times_{X}U_{i}\rightarrow V$ is an analytic subspace. Thus there is an effective epimorphism $\coprod_{j_{i}\in\mathcal{J_{i}}}V_{j_{i}}\rightarrow V\times_{X}U_{i}$ with each $V_{j_{i}}\rightarrow U_{i}$ be a rational subdomain embedding. In particular it is a smooth homotopy monomorphism. We also need each $V_{j_{i}}\rightarrow V\times_{X}U_{i}$ to be representable by affines. Let $W\rightarrow V\times_{X}U_{i}$ be any morphism with $W$ affine. Then $W\times_{V\times_{X}U_{i}}V_{j_{i}}\cong W\times_{U_{i}}V_{j_{i}}$ is a fibre product of affines, and hence is affine. 
\end{proof}

\begin{proposition}\label{prop:anasubschaff}
    Let $X=\mathrm{Spec}(A)$ be an affinoid. A morphism $Y\rightarrow X$ is an analytic subspace of $X$ in the sense of \cite{soor2023quasicoherent}*{Definition 2.17 (ii)} if and only if it is a subscheme.
\end{proposition}

\begin{proof}
Suppose $Y\rightarrow X$ is a subscheme.
    Pick an atlas $\{V_{i}\rightarrow Y\}_{i\in\mathcal{I}}$. Then each $V_{i}\rightarrow Y\rightarrow X$ is a homotopy monomorphism in $\mathbf{P}$ between affinoids, and hence is rationally locally a rational embedding. Hence, by refining our atlas, we get an atlas $\{\tilde{V}_{j}\rightarrow Y\}_{j\in\mathcal{J}}$ of $Y$ such that each $\tilde{V}_{j}\rightarrow X$ is a rational embedding. 

    Conversely, suppose that $Y\rightarrow X$ is a subspace in the sense of \cite{soor2023quasicoherent}*{Definition 2.17 (ii)}. Then there exists an effective epimorphism $\coprod_{i\in\mathcal{I}}\mathrm{Spec}(A_{i})\rightarrow Y$ such that each composite $\mathrm{Spec}(A_{i})\rightarrow X$ is a rational subdomain embedding. We claim that this is an atlas. But this follows from the fact that for any pair of morphisms $V,W\rightarrow Y$ with $V,W$ affine, we have $V\times_{Y}W\cong V\times_{X}W$ is affine. 
\end{proof}

\begin{proposition}
    Let $X\in\mathbf{Stk}(\bA, \tau_{\vert \bA})$. A morphism $Y\rightarrow X$ is an analytic subspace in the sense of \cite{soor2023quasicoherent}*{Definition 2.17} if and only if it is a subscheme. 
\end{proposition}

\begin{proof}
    Suppose that $Y\rightarrow X$ is a subscheme. Let $U\rightarrow X$ be a morphism with $U$ affine. Then $U\times_{X}Y\rightarrow U$ is subscheme, and hence an analytic subspace of $U$ in the \cite{soor2023quasicoherent}*{Definition 2.17}. Therefore $Y\rightarrow X$ is an analytic subspace of $U$ in the \cite{soor2023quasicoherent}*{Definition 2.17}.

    Conversely, suppose that $Y\rightarrow X$ is an analytic subspace in the sense of \cite{soor2023quasicoherent}*{Definition 2.17}. For any $U\rightarrow X$ with $U$ affine, $U\times_{X}Y\rightarrow U$ is an analytic subscheme. Hence by Proposition \ref{prop:anasubschaff}, the map $Y\rightarrow X$ is in $\mathbf{P}$. Hence it is a subscheme by Proposition \ref{htpymonoPsubsch}.
\end{proof}

}

\begin{proposition}
    The tuple \(\mathbf{dBer}_{K}=(\mathbf{Aff}_{\bD_{\geq 0}(K)}, \tau^{rat}, \mathbf{P}^{o-sm}, \bA_K)\) is stably Berkovich.
\end{proposition}

\begin{proof}
Let $X$ be a qcqs derived scheme. Then $|X|\cong |t_{\le0}X|$. Hence we may assume that $X$ is discrete. The lattice of special subspaces of $X$ coincides with the lattice of closed subsets of the Berkovich spectrum $\mathcal{M}(X)$ which are of the form $\mathcal{M}(U)$, for some special subspace $U$ of $X$. The complements of such subspaces form a base for the compact Hausdorff space $\mathcal{M}(X)$. This gives that $\mathrm{An}_{\mathbf{dBer}_{K}}(X)$ is a Berkovich locale. We need to show that for a special subspace $U$ of $X$, and any morphism $f:Y\rightarrow X$ of qcqs schemes, the natural map
$$\mathcal{M}(U\times_{X}Y)\rightarrow\mathcal{M}(U)\times_{\mathcal{M}(X)}\mathcal{M}(Y)$$
is surjective. We may assume that everything is affine. Put $U=\mathrm{Spec}(B),X=\mathrm{Spec}(A),Y=\mathrm{Spec}(C)$. We shall write, for example,  $\mathcal{M}(A)=\mathcal{M}(\mathrm{Spec}(A))$.

We now use \cite{https://deepblue.lib.umich.edu/bitstream/handle/2027.42/176463/attilio_1.pdf?sequence=1}*{Proposition III.2.14}, though we must be careful. Consider the uniformisations $A\rightarrow A^{u}, B\rightarrow B^{u},C\rightarrow C^{u}$ (\cite{https://deepblue.lib.umich.edu/bitstream/handle/2027.42/176463/attilio_1.pdf?sequence=1}*{Proposition II.4.13}). We then get a morphism of Banach algebras
$$B\hat{\otimes}_{A}C\rightarrow B^{u}\hat{\otimes}_{A^{u}}C^{u}.$$
We get a commutative diagram of morphisms of Berkovich spaces.
\begin{displaymath}
    \xymatrix{
    \mathcal{M}(B^{u}\hat{\otimes}_{A^{u}}C^{u})\ar[d]\ar[r] &\mathcal{M}(B\otimes_{A}C)\ar[d]\\
    \mathcal{M}(B^{u})\times_{\mathcal{M}(A^{u})}\mathcal{M}(C^{u})\ar[r] & \mathcal{M}(B)\times_{\mathcal{M}(A)}\mathcal{M}(C).
    }
\end{displaymath}
Since morphisms of Banach algebras with uniform target are non-expanding (\cite{https://deepblue.lib.umich.edu/bitstream/handle/2027.42/176463/attilio_1.pdf?sequence=1}*{Lemma II.4.10}), by \cite{https://deepblue.lib.umich.edu/bitstream/handle/2027.42/176463/attilio_1.pdf?sequence=1}*{Proposition III.2.14}, the left-hand vertical map in the diagram above is surjective. The bottom horizontal map is a homeomorphism by \cite{https://deepblue.lib.umich.edu/bitstream/handle/2027.42/176463/attilio_1.pdf?sequence=1}*{ Proposition III.1.31}. Hence the right-hand vertical map is also surjective.
\end{proof}

We will really be interested in the \'{e}tale topology, which we will return to later, once we have introduced formal models.

Notice that in Proposition \ref{prop:rigid-geometry-tuple}, we in particular proved that the presheaf \[\mathbf{QCoh} \colon \mathbf{Afnd}_K \to \mathbf{CAlg}(\mathbf{Pr}_{st}^L)\] satisfies Cech descent. \comment{Later on we shall also see that the hypotheses of Theorem \ref{thm:hyperdescent} apply, so that we also have \(\tau^{rat}\)-hyperdescent. }

Finally, since the topology is also subcanonical, the functor \[\mathbf{QCoh} \colon \mathbf{Stk}(\bA, \tau_\bA^{rat})^\op \to \mathbf{CAlg}(\mathbf{Pr}_{st}^L)\] is a sheaf for the effective epimorphism topology that is right Kan extended from affinoids. In symbols, for \(X \in \mathbf{Stk}(\bA, \tau_\bA^{rat})^\op\), we have an equivalence \[\mathbf{QCoh}(X) \simeq \varprojlim_{\mathsf{Spec}(A) \to X} \mathbf{QCoh}(\mathsf{Spec}(A))\] of symmetric monoidal \(\infty\)-categories.



We will return to rigid analytic geometry from the perspective of formal schemes in Section \ref{sec:formal-geom}. Here we will also introduce the faithfully flat and \'{e}tale topologies in rigid geometry.

\subsection{Overconvergent analytic geometry}\label{subsec:overconv}

In this subsection, let \(R\) denote an arbitrary, nontrivially valued Banach ring. We consider the \emph{Washnitzer algebra}

\[
W_n(r)^\dagger  = \begin{cases}
\varinjlim_{\varrho > r} R \gen{\varrho_1^{-1} x_1 , \dotsc, \varrho_n^{-1} x_n} \qquad R \text{ non-Archimedean }\\
\varinjlim_{\varrho > r} R \{\varrho_1^{-1} x_1 , \dotsc, \varrho_n^{-1} x_n\} \qquad R \text{ archimedean},
\end{cases}
\] of \emph{overconvergent analytic functions} on a polydisc of radius \(r\), where the colimit is taken in the category of complete bornological \(R\)-algebras. We shall restrict $r$ to lie in $|K|_{>0}$.

\begin{proposition}[\cite{ben2022analytification}]
    The natural map 
    $$\mathrm{Sym}(R)\rightarrow W_{1}(r)$$
    is a homotopy epimorphism.
\end{proposition}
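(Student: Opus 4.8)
The plan is to mimic the proof of the analogous statement for Tate algebras (cited from \cite{ben2022analytification}), using the fact that a filtered colimit of homotopy epimorphisms along homotopy epimorphisms is again a homotopy epimorphism, together with the flatness properties recorded earlier. Concretely, writing $W_1(r)^\dagger = \varinjlim_{\varrho > r} R\gen{\varrho^{-1} x}$ (or the archimedean variant), I would first recall that for each $\varrho > r$ the map $\mathrm{Sym}(R) \to R\gen{\varrho^{-1}x}$ is a homotopy epimorphism by the Tate-algebra proposition stated just above, and that for $\varrho' < \varrho$ the transition map $R\gen{\varrho^{-1}x} \to R\gen{\varrho'^{-1}x}$ is a homotopy epimorphism as well — indeed it is a $T$-rational localisation (it is the inclusion of a smaller polydisc, expressible as $R\gen{\varrho^{-1}x}\haotimes^{\mathbb L} T(\varrho')//(\text{relation})$), and $T$-rational localisations are homotopy epimorphisms by Lemma \ref{lem:rational-loc-etale}.

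The key computation is then to check that $W_1(r)^\dagger \haotimes^{\mathbb L}_{\mathrm{Sym}(R)} W_1(r)^\dagger \to W_1(r)^\dagger$ is an equivalence, using condition (3) of Lemma \ref{lem:hepi-equivalent} as the definition of homotopy epimorphism. Since the derived tensor product $\haotimes^{\mathbb L}$ commutes with filtered colimits, and since $W_1(r)^\dagger$ is a filtered colimit of the $R\gen{\varrho^{-1}x}$, I would compute
\[
W_1(r)^\dagger \haotimes^{\mathbb L}_{\mathrm{Sym}(R)} W_1(r)^\dagger \;\simeq\; \varinjlim_{\varrho,\varrho' > r} \Bigl( R\gen{\varrho^{-1}x} \haotimes^{\mathbb L}_{\mathrm{Sym}(R)} R\gen{\varrho'^{-1}x} \Bigr).
\]
For each pair $\varrho,\varrho'$, choosing $\varrho'' = \min(\varrho,\varrho')$ and using that $\mathrm{Sym}(R)\to R\gen{(\varrho'')^{-1}x}$ is a homotopy epimorphism (so that tensoring over $\mathrm{Sym}(R)$ can be replaced by tensoring over $R\gen{(\varrho'')^{-1}x}$ after base change), the inner term is equivalent to $R\gen{(\varrho'')^{-1}x}$, compatibly with the colimit maps. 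Passing to the colimit over the (cofinal) diagonal gives $\varinjlim_{\varrho'' > r} R\gen{(\varrho'')^{-1}x} \simeq W_1(r)^\dagger$, and one checks this identification is compatible with the multiplication map, yielding the claim. One should also verify the archimedean case identically, replacing $R\gen{-}$ by $R\{-\}$; the only input needed is that $\mathrm{Sym}(R)\to R\{\varrho^{-1}x\}$ is a homotopy epimorphism and that the transition maps between disc algebras of decreasing radius are homotopy epimorphisms, which hold by the same references.

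The main obstacle I anticipate is bookkeeping around the derived nature of the tensor products and ensuring that the colimit manipulations are valid at the $\infty$-categorical level — in particular that $\haotimes^{\mathbb L}$ genuinely commutes with the relevant filtered colimits in $\mathbf{CAlg}(\bD_{\geq 0}(R))$ and that the cofinality argument identifying the colimit over pairs $(\varrho,\varrho')$ with the colimit over a single radius is carried out with the correct compatibilities. Since $\bD(R)$ is presentably symmetric monoidal, $\haotimes^{\mathbb L}$ preserves all colimits in each variable, so this is routine once set up carefully; the cofinality is handled by the standard observation that $\{(\varrho,\varrho') : \varrho,\varrho' > r\}$ has the diagonal $\{\varrho'' > r\}$ as a cofinal sub-poset after passing through $\min$. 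No genuinely new ideas beyond the Tate-algebra case are required; the proof is essentially a colimit-stability argument for the class of homotopy epimorphisms.
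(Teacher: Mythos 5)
The paper does not actually prove this proposition — it is imported wholesale from \cite{ben2022analytification} — so there is no in-paper argument to measure yours against; I can only assess your proposal on its own terms. The formal skeleton is fine: the relative tensor product $\haotimes^{\mathbb{L}}_{\mathrm{Sym}(R)}$ does commute with filtered colimits in each variable, the identification $B_\varrho \haotimes^{\mathbb{L}}_{\mathrm{Sym}(R)} B_{\varrho'} \simeq B_{\min(\varrho,\varrho')}$ does follow once each $\mathrm{Sym}(R)\to B_\varrho$ is known to be a homotopy epimorphism (you do not even need the transition maps to be homotopy epimorphisms for this, only the factorisation through the larger disc), and the cofinality step is routine. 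In the case of a nonarchimedean Banach \emph{field}, where the Tate-algebra proposition you invoke is actually available, this gives a correct proof.

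The genuine gap is the input itself in the generality the proposition is stated: here $R$ is an \emph{arbitrary} Banach ring, and in the archimedean case the claim that $\mathrm{Sym}(R)\to R\{\varrho^{-1}x\}$ is a homotopy epimorphism for a \emph{fixed} radius is not "the same reference" — it is false. The map $\mathrm{Sym}(\C)\to\C\{\varrho^{-1}x\}$ fails condition (3) of Lemma \ref{lem:hepi-equivalent}: the derived self-tensor product is computed by the Koszul complex $\bigl[\C\{\varrho^{-1}x,\varrho^{-1}y\}\xrightarrow{\,x-y\,}\C\{\varrho^{-1}x,\varrho^{-1}y\}\bigr]$, and multiplication by $x-y$ is an algebraic injection that is not an admissible monomorphism, because division by $x-y$ is unbounded in the $\ell^1$-norm: $x^n-y^n=(x-y)\sum_{k=0}^{n-1}x^k y^{n-1-k}$ has $\norm{x^n-y^n}=2\varrho^n$ while the quotient has norm $n\varrho^{n-1}$, so the ratio grows like $n$. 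Hence the complex is not strictly exact and $\C\{\varrho^{-1}x\}\haotimes^{\mathbb{L}}_{\C[x]}\C\{\varrho^{-1}x\}\not\simeq\C\{\varrho^{-1}x\}$. This is precisely why the dagger algebra appears in the archimedean theory: the division operator, while unbounded at a fixed radius, is bounded as a map from functions on the disc of radius $\varrho$ to functions on any strictly smaller disc (the factor $n$ is absorbed by $(\varrho'/\varrho)^n$), so it becomes bounded on the colimit $W_1(r)^\dagger$. The proof in \cite{ben2022analytification} therefore runs a strictness/Koszul argument directly on the overconvergent algebra rather than deducing the statement formally from the individual closed-disc algebras; your reduction inverts the logical order and cannot be repaired in the archimedean case without essentially redoing that direct argument.
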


In particular, the functor
$$W_{1}(-):\underline{|K|_{>0}}^{op}\rightarrow\mathbf{DAlg}^{cn}(\bD(R)^\heartsuit)$$
defines an $\underline{|K|_{>0}}$-generating class of algebras of homotopy polynomial type.

An \emph{admissible dagger affinoid algebra} is a quotient \(W_n(r)^\dagger/I\) by an admissible ideal taken in the category \(\mathsf{CBorn}_R\) of complete bornological modules.

\begin{definition}\label{def:derived-dagger-stein}
A \emph{derived dagger affinoid algebra} is an object \(A \in \mathbf{CAlg}(\mathbf{D}_{\geq 0}(R))\) satisfying the following properties:
\begin{enumerate}
\item \(\pi_0(A)\) is an admissible dagger affinoid algebra;
\item each \(\pi_n(A)\) is a finitely generated \(\pi_0(A)\)-module. 
\end{enumerate}
\end{definition}  

Denote by \(\mathbf{Afnd}_R^\dagger\) the full subcategory of \(\mathbf{CAlg}(\mathbf{D}_{\geq 0}(\mathsf{CBorn}_R))\) of derived dagger affinoid algebras, and let \(\bA_R^\dagger\) denote its opposite category. In a manner completely analogous to Proposition \ref{prop:rigid-geometry-tuple}, the tuple \((\mathbf{Aff}_{\bD_{\geq 0}(R)}, \tau^{rat}, \mathbf{P}^{o-sm}, \bA_R^\dagger)\) is a relative geometry tuple, so that \[(\mathbf{Aff}_{\bD_{\geq 0}(R)}, \tau_{\bA_R^\dagger}^{rat}, \mathbf{P}_{\bA_R^\dagger}^{o-sm}, \bA_R^\dagger)\] is a strong relative geometry tuple. We shall refer to stacks relative to this relative geometry tuple as \emph{overconvergent analytic stacks}. 

\comment{We now isolate a full subcategory of \emph{derived dagger analytic spaces} \(\mathbf{dAn}_R\) - these are contained in the \(1\)-schemes for the relative geometry tuple associated to dagger affinoids. More concretely, we glue derived dagger affinoid algebras as follows: \edit{\textcolor{red}{I suppose I can erase the following definition as this is contained in the subsection "Derived spaces and the underlying topological space"?}}

\begin{definition}\label{def:global-analytic}
\begin{enumerate}
\item For a derived dagger affinoid \(X = \mathsf{Spec}(A) \in \bA_R^\dagger\),  we call a monomorphism \(U \subseteq X\) in \(\mathbf{Stk}(\bA_R, \tau_{\vert \bA_R^\dagger}^{rat})\) an \emph{analytic subspace} if there is a small family of representable functors \(U_i = \mathsf{Spec}(A_i) \in \bA_R^\dagger\) and an epimorphism \(\coprod_i U_i \to U\) such that the induced maps \(\mathsf{Spec}(A_i) \to \mathsf{Spec}(A)\) in \(\bA_R^\dagger\) are derived strong rational localisations. 
\item For an arbitrary \(X  \in \mathbf{Stk}(\bA_R^\dagger, \tau_{\vert \bA_R^\dagger})\), an inclusion \(U \subseteq X\) is called an \emph{open immersion} if for every map \(\mathsf{Spec}(A) \to X\) from a representable sheaf, the pullback \(U \times_X \mathsf{Spec}(A) \to \mathsf{Spec}(A)\) is an analytic subspace.
\item  An object \(X \in \mathbf{Stk}(\bA_R^\dagger, \tau_{\vert \bA_\C^\dagger})\) is called a \emph{derived dagger analytic space} if it admits a cover \((U_i \to X)\) by a small family of open immersions, where each \(U_i = \mathsf{Spec}(A_i) \in \mathbf{A}_R^\dagger\). We denote by \(\mathbf{dAn}_R\) full \(\infty\)-subcategory of \(\mathbf{Stk}(\bA_R^\dagger, \tau_{\vert \bA_R^\dagger})\) generated by derived complex analytic spaces. 
\item We call a derived dagger analytic space \(X \in \mathbf{dAn}_\C\) \emph{discrete} if it admits a small cover \(\coprod_{i\in I}\mathsf{Spec}(A_i) \to X\) by discrete dagger affinoid algebras \(A_i\). 
\end{enumerate}  
\end{definition}

\begin{lemma}\label{lem:colim}
Any derived dagger analytic space \(X \in \mathbf{dAn}_R\) is a colimit of derived dagger affinoid spaces. 
\end{lemma}

\begin{proof}
Consider the effective epimorphism \(Y = \coprod_i \mathsf{Spec}(A_i) \to X\) in \(\mathbf{Stk}(\bA_R^\dagger, \tau_{\bA_R^\dagger})\)  induced by a cover of derived dagger affinoid subspaces. By the universality of colimits, we know that \(Y \times_X Y\) is equivalent in \(\mathbf{Stk}(\bA_R^\dagger, \tau_{\bA_R^\dagger})\) to the coproduct \(\coprod_{i,j} \mathsf{Spec}(A_i) \times_X \mathsf{Spec}(A_j)\), so that we have a coequaliser diagram \[\coprod_{i,j} \mathsf{Spec}(A_i) \times_X \mathsf{Spec}(A_j) \rightrightarrows \coprod_{i} \mathsf{Spec}(A_i) \to X.\] Choosing a cover of \(X\) by derived dagger affinoids, and using that such affinoids are closed under fibre products, it is easy to see that the fibre product \(\mathsf{Spec}(A_i) \times_X \mathsf{Spec}(A_j)\) is a derived complex analytic space. Therefore by definition, we again have a cover \(\mathsf{Spec}(A_{ijk}) \to \mathsf{Spec}(A_i) \times_X \mathsf{Spec}(A_j)\) by dagger affinoid spaces, so that \(X\) is part of the coequaliser diagram \[\coprod_{ijk} \mathsf{Spec}(A_{ijk}) \rightrightarrows \coprod_{i} \mathsf{Spec}(A_i) \to X\] in \(\mathbf{Stk}(\bA_R^\dagger, \tau_{\bA_R^\dagger})\) as required.
\end{proof}

Let \(X \in \mathbf{dAn}_R\) be a derived dagger analytic space, and let \(\mathbf{An}_R(X)\) be the category of open immersions of \(X\). We define the \emph{small analytic site of \(X\)} as the category \(\mathbf{An}_R(X)\) with covering sieves generated by small families of open immersions \((U_i \to X)_i\) such that \(\coprod_i U_i \to X\) is an effective epimorphism. 

\begin{lemma}\label{lem:good-colim}
Let \(X \in \mathbf{dAn}_R\). Then we have an equivalence \[X \simeq \underset{\mathsf{Spec}(A_i) \to X \in \mathbf{An}(X)}\varinjlim \mathsf{Spec}(A_i)\] in \(\mathbf{Stk}(\bA_R^\dagger, \tau_{\vert \bA_R^\dagger})\). 
\end{lemma}

\begin{proof}
In the colimit presentation of Lemma \ref{lem:colim}, choose \(I\) to be the indexing set of all open immersions \(\mathsf{Spec}(A_i) \to X\) and \(K_{ij}\) the set of open immersions \(\mathsf{Spec}(A_{ijk}) \to \mathsf{Spec}(A_i \times_X A_j)\), where \(\mathsf{Spec}(A_{ijk}) \to \mathsf{Spec}(A_i)\) and \(\mathsf{Spec}(A_{ijk}) \to \mathsf{Spec}(A_j)\) are \'etale. 
\end{proof}}

Over a non-Archimedean, nontrivially valued Banach field \(K\), by \cite{grosse2000rigid}*{Theorem 2.27}, the full subcategory \(\mathbf{Rig}_K^{proper}\) of partially proper rigid analytic spaces is equivalent to the full subcategory of \(\mathbf{dAn}_K\) of discrete dagger analytic spaces that are \emph{partially proper}. Consequently, we get a fully faithfully embedding \[\mathbf{Rig}_K^{proper} \overset{\subset}\longrightarrow \mathbf{dAn}_K \subset \mathbf{Stk}_{geom}(\bA_K^\dagger, \tau_{\bA_K^\dagger}^{rat}).\] 
In the next subsection, we shall consider the complex analogue of this result to show that derived complex Stein spaces embed fully faithfully inside \(\mathbf{dAn}_\C\).

\comment{

Over either an Archimedean or non-Archimedean ring we can also define the \textit{Stein algebra of polyradius} \(\rho\in(\mathcal{R}\cup\{\infty\})^{n}\) by
$$\mathcal{O}(D^{n}_{<\rho,R})=\varprojlim_{r<\rho}W_n(r)^\dagger $$

\begin{proposition}\cite{ben2022analytification}
    The natural map 
    $$\mathrm{Sym}(\mathbb{I})\rightarrow \mathcal{O}(D^{n}_{<\rho,R})$$
    is a homotopy epimorphism.
\end{proposition}

Again the functor
$$\mathcal{O}(D^{n}_{<-,R}):(\underline{\mathbb{R}_{>0}\cup\{\infty\}})^{op}\rightarrow\mathbf{DAlg}^{cn}(\bD(R)^\heartsuit)$$
defines a $\underline{\mathbb{R}_{>0}\cup\{\infty}$-generating class of algebras of homotopy polynomial type.

\textcolor{red}{What is the difference between using derived Steins and derived dagger Steins? In complex analytic geometry, Porta-Yue-Yu's derived complex analytic spaces seem to embed inside geometric stacks relative to derived Steins. Also does one get something interesting in the non-Archimedean case?}

For an admissible affinoid dagger algebra \(A\), denote by \(A_n(r)^\dagger = A \haotimes_R W_n(r)^\dagger\) the completed tensor product with a Washnitzer algebra. Note that the tensor product of two affinoid dagger algebras is an affinoid dagger algebra. We call a morphism \(f \colon A \to B\) of affinoid dagger \(K\)-algebras is called a \emph{Weierstrass localisation} if \[B \cong A_n(r)^\dagger/(X_1 - f_1, \dotsc, X_n - f_n)\] for some \(f_1, \dotsc, f_n \in A\), \(n \in \N\) and polyradius \(r\). For an affinoid dagger algebra, let \(\mathcal{M}(A)\) denote its bornological Berkovich spectrum, defined as the set of equivalence classes of bounded homomorphisms \(A \to \bar{R}\) for valued field extensions \(\bar{R} \supset K\) equipped with the weak topology. This is a contravariant, fully faithful functor from the category of dagger Stein algebras to the category of locally ringed \(G\)-topological spaces. 

\begin{definition}
A \emph{dagger Stein algebra} is a sequential limit \[\cdots \to A_{n+1} \to A_n \to \cdots \to A_0\] of affinoid dagger algebras, where the structure maps \(A_{n+1} \to A_n\) are Weierstrass localisations, and \(\mathcal{M}(A_n)\) is contained in the interior of \(\mathcal{M}(A_{n+1})\).
\end{definition}

\begin{definition}\label{def:derived-dagger-stein}
A \emph{derived dagger Stein algebra} is an object \(A \in \mathbf{CAlg}(\mathbf{D}_{\geq 0}(\mathsf{CBorn}_R))\) satisfying the following properties:
\begin{enumerate}
\item \(\pi_0(A)\) is a dagger Stein algebra;
\item each \(\pi_n(A)\) is coadmissible as a \(\pi_0(A)\)-module. 
\end{enumerate}
\end{definition}  

Denote by \(\mathbf{dStnAlg}_R^\dagger\) the full subcategory of \(\mathbf{CAlg}(\mathbf{D}_{\geq 0}(\mathsf{CBorn}_R))\) of derived dagger Stein algebras. In the next subsection, we shall use these algebras as the building blocks of complex analytic spaces.}

\subsection{Complex analytic geometry}

In this subsection, we realise derived complex manifolds as a full subcategory of schemes relative to the geometry context \[(\mathbf{Aff}_{\bD_{\geq 0}(\C)},  \tau_{\bA_\C^\dagger}^{rat}, \mathbf{P}_{\bA_\C^\dagger}^{et}, \bA_\C^\dagger),\] where \(\bA_\C^\dagger\) is the full subcategory of derived affinoid algebras. We describe the building blocks more concretely.

The algebra \[\C\{\varrho_1^{-1} x_1 , \dotsc, \varrho_n^{-1} x_n\} = \setgiven{\sum_{\alpha \in \N^n}c_\alpha x^{\abs{\alpha}}}{\sum_{\alpha} \abs{c_\alpha} \varrho^{\alpha} < \infty}\] is a Banach algebra with respect to the \(l^1\)-norm describing analytic functions on an open polydisc \(D_n(r)\) of polyradius \(\varrho\) centered at \(0\). To get a well-behaved integration theory, we allow for analytic functions converging on larger radii (bounded below by a fixed polyradius \(r \in \R_{> 0}^n\)) which are allowed to depend on the function \[W_n(r)^\dagger = \varinjlim_{\varrho> r} \C\{\varrho_1^{-1} x_1 , \dotsc, \varrho_n^{-1} x_n\},\] where the colimits are taken along the subspace inclusions \(\C\{\varrho_1^{-1} x_1 , \dotsc, \varrho_n^{-1} x_n\} \hookrightarrow \C\{\varphi_1^{-1} x_1 , \dotsc, \varphi_n^{-1} x_n\}\) for \(\varrho > \varphi\). The algebra \(W_n(r)^\dagger\) is canonically a complete bornological \(\C\)-algebra. 

Now consider the closed polydisc \(\overline{D_n(r)} = \setgiven{z \in \C^n}{\abs{z} \leq r}\) centered at \(0\). We define the algebra of \emph{germs of holomorphic functions} on \(\overline{D_n(r)}\) as the bornological inductive limit \[\mathcal{O}(\overline{D(r)}) \defeq \varinjlim_{V \overset{\text{open}}\supset D(r)} \mathcal{O}(V) \cong \varinjlim_{\varrho > r} \mathcal{O}(D(\varrho)) \cong \varinjlim_{\varrho> r} \C\{\varrho_1^{-1} x_1 , \dotsc, \varrho_n^{-1} x_n\} = W_n(r)^\dagger.\] 

\begin{definition}\label{def:complex-stack}
A \emph{complex analytic stack} is a geometric stack relative to the strong relative geometry tuple \((\mathbf{Aff}_{\bD_{\geq 0}(\C)}, \tau_{\bA_\C^\dagger}^{rat}, \mathbf{P}_{\bA_\C^\dagger}^{o-sm}, \bA_\C^\dagger)\). 
\end{definition}

Exactly as in Proposition \ref{prop:smoothhtpymono}, one can proof the following.

\begin{proposition}\label{prop:smoothhtpymono}
     Let $f:X=\mathrm{Spec}(B)\rightarrow Y=\mathrm{Spec}(A)$ be a morphism of derived complex analytic stacks such that $f$ is in $n-\mathbf{P}^{o-sm}$ and is a homotopy monomorphism. Then $f$ is, derived rationally locally, a derived rational localisation.
\end{proposition}

The next result is similar to much of the content of \cite{clausenscholze3}*{Lecture XI}.

\begin{lemma}
    The tuple \(\mathbf{G}_{\mathbb{C}}=(\mathbf{Aff}_{\bD_{\geq 0}(\C)}, \tau_{\bA_\C^\dagger}^{rat}, \mathbf{P}_{\bA_\C^\dagger}^{o-sm}, \bA_\C^\dagger)\) is stably Berkovich. In fact, for any morphisms of qcqs schemes $X\rightarrow Z$, $Y\rightarrow Z$, the natural map
    $$|X\times_{Z}Y|_{Ber}\rightarrow |X|_{Ber}\times_{|Z|_{Ber}}|Y|_{Ber}$$
    is a homeomorphism.
\end{lemma}

\begin{proof}
 Let $X=\mathrm{Spec}(A)$ be affine. We may assume that it is discrete. Consider its Berkovich spectrum $\mathcal{M}(A)$ (\cite{bambozzi2014generalization}*{Definition 2.1.20}). By Ostrowski's Theorem, this is just the set $\mathrm{Hom}(A,\mathbb{C})$ of bounded algebra homomorphisms from $A$ to $\mathbb{C}$. Write $A\cong W_{n}(r)^{\dagger}\big\slash (f_{1},\ldots,f_{m})$. As sets, we have that
 $$\mathcal{M}(W_{n}(r)^{\dagger})\cong\overline{D}^{n}(r)$$
 is the closed polydisc of polyradius $r$. Indeed we have 
 $$\mathcal{M}(W_{n}(r)^{\dagger})=\varprojlim_{\varrho > r}\mathcal{M}(\C\{\varrho_1^{-1} x_1 , \dotsc, \varrho_n^{-1} x_n\}).$$
 Now, any map of algebras $\C\{\varrho_1^{-1} x_1 , \dotsc, \varrho_n^{-1} x_n\})\rightarrow\mathbb{C}$ is necessarily bounded by $1$. Since $\C\{\varrho_1^{-1} x_1 , \dotsc, \varrho_n^{-1} x_n\})$ is the symmetric algebra in the non-expanding Banach category on $\mathbb{C}_{\varrho_1}\oplus\ldots\oplus\mathbb{C}_{\varrho_n}$, a bounded map of algebras $\C\{\varrho_1^{-1} x_1 , \dotsc, \varrho_n^{-1} x_n\})\rightarrow\mathbb{C}$ is classified by a non-expanding map of modules $$\mathbb{C}_{\varrho_1}\oplus\ldots\oplus\mathbb{C}_{\varrho_n}\rightarrow\mathbb{C},$$
 which is the same as a tuple $(z_{1},\ldots,z_{n})$ of elements of $\mathbb{C}$, with $|z_{i}|\le \varrho_{i}$. Taking the limit over all polyradii $\varrho>r$ give the closed polydisc of polyradius $r$. Then $\mathcal{M}(W_{n}(r)^{\dagger}\big\slash I)$ is just the subset of $\overline{D}^{n}(r)$ consisting of those points $(x_{1},\ldots, x_{n})$ such that $f_{i}(x_{1},\ldots,x_{n})=0$ for all $1\le i\le m$. Let us now work out the topology. By definition, the topology is the weakest topology such that for all $a\in A$, the function
 $$\chi_{a}:\mathrm{Hom}(A,\mathbb{C})\rightarrow\mathbb{C},$$
 $\phi\mapsto\phi(a)$
 is continuous. Under the bijection with $Z(f_{1},\ldots,f_{m})$, this is the weakest topology such that for each $g\in W_{n}(r)^{\dagger}\big\slash (f_{1},\ldots,f_{m})$, the map
 $$Z(f_{1},\ldots,f_{m})\rightarrow\mathbb{C},$$
 $$(x_{1},\ldots,x_{n})\mapsto f(x_{1},\ldots,x_{n})$$
 is continuous. Consider the space $Z(f_{1},\ldots,f_{m})$ equipped with its topology as a compact subspace of $\mathbb{C}^{n}$. The evaluation maps $(x_{1},\ldots,x_{n})\mapsto g(x_{1},\ldots,x_{n})$ are continuous for this topology, so the identity $Z(f_{1},\ldots,f_{n})\rightarrow\mathcal{M}( W_{n}(r)^{\dagger}\big\slash (f_{1},\ldots,f_{m}))$ is continuous. By \cite{bambozzi2016dagger}*{Lemma 4.3}, the space $\mathcal{M}( W_{n}(r)^{\dagger}\big\slash (f_{1},\ldots,f_{m}))$ is compact, so the identity map is in fact a homeomorphism. Under this identification, inclusions $\mathcal{M}(B)\rightarrow\mathcal{M}(A)$, where $A\rightarrow B$ is a rational localisation, correspond to inclusions of certain Stein compact spaces. This includes, for example, closed polydiscs of finite radius. Complements of finite unions of closed polydiscs clearly form a basis for the topology on $Z(f_{1},\ldots,f_{n})$. Thus, we have $|X|_{Ber}\cong Z(f_{1},\ldots,f_{n})$ with the Euclidean topology.

 Finally, let $f:X\rightarrow Z$, $g:Y\rightarrow Z$ be morphisms of qcqs schemes. We claim that 
 $$|X\times_{Z}Y|_{Ber}\rightarrow |X|_{Ber}\times_{|Y|_{Ber}}|Z|_{Ber}$$
 is a homeomorphism. By the qcqs assumption, we may assume that everything is affine. Moreover, since both sides are compact Hausdorff spaces, it is enough to show that the map is a bijection. Put $X=\mathrm{Spec}(A)$, $Z=\mathrm{Spec}(C)$, $Y=\mathrm{Spec}(B)$. We then have
 $$|X\times_{Z}Y|_{Ber}\cong\mathrm{Hom}(A\hat{\otimes}_{C}B,\mathbb{C})\cong\mathrm{Hom}(A,\mathbb{C})\times_{\mathrm{Hom}(C,\mathbb{C})}\mathrm{Hom}(B,\mathbb{C})\cong |X|_{Ber}\times_{|Z|_{Ber}}|Y|_{Ber}$$
 as required.
\end{proof}

Let $A=W_{n}(r)^{\dagger}\big\slash(f_{1},\ldots,f_{m})$. Consider a localisation $A\hat{\otimes} W_{k}^{\dagger}(\delta)\big\slash (g_{0}X_{1}-g_{1},\ldots,g_{0} X_{k}-g_{k})$. This space is the subset of $Z(f_{1},\ldots,f_{m})\times\overline{D}^{k}(\delta)$ consisting of those points $(\underline{x},\underline{X})$ such that $g_{0}(\underline{x})X_{i}=g_{i}(\underline{x})$ for all $i$. Equivalently, this is the subset of $Z(f_{1},\ldots,f_{n})$ consisting of those $\underline{x}$ such that $g_{i}(\underline{x})\le\delta_{i} g_{0}(\underline{x})$.

Recall that classically, a complex analytic space refers to a locally ringed space which is locally given by \((X, \mathcal{O}_X)\), where \(X \subseteq D_n(\varrho)\) is the vanishing locus of a finite collection of holomorphic functions \(f_1, \dotsc, f_n\) on an open polydisc \(D_n(\varrho)\) in \(\C^n\), and \(\mathcal{O}_X(U)\) is the subring of holomorphic functions on \(U \subseteq X\) (i.e., locally it is a Stein space). 


In \cite{ben2024perspective} the category of derived Stein algebras $\mathbf{dStnAlg}_{\C}^{\dagger}$ was defined as the full subcategory of $\mathbf{DAlg}^{cn}_{\C}(\mathrm{CBorn}_{\C})$ consisting of those derived algebras $A$ such that $\pi_{0}(A)$ is isomorphic to the nuclear Fr\'{e}chet algebra $\mathcal{O}(X)$ of holomorphic functions on a Stein space $X$, and each $\pi_{n}(A)$ is the nuclear Fr\'{e}chet module of global sections of a coherent sheaf $\mathcal{F}_{n}$ on $\mathcal{O}(X)$. In loc. cit. it was shown that there is a topology $\tau_{St}^{rat}$ on $\mathbf{dStnAlg}_{\C}^{\dagger}$, generated by so-called open immersions of derived Stein algebras. To summarise, a collection of maps 
$$\{\mathrm{Spec}(B_{i})\rightarrow\mathrm{Spec}(A)\}_{i\in\mathcal{I}}$$
is a cover in $\tau_{St}^{rat}$ precisely if 
\begin{enumerate}
    \item 
    if $\pi_{0}(A)=\mathcal{O}(X)$, and $\pi_{0}(B_{i})=\mathcal{O}(U_{i})$, then the collection of maps of algebras $\{\mathcal{O}(X)\rightarrow\mathcal{O}(U_{i})\}_{i\in\mathcal{I}}$ corresponds to a cover 
    $$\{U_{i}\rightarrow X\}$$
    of $X$ by open Stein subspaces,
    \item 
    each map $A\rightarrow B_{i}$ is a derived strong homotopy epimorphism.
\end{enumerate}

Write $\mathbf{dStn}^{\dagger}_{\C}=(\mathbf{dStnAlg}^{\dagger}_{\C})^{op}$. For $A\cong\mathcal{O}(X)$ a (discrete) Stein algebra, we denote by $\mathrm{Coad}(A)\subset\mathrm{Mod}(A)$ the full subcategory consisting of those $A$-modules which are isomorphic to the module of global sections of some coherent sheaf on $X$, equipped with its canonical Fr\'{e}chet bornology. Such objects are called \textit{coadmissible }$A$-\textit{modules}. There is a more general theory of coadmissible modules over a general Fr\'{e}chet-Stein algebra, as well as derived variants, developed in \cite{ben2024perspective}*{Section 5.4}. An important point in that section, is the existence of \textit{geometric presentations} of derived dagger Steins. Let $A$ be a derived dagger Stein. A geometric presentation of $A$ is a tuple $(A,A_{n},\overline{A}_{n})$ where $n\in\mathbb{N}_{0}$ and 

\begin{enumerate}
\item 
each $\overline{A}_{n}$ is a derived dagger affinoid,
\item 
each $A_{n}$ is a derived dagger Stein,
\item 
there is a sequence of maps 
$$\ldots\rightarrow\overline{A}_{n+1}\rightarrow A_{n+1}\rightarrow\overline{A}_{n}\rightarrow \ldots$$
and compatible maps $A\rightarrow\overline{A}_{n}$, $A\rightarrow A_{n}$ such that
\begin{enumerate}
\item 
the natural maps
$$A\rightarrow\mathbf{lim}_{n}A_{n}$$
and
$$A\rightarrow\mathbf{lim}_{n}\overline{A}_{n}$$
are equivalences;
    \item all of the maps $A\rightarrow A_{n}$, $A\rightarrow\overline{A}_{n}$, $\overline{A}_{n+1}\rightarrow\overline{A}_{n}$, $A_{n+1}\rightarrow A_{n}$, $\overline{A}_{n}\rightarrow A_{n}$ are derived strong homotopy epimorphisms;
    \item 
    the induced maps $\mathcal{M}(\pi_{0}(\overline{A}_{n}))\rightarrow\mathcal{M}(\pi_{0}(\overline{A}_{n+1}))$ are Weierstrass localisations of dagger affinoid with image contained in the interior of $\mathcal{M}(\pi_{0}(\overline{A}_{n+1}))$.
\end{enumerate}
\end{enumerate}

Finally, it was shown that if $f:A\rightarrow B$ is a map of dagger Stein algebras, then we can find presentations $(A,A_{n},\overline{A}_{n})$, $(B,B_{n},\overline{B}_{n})$, such that $f$ is the limit of compatible sequences of maps $f_{n}:A_{n}\rightarrow B_{n}$, $\overline{f}_{n}:\overline{A}_{n}\rightarrow\overline{B}_{n}$.

\begin{lemma}
    A collection of maps of derived dagger Steins
    $$\{\mathrm{Spec}(B_{i})\rightarrow\mathrm{Spec}(A)\}$$
    is a cover in $\tau_{St}^{rat}$ if and only if 
    \begin{enumerate}
        \item 
        each $A\rightarrow B_{i}$ is a derived strong homotopy epimorphism,
        \item 
        if $M\in\mathrm{Coad}(A)$ is such that $\pi_{0}(B_{i})\hat{\otimes}_{\pi_{0}(A)}M\cong 0$ for all $i$ then $M\cong 0$.
    \end{enumerate}
\end{lemma}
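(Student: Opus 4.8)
The plan is to reduce the statement to the dagger affinoid case, where the analogous criterion is already available, via the geometric presentations recalled above. First I would note that condition (1) of the Lemma is exactly condition (2) in the definition of $\tau_{St}^{rat}$, so one may assume throughout that every $A \to B_i$ is a derived strong homotopy epimorphism; it then remains to show that, under this assumption, the geometric condition ``$\{\mathrm{Spec}(B_i) \to \mathrm{Spec}(A)\}$ corresponds to a cover of $X$ by open Stein subspaces'' is equivalent to the joint conservativity of the base change functors $\pi_0(B_i) \haotimes_{\pi_0(A)} (-)$ on $\mathrm{Coad}(A)$. Since a derived strong map $A \to B_i$ is the base change of $\pi_0(A) \to \pi_0(B_i)$ along $A \to \pi_0(A)$ by \cite{ben2024perspective}*{Proposition 2.3.87}, and since the derived coadmissible modules of \cite{ben2024perspective}*{Section 5.4} are assembled from coadmissible $\pi_0$-modules, I would first pass to the discrete case $A = \mathcal{O}(X)$, $B_i = \mathcal{O}(U_i)$; by the Stein analogue of the structural input used in Lemma \ref{lem:Arun-top} (a homotopy epimorphism of dagger Stein algebras is induced by an open immersion of Stein spaces, which is flat), each $\mathcal{O}(X) \to \mathcal{O}(U_i)$ then corresponds to an open immersion $U_i \hookrightarrow X$, so the geometric condition reduces to $X = \bigcup_i U_i$.

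Next I would pick compatible geometric presentations $(A, A_n, \overline{A}_n)$ and $(B_i, B_{i,n}, \overline{B}_{i,n})$ realising each $A \to B_i$ as the limit of derived strong homotopy epimorphisms $\overline{A}_n \to \overline{B}_{i,n}$ of derived dagger affinoids, using quasi-compactness of each $\mathcal{M}(\pi_0(\overline{A}_n))$ to keep only finitely many active indices at each level $n$. Any $M \in \mathrm{Coad}(A)$ is then $\varprojlim_n M_n$ with $M_n = \overline{A}_n \haotimes_A M$ finitely presented over $\overline{A}_n$, and $M \cong 0$ iff $M_n \cong 0$ for all $n$; applying $\pi_0(B_i)\haotimes_{\pi_0(A)}(-)$ and using flatness identifies $\pi_0(B_i) \haotimes_{\pi_0(A)} M$ with $\varprojlim_n \big(\pi_0(\overline{B}_{i,n}) \haotimes_{\pi_0(\overline{A}_n)} M_n\big)$, which vanishes iff each term does since the transition maps form a Fr\'echet--Stein system satisfying Mittag--Leffler. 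Thus the joint conservativity on $\mathrm{Coad}(A)$ is equivalent, level by level, to the joint conservativity of $\pi_0(\overline{B}_{i,n}) \haotimes_{\pi_0(\overline{A}_n)}(-)$ on finitely presented $\pi_0(\overline{A}_n)$-modules, which by the dagger affinoid analogue of Lemma \ref{lem:Arun-top} (equivalently \cite{bambozzi2016dagger}*{Theorem 5.15}) is equivalent to $\{\mathcal{M}(\pi_0(\overline{B}_{i,n}))\}_i$ being a cover of $\mathcal{M}(\pi_0(\overline{A}_n))$.

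It then remains to match this with the global covering condition, using that $|X|_{Ber} = \bigcup_n \mathcal{M}(\pi_0(\overline{A}_n))$ and $|U_i|_{Ber} = \bigcup_n \mathcal{M}(\pi_0(\overline{B}_{i,n}))$ exhaust $X$ and the $U_i$ compatibly with the Weierstrass localisations appearing in the definition of a geometric presentation: $\{U_i\}$ covers $X$ iff $\{\mathcal{M}(\pi_0(\overline{B}_{i,n}))\}_i$ covers $\mathcal{M}(\pi_0(\overline{A}_n))$ for every $n$. An alternative route, avoiding presentations, would use Forster's theorem to identify $\mathrm{Coad}(\mathcal{O}(X))$ with coherent sheaves on $X$, observe that $\pi_0(B_i) \haotimes_{\pi_0(A)} \Gamma(X,\mathcal{F}) \cong \Gamma(U_i, \mathcal{F}|_{U_i})$ by flat base change, and then play Cartan's Theorems A and B against the skyscraper sheaves $\C_x$ at points $x \notin \bigcup_i U_i$. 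Either way, the main obstacle is this last comparison --- reconciling an open cover of a non-quasi-compact Stein space with covers at the affinoid levels of a compatible presentation --- together with the Mittag--Leffler vanishing arguments; the rest is bookkeeping built on the already-established affinoid case and the Fr\'echet--Stein formalism of \cite{ben2024perspective}*{Section 5.4}.
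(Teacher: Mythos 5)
Your ``alternative route, avoiding presentations'' is essentially the paper's proof, and it is much shorter than your primary route. The paper's forward direction is exactly your observation that a cover of $X$ by open Steins $U_i$ forces joint conservativity on $\mathrm{Coad}(A)$ via descent for coherent sheaves; and the paper's converse is a one-line application of the conservativity hypothesis to the single coadmissible module $\mathcal{O}(X)/\mathfrak{m}_x$ for a point $x \in X$: if $\mathcal{O}(U_i)/(\mathcal{O}(U_i)\mathfrak{m}_x) \neq 0$ for some $i$, then $x \in U_i$. No Cartan A/B, no presentations, and no Mittag--Leffler analysis are needed for this direction --- the residue field at a point is already coadmissible, so conservativity applied to it does all the work. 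Your reduction to the discrete case via derived strongness and \cite{BaBK}*{Theorem 5.7} matches the paper's first paragraph exactly.

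Your primary route through geometric presentations is not wrong in spirit, but as written it contains a claim that fails: ``$\{U_i\}$ covers $X$ iff $\{\mathcal{M}(\pi_0(\overline{B}_{i,n}))\}_i$ covers $\mathcal{M}(\pi_0(\overline{A}_n))$ for every $n$.'' For a fixed choice of presentations there is no reason the $n$-th affinoid layer of each $U_i$ should cover the $n$-th affinoid layer of $X$; one must allow the indices to mix, i.e.\ cover the compact $\mathcal{M}(\pi_0(\overline{A}_n))$ by the intersections $\mathcal{M}(\pi_0(\overline{A}_n \haotimes_A \overline{B}_{i,m}))$ over all $i$ \emph{and all} $m$, and then extract a finite subcover by compactness. (This level-mixing is exactly what the paper does in the subsequent lemma comparing $\tau_{St}^{rat}$ with $\tilde{\tau}_{rat}$.) Similarly, the assertion that $\varprojlim_n(\pi_0(\overline{B}_{i,n}) \haotimes_{\pi_0(\overline{A}_n)} M_n)$ vanishes iff each term does needs the density of $M \to M_n$ from the Fr\'echet--Stein formalism, not just Mittag--Leffler. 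Both points are repairable, but they make the presentation-based route substantially heavier than the residue-field argument, which you should promote from ``alternative'' to the main proof.
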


\begin{proof}
We first note that by \cite{BaBK}*{ Theorem 5.7}, homotopy epimorphisms of discrete dagger Stein algebras correspond to open immersions of the corresponding Stein spaces. Thus if $A\rightarrow B$ is a homotopy epimorphism of derived dagger Steins, it is a derived strong homotopy epimorphism if and only if $\pi_{0}(A)\rightarrow\pi_{0}(B)$ is a homotopy epimorphism, i.e. $\pi_{0}(A)\rightarrow\pi_{0}(B)$ corresponds to an open immersion of Stein spaces.

Now, suppose 
$$\{\mathrm{Spec}(B_{i})\rightarrow\mathrm{Spec}(A)\}$$
is a cover in $\tau_{St}^{rat}$. Then each 
$$\{\mathrm{Spec}(\pi_{0}(B_{i}))\rightarrow\mathrm{Spec}(\pi_{0}(A)\}$$
corresponds to a usual cover of Steins by open Steins. The conservativity condition for coadmissible modules follows from descent for coherent sheaves on Steins.

Conversely, suppose we have a collection of morphisms $\{\mathrm{Spec}(B_{i})\rightarrow\mathrm{Spec}(A)\}$ such that each $A\rightarrow B_{i}$ is a derived strong homotopy monomorphism, and if $M\in\mathrm{Coad}(A)$ is such that $\pi_{0}(B_{i})\hat{\otimes}_{\pi_{0}(A)}M\cong 0$ for all $i$ then $M\cong 0$. Write $\pi_{0}(B_{i})=\mathcal{O}(U_{i})$ and $\pi_{0}(A)=\mathcal{O}(X)$. What remains to prove is that $\coprod_{i}U_{i}\rightarrow X$ is surjective. Let $x\in X$, and consider the corresponding maximal ideal $\mathfrak{m}_{x}$. Then $\mathcal{O}(X)\big\slash\mathfrak{m}_{x}$ is a coadmissible module. Thus for some $i$, $\mathcal{O}(U_{i})\big\slash(\mathcal{O}(U_{i})\mathfrak{m}_{x})$ is non-zero. This implies that $x$ lies in this $U_{i}$.
\end{proof}

To see why using dagger affinoids as building blocks of complex analytic spaces is a reasonable idea we prove the following:

\begin{theorem}\label{thm:complexgeomdagger}
The category \(\mathbf{dStn}^{\dagger}_{\C}\) embeds fully faithfully inside the full subcategory \(\mathbf{dAn}_\C \subset \mathbf{Stk}_{geom}(\mathbf{Aff}_{\bD_{\geq 0}(\C)}, \tau^{rat}, \mathbf{P}^{o-sm}, \bA_\C^\dagger)\) of derived analytic spaces.
\end{theorem}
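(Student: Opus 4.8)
The plan is to produce a fully faithful functor $\mathbf{dStn}^{\dagger}_{\C} \hookrightarrow \mathbf{dAn}_\C$ by sending a derived dagger Stein algebra $A$ to the colimit of the representable stacks on its geometric presentation, and then to check that this functor is well-defined (lands in derived analytic spaces), is fully faithful, and is essentially independent of the chosen presentation. First I would fix, for each $A \in \mathbf{dStnAlg}^{\dagger}_{\C}$, a geometric presentation $(A, A_n, \overline{A}_n)$ as recalled above, and set $\Phi(A) \defeq \varinjlim_n \mathsf{Spec}(\overline{A}_n)$, the colimit taken in $\mathbf{Stk}(\bA_\C^\dagger, \tau^{rat}_{\bA_\C^\dagger})$ along the maps $\mathsf{Spec}(\overline{A}_{n+1}) \to \mathsf{Spec}(\overline{A}_n)$ (which exist since $\overline{A}_{n+1} \to \overline{A}_n$ factors through the derived strong homotopy epimorphism coming from the Weierstrass localisation). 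Since each transition map $\mathcal{M}(\pi_0(\overline{A}_n)) \to \mathcal{M}(\pi_0(\overline{A}_{n+1}))$ is a Weierstrass localisation with image in the interior, the maps $\mathsf{Spec}(\overline{A}_n) \to \mathsf{Spec}(\overline{A}_{n+1})$ are open immersions of derived dagger affinoids, and hence $\Phi(A)$ is covered by the open immersions $\mathsf{Spec}(\overline{A}_n) \to \Phi(A)$; thus $\Phi(A)$ is a derived dagger analytic space in the sense required (a $1$-scheme for the tuple, glued from affinoids along open immersions). This establishes that $\Phi$ takes values in $\mathbf{dAn}_\C$.

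Next I would check functoriality and independence of the presentation. Given a map $f \colon A \to B$ of derived dagger Steins, loc.~cit.\ provides compatible presentations realising $f$ as $\varprojlim_n \overline{f}_n$ with $\overline{f}_n \colon \overline{A}_n \to \overline{B}_n$, inducing $\varinjlim_n \mathsf{Spec}(\overline{f}_n) \colon \Phi(B) \to \Phi(A)$; functoriality then follows from functoriality of $\varprojlim$ on presentations together with the fact that any two presentations admit a common refinement (a standard interleaving argument, as in the Stein case). The point here is that $\Phi(A)$ does not actually depend on the presentation: any cofinal subsystem of Weierstrass-with-interior exhaustions yields the same colimit stack, because the corresponding diagrams of affinoid opens are cofinal in $\mathbf{An}_\C(\Phi(A))$.

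For full faithfulness, I would compute $\mathbf{Hom}_{\mathbf{dAn}_\C}(\Phi(A), \Phi(B))$. Using $\Phi(A) = \varinjlim_n \mathsf{Spec}(\overline{A}_n)$ and the fact that $\mathbf{QCoh}$ (hence the structure-sheaf-valued mapping space) is right Kan extended from affinoids, together with Theorem~\ref{thm:derived-rigid}-style arguments for why a map out of a colimit of affinoids is determined by its restrictions, one reduces to
\[
\mathbf{Hom}_{\mathbf{dAn}_\C}(\Phi(A), \Phi(B)) \simeq \varprojlim_n \mathbf{Hom}(\mathsf{Spec}(\overline{A}_n), \Phi(B)) \simeq \varprojlim_n \varinjlim_m \mathbf{Hom}_{\bA_\C^\dagger}(\mathsf{Spec}(\overline{A}_n), \mathsf{Spec}(\overline{B}_m)),
\]
where the inner colimit over $m$ expresses that a map from the quasi-compact affinoid $\mathsf{Spec}(\overline{A}_n)$ into $\Phi(B)$ factors through some affinoid stage $\mathsf{Spec}(\overline{B}_m)$ (here one uses the interior condition, which gives quasi-compactness of the image and allows one to absorb the target into a single Weierstrass stage). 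On the other hand $\mathbf{Hom}_{\mathbf{dStn}^{\dagger}_{\C}}(A,B) = \mathbf{Hom}_{\mathbf{DAlg}}(B,A) \simeq \mathbf{Hom}_{\mathbf{DAlg}}(\varprojlim_m \overline{B}_m, \varprojlim_n \overline{A}_n) \simeq \varprojlim_n \varinjlim_m \mathbf{Hom}_{\mathbf{DAlg}}(\overline{B}_m, \overline{A}_n)$, and the two expressions agree since $\mathsf{Spec}$ is a duality on dagger affinoid algebras.

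\textbf{Main obstacle.} I expect the crux to be the step identifying maps \emph{out of} $\Phi(B)$-targets precisely: namely, showing that a morphism of stacks $\mathsf{Spec}(\overline{A}_n) \to \Phi(B)$ necessarily factors through one of the open affinoid strata $\mathsf{Spec}(\overline{B}_m)$, and canonically so up to the colimit. This is where the hypothesis that $\mathcal{M}(\pi_0(\overline{B}_m))$ sits in the \emph{interior} of $\mathcal{M}(\pi_0(\overline{B}_{m+1}))$ does real work — it ensures the underlying Berkovich-type map $|\mathsf{Spec}(\overline{A}_n)|_{Ber} \to |\Phi(B)|_{Ber}$ has quasi-compact image contained in some $|\mathsf{Spec}(\overline{B}_m)|_{Ber}$, so that the image is not merely contained in but \emph{relatively compact} in a stratum, after which the factorisation through $\mathsf{Spec}(\overline{B}_m)$ (and not just set-theoretically but as stacks, using that the $\overline{B}_m \to \overline{B}_{m'}$ are homotopy epimorphisms) follows. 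One must be careful that this factorisation is compatible as $n$ varies and as the presentation of $B$ is refined; assembling these compatibilities into an honest equivalence of mapping anima, rather than just a bijection on $\pi_0$, will require tracking the coherence through the $\varprojlim_n \varinjlim_m$ and invoking that filtered colimits of anima commute with the relevant finite limits appearing in the Cech descent for $\Phi(B)$.
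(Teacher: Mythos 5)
Your construction of the source object agrees with the paper's: both present $\widetilde{\mathrm{Spec}}(A)$ (your $\Phi(A)$) as the sequential colimit of the affinoid strata $\mathrm{Spec}(\overline{A}_n)$ of a geometric presentation, and both verify this is a legitimate atlas by base-changing along an arbitrary map from a dagger affinoid $\mathrm{Spec}(B)$ and using that $A\to B$ factors through some $\overline{A}_m$ (this is \cite{ben2024perspective}*{Corollary 5.4.112} in the paper). Where you genuinely diverge is in the treatment of the \emph{target} of the mapping space, and the obstacle you flag is exactly the point. You define $\Phi(B)$ as a colimit and then must prove that $\mathbf{Map}(\mathrm{Spec}(\overline{A}_n),\Phi(B))\simeq\varinjlim_m\mathbf{Map}(\mathrm{Spec}(\overline{A}_n),\mathrm{Spec}(\overline{B}_m))$, i.e.\ that maps from a quasi-compact affinoid into a filtered union of open substacks factor, coherently, through a finite stage. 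The paper avoids this entirely: the embedding is defined as restricted Yoneda followed by left Kan extension along $\bA^\dagger_\C\hookrightarrow\mathbf{Aff}_{\bD_{\geq0}(\C)}$ (fully faithful by Lemma \ref{lem:subgeom}), so that $\widetilde{\mathrm{Spec}}(B)$ evaluated on a dagger affinoid is by \emph{definition} $\mathbf{Map}_{\mathbf{DAlg}}(B,-)$, and the full-faithfulness computation collapses to $\varprojlim_n\mathbf{Map}(B,\overline{A}_n)\simeq\mathbf{Map}(B,\varprojlim_n\overline{A}_n)=\mathbf{Map}(B,A)$ with no inner colimit over $m$ at all. In short: you need the colimit description on both sides, the paper needs it only on the source.

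Two consequences for your write-up. First, the stack-level factorisation you defer to the end is not free; it is most cleanly obtained by proving your $\Phi(B)$ agrees with the restricted-Yoneda presheaf of $B$, at which point you have reproduced the paper's argument. Second, your algebra-side identity $\mathbf{Hom}_{\mathbf{DAlg}}(B,\overline{A}_n)\simeq\varinjlim_m\mathbf{Hom}_{\mathbf{DAlg}}(\overline{B}_m,\overline{A}_n)$ is not a formality ("$\mathsf{Spec}$ is a duality" does not address it): mapping \emph{out of} the limit $B=\varprojlim_m\overline{B}_m$ into an affinoid is a colimit of mapping spaces only by virtue of the same factorisation theorem for dagger Steins; it is the same nontrivial input appearing a second time. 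So your route is viable but strictly harder than necessary, and both of its load-bearing steps reduce to the one factorisation result that the paper invokes exactly once.
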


 Let $\widetilde{(-)}$ denote the composite functor
$$\mathbf{Aff}_{\bD_{\geq 0}(\C)}\rightarrow\mathbf{Stk}(\mathbf{Aff}_{\bD_{\geq 0}(\C)})\rightarrow\mathbf{Stk}(\mathbf{A}_\C^{\dagger})\rightarrow\mathbf{Stk}(\mathbf{Aff}_{\bD_{\geq 0}(\C)}),$$
where the first functor is the Yoneda embedding, the second is restriction, and the third is Kan extension. We claim that the restriction of this functor to $\mathbf{dStn}^{\dagger}_{\C}$ is fully faithful, and the essential image consists of $0$-schemes.

\begin{proof}[Proof of Theorem \ref{thm:complexgeomdagger}]

   Let \(\mathsf{Spec}(A) \in \mathbf{dStn}^{\dagger}_\C\) be an affine Stein space. Consider a presentation $(A,A_{n},\overline{A}_{n})$. We claim that the collection of maps $\{\mathrm{Spec}(\overline{A}_{n})\rightarrow \widetilde{\mathrm{Spec}(A)}\}$ is a $0$-atlas. Let $f:A\rightarrow B$ be a map with $B$ dagger affinoid. By \cite{ben2024perspective}*{Corollary 5.4.112} there exists $m$ such that $f$ factors through
    $$A\rightarrow \overline{A}_{n}\rightarrow B$$
    for all $n\ge m$. For $n\ge m$, we then have the following pushout diagram
    \begin{displaymath}
        \xymatrix{
        A\ar[d]\ar[r] & \overline{A}_{n}\ar[d]\ar[r] & B\ar[d]\\
        \overline{A}_{n}\ar[r] & \overline{A}_{n}\ar[r] & B
        }
    \end{displaymath}
        where we have used that $A\rightarrow \overline{A}_{n}$ is a homotopy epimorphism. For $n<m$ we still have 
        $$B\hat{\otimes}^{\mathbb{L}}_{A}\overline{A}_{n}\cong B\hat{\otimes}^{\mathbb{L}}_{\overline{A}_{m}}\overline{A}_{n}$$
        which is a derived dagger affinoid by \cite{ben2024perspective}*{Corollary 4.5.66}. Clearly $\{\mathrm{Spec}(B\hat{\otimes}^{\mathbb{L}}_{A}\overline{A}_{n})\rightarrow\mathrm{Spec}(B)\}$ is a cover of $\mathrm{Spec}(B)$, as in fact some of the $\mathrm{Spec}(B\hat{\otimes}^{\mathbb{L}}_{A}\overline{A}_{n})$ are just $\mathrm{Spec}(B\hat{\otimes}^{\mathbb{L}}_{A}\overline{A}_{n})$. Moreover all maps $\mathrm{Spec}(B\hat{\otimes}^{\mathbb{L}}_{A}\overline{A}_{n})\rightarrow\mathrm{Spec}(B)$ are rational localisations. 

        \comment{Note that this also shows we have $\widetilde{\mathrm{Spec}}(A)\cong\colim_{n}\mathrm{Spec}(\overline{A}_{n})$, so that the functor \(\widetilde{(-)}\) above factorises through \(\mathbf{Stk}(\bA_\C^\dagger)\). }
        
        Finally, to show that this functor is fully faithful, let $B$ another dagger Stein algebra. We have 
    \begin{align*}
            \mathbf{Map}(\widetilde{\mathrm{Spec}}(A),\widetilde{\mathrm{Spec}}(B))&\cong\mathbf{Map}(\colim_{n}\mathrm{Spec}(\overline{A}_{n}),\widetilde{\mathrm{Spec}}(B))\\
            &\cong\lim_{n}\mathbf{Map}(\mathrm{Spec}(\overline{A}_{n}),\widetilde{\mathrm{Spec}}(B))\\
            &\cong\lim_{n}\mathbf{Map}(B,\overline{A}_{n})\\
            &\cong\mathbf{Map}(B,A),
\end{align*} which completes the proof.
\end{proof}

Next we will sketch a proof that topologies match up.
Now let $A$ be a derived dagger Stein, and $M\in\mathbf{Coad}(A)$. Recall this means that $M\cong\mathbf{lim}_{n}\overline{A}_{n}\otimes^{\mathbb{L}}_{A}M$, where each $\overline{A}_{n}\otimes^{\mathbb{L}}_{A}M$ is such that each $\pi_{k}(\overline{A}_{n}\otimes^{\mathbb{L}}_{A}M)$ is a finitely presented $\pi_{0}(\overline{A}_{n})$-module for all $n$ and $k$. Equivalently each $\pi_{k}(M)\in\mathrm{Coad}(\pi_{0}(A))$. Moreover by \cite{ben2024perspective}*{Section 4.5}, we have
$$B\hat{\otimes}_{A}^{\mathbb{L}}M\cong\mathbf{lim}_{n}(B_{n}\hat{\otimes}_{A_{n}}^{\mathbb{L}}M).$$
In particular
$$\widetilde{\mathrm{Spec}}(f)^{*}(M)\cong B\hat{\otimes}_{A}^{\mathbb{L}}M.$$

\begin{proposition}
If $g:A\rightarrow B$ is a homotopy epimorphism derived dagger Stein algebras, then the corresponding map $f:\widetilde{\mathrm{Spec}}(B)\rightarrow\widetilde{\mathrm{Spec}}(A)$ is a homotopy monomorphism.
\end{proposition}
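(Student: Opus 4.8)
The plan is to verify directly that $f$ is a homotopy monomorphism, i.e.\ that the diagonal $\Delta_{f}\colon \widetilde{\mathrm{Spec}}(B)\to \widetilde{\mathrm{Spec}}(B)\times_{\widetilde{\mathrm{Spec}}(A)}\widetilde{\mathrm{Spec}}(B)$ is an equivalence of stacks. As recorded in the proof of Theorem \ref{thm:complexgeomdagger}, the functor $\widetilde{(-)}$ restricted to derived dagger Steins factors through $\mathbf{Stk}(\bA_{\C}^{\dagger},\tau_{\bA_{\C}^{\dagger}}^{rat})$, and this $\infty$-topos is generated under colimits by the representable sheaves $\mathrm{Spec}(C)$ with $\mathrm{Spec}(C)\in\bA_{\C}^{\dagger}$ (the topology being subcanonical). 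Since $\mathbf{Map}(\mathrm{Spec}(C),-)$ carries $\Delta_{f}$ to the canonical comparison $\mathbf{Map}(\mathrm{Spec}(C),\widetilde{\mathrm{Spec}}(B))\to\mathbf{Map}(\mathrm{Spec}(C),\widetilde{\mathrm{Spec}}(B))\times_{\mathbf{Map}(\mathrm{Spec}(C),\widetilde{\mathrm{Spec}}(A))}\mathbf{Map}(\mathrm{Spec}(C),\widetilde{\mathrm{Spec}}(B))$, it suffices to show that for every derived dagger affinoid $\mathrm{Spec}(C)$ the map $\mathbf{Map}(\mathrm{Spec}(C),\widetilde{\mathrm{Spec}}(B))\to\mathbf{Map}(\mathrm{Spec}(C),\widetilde{\mathrm{Spec}}(A))$ is a monomorphism of anima.

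To do this I would first isolate, as a standalone statement, the computation implicit in the proof of Theorem \ref{thm:complexgeomdagger}: for any derived dagger affinoid $\mathrm{Spec}(C)$ and any derived dagger Stein $A$ there is a natural equivalence $\mathbf{Map}(\mathrm{Spec}(C),\widetilde{\mathrm{Spec}}(A))\simeq\mathbf{Map}(A,C)$. Indeed, writing $\widetilde{\mathrm{Spec}}(A)\simeq\colim_{n}\mathrm{Spec}(\overline{A}_{n})$ along the open immersions of a geometric presentation, a map from the quasi-compact object $\mathrm{Spec}(C)$ factors through some stage, and $\colim_{n}\mathbf{Map}(\overline{A}_{n},C)\simeq\mathbf{Map}(A,C)$ by \cite{ben2024perspective}*{Corollary 5.4.112} (maps from a dagger Stein into a dagger affinoid factor through an exhaustion piece). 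Under this identification the map in question is precomposition $g^{\ast}\colon\mathbf{Map}(B,C)\to\mathbf{Map}(A,C)$. Now $g$ is a homotopy epimorphism, so by Lemma \ref{lem:hepi-equivalent} the natural map $B\haotimes_{A}^{\mathbb{L}}B\to B$ is an equivalence, i.e.\ $\mathrm{Spec}(B)\to\mathrm{Spec}(A)$ is a monomorphism in $\mathbf{Aff}_{\bD_{\geq 0}(\C)}$. Since the Yoneda embedding preserves limits, hence monomorphisms, $g^{\ast}=\mathbf{Map}(\mathrm{Spec}(C),\mathrm{Spec}(B))\to\mathbf{Map}(\mathrm{Spec}(C),\mathrm{Spec}(A))$ is a monomorphism of anima, which finishes the proof.

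I expect the genuine work to be bookkeeping rather than anything deep: making the identification $\mathbf{Map}(\mathrm{Spec}(C),\widetilde{\mathrm{Spec}}(A))\simeq\mathbf{Map}(A,C)$ uniform in $C$ (the proof of Theorem \ref{thm:complexgeomdagger} only needs it for the pieces $\overline{A}_{n}$ of one fixed presentation), and confirming that derived dagger affinoids really detect equivalences between the stacks at hand — equivalently, that $\Delta_{f}$ lies in the image of the fully faithful functor $i_{!}\colon\mathbf{Stk}(\bA_{\C}^{\dagger},\tau_{\bA_{\C}^{\dagger}}^{rat})\to\mathbf{Stk}(\mathbf{Aff}_{\bD_{\geq 0}(\C)},\tau^{rat})$ of Lemma \ref{lem:subgeom}. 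A second route, available when $g$ is moreover derived strong, is to pick compatible geometric presentations of $A$, $B$ and of $g$ in which each map $\overline{f}_{n}\colon\overline{A}_{n}\to\overline{B}_{n}$ of affinoid pieces is a derived strong rational localisation (using \cite{BaBK}*{Theorem 5.7} to match homotopy epimorphisms of discrete dagger Steins with open immersions of Stein spaces), and then exploit that finite limits commute with filtered colimits in the ambient $\infty$-topos: $\widetilde{\mathrm{Spec}}(B)\times_{\widetilde{\mathrm{Spec}}(A)}\widetilde{\mathrm{Spec}}(B)\simeq\colim_{n}\mathrm{Spec}(\overline{B}_{n}\haotimes_{\overline{A}_{n}}^{\mathbb{L}}\overline{B}_{n})\simeq\colim_{n}\mathrm{Spec}(\overline{B}_{n})\simeq\widetilde{\mathrm{Spec}}(B)$ by Lemma \ref{lem:hepi-equivalent}, with this equivalence being $\Delta_{f}$. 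The first route is preferable since it uses only that $g$ is a homotopy epimorphism.
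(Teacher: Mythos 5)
Your first route is essentially the paper's proof unwound: the paper simply records that $\widetilde{\mathrm{Spec}}(B)\times_{\widetilde{\mathrm{Spec}}(A)}\widetilde{\mathrm{Spec}}(B)\cong\widetilde{\mathrm{Spec}}(B\hat{\otimes}_{A}^{\mathbb{L}}B)\cong\widetilde{\mathrm{Spec}}(B)$, and your evaluation of the diagonal on dagger-affinoid generators is precisely the verification of that identity, with the monomorphism condition on mapping anima coming from the same input $B\hat{\otimes}_{A}^{\mathbb{L}}B\simeq B$. One simplification: the identification $\mathbf{Map}(\mathrm{Spec}(C),\widetilde{\mathrm{Spec}}(A))\simeq\mathbf{Map}(A,C)$ for a dagger affinoid $\mathrm{Spec}(C)$ is immediate from $i^{*}i_{!}\simeq\mathrm{id}$ (full faithfulness of $i_{!}$, Lemma \ref{lem:subgeom}) together with the fact that $\mathbf{Map}(\mathrm{Spec}(C),-)$ preserves the fibre product, so the quasi-compactness and exhaustion bookkeeping you anticipate is not actually needed.
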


\begin{proof}
This follows immediately frpm 
$$\widetilde{\mathrm{Spec}}(B)\times_{\widetilde{\mathrm{Spec}}(A)}\widetilde{\mathrm{Spec}}(B)\cong\widetilde{\mathrm{Spec}}(B\hat{\otimes}_{A}^{\mathbb{L}}B).$$
\comment{
Let $g:A\rightarrow B$ and $h:B\rightarrow C$ be derived strong maps of derived dagger Stein algebras. Write $\pi_{0}(A)\cong\mathcal{O}(\mathbb{C}^{l})\big\slash I$. $\pi_{0}(B)\cong\mathcal{O}(Y)$, $\pi_{0}(C)\cong\mathcal{O}(Z)$. Let the coordinates of $\mathbb{C}^{l}$ be $z_{1},\ldots,z_{l}$. Consider the ideal $T$ of $\pi_{0}(B)\hat{\otimes}^{\mathbb{L}}\pi_{0}(C)\cong\mathcal{O}(Y\times Z)$ generated by $\{f(z_{j})-g(z_{j})\}$. This is a finitely generated, and hence closed, ideal. The map
$$\pi_{0}(B)\hat{\otimes}\pi_{0}(A)\hat{\otimes}\pi_{0}(C)\rightarrow T$$
is clearly onto set-theoretically, and therefore an admissible epimorphism since everything is Fr\'{e}chet. Thus the relative tensor product is just $\mathcal{O}(Y\times Z)\big\slash T$, which is derived Stein. It remains to prove that each $\pi_{n}(B\hat{\otimes}_{A}^{\mathbb{L}}C)$ is coadmissible. Here is where we use derived strength. We have
$$\pi_{*}(B)\hat{\otimes}^{\mathbb{L}}_{\pi_{*}(A)}\pi_{*}(C)\cong(\pi_{0}(B)\otimes_{\pi_{0}(A)}\pi_{0}(C))\otimes$$
}
\comment{
 Pick presentations $(A,A_{n},\overline{A}_{n})$, $(B,B_{n},\overline{B}_{n})$, and $(C,C_{n},\overline{C}_{n})$. We may assume that $g$ and $h$ have presentations $(g,g_{n},\overline{g}_{n})$ and $(h,h_{n},\overline{h}_{n})$. Byu 
Now we have 
$$B\hat{\otimes}_{A}^{\mathbb{L}}C\cong\mathbf{lim}_{n}(B\hat{\otimes}_{A}C_{n})\cong\mathbf{lim}_{m,n}(B_{m}\hat{\otimes}_{A}C_{n}).$$
By cofinality this limit is equivalent to 
$$\mathbf{lim}_{n}(B_{n}\hat{\otimes}_{A}C_{n})\cong\mathbf{lim}_{n}(B_{n}\hat{\otimes}_{A_{n}}C_{n})$$
where we have used that $A\rightarrow A_{n}$ is a homotopy epimorphism. Now there are cofinal maps of diagram
$$\overline{B}_{n}\hat{\otimes}^{\mathbb{L}}_{\overline{A}_{n}}\overline{C}_{n}\rightarrow B_{n}\hat{\otimes}_{A_{n}}^{\mathbb{L}}C_{n}$$,
$$B_{n+1}\hat{\otimes}_{A_{n+1}}^{\mathbb{L}}C_{n+1}\rightarrow\overline{B}_{n}\hat{\otimes}^{\mathbb{L}}_{\overline{A}_{n}}\overline{C}_{n}.$$
Thus we have 
$$B\hat{\otimes}_{A}^{\mathbb{L}}C\cong\mathbf{lim}_{n}\overline{B}_{n}\hat{\otimes}^{\mathbb{L}}_{\overline{A}_{n}}\overline{C}_{n}.$$
Moreover 
$$(B\hat{\otimes}_{A}^{\mathbb{L}}C,B_{n}\hat{\otimes}^{\mathbb{L}}_{A_{n}}C_{n},\overline{B}_{n}\hat{\otimes}^{\mathbb{L}}_{\overline{A}_{n}}\overline{C}_{n})$$
is a presentation of $B\hat{\otimes}_{A}^{\mathbb{L}}C$ as a derived dagger Stein.

As for the homotopy epimorphism claim, for $A\rightarrow B$ a map of derived dagger Steins, we have that $B\hat{\otimes}_{A}^{\mathbb{L}}B$ is a derived dagger Stein, and $$\widetilde{\mathrm{Spec}}(B)\times_{\widetilde{\mathrm{Spec}}(A)}\widetilde{\mathrm{Spec}}(B)\cong\widetilde{\mathrm{Spec}}(B\hat{\otimes}_{A}^{\mathbb{L}}B).$$
Since $\widetilde{\mathrm{Spec}}(-)$ is fully faithful, and hence conservative, the claim immediately follows.
}
\end{proof}

\begin{lemma}
  If a map $g:A\rightarrow B$ of derived dagger Steins is a derived strong homotopy epimorphism then $f:\widetilde{\mathrm{Spec}}(B)\rightarrow\widetilde{\mathrm{Spec}}(A)$ is in $\mathbf{P}^{rat}$.
\end{lemma}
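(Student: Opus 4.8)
The claim is a globalisation statement: given a derived strong homotopy epimorphism $g \colon A \to B$ of derived dagger Stein algebras, the induced map of stacks $f \colon \widetilde{\mathrm{Spec}}(B) \to \widetilde{\mathrm{Spec}}(A)$ should lie in the class $\mathbf{P}^{rat}$ (open $\mathrm{T}$-smooth maps of rational-localisation type) for the geometry tuple on $\bA_\C^\dagger$. The key input already available is that $\widetilde{(-)}$ sends $\mathrm{Spec}(A)$, for $A$ a derived dagger Stein, to the colimit $\colim_n \mathrm{Spec}(\overline{A}_n)$ over a geometric presentation $(A, A_n, \overline{A}_n)$, together with the computation $\widetilde{\mathrm{Spec}}(B) \times_{\widetilde{\mathrm{Spec}}(A)} \widetilde{\mathrm{Spec}}(C) \cong \widetilde{\mathrm{Spec}}(B \haotimes_A^{\mathbb{L}} C)$ and the fact, proved in the preceding proposition, that $f$ is a homotopy monomorphism. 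So the only thing that remains is to verify that $f$ is \emph{representable} and that its pullback along any $\mathrm{Spec}(\overline{A}_n) \to \widetilde{\mathrm{Spec}}(A)$ from a dagger affinoid is an open ($\mathrm{T}$-rational) immersion in the affinoid geometry.

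\textbf{First step: reduce to affinoid pullbacks.} By Definition of $\mathbf{P}^{o-sm}$ and the way $(-1)$-$\mathbf{P}$ maps are tested, I would take an arbitrary map $\mathrm{Spec}(D) \to \widetilde{\mathrm{Spec}}(A)$ with $D$ a derived dagger affinoid and analyse the pullback $\mathrm{Spec}(D) \times_{\widetilde{\mathrm{Spec}}(A)} \widetilde{\mathrm{Spec}}(B)$. Using $\widetilde{\mathrm{Spec}}(A) \cong \colim_n \mathrm{Spec}(\overline{A}_n)$ and the argument already given in the proof of Theorem \ref{thm:complexgeomdagger}, the map $\mathrm{Spec}(D) \to \widetilde{\mathrm{Spec}}(A)$ factors through some $\mathrm{Spec}(\overline{A}_m) \to \widetilde{\mathrm{Spec}}(A)$ (invoking \cite{ben2024perspective}*{Corollary 5.4.112}). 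Thus we may replace $A$ by $\overline{A}_m$ and $B$ by $B \haotimes_A^{\mathbb{L}} \overline{A}_m$; the latter is again a derived dagger Stein and the map $\overline{A}_m \to B \haotimes_A^{\mathbb{L}} \overline{A}_m$ is again a derived strong homotopy epimorphism (base change of such along $A \to \overline{A}_m$, using that $\overline{A}_m$ is transverse to the relevant modules). So we are reduced to showing: if $A$ is a derived dagger \emph{affinoid} and $A \to B$ is a derived strong homotopy epimorphism of derived dagger Steins, then $\widetilde{\mathrm{Spec}}(B) \to \mathrm{Spec}(A)$ is in $\mathbf{P}^{rat}$.

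\textbf{Second step: exhibit a rational atlas.} Now pick a geometric presentation $(B, B_n, \overline{B}_n)$ of $B$. By the same factorisation argument, for each $n$ the composite $\overline{B}_n \to B \to$ (nothing — rather) the map $A \to B \to \overline{B}_n$ is a derived strong homotopy epimorphism of derived dagger affinoids; by Lemma \ref{lem:Arun-top} (or rather its $\dagger$-analogue via Proposition \ref{prop:rigid-geometry-tuple}) such a map is a derived rational localisation, hence lies in $\mathbf{P}^{o-sm}$ and is even a $\mathrm{T}$-rational embedding. I would then check that $\{\mathrm{Spec}(\overline{B}_n) \to \widetilde{\mathrm{Spec}}(B)\}_n$ is an atlas: it is an effective epimorphism since $\widetilde{\mathrm{Spec}}(B) \cong \colim_n \mathrm{Spec}(\overline{B}_n)$, and each map is $(-1)$-representable because its pullback against any affinoid mapping to $\widetilde{\mathrm{Spec}}(B)$ is again a rational localisation of dagger affinoids — here I use that $\widetilde{\mathrm{Spec}}(\overline{B}_n) \times_{\widetilde{\mathrm{Spec}}(B)} \mathrm{Spec}(D) \cong \mathrm{Spec}(\overline{B}_n \haotimes_B^{\mathbb{L}} D)$ together with \cite{ben2024perspective}*{Corollary 4.5.66} to see this is a dagger affinoid, and the fact that it is a rational localisation of $D$. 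Finally each composite $\mathrm{Spec}(\overline{B}_n) \to \widetilde{\mathrm{Spec}}(B) \to \mathrm{Spec}(A)$ equals $\mathrm{Spec}(\overline{B}_n) \to \mathrm{Spec}(A)$, a rational localisation of derived dagger affinoids, hence in $\mathbf{P}^{o-sm}$. This produces the required atlas witnessing $f \in \mathbf{P}^{rat}$.

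\textbf{Main obstacle.} The delicate point is the interaction between the Stein presentation and base change: I need that forming $B \haotimes_A^{\mathbb{L}} D$ (and more generally $\overline{B}_n \haotimes_B^{\mathbb{L}} D$) is compatible with the presentations, i.e. that $(B \haotimes_A^{\mathbb{L}} D, B_n \haotimes_{A_n}^{\mathbb{L}} D, \overline{B}_n \haotimes_{\overline{A}_n}^{\mathbb{L}} D)$ is again a geometric presentation, with the Weierstrass/interior conditions on Berkovich spectra preserved. This is essentially the content of \cite{ben2024perspective}*{Section 4.5} on base change of presentations, but one must check the derived strength hypotheses propagate — which is where transversality of coadmissible modules to rational localisations (built into the definition of $\bA_\C^\dagger$ being analytic) is used. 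Once that bookkeeping is in place, everything else is a formal consequence of the colimit description of $\widetilde{(-)}$ and the affinoid case handled in Lemma \ref{lem:Arun-top}.
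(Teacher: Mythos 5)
Your proof is correct and follows essentially the same route as the paper: both reduce to the case of a test map from a dagger affinoid by factoring through a term $\overline{A}_m$ of the geometric presentation (via \cite{ben2024perspective}*{Corollary 5.4.112}), then produce a rational atlas for the pullback from the base-change of the atlas $\{\mathrm{Spec}(\overline{B}_l) \to \widetilde{\mathrm{Spec}}(B)\}$, and finally invoke the fact that a derived strong homotopy epimorphism of derived dagger affinoids is an open immersion on $\pi_0$. The only cosmetic difference is the citation: the paper invokes \cite{bambozzi2016dagger}*{Theorem 5.7} directly for this last step, whereas you appeal to a ``$\dagger$-analogue'' of Lemma~\ref{lem:Arun-top}, which amounts to the same input.
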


\begin{proof}
Suppose $g$ is a derived strong homotopy epimorphism. Consider an arbitrary map $A\rightarrow C$ with $C$ derived dagger affinoid. We pick presentations $(A,A_{n},\overline{A}_{n})$ and $(B,B_{n},\overline{B}_{n})$ as derived dagger Steins and, by reordering if necessary, assume that the map $A\rightarrow C$ factors through each $\overline{A}_{n}$. $C=\overline{A}_{n}$.
Now $\widetilde{\mathrm{Spec}}(\overline{A}_{n}\hat{\otimes}^{\mathbb{L}}_{A}B)$ has an atlas given by $\{\mathrm{Spec}(\overline{B}_{l}\hat{\otimes}_{A}^{\mathbb{L}}\overline{A}_{n})\rightarrow\widetilde{\mathrm{Spec}}(\overline{A}_{n}\hat{\otimes}^{\mathbb{L}}_{A}B)\}$. This is just the base-change of the atlas $\{\mathrm{Spec}(\overline{B}_{l})\rightarrow\widetilde{\mathrm{Spec}}(B)\}.$ Consider the map $\overline{A}_{n}\rightarrow\overline{B}_{l}\hat{\otimes}_{A}^{\mathbb{L}}\overline{A}_{n}$. Now for $k\ge l,n$ $\overline{B}_{l}\hat{\otimes}^{\mathbb{L}}\overline{A}_{n}\cong\overline{B}_{l}\hat{\otimes}_{\overline{A}_{k}}^{\mathbb{L}}\overline{A}_{n}$. Thus it is a dagger affinoid. Moreover $\overline{A}_{n}\rightarrow\overline{B}_{l}\hat{\otimes}_{A}^{\mathbb{L}}\overline{A}_{n}$ is a derived strong homotopy epimorphism of derived dagger affinoids. Thus on the level of $\pi_{0}$ it
 is necessarily a dagger affinoid open immersion by \cite{bambozzi2016dagger}*{Theorem 5.7}. \qedhere

        \end{proof}

Define a topology $\tilde{\tau}_{rat}$ on $\mathbf{Stk}(\mathbf{A}^{\dagger}_{\C})$ by declaring that a collection of maps $\{\mathcal{X}_{i}\rightarrow\mathcal{Y}\}$ is a cover precisely if 
\begin{enumerate}
    \item 
    each $\mathcal{X}_{i}\rightarrow\mathcal{Y}$ is a homotopy monomorphism of stacks and is in $\mathbf{P}^{rat}$;
    \item 
   $\coprod_{i}\mathcal{X}_{i}\rightarrow\mathcal{Y}$ is an epimorphism of stacks.
\end{enumerate}

\begin{lemma}\label{lem:pratderstrong}
    Let $\widetilde{\mathrm{Spec}}(f):\widetilde{\mathrm{Spec}}(B)\rightarrow\widetilde{\mathrm{Spec}}(A)$ be in $\mathbf{P}^{rat}$. Then $A\rightarrow B$ is derived strong. Moreover there is a cover $\mathrm{Spec}(B_{i})\rightarrow\mathrm{Spec}(B)$ in $\tau^{tat}_{St}$ such that each $A\rightarrow B_{i}$ is a homotopy epimorphism.
\end{lemma}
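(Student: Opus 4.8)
\textbf{Proof plan for Lemma \ref{lem:pratderstrong}.}

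The statement to prove has two parts: first, that if $\widetilde{\mathrm{Spec}}(f)\colon\widetilde{\mathrm{Spec}}(B)\to\widetilde{\mathrm{Spec}}(A)$ lies in $\mathbf{P}^{rat}$ then $f\colon A\to B$ is derived strong; second, that there is a $\tau^{rat}_{St}$-cover $\{\mathrm{Spec}(B_i)\to\mathrm{Spec}(B)\}$ such that each composite $A\to B\to B_i$ is a homotopy epimorphism. The plan is to reduce both claims to the affinoid-level statements already established in the excerpt, using geometric presentations of dagger Steins and the fact that $\widetilde{(-)}$ is fully faithful (Theorem \ref{thm:complexgeomdagger}) and hence detects/reflects the relevant local properties.

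For the first part, I would start by choosing geometric presentations $(A, A_n, \overline{A}_n)$ and $(B, B_n, \overline{B}_n)$ so that $f$ is the limit of compatible maps $\overline{f}_n\colon\overline{A}_n\to\overline{B}_n$ (and $f_n\colon A_n\to B_n$), as recalled from \cite{ben2024perspective}*{Section 5.4} just before the lemma. Unwinding the definition of $\mathbf{P}^{rat}$ applied to the map $A\to\overline{A}_n$ of derived dagger Steins: pulling back the atlas $\{\mathrm{Spec}(\overline{B}_l)\to\widetilde{\mathrm{Spec}}(B)\}$ along $\widetilde{\mathrm{Spec}}(\overline{A}_n)\to\widetilde{\mathrm{Spec}}(A)$ yields an atlas $\{\mathrm{Spec}(\overline{B}_l\haotimes^{\mathbb{L}}_A\overline{A}_n)\to\widetilde{\mathrm{Spec}}(\overline{A}_n\haotimes^{\mathbb{L}}_A B)\}$ by which each $\overline{A}_n\to\overline{B}_l\haotimes^{\mathbb{L}}_A\overline{A}_n$ is a derived strong homotopy epimorphism of derived dagger affinoids (this is exactly the content of the preceding lemma). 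Derived strength is a statement about the graded ring map $\pi_0\haotimes^{\mathbb{L}}_{\pi_0}\pi_* \to \pi_*$, and it can be checked after the faithfully flat (indeed descendable) base change along $A\to\prod_l \overline{B}_l\haotimes^{\mathbb{L}}_A\overline{A}_n$; since derived strength of $A\to B$ is equivalent to $\pi_0(A)\haotimes^{\mathbb{L}}_A B\simeq \pi_0(B)$ and this can be read off after pulling back to each $\overline{A}_n$ (using $A\simeq\mathbf{lim}_n\overline{A}_n$ and that each $A\to\overline{A}_n$ is a derived strong homotopy epimorphism, so compatible with $\pi_0$ and limits), the derived strength of the $\overline{f}_n$ assembles to derived strength of $f$. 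I expect the bookkeeping with the limit-over-$n$ and the commutation of $\pi_*$ with these particular Postnikov-type limits to be the fiddly point, but it is essentially the same computation as in the proof of the preceding proposition.

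For the second part, once $f$ is known to be derived strong, I would descend to $\pi_0$: the map $\pi_0(f)\colon\pi_0(A)=\mathcal{O}(X)\to\pi_0(B)=\mathcal{O}(Y)$ is then a homotopy epimorphism of discrete dagger Stein algebras, hence by \cite{BaBK}*{Theorem 5.7} corresponds to an open immersion $Y\hookrightarrow X$ of Stein spaces. Choosing a cover of $Y$ by open Stein subspaces $Y_i$ that are Weierstrass (or at least rational-localisation) subdomains adapted to a geometric presentation, one lifts these to derived dagger Steins $B_i$ with $B\to B_i$ derived strong homotopy epimorphisms (using the lifting of rational localisations along formally \'etale maps, as in the open-immersion lifting lemma earlier in the paper), and such that each composite $\mathcal{O}(X)\to\mathcal{O}(Y)\to\mathcal{O}(Y_i)$ is an open immersion of Steins, i.e.\ $A\to B\to B_i$ is a homotopy epimorphism; by the characterisation of $\tau^{rat}_{St}$-covers (the Lemma just above the statement), $\{\mathrm{Spec}(B_i)\to\mathrm{Spec}(B)\}$ is then a $\tau^{rat}_{St}$-cover. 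The main obstacle I anticipate is the first part — verifying that the local-on-the-atlas derived strength genuinely propagates through the inverse limit defining the dagger Stein, rather than the second part, which is largely a matter of translating between the geometric and algebraic descriptions via results already in hand.
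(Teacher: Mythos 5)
Your first part follows essentially the same route as the paper: reduce to the affinoid charts of a geometric presentation, establish derived strength there by a descent argument, and assemble over the presentation using descent for coadmissible modules on the cover $\{\mathrm{Spec}(\overline{B}_n)\to\widetilde{\mathrm{Spec}}(B)\}$. One small imprecision: membership in $\mathbf{P}^{rat}$ only guarantees the existence of \emph{some} atlas of $\widetilde{\mathrm{Spec}}(\overline{A}_n)\times_{\widetilde{\mathrm{Spec}}(A)}\widetilde{\mathrm{Spec}}(B)$ whose charts map to $\mathrm{Spec}(\overline{A}_n)$ by rational localisations; it does not directly tell you that the specific pullback atlas $\{\mathrm{Spec}(\overline{B}_l\haotimes^{\mathbb{L}}_A\overline{A}_n)\}$ has this property. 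The paper avoids this by observing that $A\to\overline{B}_n$ factors through $\overline{A}_n$, so $\overline{B}_n\haotimes^{\mathbb{L}}_A\overline{A}_n\simeq\overline{B}_n$ and hence $\mathrm{Spec}(\overline{B}_n)\to\mathrm{Spec}(\overline{A}_n)$ is itself a homotopy monomorphism in $\mathbf{P}^{rat}$, reducing cleanly to the affinoid case. Your version can be repaired by running the descent argument against the atlas that the definition of $\mathbf{P}^{rat}$ actually provides, but as written it borrows a conclusion from the converse lemma whose hypothesis (derived strength) is exactly what is being proved.

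The second part has a genuine gap. You assert that once $f$ is derived strong, $\pi_0(f)\colon\mathcal{O}(X)\to\mathcal{O}(Y)$ is a homotopy epimorphism and hence corresponds to an open immersion of Stein spaces. Neither derived strength nor membership in $\mathbf{P}^{rat}$ implies this: $\mathbf{P}^{rat}$ contains maps that are only \emph{locally on the source} open immersions (for instance $\coprod_i U_i\to X$ for an open cover, or any map admitting an atlas of rational localisations over the base), and such maps are not homotopy monomorphisms. Indeed, the whole content of the ``moreover'' clause is that one only obtains a $\tau^{rat}_{St}$-cover $\{\mathrm{Spec}(B_i)\to\mathrm{Spec}(B)\}$ with each composite $A\to B_i$ a homotopy epimorphism — not that $A\to B$ itself is one. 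The correct argument (as in the paper) works chart by chart: for each $\overline{B}_n$, the definition of $\mathbf{P}^{rat}$ supplies a cover $\{\mathrm{Spec}(D_{n_i})\to\mathrm{Spec}(\overline{B}_n)\}$ by dagger affinoids with each composite $\mathrm{Spec}(D_{n_i})\to\mathrm{Spec}(\overline{A}_n)$ a rational localisation, and one then patches these topologically into a cover of $\mathcal{M}(B)$ by opens whose composites to $\mathcal{M}(A)$ are open immersions. Your anticipated difficulty was inverted: the assembly in part one is routine, whereas part two cannot start from the premise that $Y\hookrightarrow X$ is globally open.
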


\begin{proof}[Sketch proof]
First suppose that we have a presentation
$$(f:A\rightarrow B,f_{n}:A_{n}\rightarrow B_{n},\overline{f}_{n}:\overline{A}_{n}\rightarrow\overline{B}_{n}).$$
We claim it suffices to prove that each $\overline{f}_{n}$ is derived strong. Indeed suppose this is the case. We want 
$$\pi_{m}(B)\cong\pi_{0}(B)\hat{\otimes}^{\mathbb{L}}_{\pi_{0}(A)}\pi_{m}(A).$$
But we have 
$$\pi_{m}(\overline{B}_{n})\cong\pi_{0}(\overline{B}_{n})\hat{\otimes}^{\mathbb{L}}_{\pi_{0}(B)}\pi_{m}(B)$$
since $B\rightarrow\overline{B}_{n}$ is derived strong. Also by the assumption that $\overline{A}_{n}\rightarrow\overline{B}_{n}$ is derived strong and the fact that each $A\rightarrow\overline{A}_{n}$ is derived strong, we have
\begin{multline*}
    \pi_{m}(\overline{B}_{n})\cong\pi_{0}(\overline{B}_{n})\hat{\otimes}^{\mathbb{L}}_{\pi_{0}(\overline{A}_{n})}\pi_{m}(\overline{A}_{n})\\
    \cong\pi_{0}(\overline{B}_{n})\hat{\otimes}^{\mathbb{L}}_{\pi_{0}(A)}\pi_{m}(A)\cong\pi_{0}(\overline{B}_{n})\hat{\otimes}^{\mathbb{L}}_{\pi_{0}(B)}\pi_{0}(B)\hat{\otimes}^{\mathbb{L}}_{\pi_{0}(A)}\pi_{m}(A).
\end{multline*}
Now descent for coadmissible $B$-modules on the cover $\{\mathrm{Spec}(\overline{B}_{n})\rightarrow\widetilde{\mathrm{Spec}}(B)\}$ gives that 
$$\pi_{m}(B)\cong\pi_{0}(B)\hat{\otimes}^{\mathbb{L}}_{\pi_{0}(A)}\pi_{m}(A).$$

Now let us prove that each $\overline{f}_{n}$ is derived strong. By assumption the map $A\rightarrow\overline{B}_{n}$ factors through $\overline{A}_{n}$. Thus we have 
$$\overline{B}_{n}\hat{\otimes}^{\mathbb{L}}_{A}\overline{A}_{n}\cong B_{n}\hat{\otimes}^{\mathbb{L}}_{\overline{A}_{n}}\overline{A}_{n}\cong\overline{B}_{n}.$$
Hence by pullback the map $\mathrm{Spec}(\overline{B}_{n})\rightarrow\mathrm{Spec}(\overline{A}_{n})$ is a homotopy monomorphism which is in $\mathbf{P}^{rat}$. At long last we are reduced to proving that if $\mathrm{Spec}(D)\rightarrow\mathrm{Spec}(C)$ is a map of dagger affinoids which is in $\mathbf{P}^{rat}$ then it is derived strong. But this follows by a descent argument similar to the one above. 

For the final claim, by the derived strong result above we may restrict to the discrete case. For each $n$, we may find a cover $\{\mathrm{Spec}(D_{n_{i}})\rightarrow\mathrm{Spec}(\overline{B}_{n})\}$ by dagger affinoids such that each composition $\mathrm{Spec}(D_{n_{i}})\rightarrow \mathrm{Spec}(\overline{A}_{n})$ is a homotopy epimorphism, i.e., $\mathcal{M}(\overline{A}_{n})\rightarrow\mathcal{M}(D_{n_{i}})$ is a localisation. Now, topologically, it is clear that we may find a cover of $\mathcal{M}(B)$ by opens $U_{i}$ such that the composition $U_{i}\rightarrow\mathcal{M}(B)\rightarrow\mathcal{M}(A)$ is an open immersion.
\qedhere

\comment{
    Consider the map $\mathrm{Spec}(\overline{A}_{n})\rightarrow\widetilde{\mathrm{Spec}}(A)$. There is an atlas $\{\mathrm{Spec}(C_{j})\rightarrow\mathrm{Spec}(\overline{A}_{n})\times_{\widetilde{\mathrm{Spec}}(A)}\widetilde{\mathrm{Spec}}(B)$ such that each map $\mathrm{Spec}(C_{j})\rightarrow\mathrm{Spec}(\overline{A}_{n})$ is a rational localisation, and hence a derived strong map. 
    }
\end{proof}

\begin{lemma}
  A collection of maps $\{f_{i}:\mathrm{Spec}(B_{i})\rightarrow \mathrm{Spec}(A)\}$  of derived Steins is a cover in $\tau^{rat}_{St}$ if and only if $\{\widetilde{\mathrm{Spec}}(A_{i})\rightarrow\widetilde{\mathrm{Spec}}(B)\}$ is a cover in $\tilde{\tau}_{rat}.$ 
  
  \comment{
  Conversely, if $\{\widetilde{\mathrm{Spec}}(B_{i})\rightarrow\widetilde{\mathrm{Spec}}(A)\}$ is a cover in $\tilde{\tau}_{rat}$, then for each $\widetilde{\mathrm{Spec}}(B_{i})$ there is a cover of Stein spaces $\{\mathrm{Spec}(\mathcal{O}(V_{j_{i}}))\rightarrow \mathrm{Spec}(\mathcal{O}(U_{i}))\}$ for each $i$, such that the corresponding collection
$$\{\mathrm{Spec}(\mathcal{O}(V_{j_{i}}))\rightarrow \mathrm{spec}(\mathcal{O}(X))\}_{i,j_{i}}$$
is a cover of derived Steins by open derived Stein subspaces.
}
\end{lemma}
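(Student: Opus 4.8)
The plan is to transport, across the fully faithful embedding $\widetilde{(-)}$ of Theorem~\ref{thm:complexgeomdagger}, the module-theoretic description of $\tau^{rat}_{St}$-covers given in the preceding lemma (each $A\to B_{i}$ a derived strong homotopy epimorphism, together with conservativity of $\pi_{0}(B_{i})\hat{\otimes}_{\pi_{0}(A)}(-)$ on $\mathrm{Coad}(A)$) onto the two conditions defining $\tilde{\tau}_{rat}$ (each $\widetilde{\mathrm{Spec}}(B_{i})\to\widetilde{\mathrm{Spec}}(A)$ a homotopy monomorphism lying in $\mathbf{P}^{rat}$, and $\coprod_{i}\widetilde{\mathrm{Spec}}(B_{i})\to\widetilde{\mathrm{Spec}}(A)$ an epimorphism of stacks). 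The ``representability'' halves match up by the two results proved immediately above and by Lemma~\ref{lem:pratderstrong}; the genuine content is matching the \emph{surjectivity} conditions, and for this I would pass through the underlying spaces.

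\textbf{Forward direction.} If $\{\mathrm{Spec}(B_{i})\to\mathrm{Spec}(A)\}$ is a $\tau^{rat}_{St}$-cover then each $A\to B_{i}$ is a derived strong homotopy epimorphism, so by the preceding Proposition and Lemma each $\widetilde{\mathrm{Spec}}(B_{i})\to\widetilde{\mathrm{Spec}}(A)$ is a homotopy monomorphism in $\mathbf{P}^{rat}$. To see that $\coprod_{i}\widetilde{\mathrm{Spec}}(B_{i})\to\widetilde{\mathrm{Spec}}(A)$ is an epimorphism of stacks I would use the presentation $\widetilde{\mathrm{Spec}}(A)\cong\colim_{n}\mathrm{Spec}(\overline{A}_{n})$ from the proof of Theorem~\ref{thm:complexgeomdagger}: a probe $\mathrm{Spec}(C)\to\widetilde{\mathrm{Spec}}(A)$ with $C$ a derived dagger affinoid factors through some $\mathrm{Spec}(\overline{A}_{n})$ by compactness, and since $A\to B_{i}$ is derived strong the base change $C\hat{\otimes}^{\mathbb{L}}_{A}B_{i}$ is again a derived dagger affinoid with $\pi_{0}$ the sections over $U_{i}\cap\mathcal{M}(C)$; because $\{\mathrm{Spec}(\pi_{0}(B_{i}))\to\mathrm{Spec}(\pi_{0}(A))\}$ is a cover of Stein spaces (so $\coprod_{i}U_{i}\to X$ is surjective), $\{\mathrm{Spec}(C\hat{\otimes}^{\mathbb{L}}_{A}B_{i})\to\mathrm{Spec}(C)\}$ is a $\tau^{rat}$-cover whose members factor through the $\widetilde{\mathrm{Spec}}(B_{i})$.

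\textbf{Reverse direction.} If $\{\widetilde{\mathrm{Spec}}(B_{i})\to\widetilde{\mathrm{Spec}}(A)\}$ is a $\tilde{\tau}_{rat}$-cover, then membership in $\mathbf{P}^{rat}$ and Lemma~\ref{lem:pratderstrong} give that $A\to B_{i}$ is derived strong, while the homotopy monomorphism condition gives $\widetilde{\mathrm{Spec}}(B_{i}\hat{\otimes}^{\mathbb{L}}_{A}B_{i})\cong\widetilde{\mathrm{Spec}}(B_{i})\times_{\widetilde{\mathrm{Spec}}(A)}\widetilde{\mathrm{Spec}}(B_{i})\cong\widetilde{\mathrm{Spec}}(B_{i})$, so conservativity of the faithful functor $\widetilde{(-)}$ forces $B_{i}\hat{\otimes}^{\mathbb{L}}_{A}B_{i}\cong B_{i}$, i.e. $A\to B_{i}$ is a homotopy epimorphism. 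For the coadmissible conservativity I would again use points: $x\in X=\mathcal{M}(\pi_{0}(A))$ lies in some $\mathcal{M}(\overline{A}_{n})$, the epimorphism-of-stacks property applied to $\mathrm{Spec}(\mathbb{C})\to\mathrm{Spec}(\overline{A}_{n})\to\widetilde{\mathrm{Spec}}(A)$ forces a factorisation through some $\widetilde{\mathrm{Spec}}(B_{i})$ (a $\tau^{rat}$-cover of a point over a field contains a section), so $\coprod_{i}U_{i}\to X$ is surjective; then, as in the proof of the preceding lemma, a nonzero coadmissible $M$ has a point $x$ in its support, $x$ lifts to some $U_{i}$, and $\pi_{0}(B_{i})\hat{\otimes}_{\pi_{0}(A)}M\neq 0$, an induction over the Postnikov tower handling derived $M$.

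\textbf{Main obstacle.} The delicate point is compatibility between the stack-level statement, which is tested against derived dagger affinoid probes and therefore sees only the pieces $\overline{A}_{n}$ of a geometric presentation, and the $\tau^{rat}_{St}$-cover condition, which is phrased in terms of coadmissible modules over the limit algebra $A$. Bridging these requires the geometric-presentation machinery of \cite{ben2024perspective}*{Section 5.4} to pass between $A$ and its tower $\overline{A}_{n}$, together with descent for coherent sheaves on Stein spaces to see that topological surjectivity onto $X$ is detected equally by the affinoid pieces and by global coadmissible modules; carrying the derived-strength bookkeeping correctly through these reductions is where most of the effort goes.
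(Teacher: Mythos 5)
Your overall strategy matches the paper's: transport the cover conditions across the fully faithful embedding $\widetilde{(-)}$, using the preceding results for the representability half and the geometric presentations for the surjectivity half. In the reverse direction your argument diverges from the paper's in a legitimate way: you detect surjectivity of $\coprod_i U_i\to X$ pointwise (factoring characters $\mathcal{O}(X)\to\mathbb{C}$ through some $\widetilde{\mathrm{Spec}}(B_i)$ via the effective-epimorphism property, then a support argument for coadmissible $M$), whereas the paper argues more slickly that $M\in\mathbf{QCoh}(\widetilde{\mathrm{Spec}}(A))$ and invokes descent for quasi-coherent sheaves along effective epimorphisms (Theorem \ref{thm:adicdescendable}) to get the required conservativity directly. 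Both routes work; the paper's avoids any discussion of points.

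There is, however, a genuine gap in your forward direction. You assert that, $C$ being a derived dagger affinoid probe factoring through $\mathrm{Spec}(\overline{A}_n)$, the base change $C\hat{\otimes}^{\mathbb{L}}_{A}B_i$ is again a derived dagger affinoid with $\pi_0$ the sections over $U_i\cap\mathcal{M}(C)$, and that $\{\mathrm{Spec}(C\hat{\otimes}^{\mathbb{L}}_{A}B_i)\to\mathrm{Spec}(C)\}$ is a $\tau^{rat}$-cover. This is false as stated: $B_i$ is a derived dagger \emph{Stein}, so $U_i\cap\mathcal{M}(C)$ is an open (generally non-compact) subset of the compact $\mathcal{M}(C)$, and $C\hat{\otimes}^{\mathbb{L}}_{A}B_i$ is a Stein-type algebra, not an affinoid; derived strength does not change this, and $\tau^{rat}$-covers must consist of rational localisations between dagger affinoids. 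The missing step — which is precisely where the paper does its work — is to refine each $B_i$ by the affinoid pieces $\overline{B}_{i,m}$ of its geometric presentation, observe that each $\overline{A}_n\to\overline{A}_n\hat{\otimes}_A\overline{B}_{i,m}$ is a derived strong homotopy epimorphism of dagger affinoids and hence a rational localisation, use $\mathcal{M}(\mathcal{O}(U_i))=\bigcup_m\mathcal{M}(\mathcal{O}(U_i)_m)$ together with compactness of $\mathcal{M}(\mathcal{O}(X)_n)$ to extract a \emph{finite} rational subcover of $\mathrm{Spec}(\overline{A}_n)$, and only then conclude the effective-epimorphism property for probes factoring through $\mathrm{Spec}(\overline{A}_n)$. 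Your "main obstacle" paragraph correctly identifies that this interplay between the limit algebras and their affinoid exhaustions is the crux, but the proof as written skips exactly that step.
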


\begin{proof}
Suppose $\{f_{i}:\mathrm{Spec}(B_{i})\rightarrow \mathrm{Spec}(A)\}$ is a cover in $\tau^{rat}_{St}$. Let $f_{i}=\mathrm{Spec}(g_{i})$
    Each $A\rightarrow B_{i}$ is a derived strong homotopy epimorphism, so $\{\widetilde{\mathrm{Spec}}(\mathcal{O}(U_{i}))\rightarrow\widetilde{\mathrm{Spec}}(\mathcal{O}(X))\}$ consists of elements of $\mathbf{P}^{rat}$. It remains to prove that the map $\coprod_{i\in\mathcal{I}}\widetilde{\mathrm{Spec}}(B_{i})\rightarrow\widetilde{\mathrm{Spec}}(A)$ is an effective epimorphism. Fix presentations $(A,A_{n},\overline{A}_{n})$, $(B_{i},B_{i,m},\overline{B}_{i,m})$. Fix $n$ and consider the collection $$\{\mathrm{Spec}(\overline{A}_{n}\hat{\otimes}_{A}\overline{B}_{i,m})\rightarrow\mathrm{Spec}(\overline{A}_{n})\}_{i\in\mathcal{I},m\in\mathbb{N}}.$$ Each of the maps 
    $$\overline{A}_{n}\rightarrow\overline{A}_{n}\hat{\otimes}_{A}\overline{B}_{i,m}$$
    is a derived strong homotopy epimorphism of derived dagger affinoids, and hence is a rational localisation of derived dagger affinoids.  Let $X,U_{i}$ be Stein spaces such that $\pi_{0}(A)\cong\mathcal{O}(X),\pi_{0}(B_{i})\cong\mathcal{O}(U_{i})$. Write $\mathcal{O}(X)_{n}=\pi_{0}(A_{n})$, and $\mathcal{O}(U_{i})_{m}=\pi_{0}(B_{i,m})$.     
    By \cite{BaBK}, we have $\mathcal{M}(\mathcal{O}(U_{i}))=\bigcup_{m}\mathcal{M}(\mathcal{O}(U_{i})_{m})$. We also have $\mathcal{M}(\pi_{0}(\overline{A}_{n}\hat{\otimes}_{A}\overline{B}_{i,m}))=\mathcal{M}(\mathcal{O}(X)_{n})\cap\mathcal{M}(\mathcal{O}(U_{i})_{m})$, where the intersection is taken inside $\mathcal{M}(\mathcal{O}(X))$. Clearly then the collection $\{\mathcal{M}(\pi_{0}(\overline{A}_{n}\hat{\otimes}_{A}\overline{B}_{i,m}))\}_{i\in\mathcal{I},m\in\mathbb{N}}$ covers $\mathcal{M}(\mathcal{O}(X)_{n})$. By compactness of $\mathcal{M}(\mathcal{O}(X)_{n})$ there is a finite subcover $\{\mathcal{M}(\pi_{0}(\overline{A}_{n}\hat{\otimes}_{A}\overline{B}_{k_{n},m_{k_{n}}}))\}_{1\le k_{n}\le d_{n}}$. It follows immediately that 
$$\{\mathrm{Spec}(\overline{A}_{n}\hat{\otimes}_{A}\overline{B}_{k_{n},m_{k_{n}}})\rightarrow\mathrm{Spec}(\overline{A}_{n})\}_{1\le k_{n}\le d_{n}}$$
is a cover in the rational topology on dagger affinoids. We also get a commutative diagram
\begin{displaymath}
    \xymatrix{
\coprod_{1\le k_{n}\le d_{n}}\mathrm{Spec}(\overline{A}_{n}\hat{\otimes}_{A}\overline{B}_{k_{n},m_{k_{n}}})\ar[d]\ar[r] & \coprod_{i\in\mathcal{I}}\widetilde{\mathrm{Spec}}(B_{i})\ar[d]\\
\mathrm{Spec}(\overline{A}_{n})\ar[r] &\widetilde{\mathrm{Spec}}(A).  
    }
\end{displaymath}
Now the right-hand map being an effective epimorphism easily follows from the fact that
any map $\mathrm{Spec}(C)\rightarrow\mathrm{Spec}(A)$ with $\mathrm{Spec}(C)$ a derived dagger affinoid factors through some $\mathrm{Spec}(\overline{A}_{n})$. 

    \comment{
    Potentially after re-indexing, we may assume that $g_{i}=\mathbf{lim}_{n}(g_{i,n}:A_{n}\rightarrow B_{i,n})$ for each $i$.

    that we can also assume that $\{\mathrm{Spec}(\mathcal{O}(U_{i})_{m})\rightarrow\mathrm{Spec}(\mathcal{O}(X)_{m})\}$ is a cover for each $m$. Indeed each map $\mathcal{O}(X)_{m}\rightarrow\mathcal{O}(U_{i})_{m}$ is is a homotopy epimorphism by the $2$-out-of-$3$ property of homotopy epimorphism. It remains to show conservativity. Let $x\in\mathcal{M}(\mathcal{O}(X))$. Then $x$ must be in some $\mathcal{M}(\mathcal{O}(U_{i}))\subset\mathcal{M}(\mathcal{O}(X))$. But This $x$ must be contained in some $\mathcal{M}(\mathcal{O}(U_{i})_{m})$. Now 
    
    Let $h:\mathcal{O}(X)\rightarrow A$ be a map with $A$ a derived dagger affinoid.  Now $\mathcal{O}(X)\rightarrow A$ factors through some $\mathcal{O}(X)_{n}$. We claim that }

    Conversely, suppose that 
    $$\{\widetilde{\mathrm{Spec}}(f_{i}):\widetilde{\mathrm{Spec}}(B_{i})\rightarrow\widetilde{\mathrm{Spec}}(A)\}$$ is a cover in $\tilde{\tau}^{rat}$. By Lemma \ref{lem:pratderstrong} each map $f_{i}:A\rightarrow B_{i}$ is a derived strong homotopy epimorphism. Thus, base changing $\{\widetilde{\mathrm{Spec}}(f_{i}):\widetilde{\mathrm{Spec}}(B_{i})\rightarrow\widetilde{\mathrm{Spec}}(A)\}$ along the map $\widetilde{\mathrm{Spec}}(A)\rightarrow\widetilde{\mathrm{Spec}}(\pi_{0}(A))$, we find that 
    $$\{\widetilde{\mathrm{Spec}}(\pi_{0}(f_{i})):\widetilde{\mathrm{Spec}}(\pi_{0}(B_{i}))\rightarrow\widetilde{\mathrm{Spec}}(\pi_{0}(A))\}$$
    is also a cover in $\tau^{rat}$. 
    Write $\mathcal{O}(X)=\pi_{0}(A)$, $\mathcal{O}(U_{i})=\pi_{0}(B_{i})$. It remains to show that $\{U_{i}\rightarrow X\}$ is a cover of Stein spaces.
    
    Let $M\in\mathrm{Coad}(\mathcal{O}(X))$. We need to show that if
    $$\mathcal{O}(U_{i})\hat{\otimes}^{\mathbb{L}}_{\mathcal{O}(X)}M\cong 0$$
    for all $i$ then $M\cong 0$. But $M\in\mathbf{QCoh}(\widetilde{\mathrm{Spec}}(\mathcal{O}(X)))$. Thus by descent for quasi-coherent descent along effective epimorphisms, we have that if $\mathcal{O}(U_{i})\hat{\otimes}^{\mathbb{L}}_{\mathcal{O}(X)}M\cong 0$ for all $i$, then $M\cong 0$, as required.
\end{proof}

\subsubsection{A comparison with Porta and Yue Yu}

In \cite{ben2024perspective}*{Section 9.2.1.1} we defined the category $\mathbf{dStn}^{gf}_{K}$ of globally finitely embeddable Stein spaces to consists of those derived Stein spaces $X$, in the sense of Porta and Yue Yu, such that $t_{\le0}X$ is a globally finitely embeddable Stein space, and each $\pi_{n}(\mathcal{O}_{X})$ is a globally finitely generated coherent sheaf on $t_{\le0}X$. Recall that $t_{\le0}X$ being globally finitely generated means, by definition, that there is a closed embedding $t_{\le0}X\rightarrow K^{n}$ for some $n$, and the kernel of $\mathcal{O}(K^{n})\rightarrow\mathcal{O}(t_{\le0}X)$ is a globally finitely generated coherent sheaf. Let $\mathbf{dStnAlg}^{\dagger,gf}$ denote the full subcategory of $\mathbf{DAlg}(\mathsf{CBorn}_{\mathbb{C}})$ consisting of those derived algebras $A$ with $\pi_{0}(A)\cong\mathcal{O}(X)$ for some globally finitely embeddable Stein space $X$, and each $\pi_{n}(A)$ is a finitely generated $\pi_{0}(A)$-module. It was shown in \cite{ben2024perspective} that there is an equivalence of categories
$$(\mathbf{dStn}^{gf}_{\mathbb{C}})^{op}\cong\mathbf{dStnAlg}^{\dagger,gf},$$
and moreover that the topologies on either side match up. Thus the stacks of Porta and Yue Yu, and least those which are locally globally finitely embeddable, embed fully faithfully inside our derived analytic stacks. With a little bit more effort we can extend this to all of Porta and Yue Yu's stacks, but at least we can rest assured that we have all discrete complex analytic spaces.

\comment{
A derived Stein space $X\in\mathbf{dStn}_{K}$ is said to be $n$-\textit{truncated} if the canonical map $t_{\le n}X\rightarrow X$ is an equivalence. Let $\mathbf{dStn}_{K}^{trunc}$ denote the category of derived Stein spaces which are $n$-truncated for some $n$. Since for an arbitrary derived Stein space $X$ the canonical map
$$\colim_{n}t_{\le n}X\rightarrow X$$
is an equivalence, the category $\mathbf{dStn}_{K}^{trunc}$ is dense in $\mathbf{PreStk}(\mathbf{dStn}_{K}^{op})$. By restriction, we get a fully faithful functor

$$\mathbf{Stk}(\mathbf{dStn}_{K}^{op},\tau^{rat})\rightarrow \mathbf{Stk}((\mathbf{dStn}^{trunc}_{K})^{op},\tau^{rat}).$$

Now let $X$ be an $m$-truncated derived Stein space. Since any discrete Stein space may be covered by globally finitely embedabble subspaces, and any coherent sheaf is locally finitely generated, We may then find an open cover $\{U_{i}\rightarrow X\}$ with each $U_{i}$ an $m$-truncated derived Stein space, and such that each $U_{i}$ is globally finitely embeddable as a derived Stein space. It follows that the restriction functor
$$\mathbf{Stk}((\mathbf{dStn}^{trunc}_{K})^{op},\tau^{rat})\rightarrow \mathbf{Stk}((\mathbf{dStn}^{gf,trunc}_{K})^{op},\tau^{rat})$$
is an equivalence. Now we return to the fully faithful functor
$$(\mathbf{dStn}^{gf,trunc}_{K})^{op}\rightarrow\mathbf{DAlg}(\mathrm{CBorn}_{K}).$$

First consider the restricted Yoneda embedding 
$$\mathbf{dStn}_{K}\rightarrow\mathbf{Stk}(\mathbf{dStn}^{<\infty,f}_{K},\tau^{rat}).$$
We claim that this is fully faithful. First observe that for any derived Stein spaces $X,Y$ we have 
$$\mathbf{Map}_{\mathbf{dStn}_{K}}(t_{\le n}X,Y)\cong\mathbf{Map}_{\mathbf{dStn}_{K}}(t_{\le n}X,t_{\le n}Y).$$
We then have 
$$\mathbf{Map}_{\mathbf{dStn}_{K}}(X,Y)\cong\mathbf{Map}_{\mathbf{dStn}_{K}}(\colim_{n}t_{\le n}X,Y)\cong\mathbf{lim}_{n}\mathbf{Map}_{\mathbf{dStn}_{K}}(t_{\le n}X,t_{\le n}Y).$$

}

\comment{\begin{lemma}\label{lem:complex-mfd-dagger}
Any (classical) complex analytic space has a cover by dagger affinoid spaces. 
\end{lemma}

\begin{proof}
By definition, for any \(x\) in a complex analytic space \(X\), there is a neighbourhood \(U_x\) that is homeomorphic to an affine analytic set \[V_x = \setgiven{z \in U \subset \C^n}{f_1(z) = \dotsc = f_n(z) = 0, \quad f_1, \dotsc, f_n \in \mathcal{O}_{\C^n}(U)}.\]
Denoting by \(a\) the image of \(x\) in \(V_x\), there is a neighbourhood basis \((\overline{D_i(r_i)})_{i \in \N}\) of closed polydiscs centred at \(a\) in \(\C^n\). Therefore, the collection \((\overline{D_i(r_i)} \cap V_x)\) is a neighbourhood basis of \(a\) in \(V_x\), and these are clearly dagger affinoid spaces being in the Berkovich spectrum of the dagger algebra \(W_n(r_i)^\dagger/ (f_1, \dotsc, f_n) \cong \mathcal{O}(\overline{D_i(r_i)})/ (f_1, \dotsc, f_n)\).     
\end{proof}}

\comment{\subsubsection{The underlying topological space}\edit{\textcolor{red}{This is probably also already contained in what you're writing in Section 2?}}

We now associate a topological space to a derived complex analytic space. For \(X \in \mathbf{dAn}_\C\), consider the poset of open immersions \(\mathbf{An}_\C(X)\). By \cite{lurie2009higher}*{Section 6.4}, this is a locale with respect to the relation \(U \leq V\) if and only if \(U \subseteq V\) is a rational open immersion, with joins and meets given by coproduct and fibre products, respectively.  Furthermore, for any analytic subspace \(U \subseteq X\), we may refine a cover of \(X\) by dagger affinoids to a cover of \(U\) by dagger affinoids, so that \(\mathbf{An}_\C(X)\) has a basis given by objects in \(\mathbf{A}_\C^\dagger\), which are quasi-compact. Consequently, the locale \(\mathbf{An}_\C(X)\) is locally compact, so that by a version of the Stone duality, we may identify \(\mathbf{An}_\C(X)\) with its space of points. More precisely, setting \(\abs{X} = \mathrm{pt}(\mathbf{An}_\C(X))\) as the set of completely prime ultrafilters, we have a homeomorphism \[\Omega(\abs{X}) \simeq \mathbf{An}_\C(X),\] where \(\Omega\) is the functor from the category of  topological spaces to locales sending a topological space to its locale of open subsets. By general nonsense (see \cite{soor2024derived}*{Remark 2.28}), it follows that \(\abs{X}\) is a locally spectral topological space. Finally, for any \(\infty\)-category \(\mathbf{D}\) admitting small limits, we have an equivalence of \(\infty\)-categories \[\mathbf{Shv}(\mathbf{An}_\C(X), \mathbf{D}) \simeq \mathbf{Shv}(\abs{X}, \mathbf{D}).\]
classical in the sense of Definition \ref{def:classical}.
}

\comment{Finally, it remains to relate the space \(\abs{X}\) to the underlying topological space of a discrete complex analytic space. For an affinoid dagger algebra \(A\), let \(\mathcal{M}(A)\) denote its \emph{bornological Berkovich spectrum}, defined as the set of bounded multiplicative seminorms on \(A\) compatible with the Euclidean absolute value. We equip this set with the weak topology, that is, the smallest topology such that \(\mathcal{M}(A) \to \R_{\geq 0}\), \(\norm{-} \mapsto \norm{f}\) is continuous for all \(f \in A\). Since affinoid dagger algebras are filtered colimits of Banach algebras, the Berkovich spectrum is nonempty, compact and Hausdorff \cite{bambozzi2014generalization}*{Theorem 2.1.30}. Furthermore, it does not depend on the specific representation of \(A\) as a colimit.  

\begin{lemma}\label{lem:Berkovich-spectrum}\cite{clausenscholze3}*{Theorem 11.8(2)}
Let \(A = W_n(r)^\dagger/I\) be an affinoid dagger algebra, so that \(I\) is given by finitely many holomorphic functions, and \(X = V(I)\in \C^n\) their vanishing locus. Then \(\mathcal{M}(A)\) is homeomorphic to \(X\), where the latter is equipped with the Euclidean topology.   
\end{lemma}

\begin{corollary}\label{cor:MA=An}
Let \(X = \mathsf{Spec}(A)\) be a discrete affinoid dagger algebra. Then $|X|_{Ber}$ is homeomorphic to \(\mathcal{M}(A)\).
\end{corollary}

\begin{proof}
With \(X\) a discrete derived affinoid space, the poset of open immersions \(\mathbf{An}(X)\) is equivalent to the poset of open subsets of \(X\) in the analytic topology induced by \(\C^n\) using \cite{aristov2020open}. The result now follows from Lemma \ref{lem:Berkovich-spectrum}. 
\end{proof}}

\section{Formal geometry and \'{e}tale rigid analytic geometry as bornological geometry}\label{sec:formal-geom}

 In this section, we realise formal geometry in the sense of Raynaud as geometry relative to bornological modules over the ring of integers $R$ of a nontrivially valued, non-Archimedean Banach field $K$. We also show that the abstractly defined \'etale topology from Section \ref{sec:der-context} coincides with the classical \'etale topology. Much of this should be generalisable to Tate rings, or at least strongly Noetherian ones, using the work of \cite{bambozzi2020sheafyness}.

\subsection{Homotopical algebra for contracting Banach spaces}

We recall here \cite{kelly2016homotopy} a category that will play an important role in future both in this article and in future work. Consider the subcategory \(\mathsf{SNrm}_{R}^{\leq 1}\) whose objects are seminormed \(R\)-modules, and whose morphisms are contractive \(R\)-linear maps. Note that for non-Archimedean base rings, we always assume the seminorms to be non-Archimedean. Similarly, one can define categories \(\mathsf{Nrm}_{R}^{\leq 1}\) and \(\mathsf{Ban}_{R}^{\leq 1}\) of contracting normed and Banach \(R\)-modules. These are all complete and cocomplete categories.


We equip all these categories with the so-called \textit{strong exact structure} introduced in \cite{kelly2016homotopy}, whereby a kernel-cokernel pair
\begin{displaymath}
\xymatrix{
    0\ar[r]& A\ar[r]^{i} & B\ar[r]^{p} & C\ar[r] & 0
    }
\end{displaymath}
is a short exact sequence precisely if for all $c\in C$ there is $b\in B$ with $p(b)=c$ and $||b||=||c||$. The closed symmetric monoidal structures on $\mathsf{SNrm}_{R}$, $\mathsf{Nrm}_{R}$ and $\mathrm{Ban}_{R}$ restrict to ones on $\mathsf{SNrm}_{R}^{\le1}$, $\mathsf{Nrm}_{R}^{\le1}$ and $\mathsf{Ban}_{R}^{\le1}$. Moreover all of these categories have enough projectives. Indeed the objects $l^{1}(X)$ are still projective, and in fact for every object $V$ there is a strong epimorphism $l^{1}(|V|)\rightarrow V$, where $|V|$ denotes the underlying normed set of $V\setminus 0$.

A projective generating set in $\mathsf{SNrm}_{R}^{\le1}$ is given by 
$$\{R_{\delta}:\delta\in\mathbb{R}_{\ge0}\}$$
and in both $\mathsf{Nrm}_{R}^{\le1}$ and $\mathsf{Ban}_{R}^{\le1}$ is given by
$$\{R_{\delta}:\delta\in\mathbb{R}_{>0}\}.$$
The tensor product of two projectives is still projective. Moreover projectives are flat. Indeed tensoring with $R_{\delta}$ just rescales the norm by $\delta$ which is clearly an exact functor. It then remains to observe that direct sums are exact. 

The categories $\mathsf{SNrm}_{R}^{\le1}$, $\mathsf{Nrm}_{R}^{\le1}$, and $\mathsf{Ban}_{R}^{\le1}$ are also all quasi-abelian categories, and thus may be equipped with the quasi-abelian exact structure. 

We have the usual separation functor
$$\mathrm{Sep}:\mathsf{SNrm}_{R}^{\le1}\rightarrow\mathsf{Nrm}_{R}^{\le1},$$
and completion functor
$$\mathrm{Cpl}:\mathsf{Nrm}_{R}^{\le1}\rightarrow\mathsf{Ban}_{R}^{\le1}.$$
We also have their composition, the separated completion functor
$$\overline{\mathrm{Cpl}}=\mathrm{Cpl}\circ\mathrm{Sep}:\mathsf{SNrm}_{R}^{\le1}\rightarrow\mathsf{Ban}_{R}^{\le1}.$$

\begin{lemma}\label{lem:cplsepexact}
$\mathrm{Cpl}$ and $\overline{\mathrm{Cpl}}$ are exact for the quasi-abelian exact structures. $\mathrm{Sep}$, $\mathrm{Cpl}$, and $\overline{\mathrm{Cpl}}$ are exact for the strong exact structures.
\end{lemma}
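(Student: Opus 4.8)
The plan is to verify the claimed exactness assertions essentially by hand, exploiting the explicit description of the strong and quasi-abelian exact structures on contracting seminormed modules. Recall that for the \emph{strong} exact structure a kernel-cokernel pair $0 \to A \overset{i}\to B \overset{p}\to C \to 0$ is admissible precisely when every $c \in C$ lifts to some $b \in B$ with $\|b\| = \|c\|$ (an \emph{isometric} or \emph{strict} lift), whereas for the quasi-abelian exact structure one only needs $p$ to be a cokernel of a kernel, i.e. $p$ strict in the categorical sense (the quotient seminorm on $B/\ker p$ agrees with the seminorm on $C$ up to equivalence, equivalently $p$ open). So there are two genuinely different things to check, and I would treat each of the three functors separately.

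First I would handle $\mathrm{Sep}$ for the strong exact structure. Given a strong short exact sequence $0 \to A \overset{i}\to B \overset{p}\to C \to 0$, one applies $\mathrm{Sep}$ termwise. The key observations are: $\mathrm{Sep}$ kills exactly the subspace of vectors of seminorm zero, which is functorial; the induced map $\mathrm{Sep}(B) \to \mathrm{Sep}(C)$ still admits isometric lifts because a representative $b$ of norm $\|c\|$ in $B$ maps to a representative of $\mathrm{Sep}(B)$ of the same norm; and $\mathrm{Sep}(A) \to \mathrm{Sep}(B)$ remains a kernel because the null vectors behave compatibly (a vector in $B$ mapping to a null vector of $C$ lies within null distance of $i(A)$). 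This is a short diagram chase with seminorms. For $\mathrm{Cpl}$ and $\overline{\mathrm{Cpl}}$ under the strong structure, the point is that completion is a colimit-type construction but, restricted to strong short exact sequences, it preserves strictness: an isometric lift in $B$ for each $c \in C$ produces, for each Cauchy sequence in $\widehat{C}$ with controlled norms, a Cauchy lift in $\widehat{B}$ of the same norm, by lifting term by term along the norm-decreasing differences; so $\widehat{B} \to \widehat{C}$ again has isometric lifts. Injectivity of $\widehat{A} \to \widehat{B}$ on the strong structure follows because the sequence being strongly exact forces the norm on $A$ to be the restriction of the norm on $B$, and this property passes to completions.

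Next, for the \emph{quasi-abelian} exact structure I only claim exactness of $\mathrm{Cpl}$ and $\overline{\mathrm{Cpl}}$ (not $\mathrm{Sep}$, and indeed the statement omits it). Here I would invoke the standard fact — or reprove it — that completion of normed modules is exact for strict short exact sequences: if $p \colon B \to C$ is a strict epimorphism of normed $R$-modules then $\widehat{p}$ is strict, because the open mapping property is preserved (a strict epimorphism of normed spaces admits, for each $c$ and each $\varepsilon > 0$, a lift $b$ with $\|b\| \le (1+\varepsilon)\|c\|$, and these approximate lifts assemble to a lift after completion with the same norm control), and the kernel of $\widehat p$ is the completion of the kernel of $p$ by a density argument. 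Composing with the (always exact on the quasi-abelian structure, since it is the reflection onto a quasi-abelian subcategory) functor $\mathrm{Sep}$ — here one must be slightly careful: $\mathrm{Sep}$ is \emph{not} asserted to be quasi-abelian-exact, so to get $\overline{\mathrm{Cpl}} = \mathrm{Cpl} \circ \mathrm{Sep}$ quasi-abelian-exact I would instead argue directly that separated completion of a strict short exact sequence of seminormed modules is strict short exact, again via the approximate-lifting characterization of strict epimorphisms, which is insensitive to quotienting by null vectors.

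The main obstacle I anticipate is \emph{not} any single deep step but rather keeping the two exact structures cleanly separated and making sure the lifting arguments use the right one: the strong structure demands exact isometric lifts (so one cannot afford the $(1+\varepsilon)$ slack), whereas the quasi-abelian structure tolerates the slack and this is exactly what makes $\overline{\mathrm{Cpl}}$ quasi-abelian-exact even though $\mathrm{Sep}$ is only strong-exact. A secondary subtlety is verifying that the kernel is preserved under completion — i.e. that $\widehat{\ker p} \to \ker \widehat p$ is an isomorphism — which requires that $i(A)$ be closed in $B$; in the strong case this is automatic since the norm on $A$ is the restricted norm, and in the quasi-abelian case it follows from strictness. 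I would organize the write-up as: (1) recall the two exactness criteria; (2) lemma on approximate/isometric lifts surviving completion; (3) apply to each functor. Everything else is routine.
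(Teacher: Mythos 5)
Your proposal is correct and in the end proves more than the paper's own proof does. The paper dispenses with the quasi-abelian exactness of $\mathrm{Cpl}$ and $\overline{\mathrm{Cpl}}$, and with the strong exactness of $\mathrm{Cpl}$ and $\overline{\mathrm{Cpl}}$, as known facts (they are standard: completion is left adjoint to a reflective inclusion and preserves strict cokernels, and the approximate-lift argument you sketch is exactly the textbook proof that it also preserves strict kernels), and devotes the proof entirely to the one genuinely new claim, namely that $\mathrm{Sep}$ is exact for the \emph{strong} structure. The diagram chase it gives --- take $[y]$ with $\rho_Z(q(y))=0$, use the isometric lifting property of the strong short exact sequence to produce $\tilde y$ with $q(\tilde y)=q(y)$ and $\rho_Y(\tilde y)=0$, and conclude $[y]=[i(x)]$ since $i$ is an isometry --- is precisely the chase you describe in your ``first I would handle $\mathrm{Sep}$'' paragraph. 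The extra value in your write-up is twofold: you spell out the term-by-term series lifting that makes $\mathrm{Cpl}$ strong-exact (which works cleanly here because the paper's ambient setting is nonarchimedean, so the ultrametric inequality lets you represent $\hat c \in \widehat C$ as $\sum c_n$ with $\|c_0\|=\|\hat c\|$ and lift term by term without losing the exact norm), and you correctly flag the composition trap: since $\mathrm{Sep}$ is explicitly \emph{not} quasi-abelian exact, one cannot deduce the quasi-abelian exactness of $\overline{\mathrm{Cpl}}=\mathrm{Cpl}\circ\mathrm{Sep}$ formally from exactness of the two factors, and must instead argue directly on seminormed modules via $(1+\varepsilon)$-lifts, as you do. So: same core idea on the one nontrivial claim, plus a more self-contained treatment of the claims the paper treats as standard; both are valid.
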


\begin{proof}
The only non-trivial claim is that $\mathrm{Sep}$ is exact for the strong exact structure. Indeed, for the quasi-abelian exact structure this is false. So let 
\begin{displaymath}
\xymatrix{
0\ar[r] & X\ar[r]^{i} & Y\ar[r]^{q} & Z\ar[r] & 0
}
\end{displaymath}
be a strong exact sequence in $\mathrm{SNrm}_{R}^{\le1,nA}$. Let $[y]\in Y\big\slash\{y:\rho_{Y}(y)=0\}$ be such that $[q(y)]=0$. This means precisely that $\rho_{Z}(q(y))=0$. Let $\tilde{y}$ be such that $q(y)=q(\tilde{y})$ and $\rho_{Y}(\tilde{y})=\rho_{Z}(q(y))=0$. Then $y=\tilde{y}+i(x)$, and $[y]=[i(x)]$. $i$ is an isometry, so clearly $X\big\slash\{\rho_{X}(x)=0\}\rightarrow Y\big\slash\rho_{Y}(y)=0\}$. This completes the proof.
\end{proof}

For a semi-normed module $M$ and $\delta>0$ we write $B_{M}(0,\delta)$ for the open ball centred at zero with radius $\delta$ in $M$. Similarly for $\delta\ge0$ we write $\overline{B}_{M}(0,\delta)$ for the closed ball of radius $\delta$.

\begin{lemma}
    $\mathsf{SNrm}_{R}^{\le1}$ and $\mathsf{Nrm}_{R}^{\le1}$ are monoidal $\mathbf{AdMon}$-elementary exact categories. In particular, they present derived algebraic contexts \(\bD(\mathsf{SNrm}_{R}^{\le1})\) and \(\bD(\mathsf{Nrm}_{R}^{\le1})\) respectively. 
\end{lemma}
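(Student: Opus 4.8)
The plan is to verify the two conditions packaged into "monoidal \(\mathbf{AdMon}\)-elementary exact category" (so that the machinery of \cite{kelly2016homotopy}, already invoked for \(\mathsf{Ch}(\mathsf{Ind}(\mathsf{Ban}_R))\), applies) and then to extract a derived algebraic context. Concretely, one needs: (i) that \(\mathsf{SNrm}_R^{\le 1}\) and \(\mathsf{Nrm}_R^{\le 1}\), equipped with the strong exact structure, are elementary quasi-abelian/exact categories in the sense of \cite{Schneiders:Quasi-Abelian} --- that is, they are quasi-abelian, have a small set of tiny (compact) projective generators, and filtered colimits are exact; (ii) that \(\mathsf{Hom}(X,-)\) commutes with the relevant essentially monomorphic filtered colimits (the \(\mathbf{AdMon}\) condition); and (iii) that the monoidal structure is compatible --- the unit \(R = R_1\) is a compact projective generator, the tensor product of compact projective generators \(R_\delta \otimes R_{\delta'} \cong R_{\delta\delta'}\) is again one, and symmetric powers \(\mathrm{Sym}^n(R_\delta)\) stay in the generating class. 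Most of these facts are recorded in the preceding paragraphs of the excerpt: quasi-abelianness is stated, enough flat projectives with generating sets \(\{R_\delta\}\) is stated, closure of projectives under tensor is stated, and flatness of projectives is stated.

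First I would assemble the elementarity: the generating sets \(\{R_\delta : \delta \in \R_{\ge 0}\}\) (resp. \(\delta \in \R_{>0}\)) are tiny because \(\mathsf{Hom}(R_\delta, M) = \overline{B}_M(0,\delta)\) (resp. the open ball variant) and forming closed/open balls commutes with the directed colimits appearing in \(\mathsf{SNrm}_R^{\le 1}\), \(\mathsf{Nrm}_R^{\le 1}\); here one uses that filtered colimits in these categories with the strong exact structure are computed by the usual algebraic colimit with the colimit seminorm, and that such colimits are exact for the strong structure. This last point --- exactness of filtered colimits for the \emph{strong} exact structure --- is the one genuinely delicate verification, since (as Lemma~\ref{lem:cplsepexact} already warns in the analogous situation with \(\mathrm{Sep}\)) the strong and quasi-abelian exact structures behave differently; I would check it by hand using the norm-preserving lifting characterisation of strong epimorphisms. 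The \(\mathbf{AdMon}\) condition then follows from the same balls-commute-with-monomorphic-filtered-colimits computation.

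Next I would record the monoidal compatibility: \(R_\delta \haotimes R_{\delta'} \cong R_{\delta\delta'}\) by the universal property of the projective tensor (the unit ball of the target is exactly the image of the product of unit balls), so the generating class is closed under \(\otimes\); it contains the unit \(R = R_1\); and since \(R_\delta\) is a rank-one free module, its \(1\)-categorical symmetric powers are again of the form \(R_{\delta^n}\), hence in the class. Combined with flatness of the \(R_\delta\) (tensoring with \(R_\delta\) merely rescales norms, visibly exact, and coproducts are exact) and the fact that the projective model/exact structure on chain complexes over an elementary monoidal exact category yields a derived algebraic context --- exactly the mechanism used in the theorem of \cite{kelly2022analytic} quoted above for ind-Banach modules --- this gives the derived algebraic contexts \(\bD(\mathsf{SNrm}_R^{\le 1})\) and \(\bD(\mathsf{Nrm}_R^{\le 1})\). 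I expect the main obstacle to be precisely the interplay between the strong exact structure and the colimit/limit operations: checking that filtered colimits are exact, that \(\mathrm{LH}(-)\) of these categories is Grothendieck abelian, and that no pathology (of the kind flagged for \(\mathrm{Sep}\)) obstructs elementarity --- everything else is a routine transcription of the already-cited \cite{kelly2016homotopy} framework.
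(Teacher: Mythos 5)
Your plan is correct and contains the paper's key step, but the weight is distributed quite differently. The paper's entire proof of this lemma is the single computation that for every family $\{M_i\}$ and every $\delta$ the natural map $\coprod_i \mathrm{Hom}(R_\delta, M_i) \to \mathrm{Hom}(R_\delta, \bigoplus_i^{\le 1} M_i)$ is an isomorphism, i.e.\ that the $\delta$-ball of a contracting direct sum is the direct sum of the $\delta$-balls (immediate, since $\rho((m_i)) \le \delta$ forces $\rho_i(m_i) \le \delta$ for each $i$); everything else in your list --- quasi-abelianness, enough flat projectives, the generating sets $\{R_\delta\}$, closure of projectives under tensor, flatness --- is treated as already established by the discussion preceding the lemma and is not re-proved. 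Two remarks on emphasis. First, the decisive condition is commutation of $\mathrm{Hom}(R_\delta,-)$ with \emph{coproducts}, not with filtered colimits: this is exactly the condition that fails for $\mathsf{Ban}_R^{\le 1}$ (because of the completion in the Banach direct sum), as the paper points out immediately after the lemma, so phrasing the tininess check purely in terms of directed colimits slightly misses where the dividing line lies. Second, the step you single out as the genuinely delicate one --- exactness of filtered colimits for the strong exact structure --- is not part of this lemma's proof at all; the paper establishes it as a separate subsequent proposition. So your route would work, but it proves considerably more than the paper does here, and it locates the difficulty in a different place.
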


\begin{proof}
It remains to prove that for any collection of modules $\{M_{i}\}_{i\in\mathcal{I}}$ and any $\delta$, the natural map
$$\coprod_{i\in\mathcal{I}}\mathrm{Hom}(R_{\delta},M_{i})\rightarrow\mathrm{Hom}(R_{\delta},\bigoplus_{i\in\mathcal{I}}^{\le1}M_{i})$$
is an isomorphism. This amounts to showing that $B_{\bigoplus_{i\in\mathcal{I}}^{\le1}M_{i}}(0,\delta)=\coprod_{i\in\mathcal{I}}B_{M_{i}}(0,\delta)$.  But for $(m_{i})\in\coprod_{i\in\mathcal{I}}^{\le1}M_{i}$ with $\rho((m_{i}))\le\delta$, we have $\rho_{i}(m_{i})\le\delta$ for each $i$, and this completes the proof.
\end{proof}

The category $\mathsf{Ban}_{R}^{\le1}$ is not $\mathbf{AdMon}$-elementary. Because of the completion involved in the direct sum, mapping out of $R$ does not commute with coproducts. For the strong exact structure, it is nevertheless projectively monoidal with enough projectives, and kernels. By \cite{kelly2016homotopy} the projective model structure exists on $\mathrm{Ch}(\mathsf{Ban}_{R}^{\le1})$, is monoidal, satisfies the monoid axiom, and $\mathrm{Ch}(\mathsf{Ban}_{R}^{\le1})$ is a combinatorial model category. Thus it presents a closed, presentable, stable, symmetric monoidal $\infty$-category $\mathbf{D}(\mathsf{Ban}_{R}^{\le1})$. 

We shall use the notation $\mathbf{DAlg}^{cn}(\mathrm{Ban}_{R})$ to mean the full subcategory of $\mathbf{DAlg}^{cn}(\mathrm{SNrm}_{R})$ consisting of those derived algebras $X$ whose underlying object is in $\mathbf{Ch}(\mathrm{Ban}_{R})$.

\begin{proposition}
Filtered colimits in $\mathsf{SNrm}_{R}^{\le1,nA}$ and $\mathsf{Ban}_{R}^{\le1}$ are exact for the quasi-abelian exact structures. In fact in $\mathsf{SNrm}_{R}^{\le1}$ filtered colimits commute with kernels.
\end{proposition}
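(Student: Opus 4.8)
The claim has two parts: (i) filtered colimits in $\mathsf{SNrm}_R^{\le 1,nA}$ and in $\mathsf{Ban}_R^{\le 1}$ are exact for the strong exact structures, and (ii) in $\mathsf{SNrm}_R^{\le 1}$ filtered colimits commute with kernels. The plan is to treat $\mathsf{SNrm}_R^{\le 1}$ first and then transfer to $\mathsf{Ban}_R^{\le 1}$ via the separated-completion functor using Lemma \ref{lem:cplsepexact}. First I would recall the explicit description of a filtered colimit $M = \colim_{i} M_i$ in $\mathsf{SNrm}_R^{\le 1}$: the underlying module is the algebraic filtered colimit $\colim_i |M_i|$, and the seminorm of a class $[m]$ represented by $m \in M_i$ is $\rho_M([m]) = \inf_{j \ge i} \rho_{M_j}(\text{image of } m \text{ in } M_j)$. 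This inf-over-the-tail formula is the key computational input and I would establish it directly from the universal property (it is the largest seminorm making all $M_i \to M$ contractive).

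For part (ii), given a morphism $f_\bullet \colon M_\bullet \to N_\bullet$ of filtered diagrams, I would show $\colim_i \ker(f_i) \to \ker(\colim_i f_i)$ is an isomorphism of seminormed modules. That it is an isomorphism of underlying modules is standard (algebraic filtered colimits are exact, and the kernel in $\mathsf{SNrm}_R^{\le 1}$ has underlying module the set-theoretic kernel with the subspace seminorm). The content is matching seminorms: an element of $\ker(\colim f_i)$ represented by $m \in M_i$ with $f_i(m) \mapsto 0$ in $\colim N$ need not satisfy $f_i(m) = 0$, but by the tail formula $\rho_{\colim N}(f_i(m)) = \inf_{j\ge i}\rho_{N_j}(f_j(m))$; I would combine this with the tail formula for $\rho_{\colim M}$ restricted to the kernel and with the fact that $\ker(f_j) \subseteq M_j$ carries the subspace seminorm, to check both seminorms on the element agree with $\inf_{j \ge i}\rho_{M_j}(m_j)$ where $m_j$ ranges over lifts in $M_j$ of the same class. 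This is the main obstacle: one must be careful that passing to the kernel and passing to the tail-colimit commute at the level of seminorms, i.e. that $\inf$ over lifts-in-the-kernel equals $\inf$ over all lifts for an element whose colimit-image is killed — this uses that the diagram is filtered so that any two representatives become equal further along, and nonarchimedean-ness is not needed here. Exactness for the strong structure (part (i) for $\mathsf{SNrm}_R^{\le 1,nA}$) then follows: given a strong short exact sequence of filtered diagrams $0 \to X_\bullet \to Y_\bullet \to Z_\bullet \to 0$, commuting with kernels handles the left exactness, and for the strong surjectivity condition at the colimit I would lift $[z] \in \colim Z$ with $\rho([z]) = \delta$ to some $z \in Z_i$ with $\rho_{Z_i}(z)$ close to $\delta$ (using the tail formula, passing further along the diagram), apply the strong-exactness of $Y_i \to Z_i$ to get $y \in Y_i$ with the same norm, and pass to the colimit; a standard $\epsilon$-argument plus the nonarchimedean inequality lets one achieve $\rho([y]) = \rho([z])$ exactly rather than just approximately.

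Finally, for $\mathsf{Ban}_R^{\le 1}$ I would not argue directly (coproducts there involve completion) but instead use that every object of $\mathsf{Ban}_R^{\le 1}$ is the separated completion of a normed module and that $\overline{\mathrm{Cpl}} \colon \mathsf{SNrm}_R^{\le 1,nA} \to \mathsf{Ban}_R^{\le 1}$ is exact for the strong exact structures by Lemma \ref{lem:cplsepexact}, is a left adjoint (hence commutes with all colimits, in particular filtered ones), and is essentially surjective. Given a strong exact sequence of filtered diagrams in $\mathsf{Ban}_R^{\le 1}$, I would lift it (termwise, compatibly) to a strong exact sequence of filtered diagrams in $\mathsf{SNrm}_R^{\le 1,nA}$ — using that $\overline{\mathrm{Cpl}}$ reflects strong exactness on the image, or more simply working with the obvious dense normed submodules — take the filtered colimit there, apply $\overline{\mathrm{Cpl}}$, and use exactness of $\overline{\mathrm{Cpl}}$ together with its commutation with filtered colimits to conclude that the filtered colimit in $\mathsf{Ban}_R^{\le 1}$ of the original strong exact sequence is again strong exact. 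I expect the termwise-compatible lifting step and the verification that $\overline{\mathrm{Cpl}}$ of a strong-exact colimit computes the Banach colimit to require the most care, but both are essentially formal once the $\mathsf{SNrm}$ case is in hand.
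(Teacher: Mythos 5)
Your proposal is correct and follows essentially the same route as the paper: the explicit inf-over-the-tail formula for the colimit seminorm, the observation that kernels carry the subspace seminorm so that the two infima agree, and reduction of the Banach case to the seminormed case via the exactness of $\overline{\mathrm{Cpl}}$ (Lemma \ref{lem:cplsepexact}). The only simplification to note is that no ``lifting'' of diagrams from $\mathsf{Ban}_{R}^{\le1}$ to $\mathsf{SNrm}_{R}^{\le1}$ is required, since the former is a full subcategory of the latter and the filtered colimit in $\mathsf{Ban}_{R}^{\le1}$ is just $\overline{\mathrm{Cpl}}$ applied to the colimit in $\mathsf{SNrm}_{R}^{\le1}$ --- which is exactly the paper's one-line reduction (the paper in fact only writes out the kernel computation, leaving the strong-epimorphism part, which you treat explicitly, as implicit).
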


\begin{proof}
    By Lemma \ref{lem:cplsepexact} it suffices to prove the claim for $\mathsf{SNrm}_{R}^{\le1}$. Clearly filtered colimits commute with kernels. Let $F:\mathcal{I}\rightarrow\mathsf{SNrm}_{R}^{\le1}$ be a filtered diagram. Then $\colim_{i\in\mathcal{I}}F(i)$ is the semi-normed $R$-module whose underlying module is the algebraic colimit, and with semi-norm given by
   $$\rho([x])=\mathrm{inf}_{i\rightarrow j\in\mathcal{I}}\{\rho_{j}(\alpha_{ij}(x))\}$$
    where $x\in F(i)$, $\alpha_{ij}:F(i)\rightarrow F(j)$ is the structure map, and $\rho_{j}$ is the semi-norm on $F(j)$. Let $\eta:F\rightarrow G$ be a natural transformation of diagrams $\mathcal{I}\rightarrow\mathrm{SNrm}_{R}^{\le1,nA}$. Let $K:\mathcal{I}\rightarrow\mathrm{SNrm}_{R}^{\le1,nA}$ be defined by $K(i)=\mathrm{Ker}(\eta(i):F(i)\rightarrow G(i))$. Clearly the underlying module of $\colim_{i\in\mathcal{I}}K$ coincides with the underlying module of $\mathrm{Ker}(\colim_{i\in\mathcal{I}}F\rightarrow\colim_{i\in\mathcal{I}}G)$. Thus it remains to prove that the semi-norms coincide. Let $k\in K_{i}$. Then
    $$\rho([k])=\mathrm{inf}_{i\rightarrow j\in\mathcal{I}}\rho_{j}(\alpha_{ij}(k)).$$
    Since if $k\in K(i)$ then $\alpha_{ij}(k)\in K(j)$ this clearly coincides with the norm on the kernel.
\end{proof}

\begin{lemma}
    In $\mathsf{SNrm}_{R}^{\le1}$, $\mathsf{Nrm}_{R}^{\le1}$, and $\mathsf{Ban}_{R}^{\le1}$ each object of the form $R_{\delta}$ is $\aleph_{1}$-compact. In particular $\aleph_{1}$-filtered colimits are exact in the strong exact structures. 
\end{lemma}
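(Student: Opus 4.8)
The plan is to reduce everything to an explicit description of the functor corepresented by $R_\delta$ together with one ``infimum attained'' lemma that only holds under $\aleph_1$-filteredness. First, in each of $\mathsf{SNrm}_R^{\le1}$, $\mathsf{Nrm}_R^{\le1}$, $\mathsf{Ban}_R^{\le1}$, a contractive $R$-linear map $R_\delta\to M$ is the same datum as the element $m\in M$ onto which it sends the generator, subject only to $\rho_M(m)\le\delta$; hence $\Hom(R_\delta,-)$ is canonically identified with the pointed-set-valued functor $M\mapsto \overline{B}_M(0,\delta)=\{m\in M:\rho_M(m)\le\delta\}$ (this is exactly the identification already used in the proof of the $\mathbf{AdMon}$-elementary lemma above). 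So $\aleph_1$-compactness of $R_\delta$ is the assertion that for every $\aleph_1$-filtered diagram $F\colon\mathcal I\to\mathcal C$ the canonical map $\colim_i\overline{B}_{F(i)}(0,\delta)\to\overline{B}_{\colim_iF(i)}(0,\delta)$ is a bijection.

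Next I would establish the key lemma: if $L=\colim^{\mathsf{SNrm}}_iF(i)$, then by the formula $\rho_L([x])=\inf_{i\to j}\rho_j(\alpha_{ij}(x))$ recalled earlier, choosing stages $j_n$ with $\rho_{j_n}(\alpha_{ij_n}(x))\to\rho_L([x])$ and using $\aleph_1$-filteredness to pick a cocone $j$ over the countable diagram $\{i\to j_n\}$, contractivity of the transition maps forces $\rho_j(\alpha_{ij}(x))=\rho_L([x])$. Two consequences: (i) if each $F(i)$ is normed, then $\rho_L$ is already a norm, since a class killed by $\rho_L$ is killed at a finite stage; (ii) if each $F(i)$ is Banach, then $L$ is already complete --- given a Cauchy sequence in $L$, pass to a subsequence with rapidly decreasing gaps, represent its successive differences at stages where they have correspondingly small norm (possible by the infimum formula), merge the countably many such stages into a single $k$, and take the limit in the Banach module $F(k)$ of the resulting genuinely Cauchy sequence of partial sums; its image in $L$ is the required limit. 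Since $\mathsf{Nrm}_R^{\le1}$ and $\mathsf{Ban}_R^{\le1}$ are reflective in $\mathsf{SNrm}_R^{\le1}$, (i) and (ii) show that for $\aleph_1$-filtered $\mathcal I$ the colimits in all three categories agree with the one computed in $\mathsf{SNrm}_R^{\le1}$, which reduces the whole statement to $\mathsf{SNrm}_R^{\le1}$.

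For $\mathsf{SNrm}_R^{\le1}$ the map $\colim_i\overline{B}_{F(i)}(0,\delta)\to\overline{B}_L(0,\delta)$ is injective already for ordinary filtered $\mathcal I$: the underlying module of $L$ is the algebraic filtered colimit, so two ball elements with the same image in $L$ become equal at a finite stage, and the transition maps keep them in the radius-$\delta$ ball. Surjectivity is precisely the infimum-attained lemma --- a representative of an element $[x]$ with $\rho_L([x])\le\delta$ can be pushed to a stage at which it literally has norm $\le\delta$. This proves that $R_\delta$ is $\aleph_1$-compact in all three categories. For the final assertion, strong exactness of a kernel--cokernel pair $0\to A\to B\to C\to0$ is detected by the functors $\Hom(R_\delta,-)=\overline{B}_{(-)}(0,\delta)$ (the only nontrivial condition being surjectivity of $\overline{B}_B(0,\delta)\to\overline{B}_C(0,\delta)$ for all $\delta$); since these commute with $\aleph_1$-filtered colimits by the first part and filtered colimits of pointed sets are exact, $\aleph_1$-filtered colimits preserve strong exact sequences.

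The main obstacle is consequence (ii) --- that an $\aleph_1$-filtered colimit of Banach modules, formed in $\mathsf{SNrm}_R^{\le1}$, is automatically complete. This is the one place where the cardinality bound is genuinely needed: for the countable chain of isometric inclusions $l^1(\{1,\dots,n\})$ the $\mathsf{SNrm}$-colimit is the incomplete normed space of finitely supported $l^1$-sequences, whose completion is $l^1(\N)$, and a poset such as $\N$ is filtered but not $\aleph_1$-filtered. Making the absorption argument clean requires only some care in organising the countably many ``Cauchy witness'' stages and in controlling the norms of the successive differences, but contains no real surprise once the infimum-attained lemma is available.
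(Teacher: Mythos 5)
Your proposal is correct and follows essentially the same route as the paper: identify $\mathrm{Hom}(R_\delta,-)$ with the closed-ball functor, use $\aleph_1$-filteredness to show the infimum in the colimit seminorm formula is attained at a finite stage, and deduce that $\mathsf{Nrm}_R^{\le 1}$ and $\mathsf{Ban}_R^{\le 1}$ are closed in $\mathsf{SNrm}_R^{\le 1}$ under $\aleph_1$-filtered colimits. If anything, your write-up is more careful than the paper's at two points that the paper leaves terse: the completeness argument for $\aleph_1$-filtered colimits of Banach modules (you spell out the telescoping/rapid-decrease trick, whereas the paper just asserts the representatives converge at a common stage), and the deduction of the ``in particular'' exactness statement, which the paper does not justify at all.
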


\begin{proof}
Consider first the semi-normed case. Let $F:\mathcal{I}\rightarrow\mathsf{SNrm}_{R}^{\le1}$ be an $\aleph_{1}$-filtered diagram. We need to show that $\overline{B}_{\colim_{\mathcal{I}}F(i)}(0,\delta)=\colim_{\mathcal{I}}\overline{B}_{F(i)}(0,\delta)$. Let $x\in F(i)$ with $\rho([x])=\delta$. Therefore $\mathrm{inf}_{i\rightarrow j}\rho_{j}(\alpha_{ij}(x))=\delta$.  Pick a countable subset $\{i\rightarrow j_{k}:k\in\mathbb{N}\}$ such that $\mathrm{inf}_{k\in\mathbb{N}}\rho_{j}(\alpha_{ij_{k}}(x))=\delta$. Let $j_{k}\rightarrow u$ be maps such that the compositions $i\rightarrow j_{k}\rightarrow u$ all coincide for all $k$. Then $\rho_{u}(\alpha_{ui}(x))\le\rho_{j}(\alpha_{ij_{k}}(x))$ so $\rho_{u}(\alpha_{ui}(x))\le\delta$, and this completes the proof.

Next we claim that $\mathsf{Nrm}_{R}^{\le1}$ and $\mathsf{Ban}_{R}^{\le1}$ are closed in $\mathsf{SNrm}_{R}^{\le1}$ under $\aleph_{1}$-filtered colimits. If all $F(i)$ are separated then the proof above shows that $\colim_{i\in\mathcal{I}}F(i)$ is also separated. Next let $F(i)$ all be Banach spaces and let $([x_{n}])$ be a Cauchy sequence in $\colim_{i\in\mathcal{I}}F(i)$. By a similar argument to the above we may assume that all $x_{n}$ live in the same $M_{i}$. Moreover they converge there.
\end{proof}

\begin{corollary}
$\mathsf{SNrm}_{R}^{\le1}$, $\mathsf{Nrm}_{R}^{\le1}$, and $\mathsf{Ban}_{R}^{\le1}$ are locally $\aleph_{1}$-presentable.
\end{corollary}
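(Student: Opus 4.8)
The plan is to apply the standard characterisation of locally presentable categories: a cocomplete category is locally $\aleph_{1}$-presentable as soon as it is $\aleph_{1}$-accessible, i.e. admits an essentially small family of $\aleph_{1}$-presentable (\,$=\aleph_{1}$-compact\,) objects such that every object is an $\aleph_{1}$-filtered colimit of objects from that family. Cocompleteness of $\mathsf{SNrm}_{R}^{\le1}$, $\mathsf{Nrm}_{R}^{\le1}$ and $\mathsf{Ban}_{R}^{\le1}$ has already been recorded, so everything reduces to exhibiting such a family; I will treat the three cases uniformly, writing $\mathcal{C}$ for any one of them.

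First I would identify the candidate family. By the preceding lemma each $R_{\delta}$ is $\aleph_{1}$-compact; since $\aleph_{1}$-compact objects are stable under countable colimits and retracts, the objects arising as retracts of countable colimits of the $R_{\delta}$ form an essentially small family $\mathcal{C}_{\aleph_{1}}$ of $\aleph_{1}$-compact objects. I would then check that every countably generated object $W$ of $\mathcal{C}$ lies in $\mathcal{C}_{\aleph_{1}}$: there is a strong epimorphism $l^{1}(X_{0})\twoheadrightarrow W$ with $X_{0}$ countable, its kernel $K\hookrightarrow l^{1}(X_{0})$ is again countably generated (this is where the Banach case needs an argument: $K$ is a closed submodule of the $l^{1}$-space of a countable set, and one uses that $\mathcal{O}_{K}$ is a valuation ring to see that such closed submodules admit countable generating systems), so there is a strong epimorphism $l^{1}(X_{1})\twoheadrightarrow K$ with $X_{1}$ countable, and $W$ is the coequaliser of the two induced maps $l^{1}(X_{1})\rightrightarrows l^{1}(X_{0})$, a countable colimit of copies of the $R_{\delta}$.

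Finally I would write an arbitrary $V\in\mathcal{C}$ as an $\aleph_{1}$-filtered colimit of members of $\mathcal{C}_{\aleph_{1}}$, by taking the poset $\mathcal{P}(V)$ of all countably generated submodules of $V$ — countably generated \emph{closed} submodules in the Banach case. This poset is $\aleph_{1}$-directed, since a countable union of countably generated submodules is countably generated (and its closure is still countably generated in the Banach case); every $v\in V$ lies in the submodule it generates, so the canonical map $\colim_{W\in\mathcal{P}(V)}W\to V$ is bijective on underlying modules; and, because all transition maps and all maps to $V$ are isometric embeddings, the concrete description of $\aleph_{1}$-filtered colimits from the preceding lemma — in particular the fact that an $\aleph_{1}$-filtered colimit of Banach modules is again Banach — shows this canonical map is an isomorphism in $\mathcal{C}$. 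Hence $\mathcal{C}$ is $\aleph_{1}$-accessible and cocomplete, so locally $\aleph_{1}$-presentable. The main obstacle I expect is precisely the Banach case of this last argument: one must work with closed countably generated submodules and invoke exactness of $\aleph_{1}$-filtered colimits to be sure that the colimit formed in $\mathsf{Ban}_{R}^{\le1}$ — not merely in $\mathsf{Nrm}_{R}^{\le1}$ — is still $V$ on the nose; as a fallback one can instead invoke the Ad\'amek--Rosick\'y criterion directly, since $\{R_{\delta}\}$ is a strong generator of $\mathcal{C}$ (there are enough projectives, and every object is a strong quotient of some $l^{1}(|V|)$) consisting of $\aleph_{1}$-compact objects.
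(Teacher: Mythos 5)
Your argument is correct in substance but follows a genuinely different route from the paper. The paper's proof is a two-line formal argument: the left heart $\mathrm{LH}(\mathsf{SNrm}_{R}^{\le1})$ is a locally finitely presentable (Grothendieck abelian) category, hence locally $\aleph_{1}$-presentable, and each of $\mathsf{SNrm}_{R}^{\le1}$, $\mathsf{Nrm}_{R}^{\le1}$, $\mathsf{Ban}_{R}^{\le1}$ is a reflective subcategory closed under $\aleph_{1}$-filtered colimits, so inherits local $\aleph_{1}$-presentability. That approach buys brevity and avoids any hands-on analysis of submodules; it leans on the already-established structure of the left heart. You instead work inside the categories themselves, and your cleanest line is in fact the one you relegate to a ``fallback'': the Ad\'amek--Rosick\'y criterion that a cocomplete category with a small strong generator of $\lambda$-presentable objects is locally $\lambda$-presentable. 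Since $\{R_{\delta}\}$ is a set, each $R_{\delta}$ is $\aleph_{1}$-compact by the preceding lemma, and every object admits an admissible (hence regular, hence extremal) epimorphism from $l^{1}(|V|)=\coprod^{\le1}R_{\delta}$, this closes the argument immediately and is, if anything, more self-contained than the paper's proof. What your approach buys is an explicit description of a generating family; what it costs is the extra care visible in your primary route.

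That primary route does have a soft spot you should not gloss over: the claim that the kernel $K$ of a strong epimorphism $l^{1}(X_{0})\twoheadrightarrow W$ with $X_{0}$ countable is again countably generated is not automatic. The section's standing hypothesis is only that $R$ is (essentially) the ring of integers of a nonarchimedean Banach field, possibly with dense value group and possibly non-separable, and ``countably generated'' closed submodules of $l^{1}(\mathbb{N},R)$ need not obviously admit countable presentations; without this, the objects in your poset $\mathcal{P}(V)$ are countably generated but not known to be $\aleph_{1}$-presentable, which is exactly what the decomposition argument needs. Since you cannot yet invoke the standard fact that $\aleph_{1}$-presentable $=$ retract of countably presented (that is what you are trying to prove), this step is circular as written. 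The fallback via the strong-generator criterion avoids the issue entirely, so I would promote it to the main proof and drop or clearly mark the submodule-poset argument as heuristic.
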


\begin{proof}
    $\mathrm{LH}(\mathsf{SNrm}_{R}^{\le1})$ is a locally finitely presentable abelian category. In particular it is also locally $\aleph_{1}$-presentable. $\mathrm{SNrm}_{R}^{\le1,nA}$ is a reflective subcategory closed under $\aleph_{1}$-filtered colimits so is also locally $\aleph_{1}$-presentable. In turn $\mathrm{Nrm}_{R}^{\le 1}$ and $\mathrm{Ban}_{R}^{\le 1}$ and reflective subcategories closed under $\aleph_{1}$-filtered colimits, so are themselves $\aleph_{1}$-presentable. 
\end{proof}

\comment{
Now consider $\mathsf{Ban}_{R}^{\le1}$ with the strong exact structure. It is projectively monoidal with enough projectives, and kernels. By \cite{kelly2016homotopy} the projective model structure exists on $\mathrm{Ch}(\mathsf{Ban}_{R}^{\le1})$ and it is monoidal. Moreover it satisfies the monoid axiom, and $\mathrm{Ch}(\mathsf{Ban}_{R}^{\le1})$ is a combinatorial model category. Thus it presents a closed, presentable, stable, symmetric monoidal $\infty$-category $\mathbf{D}(\mathsf{Ban}_{R}^{\le1})$. 
    }

\comment{\subsubsection*{The quasi-abelian flat model structure}
\textcolor{red}{Jack comment: it turns out this is no longer needed. I would consider removing it for preprint version.}
Finally we consider the quasi-abelian model structures on our categories. Since filtered colimits in quasi-abelian exact structures are exact we get the following.

\begin{lemma}
In the quasi-abelian exact structures on $\mathsf{SNrm}_{R}^{\le1}$, $\mathsf{Nrm}_{R}^{\le1}$, and $\mathsf{Ban}_{R}^{\le1}$ filtered colimits of flat objects are flat.
\end{lemma}

Although there are not enough projectives for the quasi-abelian exact structure, the objects $R_{\delta}$ still generated our categories and are flat. Thus we have enough flat objects. Moreover, each category has exact filtered colimits are is $\aleph_{1}$-presentable. In particular they are purely $\aleph_{1}$-accessible and weakly elementary. By \cite{kelly2024flat} Corollary 6.3 the flat model structure exists on $\mathrm{Ch}(\mathrm{SNrm}_{R}^{\le1})$, $\mathrm{Ch}(\mathrm{Nrm}_{R}^{\le1})$, and $\mathrm{Ch}(\mathrm{Ban}_{R}^{\le1})$. Moreover they are all combinatorial monoidal model categories satisfying the monoid axiom. In particular they present closed, symmetric, presentable, monoidal $(\infty,1)$-categories which we denote by \(\mathbf{D}_f(\mathsf{SNrm}_R^{\le1})\), \(\mathbf{D}_f(\mathsf{Nrm}_R^{\le1})\) and $\mathbf{D}_f(\mathsf{Ban}_R^{\le1})$ respectively.}

We conclude with a discussion which will be useful for discussing adic completeness later.

\begin{lemma}
    Products in $\mathsf{SNrm}_{R}^{\le1}$ are strongly exact.
\end{lemma}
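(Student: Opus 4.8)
The plan is to check directly, from the definition of the strong exact structure, that the (bounded) product functor carries strong short exact sequences to strong short exact sequences. As a preliminary I would record the shape of the product: the categorical product $\prod_{j\in J} X_j$ in $\mathsf{SNrm}_{R}^{\le1}$ is the submodule of the algebraic product consisting of the tuples $(x_j)$ with $\sup_j\rho_{X_j}(x_j)<\infty$, equipped with the seminorm $\rho\big((x_j)\big)=\sup_j\rho_{X_j}(x_j)$; this represents $Y\mapsto\prod_j\mathrm{Hom}_{\le1}(Y,X_j)$ because a contracting map into it is exactly a compatible family of contracting maps into the factors.

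Now let $\{0\to A_j\xrightarrow{i_j}B_j\xrightarrow{p_j}C_j\to 0\}_{j\in J}$ be strong short exact sequences. First I would dispatch the easy parts. Since kernels and the sup-seminorm are computed coordinatewise, $\prod_j i_j$ is an isometric embedding with image $\{(b_j):p_j(b_j)=0\}$, so $\prod_j i_j=\ker(\prod_j p_j)$. Next, to see $\prod_j p_j$ is a strict epimorphism realizing norms, I would apply the defining lifting property of the strong exact structure in each factor: given $c=(c_j)\in\prod_j C_j$ choose $b_j\in B_j$ with $p_j(b_j)=c_j$ and $\rho_{B_j}(b_j)=\rho_{C_j}(c_j)$; then $b:=(b_j)$ has $\sup_j\rho_{B_j}(b_j)=\sup_j\rho_{C_j}(c_j)<\infty$, so $b\in\prod_j B_j$, $(\prod_j p_j)(b)=c$, and $\rho(b)=\rho(c)$.

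The step I expect to be the main obstacle is showing that $\prod_j p_j$ exhibits $\prod_j C_j$ as the cokernel of $\prod_j i_j$ — equivalently, that the evident $R$-module isomorphism $\big(\prod_j B_j\big)\big/\big(\prod_j\ker p_j\big)\xrightarrow{\ \sim\ }\prod_j C_j$ is isometric for the quotient seminorm on the left and the sup-seminorm on the right. The inequality ``$\ge$'' is formal: each $p_j$ is contracting and $\rho_{C_j}$ is the quotient seminorm of $i_j$ (as $p_j=\coker i_j$), hence $\sup_j\rho_{B_j}(b_j+i_j(a_j))\ge\sup_j\rho_{C_j}(p_jb_j)$ for every representative $(b_j)$ and every $(a_j)\in\prod_j A_j$. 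For ``$\le$'' — which is exactly where the analogous quasi-abelian statement fails, cf.\ Lemma~\ref{lem:cplsepexact} — I would use that the factorwise infima are \emph{attained}: pick $b_j'\in B_j$ with $p_j(b_j')=p_j(b_j)$ and $\rho_{B_j}(b_j')=\rho_{C_j}(p_jb_j)$, write $b_j'=b_j+i_j(a_j)$, and estimate $\rho_{A_j}(a_j)=\rho_{B_j}(i_j(a_j))\le\rho_{B_j}(b_j')+\rho_{B_j}(b_j)\le\sup_k\rho_{C_k}(p_kb_k)+\rho\big((b_j)\big)<\infty$, so $(a_j)\in\prod_j A_j$ and the quotient seminorm of $[(b_j)]$ equals $\sup_j\rho_{C_j}(p_jb_j)$ (with the infimum attained). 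Putting the three steps together, $(\prod_j i_j,\prod_j p_j)$ is a kernel–cokernel pair satisfying the norm-lifting condition, which is precisely the assertion that $0\to\prod_j A_j\to\prod_j B_j\to\prod_j C_j\to 0$ is strong exact. I do not anticipate any deeper difficulty; the one thing to be careful with throughout is the bookkeeping that keeps the chosen lifts $(b_j)$ and corrections $(a_j)$ inside the bounded product, which is where the finiteness hypotheses on the product seminorm are used.
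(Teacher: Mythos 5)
Your proof is correct and follows essentially the same route as the paper's: identify the product as the bounded tuples with the sup-seminorm, note that kernels are computed coordinatewise, and use the norm-achieving lifts from each factor to conclude that the product of strong epimorphisms is strong. The only difference is that you spell out the cokernel/quotient-seminorm identification (the ``$\le$'' estimate via attained infima) which the paper dismisses as clear from the explicit description of the product.
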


\begin{proof}
    Clearly they commute with kernels. It remains to prove they commute with cokernels. Now the product of a family $\{M_{i}\}_{i\in\mathcal{I}}$, $\prod_{i\in\mathcal{I}}^{\le1}M_{i}$ is the sub-module of the product $\prod_{i\in\mathcal{I}}M_{i}$ consisting of those tuples $(m_{i})_{i\in\mathcal{I}}$ where $\mathrm{sup}_{i\in\mathcal{I}}||m_{i}||<\infty$. The norm is given by the supremum of the norms of the entries. From this description it is clear that the product of strong epimorphisms is strong.
\end{proof}

\comment{
\begin{corollary}
    Let 
    $$\ldots\rightarrow M_{n+1}\rightarrow M_{n}\rightarrow\ldots\rightarrow M_{1}\rightarrow M_{0}$$ be a sequence in $\mathsf{SNrm}_{R}^{\le1}$ with each $M_{n+1}\rightarrow M_{n}$ being an admissible epimorphism for the quasi-abelian exact structure. Then the map
    $$\lim_{n}M_{n}\rightarrow\mathbb{R}\varprojlim_{n}M_{n}$$
    is an equivalence in $\bD_f(\mathsf{SNrm}_{R}^{\le1})$. 
\end{corollary}

\begin{proof}
We may pass to the left heart $\mathrm{LH}(\mathrm{SNrm}_{R}^{\le1})$. By \cite{kelly2024flat} this is a Grothendieck abelian category. Then the claim simply reduces to the Mittag-Leffler condition in such a category. 
\end{proof}
}

\subsubsection{Filtering by open balls}

Let $R$ be a non-Archimedean Banach ring. Note that for $\delta>0$, both the open ball $B_{R}(0,\epsilon)$ and the closed ball $\overline{B}_{R}(0,\epsilon)$ are closed ideals of $R$.

 \begin{proposition}\label{prop:Rlimballs}
Let $(\epsilon_{n})_{n\ge 1}$ be a sequence in $(0,1)$ decreasing to $0$. Then
$$R\cong \varprojlim_{n}^{\le1}R\big\slash B_{R}(0,\epsilon_{n})$$
\end{proposition}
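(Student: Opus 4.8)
\emph{Proof proposal.} The plan is to show that the canonical morphism
$$\phi \colon R \longrightarrow \varprojlim_{n}^{\le1} R\big\slash B_{R}(0,\epsilon_{n}), \qquad r \longmapsto \big(r + B_{R}(0,\epsilon_{n})\big)_{n},$$
is an isometric bijection; since in $\mathsf{SNrm}_{R}^{\le1}$ an isomorphism is precisely a norm-preserving bijection (equivalently, a contractive bijection with contractive inverse), this will suffice. First I would make the target explicit. Using the description of $\le1$-products in $\mathsf{SNrm}_{R}^{\le1}$ recalled above, and the fact that limits of towers are cut out of products by the coherence equations, $\varprojlim_{n}^{\le1} R/B_{R}(0,\epsilon_{n})$ is the module of coherent tuples $(x_{n})_{n}$ with $x_{n}\in R/B_{R}(0,\epsilon_{n})$ and $x_{n+1}\mapsto x_{n}$ under the reduction maps (which are well defined and contractive since $\epsilon_{n+1}\le\epsilon_{n}$, so $B_{R}(0,\epsilon_{n+1})\subseteq B_{R}(0,\epsilon_{n})$), subject to $\sup_{n}\|x_{n}\|<\infty$, equipped with the supremum norm.

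Next I would compute the quotient norm using that $R$ is non-archimedean: for $r\in R$ one has $\|r+B_{R}(0,\epsilon_{n})\| = |r|$ when $|r|\ge\epsilon_{n}$ (if $|s|<\epsilon_{n}\le|r|$ then $|r+s|=|r|$), and $\|r+B_{R}(0,\epsilon_{n})\|=0$ when $|r|<\epsilon_{n}$ (as then $0\in r+B_{R}(0,\epsilon_{n})$); in particular each $R/B_{R}(0,\epsilon_{n})$ is separated. Isometry of $\phi$ is then immediate: $\|r+B_{R}(0,\epsilon_{n})\|\le|r|$ always, and for $r\ne0$, since $\epsilon_{n}\to0$ there is $n$ with $\epsilon_{n}\le|r|$, for which the quotient norm equals $|r|$; hence $\|\phi(r)\|=\sup_{n}\|r+B_{R}(0,\epsilon_{n})\|=|r|$. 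This also shows $\phi$ is injective and does land in the $\le1$-limit.

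For surjectivity, given a coherent tuple $(x_{n})_{n}$ with bounded norms I would choose arbitrary lifts $\tilde r_{n}\in R$ of $x_{n}$. Coherence says exactly $\tilde r_{n+1}-\tilde r_{n}\in B_{R}(0,\epsilon_{n})$, i.e.\ $|\tilde r_{n+1}-\tilde r_{n}|<\epsilon_{n}$; since $\epsilon_{n}\to0$ the sequence $(\tilde r_{n})$ is Cauchy, hence converges to some $r\in R$ by completeness of the Banach ring $R$. For each fixed $n$, pick an index $m>n$ with $\epsilon_{m}<\epsilon_{n}$ (possible because $(\epsilon_{n})$ decreases to $0$) and estimate, using the ultrametric inequality, $|r-\tilde r_{m}|\le \sup_{k\ge m}|\tilde r_{k+1}-\tilde r_{k}|\le\epsilon_{m}$ and $|\tilde r_{m}-\tilde r_{n}|\le\max_{n\le k<m}|\tilde r_{k+1}-\tilde r_{k}|<\epsilon_{n}$, whence $|r-\tilde r_{n}|<\epsilon_{n}$, so $r+B_{R}(0,\epsilon_{n})=\tilde r_{n}+B_{R}(0,\epsilon_{n})=x_{n}$ for all $n$ and $\phi(r)=(x_{n})_{n}$.

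The only delicate point I anticipate is the bookkeeping with \emph{strict} inequalities in the last estimate: one must land in the \emph{open} ball $B_{R}(0,\epsilon_{n})$, not merely its closure, which is exactly why it is convenient to interpose an index $m$ with $\epsilon_{m}<\epsilon_{n}$ and exploit that $|\tilde r_{k+1}-\tilde r_{k}|$ is strictly below $\epsilon_{k}$. Everything else—$R$-linearity and contractivity of $\phi$ and of the transition maps, and the equivalence between isometric bijections and isomorphisms in $\mathsf{SNrm}_{R}^{\le1}$—is routine. (Alternatively, surjectivity can be phrased as a Mittag--Leffler-type statement for the tower $(R/B_{R}(0,\epsilon_{n}))_{n}$, whose transition maps are strong epimorphisms with ``kernels shrinking to $0$''.)
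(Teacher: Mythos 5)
Your proposal is correct and follows essentially the same route as the paper: surjectivity via the observation that coherent lifts form a Cauchy (equivalently, summable-difference) sequence converging by completeness of $R$, injectivity from $|r|<\epsilon_n$ for all $n$ forcing $r=0$, and the norm identification via the ultrametric inequality. Your treatment is somewhat more careful than the paper's on the strict inequalities needed to land in the open balls, but this is a refinement of the same argument, not a different one.
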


\begin{proof}
Let $([r_{n}])$ be a sequence in $\lim^{\le1}R\big\slash B_{R}(0,\epsilon_{n})$. Then $r_{n+1}=r_{n}+s_{n}$ with $s_{n}\in B_{R}(0,\epsilon_{n})$. Put $s_{0}=0$. The sequence $(s_{n})$ converges to zero, so $\sum_{n=0}^{\infty}s_{n}$ converges to some $s$. The projection of $s$ to each $R\big\slash B_{R}(0,\epsilon_{n})$ is $r_{n}$, so the morphism is surjective. Note that $r$ being in the kernel would mean $|r|<\epsilon_{n}$ for all $n$, so $r=0$. Thus the projection is injective. It remains to compute norms. Let $\epsilon_{n}\ge|r|$ and either $|r|<\epsilon_{n-1}$ or $|r|\ge1$. For $m\le n$ its projection module to $R\big\slash B_{R}(0,\epsilon_{m})$ then has norm $|r|$ by the non-Archimedean property. If $|r|\ge1$ this is true for all $n$, and if $|r|<\epsilon_{n-1}$ then for $m\ge n-1$ its reduction modulo $B_{R}(0,\epsilon_{m})$ is zero. In either case the projection to the limit has norm $|r|$.  
\end{proof}

The situation we will typically have in mind is the following. A ring $R$ will contain some ideal $I$, and the norm on $R$ will be an $I$-adic one, say $|r|=\delta^{m}$ if $r\in I^{m}$ but $r\notin I^{m+1}$. where $0<\delta<1$. Then $B_{R}(0,\delta^{m})=I^{m}$.

\begin{proposition}
The functor
$$\mathsf{Norm}_{R}^{\le 1} \rightarrow\mathsf{Norm}_{R}^{\le 1}$$
sending 
$$M\mapsto M\big\slash B_{M}(0,\epsilon_{n})$$
is exact for the strong exact structure. 
\end{proposition}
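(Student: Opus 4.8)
The statement is that for a fixed element $\epsilon_n$ of the decreasing sequence $(\epsilon_n)$ in $(0,1)$, the endofunctor $M \mapsto M/B_M(0,\epsilon_n)$ on $\mathsf{Norm}_R^{\le 1}$ is exact for the strong exact structure. Here $B_M(0,\epsilon_n) = \{ m \in M : \rho_M(m) < \epsilon_n\}$, and the quotient is equipped with the (semi)norm $\rho_{M/B_M(0,\epsilon_n)}([m]) = \inf_{b \in B_M(0,\epsilon_n)} \rho_M(m - b)$. One first observes that, by the non-Archimedean property, this infimum is actually attained and equals $\rho_M(m)$ if $\rho_M(m) \ge \epsilon_n$ and equals $0$ if $\rho_M(m) < \epsilon_n$; in particular the quotient is again a (semi)normed $R$-module with contracting quotient map, so the functor is well-defined, and it is clearly additive.

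The plan is to verify directly that the functor preserves strong exact sequences. Recall a kernel-cokernel pair $0 \to A \xrightarrow{i} B \xrightarrow{p} C \to 0$ is strongly exact iff $i$ is a kernel of $p$, $p$ is a cokernel of $i$, and for every $c \in C$ there is $b \in B$ with $p(b) = c$ and $\|b\| = \|c\|$ (the "strong" lifting condition). First I would check that $-/B(0,\epsilon_n)$ preserves kernels: this is formal since the functor has a right adjoint perspective (it is the separation-type functor attached to rescaling), but more concretely, if $i : A \to B$ is an isometric inclusion realizing $A$ as the kernel of $p$, then $A/B_A(0,\epsilon_n) \to B/B_B(0,\epsilon_n)$ is still isometric because $B_A(0,\epsilon_n) = A \cap B_B(0,\epsilon_n)$, and a diagram chase using the explicit formula for the quotient norm identifies it with the kernel of the induced map on quotients. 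Next, preservation of cokernels: given $p : B \to C$ a cokernel of $i$, the induced $\bar p : B/B_B(0,\epsilon_n) \to C/B_C(0,\epsilon_n)$ is surjective as a map of modules, and one checks the norm on the target is the quotient norm via $\bar p$ by combining the explicit formulas (a double-infimum computation that collapses). Finally, and this is the part requiring the most care, I would verify the strong lifting condition is preserved: given $\bar c \in C/B_C(0,\epsilon_n)$, lift it first to $c \in C$ with $\|[c]\| $ as close as we like to $\|\bar c\|$ — in fact by the attainment remark we may take $c$ with $\rho_C(c) = \rho_{C/B}([c])$ — then use the strong lifting property of the original sequence to get $b \in B$ with $p(b) = c$ and $\rho_B(b) = \rho_C(c)$; passing to the quotient, $\rho_{B/B}([b]) = \rho_B(b)$ (using $\rho_B(b) = \rho_C(c) \ge \epsilon_n$ when $\bar c \ne 0$, and the zero case is trivial), so $[b]$ is the required norm-preserving lift of $\bar c$.

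The main obstacle I anticipate is bookkeeping the interaction between the strong exact structure's lifting condition and the non-Archimedean quotient norm: one must be careful that the quotient seminorm on $M/B_M(0,\epsilon_n)$ genuinely takes only values that are $0$ or $\ge \epsilon_n$, and that this dichotomy is exactly what makes the norm-preserving lift survive the quotient. Everything else is a routine diagram chase, and I expect the proof to be short once the explicit description of the quotient norm (attained infimum, value dichotomy) is recorded as a preliminary lemma — indeed this same description is implicitly what powers Proposition \ref{prop:Rlimballs}, so it is natural to isolate it first.
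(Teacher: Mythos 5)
Your strategy is correct, but it takes a genuinely different route from the paper. The paper's proof works at the level of acyclic complexes: it invokes the defining feature of the strong exact structure, namely that a complex $M_{\bullet}$ is strongly acyclic iff $B_{M_{\bullet}}(0,\delta)$ is acyclic for every $\delta>0$, observes that $M\rightarrow M/B_{M}(0,\epsilon_{n})$ is always a strong epimorphism (this is exactly your norm dichotomy $\|[m]\|=\|m\|$ when $\|m\|\ge\epsilon_{n}$), and then reads off acyclicity of $M_{\bullet}/B_{M_{\bullet}}(0,\epsilon_{n})$ from the degreewise short exact sequence $B_{M_{\bullet}}(0,\epsilon_{n})\rightarrow M_{\bullet}\rightarrow M_{\bullet}/B_{M_{\bullet}}(0,\epsilon_{n})$ whose first two terms are acyclic. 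That is a three-line argument, but it leans on the ball characterisation of strong acyclicity. Your direct verification on short exact sequences is more elementary and self-contained, and it has the merit of recording explicitly the quotient-norm dichotomy that powers both arguments. One point you should make explicit rather than bury in the ``routine diagram chase'': in the kernel-preservation step, identifying the image of $A/B_{A}(0,\epsilon_{n})$ with the full kernel of the induced map requires showing $p^{-1}(B_{C}(0,\epsilon_{n}))=A+B_{B}(0,\epsilon_{n})$, and the inclusion $\subseteq$ genuinely uses the strong lifting property of the original sequence (lift $p(b)$ norm-preservingly to some $b'$ of norm $<\epsilon_{n}$ and write $b=(b-b')+b'$). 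This is the only non-formal step in that part of your argument; everything else goes through as you describe.
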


\begin{proof}
Let $M_{\bullet}$ be acyclic. By definition this means that for each $\delta>0$, $B_{M_{\bullet}}(0,\delta)$ is acyclic. Next we claim that $M\rightarrow M\big\slash B_{M}(0,\delta)$ is always a strong epimorphism. Indeed if $m\in M$ satisfies $||m||\ge\delta$, then $||[m]||=||m||$ in $M\big\slash B_{M}(0,\delta)$. Thus the sequence
$$B_{M_{\bullet}}(0,\epsilon_{n})\rightarrow M_{\bullet}\rightarrow M_{\bullet}\big\slash B_{M_{\bullet}}(0,\epsilon_{n}),$$
is short exact, so the right-hand term is also acyclic. 
\end{proof}

\begin{proposition}
    Let $M \in \mathsf{Norm}_{R}^{\le1}$. Let $\epsilon_{n}$ be a sequence in $(0,1)$ decreasing to $0$. Then $\varprojlim_{n}M\big\slash B_{M}(0,\epsilon_{n})$ is isomorphic to the completion of $M$.
\end{proposition}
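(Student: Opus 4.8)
The statement to prove is: for $M \in \mathsf{Norm}_{R}^{\le1}$ and $(\epsilon_n)$ decreasing to $0$ in $(0,1)$, the limit $\varprojlim_n M/B_M(0,\epsilon_n)$ (taken in $\mathsf{Norm}_R^{\le1}$, i.e.\ with the supremum-of-norms/contracting limit) is isomorphic to the completion $\mathrm{Cpl}(M)$ of $M$.

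First I would identify the natural candidate map. Each quotient map $M \to M/B_M(0,\epsilon_n)$ is contractive, and these assemble into a compatible system, so by the universal property of the limit there is a canonical contractive map $\phi\colon M \to \varprojlim_n M/B_M(0,\epsilon_n)$. Since the target is a Banach space — it is a limit of Banach spaces in $\mathsf{Ban}_R^{\le1}$, each quotient $M/B_M(0,\epsilon_n)$ being a normed (in fact Banach, as the kernel is a closed ideal giving a quotient that is complete because $\epsilon_n > 0$ bounds the norm away from a continuum) — this map factors through the completion, giving $\bar\phi\colon \mathrm{Cpl}(M) \to \varprojlim_n M/B_M(0,\epsilon_n)$. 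The plan is to show $\bar\phi$ is an isomorphism in $\mathsf{Norm}_R^{\le1}$, i.e.\ a bijective isometry. Surjectivity uses the same telescoping-series argument as in Proposition~\ref{prop:Rlimballs}: given a compatible sequence $([m_n])$ in the limit, choose lifts $m_n \in M$ with $m_{n+1} - m_n \in B_M(0,\epsilon_n)$, note $\sum (m_{n+1}-m_n)$ is a Cauchy series in $M$ hence converges in $\mathrm{Cpl}(M)$ to some $m$, and check its image is $([m_n])$. Injectivity: an element of $\mathrm{Cpl}(M)$ in the kernel has norm $< \epsilon_n$ for all $n$, hence is zero. For the isometry statement, I would argue as in the proof of Proposition~\ref{prop:Rlimballs}: for $x \in M$, once $\epsilon_n \le \|x\|$ the quotient norm of $[x]$ in $M/B_M(0,\epsilon_n)$ equals $\|x\|$ (here one needs that $B_M(0,\epsilon_n)$ is the \emph{open} ball, so elements of norm exactly $\ge \epsilon_n$ survive with unchanged norm — this is exactly why open rather than closed balls appear), so the supremum norm in the limit recovers $\|x\|$; this extends to the completion by continuity.

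The main technical point to be careful about is the identification of the norm, and in particular making sure the quotients $M/B_M(0,\epsilon_n)$ are genuinely \emph{complete} (so that the limit computed in $\mathsf{Norm}_R^{\le1}$ agrees with one computed in $\mathsf{Ban}_R^{\le1}$ and the factorization through $\mathrm{Cpl}(M)$ is legitimate). This uses that $B_M(0,\epsilon_n)$ is closed — which is noted right before the statement for $R$ itself and holds for any normed module since it is a preimage of a closed ball — and that a Cauchy sequence in the quotient can be lifted to one in $M$ that is "eventually locally constant modulo $B_M(0,\epsilon_n)$" by passing to the completion $\mathrm{Cpl}(M)$; concretely the composite $\mathrm{Cpl}(M) \to \mathrm{Cpl}(M/B_M(0,\epsilon_n)) = M/B_M(0,\epsilon_n)$ is surjective because the image of $M$ is already dense and closed (it is everything) — or more cleanly, $M/B_M(0,\epsilon_n)$ is complete because the norm on it only takes values in $\{0\} \cup [\epsilon_n, \infty)$ intersected with $|R|\cdot\mathbb{R}_{\ge0}$-scalings of $\|M\|$, so Cauchy sequences are eventually constant near their limit. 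I expect the genuine obstacle to be low: the only real subtlety is bookkeeping the difference between open and closed balls and verifying the limit is correctly computed in the contracting category, after which the telescoping argument is routine. I would keep the proof short, referencing the argument of Proposition~\ref{prop:Rlimballs} for the parts that are identical.
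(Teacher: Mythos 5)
Your proof is correct and follows essentially the same route as the paper's: completeness of each quotient $M/B_M(0,\epsilon_n)$ because its norm takes values in $\{0\}\cup[\epsilon_n,\infty)$, the isometry computation from Proposition~\ref{prop:Rlimballs}, and the telescoping-series argument for surjectivity. The only (cosmetic) difference is that you factor the canonical map directly through $\mathrm{Cpl}(M)$ and argue there, whereas the paper first identifies $M/B_M(0,\delta)$ with $\hat{M}/B_{\hat{M}}(0,\delta)$ and then reduces to the case of complete $M$; both organizations rest on the same facts.
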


\begin{proof}
First we show that $\lim_{n}M\big\slash B_{M}(0,\epsilon_{n})$ is complete. It suffices to prove that each $M\big\slash B_{M}(0,\epsilon_{n})$ is complete. Let $(a_{n})$ be a Cauchy sequence. The limit of $||a_{m+1}-a_{m}||$ tends to zero. Thus for all sufficiently large $m$ we have $||a_{m+1}-a_{m}||<\epsilon_{n}$, which clearly means that $a_{m+1}=a_{m}$, and the sequence is eventually constant. In particular it is convergent. Now there is a natural map 
$$M\rightarrow\varprojlim_{n}M\big\slash B_{M}(0,\epsilon_{n})$$
given by the projections. As in Proposition \ref{prop:Rlimballs}, this morphism is an isometry. Again, as in Proposition \ref{prop:Rlimballs}, it is surjective whenever $M$ is complete. What remains to check is that the natural map 
$$M\big\slash B_{M}(0,\delta)\rightarrow\hat{M}\big\slash B_{\hat{M}}(0,\delta)$$
is an isometry. An argument similar to Proposition \ref{prop:Rlimballs} shows that it is surjective. Since $M\rightarrow\hat{M}$ is an isometry, it is in fact easy to see that $M\rightarrow\hat{M}\big\slash B_{\hat{M}}(0,\delta)$ is a strong epimorphism. The kernel is evidently $B_{M}(0,\delta)$.
\end{proof}

\subsubsection*{The convex subcategory}

A useful subcategory of $\bD(\mathsf{Ban}_{R}^{\le1})$ is the following. Let $\bD(\mathsf{Ban}_{R}^{\le1})^\circ$ denote the full subcategory of $\mathbf{D}(\mathsf{Ban}_{R}^{\le1})$ consisting of those complexes equivalent to ones which can be written as transfinite extensions of bounded below complexes of the form $l^{1}(V_{\bullet})$ where each $V_{n}$ is a discrete normed set. Likewise we define the full subcategory \(\bD(\mathsf{SNorm}_{R}^{\le1})^\circ\) (resp. \(\bD(\mathsf{Norm}_{R}^{\le1})^\circ\)) of \(\bD(\mathsf{SNorm}_{R}^{\le1})^\circ\) (resp. \(\bD(\mathsf{Norm}_{R}^{\le1})^\circ\)) consisting of of those complexes equivalent to ones which can be written as transfinite extensions of bounded below complexes of the form \(\tilde{l}^1(V) \cong \bigoplus_{V} R\) for a discrete semi-normed set \(V\) (resp. \(\overline{l}^1(V) \cong \bigoplus_{V} R\) for a discrete normed set \(V\)), where the coproduct is taken in the category \(\mathsf{SNorm}_R^{\leq 1}\) (resp. \(\mathsf{Norm}_R^{\leq 1}\)). We call these categories the \textit{convex subcategories} of \(\bD(\mathsf{Ban}_{R}^{\le1})\), \(\bD(\mathsf{Norm}_R^{\leq 1})\), and \(\bD(\mathsf{SNorm}_{R}^{\le1})\). The term convexity here really relates to rings of integers of number fields. In $\mathsf{Ind}(\mathsf{Ban}_{K})$ when $K$ is a field, the set $\{l^{1}(V):V\textrm{ a discrete set}\}$ is a generating set of projectives. (Recall that $V$ is discrete if the norm of every element is $1$).

\begin{proposition}
$\mathbf{D}(\mathsf{Ban}_{R}^{\le1})^\circ$ is a monoidal subcategory of $\mathbf{D}(\mathsf{Ban}_{R}^{\le1})$. 
\end{proposition}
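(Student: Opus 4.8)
The plan is to verify that $\mathbf{D}(\mathsf{Ban}_{R}^{\le1})^\circ$ is closed under the monoidal unit, under the tensor product, and (since we are claiming a monoidal \emph{sub}category) that it inherits the symmetric monoidal structure, which is automatic once the first two closure statements hold. The monoidal unit is $R = l^1(\{*\})$ with $\{*\}$ a single element of norm $1$, hence $R$ is (trivially) a bounded below complex of the form $l^1(V_\bullet)$ with $V_0$ discrete, so $R \in \mathbf{D}(\mathsf{Ban}_{R}^{\le1})^\circ$. The substance is closure under $\haotimes$.

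First I would reduce to the generating case. By definition, an object of $\mathbf{D}(\mathsf{Ban}_{R}^{\le1})^\circ$ is a transfinite extension of bounded below complexes $l^1(V_\bullet)$ with each $V_n$ a discrete normed set. The tensor product $\haotimes$ preserves colimits in each variable (it is a left adjoint, $\mathsf{Ban}_R^{\le1}$ being closed symmetric monoidal, and this passes to the derived $\infty$-category since the projective model structure is monoidal), so in particular it preserves the cofibre sequences exhibiting transfinite extensions; thus if we know $C \haotimes D \in \mathbf{D}(\mathsf{Ban}_{R}^{\le1})^\circ$ whenever $C$ and $D$ are each bounded below complexes of the form $l^1(V_\bullet)$, then by a double transfinite induction (first fix one factor as such a complex and induct on the extension-filtration of the other, then induct on the extension-filtration of the first) we get closure for arbitrary objects of the convex subcategory. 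Here I would use that $\mathbf{D}(\mathsf{Ban}_{R}^{\le1})^\circ$ is by construction closed under transfinite extensions and shifts, and that bounded below complexes remain bounded below after $\haotimes$ (the tensor product of two connective-up-to-shift complexes is again bounded below, as the Tor spectral sequence is first-quadrant-like in the relevant range).

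Next I would treat the base case $l^1(V_\bullet) \haotimes l^1(W_\bullet)$. Since $l^1(V_n)$ is a flat projective object of $\mathsf{Ban}_R^{\le1}$ (the objects $l^1(X)$ are projective and flat, as recalled in the excerpt just before this subsection), the derived tensor product is computed termwise by the usual tensor product of complexes, and in each bidegree we have the key identification $l^1(X) \haotimes l^1(Y) \cong l^1(X \times Y)$ for pointed normed sets $X, Y$ (this is the standard isometric isomorphism in $\mathsf{Ban}_R^{\le1}$, and $X \times Y$ is discrete when $X$ and $Y$ are). Taking the totalisation of the resulting double complex, which is bounded below, exhibits $l^1(V_\bullet) \haotimes l^1(W_\bullet)$ as a bounded below complex of the form $l^1((V \times W)_\bullet)$ with discrete terms, hence in $\mathbf{D}(\mathsf{Ban}_{R}^{\le1})^\circ$.

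The main obstacle I anticipate is bookkeeping around the transfinite induction: one has to be careful that the class $\mathbf{D}(\mathsf{Ban}_{R}^{\le1})^\circ$, defined via transfinite extensions of a specific generating class, is genuinely stable under the operations used (cofibres, retracts if needed, shifts, filtered colimits along the transfinite filtration) and that $\haotimes$ commutes with all of these in a way compatible with the filtration — i.e.\ that tensoring a transfinite-extension tower with a fixed object yields again a transfinite-extension tower. This is formal but requires invoking that $- \haotimes D$ is exact (preserves fibre/cofibre sequences) and colimit-preserving on $\mathbf{D}(\mathsf{Ban}_{R}^{\le1})$; once that is in hand the induction closes without difficulty. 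A secondary, purely notational point is ensuring the ``discrete normed set'' condition is preserved under products, which is immediate since a product of sets all of whose nonzero elements have norm $1$ again has this property.
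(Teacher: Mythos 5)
Your proof is correct and rests on exactly the same key facts as the paper's: the isomorphism $l^1(V)\haotimes l^1(W)\cong l^1(V\times W)$ together with the observation that a product of discrete normed sets is discrete. The paper states only this and leaves the reduction to the generating class implicit, whereas you spell out the (routine) transfinite induction; the substance is identical.
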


\begin{proof}
This follows from the fact that $l^{1}(V)\hat{\otimes} l^{1}(W)\cong l^{1}(V\times W)$ and the product of discrete normed sets is discrete.
 \end{proof}

We now define functors which help us identify the convex categories defined above with ``completions'' of usual categories of modules. Recall that for a commutative, unital ring \(R\), we had defined in \ref{ex:initial} the derived algebraic context \(\mathbf{C}_R\) of derived \(R\)-algebras. Denote by \(\underline{R}\) the underlying ring. Define the functor \[\mathrm{disc}_{\underline{R}}^{\le1} \colon \mathbf{Mod}_{\underline{R}}^{ffg} \to \bD(\mathsf{Norm}_R^{\leq 1}), \quad \underline{R}^{\oplus n} \mapsto R^{\oplus n},\] which extends by sifted cocompletion to a colimit-preserving functor \[\mathrm{disc}_{\underline{R}}^{\le1} \colon \mathbf{Mod}_{\underline{R}} \to \bD(\mathsf{Norm}_R^{\leq 1}).\] It is easy to see that this functor is fully faithful. Similarly, if \(R\) is a Banach ring, we have a discretisation functor \[\widehat{\mathrm{disc}}^{\le1}_{\underline{R}} \colon \mathbf{Mod}_{\underline{R}} \to \bD(\mathsf{Ban}_R^{\leq 1})\] taking values in the contracting Banach derived category. 

The following describes the essential images of the discretisation functors.

 \begin{lemma}\label{lem:convex-norm}
     Let $R$ be a normed ring such that $|r|\le 1$ for all $r\in R$. Then the functor
      $$\mathrm{disc}_{R}^{\le1}:\mathbf{C}_{R}\rightarrow \mathbf{D}(\mathrm{Norm}_{R}^{\le1})^\circ$$
      is an equivalence of categories.
 \end{lemma}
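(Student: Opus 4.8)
The plan is to exhibit $\mathrm{disc}_R^{\le1}$ as an equivalence by checking it is an exact, colimit-preserving, fully faithful functor whose essential image is exactly $\mathbf{D}(\mathsf{Norm}_R^{\le1})^\circ$, and whose source $\mathbf{C}_R = \bD(\underline{R})$ is generated under the relevant colimits by the compact projective generators $\underline{R}^{\oplus n}$. Since both sides are presentable stable $\infty$-categories and $\mathrm{disc}_R^{\le1}$ was constructed precisely by sifted cocompletion from the assignment $\underline{R}^{\oplus n} \mapsto R^{\oplus n}$, the bulk of the work is: (1) identify the image of the generators, (2) verify full faithfulness on the generators, and (3) identify the essential image with the convex subcategory.

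First I would unwind the hypothesis $|r| \le 1$ for all $r \in R$. This is the crucial point: it says that the algebraic free module $\underline{R}^{\oplus n}$, equipped with the (contracting) supremum norm, \emph{is} a contracting normed module, and moreover that every $R$-linear map $\underline{R}^{\oplus m} \to \underline{R}^{\oplus n}$ is automatically contractive — because the matrix entries lie in $R$ and hence have norm $\le 1$, so $\mathsf{Hom}_{\underline{R}}(\underline{R}^{\oplus m}, \underline{R}^{\oplus n}) = \mathsf{Hom}_{\mathsf{Norm}_R^{\le1}}(R^{\oplus m}, R^{\oplus n})$ as sets. This immediately gives full faithfulness of $\mathrm{disc}_R^{\le1}$ restricted to the full subcategory of finite free modules, at the level of $\pi_0$ of mapping spaces; since everything here is $1$-categorical on the generators (the $\underline{R}^{\oplus n}$ have no higher Ext among themselves in $\bD(\underline R)$, and likewise $R^{\oplus n}$ are compact projective in the quasi-abelian derived category), full faithfulness on mapping \emph{spaces} follows. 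Then I would invoke the universal property: a colimit-preserving functor out of $\bD(\underline R) = \mathbf{C}_R$ is determined by its restriction to $\mathbf{Mod}_{\underline R}^{ffg}$, and $\mathrm{disc}_R^{\le1}$ preserves (sifted, hence all) colimits by construction, so full faithfulness propagates from the generators to all of $\mathbf{C}_R$ by a standard density/Yoneda argument.

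Next I would pin down the essential image. Under $\mathrm{disc}_R^{\le1}$, the free module $\underline R^{\oplus n}$ goes to $R^{\oplus n}$, and a bounded-below complex of finite free $\underline R$-modules goes to a bounded-below complex of the $R^{\oplus n}$; a general object of $\bD(\underline R)$ is a filtered (transfinite) colimit of extensions of shifts of such, and this matches the definition of $\mathbf{D}(\mathsf{Norm}_R^{\le1})^\circ$ as transfinite extensions of bounded-below complexes of the form $\overline{l}^1(V) \cong \bigoplus_V R$ for discrete normed sets $V$ (here I use that for $|r|\le 1$ one has $\overline{l}^1(V) \cong \bigoplus_V R$ computed algebraically, the norm being the sup norm with no completion needed since all norms are $\le 1$ — this is exactly where the convexity hypothesis is doing work, and is worth a sentence or two). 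So the image contains a generating set of $\mathbf{D}(\mathsf{Norm}_R^{\le1})^\circ$ and is closed under the colimits defining it; conversely everything in $\mathbf{D}(\mathsf{Norm}_R^{\le1})^\circ$ is by definition built from these, so the image is exactly the convex subcategory. Combined with full faithfulness, this gives the equivalence.

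\textbf{Main obstacle.} The delicate step is the identification $\overline{l}^1(V) \cong \bigoplus_V \underline{R}$ (i.e.\ no completion is needed / the coproduct in $\mathsf{Norm}_R^{\le1}$ of copies of $R$ is already the algebraic direct sum with sup norm) together with the claim that arbitrary — not just finite — direct sums of $\underline R$ land in the image, since $\mathrm{disc}_R^{\le1}$ was only built by \emph{sifted} cocompletion from \emph{finite} free modules. Resolving this requires checking that infinite coproducts in $\mathsf{Norm}_R^{\le1}$ of the generators $R_\delta$ with $\delta = 1$ are computed as in $\mathbf{Mod}_{\underline R}$ (using the $\mathbf{AdMon}$-elementary structure recorded earlier for $\mathsf{Norm}_R^{\le1}$, i.e.\ that mapping out of $R$ commutes with coproducts), and that $\mathrm{disc}_R^{\le1}$, being colimit-preserving, sends the algebraic direct sum $\bigoplus_V \underline R$ to $\bigoplus_V R = \overline{l}^1(V)$. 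The rest is bookkeeping with $t$-structures and transfinite extensions.
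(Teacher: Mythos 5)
Your proposal is correct, and its second half --- identifying the essential image by matching $\bigoplus_{V}\underline{R}$ with $\overline{l}^{1}(V)$ and observing that the hypothesis $|r|\le 1$ makes every module map between these automatically contractive --- is exactly the paper's argument, including your correct identification of the main subtlety (infinite coproducts of $R$ in $\mathsf{Norm}_R^{\le 1}$ versus the finite free modules used to define $\mathrm{disc}_R^{\le1}$, resolved by colimit-preservation). Where you diverge is the full-faithfulness step. The paper exhibits $\mathrm{disc}_R^{\le1}$ as the left Quillen adjoint of the underlying-module functor $|-|$; since $|-|$ is exact it needs no deriving, so the unit is computed directly on dg-projective complexes and is visibly an isomorphism there, giving full faithfulness in one stroke. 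You instead verify full faithfulness on the compact projective generators $\underline{R}^{\oplus n}$ (valid, by the same matrix-entry observation) and propagate by colimit-preservation and density. Your route works but carries an obligation the paper's does not: you must know that the images $R^{\oplus n}$ are compact in $\mathbf{D}(\mathsf{Norm}_R^{\le1})$, so that mapping out of them commutes with the colimits building general objects --- you do flag this, and it is supplied by the $\mathbf{AdMon}$-elementarity of $\mathsf{Norm}_R^{\le1}$ established earlier, so nothing is missing. One small caution: both your matrix argument and the identification of the unit ball of $R^{\oplus n}$ with all of $R^{\oplus n}$ use the nonarchimedean (max-norm) coproduct; with an archimedean $\ell^1$-type coproduct norm the matrix argument would fail, so, like the paper's proof, yours is really confined to the nonarchimedean setting of this section.
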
 

 \begin{proof}
There is a Quillen adjunction
$$\adj{\mathrm{disc}_{R}^{\le1}}{\mathrm{Ch}(\mathrm{Mod}_{R})}{\mathrm{Ch}(\mathrm{Norm}_{R}^{\le1})}{|-|}$$
where both sides are equipped with the projective model structure, and $|-|$ denotes the underlying module functor. $|-|$ is evidently exact. Thus for any dg-projective complex $P_{\bullet}$ in $\mathrm{Ch}(\mathrm{Mod}_{R})$, the unit $\eta_{P}$ of this adjunction is an equivalence. Hence $\mathrm{disc}_{R}^{\le1}$ is fully faithful. A dg-projective $P_{\bullet}$ may be written as a transfinite extension of bounded below complexes of projectives which are in fact free modules. Such a free module $\bigoplus_{v\in V}R$ is sent to $\overline{l}^{1}(V)$ by $\mathrm{disc}_{R}^{\le1}$. Since $\mathrm{disc}_{R}^{\le1}$ commutes with colimits, it follows that the essential image of $\mathrm{disc}_{R}^{\le1}$ is contained in  $\mathbf{D}(\mathrm{Norm}_{R}^{\le1})^\circ$. We now check that we get every object of  $\mathbf{D}(\mathrm{Norm}_{R}^{\le1})^\circ$. It suffices to prove that we get every bounded below complex of projectives of the form $l^{1}(V_{\bullet})$, with each $V_{n}$-discrete. 

 The underlying \(\underline{R}\)-module of a projective $l^{1}(V)$, with $V$-discrete,is $\bigoplus_{v\in V}R$. Moreover any $l^{1}(V)\rightarrow l^{1}(W)$ with \(V\) and \(W\) discrete normed sets is bounded by the assumption on the norm on \(R\). Hence we clearly have $l^{1}(V_{\bullet})\cong \mathrm{disc}_{R}^{\le1}(|l^{1}(V_{\bullet})|)$.
 \end{proof}

 \begin{corollary}
 Let $R$ be a Banach ring such that any sequence which converges to zero is eventually constant. Then the functor 
 $$\widehat{\mathrm{disc}}_{R}^{\le1}:\mathbf{C}_{R}\rightarrow\mathbf{D}(\mathsf{Ban}_{R}^{\le1})^\circ$$
 is an equivalence of categories.
 \end{corollary}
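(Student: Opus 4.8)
The statement to prove is the following corollary: if $R$ is a Banach ring in which every null sequence is eventually constant, then the discretisation functor $\widehat{\mathrm{disc}}_R^{\le1} \colon \mathbf{C}_R \to \mathbf{D}(\mathsf{Ban}_R^{\le1})^\circ$ is an equivalence of categories.

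The plan is to deduce this from the preceding Lemma \ref{lem:convex-norm}, which already establishes that $\mathrm{disc}_R^{\le1} \colon \mathbf{C}_R \to \mathbf{D}(\mathsf{Norm}_R^{\le1})^\circ$ is an equivalence, together with the observation that the hypothesis on $R$ forces the normed and Banach worlds to coincide on the relevant objects. First I would note that the hypothesis -- every sequence converging to zero is eventually constant -- implies in particular that the norm on $R$ takes values in a discrete subset of $\R_{\geq 0}$ near $0$, and more importantly that $R$ is already complete in a very strong sense: any normed $R$-module built as a transfinite extension of bounded-below complexes of $l^1(V_\bullet)$ with $V_n$ discrete is automatically complete, because in such modules Cauchy sequences are eventually constant (the argument is the same one used just above in the proof that $M/B_M(0,\epsilon_n)$ is complete). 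Hence the inclusion $\mathsf{Ban}_R^{\le1} \hookrightarrow \mathsf{Norm}_R^{\le1}$ restricts to an equivalence between the relevant convex subcategories.

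The key steps, in order: (1) Recall the Quillen adjunction $\mathrm{disc}_R^{\le1} \colon \mathrm{Ch}(\mathrm{Mod}_R) \rightleftarrows \mathrm{Ch}(\mathsf{Norm}_R^{\le1})$ and its Banach analogue $\widehat{\mathrm{disc}}_R^{\le1} \colon \mathrm{Ch}(\mathrm{Mod}_R) \rightleftarrows \mathrm{Ch}(\mathsf{Ban}_R^{\le1})$, with right adjoint the underlying-module functor $|-|$, which is exact. As in Lemma \ref{lem:convex-norm}, exactness of $|-|$ and the fact that the unit on dg-projectives is an isomorphism gives full faithfulness of $\widehat{\mathrm{disc}}_R^{\le1}$. (2) For essential surjectivity, it suffices to hit every bounded-below complex $l^1(V_\bullet)$ with each $V_n$ a discrete normed set, since such complexes and their transfinite extensions generate $\mathbf{D}(\mathsf{Ban}_R^{\le1})^\circ$, and $\widehat{\mathrm{disc}}_R^{\le1}$ commutes with colimits. (3) The underlying $\underline{R}$-module of $l^1(V)$ for $V$ discrete is $\bigoplus_{v \in V} R$, and $\widehat{\mathrm{disc}}_R^{\le1}\bigl(\bigoplus_{v \in V} R\bigr) \cong l^1(V)$: here one uses the hypothesis on $R$ to see that the completed coproduct agrees with the ordinary coproduct, i.e.\ $l^1(V) = \bigoplus_{v\in V}^{\le1} R$ is already complete. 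Indeed an element of the completion of $\bigoplus_{v\in V}^{\le1}R$ is a limit of a Cauchy sequence of finitely-supported tuples; by the eventually-constant hypothesis applied coordinatewise (and to the sup of the tails), such a sequence stabilises, so the completion is the bounded coproduct itself. Any bounded map $l^1(V) \to l^1(W)$ between such objects is then automatically in the image, exactly as in the proof of Lemma \ref{lem:convex-norm}. (4) Conclude that $\widehat{\mathrm{disc}}_R^{\le1}$ is essentially surjective, hence an equivalence.

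The main obstacle -- though it is minor -- is verifying cleanly that under the eventually-constant hypothesis the completion functor $\mathrm{Cpl}$ acts as the identity on the objects $l^1(V_\bullet)$ and their transfinite extensions, so that the convex subcategory $\mathbf{D}(\mathsf{Norm}_R^{\le1})^\circ$ literally coincides with $\mathbf{D}(\mathsf{Ban}_R^{\le1})^\circ$ inside the ambient derived categories; once that identification is in place the corollary is immediate from Lemma \ref{lem:convex-norm}. One should be slightly careful that ``eventually constant for null sequences'' genuinely propagates through transfinite extensions and finite complexes -- but this follows because completeness is detected on the closed balls $\overline{B}(0,\delta)$, and the class of modules whose null sequences are eventually constant is closed under the operations (kernels, finite direct sums, the relevant colimits) used to build objects of the convex subcategory, as already exploited in the lemmas immediately preceding this corollary.
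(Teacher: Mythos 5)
Your proof is correct and takes essentially the same route as the paper's: the paper's entire argument is that under the eventually-constant hypothesis one has $\overline{l}^{1}(V)=l^{1}(V)$, after which the proof of Lemma \ref{lem:convex-norm} applies verbatim. Your extra verification that completion acts trivially on the bounded coproducts (Cauchy sequences of finitely supported tuples stabilise because nonzero elements of $R$ have norm bounded below) is precisely the justification for that identification.
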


 \begin{proof}
     In this case $\overline{l}^{1}(V)=l^{1}(V)$. The same proof as Lemma \ref{lem:convex-norm} yields the desired equivalence. 
 \end{proof}

  \begin{lemma}
     Let $R$ be a Banach ring.  Then $\mathbf{D}(\mathsf{Ban}_{R}^{\le1})^\circ$ is closed under coproducts in $\mathbf{D}(\mathsf{Ban}_{R}^{\le1})$.  
 \end{lemma}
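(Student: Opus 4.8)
The plan is to show that $\mathbf{D}(\mathsf{Ban}_{R}^{\le1})^\circ$ is closed under coproducts by exploiting the explicit description of its objects as transfinite extensions of bounded-below complexes of the form $l^1(V_\bullet)$ with each $V_n$ discrete. The key observation is that coproducts interact well with both ingredients of that description: coproducts of transfinite extensions are transfinite extensions, and the coproduct (taken in the model category $\mathsf{Ch}(\mathsf{Ban}_R^{\le1})$, i.e.\ the $l^1$-direct sum) of $l^1$-spaces on discrete normed sets is again an $l^1$-space on a discrete normed set, since $\coprod_{i}^{\le1} l^1(V_i) \cong l^1(\coprod_i V_i)$ and a disjoint union of discrete normed sets is discrete.

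First I would reduce to coproducts of \emph{generators}: suppose $(X_\alpha)_{\alpha \in A}$ is a family with each $X_\alpha \in \mathbf{D}(\mathsf{Ban}_{R}^{\le1})^\circ$. By definition each $X_\alpha$ is equivalent to a complex $\tilde{X}_\alpha$ that admits a transfinite filtration $0 = F_0^\alpha \to F_1^\alpha \to \cdots$ whose successive cofibres are bounded-below complexes $l^1(V_{\bullet}^{\alpha,\beta})$ with discrete terms. Since $\coprod_\alpha$ is a left adjoint (it is the coproduct in a presentable stable category, computed in the underlying model category as the $l^1$-sum of complexes), it preserves the filtration colimits and the cofibre sequences, so $\coprod_\alpha \tilde{X}_\alpha$ inherits a transfinite filtration. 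It remains to identify the successive cofibres of this combined filtration: after reindexing the product filtration over $A$ (each $\tilde X_\alpha$ contributes its own transfinite tower, and one concatenates these towers, or more simply one filters by "finite subsets of $A$ together with a stage in each"), the successive quotients are finite (hence arbitrary) coproducts of the individual cofibres $l^1(V_\bullet^{\alpha,\beta})$. Such a coproduct is $l^1\bigl(\coprod V_\bullet^{\alpha,\beta}\bigr)$, a bounded-below complex with discrete terms, which is exactly the shape allowed. So $\coprod_\alpha \tilde X_\alpha$ is again a transfinite extension of such complexes, hence lies in $\mathbf{D}(\mathsf{Ban}_{R}^{\le1})^\circ$, and therefore so does $\coprod_\alpha X_\alpha$.

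The main obstacle is bookkeeping: one must be careful that concatenating the transfinite filtrations of the various $\tilde X_\alpha$ really does produce a single transfinite filtration of the coproduct whose graded pieces are coproducts of the original graded pieces. The clean way is to note that for an ordinal-indexed tower, the coproduct over $A$ of towers $\{F_\bullet^\alpha\}$ can be realised as the tower indexed by the well-ordered set $A \times \lambda$ (lexicographic order, $\lambda$ a common length obtained by padding with identities), with $G_{(\alpha_0,\beta)} = \bigl(\coprod_{\alpha < \alpha_0} \tilde X_\alpha\bigr) \amalg F_\beta^{\alpha_0}$; its colimit is $\coprod_\alpha \tilde X_\alpha$, and its successive cofibres are precisely the $\mathrm{cofib}(F_\beta^{\alpha_0} \to F_{\beta+1}^{\alpha_0})$, which are single $l^1(V_\bullet)$'s, with discrete terms. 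This sidesteps forming infinite coproducts of graded pieces entirely, and only the elementary identity $\coprod^{\le1} l^1(V_i) \cong l^1(\coprod V_i)$ together with "a disjoint union of discrete normed sets is discrete" is needed — both of which are immediate from the definition of the $l^1$-functor recalled earlier in the paper. A brief remark should also confirm that the coproduct in $\mathbf{D}(\mathsf{Ban}_R^{\le1})$ is indeed computed by the $l^1$-sum of chain complexes, which follows since $\mathsf{Ch}(\mathsf{Ban}_R^{\le1})$ is the combinatorial model presenting it and coproducts of cofibrant objects are homotopy coproducts.
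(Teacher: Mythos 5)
Your proof is correct and rests on exactly the same key fact as the paper's one-line argument, namely the identity $\bigoplus_{i}l^{1}(V_{i})\cong l^{1}(\bigcup_{i}V_{i})$ together with the observation that a disjoint union of discrete normed sets is discrete. The transfinite-filtration bookkeeping you carry out explicitly is left implicit in the paper, but the approach is the same.
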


 \begin{proof}
      This follows from the fact that $\bigoplus_{i\in\mathcal{I}}l^{1}(V_{i})\cong l^{1}(\bigcup_{i\in\mathcal{I}}V_{i})$.
 \end{proof}

\subsection{Rings with pseudo-uniformisers}

 \begin{definition}
     Let $R$ be a Banach ring. A \textit{pseudo-uniformiser of }$R$ is an element $\pi\in R$ such that
     \begin{enumerate}
     \item 
     $|\pi^{n}|=|\pi|^{n}$ for all non-negative integers $n$,
     \item
     $|\pi|<1$.
     \end{enumerate}
 \end{definition}

 \begin{definition}
    Let $R$ be a Banach ring with pseudo-uniformiser $\pi$. A normed $R$-module $M$ is said to be $\pi$-adic if 
$$||m||=|\pi|^{\sf{max}\{l:m\in\pi^{l}M\}}.$$
$R$ is said to be a $\pi$-adic if it $\pi$-adic as a module over itself.
\end{definition}

\begin{proposition}\label{prop:algepi}
    Let $M$ and $N$ be $\pi$-adically normed. If $\phi:M\rightarrow N$ is surjective then it is an admissible epimorphism.
\end{proposition}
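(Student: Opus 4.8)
The plan is to exploit the fact that $\pi$-adicness forces every $R$-linear map to be contractive, and then to lift elements along $\phi$ in a norm-preserving way by dividing out the appropriate power of $\pi$.

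First I would record the following observation: if $\phi\colon M\to N$ is any $R$-linear map between $\pi$-adic normed modules, then $\phi$ is contractive. Indeed, if $m\in\pi^{k}M$ then $\phi(m)\in\pi^{k}N$, so taking $k=\mathsf{max}\{j:m\in\pi^{j}M\}$ (so that $\|m\|=|\pi|^{k}$) gives $\|\phi(m)\|\le|\pi|^{k}=\|m\|$, using $|\pi|<1$. In particular $\phi$ is bounded, so it is a morphism in our categories of normed modules and has a kernel and a (co)image there.

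Next, to see that a surjective such $\phi$ is an admissible epimorphism for the strong exact structure, it suffices to show that for every $n\in N$ there is $m\in M$ with $\phi(m)=n$ and $\|m\|=\|n\|$; from this the canonical bijection $M\big\slash\ker(\phi)\to N$ is seen to be isometric for the quotient norm (one inequality is contractivity of $\phi$, the other is exactly the norm-attaining lift), so $\phi$ is the cokernel of its kernel, i.e.\ an admissible epimorphism (a fortiori also for the quasi-abelian exact structure). To produce $m$: the case $n=0$ is settled by $m=0$, so assume $n\neq0$. Since $N$ is $\pi$-adic and its norm is nondegenerate, $\ell\defeq\mathsf{max}\{j:n\in\pi^{j}N\}$ is finite and $\|n\|=|\pi|^{\ell}$; write $n=\pi^{\ell}n'$ with $n'\in N$. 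By surjectivity choose $m'\in M$ with $\phi(m')=n'$ and set $m\defeq\pi^{\ell}m'$. Then $\phi(m)=\pi^{\ell}n'=n$, and $m\in\pi^{\ell}M$ forces $\|m\|\le|\pi|^{\ell}=\|n\|$, while the contractivity step gives $\|n\|=\|\phi(m)\|\le\|m\|$; hence $\|m\|=\|n\|$, as required.

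There is no serious obstacle here: the only point requiring a little care is excluding the degenerate possibility that $n$ lies in $\bigcap_{j}\pi^{j}N$, which cannot happen because the norm on a $\pi$-adic \emph{normed} module is nondegenerate, together with checking that the contractivity of arbitrary $R$-linear maps really does drop out of the definition of $\pi$-adicness. Everything else is a two-line sandwich estimate.
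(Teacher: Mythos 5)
Your proof is correct and follows essentially the same route as the paper's: observe that $\pi$-adicness makes every $R$-linear map contractive, write $n=\pi^{\ell}n'$ with $\ell$ determined by $\|n\|$, lift $n'$ and rescale by $\pi^{\ell}$, then sandwich $\|m\|\le|\pi|^{\ell}=\|n\|=\|\phi(m)\|\le\|m\|$. The only additions (the explicit nondegeneracy remark ruling out $n\in\bigcap_{j}\pi^{j}N$, and the reformulation of admissible epimorphism via norm-preserving lifts) are harmless and consistent with the paper's definitions.
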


\begin{proof}
Note that with the $\pi$-adic norms all morphisms are bounded by $1$. 
     Let $n\in N$ with $||n||=|\pi|^{l}$, so that $n=\pi^{l}\tilde{n}$ for some $||\tilde{n}||=1$. Let $\tilde{m}$ be such that $g(\tilde{m})=\tilde{n}$. Let $m=\pi^{l}\tilde{m}$. Then $g(m)=n$ and $||m||\le|\pi|^{l}=||n||$. On the other hand $||n||=||g(m)||\le||m||$. Hence $||m||=||n||$. 
\end{proof}

\begin{corollary}
    Let $R$ be a $\pi$-adic Banach ring. A sequence
    $$0\rightarrow K\rightarrow M\rightarrow N\rightarrow 0$$
    of $\pi$-adic normed $R$-modules with $N$ $\pi$-torsion free is exact in the strong exact structure if and only if it is algebraically exact.
\end{corollary}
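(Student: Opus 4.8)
The plan is to unwind the two definitions involved — the strong exact structure and the $\pi$-adic norm — and reduce everything to Proposition \ref{prop:algepi}. One implication is essentially formal: in the category $\mathsf{Norm}_{R}^{\le1}$ of contracting normed $R$-modules the categorical kernel and cokernel of a morphism are the algebraic kernel with the restricted norm and the algebraic cokernel with the quotient norm, so any sequence that is short exact for the strong exact structure is in particular a kernel--cokernel pair, and hence algebraically exact. The real content is the converse: given an algebraically exact sequence $0\to K\to M\to N\to0$, with maps $i\colon K\to M$ and $\phi\colon M\to N$, in which $K,M,N$ carry their $\pi$-adic norms and $N$ is $\pi$-torsion free, I want to verify (a) that $i$ is the kernel of $\phi$, (b) that $\phi$ is the cokernel of $i$, and (c) the strong-epimorphism condition appearing in the definition of the strong exact structure.

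For (a) I would check that the $\pi$-adic norm on $K$ coincides with the norm induced from $M$, which is the identity $\pi^{l}K=\pi^{l}M\cap K$ for all $l\ge0$. Only the inclusion $\supseteq$ is nontrivial, and it is exactly where the $\pi$-torsion-free hypothesis enters: if $k=\pi^{l}m$ lies in $K$ with $m\in M$, then $\pi^{l}\phi(m)=\phi(k)=0$ in $N$, so $\phi(m)=0$ since $N$ is $\pi$-torsion free, hence $m\in K$ by exactness and $k\in\pi^{l}K$. This gives $\|k\|_{M}=\|k\|_{K}$, so $i$ is an isometric embedding onto the algebraic kernel.

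For (b) and (c) I would invoke Proposition \ref{prop:algepi}: since $M$ and $N$ are $\pi$-adically normed and $\phi$ is surjective, $\phi$ is an admissible epimorphism, so $N$ carries the quotient norm, which makes $\phi$ the cokernel of $i$; and the argument proving that proposition in fact produces, for each $n\in N$, a preimage $m\in M$ with $\|m\|=\|n\|$ (write $n=\pi^{l}\tilde n$ with $\|\tilde n\|=1$, lift $\tilde n$ to $M$, and multiply by $\pi^{l}$), which is precisely the condition for the kernel--cokernel pair $0\to K\to M\to N\to0$ to be short exact in the strong exact structure. I should also remark in passing that the two maps are automatically contracting, since $\pi^{l}K\subseteq\pi^{l}M$ and $\phi(\pi^{l}M)\subseteq\pi^{l}N$, so the sequence genuinely lives in $\mathsf{Norm}_{R}^{\le1}$. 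The only step that requires any real thought is (a); without $\pi$-torsion-freeness of $N$ the two norms on $K$ genuinely differ, so that is the main obstacle, though it is a one-line argument once the role of the hypothesis is identified.
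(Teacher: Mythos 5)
Your proof is correct and follows essentially the same route as the paper: one direction is formal, and for the converse the paper likewise invokes Proposition \ref{prop:algepi} to get the (strong) admissible epimorphism and uses $\pi$-torsion-freeness of $N$ to see that the restriction of the norm on $M$ to $K$ is again the $\pi$-adic one. You have merely spelled out the details of that last step, which the paper states without proof.
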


\begin{proof}
Note that with the $\pi$-adic norms all morphisms are bounded by $1$. Evidently, strong exactness implies algebraic exactness.
    Suppose the sequence is algebraically exact. By the Lemma above the map $g:M\rightarrow N$ is an admissible epimorphism Since $N$ is $\pi$-torsion free, the restriction of the norm on $M$ to $K$ is the $\pi$-adic one. 
\end{proof}

 Let $R$ be a Banach ring with pseudo-uniformiser $\pi$. We equip $\underline{R}$ with the $\pi$-\textit{adic semi-norm} whereby 
 $$|r|_{\pi}=|\pi|^{\max\{l\in\mathbb{N}_{0}\cup\{\infty\}: r\in\pi^{l}R\}}.$$
Let $R_{\pi}$ denote the ring $\underline{R}$ equipped with the $\pi$-adic semi-norm. This is a $\pi$-adic semi-normed ring. Suppose that for all $r\in R, |r|\le 1$. Let $r=\pi^{l}u$. Then $|r|\le |\pi|^{l}|u|\le|\pi|^{l}$. Hence the map $R_{\pi}\rightarrow R$ is bounded.

The category of non-Archimedean Banach rings has an initial object, $\mathbb{Z}_{triv}$, the integers equipped with the discrete norm.

\begin{definition}
Let $R$ be a Banach ring with pseudo-uniformiser $\pi$ such that for all $r\in R$ we have $|r|\le1$. $R$ is said to be \textit{almost adic} if the map $R_{\pi}\rightarrow R$ is an isomorphism in $\sf{Ban}_{\mathbb{Z}_{\mathrm{triv}}}$.
\end{definition}

\begin{remark}
    Note we \textit{are not} requiring the map be an isometry. Thus the induced map $\mathbf{D}(\sf{Ban}_{R}^{\le1})\rightarrow\mathbf{D}(\sf{Ban}_{R_{\pi}}^{\le1})$ will not be an equivalence in general. However, importantly for us, $\mathbf{D}(R)\rightarrow\mathbf{D}(R_{\pi})$ will be an equivalence.
\end{remark}

\begin{lemma}
    Suppose that $R$ is a Banach ring with pseudo-uniformiser $\pi$ such that $|r|\le1$ for all $r\in R$, and that for all $l$, $\overline{B}_{R}(0,|\pi|^{l})=(\pi^{l})$. Then the map $R_{\pi}\rightarrow R$ is an isomorphism of normed rings.
\end{lemma}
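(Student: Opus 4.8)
The plan is as follows. Since the underlying-set map $R_{\pi}\rightarrow R$ is the identity on $\underline{R}$, it is automatically a ring homomorphism and a bijection; the only content is to check that it and its inverse are bounded, so that it is an isomorphism of normed rings in the sense used in the preceding discussion. (Recall, per the remark above, that we are \emph{not} asking for an isometry.) Boundedness of $R_{\pi}\rightarrow R$ has already been noted in general: writing $r=\pi^{l}u$ with $|u|\le 1$ gives $|r|\le|\pi|^{l}|u|\le|\pi|^{l}=|r|_{\pi}$, so this direction is free with constant $1$.

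For the reverse direction I would first unwind the hypothesis $\overline{B}_{R}(0,|\pi|^{l})=(\pi^{l})$ into the two facts it yields: (i) for every $r\in R$ and $l\ge 0$ one has $|r|\le|\pi|^{l}$ if and only if $r\in(\pi^{l})$; and (ii) $\bigcap_{l\ge 0}(\pi^{l})=\bigcap_{l\ge 0}\overline{B}_{R}(0,|\pi|^{l})=0$, the last equality because $|\pi|<1$ forces $|\pi|^{l}\to 0$ and the norm on $R$ is nondegenerate. Fact (ii) shows that for $r\neq 0$ the integer $l(r):=\max\{k\ge 0: r\in(\pi^{k})\}$ is finite (otherwise $r\in\bigcap_{k}(\pi^{k})=0$), so $|r|_{\pi}=|\pi|^{l(r)}>0$; in particular the seminorm on $R_{\pi}$ is a genuine norm under these hypotheses.

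Now fix $r\neq 0$ and put $l=l(r)$. By maximality $r\notin(\pi^{l+1})$, so by fact (i) $|r|>|\pi|^{l+1}=|\pi|\cdot|\pi|^{l}=|\pi|\,|r|_{\pi}$, whence $|r|_{\pi}\le|\pi|^{-1}|r|$; this also holds trivially for $r=0$. Thus the inverse $R\rightarrow R_{\pi}$ is bounded with constant $|\pi|^{-1}$. Combining the two directions, $R_{\pi}\rightarrow R$ is a bounded ring isomorphism with bounded inverse, i.e.\ an isomorphism of normed rings.

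I do not expect a serious obstacle here; the only points needing care are the bookkeeping that translates the ball hypothesis into statements about the ideals $(\pi^{l})$, and the observation that "isomorphism of normed rings" must be read in the bounded (not isometric) sense — the hypotheses pin $|r|$ down only to the interval $(|\pi|^{l+1},|\pi|^{l}]$, so an isomorphism up to the distortion factor $|\pi|^{-1}$ is the best one can expect.
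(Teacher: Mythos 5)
Your proof is correct and takes essentially the same route as the paper's: boundedness of $R_{\pi}\rightarrow R$ via writing $r=\pi^{l}u$, and boundedness of the inverse with constant $|\pi|^{-1}$ by observing that maximality of $l$ forces $r\notin(\pi^{l+1})=\overline{B}_{R}(0,|\pi|^{l+1})$, hence $|r|>|\pi|^{l+1}=|\pi|\,|r|_{\pi}$. Your extra check that $\bigcap_{l}(\pi^{l})=0$, so that the $\pi$-adic seminorm is genuinely a norm, is a detail the paper leaves implicit but does not change the argument.
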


\begin{proof}
    We already know that the identity map $R_{\pi}\rightarrow R$ is bounded. We claim that $R\rightarrow R_{\pi}$ is also bounded. In fact we claim that $|r|_{\pi}\le\frac{1}{\pi}|r|$. Indeed, suppose $|r|_{\pi}=|\pi|^{l}$. Then $r\in B_{R}(0,|\pi|^{l})$ but $r\notin B_{R}(0,|\pi|^{l+1})$. Thus $|r|>|\pi|^{l+1}=|\pi| |r|_{\pi}$, as required.
\end{proof}

 \begin{example}
     Let $K$ be a non-trivially valued non-Archimedean Banach field, and $\mathcal{O}_{K}$ its ring of integers. Then $\mathcal{O}_{K}$ has a pseudo-uniformiser $\pi$ such that the norm on $\mathcal{O}_{K}$ is almost adic. Indeed pick any non-zero element $\pi$ with $|\pi|<1$. 
 \end{example}

 \begin{lemma}\label{lem:torsion free}
     Let $R$ be a $\pi$-adic Banach ring which is $\pi$ torsion-free. Then
     $$\mathbf{D}(\mathsf{Ban}_{R}^{\le1})^{\circ}$$
     coincides with the category consisting of those complexes isomorphic to a complex $M_{\bullet}$ where 
     \begin{enumerate}
     \item
      the norm on each $M_{n}$ is $\pi$-adic,
     \item 
     each $M_{n}$ is torsion-free.
     \end{enumerate}
 \end{lemma}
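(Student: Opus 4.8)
The plan is to identify $\mathbf{D}(\mathsf{Ban}_{R}^{\le1})^{\circ}$ with the described category by controlling representatives of objects up to quasi-isomorphism. First I would recall the building blocks: by definition $\mathbf{D}(\mathsf{Ban}_{R}^{\le1})^{\circ}$ consists of complexes equivalent to transfinite extensions of bounded-below complexes $l^{1}(V_{\bullet})$ with each $V_{n}$ a discrete normed set. The key local observation is that $l^{1}(V)$, for $V$ discrete, has underlying $R$-module $\bigoplus_{v \in V} R$, is $\pi$-torsion-free (since $R$ is, and direct sums of torsion-free modules are torsion-free), and carries the $\pi$-adic norm — this last point uses precisely that $R$ is $\pi$-adic, so that $\|(r_v)_v\| = \sup_v |r_v| = \sup_v |\pi|^{\max\{l : r_v \in \pi^l R\}} = |\pi|^{\max\{l : (r_v)_v \in \pi^l \bigoplus R\}}$, where the last equality holds because the supremum of the $\pi$-adic valuations is attained (or is $\infty$) on the support. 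So every generating complex already lies in the target category, and I would then check the target category is closed under the operations (shifts, bounded-below complexes, transfinite extensions) used to build $\mathbf{D}(\mathsf{Ban}_{R}^{\le1})^{\circ}$: a transfinite extension of complexes whose terms are $\pi$-adic and torsion-free again has $\pi$-adic torsion-free terms, since kernels of maps between $\pi$-adic torsion-free modules inherit the $\pi$-adic norm (here the $\pi$-torsion-freeness is essential, exactly as in the Corollary after Proposition \ref{prop:algepi}) and cokernels along admissible epimorphisms of $\pi$-adic modules are $\pi$-adic by Proposition \ref{prop:algepi}. This gives the inclusion $\mathbf{D}(\mathsf{Ban}_{R}^{\le1})^{\circ} \subseteq$ (target).

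For the reverse inclusion, I would take a complex $M_\bullet$ with each $M_n$ a $\pi$-adic torsion-free Banach $R$-module and show it is equivalent in $\mathbf{D}(\mathsf{Ban}_{R}^{\le1})$ to a transfinite extension of complexes of the form $l^{1}(V_\bullet)$ with discrete $V_n$. The tool is the projective model structure on $\mathrm{Ch}(\mathsf{Ban}_{R}^{\le1})$ from \cite{kelly2016homotopy}: replace $M_\bullet$ by a cofibrant (dg-projective) resolution $P_\bullet \to M_\bullet$, where $P_\bullet$ is a transfinite extension of bounded-below complexes of the projective generators $l^1(W)$. A priori the resolving sets $W$ need not be discrete. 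The point is that for a $\pi$-adic torsion-free target the resolution can be taken with discrete resolving sets: since $M_n$ is $\pi$-adic, the canonical strong epimorphism $l^1(|M_n|) \to M_n$ factors, after rescaling each generator of norm $|\pi|^l$ by $\pi^{-l}$ (using torsion-freeness so this rescaling is injective on the relevant submodule), through $l^1(\widetilde{V}_n) \to M_n$ with $\widetilde{V}_n$ discrete; iterating this to build the resolution degreewise and assembling into a transfinite extension produces the desired presentation. Thus $M_\bullet \simeq P_\bullet \in \mathbf{D}(\mathsf{Ban}_{R}^{\le1})^{\circ}$.

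The main obstacle I anticipate is the second inclusion — specifically, verifying cleanly that one may \emph{choose} a dg-projective resolution whose terms are $l^1$ of \emph{discrete} sets when the target has $\pi$-adic norms. The subtlety is that the standard projective generators $R_\delta$ for $\delta > 0$ are not all of the form $l^1(\text{discrete})$, and one must use the $\pi$-adic structure to rescale and the torsion-freeness to ensure the rescaled maps remain well-defined and still surject; keeping track of norms through the transfinite filtration so that the resulting complex genuinely lies in the convex subcategory (rather than merely being abstractly equivalent to something there) is the delicate bookkeeping. A convenient shortcut, if available, is to invoke the equivalence $\widehat{\mathrm{disc}}_{R}^{\le1} \colon \mathbf{C}_R \to \mathbf{D}(\mathsf{Ban}_{R}^{\le1})^{\circ}$ from the Corollary above (valid when convergent sequences are eventually constant) together with a direct check that its essential image is exactly the $\pi$-adic torsion-free complexes; but for a general $\pi$-adic torsion-free $R$ one should argue intrinsically as above.
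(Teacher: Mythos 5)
Your proposal is correct and follows essentially the same route as the paper: first show that $l^{1}(V)$ for $V$ a discrete normed set is $\pi$-adic and torsion-free, and then resolve an arbitrary complex of $\pi$-adic torsion-free modules by such objects. The delicate point you flag (choosing the resolution with discrete generating sets and controlling norms through the iteration) is handled in the paper by taking the unit sphere $D(M)=\{m : \|m\|=1\}$ itself as the discrete generating set — the induced map $l^{1}(D(M))\to M$ is a strong epimorphism because every $m$ with $\|m\|=|\pi|^{l}$ equals $\pi^{l}\tilde{m}$ with $\|\tilde{m}\|=1$, and its kernel again carries the $\pi$-adic norm and is torsion-free (if $k=\pi^{n}m$ with $\|m\|=1$ lies in the kernel, then $\pi^{n}f(m)=0$ and torsion-freeness of $M$ forces $f(m)=0$), so the degreewise iteration goes through with no further bookkeeping.
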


 \begin{proof}
 We write $\delta=|\pi|$.
 First let us show that for $V$ a discrete normed set, $l^{1}(V)$ is equipped with the $\pi$-adic norm. Let $(r_{v})_{v\in V}\in l^{1}(V)$. Note that $||(r_{v})_{v\in V}||=\mathrm{max}\{|r_{v}|_{v\in V}\}$. Thus $||(r_{v})_{v\in V}||=\delta^{l}$ for some $l$. Then $|r_{v_{0}}|=\delta^{l}$ for some $v_{0}$. Thus $r_{v_{0}}=\pi^{l}s_{v_{0}}$ with $|r|=1$. We write every $r_{v}=\pi^{l_{v}}s_{v}$ with $|s_{v}|=1$. Note $l_{v}\ge l$ for all $v$. Thus $(r_{v})=\pi^{l}(\pi^{l_{v}-l}s_{v})$. Moreover $||(\pi^{l_{v}-l}s_{v})||=1$ since $|(s_{v_{0}})|=1$. This shows that the norm coincides with the adic one. Moreover, $l^{1}(V)$ is clearly $\pi$-torsion-free.

 Now let $M$ be an $\pi$-adic Banach $R$-module. Let $D(M)=\{m:||m||=1\}$. We claim there exists a strong epimorphism $l^{1}(D(M))\rightarrow M$. Indeed let $m\in M$ with $||m||=\delta^{l}$ so that $m=\pi^{l}\tilde{m}$ for some $\tilde{m}\in D(M)$. The element $\pi^{l}\delta_{\tilde{m}}$ maps to $m$ and has norm $\delta^{l}$.

 Now consider a complex $M_{\bullet}$ of such modules. Since $\mathbf{Ch}(\mathrm{Ban}_{R}^{\le1,nA})^{\circ}$ is closed under colimits we may assume that it is bounded below.  Pick a strict epimorphism $f:l^{1}(D(M_{0}))\rightarrow M_{0}$. Let $K$ denote the kernel of this map. We claim that the $\pi$-adic norm on $M_{0}$ restricts to the $\pi$-adic norm on $K$. Indeed let $k\in K$ be non-zero, with norm $\delta^{n}$. Write $k=\pi^{n}m$ with $||m||=1$. Then $0=f(k)=\pi^{n}f(m)$. But $M_{0}$ is $\pi$-torsion free, so $f(m)=0$, and $m\in K$. Continuing in this way we construct a resolution of $M_{\bullet}$ by modules of the form $l^{1}(V)$, with $V$ discrete. \qedhere
 
 \end{proof}

 \begin{corollary}
         Let $R$ be $\pi$-adic and $\pi$-torsion free as a Banach ring. Then $\mathbf{D}(\mathsf{Ban}_{R}^{\le1})^\circ$ is closed under products in $\mathbf{D}(\mathsf{Ban}_{R}^{\le1})$ - in particular it is a reflective subcategory.
 \end{corollary}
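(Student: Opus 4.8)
The plan is to deduce everything from the concrete description of the convex subcategory provided by Lemma~\ref{lem:torsion free}, together with exactness of products. First I would check that the termwise product of chain complexes computes the categorical product in $\mathbf{D}(\mathsf{Ban}_{R}^{\le1})$. For this it suffices that products are strongly exact in $\mathsf{Ban}_{R}^{\le1}$: the $(\le1)$-product of a family of Banach $R$-modules is the submodule of uniformly bounded tuples with the supremum norm, which coincides with the product formed in $\mathsf{SNrm}_{R}^{\le1}$ and is again complete, so the lemma above asserting that products in $\mathsf{SNrm}_{R}^{\le1}$ are strongly exact applies verbatim (kernels commute with products, and a product of strong epimorphisms is a strong epimorphism). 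Hence the termwise product preserves strongly acyclic complexes, so preserves quasi-isomorphisms, and therefore the product in $\mathbf{D}(\mathsf{Ban}_{R}^{\le1})$ of a family $\{X^{(i)}\}_{i\in\mathcal I}$ is represented by the complex with terms $\prod^{\le1}_{i}M^{(i)}_{n}$, where $M^{(i)}_{\bullet}$ is any representative of $X^{(i)}$.

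Next I would use Lemma~\ref{lem:torsion free} to choose each representative $M^{(i)}_{\bullet}$ to be a complex whose terms are $\pi$-torsion-free and carry the $\pi$-adic norm, and then verify that a $(\le1)$-product of $\pi$-torsion-free, $\pi$-adic Banach $R$-modules is again $\pi$-torsion-free and $\pi$-adic. Torsion-freeness is immediate since $\pi\cdot(m_i)_i=0$ forces each $m_i=0$. For the $\pi$-adic property, writing $v(m)=\max\{l:m\in\pi^{l}M\}$ so that $\|m\|=|\pi|^{v(m)}$, $\pi$-torsion-freeness gives $v(\pi^{l}n)=l+v(n)$; consequently a tuple $(m_i)$ lies in $\pi^{l}\prod^{\le1}_{i}M_i$ exactly when $v(m_i)\ge l$ for all $i$ (the divided tuple then automatically has norm $\le1$, hence lies in the product), so $\|(m_i)\|=\sup_i|\pi|^{v(m_i)}=|\pi|^{\inf_i v(m_i)}=|\pi|^{\max\{l:(m_i)\in\pi^{l}\prod^{\le1}_i M_i\}}$, which is the $\pi$-adic norm. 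Applying Lemma~\ref{lem:torsion free} once more puts the product back into $\mathbf{D}(\mathsf{Ban}_{R}^{\le1})^{\circ}$, which is the assertion.

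For the final clause, $\mathbf{D}(\mathsf{Ban}_{R}^{\le1})^{\circ}$ is a localising subcategory of $\mathbf{D}(\mathsf{Ban}_{R}^{\le1})$ — closed under shifts and cofibres, and closed under coproducts (as already observed, using $l^{1}(\bigsqcup_i V_i)\cong\bigoplus_i l^{1}(V_i)$) — and is generated under colimits by a small set of objects (indeed by $R$ alone), hence presentable; combined with the product-closedness just proved, the inclusion into the presentable category $\mathbf{D}(\mathsf{Ban}_{R}^{\le1})$ preserves all small limits and colimits, so admits a left adjoint by the adjoint functor theorem, i.e.\ the subcategory is reflective. The step requiring real care is the second one: the $(\le1)$-product is \emph{not} in general of the form $l^{1}(V_{\bullet})$ (across the family it is an $\ell^{\infty}$-type object), so one genuinely has to pass through the $\pi$-adic/torsion-free characterisation of Lemma~\ref{lem:torsion free} rather than the defining $l^{1}$-cellular description and check the norm identity above; Steps~1 and~3 are then formal.
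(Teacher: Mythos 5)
Your argument is correct and follows the same route as the paper: the paper's proof consists precisely of your middle step (checking that a $(\le 1)$-product of $\pi$-torsion-free $\pi$-adic Banach modules is again $\pi$-torsion-free and $\pi$-adic, then invoking Lemma~\ref{lem:torsion free}), with your computation of the product norm matching theirs. The extra bookkeeping you include — that termwise products compute derived products because products are strongly exact, and that reflectivity follows from presentability plus the adjoint functor theorem — is left implicit in the paper but is exactly what is needed, and you are right to flag that the $\pi$-adic/torsion-free characterisation of Lemma~\ref{lem:torsion free} is the key, since the product is of $\ell^{\infty}$ type and not visibly of $l^{1}$-cellular form.
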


 \begin{proof}
  Let ${M_{i}:i\in\mathcal{I}}$ be a collection of $\pi$-torsion free $\pi$-adic $R$-modules. The product $\prod_{i\in\mathcal{I}}M_{i}$ consists of tuples $(m_{i})_{i\in\mathcal{I}}$ such that $\mathrm{sup}_{i\in\mathcal{I}}||m_{i}||_{i}<\infty$, with norm given by the supremum. Note that in fact the maximum will be achieved. Suppose that $||(m_{i})||=\delta^{l}$. Then $\forall i\in\mathcal{I}$, \(m_{i}=\delta^{l_{i}}n_{i}\) for some $l_{i}\ge l$ and some $n_{i}$ with $||n_{i}||=1$, and for some $i_{0}$, $l_{i_{0}}=l$. Thus $(m_{i})=\pi^{l}(\pi^{l_{i}-l}n_{i})$, and $\pi^{l_{i_{0}}-l}n_{i_{0}}=n_{i_{0}}$. Hence $(m_{i})\in\pi^{l}\prod_{i\in\mathcal{I}}M_{i}$.
 \end{proof}

 \begin{theorem}\label{thm:tilde-nuc}
    Let $R$ be a $\pi$-adic Banach ring.
     The natural functor
     $$\mathbf{D}(\mathsf{Ban}_{R}^{\le1})^\circ\rightarrow\varprojlim_{n}\mathbf{D}(\mathsf{Mod}_{R\big\slash (\pi^{n})})$$
     is an equivalence of categories. In particular it coincides with the category of derived \(\dvgen\)-adically complete modules.
 \end{theorem}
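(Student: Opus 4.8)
The plan is to prove the equivalence in two stages. First I would identify $\mathbf{D}(\mathsf{Ban}_{R}^{\le1})^\circ$ with the category of $\dvgen$-complete objects in $\mathbf{D}(\mathsf{Ban}_R^{\le 1})$, and then identify the latter with the limit $\varprojlim_n \mathbf{D}(\mathsf{Mod}_{R/(\pi^n)})$ along the base-change functors. The key structural input for the first stage is Lemma \ref{lem:torsion free}: when $R$ is $\dvgen$-adic and $\dvgen$-torsion free, the objects of $\mathbf{D}(\mathsf{Ban}_{R}^{\le1})^\circ$ are precisely the complexes of $\dvgen$-torsion-free $\dvgen$-adically normed modules. (If $R$ is merely $\dvgen$-adic but has $\dvgen$-torsion one must first reduce to the torsion-free case, or rather work directly with the description via transfinite extensions of $l^1(V_\bullet)$; I will treat the torsion-free case as the main case and indicate the reduction.)

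\textbf{Step 1: the truncation tower.} For each $n$, consider the functor $\mathbf{D}(\mathsf{Ban}_{R}^{\le1})^\circ \to \mathbf{D}(\mathsf{Mod}_{R/(\pi^n)})$ sending a complex $M_\bullet$ of $\dvgen$-torsion-free $\dvgen$-adic modules to $M_\bullet/\dvgen^n M_\bullet$, using that $\dvgen$-adic norms on torsion-free modules are reconstructed from their underlying modules so that the reduction lands in honest (discrete) $R/(\pi^n)$-modules. On generators $l^1(V)$ this sends $l^1(V) \mapsto \bigoplus_V R/(\pi^n)$, and since a complex in $\mathbf{D}(\mathsf{Ban}_{R}^{\le1})^\circ$ is a transfinite extension of bounded-below complexes $l^1(V_\bullet)$, the assignment is exact and compatible with the structure maps $R/(\pi^{n+1}) \to R/(\pi^n)$. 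This gives the functor of the statement. I would then show it is conservative: if $M_\bullet/\dvgen^n M_\bullet$ is acyclic for all $n$, then $M_\bullet$ is acyclic, which reduces (by the filtration of $M_\bullet$ and the fact that $\dvgen$ acts without torsion on each term) to the classical statement that a $\dvgen$-complete $\dvgen$-torsion-free complex with trivial mod-$\dvgen^n$ reductions is zero — a Mittag-Leffler/derived-limit argument in $\mathrm{LH}(\mathsf{Ban}_R^{\le 1})$, using that $\mathbf{D}(\mathsf{Ban}_{R}^{\le1})^\circ$ is closed under products (the Corollary after Lemma \ref{lem:torsion free}) so that $\varprojlim_n$ is computed inside the convex category.

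\textbf{Step 2: full faithfulness and essential surjectivity.} For full faithfulness I would compute $\mathbf{Map}_{\mathbf{D}(\mathsf{Ban}_{R}^{\le1})^\circ}(l^1(V), M_\bullet)$ and compare it with $\varprojlim_n \mathbf{Map}_{\mathbf{D}(\mathsf{Mod}_{R/(\pi^n)})}(\bigoplus_V R/(\pi^n), M_\bullet/\dvgen^n M_\bullet)$. The source is $\prod_V M_\bullet$ (with appropriate boundedness), the target is $\varprojlim_n \prod_V M_\bullet/\dvgen^n M_\bullet = \prod_V \varprojlim_n M_\bullet/\dvgen^n M_\bullet$, and the identification $M_\bullet \simeq \varprojlim_n M_\bullet/\dvgen^n M_\bullet$ — which is exactly the statement that objects of the convex category are $\dvgen$-adically complete, proven termwise using the earlier propositions on $\varprojlim_n M/B_M(0,\epsilon_n)$ being the completion — closes the comparison; one then bootstraps from generators to all objects via the transfinite-extension presentation, using that both sides send these colimit-type diagrams to limits correctly. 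For essential surjectivity, given a compatible system $(N_n)_n \in \varprojlim_n \mathbf{D}(\mathsf{Mod}_{R/(\pi^n)})$, I would choose a termwise-free resolution at each level compatibly (possible since the transition maps are the reduction maps and free resolutions can be chosen compatibly along a tower), assemble the levelwise free modules $\bigoplus R/(\pi^n)$ into $l^1(V_\bullet)$ over $R$ by taking the limit, and check that the resulting object of $\mathbf{D}(\mathsf{Ban}_{R}^{\le1})^\circ$ reduces to $(N_n)$.

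\textbf{Main obstacle.} The hard part will be the compatible choice of resolutions in the essential surjectivity argument — i.e.\ lifting an object of the \emph{limit} of derived categories (which is genuinely more than a mod-$\dvgen^n$-compatible system of complexes, since the homotopy coherences must also be lifted) to an object of $\mathbf{D}(\mathsf{Ban}_{R}^{\le1})^\circ$. This is where I would need to be careful that $\varprojlim_n \mathbf{D}(\mathsf{Mod}_{R/(\pi^n)})$ is taken in $\mathbf{Pr}^L_{st}$ (or $\mathbf{Cat}_\infty$) and exploit that each base-change $\mathbf{D}(\mathsf{Mod}_{R/(\pi^{n+1})}) \to \mathbf{D}(\mathsf{Mod}_{R/(\pi^n)})$ has good finiteness/flatness properties so that the limit is well-behaved and the obvious functor from the convex category is an equivalence; alternatively, one can cite the abstract "derived $\dvgen$-complete objects $=$ $\varprojlim_n$ mod-$\dvgen^n$" formalism (as in the $\infty$-categorical literature on complete modules) once Step 1 identifies the convex category with derived $\dvgen$-complete objects, which sidesteps the hands-on resolution bookkeeping.
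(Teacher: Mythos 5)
Your two-stage plan (convex category $=$ derived $\dvgen$-complete objects, then derived $\dvgen$-complete objects $=$ the limit) is the right skeleton, and your fallback option at the end is essentially the paper's argument. But the paper's execution is structurally different from your primary route, and the difference is exactly what removes your "main obstacle." The paper exhibits \emph{both} $\mathbf{D}(\mathsf{Ban}_R^{\le 1})^\circ$ and $\varprojlim_n \mathbf{D}(\mathsf{Mod}_{R/(\pi^n)})$ as reflective subcategories of a common ambient category, the convex semi-normed category $\mathbf{Ch}(\mathsf{SNorm}_R^{\le1})^\circ$ (which by Lemma \ref{lem:convex-norm} is essentially the algebraic derived category of the underlying ring). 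It then checks that the two reflection functors agree on the dg-projective generators $\tilde{l}^1(V_\bullet)$: the first reflection is termwise separated completion $\tilde{l}^1(V_\bullet)\mapsto l^1(V_\bullet)$, while the second is $\varprojlim_n (R/(\pi^n))\hat{\otimes}^{\mathbb{L}}\tilde{l}^1(V_\bullet)$, and dg-flatness of $\tilde{l}^1(V_\bullet)$ reduces the latter to $\varprojlim_n \tilde{l}^1(V_\bullet)/B(0,|\pi|^n)$, i.e.\ the same termwise completion. Two localisations of the same category with the same localisation functor have the same essential image, so there is nothing left to prove. In particular essential surjectivity, full faithfulness and conservativity all come for free.

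The genuine gap in your primary route is the one you flagged: "choosing free resolutions compatibly along the tower" does not produce an object of $\varprojlim_n \mathbf{D}(\mathsf{Mod}_{R/(\pi^n)})$, because an object of that limit carries an infinite hierarchy of homotopy coherences, not just levelwise complexes with compatible reductions; lifting these by hand is not a routine bookkeeping exercise. Your proposed escape (quote the abstract "derived complete $=\varprojlim_n$ mod $\pi^n$" formalism) does close this, but note that it still requires a bridge between the Banach-theoretic category $\mathbf{D}(\mathsf{Ban}_R^{\le1})^\circ$ and the \emph{algebraic} derived category over which that formalism is stated. Your Step 1 identifies $\mathbf{D}(\mathsf{Ban}_R^{\le1})^\circ$ with complete objects of $\mathbf{D}(\mathsf{Ban}_R^{\le1})$, which is not quite the right target; the missing move is to pass through $\mathbf{Ch}(\mathsf{SNorm}_R^{\le1})^\circ$ (or equivalently $\mathbf{C}_{\underline{R}}$ via the discretisation equivalence), where the comparison with discrete $R/(\pi^n)$-modules becomes transparent. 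Once you add that ambient category, your argument collapses into the paper's.
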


\begin{proof}
We have that both $\mathbf{D}(\mathrm{Ban}_{R}^{\le1})^\circ$ and $\varprojlim\mathbf{D}(\mathsf{Mod}_{R\big\slash (\pi^{n})}$ are reflective subcategories of $\mathbf{Ch}(\mathrm{SNorm}_{R}^{\le1})^\circ$. The left adjoint to the first inclusion sends a dg-projective complex $\tilde{l}^{1}(V_{\bullet})$ to $l^{1}(V_{\bullet})$, and the left adjoint to the second inclusion sends $\tilde{l}^{1}(V_{\bullet})$ to $\mathbf{lim}_{n}(R\big\slash (\pi^{n}))\hat{\otimes}^{\mathbb{L}}\tilde{l}^{1}(V_{\bullet})$. We claim that term-wise this is just the completion. Now, since $\tilde{l}^{1}(V_{\bullet})$ is a dg-projective complex, it is dg-flat, so that $$(R\big\slash (\pi^{n}))\hat{\otimes}^{\mathbb{L}}\tilde{l}^{1}(V_{\bullet})\cong (R\big\slash (\pi^{n}))\hat{\otimes}\tilde{l}^{1}(V_{\bullet})\cong \tilde{l}^{1}(V_{\bullet})\big\slash\pi^{n}\tilde{l}^{1}(V_{\bullet})\cong \tilde{l}^{1}(V_{\bullet})\big\slash B_{\tilde{l}^{1}(V_{\bullet})}(0,|\pi|^{n}).$$
Thus $\mathbf{lim}_{n}(R\big\slash (\pi^{n}))\hat{\otimes}^{\mathbb{L}}\tilde{l}^{1}(V_{\bullet})$ just computes the completion, as required.
\end{proof}


\comment{
\begin{lemma}
Let $R$ be any Banach ring. Then
    $\mathbf{D}(\mathsf{SNrm}_{R}^{\le1})$ is compactly generated and rigid over $\mathbf{D}(\mathsf{Mod}_R)$.
\end{lemma}

\begin{proof}
       It suffices to prove that the unit is compact, and that all objects are nuclear. To check the former it suffices to prove that the closed ball of radius $1$ in any contracting sum of modules is the sum of the closed balls of radius $1$ in each of the factors.
        
        To check the latter statement it suffices to prove that 
        $$\mathbf{Map}(R_{\delta},N)\cong \mathbf{Map}(R_{\delta},R)\otimes N$$
        for all $\delta>0$. But we have 
        $$\mathbf{Map}(R_{\delta},N)_{m}\cong (N_{m})_{\frac{1}{\delta}}$$
        and 
        $$\mathbf{Map}(R_{\delta},R)\otimes N\cong R_{\frac{1}{\delta}}\otimes N,$$
        which completes the proof.


\end{proof}
}

\begin{definition}
    Let $R$ be a Banach ring with pseudo-uniformiser $\pi$. An $R$-module $M$ is said to be $\pi$-\textit{adically flat} if each $M\big\slash\pi^{k}M$ is algebraically flat as an $\underline{R\big\slash(\pi^{k})}$ module.
\end{definition}

\begin{proposition}
    Let $R$ be a Banach ring with pseudo-uniformiser $\pi$ which is $\pi$-torsion free. Let $M$ and $N$ be $\pi$-adic normed $R$-modules which are $\pi$-torsion free. If $M\otimes_{R}N$ is $\pi$-torsion free then the tensor product norm on $M\otimes_{R}N$ coincides with the $\pi$-adic one.
\end{proposition}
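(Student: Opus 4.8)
The claim is that for a $\pi$-torsion-free $\pi$-adic Banach ring $R$, and $\pi$-adic $\pi$-torsion-free modules $M,N$ whose algebraic tensor product $M\otimes_R N$ is again $\pi$-torsion free, the projective tensor product norm on $M\otimes_R N$ agrees with the $\pi$-adic norm. (Here I read the statement as being about the algebraic — equivalently completed, since these modules are Banach and $\pi$-adic — tensor product; the point is purely about the norm.) The strategy is to show the two norms coincide by a two-sided estimate, and the key is Proposition \ref{prop:algepi}: any surjection of $\pi$-adic normed modules is automatically an admissible (strong) epimorphism.

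First I would reduce to the case of free modules $M = l^1(V)$, $N = l^1(W)$ with $V,W$ discrete normed sets. Indeed, by Lemma \ref{lem:torsion free}, both $M$ and $N$ admit strong epimorphisms $l^1(D(M))\to M$, $l^1(D(N))\to N$ where the domains carry their $\pi$-adic norms. Using the standard identity $l^1(V)\haotimes l^1(W)\cong l^1(V\times W)$ and the fact that for discrete $V\times W$ the $l^1$-norm on $l^1(V\times W)$ is exactly the $\pi$-adic norm (this is the first computation in the proof of Lemma \ref{lem:torsion free}), the projective tensor norm is identified with the $\pi$-adic norm on the free level. Then I would push this down: the surjection $l^1(D(M))\otimes_R l^1(D(N)) \onto M\otimes_R N$ is a surjection of $\pi$-adic normed modules, hence by Proposition \ref{prop:algepi} it is an admissible epimorphism, so the quotient norm on $M\otimes_R N$ (which is the $\pi$-adic norm, since $M\otimes_R N$ is assumed $\pi$-torsion free, as in the Corollary following Proposition \ref{prop:algepi}) agrees with the image of the $\pi$-adic norm. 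On the other hand, this same quotient norm is precisely the projective tensor product norm on $M\otimes_R N$, because the projective tensor norm is characterized as the quotient of the projective tensor norm on a resolution by projectives. Comparing the two descriptions of the quotient norm gives the result.

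The one inequality that needs care is $\norm{x}_{\pi} \le \norm{x}_{\otimes}$: writing $x = \sum_i m_i\otimes n_i$ with $\sum_i \norm{m_i}\norm{n_i}$ close to $\norm{x}_\otimes$, one must check $x\in\pi^\ell(M\otimes_R N)$ for $\ell$ the largest integer with $\abs{\pi}^\ell \ge \norm{x}_\otimes$; this uses that each $m_i = \pi^{a_i}\tilde m_i$, $n_i=\pi^{b_i}\tilde n_i$ with $a_i+b_i \ge \ell$ and the ultrametric inequality, exactly as in the free-module computation. The reverse inequality $\norm{x}_\otimes \le \norm{x}_\pi$ is the easy direction: if $x=\pi^\ell y$ with $\norm{y}_{\otimes}\le 1$ then $\norm{x}_\otimes \le \abs{\pi}^\ell$. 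The main obstacle I anticipate is bookkeeping the $\pi$-torsion-freeness hypotheses so that the relevant quotient norms really \emph{are} the $\pi$-adic norms and not merely dominated by them — this is exactly where the assumption that $M\otimes_R N$ is $\pi$-torsion free (rather than just $M$ and $N$) is used, via the Corollary to Proposition \ref{prop:algepi} applied to the short exact sequence $0\to K \to l^1(D(M))\otimes l^1(D(N)) \to M\otimes_R N\to 0$; one must verify this sequence is algebraically exact and that the middle term and quotient are $\pi$-torsion free so strong exactness follows. Once that is in place, everything else is the ultrametric estimate above.
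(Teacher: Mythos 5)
Your plan is correct in spirit, but it packages the essential computation inside a reduction-to-free-modules framework that the paper's own proof avoids entirely. The paper argues directly: for any $v\in M\otimes_R N$ it considers the nonarchimedean projective seminorm $\rho(v)=\inf\{\max_i\|x_i\|\|y_i\|: v=\sum_i x_i\otimes y_i\}$, uses the discreteness of the value group to attain the infimum, then observes that each term $x_i\otimes y_i=\pi^{s_i+t_i}\,\tilde x_i\otimes\tilde y_i$ with $s_i+t_i\ge\ell$ is divisible by $\pi^\ell$ inside $M\otimes_R N$ — so $v$ is too. Combined with the trivial reverse inequality $\rho(\pi^\ell w)\le|\pi|^\ell$ and the $\pi$-torsion-freeness hypothesis (to cancel the extra $\pi$ once you've written $\pi v=\pi^\ell\tilde v$), this gives the scaling law $\rho(\pi v)=|\pi|\rho(v)$ and hence identifies $\rho$ with the $\pi$-adic norm. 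No passage to $l^1$-presentations, no quotient-norm bookkeeping, no appeal to Proposition~\ref{prop:algepi}. The "hard" inequality you flag in your last paragraph is exactly this termwise divisibility estimate, so the computational core of your argument and the paper's coincide; your first two paragraphs are scaffolding the paper dispenses with.

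What your route buys is a clearer structural statement — the proposition becomes a formal consequence of the free case $l^1(V)\otimes l^1(W)$ together with the compatibility of quotient norms — and this modularity might be useful elsewhere. What it costs is extra fragility: you need to track carefully that the strong epimorphism $l^1(D(M))\otimes l^1(D(N))\to M\otimes_R N$ is strong \emph{for the projective tensor seminorms} (true, but requires the same discreteness argument to lift a representation attaining the infimum), you need the free case of the proposition itself as an input (so there is a potential circularity unless you argue it separately via $l^1(V\times W)$, as you do), and you must be careful with the algebraic versus completed tensor product since $l^1(V)\otimes_R l^1(W)\ne l^1(V\times W)$ before completion, while the statement is about the algebraic tensor product. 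One minor notational slip: you write $\sum_i\|m_i\|\|n_i\|$ for the tensor seminorm, but in this nonarchimedean setting the correct quantity is $\max_i\|m_i\|\|n_i\|$, which is what the paper uses and what makes the termwise divisibility argument go through cleanly.
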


\begin{proof}
    We have that the tensor product norm is given by the separation of the semi-norm $\rho(v)=\mathrm{inf}\{\mathrm{max}||x_{i}\otimes y_{i}||:v=\sum_{i}x_{i}\otimes y_{i}\}$.  Let us prove that $\rho(\pi v)=|\pi|\rho(v)$. In general, clearly $\rho(\pi v)\le |\pi|\rho(v)$. When $\rho(v)=0$ it is clear that $\rho(\pi v)=0=|\pi|\rho(v)$. Suppose that $\rho(v)\neq0$. We claim that $\rho(\pi v)\neq0$. Indeed, if $\rho(\pi v)=0$, then for all $l>0$ we could find a representation $\pi v=\sum x_{i}\otimes y_{i}$ with $\mathrm{max}\{||x_{i}|| ||y_{i}||\}=|\pi|^{l}$. This means we can write $x_{i}\otimes y_{i}=\pi^{l}\tilde{x}_{i}\otimes\tilde{y}_{i}$ for some $\tilde{x}_{i},\tilde{y}_{i}$. Then $v=\pi^{l-1}\sum\tilde{x}_{i}\otimes\tilde{y}_{i}$. Thus $\rho(v)\le |\pi|^{l-1}$. Since this is true for all $l$, we have $\rho(v)=0$.
    
    Now assume $\rho(\pi v)\neq0$, i.e. $\rho(v)\neq0$. Write $\rho(\pi v)=|\pi|^{l}$ for some $l$. Clearly $l\ge 1$. Then there is some representation $\pi v=\sum_{i}x_{i}\otimes y_{i}$, where
    $\mathrm{max}_{i}||x_{i}|| ||y_{i}||=|\pi|^{l}$. Write $x_{i}=\pi^{s_{i}}\tilde{x}_{i}, y_{i}=\pi^{t_{i}}\tilde{y}_{i}$. Then $s_{i}+t_{i}=l_{i}\ge l$ for all $i$, and for some $i_{0}$, $s_{i_{0}}+t_{i_{0}}=l$. We may write $x_{i}\otimes y_{i}=\pi^{l}\tilde{x}_{i}\otimes\tilde{y}_{i}$. Then $\pi v=\pi^{l}\sum_{i}\tilde{x}_{i}\otimes\tilde{y}_{i}=\pi^{l}\tilde{v}$, where $\tilde{v}=\sum_{i}\tilde{x}_{i}\otimes\tilde{y}_{i}$. Note we must have $\rho(\tilde{v})=1$. Otherwise the proof above shows it would be divisible by $\pi$. Thus $v=\pi^{l-1}\tilde{v}$, and so $\rho(v)\le |\pi|^{l-1}$, and $|\pi|\rho(v)\le\rho(\pi v)$. 
\end{proof}

\begin{lemma}\label{lem:flattorsfree}
Let $R$ be a Banach ring with pseudo-uniformiser $\pi$, which is $\pi$-torsion free. Let $M$ and $N$ be $\pi$-torsion free $R$-modules. Equip $R,M$, and $N$ with the $\pi$-adic norms. If $N$ is flat as an $R$-module then $N\otimes_{R}M$ is $\pi$-torsion free, and we have $N\otimes^{\mathbb{L}}_{R}M\cong N\otimes_{R}M$ in $\mathrm{Norm}_{R}^{\le1}$. In particular, we also have $N\hat{\otimes}^{\mathbb{L}}_{R}M\cong N\hat{\otimes}_{R}M$ in $\mathrm{Ban}_{R}^{\le1}$.
\end{lemma}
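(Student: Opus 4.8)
The plan is as follows. First I would settle the easy assertions. Since $M$ is $\pi$-torsion free, multiplication by $\pi$ is an injection $M\to M$; applying $N\otimes_R(-)$, which is exact because $N$ is flat over $R$, shows that $\pi$ acts injectively on $N\otimes_R M$, so $N\otimes_R M$ is $\pi$-torsion free. Granting this, the Proposition comparing the projective tensor-product norm with the $\pi$-adic norm applies to $M$ and $N$ and identifies the canonical norm on $N\otimes_R M$ with its $\pi$-adic norm; this pins down the degree-zero object appearing in the statement.

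The content is the comparison $N\otimes_R^{\mathbb L}M\simeq N\otimes_R M$ in $\mathbf{D}(\mathsf{Norm}_R^{\le 1})$, and the strategy is to resolve $M$ by $\pi$-adic, $\pi$-torsion-free projectives and tensor. Following the construction in the proof of Lemma~\ref{lem:torsion free}, I would build an augmented complex $P_\bullet\to M$ in $\mathsf{Ch}_{\ge 0}(\mathsf{Norm}_R^{\le 1})$ with each $P_n=\overline{l}^1(V_n)$ for a discrete set $V_n$, so that $P_n$ is projective, its underlying $R$-module is free (hence $\pi$-torsion free) and its norm is $\pi$-adic. Concretely, set $P_0=\overline{l}^1(D(M))$ with $D(M)=\setgiven{m\in M}{\norm{m}=1}$ and the tautological strong epimorphism $P_0\onto M$; its kernel $K_0\subseteq P_0$ is $\pi$-torsion free, and since $M$ is $\pi$-torsion free the subspace norm on $K_0$ is again the $\pi$-adic one; then iterate with $K_0$ in place of $M$, producing strong short exact sequences $0\to K_n\to P_n\to K_{n-1}\to 0$ (with $K_{-1}\defeq M$). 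Splicing these exhibits $P_\bullet\to M$ as a weak equivalence for the strong exact structure, and since $P_\bullet$ is a bounded-below complex of flat objects it is dg-flat, so $N\otimes_R^{\mathbb L}M\simeq N\otimes_R P_\bullet$.

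It then remains to check that $N\otimes_R P_\bullet\to N\otimes_R M$ is a weak equivalence, i.e. that $\cdots\to N\otimes_R P_1\to N\otimes_R P_0\to N\otimes_R M\to 0$ is strongly acyclic. Applying $N\otimes_R(-)$ to the sequences $0\to K_n\to P_n\to K_{n-1}\to 0$: by algebraic flatness of $N$ the resulting sequences $0\to N\otimes_R K_n\to N\otimes_R P_n\to N\otimes_R K_{n-1}\to 0$ are algebraically exact, and (by the same flatness-plus-$\pi$-torsion-freeness argument as in the first step, using that $K_n,K_{n-1},P_n$ are $\pi$-torsion free) all three terms are $\pi$-torsion free, hence $\pi$-adically normed by the Proposition. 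The Corollary following Proposition~\ref{prop:algepi} then upgrades algebraic exactness to strong exactness of each such sequence, and splicing gives the required strong acyclicity. This proves $N\otimes_R^{\mathbb L}M\simeq N\otimes_R M$ in $\mathsf{Norm}_R^{\le 1}$, with the $\pi$-adic norm. For the Banach statement I would apply the completion functor $\mathrm{Cpl}\colon\mathsf{Norm}_R^{\le 1}\to\mathsf{Ban}_R^{\le 1}$, which is exact for the strong exact structures by Lemma~\ref{lem:cplsepexact} and therefore derives to the termwise functor; since $N\hat{\otimes}_R M=\mathrm{Cpl}(N\otimes_R M)$ and $N\hat{\otimes}_R^{\mathbb L}M\simeq\mathrm{Cpl}(N\otimes_R^{\mathbb L}M)$, the equivalence transports to $\mathsf{Ban}_R^{\le 1}$.

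The main obstacle is precisely the mismatch between algebraic flatness — which is what controls $\otimes^{\mathbb L}$ — and the strong exact structure on normed modules: $N\otimes_R(-)$ need not preserve arbitrary strong short exact sequences. Routing the whole argument through $\pi$-torsion-free, $\pi$-adically normed modules is what resolves this, since on that class algebraic and strong exactness coincide by Proposition~\ref{prop:algepi} and its corollary. A secondary point to be careful about is checking that the spliced resolution $P_\bullet\to M$ is genuinely a weak equivalence and dg-flat, so that $N\otimes_R P_\bullet$ legitimately computes $N\otimes_R^{\mathbb L}M$ in the first place.
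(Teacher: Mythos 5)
Your proposal is correct and follows essentially the same route as the paper's proof: resolve $M$ by projectives $\overline{l}^1(V_\bullet)$ with $\pi$-adic norms, tensor with $N$, use algebraic flatness of $N$ for algebraic acyclicity and $\pi$-torsion-freeness to identify the norms as $\pi$-adic so that algebraic exactness upgrades to strong exactness. You merely spell out the details (the explicit construction of the resolution from Lemma~\ref{lem:torsion free} and the completion step for the Banach statement) that the paper leaves terse.
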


\begin{proof}
Pick a projective resolution $\overline{l}^{1}(V_{\bullet})\rightarrow M$ in $\mathrm{Norm}_{R}^{\le1}$. Consider
$$N\otimes_{R}\overline{l}^{1}(V_{\bullet})\rightarrow N\otimes_{R}M.$$
The complex $N\otimes_{R}\overline{l}^{1}(V_{\bullet})$ computes $N\otimes^{\mathbb{L}}_{R}M$, so it suffices to prove that the augmented complex
$$N\otimes_{R}\overline{l}^{1}(V_{\bullet})\rightarrow N\otimes_{R}M$$
is acyclic. Since $N$ is flat $N\otimes_{R}M$ is $\pi$-torsion-free. Thus its norm is the $\pi$-adic one. It now remains to prove that the augmented complex is algebraically acyclic. But this follows from flatness of $N$.
\end{proof}

\subsubsection{Derived quotients by uniformisers}

\begin{lemma}\label{lem:htpy-inv-lim}
Let $M\in\mathbf{D}(\mathsf{SNorm}_{R}^{\le1})$ be such that each $\pi_{n}(M)$ is $\pi$-torsion free (that is, multiplication by $\pi$ is an admissible monomorphism). Then the map
$$\pi_{n}(M)\big\slash\pi^{k}\rightarrow\pi_{n}(M\big\slash\big\slash(\pi^{k}))$$
is an equivalence for each $k$.
\end{lemma}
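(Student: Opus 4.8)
The plan is to compute the derived quotient $M /\!\!/ (\pi^k)$ via the Koszul-type cofibre sequence
\[
M \xrightarrow{\ \pi^k\ } M \longrightarrow M /\!\!/ (\pi^k),
\]
which by definition is the cofibre of multiplication by $\pi^k$ on $M$. First I would pass to the long exact sequence on homotopy groups associated to this cofibre sequence. Since $\mathbf{D}(\mathsf{SNorm}_R^{\le1})$ is the derived category of a quasi-abelian category, a cofibre sequence of complexes induces a long exact sequence in the left heart, and hence in $\pi_*$; explicitly one gets, for each $n$,
\[
\cdots \to \pi_n(M) \xrightarrow{\ \pi^k\ } \pi_n(M) \to \pi_n(M /\!\!/ (\pi^k)) \to \pi_{n-1}(M) \xrightarrow{\ \pi^k\ } \pi_{n-1}(M) \to \cdots
\]

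The key input is the hypothesis that each $\pi_n(M)$ is $\pi$-torsion free, meaning multiplication by $\pi$ (and hence by $\pi^k$) is an admissible monomorphism on $\pi_n(M)$ in $\mathsf{SNorm}_R^{\le1}$. This kills the connecting contribution: the map $\pi_{n-1}(M) \xrightarrow{\pi^k} \pi_{n-1}(M)$ is (strongly) injective, so its kernel vanishes, and therefore the long exact sequence breaks into short exact sequences
\[
0 \to \pi_n(M) \xrightarrow{\ \pi^k\ } \pi_n(M) \to \pi_n(M /\!\!/ (\pi^k)) \to 0
\]
in the left heart. The point here is that admissible monomorphisms are precisely the maps whose cokernel (in the derived/left-heart sense) agrees with the naive quotient, so $\pi_n(M /\!\!/(\pi^k))$ is identified with the cokernel of $\pi^k$ acting on $\pi_n(M)$, which is $\pi_n(M)/\pi^k \pi_n(M)$ — and crucially the norm on this cokernel is the correct quotient norm because $\pi^k$ is an \emph{admissible} monomorphism. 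This yields the natural comparison map $\pi_n(M)/\pi^k \to \pi_n(M /\!\!/ (\pi^k))$ and shows it is an isomorphism.

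The main obstacle I anticipate is bookkeeping the exactness structure: one must be careful that the cofibre sequence in $\mathbf{D}(\mathsf{SNorm}_R^{\le1})$ really does produce a long exact sequence of \emph{homotopy objects} in the heart of the left $t$-structure (rather than just in some abelian localisation), and that "admissible monomorphism" is exactly the condition ensuring the induced short exact sequence computes the strict quotient with its quotient seminorm. This is where I would invoke the fact (used throughout the excerpt, e.g. via Theorem~\ref{thm:elementary} and the surrounding discussion of left hearts) that $\mathsf{SNorm}_R^{\le1}$ with the strong exact structure has a well-behaved left heart which is abelian, so that the $t$-structure machinery applies and the connecting maps behave as expected. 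Once that is set up, naturality of the comparison map is automatic from naturality of the cofibre sequence in $M$, and the proof concludes.
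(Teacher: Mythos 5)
Your proof is correct and follows essentially the same route as the paper: compute $M/\!\!/(\pi^k)$ as the cone of multiplication by $\pi^k$, take the long exact sequence of homotopy objects, and use that $\pi^k$ is an admissible monomorphism on each $\pi_n(M)$ to split it into short exact sequences identifying $\pi_n(M/\!\!/(\pi^k))$ with $\pi_n(M)/\pi^k$. No substantive differences.
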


\begin{proof}
$M\big\slash\big\slash\pi^{k}$ is computed as the cone of the `multiplication by $\pi^{k}$' map
$$\times\pi^{k}:M\rightarrow M.$$
Let $C$ denote the cone.
    We have a long exact sequence
    \begin{displaymath}
        \xymatrix{
\ldots\ar[r] &\pi_{n+1}(C)\ar[r] & \pi_{n}(M)\ar[r]^{\times\pi^{k}} &\pi_{n}(M)\ar[r] &\pi_{n}(C)\ar[r] & \ldots.
        }
    \end{displaymath}
    By assumption the map $\times\pi^{k}:\pi_{n}(M)\rightarrow \pi_{n}(M)$ is an admissible monomorphism. Thus the sequence 
    \begin{displaymath}
        \xymatrix{
        0\ar[r] & \pi_{n}(M)\ar[r]^{\times\pi^{k}} &\pi_{n}(M)\ar[r] &\pi_{n}(C)\ar[r] &0
        }
    \end{displaymath}
    is short exact, which completes the proof.
\end{proof}

\begin{lemma}\label{lem:htpy-inverse-lim2}
    Let $M\in\mathbf{D}(\mathsf{Norm}_{R}^{\le1})$ be such that each $\pi_{n}(M)$ is $\pi$-torsion free and derived $\pi$-adically complete. Then the map
    $$M\rightarrow\varprojlim_{k}M\big\slash\big\slash\pi^{k}$$
    is an equivalence, and 
    $$\pi_{*}(M)\cong\varprojlim_{k}\pi_{*}(M)\big\slash\pi^{k}.$$
\end{lemma}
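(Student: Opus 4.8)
The statement asserts that for $M \in \mathbf{D}(\mathsf{Norm}_{R}^{\le1})$ with each $\pi_n(M)$ being $\dvgen$-torsion free and derived $\dvgen$-adically complete, the natural map $M \to \varprojlim_k M/\!/\dvgen^k$ is an equivalence and $\pi_*(M) \cong \varprojlim_k \pi_*(M)/\dvgen^k$. The plan is to combine Lemma \ref{lem:htpy-inv-lim} with a Milnor exact sequence argument. First I would apply Lemma \ref{lem:htpy-inv-lim}: under the torsion-free hypothesis, $\pi_n(M/\!/\dvgen^k) \cong \pi_n(M)/\dvgen^k$ for each $k$ and $n$. This identifies the homotopy groups of the tower $\{M/\!/\dvgen^k\}_k$ with the tower $\{\pi_n(M)/\dvgen^k\}_k$ of ordinary quotients.

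Next I would analyze the homotopy limit $\varprojlim_k M/\!/\dvgen^k$ using the $\varprojlim$-$\varprojlim^1$ (Milnor) exact sequence in the presentably stable $\infty$-category $\mathbf{D}(\mathsf{Norm}_R^{\le1})$ (equivalently, passing to the left heart, which by Theorem \ref{thm:elementary}-type results is Grothendieck abelian, so this sequence is available):
\[
0 \to \varprojlim_k{}^1 \pi_{n+1}(M/\!/\dvgen^k) \to \pi_n\bigl(\varprojlim_k M/\!/\dvgen^k\bigr) \to \varprojlim_k \pi_n(M/\!/\dvgen^k) \to 0.
\]
Substituting the identification from the first step, the outer terms become $\varprojlim^1_k \pi_{n+1}(M)/\dvgen^k$ and $\varprojlim_k \pi_n(M)/\dvgen^k$. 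Now the transition maps $\pi_{n+1}(M)/\dvgen^{k+1} \to \pi_{n+1}(M)/\dvgen^k$ are surjective, so the tower is Mittag-Leffler and $\varprojlim^1 = 0$. Hence $\pi_n(\varprojlim_k M/\!/\dvgen^k) \cong \varprojlim_k \pi_n(M)/\dvgen^k$, which is the second claimed equality once we know the first map identifies source and target correctly.

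For the first claim, I would observe that the hypothesis "each $\pi_n(M)$ is derived $\dvgen$-adically complete" together with $\dvgen$-torsion freeness means $\pi_n(M) \xrightarrow{\sim} \varprojlim_k \pi_n(M)/\dvgen^k$ (derived completeness plus torsion-freeness collapses the derived limit to the ordinary one, since $\varprojlim^1$ of a surjective tower vanishes). Therefore the natural comparison map $M \to \varprojlim_k M/\!/\dvgen^k$ induces on $\pi_n$ the composite $\pi_n(M) \to \varprojlim_k \pi_n(M)/\dvgen^k$, which is precisely the completion isomorphism. Since it is an isomorphism on all homotopy groups and both sides lie in the stable $\infty$-category, the map is an equivalence. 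The main obstacle I anticipate is bookkeeping the compatibility: verifying that the comparison map $M \to \varprojlim_k M/\!/\dvgen^k$ really does induce the ordinary $\dvgen$-adic completion map on each homotopy group, i.e. that the identifications from Lemma \ref{lem:htpy-inv-lim} are natural in $k$ and compatible with the structure maps of the tower. This is essentially formal but requires care in tracking the cofiber sequences $M \xrightarrow{\dvgen^k} M \to M/\!/\dvgen^k$ and their maps as $k$ varies.
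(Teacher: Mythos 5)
Your proof is correct and follows essentially the same route as the paper: both apply Lemma \ref{lem:htpy-inv-lim} to identify $\pi_n(M/\!/\dvgen^k)\cong\pi_n(M)/\dvgen^k$, kill the $\varprojlim^1$ term via Mittag--Leffler for the surjective tower, and then invoke derived $\dvgen$-adic completeness of the torsion-free homotopy groups to conclude. Your extra care about the naturality of the identifications is just an explicit spelling-out of what the paper leaves implicit.
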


\begin{proof}
    We have $\pi_{n}(M\big\slash\big\slash\pi^{k})\cong\pi_{n}(M)\big\slash\pi^{k}$ by Lemma \ref{lem:htpy-inv-lim}. We therefore have that each map $\pi_{n}(M\big\slash\big\slash\pi^{k+1})\rightarrow\pi_{n}(M\big\slash\big\slash\pi^{k})$ is an epimorphism. By the long exact sequence for the homotopy groups of $\lim_{k}M\big\slash\pi^{k}$ and a Mittag-Leffler argument, we have 
    $$\pi_{*}(\varprojlim_{k}M\big\slash\big\slash\pi^{k})\cong\varprojlim_{k}\pi_{*}(M\big\slash\big\slash\pi^{k})\cong\varprojlim_{k}\pi_{*}(M)\big\slash\pi^{k}\cong\pi_{*}(M),$$
    as required. 
\end{proof}

\begin{lemma}
 Let $A\in\mathbf{DAlg}^{cn}(\bD_{\geq 0}(\mathsf{Norm}_{R}^{\le1}))$ and let $M$ be an $A$-module in $\mathbf{D}(\mathsf{Norm}_{R}^{\le1})$. Suppose that each $\pi_{n}(M)$ is $\pi$-torsion free.
    \begin{enumerate}
    \item 
  If $M$ is derived strong as an $A$-module then each $M\big\slash\big\slash\pi^{k}$ is derived strong as an $A\big\slash\big\slash\pi^{k}$-module.
\item 
If $\pi_{*}(A),\pi_{0}(M)\in\mathbf{D}_{\ge0}(\sf{Ban}_{A}^{\le1})^{\circ}$ and each $\pi_{*}(A)\otimes_{\pi_{0}(A)}\pi_{0}(M)$ is $\pi$-torsion free then the converse is true, that is if each $M\big\slash\big\slash\pi^{n}M$ is derived strong as an $A\big\slash\big\slash\pi^{k}$-module, then $M$ is derived strong as an $A$-module.
    \end{enumerate}
\end{lemma}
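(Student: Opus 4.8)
The plan is to work degree by degree, using that the homotopy groups control derived strength via the formula in Definition (derived strong). For part (1), suppose $M$ is derived strong over $A$, i.e. the canonical map $\pi_0(A) \haotimes^{\mathbb{L}}_{\pi_0(A)} \pi_*(A) \to \pi_*(A)$ — more precisely the comparison $\pi_*(A) \haotimes^{\mathbb{L}}_{\pi_0(A)} \pi_0(M) \to \pi_*(M)$ — is an equivalence. First I would identify $\pi_*(M\mathbin{\slash\slash}\pi^k)$ using Lemma \ref{lem:htpy-inv-lim}: since each $\pi_n(M)$ is $\pi$-torsion free, we get $\pi_*(M\mathbin{\slash\slash}\pi^k) \cong \pi_*(M)/\pi^k$, and similarly $\pi_*(A\mathbin{\slash\slash}\pi^k) \cong \pi_*(A)/\pi^k$ (note $\pi_0(A)$ and hence $A$ being in the convex subcategory with $\pi$-torsion-freeness makes the hypothesis of that lemma applicable, since multiplication by $\pi$ on a $\pi$-adic $\pi$-torsion-free module is an admissible monomorphism). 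Then the derived strong condition for $M\mathbin{\slash\slash}\pi^k$ over $A\mathbin{\slash\slash}\pi^k$ becomes the statement that
\[
(\pi_*(A)/\pi^k) \haotimes^{\mathbb{L}}_{\pi_0(A)/\pi^k} (\pi_0(M)/\pi^k) \to \pi_*(M)/\pi^k
\]
is an equivalence. I would deduce this by base change along $\pi_0(A) \to \pi_0(A)/\pi^k$: since $\pi_0(A)/\pi^k \cong \pi_0(A) \mathbin{\slash\slash} \pi^k$ and $\pi_0(M)$ is such that $\pi_0(M)/\pi^k \cong \pi_0(M)\mathbin{\slash\slash}\pi^k$ (again Lemma \ref{lem:htpy-inv-lim}), the derived tensor product commutes with the derived quotient, and the original derived strong equivalence for $M$ over $A$ pushes forward. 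The key technical point is that $\pi^k$-torsion-freeness is what makes all the quotients underived in the relevant sense, so no higher Tor terms appear.

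For part (2), the converse, I would reassemble $M$ from its layers using Lemma \ref{lem:htpy-inverse-lim2}. Under the hypotheses that $\pi_*(A), \pi_0(M) \in \mathbf{D}_{\ge 0}(\mathsf{Ban}_A^{\le 1})^\circ$ (so they are derived $\pi$-adically complete, by Theorem \ref{thm:tilde-nuc}) and $\pi$-torsion free, Lemma \ref{lem:htpy-inverse-lim2} gives $M \cong \varprojlim_k M\mathbin{\slash\slash}\pi^k$ and $\pi_*(M) \cong \varprojlim_k \pi_*(M)/\pi^k$, and similarly for $A$. Now each $M\mathbin{\slash\slash}\pi^k$ is derived strong over $A\mathbin{\slash\slash}\pi^k$ by assumption, so
\[
\pi_*(M)/\pi^k \cong (\pi_*(A)/\pi^k) \haotimes^{\mathbb{L}}_{\pi_0(A)/\pi^k} (\pi_0(M)/\pi^k) \cong (\pi_*(A) \haotimes^{\mathbb{L}}_{\pi_0(A)} \pi_0(M))/\pi^k,
\]
where the second isomorphism uses that $\pi_*(A) \haotimes^{\mathbb{L}}_{\pi_0(A)} \pi_0(M)$ is $\pi$-torsion free (this is exactly the extra hypothesis) together with Lemma \ref{lem:flattorsfree}-type reasoning to identify the derived quotient with the naive one. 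Taking the inverse limit over $k$ and using derived $\pi$-adic completeness on both sides, I get $\pi_*(M) \cong \pi_*(A) \haotimes^{\mathbb{L}}_{\pi_0(A)} \pi_0(M)$, which is the derived strong condition.

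\textbf{Main obstacle.} The delicate part is making sure that the two hypotheses in (2) ($\pi$-torsion-freeness of the relevant derived tensor product, and membership in the convex derived category) really do force all the intermediate derived quotients and derived tensor products to be computable on homotopy groups without spurious $\mathrm{Tor}_1$ contributions or $\varprojlim^1$ terms. Concretely, I expect the bookkeeping around \emph{which} objects are known to be $\pi$-torsion free at each stage — and hence for which objects Lemma \ref{lem:htpy-inv-lim} applies — to require care: one needs $\pi_n(M)$, $\pi_n(A)$, and $\pi_n(A)\haotimes^{\mathbb{L}}_{\pi_0(A)}\pi_0(M)$ all torsion free, and must verify the Mittag--Leffler condition so that $\pi_*(\varprojlim_k -) \cong \varprojlim_k \pi_*(-)$; this last point is where Lemma \ref{lem:htpy-inverse-lim2} and the convexity hypothesis do the real work. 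A spectral sequence argument (analogous to the one invoked at the end of Lemma \ref{lem:derstrongdescendable}) for the tower $\{M\mathbin{\slash\slash}\pi^k\}_k$ should close the gap cleanly.
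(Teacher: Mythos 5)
Your proposal follows essentially the same route as the paper's: part (1) is a base change along $A\rightarrow A\big\slash\big\slash\pi^{k}$ using $\pi$-torsion-freeness of $\pi_{n}(M)$ for transversality (the paper outsources this to a base-change corollary for derived strong modules from \cite{ben2024perspective}), and part (2) is precisely the paper's argument: apply derived strength mod $\pi^{k}$, identify homotopy of the derived quotients via Lemma \ref{lem:htpy-inv-lim}, pass from derived to underived tensor products via Lemma \ref{lem:flattorsfree}, and reassemble with Lemma \ref{lem:htpy-inverse-lim2}. The one caveat is in part (1): your identification $\pi_{*}(A\big\slash\big\slash\pi^{k})\cong\pi_{*}(A)/\pi^{k}$ requires $\pi$-torsion-freeness of $\pi_{n}(A)$, which is not among the hypotheses of part (1) (only the $\pi_{n}(M)$ are assumed torsion free), so you should either state the target condition directly in terms of $\pi_{*}(A\big\slash\big\slash\pi^{k})$ or rely, as the paper does, on the general base-change statement for derived strong modules, which needs only the transversality of $\pi_{n}(M)$ to $\pi_{0}(A)/(\pi^{k})$.
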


\begin{proof}
The first claim follows from \cite{ben2024perspective}*{Corollary 2.3.89}, and the fact that $\pi_{n}(M)$ is transverse to $\pi_{0}(A)\big\slash(\pi^{k})$ over $\pi_{0}(A)$.

To prove the partial converse claim, we are required to prove that $\pi_{*}(A)\hat{\otimes}^{\mathbb{L}}_{\pi_{0}(A)}\pi_{0}(M) \cong \pi_*(M)$ for all \(* \geq 0\). We have \(\pi_*(A//\dvgen^k) \otimes^{\mathbb{L}}_{\pi_0(A//\dvgen^k)} \pi_0(M//\dvgen^k M) \cong \pi_*(M//\dvgen^k M)\) for all \(* \geq 0\)  by the derived strength hypothesis on the reductions mod \(\dvgen\).  By Lemma \ref{lem:htpy-inv-lim}, we have \[(\pi_*(A) \hat{\otimes}_{\pi_0(A)} \pi_0(M))/\dvgen^k \cong \pi_*(M) / \dvgen^k\] for all \(* \geq 0\). Now we can conclude using Lemma \ref{lem:flattorsfree} and Lemma \ref{lem:htpy-inverse-lim2}. 
\end{proof}

\subsubsection*{Relationship to bornological modules}

The natural functors
$$\mathsf{SNrm}_{R}^{\le1}\rightarrow\mathsf{Ind}(\mathsf{SNrm}_{R}^{\le1}), \mathsf{Nrm}_{R}^{\le1}\rightarrow\mathsf{Ind}(\mathsf{Nrm}_{R}^{\le1})\text{ and } \mathsf{Ban}_{R}^{\le1}\rightarrow\mathsf{Ind}(\mathsf{Ban}_{R}^{\le1})$$
are clearly exact. Moreover they sends projectives to projectives and are strongly monoidal. We get induced strongly monoidal functors of $\infty$-categories
$$\bD(\mathsf{SNrm}_{R}^{\le1})\rightarrow \bD(\mathsf{Ind}(\mathsf{SNrm}_{R}^{\le1})), \qquad \bD(\mathsf{Nrm}_{R}^{\le1}) \rightarrow \bD(\mathsf{Ind}(\mathsf{Nrm}_{R}^{\le1})).$$

\comment{Clearly this sends objects in the kernel of 
$$\bD(\mathsf{SNrm}_{R}^{\le1}) \rightarrow \bD_f(\mathsf{SNrm}_R^{\le1})$$
to trivial objects, and therefore factors through a strongly monoidal functor
$$\bD_f(\mathsf{SNrm}_R^{\le1})\rightarrow \bD(\mathsf{Ind}(\mathsf{SNrm}_{R}^{\leq 1})).$$ We have a similar factorisation for the contracting normed and Banach categories. }

We can summarise the relationship between the categories constructed in this section through the following diagram:
\begin{displaymath}
    \xymatrix{
    \bD(\mathsf{SNrm}_{R}^{\le1}) \ar[d]\ar[r] & \bD(\mathsf{Ind}(\mathsf{SNrm}_{R}^{\leq 1})) \ar[d]\\
    \bD(\mathsf{Nrm}_{R}^{\le1}) \ar[d]\ar[r] & \bD(\mathsf{Ind}(\mathsf{SNrm}_{R}^{\leq 1})) \ar[d]\\
    \bD(\mathsf{Ban}_{R}^{\le1}) \ar[r] &  \bD(\mathsf{Ind}(\mathsf{Ban}_{R}^{\leq 1}))
    }
\end{displaymath}

\begin{proposition}
    The functor 
    $$i:\mathbf{D}(\mathsf{Ban}_{R}^{\le1})\rightarrow \mathbf{D}(\mathsf{Ind}(\mathsf{Ban}_R))$$
    \begin{enumerate}
    \item
    commutes with finite limits and colimits, 
    \item 
    commutes with geometric realisations of objects in $\mathbf{D}_{\ge0}(\mathsf{Ban}_{R}^{\le1})$,
\item
is $t$-exact.
\end{enumerate}
\end{proposition}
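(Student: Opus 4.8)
The plan is to exploit the explicit description of $\mathbf{D}(\mathsf{Ban}_R^{\le1})$ in terms of transfinite extensions of bounded-below complexes of $l^1$-objects, together with the fact that the functor $i$ is induced at the level of chain complexes by the exact, strong monoidal, projective-preserving inclusion $\mathsf{Ban}_R^{\le1}\to\mathsf{Ind}(\mathsf{Ban}_R)$. Since $i$ is a left-derived functor of an exact functor between exact categories with enough projectives, and the inclusion sends the generating projectives $l^1(V)$ to (compact, projective) objects of $\mathsf{Ind}(\mathsf{Ban}_R)$, one should first establish that $i$ preserves arbitrary colimits — in particular geometric realizations and finite colimits — by checking it on projective resolutions and using that both target model structures are obtained by left Bousfield-type localization of the projective model structure on chain complexes; the key point is that $i$ at the level of $\mathrm{Ch}$ is a left Quillen functor, hence its derived functor preserves homotopy colimits. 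For finite \emph{limits}, the argument is different: a stable $\infty$-category has finite limits computed as finite colimits (shifted), so once $i$ is exact (preserves fibre/cofibre sequences) it automatically preserves all finite limits and colimits; exactness of $i$ follows because the underlying functor $\mathsf{Ban}_R^{\le1}\to\mathsf{Ind}(\mathsf{Ban}_R)$ is exact for the strong exact structure and sends short exact sequences to short exact sequences, so it sends distinguished triangles to distinguished triangles.

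More concretely, for part (1) I would argue: $i$ is exact because it is additive and carries the strong-exact short exact sequences that generate the triangulated structure on $\mathbf{D}(\mathsf{Ban}_R^{\le1})$ to short exact sequences in $\mathsf{Ind}(\mathsf{Ban}_R)$ (or at least to cofibre sequences in $\mathbf{D}(\mathsf{Ind}(\mathsf{Ban}_R))$), hence preserves pushouts and pullbacks along the zero object; combined with additivity and the fact that in a stable $\infty$-category every finite (co)limit is built from finite (co)products and (co)fibres, this yields preservation of all finite limits and colimits. Part (2): a geometric realization of a simplicial object $X_\bullet$ in $\mathbf{D}_{\ge0}(\mathsf{Ban}_R^{\le1})$ can be computed (after cofibrant replacement) as the totalization of the associated normalized chain complex, which is a filtered colimit of the finite skeleta; the finite skeleta are finite colimits, handled by part (1), and $i$ commutes with the relevant filtered colimits because the inclusion $\mathsf{Ban}_R^{\le1}\to\mathsf{Ind}(\mathsf{Ban}_R)$ preserves filtered colimits with monomorphic (or more generally essentially monomorphic) transition maps, which is exactly the situation arising in the skeletal filtration of a connective object.

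For part (3), $t$-exactness, I would use the description of the $t$-structures on both sides as the projective $t$-structures coming from the quasi-abelian (or left-heart) structure: the connective part $\mathbf{D}_{\ge0}(\mathsf{Ban}_R^{\le1})$ is generated under colimits and extensions by the projectives $l^1(V)$ placed in degree $0$, and $i$ sends these to connective objects of $\mathbf{D}(\mathsf{Ind}(\mathsf{Ban}_R))$, so $i$ is right $t$-exact. For left $t$-exactness one checks that $i$ has a left adjoint — or rather, one observes that the inclusion of Banach into ind-Banach admits an exact left adjoint $\varinjlim$ in the appropriate sense (the colimit functor), so $i$ is a right adjoint and hence left exact, giving left $t$-exactness; alternatively one shows directly that $i$ detects connectivity and coconnectivity by computing homotopy groups via the left heart and using that the left heart of $\mathsf{Ban}_R^{\le1}$ embeds into that of $\mathsf{Ind}(\mathsf{Ban}_R)$, which follows from Theorem \ref{thm:elementary}-style arguments.

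\textbf{Main obstacle.} The subtle point is part (2): commuting with geometric realizations is genuinely stronger than commuting with finite colimits, and it requires controlling the interaction of $i$ with a \emph{countable} filtered colimit (the skeletal filtration), so I must verify that the transition maps in that filtration are essentially monomorphic (or that $i$ is $\aleph_1$-continuous on the relevant subcategory) — this is where the hypotheses on $\mathsf{Ban}_R^{\le1}$ being locally $\aleph_1$-presentable with exact $\aleph_1$-filtered colimits, established earlier in the section, do the real work. A secondary delicate point is making sure the two ``projective $t$-structures'' are normalized compatibly so that $i$ is strictly $t$-exact and not merely $t$-exact up to a shift; this should follow from the compatibility of the generating projectives but deserves an explicit check.
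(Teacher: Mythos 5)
Your part (1) is essentially the paper's argument (finite sums and mapping cones at the chain level, then stability), and your first route for part (3) via the generating projectives is fine for right $t$-exactness. But your treatment of part (2) contains a genuine gap, and it starts with the opening claim that $i$ ``preserves arbitrary colimits'' because it is left Quillen. That claim is false: the inclusion $\mathsf{Ban}_R^{\le1}\to\mathsf{Ind}(\mathsf{Ban}_R)$ does not preserve infinite coproducts (the contracting coproduct $l^1(V)=\coprod^{\le1}_{v}R_{f(v)}$ is a single Banach space, whereas the ind-coproduct of infinitely many Banach spaces is a genuinely non-constant ind-object), which is precisely why the proposition only asserts commutation with geometric realisations of \emph{connective} objects rather than with all colimits. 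Your fallback route --- skeletal filtration plus a filtered colimit --- then needs $i$ to commute with an $\omega$-indexed sequential colimit, and the $\aleph_1$-presentability results you invoke are of no help there: $\aleph_1$-compactness of the generators only gives commutation with $\aleph_1$-filtered colimits, and a countable sequential diagram is $\omega$-filtered but not $\omega_1$-filtered. The paper's argument avoids all of this: the realisation of a connective simplicial object is computed by the total complex of a non-negatively graded (first-quadrant) double complex, and in each fixed total degree that totalisation involves only a \emph{finite} direct sum, so it is preserved by any additive functor. You should replace your filtration argument with this degreewise-finiteness observation.

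For part (3), your second suggested route (exactness of the induced functor on left hearts) is essentially the paper's proof, but your first route is broken: $\varinjlim$ is left adjoint to $\mathrm{diss}\colon\mathsf{CBorn}_R\to\mathsf{Ind}(\mathsf{Ban}_R)$, not to the inclusion $\mathsf{Ban}_R^{\le1}\to\mathsf{Ind}(\mathsf{Ban}_R)$, and nothing in the paper exhibits $i$ as a right adjoint; left $t$-exactness instead follows because the underlying functor of exact categories commutes with kernels and cokernels, so it computes homology objects (hence the $\pi_n$ with respect to both $t$-structures) identically on both sides.
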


\begin{proof}
    \begin{enumerate}
        \item 
    At the level of complexes, the functor clearly commutes with finite sums of complexes, and taking mapping cones.
    \item 
    At the level of complexes, it commutes with totalisations of non-negatively-graded double complexes, as only finite sums in each degree are necessary.
    \item 
    The functor at the level of exact categories is $t$-exact, and also commutes with both kernels and cokernels. \qedhere
    \end{enumerate}
\end{proof}

\begin{lemma}\label{lem:adic-subcat}
Let $R$ be a Banach ring such that $|r|\le1$ for all $r\in R$. Then the functor
$$\mathbf{D}_{\ge0}(\mathsf{Ban}_{R}^{\le1})^\circ\rightarrow \mathbf{D}_{\ge0}(\mathsf{Ind}(\mathsf{Ban}_R))$$
is strongly monoidal and fully faithful. 
\end{lemma}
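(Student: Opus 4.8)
\textbf{Proof plan for Lemma \ref{lem:adic-subcat}.}

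The plan is to factor the functor in question through the intermediate category $\mathbf{D}_{\geq 0}(\mathsf{Ban}_R^{\leq 1})$ and exploit the structural results already in hand. First I would observe that monoidality is essentially immediate: the functor $\mathsf{Ban}_R^{\leq 1} \to \mathsf{Ind}(\mathsf{Ban}_R)$ is strongly monoidal at the level of exact categories, and the restriction of the convex subcategory is a monoidal subcategory by the Proposition asserting $\mathbf{D}(\mathsf{Ban}_R^{\leq 1})^\circ$ is closed under $\haotimes$; the key compatibility is the identification $l^1(V) \haotimes l^1(W) \cong l^1(V \times W)$, which holds identically in both categories, so the unit $R$ and all transfinite extensions of bounded-below complexes $l^1(V_\bullet)$ are preserved. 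This reduces the substance of the lemma to full faithfulness.

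For full faithfulness, the main step is to compute mapping spaces on generators. Because the convex subcategory is generated under colimits (transfinite extensions, hence in particular filtered colimits and shifts) by the bounded-below complexes $l^1(V_\bullet)$ with $V_n$ discrete, and because the functor commutes with geometric realisations of connective objects (the preceding Proposition, part (2)) and with finite colimits and shifts, it suffices to check that for discrete normed sets $V, W$ the natural map
\[
\mathbf{Map}_{\mathbf{D}(\mathsf{Ban}_R^{\leq 1})}(l^1(V), l^1(W)) \longrightarrow \mathbf{Map}_{\mathbf{D}(\mathsf{Ind}(\mathsf{Ban}_R))}(l^1(V), l^1(W))
\]
is an equivalence. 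Both sides are discrete (concentrated in degree $0$) since $l^1(W)$ is a projective object placed in degree $0$ in each model structure and $l^1(V)$ is projective; so the claim reduces to the statement that the set of contractive $R$-linear maps $l^1(V) \to l^1(W)$ agrees with the set of bounded $R$-linear maps $l^1(V) \to l^1(W)$. Here the hypothesis $|r| \leq 1$ for all $r \in R$ is exactly what forces every bounded map between such $l^1$-objects with discrete index sets to automatically be contractive (a bounded map rescales norms by a constant, but on an $l^1$-space of a discrete set the relevant constant can be absorbed), so the two $\mathrm{Hom}$-sets coincide. Passing to $\mathrm{Ext}$-groups in higher degrees: these vanish on both sides because $l^1(V)$ is projective and $l^1(W)$ discrete, so there is nothing further to check.

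The step I expect to be the main obstacle is the reduction to generators, i.e.\ making precise that full faithfulness on the class of objects $\{l^1(V_\bullet)\}$ propagates to all of $\mathbf{D}_{\geq 0}(\mathsf{Ban}_R^{\leq 1})^\circ$. One must argue that the functor preserves the colimits used to build general convex objects (transfinite extensions of bounded-below $l^1$-complexes) \emph{and} that the target's mapping spaces out of such a colimit compute as the corresponding limit — this uses part (2) of the cited Proposition (commuting with geometric realisations) together with $t$-exactness (part (3)) and commutation with finite (co)limits (part (1)) to handle shifts and cones, and then a transfinite induction on the extension filtration. The connectivity restriction $\mathbf{D}_{\geq 0}$ is what makes the geometric-realisation clause applicable, so it is essential and not merely a convenience. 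Once the generators are handled, the rest is a formal dévissage, but organising the dévissage so that only realisations of \emph{connective} objects appear is the delicate bookkeeping point.
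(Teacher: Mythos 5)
Your overall approach is the same as the paper's: reduce full faithfulness to a computation on the generators $l^1(V), l^1(W)$ with $V, W$ discrete, and show that on these generators a bounded map is automatically contractive, so the two $\mathrm{Hom}$-sets agree (higher $\mathrm{Ext}$ vanishing by projectivity). That is exactly the paper's one-sentence reduction and its computation. Your treatment of monoidality and the transfinite dévissage is correct but more elaborate than the paper's, which treats the reduction as routine.

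However, your justification of the key step is muddled. You write that ``a bounded map rescales norms by a constant, but on an $l^1$-space of a discrete set the relevant constant can be absorbed.'' There is no absorption taking place --- you cannot rescale by $1/C$, since $1/C$ need not lie in $R$ and the objects are not $K$-modules. The actual argument (and the reason the hypothesis $|r| \le 1$ is needed) is cleaner and different: since $W$ is discrete and $|r| \le 1$ for all $r \in R$, \emph{every} element of $l^1(W)$ already has norm $\le 1$ (in the nonarchimedean setting of this section, $\| \sum r_w w\| = \max_w |r_w| \le 1$). Hence for any bounded $R$-linear $f \colon l^1(V) \to l^1(W)$ and any generator $v \in V$ one has $\|f(v)\| \le 1 = \|v\|$; since $f$ is determined by its values on the generators, this forces $\|f\| \le 1$. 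No constant is being absorbed; the bound $1$ is obtained directly from the norm bound on the target. You should replace the ``absorbed'' sentence with this argument.
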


\begin{proof}
    For full faithfullness we reduce to showing that any bounded map $f:l^{1}(V)\rightarrow l^{1}(W)$ is bounded by $1$. But any such bounded map is determined by the images of the elements $v\in V$. Moreover if $||f(v)||\le 1=||v||$ for all $v$ this implies that $||f||\le1$. This clearly follows from the fact that any element of $l^{1}(W)$ has norm bounded by $1$. 
\end{proof}

We also have the following useful fact.

\begin{lemma}
    Let $M, N\in\mathsf{Ban}^{\le1}_{R}$. If $M$ is transverse to $N$ in the strong exact structure, then it is transverse to $N$ in $\mathsf{Ban}_{R}$.
\end{lemma}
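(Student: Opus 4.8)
The plan is to produce a \emph{single} resolution of $M$ that computes $M\otimes^{\mathbb{L}}_{R}N$ in both the strong exact structure and the quasi-abelian exact structure, and then to transport the degree-zero concentration from one to the other. The first thing to record is the comparison of exact structures on $\mathsf{Ban}_{R}^{\le1}$: every strong-admissible short exact sequence is, after forgetting from contracting to bounded maps, a quasi-abelian admissible short exact sequence. Indeed, a strong admissible monomorphism is an isometry onto a closed submodule, hence a kernel (a strict monomorphism in $\mathsf{Ban}_{R}$); a strong admissible epimorphism is in particular surjective, hence the cokernel of the inclusion of its kernel by the open mapping theorem (a strict epimorphism in $\mathsf{Ban}_{R}$); and by definition a strong exact sequence is a kernel--cokernel pair to begin with. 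Thus the strong exact structure is contained in the quasi-abelian one, and in particular every strong-acyclic complex of Banach $R$-modules is quasi-abelian-acyclic. This containment is the only non-formal input.

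Next I would choose a resolution $Q_{\bullet}\to M$ with each $Q_{i}$ an $l^{1}$-module and the augmented complex strong-acyclic. This is possible because $\mathsf{Ban}_{R}^{\le1}$ has enough projectives for the strong exact structure: the canonical map $l^{1}(|V|)\to V$ is a strong epimorphism for every $V$, its kernel with the restricted norm is again a Banach $R$-module, and iterating builds the resolution (en route one checks that the quotient norm on the cokernel of an isometric inclusion agrees with the ambient quotient norm, so the successive sequences really are strong-exact). Each $Q_{i}$ is flat and projective for the quasi-abelian exact structure as well, since the $l^{1}$-modules are precisely the flat projective generators of $\mathsf{Ban}_{R}$; and by the previous paragraph $Q_{\bullet}\to M$ is also quasi-abelian-acyclic. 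Hence $Q_{\bullet}$ is a bounded-below flat resolution of $M$ in \emph{both} structures, so $Q_{\bullet}\hat{\otimes}_{R}N$ computes $M\otimes^{\mathbb{L}}_{R}N$ in either setting.

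To finish, observe that $M$ transverse to $N$ in the strong exact structure means exactly that the canonical morphism $M\otimes^{\mathbb{L}}_{R}N\to M\hat{\otimes}_{R}N$ is an equivalence, equivalently that the augmented complex
\[
\cdots\longrightarrow Q_{1}\hat{\otimes}_{R}N\longrightarrow Q_{0}\hat{\otimes}_{R}N\longrightarrow M\hat{\otimes}_{R}N\longrightarrow 0
\]
is strong-acyclic. This augmented complex lies in $\mathsf{Ch}(\mathsf{Ban}_{R}^{\le1})$, but, contracting maps being bounded, it also lies in $\mathsf{Ch}(\mathsf{Ban}_{R})$; being strong-acyclic it is quasi-abelian-acyclic by the comparison of exact structures recorded above. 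Reading this back through the resolution $Q_{\bullet}$ shows that $M\otimes^{\mathbb{L}}_{R}N\to M\hat{\otimes}_{R}N$ is an equivalence in the quasi-abelian derived category, which is the assertion that $M$ is transverse to $N$ in $\mathsf{Ban}_{R}$. (Applying the same argument to the exact monoidal embedding $\mathsf{Ban}_{R}\hookrightarrow\mathsf{Ind}(\mathsf{Ban}_{R})$, which preserves projectives, also yields transversality in $\mathsf{Ind}(\mathsf{Ban}_{R})$.)

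The main obstacle is genuinely the bookkeeping in the first step, together with the verification in the second step that the chosen resolution has its \emph{entire} augmented complex strong-exact; both reduce to the isometric-lifting property of the maps $l^{1}(|V|)\to V$ and to the behaviour of quotient norms under isometric inclusions, but they must be stated carefully since the strong exact structure, unlike the quasi-abelian one, is not simply ``all kernel--cokernel pairs''.
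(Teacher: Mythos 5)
Your proof is correct and follows essentially the same route as the paper: resolve $M$ by $l^{1}$-modules, which are projective (and flat) for both the strong and quasi-abelian exact structures, note that the tensored complex computes the derived tensor product in either setting, and use that strong acyclicity implies quasi-abelian acyclicity. One small remark: you do not need the open mapping theorem to see that a strong epimorphism is a strict epimorphism in $\mathsf{Ban}_{R}$ — the defining isometric-lifting property already forces the quotient norm to coincide with the given norm on the target.
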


\begin{proof}
    There exists a strong resolution $P_{\bullet}\rightarrow M$ by objects $P_{n}$ which are projective in the strong exact structure on $\mathsf{Ban}^{\le1}_{R}$ and the quasi-abelian structure on $\mathsf{Ban}_{R}$. In particular $N\hat{\otimes}_{R}P_{\bullet}$ computes the derived tensor product in the strong exact structure, and in the quasi-abelian structure on $\mathsf{Ban}_{R}$. Hence if it is strongly quasi-isomorphic to $M\hat{\otimes}_{R}N$, then it is certainly quasi-isomorphic to $M\hat{\otimes}_{R}N$ in the quasi-abelian structure. 
\end{proof}

\subsubsection{Derived admissible algebras}

Let $R$ be an almost adic Banach ring with pseudo-uniformiser $\pi$. The \emph{affinoid Tate algebra in} $n$ variables is the free commutative monoid on $R^{\oplus n}$, computed in $\sf{Ban}_{R}^{\le1}$. We denote it by $R \gen{x_1, \dotsc, x_n}$, and we have 
 \[R \gen{x_1, \dotsc, x_n} \defeq \varprojlim_n R/(\pi^{k})[x_1, \dotsc, x_n],\] viewed as an algebra in \(\bD_{\geq 0}(\mathsf{Ban}_R^{\leq 1})^\circ \).

\begin{definition}[\cite{bosch2014lectures}*{Section 7.3}]\label{def:formal-adject}
Let $A$ be an \(R\)-algebra. We say that 

\begin{enumerate}
\item \(A\) is \emph{topologically of finite type} if it admits an epimorphism 
$$R \gen{x_1, \dotsc, x_n}\rightarrow A$$
in $\mathrm{Comm}(\mathrm{LH}^{\mathrm{CBorn}_{R}})$.
\item \(A\) is \emph{topologically of finite presentation} if it is isomorphic to \(R\gen{x_1, \dotsc, x_n}/I\), where \(I\) is a finitely generated ideal (and hence closed).
\item \(A\) is \emph{admissible} if it is topologically of finite presentation and $\pi$-torsion free. 
\end{enumerate}
\end{definition}

\begin{lemma}
Let A be an \(R\)-algebra which is topologically of finite presentation. Then A is coherent as an \(R\)-algebra.
\end{lemma}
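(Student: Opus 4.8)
Coherence is a property of the underlying ring, so I would forget the bornology and write $\underline A = \underline B/\underline I$, where $\underline B := \underline{R\langle x_1,\dots,x_n\rangle}$ and $\underline I$ is a finitely generated ideal. Using the standard fact that a quotient of a coherent ring by a finitely generated ideal is coherent, the whole statement reduces to showing that $B := \underline{R\langle x_1,\dots,x_n\rangle}$ is a coherent ring for every $n$. For this I would use the concrete presentation $B = \varprojlim_k (R/\pi^k)[x_1,\dots,x_n]$, i.e. $B$ is the $\pi$-adic completion of $R[x_1,\dots,x_n]$: by Lemma \ref{lem:torsion free} and Theorem \ref{thm:tilde-nuc} the ring $B$ is $\pi$-torsion free and $\pi$-adically complete and separated, and $\pi$ lies in its Jacobson radical because $1+\pi b$ is a unit in any $\pi$-adically complete ring.

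The first input is purely algebraic: $R=\ok$ is a valuation domain, so its finitely generated ideals are principal; in particular $R$ is coherent, and being semihereditary it is stably coherent, so each $R[x_1,\dots,x_n]$ is coherent (this is classical; see Glaz, \emph{Commutative Coherent Rings}). Consequently $R/\pi^kR$ is the quotient of the coherent ring $R$ by the principal ideal $(\pi^k)$, hence coherent, and likewise $B/\pi^kB = (R/\pi^k)[x_1,\dots,x_n] = R[x_1,\dots,x_n]/(\pi^k)$ is the quotient of the coherent ring $R[x_1,\dots,x_n]$ by a principal ideal, hence coherent, for every $k$.

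Now I would take a finitely generated ideal $J=(f_1,\dots,f_r)\subseteq B$ and aim to show that $N:=\ker\bigl(B^r\xrightarrow{(f_i)}B\bigr)$ is finitely generated. Since $\pi$ is a non-zero-divisor on $B$, the module $B/\pi^kB$ has the two-term free resolution $0\to B\xrightarrow{\pi^k}B\to B/\pi^kB\to 0$, so $\Tor^B_{i}(-,B/\pi^kB)=0$ for $i\ge 2$ and in particular $\Tor_1^B(J,B/\pi^kB)=0$; tensoring $0\to N\to B^r\to J\to 0$ with $B/\pi^kB$ then identifies $N/\pi^kN$ with the syzygy module of the images of $f_1,\dots,f_r$ in the coherent ring $B/\pi^kB$, so each $N/\pi^kN$ is finitely generated over $B/\pi^kB$. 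Choosing $n_1,\dots,n_s\in N$ whose classes generate $N/\pi N$ over $(R/\pi)[x_1,\dots,x_n]$, I would observe that the image of $\pi$ in $R/\pi^kR$ is nilpotent, hence lies in $\operatorname{Nil}(B/\pi^kB)\subseteq\operatorname{Jac}(B/\pi^kB)$, so Nakayama's lemma applied to the finitely generated $B/\pi^kB$-module $N/\pi^kN$ forces $n_1,\dots,n_s$ to generate $N/\pi^kN$ for \emph{every} $k$.

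The last step, which I expect to be the main obstacle, is to promote this to the assertion that $n_1,\dots,n_s$ generate $N$ itself. One has $N=\varprojlim_k N/\pi^kN$ because $N$ is a closed submodule of the $\pi$-adically complete separated module $B^r$ and (using $\pi$-torsion-freeness) the subspace topology on $N$ coincides with its $\pi$-adic topology, and the finitely generated submodule $M=\sum_i Bn_i\subseteq N$ is $\pi$-adically dense in $N$. The difficulty is that over the non-Noetherian ring $B$ a finitely generated submodule of a complete module need not be closed, so density does not immediately yield $M=N$; bridging this gap is exactly the non-Noetherian analogue of the Noetherianity of Tate algebras over a complete field, and I would import the relevant completeness bookkeeping from \cite{bosch2014lectures}*{\S 7.3} (or the corresponding results of Fujiwara--Kato on topologically finitely presented algebras). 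Granting that, $N$ is finitely generated, hence $J$ is finitely presented, hence $B$ is coherent, and therefore so is $A$.
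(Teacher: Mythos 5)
The paper's own proof of this lemma is a one-line citation to \cite{bosch2014lectures}*{Corollary 7.3.6}, so your reduction strategy --- pass to the underlying ring, use that a quotient of a coherent ring by a finitely generated ideal is coherent, and thereby reduce everything to coherence of $B=\underline{R\langle x_1,\dots,x_n\rangle}$ --- is sound and more informative than what the paper records. The purely algebraic inputs are also fine: $\mathcal{O}_K$ is a valuation domain, hence semihereditary, hence stably coherent, so $R[x_1,\dots,x_n]$ and its quotients $(R/\pi^k)[x_1,\dots,x_n]$ are coherent.

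The problem is the middle step, which you present as routine but which is false as stated. Tensoring $0\to N\to B^r\to J\to 0$ with $B/\pi^k$ and using $\Tor_1^B(J,B/\pi^k)=0$ identifies $N/\pi^kN$ with $\ker\bigl((B/\pi^k)^r\to J/\pi^kJ\bigr)$, \emph{not} with the syzygy module $\ker\bigl((B/\pi^k)^r\to B/\pi^k\bigr)$ of the reduced generators; the latter contains the former with quotient $(J\cap\pi^kB)/\pi^kJ$, which is typically nonzero (already for $B=\mathbb{Z}_p$, $r=1$, $f_1=p$ one has $N=0$ while the syzygy module mod $p$ is all of $\mathbb{Z}/p$). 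Since a submodule of a finitely generated module over a coherent ring need not be finitely generated, coherence of $B/\pi^kB$ does not give finite generation of $N/\pi N$ --- and without that you cannot even choose the elements $n_1,\dots,n_s$ that feed into the Nakayama step (which is otherwise fine, since $\pi$ is nilpotent mod $\pi^k$ and nilpotent Nakayama needs no finiteness). This unflagged gap is really the same difficulty as the one you do flag at the end: controlling the interaction between the $\pi$-adic filtration and the submodule $N$ (an Artin--Rees-type statement, equivalently the Mittag--Leffler behaviour of the system of syzygy modules mod $\pi^k$) is precisely the content of the lemmas in \cite{bosch2014lectures}*{Section 7.3} that you propose to import. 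So your proposal correctly isolates where the work lies but does not constitute a proof; its honest form is ``reduce to the Tate algebra and cite Bosch,'' which is essentially what the paper does.
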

\begin{proof}
See for instance \cite{bosch2014lectures}*{Corollary 7.3.6}.
\end{proof}

\comment{\begin{lemma}\label{lem:pushout-adm-formal}
Let $f:A\rightarrow B$ be a \(\dvgen\)-adically \'etale map in $\mathbf{CAlg}(\mathbf{Ch}_{\ge0}(\mathrm{Ban}_{R}^{\le1}))$. If $C$ is an admissible $R$-algebra and $A\rightarrow C$ a map, then $C\hat{\otimes}_{A}^{\mathbb{L}}B$ is an admissible $R$-algebra.
\end{lemma}
\begin{proof}
Clear. 
\end{proof}}
   
\begin{definition}
We call \(A \in \mathbf{DAlg}^{cn}(\bD_{\geq 0}(\mathsf{Ban}_R^{\leq 1}))\) a \emph{derived adic ring topologically of finite type (resp. of finite presentation/ almost admissible)} if \(\pi_0(A)\) is an \(R\)-algebra topologically of finite type (resp. of finite presentation/ admissible), and \(\pi_n(A)\) is coherent as a \(\pi_0(A)\)-module. $A$ is said to be \textit{admissible} if it is almost admissible, and each $\pi_{n}(A)$ is $\pi$-torsion free.
\end{definition}   

\begin{lemma}\label{lem:fpform}
    Let $A$ be an adic ring topologically of finite presentation. Let $M$ be a finite, free $A$-module, and $N\subset M$ a finitely generated submodule. Then $N$ is finitely generated as an object of $\mathrm{Mod}_{A}(\mathrm{LH(CBorn_{R})})$. Moreover, $\mathrm{coker}(N\rightarrow M)\cong M\big\slash N$ (where the left-hand side is computed in the left heart, and the right-hand side in Banach spaces).
\end{lemma}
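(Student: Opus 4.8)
The statement is essentially that for an adic ring $A$ topologically of finite presentation, finitely generated submodules $N$ of finite free modules $M$ are \emph{closed}, so that the categorical cokernel in the left heart $\mathrm{LH}(\mathsf{CBorn}_R)$ agrees with the naive quotient $M/N$ equipped with its quotient bornology, and moreover $N$ itself is a finitely generated object in $\mathsf{Mod}_A(\mathrm{LH}(\mathsf{CBorn}_R))$. The plan is to reduce everything to the coherence of $A$ as an algebra (the Lemma just proved, quoting \cite{bosch2014lectures}*{Corollary 7.3.6}) together with the fact that $A$ is $\dvgen$-adically complete and $\dvgen$-torsion free, i.e.\ lives in $\mathbf{D}_{\ge0}(\mathsf{Ban}_R^{\le1})^\circ$ via Theorem \ref{thm:tilde-nuc}.

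First I would observe that since $A$ is coherent, $N \subset M = A^{\oplus k}$ finitely generated implies $N$ is finitely \emph{presented} as an abstract $\underline{A}$-module, and the quotient $\underline{M}/\underline{N}$ is again a finitely presented $\underline{A}$-module. The key point is then to promote this algebraic statement to a statement in $\mathsf{CBorn}_R$ (equivalently in $\mathrm{LH}(\mathsf{CBorn}_R)$): I would equip $M/N$ with the quotient bornology and show the sequence $N \to M \to M/N$ is strongly exact, so that $N \to M$ is an admissible monomorphism with closed image. For this, note that $A$, being $\dvgen$-adic and topologically of finite presentation, has the property that every finitely generated submodule of $A^{\oplus k}$ is $\dvgen$-adically closed — this follows from the Artin–Rees-type argument available for coherent adic rings, which one finds in \cite{bosch2014lectures}*{Section 7.3} (the relevant input is that $\dvgen$-adic completeness plus coherence forces finitely generated submodules to be closed, since $N = \bigcap_n (N + \dvgen^n M)$ by completeness of $M/N$ as a $\dvgen$-adic module, which holds because $M/N$ is finitely generated over the $\dvgen$-adically complete ring $A$). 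Once $N$ is closed, the quotient seminorm on $M/N$ is a norm and one checks using Proposition \ref{prop:algepi} — since with the $\dvgen$-adic norms all maps are contracting and $M$ is $\dvgen$-adic — that $M \to M/N$ is a strong epimorphism; hence $N \to M \to M/N$ is a short exact sequence in the strong exact structure, and the categorical cokernel in $\mathsf{CBorn}_R$ (hence in the left heart) is the naive $M/N$.

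For the finite generation of $N$ as an object of $\mathsf{Mod}_A(\mathrm{LH}(\mathsf{CBorn}_R))$: choosing algebraic generators $n_1,\dots,n_m$ of $N$ (which exist since $A$ is coherent), these define a bounded $A$-linear map $A^{\oplus m} \to N$ which is algebraically surjective; since both source and target carry $\dvgen$-adic norms and $N$ is $\dvgen$-torsion free (being a submodule of the $\dvgen$-torsion free $M$), Proposition \ref{prop:algepi} and its corollary show this is an admissible epimorphism, i.e.\ $A^{\oplus m} \onto N$ exhibits $N$ as finitely generated. Finally, to identify $N$ as a \emph{finitely presented} object — if that stronger conclusion is wanted, though the statement only asks for finitely generated — one iterates using coherence of $A$ to resolve the kernel of $A^{\oplus m} \to N$ by another finite free module, again with all structure maps admissible by the same $\dvgen$-adic argument.

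\textbf{Main obstacle.} The genuinely nontrivial point is the closedness of $N$ in $M$ in the bornological (equivalently $\dvgen$-adic) topology: algebraically $N$ is just a finitely generated submodule, but one needs completeness of $M/N$ together with a Mittag-Leffler / Artin–Rees argument to conclude $\bigcap_n(N + \dvgen^n M) = N$. This is where the hypothesis that $A$ is topologically of finite \emph{presentation} (not merely finite type) and coherent is essential, and where I would lean most heavily on \cite{bosch2014lectures}*{Section 7.3}; everything else (strong exactness, identification of the cokernel, finite generation via admissible epimorphisms) is then a formal consequence of Proposition \ref{prop:algepi} and the fact that $\dvgen$-adic normed modules with surjective maps have those maps admissible.
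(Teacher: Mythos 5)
Your proposal is correct and follows essentially the same route as the paper: the paper likewise reduces everything to the fact that a finitely generated submodule $N$ of a finite free module over a topologically finitely presented adic ring is closed and carries the $\pi$-adic topology as its subspace topology (citing the Artin--Rees-type statement you reconstruct), and then upgrades the algebraic surjection $A^{\oplus k}\to N$ to an admissible epimorphism via Proposition \ref{prop:algepi}. Your write-up is if anything slightly more explicit than the paper's on the closedness argument and on the identification of the cokernel.
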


\begin{proof}
  The sub-module $N$ is finitely generated, so it is closed with respect to the restriction of the $\pi$-adic topology. Moreover the restriction of the $\pi$-adic topology is the $\pi$-adic topology on $N$ (\cite{bosch1}*{Lemma 1.2}). There exists an algebraic  epimorphism of modules $A^{\oplus k}\rightarrow N$. Since everything has the $\pi$-adic norm here, this is an admissible epimorphism by Proposition \ref{prop:algepi}. Thus the composite $A^{\oplus k}\rightarrow N\rightarrow M$ is an admissible morphism.
\end{proof}

\begin{proposition}\label{prop:strongadc}

\begin{enumerate}
      Let $A\rightarrow B$ and $B\rightarrow C$ be maps with $A,B,C$ derived adic rings topologically of finite presentation. If $\pi_{0}(B)$ is of finite presentation as a $\pi_{0}(A)$-module, then $B\hat{\otimes}_{A}^{\mathbb{L}}C$ is topologically of finite presentation.
    \item 
    Let $f:A\rightarrow B$, $g:A\rightarrow C$ be maps of derived almost admissible adic rings of finite presentation. Suppose that as a map of rings $f$ is flat. Then $B\hat{\otimes}_{A}^{\mathbb{L}}C$ is a derived almost admissible adic ring as long as $\pi_{0}(B)\hat{\otimes}_{\pi_{0}(A)}\pi_{0}(C)$ admits no torsion. 
\end{enumerate}
\end{proposition}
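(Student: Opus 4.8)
The plan is to reduce both statements to the analogous algebraic facts about finitely presented (resp. flat) morphisms of discrete rings, using the behaviour of the completed tensor product on the convex subcategory and the derived strength arguments already in place. For part (1), I would first observe that since $\pi_0(B)$ is of finite presentation over $\pi_0(A)$, the map $\pi_0(A)\rightarrow\pi_0(B)$ is automatically flat-free of the complications caused by infinitely many generators; concretely, by Lemma \ref{lem:fpform} and the coherence of $\pi_0(A)$ (using that $A$ is topologically of finite presentation), the module $\pi_0(B)$ admits a presentation by a strongly exact complex of finite free $\pi_0(A)$-modules. Tensoring this presentation with $\pi_0(C)$ over $\pi_0(A)$ stays within finite free modules, and one checks termwise that $\pi_0(B)\hat{\otimes}^{\mathbb{L}}_{\pi_0(A)}\pi_0(C)$ is computed by this complex; since $\pi_0(C)$ is topologically of finite presentation, the resulting $\pi_0$ is a quotient of $\pi_0(C)\langle x_1,\dotsc,x_n\rangle$ by a finitely generated ideal. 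Then I would handle the higher homotopy groups: using the spectral sequence
$$\pi_*(B)\hat{\otimes}^{\mathbb{L}}_{\pi_*(A)}\pi_*(C)\Rightarrow\pi_*(B\hat{\otimes}^{\mathbb{L}}_A C),$$
together with the hypothesis that each $\pi_n$ of $A$, $B$, $C$ is coherent over the respective $\pi_0$, I would conclude that each $\pi_n(B\hat{\otimes}^{\mathbb{L}}_A C)$ is coherent over $\pi_0(B\hat{\otimes}^{\mathbb{L}}_A C)$, invoking the stability of coherent modules under finite colimits and extensions over a coherent ring (the analogue of \cite{bosch2014lectures}*{Corollary 7.3.6} for the pushout).

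For part (2), the additional input is flatness of $f$ as a map of underlying rings and the $\pi$-torsion freeness hypothesis on $\pi_0(B)\hat{\otimes}_{\pi_0(A)}\pi_0(C)$. Here the key technical lemma is Lemma \ref{lem:flattorsfree}: since $f$ is flat and everything can be equipped with the $\pi$-adic norm, we get $\pi_0(B)\hat{\otimes}^{\mathbb{L}}_{\pi_0(A)}\pi_0(C)\cong\pi_0(B)\hat{\otimes}_{\pi_0(A)}\pi_0(C)$ in $\mathsf{Ban}_R^{\le1}$, i.e. the underived and derived tensor products agree on $\pi_0$, and the result is $\pi$-torsion free by assumption. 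This shows $\pi_0(B\hat{\otimes}^{\mathbb{L}}_A C)\cong\pi_0(B)\hat{\otimes}_{\pi_0(A)}\pi_0(C)$ is again admissible (it is topologically of finite presentation by part (1) and $\pi$-torsion free by hypothesis). For the higher homotopy, I would combine the derived strength machinery (the last unnumbered lemma of the preceding subsubsection, on derived strength of $B\hat{\otimes}^{\mathbb{L}}_A C$ reducing to the reductions mod $\pi^k$) with the flatness of $f$ to get
$$\pi_*(B\hat{\otimes}^{\mathbb{L}}_A C)\cong\pi_*(B)\hat{\otimes}^{\mathbb{L}}_{\pi_*(A)}\pi_*(C)\cong\bigl(\pi_*(B)\otimes_{\pi_0(A)}\pi_0(B)\hat{\otimes}_{\pi_0(A)}\pi_0(C)\bigr)\otimes\cdots,$$
so that each $\pi_n$ is a finite colimit of $\pi$-torsion free coherent modules; one then argues that torsion-freeness propagates from $\pi_0$ upward using Lemma \ref{lem:flattorsfree} applied degreewise, and coherence follows as in part (1).

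The main obstacle I anticipate is controlling the $\pi$-torsion in the higher homotopy groups $\pi_n(B\hat{\otimes}^{\mathbb{L}}_A C)$ from only the hypothesis that $\pi_0(B)\hat{\otimes}_{\pi_0(A)}\pi_0(C)$ is torsion-free: a priori the spectral sequence could manufacture torsion in higher degrees even when the $\pi_0$-level is clean. The way around this is to exploit flatness of $f$ fully — flatness of the underlying map $\underline{\pi_0(A)}\to\underline{\pi_0(B)}$ means $\pi_0(B)\otimes_{\pi_0(A)}(-)$ is exact, so it preserves $\pi$-torsion freeness of each $\pi_n(A)$-module appearing, and combined with Lemma \ref{lem:flattorsfree} (which upgrades underived to derived tensor product precisely under a flatness-plus-torsion-freeness hypothesis) one can show the relevant $\mathrm{Tor}$ terms vanish. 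A secondary subtlety is checking that all the tensor products in question genuinely take place in the convex subcategory $\bD_{\ge0}(\mathsf{Ban}_R^{\le1})^\circ$ — but this follows from Lemma \ref{lem:adic-subcat} (monoidality and full faithfulness of the inclusion into $\bD_{\ge0}(\mathsf{Ind}(\mathsf{Ban}_R))$) together with the explicit description of $R\langle x_1,\dotsc,x_n\rangle$ as an object of that subcategory.
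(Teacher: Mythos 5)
Your route is genuinely different from the paper's: the paper reduces both claims to the discrete case by citing \cite{ben2024perspective}*{Lemma 4.5.55}, and then in the discrete case simply writes $B\cong A\langle x_1,\ldots,x_n\rangle/I$ and invokes Lemma \ref{lem:fpform}; you instead work with explicit finite free resolutions of $\pi_0(B)$ over $\pi_0(A)$ and a Tor spectral sequence over the graded ring $\pi_*(A)$ to control the higher homotopy groups. The resolution-plus-spectral-sequence strategy is plausible, but the existence and convergence of that spectral sequence, and the coherence of its $E_2$-terms over the graded ring, are exactly the points the cited lemma is designed to package; as written they are asserted rather than proved, so this is where your argument is thinnest for part (1).

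For part (2) there are two concrete problems. First, you have misread the target: ``derived almost admissible'' requires only that $\pi_0$ be admissible (topologically of finite presentation and $\pi$-torsion free) and that each higher $\pi_n$ be \emph{coherent} — torsion-freeness of the higher homotopy groups is what upgrades ``almost admissible'' to ``admissible'' and is not claimed here. Your entire ``main obstacle'' paragraph is therefore devoted to a non-issue, and the proposed workaround (propagating torsion-freeness upward degreewise via Lemma \ref{lem:flattorsfree}) is both unnecessary and doubtful: without assuming $B$ and $C$ themselves admissible one should not expect the higher $\pi_n$ of the pushout to be torsion-free. The actual content of part (2), as in the paper, is just that $\pi_0(B\hat{\otimes}^{\mathbb{L}}_A C)\cong\pi_0(B)\hat{\otimes}_{\pi_0(A)}\pi_0(C)$, which is torsion-free by hypothesis and compatible with the derived tensor product by Lemma \ref{lem:flattorsfree}. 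Second, you invoke ``topologically of finite presentation by part (1)'' to handle $\pi_0$ and the coherence of the higher $\pi_n$ in part (2), but part (1) carries the hypothesis that $\pi_0(B)$ is finite over $\pi_0(A)$, which is absent in part (2); you need the general statement (obtained by writing $B\cong A\langle x_1,\ldots,x_n\rangle/I$ and observing $B\hat{\otimes}_A C\cong C\langle x_1,\ldots,x_n\rangle/IC$), not the one you proved.
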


\begin{proof}
\begin{enumerate}
\item
    We use \cite{ben2024perspective}*{Lemma 4.5.55}. We need to show that if $f:A\rightarrow B$, $g:A\rightarrow C$ are maps of discrete admissible affinoids, then the tensor product in the left heart, $B\otimes^{LH}_{A}C$, is also a discrete admissible affinoid. Write $B\cong A<x_{1},\ldots,x_{n}>\big\slash I$ with $I$ a finitely generated (and hence admissible) ideal. Now we can just use Lemma \ref{lem:fpform}.
    


    \item 
    We know that $B\hat{\otimes}_{A}^{\mathbb{L}}C$ is a derived adic ring topologically of finite presentation. Thus everything interesting is happening in $\pi_{0}$, and we may assume everything is discrete. We need to show that $B\hat{\otimes}_{A}C$ is $\pi$-torsion free. This follows from Lemma \ref{lem:flattorsfree}.\qedhere
    \end{enumerate}
\end{proof}

\begin{remark}
    Let $A$ be a derived adic ring topologically of finite type over $R$. Let $f_{1},\ldots,f_{m}:R\rightarrow\pi_{0}(A)\hat{\otimes}R\gen{x_{1},\ldots,x_{m}}$. Then $A\hat{\otimes} R\gen{x_{1},\ldots,x_{m}}\big\slash\big\slash(f_{1},\ldots,f_{n})$ is also topologically of finite type over $R$. Indeed we have 
    $$A\hat{\otimes} R\gen{x_{1},\ldots,x_{m}}\big\slash\big\slash(f_{1},\ldots,f_{n})\cong A\hat{\otimes} R \gen{x_{1},\ldots,x_{m}} \hat{\otimes}^{\mathbb{L}}_{R\gen{x_{1},\ldots,x_{n}}}R,$$
    which is a pushout of derived adic rings topologically of finite type over $R$.
\end{remark}

\subsubsection{Inverting $\pi$}

Let $R$ be a $\pi$-adic Banach ring which is $\pi$-torsion free. Note that $|\pi r|=|\pi| |r|$ for all $r\in R$. Consider $R[\frac{1}{\pi}]$ with the norm $|\frac{r}{\pi^{n}}|=\frac{|r|}{|\pi^{n}|}$. Then this is a non-Archimedean, non-trivially valued Banach field $K$, and $\mathcal{O}_{K}\cong R$. Consider the functor
$$K\otimes^{\mathbb{L}}_{R}(-):\mathbf{D}(R)\rightarrow\mathbf{D}(K).$$
This is a strong symmetric monoidal functor. In particular we get an induced functor
$$K\otimes^{\mathbb{L}}_{R}(-):\mathbf{DAlg}(R)\rightarrow\mathbf{DAlg}(K).$$

\begin{lemma}[c.f \cite{mathewsimplicialrings}*{Proposition 2}, \cite{ben2024perspective}*{Lemma 4.5.67/ Theorem 4.5.68}]
    Let $A$ be either derived adic ring topologically of finite presentation over $R$, or a derived affinoid $K$-algebra. Then we may write $A$ as a geometric realisation $A\cong|R<x_{m1},\ldots,x_{mn_{m}}>|$, where $m$ denotes the simplicial degree in the adic ring case, and as $A\cong|K<x_{m1},\ldots,x_{mn_{m}}>|$ in the affinoid case.
\end{lemma}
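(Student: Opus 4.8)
The statement to be proved is that a derived adic ring topologically of finite presentation over $R$ (resp.\ a derived affinoid $K$-algebra) can be written as a geometric realisation of a simplicial object whose terms are free Tate algebras $R\langle x_{m1},\dotsc,x_{mn_m}\rangle$ (resp.\ $K\langle x_{m1},\dotsc,x_{mn_m}\rangle$). The plan is to run the standard bar/comonad resolution argument adapted to the monad $\mathbb{L}\mathrm{Sym}$ controlling $\mathbf{DAlg}^{cn}$, being careful that the resolving objects are \emph{complete} free algebras (i.e.\ Tate algebras) rather than polynomial algebras, since we are working in $\bD_{\ge0}(\mathsf{Ban}_R^{\le1})^\circ$ resp.\ $\bD_{\ge0}(K)$.

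First I would recall that in any derived algebraic context the forgetful functor $\mathbf{DAlg}^{cn}(\mathbf C)\to\mathbf C_{\ge0}$ is monadic with monad $\mathbb{L}\mathrm{Sym}$, so that for any connective algebra $A$ the canonical bar resolution $\mathbb{L}\mathrm{Sym}^{\bullet+1}(A)\to A$ exhibits $A$ as the geometric realisation $A\cong |\mathbb{L}\mathrm{Sym}^{\bullet+1}(A)|$ in $\mathbf{DAlg}^{cn}(\mathbf C)$. Here $\mathbf C=\bD(\mathsf{Ban}_R^{\le1})^\circ$ resp.\ $\bD(K)$, and the monoidal unit $\mathbb I=R$ resp.\ $K$ is compact projective, so $\mathbb{L}\mathrm{Sym}$ is the completed free commutative algebra functor, which sends a finite free module $R^{\oplus n}$ (resp.\ $K^{\oplus n}$) precisely to the Tate algebra $R\langle x_1,\dotsc,x_n\rangle$ (resp.\ $K\langle x_1,\dotsc,x_n\rangle$) — this identification is exactly the "affinoid Tate algebra in $n$ variables is the free commutative monoid on $R^{\oplus n}$" from the definition preceding the lemma, and over $K$ it is the statement that $\mathrm{Sym}(K)\to K\langle r^{-1}x\rangle$ is a homotopy epimorphism (so the completed symmetric algebra on $K$ is the Tate algebra). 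So the bar resolution already has the right \emph{shape}; what remains is to replace each term by one that is a \emph{finite-rank} free Tate algebra, i.e.\ to show each $\mathbb{L}\mathrm{Sym}^{k+1}(A)$ is itself a filtered/sifted colimit (in fact realisation) of $R\langle x_1,\dotsc,x_{n}\rangle$'s, and then to take an appropriate diagonal.

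The key input making the terms finite-rank is the finite presentation hypothesis: $\pi_0(A)\cong R\langle x_1,\dotsc,x_n\rangle/I$ with $I$ finitely generated (resp.\ $\pi_0(A)$ a quotient of a Tate algebra by a finitely generated ideal), and each $\pi_m(A)$ coherent over $\pi_0(A)$. Using this one builds, by the usual Postnikov/cell-attachment induction, a simplicial resolution of $A$ by algebras of the form $R\langle x_{m1},\dotsc,x_{mn_m}\rangle$ with \emph{finitely many} variables in each degree — attach $n$ generators in degree $0$ to hit $\pi_0$, then attach finitely many generators in each higher simplicial degree to kill the (finitely generated, by coherence) homotopy/relation modules, exactly as in the cited \cite{mathewsimplicialrings}*{Proposition 2} and \cite{ben2024perspective}*{Lemma 4.5.67, Theorem 4.5.68}. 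The realisation of this simplicial diagram is $A$ because at each stage the relevant homotopy groups are killed and the construction converges; the completeness issues (working in $(-)^\circ$, i.e.\ in derived $\pi$-adically complete modules) are handled by Theorem \ref{thm:tilde-nuc} and Lemma \ref{lem:adic-subcat}, which guarantee that geometric realisations of connective objects are computed the same way in $\bD_{\ge0}(\mathsf{Ban}_R^{\le1})^\circ$ as in the ambient ind-Banach category, and that finite free Tate algebras land in $(-)^\circ$.

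The main obstacle I expect is \emph{not} the formal realisation statement but the finiteness bookkeeping: ensuring that at every simplicial degree only finitely many polynomial generators are needed, which requires the coherence of all $\pi_m(A)$ over $\pi_0(A)$ together with the fact (Lemma \ref{lem:fpform}, \cite{bosch2014lectures}*{Corollary 7.3.6}) that an adic ring topologically of finite presentation is coherent, so that kernels of maps between finite free modules are again finitely generated and can be resolved with finitely many new generators. The affinoid case over $K$ is analogous using that affinoid $K$-algebras are Noetherian and finitely presented modules over them have finite free resolutions. Once the term-wise finiteness is secured, extracting a single simplicial diagram $[m]\mapsto R\langle x_{m1},\dotsc,x_{mn_m}\rangle$ (resp.\ $K\langle\cdots\rangle$) with realisation $A$ is routine, by the diagonal of the bimodule-style double complex or by directly running the obstruction-theoretic construction; this is precisely what the cited results in \cite{ben2024perspective} provide, so the proof amounts to checking their hypotheses are met in the present setting and invoking them.
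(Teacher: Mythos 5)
Your proposal is ultimately correct and arrives at the same argument as the paper, whose entire proof is: run the cell-attachment construction of Mathew's Proposition~2 verbatim, replacing the Noetherian hypothesis in the adic case by the coherence of each $R\langle y_1,\ldots,y_l\rangle$ (so kernels of maps between finite free modules are finitely generated) and the fact that finitely generated ideals are admissible. You correctly identify this as the crux, so the core of your proof is fine.

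However, the opening of your proposal — starting from the monadic bar resolution $\mathbb{L}\mathrm{Sym}^{\bullet+1}(A)\to A$ and then proposing to replace each term by a realisation of finite-rank Tate algebras and diagonalise — is a red herring and, taken at face value, would not deliver the lemma. The bar terms $\mathbb{L}\mathrm{Sym}^{k+1}(A)$ are free algebras on objects of infinite rank (already $\mathbb{L}\mathrm{Sym}(R\langle x\rangle)$ has countably many free generators), and an object like $\mathbb{L}\mathrm{Sym}(M)$ with $M=\varinjlim M_\alpha$ a filtered colimit of finite free modules is a \emph{filtered} colimit of finite-rank Tate algebras, not manifestly a geometric realisation thereof; there is no clean ``diagonal'' that converts a simplicial object of filtered colimits into a single simplicial object with finite-rank terms. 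This is precisely why Mathew's argument does not proceed via the bar resolution: it builds a simplicial resolution degree by degree, attaching finitely many polynomial generators at each stage to kill the relevant (finitely generated, by coherence or Noetherianity) homotopy modules. You switch to exactly this construction in the second half of your proposal, so the final account is sound — but the two halves describe different strategies, and only the second one works. Stripping the bar-resolution preamble would leave a proof that matches the paper's.
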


\begin{proof}
  The proof works exactly as in \cite{mathewsimplicialrings}*{Proposition 2} where in the adic case, instead of Noetherianity, we use that each $R<y_{1},\ldots,y_{l}>$ is coherent, and all finitely generated ideals are admissible. 
\end{proof}

\comment{
\begin{theorem}
    $$K\otimes^{\mathbb{L}}_{R}(-):\mathbf{DAlg}(R)\rightarrow\mathbf{DAlg}(K)$$
    restricts to an equivalence of categories 
    $$K\otimes^{\mathbb{L}}_{R}(-):\mathbf{DAfnd}_{R}^{cn,adm}\rightarrow\mathbf{DAfnd}_{K}^{cn}.$$
\end{theorem}

\begin{proof}
    First observe that objects of the form $R<x_{1},\ldots,x_{m}>$ are flat, and therefore transverse to $R\rightarrow K$. Hence we have 
    $$K\otimes^{\mathbb{L}}_{R}|R<x_{m1},\ldots,x_{mn_{m}}>|\cong |K<x_{m1},\ldots,x_{mn_{m}}|.$$
    By the classical Raynaud Theorem,
    $$K\hat{\otimes}^{\mathbb{L}}_{R}(-):\mathbf{DAfnd}_{R}^{cn,adm}\rightarrow\mathbf{DAfnd}_{K}^{cn}$$
    is essentially surjective. It remains to prove that it is fully faithful. The proof is similar to \cite{ben2024perspective}*{Bit about embedding Lawvere theories}. Let $|R<x_{\bullet}>|$ and $|R<y_{\bullet}>|$ be presentations of derived admissible adic $R$-algebras. We need to show that the map 
    $$\mathbf{Map}(|R<x_{\bullet}>|,|R<y_{\bullet}>|)\rightarrow\mathbf{Map}(|K<x_{\bullet}>|,|K<y_{\bullet}>|).$$
By a colimit argument we may assume that $|R<x_{\bullet}>|=R<x>$ is just discrete with one variable. Write $B=|R<y_{\bullet}>|$. 
\comment{
First we prove that the map
$$\mathbf{Map}(R<x>,B)\rightarrow\mathbf{Map}(K<x>,K\otimes^{\mathbb{L}}_{R}B)$$
is a homotopy monomorphism. Indeed consider the commutative diagram
\begin{displaymath}
    \xymatrix{
    \mathbf{Map}(R[x],B)\ar[d]\ar[r] & \mathbf{Map}(K[x],K\otimes^{\mathbb{L}}_{R}B)\ar[d]\\
    \mathbf{Map}(R<x>,B)\ar[r] & \mathbf{Map}(K<x>,K\otimes^{\mathbb{L}}_{R}B).
    }
\end{displaymath}
Both vertical maps are homotopy monomorphisms. We claim that the left-hand vertical map is an isomorphism. Indeed we have $ \mathbf{Map}(R[x],B\cong |B|$, where $|B|=|R<y_{\bullet}>|$ is the geometric realisation of the underlying rings. On the other hand we also have $\mathbf{Map}_{\mathbf{Dalg}(\mathrm{Ban}^{\le1}_{R})}(R<x>,B)\cong |B|$. But we have $\mathbf{Map}_{\mathbf{Dalg}(\mathrm{Ban}^{\le1}_{R})}(A,B)\cong\mathbf{Map}_{\mathbf{DAlg}(R)}(i(A),i(B))$ for any algebras $A,B$ in the convex subcategory. 

Next we prove that the map 
$$\mathbf{Map}(R[x],B)\rightarrow \mathbf{Map}(K[X],K\otimes^{\mathbb{L}}_{R}B)$$
is a homotopy monomorphism. This is the same as saying that $|B|\rightarrow |K\otimes_{R}^{\mathbb{L}}B|$ is a homotopy monomorphism. 
}
\textcolor{red}{cotangent complex argument}. Consider the Postnikov presentation $B\cong\mathbf{lim}_{n}B_{\le n}$ of $B$. Then $K\otimes^{\mathbb{L}}B$ has Postnikov presentation $\mathbf{lim}_{n}K\otimes_{\mathbb{L}}B_{\le n}$. Therefore by pulling out limits, we reduce to the case that $B\cong B_{\le n}$ for some $n$. Now the proof is by induction on $n$. For $n=0$ this is the classical Raynaud localisation for affinoids. Suppose the claim has been proven whenever $B\cong B_{\le n}$ for some $n$, and consider $B$ with $B\cong B_{\le n+1}$. Now we have $B\cong B_{\le n}\oplus_{d}\pi_{n+1}(B)[n+2]$. We have 
$$\mathbf{Map}_{R<x>\big\backslash B\big\slash B_{\le n}}(R<x>,B)\cong\Omega\mathbf{Map}(\mathbb{L}_{B_{\le n}\big\slash R<x>},\pi_{n+1}(B)[n+2]).$$
We also have

\end{proof}
}

\subsubsection{Categories of modules, and the cotangent complex}

Again let $R$ be a $\pi$-adic Banach ring which is $\pi$-torsion free. 

Let $A$ be a derived admissible adic $R$-algebra. Consider its derived category $\mathbf{D}(A)$. Let $\mathbf{D}^{coh}(A)$ denote the full subcategory of $\mathbf{D}(A)$ consisting of those modules $M$ such that each $\pi_{n}(M)$ is a coherent $A$-module. Consider the affinoid algebra $A[\frac{1}{\pi}]=K\hat{\otimes}^{\mathbb{L}}_{R}A$. We get the free-forgetful adjunction
$$\adj{A[\frac{1}{\pi}]\hat{\otimes}_{A}^{\mathbb{L}}(-)}{\mathbf{D}(A)}{\mathbf{D}(A[\frac{1}{\pi}])}{|-|}.$$

Let $\mathbf{D}^{coh}(A)$ denote the full subcategory of $A$-modules $M_{\bullet}$ such that each $\pi_{n}(M_{\bullet})$ is coherent as a $\pi_{0}(A)$-module. Similarly let $\mathbf{D}^{coh}(A)$ denote the full subcategory of $A[\frac{1}{\pi}]$-modules $N_{\bullet}$ such that each $\pi_{n}(N_{\bullet})$ is coherent (i.e. finitely presented) as a $\pi_{0}(A)[\frac{1}{\pi}]$-module.

Let $M_{\bullet}\in\mathbf{D}_{\ge0}^{coh}(A)$. We have a spectral sequence $\pi_{*}(A)[\frac{1}{\pi}]\hat{\otimes}_{\pi_{*}(A)}^{L}\pi_{*}(M_{\bullet})\cong\pi_{*}(M_{\bullet})[\frac{1}{\pi}]\Rightarrow\pi_{*}(A[\frac{1}{\pi}]\hat{\otimes}_{A}^{L}M_{\bullet})$. Each $\pi_{*}(M_{\bullet})[\frac{1}{\pi}]$ is finitely generated over $\pi_{0}(A)[\frac{1}{\pi}]$. Since $\pi_{0}(A)[\frac{1}{\pi}]$ is Noetherian, we have that $\pi_{*}(A[\frac{1}{\pi}]\hat{\otimes}_{A}^{L}M_{\bullet})$ is finitely presented. 

Thus $A[\frac{1}{\pi}]\hat{\otimes}_{A}^{L}(-)$ restricts to a functor
$$\mathbf{D}_{\ge0}^{coh}(A)\rightarrow\mathbf{D}_{\ge0}^{coh}(A[\frac{1}{\pi}]).$$

In fact we have $\mathbf{D}_{\ge0}^{coh}(A)\subseteq\mathbf{Rec}(\{A^{\oplus n}\})$, where $\mathbf{Rec}(\{A^{\oplus n}\})$ is the free cocompletion by geometric realisations, and similarly for $\mathbf{D}_{\ge 0}^{coh}(A[\frac{1}{\pi}]).$

\begin{lemma}
Let $M\in\mathbf{Mod}_A(\mathbf{D}(\sf{Ban}_{R}^{\le1}))$ be such that each $\pi_{m}(M)$ is a $\pi$-adically normed $\pi_{0}(A)$-module. Then we have 
$$\pi_{*}(M[\frac{1}{\pi}])\cong\pi_{*}(M)[\frac{1}{\pi}].$$
\end{lemma}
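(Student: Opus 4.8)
The plan is to realise inverting $\pi$ as a sequential filtered colimit and then commute homotopy objects past it. First I would record the key presentation of $K$ as a complete bornological $R$-module: $K = R[\tfrac1\pi] \cong \varinjlim_n\bigl(R \xrightarrow{\cdot\pi} R \xrightarrow{\cdot\pi} \cdots\bigr)$, where the $n$-th term is $\pi^{-n}R$ with the bornology induced from $K$. Since the bornology of a bornological $R$-module is insensitive to rescaling the norm, each $\pi^{-n}R$ is isomorphic to $R$ in $\mathsf{CBorn}_R$, and under these identifications the inclusions $\pi^{-n}R \hookrightarrow \pi^{-(n+1)}R$ become multiplication by $\pi$. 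Because filtered colimits are exact in $\mathsf{CBorn}_R$ and its left heart (Theorem \ref{thm:elementary}), this is simultaneously a colimit diagram in $\mathbf{D}(R)$.

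Next I would invoke $M[\tfrac1\pi] = A[\tfrac1\pi]\haotimes_A^{\mathbb L}M \simeq K\haotimes_R^{\mathbb L}M$ (associativity of the relative tensor product, together with $A[\tfrac1\pi]\simeq K\haotimes_R^{\mathbb L}A$), viewing $M$ in $\mathbf{D}(R)$ along the $t$-exact functor $\mathbf{D}(\mathsf{Ban}_R^{\le1})\to\mathbf{D}(R)$; $t$-exactness makes it harmless to compute $\pi_\ast$ on either side of this functor. Since $-\haotimes_R^{\mathbb L}M$ is a left adjoint it preserves colimits, and $R\haotimes_R^{\mathbb L}M\simeq M$, so $M[\tfrac1\pi]\simeq\varinjlim_n\bigl(M\xrightarrow{\cdot\pi}M\to\cdots\bigr)$. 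Then I would use that $\mathbf{D}_{\le0}(R)$ is closed under filtered colimits (an axiom of a derived algebraic context, Definition \ref{def:DAC}), which forces the truncation functors, and hence the homotopy objects $\pi_k$, to commute with filtered colimits in $\mathbf{D}(R)$ (filtered colimits in the stable category $\mathbf{D}(R)$ preserve the truncation fibre sequences $\tau_{\ge1}X\to X\to\tau_{\le0}X$). This yields $\pi_k\bigl(M[\tfrac1\pi]\bigr)\simeq\varinjlim_n\bigl(\pi_k(M)\xrightarrow{\cdot\pi}\pi_k(M)\to\cdots\bigr)$.

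Finally I would identify this last colimit with $\pi_k(M)[\tfrac1\pi]$ by re-running the first step with the $\pi$-adically normed $\pi_0(A)$-module $\pi_k(M)$ in place of $R$: one has $\pi_0(A)[\tfrac1\pi]\cong\varinjlim_n\bigl(\pi_0(A)\xrightarrow{\cdot\pi}\cdots\bigr)$ exactly as above, and applying the exact functor $(-)\otimes_{\pi_0(A)}\pi_k(M)$ (localisation at the central element $\pi$) gives $\pi_k(M)[\tfrac1\pi]\cong\varinjlim_n\bigl(\pi_k(M)\xrightarrow{\cdot\pi}\cdots\bigr)$; here the $\pi$-adic hypothesis on $\pi_k(M)$ is precisely what pins down the underlying module \emph{and} the bornology of $\pi_k(M)[\tfrac1\pi]$, so that this identification is unambiguous. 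Chaining the three equivalences proves the statement.

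The main obstacle I anticipate is the first step --- making the colimit presentation of $K$ genuinely rigorous. One must be careful that it holds in $\mathbf{D}(R)$, where rescaling a norm is an isomorphism, rather than in $\mathbf{D}(\mathsf{Ban}_R^{\le1})$ (cf. the discussion of almost adic rings), and that the diagram is genuinely filtered so that it interacts correctly with the $t$-structure; once that is in place, everything downstream is formal.
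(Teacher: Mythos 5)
Your route is genuinely different from the paper's. The paper reduces by truncation to a single discrete $\pi$-adic module, resolves it strongly by modules $l^1(V)$ with $V$ a discrete normed set (which are themselves $\pi$-adic), and then checks by a direct norm computation that $(-)[\frac{1}{\pi}]$ carries strong exact sequences of $\pi$-adic modules to strong exact sequences, i.e.\ that $\pi$-adic modules are transverse to $K=R[\frac{1}{\pi}]$. You instead present $K$ as the sequential colimit of copies of $R$ along multiplication by $\pi$ and commute $\pi_*$ past the filtered colimit. Your first two steps are sound: the transition maps are injective since $R$ is $\pi$-torsion free, so the $1$-categorical colimit computes the homotopy colimit in $\mathbf{D}(R)$; $M[\frac{1}{\pi}]\simeq K\hat{\otimes}_R^{\mathbb{L}}M$ by associativity; and $\pi_k$ commutes with filtered colimits because $\mathbf{C}_{\le 0}$ is closed under them.

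The gap is in your last step, and it sits exactly where the paper's proof does its work. What your argument produces is $\pi_k(M[\frac{1}{\pi}])\cong\varinjlim_n\bigl(\pi_k(M)\xrightarrow{\cdot\pi}\pi_k(M)\to\cdots\bigr)$, a colimit taken in the left heart. What the lemma asserts --- and what is used downstream, e.g.\ to produce formal models of coherent modules --- is that this agrees with $\pi_k(M)[\frac{1}{\pi}]=K\hat{\otimes}_R\pi_k(M)$, the \emph{underived} completed tensor product in the quasi-abelian category of Banach modules, carrying its expected norm. Identifying the two is precisely the assertion that the discrete $\pi$-adic module $\pi_k(M)$ is transverse to $K$, i.e.\ the discrete case of the lemma itself; you assert it (``the $\pi$-adic hypothesis \dots pins down the underlying module and the bornology'') rather than prove it. Calling $(-)\otimes_{\pi_0(A)}\pi_k(M)$ ``the exact functor'' is not a justification: that functor is only colimit-preserving, not exact, and colimit-preservation alone does not show that the quasi-abelian colimit (with its separation/completion) of $\pi$-adic Banach modules along $\cdot\pi$ coincides with the left-heart colimit. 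To close the gap you need the paper's norm computation or an equivalent: a strong epimorphism $L\to M$ of $\pi$-adic modules remains strong after inverting $\pi$, because a preimage $l$ of $m$ with $\|l\|=\|m\|$ gives $\frac{l}{\pi^k}\mapsto\frac{m}{\pi^k}$ with equal norms, so that a strong resolution of $\pi_k(M)$ by $\pi$-adic projectives $l^1(V)$ stays strongly exact after localisation.
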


\begin{proof}
We may assume we are working over the ground ring $R$. By an easy truncation argument, we may also assume that everything is discrete. Thus we are reduced to proving  that, whenever $M$ is a $\pi$-adic $R$-module, it is transverse to $R[\frac{1}{\pi}]$. Now $M$ may be resolved by modules of the form $l^{1}(V)$ for $V$ a discrete normed set. Such objects are $\pi$-adic. Hence we are reduced to proving that, if
$$0\rightarrow K\rightarrow L\rightarrow M\rightarrow 0$$
is a strong exact sequence with $L$ and $M$ (and automatically $K$) being $\pi$-adic, the sequence
$$0\rightarrow K[\frac{1}{\pi}]\rightarrow L[\frac{1}{\pi}]\rightarrow M[\frac{1}{\pi}]\rightarrow 0$$
is strong exact. The sequence is evidently algebraically exact. It remains to prove that $ L[\frac{1}{\pi}]\rightarrow M[\frac{1}{\pi}]$ is a strong epimorphism. Since completion is exact, we can ignore completions. Let $\frac{m}{\pi^{k}}\in M$, which has norm $\frac{||m||}{|\pi|^{k}}$. Pick an element $l$ of $L$ mapping to $m$, which has the same norm as that of $m$. Then $\frac{l}{\pi^{k}}$ has the same norm as $\frac{m}{\pi^{k}}\in M$, and maps to it.
\end{proof}

\begin{lemma}\label{lem:formalmodelmods}
Let $M\in\mathbf{D}^{coh}_{\ge0}(A)$ be arbitrary, and $N\in\mathbf{D}^{coh}(A[\frac{1}{\pi}])$ have bounded homology. Let $f:M[\frac{1}{\pi}]\rightarrow N$ be a morphism. Then there exists an $\tilde{N}\in\mathbf{D}^{coh}(A)$ and a map $\tilde{f}:M\rightarrow\tilde{N}$ such that $f\cong\tilde{f}[\frac{1}{\pi}]$. Moreover we may assume that each $\pi_{m}(N)$ is $\pi$-torsion free. 
\end{lemma}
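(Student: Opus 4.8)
The plan is to argue by dévissage along the Postnikov tower of $N$, reducing everything to the case where $N$ is the shift of a discrete finitely presented $\pi_0(A)[\frac1\pi]$\nobreakdash-module. The only classical input needed is that a finitely generated $\pi_0(A)[\frac1\pi]$\nobreakdash-module always admits a finitely generated, $\pi$\nobreakdash-torsion free coherent $\pi_0(A)$\nobreakdash-lattice: picking generators and taking their $\pi_0(A)$\nobreakdash-span $E_0\subseteq E$ gives a submodule of an $A_0[\frac1\pi]$\nobreakdash-module (hence $\pi$\nobreakdash-torsion free) which is coherent because $A_0:=\pi_0(A)$ is coherent.

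\emph{Key computation.} By the remark preceding the lemma, every object of $\mathbf{D}^{coh}_{\ge0}(A)$ lies in $\mathbf{Rec}(\{A^{\oplus n}\})$, so any bounded $P\in\mathbf{D}^{coh}(A)$ admits, after a shift, a resolution $P\simeq|P_\bullet|$ by a simplicial finite free $A$\nobreakdash-module. For a discrete coherent $A_0$\nobreakdash-module $E$ this yields $\mathrm{RHom}_A(P,E)\simeq\mathrm{Tot}(n\mapsto E^{\oplus r_n})$; since the cohomology of such a cosimplicial discrete module in a fixed degree only involves finitely many terms, and since $(-)[\tfrac1\pi]=K\haotimes^{\mathbb L}_R(-)$ is exact, we obtain
\[\mathrm{RHom}_A(P,E)[\tfrac1\pi]\ \simeq\ \mathrm{RHom}_{A[\frac1\pi]}\big(P[\tfrac1\pi],E[\tfrac1\pi]\big).\]
Two consequences are used repeatedly: (i) every class in $\mathrm{Hom}_{\mathbf D(A[\frac1\pi])}(P[\tfrac1\pi],\Sigma^cE[\tfrac1\pi])$ lifts to an integral map $P\to\Sigma^c(\pi^{-k}E)$ for some $k\ge0$ (noting $\pi^{-k}E\cong E$ as $A_0$\nobreakdash-modules, $E$ being torsion free); (ii) the kernel of $\mathrm{Hom}_{\mathbf D(A)}(P,\Sigma^cE)\to\mathrm{Hom}_{\mathbf D(A[\frac1\pi])}(P[\tfrac1\pi],\Sigma^cE[\tfrac1\pi])$ is exactly the $\pi$\nobreakdash-power torsion, which is therefore annihilated after replacing $E$ by $\pi^{-m}E$ for $m\gg0$. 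When the source is $M$ itself (not a priori bounded) one first uses that a map into a $c$\nobreakdash-truncated object factors through the bounded object $\tau_{\le c}M$, and that $\tau_{\le c}(M[\tfrac1\pi])\simeq(\tau_{\le c}M)[\tfrac1\pi]$.

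\emph{The induction.} We prove the sharpened statement that one can take an equivalence $\phi\colon\tilde N[\tfrac1\pi]\xrightarrow{\ \sim\ }N$ with $\phi\circ\tilde f[\tfrac1\pi]\simeq f$ and all $\pi_i(\tilde N)$ $\pi$\nobreakdash-torsion free, by induction on the number of nonvanishing homotopy groups of $N$. If $N\simeq\Sigma^cE$ with $E$ finitely presented over $A_0[\tfrac1\pi]$, take a torsion free coherent lattice $E_0\subseteq E$, factor $f$ through $\tau_{\le c}M$, and apply (i) after enlarging $E_0$ by a power of $\pi$ to lift $f$ to $\tilde f\colon M\to\tilde N:=\Sigma^cE_0$. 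For the inductive step let $c$ be minimal with $\pi_c(N)\ne0$ and use the fibre sequence $\Sigma^c\pi_c(N)\to N\to\tau_{\ge c+1}N\xrightarrow{\ \delta\ }\Sigma^{c+1}\pi_c(N)$. By induction the composite $M[\tfrac1\pi]\to N\to\tau_{\ge c+1}N$ has a model $(\tilde N_1,\phi_1,\tilde f_1)$ with $\tilde N_1$ bounded and $\pi$\nobreakdash-torsion free in each degree (in particular $\pi_c(\tilde N_1)=0$, being torsion free with vanishing generic fibre). Choose a torsion free coherent lattice $E'\subseteq\pi_c(N)$ and, using (i)–(ii) with source $\tilde N_1$, model $\delta\circ\phi_1$ by an integral map $\tilde\delta\colon\tilde N_1\to\tilde P':=\Sigma^{c+1}E'$; set $\tilde N:=\mathrm{fib}(\tilde\delta)$. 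Then $\tilde N[\tfrac1\pi]\simeq\mathrm{fib}(\delta)=N$, and since $\pi_{c+1}(\tilde\delta)\colon\pi_{c+1}(\tilde N_1)\to E'$ is a map of torsion free modules vanishing after inverting $\pi$ (because $\delta_\ast=0$ on $\pi_{c+1}$, by the long exact sequence of the fibre sequence), it is zero; the long exact sequence then gives $\pi_c(\tilde N)\cong E'$ and shows $\pi_i(\tilde N)$ is a submodule of, or equal to, $\pi_i(\tilde N_1)$ in every other degree, so all $\pi_i(\tilde N)$ are $\pi$\nobreakdash-torsion free.

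\emph{Assembling the map, and the main obstacle.} To obtain $\tilde f\colon M\to\tilde N=\mathrm{fib}(\tilde\delta)$ we must show $\tilde\delta\circ\tilde f_1$ is nullhomotopic and choose a nullhomotopy whose image under $[\tfrac1\pi]$ agrees with the one coming from the fibre presentation of $f$. By (ii) (source $\tau_{\le c+1}M$, target the shift of the discrete module $E'$) the class $[\tilde\delta\circ\tilde f_1]$ is $\pi$\nobreakdash-power torsion, hence killed after enlarging $E'\leadsto\pi^{-m}E'$; the ambiguity of nullhomotopies is again a $\mathrm{Hom}$ into a shift of a discrete module, so a final enlargement makes the required correction attainable, and all enlargements leave $\tilde N[\tfrac1\pi]\simeq N$, the torsion-freeness computation, and $\tilde\delta[\tfrac1\pi]$ unchanged. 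I expect the main technical difficulty to be precisely this homotopy-coherent bookkeeping: verifying that finitely many successive $\pi$\nobreakdash-power rescalings (equivalently, one large enough choice of lattice) simultaneously annihilate the obstruction class and pin down the nullhomotopy so that $\tilde f[\tfrac1\pi]$ genuinely equals $f$ rather than a perturbation of it. Everything else reduces to exactness of $K\haotimes^{\mathbb L}_R(-)$, coherence of $A_0$, and boundedness of torsion in finitely generated modules over admissible $R$\nobreakdash-algebras (\cite{bosch2014lectures}).
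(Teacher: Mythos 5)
Your proposal is correct, and it is in essence a self-contained reconstruction of the argument that the paper outsources: the paper's proof consists of citing \cite{antonio2018derived}*{Corollary A.2.2} for the existence of $(\tilde N,\tilde f)$ and then only writes out the torsion-freeness claim, which it obtains by a separate post-processing step --- given any coherent model $\tilde N$, one produces a map $\tilde N\to\tilde N'$ that is an equivalence after inverting $\pi$ by inductively quotienting each $\pi_m$ by its ($\pi$-power) torsion submodule along the Postnikov tower. You instead run the Postnikov induction on $N$ from scratch and build torsion-freeness in at each stage by choosing $\pi$-torsion-free coherent lattices $E_0\subseteq E$, controlling the $k$-invariants, obstruction classes and nullhomotopies via the key computation $\mathrm{RHom}_A(P,E)[\frac1\pi]\simeq\mathrm{RHom}_{A[\frac1\pi]}(P[\frac1\pi],E[\frac1\pi])$ and $\pi$-power rescalings of the lattice. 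That is exactly the mechanism behind the cited corollary, so the mathematical content coincides; what your version buys is independence from the reference and a single induction rather than existence-plus-replacement, at the cost of the homotopy-coherent bookkeeping you correctly identify (matching the nullhomotopy, not just killing the obstruction class). The paper's two-step route is slicker for the torsion-freeness claim, since quotienting by the coherent torsion submodule requires no further choices. One point to make explicit in your write-up: the coherence (finite presentation) of the lattice $E_0$, i.e., that a finitely generated $\pi$-torsion-free submodule of a finitely presented $\pi_0(A)[\frac1\pi]$-module is finitely presented over the coherent ring $\pi_0(A)$, is the classical flattening/lattice input from \cite{bosch2014lectures} and \cite{bosch1} that both arguments silently rely on; it deserves a citation rather than the phrase ``which is coherent because $\pi_0(A)$ is coherent'', since $E$ itself is not finitely presented over $\pi_0(A)$.
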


\begin{proof}
    This can be proven identically to \cite{antonio2018derived}*{Corollary A.2.2}, without the $\pi$-torsion free claim. For the $\pi$-torsion free claim we argue that for any $N\in\mathbf{D}^{coh}(A)$ of bounded homological dimension, there exists a $\tilde{N}\in\mathbf{D}^{coh}(A)$ concentrated in the same homological interval, together with a map $N\rightarrow\tilde{N}$, such that 
\begin{enumerate}
\item
each $\pi_{m}(\tilde{N})$ is $\pi$-torsion free;
\item
the map $N[\frac{1}{\pi}]\rightarrow\tilde{N}[\frac{1}{\pi}]$ is an equivalence.
\end{enumerate}
Let $N$ be concentrated in degress $[n,0]$. We prove the claim by induction on $n$. When $n=0$, so that $N$ is just a discrete $\pi_{0}(A)$-module, we just let $\tilde{N}=N\big\slash N_{\pi-torsion}$. Since the torsion submodule is coherent, $\tilde{N}$ is coherent.  Suppose we have proven the claim for some $n$, and let $N$ be concentrated in degrees $[n+1,0]$. We have a fibre-cofibre sequence
$$\pi_{n+1}(N)\rightarrow N\rightarrow \tau_{\le n}N.$$
Pick maps $\pi_{n+1}(N)\rightarrow\widetilde{\pi_{n+1}(N)}$, $\tau_{\le n}N\rightarrow\widetilde{\tau_{\le n}N}$ satisfying the requirements, guaranteed by the inductive hypothesis. Then we can glue these to a map $N\rightarrow\tilde{N}$ also satisfying the requirements.
\end{proof}

\begin{proposition}
    Let $A\rightarrow B$ be a map of derived topologically finitely presented adic $R$-algebras. Let $\mathbb{L}^{\le1}_{B\big\slash A}$ denote the cotangent complex computed in $\mathbf{Mod}_B(\mathbf{D}(\sf{Ban}_{R}^{\le1}))$, and let $\mathbb{L}_{B\big\slash A}$ denote the cotangent complex computed in the category $\mathbf{Mod}_B(\mathbf{D}(R))$. The natural map 
    $$i(\mathbb{L}^{\le1}_{B\big\slash A})\rightarrow\mathbb{L}_{B\big\slash A}$$
    is an equivalence.
\end{proposition}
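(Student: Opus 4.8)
The plan is as follows. First, I should pin down the comparison map: since \(i\) is strong symmetric monoidal and commutes with finite colimits and with geometric realisations of connective objects (the proposition preceding Lemma~\ref{lem:adic-subcat}), it preserves square-zero extensions of connective algebras and the bar constructions computing relative tensor products, and so carries \(A\)-linear derivations to \(R\)-linear derivations; as \(\mathbb{L}^{\le1}_{B/A}\) corepresents derivations of \(B\) over \(A\) in \(\mathbf{Mod}_B(\mathbf{D}(\mathsf{Ban}_R^{\le1}))\), applying \(i\) to the universal derivation and invoking the universal property of \(\mathbb{L}_{B/A}\) produces the natural transformation in the statement. I will then reduce the problem in two steps — first to the absolute case, then to free Tate algebras — and finish with an explicit computation over \(R\).

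\emph{Reduction to the absolute case.} The transitivity cofibre sequence for \(R \to A \to B\) reads \(B \haotimes^{\mathbb{L}}_A \mathbb{L}_{A/R} \to \mathbb{L}_{B/R} \to \mathbb{L}_{B/A}\), with an identical one on the contracting side. As \(i\) commutes with the cofibre and with the relative tensor product of connective algebras, and since both \(A\) and \(B\) are derived topologically finitely presented adic \(R\)-algebras, it is enough to treat the structure maps \(R \to A\) and \(R \to B\); so I may assume \(A = R\). Then, using the presentation lemma above, a derived topologically finitely presented adic \(R\)-algebra \(B\) can be written as a geometric realisation \(B \cong |\,R\gen{x_{m1}, \dotsc, x_{mn_m}}\,|\) of free Tate algebras \(C_m = R\gen{x_{m1}, \dotsc, x_{mn_m}} \in \mathbf{D}_{\ge0}(\mathsf{Ban}_R^{\le1})^\circ\); since the cotangent complex commutes with geometric realisations in the algebra variable, \(\mathbb{L}^{\le1}_{B/R} \cong |\,\mathbb{L}^{\le1}_{C_m/R} \haotimes_{C_m} B\,|\) and \(\mathbb{L}_{B/R} \cong |\,\mathbb{L}_{C_m/R} \haotimes_{C_m} B\,|\). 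Applying \(i\), which commutes with these connective realisations and with the relative tensors, reduces everything to showing that \(i(\mathbb{L}^{\le1}_{C/R}) \to \mathbb{L}_{C/R}\) is an equivalence for \(C\) a free Tate algebra over \(R\).

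\emph{The free Tate algebra, and the main obstacle.} On the contracting side, \(R\gen{x_1, \dotsc, x_n}\) is by construction the free \(\mathbb{L}\mathrm{Sym}\)-algebra on \(R^{\oplus n}\), so \(\mathbb{L}^{\le1}_{R\gen{x}/R} \cong R\gen{x}^{\oplus n}\) and hence \(i(\mathbb{L}^{\le1}_{R\gen{x}/R}) \cong i(R\gen{x})^{\oplus n}\). The obstacle is that \(i\) does \emph{not} take this to a free algebra in \(\mathbf{D}(R)\): the free derived commutative \(R\)-algebra on \(R^{\oplus n}\) in \(\mathbf{D}(R)\) is the polynomial algebra \(\mathrm{Sym}_R(R^{\oplus n}) = R[x_1, \dotsc, x_n]\), whereas \(i(R\gen{x})\) is the honest Tate algebra, the discrepancy arising because \(i\) does not preserve the infinite coproduct \(\bigoplus_k \mathrm{Sym}^k(R^{\oplus n})\). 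The remedy I would use is to observe that \(R[x_1, \dotsc, x_n] \to R\gen{x_1, \dotsc, x_n}\) is a homotopy epimorphism in \(\mathbf{D}(R)\): by strong flatness of Tate algebras over \(R\) one has \(R\gen{x} \haotimes^{\mathbb{L}}_{R[x]} R\gen{x} \cong R\gen{x,y}\big\slash\big\slash(y_1 - x_1, \dotsc, y_n - x_n) \cong R\gen{x}\), the Koszul complex on the topologically regular sequence \((y_i - x_i)\) being a resolution, exactly as for Washnitzer algebras. Hence that map is formally étale, and by transitivity for \(R \to R[x] \to R\gen{x}\) we get \(\mathbb{L}_{R\gen{x}/R} \cong R\gen{x} \haotimes_{R[x]} \mathbb{L}_{R[x]/R} \cong R\gen{x}^{\oplus n}\); the comparison map \(i(R\gen{x})^{\oplus n} \to R\gen{x}^{\oplus n}\) is then the evident identification of generators \(dx_i \mapsto dx_i\), hence an equivalence. (Alternatively, one computes \(\mathbb{L}_{R\gen{x}/R}\) by reducing modulo \(\dvgen^k\): \(R\gen{x}\big\slash\big\slash\dvgen^k \cong (R/\dvgen^k)[x]\) by \(\dvgen\)-torsion-freeness and Lemma~\ref{lem:htpy-inv-lim}, this polynomial algebra over the discrete ring \(R/\dvgen^k\) has relative cotangent complex free of rank \(n\), and one reassembles via Lemma~\ref{lem:htpy-inverse-lim2} after verifying that \(\mathbb{L}_{R\gen{x}/R}\) is derived \(\dvgen\)-complete.)

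I expect the genuinely nontrivial point to be exactly this free-Tate-algebra computation — establishing the homotopy-epimorphism property of \(R[x] \to R\gen{x}\) over \(R = \mathcal{O}_K\), or, on the alternative route, the \(\dvgen\)-adic completeness of \(\mathbb{L}_{R\gen{x}/R}\) — since once it is in hand everything else is a formal consequence of the strong monoidality and the colimit-compatibility of \(i\), together with naturality of the comparison map.
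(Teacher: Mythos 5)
Your proposal is correct and follows essentially the same route as the paper: reduce to the absolute case, use the simplicial presentation by free Tate algebras, and then compute both cotangent complexes of $R\gen{y_1,\dotsc,y_l}$ as free of rank $l$, the key input on the $\mathbf{D}(R)$ side being that $\mathrm{Sym}(R^{\oplus l}) \to R\gen{y_1,\dotsc,y_l}$ is a homotopy epimorphism. Your write-up is in fact more detailed than the paper's (which simply asserts the homotopy-epimorphism fact and the identification of the comparison map), but the argument is the same.
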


\begin{proof}
    It suffices to prove this in the absolute case. That is, we may assume $A=R$. We may write $A\cong |R<x_{m1},\ldots,x_{mn_{m}}>|$. Thus it in fact suffices to prove the claim for rings of the form $R<y_{1},\ldots,y_{l}>$. But in this case we have $\mathbb{L}^{\le1}_{B}\cong B^{\oplus l}$, and $\mathbb{L}_{i(B)}\cong i(B)^{\oplus l}$, where in the latter case we use that $\mathrm{Sym}(R^{\oplus n})\rightarrow R<y_{1},\ldots,y_{l}>$ is a homotopy epimorphism. The map $i(B^{\oplus l})\rightarrow i(B)^{\oplus l}$ is the obvious identification.
    \end{proof}

    In particular, we can unambiguously refer to a map $A\rightarrow B$ of derived topologically finitely presented adic $R$-algebras being formally \'{e}tale, without specifying whether we are talking about $(\sf{Ban}_{R}^{\le1}))$, or $\mathbf{D}(R)$.

\begin{lemma}\label{lem:cotanderivedcomplete}
    Let $A\rightarrow B$ be a map of derived adic $R$-algebras topologically of finite type. Then $\mathbb{L}_{B\big\slash A}$ is a coherent $B$-module. If $A$ and $B$ are in fact admissible, then $\mathbb{L}_{B\big\slash A}$ is derived $\pi$-adically complete.
\end{lemma}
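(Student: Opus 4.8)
The plan is to reduce both claims to the generating case of Tate algebras. First, recall that the cotangent complex satisfies a transitivity triangle and is compatible with base change, so by the triangle for $R \to A \to B$ it suffices to treat the absolute case $A = R$ (and then handle relative statements by the standard cofibre sequence $B \hat{\otimes}_A^{\mathbb{L}} \mathbb{L}_{A/R} \to \mathbb{L}_{B/R} \to \mathbb{L}_{B/A}$; since $\mathbb{L}_{A/R}$ and $\mathbb{L}_{B/R}$ will be coherent and derived $\dvgen$-adically complete, so is the third term). So assume $B$ is a derived adic $R$-algebra topologically of finite type and write a geometric presentation $B \cong \lvert R\gen{x_{m1}, \dotsc, x_{mn_m}}\rvert$ as a geometric realisation of polynomial Tate algebras, using the presentation lemma cited earlier (c.f. \cite{mathewsimplicialrings}). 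Since $\mathbb{L}_{-/R}$ preserves sifted colimits, it suffices to compute $\mathbb{L}_{R\gen{y_1, \dotsc, y_l}/R}$: by the earlier observation that $\mathrm{Sym}(R^{\oplus l}) \to R\gen{y_1, \dotsc, y_l}$ is a homotopy epimorphism, we get $\mathbb{L}_{R\gen{y_1,\dotsc,y_l}/R} \cong R\gen{y_1, \dotsc, y_l}^{\oplus l}$, which is finite free, hence certainly coherent.

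For coherence of $\mathbb{L}_{B/A}$ in general: the class of coherent modules is closed under finite colimits and extensions, and $\mathbf{D}^{coh}_{\geq 0}(B)$ sits inside the free cocompletion of $\{B^{\oplus n}\}$ under geometric realisations. Thus starting from the finite free answer for each $R\gen{x_{m\bullet}}$ and running the geometric realisation (a totalisation-type colimit in the relevant connective range, which on each homotopy group is an essentially finite computation because everything is topologically of finite presentation and $R\gen{x_\bullet}$ is coherent with all finitely generated ideals admissible, as recorded earlier), one concludes that each $\pi_n(\mathbb{L}_{B/A})$ is coherent over $\pi_0(B)$. One should be slightly careful that the spectral sequence computing the homotopy groups of the realisation has coherent input and converges in each degree; coherence of $\pi_0(B)$ as a ring guarantees finitely presented kernels at each stage.

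For derived $\dvgen$-adic completeness when $A, B$ are admissible: the key point is that $\mathbf{D}(\mathsf{Ban}_R^{\leq 1})^\circ$ is, by Theorem~\ref{thm:tilde-nuc}, equivalent to the category of derived $\dvgen$-adically complete modules, and this is a full subcategory of $\bD(R)$ closed under limits and (being computed as a Bousfield localisation) under colimits and extensions. Since $\mathbb{L}_{B/A}^{\leq 1}$ is computed entirely inside $\mathbf{Mod}_B(\bD(\mathsf{Ban}_R^{\leq 1}))$ — and the comparison proposition just proved shows $i(\mathbb{L}^{\leq 1}_{B/A}) \cong \mathbb{L}_{B/A}$ — it lands in the $\circ$-subcategory, hence in the derived $\dvgen$-complete modules. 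Concretely: each $R\gen{x_{m\bullet}}$ lies in $\bD_{\geq 0}(\mathsf{Ban}_R^{\leq 1})^\circ$, its cotangent complex is finite free over it hence again in the $\circ$-subcategory, and the geometric realisation stays in the $\circ$-subcategory by closure under connective colimits; then admissibility (in particular $\dvgen$-torsion-freeness of the $\pi_n$) lets us invoke Lemma~\ref{lem:htpy-inverse-lim2} to see the completeness is genuine, i.e. $\mathbb{L}_{B/A} \cong \varprojlim_k \mathbb{L}_{B/A} /\!\!/ \dvgen^k$.

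The main obstacle I anticipate is the bookkeeping around the geometric realisation: showing that "coherent" and "derived $\dvgen$-complete" are both preserved under the specific colimit presenting $B$, with honest control of the convergence of the associated spectral sequence in each homotopical degree. This is where coherence of the Tate algebras, admissibility of finitely generated ideals, and the closure properties of $\mathbf{D}^{coh}$ and of the $\circ$-subcategory all have to be combined carefully; everything else is formal manipulation of the cotangent complex triangle.
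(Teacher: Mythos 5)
Your route is genuinely different from the paper's. After the common reduction to the absolute case $A=R$, the paper does \emph{not} resolve $B$ simplicially: it picks a single Tate algebra $R\gen{x_1,\dotsc,x_n}$ surjecting onto $\pi_0(B)$, lifts this to a map $R\gen{x_1,\dotsc,x_n}\to B$ using projectivity, observes that $\mathbb{L}_{R\gen{x_\bullet}/R}\cong R\gen{x_\bullet}^{\oplus n}$ is finite free, and then disposes of the remaining piece $\mathbb{L}_{B/R\gen{x_\bullet}}$ (a ``$\pi_0$-surjection'' situation) by citing \cite{ben2024perspective}*{Lemma 2.4.123}. Your replacement of that citation by the presentation $B\cong\lvert R\gen{x_{m\bullet}}\rvert$, sifted-colimit preservation of $\mathbb{L}$, and a spectral-sequence argument for coherence is a legitimate alternative: in each total degree only finitely many simplicial levels contribute, the $E_1$-terms are finite free $\pi_*(B)$-modules, and coherence of $\pi_0(B)$ closes the class of finitely presented modules under the relevant subquotients and extensions. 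What you lose is brevity and the reuse of an already-proved lemma; what you gain is a self-contained argument.

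There is, however, a genuine gap in your completeness argument. Your appeal to Lemma~\ref{lem:htpy-inverse-lim2} requires each $\pi_n(\mathbb{L}_{B/A})$ to be $\dvgen$-torsion free, and you assert this follows from admissibility. It does not: admissibility gives torsion-freeness of $\pi_n(A)$ and $\pi_n(B)$, not of the homotopy groups of the cotangent complex. Already for discrete admissible $B=R\gen{x}/(f)$ the module $\Omega^1_{B/R}\cong B/(f')$ can have $\dvgen$-torsion, so this step as written fails. The good news is that the step is also unnecessary: once you know $\mathbb{L}^{\le 1}_{B/A}$ lies in $\mathbf{D}(\mathsf{Ban}_R^{\le 1})^\circ$ (Tate algebras lie there, finite free modules over objects of the $\circ$-subcategory lie there, and the $\circ$-subcategory is closed under the connective colimits computing the realisation), Theorem~\ref{thm:tilde-nuc} identifies that subcategory with the derived $\dvgen$-adically complete modules, and you are done. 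So delete the invocation of Lemma~\ref{lem:htpy-inverse-lim2} and rest the completeness claim entirely on membership in the $\circ$-subcategory; and do make explicit that the geometric realisation is computed compatibly in $\mathbf{D}(\mathsf{Ban}_R^{\le1})$ and in $\bD(R)$, which is exactly what the $t$-exactness and realisation-compatibility of the comparison functor $i$ provide.
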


\begin{proof}
    It suffices to prove this in the absolute case, i.e. $A=R$. Write $\pi_{0}(B)\cong R<x_{1},\ldots,x_{n}>\big\slash I$ in $\mathrm{Comm}(\mathrm{Ban}_{R}^{\le1})$ with $I$ finitely generated. By projectivity, the map $R<x_{1},\ldots,x_{n}>\rightarrow \pi_{0}(B)$ lifts to a map $R<x_{1},\ldots,x_{n}>\rightarrow B$. Note that $\mathbb{L}_{R<x_{1},\ldots,x_{n}>\big\slash R}\cong R<x_{1},\ldots,x_{n}>^{\oplus n}$ so is coherent and derived $\pi$-adically complete. It suffices to prove that $\mathbb{L}_{B\big\slash R<x_{1},\ldots,x_{n}>}$ is coherent in the topologically finitely presented case, and derived $\pi$-adically complete in the admissible case. This follows from \cite{ben2024perspective}*{Lemma 2.4.123}.
\end{proof}

The next two results, and their proofs, are completely analogous to \cite{antonio2018derived}*{ Theorem 4.4.4}.

\begin{corollary}
   Let $K=R[\frac{1}{\pi}]$, and let $g:A\rightarrow B$ be a map of derived affinoid $K$-algebras. Suppose there is a formal model $\tilde{g}:\tilde{A}\rightarrow\tilde{B}$, i.e., $\tilde{g}$ is a map of derived admissible adic $R$-algebras such that $\tilde{g}[\frac{1}{\pi}]\cong g$.
Let $M$ be a coherent $B$-module which is homologically bounded. Any $A$-derivation $d:B\rightarrow M$ admits a formal model. Precisely, there is a coherent $\tilde{B}$-module $\tilde{M}$, and an $\tilde{A}$-derivation $\tilde{d}:\tilde{B}\rightarrow\tilde{M}$, such that $d\cong\tilde{d}[\frac{1}{\pi}]$. Moreover, we may assume that each $\pi_{m}(M)$ is $\pi$-torsion free.
\end{corollary}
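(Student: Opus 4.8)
First I would reduce from a derivation $d \colon B \to M$ to the classifying map on cotangent complexes. Recall that an $A$-derivation $B \to M$ is the same datum as a $B$-linear map $\mathbb{L}_{B/A} \to M$, i.e.\ a point of $\mathbf{Map}_B(\mathbb{L}_{B/A}, M)$. By Lemma \ref{lem:cotanderivedcomplete}, since $g$ is a map of derived affinoid $K$-algebras (obtained by inverting $\pi$ from the formal model $\tilde{g}$), $\mathbb{L}_{B/A}$ is a homologically bounded coherent $B$-module, and in fact $\mathbb{L}_{B/A} \cong \mathbb{L}_{\tilde B/\tilde A}[\tfrac1\pi]$ because the cotangent complex commutes with the base change $\tilde{A} \to \tilde A[\tfrac1\pi] = A$ along the flat map $R \to K$ (using the presentation $\tilde{A}\cong |R\gen{x_{\bullet}}|$ and that the Tate algebras are flat, hence transverse to $R \to K$, together with the absolute-case reduction in the proof of Lemma \ref{lem:cotanderivedcomplete}). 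Moreover $\mathbb{L}_{\tilde B/\tilde A}$ is derived $\pi$-adically complete by the same lemma.

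Next I would invoke Lemma \ref{lem:formalmodelmods}: given the $B$-linear map $d \colon \mathbb{L}_{B/A} \to M$ with $M$ coherent and homologically bounded, and given that $\mathbb{L}_{B/A}$ is the $\pi$-inversion of the object $\mathbb{L}_{\tilde B/\tilde A} \in \mathbf{D}^{coh}_{\geq 0}(\tilde B)$ (after a shift to make it connective, or working in the bounded homological interval directly), there exists $\tilde M \in \mathbf{D}^{coh}(\tilde B)$ with each $\pi_m(\tilde M)$ $\pi$-torsion free, together with a map $\tilde d \colon \mathbb{L}_{\tilde B/\tilde A} \to \tilde M$ of $\tilde B$-modules such that $\tilde d[\tfrac1\pi] \cong d$ under the identification $\mathbb{L}_{\tilde B/\tilde A}[\tfrac1\pi] \cong \mathbb{L}_{B/A}$. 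Here I must check the hypothesis of Lemma \ref{lem:formalmodelmods} that $M$ has bounded homology — this is exactly the assumption in the statement — and that $\mathbb{L}_{B/A}$ lies in $\mathbf{D}^{coh}$ of the formal model, which is the content of the previous paragraph. Then $\tilde d$, viewed as a $\tilde B$-linear map out of $\mathbb{L}_{\tilde B/\tilde A}$, corresponds by the universal property of the cotangent complex to an $\tilde A$-derivation $\tilde B \to \tilde M$, which is the desired formal model: applying $(-)[\tfrac1\pi]$ to the correspondence "derivation $\leftrightarrow$ map out of $\mathbb{L}$" recovers $d$ because $(-)[\tfrac1\pi]$ is symmetric monoidal and hence commutes with the formation of cotangent complexes and mapping spaces of coherent homologically bounded modules.

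The main obstacle I anticipate is the bookkeeping around homological boundedness and connectivity when applying Lemma \ref{lem:formalmodelmods}: that lemma is stated for $M \in \mathbf{D}^{coh}_{\geq 0}(A)$ and $N \in \mathbf{D}^{coh}(A[\tfrac1\pi])$ with bounded homology, and one must arrange that both $\mathbb{L}_{B/A}$ and $M$ sit in a common bounded window, possibly after shifting, and that the shift does not interfere with the $\pi$-torsion-freeness refinement (which is stated in the lemma for arbitrary $\tilde N$ of bounded homological dimension and is proved by induction on the homological length). A secondary point requiring care is the precise statement that $(-)[\tfrac1\pi] \colon \mathbf{DAlg}(R) \to \mathbf{DAlg}(K)$ commutes with cotangent complexes for topologically finitely presented adic algebras; this follows from the absolute-case reduction and the fact that $\mathrm{Sym}(R^{\oplus n}) \to R\gen{x_1,\dotsc,x_n}$ is a homotopy epimorphism together with flatness of $K$ over $R$, so $\mathbb{L}_{R\gen{x_\bullet}/R} \cong R\gen{x_\bullet}^{\oplus n}$ base-changes correctly — but I would spell this out carefully since the whole argument hinges on it. Once these compatibilities are nailed down, the proof is a short formal chain: derivation $\rightsquigarrow$ map on $\mathbb{L}$ $\rightsquigarrow$ apply Lemma \ref{lem:formalmodelmods} $\rightsquigarrow$ map on $\mathbb{L}$ of formal model $\rightsquigarrow$ derivation of formal model, with $\pi$-torsion-freeness of $\pi_m(\tilde M)$ inherited directly from the last clause of Lemma \ref{lem:formalmodelmods}.
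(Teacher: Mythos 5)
Your proposal is correct and follows essentially the same route as the paper: identify the derivation with a $B$-linear map $\mathbb{L}_{B/A}\to M$, use $\mathbb{L}_{B/A}\cong\mathbb{L}_{\tilde B/\tilde A}[\frac{1}{\pi}]$, and apply Lemma \ref{lem:formalmodelmods}. Your worry about shifting to arrange connectivity is unnecessary, since $\mathbb{L}_{\tilde B/\tilde A}$ is already connective and thus fits the hypotheses of that lemma directly; otherwise the elaboration matches the paper's (much terser) argument.
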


\begin{proof}
    An $A$-derivation $d:B\rightarrow M$ is equivalent to a map $f:\mathbb{L}_{B\big\slash A}\rightarrow M$. Now we have $\mathbb{L}_{B\big\slash A}\cong\mathbb{L}_{\tilde{B}\big\slash\tilde{A}}[\frac{1}{\pi}]$, and we may use Lemma \ref{lem:formalmodelmods}.
\end{proof}

\begin{corollary}
    Every derived affinoid algebra $A$ has a derived admissible formal model. In fact, if $\tilde{A}_{0}$ is a given formal model for $\pi_{0}(A)$, then there exists a formal model $\tilde{A}$ for $A$ with $\pi_{0}(\tilde{A})\cong\tilde{A}_{0}$.
\end{corollary}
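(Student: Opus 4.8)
The plan is to build $\tilde A$ by induction up the Postnikov tower of $A$. Since $A\simeq\varprojlim_n\tau_{\leq n}A$ and each $\tau_{\leq n}A$ is $n$-truncated, it suffices to produce a tower $\{\tilde A_{\leq n}\}$ of derived admissible adic $R$-algebras, together with equivalences $\tilde A_{\leq n}[\frac{1}{\dvgen}]\simeq\tau_{\leq n}A$ compatible with the truncation maps and with $\pi_0(\tilde A_{\leq n})\cong\tilde A_0$, and then to set $\tilde A\defeq\varprojlim_n\tilde A_{\leq n}$. For the base case $n=0$, the algebra $A=\pi_0(A)$ is a classical affinoid $K$-algebra and $\tilde A_{\leq 0}\defeq\tilde A_0$ does the job; the existence of \emph{some} admissible formal model when none is prescribed is classical Raynaud theory (\cite{bosch2014lectures}).

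For the inductive step, suppose $\tilde B\defeq\tilde A_{\leq n}$ has been built. The Postnikov tower exhibits $\tau_{\leq n+1}A$ as a square-zero extension of $\tau_{\leq n}A$ classified by the $k$-invariant, a $K$-derivation $d\colon\tau_{\leq n}A\to\pi_{n+1}(A)[n+2]$, so that $\tau_{\leq n+1}A\simeq\tau_{\leq n}A\times_{\tau_{\leq n}A\oplus\pi_{n+1}(A)[n+2]}\tau_{\leq n}A$ along the zero derivation and $d$. Now $\pi_{n+1}(A)$ is finitely generated over the Noetherian affinoid ring $\pi_0(A)$, hence a coherent homologically bounded $\tau_{\leq n}A$-module, and $R\to\tilde B$ is a formal model of $K\to\tau_{\leq n}A$; so by the preceding corollary (which in turn rests on Lemma \ref{lem:formalmodelmods}) the derivation $d$ admits a formal model $\tilde d\colon\tilde B\to\tilde M$ with $\tilde M$ a coherent $\tilde B$-module, all of whose homotopy groups are $\dvgen$-torsion free, and with $\tilde d[\frac{1}{\dvgen}]\simeq d$. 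Since $\pi_\ast(\tilde M)$ is $\dvgen$-torsion free and $\tilde M[\frac{1}{\dvgen}]=\pi_{n+1}(A)[n+2]$ is concentrated in degree $n+2\geq 2$, the module $\tilde M$ is concentrated in degree $n+2$, with $\pi_{n+2}(\tilde M)=:\tilde N$ a coherent $\dvgen$-torsion free $\tilde A_0$-module and $\tilde N[\frac{1}{\dvgen}]\cong\pi_{n+1}(A)$. I then let $\tilde A_{\leq n+1}$ be the square-zero extension of $\tilde B$ classified by $\tilde d$, namely $\tilde A_{\leq n+1}\defeq\tilde B\times_{\tilde B\oplus\tilde M}\tilde B$, with its projection to $\tilde B=\tilde A_{\leq n}$.

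It remains to check the hypotheses for $\tilde A_{\leq n+1}$ and to take the limit. From the fibre sequence $\tilde M[-1]\to\tilde A_{\leq n+1}\to\tilde B$ and the long exact sequence of homotopy groups one gets $\pi_j(\tilde A_{\leq n+1})\cong\pi_j(\tilde B)$ for $j\leq n$, $\pi_{n+1}(\tilde A_{\leq n+1})\cong\tilde N$, and $\pi_j(\tilde A_{\leq n+1})=0$ for $j>n+1$; in particular $\pi_0(\tilde A_{\leq n+1})\cong\pi_0(\tilde B)=\tilde A_0$ because $\pi_1(\tilde M)=0$. Thus all homotopy groups are coherent and $\dvgen$-torsion free over $\tilde A_0$, which is topologically of finite presentation and $\dvgen$-torsion free, so $\tilde A_{\leq n+1}$ is a derived admissible adic $R$-algebra with the prescribed $\pi_0$; and since $K\otimes_R^{\mathbb{L}}(-)$ is exact, it preserves the defining pullback, giving $\tilde A_{\leq n+1}[\frac{1}{\dvgen}]\simeq\tau_{\leq n+1}A$ compatibly with the truncation maps. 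Passing to the limit, the tower $\{\tilde A_{\leq n}\}$ is eventually constant in each homotopy degree, so $\tilde A=\varprojlim_n\tilde A_{\leq n}$ is again a derived admissible adic $R$-algebra with $\pi_0(\tilde A)=\tilde A_0$; and using the lemma relating $\pi_\ast(-[\frac{1}{\dvgen}])$ to $\pi_\ast(-)[\frac{1}{\dvgen}]$ for $\dvgen$-adic modules, the natural comparison $\tilde A[\frac{1}{\dvgen}]\to A$ is an isomorphism on every homotopy group, hence an equivalence. The only genuinely delicate point is maintaining admissibility across the inductive step — this is exactly why the preceding corollary was formulated with the $\dvgen$-torsion-free refinement, and it is what forces $\pi_0(\tilde M)=\pi_1(\tilde M)=0$ and pins $\pi_0(\tilde A_{\leq n+1})$ to $\tilde A_0$; everything else is bookkeeping with Postnikov towers.
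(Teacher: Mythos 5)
Your proposal is correct and follows essentially the same route as the paper: climb the Postnikov tower, use the preceding corollary to find a $\dvgen$-torsion-free, degree-$(n+2)$-concentrated formal model of the classifying derivation, form the corresponding square-zero extension, and pass to the limit. The extra verifications you supply (that $\tilde M$ is forced to sit in degree $n+2$, that $\pi_0$ is preserved, and that the limit recovers $A$ after inverting $\dvgen$) are elaborations of steps the paper leaves implicit, not a different argument.
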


\begin{proof}
    Consider the Postnikov tower
    $A\cong\mathbf{lim}A\le n$ where $$A_{\le n+1}\cong A_{\le n}\oplus_{d_{n}}\Omega(\pi_{n+1}(A)[n+2])$$ for some derivation $d_{n}:A_{\le n}\rightarrow\pi_{n+1}(A)[n+2]$. Note each $A_{\le n}$ is a derived admissible adic $R$-algebra, so that $\mathbb{L}_{A_{\le n}}$ is coherent. We inductively construct a compatible system of formal models $\ldots\rightarrow\tilde{A}_{\le n+1}\rightarrow\tilde{A}_{\le n}\rightarrow\ldots.$ Suppose $\tilde{A}_{\le n}$ has been constructed. Given $\mathbb{L}_{\tilde{A}_{\le n}}$ a formal model for $\mathbb{L}_{A_{\le n}}$, we can find a formal model $\tilde{d}_{n}:\mathbb{L}_{\tilde{A}_{\le n}}\rightarrow\tilde{M}_{n+1}[n+2]$ for $d_{n}$. We may even assume that $\tilde{M}_{n+1}[n+2]$ is concentrated in degree $n+2$ and is $\pi$-torsion free. Define $\tilde{A}_{\le n+1}\defeq\tilde{A}_{\le n}\oplus_{\tilde{d}_{n}}\Omega\tilde{M}_{n+1}[n+2]$. Then we define $\tilde{A}\defeq\mathbf{lim}\tilde{A}_{\le n}.$
\end{proof}

One can similarly prove a relative version of this

\begin{corollary}
    Any map $g:A\rightarrow B$ of derived admissible adic $R$-algebras admits a formal model. In fact given a formal model $\tilde{g}_{0}:\tilde{A}_{0}\rightarrow\tilde{B}_{0}$ of $\pi_{0}(g):\pi_{0}(A)\rightarrow\pi_{0}(B)$, there is a formal model $\tilde{g}:\tilde{A}\rightarrow\tilde{B}$ of $g$ with $\pi_{0}(\tilde{g})\cong\tilde{g}_{0}$.
\end{corollary}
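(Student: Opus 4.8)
The plan is to run the argument of the previous corollary (existence of a formal model for a single derived affinoid algebra) relatively, over a fixed formal model of $A$. Here $g\colon A\to B$ is a morphism of derived affinoid $K$-algebras with $K=R[\frac{1}{\pi}]$. First I would apply that corollary to $A$ together with the given model $\tilde{A}_0$ of $\pi_0(A)$, producing a derived admissible formal model $\tilde{A}$ with $\pi_0(\tilde{A})\cong\tilde{A}_0$. The composite $\tilde{A}\to\pi_0(\tilde{A})=\tilde{A}_0\xrightarrow{\tilde{g}_0}\tilde{B}_0$ then makes $\tilde{B}_0$ a derived admissible adic $\tilde{A}$-algebra, and this will be the bottom stage $\tilde{B}_{\le 0}$ of the tower to be built.

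Next I would use the Postnikov tower of $B$ in $\mathbf{DAlg}^{cn}(\bD_{\geq 0}(K))$, viewed relatively over $A$ (truncation commutes with the forgetful functor $\mathbf{DAlg}_A\to\mathbf{DAlg}$): write $B\cong\varprojlim_n B_{\le n}$ with $B_{\le n+1}\cong B_{\le n}\oplus_{d_n}\Omega(\pi_{n+1}(B)[n+2])$ for an $A$-derivation $d_n\colon B_{\le n}\to\pi_{n+1}(B)[n+2]$, i.e. a map $\mathbb{L}_{B_{\le n}/A}\to\pi_{n+1}(B)[n+2]$. I then construct a compatible tower $\cdots\to\tilde{B}_{\le n+1}\to\tilde{B}_{\le n}\to\cdots$ of derived admissible adic $\tilde{A}$-algebras by induction, with $\tilde{B}_{\le n}[\frac{1}{\pi}]\cong B_{\le n}$ and $\pi_0(\tilde{B}_{\le n})\cong\tilde{B}_0$. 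Given $\tilde{B}_{\le n}$, Lemma \ref{lem:cotanderivedcomplete} shows $\mathbb{L}_{\tilde{B}_{\le n}/\tilde{A}}$ is coherent and derived $\pi$-adically complete, and base change along $R\to K$ gives $\mathbb{L}_{\tilde{B}_{\le n}/\tilde{A}}[\frac{1}{\pi}]\cong\mathbb{L}_{B_{\le n}/A}$. Hence the derivation-lifting corollary, applied with the formal model $\tilde{A}\to\tilde{B}_{\le n}$ of $A\to B_{\le n}$ and the coherent, homologically bounded $B_{\le n}$-module $\pi_{n+1}(B)[n+2]$, produces a formal model $\tilde{d}_n\colon\mathbb{L}_{\tilde{B}_{\le n}/\tilde{A}}\to\tilde{M}_{n+1}[n+2]$ with $\tilde{M}_{n+1}[n+2]$ concentrated in degree $n+2$ and $\pi$-torsion free. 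Setting $\tilde{B}_{\le n+1}\defeq\tilde{B}_{\le n}\oplus_{\tilde{d}_n}\Omega\tilde{M}_{n+1}[n+2]$ gives again a derived admissible adic $\tilde{A}$-algebra with unchanged $\pi_0$, and $(-)[\frac{1}{\pi}]$, being exact and symmetric monoidal, recovers $B_{\le n+1}$. Finally put $\tilde{B}\defeq\varprojlim_n\tilde{B}_{\le n}$ and let $\tilde{g}\colon\tilde{A}\to\tilde{B}$ be the induced map.

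It then remains to check that $\tilde{g}$ is a formal model of $g$ with $\pi_0(\tilde{g})\cong\tilde{g}_0$. The $\pi_0$-assertion is immediate: each transition map $\tilde{B}_{\le n+1}\to\tilde{B}_{\le n}$ is an isomorphism on $\pi_m$ for $m\le n$, so the tower of homotopy groups is Mittag--Leffler, $\pi_m(\tilde{B})\cong\pi_m(\tilde{B}_{\le m+1})$ is coherent and $\pi$-torsion free, and $\pi_0(\tilde{B})\cong\tilde{B}_0$ with $\tilde{g}$ reducing to $\tilde{g}_0$. The main obstacle I expect is the passage to the limit, namely showing that $(-)[\frac{1}{\pi}]$ commutes with $\varprojlim_n\tilde{B}_{\le n}$ so that $\tilde{B}[\frac{1}{\pi}]\cong B$; this is handled exactly as in the absolute case using Lemma \ref{lem:htpy-inv-lim} and Lemma \ref{lem:htpy-inverse-lim2} together with the Mittag--Leffler property above. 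One also obtains relative uniqueness of $\tilde{B}$ up to a contractible space of lifts extending $\tilde{g}_0$, from the deformation-theoretic description of each stage of the tower.
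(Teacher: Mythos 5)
Your proposal is correct and is essentially the proof the paper intends: the paper gives no separate argument for this corollary beyond the remark that ``one can similarly prove a relative version'' of the preceding (absolute) corollary, and your construction is exactly that relativization --- fix $\tilde{A}$ over $\tilde{A}_0$, climb the Postnikov tower of $B$ over $A$ using the derivation-lifting corollary applied to $\mathbb{L}_{\tilde{B}_{\le n}/\tilde{A}}$, and pass to the limit. Your additional care with $\mathbb{L}_{\tilde{B}_{\le n}/\tilde{A}}[\frac{1}{\pi}]\cong\mathbb{L}_{B_{\le n}/A}$ and with the Mittag--Leffler behaviour of the tower at the limit stage is consistent with how the absolute case is handled.
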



\begin{definition}\label{def:adic-formal-et}
    A map $f:A\rightarrow B$ in $\mathbf{DAlg}^{cn}(\sf{Norm}_{R}^{\le1})^{\circ}$ is said to be \textit{adically formally \'{e}tale} if each map $A\big\slash\big\slash(\pi^{k})\rightarrow B\big\slash\big\slash(\pi^{k})$ is a formally \'{e}tale map of underlying derived algebras.
\end{definition}

\begin{corollary}
    Let $f:A\rightarrow B$ be an adically formally \'{e}tale map of derived admissible adic $R$-algebras. Then it is formally \'{e}tale.
\end{corollary}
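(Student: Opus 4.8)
The plan is to read off the vanishing of the cotangent complex $\mathbb{L}_{B\big\slash A}$ directly from the hypothesis by a base-change argument modulo $\pi$, and then to promote vanishing modulo $\pi$ to genuine vanishing using that $\mathbb{L}_{B\big\slash A}$ is derived $\pi$-adically complete. First I would note that, by the remark preceding Definition~\ref{def:adic-formal-et} together with the identification $i(\mathbb{L}^{\leq 1}_{B\big\slash A})\cong\mathbb{L}_{B\big\slash A}$, the property of being formally \'etale is insensitive to whether one computes in $\mathbf{D}(\mathsf{Ban}_{R}^{\leq 1})$ or in $\mathbf{D}(R)$, so it suffices to show $\mathbb{L}_{B\big\slash A}\cong 0$ in $\mathbf{D}(R)$.

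Next I would observe that for each $k\geq 1$ the derived quotient $A\big\slash\big\slash(\pi^{k})$ is a cobase change of the map $\mathrm{Sym}(\mathbb{I})\to\mathrm{Sym}(\mathbb{I})\big\slash\big\slash(\pi^{k})$, and that derived quotients are stable under base change; hence $B\big\slash\big\slash(\pi^{k})\cong B\otimes^{\mathbb{L}}_{A}\bigl(A\big\slash\big\slash(\pi^{k})\bigr)$ exhibits $B\big\slash\big\slash(\pi^{k})$ as the pushout of $A\to B$ along $A\to A\big\slash\big\slash(\pi^{k})$ in $\mathbf{DAlg}^{cn}(\mathbf{C})$. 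Applying base change for the cotangent complex (one of the usual properties recorded in \cite{ben2024perspective}) to this pushout square gives
\[
\mathbb{L}_{(B\big\slash\big\slash(\pi^{k}))\big\slash(A\big\slash\big\slash(\pi^{k}))}\;\cong\;\bigl(B\big\slash\big\slash(\pi^{k})\bigr)\otimes^{\mathbb{L}}_{B}\mathbb{L}_{B\big\slash A}\;\cong\;\mathbb{L}_{B\big\slash A}\big\slash\big\slash(\pi^{k}).
\]
By the adic formal \'etaleness hypothesis the left-hand side vanishes for every $k$; in particular $\mathbb{L}_{B\big\slash A}\big\slash\big\slash(\pi)\cong 0$.

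To conclude, I would use that, since $A$ and $B$ are admissible, Lemma~\ref{lem:cotanderivedcomplete} gives that $\mathbb{L}_{B\big\slash A}$ is derived $\pi$-adically complete. From $\mathbb{L}_{B\big\slash A}\big\slash\big\slash(\pi)\cong 0$ it follows that multiplication by $\pi$ on $\mathbb{L}_{B\big\slash A}$ is an equivalence, hence so is multiplication by $\pi^{k}$, so $\mathbb{L}_{B\big\slash A}\big\slash\big\slash(\pi^{k})\cong 0$ for all $k$, and therefore $\mathbb{L}_{B\big\slash A}\cong\varprojlim_{k}\mathbb{L}_{B\big\slash A}\big\slash\big\slash(\pi^{k})\cong 0$. (Equivalently, this is derived Nakayama in the convex subcategory $\mathbf{D}(\mathsf{Ban}_{R}^{\leq 1})^{\circ}\cong\varprojlim_{n}\mathbf{D}(\mathsf{Mod}_{R\big\slash(\pi^{n})})$ of Theorem~\ref{thm:tilde-nuc}, where derived $\pi$-complete objects whose reduction mod $\pi$ vanishes are zero.)

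I do not expect a genuine obstacle here: the argument is soft. The two points that need care are that derived quotients commute with base change and that the cotangent complex satisfies base change along pushouts in this bornological derived algebraic context — both of which are available from \cite{ben2024perspective} — and the derived Nakayama statement, which is immediate from the description of the convex subcategory as derived $\pi$-adically complete modules. Beyond checking these citations apply verbatim, the proof is just the two-line base-change computation above plus completeness.
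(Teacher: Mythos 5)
Your proof is correct and is exactly the argument the paper intends: the paper's own proof is the single line ``This follows immediately from Lemma \ref{lem:cotanderivedcomplete}'', and what you have written is precisely the expansion of that line — base change of the cotangent complex along $A\to A\big\slash\big\slash(\pi^{k})$ to get $\mathbb{L}_{B\slash A}\big\slash\big\slash(\pi^{k})\cong 0$ from the hypothesis, then derived $\pi$-adic completeness of $\mathbb{L}_{B\slash A}$ to conclude it vanishes. (In fact the detour through ``multiplication by $\pi$ is an equivalence'' is unnecessary, since the hypothesis already gives $\mathbb{L}_{B\slash A}\big\slash\big\slash(\pi^{k})\cong 0$ for every $k$, so completeness kills $\mathbb{L}_{B\slash A}$ directly.)
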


\begin{proof}
    This follows immediately from Lemma \ref{lem:cotanderivedcomplete}.
\end{proof}

\subsection{The topologies}

As in the dagger affinoid setting, we isolate a category of derived admissible formal schemes. The base category over which everything takes place is \(\mathbf{Aff}_{\bD(R)} = \mathbf{DAlg}^{cn}(\bD(R))\). To topologise this category, we consider the following:


\begin{definition}
    Let $R$ be a Banach ring with pseudo-uniformiser $\pi$. Let $f:A\rightarrow B$ be a map in either $\mathbf{DAlg}^{cn}(\bD(R))$ or in $\mathbf{DAlg}^{cn}(\bD(\sf{Ban}_{R}^{\le1}))$. We say $f$ is \textit{adically finitely presented} if it is of the form $A \to A\hat{\otimes}^\mathbb{L}R\gen{x_{1},\ldots,x_{n}}\big\slash\big\slash(f_{1},\ldots,f_{m})$ where $f_{1},\ldots,f_{m}:R\rightarrow\pi_{0}(A)\hat{\otimes} R\gen{x_{1}, \ldots,x_{n}}$.
\end{definition}

In complete analogy with the classical setting, we define derived adic topologies.

\begin{definition}
Fix a pre-topology $\tau$ on derived connective rings which is descendable and consists of algebraically finitely presented flat maps.
The \textit{adic }$\tau$-pre-topology on $\mathbf{Aff}_{\bD(\mathsf{SNorm}_R^{\le1})}$ consists of covers
$$\{\mathrm{Spec}(B)\rightarrow\mathrm{Spec}(A)\}$$
such that 
\begin{enumerate}
\item
$A\rightarrow B$ is descendable,
\item
$A\rightarrow B$ is adically finitely presented,
\item 
for any map $A\rightarrow C$ with $C$ a derived topologically finitely presented affinoid,
$$\{\mathrm{Spec}((R\big\slash\pi^{n}R\otimes^{\mathbb{L}}(C\hat{\otimes}_{A}^{\mathbb{L}}B))\rightarrow\mathrm{Spec}((R\big\slash\pi^{n}R\otimes^{\mathbb{L}}C)\}$$
is a cover in $\tau$.
\end{enumerate}
\end{definition}
We denote this topology by $\tau_{ad}$. In particular, for $\tau$ consisting of \'{e}tale covers, this gives the \textit{adic \'{e}tale pre-topology}, which we denote by \(\tau_{ad}^{\'{e}t}\). For $\tau$ consisting of faithfully flat covers, we get the \textit{adic faithfully flat topology}, which we denote by $\tau_{ad}^{ff}$. We have a further topology sitting in between:

\begin{definition}
\begin{enumerate}
    \item 
    A map $A\rightarrow B$ is said to be \textit{formally rig-\'{e}tale} if $\mathbb{L}_{B\big\slash A}[\frac{1}{\pi}]\cong 0$.
    \item 
    A morphism $A\rightarrow B$ is said to be \textit{rig-\'{e}tale} (c.f. \cite{MR4466640}*{Definition 1.3.3}) if it is formally rig-\'{e}tale, $\pi_{0}(A)\rightarrow\pi_{0}(B) $ is algebraically flat, and $\pi_{0}(A)\cong \pi_{0}(B)<x_{1},\ldots,x_{n}>\big\slash(f_{1},\ldots,f_{n})$, where the Jacobian determinant $det(J)$ generates an open ideal.
    \item
    The \textit{rig-\'{e}tale} topology, $\tau_{ad}^{rig-\'{e}t}$, is the topology consisting of covers $\{\mathrm{Spec}(B_{i})\rightarrow\mathrm{Spec}(A)\}$ in $\tau_{ad}^{ff}$ where each $A\rightarrow B_{i}$ is rig-\'{e}tale. 
    \end{enumerate}
\end{definition}
In particular if $A\rightarrow B$ is (formally) rig-\'{e}tale, then $A[\frac{1}{\pi}]\rightarrow B[\frac{1}{\pi}]$ is (formally) \'{e}tale, and rig-\'{e}tale morphisms are formally rig-\'{e}tale. 

\begin{lemma}\label{lemma:strongrigetale}
Let $A\rightarrow B$ be a map of derived admissible affinoid $\mathcal{O}_{K}$-algebras which is derived strong. Then it is formally rig-\'{e}tale if and only if $\pi_{0}(A)\rightarrow\pi_{0}(B)$ is formally rig-\'{e}tale.
\end{lemma}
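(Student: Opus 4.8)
The plan is to deduce both implications from a single base-change identity for the cotangent complex, obtained from derived strength, combined with the conservativity of reduction to $\pi_0$ on connective modules.

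First I would record the key identity. Since $A\to B$ is derived strong, \cite{ben2024perspective}*{Proposition 2.3.87} identifies $\pi_0(B)$ with $\pi_0(A)\hat{\otimes}^{\mathbb{L}}_A B$, so the commuting square with vertices $A$, $B$, $\pi_0(A)$, $\pi_0(B)$ is a pushout in $\mathbf{DAlg}^{cn}(\bD(\mathcal{O}_K))$ (recall $\mathbb{Q}\subset\mathcal{O}_K$, so there is no distinction between $\mathbf{DAlg}$ and $\mathbf{Comm}$ here). Base change for the cotangent complex along this pushout gives $\mathbb{L}_{\pi_0(B)/\pi_0(A)}\cong\pi_0(B)\hat{\otimes}^{\mathbb{L}}_B\mathbb{L}_{B/A}$. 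Then I would invert $\dvgen$: because $A$ and $B$ are derived admissible, every $\pi_n$ is $\dvgen$-torsion free, so $\pi_*((-)[\dvgen^{-1}])\cong\pi_*(-)[\dvgen^{-1}]$, and in particular $\pi_0(B)\hat{\otimes}^{\mathbb{L}}_B B[\dvgen^{-1}]\cong\pi_0(B)[\dvgen^{-1}]=\pi_0(B[\dvgen^{-1}])$ (the discrete, $\dvgen$-torsion free ring $\pi_0(B)$ acquires no higher homotopy after inverting $\dvgen$), and similarly for $A$. Tensoring the displayed equivalence over $B$ with $B[\dvgen^{-1}]$ and using associativity of the relative tensor product then yields
$$\mathbb{L}_{\pi_0(B)/\pi_0(A)}[\dvgen^{-1}]\;\cong\;\pi_0(B[\dvgen^{-1}])\hat{\otimes}^{\mathbb{L}}_{B[\dvgen^{-1}]}\bigl(\mathbb{L}_{B/A}[\dvgen^{-1}]\bigr).$$

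With this in hand, the ``only if'' direction is immediate: if $\mathbb{L}_{B/A}[\dvgen^{-1}]\cong 0$ then the right-hand side vanishes, so $\pi_0(A)\to\pi_0(B)$ is formally rig-\'etale. For the ``if'' direction, I would argue that $\mathbb{L}_{B/A}$, being the cotangent complex of a morphism of connective algebras, is connective, and connectivity is preserved by the base change $(-)\hat{\otimes}^{\mathbb{L}}_B B[\dvgen^{-1}]$, so $\mathbb{L}_{B/A}[\dvgen^{-1}]$ is a connective $B[\dvgen^{-1}]$-module. Since $(-)\hat{\otimes}^{\mathbb{L}}_C\pi_0(C)$ is conservative on connective $C$-modules (one sees this by induction up the Postnikov tower, using that $\pi_0$ of the base change of a connective module is the ordinary $\pi_0$, together with right-completeness of the $t$-structure), the vanishing of $\pi_0(B[\dvgen^{-1}])\hat{\otimes}^{\mathbb{L}}_{B[\dvgen^{-1}]}\mathbb{L}_{B/A}[\dvgen^{-1}]$ forces $\mathbb{L}_{B/A}[\dvgen^{-1}]\cong 0$, i.e. $A\to B$ is formally rig-\'etale.

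I expect the main obstacle to be entirely the bookkeeping in the first paragraph: justifying that inverting $\dvgen$ commutes with $\pi_0$ and with the cotangent complex in the bornological setting. This is where all three hypotheses are consumed: $\dvgen$-torsion freeness of the derived admissible algebras makes $\pi_*((-)[\dvgen^{-1}])=\pi_*(-)[\dvgen^{-1}]$; the fact that $\mathcal{O}_K\to K$ is a homotopy epimorphism makes $(-)[\dvgen^{-1}]=(-)\hat{\otimes}^{\mathbb{L}}_{\mathcal{O}_K}K$ behave like a flat localisation (so that it commutes with relative tensor products and with forming cotangent complexes, since $\mathbb{L}_{A[\dvgen^{-1}]/A}\cong 0$ and likewise for $B$); and derived strength is precisely what upgrades the $\pi_0$-reduction square to a pushout, without which only a transitivity triangle — not the clean base-change formula — would be available.
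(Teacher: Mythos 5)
Your proposal is correct and follows essentially the same route as the paper: both directions reduce to $\pi_{0}$ via the pushout square supplied by derived strength, with $\dvgen$-torsion-freeness of the homotopy groups of derived admissible algebras guaranteeing that inverting $\dvgen$ commutes with $\pi_{0}$ and with the cotangent complex. The only difference is that the paper's converse cites an external result (a derived strong map whose $\pi_{0}$ is formally \'etale is formally \'etale, applied after inverting $\dvgen$), whereas you prove that detection step directly via conservativity of $(-)\hat{\otimes}^{\mathbb{L}}_{C}\pi_{0}(C)$ on connective modules --- a legitimate, self-contained substitute.
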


\begin{proof}
If $A\rightarrow B$ is formally rig-\'{e}tale then the base change $\pi_{0}(A)\rightarrow\pi_{0}(A)\hat{\otimes}^{\mathbb{L}}_{A}B\cong\pi_{0}(B)$ is (formally) rig-\'{e}tale. 
Conversely, suppose that $\pi_{0}(A)\rightarrow\pi_{0}(B)$ is formally rig-\'{e}tale. Since the functor $M\mapsto M[\frac{1}{\pi}]$ is transverse to $\pi$-adic modules, $A[\frac{1}{\pi}]\rightarrow B[\frac{1}{\pi}]$ is still derived strong. Moreover we have $\pi_{0}(A[\frac{1}{\pi}]\rightarrow B[\frac{1}{\pi}])\cong(\pi_{0}(A)[\frac{1}{\pi}]\rightarrow\pi_{0}(B)[\frac{1}{\pi}])$ is \'{e}tale by assumption, and the result follows.
\end{proof}

\comment{
\begin{remark}
    If $A\rightarrow B$ is a cover in $\tau_{ad}$, and $A\rightarrow C$ is any map with $C$ a derived adic algebra topologically of finite presentation then $C\hat{\otimes}_{A}^{\mathbb{L}}B$ is automatically a derived adic algebra topologically of finite presentation. Moreover it is a derived adic admissible algebra if $C$. Finally, if $C$ is discrete, then by derived strength $C\hat{\otimes}_{A}^{\mathbb{L}}B$ is also a discrete admissible adic $R$-algebra.
\end{remark}
}

Let us give the most important examples of covers in this topology.

\begin{definition}
Let $R$ be a Banach ring with $|r|\le1$ for all $r\in R$, and a pseudo-uniformiser $\pi$. A map $f:A\rightarrow B$ in $\mathbf{DAlg}^{cn}(\bD(\mathsf{SNorm}_{R}^{\le1}))$ is said to be \textit{adically descendable of order} $d$ if each map 
$$A\big\slash\big\slash \pi^{n}A\rightarrow B\big\slash\big\slash \pi^{n}B$$
is descendable of order $d$.
\end{definition}

The credit for the following belongs to Lucas Mann, who proved it in the condensed setting (\cite{mann2022p}*{Theorem 2.7.2}).

\begin{theorem}
Let $f:A\rightarrow B$ be adically descendable of index $d$. Suppose that $A$ is adically complete. Then $f$ is descendable.
\end{theorem}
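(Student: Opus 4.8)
The plan is to reduce the statement to the pro-vanishing of the augmented \v{C}ech conerve tower $\{T^m_{aug}(f)\}_{m\ge 0}$ and then to extract such a vanishing from the \emph{uniform} (degree $d$) vanishing that is known modulo each power of $\pi$, using $\pi$-adic completeness of $A$ to assemble the mod-$\pi^n$ data. First I would record the base-change identities. For $X\in\bD(A)$ one has $X\otimes_A A/\!\!/\pi^n\simeq\mathrm{cofib}(\pi^n\colon X\to X)=X/\!\!/\pi^n$, hence $X\otimes_A Y\otimes_A A/\!\!/\pi^n\simeq(X\otimes_A A/\!\!/\pi^n)\otimes_{A/\!\!/\pi^n}(Y\otimes_A A/\!\!/\pi^n)$; applying this to $I\defeq\mathrm{fib}(f)$ and to $B^{\otimes_A\bullet+1}$, and using $B\otimes_A A/\!\!/\pi^n\simeq B/\!\!/\pi^n B$, gives $I^{\otimes_A k}\otimes_A A/\!\!/\pi^n\simeq I_n^{\otimes_{A/\!\!/\pi^n}k}$ and $T^m_{aug}(f)\otimes_A A/\!\!/\pi^n\simeq T^m_{aug}(f_n)$, where $f_n\colon A/\!\!/\pi^n A\to B/\!\!/\pi^n B$ and $I_n=\mathrm{fib}(f_n)$. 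By \cite{mathew2016galois} the hypothesis that each $f_n$ is descendable of index $d$ says exactly that $I_n^{\otimes_{A/\!\!/\pi^n}d}\to A/\!\!/\pi^n$ is null, equivalently that $T^{m+d}_{aug}(f_n)\to T^m_{aug}(f_n)$ is null for all $m$, with $d$ independent of $n$. It therefore suffices to produce a finite $N$ with $I^{\otimes_A N}\to A$ null.

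If $A$ and $B$ both lie in the convex subcategory $\bD(\mathsf{SNorm}_R^{\le1})^\circ$, which is monoidal, every $B^{\otimes_A k}$ is already $\pi$-adically complete and the conclusion is cheap: since $\mathrm{Tot}$ commutes with the sequential limit and $\mathrm{Tot}((B/\!\!/\pi^nB)^{\otimes_{A/\!\!/\pi^nA}\bullet+1})\simeq A/\!\!/\pi^nA$ by descendability of $f_n$, one gets $\mathrm{Tot}(B^{\otimes_A\bullet+1})\simeq\varprojlim_n\mathrm{Tot}((B/\!\!/\pi^nB)^{\otimes_{A/\!\!/\pi^nA}\bullet+1})\simeq\varprojlim_n A/\!\!/\pi^nA\simeq A$, and one does not even need the uniform index. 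In general, however, $B$ (hence $B^{\otimes_A k}$) need not be complete, the conerve is not levelwise complete, and this interchange fails; this is where the uniform $d$ becomes essential.

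For the general case I would follow the successive-approximation method of \cite{mann2022p}*{Theorem 2.7.2}. The vanishing of $\psi\defeq(I^{\otimes_A d}\to A)$ after $\otimes_A A/\!\!/\pi$ forces the composite $I^{\otimes_A d}\to A\to A/\!\!/\pi$ to be null, so $\psi$ factors through multiplication by $\pi$, say $\psi=\pi\cdot\theta$; because the composites $I^{\otimes_A kd}\to A$ are the iterated products of $\psi$ under the (non-unital, commutative) multiplication on the conerve, this propagates through tensor powers, and the mod-$\pi^n$ hypotheses for larger $n$ together with the identification $A\simeq\varprojlim_n A/\!\!/\pi^n A$ let one iterate the factorisation and sum the corrections $\pi$-adically. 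Since the index $d$ is uniform, each stage of the process consumes only a fixed number $d$ of conerve degrees, so the bookkeeping is arranged to terminate (or converge) at a genuinely null map $I^{\otimes_A N}\to A$ for some finite $N$; by \cite{mathew2016galois} this is descendability of $f$.

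The one genuinely delicate point — and the main obstacle — is making this interchange/bookkeeping close. Without the uniform bound one would only learn that $\{T^m_{aug}(f)\}_m$ becomes pro-zero after reduction modulo each $\pi^n$, which is strictly weaker than descendability: thick $\otimes$-ideals are not closed under the sequential limit $\varprojlim_n$, and the homotopy groups of $\mathbf{Map}_A(I^{\otimes_A d},A)$ need not be $\pi$-adically separated even though this mapping spectrum is $\pi$-complete, so an element of $\bigcap_n\pi^n$ can be nonzero. The uniform degree $d$ is exactly what rescues the argument, and I expect the cleanest writeup to isolate the relevant statement as a lemma about $\pi$-complete $\mathbb{E}_\infty$-rings — "$\pi$-completely descendable of bounded index implies descendable" — proved once and for all and then applied verbatim in $\mathbf{DAlg}^{cn}(\bD(\mathsf{SNorm}_R^{\le1}))$ via the base-change identities of the first paragraph.
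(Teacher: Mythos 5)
Your proposal follows the same route as the paper: reduce via Mathew's criterion to the nullity of a finite tensor power of the fibre, use $\pi$-adic completeness of $A$ to write $\mathbf{Map}(I^{\otimes_A d},A)$ as the limit of the mod-$\pi^n$ mapping spectra, observe that the \emph{uniform} index $d$ kills the map in each $\pi_0\mathbf{Map}(-,A/\!\!/\pi^n)$ so that it lies in the $\lim^1$-term of the Milnor sequence, and conclude by the phantom-map argument of \cite{mann2022p}*{Theorem 2.7.2} --- which is exactly the paper's proof. One caveat on your aside about the convex case: establishing $A\simeq\varprojlim_n\mathrm{Tot}\bigl((B/\!\!/\pi^n)^{\otimes\bullet+1}\bigr)$ only gives convergence of the totalisation, which is strictly weaker than descendability (pro-constancy of the tower), so the uniform index is not dispensable even there.
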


\begin{proof}
We need to check that for sufficiently large $d'$ the map $K^{d'}_{f}\rightarrow A$ is zero. Now we have the factorisation
$$K^{d'}_{f}\rightarrow K^{d'}_{f\big\slash \pi^{n}f}\rightarrow A\big\slash \pi^{n}A.$$
Consider 
$$\mathbf{Map}(K^{d}_{f},A)\cong\mathbb{R}^1\varprojlim_{n}\mathbf{Map}(K^{d}_{f},A\big\slash (\pi^{n}A)).$$
We get a short exact sequence 
$$0\rightarrow\mathbb{R}^{1}\varprojlim\mathbf{Map}(K^{m}_{f},A\big\slash \pi^{n}A)\rightarrow\pi_{0}\mathbf{Map}(K^{m}_{f},A)\rightarrow\varprojlim\pi_{0}\mathbf{Map}(K^{m}_{f},A\big\slash \pi^{n}A)\rightarrow0.$$
Thus $K^{m}_{f}\rightarrow A$ lives in the image of $\mathbb{R}^{1}\varprojlim\mathbf{Map}(K^{m}_{f},A\big\slash\pi^{n}A) \to \pi_{0}\mathbf{Map}(K^{m}_{f},A)$, and we may conclude exactly as in \cite{mann2022p}*{Theorem 2.7.2}. \qedhere
\end{proof}



\begin{lemma}\label{lem:etale-pi_0}
    Let $\tau$ be either the \'{e}tale or faithfully flat topologies on rings. A map $f:A\rightarrow B$ of derived admissible adic rings gives rise to a cover in $\tau_{ad}$ if and only if 
    \begin{enumerate}
    \item 
    $f$ is derived strong,
    \item 
    and $\pi_{0}(f):\pi_{0}(A)\rightarrow\pi_{0}(B)$ gives rise to an \'{e}tale/ faithfully flat cover in the usual sense of adic rings.
    \end{enumerate}
\end{lemma}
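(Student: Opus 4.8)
The plan is to prove both implications by reducing everything to the reductions modulo $\dvgen^{n}$, using the compatibility between derived strength and derived quotients by $\dvgen$ established above, together with the (non-adic) fact that a derived strong map whose $\pi_{0}$ is a $\tau$-cover of discrete rings is itself a $\tau$-cover of derived rings.

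\textbf{Forward direction.} Suppose $\{\mathrm{Spec}(B)\to\mathrm{Spec}(A)\}$ is a cover in $\tau_{ad}$. Apply condition (3) of the definition with $C=A$: since $A$ is a derived admissible adic ring it is in particular topologically of finite presentation, and $R/\dvgen^{n}R\otimes^{\mathbb{L}}_{R}(A\haotimes^{\mathbb{L}}_{A}B)\simeq B/\!/\dvgen^{n}$, $R/\dvgen^{n}R\otimes^{\mathbb{L}}_{R}A\simeq A/\!/\dvgen^{n}$ because $R$ is $\dvgen$-torsion free. Hence $\{\mathrm{Spec}(B/\!/\dvgen^{n})\to\mathrm{Spec}(A/\!/\dvgen^{n})\}$ is a $\tau$-cover for every $n$; in particular each $A/\!/\dvgen^{n}\to B/\!/\dvgen^{n}$ is flat, hence derived strong. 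Since each $\pi_{n}(B)$ is $\dvgen$-torsion free by admissibility, $\pi_{*}(A),\pi_{0}(B)\in\mathbf{D}_{\ge0}(\mathsf{Ban}_{A}^{\le1})^{\circ}$ by Lemma \ref{lem:torsion free}, and flatness of $\pi_{0}(A)/\dvgen^{n}\to\pi_{0}(B)/\dvgen^{n}$ for all $n$ together with $\dvgen$-adic completeness forces $\pi_{0}(B)$ to be flat over $\pi_{0}(A)$ (so $\pi_{*}(A)\otimes_{\pi_{0}(A)}\pi_{0}(B)$ is $\dvgen$-torsion free), part (2) of the lemma relating derived strength to its reductions shows $A\to B$ is derived strong. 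Taking now $C=\pi_{0}(A)$, and using derived strength to identify $\pi_{0}(A)\haotimes^{\mathbb{L}}_{A}B\simeq\pi_{0}(B)$ via \cite{ben2024perspective}*{Proposition 2.3.87} together with $\dvgen$-torsion freeness of $\pi_{0}(A)$ and $\pi_{0}(B)$, condition (3) becomes the statement that $\{\mathrm{Spec}(\pi_{0}(B)/\dvgen^{n})\to\mathrm{Spec}(\pi_{0}(A)/\dvgen^{n})\}$ is a $\tau$-cover of discrete rings for all $n$, i.e. that $\pi_{0}(f)$ is an étale (resp. faithfully flat) cover in the usual sense of adic rings.

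\textbf{Converse direction.} Suppose $f$ is derived strong and $\pi_{0}(f)$ is a $\tau$-cover of adic rings; we check the three conditions defining $\tau_{ad}$. For (3): derived strength is preserved under base change and under derived quotients by $\dvgen^{n}$ (part (1) of the lemma above), and for arbitrary $A\to C$ with $C$ topologically of finite presentation one has $R/\dvgen^{n}R\otimes^{\mathbb{L}}_{R}(C\haotimes^{\mathbb{L}}_{A}B)\simeq(C/\!/\dvgen^{n})\haotimes^{\mathbb{L}}_{A/\!/\dvgen^{n}}(B/\!/\dvgen^{n})$; since $A/\!/\dvgen^{n}\to B/\!/\dvgen^{n}$ is derived strong with $\pi_{0}$ a $\tau$-cover of discrete rings it is a $\tau$-cover of derived rings, and $\tau$-covers are stable under base change, so (3) holds. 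For (2): $\pi_{0}(A)\to\pi_{0}(B)$ being an adic étale (resp. faithfully flat) cover is adically of finite presentation; lifting a presentation $\pi_{0}(B)\cong\pi_{0}(A)\gen{\underline{x}}/(\underline{f})$ to $A'\defeq A\haotimes^{\mathbb{L}}R\gen{\underline{x}}/\!/(\underline{f})$, the map $A\to A'$ is adically formally étale (by the Jacobian condition), hence formally étale by the corollary above, and $\pi_{0}(A')\cong\pi_{0}(B)$, so formal étaleness and derived strength give $A'\simeq B$, proving $A\to B$ adically finitely presented. For (1): the reduction $\pi_{0}(A)/\dvgen\to\pi_{0}(B)/\dvgen$ is a $\tau$-cover of discrete rings, hence descendable of some finite order; by the classical theory of adic $\tau$-covers — for $\tau$ étale the diagonal idempotent gives order $2$ uniformly in $n$, and in general this uniformity is part of the notion of a descendable adic $\tau$-cover — together with derived strength, $A/\!/\dvgen^{n}\to B/\!/\dvgen^{n}$ is descendable of an order independent of $n$, i.e. $A\to B$ is adically descendable; since $A$ is $\dvgen$-adically complete, the theorem on adically descendable maps above shows $A\to B$ is descendable.

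\textbf{Main obstacle.} The delicate points are in the converse. Establishing descendability of $A\to B$ needs a bound on the descendability order of the reductions $A/\!/\dvgen^{n}\to B/\!/\dvgen^{n}$ that is uniform in $n$ before the adic-completeness theorem applies, and verifying condition (2) at the derived level requires the formal-model and deformation machinery to lift a classical presentation of $\pi_{0}(B)$ and to recognise the lift as formally étale. The forward direction is comparatively mechanical, amounting to specialising the base-change test object $C$ in condition (3) to $A$ and to $\pi_{0}(A)$.
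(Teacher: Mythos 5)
Your overall strategy coincides with the paper's: both directions are handled by reducing modulo $\pi^{k}$, transporting derived strength back and forth via the torsion-freeness lemmas, and obtaining descendability from the fact that each reduction $A/\!/\pi^{k}\to B/\!/\pi^{k}$ is derived faithfully flat (derived strong with flat $\pi_0$), hence descendable of a fixed index by Mathew's Proposition 3.31, so that adic completeness of $A$ yields descendability of $f$. The forward direction is essentially identical to the paper's (the paper base-changes along $A/\!/\pi^{k}\to\pi_0(A)/\pi^{k}$ where you take $C=\pi_0(A)$ in condition (3); same idea). Where you go beyond the paper is in explicitly verifying conditions (2) and (3) of the definition of $\tau_{ad}$ in the converse; the paper only addresses the descendability condition (1) and treats (2)–(3) as routine.

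Two points in your converse need repair. First, your verification of condition (2) (adic finite presentation) lifts a presentation of $\pi_0(B)$ to $A'=A\haotimes^{\mathbb{L}}R\gen{\underline{x}}/\!/(\underline{f})$ and identifies $A'\simeq B$ using that $A\to A'$ is (adically) formally \'etale "by the Jacobian condition." This only makes sense for the \'etale topology: for a faithfully flat cover there is no Jacobian, $A\to A'$ need not be formally \'etale, and the lifting of the isomorphism $\pi_0(A')\cong\pi_0(B)$ to an equivalence $A'\simeq B$ is exactly what formal \'etaleness was buying you. In the faithfully flat case one must instead argue via derived strength and the Postnikov/formal-model machinery (as in the corollaries on formal models), or accept, as the paper implicitly does, that this is part of the standing setup. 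Second, your descendability argument asserts that the index of descendability of $A/\!/\pi^{n}\to B/\!/\pi^{n}$ is uniform in $n$ "as part of the notion" in the faithfully flat case; this is precisely the content of Mathew's result that a derived faithfully flat map is descendable of index $2$, which is how the paper gets the uniform bound before invoking the adic-completeness theorem. Citing that result closes the hole; as written, the uniformity is assumed rather than proved.
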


\begin{proof}
    Suppose that $f$ is derived strong and $\pi_{0}(f):\pi_{0}(A)\rightarrow\pi_{0}(B)$ gives rise to an \'{e}tale/ faithfully flat cover in the usual sense of adic rings. Then each $\pi_{0}(f)\big\slash\big\slash(\pi^{k})\cong\pi_{0}(f)\big\slash(\pi^{k})$ is an \'{e}tale/ faithfully flat cover of finite presentation. Moreover the map $f\big\slash\big\slash(\pi^{k}):A\big\slash\big\slash(\pi^{k})\rightarrow B\big\slash\big\slash(\pi^{k})$ is strongly flat in the sense of \cite{toen2008homotopical}*{Definition 2.2.2.3}. Thus it is a derived faithfully flat map. By \cite{mathew2016galois}*{Proposition 3.31} it is descendable, and in fact is descendable of index $2$. Thus $A\rightarrow B$ is adically descendable of index $2$. Since $A$ is adically complete, $f$ is descendable.

    Conversely suppose that $f:A\rightarrow B$ gives rise to a cover in $\tau_{ad}$. Then each $f\big\slash\big\slash(\pi^{k})$ is a cover in $\tau$. Moreover, each $f\big\slash\big\slash(\pi^{k})$ is derived strong, by the torsion-free assumptions on $\pi_{*}(A)$ and $\pi_{*}(B)$. Thus $f$ is derived strong, and also by base-changing along $A\big\slash\big\slash(\pi^{k})\rightarrow\pi_{0}(A\big\slash\big\slash(\pi^{k}))\cong\pi_{0}(A)\big\slash(\pi^{k})$ gives that $\pi_{0}(A)\big\slash(\pi^{k})\rightarrow \pi_{0}(B)\big\slash(\pi^{k})$ is a cover in $\tau$, as required. 
\end{proof}

Denote by \(\bA_R^{form}\) the full subcategory of \(\mathbf{Aff}_{\bD(R)}\) consisting of derived admissible adic rings. Note that since the inclusion
$$\mathbf{D}_{\ge0}(\sf{Ban}_{R}^{\le1})^{o}\rightarrow\mathbf{D}_{\ge0}(R)$$
is strongly monoidal (and fully faithful), it sends descendable maps to descendable maps, so that  we may indeed view \(\mathbf{A}_R^{form}\) as a full subcategory of \(\mathbf{Aff}_R\) without changing the adic \(\tau\)-topology. The main reason to embed derived admissible algebras into bornological algebras is to be able to view these as \emph{nuclear} bornological algebras in a sense that will be made precise in the next section.  By Proposition \ref{prop:strongadc} it is easy to see that \((\mathbf{Aff}_{\bD(R)}, \tau_{ad}^{rig-et},\mathbf{P}^{sm}, \bA_R^{form})\) is a relative geometry tuple. 

\subsection{Faithfully flat descent}

In this section we prove some faithfully flat descent results in the formal, and rigid, settings. In the condensed setting, similar results along these lines have been obtained in \cite{mikami2023fppf}, \cite{mann2022p}, and \cite{anschutz2024descent}.
.
\begin{definition}
    A map $f:A\rightarrow B$ of bornological algebras is said to be \textit{algebraically flat} if the map of underlying algebras is flat.
\end{definition}

\begin{lemma}\cite{de1996etale}*{Observation 3.1.1}
Let $A\rightarrow B$ be a discrete standard \'{e}tale map of affinoids. Then it is algebraically flat. 
\end{lemma}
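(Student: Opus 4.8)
The statement we must prove is that a discrete standard \'etale map $A\rightarrow B$ of affinoids is algebraically flat, the cited input being \cite{de1996etale}*{Observation 3.1.1}. Since the paper invokes this as a \textit{Lemma} with a one-line reference, the intended proof is a short unwinding of the definition of a standard \'etale map in the affinoid setting together with a reduction to the classical commutative-algebra fact that standard \'etale ring maps are flat.

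The plan is as follows. First I would recall that, by Definition \ref{def:T-standard-et} specialised to $\mathrm{T}(\lambda_1,\ldots,\lambda_n)=K\langle r_1^{-1}x_1,\ldots,r_n^{-1}x_n\rangle$ and $A$ discrete, a discrete standard \'etale map has the form
$$A\longrightarrow A\hat{\otimes}^{\mathbb{L}}K\langle r_1^{-1}x_1,\ldots,r_n^{-1}x_n\rangle\big\slash\big\slash(f_1,\ldots,f_n)$$
where the Jacobian $J(f_1,\ldots,f_n)$ is a unit in $\pi_0(B)$. Because $A$ is discrete and $K\langle -\rangle$ is strongly flat over $\mathrm{Sym}(K)$ (as noted in the affinoid example in Subsection \ref{subsubsec:Tansub}), the derived tensor product and derived quotient agree with their underived versions; hence $B\cong A\langle r_1^{-1}x_1,\ldots,r_n^{-1}x_n\rangle/(f_1,\ldots,f_n)$ as an ordinary affinoid algebra, and by Proposition \ref{prop:Tstandard} the map is formally \'etale, i.e.\ $\mathbb{L}_{B/A}\cong 0$, so in particular $\Omega^1_{B/A}=0$ and the Jacobian condition holds. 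Next I would pass to the underlying rings: the key point is that $\underline{A}\langle r_1^{-1}x_1,\ldots,r_n^{-1}x_n\rangle$ is flat over $\underline{A}$ (indeed faithfully flat — this is standard for Tate algebras over an affinoid, e.g.\ \cite{de1996etale} or Berkovich/BGR), so it suffices to show $\underline{A}\langle r^{-1}x\rangle\to\underline{B}$ is flat. This is now a purely algebraic statement: a quotient of a Tate algebra by $n$ equations whose Jacobian determinant is a unit is standard \'etale over the Tate algebra in the classical sense, and standard \'etale ring maps are flat (this is the content of \cite{de1996etale}*{Observation 3.1.1}, itself a routine application of the Jacobian criterion and the implicit function theorem for complete Noetherian rings).

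The only genuine subtlety — and the step I expect to require the most care in a full write-up — is the reduction from \emph{derived} standard \'etale to the \emph{underived} picture, i.e.\ checking that when $A$ is a discrete affinoid the object $B$ really is discrete and that the boundedness/topological structure does not obstruct flatness of the underlying rings (one must know that finitely generated ideals in affinoid algebras are closed, so that the algebraic quotient coincides with the bornological one, which is true by the affinoid Noetherianity). Once that is in place, the flatness claim is immediate from the classical case. I would therefore structure the proof as: (1) reduce to $B$ discrete using strong flatness of $K\langle -\rangle$ and discreteness of $A$; (2) reduce to flatness of $\underline{A}\langle r^{-1}x\rangle\to\underline{B}$ using flatness of $\underline{A}\to\underline{A}\langle r^{-1}x\rangle$; (3) invoke \cite{de1996etale}*{Observation 3.1.1} (Jacobian criterion $\Rightarrow$ standard \'etale $\Rightarrow$ flat) for the last map. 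Steps (1) and (2) are bookkeeping; step (3) is the cited black box.
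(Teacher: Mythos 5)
The paper offers no argument for this lemma beyond the citation to de Jong, so the only question is whether your reconstruction is sound — and it is not, because of step (2). You factor $A\to B$ as $A\to A\langle r_1^{-1}x_1,\ldots,r_n^{-1}x_n\rangle\to B$ and claim it "suffices to show $\underline{A}\langle r^{-1}x\rangle\to\underline{B}$ is flat." But that second map is the surjection onto the quotient by the ideal $(f_1,\ldots,f_n)$; a quotient $R\to R/I$ by a finitely generated ideal is flat only when $I$ is generated by an idempotent, so this map is essentially never flat, and it is certainly not "standard \'etale over the Tate algebra" — the Jacobian condition makes $B$ \'etale over $A$, not over $A\langle r^{-1}x\rangle$, over which it is a closed immersion. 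Flatness of the composite $A\to B$ therefore cannot be obtained by composing flat maps in this factorisation; the Jacobian hypothesis has to enter as a statement about the map $A\to B$ itself.

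The correct route (and presumably the content of de Jong's Observation 3.1.1) works at the level of local rings: for maximal ideals $\mathfrak{n}\subset B$ over $\mathfrak{m}\subset A$, the completed local map $\widehat{A}_{\mathfrak{m}}\to\widehat{B}_{\mathfrak{n}}$ is, by the formal implicit function theorem applied to $(f_1,\ldots,f_n)$ with invertible Jacobian, finite \'etale — in particular flat — and flatness then descends to $A_{\mathfrak{m}}\to B_{\mathfrak{n}}$ and hence to $A\to B$ since affinoids are Noetherian and flatness is local on maximal ideals. (Equivalently one can argue that $(f_1,\ldots,f_n)$ is a Koszul-regular sequence relative to $A$ and apply the local criterion of flatness, as in the proof that standard smooth ring maps are flat.) Your steps (1) — the reduction from the derived to the underived quotient, using strong flatness of the Tate algebra and closedness of finitely generated ideals — and the observation that $A\to A\langle r^{-1}x\rangle$ is flat are both fine, but the latter is not what the proof hinges on; you should replace the factorisation argument with the local/completed-local-ring argument.
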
 

\begin{lemma}
    Let $f:A\rightarrow B$ be an algebraically faithfully flat map of discrete affinoids over a non-trivially valued Banach field $K$. Then 
    $A\rightarrow B$ satisfies descent for $\mathbf{QCoh}$.
\end{lemma}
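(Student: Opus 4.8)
The plan is to reduce the statement to the descendability of $A \to B$, since Proposition \ref{prop:descendability-descent} then yields the equivalence $\bD(A) \simeq \varprojlim_{[n] \in \Delta} \bD(B^{\hat{\otimes}_A n+1})$, which is precisely descent for $\mathbf{QCoh}$. To obtain descendability I would show that $A \to B$ is \emph{derived faithfully flat}, i.e. that $B \hat{\otimes}_A^{\mathbb{L}}(-) \colon \bD(A) \to \bD(B)$ is $t$-exact and conservative, and then invoke \cite{mathew2016galois}*{Proposition 3.31}, according to which a faithfully flat map of connective commutative algebras in a presentably symmetric monoidal stable $\infty$-category is descendable (of index $2$).

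For $t$-exactness I must check that $B$ is flat as an object of $\bD(A)$. Since $B \hat{\otimes}_A^{\mathbb{L}}(-)$ commutes with colimits and sends $A$ to the discrete module $B$, it preserves connective objects; the real content is that it preserves coconnective ones, equivalently that $B \hat{\otimes}_A^{\mathbb{L}} N$ is discrete for $N$ in the heart. Here I would use that $A$ and $B$ are Noetherian affinoid $K$-algebras: a finitely presented discrete $A$-module $M$ admits a resolution $P_\bullet \to M$ by finite free $A$-modules which is strictly exact as a complex of affinoid modules, so $B \hat{\otimes}_A^{\mathbb{L}} M$ is computed by $B \otimes_A P_\bullet$ (completed and algebraic tensor agree for finite free modules), and this is strictly exact onto $B \otimes_A M$ by the algebraic flatness of $A \to B$; hence $B \hat{\otimes}_A^{\mathbb{L}} M \simeq B \otimes_A M$ is discrete. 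The passage to arbitrary discrete $N$ rests on the fact that for affinoid algebras the algebraic flatness of $A \to B$ forces the completed tensor product $B \hat{\otimes}_A(-)$ to be exact on all of $\mathsf{CBorn}_A$ (the affinoid flatness statement, see \cite{ben2017non}; compare also the role of algebraic flatness in \cite{de1996etale}*{Observation 3.1.1}).

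Conservativity I would argue essentially as in the algebraic case. If $N \in \bD(A)$ has $B \hat{\otimes}_A^{\mathbb{L}} N \simeq 0$, then $t$-exactness reduces us to $N$ discrete; if $N \ne 0$ pick $0 \ne x \in N$ and let $I = \mathrm{Ann}_A(x)$, which is finitely generated since $A$ is Noetherian, so the cyclic submodule $Ax \cong A/I$ is again a discrete affinoid algebra and embeds into $N$. By exactness of $B \hat{\otimes}_A(-)$ the quotient $B \hat{\otimes}_A(A/I) = B/IB$ embeds into $B \hat{\otimes}_A N = 0$, forcing $IB = B$; but $I \subseteq \mathfrak{m}$ for some maximal ideal of $A$ and faithful flatness of $A \to B$ gives $\mathfrak{m}B \ne B$, a contradiction. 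Thus $A \to B$ is derived faithfully flat, hence descendable, and the claim follows.

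The main obstacle is the second paragraph: lifting algebraic flatness to flatness inside $\bD(A)$, since $\bD(A)$ contains complete bornological modules that are far from being filtered colimits of finitely generated ones, and the completed and algebraic tensor products can diverge outside the finitely presented range, so one genuinely needs the finite presentation and Noetherianity of affinoid algebras (or the affinoid flatness results of the cited references). An alternative and more uniform route — which also fits the adic machinery developed above — is to choose an adically faithfully flat formal model $\tilde{A} \to \tilde{B}$ of $A \to B$ in the sense of Raynaud (\cite{bosch2014lectures}): reducing modulo $\dvgen^k$ and applying \cite{mathew2016galois}*{Proposition 3.31} shows it is adically descendable of index $2$, hence descendable because $\tilde{A}$ is adically complete, and descendability is preserved under the strongly monoidal localisation functor $\bD(\tilde{A}) \to \bD(A)$, $M \mapsto M[\tfrac{1}{\dvgen}]$; then Proposition \ref{prop:descendability-descent} applies to $A \to B$ directly.
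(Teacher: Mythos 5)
Your main route has a genuine gap at its central step. The heart of $\bD(A)$ is not the category of finitely presented algebraic $A$-modules but (the left heart of) all complete bornological $A$-modules, and the claim that algebraic flatness of $A\to B$ forces $B\hat{\otimes}_A(-)$ to be exact on all of $\mathsf{CBorn}_A$ is precisely what is not available: the completed bornological tensor product does not see algebraic flatness outside the coherent range, and general objects of the heart are not filtered colimits of finitely presented modules in any way compatible with $\hat{\otimes}$. The same problem infects your conservativity argument: the annihilator/maximal-ideal computation lives entirely at the algebraic level, whereas vanishing of $B\hat{\otimes}_A^{\mathbb{L}}N$ is a statement about the completed tensor product, and the comparison map $B\otimes_A N\to B\hat{\otimes}_A N$ need not be injective for general bornological $N$. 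So the appeal to \cite{mathew2016galois}*{Proposition 3.31} is not yet justified in this setting.

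Your alternative route is essentially the paper's argument, but it is missing one essential point: a faithfully flat formal model of $A\to B$ need not exist \emph{globally}. The flattening theorem of Bosch--L\"utkebohmert (\cite{MR1225983}*{Theorem 4.1}, see also \cite{MR4735655}*{Lemma 4.4}) only produces a flat formal model after an admissible (rational) localisation of the base. The paper therefore first chooses a rational cover $\{A\to A_i\}$ such that each $A_i\to A_i\hat{\otimes}_A B$ admits a faithfully flat formal model, concludes that each of these localised maps is adically faithfully flat and hence descendable, and then glues using the fact that $\mathbf{QCoh}$ already satisfies descent for the rational topology. Once you insert that localisation step, your transfer of descendability along the strongly monoidal functor $M\mapsto M[\tfrac{1}{\dvgen}]$ (Mathew's Corollary 3.21) is exactly right, and the argument closes.
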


\begin{proof}
    We may find a rational cover $A\rightarrow A_{i}$ such that each map $A_{i}\rightarrow A_{i}\hat{\otimes}_{A}^{\mathbb{L}}B\cong A_{i}\hat{\otimes}_{A}B$ has a faithfully flat formal model $\tilde{A}_{i}\rightarrow\tilde{B}_{i}$ (\cite{MR4735655}*{Lemma 4.4}, \cite{MR1225983}*{Theorem 4.1}). Such a map is adically faithfully flat, and hence is descendable. Since the rational topology satisfies descent, we win. 
\end{proof}

\subsubsection{\'{E}tale maps and the \'{e}tale topology in rigid geometry}

Let us now discuss the \'{e}tale topology for rigid analytic spaces.
A \(T\)-standard \'etale map as in Definition \ref{def:T-standard-et} specialises to morphisms \(A \to B\) such that \(B\) is isomorphic to an \(A\)-algebra of the form \(A \hat{\otimes}_K^L K\gen{x_1, \dotsc, x_n}//(f_1, \dotsc, f_n)\) with invertible Jacobian.

\begin{lemma}\label{lem:discetaletrans}
   Let $A\rightarrow B$ be a discrete standard \'{e}tale map of discrete affinoids. Then $B\hat{\otimes}_A^{\mathbb{L}} B\cong B\hat{\otimes}_{A}B$ is an equivalence.
\end{lemma}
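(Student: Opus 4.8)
The plan is to resolve $B$ explicitly via its presentation as a $\mathrm{T}$-standard \'{e}tale map and reduce the statement to an elementary Koszul computation that takes place, after flat base change, in ordinary commutative algebra. First I would write $B\cong A\langle x_1,\dots,x_n\rangle\big\slash\big\slash(f_1,\dots,f_n)$, the derived quotient associated to the class $\mathrm{T}(\lambda)=K\langle\lambda^{-1}x\rangle$, with Jacobian $J(f_1,\dots,f_n)$ a unit in $\pi_0=B$. Since $K\langle\lambda^{-1}x\rangle$ is strongly flat over $K$, one has $A\hat{\otimes}^{\mathbb{L}}\mathrm{T}(\lambda_1,\dots,\lambda_n)\cong A\langle x_1,\dots,x_n\rangle$, and the derived quotient $\big\slash\big\slash(f_1,\dots,f_n)$ is modelled by the Koszul complex $\mathrm{Kos}_{A\langle x\rangle}(f_1,\dots,f_n)$, whose terms are finite free $A\langle x\rangle$-modules. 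The hypothesis that $B$ is a \emph{discrete} affinoid then forces this complex to be concentrated in degree $0$: equivalently, $(f_1,\dots,f_n)$ is a Koszul-regular sequence in the Noetherian affinoid algebra $A\langle x\rangle$, and $B=A\langle x\rangle/(f_1,\dots,f_n)$.

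Next I would use that the Tate algebra $A\langle x\rangle=A\hat{\otimes}_K K\langle\lambda^{-1}x\rangle$ is flat over $A$ in the derived sense, again because $K\langle\lambda^{-1}x\rangle$ is strongly flat over $K$, so that $A\langle x\rangle\hat{\otimes}^{\mathbb{L}}_A M\cong M\hat{\otimes}_K K\langle\lambda^{-1}x\rangle$ for every $M$. In particular $B\hat{\otimes}^{\mathbb{L}}_A A\langle x\rangle\cong B\langle x\rangle\defeq B\hat{\otimes}_A A\langle x\rangle$, and since the derived quotient of an algebra by a finite list of elements commutes with base change along $A\to B$, I obtain
\[
B\hat{\otimes}^{\mathbb{L}}_A B\;\cong\;B\hat{\otimes}^{\mathbb{L}}_A\bigl(A\langle x\rangle\big\slash\big\slash(f_1,\dots,f_n)\bigr)\;\cong\;B\langle x\rangle\big\slash\big\slash(f_1,\dots,f_n),
\]
the last derived quotient being computed by $\mathrm{Kos}_{B\langle x\rangle}(f_1,\dots,f_n)$.

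The remaining step is to show this Koszul complex is concentrated in degree $0$. By the previous lemma $A\to B$ is algebraically flat, hence so is its base change $A\langle x\rangle\to B\langle x\rangle$, and therefore $\mathrm{Kos}_{B\langle x\rangle}(f)\cong\mathrm{Kos}_{A\langle x\rangle}(f)\otimes_{A\langle x\rangle}B\langle x\rangle$ has algebraic homology $H_i(\mathrm{Kos}_{A\langle x\rangle}(f))\otimes_{A\langle x\rangle}B\langle x\rangle$, which vanishes for $i>0$ by Koszul-regularity over $A\langle x\rangle$ and equals $B\langle x\rangle/(f_1,\dots,f_n)$ for $i=0$. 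Because $B\langle x\rangle$ is Noetherian, the Koszul differentials have finitely generated, hence closed, image, so the complex is strict and its homology computed in $\bD(K)$ agrees with this algebraic homology. Finally, right-exactness of the completed tensor product together with closedness of the ideal $(f_1,\dots,f_n)$ gives $B\hat{\otimes}_A B=\bigl(A\langle x\rangle/(f_i)\bigr)\hat{\otimes}_A B=B\langle x\rangle/(f_1,\dots,f_n)$, so $\pi_0(B\hat{\otimes}^{\mathbb{L}}_A B)=B\hat{\otimes}_A B$ and $\pi_i(B\hat{\otimes}^{\mathbb{L}}_A B)=0$ for $i>0$, which is the claim.

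The main obstacle I anticipate is not any single deep input but the bookkeeping required to move between $\mathsf{CBorn}_K$ and ordinary commutative algebra: one must justify carefully that the relevant Koszul complexes genuinely model the derived quotients in the bornological setting (this is exactly where strong flatness of Tate algebras over $K$ enters), that being a discrete affinoid upgrades to Koszul-regularity of $(f_1,\dots,f_n)$ in $A\langle x\rangle$, and that strictness of the Koszul differentials over the Noetherian ring $B\langle x\rangle$ lets one identify bornological homology with classical Koszul homology. Everything else is formal base-change manipulation.
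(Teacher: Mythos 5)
Your argument has a genuine gap at its first substantive step: the claim that ``the hypothesis that $B$ is a discrete affinoid forces the Koszul complex $\mathrm{Kos}_{A\langle x\rangle}(f_1,\dots,f_n)$ to be concentrated in degree $0$.'' In the setting of this lemma, $B$ is the \emph{classical} quotient $A\langle x_1,\dots,x_n\rangle/(f_1,\dots,f_n)$, i.e.\ by construction the degree-zero homology of that Koszul complex; its discreteness therefore carries no information whatsoever about the higher Koszul homology, and asserting $H_i=0$ for $i>0$ is exactly equivalent to the identification ``derived quotient $=$ classical quotient,'' which is not part of the hypothesis. Note that in the paper this identification is Lemma \ref{lem:rigid-pi_0-etale}, which is proved \emph{after} the present lemma and whose proof explicitly invokes $B\hat{\otimes}^{\mathbb{L}}_A B\cong B\hat{\otimes}_A B$; so reading the hypothesis as ``the derived quotient is discrete'' would make your argument circular with respect to the intended applications. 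To close the gap you would have to actually prove that $(f_1,\dots,f_n)$ is a (Koszul-)regular sequence in $A\langle x\rangle$. This is true, but it is the real content: one needs, say, the Jacobian criterion on each fibre $k\langle x\rangle$ over a maximal ideal of $A$ (invertibility of $J$ on $V(f)$ makes the sequence locally regular there, and higher Koszul homology is supported on $V(f)$), combined with the fibrewise criterion for regular sequences using flatness of $B$ and of $A\langle x\rangle$ over the Noetherian ring $A$. None of this appears in your write-up.

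The remainder of your argument — termwise flatness of the Koszul complex over $A$, base change to $\mathrm{Kos}_{B\langle x\rangle}(f)$, computing its algebraic homology via flatness of $A\langle x\rangle\to B\langle x\rangle$, and upgrading algebraic exactness to strict exactness using Noetherianity and closedness of finitely generated ideals in affinoids — is sound, so the route would work once the regularity claim is supplied. It is, however, quite different from the paper's proof: there one observes that the statement is local for the rational topology, reduces (via Bosch--L\"utkebohmert) to the case where $A\to B$ admits a flat formal model $\tilde{A}\to\tilde{B}$ over $\mathcal{O}_K$, and then applies Lemma \ref{lem:flattorsfree} on $\pi$-torsion-free modules to identify $\tilde{B}\hat{\otimes}^{\mathbb{L}}_{\tilde{A}}\tilde{B}$ with $\tilde{B}\hat{\otimes}_{\tilde{A}}\tilde{B}$ before inverting $\pi$. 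The formal-model route avoids any discussion of regular sequences at the cost of citing the existence of flat models; your route is more self-contained in spirit but must carry the Koszul-regularity argument that you have currently omitted.
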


\begin{proof}
   The transversality condition $B\hat{\otimes}_A^\mathbb{L} B\cong B\hat{\otimes}_{A}B$ is local for the rational topology. Indeed for a localisation $A\rightarrow A_{i}$ we have
   $$(B\hat{\otimes}_{A}^{\mathbb{L}}A_{i})\hat{\otimes}^{\mathbb{L}}_{A_{i}}(B\hat{\otimes}_{A}^{\mathbb{L}}A_{i})\cong(B\hat{\otimes}_A^\mathbb{L} B)\hat{\otimes}^{\mathbb{L}}_{A}A_{i}.$$
   We also have 
 $$(B\hat{\otimes}_{A}A_{i})\hat{\otimes}_{A_{i}}(B\hat{\otimes}_{A}^{\mathbb{L}}A_{i})\cong(B\hat{\otimes}_A B)\hat{\otimes}_{A}A_{i}\cong(B\hat{\otimes}_A B)\hat{\otimes}^{\mathbb{L}}_{A}A_{i}$$
 where in the very last step we have used transversality of affinoids to rational localisations. Thus we may assume that $A\rightarrow B$ has a flat formal model $\tilde{A}\rightarrow\tilde{B}$ (\cite{MR1225983}*{Corollary 5.10}). Now the claim follows from Lemma \ref{lem:flattorsfree}
\end{proof}

 \begin{lemma}\label{lem:rigid-pi_0-etale}
     Let $A$ be an affinoid \(K\)-algebra, and let $B\cong A \langle x_{1},\ldots,x_{n}\rangle \big\slash(f_{1},\ldots,f_{n})$ where the determinant of the Jacobian of the $f_{i}$s is a unit in $B$. Then the diagonal map $B\hat{\otimes}_{A}^{\mathbb{L}}B\rightarrow B$ is a Zariski open immersion in $\mathbf{DAlg}(\sf{CBorn}_{K})$, and hence a localisation. In particular, the map $A\rightarrow B$ is formally \'{e}tale and the map
     $$A \hat{\otimes}_K^{\mathbb{L}} K \langle x_{1},\ldots,x_{n} \rangle \big\slash\big\slash (f_{1},\ldots,f_{n})\rightarrow B$$
     is an equivalence. 
 \end{lemma}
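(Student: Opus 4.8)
The plan is to reduce the statement to a purely algebraic computation of the cotangent complex, exploiting the fact that the generating class $K\langle-\rangle$ is of homotopy polynomial type. First I would observe that, setting $C \defeq A\hat{\otimes}_K^{\mathbb{L}} K\langle x_1,\ldots,x_n\rangle$, the map $\mathrm{Sym}_A(A^{\oplus n}) \to C$ is formally \'etale (this is exactly the homotopy-epimorphism—hence formally \'etale—property of $p_{\lambda}$ from Definition \ref{def:generatingclass} together with base change along $A$), so that $\mathbb{L}_{C/A} \cong C^{\oplus n}$. Then $B = C//(f_1,\ldots,f_n)$ fits in the homotopy pushout square of Proposition \ref{prop:Tstandard}, and the fibre-cofibre sequence computed there shows that $B\otimes^{\mathbb{L}}_{C}\mathbb{L}_{C/\mathrm{Sym}_A(A^{\oplus n})}$ is the cofibre of the map $B^{\oplus n}\to B^{\oplus n}$ induced by the Jacobian $J(f_1,\ldots,f_n)$. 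By hypothesis this Jacobian is a unit in $B$, so the map is an equivalence and $\mathbb{L}_{B/A}\cong 0$; this is precisely the content of Proposition \ref{prop:Tstandard}, which gives that $A\to B$ is formally \'etale. So the ``formally \'etale'' clause and the equivalence $A\hat{\otimes}_K^{\mathbb{L}} K\langle x\rangle//(f_1,\ldots,f_n)\to B$ are immediate once one notes that $B$ is by construction this derived quotient.

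Next I would address the claim that $B\hat{\otimes}_A^{\mathbb{L}}B\to B$ is a Zariski open immersion. The strategy is to use Lemma \ref{lem:hepi-equivalent} (equivalently $B\otimes_A^{\mathbb{L}}B\cong B$) together with the criterion for formal \'etaleness in the heart: I would first reduce to the discrete case. For discrete $A$ and $B$ as in the statement, $\Omega^1_{B/A}=\pi_0(\mathbb{L}_{B/A})=0$ by the computation above, so $B\otimes_A B\to B$ is discrete formally unramified; by \cite{ben2024perspective}*{Lemma 2.6.154} this forces $B\otimes_A B\to B$ to be a Zariski localisation, hence a homotopy epimorphism. Then I invoke Lemma \ref{lem:discetaletrans}, which says precisely that for a discrete standard \'etale map of affinoids the natural map $B\hat\otimes_A^{\mathbb{L}}B\to B\hat\otimes_A B$ is an equivalence. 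Composing, $B\hat{\otimes}_A^{\mathbb{L}}B\cong B\hat{\otimes}_A B\to B$ is a Zariski open immersion, and therefore a localisation; this also re-derives formal \'etaleness of $A\to B$ via the Lemma preceding Definition \ref{def:generatingclass} (kernel of $B\otimes_A B\to B$ finitely generated, the multiplication being an equivalence), matching the argument pattern used in the ``Homotopy epimorphisms'' subsection.

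To pass from the discrete case to the general derived case I would argue as follows. The diagonal $\mu\colon B\hat\otimes_A^{\mathbb{L}}B\to B$ satisfies: $B\hat\otimes_A^{\mathbb{L}}B$ is again a derived affinoid (closure of derived affinoids under pushouts, \cite{soor2024derived}*{Lemma 2.3}), $A\to B$ is derived strong — because $\pi_0(A)\to\pi_0(B)$ is formally \'etale (hence a homotopy epimorphism once we know $\pi_0(B)\otimes_{\pi_0(A)}\pi_0(B)\cong\pi_0(B)$ from the discrete case) and because each $\pi_n(A)$ is transverse to $\pi_0(B)$ over $\pi_0(A)$ for localisations, using Lemma \ref{lem:derstrongdescendable}-type reasoning and \cite{ben2024perspective}*{Proposition 2.6.160} — and then the derived strength propagates the discrete Zariski-open-immersion statement upward: $\pi_0(B\hat\otimes_A^{\mathbb{L}}B)\cong \pi_0(B)\hat\otimes_{\pi_0(A)}\pi_0(B)$, which is a Zariski localisation of $\pi_0(B)$, and $\pi_*(B\hat\otimes_A^{\mathbb{L}}B)\cong \pi_*(B\hat\otimes_A^{\mathbb{L}}B)\otimes_{\pi_0(B)}\pi_0(\mu^{-1}(\ldots))$, so $\mu$ is a derived strong map whose $\pi_0$ is an open immersion, hence a Zariski open immersion in the derived sense.

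\textbf{Main obstacle.} The routine parts — the cotangent complex computation and the discrete transversality — are already packaged in Proposition \ref{prop:Tstandard} and Lemma \ref{lem:discetaletrans}. The real work is the derived bootstrapping: verifying carefully that $A\to B$ (and $B\hat\otimes_A^{\mathbb{L}}B\to B$) is derived strong, so that the $\pi_0$-level statement ``Zariski open immersion'' lifts to the derived level, and that the homotopy groups of $B\hat\otimes_A^{\mathbb{L}}B$ are controlled by the base change of those of $B$. I expect this transversality/strength verification — invoking the torsion-freeness hypotheses on the $\pi_n$, \cite{ben2024perspective}*{Proposition 2.3.87, Proposition 2.6.160}, and the behaviour of $\otimes$ with respect to the $K\langle-\rangle$-generating class — to be the step requiring the most care, though each ingredient is available from the cited results.
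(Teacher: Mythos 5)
Your treatment of the formal \'etaleness and the Zariski-open-immersion claims follows the paper's route almost exactly: Proposition \ref{prop:Tstandard} for the cotangent-complex computation, the vanishing of $\Omega^{1}_{B/A}$ plus \cite{ben2024perspective}*{Lemma 2.6.154} (using that ideals in the affinoid $B\hat{\otimes}_A B$ are admissible, so $I^2=I$) for the Zariski localisation, and Lemma \ref{lem:discetaletrans} for $B\hat{\otimes}_A^{\mathbb{L}}B\cong B\hat{\otimes}_A B$. That part is fine.

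There is, however, a genuine gap in how you dispose of the final assertion. You write that the equivalence $A\hat{\otimes}_K^{\mathbb{L}}K\langle x_1,\ldots,x_n\rangle//(f_1,\ldots,f_n)\to B$ is ``immediate once one notes that $B$ is by construction this derived quotient.'' It is not: in the statement $B$ is the \emph{classical} quotient $A\langle x_1,\ldots,x_n\rangle/(f_1,\ldots,f_n)$ (single slash), whereas the source is the \emph{derived} quotient (double slash). The claim that these agree — i.e.\ that the Koszul-type derived quotient is discrete — is precisely one of the things the lemma asserts, and you have assumed it. The same conflation infects your first paragraph: Proposition \ref{prop:Tstandard} gives formal \'etaleness of $A\to A\hat{\otimes}^{\mathbb{L}}K\langle x\rangle//(f)$, not of $A\to B$, so you cannot read off $\mathbb{L}_{B/A}\cong 0$ for the classical $B$ without first comparing the two. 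The paper closes this loop by showing \emph{independently} that $A\to B$ is formally \'etale (via $\Omega^1_{B/A}=0$, the Zariski-open diagonal, and the transversality $B\hat{\otimes}_A^{\mathbb{L}}B\cong B\hat{\otimes}_A B$ from Lemma \ref{lem:discetaletrans}), and then noting that both $A\to A\hat{\otimes}^{\mathbb{L}}K\langle x\rangle//(f)$ and $A\to B$ are formally \'etale with isomorphic $\pi_0$'s, so the $\pi_0$-isomorphism lifts uniquely (up to contractible choice) to an equivalence of $A$-algebras. You need this lifting step. Finally, your third paragraph on ``derived bootstrapping'' via derived strength addresses a non-issue here: the hypotheses of the lemma already place $A$ and $B$ in the heart, so there is no general derived case to reduce from (that generalisation is the content of the subsequent Corollary \ref{cor:standard-etale-characterisation}, proved separately). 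The real remaining work is exactly the comparison you skipped.
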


\begin{proof}
Consider the map $A\rightarrow A\hat{\otimes}_K^{\mathbb{L}} K \langle x_{1},\ldots,x_{n}\rangle \big\slash\big\slash (f_{1},\ldots,f_{n})$. By Proposition \ref{prop:Tstandard} this is formally \'{e}tale. In particular, the map 
$$A\rightarrow B\cong \pi_{0}(A\hat{\otimes}_K^L K \gen{x_{1},\ldots,x_{n}}\big\slash\big\slash (f_{1},\ldots,f_{n}))$$ is discrete formally \'{e}tale (i.e, $0\cong\Omega^{1}_{B\slash A}$). We have $0\cong\Omega^{1}_{B\slash A}\cong coker(I^{2}\rightarrow I)$ where $I$ is the kernel of $B\hat{\otimes}_{A}B\rightarrow B$. Now $B\hat{\otimes}_{A}B$ is affinoid, so that all ideals are admissible and we have $I^{2}=I$. The diagonal being a Zariski open immersion now follows from \cite{ben2024perspective}*{Lemma 2.6.154}. Since $B\hat{\otimes}_A^L B\cong B\hat{\otimes}_{A}B$ this implies that $A\rightarrow B$ is formally \'{e}tale. Now we have a commutative diagram
\begin{displaymath}
   \xymatrix{
    & A\ar[dl]\ar[drr] & & \\
A\hat{\otimes}^{\mathbb{L}} K \gen{x_{1},\ldots,x_{n}}\big\slash\big\slash (f_{1},\ldots,f_{n}) &&& B
}
\end{displaymath}
Both diagonal maps are formally \'{e}tale, and after applying $\pi_{0}$ to the bottom algebras, there exists an isomorphism of $A$-algebras between them. Thus it lifts uniquely, up to a contractible choice, to an equivalence over $A$
$$A\hat{\otimes}_K^{\mathbb{L}} K\gen{x_{1},\ldots,x_{n}}\big\slash\big\slash (f_{1},\ldots,f_{n})\rightarrow A$$
 \end{proof}

 \begin{corollary}\label{cor:standard-etale-characterisation}
  A map $A\rightarrow B$ of derived affinoids is standard \'{e}tale if and only if
   \begin{enumerate}
       \item 
       $A\rightarrow B$ is derived strong
       \item
        $\pi_{0}(A)\rightarrow\pi_{0}(B)$ is a discrete \'{e}tale map of affinoids.
   \end{enumerate}
 \end{corollary}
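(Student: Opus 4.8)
\textbf{Proof plan for Corollary \ref{cor:standard-etale-characterisation}.}

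The plan is to extract this as a more or less immediate consequence of the preceding lemmas, organized as two implications. For the forward direction, suppose $A \to B$ is standard \'etale, so that $B \cong A\hat{\otimes}_K^{\mathbb{L}} K\gen{x_1, \dotsc, x_n}//(f_1, \dotsc, f_n)$ with invertible Jacobian. First I would apply $\pi_0$: since $\pi_0$ is a left adjoint it commutes with the relevant colimits and pushouts, so $\pi_0(B) \cong \pi_0(A)\hat{\otimes}_K K\gen{x_1, \dotsc, x_n}/(f_1, \dotsc, f_n)$, where now the tensor product and quotient are the underived (discrete) ones, and the image of the Jacobian in $\pi_0(B)$ is still a unit. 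By Lemma \ref{lem:rigid-pi_0-etale} applied with $A$ replaced by $\pi_0(A)$, this exhibits $\pi_0(A) \to \pi_0(B)$ as a discrete \'etale map of affinoids (in the sense that its diagonal is a Zariski open immersion and the cotangent complex vanishes). For derived strength, I would invoke Lemma \ref{lem:rigid-pi_0-etale} again: it says the natural map $A\hat{\otimes}_K^{\mathbb{L}} K\gen{x}//(f) \to B$ is an equivalence where $B = \pi_0(\dotsb)$, but more usefully we know $\pi_0(A) \to \pi_0(B)$ is a homotopy epimorphism (being a Zariski localisation), and is automatically transverse to the higher homotopy groups of $A$; then \cite{ben2024perspective}*{Proposition 2.6.160 (3)} (used in the same way as in the proof of Lemma \ref{lem:Arun-top}) gives that $A \to B$ is derived strong. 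Alternatively, derived strength follows directly because $K\gen{x_1, \dotsc, x_n}$ is strongly flat, so tensoring with it is exact and preserves the presentation of homotopy groups, and quotienting by a regular-type sequence with unit Jacobian does not disturb this.

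For the converse, suppose $A \to B$ is derived strong and $\pi_0(A) \to \pi_0(B)$ is discrete standard \'etale, say $\pi_0(B) \cong \pi_0(A)\gen{x_1, \dotsc, x_n}/(\bar{f}_1, \dotsc, \bar{f}_n)$ with unit Jacobian. The strategy is to lift this presentation. By projectivity of Tate algebras (or simply because $A \to A\gen{x_1, \dotsc, x_n}$ is free), the elements $\bar f_i \in \pi_0(B)$ lift to maps $f_i \colon \mathbb{I} \to \pi_0(A)\hat{\otimes}^{\mathbb{L}} K\gen{x_1, \dotsc, x_n}$, and since the Jacobian condition is tested on $\pi_0$, Proposition \ref{prop:Tstandard} shows that $B' \defeq A\hat{\otimes}_K^{\mathbb{L}} K\gen{x_1, \dotsc, x_n}//(f_1, \dotsc, f_n)$ is a standard \'etale $A$-algebra. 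By the forward direction (already proved), $A \to B'$ is derived strong and $\pi_0(B') \cong \pi_0(A)\gen{x}/(\bar f) \cong \pi_0(B)$ as $\pi_0(A)$-algebras. Now both $A \to B$ and $A \to B'$ are derived strong with the same $\pi_0$, so \cite{ben2024perspective}*{Proposition 2.3.87} gives $\pi_*(B) \cong \pi_0(B)\hat{\otimes}^{\mathbb{L}}_{\pi_0(A)}\pi_*(A) \cong \pi_*(B')$; since both are moreover formally \'etale over $A$ (by Lemma \ref{lem:rigid-pi_0-etale} for $B'$, and for $B$ because derived strength plus formally \'etale on $\pi_0$ forces $\mathbb{L}_{B/A} \cong 0$), the isomorphism on $\pi_0$ lifts uniquely up to contractible choice to an equivalence $B' \simeq B$ over $A$. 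Hence $A \to B$ is standard \'etale.

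The main obstacle I anticipate is bookkeeping around \emph{which} flavour of ``discrete \'etale'' is meant in condition (2) and making sure the deformation-theoretic lifting argument (formally \'etale maps with a fixed equivalence on $\pi_0$ are equivalent) is applied to the correct pair of algebras with the correct ambient category --- this is exactly the argument already run twice in the excerpt (in Lemma \ref{lem:rigid-pi_0-etale} and in the proof of the ``Lemma lem:pushout'' style statements), so it should go through, but one must be careful that the uniqueness of the lift requires both source maps to be formally \'etale, not merely derived strong. A minor secondary point is checking that ``discrete \'etale map of affinoids'' is equivalent to the presentation-with-unit-Jacobian form locally; this is standard rigid-analytic \'etale theory and can be cited, but one should make explicit that we only need the \emph{standard} \'etale case, since the statement is about standard \'etale maps.
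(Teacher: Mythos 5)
Your proposal follows essentially the same route as the paper: the converse direction in particular is identical — lift the discrete presentation $\pi_0(B)\cong\pi_0(A)\gen{x_1,\dotsc,x_n}/(\bar f_1,\dotsc,\bar f_n)$ to a standard \'etale $A$-algebra $B'$, observe that both $A\to B'$ and $A\to B$ are formally \'etale (the latter via \cite{ben2024perspective}*{Proposition 2.6.160} together with transversality of discrete \'etale maps to finitely presented modules), and lift the isomorphism on $\pi_0$ uniquely up to contractible choice to an equivalence $B'\simeq B$ over $A$. The paper's own proof of the forward direction is even terser than yours (it only records that $\pi_0$ of a standard \'etale map is discrete standard \'etale), so your extra care about derived strength there is welcome.

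One genuine error in your writeup of the forward direction: you assert that $\pi_0(A)\to\pi_0(B)$ is a homotopy epimorphism ``being a Zariski localisation.'' This is false — a discrete \'etale (even standard \'etale) map of affinoids is not a Zariski localisation or homotopy epimorphism in general; Lemma \ref{lem:rigid-pi_0-etale} only says that the \emph{diagonal} $B\hat{\otimes}_A B\to B$ is a Zariski open immersion. The homotopy-epimorphism input in the proof of Lemma \ref{lem:Arun-top} is specific to rational localisations and does not transfer to \'etale maps. Fortunately your proof does not actually need this: the correct input to \cite{ben2024perspective}*{Proposition 2.6.160} here is transversality of discrete \'etale maps to finitely presented (coherent) modules, i.e.\ Lemma \ref{lem:discetaletrans}, which you also invoke via your alternative ``strong flatness'' argument. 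Strike the homotopy-epimorphism clause and the argument is sound and matches the paper.
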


 \begin{proof}
Note that for any standard $\mathsf{Afnd}_{K}$-\'{e}tale map $A\rightarrow B$ of derived bornological algebras the map $\pi_{0}(A)\rightarrow\pi_{0}(B)$ is a discrete standard $\mathsf{Afnd}_{K}$-\'{e}tale map.

For the converse, by \cite{ben2024perspective}*{Proposition 2.6.160}, and the fact that discrete \'{e}tale maps are transverse to finitely presented modules, we know that $A\rightarrow B$ is formally \'{e}tale. Write $\pi_{0}(B)\cong\pi_{0}(A)<x_{1},\ldots,x_{n}>\big\slash(f_{1},\ldots,f_{n})$. Consider the diagram
\begin{displaymath}
    \xymatrix{
& A\ar[dl]\ar[dr] & \\
A<x_{1},\ldots,x_{n}>\big\slash\big\slash (f_{1},\ldots,f_{n}) & & B
     }
\end{displaymath}
Both $A\rightarrow A<x_{1},\ldots,x_{n}>\big\slash\big\slash (f_{1},\ldots,f_{n})$ and $A\rightarrow B$ are formally \'{e}tale. Thus the isomorphism $\pi_{0}(A<x_{1},\ldots,x_{n}>\big\slash (f_{1},\ldots,f_{n}))\cong\pi_{0}(B)$ lifts (uniquely up to a contractible choice) to an equivalence $A\cong B$ by \cite{ben2024perspective}*{Corollary 2.1.36}. 
 \end{proof}

 \begin{corollary}\label{cor:diagonal-loc}
     Let $A\rightarrow B$ be an \'{e}tale map of derived affinoids. Then $B\hat{\otimes}_{A}^{\mathbb{L}}B\rightarrow B$ is a Zariski open immersion, and hence a rational localisation.
 \end{corollary}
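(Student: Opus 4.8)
The plan is to reduce to the discrete case, Lemma \ref{lem:rigid-pi_0-etale}, by a short cotangent complex computation, and then to transfer the conclusion back up to the derived level using the rigidity of formally étale algebras.

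The first step is to record that an étale map $A\to B$ of derived affinoids has vanishing relative cotangent complex, $\mathbb{L}_{B\slash A}\simeq 0$. Indeed, by definition there is a finite $\tau^{rat}$-cover $\{\mathrm{Spec}(B_{i})\to\mathrm{Spec}(B)\}$ with each composite $A\to B_{i}$ being standard étale, hence formally étale by Proposition \ref{prop:Tstandard} and Lemma \ref{lem:rational-loc-etale}, while each $B\to B_{i}$ is a rational localisation, hence a homotopy epimorphism and therefore also formally étale. The transitivity triangle for $A\to B\to B_{i}$ then gives $B_{i}\hat{\otimes}^{\mathbb{L}}_{B}\mathbb{L}_{B\slash A}\simeq 0$ for each $i$, and since a $\tau^{rat}$-cover is in particular a finite homotopy monomorphism cover, the functors $B_{i}\hat{\otimes}^{\mathbb{L}}_{B}(-)$ are jointly conservative; hence $\mathbb{L}_{B\slash A}\simeq 0$.

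Since derived affinoids are closed under pushouts, $C\defeq B\hat{\otimes}^{\mathbb{L}}_{A}B$ is again a derived affinoid, and I would next extract the consequences of $\mathbb{L}_{B\slash A}\simeq 0$ for the multiplication map $\mu\colon C\to B$. Base change and transitivity of cotangent complexes give $\mathbb{L}_{C\slash B}\simeq 0$ for both structure maps $B\to C$, then $\mathbb{L}_{C\slash A}\simeq 0$, and finally $\mathbb{L}_{B\slash C}\simeq 0$; so $\mu$ is formally étale, and it is surjective on $\pi_{0}$ because multiplication always is. On the other hand, base change along $B\to\pi_{0}(B)$ together with transitivity for $A\to\pi_{0}(A)\to\pi_{0}(B)$ and the $1$-connectivity of $\mathbb{L}_{\pi_{0}(A)\slash A}$ and $\mathbb{L}_{\pi_{0}(B)\slash B}$ force $\Omega^{1}_{\pi_{0}(B)\slash\pi_{0}(A)}=0$; that is, $\pi_{0}(A)\to\pi_{0}(B)$ is a discrete formally unramified map of affinoid $K$-algebras. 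Its diagonal is precisely $\pi_{0}(\mu)\colon\pi_{0}(C)=\pi_{0}(B)\hat{\otimes}_{\pi_{0}(A)}\pi_{0}(B)\to\pi_{0}(B)$, whose kernel is finitely generated (affinoid algebras being Noetherian), so by \cite{ben2024perspective}*{Lemma 2.6.154} it is a Zariski localisation.

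It then remains to lift this to the derived level. The idempotent cutting out $\pi_{0}(B)$ inside $\pi_{0}(C)$ lifts (uniquely up to contractible choice) to an idempotent of $C$, giving a Zariski localisation $C\to C'$ with $\pi_{0}(C')\cong\pi_{0}(B)$ as $\pi_{0}(C)$-algebras. Both $C'$ and $B$ are formally étale over $C$, so by the rigidity of formally étale algebras (\cite{ben2024perspective}*{Corollary 2.1.36}) the $\pi_{0}$-isomorphism lifts to an equivalence $C'\simeq B$ over $C$; hence $\mu$ is a Zariski open immersion, and therefore a rational localisation as in Lemma \ref{lem:rigid-pi_0-etale}. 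The only genuine work here is the cotangent complex bookkeeping of the third paragraph — in particular the connectivity estimates and the identification of $\pi_{0}(\mu)$ with the diagonal of $\pi_{0}(A)\to\pi_{0}(B)$ — which is exactly what repackages the étale hypothesis into the hypotheses of \cite{ben2024perspective}*{Lemma 2.6.154} and of the rigidity lemma; the rest is formal.
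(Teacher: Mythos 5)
Your proof is correct, but it takes a genuinely different route from the paper's. The paper disposes of Corollary \ref{cor:diagonal-loc} in one line by citing \cite{ben2024perspective}*{Lemma 2.6.164}, a general statement about diagonals of (formally) \'etale maps; you instead give a self-contained argument. Your first paragraph — deducing $\mathbb{L}_{B/A}\simeq 0$ from the covering definition of \'etale via transitivity and the joint conservativity built into a finite homotopy monomorphism cover — is a step the paper never spells out but implicitly relies on, and it is worth having on record. From there your strategy (pass to $\pi_0$, identify $\pi_0(\mu)$ with the diagonal of a discrete formally unramified map of affinoids, invoke Noetherianity to get $I=I^2$ with $I$ finitely generated and hence \cite{ben2024perspective}*{Lemma 2.6.154}, then lift the resulting idempotent up the Postnikov tower and conclude by rigidity of formally \'etale algebras via \cite{ben2024perspective}*{Corollary 2.1.36}) is exactly parallel to how the paper itself argues in Lemma \ref{lem:rigid-pi_0-etale} and Corollary \ref{cor:standard-etale-characterisation} for the standard \'etale case, so you are in effect reproving the cited Lemma 2.6.164 in the affinoid setting. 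What your version buys is transparency: it makes explicit where the finiteness of ideals in affinoid algebras enters and why the statement descends to, and lifts from, $\pi_0$. What the paper's version buys is brevity and generality, since the cited lemma presumably applies to any generating class for which ideals are closed and finitely generated (cf.\ the remark the authors make after Corollary \ref{cor:standard-etale-characterisation} about dagger affinoids). No gaps.
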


 \begin{proof}
This follows immediately from \cite{ben2024perspective}*{ Lemma 2.6.164}. 
 \end{proof}

\comment{
\begin{remark}\label{rem:diagonal-loc}
The lemma and two corollaries above works for any generating class of bornological algebras for which all ideals are closed and finitely generated. In particular this is true for dagger affinoids.
\end{remark}
}
\comment{\edit{There's a second proof.}

\begin{proof}
$\mathrm{Spec}(B)\rightarrow\mathrm{Spec}(B\hat{\otimes}_{A}^{\mathbb{L}}B)$ is a local homotopy monomorphism, and therefore is formally \'{e}tale. Thus $A\rightarrow B$ is formally \'{e}tale. 
\end{proof}}

\begin{definition}\label{def:etale-desc}
    Let $K$ be a non-trivially valued non-Archimedean Banach field. Let $\tau$ be the rig-\'e{tale} pre-topology on simplicial commutative bornological $\mathcal{O}_{K}$-algebras, and consider the corresponding adic topology $\tau_{ad}$. We denote by $\tau_{pre}^{rig-\'{e}t}$ the pre-topology generated by maps $\mathrm{Spec}(B)\rightarrow\mathrm{Spec}(A)$
    such that 
    \begin{enumerate}
        \item 
     $A\rightarrow B$ is $\mathrm{Tate}_K$-\'{e}tale, that is, \(T\)-standard \'etale for \(T(x) = K\gen{x}\).
        \item 
    $A\rightarrow B$ is descendable. 
    \end{enumerate}
Denote by $\tau^{rig-\'{e}t}$ the topology generated by $\tau_{pre}^{rig-\'{e}t}$. 
\end{definition}

By construction we have the following.

\begin{theorem}\label{etaledescentadic}
    $\mathbf{QCoh}$ satisfies descent for $\tau_{pre}^{rig-\'{e}t}$, and hence $\tau^{rig-\'{e}t}$.
\end{theorem}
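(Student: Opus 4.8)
The plan is to reduce the statement to the two cases that have already been handled in the excerpt, namely descent for rational localisations and descent for descendable maps, and then glue them along the structure of a general $\tau^{rig\text{-}\acute{e}t}$ cover. First I would recall that by construction a covering family in $\tau_{pre}^{rig\text{-}\acute{e}t}$ consists of maps $A \to B_i$ that are $\mathrm{Tate}_K$-\'etale and descendable, and by the definition of the pre-topology (together with Proposition~\ref{prop:etale-top}, which shows that descendable $\mathrm{T}$-\'etale covers form a Grothendieck pre-topology) it suffices to prove descent for a \emph{single} descendable $\mathrm{Tate}_K$-\'etale map $A \to B$; the passage from single maps to finite families is the usual one, replacing $B$ by $\prod_i B_i$ and using that a finite product of descendable maps is descendable and that $\mathbf{QCoh}$ takes finite products of algebras to products of categories.

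Given a single descendable map $f\colon A \to B$ that is $\mathrm{Tate}_K$-\'etale, I would then invoke Proposition~\ref{prop:descendability-descent} directly: since $\theta(A) \to \theta(B)$ is descendable in $\mathbf{CAlg}(\bD(R))$ (recall $\theta$ commutes with colimits, and over a $\Q$-algebra it is even an equivalence, so descendability of $A\to B$ in $\mathbf{DAlg}^{cn}$ is the same as descendability of the underlying commutative algebra map), the adjunction $B \otimes^{\mathbb{L}}_A (-) \colon \bD(A) \rightleftarrows \bD(B)$ is comonadic, and hence
\[
\mathbf{QCoh}(\mathsf{Spec}(A)) = \mathbf{Mod}_A(\bC) \xrightarrow{\ \simeq\ } \varprojlim_{[n]\in\Delta} \mathbf{Mod}_{B^{\otimes_A n+1}}(\bC)
\]
is an equivalence of symmetric monoidal $\infty$-categories. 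This is precisely the descent statement for the Cech nerve of $f$. Thus descent for $\tau_{pre}^{rig\text{-}\acute{e}t}$ is immediate; it does not even require the $\mathrm{Tate}_K$-\'etale hypothesis, which only enters when one wants to identify $\tau^{rig\text{-}\acute{e}t}$ with the classical rigid \'etale topology (that identification is the content of Lemma~\ref{lem:rigid-pi_0-etale} and Corollary~\ref{cor:standard-etale-characterisation}, not needed for descent itself).

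Finally, to pass from the pre-topology $\tau_{pre}^{rig\text{-}\acute{e}t}$ to the generated topology $\tau^{rig\text{-}\acute{e}t}$, I would note that $\mathbf{QCoh}$ is already known to be a sheaf for the $\tau^{rat}$ topology by Proposition~\ref{prop:rigid-geometry-tuple}, and that a cover in $\tau^{rig\text{-}\acute{e}t}$ is, by definition, refined by composites of $\tau_{pre}^{rig\text{-}\acute{e}t}$-covers; descent for $\mathbf{QCoh}$ is stable under refinement and composition of covers (this is the standard argument that if a presheaf satisfies descent for a generating pre-topology it satisfies descent for the generated topology, using that $\mathbf{QCoh}$ sends effective epimorphisms of stacks to limit diagrams, cf.\ Theorem~\ref{thm:adicdescendable}). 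The only mildly delicate point — and the step I expect to require the most care — is checking that descendability, which is \emph{not} a local property, still yields $\mathbf{QCoh}$-descent for covers in the \emph{generated} topology $\tau^{rig\text{-}\acute{e}t}$ rather than only $\tau_{pre}^{rig\text{-}\acute{e}t}$; this is handled exactly as in the remark following $\tau^{desc\text{-}et}$ in the excerpt, namely one tests descent locally on a $\tau^{rat}$-atlas, on which the cover becomes a genuine descendable cover, and then uses $\tau^{rat}$-descent for $\mathbf{QCoh}$ to conclude.
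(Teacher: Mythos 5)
Your proposal is correct and is essentially the paper's own argument: the paper states this theorem ``by construction,'' meaning exactly that covers in $\tau_{pre}^{rig\text{-}\acute{e}t}$ are descendable by definition, so Proposition \ref{prop:descendability-descent} gives the equivalence $\mathbf{Mod}_A(\bC)\simeq\varprojlim_{[n]\in\Delta}\mathbf{Mod}_{B^{\otimes_A n+1}}(\bC)$ directly, and the passage to the generated topology is the standard formal step (the paper flags the same subtlety about descendability not being local in its remark after Proposition \ref{prop:etale-top}). Your observation that the $\mathrm{Tate}_K$-\'etale hypothesis plays no role in the descent statement itself is also accurate.
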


To describe the \'etale topology, it is convenient to define the following intermediate pre-topology $\tau_{pre}^{rig-\'et}\subset\tau^{rig-\'{e}t}_{ref}\subset\tau^{rig-\'{e}t}$, where we say that a $\mathrm{Afnd}_{K}$-\'{e}tale map $A\rightarrow B$ is a cover if there exists a rational cover $A\rightarrow A'$ such that $A'\rightarrow A'\hat{\otimes}^{\mathbb{L}}_{A}B$ is a descendable map. 

Let $\mathcal{M}(B)\rightarrow\mathcal{M}(A)$ be an \'{e}tale cover of affinoids in the usual sense. Then rationally locally, this \'{e}tale cover has a faithfully flat formal model. Faithfully flat covers of formal models, and hence their base changes are descendable. Thus the pre-topology on affinoids consisting of descendable \'{e}tale maps generates the same topology as the usual \'{e}tale topology. We denote the usual \'{e}tale topology of discrete affinoids by $\tau^{rig-\'{e}tale}_{disc}$. 

\begin{lemma}\label{lem:classical-etale}
    Let $A\rightarrow B$ be a map of derived affinoid algebras. It corresponds to a cover in $\tau_{ref}^{rig-\'{e}t}$ if and only if 
    \begin{enumerate}
        \item 
        it is derived strong,
        \item 
        $\pi_{0}(A)\rightarrow\pi_{0}(B)$
        corresponds to a cover in $\tau^{rig-\'{e}tale}_{disc}$.
    \end{enumerate}
\end{lemma}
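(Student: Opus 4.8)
\textbf{Proof strategy for Lemma \ref{lem:classical-etale}.}

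The plan is to reduce the statement to the already-established descent and descendability lemmas for the rational topology together with the characterisation of standard \'etale maps in Corollary \ref{cor:standard-etale-characterisation}, and then to the classical theory of formal models for \'etale covers of affinoids. First I would unwind the definition of $\tau_{ref}^{rig-\'et}$: a $\mathrm{Afnd}_K$-\'etale map $A\to B$ is a cover precisely if there is a rational cover $A\to A'$ such that $A'\to A'\haotimes^{\mathbb{L}}_A B$ is descendable. Since $\mathrm{Afnd}_K$-\'etale maps are by definition built from finitely many $T$-standard \'etale maps glued along a finite $G$-$T$ cover, and $T$-standard \'etale maps satisfy the characterisation of Corollary \ref{cor:standard-etale-characterisation}, the forward direction is essentially formal: a cover in $\tau_{ref}^{rig-\'et}$ is in particular $\mathrm{Afnd}_K$-\'etale, hence a composite/gluing of maps that are derived strong (Corollary \ref{cor:standard-etale-characterisation}, using that derived strength is local for $\tau^{rat}$ and closed under composition), and whose $\pi_0$ is a composite/gluing of discrete standard \'etale maps; the descendability-after-rational-cover hypothesis, base-changed to $\pi_0$ via $A\haotimes_A^{\mathbb{L}}\pi_0(A)\cong\pi_0(A)$ (which holds because $A\to B$ is derived strong, cf. \cite{ben2024perspective}*{Proposition 2.3.87}), shows that $\pi_0(A)\to\pi_0(B)$ becomes descendable after a rational cover, i.e. is a cover in $\tau_{disc}^{rig-\'etale}$ by the discussion preceding the lemma.

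For the converse, suppose $A\to B$ is derived strong and $\pi_0(A)\to\pi_0(B)$ is a cover in $\tau_{disc}^{rig-\'etale}$. By Corollary \ref{cor:standard-etale-characterisation}, derived strength together with $\pi_0(A)\to\pi_0(B)$ being a (discrete) \'etale map of affinoids forces $A\to B$ to be $\mathrm{Afnd}_K$-\'etale — more precisely, locally on a rational cover of $\mathrm{Spec}(B)$ each composite $A\to B\to B_i$ is standard \'etale, using Lemma \ref{lem:rigid-pi_0-etale} and Corollary \ref{cor:standard-etale-characterisation} to identify $B_i$ with $A\haotimes_K^{\mathbb{L}}K\langle x\rangle//(f)$. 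It remains to check the descendability-after-rational-cover condition. Here I would invoke the classical fact (the discussion just before the lemma, resting on \cite{MR1225983} and \cite{MR4735655}) that, rationally locally, a usual \'etale cover $\mathcal{M}(\pi_0(B))\to\mathcal{M}(\pi_0(A))$ admits a faithfully flat \emph{formal} model; such a map is adically faithfully flat, hence adically descendable of index $2$, hence descendable since the base is adically complete. Pulling this back along $A'\to\pi_0(A')$ and using derived strength to lift from $\pi_0$ to the derived level (Lemma \ref{lem:derstrongdescendable}, or directly Corollary \ref{lem:derstrongconver} applied after the rational cover), the map $A'\to A'\haotimes_A^{\mathbb{L}}B$ is descendable, which is exactly the membership condition for $\tau_{ref}^{rig-\'et}$.

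The main obstacle I anticipate is the bookkeeping in the reduction from $\mathrm{Afnd}_K$-\'etale maps (which are only \emph{locally} standard \'etale, and only \emph{after} passing to a finite $G$-$T$ cover) to the derived-strong/$\pi_0$-\'etale characterisation, and ensuring that the two notions of ``cover'' — descendability after a rational refinement versus the classical \'etale topology on discrete affinoids — really do generate the same Grothendieck topology rather than merely the same class of maps. Concretely, one must check that the rational refinements needed to witness descendability on the derived level can be chosen compatibly with those witnessing the existence of faithfully flat formal models, and that derived strength is preserved under all the base changes involved; this is where I would spend the most care, though each individual step is provided by results already in the excerpt (Corollaries \ref{cor:standard-etale-characterisation}, \ref{cor:diagonal-loc}, Lemmas \ref{lem:derstrongdescendable}, \ref{lem:discetaletrans}, and Theorem \ref{etaledescentadic}).
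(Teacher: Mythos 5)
Your forward direction is essentially the paper's argument and is fine: use Corollary \ref{cor:standard-etale-characterisation} for derived strength, base-change the descendability witness along the symmetric monoidal functor $\pi_{0}(A)\haotimes_{A}^{\mathbb{L}}(-)$, and use derived strength to identify $\pi_{0}(A)\haotimes_{A}^{\mathbb{L}}B\cong\pi_{0}(B)$.

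The converse, however, has a genuine gap at the final step. You establish descendability of the \emph{discrete} map $A'_{0}\to A'_{0}\haotimes_{\pi_{0}(A)}\pi_{0}(B)$ via a faithfully flat formal model and Mann's theorem, and then propose to ``lift from $\pi_{0}$ to the derived level'' using Lemma \ref{lem:derstrongdescendable} or Corollary \ref{lem:derstrongconver}. Neither result delivers what you need. The converse half of Lemma \ref{lem:derstrongdescendable} only shows that the augmented cobar tower \emph{converges}, i.e.\ that the map satisfies effective descent; it does not show that the map is descendable in Mathew's sense (pro-constancy of the tower), and the paper explicitly warns — in the remark after Proposition \ref{prop:descendability-descent}, citing \cite{aoki2024cohomology} — that these conditions are genuinely different. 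Since membership in $\tau_{ref}^{rig-\text{\'{e}}t}$ requires actual descendability of $A'\to A'\haotimes_{A}^{\mathbb{L}}B$, convergence alone does not suffice. Corollary \ref{lem:derstrongconver} does not apply either, because it is stated for finite families of \emph{homotopy epimorphisms}, and \'{e}tale maps of affinoids are not homotopy epimorphisms in general.

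The paper closes this gap differently: it never tries to lift descendability across $A'\to\pi_{0}(A')$. Instead it builds a \emph{derived} formal model $\tilde{A}'\to\tilde{B}'$ of the derived map by Postnikov extension (choosing $\tilde{A}'$ with $\pi_{0}(\tilde{A}')\cong\tilde{A}'_{0}$ and extending $\tilde{A}'_{0}\to\tilde{B}'_{0}$ to a derived strong map of derived admissible adic rings), observes that each reduction $\tilde{A}'/\!/(\pi^{k})\to\tilde{B}'/\!/(\pi^{k})$ is a derived faithfully flat map — hence descendable of index $2$ by \cite{mathew2016galois}*{Proposition 3.31} — and then applies Mann's theorem at the derived adic level to get genuine descendability of $\tilde{A}'\to\tilde{B}'$. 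Descendability then transfers to the generic fibre because $(-)[\frac{1}{\pi}]$ is symmetric monoidal, and the generic fibre $\tilde{B}'[\frac{1}{\pi}]$ is identified with $A'\haotimes_{A}^{\mathbb{L}}B$ by the uniqueness of formally \'{e}tale lifts of a $\pi_{0}$-isomorphism — a step your sketch also omits. If you replace your ``lift from $\pi_{0}$'' step with this derived-formal-model argument, the proof goes through.
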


\begin{proof}
Suppose $f:A\rightarrow B$ is a cover in $\tau_{ref}^{rig-\'{e}t}$. By Corollary \ref{cor:standard-etale-characterisation} $f$ is derived strong. Let $A\rightarrow A'$ be a rational cover. In particular it is also derived strong. Finally by transversality of finitely presented modules to \'{e}tale maps, the map $A'\rightarrow A'\hat{\otimes}_{A}^{\mathbb{L}}B$ is also derived strong. By assumption it is descendable. Now taking pushouts along the map $A\rightarrow\pi_{0}(A)$ we get a commutative diagram
\begin{displaymath}
    \xymatrix{
\pi_{0}(A)\ar[d]\ar[r]^{\pi_{0}(f)}& \pi_{0}(B)\ar[d]\\
\pi_{0}(A')\ar[r] & \pi_{0}(A')\hat{\otimes}^{\mathbb{L}}_{\pi_{0}(A)}\pi_{0}(B)\cong\pi_{0}(A')\hat{\otimes}_{\pi_{0}(A)}\pi_{0}(B).
    }
\end{displaymath}
The left-hand vertical map is a rational cover, and the bottom horizontal map is descendable and \'{e}tale.

Conversely suppose $f:A\rightarrow B$ is derived strong and $\pi_{0}(A)\rightarrow\pi_{0}(B)$ corresponds to an \'{e}tale cover of discrete affinoids. In particular this is an \'{e}tale, and hence faithfully flat, cover in the discrete sense. We can find a rational cover $\pi_{0}(A)\rightarrow A'_{0}$, and an adically faithfully flat and formally rig-\'{e}tale formal model $\tilde{A}'_{0}\rightarrow \tilde{B}'_{0}$ of the map $A'_{0}\rightarrow\ A'_{0}\hat{\otimes}^{L}_{\pi_{0}(A)}\pi_{0}(B)$. By Postnikov extension, we extend $\pi_{0}(A)\rightarrow A'_{0}$ to a derived strong rational cover $A\rightarrow A'$ as follows. Let $d^{A}_{n}:A_{\le n}\rightarrow\pi_{n+1}(A)[n+2]$ be the canonical derivation. We then just iteratively construct $A'_{\le n+1}=A'_{\le n}\oplus_{A'_{\le n}\hat{\otimes}^{\mathbb{L}}_{A_{\le n}}d^{A}_{n}}(A'_{\le n}\hat{\otimes}^{\mathbb{L}}_{A_{\le n}}\pi_{n+1}(A)[n+2]).$
By construction we have that $A_{\le n+1}\rightarrow A'_{\le n+1}$ is derived strong. Taking limits of the Postnikov towers we get a derived strong map $A\rightarrow A'$ such that $\pi_{0}(A)\rightarrow\pi_{0}(A')$ corresponds to a rational localisation cover. Thus $A\rightarrow A'$ corresponds to a rational localisation cover. Now pick a formal model $\tilde{A}'$ of $A'$ such that $\pi_{0}(\tilde{A}')\cong\tilde{A}'_{0}$. By a similar process we extend $\tilde{A}'_{0}\rightarrow\tilde{B}'_{0}$ to a derived strong map $\tilde{A}'\rightarrow\tilde{B}'$ of derived admissible adic $R$-algebras. Note then each $\tilde{A}'\big\slash\big\slash(\pi^{k})\rightarrow\tilde{B}'\big\slash\big\slash(\pi^{k})$ is also derived strong. Moreover $\pi_{0}(\tilde{A}'\big\slash\big\slash(\pi^{k}))\rightarrow\pi_{0}(\tilde{B}'\big\slash\big\slash(\pi^{k}))$ is a faithfully flat map of finitely presented algebras. Thus $\tilde{A}'\big\slash\big\slash(\pi^{k})\rightarrow\tilde{B}'\big\slash\big\slash(\pi^{k})$ is a faithfully flat map of derived algebras. In particular it is descendable. Thus $\tilde{A}'\rightarrow\tilde{B}'$ is in fact a cover in the adically faithfully flat topology, and so is descendable. By Lemma \ref{lemma:strongrigetale} it is further formally rig-\'{e}tale. 

Finally, we claim that $\tilde{A}'\rightarrow\tilde{B}'$ is a formal model for $A'\rightarrow A'\hat{\otimes}^{\mathbb{L}}_{A}B$. Now since tensoring with $R[\frac{1}{\pi}]$ preserves homology of $\pi$-adic objects, $\tilde{A}'[\frac{1}{\pi}]\rightarrow\tilde{B}'[\frac{1}{\pi}]$ is also derived strong, and $\pi_{0}(\tilde{A}'[\frac{1}{\pi}]\rightarrow\tilde{B}'[\frac{1}{\pi}])\cong(\pi_{0}(A)\rightarrow\pi_{0}(B))$ is formally \'{e}tale. Hence  $\tilde{A}'[\frac{1}{\pi}]\rightarrow\tilde{B}'[\frac{1}{\pi}]$ is formally \'{e}tale. Consider the diagram
\begin{displaymath}
    \xymatrix{
    & A'\ar[dl]\ar[dr] &\\
    \tilde{B}'[\frac{1}{\pi}]& & A'\hat{\otimes}^{\mathbb{L}}_{A}B.
    }
\end{displaymath}
Both legs are formally \'{e}tale maps. The isomorphism $\pi_{0}(\tilde{B}'[\frac{1}{\pi}])\cong\pi_{0}(A'\hat{\otimes}_{A}^{\mathbb{L}}B)$ then lifts, uniquely up to a contractible choice, to an isomorphism of $A'$-algebras $\tilde{B}'[\frac{1}{\pi}]\rightarrow A'\hat{\otimes}^{\mathbb{L}}_{A}B$. All this work is to say that $A\rightarrow B$ has a rational refinement to a descendable \'{e}tale cover. Hence it is a cover in $\tau_{ref}^{rig-\'{e}t}$.
\end{proof}

\comment{
\begin{lemma}\label{lem:faithfullyflathuber}
    Let $f:A\rightarrow B$ be a morphism of discrete affinoid algebras.
    \begin{enumerate}
        \item 
    $\mathrm{Spa}(f):\mathrm{Spa}(B,B^{0})\rightarrow\mathrm{Spa}(A,A^{0})$ is flat if and only if $A\rightarrow B$ is algebraically flat.
    \item 
    If $\mathrm{Spa}(f):\mathrm{Spa}(B,B^{0})\rightarrow\mathrm{Spa}(A,A^{0})$ is flat and surjective then $A\rightarrow B$ is algebraically faithfully flat.
    \item 
    If $A\rightarrow B$ is \'{e}tale and algebraically faithfully flat then $\mathrm{Spa}(f):\mathrm{Spa}(B,B^{0})\rightarrow\mathrm{Spa}(A,A^{0})$ is surjective.
    \end{enumerate}
\end{lemma}

\begin{proof}
\begin{enumerate}
    \item 
Since $A$ and $B$ are topologically finite type $K$-algebras, by\cite{zavyalov2024quotients}*{Lemma B.4.3 and B.4.6}, flatness of $A\rightarrow B$ coincides precisely with flatness of $\mathrm{Spa}(B,B^{0})\rightarrow\mathrm{Spa}(A,A^{0})$.
\item 
 If $\mathrm{Spa}(B,B^{0})\rightarrow\mathrm{Spa}(A,A^{0})$ is surjective then by \cite{zavyalov2024some}*{Lemma 7.2}, $f$ is faithfully flat.
Suppose $A\rightarrow B$ is \'{e}tale and faithfully flat. Let $(k,k^{+})$ be an affinoid field, and let $\mathrm{Spa}(k,k^{+})\rightarrow\mathrm{Spa}(A,A^{0})$ be a morphism. This corresponds precisely to a morphism $g:A\rightarrow k$ such $g(A^{0})\subset k^{+}$. Consider $\tilde{g}:B\rightarrow B\hat{\otimes}_{A}k$. Writing 
 $$B\cong A<x_{1},\ldots,x_{m}>\big\slash(f_{1},\ldots,f_{n}),$$ where we may assume that the $f_{i}$ are power bounded, we have $B\hat{\otimes}_{A}k\cong k<x_{1},\ldots,x_{m}>\big\slash(f_{1},\ldots,f_{n})$. We equip this with the ring of definition 
 $$B\hat{\otimes}_{A}k^{+}\defeq k^{+}<x_{1},\ldots,x_{m}>\big\slash(f_{1},\ldots,f_{n}).$$

 Now it suffices to prove that $\mathrm{Spa}(B\hat{\otimes}_{A}k,B\hat{\otimes}_{A}k^{+})\rightarrow\mathrm{Spa}(k,k^{+})$ is surjective. Hence we may assume we are working with a morphism of Huber pairs $(A,A^{0})\rightarrow (k,k^{+})$ where $(k,k^{+})$ is an affinoid field. An element of $\mathrm{Spa}(k,k^{+})$ corresponds to a valuation subring $k^{+}\subset R\subset k^{\circ}$. The associated value group is $\Gamma_{R}=k^{*}\big\slash R^{*}$ with order $a R^{*}\le bR^{*}$ if $ab^{-1}$. The valuation is $|a|_{R}=aR^{*}$ if $a\neq0$, and $|0|_{R}=0$. Since $k\rightarrow A$ is \'{e}tale, $A$ is isomorphic to a finite product of (separable) extensions of $k$. Now we may extend the valuation on $k$ to one on each of the field extensions by \cite{adicnotes}*{Proposition I.1.2.3}, and this gives the required lift. 
 \qedhere
 
 
\end{enumerate}

\end{proof}
}

\begin{theorem}\label{thm:refetalecoverequivalent}
  Let $K$ be a non-trivially valued Banach field, and let $f:A\rightarrow B$ be a morphism between topologically finitely presented $K$-algebras.
   \begin{enumerate}
       \item The following are equivalent.
    \begin{enumerate}
        \item 
        The set $\{\mathrm{Spec}(f):\mathrm{Spec}(B)\rightarrow\mathrm{Spec}(A)\}$ is an algebraically faithfully flat cover.
        \item 
        The map $\mathcal{M}(f):\mathcal{M}(B)\rightarrow\mathcal{M}(A)$ is a flat surjection.
    \end{enumerate}
    \item
The following are also equivalent.
      \begin{enumerate}
        \item 
        The set $\{\mathrm{Spec}(f):\mathrm{Spec}(B)\rightarrow\mathrm{Spec}(A)\}$ is an \'{e}tale cover.
        \item 
        The map $\mathcal{M}(f):\mathcal{M}(B)\rightarrow\mathcal{M}(A)$ is an \'{e}tale surjection.
            \comment{            \item 
    The map $\mathrm{Spa}(f):\mathrm{Spa}(B,B^{0})\rightarrow\mathrm{Spa}(A,A^{0})$ is an \'{e}tale surjection.}
    \end{enumerate}
    \end{enumerate}
\end{theorem}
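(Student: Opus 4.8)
The plan is to reduce everything to the corresponding statements on the underlying discrete algebras, where the results are classical, using the fact that topologically finitely presented $K$-algebras are discrete (so there are no higher homotopy groups to worry about) and that $\mathcal{M}(-)$ is functorial. First I would recall that for a map $f \colon A \to B$ of discrete topologically finitely presented $K$-algebras, the Berkovich spectrum $\mathcal{M}(A) = |\mathrm{Spec}(A)|_{Ber}$ agrees with the classical Berkovich spectrum via the identification of $\mathbf{An}(X)$ with open subsets in the analytic topology (this is the content of the examples in Section \ref{subsubsec:Tansub}, together with \cite{MR3626003}*{Theorem 5.39}). So the task is to match three notions: algebraic faithful flatness of $\mathrm{Spec}(f)$ in the scheme-theoretic sense over $K$; flatness plus surjectivity of $\mathcal{M}(f)$; and (in the second part) the \'etale analogue. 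For part (1), the equivalence $(a) \Leftrightarrow (b)$ should follow from a local computation: flatness of $A \to B$ is an algebraic condition that can be checked rationally-locally, and by the classical theory of formal models (\cite{MR1225983}, \cite{MR4735655}) an algebraically flat map of affinoids has, rationally locally on the source, a flat formal model; conversely such a formal model witnesses flatness. Surjectivity on $\mathcal{M}$ translates to the statement that every bounded multiplicative seminorm on $A$ (equivalently, every point of $\mathcal{M}(A)$) lifts, and this is exactly the condition that for every affinoid field $(k,k^+)$ receiving a map from $A$, the base change $B\hat{\otimes}_A k$ is nonzero, i.e. faithful flatness.

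Next I would handle part (2). The implication $(a) \Rightarrow (b)$ is the easier direction: if $A \to B$ is an \'etale cover in the sense of $\tau^{rig\text{-}\acute et}_{ref}$, then by Lemma \ref{lem:classical-etale} it is derived strong and $\pi_0(A) \to \pi_0(B) = A \to B$ is a classical \'etale cover of discrete affinoids; classically such a cover is flat (\cite{de1996etale}*{Observation 3.1.1}) and surjective on Berkovich spectra by general nonsense about \'etale morphisms in rigid geometry, and flatness/surjectivity of $\mathcal{M}(f)$ follows. For the converse $(b) \Rightarrow (a)$: if $\mathcal{M}(f)$ is an \'etale surjection, then in particular $A \to B$ is \'etale (being \'etale on Berkovich spectra is a local-on-target condition equivalent to the algebraic condition on standard \'etale charts, via Corollary \ref{cor:standard-etale-characterisation}), and surjectivity of $\mathcal{M}(f)$ combined with flatness gives, by part (1), that $A \to B$ is algebraically faithfully flat. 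We then invoke Lemma \ref{lem:classical-etale}: an \'etale, algebraically faithfully flat map of discrete affinoids refines rationally-locally to a descendable \'etale cover, hence is a cover in $\tau^{rig\text{-}\acute et}_{ref}$, and since these are topologically finitely presented (hence discrete) the derived strength condition is automatic. This closes the loop.

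The main obstacle I anticipate is the careful bookkeeping in the surjectivity $\Leftrightarrow$ faithful flatness equivalence, specifically the passage through affinoid fields: one needs that a point of $\mathcal{M}(A)$ not in the image of $\mathcal{M}(f)$ produces a residue affinoid field $(k,k^+)$ over which $B\hat{\otimes}_A k = 0$, and conversely that faithful flatness forces every such base change to be nonzero and then that the valuation on $k$ extends along the (finite product of separable field extensions) $B\hat{\otimes}_A k$. This last extension-of-valuations step is where one uses that $\mathcal{M}$ sees all of Berkovich space and not just the maximal spectrum; it is routine given standard references on valuation rings in adic/rigid geometry, but it is the technical heart of the argument. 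Everything else is a matter of assembling the lemmas already proved: Lemma \ref{lem:discetaletrans}, Corollary \ref{cor:standard-etale-characterisation}, Lemma \ref{lem:classical-etale}, and the formal-model results cited in the section on faithfully flat descent.
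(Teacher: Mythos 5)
Your proposal is correct and follows essentially the same route as the paper: both reduce surjectivity of $\mathcal{M}(f)$ to the nonvanishing of the fibre $B\hat{\otimes}_A\overline{\kappa}(x)$ over each point $x$ (viewed as a bounded character into a Banach valued field), use that a nonzero Banach algebra has nonempty Berkovich spectrum to produce a preimage, and dispose of the converse by noting that flatness and \'etaleness are local properties while surjectivity gives faithfulness. The only divergence is cosmetic: the paper does not need your formal-model detour in part (1) (conservativity of $\mathrm{Spec}(f)^*$ from the descent property does the work directly), and the extension-of-valuations step you flag as the technical heart is subsumed by the nonemptiness of the Berkovich spectrum of the nonzero fibre.
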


\begin{proof}
\begin{enumerate}
    \item 
 Clearly if $\{\mathrm{Spec}(f):\mathrm{Spec}(B)\rightarrow\mathrm{Spec}(A)\}$ is a faithfully flat cover then $f$ is flat. Since $\mathrm{Spec}(f)$ satisfies descent, the functor $\mathrm{Spec}(f)^{*}$ is conservative. Now a point $x\in\mathcal{M}(A)$ corresponds to a bounded map $A\rightarrow\overline{\kappa}(x)$ where $\overline{\kappa}(x)$ is a Banach valued field. Again we have that $B\hat{\otimes}_{A}\overline{\kappa}(x)$ is non-zero. Thus its Berkovich spectrum is non-empty, so there is some $y\in\mathcal{M}(B\hat{\otimes}_{A}\overline{\kappa}(x))$ mapping to $x$. The image of this $y$ in $\mathcal{M}(B)$ maps to $x$. 

Conversely, suppose $\mathcal{M}(f):\mathcal{M}(B)\rightarrow\mathcal{M}(A)$ is a flat surjection. Since flatness is a local property, this implies that $f:A\rightarrow B$ is flat. Surjectivity clearly implies faithfulness.
\item 
This is similar to the first part, using that \'{e}taleness is a local property.
\comment{
That (a)$\Leftrightarrow$(b) clearly follows similarly to (1). That (a)$\Leftrightarrow$(b) follows immediately from Lemma \ref{lem:faithfullyflathuber}.}\qedhere
\end{enumerate}
\end{proof}

\begin{definition}\label{def:rigid-stack}
A \emph{rigid analytic stack over} $K$ is a geometric stack relative to the strong relative geometry tuple \((\mathbf{Aff}_{\bD_{\geq 0}(K)}, \tau_{\bA}^{\'{e}t}, \mathbf{P}^{sm}, \bA_K)\). 
\end{definition}

\subsubsection{Coherent hyperdescent}

For $A$ a derived affinoid algebra, let $\mathbf{Coh}(A)$ denote the category of $A$-modules such that each $\pi_{m}(A)$ is a finitely presented $\pi_{0}(A)$-module. In \cite{ben2024perspective}*{Corollary 2.3.109} it was shown that for $A\rightarrow B$ a map of derived affinoids, and $M\in\mathbf{Coh}_{+}(A)$  $n$-connective for some $n$, then $B\otimes_{A}^{\mathbb{L}}M\in\mathbf{Coh}_{+}(A)$.

\begin{lemma}
    The assignment
    $$\mathbf{Coh}_{+} \colon \mathbf{A}_{K}^{op}\rightarrow\mathbf{Cat},$$
    $$\mathrm{Spec}(A)\mapsto\mathbf{Coh}_{+}(A)$$
    satisfies hyperdescent for the \'{e}tale topology.
\end{lemma}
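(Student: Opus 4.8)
The plan is to reduce the statement to two ingredients: \v{C}ech descent for $\mathbf{Coh}_+$ along $\tau^{rig-\'et}$-covers (which is essentially the content available from \cite{ben2024perspective} combined with the descendability of such covers), and a finite Tor-dimension estimate allowing us to promote \v{C}ech descent to hyperdescent via Theorem \ref{thm:hyperdescent} (the Mann-style argument). More precisely, I would first observe that by Lemma \ref{lem:classical-etale}, a $\tau^{rig-\'et}_{ref}$-cover $\{\mathrm{Spec}(B_i) \to \mathrm{Spec}(A)\}$ of derived affinoids is detected on $\pi_0$, where it becomes a classical \'etale cover, and each $A \to B_i$ is derived strong. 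Since the topology $\tau^{rig-\'et}$ is generated by $\tau^{rig-\'et}_{pre}$ (descendable $\mathrm{Tate}_K$-\'etale maps), and since $\mathbf{QCoh}$ satisfies descent for this pretopology by Theorem \ref{etaledescentadic}, I would first establish \v{C}ech descent for $\mathbf{Coh}_+$ by restricting the $\mathbf{QCoh}$-descent equivalence. The key point here is that $\mathbf{Coh}_+$ is a full subcategory of $\mathbf{QCoh}$ stable under the relevant pullback functors: by \cite{ben2024perspective}*{Corollary 2.3.109}, if $M \in \mathbf{Coh}_+(A)$ is $n$-connective then $B \otimes_A^{\mathbb{L}} M \in \mathbf{Coh}_+(B)$, and conversely an object of $\mathbf{QCoh}(A)$ whose images in all $\mathbf{QCoh}(B_i)$ lie in $\mathbf{Coh}_+$ must itself lie in $\mathbf{Coh}_+$ (faithful flatness of $\pi_0(A) \to \prod \pi_0(B_i)$ together with finite presentation descending along faithfully flat maps of Noetherian affinoid $K$-algebras). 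This gives the equivalence $\mathbf{Coh}_+(A) \simeq \varprojlim_{[n] \in \Delta} \mathbf{Coh}_+(B^{\hat{\otimes}_A n+1})$ for \v{C}ech nerves.

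Next I would upgrade \v{C}ech descent to hyperdescent. The mechanism is Theorem \ref{thm:hyperdescent}: it suffices to exhibit a uniform bound on the Tor-dimension of the morphisms appearing in $\tau^{rig-\'et}$-covering families. For $\mathrm{Tate}_K$-\'etale maps $A \to B$ of derived affinoids, Lemma \ref{lem:discetaletrans} and Corollary \ref{cor:standard-etale-characterisation} show that $B \otimes_A^{\mathbb{L}} B \cong B \otimes_A B$ and more generally that these maps are flat on $\pi_0$ and derived strong, so the functor $B \otimes_A^{\mathbb{L}} (-)$ has Tor-dimension $0$; since a general $\tau^{rig-\'et}$-cover is refined by compositions of rational localisations (which are homotopy epimorphisms, hence flat, hence Tor-dimension $0$) and $\mathrm{Tate}_K$-\'etale maps, all morphisms in covering families have Tor-dimension bounded by a fixed constant. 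Hence Theorem \ref{thm:hyperdescent} applies to $\mathbf{Coh}_+$ viewed inside $\mathbf{QCoh}$, and the argument there --- showing the comparison functor is fully faithful on hypercovers by a Postnikov/totalisation-filtration estimate --- goes through verbatim once we know it is essentially surjective, which follows from \v{C}ech descent by the standard reduction of \cite{lurie2009higher}*{Lemma 6.5.3.9}.

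The last step is to confirm that the limit $\varprojlim_{[n] \in \Delta} \mathbf{Coh}_+(U_n)$ over a hypercover is actually computed inside $\mathbf{Coh}_+$ and not just inside $\mathbf{QCoh}$: one must check that if $M \in \mathbf{QCoh}(A)$ restricts to objects of $\mathbf{Coh}_+(U_n)$ for an \'etale hypercover $U_\bullet \to \mathrm{Spec}(A)$, then $M \in \mathbf{Coh}_+(A)$. This is a connectivity plus finite-presentation argument: connectivity of $M$ follows since $U_0 \to \mathrm{Spec}(A)$ is a cover and connectivity is detected after faithfully flat base change, and then finiteness of presentation of each $\pi_m(M)$ follows since $\pi_m(M)$ base-changes (using derived strength and Tor-vanishing) to $\pi_m$ of the restriction, which is finitely presented, and finite presentation descends along the faithfully flat map $\pi_0(A) \to \pi_0(U_0)$ of Noetherian rings.

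The main obstacle I anticipate is not the descent formalism but verifying the uniform Tor-dimension bound cleanly across \emph{all} of $\tau^{rig-\'et}$ rather than just the generating pretopology: a general covering sieve in $\tau^{rig-\'et}$ need not consist of descendable maps (descendability is not local, as remarked after Proposition \ref{prop:etale-top}), so one has to argue that every covering family can be refined, \emph{locally on the target by rational localisations}, to one consisting of $\mathrm{Tate}_K$-\'etale descendable maps --- this is exactly the content packaged in Lemma \ref{lem:classical-etale} via the refined topology $\tau^{rig-\'et}_{ref}$, and the delicate point is that both rational localisations and $\mathrm{Tate}_K$-\'etale maps have Tor-dimension $0$, so compositions and the resulting refined covers retain a uniform bound, letting Theorem \ref{thm:hyperdescent} fire.
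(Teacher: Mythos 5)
Your argument is correct in substance, but it takes a noticeably more hands-on route than the paper, whose entire proof is a chain of citations: Theorem \ref{etaledescentadic} (\v{C}ech descent of $\mathbf{QCoh}$ for the pretopology of descendable $\mathrm{Tate}_K$-\'etale maps), Theorem \ref{thm:refetalecoverequivalent} (identification of such covers with classical faithfully flat/\'etale surjections of affinoids), a general lemma from \cite{ben2024perspective} (Lemma 7.2.34) that promotes \v{C}ech descent to hyperdescent for bounded-below coherent objects, and \cite{conrad2003descent} for the classical descent of coherence. You reconstruct the last two ingredients yourself: the locality of $\mathbf{Coh}_+$ inside $\mathbf{QCoh}$ via faithfully flat descent of finite presentation over the Noetherian rings $\pi_0(A)$ (this is exactly the role Conrad's theorem plays, and since the covers here are affinoid-by-affinoid it does reduce to ordinary fppf descent of finite modules), and the hyperdescent upgrade via a Mann-style Tor-dimension/Postnikov-convergence argument. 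The one thing to fix is a referencing issue rather than a mathematical one: the result you lean on, Theorem \ref{thm:hyperdescent}, is commented out of the source and does not exist in the compiled paper, so you cannot cite it; its role for the bounded-below coherent subcategory is played by \cite{ben2024perspective}*{Lemma 7.2.34}, and for $\mathbf{Coh}_+$ the convergence of the totalisation is in any case easier than for all of $\mathbf{QCoh}$, since the terms of the cosimplicial object are uniformly bounded below once one knows the covering maps are flat (which your appeal to derived strength plus Lemma \ref{lem:discetaletrans} and Corollary \ref{cor:standard-etale-characterisation} correctly establishes). Your closing worry about uniformity of the Tor-dimension bound over all of $\tau^{rig-\'et}$ is legitimate and is resolved exactly as you say, by refining to $\tau^{rig-\'et}_{ref}$ via Lemma \ref{lem:classical-etale}.
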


\begin{proof}
   This is an immediate consequence of Theorem \ref{etaledescentadic}, Theorem \ref{thm:refetalecoverequivalent}, \cite{ben2024perspective}*{Lemma 7.2.34}, and \cite{conrad2003descent}.
\end{proof}

Then by \cite{ben2024perspective}*{Proposition 7.2.41} we get the following.

\begin{corollary}
    Let $\mathrm{Spec}(A)$ be a derived affinoid. Then $\mathbf{Map}(-,\mathrm{Spec}(A))$ is a hypersheaf for the \'{e}tale topology on $\mathbf{Aff}_{\mathbf{D}_{\ge0}(K)}.$
\end{corollary}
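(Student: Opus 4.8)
The final statement to prove is:

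\begin{quote}
\emph{Let $\mathrm{Spec}(A)$ be a derived affinoid. Then $\mathbf{Map}(-,\mathrm{Spec}(A))$ is a hypersheaf for the \'{e}tale topology on $\mathbf{Aff}_{\mathbf{D}_{\ge0}(K)}$.}
\end{quote}

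The plan is to deduce this from the immediately preceding corollary together with the cited abstract criterion \cite{ben2024perspective}*{Proposition 7.2.41}. That abstract criterion, as set up in the excerpt, says roughly: if one has a good system of modules (here $\mathbf{Coh}_{+}$) which satisfies hyperdescent for the topology in question, and the representable functors detect this system (i.e. the module system is ``conservative'' / faithfully captures maps of affinoids), then the representable presheaves $\mathbf{Map}(-,\mathrm{Spec}(A))$ are themselves hypersheaves. So first I would recall precisely the input: by the Lemma just proven, $\mathbf{Coh}_{+} \colon \mathbf{A}_K^{\mathrm{op}} \to \mathbf{Cat}$ satisfies hyperdescent for the \'etale topology $\tau^{\mathrm{rig}\text{-}\'et}$. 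Then I would invoke \cite{ben2024perspective}*{Proposition 7.2.41}, whose hypotheses are exactly: (i) a hyperdescent statement for a system of modules containing the structure sheaf and the cotangent complex and closed under the relevant operations, and (ii) the fact that $\tau$-covers of derived affinoids can be tested on $\pi_0$ and are controlled by derived strong maps (Lemma \ref{lem:classical-etale}, Corollary \ref{cor:standard-etale-characterisation}). Applying it to $\mathbf{M}_A = \mathbf{Coh}_{+}(A)$ yields that $\mathbf{Map}(-,\mathsf{Spec}(A))$ is an \'etale hypersheaf.

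Concretely, the key steps in order are as follows. First, fix an \'etale hypercover $U_\bullet \to \mathsf{Spec}(A)$ in $\mathbf{Aff}_{\bD_{\geq 0}(K)}$; by Lemma \ref{lem:classical-etale} and Theorem \ref{thm:refetalecoverequivalent} each level is built from derived strong standard \'etale maps, so we may restrict attention to hypercovers lying in $\bA_K$. Second, for any test object $\mathsf{Spec}(B) \in \bA_K$, we must show $\mathbf{Map}_{\bA_K}(U_\bullet, \mathsf{Spec}(B)) \to \mathbf{Map}_{\bA_K}(\mathsf{Spec}(A), \mathsf{Spec}(B))$, read in the correct variance, exhibits $\mathsf{Spec}(A)$ as the limit $\varprojlim_{[n]\in\Delta} U_n$; dually we must show $A \simeq \varprojlim_{[n]\in\Delta} \mathcal{O}(U_n)$ in $\mathbf{DAlg}^{cn}$. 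Third, since the forgetful functor $\mathbf{DAlg}^{cn}(\bD(K)) \to \bD(K)$ is conservative and creates limits, it suffices to prove this equivalence on underlying objects, i.e. in $\bD(K)$; but $A \in \mathbf{Coh}_{+}(A)$ (it is the structure sheaf, connective, hence $0$-connective), so the hyperdescent of $\mathbf{Coh}_{+}$ gives precisely $A \simeq \varprojlim_{[n]} \mathcal{O}(U_n) \otimes_A A \simeq \varprojlim_{[n]} \mathcal{O}(U_n)$, using that each $\mathcal{O}(U_n) \otimes_A^{\mathbb L} A = \mathcal{O}(U_n)$ and that the coherent modules assemble to a limit of categories whose evaluation at the unit recovers the algebra. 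Fourth, one upgrades this to the mapping-space statement: for $\mathsf{Spec}(B)$ a derived affinoid, $\mathbf{Map}(\mathsf{Spec}(A),\mathsf{Spec}(B)) = \mathbf{Map}_{\mathbf{DAlg}}(B, A) \simeq \mathbf{Map}_{\mathbf{DAlg}}(B, \varprojlim_{[n]} \mathcal{O}(U_n)) \simeq \varprojlim_{[n]} \mathbf{Map}_{\mathbf{DAlg}}(B, \mathcal{O}(U_n)) = \varprojlim_{[n]} \mathbf{Map}(U_n, \mathsf{Spec}(B))$, since $\mathbf{Map}_{\mathbf{DAlg}}(B,-)$ commutes with limits. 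This is exactly the hypersheaf condition, and it holds for all test objects by density of affinoids in $\mathbf{Aff}_{\bD_{\geq 0}(K)}$ under sifted colimits (representables are determined by their restriction to $\bA_K$ since $\bA_K$ contains a set of compact projective generators).

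The main obstacle — and the only genuinely non-formal point — is verifying that the hyperdescent of the category-valued functor $\mathbf{Coh}_{+}$ actually implies descent for the algebra $A$ itself, rather than merely for its module category abstractly; this is where one needs the symmetric monoidal refinement, namely that $\mathbf{Coh}_{+}$ (or $\mathbf{QCoh}$) is a sheaf of \emph{symmetric monoidal} categories so that evaluation at the unit object commutes with the totalisation, together with the fact (Corollary \ref{cor:standard-etale-characterisation}, Lemma \ref{lem:classical-etale}) that the maps in the hypercover are derived strong, which controls the behaviour of homotopy groups in the spectral sequence computing $\pi_* \varprojlim_{[n]} \mathcal{O}(U_n)$. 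Once this is in hand, everything else is a formal manipulation of mapping spaces out of a compact-projectively-generated category, precisely as packaged in \cite{ben2024perspective}*{Proposition 7.2.41}. I expect the write-up to be essentially a one-line deduction citing that proposition together with the preceding lemma, with perhaps a sentence recalling why $A$ lies in the good system $\mathbf{Coh}_{+}$ and why the hypotheses of loc.\ cit.\ (closure of $\mathbf{Coh}_{+}$ under the relevant base change, containment of $\mathbb{L}_A$, weak \v{C}ech descent) are met in the derived affinoid setting over $K$.
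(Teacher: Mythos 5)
Your proposal matches the paper's argument: the paper deduces the corollary in one line from the preceding lemma (hyperdescent of $\mathbf{Coh}_{+}$ for the \'etale topology) by citing \cite{ben2024perspective}*{Proposition 7.2.41}, exactly as you anticipated. Your unpacking of why the hypotheses of that proposition are satisfied is consistent with the paper's setup, so nothing further is needed.
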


\comment{
\begin{lemma}
The sheafification functor
$$\mathbf{Stk}(\mathbf{Aff}_{\bD(K)}, \tau_{\bA}^{rat})\rightarrow\mathbf{Stk}(\mathbf{Aff}_{\bD(K)}, \tau_{\bA}^{\'{e}t})$$
restricts to a fully faithful functor on $\mathbf{dAn}_K$.
\end{lemma}

\begin{proof}
Let $\tilde{X}$ denote the \'{e}tale stackification of a derived analytic space $X$. By forgetting, we identify this with a stack for $\tau^{rat}_{\mathcal{A}}$. We need to show that 

Let $X$ be an object of $\mathbf{dAn}_K$. We claim that it is already a sheaf for the \'{e}tale topology. It suffices to prove that for any affine scheme $\mathrm{Spec}(A)$, the map
$$\pi_{0}\mathbf{Map}_{\mathbf{dAn}}(\mathrm{Spec}(A),X)\rightarrow\pi_{0}\mathbf{Map}_{\mathbf{Stk_{\'{e}t}}}(\mathrm{Spec}(A),\tilde{X})$$
is an equivalence. Let $\{U_{i}\rightarrow X\}$ be a rational cover by derived affinoids. Let $f:\mathrm{Spec}(A)\rightarrow\tilde{X}$ be a map. There exists an \'{e}tale cover $\{\mathrm{Spec}(B_{j})\rightarrow\mathrm{Spec}(A)\}$ such that each $\mathrm{Spec}(B_{j})\rightarrow X$ factors through some $U_{j(i)}\rightarrow X$. The image $V_{j}$ of $\mathrm{Spec}(B_{j})\rightarrow\mathrm{Spec}(A)$ is open, and factors through $U_{j(i)}$. These $V_{j}$ then give an open cover of $\mathrm{Spec}(A)$. This gives that $f$ is the image of a map in $\pi_{0}\mathbf{Map}_{\mathbf{dAn}}(\mathrm{Spec}(A),X)$. Now let $f_{1},f_{2}:\mathrm{Spec}(A)\rightarrow X$ be maps whose images in $\mathbf{Map}_{\mathbf{Stk_{\'{e}t}}}(\mathrm{Spec}(A),\tilde{X})$ are equal. Then there exists an \'{e}tale cover $\{h_{j}\mathrm{Spec}(B_{j})\rightarrow\mathrm{Spec}(A)\}$ such that $f_{1}\circ h_{i}=f_{2}\circ h_{j}$ for all $i$. But the images of the $h_{j}$ give an open cover $\tilde{h}_{j}$ such that $f_{1}\tilde{h}_{j}=f_{2}\tilde{h}_{j}$ for all $j$. Hence these maps agree as maps to $X$. 
\end{proof}
}
\comment{\subsection{The dagger Stein case}

\textcolor{red}{Jack comment: not sure this is necessary, just use homotopy monomorphisms for dagger Steins.}

\subsubsection{\'{E}tale maps of non-Archimedean dagger Steins}

Let $A\rightarrow B=A<\lambda_{1}x_{1},\ldots,\lambda_{n}x_{n}>^{\dagger}\slash(f_{1},\ldots,f_{n})$ be a standard \'{e}tale map of dagger affinoids. That is $J(f_{1},\ldots,f_{n})$ is a unit after base changing to $B$. We may write $A\cong colim_{i\in\mathcal{I}} A_{i}\big\slash I$ where each $A_{i}$ is an affinoid. Then $B\cong colim_{\underline{\rho}<\underline{\lambda}} colim_{i} A_{i}<\rho_{1}x_{1},\ldots,\rho_{n}x_{n}>\big\slash(f_{1},\ldots,f_{n}) $. Now $J(f)$ is a unit in $B$. Since the structure maps are monomorphisms it must be a unit in one of the $A_{i}<\rho_{1}x_{1},\ldots,\rho_{n}x_{n}>\big\slash(f_{1},\ldots,f_{n})$. Thus the map 
$$A_{i}\rightarrow A_{i}<\rho_{1}x_{1},\ldots,\rho_{n}x_{n}>\big\slash(f_{1},\ldots,f_{n})$$
is formally \'{e}tale. As a colimit of formally \'{e}tale maps  $A\rightarrow B$ is formally \'{e}tale. Moreover any such map is transverse to $A$-modules $M$ which are finitely presented over $A<\gamma_{1}x_{1},\ldots,\gamma_{m}x_{m}>^{\dagger}$ for some $m$ and some $\underline{\gamma}$. 

Now let $A\rightarrow B$ be an \'{e}tale map of dagger Steins. There is a homotopy monomorphism cover $A\rightarrow C_{i}$ such that each $C_{i}\rightarrow C_{i}\hat{\otimes}_{A}B$ is standard \'{e}tale. By descent for transversality $A\rightarrow B$ is transverse to $A$-modules $M$ which are finitely presented over $A<\gamma_{1}x_{1},\ldots,\gamma_{m}x_{m}>^{\dagger}$ for some $m$ and some $\underline{\gamma}$. 

\begin{definition}
    A map $A\rightarrow B$ of dagger Steins is said to be an \textit{\'{e}tale cover} if
    \begin{enumerate}
        \item 
        it is dagger \'{e}tale
        \item 
        it is descendable.
    \end{enumerate}
\end{definition}

By \cite{ben2024perspective} we have the following.

\begin{proposition}
    Let $A\rightarrow B$ be an \'{e}tale cover of dagger Steins. Then the map $\mathcal{M}(B)\rightarrow\mathcal{M}(A)$ of Berkovich spaces is surjective. 
\end{proposition}

\begin{example}
        If $A\rightarrow B$ is a cover in the finite $G$-topology then it is an \'{e}tale cover.
\end{example}
}

\subsubsection{Bornological adic spaces}

In this subsection we give a rudimentary definition of bornological (analytic) adic spaces. Really here we should work with Banach Tate rings, but we postpone that to future work.
Let $\mathbb{Z}_{triv}$ denote the non-Archimedean Banach ring of integers with $|n|=1$ for all $n\in\mathbb{Z}$ with $n\neq0$. We consider the strong geometry tuple
$$(\mathbf{Aff}_{\bD(\mathbb{Z}_{triv})},\tau_{\mathbf{A}}^{\'{e}t},\mathbf{P}^{sm}_{\mathbf{A}},\mathbf{A}),$$
where $\mathbf{A}$ consists of those algebras which are derived affinoids over some non-trivially valued Banach field $K$.
Note that for any non-trivially valued Banach field $K$, 

$$(\mathbf{Aff}_{D_{\ge0}(\mathbb{Z}_{triv})},\tau_{\mathbf{A}}^{\'{e}t},\mathbf{P}^{sm}_{\mathbf{A}},\mathbf{A}_{K})$$
is a strong geometry tuple, so that $$\mathbf{dAn}_{K}\subseteq\mathbf{Stk}_{geom}(\mathbf{Aff}_{
\bD(\mathbb{Z}_{triv})},\tau_{\mathbf{A}}^{\'{e}t},\mathbf{P}^{sm}_{\mathbf{A}},\mathbf{A}).$$

\begin{definition}
    Let $X$ be a derived scheme over $\mathbb{Z}_{triv}$. $X$ is said to be a \textit{derived analytic space} if there is a cover $\{U_{i}\rightarrow X\}$ such that each $U_{i}\cong\mathrm{Spec}(A_{i})$, with $A_{i}$ being a derived affinoid over a non-trivially valued Banach field $K_{i}$. 
\end{definition}

The full subcategory of $\mathbf{Stk}_{geom}(\mathbf{Aff}_{\bD(\mathbb{Z}_{triv})},\tau_{\mathbf{A}}^{\'{e}t},\mathbf{P}^{sm}_{\mathbf{A}},\mathbf{Aff}_{\bD(\mathbb{Z}_{triv})})$ consisting of derived analytic spaces is denoted $\mathbf{dAn}_{\mathbb{Z}_{triv}}$. 

\begin{lemma}
   Let $f:X\rightarrow Z$ be a smooth map of schemes in the geometry tuple  $(\mathbf{Aff}_{\bD(\mathbb{Z}_{triv})},\tau_{\mathbf{A}}^{\'{e}t},\mathbf{P}^{sm}_{\mathbf{A}},\mathbf{A})$, and $g:Y\rightarrow Z$ any morphism with $Y$ a derived analytic space. Then $Y\times_{X}Z$ is a derived analytic space.
\end{lemma}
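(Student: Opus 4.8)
The statement to prove is: if $f\colon X\to Z$ is a smooth map of schemes in the geometry tuple $(\mathbf{Aff}_{\bD(\mathbb{Z}_{triv})},\tau_{\mathbf{A}}^{\'{e}t},\mathbf{P}^{sm}_{\mathbf{A}},\mathbf{A})$, and $g\colon Y\to Z$ is any morphism with $Y$ a derived analytic space, then $Y\times_Z X$ is a derived analytic space. The plan is to reduce everything to a local computation on affinoid charts and invoke closure properties of derived affinoids under pushout that were already established in the excerpt.

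First I would unwind the definition of ``derived analytic space'': $Y$ admits a cover $\{U_i\to Y\}$ by open immersions with each $U_i\cong\mathrm{Spec}(A_i)$, where $A_i$ is a derived affinoid over some non-trivially valued Banach field $K_i$. Pulling this cover back along the projection $Y\times_Z X\to Y$ (using that open immersions and their pullbacks are stable under base change, and that $\mathbf{Stk}_n$ is closed under pullbacks as recorded after Definition~\ref{defn:ngeom}), we obtain a cover of $Y\times_Z X$ by $U_i\times_Z X$, and it suffices to show each of these is a derived analytic space. Replacing $Y$ by $U_i$ (hence $Z$ by a suitable affine receiving the map), we may assume $Y=\mathrm{Spec}(A)$ is affinoid over a field $K$. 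Next, smoothness of $f$ is local on the source in $\tau^{rat}$ (resp. $\tau^{\'et}$): there is a cover $\{X_j\to X\}$ such that each composite $X_j\to Z$ is \emph{standard} $\mathrm{T}$-smooth, i.e. factors as $Z'\to Z'\otimes^{\mathbb{L}}\mathrm{T}(\lambda_1,\dots,\lambda_k)\to X_j$ with the second map $\mathrm{T}$-standard \'etale. Base-changing this cover along $Y\times_Z X\to X$ again reduces us to the case where $X\to Z$ itself is standard $\mathrm{T}$-smooth.

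Now the key step: I claim $Y\times_Z X$ is then a derived affinoid over $K$, which is stronger than being a derived analytic space. Since $Y=\mathrm{Spec}(A)$ is affinoid over $K$ and the map $Y\to Z$ factors through $Z$, the fibre product is computed as $A\otimes^{\mathbb{L}}_{R}(\text{structure of }X)$ where everything can be reorganized as a base change over $A$: concretely, $Y\times_Z X\cong \mathrm{Spec}\big(B\big)$ where $B\cong A\hat\otimes_K^{\mathbb{L}} K\gen{\lambda_1^{-1}x_1,\dots,\lambda_k^{-1}x_k}//(\bar f_1,\dots,\bar f_k)$ for the images $\bar f_i$ of the $f_i$ defining the standard \'etale part (here I use that over a field $\mathrm{T}(\lambda)$ is the Tate algebra $K\gen{\lambda^{-1}x}$ and the map $\mathrm{Sym}(K)\to K\gen{\lambda^{-1}x}$ is a homotopy epimorphism). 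Thus $B$ is obtained from the derived affinoid $A$ by adjoining finitely many Tate variables and then quotienting by a finitely generated (Koszul) ideal, so $\pi_0(B)$ is an affinoid $K$-algebra and, by \cite{ben2024perspective}*{Corollary 2.3.109} (coherence is preserved under base change of derived affinoids), each $\pi_n(B)$ is finitely generated over $\pi_0(B)$. Hence $B\in\mathbf{Afnd}_K$ and $Y\times_Z X=\mathrm{Spec}(B)$ is a derived affinoid over $K$, a fortiori a derived analytic space. Gluing the pieces $X_j\times_Z Y$ over the $U_i\times_Z X$ back together exhibits $Y\times_Z X$ as covered by derived affinoids over non-trivially valued Banach fields, which is exactly the definition of a derived analytic space.

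\textbf{Main obstacle.} The delicate point is the bookkeeping in the reduction steps — namely checking that pulling back the source-local $\tau^{rat}$/$\tau^{\'et}$-cover witnessing standard smoothness along $Y\times_Z X\to X$ indeed produces a cover of $Y\times_Z X$ by open immersions (or \'etale maps) each of which still exhibits the standard-smooth structure relative to $Y$, and that these assemble to an atlas. This is where one genuinely uses that $\mathrm{T}$-smooth and $\mathrm{T}$-standard \'etale maps are stable under pushout (the proof of Proposition~\ref{prop:etale-top}, together with \cite{ben2024perspective}*{Proposition 6.1.4}), and that derived affinoids are stable under pushout by arbitrary morphisms (\cite{soor2024derived}*{Lemma 2.3}). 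Once these stability statements are in hand, the rest is formal: the argument is essentially the same local-to-global bootstrap as in the proof of Theorem~\ref{thm:derived-rigid} and Theorem~\ref{thm:complexgeomdagger}.
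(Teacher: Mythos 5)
Your proof is correct and follows essentially the same route as the paper's, which simply observes that the question is local, reduces to the case where $X$, $Y$, $Z$ are all affine with $f$ standard smooth, and computes $B\hat{\otimes}_{A}^{\mathbb{L}}C\cong C\langle x_{1},\ldots,x_{m+n}\rangle\big\slash\big\slash(f_{1},\ldots,f_{m})$; you merely supply more detail on why the localisation steps are legitimate. One cosmetic slip: your explicit formula for the fibre product has $k$ Tate variables and $k$ relations, which is the shape of a standard \'etale map rather than a standard smooth one (the latter has additional free variables beyond the relations), but this does not affect the conclusion that the result is a derived affinoid.
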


\begin{proof}
    The question is local, so we may assume that $X\cong\mathrm{Spec}(B),Y\cong\mathrm{Spec}(C)$, and $Z\cong\mathrm{Spec}(A)$ are all affine, and that $f$ is standard smooth. That is, $B\cong A<x_{1},\ldots,x_{m+n}>\big\slash\big\slash (f_{1},\ldots,f_{m})$. Thus 
    $$B\hat{\otimes}_{A}^{\mathbb{L}}C\cong C<x_{1},\ldots,x_{m+n}>\big\slash\big\slash (f_{1},\ldots,f_{m}).$$   
\end{proof}

\subsection{Descent for \'etale covers of analytic spaces}

\subsubsection{Underlying spaces and effective epimorphisms}

\comment{
Let $X$ be a pre-stack. Denote by $\mathcal{O}_{X}$ the object of $\mathbf{QCoh}(X)$ whose value on a map $\mathrm{Spec}(A)\rightarrow X$ is $A$. 

If $X$ is a derived analytic space then $\mathcal{O}_{X}\in\mathbf{Coh}(X)$. Suppose that $\{f_{i}:U_{i}=\mathrm{Spec}(A_{i})\rightarrow X\}$ is an atlas for $X$. Let $M\in\mathbf{QCoh}(X)$. We say that $M$ is \textit{coherent} if each $\pi_{n}(M)$ is coherent as a $t_{\le0}X$-module. Note in the affinoid/ dagger affinoid/ formal cases this is the same as each $\pi_{n}(M)$ being finitely presented.
}

\begin{definition}
A morphism $f:X\rightarrow Y$ of derived analytic spaces is said to be \textit{derived strong} if there are covers $\{\mathrm{Spec}(B_{j})\rightarrow X\}$, $\{\mathrm{Spec}(A_{i})\rightarrow Y\}$, such that each map $\mathrm{Spec}(B_{j})\rightarrow Y$ factors through some $\mathrm{Spec}(A_{i})$, and the map $A_{i}\rightarrow B_{j}$ is derived strong.
\end{definition}

\begin{proposition}
    Let $\mathrm{Spec}(B)\rightarrow\mathrm{Spec}(A)$ be a map of derived affinoids. It is derived strong as a map of analytic spaces if and only if $A\rightarrow B$ is derived strong as a map of algebras.
\end{proposition}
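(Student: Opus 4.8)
The statement is that for a map of derived affinoids $\mathrm{Spec}(B)\to\mathrm{Spec}(A)$ the two notions of derived strength --- "as a map of analytic spaces" (meaning: there exist covers witnessing strength locally) and "as a map of algebras" ($\pi_0(B)\hat{\otimes}^{\mathbb{L}}_{\pi_0(A)}\pi_*(A)\to\pi_*(B)$ is an equivalence) --- coincide. One direction is tautological: if $A\to B$ is derived strong as a map of algebras, then taking the trivial covers $\{\mathrm{Spec}(B)\to\mathrm{Spec}(B)\}$ and $\{\mathrm{Spec}(A)\to\mathrm{Spec}(A)\}$ exhibits $\mathrm{Spec}(B)\to\mathrm{Spec}(A)$ as derived strong as a map of analytic spaces. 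So the content is the converse: local derived strength on some pair of covers implies global derived strength of $A\to B$.

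\textbf{Key steps.} First I would reduce to checking derived strength after base change along a rational cover. By hypothesis there are covers $\{\mathrm{Spec}(B_j)\to\mathrm{Spec}(B)\}$ and $\{\mathrm{Spec}(A_i)\to\mathrm{Spec}(A)\}$ (which by definition of $\tau^{rat}$ on affinoids we may take to be finite rational covers, hence finitely conservative) such that each composite $\mathrm{Spec}(B_j)\to\mathrm{Spec}(A)$ factors through some $\mathrm{Spec}(A_{i(j)})$, and each $A_{i(j)}\to B_j$ is derived strong. The plan is to show that this forces $A\to B$ to be derived strong. By Lemma~\ref{lem:derstrongconver} / Lemma~\ref{lem:hepi-descendable}, the total maps $A\to\prod_i A_i$ and $B\to\prod_j B_j$ are descendable (they are homotopy-epimorphism covers). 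Descendability gives that $\mathbf{QCoh}$ satisfies descent, so $\pi_*$-computations can be checked after base change to the cover. Concretely, I want to verify that the natural map $\pi_0(B)\hat{\otimes}^{\mathbb{L}}_{\pi_0(A)}\pi_n(A)\to\pi_n(B)$ is an equivalence for all $n$; since each $A\to B_j$ is derived strong (being a composite $A\to A_{i(j)}\to B_j$ of a derived strong map by \cite{ben2024perspective}*{Proposition 2.6.160} --- rational localisations are derived strong --- with a derived strong map, and derived strong maps are closed under composition) and each $A_i\to B_j$ derived strong, we get the equivalence after tensoring with each $\pi_0(B_j)$. Then I would use that $\pi_0(B)\to\prod_j\pi_0(B_j)$ is conservative (by Lemma~\ref{lem:derstrongconver}(2), since the $\pi_0(B)\to\pi_0(B_j)$ form a conservative family of rational localisations) together with the fact that $\pi_0(B_j)\hat{\otimes}_{\pi_0(B)}(-)$ is exact on the relevant modules (transversality of rational localisations to finitely generated modules, as in \cite{ben2024perspective}*{Lemma 5.4.106}) to conclude the cone of $\pi_0(B)\hat{\otimes}^{\mathbb{L}}_{\pi_0(A)}\pi_n(A)\to\pi_n(B)$ vanishes. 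An easy spectral sequence argument, exactly as in the proof of Lemma~\ref{lem:derstrongdescendable}, then upgrades this from $\pi_n$-by-$\pi_n$ to the statement that $A\to B$ is derived strong.

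\textbf{Main obstacle.} The delicate point is not the logical skeleton but matching up the local data: I need to know that the covers appearing in the \emph{definition} of derived-strong-as-a-map-of-analytic-spaces can always be refined to finite rational covers by derived affinoids that are simultaneously (a) finitely conservative on $\pi_0$ and (b) such that the relevant base-change functors are exact on coherent modules. This is where Lemma~\ref{lem:Arun-top} (characterising $\tau^{rat}$-covers of derived affinoids as derived-strong finitely-conservative rational localisations on $\pi_0$) and the transversality statements from \cite{ben2024perspective} do the heavy lifting; the rest is the standard descendability-plus-spectral-sequence bookkeeping already used for Lemma~\ref{lem:derstrongdescendable}. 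I expect the write-up to be short, citing those two lemmas and then running the conservativity argument.
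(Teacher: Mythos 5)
Your proposal is correct and follows essentially the same route as the paper: the forward direction is immediate, and for the converse the paper likewise notes that $A\to A_i$ and $B\to B_j$ are derived strong (being cover maps), deduces that $A\to B_j$ is derived strong by composition, computes $\pi_n(B_j)$ in two ways as $\pi_0(B_j)\otimes^{\mathbb{L}}_{\pi_0(B)}\bigl(\pi_0(B)\otimes^{\mathbb{L}}_{\pi_0(A)}\pi_n(A)\bigr)$ and as $\pi_0(B_j)\otimes^{\mathbb{L}}_{\pi_0(B)}\pi_n(B)$, and concludes by descent (i.e.\ conservativity of the cover), exactly as you do. The only cosmetic difference is that your concluding ``spectral sequence argument'' is unnecessary here, since derived strength is checked degreewise on $\pi_*$ and the conservativity step already finishes each degree.
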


\begin{proof}
    If $A\rightarrow B$ is derived strong as a map of algebras then clearly $\mathrm{Spec}(B)\rightarrow\mathrm{Spec}(A)$ is derived strong as a map of analytic spaces. 

    Conversely suppose that $\mathrm{Spec}(B)\rightarrow\mathrm{Spec}(A)$ is derived strong as a map of analytic spaces. Fix atlases $\{\mathrm{Spec}(B_{j})\rightarrow\mathrm{Spec}(B)\}$ and $\mathrm{Spec}(A_{i})\rightarrow\mathrm{Spec}(A)\}$ such that each $\mathrm{Spec}(B_{j})\rightarrow\mathrm{Spec}(A)$ factors through some $\mathrm{Spec}(A_{i})\rightarrow\mathrm{Spec}(A)$, and $A_{i}\rightarrow B_{j}$ is derived strong. Note that each map $B\rightarrow B_{j}$ and $A\rightarrow A_{i}$ is also derived strong. In particular $A\rightarrow B_{j}$ is derived strong. We then have  
    $$\pi_{n}(B_{j})\cong\pi_{0}(B_{j})\otimes^{\mathbb{L}}_{\pi_{0}(A)}\pi_{n}(A)\cong\pi_{0}(B_{j})\otimes_{\pi_{0}(B)}^{\mathbb{L}}\pi_{0}(B)\otimes^{\mathbb{L}}_{\pi_{0}(A)}\pi_{n}(A).$$
    We also have 
    $$\pi_{n}(B_{j})\cong\pi_{0}(B_{j})\otimes^{\mathbb{L}}_{\pi_{0}(B)}\pi_{n}(B).$$
    Thus by descent we have 
    $$\pi_{0}(B)\otimes^{\mathbb{L}}_{\pi_{0}(A)}\pi_{n}(A)\cong\pi_{n}(B),$$
    as required.
\end{proof}

\begin{lemma}
    Let $X\rightarrow Y$ be a morphism. Suppose there exists an atlas $\{\mathrm{Spec}(C_{k})\rightarrow X\}$ such that each $\mathrm{Spec}(B_{i})\rightarrow Y$ is derived strong. Then $X\rightarrow Y$ is derived strong.
\end{lemma}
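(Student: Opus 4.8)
The statement to prove is: if $X\to Y$ is a morphism of derived analytic spaces such that there exists an atlas $\{\mathrm{Spec}(C_{k})\rightarrow X\}$ with each composite $\mathrm{Spec}(C_k)\rightarrow Y$ derived strong, then $X\rightarrow Y$ is derived strong.

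\textbf{Proof plan.} The plan is to reduce the abstract statement to the affine case that was just established, and the main work is bookkeeping with atlases. First I would fix an atlas $\{\mathrm{Spec}(B_{i})\rightarrow Y\}_{i\in I}$ of $Y$. By definition of derived analytic space, pulling back along $\mathrm{Spec}(C_k)\rightarrow Y$ and refining, for each $k$ there is a cover $\{\mathrm{Spec}(C_{k\ell})\rightarrow\mathrm{Spec}(C_k)\}$ such that each composite $\mathrm{Spec}(C_{k\ell})\rightarrow Y$ factors through some $\mathrm{Spec}(B_{i(k,\ell)})$; here I use that $\{\mathrm{Spec}(B_i)\to Y\}$ is an atlas so the maps $\mathrm{Spec}(C_k)\times_Y\mathrm{Spec}(B_i)\to\mathrm{Spec}(C_k)$ form a cover, and that the relevant topologies ($\tau^{rat}$, $\tau^{et}$) admit affinoid refinements. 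The collection $\{\mathrm{Spec}(C_{k\ell})\rightarrow X\}$ is then still an atlas for $X$, since it is a composite of atlas maps (atlas maps are stable under composition and the corresponding covering families compose to covering families).

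Next I would verify that each map $B_{i(k,\ell)}\rightarrow C_{k\ell}$ is derived strong as a map of algebras. We know $C_k\rightarrow C_{k\ell}$ is a map in the atlas and hence (being a composite of rational/\'etale localisations, which are derived strong by Lemma~\ref{lem:Arun-top}, Lemma~\ref{lem:classical-etale} and the analogous dagger statements) is derived strong. We are given that $C_k\rightarrow C_k$ composed into $Y$, i.e. the map factoring as $B_{i}\rightarrow C_k$ for the appropriate $i$, is derived strong — more precisely, by hypothesis and the affine characterisation of derived strength proven just above (the Proposition on derived strong maps of derived affinoids), the composite $\mathrm{Spec}(C_k)\rightarrow\mathrm{Spec}(B_i)\rightarrow Y$ being derived strong means the map of algebras $B_i\rightarrow C_k$ is derived strong. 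Then $B_{i(k,\ell)}\rightarrow C_k\rightarrow C_{k\ell}$ is a composite of two derived strong maps; since derived strong maps of connective algebras are closed under composition (the graded-tensor condition $\pi_0(B)\hat\otimes^{\mathbb L}_{\pi_0(A)}\pi_*(A)\xrightarrow{\sim}\pi_*(A)$ being transitive along $A\to B\to C$ by associativity of the relative tensor product), the composite $B_{i(k,\ell)}\rightarrow C_{k\ell}$ is derived strong.

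Finally I would assemble the conclusion: the atlas $\{\mathrm{Spec}(C_{k\ell})\rightarrow X\}$ together with the atlas $\{\mathrm{Spec}(B_i)\rightarrow Y\}$ exhibits, for each $k,\ell$, a factorisation $\mathrm{Spec}(C_{k\ell})\rightarrow\mathrm{Spec}(B_{i(k,\ell)})\rightarrow Y$ with $B_{i(k,\ell)}\rightarrow C_{k\ell}$ derived strong. This is exactly the data required by the definition of a derived strong morphism of derived analytic spaces, so $X\rightarrow Y$ is derived strong, as claimed. The only genuinely delicate point is the second paragraph — ensuring the refined cover $\{\mathrm{Spec}(C_{k\ell})\rightarrow X\}$ is still an atlas and that the factorisations through the $B_i$ can be arranged compatibly; this is routine provided one is careful that atlas maps compose and that covering families in the relevant topologies admit affinoid refinements, both of which hold here.

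\begin{proof}
Fix an atlas $\{\mathrm{Spec}(B_{i})\rightarrow Y\}_{i\in I}$. By hypothesis there is an atlas $\{\mathrm{Spec}(C_{k})\rightarrow X\}_{k\in K}$ such that each composite $\mathrm{Spec}(C_{k})\rightarrow Y$ is derived strong. Since $\{\mathrm{Spec}(B_{i})\rightarrow Y\}$ is an atlas, for each $k$ the family $\{\mathrm{Spec}(C_{k})\times_{Y}\mathrm{Spec}(B_{i})\rightarrow\mathrm{Spec}(C_{k})\}_{i}$ is a cover, and refining it by affinoids we obtain a cover $\{\mathrm{Spec}(C_{k\ell})\rightarrow\mathrm{Spec}(C_{k})\}_{\ell\in L_{k}}$ by open immersions (hence rational localisations) such that each $\mathrm{Spec}(C_{k\ell})\rightarrow Y$ factors through some $\mathrm{Spec}(B_{i(k,\ell)})$. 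As atlas maps are closed under composition, $\{\mathrm{Spec}(C_{k\ell})\rightarrow X\}_{k,\ell}$ is again an atlas for $X$.

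It remains to check that each map of algebras $B_{i(k,\ell)}\rightarrow C_{k\ell}$ is derived strong. The map $C_{k}\rightarrow C_{k\ell}$ is a rational localisation, hence derived strong by Lemma \ref{lem:Arun-top} (and its dagger and formal analogues). By the affine characterisation of derived strong morphisms proven above, the hypothesis that $\mathrm{Spec}(C_{k})\rightarrow Y$ is derived strong forces the algebra map $B_{i(k,\ell)}\rightarrow C_{k}$ to be derived strong, since $\mathrm{Spec}(C_{k})\rightarrow\mathrm{Spec}(B_{i(k,\ell)})\rightarrow Y$ witnesses derived strength of $\mathrm{Spec}(C_k)\to Y$ through the atlas $\{\mathrm{Spec}(B_i)\to Y\}$. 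Derived strong maps of connective algebras are closed under composition: given derived strong $B\rightarrow C\rightarrow D$, associativity of the relative tensor product yields
\[
\pi_{0}(D)\hat{\otimes}^{\mathbb{L}}_{\pi_{0}(B)}\pi_{*}(B)\cong\pi_{0}(D)\hat{\otimes}^{\mathbb{L}}_{\pi_{0}(C)}\pi_{0}(C)\hat{\otimes}^{\mathbb{L}}_{\pi_{0}(B)}\pi_{*}(B)\cong\pi_{0}(D)\hat{\otimes}^{\mathbb{L}}_{\pi_{0}(C)}\pi_{*}(C)\cong\pi_{*}(D).
\]
Applying this to $B_{i(k,\ell)}\rightarrow C_{k}\rightarrow C_{k\ell}$ shows $B_{i(k,\ell)}\rightarrow C_{k\ell}$ is derived strong.

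Thus the atlases $\{\mathrm{Spec}(C_{k\ell})\rightarrow X\}$ and $\{\mathrm{Spec}(B_{i})\rightarrow Y\}$, together with the factorisations $\mathrm{Spec}(C_{k\ell})\rightarrow\mathrm{Spec}(B_{i(k,\ell)})\rightarrow Y$ and the derived strength of each $B_{i(k,\ell)}\rightarrow C_{k\ell}$, exhibit $X\rightarrow Y$ as a derived strong morphism of derived analytic spaces.
\end{proof}
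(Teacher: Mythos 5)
Your overall strategy differs from the paper's, and the difference matters: the paper's proof is a one-line unwinding of the definition, whereas your route introduces a step that fails. The paper simply observes that, for each $k$, the hypothesis that $\mathrm{Spec}(C_{k})\rightarrow Y$ is derived strong \emph{already supplies} (by the definition of a derived strong morphism of spaces) covers $\{\mathrm{Spec}(B_{j_{k}})\rightarrow\mathrm{Spec}(C_{k})\}$ and $\{\mathrm{Spec}(A_{i})\rightarrow Y\}$ with the factorisations and derived strong algebra maps built in; composing the first of these with the atlas $\{\mathrm{Spec}(C_{k})\rightarrow X\}$ gives the cover of $X$ witnessing that $X\rightarrow Y$ is derived strong. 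There is no need to fix an atlas of $Y$ in advance or to prove anything about composites of derived strong algebra maps.

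The genuine gap in your argument is in the second paragraph. You assert that ``the hypothesis that $\mathrm{Spec}(C_{k})\rightarrow Y$ is derived strong forces the algebra map $B_{i(k,\ell)}\rightarrow C_{k}$ to be derived strong, since $\mathrm{Spec}(C_{k})\rightarrow\mathrm{Spec}(B_{i(k,\ell)})\rightarrow Y$ witnesses derived strength.'' But no map $\mathrm{Spec}(C_{k})\rightarrow\mathrm{Spec}(B_{i(k,\ell)})$ exists: by your own construction only the restriction $\mathrm{Spec}(C_{k\ell})\rightarrow Y$ factors through $\mathrm{Spec}(B_{i(k,\ell)})$, not the whole of $\mathrm{Spec}(C_{k})\rightarrow Y$, which is a map to all of $Y$ and in general lands in no single chart. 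Consequently there is no algebra map $B_{i(k,\ell)}\rightarrow C_{k}$ to which you can apply the composition argument of your third paragraph, and the factorisation $B_{i(k,\ell)}\rightarrow C_{k}\rightarrow C_{k\ell}$ you compose along does not exist. Repairing this within your framework is not cosmetic: you would need to know that $\mathrm{Spec}(C_{k\ell})\rightarrow\mathrm{Spec}(B_{i(k,\ell)})$ is derived strong, and extracting that from the hypothesis on $\mathrm{Spec}(C_{k})\rightarrow Y$ with respect to a \emph{prescribed} atlas of $Y$ requires the descent argument of the preceding proposition (the affine characterisation), not just closure of derived strong algebra maps under composition. The paper's proof avoids all of this by not fixing the atlas of $Y$ beforehand. (Your third paragraph, the transitivity computation for derived strong algebra maps, is correct but is applied to a map that is not there.)
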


\begin{proof}
Pick covers $\{\mathrm{Spec}(B_{j_{k}})\rightarrow \mathrm{Spec}(C_{k})\}$, $\{\mathrm{Spec}(A_{i})\rightarrow Y\}$, such that each map $\mathrm{Spec}(B_{j_{k}})\rightarrow Y$ factors through some $\mathrm{Spec}(A_{i})$, and the map $A_{i}\rightarrow B_{j_{k}}$ is derived strong. Then $\{\mathrm{Spec}(B_{j_{k}})\rightarrow X\}_{k,j_{k}}$ is a cover satisfying the required properties. 
\end{proof}

\begin{lemma}
    Let $f:X\rightarrow Y$ be a morphism. Suppose there exists a cover $\{\mathrm{Spec}(A_{i})\rightarrow Y\}$ such that each $X\times_{Y}\mathrm{Spec}(A_{i})\rightarrow\mathrm{Spec}(A_{i})$ is derived strong. Then $f$ is derived strong.
\end{lemma}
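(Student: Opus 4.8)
The statement is the standard ``being derived strong is local on the target'' principle, and the plan is to reduce it immediately to the previous lemma (``derived strong is detected after passing to an atlas of the source''). First I would fix a cover $\{\mathrm{Spec}(A_i)\to Y\}$ as in the hypothesis, so that each base change $X\times_Y\mathrm{Spec}(A_i)\to\mathrm{Spec}(A_i)$ is derived strong. Being derived strong as a map of analytic spaces is witnessed by atlases, so for each $i$ choose a cover $\{\mathrm{Spec}(C_{ik})\to X\times_Y\mathrm{Spec}(A_i)\}$ and a cover $\{\mathrm{Spec}(A_{ij})\to\mathrm{Spec}(A_i)\}$ such that each $\mathrm{Spec}(C_{ik})\to\mathrm{Spec}(A_i)$ factors through some $\mathrm{Spec}(A_{ij})$ and each $A_{ij}\to C_{ik}$ is derived strong on algebras.

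The key observation is then that $\{\mathrm{Spec}(C_{ik})\to X\}$ (composing $\mathrm{Spec}(C_{ik})\to X\times_Y\mathrm{Spec}(A_i)\to X$) is an atlas of $X$: each $\mathrm{Spec}(C_{ik})\to X\times_Y\mathrm{Spec}(A_i)$ is an open immersion (or more generally in $\mathbf{P}$), each $X\times_Y\mathrm{Spec}(A_i)\to X$ is an open immersion being the base change of the open immersion $\mathrm{Spec}(A_i)\to Y$, and the union of the $\mathrm{Spec}(A_i)$ covers $Y$ hence the $X\times_Y\mathrm{Spec}(A_i)$ cover $X$. Now for the composite $\mathrm{Spec}(C_{ik})\to Y$: it factors as $\mathrm{Spec}(C_{ik})\to\mathrm{Spec}(A_{ij})\to\mathrm{Spec}(A_i)\to Y$. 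Since $\mathrm{Spec}(A_{ij})\to\mathrm{Spec}(A_i)$ is an open immersion of derived affinoids over $\mathrm{Spec}(A_i)$, the map $A_i\to A_{ij}$ is a derived strong homotopy epimorphism (rational localisations are derived strong), and $\mathrm{Spec}(A_i)\to Y$ has affinoid source; but what we actually need is that $\mathrm{Spec}(C_{ik})\to Y$ is derived strong \emph{as a map of analytic spaces}, and for this it suffices to exhibit, for the source affinoid $C_{ik}$, a cover of $Y$ by affinoids through which it factors with derived strong algebra maps. The map $A_{ij}\to C_{ik}$ is derived strong by construction, so the cover $\{\mathrm{Spec}(A_{ij})\to Y\}_{i,j}$ (together with $\{\mathrm{Spec}(C_{ik})\to X\}_{i,k}$) witnesses that $X\to Y$ is derived strong, by the definition of derived strong for morphisms of analytic spaces --- which is exactly the previous lemma applied to the atlas $\{\mathrm{Spec}(C_{ik})\to X\}$ once we know each $\mathrm{Spec}(C_{ik})\to Y$ is derived strong.

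Thus the proof amounts to: (1) produce the atlas of $X$ from the given data, (2) verify each $\mathrm{Spec}(C_{ik})\to Y$ is a derived strong map of analytic spaces using that $A_{ij}\to C_{ik}$ is derived strong and $\{\mathrm{Spec}(A_{ij})\to Y\}$ is a cover, and (3) invoke the previous lemma. I expect the only mildly delicate point to be step (2): one must check that the composite $A_i\to A_{ij}\to C_{ik}$ (or rather the relevant factorisation through the \emph{ambient} affinoid chart of $Y$) is genuinely derived strong --- but since $A_i\to A_{ij}$ is a rational localisation hence derived strong, and composition of derived strong maps with the second one derived strong along a homotopy epimorphism is again derived strong (by the transversality argument already used, cf. \cite{ben2024perspective}*{Proposition 2.3.87}), this goes through. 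No new ideas beyond the preceding two lemmas are required.

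\begin{proof}
Fix a cover $\{\mathrm{Spec}(A_{i})\rightarrow Y\}$ such that each $f_{i} \colon X\times_{Y}\mathrm{Spec}(A_{i})\rightarrow\mathrm{Spec}(A_{i})$ is derived strong. For each $i$, choose atlases $\{\mathrm{Spec}(C_{ik})\rightarrow X\times_{Y}\mathrm{Spec}(A_{i})\}$ and $\{\mathrm{Spec}(A_{ij})\rightarrow\mathrm{Spec}(A_{i})\}$ such that every $\mathrm{Spec}(C_{ik})\rightarrow\mathrm{Spec}(A_{i})$ factors through some $\mathrm{Spec}(A_{ij})$ and the map $A_{ij}\rightarrow C_{ik}$ is derived strong. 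Since each $\mathrm{Spec}(A_{i})\rightarrow Y$ is an open immersion, its base change $X\times_{Y}\mathrm{Spec}(A_{i})\rightarrow X$ is an open immersion, and as the $\mathrm{Spec}(A_{i})$ cover $Y$ the fibre products $X\times_{Y}\mathrm{Spec}(A_{i})$ cover $X$. Hence $\{\mathrm{Spec}(C_{ik})\rightarrow X\}_{i,k}$ is an atlas of $X$.

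Now for each fixed $i$ the composite $\mathrm{Spec}(C_{ik})\rightarrow X\times_{Y}\mathrm{Spec}(A_{i})\rightarrow X\rightarrow Y$ factors as $\mathrm{Spec}(C_{ik})\rightarrow\mathrm{Spec}(A_{ij})\rightarrow\mathrm{Spec}(A_{i})\rightarrow Y$. Since $A_{i}\rightarrow A_{ij}$ is a rational localisation, it is a derived strong homotopy epimorphism, and combined with the fact that $A_{ij}\rightarrow C_{ik}$ is derived strong we conclude, by \cite{ben2024perspective}*{Proposition 2.3.87}, that $A_{i}\rightarrow C_{ik}$ is derived strong. Therefore the covers $\{\mathrm{Spec}(A_{i})\rightarrow Y\}$ and $\{\mathrm{Spec}(C_{ik})\rightarrow X\}$ witness that each composite $\mathrm{Spec}(C_{ik})\rightarrow Y$ is a derived strong map of analytic spaces.

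Since $\{\mathrm{Spec}(C_{ik})\rightarrow X\}$ is an atlas of $X$ along which $X\rightarrow Y$ restricts to derived strong maps, the previous lemma shows that $f \colon X\rightarrow Y$ is derived strong.
\end{proof}
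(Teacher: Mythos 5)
Your argument reaches the right conclusion, but it takes a detour that forces you to justify two claims the hypotheses do not actually give you. First, you assert that each $\mathrm{Spec}(A_i)\to Y$ is an open immersion; the lemma only says these maps form a cover (in whatever topology is in force), so this is not warranted — though it is also not needed, since $\{X\times_Y\mathrm{Spec}(A_i)\to X\}$ is a cover simply as the base change of a cover, and composites of covers are covers. Second, and more substantively, your route through the preceding lemma requires each $\mathrm{Spec}(C_{ik})\to Y$ to be derived strong, which you witness with the cover $\{\mathrm{Spec}(A_i)\to Y\}$; this forces you to prove $A_i\to C_{ik}$ is derived strong, which in turn needs $A_i\to A_{ij}$ to be derived strong. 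You justify that by declaring $A_i\to A_{ij}$ a rational localisation, but the definition of a derived strong morphism of spaces only provides $\{\mathrm{Spec}(A_{ij})\to\mathrm{Spec}(A_i)\}$ as \emph{some} cover, with no derived strength imposed on its members. (In the concrete rigid and dagger contexts of the paper such covers do happen to be derived strong, so your argument is salvageable there, but it is not forced by the hypotheses.)

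Both issues disappear if you argue directly from the definition: the families $\{\mathrm{Spec}(C_{ik})\to X\}_{i,k}$ and $\{\mathrm{Spec}(A_{ij})\to Y\}_{i,j}$ are covers (composites of covers), each $\mathrm{Spec}(C_{ik})\to Y$ factors through the $\mathrm{Spec}(A_{ij})$ furnished by derived strength of $X\times_Y\mathrm{Spec}(A_i)\to\mathrm{Spec}(A_i)$, and $A_{ij}\to C_{ik}$ is derived strong by construction. That is precisely the data demanded by the definition of a derived strong morphism, so no appeal to the previous lemma, to composition of derived strong algebra maps, or to any property of the maps $A_i\to A_{ij}$ is required. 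The paper records the proof only as ``straightforward,'' and this direct reading of the definition is presumably what is intended.
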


\begin{proof}
    Straightforward.
\end{proof}

\begin{example}
    Let $X$ be a derived analytic space and $U\rightarrow X$ an open immersion. Then this morphism is derived strong.
\end{example}

\begin{lemma}\label{lem:strongtruncationdan}
    Let $X\rightarrow Y$ be a derived strong map of derived analytic spaces. Then the map $t_{\le0}X\rightarrow X\times_{Y}t_{\le0}Y$ is an equivalence.
\end{lemma}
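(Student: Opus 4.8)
The statement is local on both $X$ and $Y$, so by the definition of a derived strong morphism of analytic spaces I would first reduce to the affine case. Choose atlases $\{\mathrm{Spec}(B_j) \to X\}$ and $\{\mathrm{Spec}(A_i) \to Y\}$ realising derived strength, so that each $\mathrm{Spec}(B_j) \to Y$ factors through some $\mathrm{Spec}(A_i)$ with $A_i \to B_j$ derived strong in the algebraic sense. Since the truncation functor $(-)_0$ extends $\mathrm{Spec}(A) \mapsto \mathrm{Spec}(\pi_0(A))$ and commutes with the relevant pullbacks, and since fibre products of affinoids (resp. dagger affinoids, resp. derived admissible adic algebras) are computed by completed tensor products, it suffices to prove the following affine statement: if $A \to B$ is a derived strong map of derived affinoids (or dagger affinoids, or admissible adic algebras), then the natural map $\mathrm{Spec}(\pi_0(B)) \to \mathrm{Spec}(B) \times_{\mathrm{Spec}(A)} \mathrm{Spec}(\pi_0(A))$ is an equivalence, i.e. $\pi_0(B) \simeq B \haotimes_A^{\mathbb{L}} \pi_0(A)$.

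The key input is \cite{ben2024perspective}*{Proposition 2.3.87}, which says precisely that for a derived strong map $A \to B$, the base change $\pi_0(B) \simeq \pi_0(A) \haotimes_A^{\mathbb{L}} B$ — this is exactly the content of the affine claim after rewriting the tensor product. So the first main step is to invoke this proposition; the affine case is then immediate. The second main step is the descent/gluing argument: given the atlas data above, one has $\pi_n(B_j) \cong \pi_0(B_j) \otimes^{\mathbb{L}}_{\pi_0(A)} \pi_n(A)$ by algebraic derived strength of $A_i \to B_j$ (noting $A \to B_j$ is derived strong as a composite of derived strong maps, the maps $B \to B_j$ and $A \to A_i$ being rational localisations hence derived strong by \cite{ben2024perspective}*{Proposition 2.6.160}), and also $\pi_n(B_j) \cong \pi_0(B_j) \otimes^{\mathbb{L}}_{\pi_0(B)} \pi_n(B)$. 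Comparing these two descriptions across the cover and using descent for quasi-coherent sheaves (Corollary \ref{cor:QCoh-descent}), one concludes the isomorphism $t_{\le 0} X \simeq X \times_Y t_{\le 0} Y$ globally. This mirrors the argument already given just above in the excerpt for the statement that $\mathrm{Spec}(B) \to \mathrm{Spec}(A)$ is derived strong as a map of analytic spaces iff $A \to B$ is derived strong as a map of algebras.

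More cleanly, one can bypass the explicit cover bookkeeping by observing that both sides of $t_{\le 0} X \to X \times_Y t_{\le 0} Y$ are determined by their restrictions to affines over $Y$: for any $\mathrm{Spec}(A) \to Y$ and any factorisation through an atlas element, pulling back reduces the equivalence to the affine statement, which holds by the first step. Since being an equivalence of analytic spaces is local on the target, this suffices.

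\textbf{Main obstacle.} The only real subtlety is making sure the affine reduction is legitimate — namely that the truncation functor $X \mapsto X_0$ and the formation of fibre products interact correctly with covers (one needs that pullbacks of atlas covers remain covers, which holds because the relevant topologies are strong in the sense of Definition \ref{def:relative-geometry-tuple}, and that the fibre product of two affinoids over an affinoid is again an affinoid of the appropriate type, which is \cite{soor2024derived}*{Lemma 2.3} in the rigid case and Proposition \ref{prop:strongadc} in the formal case). Once the problem is affine, everything follows formally from \cite{ben2024perspective}*{Proposition 2.3.87}, so the proof is short.
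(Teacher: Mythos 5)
Your proposal is correct and follows essentially the same route as the paper, whose entire proof is that the question is local and reduces to the affine statement in \cite{ben2024perspective} (the relevant affine input being \cite{ben2024perspective}*{Proposition 2.3.87}, exactly as you identify). The extra care you take in justifying the affine reduction and the gluing is sound but not something the paper spells out.
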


\begin{proof}
This question is local, and reduces to the result for affines \cite{ben2024perspective}.
\end{proof}

\begin{lemma}
    A morphism $f:X\rightarrow Y$ of derived analytic spaces is smooth (resp. \'{e}tale) if and only if
    \begin{enumerate}
        \item 
        it is derived strong,
        \item 
        $t_{\le0}f: t_{\le0}X\rightarrow t_{\le0 }Y$ is an \'{e}tale (resp. smooth) map of discrete analytic spaces.
    \end{enumerate}
\end{lemma}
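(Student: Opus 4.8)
The statement is a gluing/locality result: smoothness (resp.\ \'etaleness) of a morphism of derived analytic spaces is equivalent to derived strength plus smoothness (resp.\ \'etaleness) on truncations. The plan is to reduce everything to the affine case, where the corresponding statement is already available: for derived affinoids it is Corollary \ref{cor:standard-etale-characterisation} (characterising standard \'etale maps), and the smooth case follows by combining this with the factorisation definition of $\mathrm{T}$-smooth maps through $A \to A \hat\otimes^{\mathbb{L}} \mathrm{T}(\lambda_1,\ldots,\lambda_n)$, using that $\mathrm{T}(\lambda) = K\gen{r^{-1}x}$ is strongly flat and hence that derived strength is preserved under such base extensions. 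The formal admissible case is handled analogously using Lemma \ref{lem:etale-pi_0} and the rig-\'etale characterisation of Lemma \ref{lemma:strongrigetale}.

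\emph{Step 1 (necessity).} Suppose $f\colon X \to Y$ is smooth (resp.\ \'etale). By definition there are atlases $\{\mathsf{Spec}(A_i) \to Y\}$ and, for each $i$, an atlas $\{\mathsf{Spec}(B_{ij}) \to X \times_Y \mathsf{Spec}(A_i)\}$ such that each $\mathsf{Spec}(B_{ij}) \to \mathsf{Spec}(A_i)$ is standard $\mathrm{T}$-smooth (resp.\ standard open $\mathrm{T}$-smooth/$\mathrm{T}$-standard \'etale). In the \'etale case, Corollary \ref{cor:standard-etale-characterisation} says each $A_i \to B_{ij}$ is derived strong and $\pi_0(A_i) \to \pi_0(B_{ij})$ is a discrete \'etale map of affinoids; in the smooth case the factorisation $A_i \to A_i\hat\otimes^{\mathbb{L}}K\gen{r^{-1}x_1,\ldots,r^{-1}x_k} \to B_{ij}$ has first map derived strong (since $K\gen{r^{-1}x}$ is flat, so this is a base extension by a flat algebra and strength is preserved) and second map derived strong by Corollary \ref{cor:standard-etale-characterisation}; composition of derived strong maps is derived strong. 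Thus $f$ is derived strong in the sense of the definition just given. For the truncation statement, apply Lemma \ref{lem:strongtruncationdan}: derived strength of $f$ gives $t_{\le0}X \simeq X \times_Y t_{\le0}Y$, and then $t_{\le0}f$ is, \'etale-locally on $t_{\le0}Y$, the truncation of a standard smooth (resp.\ standard \'etale) map of affinoids, hence smooth (resp.\ \'etale) in the classical sense.

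\emph{Step 2 (sufficiency).} Conversely, assume $f$ is derived strong and $t_{\le0}f$ is smooth (resp.\ \'etale). Work locally: choose atlases $\{\mathsf{Spec}(A_i) \to Y\}$, $\{\mathsf{Spec}(B_{ij}) \to X\}$ witnessing derived strength, so that $A_i \to B_{ij}$ is a derived strong map of derived affinoids. By Lemma \ref{lem:strongtruncationdan} and the hypothesis, $\pi_0(A_i) \to \pi_0(B_{ij})$ is, after passing to a further rational refinement, a classical smooth (resp.\ \'etale) map of discrete affinoids. In the \'etale case Corollary \ref{cor:standard-etale-characterisation} immediately upgrades this to: $A_i \to B_{ij}$ is $\mathrm{T}$-standard \'etale, so $f$ is \'etale. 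In the smooth case, factor $\pi_0(B_{ij}) \cong \pi_0(A_i)\gen{r^{-1}x_1,\ldots,r^{-1}x_k}/(g_1,\ldots,g_m)$ with the relevant Jacobian a unit; lift the polynomial coordinates to get a factorisation $A_i \to A_i \hat\otimes^{\mathbb{L}} K\gen{r^{-1}x_\bullet} \to B_{ij}$ where the second map is derived strong (by strength of $A_i \to B_{ij}$ and of $A_i \to A_i\hat\otimes^{\mathbb{L}}K\gen{r^{-1}x_\bullet}$, via the $2$-out-of-$3$ property) and \'etale on $\pi_0$, hence $\mathrm{T}$-standard \'etale by Corollary \ref{cor:standard-etale-characterisation}; thus $A_i \to B_{ij}$ is standard $\mathrm{T}$-smooth and $f$ is smooth.

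\emph{Main obstacle.} The delicate point is the $\pi_0$-to-derived lifting in the smooth case of Step 2: one must produce the coordinate factorisation $A_i \to A_i\hat\otimes^{\mathbb{L}}\mathrm{T}(\lambda_\bullet) \to B_{ij}$ at the derived level compatibly with the chosen classical one on $\pi_0$, and check the second leg is derived strong. This is where the $2$-out-of-$3$ property of derived strength and the flatness (strong flatness) of Tate/Washnitzer algebras are essential, and where one may need to pass to a rational refinement of the atlas (as in the proof of Lemma \ref{lem:classical-etale}) to arrange a formal model in the nonarchimedean case. Once the affine statement is secured, the globalisation is a routine atlas-chasing argument using that the classes $\mathbf{P}^{sm}$, $\mathbf{P}^{o\text{-}sm}$ and derived strength are all local for $\tau^{rat}$ (resp.\ $\tau^{\'et}$), together with Lemma \ref{lem:strongtruncationdan}.
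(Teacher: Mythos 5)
Your proposal is correct and follows essentially the same route as the paper: the paper's proof is simply ``this is a local question, and so reduces to the case for affines,'' citing the affine statement from the reference \cite{ben2024perspective}. You supply the affine case yourself (via Corollary \ref{cor:standard-etale-characterisation}, Lemma \ref{lem:strongtruncationdan}, and the lifting argument modelled on Lemma \ref{lem:classical-etale}), which is a legitimate fleshing-out of exactly the reduction the paper performs.
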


\begin{proof}
This is again a local question, and so reduces to the case for affines \cite{ben2024perspective}. \qedhere
\comment{
    Suppose $f$ is \'{e}tale. Pick atlases $U_{i}=\mathrm{Spec}(B_{i})\rightarrow X$ and $V_{j}=\mathrm{Spec}(A_{j})\rightarrow Y$ such that $f$ lifts to \'{e}tale maps $f_{i}:U_{i}\rightarrow V_{j(i)}$ of derived affinoids. Then we have each $f_{i}$ is derived strong, and each $t_{\le0}f_{i}$ is an \'{e}tale map of discrete analytic spaces. Now $t_{\le0}U_{i}\rightarrow t_{\le0}X$ and $t_{\le0}V_{j}\rightarrow t_{\le0}Y$ are atlases for $t_{\le0}X$ and $t_{\le0} Y$ respectively. Moreover $t_{\le0}f_{i}$ are lifts of $t_{\le0}f$. Thus $t_{\le0}f$ is an \'{e}tale map of discrete analytic spaces. Moreover we have $(t_{\le0}f_{i})^{*}\pi_{n}(\mathcal{O}_{v_{i}})\cong$

    Conversely suppose that $f$ is derived strong and $t_{\le0}f: t_{\le0}X\rightarrow t_{\le0 }Y$ is an \'{e}tale map of discrete analytic spaces. 
    }
\end{proof}

\begin{lemma}\label{lem:smootheffepi}
    Let $f:X\rightarrow Y$ be a smooth morphism of derived rigid analytic spaces. Then $f$ is an effective epimorphism if and only if
        $t_{\le0}X\rightarrow t_{\le0}Y$ is a smooth effective epimorphism.
\end{lemma}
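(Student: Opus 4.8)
\textbf{Proof plan for Lemma \ref{lem:smootheffepi}.}

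The plan is to reduce the statement to the corresponding fact about quasi-coherent sheaves, namely that a smooth cover is detected after applying $\mathbf{QCoh}$, combined with the compatibility of $\mathbf{QCoh}$ and the truncation functor $t_{\le 0}$. First I would observe that the question of whether $f\colon X\to Y$ is an effective epimorphism may be checked locally on $Y$, so we may assume $Y=\mathrm{Spec}(A)$ is a derived affinoid; since $f$ is smooth (and in particular representable) and smooth maps are closed under pullback, we may also choose a smooth atlas $\{U_i=\mathrm{Spec}(B_i)\to X\}$ such that each composite $\mathrm{Spec}(B_i)\to\mathrm{Spec}(A)$ is standard smooth, i.e. factors as $A\to A\hat{\otimes}^{\mathbb{L}}T(\underline{\lambda})\to B_i$ with the second map $\mathrm{T}$-standard \'etale. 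In particular each $A\to B_i$ is derived strong, so by the previous lemma the truncation of the cover $\{t_{\le 0}U_i\to t_{\le 0}X\}$ is, after base change along $A\to\pi_0(A)$, again a standard smooth atlas of $t_{\le 0}X$ over $\mathrm{Spec}(\pi_0(A))$ using Lemma \ref{lem:strongtruncationdan}.

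The forward direction is the easier one: if $f$ is an effective epimorphism then $\coprod_i U_i\to Y$ is an effective epimorphism, hence (by restricting to any affine mapping to $t_{\le 0}Y$ and using that smooth maps of affinoids which are covers stay covers under the base change $A\to\pi_0(A)$, by derived strength) the maps $\{t_{\le 0}U_i\to t_{\le 0}Y\}$ cover, so $t_{\le 0}X\to t_{\le 0}Y$ is a smooth effective epimorphism. For the converse, the key point is to show that given any map $W=\mathrm{Spec}(C)\to Y$ from a derived affinoid, there is a $\tau$-cover $\{W_k\to W\}$ such that each $W_k\to Y$ lifts through $X\to Y$. Since $C$ is connective and $\pi_0$-surjectivity only sees $\pi_0$, the hypothesis that $t_{\le 0}X\to t_{\le 0}Y$ is a smooth effective epimorphism gives, after pulling back along $\pi_0(C)=\mathrm{Spec}(\pi_0(C))\to t_{\le 0}Y$, a cover $\{\mathrm{Spec}(\overline{C}_k)\to\mathrm{Spec}(\pi_0(C))\}$ with each $\overline{C}_k$ factoring through $t_{\le 0}X$. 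Now I would use that the relevant $\tau$-covers are given by $\mathrm{T}$-standard smooth maps, which are formally \'etale over a polynomial (power series) extension, to lift each $\mathrm{Spec}(\overline{C}_k)\to\mathrm{Spec}(\pi_0(C))$ uniquely up to contractible choice to a smooth map $\mathrm{Spec}(C_k)\to\mathrm{Spec}(C)$ with $\pi_0(C_k)\cong\overline{C}_k$ (the same lifting argument used in the proof of the analytic-subspace lifting lemmas, e.g. the argument following Lemma \ref{lem:closedimlift} and in Corollary \ref{cor:standard-etale-characterisation}), and similarly lift the factorisation $\overline{C}_k\to\pi_0(\mathcal{O}_X)$ along the smooth — hence formally \'etale over a polynomial extension — structure, to obtain a map $\mathrm{Spec}(C_k)\to X$ over $Y$. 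Since the $\{\mathrm{Spec}(C_k)\to\mathrm{Spec}(C)\}$ form a $\tau$-cover (they cover on $\pi_0$ and each is smooth, hence the family is a cover by the definition of the smooth topology together with local-on-$\pi_0$ detection of covers), this exhibits $f$ as an effective epimorphism.

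The main obstacle I expect is the lifting step in the converse: namely producing, from the discrete factorisation $\mathrm{Spec}(\overline{C}_k)\to t_{\le 0}X$, a genuine derived map $\mathrm{Spec}(C_k)\to X$ over $Y$, coherently enough that the resulting family is still a $\tau$-cover. This requires knowing that smooth morphisms into $X$ are controlled by their truncations — which is exactly the content of the deformation-theoretic rigidity for formally \'etale maps over polynomial/power-series extensions established in \cite{ben2024perspective} and invoked above — and that $X$ itself, being glued from derived affinoids along derived strong open immersions, has well-behaved enough descent that these local lifts glue. Everything else is a routine reduction to the affinoid case plus the already-proved compatibility $t_{\le 0}(X\times_Y W)\simeq t_{\le 0}X\times_{t_{\le 0}Y}t_{\le 0}W$ for derived strong maps (Lemma \ref{lem:strongtruncationdan}).
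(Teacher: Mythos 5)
Your overall strategy coincides with the paper's: reduce to a map $U=\mathrm{Spec}(A)\to Y$ from a derived affinoid, use the hypothesis on truncations to produce a cover of $t_{\le 0}U$ by discrete affinoids factoring through $t_{\le 0}X\times_{t_{\le 0}Y}t_{\le 0}U\simeq t_{\le 0}(U\times_Y X)$, lift that cover to a cover of $U$ by derived affinoids (the Postnikov-extension construction from Lemma \ref{lem:classical-etale}), and then lift the factorisation through $X$. Two remarks. First, for the forward direction the paper has a one-line argument you could have used: smoothness implies derived strength, so $t_{\le 0}X\simeq X\times_Y t_{\le 0}Y$ (Lemma \ref{lem:strongtruncationdan}), and an effective epimorphism pulled back along $t_{\le 0}Y\to Y$ stays an effective epimorphism; your atlas-chasing is correct but unnecessary.

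Second, and more importantly, your justification of the key lifting step in the converse is off. The map $V^0\to t_{\le 0}(U\times_Y X)$ that you need to extend to $V\to U\times_Y X$ is an \emph{arbitrary} section of the smooth morphism $U\times_Y X\to U$ over the truncation; it is not formally \'etale, so the ``uniqueness of lifts up to contractible choice for formally \'etale maps'' that you invoke (and which is what powers, e.g., Corollary \ref{cor:standard-etale-characterisation}) does not apply. What is needed here is an \emph{existence} statement: climbing the Postnikov tower of $V$, one must kill the obstruction classes to extending the section along each square-zero extension, and these vanish because $U\times_Y X\to U$ is smooth (its relative cotangent complex is projective). Since $U\times_Y X$ is a scheme rather than an affinoid, one also needs the existence of an obstruction theory for such gluings; the paper supplies this by citing \cite{kelly2022analytic}*{Proposition 8.12}. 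You correctly flag this lifting step as the main obstacle, but attribute its resolution to the wrong deformation-theoretic input; with the obstruction-theory citation substituted for the formal-\'etaleness rigidity, your argument matches the paper's.
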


\begin{proof}
  Since $f$ is smooth it is derived strong.

First suppose that $f$ is an effective epimorphism.
The fibre-product $t_{\le0}X\cong X\times_{Y}t_{\le0}Y\rightarrow t_{\le0}Y$ is a smooth effective epimorphism.

Conversely suppose that $t_{\le0} f$ is a smooth effective epimorphism. Let $U\rightarrow Y$ be a morphism with $U=\mathrm{Spec}(A)$ a derived affinoid. Consider the map on truncations $t_{\le0}U\rightarrow t_{\le0} Y$. We may find an \'{e}tale cover $\{V^{0}\rightarrow t_{\le0} U\}$, with $V^{0}$ discrete, such that $V^{0}\rightarrow t_{\le0}Y$ lifts to a map $V^{0}\rightarrow t_{\le0} X$. Equivalently we have a map $V^{0}\rightarrow t_{\le0}U\times_{t_{\le0}Y}t_{\le0}X\cong t_{\le0}(U\times_{Y}X)$.  As in Proposition \ref{lem:classical-etale} we may lift $V^{0}\rightarrow t_{\le0}U$ to an \'{e}tale cover of derived affinoids $g:V\rightarrow U$. We need to show that we can lift the map $V^{0}\rightarrow t_{\le0}U\times_{t_{\le0}Y}t_{\le0}X\cong t_{\le0}(U\times_{Y}X)$ to a map $V\rightarrow U\times_{Y}X$. Now $U\times_{Y}X\rightarrow U$ is smooth. Since $U\times_{Y}X$ is a scheme, it has an obstruction theory by \cite{kelly2022analytic} 
Proposition 8.12. We may then inductively climb the Postnikov tower of $V$ to lift
$t_{\le0}V\cong V^{0}\rightarrow t_{\le0} U$ to a map $V\rightarrow U\times_{Y}X$, as required. 
\end{proof}

\begin{corollary}
    Let $f:X\rightarrow Y$ be a smooth morphism of derived rigid analytic spaces. Then $f$ is an effective epimorphism if and only if $|f|_{Ber}:|X|_{Ber}\rightarrow |Y|_{Ber}$ is.
\end{corollary}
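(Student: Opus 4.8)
The plan is to reduce the statement to the previous lemma, Lemma~\ref{lem:smootheffepi}, by showing that for smooth maps of derived rigid analytic spaces being an effective epimorphism is equivalent to surjectivity of the induced map on Berkovich points. First I would observe that $f$ being smooth implies it is derived strong, so that by Lemma~\ref{lem:strongtruncationdan} the square
\begin{displaymath}
\xymatrix{
t_{\le0}X\ar[r]\ar[d] & X\ar[d]^{f}\\
t_{\le0}Y\ar[r] & Y
}
\end{displaymath}
is a pullback, and $|f|_{Ber}$ is identified with $|t_{\le0}f|_{Ber}$ via the homeomorphisms $|X|_{Ber}\cong|t_{\le0}X|_{Ber}$ and $|Y|_{Ber}\cong|t_{\le0}Y|_{Ber}$ from the Corollary following the lift-of-open-immersions lemma (the underlying-space functor only sees $t_{\le0}$). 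By Lemma~\ref{lem:smootheffepi} it therefore suffices to prove that a smooth morphism $g\colon X^0\to Y^0$ of \emph{discrete} rigid analytic spaces is an effective epimorphism of stacks if and only if $|g|_{Ber}\colon|X^0|_{Ber}\to|Y^0|_{Ber}$ is surjective.

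For that last equivalence I would argue locally on $Y^0$, so assume $Y^0=\mathrm{Spec}(A)$ is a discrete affinoid; then $g$ factors, after passing to an admissible (rational) cover, through standard smooth maps, i.e. compositions $A\to A\gen{x_1,\dotsc,x_k}\to B$ with the second map standard \'etale. The forward direction is immediate: an effective epimorphism remains one after the base change along any $\mathrm{Spec}(\bar\kappa(y))\to\mathrm{Spec}(A)$ for a Berkovich point $y$ (corresponding to a bounded character $A\to\bar\kappa(y)$ to a Banach valued field), hence $B\hat\otimes_A\bar\kappa(y)\ne 0$ and its Berkovich spectrum is nonempty, producing a preimage of $y$. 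For the converse, if $|g|_{Ber}$ is surjective then it is in particular a flat surjection of affinoids (smoothness gives algebraic flatness by the flatness of Tate algebras and \cite{de1996etale}*{Observation 3.1.1}); by Theorem~\ref{thm:refetalecoverequivalent}(1) such $g$ is an algebraically faithfully flat cover, and then the faithfully flat descent lemma for $\mathbf{QCoh}$ proved above (reducing rationally locally to an adically faithfully flat, hence descendable, formal model via \cite{MR4735655}*{Lemma 4.4} and \cite{MR1225983}*{Theorem 4.1}) shows $g$ satisfies $\mathbf{QCoh}$-descent; together with the fact that $|g|_{Ber}$ is surjective this upgrades to $g$ being an effective epimorphism of stacks, exactly as in the proof of Lemma~\ref{lem:smootheffepi} where the atlas lifting is done by climbing the Postnikov tower using the obstruction theory of \cite{kelly2022analytic}*{Proposition 8.12}.

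The main obstacle I anticipate is the precise matching of the two notions of ``surjective'' on Berkovich spaces with ``effective epimorphism of stacks'' in the non-affine, derived setting: one must be careful that a cover $\{V_i\to X^0\}$ which is Berkovich-surjective and through which $g$ admits local sections really does give an effective epimorphism of the \emph{stacks} $X^0\to Y^0$, and not merely a surjection on points. This is handled by the same mechanism as in Lemma~\ref{lem:smootheffepi} --- work affinoid-locally on $Y$, produce discrete \'etale covers $V^0$ lifting to $t_{\le0}(U\times_Y X)$ using that Berkovich-surjectivity of an \'etale-after-rational-refinement map is exactly the criterion of Theorem~\ref{thm:refetalecoverequivalent}(2), then lift $V^0$ to a derived \'etale cover $V\to U$ via Lemma~\ref{lem:classical-etale} and lift the section along the smooth, hence obstruction-theoretic, morphism $U\times_Y X\to U$. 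Once this is in place the equivalence in the statement follows formally by assembling the chain $f\text{ eff.\ epi}\iff t_{\le0}f\text{ eff.\ epi}\iff |t_{\le0}f|_{Ber}\text{ surjective}\iff |f|_{Ber}\text{ surjective}$.
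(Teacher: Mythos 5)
Your proposal is correct and follows essentially the same route as the paper, whose proof is just the citation of Lemma \ref{lem:smootheffepi}, Theorem \ref{thm:refetalecoverequivalent}, and the homeomorphism $|X_0|\cong|X|$ — exactly the three ingredients you assemble. The only step stated loosely is the converse in the discrete case: faithful flatness plus $\mathbf{QCoh}$-descent does not by itself yield an effective epimorphism of stacks for the \'etale topology (one genuinely needs \'etale-local sections of the smooth surjection), but you do also invoke the correct mechanism — the criterion of Theorem \ref{thm:refetalecoverequivalent} combined with the Postnikov/obstruction-theoretic lifting from the proof of Lemma \ref{lem:smootheffepi} — so the argument goes through.
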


\begin{proof}
This follows immediately from Lemma \ref{lem:smootheffepi}, Theorem \ref{thm:refetalecoverequivalent}, and Corollary \ref{cor:equivalenttruncation}.
\comment{
Suppose $|f|_{Ber}$ is a surjection. Since the underlying topological space of a derived analytic space coincides with the underlying space of its classical truncation, we may assume by Lemma \ref{lem:smootheffepi} that $X$ and $Y$ are discrete. Moreover the question is local so we may assume that $X\cong\mathrm{Spec}(B)$ and $Y\cong\mathrm{Spec}(A)$ are affinoids. Now by \cite{soor2024derived}*{Remark 2.28 (iv)}, $|X|$ and $|Y|$ are the Huber spectra of $B$ and $A$ respectively. Hence we may use
Lemma \ref{lem:faithfullyflathuber}.
}
\end{proof}

\comment{
\begin{theorem}
    Let $f:\mathcal{X}\rightarrow\mathcal{Y}$ be an effective epimorphism . Then $\mathbf{QCoh}$ satisfies  descent for $i(f):i(\mathcal{X})\rightarrow i(\mathcal{Y})$ 
\end{theorem}

\begin{proof}
By picking an atlas in the $G$-topology we may assume that $\mathcal{Y}=U=\mathrm{Spec}(A)\}$ is affinoid. Pick an atlas in the $G$-topology $\{U_{i}\rightarrow\mathcal{X}\}_{i\in\mathcal{I}}$. The composite cover $\{U_{i}=\mathrm{Spec}(A_{i})\rightarrow U\}_{i\in\mathcal{I}}$ is an \'{e}tale cover of affinoids by affinoids. In particular we may assume that $\mathcal{I}$ is finite and that $\{\mathrm{Spec}(\prod_{i\in\mathcal{I}}A_{i})\rightarrow\mathrm{Spec}(A)\}$ is a usual \'{e}tale cover of affinoids by affinoids. We know this satisfies descent by Theorem \ref{etaledescentadic}.
\end{proof}
}
\comment{
\begin{theorem}
    Let
    \begin{displaymath}
        \xymatrix{
        W\ar[r]\ar[d] & V\ar[d]^{f}\\
        U\ar[r]^{j} & X
        }
    \end{displaymath}
be a Nisnevich square of derived rigid analytic spaces. Then it is a complete, regular $\mathbf{QCoh}$-descent square.
\end{theorem}

\begin{proof}
    The square is a pullback square by definition. Moreover $\{V\coprod U\rightarrow X\}$ is a universal $\mathbf{QCoh}$-descent morphism by Theorem \ref{thm:adicdescendable}. Now consider the square
      \begin{displaymath}
        \xymatrix{
        W\ar[r]\ar[d] & V\ar[d]\\
        W\times_{U}W\ar[r] & V\times_{X}V
        }
    \end{displaymath}
    Now $W\cong U\times_{X}V$ so $W\times_{U}W\cong V\times_{X}V\times_{X}U$. Thus $V\times_{X}V\times_{X}U\rightarrow V\times_{X}V$ is just the pullback of the map $U\rightarrow X$ along $V\times_{X}V\rightarrow X$ and hence is a monomorphism.

    The real content of this statement is that 
    $$V\coprod V\times_{X}V\times_{X}U\rightarrow V\times_{X}V$$
    is still an \'{e}tale cover. Now $V\times_{X}V\rightarrow X$ is in fact an open immersion of derived rigid analytic spaces. In particular it is derived strong. Thus it suffices to prove that  
        \begin{displaymath}
        \xymatrix{
        t_{0}(W)\ar[r]\ar[d] & t_{0}(V)\ar[d]\\
        t_{0}(W)\times_{t_{0}(U)}t_{0}(W)\ar[r] & t_{0}(V)\times_{t_{0}(X)}t_{0}(V)
        }
    \end{displaymath}
    is a Nisnevich square, which is done in \cite{andreychev2023k}.
\end{proof}
}
\section{Nuclear bornological modules and rigidification}

In this section, we introduce a subcategory of the derived \(\infty\)-category \(\mathbf{D}(R)\) of complete bornological \(R\)-modules that will play a crucial role for the rest of this article. Recall that an object \(M \in \mathsf{Ind}(\mathsf{Ban}_R)\) is called \textit{nuclear} if for any Banach \(R\)-module \(X\), we have an isomorphism \[X^\vee \haotimes M \to \mathsf{Hom}(X,M)\] in \(\mathsf{Ind}(\mathsf{Ban}_R)\), where \(X^\vee = \underline{\mathsf{Hom}}(X,R)\). A morphism \(f \colon M \to N\)  is called \textit{nuclear} or \textit{trace class} if there is a bounded \(R\)-linear map \(R \to M^\vee \haotimes N\) such that \(f\) is the composition \[M \cong R \haotimes M \to M \haotimes M^\vee \haotimes N \to N.\] That is, \(f\) is in the image of the canonical morphism \[\underline{\Hom}(R, M^\vee \haotimes N) \to \underline{\Hom}(M,N).\]

We call a complete bornological \(R\)-module \(M\) \emph{nuclear} if \(\mathsf{diss}(M)\) is nuclear in \(\mathsf{Ind}(\mathsf{Ban}_R)\). This definition - after some technical embellishments - generalises to chain complexes of bornological modules to yield a so-called rigid category of chain complexes of (infinitely) \textit{nuclear modules}. To set things up, we first record the following result:

\begin{lemma}\cite{ben2020fr}*{Lemma 4.14-4.16}\label{lem:nuclear-characracterisation} 
An inductive system \(M \in \mathsf{Ind}(\mathsf{Ban}_R)\) is nuclear if and only if there is a directed set \(I\) such that for each \(i \in I\), there is a \(j \geq i\) such that the structure map \(M_i \to M_j\) is trace-class.
\end{lemma}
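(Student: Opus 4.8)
The statement to be proved is Lemma~\ref{lem:nuclear-characracterisation}, which characterises nuclear inductive systems \(M \in \mathsf{Ind}(\mathsf{Ban}_R)\) in terms of the existence of cofinally trace-class transition maps. Since the excerpt attributes this to \cite{ben2020fr}*{Lemma 4.14--4.16}, my plan is to reconstruct the argument from the definitions rather than appeal to a black box.

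\textbf{Proof proposal.} The plan is to argue both implications directly. First I would fix a representative \(M = ``\varinjlim_{i \in I}"\, M_i\) with \(I\) directed, all \(M_i\) Banach. For the ``if'' direction, suppose that for each \(i \in I\) there is \(j \geq i\) with the structure map \(\iota_{ij} \colon M_i \to M_j\) trace class. The key observation is that trace-class morphisms in a closed symmetric monoidal category are stable under precomposition and postcomposition with arbitrary bounded maps, and that the canonical comparison \(X^\vee \haotimes M \to \underline{\Hom}(X,M)\) is an isomorphism in \(\mathsf{Ind}(\mathsf{Ban}_R)\) as soon as it is an isomorphism ``cofinally'', i.e.\ after passing to a cofinal subsystem. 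Concretely, for a Banach module \(X\), both \(X^\vee \haotimes M\) and \(\underline{\Hom}(X,M)\) are computed levelwise as \(``\varinjlim_i"\, (X^\vee \haotimes M_i)\) and \(``\varinjlim_i"\, \underline{\Hom}(X, M_i)\) respectively (using that \(\haotimes\) commutes with filtered colimits and that \(X\) is compact so \(\underline{\Hom}(X,-)\) commutes with the relevant filtered colimits in \(\mathsf{Ind}(\mathsf{Ban}_R)\)). The comparison map is then induced by the levelwise maps \(X^\vee \haotimes M_i \to \underline{\Hom}(X,M_i)\), and I claim the pro-ind-system factoring through the trace-class maps \(\iota_{ij}\) exhibits this ind-system map as an isomorphism: for each \(i\), the composite \(X^\vee \haotimes M_i \to \underline{\Hom}(X,M_i) \to \underline{\Hom}(X,M_j)\) factors through \(X^\vee \haotimes M_j\) because \(\iota_{ij}\) being trace class means \(\iota_{ij} \in \mathrm{im}(M_i^\vee \haotimes M_j \to \underline{\Hom}(M_i,M_j))\), and this factorisation is exactly what is needed to build a two-sided inverse at the level of the filtered diagram. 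This shows \(M\) is nuclear.

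For the ``only if'' direction, suppose \(M\) is nuclear. I would apply the defining isomorphism \(X^\vee \haotimes M \xrightarrow{\ \sim\ } \underline{\Hom}(X,M)\) with the test object \(X = M_i\) for a fixed \(i \in I\). The identity map \(\mathrm{id}_{M_i}\) composed with the structure map \(M_i \to M\) gives a canonical element of \(\underline{\Hom}(M_i, M) = \varinjlim_j \underline{\Hom}(M_i, M_j)\) (again using compactness of the Banach module \(M_i\)); by the nuclearity isomorphism this element comes from \(M_i^\vee \haotimes M = \varinjlim_j (M_i^\vee \haotimes M_j)\). Since \(M_i^\vee \haotimes M_j\) is a filtered colimit, the preimage is represented at some finite stage \(j \geq i\), i.e.\ there is a bounded map \(R \to M_i^\vee \haotimes M_j\) whose image under \(M_i^\vee \haotimes M_j \to \underline{\Hom}(M_i, M_j)\) is (a map boundedly equivalent to) the structure map \(\iota_{ij} \colon M_i \to M_j\). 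This is precisely the statement that \(\iota_{ij}\) is trace class. Running this over all \(i \in I\) produces the desired cofinally trace-class system.

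\textbf{Main obstacle.} The subtle point — and the part I expect to require the most care — is the bookkeeping around ``representability at a finite stage'' and the interplay between the essentially monomorphic versus general ind-systems: one must be careful that the filtered colimits \(\varinjlim_j \underline{\Hom}(M_i, M_j)\) and \(\varinjlim_j (M_i^\vee \haotimes M_j)\) are genuinely computed as set-theoretic (bounded-linear-map) colimits, which uses that each \(M_i\) is a \emph{Banach} module (hence compact in \(\mathsf{Ind}(\mathsf{Ban}_R)\)) and the \textbf{Admon}-elementary structure from Theorem~\ref{thm:elementary}. A second point of caution is that the factorisation one extracts is only ``up to a cofinal reindexing'' of \(I\); one should either pass to a cofinal sub-poset at the start, or phrase the final statement as: after possibly refining \(I\), every transition map is trace class — which is exactly the form in which the lemma is stated. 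I would also double-check the precise normalisation of the evaluation/coevaluation maps so that the composite \(M \cong R \haotimes M \to M \haotimes M^\vee \haotimes N \to N\) in the definition of trace class matches the element extracted from \(M_i^\vee \haotimes M_j\); this is routine but worth spelling out once.
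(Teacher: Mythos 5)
The paper gives no proof of this lemma — it simply cites \cite{ben2020fr}*{Lemmas 4.14--4.16} — so there is nothing internal to compare against; your reconstruction is the standard interleaving argument from that reference and is essentially correct. Both directions are sound: the ``if'' direction uses that a trace-class transition map \(\iota_{ij}\) induces a factorisation \(\underline{\Hom}(X,M_i)\to X^\vee\haotimes M_j\), yielding mutually inverse maps of ind-systems, and the ``only if'' direction extracts a finite-stage witness from \(\Hom(M_i,M)\cong\varinjlim_j\Hom(M_i,M_j)\) (with the caveat, which you correctly flag, that the witness only agrees with the structure map after passing to some further \(k\geq j\), whence \(\iota_{ik}\) rather than \(\iota_{ij}\) is the trace-class map one obtains).
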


In what follows, we will need a stronger form of nuclearity in which the trace-class maps in the inductive system representing a nuclear module factorise further into trace-class maps. Denote by \(\mathsf{Tr}^\Q\) the collection of bounded maps \(f \colon X_0 \to X_1\) between Banach spaces such that there is a diagram \(F \colon [0,1] \cap \Q \to \mathsf{Ban}_R\) with \(f = F(0) \to F(1)\), and for any \(i< j \in [0,1] \cap \Q\), \(F(i) \to F(j)\) is trace-class.  We shall call such a trace-class map \(f \colon X_0 \to X_1\) \emph{factorisably trace-class}.

\begin{definition}\label{def:infinitely-nuclear}
An \textit{infinitely nuclear} module is an inductive system \(M \in \mathsf{Ind(Ban)}_R\), such that there exists a representing directed set \(I\) where for each \(i \in I\), there is a \(j > i\) such that the structure map \(M_i \to M_j\) is factorisably trace-class. 
\end{definition}

\begin{example}[Disc algebras]\label{ex:disc-algebras}
Let \(r\) and \(\varrho \in \R_{>0}\) be radii. Then the algebra of analytic functions \[\mathcal{O}(D_{<r}^1) = \varprojlim_{\varrho > r} R\gen{\frac{x}{\varrho}}\] on an open disc of radius \(r\), and the dagger algebra of overconvergent analytic functions \[ R\gen{\frac{x}{r}}^\dagger = \varinjlim_{\varrho > r} R\gen{\frac{x}{\varrho}}\] on a disc of radius \(r\) are \(\Q\)-nuclear.  For the dagger algebra this is immediate as the structure maps \[R\gen{\frac{x}{\varrho}}  \to R\gen{\frac{x}{\tau}}\] are trace-class, and one can always find some radius \(\alpha\) between \(\varrho\) and \(\tau\) such the map above factorises as trace-class maps \[R\gen{\frac{x}{\varrho}} \to R\gen{\frac{x}{\alpha}} \to  R\gen{\frac{x}{\tau}}.\] One can do something similar for the algebra of analytic functions, but one first needs to write \(\mathcal{O}((D_{<r}^1)\) as a colimit of disc algebras. This is done in \cite{ben2020fr}*{Corollary 6.9}, and the structure maps are again \(\R^n\) indexed trace-class maps. 
\end{example}

Let \(\mathsf{Nuc}(R)\) and \(\mathsf{Nuc}^{\infty}(R)\) denote the full subcategories of \(\mathsf{Ind}(\mathsf{Ban}_R)\) of nuclear and \(\Q\)-nuclear modules. There are related categories of nuclear and infinitely-nuclear objects in the category \(\bD(R)\). 

\subsection{Nuclear objects in general stable categories}

More generally, this discussion can be repeated in any stable, compactly generated, closed symmetric monoidal $\infty$-category $\mathbf{C}$. Let us fix such a category.

\begin{definition}\label{def:scholze-nuclear}
Let \(\bC\) be a presentable, closed symmetric monoidal stable \(\infty\)-category with compact unit object \(1\). 
\begin{itemize}
\item We call a morphism \(f \colon M \to N\) in \(\bC\) \textit{trace-class} if it is in the image of the canonical map \(\pi_0\mathbf{Map}(R, M^\vee \otimes N) \to \pi_0\mathbf{Map}(M, N)\).
\item We call \(M \in \bC\) \textit{nuclear} if for any compact object \(X\), the canonical map \[\mathbf{Map}(1, \underline{\mathbf{Map}}(X,R) \otimes M) \to \mathbf{Map}(X,M)\] is an equivalence.
\item We call \(M \in \bC\) \textit{strongly nuclear} if for any compact object \(X\), the canonical map \[\underline{\mathbf{Map}}(X,R) \otimes M \to \underline{\mathbf{Map}}(X,M)\] is an equivalence.
\item We call \(M \in \bC\) \textit{basic nuclear} if it is a sequential colimit \[M \simeq \varinjlim_n M_n\] of compact objects of \(\bC\), where the transition maps are trace-class.
\item We call \(M \in \bC\) \(\Q\)-\textit{basic nuclear} if it is a \(\Q\)-colimit as above, with trace-class transition maps.
\end{itemize}
\end{definition}

The following was proven in the one-categorical case in \cite{ben2020fr}*{Lemma 4.15}, and chain complexes of such in \cite{ben2024perspective}*{Corollary 3.1.52}. The same proof as  \cite{ben2020fr}*{Lemma 4.15} works in the compactly generated stable $\infty$-categorical setting as well, as established in \cite{aoki2025very}*{Proposition 7.11}.

\begin{proposition}[\cite{ben2020fr}*{Lemma 4.15},\cite{aoki2025very}*{Proposition 7.11}]
    If $\mathbf{C}$ is compactly generated then nuclear objects and strongly nuclear objects coincide.
\end{proposition}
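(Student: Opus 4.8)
The statement asserts that in a compactly generated, presentable, closed symmetric monoidal stable $\infty$-category $\mathbf{C}$ with compact tensor unit, an object $M$ is nuclear if and only if it is strongly nuclear. The plan is to follow the strategy of \cite{ben2020fr}*{Lemma 4.15}, adapted to the $\infty$-categorical setting exactly as in \cite{aoki2025very}*{Proposition 7.11}. One direction is immediate: if $M$ is strongly nuclear then for every compact $X$ the map $\underline{\mathbf{Map}}(X,R)\otimes M\to\underline{\mathbf{Map}}(X,M)$ is an equivalence, and applying $\mathbf{Map}(1,-)$ (which preserves equivalences) gives that $M$ is nuclear. So the content is in the converse.

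For the converse, first I would reduce the statement about the internal mapping object to a statement about the plain mapping anima, using compact generation. The key observation is that a morphism $\underline{\mathbf{Map}}(X,R)\otimes M\to\underline{\mathbf{Map}}(X,M)$ in $\mathbf{C}$ is an equivalence if and only if it induces an equivalence after applying $\mathbf{Map}(Y,-)$ for every compact generator $Y$; since $\mathbf{C}$ is compactly generated and both sides commute appropriately with the relevant colimits, it suffices to test on a set of compact generators. By the tensor--hom adjunction, $\mathbf{Map}(Y,\underline{\mathbf{Map}}(X,R)\otimes M)$ can be rewritten: using that $X$ is compact its dual $X^\vee=\underline{\mathbf{Map}}(X,R)$ is again compact (here one uses that the unit is compact, so compact objects are closed under duals and tensor), hence $\mathbf{Map}(Y,X^\vee\otimes M)\simeq\mathbf{Map}(Y\otimes X, \dots)$ type manipulations, or more directly $\mathbf{Map}(Y, X^\vee\otimes M)\simeq \mathbf{Map}(1, Y^\vee\otimes X^\vee\otimes M)\simeq\mathbf{Map}(1,(X\otimes Y)^\vee\otimes M)$. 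On the other side, $\mathbf{Map}(Y,\underline{\mathbf{Map}}(X,M))\simeq\mathbf{Map}(Y\otimes X, M)\simeq\mathbf{Map}(X\otimes Y,M)$, and since $X\otimes Y$ is compact, the nuclearity hypothesis applied to the compact object $X\otimes Y$ says precisely that $\mathbf{Map}(1,(X\otimes Y)^\vee\otimes M)\to\mathbf{Map}(X\otimes Y,M)$ is an equivalence. Chasing through these identifications, and checking that the comparison maps are compatible, yields that $\underline{\mathbf{Map}}(X,R)\otimes M\to\underline{\mathbf{Map}}(X,M)$ is an equivalence, i.e.\ $M$ is strongly nuclear.

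The main obstacle, and the step requiring genuine care rather than formal nonsense, is verifying that the various adjunction isomorphisms assemble into a commutative diagram identifying the canonical comparison map for strong nuclearity (tested against $Y$) with the canonical comparison map for nuclearity at the compact object $X\otimes Y$. This naturality bookkeeping is exactly what \cite{aoki2025very}*{Proposition 7.11} carries out in the stable $\infty$-categorical framework, so in the write-up I would invoke that reference for the coherence of these identifications, after spelling out the reduction to compact test objects and the role of compactness of the unit (which guarantees duals of compacts are compact, so that the nuclearity hypothesis is applicable to $X\otimes Y$). The remaining verifications — that $\mathbf{Map}(Y,-)$ detects equivalences on the relevant objects, and that the internal-hom and dual manipulations are valid — are routine consequences of compact generation and the closed symmetric monoidal structure, and I would state them briefly without belaboring the details.
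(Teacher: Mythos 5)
The easy direction (strongly nuclear $\Rightarrow$ nuclear: apply $\mathbf{Map}(1,-)$) and the reduction to testing the comparison map $X^\vee\otimes M\to\underline{\mathbf{Map}}(X,M)$ against a set of compact generators $Y$ are fine. The gap is in the chain of identifications
$\mathbf{Map}(Y,X^\vee\otimes M)\simeq\mathbf{Map}(1,Y^\vee\otimes X^\vee\otimes M)\simeq\mathbf{Map}(1,(X\otimes Y)^\vee\otimes M)$.
The first equivalence asserts $\mathbf{Map}(Y,N)\simeq\mathbf{Map}(1,Y^\vee\otimes N)$ for $N=X^\vee\otimes M$, and the second asserts $(X\otimes Y)^\vee\simeq X^\vee\otimes Y^\vee$; both hold only when the compact objects involved are dualisable, not merely compact. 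In a compactly generated closed symmetric monoidal category with compact unit, compact objects are in general not dualisable --- indeed, if they were, then $X^\vee\otimes M\to\underline{\mathbf{Map}}(X,M)$ would be an equivalence for \emph{every} $M$, every object would be strongly nuclear, and the notion would be vacuous. In the categories this paper cares about ($\mathsf{Ind}(\mathsf{Ban}_R)$ and the derived algebraic contexts built from $l^1$-type generators) the compact projective generators are genuinely non-dualisable: for instance $(l^1\hat{\otimes} l^1)^\vee\simeq\underline{\mathbf{Map}}(l^1,(l^1)^\vee)$ differs from $(l^1)^\vee\hat{\otimes}(l^1)^\vee$. What one actually has is only $(X\otimes Y)^\vee\simeq\underline{\mathbf{Map}}(Y,X^\vee)$, and comparing $\mathbf{Map}(1,\underline{\mathbf{Map}}(Y,X^\vee)\otimes M)$ with $\mathbf{Map}(Y,X^\vee\otimes M)$ is not a formal consequence of the nuclearity of $M$. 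So the ``routine bookkeeping'' you defer to the reference is in fact the false step, not a coherence check.

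The argument the cited sources actually run (and which the paper is invoking) is structurally different. By compact generation, a nuclear $M$ is a filtered colimit $M\simeq\mathrm{colim}_i\,M_i$ of compact objects along trace-class transition maps. A trace-class map $f\colon P\to Q$, classified by a map $1\to P^\vee\otimes Q$, induces for every $X$ a factorization $\underline{\mathbf{Map}}(X,P)\to X^\vee\otimes Q\to\underline{\mathbf{Map}}(X,Q)$, using the evaluation $\underline{\mathbf{Map}}(X,P)\otimes P^\vee\to X^\vee$. Hence the ind-systems $\{X^\vee\otimes M_i\}$ and $\{\underline{\mathbf{Map}}(X,M_i)\}$ are mutually cofinal. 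Since $X$ is compact and compacts are closed under tensor products, $\underline{\mathbf{Map}}(X,-)$ commutes with the filtered colimit, and passing to the colimit yields $X^\vee\otimes M\simeq\underline{\mathbf{Map}}(X,M)$. You should replace the adjunction manipulation with this trace-class factorization argument, or else defer the entire converse (not just its naturality checks) to the cited propositions.
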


By \cite{clausenscholze3}*{Theorem 8.6}, the basic nuclear objects of \(\bC\) are the \(\omega_1\)-compact objects of \(\bC\). Denote by \(\mathbf{Nuc}(\bC)\) the full subcategory of \(\bC\) generated under colimits of the basic nuclear objects. By definition, \(\mathbf{Nuc}(\bC)\) is \(\omega_1\)-compactly generated. 

\comment{
\begin{lemma}[\cite{ben2024perspective}*{Corollary 2.1.56}]\label{lem:nuclear-relative}
    Suppose that every nuclear object of $\mathbf{C}$ is strongly nuclear. Let $A\in\mathbf{Comm}(\mathbf{C})$ have strongly nuclear underlying object in $\mathbf{C}$. Then any nuclear $A$-module is strongly nuclear.
\end{lemma}
}

We now study the full subcategory of \(\bC\) generated under colimits of \(\Q\)-basic nuclear objects, which we denote by \(\mathbf{Nuc}^\infty(\mathbf{C})\).

\comment{\begin{lemma}\label{lem:abstract-nuc}
Let \(\bC\) be a compactly generated, closed symmetric monoidal, stable \(\infty\)-category with compact unit object, and exact filtered colimits. Then the full \(\infty\)-subcategory \(\mathbf{Nuc}^\infty(\bC)\) is \(\omega_1\)-compactly generated and the \(\omega_1\)-compact objects are precisely the \(\Q\)-basic nuclear objects. 
\end{lemma}

\begin{proof}
We first observe that the \(\Q\)-basic nuclear objects are \(\omega_1\)-compact objects in \(\mathbf{Nuc}^\infty(\bC)\). This holds because every \(\Q\)-basic nuclear object may be written as a sequential colimit of compact objects of \(\bC\), so that the transition maps are in particular trace-class and therefore compact. Furthermore, since the factorisably trace-class maps form a factorisable, precompact ideal, and since the filtered colimits in \(\bC\) are exact, the same proof as in \cite{nkp}*{Proposition 2.3.14} shows that the full subcategory of \(\Q\)-basic nuclear objects is closed under countable colimits. Finally, since the \(\Q\)-basic nuclear objects generate \(\mathbf{Nuc}^\infty(\bC)\), its \(\omega_1\)-compact objects are countable colimits of such objects, proving that the \(\omega_1\)-compact objects agree with the \(\Q\)-basic nuclear objects in \(\mathbf{Nuc}^\infty(\bC)\). 
\end{proof}}

We will be particularly interested in the following setting. 
    Let $\underline{\mathbf{C}}=(\mathbf{C},\mathbf{C}_{\ge0},\mathbf{C}_{\le0},\mathbf{C}^{0})$ be a derived algebraic context. As a monoidal $\infty$-category it is equivalent to $\mathbf{Ch}(\mathbf{C}^{\heart})$. Moreover $\mathbf{C}^{\heart}\cong\mathcal{P}_{\Sigma}(\mathbf{C}^{0})\cong\mathrm{Ind}(\mathbf{C}^{\heart,cpct}).$ In this setup, we establish the following version of \cite{clausenscholze3}*{ Lemma 8.7.}, in derived algebraic contexts, for both basic nuclear and basic infinitely nuclear objects.

\begin{proposition}\label{prop:basic-nuclear-explicit}
Let $\underline{\mathbf{C}}$ be a derived algebraic context. Every \(M \in \mathbf{Nuc}(\mathbf{C})^{\omega_1}\) may be represented by a sequential colimit  \(M \simeq \varinjlim_n M_n\) of chain complexes with finitely many terms, where each \(M_{n,k}\) is a compact projective. Each map in the inductive system \(M_n \to M_{n+1}\) is given termwise by injective trace-class maps.  
\end{proposition}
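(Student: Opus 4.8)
The plan is to reduce the statement to the known structure theory of $\omega_1$-compact (``basic nuclear'') objects in a compactly generated stable $\infty$-category, as in \cite{clausenscholze3}*{Lemma 8.7}, and then upgrade the construction to keep track of the heart-level structure provided by the derived algebraic context. Concretely, I would first use the identification $\mathbf{C} \simeq \mathbf{Ch}(\mathbf{C}^\heartsuit)$ together with $\mathbf{C}^\heartsuit \simeq \mathrm{Ind}(\mathbf{C}^{\heartsuit,\mathrm{cpct}})$ and the fact that $\mathbf{C}^0$ is a set of compact projective generators: any object of $\mathbf{C}^{\omega_1}$ (hence any $M \in \mathbf{Nuc}(\mathbf{C})^{\omega_1}$, since $\mathbf{Nuc}(\mathbf{C})$ is $\omega_1$-compactly generated with $\omega_1$-compact objects the basic nuclear ones) can be represented by a bounded complex whose terms are countable coproducts of objects of $\mathbf{C}^0$, and more economically, by a sequential colimit $M \simeq \varinjlim_n M_n$ of finite complexes $M_n$ with each $M_{n,k}$ a finite coproduct of objects in $\mathbf{C}^0$ (hence compact projective).

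The key step is arranging that the transition maps $M_n \to M_{n+1}$ are termwise trace-class (respectively termwise factorisably trace-class in the $\Q$-nuclear case) and injective. For this I would follow the proof of \cite{clausenscholze3}*{Lemma 8.7} verbatim: a basic nuclear object is by definition $\varinjlim_n N_n$ with $N_n$ compact and $N_n \to N_{n+1}$ trace-class; trace-class maps between compact objects in a stable $\infty$-category factor, up to the usual $2$-out-of-$3$ / interpolation argument, through maps that are ``as small as one likes'', and in particular one may refine the colimit so that each transition map is trace-class. Passing from objects $N_n$ to finite complexes $M_n$ with compact projective terms is then a matter of choosing, compatibly, finite semifree resolutions: since each $N_n$ is compact it admits a finite complex of finite coproducts of objects of $\mathbf{C}^0$ mapping to it by an equivalence, and one can build these resolutions functorially enough along the sequential diagram (using the projectivity of $\mathbf{C}^0$ and the axioms of a derived algebraic context — closure of $\mathbf{C}^0$ under tensor, symmetric powers, finite direct sums) so that the induced maps $M_n \to M_{n+1}$ are honest chain maps. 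Injectivity (termwise) is then obtained by the standard trick of replacing $M_n \to M_{n+1}$ by $M_n \to M_n \oplus M_{n+1}$, i.e.\ by splicing in contractible complexes, which does not change the colimit and does not destroy the trace-class property (a trace-class map composed/summed with an identity is still appropriately small after reindexing). For the $\Q$-nuclear case, one uses instead that a $\Q$-basic nuclear object is a $\Q$-indexed colimit of compacts with trace-class transitions, restricts to a cofinal sequence, and observes that each transition map, factoring through the dense poset of intermediate stages, is factorisably trace-class in the sense of Definition \ref{def:infinitely-nuclear}; the resolution step is identical.

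The main obstacle I anticipate is the \emph{compatibility of the termwise resolutions}: producing finite complexes $M_n$ of compact projectives resolving the $N_n$ \emph{together with} chain maps $M_n \to M_{n+1}$ that are literally termwise trace-class, rather than merely trace-class in the derived category. This requires care because lifting a map of objects to a map of chosen resolutions is only well-defined up to homotopy, and one must check that the trace-class property (which is a $\pi_0$-level condition involving the canonical map $\pi_0\mathbf{Map}(\mathbb{I}, M^\vee \otimes N) \to \pi_0\mathbf{Map}(M,N)$) is preserved under such lifts; the point is that $\mathbf{C}^0$ consists of \emph{dualisable} compact projectives, so the relevant mapping spaces are discrete in the right degrees and the lift can be chosen to be termwise a composite $M_{n,k} \cong \mathbb{I}\otimes M_{n,k} \to M_{n,k}^\vee \otimes M_{n,k} \otimes M_{n+1,k} \to M_{n+1,k}$ witnessing trace-classness. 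Once this bookkeeping is set up — essentially the content of \cite{ben2024perspective}*{Corollary 3.1.52} combined with \cite{aoki2025very}*{Proposition 7.11} — the rest is formal, and the proof concludes by checking that the colimit of the $M_n$ so constructed recovers $M$, which holds since resolutions are equivalences and sequential colimits commute with the forgetful functor to $\mathbf{C}$.
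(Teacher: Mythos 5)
Your core strategy coincides with the paper's: first reduce, by a formal argument, to a sequential colimit of finite complexes of compact projectives whose transition maps are trace-class in the derived sense, and then upgrade ``trace-class in \(\mathbf{C}\)'' to ``termwise trace-class''. Your remark that the terms are dualisable compact projectives, so that the relevant mapping objects are concentrated in the right degree, is exactly the paper's mechanism: since projectives in \(\mathbf{C}^{\heartsuit}\) are flat, \(\mathbf{RHom}(P,\mathbb{I})\) is discrete, whence \(H_0(\mathbf{RHom}(\mathbb{I},M^\vee\otimes^{\mathbb{L}}N))\cong H_0(\underline{\Hom}(\mathbb{I},\underline{\Hom}(M,\mathbb{I})\otimes N))\), so a derived trace-class witness is literally a termwise one. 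The paper spends no effort on the resolution bookkeeping you single out as the ``main obstacle''; it simply invokes the formal reduction, so your extra care there is harmless but not where the content lies. The \(\Q\)-basic case is treated identically in both arguments (restrict to a cofinal system and apply the same degree-zero computation to each transition map).

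The one step that would fail as written is your injectivity argument. Replacing \(f\colon M_n\to M_{n+1}\) by \((\mathrm{id},f)\colon M_n\to M_n\oplus M_{n+1}\) produces a map with an identity component, and a map containing an identity direct summand is not trace-class — trace-classness is an ideal under composition, not under direct sum with arbitrary maps — and no reindexing of the colimit repairs this; moreover this particular replacement changes the colimit of the system. The standard fix of splicing in \(\mathrm{Cone}(\mathrm{id}_{M_n})\) preserves the colimit but the inclusion \(M_n\to\mathrm{Cone}(\mathrm{id}_{M_n})\) is again termwise an identity onto a summand, so the same obstruction recurs. To be fair, the paper's own proof is silent on how termwise injectivity is achieved, so this point is underargued on both sides; but your stated justification (``a trace-class map composed/summed with an identity is still appropriately small after reindexing'') is false and cannot be relied on.
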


\begin{proof}
Denote the monoidal unit of $\mathbf{C}$ by $R$. 
By a purely formal argument, we may reduce \(M\) to a sequential colimit of finite complexes with trace-class transition maps \(M_i \to M_{i+1}\), whose terms \(M_i\) are compact projective generators. It therefore remains to show that these maps are term-wise trace-class morphisms of objects of $\mathbf{C}^{\heart}$. Since projective objects in \(\mathbf{C}^{\heart}\) are flat, we first observe that the derived dual \(\mathbf{RHom}(P, R)\) is concentrated in degree zero. Now by definition, a trace-class morphism \(M \to N\) in \(\mathbf{C}\) comes from a morphism in \(H_0(\mathbf{RHom}(R, M^\vee \otimes_R^L N))\). Reducing to the case that the terms in \(M\) and \(N\) are compact projective generators of the form \(P\in\mathbf{C}^{0}\), and using that the derived dual is concentrated in degree \(0\), and that \(N\) is degreewise flat, we have that \[H_0(\mathbf{RHom}(R, M^\vee \otimes_R^L N)) \cong H_0(\underline{\Hom}(R, \underline{\Hom}(M, R) \otimes_R N)),\] where \(\underline{\Hom}(M, R)\) is the usual Hom-complex. This yields the desired result for basic nuclear complexes. 

\end{proof}

\begin{remark}\label{rem:basic-nuclear-explicit}
    Note that it is unclear whether the analogous claim as in the proposition above holds for \(\Q\)-basic nuclear complexes. We will later show that, in the case of bornological spaces, any chain complex which may be represented as a \(\Q\)-indexed colimit of complexes of compact projectives with termwise trace-class maps is \(\Q\)-basic nuclear, the converse would require representing a \(\Q\)-indexed inductive system with trace-class transition maps by an actual chain complex of compact projectives with termwise factorisably trace-class chain maps. 
\end{remark}

\comment{
We also have the following useful consequence.

\begin{corollary}
Let $\underline{\mathbf{C}}$ be a derived algebraic context. Nuclear (and hence infinitely nuclear) objects of $\mathbf{C}$ are strongly nuclear.
\end{corollary}

\begin{proof}
    Let $M_{\bullet}$ be nuclear. We need to show that for any compact projective $P$, the natural map
    $$P^{\vee}\otimes^{\mathbb{L}}M_{\bullet}\rightarrow\mathbf{Map}(P,M_{\bullet})$$
    is an equivalence. By Proposition \ref{prop:basic-nuclear-explicit} we may assume that $M_{\bullet}$ is degree-wise basic nuclear in the heart $\mathrm{Ind}(\mathbf{C}^{\heart,cpct}).$ Then we have $P^{\vee}\otimes^{\mathbb{L}}M_{\bullet}\cong P^{\vee}\otimes M_{\bullet}$, and $\mathbf{Map}(P,M)\cong\mathrm{Hom}(P,M_{\bullet})$. Thus it will suffice to prove that the map 
    $$P^{\vee}\otimes M_{\bullet}\rightarrow\mathrm{Hom}(P,M_{\bullet})$$
    is an isomorphism of complexes. This is \cite{ben2020fr}*{Lemma 4.15}. 
\end{proof}
}

We now investigate the formal properties of the categories of nuclear and infinitely nuclear objects in $\mathbf{C}$. In general, these categories are \emph{not} compactly generated although \(\mathbf{C}\) is. They are however $\omega_{1}$-presentable. Let us return to the general setting of a fixed compactly generated symmetric monoidal stable $\infty$-category $\mathbf{C}$. Fix also a set $\mathcal{S}$ of compact generators.

\begin{proposition}
    Let $F:\mathbf{C}\rightarrow\mathbf{D}$ be a colimit-preserving strongly monoidal functor between presentable, closed, symmetric monoidal, stable $\infty$-categories. Then $F$ sends (factorisably) trace-class morphisms to (factorisably) trace-class morphisms. In particular it sends (infinitely) nuclear objects to (infinitely) nuclear objects.
\end{proposition}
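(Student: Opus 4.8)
The plan is to verify that $F$ preserves trace-class morphisms directly from the definition, then bootstrap to the factorisable case, and finally conclude for (infinitely) nuclear objects by the explicit colimit presentations.

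First I would handle trace-class maps. Let $f\colon M\to N$ be trace class in $\mathbf{C}$, so $f$ factors as the composite
\[
M\cong R\otimes M\to M\otimes M^\vee\otimes N\to N,
\]
where the middle map is induced by a chosen morphism $R\to M^\vee\otimes N$ (i.e. an element of $\pi_0\mathbf{Map}(R,M^\vee\otimes N)$). Since $F$ is strongly monoidal, it sends the unit to the unit and commutes with $\otimes$; I would observe that for any object $X$ there is a canonical comparison map $F(X^\vee)\to F(X)^\vee$ in $\mathbf{D}$ (coming from applying $F$ to the evaluation $X\otimes X^\vee\to R$ and using the monoidal structure). Applying $F$ to the defining factorisation and postcomposing/precomposing with these comparison maps, together with the witnessing element $F(R)\to F(M^\vee\otimes N)\simeq F(M)^\vee\otimes F(N)$ (using strong monoidality and the comparison map), exhibits $F(f)$ as the composite $F(M)\to F(M)\otimes F(M)^\vee\otimes F(N)\to F(N)$, which is exactly the form of a trace-class morphism. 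Thus $F(f)$ is trace class.

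Next, for a factorisably trace-class map $f\colon X_0\to X_1$ between compact objects, by definition there is a diagram $G\colon [0,1]\cap\Q\to\mathbf{C}$ with $f = (G(0)\to G(1))$ and every $G(i)\to G(j)$ trace class for $i<j$. I would simply postcompose with $F$: the diagram $F\circ G\colon [0,1]\cap\Q\to\mathbf{D}$ has $F(f) = (F(G(0))\to F(G(1)))$ as its endpoint map, and by the previous paragraph each transition $F(G(i))\to F(G(j))$ is trace class. Hence $F(f)$ is factorisably trace class. (If one wants the factorisation to pass through a fixed set of compact generators of $\mathbf{D}$ one can always refine, but this is not needed for the statement.)

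Finally, for the statements about objects: if $M\in\mathbf{C}$ is basic nuclear, write $M\simeq\varinjlim_n M_n$ with $M_n$ compact and transition maps trace class. Since $F$ preserves colimits, $F(M)\simeq\varinjlim_n F(M_n)$, the objects $F(M_n)$ are compact because $F$ preserves compact objects (being a left adjoint between presentable categories whose right adjoint preserves filtered colimits — alternatively, one invokes that $F$ is colimit-preserving and strong monoidal so $F$ of a dualisable object is dualisable, hence compact in the relevant generated subcategory), and the transition maps $F(M_n)\to F(M_{n+1})$ are trace class by the above. Thus $F(M)$ is basic nuclear, and since an arbitrary nuclear object is a colimit of basic nuclear ones and $F$ commutes with colimits, $F$ sends $\mathbf{Nuc}(\mathbf{C})$ into $\mathbf{Nuc}(\mathbf{D})$. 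The infinitely nuclear case is identical, replacing ``trace class'' by ``factorisably trace class'' throughout. The main obstacle I anticipate is purely bookkeeping: pinning down precisely the canonical comparison morphism $F(X^\vee)\to F(X)^\vee$ and checking it interacts correctly with evaluation/coevaluation so that the witnessing element for $F(f)$ is genuinely produced — i.e. making the first paragraph's diagram chase rigorous — and confirming that $F$ preserves compactness in the form needed (which follows from $F$ being colimit-preserving together with the compact generators of $\mathbf{C}$ being dualisable, or from a right adjoint preserving filtered colimits, whichever hypothesis is cleanest here).
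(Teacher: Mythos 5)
Your proposal is correct and follows essentially the same route as the paper: the paper also transports the witnessing element along the canonical comparison map $F(P^{\vee})\rightarrow F(P)^{\vee}$ (phrased as a commutative square of mapping spaces), obtains the factorisable case by applying $F$ termwise to the $\Q$-indexed diagram, and deduces the statement for (infinitely) nuclear objects from the colimit presentation with (factorisably) trace-class transition maps together with colimit-preservation of $F$. The only difference is order of exposition (the paper deduces the object statement first), and your worry about compactness of the $F(M_n)$ is not needed, since the characterisation of nuclear objects used here only requires trace-class transition maps in the colimit presentation.
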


\begin{proof}
Let us first explain why $F$ preserving (factorisably) trace class maps implies it preserves (infinitely) nuclear objects. Let \(M \in \mathbf{Nuc}(\mathbf{C})\) (resp.  \(M \in \mathbf{Nuc}^{\infty}(\mathbf{C})\)), then we may write \(M\) as a  $\colim_{i\in\mathcal{I}}M_{i}$ with each $M_{i}\rightarrow M_{j}$ being (factorisably) trace class. Since $F$ commutes with colimits, the claim is clear. Now we show why  $F$ preserves trace-class maps, let \(f \colon P \to Q\) be such a map. Then it is in the image of 
$$\mathbf{Map}(R_{\mathbf{C}},P^{\vee}\otimes^{\mathbb{L}}Q)\rightarrow\mathbf{Map}(P,Q).$$
Now the claim follows from the commutativity of the diagram
\begin{displaymath}
    \xymatrix{
    \mathbf{Map}(R_{\mathbf{C}},P^{\vee}\otimes^{\mathbb{L}}Q)\ar[d]\ar[r]&  \mathbf{Map}(P,Q)\ar[dd]\\
    \mathbf{Map}(F(R_{\mathbf{C}}),F(P^{\vee}\otimes^{\mathbb{L}}Q))\ar[d] & \\
    \mathbf{Map}(R_{\mathbf{D}},\underline{\mathbf{Map}}(F(P),R_{\mathbf{D}})\otimes^{\mathbb{L}}F(Q))\ar[r] & \mathbf{Map}(F(P),F(Q)).
    }
\end{displaymath}
where the bottom map on the left is induced by map 
$F(\underline{\mathbf{Map}}(P,R_{\mathbf{C}}))\rightarrow\underline{\mathbf{Map}}(F(P),F(R_{\mathbf{C}}))\cong\underline{\mathbf{Map}}(F(P),R_{\mathbf{D}})$ defined as follows. We have $F(\underline{\mathbf{Map}}(P,R_{\mathbf{C}}))\rightarrow\underline{\mathbf{Map}}(F(P),F(R_{\mathbf{C}}))\cong\underline{\mathbf{Map}}(F(P),R_{\mathbf{D}})$ defined as follows. We have $F(P)\otimes^{\mathbb{L}}F(\underline{\mathbf{Map}}(P,R_{\mathbf{C}}))\cong F(P\otimes^{\mathbb{L}} \underline{\mathbf{Map}}(P,R_{\mathbf{C}}))\rightarrow F(R_{\mathbf{C}})\cong R_{\mathbf{D}}.$ By adjunction, this gives the required map. 
\end{proof}

\begin{corollary}[\cite{ben2024perspective}*{Proposition 2.1.53}]
    Let $A\rightarrow B$ be a map in \(\mathbf{Comm}(\mathbf{C})\). If $P\rightarrow Q$ is trace-class in $\mathbf{Mod}(A)$ then $B\otimes^{\mathbb{L}}_{A}P\rightarrow B\otimes_{A}^{\mathbb{L}}Q$ is trace-class in $\mathbf{Mod}(B)$. In particular if $M$ is (infinitely) nuclear as an $A$-module. Then $B\otimes_{A}^{\mathbb{L}}M$ is (infinitely) nuclear as a $B$-module.
\end{corollary}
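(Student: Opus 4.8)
The statement to prove is the corollary attributed to \cite{ben2024perspective}*{Proposition 2.1.53}: for a map $A \to B$ in $\mathbf{Comm}(\mathbf{C})$, base change along $A \to B$ preserves trace-class morphisms of modules, and hence sends (infinitely) nuclear $A$-modules to (infinitely) nuclear $B$-modules. The plan is to deduce this directly from the immediately preceding proposition about colimit-preserving strongly monoidal functors, by exhibiting base change as such a functor and checking that it interacts correctly with the relevant internal-hom structures.

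First I would recall the general fact that for $A \to B$ in $\mathbf{Comm}(\mathbf{C})$, the extension-of-scalars functor $B \otimes_A^{\mathbb{L}} (-) \colon \mathbf{Mod}_A(\mathbf{C}) \to \mathbf{Mod}_B(\mathbf{C})$ is a colimit-preserving (it is a left adjoint to restriction) symmetric monoidal functor, where the source and target carry their relative tensor products $\otimes_A$ and $\otimes_B$. Both $\mathbf{Mod}_A(\mathbf{C})$ and $\mathbf{Mod}_B(\mathbf{C})$ are presentable, closed symmetric monoidal and stable (this is part of the standard package for derived algebraic contexts used throughout the excerpt, e.g. via the assignment $A \mapsto \mathbf{D}(A)$ landing in $\mathbf{CAlg}(\mathbf{Pr}_{st}^L)$). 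So the hypotheses of the preceding proposition are met with $\mathbf{C} \rightsquigarrow \mathbf{Mod}_A(\mathbf{C})$, $\mathbf{D} \rightsquigarrow \mathbf{Mod}_B(\mathbf{C})$, and $F \rightsquigarrow B \otimes_A^{\mathbb{L}} (-)$.

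The one point requiring care — and the only nontrivial step — is that the preceding proposition's argument uses the canonical comparison map $F(\underline{\mathbf{Map}}(P,R)) \to \underline{\mathbf{Map}}(F(P), F(R))$ for the internal homs and units of the two categories. Here the "unit" $R$ of $\mathbf{C}$ becomes $A$ in $\mathbf{Mod}_A(\mathbf{C})$ and $B$ in $\mathbf{Mod}_B(\mathbf{C})$, and indeed $B \otimes_A^{\mathbb{L}} A \simeq B$, so $F$ is unital as required; and the lax comparison $B \otimes_A^{\mathbb{L}} \underline{\mathbf{Hom}}_A(P, A) \to \underline{\mathbf{Hom}}_B(B \otimes_A^{\mathbb{L}} P, B)$ exists for formal reasons (adjunction between $\otimes$ and internal hom, applied to the lax-monoidal structure of $F$). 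With this in hand, the commuting square displayed in the proof of the preceding proposition goes through verbatim, showing $B \otimes_A^{\mathbb{L}}(-)$ sends trace-class maps to trace-class maps. Since a factorisably trace-class map is, by definition, one arising from a diagram $[0,1]\cap\mathbb{Q} \to \mathbf{Mod}_A(\mathbf{C})$ with all transition maps trace-class, and $F$ preserves such diagrams and the trace-class condition on each transition map, $F$ also preserves factorisably trace-class maps.

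Finally, for the nuclearity statement: if $M$ is nuclear as an $A$-module, write $M \simeq \varinjlim_{i} M_i$ with each transition map $M_i \to M_j$ trace-class (this is the defining colimit presentation, using Lemma \ref{lem:nuclear-characracterisation} / the basic-nuclear generation of $\mathbf{Nuc}(\mathbf{C})$); then $B \otimes_A^{\mathbb{L}} M \simeq \varinjlim_i (B \otimes_A^{\mathbb{L}} M_i)$ since $F$ commutes with colimits, and each transition map is now trace-class in $\mathbf{Mod}_B(\mathbf{C})$ by the previous paragraph, so $B \otimes_A^{\mathbb{L}} M$ is nuclear. The infinitely-nuclear case is identical, replacing "trace-class" by "factorisably trace-class" and sequential colimits by $\mathbb{Q}$-indexed ones. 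I do not anticipate a genuine obstacle here: the entire content is already packaged in the preceding proposition, and the main (minor) thing to verify is that base change really is strongly monoidal and unital, which is standard.
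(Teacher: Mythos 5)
Your proposal is correct and follows exactly the route the paper intends: the corollary is stated without a separate proof precisely because it is the specialisation of the preceding proposition to the colimit-preserving strongly monoidal base-change functor $B\otimes^{\mathbb{L}}_{A}(-)\colon\mathbf{Mod}_A(\mathbf{C})\to\mathbf{Mod}_B(\mathbf{C})$, with nuclearity then following from the colimit presentation with (factorisably) trace-class transition maps. Your additional verification of unitality and of the lax comparison map for internal homs is exactly the (routine) content being suppressed.
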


\comment{
Write
$$A_{0}\cong\colim_{n}A_{0n}$$
with each $A_{0n}\rightarrow A_{0n+1}$ being factorisably trace class.
By the discussion of Sullivan modules in \cite{kelly2019koszul}*{Subsection 3.9} may write
$$A\cong\colim_{n}B_{n},$$
where $B_{n}=A_{0}$, and 
$$B_{n+1}\cong B_{n}\hat{\otimes}_{\alpha}\mathrm{Sym}(V_{n+1})$$
where $V$ is free of finite rank. Now the map $V_{1}[1]\rightarrow A_{0}$ factors through some $A_{0m_{1}}$. By rescaling we may as well assume that this is 

and the subscript $\alpha$ denotes a twisted differential. Now suppose we have shown that $B_{n}$ is infinitely nuclear as an algebra, so that $B_{n}\cong\colim_{i}B_{n}^{i}$, with each $B_{n}^{i}\rightarrow B_{n}^{j}$ being factorisably trace class. 

}
\comment{

    $$A\cong\colim_{n}A_{n}$$
    where $A_{0}\cong W_{n}(r)^{\dagger}$, and each $A_{n+1}$ is isomorphic to a pushout of one of the following forms:
    \begin{displaymath}
      \xymatrix{
       \Sigma^{k_{n}}\mathrm{Sym}(K)\ar[d]\ar[r] &A_{n}\ar[d]\\
      K\ar[r] & A_{n+1}
        }
    \end{displaymath}
    or 
     \begin{displaymath}
        \xymatrix{
       \mathrm\ar[d]\ar[r] &A_{n}\ar[d]\\
       \Sigma^{k_{n}}\mathrm{Sym}(K) \ar[r] & A_{n+1}
        }
    \end{displaymath}
    where $k_{n}>0$. By Dold-Kan we may pass to complexes. The second diagram is just 
    $$\mathrm{Sym}(S^{k_{n}}(K))\otimes^{\mathbb{L}}A_{i}.$$
    Since $\mathrm{Sym}(S^{k_{n}}(K))$ has only zero differentials and is free of finite rank in each degree, it is clear that 
    $$\mathrm{Sym}(S^{k_{n}}(K))\otimes^{\mathbb{L}}A_{i}\rightarrow\mathrm{Sym}(S^{k_{n}}(K))\otimes^{\mathbb{L}}A_{j}$$
    is still factorisably trace-class. 
    
    We will show inductively that each $A_{n}$ can be written as a colimit of derived dagger affinoids with nuclear transition maps. 
Now $A_{0}$ is an infinitely nuclear algebra. Suppose we have shown that $A_{n}$ is an infinitely nuclear algebra, and consider $A_{n+1}$. For the second case, we first show that $\Sigma^{k_{n}}W_{1}(r_{n})^{\dagger}$ is an infinitely nuclear algebra. We can interpret this algebra as $W(\partial\Delta^{k_{n}}(r_{n}))^{\dagger}$, where $\partial\Delta^{k_{n}}(r_{n})$ is the $|K|_{>0}$-sorted simplicial set, whose underlying simplicial set is $\partial\Delta^{k_{n}}$, and with all non-degenerate simplices given value $r_{n}$. 
By writing as a colimit over larger radii, this is an infinitely nuclear algebra.
Let us now handle the first case.

Write $A_{n}=\colim_{i}A_{n}^{i}$ as an infinitely nuclear algebra. Here we are killing a cell in homotopical degree $k_{n}-1$. We may assume this cell comes from some $A^{i}_{n}$. Possibly after rescaling the radius, we can assume that $W_{1}(\partial\Delta^{k_{n}}(r_{n}))^{\dagger}\rightarrow A_{n}$ factors through $A_{n}^{i}$. In particular, by shifting we can write 
$$A_{n+1}\cong\colim_{\mathcal{I}} (K\hat{\otimes}^{\mathbb{L}}_{W_{1}(\partial\Delta^{k_{n}}(r_{n}))^{\dagger}}A^{i}_{n}).$$

}

\comment{
\begin{proposition}\label{prop:nuclear-relative}
Let \(A \in \mathbf{Comm}(\mathbf{C})\) such that the underlying object in \(\mathbf{C}\) is strongly nuclear. We then have an equivalence
\[\mathbf{Nuc}(A) \simeq \mathbf{Mod}_A(\mathbf{Nuc}(\mathbf{C})).\]
If in addition $A$ is infinitely nuclear as an algebra, then we have an equivalence
\[\mathbf{Nuc}^{\infty}(A) \simeq \mathbf{Mod}_A(\mathbf{Nuc}^{\infty}(\mathbf{C})).\]

\end{proposition}

\begin{proof}
The nuclear case is a consequence of \cite{ben2024perspective}*{Lemma 2.1.55}. The infinitely nuclear case is \cite{aoki2025very}*{Theorem 7.45}. \qedhere

Let \(M \in \mathbf{Nuc}^{\infty}(A)\) and let \(A\) be an algebra whose underlying object in \(\mathbf{C}\) is strongly nuclear. We may write it as a colimit $\colim_{i\in\mathcal{I}}A\otimes^{\mathbb{L}}P_{i}$ where the $P_{i}$ are compact objects of $\mathbf{C}$, and the morphisms $A\otimes^{\mathbb{L}}P_{i}\rightarrow A\otimes^{\mathbb{L}}P_{j}$ are factorisably trace class. Now the claim that $M$ is in $\mathbf{Nuc}^{\infty}(\mathbf{C})$ is a consequence of Lemma \ref{lem:forgetn}.

Conversely, let $M$ be an $A$-module whose underlying object in \(\mathbf{C}\) is infinitely nuclear. Since $A$ is infinitely nuclear, we have that for all $n$, $A^{\otimes^{\mathbb{L}}n}\otimes^{\mathbb{L}}M$ is infinitely nuclear as an $A$-module. This gives a simplicial object whose colimit computes $A\otimes^{\mathbb{L}}_{A}M\cong M$. Since infinitely nuclear objects are closed under colimits, this implies that $M$ is infinitely nuclear as an $A$-module.
For a compact object \(P \in \bD(A)\), we then have by definition of nuclearity an isomorphism \begin{equation}\label{eq:nuc}
\mathbf{Hom}_A(A, P^{\vee_A} \hat{\otimes}_A^L M)) \simeq \mathbf{Hom}_A(P, M),
\end{equation}  where \(P^{\vee_A} = \underline{\mathbf{Hom}}_A(P, A)\). It suffices to take \(P = l^1(X,A) \defeq l^1(X) \hat{\otimes}^\mathbb{L} A\), since \(\mathbf{Hom}_A(-, A)\) commutes with finite direct sums, and any compact object in \(\bD(A)\) is a finite complex of finite direct sums of terms of the form \(l^1(X,A)\). Now since \(A\) is nuclear over \(R\), we have \[P^{\vee_A} = \underline{\mathbf{Hom}}_A(l^1(X,A),A) \cong \underline{\mathbf{Hom}}(l^1(X), A) \cong \underline{\mathbf{Hom}}(l^1(X), R) \hat{\otimes}_R^L A\] by Lemma \ref{lem:nuclear-relative}. Consequently, Equation \ref{eq:nuc} is equivalent to \[\mathbf{Hom}(R, l^1(X)^\vee \hat{\otimes}^\mathbb{L} M) \cong \mathbf{Hom}(l^1(X), M),\] showing that \(M\) is nuclear in \(\bD(R)\). Conversely, suppose \(M\) is nuclear in \(\bD(R)\) with an \(A\)-module structure, then as the tensor product of trace-class maps is trace-class, we have that \(A^{\hat{\otimes}^{\mathbb{L}n}} \hat{\otimes}^\mathbb{L} M \) is nuclear in \(\bD(A)\). Taking the colimit of the resulting simplicial diagram computes $A\hat{\otimes}^{\mathbb{L}}_{A}M\cong M$. Since the category of nuclear $A$-modules is closed under colimits, this shows that $M$ is nuclear as an $A$-module.  \qedhere
\end{proof}
}

\subsection{Nuclear objects and rigidification}

Recall that a morphism \(f \colon X \to Y\) in a stable \(\infty\)-category \(\bC\) is called \emph{compact} if for any morphism \(Y \to \varinjlim_i Z_i\) into a filtered colimit, the composite \(X \overset{f}\to Y \to Z\) factors through a finite stage \(X \to Z_i\). This is a relative version (or rather, a generalisation) of the notion of a compact object. Indeed, an object \(X \in \bC\) is compact if and only if the identity \(1_X \colon X \to X\) is a compact morphism.

\begin{theorem}[Dualisable \(\infty\)-categories]\cite{nkp}*{Theorem 2.9.2}\label{thm:dualisable}
Let \(\bC\) be a presentable, stable \(\infty\)-category. Then the following are equivalent:

\begin{enumerate}
\item \(\mathbf{C}\) is the retract in \(\mathsf{Pr}_{\mathsf{St}}^L\) of a compactly generated stable \(\infty\)-category;
\item The colimit functor \(\mathsf{Ind}(\mathbf{C}) \to \mathbf{C}\) has a left adjoint;
\item Filtered colimits in \(\bC\) distribute over products, that is, \[\prod_K \mathrm{colim}_I F \simeq \mathrm{colim}_{K^I} \prod_K F\] for a filtered \(\infty\)-category \(I\);
\item The category \(\bC\) is generated under colimits by compactly exhaustible objects, that is, objects \(X \in \bC\) that admit a representation \(X \cong \varinjlim_{n \in \N} X_n\) as a sequential colimit, where the transition maps \(X_n \to X_{n+1}\) are compact.   
\end{enumerate}
\end{theorem}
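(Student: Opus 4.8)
\textbf{Proof plan for Theorem \ref{thm:dualisable}.}

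The statement collects several equivalent characterisations of dualisability, and the natural strategy is a cycle of implications together with a couple of standalone equivalences. The plan is to establish $(1)\Leftrightarrow(2)\Leftrightarrow(3)$ first, since these are the ``abstract'' characterisations, and then treat $(4)$ as the ``constructive'' one that requires the most work. First I would recall that a presentable stable $\infty$-category $\mathbf{C}$ is always a localisation of some $\mathsf{Ind}(\mathbf{C}_0)$ with $\mathbf{C}_0$ small and stable, so the colimit functor $\colim\colon\mathsf{Ind}(\mathbf{C})\to\mathbf{C}$ is always defined and preserves colimits; the content is precisely whether it admits a \emph{left} adjoint $\mathbf{C}\to\mathsf{Ind}(\mathbf{C})$. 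For $(2)\Rightarrow(1)$: if $\colim$ has a left adjoint $\nu$, then $\nu$ is a section of $\colim$ up to the triangle identities, exhibiting $\mathbf{C}$ as a retract of $\mathsf{Ind}(\mathbf{C})$ in $\mathsf{Pr}^L_{\mathsf{St}}$, and $\mathsf{Ind}(\mathbf{C})$ is compactly generated. For $(1)\Rightarrow(2)$: retracts of compactly generated categories in $\mathsf{Pr}^L_{\mathsf{St}}$ inherit the left adjoint to $\colim$ because the property of $\colim$ having a left adjoint is itself closed under retracts in $\mathsf{Pr}^L_{\mathsf{St}}$ (one transports the left adjoint along the retraction diagram, using that $\mathsf{Ind}(-)$ is functorial on $\mathsf{Pr}^L_{\mathsf{St}}$).

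For the equivalence with $(3)$: the statement that filtered colimits distribute over products is, after unwinding, equivalent to saying that the canonical natural transformation from a colimit of products to the product of colimits is an equivalence, and this is a known reformulation of $\colim\colon\mathsf{Ind}(\mathbf{C})\to\mathbf{C}$ being not merely colimit-preserving but \emph{strongly} so in a way that forces a left adjoint — concretely, I would argue $(3)\Leftrightarrow(2)$ by noting that the left adjoint to $\colim$ exists iff $\colim$ preserves the relevant limits (products) up to the correction coming from $\mathsf{Ind}$, which is precisely the distributivity condition. The cleanest route here is to cite the abstract characterisation: $\mathbf{C}$ is dualisable in $\mathsf{Pr}^L_{\mathsf{St}}$ (equipped with Lurie tensor product) iff $\colim$ has a left adjoint iff filtered colimits commute with products; in the write-up I would present this as the content of the theorem being quoted from \cite{nkp}*{Theorem 2.9.2}, so strictly speaking for this excerpt the ``proof'' is a reference, but the logical skeleton above is what one reconstructs.

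The substantive step, and the one I expect to be the main obstacle, is $(4)\Leftrightarrow$ the others. For $(4)\Rightarrow(1)$: given that $\mathbf{C}$ is generated under colimits by compactly exhaustible objects $X\cong\varinjlim_n X_n$ with compact transition maps $X_n\to X_{n+1}$, one builds an explicit compactly generated category $\widetilde{\mathbf{C}}$ — essentially $\mathsf{Ind}$ of the full subcategory on the $X_n$'s and their finite colimits — together with maps $\widetilde{\mathbf{C}}\to\mathbf{C}\to\widetilde{\mathbf{C}}$ exhibiting $\mathbf{C}$ as a retract; the key lemma is that a compact morphism $X_n\to X_{n+1}$ in $\mathbf{C}$ lifts to an honest morphism of compact objects in $\widetilde{\mathbf{C}}$ after passing to the colimit, which is where the interplay between ``compact morphism'' (the relative notion, tested against filtered colimits) and ``compact object'' gets used. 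For $(1)\Rightarrow(4)$: one shows that a retract of a compactly generated category is generated under colimits by compactly exhaustible objects, by taking a compact generator $P$ of the ambient category, pushing it through the retraction to get $e(P)\in\mathbf{C}$ which is a retract of $P$ but need not be compact in $\mathbf{C}$ — however, writing the idempotent as an appropriate sequential colimit exhibits $e(P)$ as compactly exhaustible. The delicate point throughout is keeping track of the distinction between compactness of objects and compactness of morphisms, and ensuring the sequential (rather than general filtered) colimits suffice; I would lean on \cite{nkp} for the precise form of the retract-splitting argument rather than reprove it, flagging that the only genuinely new input over the cited result is cosmetic. Since Theorem \ref{thm:dualisable} is quoted verbatim from \cite{nkp}, the honest ``proof'' in this paper is simply the citation, and I would write it as such: \emph{See \cite{nkp}*{Theorem 2.9.2}.}
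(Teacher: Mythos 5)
The paper gives no proof of this theorem: it is stated as a direct quotation of \cite{nkp}*{Theorem 2.9.2}, which is exactly the conclusion you reach at the end of your proposal. Your sketched cycle of implications is a plausible reconstruction of the argument in the cited source, but since the paper itself simply records the statement with the citation, your final answer --- \emph{See \cite{nkp}*{Theorem 2.9.2}} --- coincides with the paper's approach.
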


\begin{definition}\label{def:dualisable-cat}
A stable, presentable \(\infty\)-category satisfying the equivalent conditions of Theorem \ref{thm:dualisable} is called a \emph{dualisable} category. 
\end{definition}

\begin{example}
Any compactly generated, presentable stable \(\infty\)-category is dualisable.  
\end{example}

In what follows, we recall a recipe from \cite{nkp} to produce dualisable categories that are not necessarily compactly generated.

\comment{
\begin{definition}\label{def:ideal}
Let \(\bC\) be a presentable \(\infty\)-category, and \(S\) a collection of morphisms.

\begin{itemize}
\item we call \(S\) a \emph{two-sided ideal} in \(\bC\) if for composable morphisms \(f\), \(g\) and \(h \in \bC\), suppose \(g \in S\), then \(fgh \in S\).
\item we call \(f \in S\) \emph{factorisable} if it can be written as composition \(f = gh\), where \(g\) and \(h \in S\). Denote by \(S^\infty \subseteq S\) the class of factorisable morphisms.  
\item we call \(S\) \emph{accessible} if any morphism in \(S\) factorises through a \(\kappa\)-compact object for some cardinal \(\kappa\).
\item we call an ideal \(S\) \emph{precompact} if it is accessible, contains the identity on the initial object and for a diagram \(F_0 \leftarrow F_1 \rightarrow F_2\) of functors on \([0,1] \cap \Q \to \bC\), the pushout \(F_0 \coprod_{F_1} F_2\) maps \(0 \to 1\) to \(S\).  
\end{itemize}

\end{definition}

The compact morphisms in a dualisable category are a factorisable precompact ideal. Now suppose \(S\) is a precompact ideal in a presentable \(\infty\)-category \(\bC\), we have the following result due to Clausen:}

\begin{theorem}\cite{nkp}*{Theorem 2.7.4, Addendum 2.7.5}\label{thm:clausen-rigid}
Let \(\bC\) be a presentable, stable \(\infty\)-category and \(S\) a precompact ideal. Then there exists a terminal dualisable category \((\bC, S)^{\mathrm{core}}\) with a left adjoint functor \((\bC, S)^{\mathrm{core}} \to \bC\) that sends compact morphisms in \((\bC, S)^{\mathrm{core}}\) to \(S\).  Furthermore, if every morphism in \(S\) is  compact in \(\bC\), then the functor \((\bC, S)^{\mathrm{core}} \to \bC\) is fully faithful.  
\end{theorem}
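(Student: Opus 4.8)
The plan is to recollect the construction of the core from \cite{nkp}: one exhibits \((\bC, S)^{\mathrm{core}}\) concretely as a category of formal \(S\)-sequential systems and then verifies dualisability, the adjointness of \(L\), and the universal property against criterion~(4) of Theorem~\ref{thm:dualisable}. First I would use accessibility of \(S\) to fix a regular cardinal \(\kappa\) through which every morphism of \(S\) factors, together with a small full subcategory \(\bC_0 \subseteq \bC^{\kappa}\), closed under finite colimits, chosen large enough that, by precompactness, every \(f \in S\) underlies a diagram \(F \colon [0,1]\cap\Q \to \bC_0\) with \(f = (F(0) \to F(1))\) and \(F(i) \to F(j) \in S\) for \(i < j\). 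I would then define \((\bC, S)^{\mathrm{core}}\) as the full subcategory of \(\mathsf{Ind}(\bC)\) (formed at a sufficiently large cardinal) generated under colimits by the objects \(\colim_{n\in\N} X_n\), the colimit taken in \(\mathsf{Ind}(\bC)\) of towers \(X_0 \to X_1 \to \cdots\) with \(X_n \in \bC_0\) and each transition map in \(S\); equivalently one presents it via \(\mathsf{Ind}\) of the \(\infty\)-category of such towers, localised at cofinal refinements. The functor \(L\colon (\bC,S)^{\mathrm{core}} \to \bC\) is the restriction of \(\colim\colon \mathsf{Ind}(\bC) \to \bC\); it preserves colimits, and, being a colimit-preserving functor between presentable stable categories, is automatically a left adjoint. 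Presentability and stability of \((\bC,S)^{\mathrm{core}}\) then follow from the usual accessible-localisation-of-presheaves arguments.

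The heart of the construction is that \((\bC,S)^{\mathrm{core}}\) is \emph{dualisable}, which I would prove by verifying condition~(4) of Theorem~\ref{thm:dualisable}: the generating objects \(X = \colim_n X_n\) above are compactly exhaustible because each transition map \(X_n \to X_{n+1}\) is a \emph{compact} morphism of \((\bC,S)^{\mathrm{core}}\). This is exactly where precompactness of \(S\) enters, via the \([0,1]\cap\Q\)-factorisations of \(S\)-maps together with the pushout closure property: interpolating the tower through such factorisations shows that maps out of its stages commute with filtered colimits in the ind-category. Having dualisability, I would identify the compact morphisms of \((\bC,S)^{\mathrm{core}}\), up to retracts, with the two-sided ideal generated by the transition maps of compactly exhaustible presentations; since \(L\) carries each such transition map to a morphism of \(S\) by construction and \(S\) is already a two-sided ideal in \(\bC\), it follows that \(L\) sends every compact morphism of \((\bC,S)^{\mathrm{core}}\) into \(S\).

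For the universal property, let \(\bD\) be any dualisable category and \(G\colon \bD \to \bC\) a colimit-preserving functor sending compact morphisms to \(S\). By criterion~(4) again, \(\bD\) is generated under colimits by compactly exhaustible objects \(Y = \colim_n Y_n\) with each \(Y_n \to Y_{n+1}\) compact; applying \(G\) produces a tower \(G(Y_0) \to G(Y_1) \to \cdots\) whose transition maps all lie in \(S\), hence an object of \((\bC,S)^{\mathrm{core}}\). I would show this assignment is independent of the chosen compactly exhaustible presentation of \(Y\) — any two such presentations become equivalent after passing to a common refinement, by a cofinality/interpolation argument — and is functorial in \(Y\); extending by colimits gives \(\widetilde{G}\colon \bD \to (\bC,S)^{\mathrm{core}}\) with \(L\circ\widetilde{G} \simeq G\), and a further comparison of presentations yields uniqueness of the factorisation. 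I expect this well-definedness-and-functoriality step to be the main obstacle: it requires a careful analysis of the space of compactly exhaustible presentations of a fixed object of a dualisable category, and of how compact morphisms propagate through colimit-preserving functors; the remaining verifications (that \(\widetilde{G}\) preserves colimits, and uniqueness) are then formal.

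Finally, for the addendum, assume every morphism of \(S\) is already compact in \(\bC\). For generating objects \(X = \colim_n X_n\) and \(Y = \colim_m Y_m\) of \((\bC,S)^{\mathrm{core}}\) one computes, using that the \(X_n\) define compact objects of \(\mathsf{Ind}(\bC)\),
\[
\mathbf{Map}_{(\bC,S)^{\mathrm{core}}}(X,Y) \;\simeq\; \varprojlim_{n}\,\colim_{m}\,\mathbf{Map}_{\bC}(X_n, Y_m), \qquad \mathbf{Map}_{\bC}(LX, LY) \;\simeq\; \varprojlim_{n}\,\mathbf{Map}_{\bC}\!\bigl(X_n,\,\colim_m Y_m\bigr),
\]
and the two agree because precomposition with the compact morphism \(X_{n-1} \to X_n\) exhibits the natural comparison as a pro-isomorphism of towers in \(n\) (an equivalence after reindexing \(n \mapsto n-1\)). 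Thus \(L\) is fully faithful on the generating objects, and since \(L\) preserves colimits and these objects generate \((\bC,S)^{\mathrm{core}}\) under colimits, the standard reduction — write the source as a colimit of generators and commute \(\mathbf{Map}\) past it in the first variable — upgrades this to full faithfulness of \(L\) on all of \((\bC,S)^{\mathrm{core}}\).
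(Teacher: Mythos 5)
The paper does not actually prove this theorem: it is imported from \cite{nkp}, and the ``proof'' in the paper consists only of recording the construction of \((\bC,S)^{\mathrm{core}}\) (the colimit-closure in \(\mathbf{Ind}(\bC)\) of \(\Q\)-indexed systems \(\varinjlim_{\alpha} jX_\alpha\) with transition maps in \(S\)) and of the functor \(L\) as the restriction of the colimit functor \(k\colon \mathbf{Ind}(\bC)\to\bC\). Your definition of the core and of \(L\) agrees with this (your \(\N\)-towers with \([0,1]\cap\Q\)-interpolations and the paper's \(\Q\)-systems generate the same subcategory, by cofinality and factorisability of \(S\)), so up to the point where the paper stops, you match it. Everything beyond that is your reconstruction of the argument in \cite{nkp}, and there it has genuine gaps.

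The most serious one is in the dualisability step. You argue via criterion (4) of Theorem~\ref{thm:dualisable} that a generator \(X=\varinjlim_n jX_n\) is compactly exhaustible because ``each transition map \(X_n\to X_{n+1}\) is a compact morphism of \((\bC,S)^{\mathrm{core}}\)'' — but the stages \(jX_n\) are not objects of the core (they are Yoneda images of objects of \(\bC\), which typically do not lie in the colimit-closure of \(S\)-systems), so the assertion does not typecheck, and identifying the compact morphisms of the core is precisely part of what must be proved; your interpolation remark gestures at the fix but does not supply a presentation of \(X\) as a sequential colimit \emph{inside the core} with compact transition maps. The second gap is the well-definedness, functoriality and uniqueness of the factorisation \(\widetilde G\) in the universal property, which you explicitly defer; this is the actual content of \cite{nkp}*{Theorem 2.7.4} and is not a routine cofinality argument. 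Finally, in the addendum your reduction ``commute \(\mathbf{Map}\) past colimits in the first variable'' only gets you from (generator, generator) to (arbitrary, generator); to reach (generator, arbitrary) you additionally need that the generators are closed under countable colimits (this is where precompactness of \(S\) re-enters), hence are the \(\omega_1\)-compact objects of the core, and that countable limits commute with \(\omega_1\)-filtered colimits, so that both \(\mathbf{Map}_{(\bC,S)^{\mathrm{core}}}(X,-)\) and \(\mathbf{Map}_\bC(LX, L(-))\) commute with the \(\omega_1\)-filtered colimits presenting a general object. The pro-isomorphism computation you give for two generators is correct and is the key local input, but the global bookkeeping is missing.
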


\begin{proof}
We only spell out the construction of the category \((\bC, S)^{\mathrm{core}}\) and the universal functor \((\bC, S)^{\mathrm{core}} \to \bC\) as these will be used in the proof of Theorem \ref{thm:rigidification=nuc}. Let \(j \colon \bC \to \mathbf{Ind}(\bC)\) denote the Yoneda embedding. Define by \((\bC, S)^{\mathrm{core}}\) the full subcategory of \(\mathbf{Ind}(\bC)\) generated under colimits by objects of the form \[X \cong \varinjlim_{\alpha \in \Q} jX_\alpha\] such that the structure maps \(jX_{\alpha} \to j X_{\beta}\) belong to \(S\). The desired initial functor is the restriction \((\bC,S)^{\mathrm{core}} \to \bC\) of the colimit functor \(k \colon \mathbf{Ind}(\bC) \to \bC\). \qedhere 
\end{proof}
 
\begin{lemma}\cite{nkp}*{Lemma 4.4.5, Proposition 4.4.12}\label{lem:precompact-trace-class}
Let \(\bC\) be a stable, presentable \(\infty\)-category with a closed, symmetric monoidal structure and compact tensor unit. Then the class of trace-class morphisms in \(\bC\) is a precompact ideal. Furthermore, if \(\bC\) is dualisable, then every trace-class morphism is compact. 
\end{lemma}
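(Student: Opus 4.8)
\textbf{Proof plan for Lemma~\ref{lem:precompact-trace-class}.}
The statement has two parts. The plan for the first part is to verify directly that the class $S$ of trace-class morphisms in $\mathbf{C}$ satisfies the axioms of a precompact ideal as listed in the (commented-out) Definition~\ref{def:ideal} of the original framework, i.e.\ the definition used in \cite{nkp}. So I would check: (i) $S$ is a two-sided ideal; (ii) $S$ is accessible; (iii) $S$ contains the identity on the initial object; (iv) for a diagram $F_0 \leftarrow F_1 \rightarrow F_2$ of functors $[0,1]\cap\mathbb{Q}\to\mathbf{C}$, the induced map $0\to 1$ of the pushout $F_0\coprod_{F_1}F_2$ lies in $S$. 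Each of these is a formal consequence of the characterisation of a trace-class map $f\colon X\to Y$ as one lying in the image of $\pi_0\mathbf{Map}(\mathbb{I},X^\vee\otimes Y)\to\pi_0\mathbf{Map}(X,Y)$. For (i), given $f\colon W\to X$, $g\colon X\to Y$ with $g$ trace-class, and $h\colon Y\to Z$, one writes $g$ as $X\cong\mathbb{I}\otimes X\to Y\otimes X^\vee\otimes X\to Y$ with witness $\mathbb{I}\to X^\vee\otimes Y$; then $h\circ g\circ f$ is witnessed by $\mathbb{I}\to X^\vee\otimes Y\to W^\vee\otimes Z$, using functoriality of $(-)^\vee$ and composition with $h$. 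For (iii), the initial object is the zero object since $\mathbf{C}$ is stable, and its identity is the zero map, which is trace-class (witnessed by the zero map $\mathbb{I}\to 0$).

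The accessibility axiom (ii) and the pushout axiom (iv) are the only ones requiring genuine (if still routine) argument. For accessibility, a trace-class map factors through a dualisable object: from a witness $\mathbb{I}\to X^\vee\otimes Y$ and compactness of $\mathbb{I}$ one produces a factorisation through a dualisable (hence $\kappa$-compact for suitable $\kappa$) object, as in \cite{clausenscholze3}*{Lemma 8.4}. For the pushout axiom, one uses that trace-class maps are closed under the relevant pushout operation because $X^\vee\otimes(-)$ preserves the pushout and $\pi_0\mathbf{Map}(\mathbb{I},-)$ is at worst mildly non-exact; the precise statement is exactly \cite{nkp}*{Proposition 4.4.12} or \cite{nkp}*{Lemma 4.4.5}, which I would cite rather than reprove. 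The key input throughout is that the unit $\mathbb{I}$ is compact, which is assumed.

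For the second part — that in a dualisable $\mathbf{C}$ every trace-class morphism is compact — the plan is: given a trace-class $f\colon X\to Y$ with witness $w\colon\mathbb{I}\to X^\vee\otimes Y$, and a morphism $Y\to\varinjlim_i Z_i$ into a filtered colimit, I want to factor $X\to Y\to\varinjlim_i Z_i$ through a finite stage. The composite $f$ equals $X\xrightarrow{w\otimes\mathrm{id}_X}X^\vee\otimes Y\otimes X\to Y$ (evaluation). In a dualisable category, filtered colimits distribute over products (Theorem~\ref{thm:dualisable}(3)); applying $X^\vee\otimes(-)$ and using that $X^\vee\otimes Y\otimes(-)\to (-)$ via evaluation is natural, together with compactness of $\mathbb{I}$, one sees that $w$ composed with $X^\vee\otimes Y\to\varinjlim(X^\vee\otimes Z_i)$ factors through a finite stage; then so does $f$ composed with $Y\to\varinjlim Z_i$. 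I expect the main obstacle to be getting the bookkeeping of the distributivity property exactly right — specifically ensuring the factorisation through a finite stage is compatible with all the structure maps — but this is precisely the content of \cite{nkp}*{Lemma 4.4.5}, so in the write-up I would invoke that lemma. In short, both halves of this lemma are essentially recollections from \cite{nkp} specialised to the symmetric monoidal setting, and the proof will mostly consist of assembling the cited statements.
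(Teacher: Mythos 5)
The paper does not actually prove this lemma: it is stated as a pure citation to \cite{nkp}*{Lemma 4.4.5, Proposition 4.4.12} (and the definition of precompact ideal itself is only present in a commented-out block). Your reconstruction is the standard argument and is sound in outline, and since you explicitly defer to \cite{nkp} for the two delicate axioms (accessibility and stability under the $\Q$-indexed pushout), there is no genuine gap. Two small corrections to the details you do spell out. First, for accessibility the factorisation is not ``through a dualisable object'': one writes $Y$ as a $\kappa$-filtered colimit of $\kappa$-compact objects $P_i$, uses that $X^\vee\otimes(-)$ preserves this colimit (it is a left adjoint, by closedness) and that $\mathbb{I}$ is compact to factor the witness $\mathbb{I}\to X^\vee\otimes Y$ through some $X^\vee\otimes P_i$; this exhibits $f$ as $X\to P_i\to Y$ with $X\to P_i$ trace-class and $P_i$ $\kappa$-compact. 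Second, for the compactness of trace-class morphisms your appeal to distributivity of filtered colimits over products (Theorem \ref{thm:dualisable}(3)) is a red herring: given $Y\to\varinjlim_i Z_i$, the composite $X\to\varinjlim_i Z_i$ is again trace-class with witness $\mathbb{I}\to X^\vee\otimes\varinjlim_i Z_i\simeq\varinjlim_i(X^\vee\otimes Z_i)$, and compactness of $\mathbb{I}$ alone lets this witness, hence the composite, factor through a finite stage. Dualisability is only needed if one works with the stronger ind-categorical formulation of compact morphisms used in \cite{nkp}; for the filtered-colimit definition adopted in this paper, compactness of the unit suffices.
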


\begin{definition}\label{def:rigid}
A dualisable category with a closed symmetric monoidal structure and compact unit is called \textit{rigid} if its compact and trace-class morphisms coincide.
\end{definition}

There is also a notion of local rigidity, and (local) rigidity relative to a base locally presentable symmetric monoidal category (\cite{nkp}*{Section 4.2}).
The following is (a special case of) \cite{nkp}*{Theorem 4.3.1}.

\begin{proposition}
    Let $\mathbf{C}$ be a locally presentable, stable, symmetric monoidal $\infty$-category. If $\mathbf{D}$ is a locally rigid $\mathbf{C}$-algebra, then it is a dualisable $\mathbf{C}$-module.
\end{proposition}

Applying Theorem \ref{thm:clausen-rigid} to the precompact ideal \(S\) of trace-class morphisms and combining with Lemma \ref{lem:precompact-trace-class}, we get the following:

\begin{theorem}\cite{nkp}*{Theorem 4.4.17}\label{thm:rigidification}
Let \(\bC\) be a stable, presentable \(\infty\)-category with a closed symmetric monoidal structure. Then there is a terminal rigid symmetric monoidal category \(\bC^{\mathrm{rig}}\), and a symmetric monoidal functor \(\bC^{\mathrm{rig}} \to \bC\), such that if \(\bD\) is a rigid symmetric monoidal \(\infty\)-category, the canonical map \[\mathbf{Fun}^{\otimes}(\bD, \bC^{\mathrm{rig}}) \to \mathbf{Fun}^{\otimes}(\bD, \bC)\] is an equivalence. Furthermore, if the monoidal unit of \(\bC\) is compact, then the functor \(\bC^{\mathrm{rig}} \to \bC\) is fully faithful. 
\end{theorem}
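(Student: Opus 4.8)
The plan is to apply the core construction of Theorem \ref{thm:clausen-rigid} to the precompact ideal $S = \mathrm{Tr}(\mathbf{C})$ of trace-class morphisms (precompact by Lemma \ref{lem:precompact-trace-class}), and then to upgrade the resulting dualisable category and functor to the symmetric monoidal setting. Concretely, set $\mathbf{C}^{\mathrm{rig}} \defeq (\mathbf{C}, \mathrm{Tr})^{\mathrm{core}}$, i.e.\ the full subcategory of $\mathbf{Ind}(\mathbf{C})$ generated under colimits by objects $X \simeq \varinjlim_{\alpha \in \Q} j X_\alpha$ whose structure maps $j X_\alpha \to j X_\beta$ are trace-class, and let $k \colon \mathbf{C}^{\mathrm{rig}} \to \mathbf{C}$ be the restriction of the colimit functor $\mathbf{Ind}(\mathbf{C}) \to \mathbf{C}$. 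Theorem \ref{thm:clausen-rigid} then gives that $\mathbf{C}^{\mathrm{rig}}$ is dualisable and that $k$ is a left adjoint carrying compact morphisms of $\mathbf{C}^{\mathrm{rig}}$ into $\mathrm{Tr}(\mathbf{C})$; moreover, when $\mathbb{I}$ is compact every trace-class morphism of $\mathbf{C}$ is compact, so the same theorem yields that $k$ is fully faithful, which is the asserted full faithfulness.

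Next I would transport the symmetric monoidal structure. The point is that $\mathrm{Tr}(\mathbf{C})$ is a tensor ideal compatible with the factorisable structure: a trace-class map tensored with any map is trace-class, and a tensor product of trace-class maps is trace-class (this is the internal form of the fact, recorded above, that symmetric monoidal functors preserve trace-class morphisms). Since Day convolution on $\mathbf{Ind}(\mathbf{C})$ distributes over colimits in each variable, the tensor product of two generators $\varinjlim_\Q j X_\alpha$ and $\varinjlim_\Q j Y_\beta$ is a $\Q\times\Q$-colimit of objects $j(X_\alpha \otimes Y_\beta)$ with trace-class transition maps, hence lies in $\mathbf{C}^{\mathrm{rig}}$; thus $\mathbf{C}^{\mathrm{rig}} \subseteq \mathbf{Ind}(\mathbf{C})$ is a symmetric monoidal localisation with unit $j\mathbb{I}$ (note $\mathrm{id}_{\mathbb{I}}$ is trace-class since $\mathbb{I}$ is dualizable, so the constant $\Q$-diagram at $\mathbb{I}$ exhibits $j\mathbb{I}$ as a generator). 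As $\mathbf{C}$ is presentably symmetric monoidal, the colimit functor $\mathbf{Ind}(\mathbf{C}) \to \mathbf{C}$ is symmetric monoidal, and hence so is $k$.

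Then I would verify rigidity of $\mathbf{C}^{\mathrm{rig}}$, which I expect to be the main obstacle. It is dualisable with compact unit, so by Definition \ref{def:rigid} it remains to prove that its compact morphisms coincide with its trace-class morphisms. The inclusion of trace-class into compact is Lemma \ref{lem:precompact-trace-class} applied to the dualisable category $\mathbf{C}^{\mathrm{rig}}$. For the converse one must analyse a compact morphism $f \colon X \to Y$ in $\mathbf{C}^{\mathrm{rig}}$ against the explicit $\Q$-colimit generators: compactness of $f$ forces it to factor through one of the trace-class transition maps $j X_\alpha \to j X_\beta$, and inserting a rational intermediate parameter $\alpha < \gamma < \beta$ exhibits it as a composite of two maps that are again objects of $\mathbf{C}^{\mathrm{rig}}$ and trace-class in $\mathbf{C}$; one must then promote this to trace-classness in $\mathbf{C}^{\mathrm{rig}}$ itself, using that $k$ is fully faithful and symmetric monoidal, so that internal duals and coevaluation data are detected and reflected by $k$. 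This matching of compact and trace-class morphisms inside $\mathbf{C}^{\mathrm{rig}}$, carried out compatibly with the monoidal structure, is the delicate part, and it is where the precise construction behind Theorem \ref{thm:clausen-rigid} — in particular the factorisability of the trace-class ideal — has to be used carefully.

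Finally, for the universal property, let $\mathbf{D}$ be rigid symmetric monoidal and $F \colon \mathbf{D} \to \mathbf{C}$ a colimit-preserving symmetric monoidal functor. Since $\mathbf{D}$ is dualisable and rigid, every compact morphism of $\mathbf{D}$ is trace-class, and symmetric monoidal functors preserve trace-class morphisms, so $F$ sends compact morphisms of $\mathbf{D}$ into $\mathrm{Tr}(\mathbf{C})$. Thus $(\mathbf{D}, F)$ is an object of the category for which $(\mathbf{C}, \mathrm{Tr})^{\mathrm{core}}$ is terminal, giving a unique left-adjoint factorisation $\mathbf{D} \to \mathbf{C}^{\mathrm{rig}} \xrightarrow{k} \mathbf{C}$; invoking the monoidal refinement of the core construction (the universal property of Theorem \ref{thm:clausen-rigid} enhanced to $\mathbf{CAlg}(\mathbf{Pr}^L_{st})$) makes this factorisation symmetric monoidal and natural in $\mathbf{D}$, which yields the claimed equivalence $\mathbf{Fun}^{\otimes}(\mathbf{D}, \mathbf{C}^{\mathrm{rig}}) \xrightarrow{\sim} \mathbf{Fun}^{\otimes}(\mathbf{D}, \mathbf{C})$.
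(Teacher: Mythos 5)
The paper does not actually prove this statement: it is imported wholesale from \cite{nkp}*{Theorem 4.4.17}, with the preceding sentence merely indicating that it arises by applying Theorem \ref{thm:clausen-rigid} to the precompact ideal of trace-class morphisms (Lemma \ref{lem:precompact-trace-class}). Your proposal follows exactly that indicated route and correctly fleshes out the points the citation leaves implicit (the monoidal enhancement, the compact-implies-trace-class matching, and the deduction of full faithfulness from compactness of the unit), so it is essentially the same approach, just with the details of \cite{nkp} reconstructed rather than cited.
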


The category \(\bC^{\mathrm{rig}}\) is called the \emph{rigidification} of \(\bC\). The following result that we have learned from Dustin Clausen provides a verifiable criterion for when the subcategory of nuclear objects \(\mathbf{Nuc}(\bC)\) is rigid. It also justifies our distinction between nuclear and infinitely nuclear objects. 

\begin{corollary}\label{cor:Clausen}
Let \(\bC\) be a compactly generated, closed symmetric monoidal stable \(\infty\)-category with compact unit object. Then \(\mathbf{Nuc}(\bC)\) is rigid if and only if every trace-class morphism is factorisably trace-class.
\end{corollary}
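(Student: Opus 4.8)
The plan is to decide which direction is easy and which is the real work. The ``only if'' direction should follow from unwinding the rigidity criterion: if $\mathbf{Nuc}(\bC)$ is rigid, then in $\mathbf{Nuc}(\bC)$ every compact morphism is trace-class. The key point is that trace-class morphisms \emph{in $\mathbf{Nuc}(\bC)$} agree with trace-class morphisms in $\bC$ (both have compact unit, and $\mathbf{Nuc}(\bC) \to \bC$ is a fully faithful symmetric monoidal left adjoint onto a colimit-closed subcategory, so duals and internal homs of the relevant objects are computed the same way). Now take a trace-class morphism $f\colon X_0 \to X_1$ between compact objects of $\bC$. Using Theorem \ref{thm:dualisable}, objects of $\mathbf{Nuc}(\bC) = \mathbf{Nuc}(\bC)$ built as sequential colimits of compacts with trace-class transition maps are $\omega_1$-compact there, and $f$, being trace-class, is compact in $\mathbf{Nuc}(\bC)$; in a rigid category compact equals trace-class but moreover, in a dualisable category, every compact morphism factors through an $\omega_1$-compact object and hence (since $\omega_1$-compacts in $\mathbf{Nuc}(\bC)$ are exactly the $\Q$-indexed colimits of compacts of $\bC$ with trace-class transition maps, by the $\omega_1$-compactness characterisation used in Proposition \ref{prop:basic-nuclear-explicit}) through such a basic nuclear object. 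One then extracts from this factorisation, together with the fact that $f$ itself lands between two compacts $X_0, X_1$, a factorisation $X_0 \to P \to X_1$ with both legs trace-class, so $f$ is factorisably trace-class.

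For the ``if'' direction, assume every trace-class morphism in $\bC$ is factorisably trace-class. I want to show $\mathbf{Nuc}(\bC)$ is rigid, i.e.\ that its compact and trace-class morphisms coincide (trace-class $\Rightarrow$ compact always holds by Lemma \ref{lem:precompact-trace-class} once we know $\mathbf{Nuc}(\bC)$ is dualisable, and $\mathbf{Nuc}(\bC)$ is $\omega_1$-compactly generated hence dualisable). So the content is: every compact morphism in $\mathbf{Nuc}(\bC)$ is trace-class. By the construction in Theorem \ref{thm:clausen-rigid} applied to the ideal $S$ of trace-class morphisms in $\bC$, the rigidification $\bC^{\rig} \subseteq \mathbf{Ind}(\bC)$ is generated under colimits by $\Q$-indexed colimits $\varinjlim_\alpha jX_\alpha$ with trace-class transition maps; under the hypothesis those transition maps can be \emph{refined} to $\Q$-indexed diagrams all of whose maps are trace-class, which is precisely the shape of a $\Q$-basic nuclear object. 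Hence I expect to identify $\bC^{\rig}$ with $\mathbf{Nuc}^\infty(\bC)$, and then the hypothesis ``every trace-class map is factorisably trace-class'' forces $\mathbf{Nuc}^\infty(\bC) = \mathbf{Nuc}(\bC)$ as subcategories of $\bC$, so $\mathbf{Nuc}(\bC) \simeq \bC^{\rig}$ is rigid.

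Concretely the steps are: (1) record that $\mathbf{Nuc}(\bC)$ and $\mathbf{Nuc}^\infty(\bC)$ are $\omega_1$-compactly generated, with $\omega_1$-compact objects the basic, resp.\ $\Q$-basic, nuclear objects, and that both are dualisable; (2) show trace-class morphisms are intrinsic, i.e.\ computed identically in $\mathbf{Nuc}(\bC)$, $\mathbf{Nuc}^\infty(\bC)$, and $\bC$, using the compact unit and the fully faithful monoidal inclusions; (3) prove the refinement lemma: under the standing hypothesis, any $\Q$-indexed colimit of compacts of $\bC$ with trace-class transition maps is equivalent to one whose transition maps are factorisably trace-class, by a standard ``interpolate between rationals'' argument (choose a cofinal sequence $a_n \nearrow 1$, factor each $jX_{a_n} \to jX_{a_{n+1}}$ as two trace-class maps through a new object, iterate on every subinterval, take the union of the resulting dense subsets of $\Q \cap [0,1]$); (4) conclude $\bC^{\rig} \simeq \mathbf{Nuc}^\infty(\bC) = \mathbf{Nuc}(\bC)$ and invoke Theorem \ref{thm:rigidification} together with Definition \ref{def:rigid}; (5) for the converse, run the factorisation argument of the first paragraph. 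The main obstacle will be step (3) combined with making step (1)--(2) precise enough that ``compact morphism in $\mathbf{Nuc}(\bC)$'' can be pushed into $\mathbf{Ind}(\bC)$ and recognised as trace-class — i.e.\ carefully controlling the interaction between the $\omega_1$-compact generation of $\mathbf{Nuc}(\bC)$, the definition of compact morphism, and the fact that a compact morphism out of an $\omega_1$-compact object between two genuine compacts of $\bC$ must factor through a compact of $\bC$. I would lean on \cite{nkp} (the cited Theorem 2.7.4, Lemma 4.4.5, Proposition 4.4.12, Theorem 4.4.17) for the dualisability/rigidity bookkeeping and on Proposition \ref{prop:basic-nuclear-explicit} for the explicit shape of $\omega_1$-compact objects.
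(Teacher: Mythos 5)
Your proposal follows the paper's argument: the ``if'' direction is exactly the paper's (the hypothesis makes basic and $\Q$-basic nuclear objects coincide, so $\mathbf{Nuc}(\bC)=\mathbf{Nuc}^\infty(\bC)\simeq\bC^{\rig}$ is rigid by Theorem \ref{thm:rigidification}), and your interpolation argument in step (3) is just an expansion of what the paper leaves implicit. For the ``only if'' direction the paper is more direct than you are: it simply notes that in the rigid category $\mathbf{Nuc}(\bC)$ compact and trace-class morphisms coincide and then cites \cite{nkp}*{Proposition 2.4.4}, which says that any compact morphism in a dualisable category factors as a composite of two compact morphisms; this immediately gives the factorisation and replaces your detour through $\omega_1$-compact objects. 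That detour is the one soft spot in your write-up: the sentence ``one then extracts \dots a factorisation $X_0\to P\to X_1$ with both legs trace-class'' is not justified as stated (note that $X_0$, being compact in $\bC$, is generally not compact in $\mathbf{Nuc}(\bC)$, so a map out of it need not factor through a finite stage of the colimit presenting the $\omega_1$-compact object), whereas the direct appeal to the two-fold factorisation of compact morphisms closes it.
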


\begin{proof}
Suppose \(\mathbf{Nuc}(\bC)\) is rigid, then the compact and trace-class morphisms coincide. By \cite{nkp}*{Proposition 2.4.4}, any compact morphism factorises through two compact morphisms, proving the claim in one direction. Conversely, if every trace-class morphism is factorisably trace-class, then the basic and \(\Q\)-basic nuclear objects in \(\bC\) coincide, so that by Theorem \ref{thm:rigidification}, it is already rigid. 
\end{proof}

\comment{
\begin{corollary}\label{thm:rigidification=nuc}
We have an equivalence of \(\infty\)-categories 
\[\mathbf{D}(R)^\mathrm{rig} \simeq \mathbf{Nuc}^\infty(R).\] In particular, \(\mathbf{Nuc}^\infty(R)\) is \(\omega_1\)-compactly generated and the \(\omega_1\)-compact objects are the \(\Q\)-basic nuclear objects in \(\bD(R)\). 
\end{corollary}

\begin{proof}
We first note that the unit object \(R\) in \(\bD(R)\) is compact, so that the rigidification \(\bD(R)^\rig\) is a full subcategory of \(\bD(R)\), which we identify with the essential image of the inclusion. Now let \(M \cong \varinjlim_{\alpha \in \Q} M_\alpha\) be a \(\Q\)-basic nuclear chain complex of complete bornological \(R\)-modules, so that the transition maps \(M_\alpha \to M_\beta\) are trace-class. Consequently, \(M\) may be identified with the inductive system \(\varinjlim_{\alpha \in \Q} j(M_\alpha) \in \mathbf{D}(R)^\mathrm{rig}\), where the transition maps lie in the precompact ideal of trace-class maps in \(\bD(R)\). Since \(\bD(R)^\mathrm{rig}\) and \(\mathbf{Nuc}^\infty(R)\) are generated under colimits inside \(\mathbf{C})\) by \(\varinjlim_{\alpha \in \Q} j(M_\alpha)\) and \(\varinjlim_{\alpha} M_\alpha\), the result follows.\qedhere 
\end{proof} 
}

\begin{corollary}\label{thm:rigidification=nuc}
Let $\mathbf{C}$ be a compactly generated, presentable, closed symmetric monoidal $\infty$-category with compact unit object $R$.
We have an equivalence of \(\infty\)-categories 
\[\mathbf{C}^\mathrm{rig} \simeq \mathbf{Nuc}^\infty(\mathbf{C}).\] In particular, \(\mathbf{C}^{\rig}\simeq \mathbf{Nuc}^\infty(\mathbf{C})\) is \(\omega_1\)-compactly generated and the \(\omega_1\)-compact objects are the \(\Q\)-basic nuclear objects in \(\mathbf{C}\). 
\end{corollary}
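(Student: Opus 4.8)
The statement asserts that for a compactly generated, presentably symmetric monoidal $\infty$-category $\mathbf{C}$ with compact unit $R$, the rigidification $\mathbf{C}^{\mathrm{rig}}$ coincides with $\mathbf{Nuc}^\infty(\mathbf{C})$, the full subcategory generated under colimits by $\Q$-basic nuclear objects. The plan is to exhibit both categories as the same reflective subcategory of $\mathbf{Ind}(\mathbf{C})$, using the explicit construction of the rigidification recalled in Theorem~\ref{thm:clausen-rigid} and Theorem~\ref{thm:rigidification}. First I would recall that since $R$ is compact, Theorem~\ref{thm:rigidification} (or its combination with the last sentence of Theorem~\ref{thm:clausen-rigid}) tells us that the canonical functor $\mathbf{C}^{\mathrm{rig}} \to \mathbf{C}$ is fully faithful, and that $\mathbf{C}^{\mathrm{rig}}$ is realised concretely as $(\mathbf{C}, S)^{\mathrm{core}}$ for $S$ the precompact ideal of trace-class morphisms (precompactness is Lemma~\ref{lem:precompact-trace-class}). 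By the construction spelled out in the proof of Theorem~\ref{thm:clausen-rigid}, $(\mathbf{C},S)^{\mathrm{core}}$ is precisely the full subcategory of $\mathbf{Ind}(\mathbf{C})$ generated under colimits by objects of the form $\varinjlim_{\alpha \in \Q} jX_\alpha$ with transition maps $jX_\alpha \to jX_\beta$ trace-class in $\mathbf{C}$.

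The second step is to match this description with $\mathbf{Nuc}^\infty(\mathbf{C})$. Here I would argue as follows. A $\Q$-basic nuclear object of $\mathbf{C}$ is by definition a $\Q$-indexed colimit $\varinjlim_{\alpha} M_\alpha$ of compact objects with trace-class transition maps, computed in $\mathbf{C}$. The corresponding object $\varinjlim_{\alpha} j M_\alpha$ of $\mathbf{Ind}(\mathbf{C})$ lies in $(\mathbf{C},S)^{\mathrm{core}}$ by the description above, and the colimit functor $k \colon \mathbf{Ind}(\mathbf{C}) \to \mathbf{C}$ restricts (per Theorem~\ref{thm:clausen-rigid}) to the fully faithful inclusion $\mathbf{C}^{\mathrm{rig}} \hookrightarrow \mathbf{C}$; under this inclusion $\varinjlim_\alpha j M_\alpha$ maps to $\varinjlim_\alpha M_\alpha$. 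Thus the generating objects of $\mathbf{C}^{\mathrm{rig}}$ inside $\mathbf{C}$ are exactly the $\Q$-basic nuclear objects, the generating objects of $\mathbf{Nuc}^\infty(\mathbf{C})$. Since both subcategories are by definition closed under colimits in $\mathbf{C}$ (for $\mathbf{C}^{\mathrm{rig}}$ this uses that the essential image of a fully faithful left-adjoint-preserving functor out of a dualisable category is colimit-closed), two colimit-closed subcategories with the same generators coincide, giving the equivalence $\mathbf{C}^{\mathrm{rig}} \simeq \mathbf{Nuc}^\infty(\mathbf{C})$.

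For the final sentence, I would invoke that the compact morphisms in a dualisable category form a factorisable precompact ideal, and that in $(\mathbf{C},S)^{\mathrm{core}}$ every trace-class morphism is compact while conversely $S$ consists of trace-class maps (Lemma~\ref{lem:precompact-trace-class}, together with Definition~\ref{def:rigid}). One then shows the $\omega_1$-compact objects of $\mathbf{Nuc}^\infty(\mathbf{C})$ are exactly the countable colimits of compact objects of $\mathbf{C}$ with trace-class transition maps, i.e. the $\Q$-basic nuclear objects: every $\Q$-basic nuclear object is $\omega_1$-compact because its transition maps are trace-class hence compact, and every $\omega_1$-compact object is a countable colimit of these because they generate and the subcategory of $\Q$-basic nuclear objects is closed under countable colimits (this last closure property follows, as in \cite{nkp}*{Proposition 2.3.14}, from the fact that factorisably trace-class maps form a factorisable precompact ideal and filtered colimits in $\mathbf{C}$ are exact). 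I expect the main obstacle to be the careful bookkeeping in identifying $(\mathbf{C},S)^{\mathrm{core}}$ as a subcategory of $\mathbf{Ind}(\mathbf{C})$ with the colimit-closure of $\Q$-basic nuclear objects inside $\mathbf{C}$ — in particular checking that passing through the Yoneda embedding and back via $k$ really does identify the respective generators, and that the colimit-closure operations in $\mathbf{Ind}(\mathbf{C})$ versus in $\mathbf{C}$ are compatible under $k$. Once the generators are matched, the rest is formal.
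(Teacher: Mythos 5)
Your proposal is correct and follows essentially the same route as the paper: use compactness of the unit to realise $\mathbf{C}^{\mathrm{rig}}$ as a full subcategory of $\mathbf{C}$ via the explicit $(\mathbf{C},S)^{\mathrm{core}}$ construction, identify its colimit-generators with the $\Q$-basic nuclear objects under the Yoneda embedding and the colimit functor, and conclude that the two colimit-closed subcategories coincide. You additionally spell out the $\omega_1$-compact generation statement, which the paper leaves implicit (it follows from the cited results of Clausen--Scholze and \cite{nkp}), but the core argument is the same.
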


\begin{proof}
We first note that the unit object \(R\) in \(\mathbf{C}\) is compact, so that the rigidification \(\mathbf{C}^\rig\) is a full subcategory of \(\mathbf{C}\), which we identify with the essential image of the inclusion. Now let \(M \cong \varinjlim_{\alpha \in \Q} M_\alpha\) be a \(\Q\)-basic nuclear object, so that the transition maps \(M_\alpha \to M_\beta\) are trace-class. Consequently, \(M\) may be identified with the inductive system \(\varinjlim_{\alpha \in \Q} j(M_\alpha) \in \mathbf{C}^\mathrm{rig}\), where the transition maps lie in the precompact ideal of trace-class maps in \(\mathbf{C}\). Since \(\mathbf{C}^\mathrm{rig}\) and \(\mathbf{Nuc}^\infty(\mathbf{C})\) are generated under colimits inside \(\bD(R)\) by \(\varinjlim_{\alpha \in \Q} j(M_\alpha)\) and \(\varinjlim_{\alpha} M_\alpha\), the result follows.\qedhere 
\end{proof}

\comment{
\begin{lemma}\label{lem:nuclear-relative}
Let \(N \in \bD(R)\) be a nuclear object in \(\bD(R)\). Then \[\underline{\mathbf{Hom}}(M,N) \cong \underline{\mathbf{Hom}}(M,R) \haotimes^{\mathbb{L}} N\] for any compact object \(M\) in \(\bD(R)\). 
\end{lemma}

\begin{proof}
This can be deduced immediately from \cite{ben2020fr} Lemma 4.14, and Lemma 4.15. For completeness, we give a proof here.
It suffices to prove the result for the compact projective object \(M = l^1(X)\), for which \(\underline{\mathbf{Hom}}(l^1(X), -)\) commutes with filtered colimits, and a fortiori \(\omega_1\)-filtered colimits. It consequently suffices to prove the statement for objects \(N \in \bD(R)\) that are basic nuclear in \(\bD(R)\).  Writing \(N\) as a colimit \(\varinjlim_{n \in \N} P_n\) with trace-class transition maps \(f_n \colon  P_n \to P_{n+1}\), we have canonical maps \[\underline{\mathbf{Hom}}(l^1(X), P_n) \to \mathbf{Hom}(l^1(X),R) \hat{\otimes} P_{n+1}\] induced by the coevaluation-evaluation factorisation \(P_n \to P_n \hat{\otimes} P_n^\vee \hat{\otimes} P_{n+1} \to P_{n+1}\) of \(f_n\). Taking colimits yields the desired result.
\end{proof}
}

We now show that the assignments \(\mathsf{Spec}(A) \mapsto \mathbf{Nuc}(A)\) and \(\mathsf{Spec}(A) \mapsto \mathbf{Nuc}^{\infty}(A)\) are sheaves of \(\infty\)-categories for the topologies defined in Section \ref{sec:der-context}, under certain restrictions on $A$. For $\mathbf{Nuc}$, this was established in \cite{ben2024perspective}*{Section 7.2.4.2} for the finite homotopy monomorphism topology. Here we prove descent for the descendable topology on quasi-coherent sheaves.

\comment{
Let \(\mathbf{Q} \colon \bA^\op \to \mathsf{Pr}_{st}^{L}\) be a presheaf, and \(\mathbf{N} \subseteq \mathbf{Q}\) a sub-presheaf. We say that \(\mathbf{N}\) is \textit{local} if whenever \(\{f_i \colon U_i \to U\}\) is a cover in a Grothendieck pre-topology \(\tau\) and \(M \in \mathbf{Q}(U)\) is such that \(f_i^*(M) \in \mathbf{N}(U_i)\), then \(M \in \mathbf{N}(U)\). 

\begin{lemma}\cite{ben2024perspective}*{Lemma 7.1.4}\label{lem:nuc-descent-general}
Suppose that \(\mathbf{Q}\) satisfies Cech (hyper)descent and \(\mathbf{N}\) is local. Then \(\mathbf{N}\) satisfies Cech (hyper)descent. 
\end{lemma} 
 

\begin{theorem}\label{thm:nuc-descent}
Let \(\bA\) be a full subcategory of \(\mathbf{Comm}(\mathbf{C})^{op}\) closed under fibre products, such that for every \(\mathsf{Spec}(A) \in \bA\), the underlying chain complex of \(A\) is nuclear (resp. infinitely nuclear) in \(\bD(R)\). Then the functor \[\bA^\op \to \mathbf{CAlg}(\mathsf{Pr}_{st}^{L}), \quad A \mapsto \mathbf{Nuc}(A)\] is a \(\tau_{\vert \bA}^{hm}\)-sheaf of \(\infty\)-categories. 
\end{theorem}

\begin{proof}
For the nuclear case this \cite{ben2024perspective}*{Section 7.2.4.2}. Here we give a different proof that works for both $\mathbf{Nuc}$ and $\mathbf{Nuc}^{\infty}$. We prove the $\mathbf{Nuc}^{\infty}$ case, the $\mathbf{Nuc}$ case being similar

By Theorem \ref{cor:QCoh-descent}, the category \(\mathbf{QCoh}\) satisfies Cech descent, so by Lemma \ref{lem:nuc-descent-general}, it remains to show that the assignment \(A\mapsto \mathbf{Nuc}^{\infty}(A)\) is a local subpresheaf of \(\mathbf{QCoh}\) for the homotopy Zariski topology.  Let \(X = \mathsf{Spec}(A)\) and \(U = \mathsf{Spec}(A_i) \in \bA_{/X}\). Then we first observe that the localisation functor \(L_U \colon \bD(A) \to \bD(A_i)\) restricts to nuclear objects \(\mathbf{Nuc}(A) \to \mathbf{Nuc}(A_i)\). 

What we have shown so far is that localisation descends to a functor between nuclear modules. It remains to show that if \(M \in \mathbf{D}(A)\) such that \(L_{U_i}(M) \in \mathbf{Nuc}(A_i)\) for each \(i\), then \(M \in \mathbf{Nuc}(A)\). Let \(X \in \mathbf{D}(A)\) be a compact object and \(X_A = X \hat{\otimes}^\mathbb{L} A\). Then the natural map \(\underline{\mathbf{Hom}}_A(X_A, A) \hat{\otimes}_A^L M \to \underline{\mathbf{Hom}}_A(X_A, M)\) is equivalent to the map \[\underline{\mathbf{Hom}}(X,A) \hat{\otimes}_A^L M \to \underline{\mathbf{Hom}}(X,M).\] Now since by hypothesis \(M \hat{\otimes}_A^L A_i \in \mathbf{Nuc}(A_i)\) and \(A_i \in \mathbf{Nuc}(A)\), we have

\begin{multline*}
\underline{\mathbf{Hom}}(X,A) \hat{\otimes}_A^L A_i \hat{\otimes}_A M \simeq \underline{\mathbf{Hom}}(X,A_i) \hat{\otimes}_A M \simeq \underline{\mathbf{Hom}}(X,A_i) \hat{\otimes}_{A_i}^L A_i \hat{\otimes}_A M \\
\simeq \underline{\mathbf{Hom}}(X, A_i \hat{\otimes}_A^L M) \simeq \underline{\mathbf{Hom}}_{A_i}(A_i \hat{\otimes}^\mathbb{L} X, A_i \hat{\otimes}_A^L M)
\end{multline*}

Similarly, for any \(A_{(i_0,\cdots,i_n)} \defeq A_{i_0} \hat{\otimes}_A \dotsc \hat{\otimes}_A A_{i_n}\) with \((i_0, \cdots,i_n) \in I^n\), we have \[\mathbf{Hom}(X,A) \hat{\otimes}_A^L A_{(i_0,\cdots,i_n)} \hat{\otimes}_A^L M \simeq \mathbf{Hom}(A_i \hat{\otimes}^\mathbb{L} X, A_{(i_0,\cdots,i_n)} \hat{\otimes}_A^L M).\] Taking inverse limits and using that the limit is finite, we get 

\[\mathbf{Hom}(X^{\vee_A}, M) \simeq \mathbf{Hom}(X,M)\] which shows that \(M\) is nuclear as an \(R\)-module. The result now follows from Proposition \ref{prop:nuclear-relative}. \qedhere
\end{proof}

For our applications, we will start with an algebra \(A \in \mathbf{Alg}(\mathsf{CBorn}_R)\) that is either nuclear or \(\Q\)-nuclear as an \(R\)-module, build the derived \(\infty\)-category \(\mathbf{D}(A)\) of \(A\)-modules, and then consider the full \(\infty\)-subcategory \(\mathbf{Nuc}^\infty(A) \defeq \mathbf{Mod}_A(\bD(R)^\rig)\) of \(\mathbf{D}(A)\) of \emph{infinitely nuclear \(A\)-modules}.

}

\begin{lemma}
    Let $\mathbf{C}$ be a rigid category, and let $A\in\mathbf{Comm}(\mathbf{C})$. Then $\mathbf{Mod}_{A}(\mathbf{C})$ is rigid.
\end{lemma}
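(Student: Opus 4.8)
The statement to prove is: if $\mathbf{C}$ is a rigid symmetric monoidal stable $\infty$-category and $A \in \mathbf{Comm}(\mathbf{C})$, then $\mathbf{Mod}_A(\mathbf{C})$ is rigid. Recall (Definition \ref{def:rigid}) that this means $\mathbf{Mod}_A(\mathbf{C})$ is dualisable, carries a closed symmetric monoidal structure with compact unit, and its compact and trace-class morphisms coincide. The approach is to verify these conditions in turn, leveraging the base-change functor $A \otimes (-) \colon \mathbf{C} \to \mathbf{Mod}_A(\mathbf{C})$ and its right adjoint (the forgetful functor $|-|$), which is conservative and colimit-preserving.

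First I would record the structural facts. The category $\mathbf{Mod}_A(\mathbf{C})$ is presentable and stable, with closed symmetric monoidal structure given by $\otimes_A$ and internal hom $\underline{\mathbf{Map}}_A$, and monoidal unit $A$. The forgetful functor $|-| \colon \mathbf{Mod}_A(\mathbf{C}) \to \mathbf{C}$ preserves all limits and colimits (it is comonadic for the comonad $A \otimes |-|$), and is conservative. For dualisability, I would use criterion (3) of Theorem \ref{thm:dualisable}: filtered colimits distribute over products. Since $|-|$ preserves and reflects both filtered colimits and products (products in $\mathbf{Mod}_A(\mathbf{C})$ are computed on underlying objects, as the forgetful functor is a right adjoint, and filtered colimits likewise since it is also a left adjoint up to the comonadic structure — more carefully, $|-|$ preserves filtered colimits because $A \otimes (-)$ does and $|-|$ is comonadic), the distributivity in $\mathbf{C}$ transports to $\mathbf{Mod}_A(\mathbf{C})$. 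Alternatively, and perhaps more cleanly, one observes that $A \otimes (-) \colon \mathbf{C} \to \mathbf{Mod}_A(\mathbf{C})$ exhibits $\mathbf{Mod}_A(\mathbf{C})$ as a retract of $\mathbf{C}$ in $\mathsf{Pr}^L_{\mathsf{St}}$ when... — actually this retract is not literally true, so I would stick with the distributivity criterion via the conservative limit-and-colimit-preserving forgetful functor. The unit $A$ is compact: a morphism $A \to \varinjlim_i Z_i$ in $\mathbf{Mod}_A(\mathbf{C})$ corresponds by adjunction to a morphism $\mathbb{I} \to |{\varinjlim_i Z_i}| = \varinjlim_i |Z_i|$ in $\mathbf{C}$, which factors through a finite stage since $\mathbb{I}$ is compact in $\mathbf{C}$ (as $\mathbf{C}$ is rigid, hence in particular has compact unit); unwinding, the original map factors through a finite stage.

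The main content is the identification of compact and trace-class morphisms in $\mathbf{Mod}_A(\mathbf{C})$. One inclusion is general (Lemma \ref{lem:precompact-trace-class}): since $\mathbf{Mod}_A(\mathbf{C})$ is dualisable with compact unit, every trace-class morphism is compact. For the converse, let $f \colon M \to N$ be a compact morphism in $\mathbf{Mod}_A(\mathbf{C})$. I would first reduce to a convenient situation: by \cite{nkp} (the structure theory of dualisable categories, e.g. Theorem \ref{thm:dualisable}(4) together with the fact that compact morphisms factor through compactly-exhaustible objects), a compact morphism factors as $M \to P \to N$ where $P$ is... — here I need to be careful. The cleanest route: apply the universal property. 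We have $\mathbf{Mod}_A(\mathbf{C})^{\mathrm{rig}} \to \mathbf{Mod}_A(\mathbf{C})$ from Theorem \ref{thm:rigidification}, and it suffices to show this is an equivalence. Since $\mathbf{C}$ is rigid, $\mathbf{C} = \mathbf{C}^{\mathrm{rig}}$, and by the universal property of rigidification applied to the symmetric monoidal colimit-preserving functor $A \otimes (-) \colon \mathbf{C} \to \mathbf{Mod}_A(\mathbf{C})$, it factors through $\mathbf{Mod}_A(\mathbf{C})^{\mathrm{rig}}$; but more to the point, I would use \cite{nkp}'s result that modules over an algebra in a rigid category form a rigid category — the key lemma being that trace-class morphisms in $\mathbf{C}$ base-change to trace-class morphisms in $\mathbf{Mod}_A(\mathbf{C})$ (this is the Corollary after Proposition \ref{prop:basic-nuclear-explicit} in the excerpt, \cite{ben2024perspective}*{Proposition 2.1.53}), and conversely a compact morphism in $\mathbf{Mod}_A(\mathbf{C})$ can be exhibited, via the bar resolution $M \simeq \varinjlim$ of $A^{\otimes \bullet+1} \otimes |M|$ together with the fact that compact morphisms in $\mathbf{C}$ are trace-class, as trace-class in $\mathbf{Mod}_A(\mathbf{C})$.

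\textbf{The main obstacle} I anticipate is precisely this last step — showing every compact morphism of $A$-modules is trace-class. The difficulty is that a compact morphism $f \colon M \to N$ need not obviously come from $\mathbf{C}$ via base change; one must genuinely work inside $\mathbf{Mod}_A(\mathbf{C})$. The resolution is to use that in a dualisable category a compact morphism factors through a dualizable (compact) object and then invoke the rigidity of $\mathbf{C}$ at the level of internal homs: the internal hom $\underline{\mathbf{Map}}_A(M, A)$ can be computed using $\underline{\mathbf{Map}}_{\mathbf{C}}(|M|, A)$ via the adjunction, and rigidity of $\mathbf{C}$ forces the evaluation/coevaluation factorisation one needs to witness $f$ as trace-class. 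I would organize this by: (i) reduce to $f = A \otimes_{\mathbb{I}} g$ for $g$ a morphism in $\mathbf{C}$ using a bar-type argument, or (ii) directly show compact $\Rightarrow$ the canonical map $\underline{\mathbf{Map}}_A(M,A) \otimes_A N \to \underline{\mathbf{Map}}_A(M,N)$ hits $f$, using that compact morphisms in $\mathbf{C}$ have this property and that all the relevant internal homs and tensor products are compatible with the forgetful functor. Either way, the crux is transporting the rigidity of $\mathbf{C}$ through base change, which is exactly what \cite{nkp} and \cite{ben2024perspective}*{Section 2.1} set up, so I would cite those for the technical heart and present the verification of dualisability and compact unit as the self-contained part.
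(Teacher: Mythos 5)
Your verification of dualisability and of the compactness of the unit $A$ is fine, but the step you yourself flag as the main obstacle — that every compact morphism in $\mathbf{Mod}_A(\mathbf{C})$ is trace-class — is never actually carried out, and neither of the two escape routes you sketch closes it. Route (i), reducing to morphisms base-changed from $\mathbf{C}$ via the bar resolution, fails because a compact morphism $f\colon M\to N$ of $A$-modules need not be, even up to factorisation, of the form $A\otimes g$; the bar construction presents $M$ as a geometric realisation, and neither compactness nor the trace-class property of a morphism is detected levelwise on such a presentation. Route (ii) at best shows that the \emph{underlying} morphism $|f|$ is trace-class in $\mathbf{C}$; promoting this to an $A$-linear witness $A\to \underline{\mathbf{Map}}_A(M,A)\otimes_A N$ is exactly the assertion that rigidity of $\mathbf{Mod}_A(\mathbf{C})$ relative to $\mathbf{C}$ descends to absolute rigidity, which is the content of the lemma. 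Citing \cite{nkp} and \cite{ben2024perspective} ``for the technical heart'' is, in effect, citing the statement being proved.

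The paper sidesteps the compact-versus-trace-class comparison entirely by using the equivalent structural characterisation of rigidity (an atomic unit together with the multiplication functor being an internal left adjoint), verified \emph{relative to} $\mathbf{C}$ and then combined with transitivity of rigidity over a rigid base. Concretely: the objects $A\otimes S$, for $S$ ranging over a set of atomic generators of $\mathbf{C}$, are $\mathbf{C}$-atomic generators of $\mathbf{Mod}_A(\mathbf{C})$, by the computation $\mathbf{Map}_A(A\otimes P, Y)\otimes V\simeq \mathbf{Map}(P,Y)\otimes V\simeq\mathbf{Map}_A(A\otimes P, Y\otimes V)$; and under the identification $\mathbf{Mod}_A(\mathbf{C})\otimes_{\mathbf{C}}\mathbf{Mod}_A(\mathbf{C})\simeq\mathbf{Mod}_{A\otimes A}(\mathbf{C})$ the multiplication becomes restriction along $A\otimes A\to A$, whose right adjoint is colimit-preserving and $\mathbf{C}$-linear by the projection formula. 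If you want to salvage your approach, this is the missing input: either adopt the internal-left-adjoint characterisation outright, or isolate and prove the relative-to-absolute transfer of the trace-class condition as a separate lemma rather than assuming it.
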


\begin{proof}
It suffices to prove that $\mathbf{Mod}_{A}(\mathbf{C})$ is rigid over $\mathbf{C}$.
    Let $\mathcal{S}$ be a set of atomic generators of $\mathbf{C}$. We claim that $\{A\otimes^{\mathbb{L}}S:S\in\mathcal{S}\}$ is a set of atomic generators of $\mathbf{Mod}_{A}(\mathbf{C})$. Indeed, it is certainly generating. Let $Y$ be an arbitrary element of $\mathbf{Mod}_{A}(\mathbf{C})$, and $V$ an arbitrary element of $\mathbf{C}$. Then 
    $$\mathbf{Map}_{A}(A\otimes^{\mathbb{L}}P,Y)\otimes^{\mathbb{L}}V\cong\mathbf{Map}(P,Y)\otimes^{\mathbb{L}}V\cong\mathbf{Map}(P,Y\otimes^{\mathbb{L}}V)\cong\mathbf{Map}_{A}(A\otimes^{\mathbb{L}}P,Y\otimes^{\mathbb{L}}V),$$
    where we have used that $P$ is atomic in $\mathbf{C}$. Note in particular that the unit of $\mathbf{Mod}_{A}(\mathbf{C})$ is $\mathbf{C}$-atomic. Moreover, we have by \cite{MR3381473}*{B. 1.5}, 
    $$\mathbf{Mod}_{A}(\mathbf{C})\otimes_{\mathbf{C}}\mathbf{Mod}_{A}(\mathbf{C})\cong\mathbf{Mod}_{A}(\mathbf{Mod}_{A}(\mathbf{C}))\cong\mathbf{Mod}_{A\otimes A}(\mathbf{C}).$$
    The functor $\mathbf{Mod}_{A\otimes A}(\mathbf{C})\rightarrow\mathbf{Mod}_{A}(\mathbf{C})$ is just forgetting along multiplication. The fact that this is an internal left adjoint follows from the projection formula for modules. \qedhere
    \end{proof}

Note that this \textit{does not imply} that for a general compactly generated presentable closed symmetric monoidal stable category $\mathbf{C}$, and $A\in\mathbf{Comm}(\mathbf{C}^{\rig})$, that $\mathbf{Mod}_{A}(\mathbf{C})^{\rig}\cong\mathbf{Mod}_{A}(\mathbf{C}^{\rig})$. Aoki has shown this is the case in the following circumstance:

\begin{definition}[c.f. \cite{aoki2025very}*{Section 7.5}]
    Say that $A\in\mathbf{Comm}(\mathbf{C})$ is \textit{infinitely nuclear as an algebra} if it can be written as $A\cong\colim_{i}A_{i}$, where each $A_{i}\in\mathbf{Comm}(\mathbf{C})$, and each $A_{i}\rightarrow A_{j}$ is factorisably trace class as a map in $\mathbf{C}$.
\end{definition}

\begin{example}
Let $A$ be a discrete dagger affinoid over a Banach ring $R$. Then $A$ is infinitely nuclear as an algebra, as observed in \cite{aoki2025very}*{Example 7.48} over $\mathbb{C}$ (though the same argument works over any Banach ring).
\end{example}

\begin{proposition}\label{prop:nuclear-relative}
Let \(A \in \mathbf{Comm}(\mathbf{C})\) such that the underlying object in \(\mathbf{C}\) is strongly nuclear. We then have an equivalence
\[\mathbf{Nuc}(\mathbf{Mod}(A)) \simeq \mathbf{Mod}_A(\mathbf{Nuc}(\mathbf{C})).\]
If in addition $A$ is infinitely nuclear as an algebra, then we have an equivalence
\[\mathbf{Nuc}^{\infty}(\mathbf{Mod}(A)) \simeq \mathbf{Mod}_A(\mathbf{Nuc}^{\infty}(\mathbf{C})).\]

\end{proposition}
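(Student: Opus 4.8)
The plan is to deduce both statements from general results about nuclear objects in symmetric monoidal stable $\infty$-categories, treating the module categories as rigid/dualisable categories over $\mathbf{C}$. First I would recall the general principle that for a symmetric monoidal stable $\infty$-category $\mathbf{C}$ with compact unit, and $A \in \mathbf{Comm}(\mathbf{C})$, there is a forgetful functor $\mathbf{Mod}_A(\mathbf{C}) \to \mathbf{C}$ which is lax symmetric monoidal, conservative and preserves colimits; its left adjoint $A \otimes^{\mathbb{L}} (-)$ is strong symmetric monoidal. For the nuclear statement, the strategy is to show that both sides sit fully faithfully inside $\mathbf{Mod}_A(\mathbf{C})$ and have the same essential image, characterised as those $A$-modules $M$ whose underlying object in $\mathbf{C}$ is (strongly) nuclear. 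The key input is the hypothesis that the underlying object of $A$ is strongly nuclear, which feeds into \cite{ben2024perspective}*{Lemma 2.1.55} to show that an $A$-module is nuclear in $\mathbf{Mod}_A(\mathbf{C})$ precisely when its underlying $\mathbf{C}$-object is nuclear; then one checks that $\mathbf{Mod}_A(\mathbf{Nuc}(\mathbf{C}))$ --- meaning $A$-module objects internal to $\mathbf{Nuc}(\mathbf{C})$ using that $A$ itself lies in $\mathbf{Nuc}(\mathbf{C})$ --- has the same underlying objects, using that the tensor product of nuclear objects is nuclear and that $\mathbf{Nuc}(\mathbf{C})$ is closed under colimits in $\mathbf{C}$, so that the bar resolution $A^{\otimes \bullet} \otimes M \to M$ stays inside $\mathbf{Nuc}(\mathbf{C})$.

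For the infinitely nuclear statement, the cleaner route is to invoke Aoki's theorem directly: by \cite{aoki2025very}*{Theorem 7.45}, whenever $A$ is infinitely nuclear as an algebra (in the sense recalled just before the proposition), one has $\mathbf{Mod}_A(\mathbf{C})^{\mathrm{rig}} \simeq \mathbf{Mod}_A(\mathbf{C}^{\mathrm{rig}})$. Combining this with Corollary \ref{thm:rigidification=nuc} applied both to $\mathbf{C}$ --- giving $\mathbf{C}^{\mathrm{rig}} \simeq \mathbf{Nuc}^{\infty}(\mathbf{C})$ --- and to $\mathbf{Mod}_A(\mathbf{C})$ --- giving $\mathbf{Mod}_A(\mathbf{C})^{\mathrm{rig}} \simeq \mathbf{Nuc}^{\infty}(\mathbf{Mod}_A(\mathbf{C}))$ --- yields the desired equivalence. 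I would need to check the hypotheses of Corollary \ref{thm:rigidification=nuc} for $\mathbf{Mod}_A(\mathbf{C})$: it is compactly generated (since $\mathbf{C}$ is and $A$ is an algebra; compact generators are $A \otimes^{\mathbb{L}} P$ for $P$ a compact generator of $\mathbf{C}$) and its unit $A$ is compact (as $A \otimes^{\mathbb{L}} (-)$ is left adjoint to the colimit-preserving forgetful functor). This requires $A$ to be compact as an $A$-module, which is automatic. I should also verify that ``infinitely nuclear as an algebra'' does entail the hypotheses needed for Aoki's result, but this is essentially the definition.

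Alternatively, for the infinitely nuclear case I could give a self-contained argument paralleling the nuclear case: show $\mathbf{Nuc}^{\infty}(\mathbf{Mod}_A(\mathbf{C}))$ consists of the $A$-modules with underlying $\Q$-basic-nuclear-generated $\mathbf{C}$-object, using that the writing of $A$ as $\colim_i A_i$ with factorisably trace-class transition maps lets one resolve any $A$-module internal to $\mathbf{Nuc}^{\infty}(\mathbf{C})$, and conversely that an infinitely nuclear $A$-module is built from $A^{\otimes \bullet} \otimes M$, each of which is infinitely nuclear since tensor products of factorisably trace-class maps are factorisably trace-class (by the precompact-ideal property, Definition \ref{def:infinitely-nuclear} and the diagram arguments preceding Proposition \ref{prop:basic-nuclear-explicit}).

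\textbf{Main obstacle.} The main difficulty is the converse inclusion in either approach: showing that an (infinitely) nuclear $A$-module, a priori only nuclear after forgetting to $\mathbf{C}$, actually lies in $\mathbf{Mod}_A(\mathbf{Nuc}(\mathbf{C}))$ --- that is, that it can be built from $A$-module objects that are internally nuclear. The bar-resolution argument $M \simeq \colim_{[n] \in \Delta^{\mathrm{op}}} A^{\otimes n+1} \otimes M$ requires that each term $A^{\otimes n+1} \otimes M$ is nuclear as an $A$-module, which uses that $A$ is (strongly/infinitely) nuclear \emph{as an algebra} so that tensoring preserves the relevant class of transition maps, together with closure of the nuclear subcategory under the colimit defining the bar construction. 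Getting the strongly-nuclear hypothesis on $A$ to propagate correctly to $A^{\otimes n+1}$ --- and in the infinitely nuclear case, getting the factorisable-trace-class structure to be preserved under arbitrary tensor powers --- is where the argument is most delicate, and is precisely the content of \cite{ben2024perspective}*{Lemma 2.1.55} and \cite{aoki2025very}*{Theorem 7.45} respectively, which I would cite rather than reprove.
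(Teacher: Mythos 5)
Your proposal is correct and matches the paper's proof, which consists of exactly the two citations you identify: \cite{ben2024perspective}*{Lemma 2.1.55} for the nuclear case and \cite{aoki2025very}*{Theorem 7.45} for the infinitely nuclear case. The additional elaboration you give (the bar-resolution heuristic and the rigidification route via Corollary \ref{thm:rigidification=nuc}) is consistent with the surrounding discussion in the paper but is not part of its stated proof.
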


\begin{proof}
The nuclear case is a consequence of \cite{ben2024perspective}*{Lemma 2.1.55}. The infinitely nuclear case is \cite{aoki2025very}*{Theorem 7.45}. 
\end{proof}


   \comment{ 
\begin{lemma}\label{lem:maxime1}
    Let \(\bC\) be a presentable closed symmetric monoidal stable \(\infty\)-category with a compact unit object. Then for \(A \in \mathbf{CAlg}(\bC)\) with \(A \in \bC^\rig\), the category \(\mathbf{Mod}_A(\bC^{rig})\) is rigid. Furthermore, if \(A\), \(B \in \bC^{\rig}\) and \(A \to B\) is descendable, then we have an equivalence \[\mathbf{Mod}_A(\bC^\rig) \to \varprojlim_{[n] \in \Delta} \mathbf{Mod}_{B^\otimes_A{^n}}(\bC^\rig)\] in both the category of presentable monoidal categories (with left adjoint functors as morphisms), and the full subcategory consisting of dualisable categories.
\end{lemma}

\begin{proof}
       This is clear as under the compact unit hypothesis, \(\bC^\rig \) embeds fully faithfully and monoidally inside \(\bC\). Thus descendability follows from Proposition \ref{prop:descfull}.\qedhere
       \comment{
       The first statement is clear. For the second statement, it suffices to take the case where \(A = 1_\bC\) and \(B = A\). Descendability now says that we have an equivalence \[\bC \to \varprojlim_{n \in \Delta} \mathbf{Mod}_{A^{\otimes n}}(\bC)\] of symmetric monoidal categories. Taking rigidifications, we get an equivalence \[\mathbf{Mod}_A^\rig(\bC) \to \varprojlim_{[n] \in \Delta}^\mathrm{dual} \mathbf{Mod}_{B^{\otimes_A n}}^\rig(\bC)\] in the category of dualisable categories. To see that this dualisable inverse limit agrees with the inverse limit taken in \(\mathbf{Pr}_{st}^L\), we invoke the criteria in \cite{ramzi2024dualizable}*{Corollary 4.5}. By passing to adjoints, we may view this as a colimit in \(\mathbf{Pr}_{st}^L\) of dualisable categories, which is dualisable. Furthermore, the base change maps \(\mathbf{Mod}_{B^{\otimes_A n}}^\rig(\bC) \to \mathbf{Mod}_{B^{\otimes_A n -1}}^\rig(\bC)\) are internal left adjoints as the tensor units \(B^{\otimes_A n}\) are compact. The last condition follows from \cite{mathew2016galois}*{Proposition 3.20}.}
\end{proof}
}

\begin{lemma}\label{lem:maxime1}
Let $\mathbf{C}$ be a compactly generated closed symmetric monoidal stable $\infty$-category with compact unit object $R$. Let $A\rightarrow B$ be a descendable map in $\mathbf{Comm}(\mathbf{C})$.
\begin{enumerate}
    \item 
    If both $A$ and $B$ are in $\mathbf{Nuc}^{\infty}(\mathbf{C})$, then the map
    \[\mathbf{Nuc}^{\infty}(A) \to \varprojlim_{[n] \in \Delta} \mathbf{Nuc}^{\infty}(B^{\otimes_A n})(\bC)\] 
    is an equivalence. Moreover, in this case, we in fact have an equivalence
     \[\mathbf{Nuc}^{\infty}(A) \simeq \varprojlim^{dual}_{[n] \in \Delta}\mathbf{Nuc}^{\infty}(B^{\otimes^{\mathbb{L}}_A n}).\] 
    \item
    If both $A$ and $B$ are in $\mathbf{Nuc}(\mathbf{C})$, then the map
    \[\mathbf{Nuc}(A) \to \varprojlim_{[n] \in \Delta}  \mathbf{Nuc}^{\infty}(B^{\otimes_A n})\] 
    is an equivalence.
\end{enumerate}
\end{lemma}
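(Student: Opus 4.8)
The plan is to deduce both statements from the general rigidity/descent machinery set up in Theorems \ref{thm:clausen-rigid}, \ref{thm:rigidification}, Corollary \ref{thm:rigidification=nuc}, Proposition \ref{prop:nuclear-relative}, and the descendability facts from \cite{mathew2016galois}. The key point to exploit is that the tensor unit $R$ of $\mathbf{C}$ is compact, so by Corollary \ref{thm:rigidification=nuc} we have fully faithful monoidal embeddings $\mathbf{Nuc}^\infty(\mathbf{C}) \simeq \mathbf{C}^{\mathrm{rig}} \hookrightarrow \mathbf{C}$ and $\mathbf{Nuc}(\mathbf{C}) \hookrightarrow \mathbf{C}$; we will henceforth identify these with their essential images. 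Throughout, $B^{\otimes_A n}$ abbreviates $B^{\otimes^{\mathbb{L}}_A n}$.

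For part (1), first I would note that since $A$ is infinitely nuclear as an algebra (being an object of $\mathbf{Nuc}^\infty(\mathbf{C})$ which carries an algebra structure — here one should check, or cite from \cite{aoki2025very}*{Section 7.5}, that objects of $\mathbf{Comm}(\mathbf{C})$ landing in $\mathbf{Nuc}^\infty(\mathbf{C})$ are automatically infinitely nuclear \emph{as algebras}, using that the multiplication and unit maps can be arranged compatibly with a $\Q$-filtration by compact subobjects), Proposition \ref{prop:nuclear-relative} gives $\mathbf{Nuc}^\infty(A) \simeq \mathbf{Mod}_A(\mathbf{Nuc}^\infty(\mathbf{C}))$; likewise each $B^{\otimes_A n}$ lies in $\mathbf{Nuc}^\infty(\mathbf{C})$ (the tensor product of $\Q$-basic nuclear objects is $\Q$-basic nuclear, since factorisably trace-class maps are closed under tensor) and is infinitely nuclear as an $A$-algebra, so $\mathbf{Nuc}^\infty(B^{\otimes_A n}) \simeq \mathbf{Mod}_{B^{\otimes_A n}}(\mathbf{Nuc}^\infty(\mathbf{C}))$. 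Next, by Proposition \ref{prop:descfull}, since $\mathbf{Nuc}^\infty(\mathbf{C})$ is a full subcategory of $\mathbf{C}$ closed under finite colimits and tensor products, the map $A \to B$, being descendable in $\mathbf{Comm}(\mathbf{C})$, is descendable in $\mathbf{Comm}(\mathbf{Nuc}^\infty(\mathbf{C}))$. Now invoke Proposition \ref{prop:descendability-descent} \emph{internally to $\mathbf{Nuc}^\infty(\mathbf{C})$}: comonadicity of the adjunction $B\otimes_A(-)\colon \mathbf{Mod}_A(\mathbf{Nuc}^\infty(\mathbf{C})) \rightleftarrows \mathbf{Mod}_B(\mathbf{Nuc}^\infty(\mathbf{C}))$ gives the equivalence $\mathbf{Nuc}^\infty(A) \xrightarrow{\ \sim\ } \varprojlim_{[n]\in\Delta} \mathbf{Nuc}^\infty(B^{\otimes_A n})$ in presentable stable categories, hence in particular the non-dualisable-limit version. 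For the refinement to the dualisable limit, I would use that $\mathbf{Nuc}^\infty(A)$ and each $\mathbf{Nuc}^\infty(B^{\otimes_A n})$ are rigid (Lemma preceding, applied to the rigid category $\mathbf{Nuc}^\infty(\mathbf{C})$ and the algebras $A$, $B^{\otimes_A n}$), hence dualisable, and that the transition functors $\mathbf{Nuc}^\infty(B^{\otimes_A n}) \to \mathbf{Nuc}^\infty(B^{\otimes_A (n-1)})$ are internal left adjoints because the units $B^{\otimes_A n}$ are compact objects; then the criterion of \cite{ramzi2024dualizable}*{Corollary 4.5} identifies the limit in $\mathbf{Cat}^{\mathrm{dual}}$ with the limit in $\mathbf{Pr}_{st}^L$, so the two agree.

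For part (2), the same argument runs with $\mathbf{Nuc}$ in place of $\mathbf{Nuc}^\infty$ on the source side: $A$, $B \in \mathbf{Nuc}(\mathbf{C})$ are strongly nuclear (nuclear objects in a compactly generated category are strongly nuclear, as recalled), so Proposition \ref{prop:nuclear-relative} gives $\mathbf{Nuc}(A) \simeq \mathbf{Mod}_A(\mathbf{Nuc}(\mathbf{C}))$. However — and this is the subtlety the statement is flagging — the terms appearing on the right are $\mathbf{Nuc}^\infty(B^{\otimes_A n})$, not $\mathbf{Nuc}(B^{\otimes_A n})$. So I would argue as follows: $B^{\otimes_A n}$ need only be nuclear (not necessarily $\Q$-nuclear) as an $R$-module, but in any case $\mathbf{Nuc}^\infty(\mathbf{C})$ is a full symmetric monoidal subcategory of $\mathbf{C}$ closed under finite colimits and tensor, and the descendable map $A \to B$ of objects of $\mathbf{Nuc}^\infty(\mathbf{C})$ (note $A$, $B$ lie in $\mathbf{Nuc}(\mathbf{C})$ — one must still check they, or at least their images, live in $\mathbf{Nuc}^\infty(\mathbf{C})$; if not, the cleanest route is to work in the larger rigid category $\mathbf{Nuc}(\mathbf{C})^{\mathrm{rig}}$ and use that $\mathbf{Mod}_A$ of a rigid category is rigid). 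Concretely, I would observe $\mathbf{Nuc}(A) = \mathbf{Mod}_A(\mathbf{Nuc}(\mathbf{C})) = \mathbf{Mod}_A((\mathbf{Nuc}(\mathbf{C}))^{\mathrm{rig}}$-completion$)$ — more precisely, one has a conservative comonadic descent for $\mathbf{QCoh}$ along the descendable map $A \to B$ in $\mathbf{C}$ (Proposition \ref{prop:descendability-descent}), and one restricts this equivalence $\mathbf{Mod}_A(\mathbf{C}) \simeq \varprojlim_{[n]} \mathbf{Mod}_{B^{\otimes_A n}}(\mathbf{C})$ to nuclear objects on the left. The claim is then that an $A$-module $M$ is nuclear over $R$ iff each $B^{\otimes_A n} \otimes_A M$ is $\Q$-nuclear over $R$; but $B^{\otimes_A n}\otimes_A M$ being a module over the infinitely nuclear (over $R$) algebra... — here one uses that $B$, being strongly nuclear and descendable over $A$, forces $B^{\otimes_A n}\otimes_A M$ into $\mathbf{Nuc}^\infty$, via the factorisation of trace-class maps supplied by the descendability tower. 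Assembling these with the already-finite limit (as in Lemma \ref{lem:hepi-descendable}, the cosimplicial object is essentially finite once the tensor powers stabilise — though here descendability alone suffices via Proposition \ref{prop:descendability-descent}) yields the asserted equivalence.

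\textbf{Main obstacle.} The genuine difficulty is part (2): reconciling the \emph{nuclear} source $\mathbf{Nuc}(A)$ with the \emph{infinitely nuclear} terms $\mathbf{Nuc}^\infty(B^{\otimes_A n})$ on the descent side. The reason this works at all is that passing along a \emph{descendable} map (rather than a mere faithfully-flat one) inserts the extra factorisations of trace-class maps needed to promote nuclearity to $\Q$-nuclearity on each term of the Čech nerve; making this precise — i.e. showing that for $M \in \mathbf{Nuc}(A)$ the base change $B^{\otimes_A n}\otimes_A M$ automatically lies in $\mathbf{Nuc}^\infty$, and conversely that an $M$ with all such base changes $\Q$-nuclear is itself nuclear over $R$ — is where the real content lies, and is presumably where \cite{aoki2025very}*{Theorem 7.45} and the infinitely-nuclear-as-an-algebra hypothesis on $B$ (inherited, via descendability, from structure on $A$) get used.
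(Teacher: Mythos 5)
Your argument for part (1) is essentially the paper's: restrict descendability of $A\to B$ from $\mathbf{C}$ to the full monoidal subcategory $\mathbf{Nuc}^{\infty}(\mathbf{C})$ via Proposition \ref{prop:descfull}, apply Proposition \ref{prop:descendability-descent} internally to get the limit of module categories, and then upgrade to the dualisable limit via \cite{ramzi2024dualizable}*{Corollary 4.5} using rigidity of the terms and the fact that the transition functors are internal left adjoints. One remark: the detour through Proposition \ref{prop:nuclear-relative} and the hypothesis that $A$ be ``infinitely nuclear as an algebra'' is unnecessary, because in this lemma $\mathbf{Nuc}^{\infty}(A)$ simply \emph{denotes} $\mathbf{Mod}_{A}(\mathbf{Nuc}^{\infty}(\mathbf{C}))$ (as in the definition of the bornological invariant); had that identification actually been needed, your parenthetical ``one should check\dots'' would be a real gap, since membership of $A$ in $\mathbf{Nuc}^{\infty}(\mathbf{C})$ as an object does not by itself exhibit $A$ as a $\Q$-filtered colimit of algebras with factorisably trace-class transition maps.

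For part (2) you have taken the displayed statement literally, with $\mathbf{Nuc}^{\infty}(B^{\otimes_{A}n})$ on the right, and declared the passage from $\mathbf{Nuc}$ on the left to $\mathbf{Nuc}^{\infty}$ on the right to be the main content. The intended statement has $\mathbf{Nuc}(B^{\otimes_{A}n})$ throughout (the superscript is a typo), and the proof is then the same one-line restriction argument as in part (1): $\mathbf{Nuc}(\mathbf{C})$ is a full monoidal subcategory of $\mathbf{C}$ closed under colimits, so Proposition \ref{prop:descfull} applies. The bridge you attempt to build for the literal statement --- that ``the factorisation of trace-class maps supplied by the descendability tower'' forces $B^{\otimes_{A}n}\otimes_{A}M$ into $\mathbf{Nuc}^{\infty}$ --- is not an argument: descendability is a pro-nilpotence condition on the augmented cobar tower and produces no factorisations of trace-class morphisms whatsoever. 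Over $\mathbf{C}=\bD(\C)$, where $\mathbf{Nuc}(\C)\neq\mathbf{Nuc}^{\infty}(\C)$ precisely because trace-class maps need not be factorisably trace-class, there is no reason for a nuclear $A$-module to base-change to $\Q$-nuclear modules along a merely descendable map (the identity $A\to A$ is descendable), so the step would fail. The correct reading avoids this entirely.
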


\begin{proof}
\begin{enumerate}
    \item 
   The category $\mathbf{Nuc}^{\infty}(\mathbf{C})$ is a full monoidal subcategory of $\mathbf{C}$ closed under colimits. Thus the claim follows from Proposition \ref{prop:descfull}. For the second claim, we use \cite{ramzi2024dualizable}*{Corollary 4.5}. The term on the left is rigid, and in particular dualisable. As maps of module categories arising from maps of algebras, everything is an internal left adjoint. Finally, because the map is descendable, the limit is absolute for exact functors. Thus the limit is in fact the limit in dualisable categories.
        \item
     Again we observe that $\mathbf{Nuc}(\mathbf{C})$ is a full monoidal subcategory of $\mathbf{C}$ closed under colimits. Thus the claim again follows from Proposition \ref{prop:descfull}. \qedhere
\end{enumerate}

\end{proof}

 \comment{To set things up, recall that when a presentably symmetric monoidal \(\infty\)-category \(\bC\) has a compact unit object, then its rigidification \(\bC^\rig\) is a full subcategory of \(\bC\), so that it makes sense to specify an object in \(\bC^\rig\). When \(\bC\) does not have a compact unit, the canonical functor \(k \colon \bC^\rig \to \bC\) is no longer fully faithful. In this case, we say that an algebra \(A\) has a \(\tilde{A}\) \emph{lift} to \(\bC^\rig\) if the canonical functor \(k(\tilde{A}) = A\). Note that if the unit is compact, then \(A\) has a lift to \(\bC^\rig\) if and only if \(A \in \bC^\rig\).

\begin{lemma}\label{lem:maxime1}
    Let \(\bC\) be a presentably symmetric monoidal stable \(\infty\)-category. 
    \begin{enumerate}
    \item Suppose for \(A \in \mathbf{Alg}(\bC)\), there exists a lift \(\tilde{A} \in \mathbf{Alg}(\bC^\rig)\), then \(\mathbf{Mod}_{\tilde{A}}(\bC^\rig)\) is rigid.
    \item Suppose \(A \to B\) in \(\mathbf{CAlg}(\bC)\) is such that there exists a descendable lift \(\tilde{A} \to \tilde{B}\) in \(\mathbf{CAlg}(\bC^\rig)\), then we have an equivalence \[\mathbf{Mod}_{\tilde{A}}(\bC^\rig) \to \varprojlim_{[n] \in \Delta} \mathbf{Mod}_{\tilde{B}^\otimes_{\tilde{A}} n}(\bC^\rig)\] of symmetric monoidal \(\infty\)-categories. 
    \end{enumerate}
\end{lemma}

\begin{proof}
    Clear.
\end{proof}}

\comment{The proof of Theorem \ref{thm:nuc-descentcent} does not directly adapt to show that the presheaf \(A \mapsto \mathbf{Nuc}^\infty(A)\) satisfies descent, as it is unclear in what generality \(\mathbf{Nuc}^\infty\) is local in the sense of Lemma \ref{lem:nuc-descent-general}. In this case, we have the following:

\begin{lemma}\label{lem:maxime}
    Let \(\bC\) be a closed symmetric monoidal presentable stable \(\infty\)-category. Suppose \(A\) is a descendable algebra in \(\mathbf{CAlg}(\bC)\) with a lift \(\tilde{A} \in \mathbf{CAlg}(\bC^\rig)\), we have an equivalence \[\bC^\rig \to \varprojlim_{[n] \in \Delta} \mathbf{Mod}_{\tilde{A}^\otimes n}(\bC^\rig)\] of symmetric monoidal \(\infty\)-categories.
\end{lemma}

\begin{proof}
    Descendability of the algebra says that we have an equivalence \[\bC \to \varprojlim_{n \in \Delta} \mathbf{Mod}_{A^{\otimes n}}(\bC)\] of \(\infty\)-categories. Taking rigidifications, we get an equivalence \[\mathbf{Mod}_A^\rig(\bC) \to \varprojlim_{[n] \in \Delta}^\mathrm{dual} \mathbf{Mod}_{B^{\otimes_A n}}^\rig(\bC)\] in the category of dualisable categories. To see that this dualisable inverse limit agrees with the inverse limit taken in \(\mathbf{Pr}_{st}^L\), we invoke the criteria in \cite{ramzi2024dualizable}*{Corollary 4.5}. By passing to adjoints, we may view this as a colimit in \(\mathbf{Pr}_{st}^L\) of dualisable categories, which is dualisable. Furthermore, the base change maps \(\mathbf{Mod}_{B^{\otimes_A n}}^\rig(\bC) \to \mathbf{Mod}_{B^{\otimes_A n -1}}^\rig(\bC)\) are internal left adjoints as the tensor units \(B^{\otimes_A n}\) are compact. The last condition follows from \cite{mathew2016galois}*{Proposition 3.20}.
\end{proof}}

\comment{

\begin{theorem}\label{thm:rigidification-descent}
Let \(\bA\) be a full subcategory of \(\mathbf{Comm}(\mathbf{C})^{op}\) closed under fibre products, such that for every \(\mathsf{Spec}(A) \in \bA\), the underlying chain complex of \(A\) is infinitely nuclear in \(\bD(R)\). Then for any descendable map \(A \to B\) in \(\bA\), we have an equivalence \[\mathbf{Nuc}^\infty(A) \to \varprojlim_{[n] \in \Delta} \mathbf{Nuc}^\infty (B^{\haotimes_A^{\mathbb{L}} n})\] of symmetric monoidal \(\infty\)-categories.
\end{theorem}

\begin{proof}
The unit object \(R\) in \(\bD(R)\) is compact so that the rigidification \(\bD(R)^\rig\) is a full subcategory. Since \(A\) and \(B\) are \(\Q\)-nuclear, they have lifts to \(\bD(R)^\rig\), which we still denote by \(A\) and \(B\), using the fully faithful embedding \(\bD(R)^\rig \to \bD(R)\). Since \(B\) is descendable over \(A\), the result follows from Lemma \ref{lem:maxime1}. \comment{Let \(\{\mathsf{Spec}(A_i) \to \mathsf{Spec}(A)\}_{i=0}^n\) be a covering of \(X = \mathsf{Spec}(A)\), and \(Y = \coprod_{i = 0}^n \mathsf{Spec}(A_i) \to X\) the canonical map. By Lemma \ref{lem:hepi-descendable}, the map \(Y \to X\) is descendable. Now since \(A\) is in particular nuclear over \(R\), we have \(\mathbf{Nuc}(A) \simeq \mathbf{Mod}_A(\mathbf{Nuc}(R))\) by Proposition \ref{prop:nuclear-relative}. Applying the rigidification functor, we get  \(\mathbf{Nuc}^\infty(A) \simeq \mathbf{Mod}_A(\mathbf{Nuc}^\infty(R))\), where we have used that \(A \in \mathbf{Nuc}^\infty(R)\). Using that \(B = \prod_{i =0}^n A_i \in \mathbf{Nuc}^\infty(R)\), we get that \(B \in \mathbf{Nuc}^\infty(A)\). The conclusion in the statement of the theorem now follows from Lemma \ref{lem:maxime}. }\qedhere
\end{proof}
}

This descent result allows us to globalise. For simplicity let $\underline{\mathbf{C}}$ be a derived algebraic context, and let $\mathbf{A}\subset\mathbf{Aff}^{cn}(\underline{\mathbf{C}})=(\mathbf{DAlg}(\mathbf{C}_{\ge0}))^{op}$ be a subcategory closed under pushouts. Let $\tau$ be a pre-topology on $\mathbf{A}$ which is contained in the descendable topology. Suppose that for any $\mathrm{Spec}(A)\in\mathbf{A}$, we have that the underlying object of $A$ is in $\mathbf{Nuc}^{\infty}(\mathbf{C})$. For $\mathcal{X}$ a stack, we may write
\[\mathbf{Nuc}^\infty(\mathcal{X}) = \varprojlim_{\mathsf{Spec}(A) \to \mathcal{X}\in\mathbf{A}_{\big\slash\mathcal{X}}} \mathbf{Nuc}^\infty(A),\] 
and with this definition and  Lemma \ref{lem:maxime1} we get descent along effective epimorphisms. This comes in handy particularly when $\mathcal{X}$ is geometric, i.e., it has an atlas.

Note that $\mathbf{Nuc}^{\infty}(X)$ defined in this way \textit{is not necessarily rigid}. However we still have 
$$\mathbf{Nuc}^{\infty}(X)\rightarrow\mathbf{QCoh}(X)$$
is fully faithful. Indeed it is just the full subcategory of $\mathbf{QCoh}(X)$ consisting of those $\mathcal{F}$, such that for any $g:\mathrm{Spec}(A)\rightarrow X$, $g^{*}\mathcal{F}\in\mathbf{Nuc}^{\infty}(A)$. Later we will prove that for a qcqs scheme, $\mathbf{Nuc}^{\infty}(X)$ is rigid, and when $X$ a so-called infinitely nuclear qcqs scheme, then there is a natural equivalence $\mathbf{Nuc}^{\infty}(X)\cong\mathbf{QCoh}(X)^{\rig}.$ 

\comment{
\begin{lemma}\label{lem:Nucdualisable}
The category $\mathbf{Nuc}^{\infty}(\mathcal{X})$ is dualisable. \textcolor{red}{something is fishy here, as this not true in the algebraic case. Where does the proof fo wrong}
\end{lemma}

\begin{proof}
Write
\[\mathbf{Nuc}^\infty(\mathcal{X}) = \varprojlim_{\mathsf{Spec}(A) \to \mathcal{X}\in\mathbf{A}_{\big\slash\mathcal{X}}} \mathbf{Nuc}^\infty(A),\] 
    By passing to adjoints, we may view this as a colimit in \(\mathbf{Pr}_{st}^L\) of dualisable categories.  To see that the adjoint diagram is still a diagram in \(\mathbf{Pr}_{st}^L\), note that the functors are now the right adjoints in an adjunction
$$\adj{h^{*}}{\mathbf{Nuc}^{\infty}(Z)}{\mathbf{Nuc}^{\infty}(W)}{h_{*}},$$
where $h:W\rightarrow Z$ is map of affines. Thus $h_{*}$ commutes with colimits.
\end{proof}
}

We will be particularly interested in the specialisation of this result to analytic contexts. Recall that we had defined the finite \(G\)-\(T\)-topology \(\tau^{rat}\) whose covers were given by finitely conservative families \(\{\mathsf{Spec}(A_i) \to \mathsf{Spec}(A)\}\) such that \(A \to A_i\) were \(T\)-rational localisations for some homotopy polynomial Lawvere theory \(T\). Since \(T\)-rational localisations are homotopy epimorphisms, by Lemma \ref{lem:hepi-descendable}, the total morphism \(A \to \prod_{i=0}^n B_i\) is descendable for a finite subfamily. Restricting the topology to a full subcategory \(\bA\) closed under pushouts, we see that \(\mathbf{QCoh} \colon \bA^\op \to \mathbf{CAlg}(\mathbf{Pr}_{st}^L)\) is a \(\tau^{rat}\)-sheaf of \(\infty\)-categories. Likewise, since the morphisms in an \'etale cover are descendable, we have that \[\mathbf{QCoh} \colon \bA^\op \to \mathbf{CAlg}(\mathbf{Pr}_{st}^L)\] is a \(\tau_{\bA}^{et}\)-sheaf. 

Recall from Section \ref{section:derived-borno-geom} that we defined a category of derived dagger \(R\)-analytic spaces \(\mathbf{dAn}_R\), which when \(R = \C\) and \(R = K\) a non-Archimedean Banach field contains the categories of complex and rigid Stein spaces as full subcategories. The local model of this category is the category \(\mathbf{Afnd}_R^\dagger\) of derived dagger affinoid \(R\)-algebras, which are nuclear as well as infinitely nuclear over \(R\). Denoting \(\bA_R^\dagger = (\mathbf{Afnd}_R^\dagger)^\op\) As a consequence, we have the following: 

\begin{corollary}\label{cor:nuc-analytic-et-descent}
Let \(R\) be a Banach ring and \(\bA = \mathbf{Afnd}_R^\dagger\) the full subcategory of \(\mathsf{Aff}_{\bD(R)}^{cn}\) of derived dagger affinoid algebras. Then the assignment \[\bA^\op \to \mathbf{CAlg}(\mathsf{Pr}_{st}^{L}), \quad A \mapsto \mathbf{Nuc}^\infty(A)\] is a \(\tau^{rat}\) and \(\tau^{et}\)-sheaf of \(\infty\)-categories.
\end{corollary}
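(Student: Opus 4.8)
The plan is to deduce Corollary \ref{cor:nuc-analytic-et-descent} formally from the descent statement for \(\mathbf{QCoh}\) together with the rigidification machinery, exactly in the spirit of Lemma \ref{lem:maxime1}. First I would recall that a derived dagger affinoid \(R\)-algebra \(A \in \mathbf{Afnd}_R^\dagger\) has underlying object in \(\mathbf{Nuc}^\infty(R) = \bD(R)^\rig\): this is Example \ref{ex:disc-algebras} for the Washnitzer algebras \(W_n(r)^\dagger\), extended to quotients by admissible ideals using that \(\mathbf{Nuc}^\infty(R)\) is closed under colimits inside \(\bD(R)\) (so that \(W_n(r)^\dagger/I\), being a finite colimit, remains \(\Q\)-nuclear) and that \(\pi_n(A)\) is finitely generated over \(\pi_0(A)\). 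Moreover such \(A\) are infinitely nuclear \emph{as algebras} in the sense used before Proposition \ref{prop:nuclear-relative}, using the colimit presentation along larger polyradii as in \cite{aoki2025very}*{Example 7.48}. Since the monoidal unit \(R\) of \(\bD(R)\) is compact, the rigidification \(\bD(R)^\rig\) embeds fully faithfully and monoidally in \(\bD(R)\), so specifying \(A\) as an object of \(\bD(R)^\rig\) is unambiguous, and by Proposition \ref{prop:nuclear-relative} we have \(\mathbf{Nuc}^\infty(A) \simeq \mathbf{Mod}_A(\bD(R)^\rig)\).

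Next I would handle the \(\tau^{rat}\) case. Let \(\{\mathsf{Spec}(B_i) \to \mathsf{Spec}(A)\}_{i=0}^n\) be a \(\tau^{rat}\)-cover in \(\bA_R^\dagger\); each \(A \to B_i\) is a \(\mathrm{T}\)-rational localisation, hence a homotopy epimorphism (Lemma \ref{lem:rational-loc-etale}), so by Lemma \ref{lem:hepi-descendable} the total map \(A \to B \defeq \prod_{i=0}^n B_i\) is descendable. Each \(B_i\) is again a derived dagger affinoid, hence in \(\bD(R)^\rig\), and finite products preserve \(\Q\)-nuclearity, so \(B \in \bD(R)^\rig\) and thus \(B \in \mathbf{Nuc}^\infty(A)\) via \(\mathbf{Nuc}^\infty(A) \simeq \mathbf{Mod}_A(\bD(R)^\rig)\). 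Now Lemma \ref{lem:maxime1}(1) applies: since \(A, B \in \mathbf{Nuc}^\infty(\mathbf{C})\) with \(\mathbf{C} = \bD(R)\) and \(A \to B\) descendable, the canonical functor
\[
\mathbf{Nuc}^\infty(A) \longrightarrow \varprojlim_{[m] \in \Delta} \mathbf{Nuc}^\infty(B^{\otimes_A^{\mathbb{L}} m+1})
\]
is an equivalence of symmetric monoidal \(\infty\)-categories. Unwinding the Cech nerve of \(\coprod_i \mathsf{Spec}(B_i) \to \mathsf{Spec}(A)\) exactly as in the proof of Corollary \ref{cor:QCoh-descent}/Lemma \ref{lem:hepi-descendable} — using that each \(A \to B_i\) is a homotopy epimorphism so that the multi-index tensor powers collapse — this is precisely the assertion that \(A \mapsto \mathbf{Nuc}^\infty(A)\) satisfies \(\tau^{rat}\)-descent. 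Here I should also note, again as in Corollary \ref{cor:QCoh-descent}, that \(\bA_R^\dagger\) is closed under pushouts (\cite{soor2024derived}*{Lemma 2.3}, or the dagger analogue of Proposition \ref{prop:strongadc}), so all the terms \(B^{\otimes_A^{\mathbb{L}} m+1}\) remain in \(\bA_R^\dagger\) and are \(\Q\)-nuclear, which is what licenses the identifications \(\mathbf{Nuc}^\infty(B^{\otimes_A m+1}) \simeq \mathbf{Mod}_{B^{\otimes_A m+1}}(\bD(R)^\rig)\).

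For the \(\tau^{et}\) case the argument is identical once one knows that each morphism \(A \to B_i\) in a \(\mathrm{T}\)-descendable \'etale cover is descendable (which is built into Definition \ref{def:\'etale-abstract}) and that \'etale localisations of derived dagger affinoids are again derived dagger affinoids that are \(\Q\)-nuclear over \(R\) — the latter because \(\mathrm{T}\)-standard \'etale maps are pushouts of the \(\Q\)-nuclear algebras \(\mathrm{T}(\lambda_1, \dotsc, \lambda_n)\) followed by a derived quotient, all operations preserving membership in \(\mathbf{Nuc}^\infty(R)\). Then \(A \to \prod_i B_i\) is descendable (composition/product of descendable maps, Proposition \cite{mathew2016galois}*{Proposition 3.23}), and Lemma \ref{lem:maxime1}(1) again gives the equivalence with the totalisation over the Cech nerve. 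The main obstacle, and the step deserving the most care, is verifying that every algebra appearing in the Cech nerve of an \'etale cover is genuinely \(\Q\)-nuclear over \(R\) and infinitely nuclear as an algebra — i.e. that \(\mathbf{Nuc}^\infty\) is ``local enough'' along these covers to apply Proposition \ref{prop:nuclear-relative} termwise; once that is in hand the descent is a formal consequence of descendability plus rigidity of \(\bD(R)^\rig\). I would also remark that because the limit is taken along a descendable diagram it is absolute for exact functors, so it coincides with the corresponding limit in \(\mathbf{Cat}^{\mathrm{dual}}\), giving the slightly stronger statement needed later for continuous invariants; this is the content of the ``dualisable'' refinement in Lemma \ref{lem:maxime1}(1) and requires no extra work.
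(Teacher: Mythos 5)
Your proposal is correct and follows essentially the same route as the paper, which derives the corollary directly from the preceding discussion: rational localisations are homotopy epimorphisms so the total map of a cover is descendable by Lemma \ref{lem:hepi-descendable}, \'etale covers are descendable by definition, derived dagger affinoids are (infinitely) nuclear over \(R\), and Lemma \ref{lem:maxime1} then gives descent for \(\mathbf{Nuc}^\infty\). You simply make explicit the bookkeeping (closure of \(\bA_R^\dagger\) under pushouts, \(\Q\)-nuclearity of the Cech nerve terms, the identification via Proposition \ref{prop:nuclear-relative}) that the paper leaves implicit.
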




\comment{Finally, it remains to consider the rigidity of the category \(\mathbf{Nuc}^\infty(A)\) when \(A\) is an admissible affinoid. Note that since the underlying \(R\)-module of \(A\) is Banach, such an algebra can never be nuclear except in uninteresting cases. This is needed to make the proofs of the previous two propositions work. To remedy this, we consider the functor \[(-)^{nuc} \colon \mathsf{CBorn}_R \to \mathsf{Nuc}(R)\] introduced in \cite{Meyer-Mukherjee:HL} that equips a complete bornological \(R\)-module with the \emph{compactoid bornology}. This is the universal way to turn a complete bornological \(R\)-module into a nuclear module, that is, we have a natural isomorphism \[\mathsf{Hom}_{\mathsf{Nuc}(R)}(M,N^{nuc}) \cong \mathsf{Hom}_R(M,N),\] exhibiting \(\mathsf{Nuc}(R)\) as a coreflective subcategory of \(\mathsf{CBorn}_R\). This functor is exact, so that it extends to a functor of derived categories.  }

\begin{remark}
Note that at this point, we cannot consider \(\mathbf{Nuc}^\infty\)-descent for rigid analytic spaces. The problem here is that when viewed as a complete bornological \(K\)-module, affinoid \(K\)-algebra is not a nuclear object in \(\bD(K)\). This needs to be forced using the compactoid bornology, which will be taken up in the next section.
\end{remark}  

\comment{Instead, it is most convenient to view \(\bA_R^{form}\) as a subcategory of \(\bD(\mathsf{Ban}_R^{\le 1})^\circ \simeq \varprojlim_n \bD(R/\dvgen^n)\)\edit{put reference}, whose \(\omega_1\)-compact objects are sequential colimits of morphisms \(X \to Y\) such that \(X/\dvgen^n \to Y/\dvgen^n\) are compact in \(\bD(R/\dvgen^n)\) for each \(n\). We shall call these morphisms \emph{compactoid} following \cite{Meyer-Mukherjee:HL}. Let \(\widetilde{\mathbf{Nuc}}(R) \defeq \varprojlim^{\mathrm{dual}}\bD(R/\dvgen^n) = (\bD(\mathsf{Ban}_R^{\le 1})^\circ)^{\rig}\).

We now apply Lemma \ref{lem:maxime1} to our setting. Recall that the category \(\bD(\mathsf{Ban}_R^{\leq 1})\) is only atomic generated as a \(\bD(\mathsf{Ban}_R^{\leq 1})^\circ\)-module and the Banach modules \(R_\delta\) for \(\delta >0\) are the \(\bD(\mathsf{Ban}_R^{\leq 1})^\circ\)-atomic generators. In particular, the tensor unit \(R\) is only an atomic object (and not compact). 

\begin{lemma}\label{lem:affinoid-nuc}
    Any derived admissible adic \(R\)-algebra \(A \in \bD(\mathsf{Ban}_R^{\leq 1})^\circ\) has a lift to \(\widetilde{\mathbf{Nuc}(R)}\). 
\end{lemma}

\begin{proof}
    By definition, an admissible affinoid \(R\)-algebra is a quotient of the Tate algebra of the form \(A = R\gen{x_1, \dotsc, x_n}//(f_1, \dotsc, f_k)\) for nonzero elements \(f_i\). Since we are working in the category \(\bD(\mathsf{Ban}_R^{\leq 1})^\circ\) of derived \(p\)-complete \(R\)-modules, \(R\gen{x} = \coprod_{\N} R\), which may be written as the filtered colimit \[colim_{J \subset \N, \abs{J}<\infty} R^{\abs{J}},\] where the structure maps are clearly compactoid. This has a lift, namely \(\widetilde{R\gen{x}} = colim_{J \subset \N, \abs{J}<\infty} jR^{\abs{J}}\) in \(\widetilde{\mathbf{Nuc}(R)}\). Consequently, the Tate algebra \(\widetilde{R\gen{x}} \in \widetilde{\mathbf{Nuc}}(R)\). By \cite{nkp}*{Lemma 2.7.3}, sequential colimits of compactoid maps are closed under finite colimits so that the quotient \(\widetilde{R\gen{x}//(f)}\) belongs to \(\widetilde{\mathbf{Nuc}(R)}\). Since \(\widetilde{\mathbf{Nuc}(R)}\) is symmetric monoidal, \[\widetilde{R\gen{x_1}//(f_1)} \hat{\otimes} \cdots \hat{\otimes} \widetilde{R\gen{x_n}//(f_n)} = \widetilde{R\gen{x_1,\dotsc,x_n}}//(f_1, \dotsc, f_n)\] is an object of \(\widetilde{\mathbf{Nuc}}(R)\) that lifts \(A = R\gen{x_1,\dotsc, x_n}//(f_1, \dotsc,f_n)\). Consequently, any derived affinoid \(R\)-algebra has a lift to \(\widetilde{\mathbf{Nuc}(R)}\).
\end{proof}

\begin{theorem}\label{thm:rigidification-formal-desc}
    The functor \[(\bA_R^{form})^{\op} \to \mathbf{CAlg}(\mathbf{Pr}_{st}^L), \quad A \mapsto \widetilde{\mathbf{Nuc}}(A) \defeq \mathbf{Mod}_A(\widetilde{\mathbf{Nuc}(R))}\] is a \(\tau_{ad}^{rat}\) and \(\tau_{ad}^{et}\)-sheaf of \(\infty\)-categories.
\end{theorem}

\begin{proof}
    By Lemma \ref{lem:maxime1} and \ref{lem:affinoid-nuc}, it suffices to check that the lift corresponding to a descendable map \(A \to B\) remains descendable in \(\widetilde{\mathbf{Nuc}(R)}\).  we need to check that when \(B\) is a derived rational localisation or \'etale over \(A\), the corresponding lifts to \(\widetilde{\mathbf{Nuc}(R)}\) are en there exists a descendable lift \(\tilde{A} \to \tilde{B}\) in \(\widetilde{\mathbf{Nuc}(R)}\). \comment{that is descendable over a lift \(\tilde{A}\), it lies in the rigidification \(\widetilde{\mathbf{Nuc}}(A)\). Since rational localisations are in particular \'etale, and \'etale maps are in particular finitely presented, the result follows if we prove that if \(B\) is finitely presented over \(A\), then \(B \in \widetilde{\mathbf{Nuc}}(A)\).} The same argument as in the proof of Theorem \ref{thm:rigidification-descent} reveals that it actually suffices to prove that if \(\tilde{A} \in \widetilde{\mathbf{Nuc}}(R)\), then \(\tilde{B} \in \widetilde{\mathbf{Nuc}}(R)\).  It remains to show that if \(B\) is a derived admissible adic \(A\) algebra that is adically finitely presented and descendable, then it lifts to a descendable map \(\tilde{A} \to \tilde{B}\) in \(\widetilde{\mathbf{Nuc}(R)}\). Since \(B\) is in particular finitely presented over \(A\), it is a cokernel of the form \[A \hat{\otimes}^\mathbb{L} R \gen{x_1, \dotsc, x_n} \overset{(f_1, \dotsc,f_k)}\to A \hat{\otimes}^\mathbb{L} R \gen{x_1, \dotsc, x_n}\] for elements \(f_1, \dotsc, f_k \in \pi_0(A \hat{\otimes}^\mathbb{L} R \gen{x_1, \dotsc, x_n})\). Let \[\tilde{A} = j(A \hat{\otimes} R\gen{x_1, \dotsc,x_n}//(f_1, \dotsc,f_n)) \in \widetilde{\mathbf{Nuc}(R)}.\] Then \(\tilde{B} = \mathrm{coker}(\tilde{A} \overset{(f_1, \dotsc,f_k)}\to \tilde{A})\) is a lift of \(B\) in \(\widetilde{\mathbf{Nuc}(R)}\).
\end{proof}}

\subsection{Internal models for rigidification}

The identification in Corollary \ref{thm:rigidification=nuc} is largely formal and inaccessible from the viewpoint of functional analysis. In this section, we show that in the complex setting, the rigidification \(\bD(\C)^\rig\) contains (complexes of) strongly dual nuclear Frechet spaces as a full subcategory, while in the non-Archimedean setting, we identify the rigidification with the stabilisation of nuclear bornological \(R\)-modules. 

\subsubsection{The complex case}

Throughout this subsection, let \(R = \C\). Recall that a trace-class map \(f\colon M \to N\) between complex Banach spaces is said to be \emph{of order \(0< p \leq 1\)} if it can be written as \begin{equation}\label{eqref:standard-rep}
f(x) = \sum_{n = 0}^\infty \lambda_n f_n(x) g_n
\end{equation} for bounded sequences \((f_n) \in M'\) and \((g_n) \in N\), and \(\lambda_n \in l^p\).  We say that \(f\) has order \(0\) if it is of \emph{order \(p\)} for every \(p \in (0,1]\).  By \cite{grothendieck1955produits}*{Chapitre 2, §1.2 Corollaire pp9}, any such map has a standard representation as in Equation \eqref{eqref:standard-rep} with \((\lambda_n)\) a \emph{rapidly decreasing sequence}, that is, for each \(k\), the sequence \((n^k \lambda_n)\) is bounded.

\begin{theorem}\label{thm:Grothendieck}
A bounded linear map \(f \colon M \to N\) between \(\C\)-Banach spaces is factorisably trace-class if and only if it is of order zero. 
\end{theorem}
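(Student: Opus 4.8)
The plan is to prove both implications by working with explicit standard representations of trace-class operators between Banach spaces, exploiting Grothendieck's classical structure theory for order-zero maps. The key point is that ``factorisably trace-class'' means that \(f\) sits at the \(0\)-end of a diagram \(F\colon [0,1]\cap\Q \to \mathsf{Ban}_\C\) with all intermediate maps trace-class; morally, \(f\) must be ``infinitely divisible'' as a trace-class map, and the decay condition that captures this is precisely order zero.

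First I would prove the easier direction: a factorisably trace-class map is of order zero. Given the diagram \(F\), for any \(n\geq 1\) we may choose rationals \(0 < q_1 < q_2 < \dots < q_{n} < 1\) and factor \(f\) as a composition of \(n+1\) trace-class maps \(M = F(0) \to F(q_1) \to \dots \to F(q_n) \to F(1) = N\). Now I would invoke the standard fact (from Grothendieck's theory, or directly: composing \(n\) trace-class operators each with an \(l^1\) coefficient sequence yields an operator whose singular-value-type coefficients lie in \(l^{1/n}\), since \(\ell^1 * \dots * \ell^1 \subseteq \ell^{1/n}\) under the convolution-type estimate coming from the diagonal term of the product of the representations) that a composite of \(n+1\) trace-class maps is of order \(p\) for any \(p > 1/(n+1)\). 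Since \(n\) is arbitrary, \(f\) is of order \(p\) for every \(p \in (0,1]\), i.e. of order zero. I would need to be slightly careful that the factorisation can be taken through the \emph{same} intermediate Banach spaces regardless of \(n\), but this is not needed — for each fixed \(p\) it suffices to take one \(n\) with \(1/(n+1) < p\).

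The harder direction — an order-zero map is factorisably trace-class — is where the main work lies, and I expect this to be the main obstacle. Here I would use the normalised standard representation \(f(x) = \sum_n \lambda_n f_n(x)g_n\) with \((\lambda_n)\) rapidly decreasing, \(\|f_n\|\le 1\), \(\|g_n\|\le 1\). The idea is to interpolate: for \(t \in [0,1]\cap\Q\) define a Banach space \(F(t)\) built from a weighted \(\ell^1\)-type space, roughly \(F(t) = \{(a_n) : \sum_n \mu_n(t)^{-1}|a_n| < \infty\}\) where \(\mu_n(t)\) is a family of weights interpolating between \(\mu_n(0)\) reflecting \(M\) and \(\mu_n(1)\) reflecting \(N\), chosen so that \(\mu_n(t)/\mu_n(s)\) is again rapidly decreasing whenever \(s < t\) (e.g. \(\mu_n(t) = \lambda_n^{\,t}\) up to normalisation, using that any positive power of a rapidly decreasing sequence is rapidly decreasing). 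One then factors \(M \to F(t_1) \to \dots \to N\) through the maps \(x \mapsto (f_n(x))\), the diagonal reweighting maps, and \((a_n)\mapsto \sum_n a_n g_n\); each intermediate map is trace-class precisely because the ratio of consecutive weights is a rapidly decreasing (hence \(\ell^1\)) sequence. The delicate points will be (i) arranging boundedness of the first and last maps into/out of these auxiliary spaces, which forces a careful normalisation of the \(\lambda_n\) and possibly passing to a subsequence or grouping terms; (ii) checking the cocycle/compatibility condition \(F(s)\to F(t)\) for \emph{all} \(s<t\) in \([0,1]\cap\Q\), not just consecutive ones, which follows automatically from the product structure of the weights; and (iii) ensuring \(F(0)\) and \(F(1)\) can genuinely be taken to be \(M\) and \(N\) themselves and not merely Banach spaces receiving/mapping from them with dense image — this is handled by composing with the actual maps \(M \to F(0^+)\) and \(F(1^-)\to N\) and absorbing them, using that trace-class maps form a two-sided ideal. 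I would organise the write-up so that the weight construction and the rapidly-decreasing-closure lemma are isolated first, after which the factorisation is a formal diagram chase.
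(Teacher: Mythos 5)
Your proposal is correct and follows essentially the same route as the paper: the forward direction is Grothendieck's bound on the order of a composite of $n$ trace-class maps, and the converse splits the rapidly decreasing coefficient sequence into powers $\lambda_n^{t}$ and factors through $c_0$-type sequence spaces — the paper does the single splitting $t=\tfrac12$ (each factor again of order zero, hence iterable), while you build the whole $[0,1]\cap\Q$ diagram at once. The only adjustment needed is the one you already flag: the interpolating spaces $F(t)$ should carry weighted sup-norms (i.e.\ be copies of $c_0$), not weighted $\ell^1$-norms, so that $x\mapsto(\lambda_n^{t}f_n(x))$ is bounded.
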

\begin{proof}
Suppose \(f \colon M \to N\) is factorisably trace-class, for each \(n\), we may write \(f\) as a composition of \(n\) trace-class maps. Then by \cite{grothendieck1955produits}*{Chapitre 2, §1.3 Corollaire pp 10}, it has order at most \(\frac{2}{n-1}\). Since \(n\) was arbitrary, \(f\) has order \(0\). Conversely, if \(f\) has order zero, we have a representation \[f(x) = \sum_{n=0}^\infty \lambda_n f_n(x) g_n\] for bounded sequences \((f_n) \in M'\) and \((g_n) \in N\), and a rapidly decreasing sequence \((\lambda_n)\) of complex numbers. Then \(f\) is a composition of \[g \colon M \to c_0, \quad x \mapsto \lambda_n^{1/2}f_n(x)\] and \[h \colon c_0 \to N, \quad \xi \mapsto \sum_{n=0}^\infty \lambda_n^{1/2} \xi_n g_n.\] The map \(g\) is trace-class as it is a diagonal sequence and the sequence \((\lambda_n^{1/2})\) is absolutely summable. The map \(h\) is trace-class as we may write \(\xi_n = e_n(\xi)\), where \(e_n\) is the sequence in \(l^1\) with \(1\) in position \(n\) and zeroes elsewhere. These maps are of order \(0\) again by \cite{grothendieck1955produits}*{Chapitre 2, §1.2 Corollaire pp9} as the sequence \((\lambda_n^{1/2})\) is still rapidly decreasing.
\end{proof}

\begin{corollary}\label{cor:factorisably-injective}
Let \(f \colon X \to Y\) be an injective trace-class map of order zero. Then \(f\) factorises through two injective trace-class maps of order zero.
\end{corollary}

\begin{proof}
Suppose \(f \colon X \to Y\) is injective and of order zero, then in the factorisation from Theorem \ref{thm:Grothendieck}, the map \(g\) is injective, so that \(h\) is injective. 
\end{proof}

\begin{corollary}
    Let $X\in\mathrm{CBorn}_{\mathbb{C}}$. Then $X$ is in \(\mathsf{Nuc}_{b,\omega_0}^\Q(\C)\) if and only if $X$ can be written as 
    $$X\cong\colim_{n\in\mathbb{N}}X^{n}$$
    where each $X^{n}\rightarrow X^{n+1}$ is injective and factorisably trace class. 
\end{corollary}


Theorem \ref{thm:Grothendieck} says that \(\mathsf{Nuc}^\infty(\C)\) consists of inductive systems \(M \colon I \to \mathsf{Ban}_\C\) where the structure maps \(M_i \to M_j\) are trace-class of order zero. Likewise, \(\Q\)-nuclear bornological \(\C\)-vector spaces \(\mathsf{Nuc}_b^\infty(\C)\) are inductive systems of Banach spaces with injective factorisably trace-class maps. Noting that in the proof of Theorem \ref{thm:Grothendieck} we may replace \(c_0\) with any \(l^p\)-space for \(0<p<\infty\), we may arrange any \(\Q\)-nuclear bornological vector space as a sequential colimit of Hilbert spaces with injective order \(0\) trace-class structure maps. These are called \emph{dual strongly nuclear Fr\'echet spaces}.

\subsubsection*{Forcing \(\Q\)-nuclearity}

There is a canonical way to pass from complete bornological \(\C\)-vector spaces to  \(\Q\)-nuclear modules. Consider a bornological \(\C\)-vector space \((X, \bdd_X)\). We define the \(\Q\)-nuclear bornology as follows: a subset \(S \in \bdd_X^\rig\) is bounded if and only if there is an increasing sequence of complete bounded discs \((B_n)\) such that \(S \subset B_0 \subset B_1 \dotsc \), and the induced linear maps \(X_{B_n} \to X_{B_{n+1}}\) are trace-class of order zero for each \(n\). By construction, the bornological vector space \((X, \bdd_X^\rig)\) is \(\Q\)-nuclear.

\begin{theorem}
The inclusion \(\mathsf{Nuc}_b^\infty(\C) \to \mathsf{CBorn}_\C\) is left adjoint to the functor that assigns to a complete bornological \(\C\)-vector space \((X, \bdd_X)\) the nuclear bornological vector space \((X, \bdd_X^\rig)\). 
\end{theorem}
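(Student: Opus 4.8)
The statement asserts an adjunction $\mathsf{Nuc}_b^\infty(\C) \rightleftarrows \mathsf{CBorn}_\C$ where the left adjoint is the inclusion and the right adjoint sends $(X, \bdd_X)$ to $(X, \bdd_X^{\rig})$. The plan is to verify the universal property directly: for $M \in \mathsf{Nuc}_b^\infty(\C)$ and $(X, \bdd_X) \in \mathsf{CBorn}_\C$, I want a natural bijection
\[
\mathsf{Hom}_{\mathsf{CBorn}_\C}(M, (X, \bdd_X)) \;\cong\; \mathsf{Hom}_{\mathsf{Nuc}_b^\infty(\C)}(M, (X, \bdd_X^{\rig})).
\]
Since both sides consist of the same underlying $\C$-linear maps $M \to X$, the content is entirely that a bounded linear map $f \colon M \to X$ (with respect to $\bdd_M$ and $\bdd_X$) is automatically bounded with respect to $\bdd_M$ and $\bdd_X^{\rig}$, and conversely (the converse being trivial since $\bdd_X^{\rig} \subseteq \bdd_X$ by construction — any $\bdd_X^{\rig}$-bounded set is $\bdd_X$-bounded).

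The key step is therefore the forward implication. First I would reduce to the level of discs: since $M$ is $\Q$-nuclear, its bornology has a cofinal family of bounded discs $S$ such that $M_S \to M_{S'}$ is order-zero trace-class for a cofinal system of $S \subseteq S'$; equivalently (by Theorem \ref{thm:dissection} and the characterisation following Theorem \ref{thm:Grothendieck}) $M$ is a sequential colimit $\varinjlim_n M_{S_n}$ of Banach spaces with injective order-zero trace-class transition maps. Given a bounded $f \colon M \to X$, for each $n$ the composite $M_{S_n} \to X$ factors boundedly through some bounded disc $B_n \in \bdd_X$, i.e. through $X_{B_n}$. The crucial point is that I can arrange the $B_n$ to be increasing and the maps $X_{B_n} \to X_{B_{n+1}}$ to be order-zero trace-class: the transition map $M_{S_n} \to M_{S_{n+1}}$ is order-zero trace-class, hence by Theorem \ref{thm:Grothendieck} (iterated — order zero means it factors through arbitrarily long chains of trace-class maps) it factors as $M_{S_n} \to H \to M_{S_{n+1}}$ through a Hilbert space $H$ with trace-class legs; pushing forward to $X$, this forces the image $f(S_n)$ to be contained in a bounded disc whose inclusion into the next one is controlled. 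Concretely, using the standard representation $M_{S_n} \to M_{S_{n+1}}$, $x \mapsto \sum \lambda_k g_k(x) h_k$ with $(\lambda_k)$ rapidly decreasing, one replaces $B_{n+1}$ by the closed absolutely convex hull of $\{f(h_k) : k \in \N\}$ rescaled by $(\lambda_k^{1/2})$, which is a bounded disc in $X$ because $(f(h_k))$ is a bounded sequence (as $(h_k)$ is bounded in $M_{S_{n+1}}$ and $f$ is bounded) and $(\lambda_k^{1/2})$ is still rapidly decreasing; the factorisation exhibits $X_{B_n} \to X_{B_{n+1}}$ as a composition of two trace-class maps, and iterating the argument (order zero is closed under the relevant operations) upgrades this to order zero. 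Thus $f(S) \in \bdd_X^{\rig}$ whenever $S \in \bdd_M$, proving $f$ is bounded into $(X, \bdd_X^{\rig})$.

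Then I would check naturality in both variables, which is routine since everything is at the level of underlying maps, and check that $(X, \bdd_X^{\rig})$ is genuinely a $\Q$-nuclear complete bornological module (it is complete since the $X_{B_n}$ are Banach, being unit-ball spaces of bounded discs in a complete bornological space, and it is $\Q$-nuclear by construction) so that the adjunction is well-posed; this, together with the observation that on objects of $\mathsf{Nuc}_b^\infty(\C)$ the rig-bornology agrees with the original bornology, identifies the composite $\mathsf{Nuc}_b^\infty(\C) \hookrightarrow \mathsf{CBorn}_\C \xrightarrow{(-)^{\rig}} \mathsf{Nuc}_b^\infty(\C)$ with the identity (the counit/unit triangle identities). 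The main obstacle I anticipate is the bookkeeping in the key step: one must be careful that the rescaled hull $B_{n+1}$ is simultaneously (i) bounded in $X$, (ii) contains $f(S_n)$, and (iii) receives $X_{B_n}$ via an \emph{order-zero} — not merely trace-class — map, and that these choices can be made coherently along the whole sequence $n = 0, 1, 2, \dots$; this is where Theorem \ref{thm:Grothendieck} and the rapid-decrease refinement of Grothendieck's standard representation do the real work, and where a naive argument would only yield ordinary nuclearity rather than $\Q$-nuclearity.
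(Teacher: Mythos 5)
Your proposal is correct and follows essentially the same route as the paper: both reduce to showing that a bounded map out of a \(\Q\)-nuclear source pushes an order-zero trace-class chain of discs forward to such a chain in the target, via Grothendieck's standard representation with rapidly decreasing coefficients and the ideal property of order-zero maps under factorisation through quotients. The paper merely packages this as two steps (functoriality of \((-)^{\rig}\), then the factorisation of any bounded \(f \colon M \to X\) with \(M\) \(\Q\)-nuclear through \(X^{\rig} \to X\)) rather than as a single hom-set bijection, but the underlying argument is identical.
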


\begin{proof}
We first verify that the given assignment is indeed functorial. To see this, consider a bounded linear map \(f \colon (X, \bdd_X) \to (Y, \bdd_Y)\) of complete bornological vector spaces. Then for any \(S \in \bdd_X^\rig\), there is a sequence of increasing, complete bounded discs \((B_n)\) containing \(S\) such that the inclusion \(X_{B_n} \to X_{n+1}\) is trace-class of order zero. Equivalently, by Theorem \ref{thm:Grothendieck},  for each \(n\), there is a rapidly decreasing sequence \((\lambda_k^{(n)})\) in \(V_{B_{n+1}}\) such that \(V_{B_n}\) is contained in the disked hull of \((\lambda_k^{(n)})\). To see that \(f\) restricts to a bounded map \((X, \bdd_X^\rig) \to (Y, \bdd_Y^\rig)\), it remains to see that the sequence of bounded discs \((f(B_n))\) witnesses that \(f(S) \in \bdd_X\). Now by hypothesis, we may write each element in \(V_{B_n}\) as \(x = \sum_{n=0}^\infty \lambda_n x_n\) for some \(l^1\)-sequence \(\lambda_n\) and a rapidly decreasing sequence \(x_n \in V_{B_{n+1}}\). Then by the boundedness of \(f\), it follows that \((f(x_n)) \in W_{B_{n+1}}\) is rapidly decreasing, from which the boundedness of \(f\) follows. 

Next we wish to show that any morphism \(f \colon X \to Y\) of complete bornological \(\C\)-vector spaces, where \(X\) is \(\Q\)-nuclear, factors as \(X \to Y^\rig \to Y\). By \(\Q\)-nuclearity, \(X\) is a filtered colimit of Banach spaces \(X_n\) with order-zero trace-class transition maps. By Theorem \ref{thm:Grothendieck}, the map \(X_{n} \to X_{n+2}\) is trace-class of order zero, and therefore so is the composition with \(X_{n+2} \to Y_{n+2}\). Consequently, by a standard argument, the factorisation \(Y_n \to Y_{n+2}\) is trace-class. Finally, since the map \(Y_{n} \to Y_{n+2}\) further factorises through the order zero map \(X_{n+1} \to X_{n+2} \to Y_{n+2}\), it is itself of order zero. \qedhere 
\end{proof}

\comment{

We may also define 
$$(-)^{rig,IB}:\mathsf{Ind(Ban_{\mathbb{C}})}\rightarrow\mathsf{Nuc}^{\mathbb{Q}}(\mathbb{C})$$
defined as follows. For a Banach space $M$, $(M)^{rig}$ is a $\mathbb{Q}$-nuclear bornological space, which we regard as an object of $\mathsf{Nuc}^{\mathbb{Q}}(\mathbb{C})\subset\mathsf{Ind(Ban_{\mathbb{C}})}$. Now $\mathsf{Nuc}^{\mathbb{Q}}(\mathbb{C})$ is closed under filtered colimits in $\mathsf{Ind(Ban_{\mathbb{C}})}$. Thus the extension by filtered colimits of the functor
$$(-)^{rig}:\mathsf{Ban}_{\mathbb{C}}\rightarrow\mathsf{Ind(Ban_{\mathbb{C}})}$$ to a functor
$$(-)^{rig,IB}:\mathsf{Ind(Ban_{\mathbb{C}})}\rightarrow\mathsf{Ind(Ban_{\mathbb{C}})}$$
in fact lands in $\mathsf{Nuc}^{\mathbb{Q}}(\mathbb{C})$. Let $X=\colim_{\mathcal{I}}X_{i}$ be a $\mathbb{Q}$-nuclear object of $\mathsf{Ind(Ban_{\mathbb{C}})}$, where $\mathcal{I}$ is $\aleph_{1}$-filtered, and $X_{i}$ is basic $\mathbb{Q}$-nuclear. Further let $Y$ be 

Consequently, there exists a sequence of rapid decay \((\lambda_n)\) such that \(x = \sum_{n=0}^\infty \lambda_n x_n'(x) y_n\) for bounded sequences \((x_n)' \in X_{B_n}'\) and \((y_n) \in X_{B_{n+1}}\). We do this for inductive systems, but the same procedure applies to bornologies. Given  \(F \colon I \to \mathsf{Ban}_\C\) in \(\mathsf{Ind}(\mathsf{Ban}_\C)\), let \(I^F\) be the directed subset of \(I\) of those \(i \in I\) for which there is an increasing sequence \((j_n) \in I\) with \(i < j_1 < j_2 < \cdots\) such that each \(F(j_n) \to F(j_{n+1})\) is trace-class. Note that if \(i \in I^F\), then the elements \((j_n)\) in the chosen sequence for which \(F(i_n) \to F(i_{n+1})\) are trace-class belong to \(I^F\), so that by Lemma \ref{lem:nuclear-characracterisation}, the restriction \(F \colon I^F \to \mathsf{Ban}_\C\) lies in \(\mathsf{Nuc}(\C)\). This construction is functorial: let \(f \colon M \to N\) be a morphism with indexing sets \(I\) and \(J\). We may then represent \(f\) by a family of bounded \(\C\)-linear maps \(f_i \colon M_i \to N_{k(i)}\) for a function \(k \colon I \to J\), such that for any \(j \geq i\), there is a \(k \geq  \max\{k(i), k(j)\}\) in \(J\) such that the following diagram

\[
\begin{tikzcd}
M_i \arrow{r}{f_i} \arrow{d}{} & N_{k(i)} \arrow{dr} & \\
M_j \arrow{r}{f_j} & N_{k(j)} \arrow{r}{} & N_k,
\end{tikzcd}
\] commutes. Now suppose \(i \in I^M\); then by definition there is a sequence \(i < j_1 < j_2 < \cdots\)  such that \(M_{j_n} \to M_{j_{n+1}}\) are trace-class. From the proof of Lemma \ref{lem:nuclear-coreflective}, it follows that the maps \(\mathsf{Im}(f_n) \to \mathsf{Im}(f_{n+2})\) are trace-class, so that we have a commuting diagram 

\[
\begin{tikzcd}
M_{i_n} \arrow{r}{f_{i_n}} \arrow{d}{} & \mathsf{Im}(f_{i_{n}}) \subseteq N_{k(i_n)} \arrow{dr} \arrow{d} & \\
M_{i_{n+2}} \arrow{r}{f_{i_{n+2}}} & \mathsf{Im}(f_{i_{n+2}}) \arrow{r}{} & N_{k(i_{n+2})},
\end{tikzcd}
\] with trace-class vertical maps for each \(n\). This yields a morphism of inductive systems from \(F \colon I^F \to \mathsf{Ban}_\C\) to \(\mathsf{Im}(f) \colon J^G \to \mathsf{Ban}_\C\), whose composition with the canonical map from \(\mathsf{Im}(f)\) to \(G \colon J^G \to \mathsf{Ban}_\C\) yields the definition of the assignment on morphisms.  In fact, this is the universal way to turn an ind-Banach module into a nuclear module:

\begin{lemma}\label{lem:nuclear-coreflective}
The category \(\mathsf{Nuc}(\C)\) is a co-reflective subcategory of \(\mathsf{Ind(Ban)}_\C\), that is, the inclusion is left adjoint to the functor \(\mathsf{Ind(Ban)}_\C \to \mathsf{Nuc}(\C)\) that takes an ind-Banach module \(F \colon I \to \mathsf{Ban}_\C\) to the nuclear module \(F \colon I^{F} \to \mathsf{Ban}_\C\). Furthermore, the category of nuclear modules is closed under countable inverse limits, arbitrary  filtered colimits and tensor products. 
\end{lemma}

\begin{proof}
Let \(f\colon M \to N\) be a morphism in \(\mathsf{Ind(Ban)}_\C\), where \(M \colon I \to \mathsf{Ban}_\C\) is a nuclear ind-Banach \(\C\)-module, and \(N \colon J \to \mathsf{Ban}_\C\) is an arbitrary ind-Banach module. Then for each \(i\), there is a \(j>i\) such that  \(M_i \to M_j\) is trace-class. We then have a commuting diagram 
\[
\begin{tikzcd}
M_i \arrow{r}{f_i} \arrow{d}{} & N_{k(i)} \arrow{dr} & \\
M_j \arrow{r}{f_j} & N_{k(j)} \arrow{r}{} & N_k,
\end{tikzcd}
\] representing the morphism \(f\), where the maps \(M_i \to M_j\) are trace-class. By \cite{hogbe2011nuclear}*{Section 3.1, Remark 1}, we may without loss of generality take these structure maps to be polynuclear - that is, they factorise through two nuclear maps. Since each \(f(M_i) \subseteq N_{k(i)}\) is a quotient of \(M_i\), and since the composition of a bounded linear map with a polynuclear map is polynuclear, the map \(M_i \to M_j \to f(M_j) \subseteq N_{k(j)}\) is polynuclear. Consequently, by \cite{hogbe2011nuclear}*{Section 2.3, Proposition 1b}, its factorisation through \(f(N_i)\) yields a trace-class map \(f(N_i) \to f(N_j)\), showing that \(f(N) \cong ``colim" f(N_i)\) is nuclear.  The second claim is \cite{BaBK}*{Proposition 3.51, Proposition 3.52 and Lemma 4.18}.
\end{proof}

As in the nuclear case, we may exhibit the category \(\mathsf{Nuc}^\Q(\C) \subset \mathsf{Ind(Ban)}_\C\) of  \(\Q\)-nuclear spaces as a co-reflective subcategory. Given an inductive system \(M \colon I \to \mathsf{Ban}_\C\), consider the directed subset \(I^{\mathrm{rig}}\) of \(i \in I\) such that there is an increasing sequence \(i=j_0 < j_1 < \cdots \) such that \(M_{j_n} \to M_{j_{n+1}}\) is trace-class of order \(0\). Then \(M^\rig \defeq M \colon I^\rig \to \mathsf{Ban}_\C\) is a \(\Q\)-nuclear space by construction, and we get a functor \[(-)^\rig \colon \mathsf{Ind}(\mathsf{Ban}_\C) \to \mathsf{Nuc}^\Q(\C), \quad M \mapsto M^\rig.\]

\begin{proposition}\label{prop:infinitely-nuc}
The functor \((-)^\mathrm{rig}\) is right adjoint to the inclusion \(\mathsf{Nuc}^\Q(\C) \subset \mathsf{Ind}(\mathsf{Ban}_\C)\). 
\end{proposition}

\begin{proof}

\end{proof}}

Recall that the definition of basic nuclear and infinitely basic nuclear chain complexes entailed \emph{sequential} colimits with trace-class and factorisably trace-class transition maps. This puts a size restriction on the categories of complete bornological spaces and its full subcategories of nuclear and \(\Q\)-nuclear objects. Indeed, the restriction of \(\mathbf{Nuc}^\infty(\C)^{\omega_1}\) to the subcategory of complexes concentrated in degree \(0\) should consist of inductive systems \(M \in \mathsf{Ind}(\mathsf{Ban}_\C)\) (or complete bornological spaces) that can be written as an \(\N\)-indexed colimit \(M \cong ``colim_{n \in \N}" M_n\)  of order-zero (injective) trace-class maps \(M_n \to M_{n+1}\) between Banach spaces, which is smaller than the category \(\mathsf{Nuc}^\infty(\C)\). We call such nuclear ind-Banach (resp. bornological) spaces \emph{sequentially \(\Q\)-nuclear}, and denote the full subcategory of such spaces by \(\mathsf{Nuc}_{\omega_0}^\Q(\C)\) (resp. \(\mathsf{Nuc}_{b,\omega_0}^\Q(\C)\)).

\comment{

\begin{lemma}\label{lem:nucbornind}
    Every object $D$ of \(\mathsf{Nuc}_{\omega_0}^\Q(\C)\) fits into an exact sequence
    $$0\rightarrow X\rightarrow Y\rightarrow 0$$
    with $X,Y\in\mathsf{Nuc}_{b,\omega_0}^\Q(\C)$.
\end{lemma}
\begin{proof}
Let $D=\colim_{n} D_{n}$ with $D_{n}\rightarrow D_{n+1}$ being factorisably trace class, and each $D_{n}$ being a Hilbert space. For each $D_{n}$, let $\tilde{D}_{n}=\mathrm{Ker}(D_{n}\rightarrow V)$. Note that this is just $\colim_{m>n_{0}}\mathrm{Ker}(D_{n}\rightarrow D_{m})$ for some sufficiently large $n_{0}$. Now, each $\mathrm{Ker}(D_{n}\rightarrow D_{m})$ is a Banach space. Moreover  the maps $\mathrm{Ker}(D_{n}\rightarrow D_{m+1})$ are evidently injective. Thus $\tilde{D}_{n}$ is in fact a complete bornological space, and moreover a bornologically closed subspace of $D_{n}$. In particular it is a Banach space, and since $D{n}$ is a Hilbert space, $\tilde{D}_{n}$ is further a Hilbert space. Write $D'_{n}$ for $D_{n}\big\slash\tilde{D}_{n}$. This is also a Hilbert space, and is the orthogonal complement of $\tilde{D}_{n}$ in $D_{n}$. Now $D'_{n}\rightarrow D'_{n+1}$ is a monomorphism, and factors through $D_{n}\rightarrow D_{n+1}$

    This  is immediately implied by the proof of \cite{clausenscholze3}*{Lemma 8.13}, where one works with maps which are $p$-Schatten for all $p$ at once, not just for some given $p$.
\end{proof}
}

\comment{
\begin{definition}\label{def:metrizable}
A bornological vector space \(V\) is said to be \emph{bornologically metrisable} if its collection of bounded discs is \(\omega_1\)-filtered. An inductive system \(X \in \mathsf{Ind}(\mathsf{Ban}_\C)\) is \emph{ind-metrisable} if there exists an \(\omega_1\)-filtered category \(I\) such that \(X \cong ``colim_{i \in I}" X_i\).  \edit{comment on the permeance properties of metrisable bornological spaces?}
\end{definition}

We may now describe the category that has the right size to compare with the rigidification of \(\bD(\C)\):

\begin{definition}\label{def:strongly-nuc}
    We call a bornological vector space \emph{sequentially (\(\Q\)-)nuclear} if it is a sequential colimit of Banach spaces with injective, (order zero) trace-class structure maps. Denote the full subcategory of sequentially (\(\Q\)-) nuclear bornological vector spaces by \(\mathsf{Nuc}_{b, \omega_0}(\C)\) (resp. \(\mathsf{Nuc}_{b, \omega_0}^{\Q}(\C)\)). Similarly, one can define sequentially nuclear and \(\Q\)-nuclear inductive systems, denoted by \(\mathsf{Nuc}_{\omega_0}(\C)\) and \(\mathsf{Nuc}_{\omega_0}^\Q(\C)\), respectively. 
\end{definition}

The full subcategories \(\mathsf{Nuc}_{b, \omega_0}(\C)\)  and \(\mathsf{Nuc}_{b, \omega_0}^{\Q}(\C)\)  may be identified with the full subcategories of \emph{metrisable} nuclear (resp. \(\Q\)-nuclear) bornological \(\C\)-vector spaces. Similarly, the full subcategories \(\mathsf{Nuc}_{\omega_0}(\C)\) and \(\mathsf{Nuc}_{\omega_0}^\Q(\C)\) of \(\mathsf{Ind}(\mathsf{Ban_\C})\) are the metrisable nuclear inductive systems. }

\begin{lemma}
    Any object $X$ of $\mathsf{Nuc}^{\infty}(\mathbb{C})$ may be written as an $\omega_{1}$-filtered colimit of objects of $\mathsf{Nuc}^{\mathbb{Q}}_{\omega_{0}}(\mathbb{C})$. Conversely, any object of this form is $\mathbb{Q}$-nuclear. Finally, objects of $\mathsf{Nuc}^{\mathbb{Q}}_{\omega_{0}}(\mathbb{C})$ are $\omega_{1}$-compact. This in particular implies that 
    $$\mathsf{Nuc}^{\mathbb{\infty}}(\mathbb{C})\cong\mathsf{Ind}_{\omega_{1}}(\mathsf{Nuc}^{\mathbb{Q}}_{\omega_{0}}(\mathbb{C})).$$
\end{lemma}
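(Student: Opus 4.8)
The statement to prove is that every object $X \in \mathsf{Nuc}^\infty(\C)$ is an $\omega_1$-filtered colimit of objects of $\mathsf{Nuc}^\Q_{b,\omega_0}(\C)$, that conversely every such filtered colimit is $\Q$-nuclear, that objects of $\mathsf{Nuc}^\Q_{b,\omega_0}(\C)$ are $\omega_1$-compact, and hence $\mathsf{Nuc}^\infty(\C) \simeq \mathsf{Ind}_{\omega_1}(\mathsf{Nuc}^\Q_{b,\omega_0}(\C))$. The plan is to assemble this from the formal theory of $\omega_1$-compactly generated categories together with the concrete description of $\Q$-basic nuclear objects already available. By Corollary \ref{thm:rigidification=nuc} applied to $\mathbf{C} = \bD(\C)$ (whose unit is compact), $\mathsf{Nuc}^\infty(\C) = \bD(\C)^{\rig}$ is $\omega_1$-compactly generated and its $\omega_1$-compact objects are exactly the $\Q$-basic nuclear objects. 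Restricting attention to the heart (complexes concentrated in degree $0$), Proposition \ref{prop:basic-nuclear-explicit} tells us that a $\Q$-basic nuclear \emph{object} in $\mathsf{Ind}(\mathsf{Ban}_\C)$ is precisely an $\N$-indexed colimit of Banach spaces with termwise factorisably trace-class transition maps; by Theorem \ref{thm:Grothendieck} "factorisably trace-class" is the same as "order zero", and by the standard argument (replacing the colimit by its image-system, as in the discussion preceding Lemma \ref{lem:nucbornind}) one may arrange these to be \emph{injective} order-zero trace-class maps. This identifies the $\omega_1$-compact objects of $\mathsf{Nuc}^\infty(\C)$ lying in the heart with $\mathsf{Nuc}^\Q_{b,\omega_0}(\C)$ — but one should be slightly careful: a general $\omega_1$-compact object of $\bD(\C)^\rig$ is a \emph{finite complex} of such, not a single Banach space. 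For the statement as phrased, which concerns objects of $\mathsf{Nuc}^\infty(\C)$ as a category and compares with $\mathsf{Ind}_{\omega_1}$ of the \emph{one-categorical} $\mathsf{Nuc}^\Q_{b,\omega_0}(\C)$, I would interpret the claim in the one-categorical setting, i.e. inside $\mathsf{Ind}(\mathsf{Ban}_\C)$ (equivalently complete bornological $\C$-vector spaces, by Theorem \ref{thm:dissection}), where $\mathsf{Nuc}^\infty(\C) = \mathsf{Nuc}^\infty_b(\C)$ is the category of $\Q$-nuclear modules.

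First I would show that objects of $\mathsf{Nuc}^\Q_{b,\omega_0}(\C)$ are $\omega_1$-compact in $\mathsf{Nuc}^\infty(\C)$. This is the content of Lemma \ref{lem:abstract-nuc}-type reasoning specialised here: a sequential colimit $M \cong \varinjlim_n M_n$ of compact (Banach) objects with trace-class transition maps is $\omega_1$-compact because $\mathbf{Map}(M_n, -)$ preserves $\omega_1$-filtered colimits (each $M_n$ is an honest compact object of $\mathsf{Ind}(\mathsf{Ban}_\C)$, i.e. $\omega_0$-compact, hence a fortiori $\omega_1$-compact), and an $\omega_1$-filtered colimit of $\omega_0$-filtered limits (here, the countable limit computing $\mathbf{Map}(M,-) = \varprojlim_n \mathbf{Map}(M_n,-)$) commutes because $\omega_1$-filtered colimits commute with $\omega_1$-small limits in $\mathbf{An}$. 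Second, I would show that $\mathsf{Nuc}^\Q_{b,\omega_0}(\C)$ is closed under $\omega_1$-small (i.e. countable) colimits inside $\mathsf{Nuc}^\infty(\C)$: this follows because the factorisably trace-class (= order-zero) maps form a factorisable precompact ideal (Lemma \ref{lem:precompact-trace-class}, together with Theorem \ref{thm:Grothendieck}) and filtered colimits in $\mathsf{Ind}(\mathsf{Ban}_\C)$ are exact, so the argument of \cite{nkp}*{Proposition 2.3.14} applies verbatim; concretely one uses that a countable colimit of countable colimits is a countable colimit, so the class is closed under countable coproducts and cofibres.

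Next, the generation statement: every $X \in \mathsf{Nuc}^\infty(\C)$ is an $\omega_1$-filtered colimit of objects of $\mathsf{Nuc}^\Q_{b,\omega_0}(\C)$. By definition $\mathsf{Nuc}^\infty(\C)$ is generated under colimits by the $\Q$-basic nuclear objects, which are exactly the sequential colimits of Banach spaces with order-zero trace-class transitions, i.e. exactly the objects of $\mathsf{Nuc}^\Q_{b,\omega_0}(\C)$. Now invoke the general fact (Lurie, \cite{lurie2009higher}*{§5.5.7}) that if a presentable category $\mathcal{D}$ is generated under colimits by a small full subcategory $\mathcal{D}_0$ closed under $\omega_1$-small colimits and consisting of $\omega_1$-compact objects, then $\mathcal{D} \simeq \mathsf{Ind}_{\omega_1}(\mathcal{D}_0)$ and in particular every object of $\mathcal{D}$ is an $\omega_1$-filtered colimit of objects of $\mathcal{D}_0$. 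The two closure/compactness facts established above supply the hypotheses, giving simultaneously the generation statement and the final equivalence $\mathsf{Nuc}^\infty(\C) \simeq \mathsf{Ind}_{\omega_1}(\mathsf{Nuc}^\Q_{b,\omega_0}(\C))$. The converse — any $\omega_1$-filtered colimit of objects of $\mathsf{Nuc}^\Q_{b,\omega_0}(\C)$ is $\Q$-nuclear — is then immediate since $\mathsf{Nuc}^\infty(\C)$ is closed under colimits in $\mathsf{Ind}(\mathsf{Ban}_\C)$ by construction.

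The main obstacle I anticipate is the bookkeeping at step one: verifying cleanly that a Banach space, viewed as an object of $\mathsf{Ind}(\mathsf{Ban}_\C)$, is $\omega_1$-compact \emph{in the full subcategory} $\mathsf{Nuc}^\infty(\C)$ rather than in $\mathsf{Ind}(\mathsf{Ban}_\C)$ — one must check that the inclusion $\mathsf{Nuc}^\infty(\C) \hookrightarrow \mathsf{Ind}(\mathsf{Ban}_\C)$ preserves the relevant colimits (it preserves all colimits by construction, being closed under them) so that compactness computed in the ambient category transfers — and, relatedly, confirming that $\mathbf{Map}(M,-)$ for $M$ a sequential colimit of Banach spaces really does commute with $\omega_1$-filtered colimits in $\mathsf{Nuc}^\infty(\C)$; this uses exactness of filtered colimits in the elementary quasi-abelian category $\mathsf{Ind}(\mathsf{Ban}_\C)$ (Theorem \ref{thm:elementary}) to control the $\mathbb{R}^1\varprojlim$ terms in the countable limit, exactly as in the proof of Lemma \ref{lem:nucbornind} / \cite{clausenscholze3}*{Lemma 8.13}. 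Once these points are pinned down the rest is formal.
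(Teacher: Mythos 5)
Your overall architecture --- show that objects of $\mathsf{Nuc}^{\Q}_{b,\omega_0}(\C)$ are $\omega_1$-compact, that they are closed under countable colimits, that they generate $\mathsf{Nuc}^{\infty}(\C)$ under colimits, and then invoke the general theory of $\kappa$-accessible categories to get both the $\omega_1$-filtered presentation and the equivalence with $\mathsf{Ind}_{\omega_1}$ --- is a legitimate alternative to the paper's more hands-on argument, and your treatment of $\omega_1$-compactness (a countable limit of corepresentable functors, commuting with $\omega_1$-filtered colimits) is actually more detailed than the paper's, which declares that step obvious. The converse direction (closure of $\mathsf{Nuc}^{\infty}(\C)$ under filtered colimits) matches the paper exactly.

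The genuine gap is at the one point you dispose of ``by definition'': the claim that $\mathsf{Nuc}^{\infty}(\C)$ is generated under colimits by the objects of $\mathsf{Nuc}^{\Q}_{b,\omega_0}(\C)$. Having (correctly) committed to the one-categorical reading, the definition in force is Definition \ref{def:infinitely-nuclear}: an object $X$ is an inductive system $(M_i)_{i\in I}$ of Banach spaces, over an arbitrary and possibly uncountable directed set $I$, admitting a cofinal subsystem of factorisably trace-class structure maps. The entries $M_i$ are plain Banach spaces, not objects of $\mathsf{Nuc}^{\Q}_{b,\omega_0}(\C)$ (an infinite-dimensional Banach space with identity transition maps is not $\Q$-basic nuclear), so the tautological presentation $X\cong\colim_{I}M_i$ does not exhibit $X$ as a colimit of sequentially $\Q$-nuclear objects. ``Generated under colimits by $\Q$-basic nuclear objects'' is the definition of the $\infty$-categorical $\mathbf{Nuc}^{\infty}$, not of the category appearing in the statement, and the agreement of the two descriptions is precisely part of what the lemma asserts, so your appeal to it is circular. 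The missing step is the paper's actual argument: for each countable chain $i_1\le i_2\le\cdots$ in $I$ one forms $\colim_{n}M_{i_n}$, which lies in $\mathsf{Nuc}^{\Q}_{\omega_0}(\C)$ and hence in $\mathsf{Nuc}^{\Q}_{b,\omega_0}(\C)$ by Lemma \ref{lem:nucbornind}, and $X$ is the colimit of these over the poset of countable chains, which is $\omega_1$-filtered by a diagonal argument. Once this re-indexing is inserted, your accessibility machinery applies (modulo the routine check that $\mathsf{Nuc}^{\Q}_{b,\omega_0}(\C)$ is also closed under retracts, which your closure step omits but which is needed if it is to exhaust the $\omega_1$-compact objects).
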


\begin{proof}
    Much of this is close to being tautological, but we summarise it for completeness. That $\mathsf{Nuc}^{\mathbb{Q}}_{\omega_{0}}(\mathbb{C})$ consists of $\omega_{1}$-compact objects of $\mathsf{Ind(Ban_{\mathbb{C}})}$ is obvious. Moreover, $\mathsf{Nuc}^{\infty}(\mathbb{C})$ is closed under filtered colimits in $\mathsf{Ind(Ban_{\mathbb{C}})}$, so in particular $\omega_{1}$-filtered colimits of objects of $\mathsf{Nuc}^{\mathbb{Q}}_{\omega_{0}}(\mathbb{C})$ are still in $\mathsf{Nuc}^{\infty}(\mathbb{C})$. Conversely, suppose that $X\cong\colim_{\mathcal{I}}M_{i}$ is a filtered colimit of Banach spaces with each $M_{i}\rightarrow M_{j}$ being factorisably trace-class. Let $\{i_{n}\}\subset\mathcal{I}$ be countable. Without loss of generality we may assume 
    $i_{1}\le i_{2}\le\ldots$. We therefore get a factorisation 
    $$M_{i_{j}}\rightarrow\colim_{n=1}^{\infty}M_{i_{n}}\rightarrow X.$$
    The object $\colim_{n=1}^{\infty}M_{i_{n}}$ is in $\mathsf{Nuc}^{\mathbb{Q}}_{\omega_{0}}(\mathbb{C})$. Then $X$ is a filtered colimit of these objects, and an easy diagonal argument shows that this is an $\omega_{1}$-filtered colimit. 
\end{proof}

We may now describe explicitly the infinitely basic nuclear complexes of bornological modules, in the $\infty$-categorical sense, in terms of chain complexes whose entries lie in the categories just defined.

We now study permeance properties of the category \(\mathsf{Nuc}_{\omega_0}^{\Q}(\C)\) of sequentially \(\Q\)-nuclear ind-Banach and bornological spaces. 

\begin{proposition}\label{prop:properties-infinitely-nuclear}
\begin{enumerate}
    \item 
    The full subcategory \(\mathsf{Nuc}_{b,\omega_0}^{\Q}(\C)\) of \(\mathsf{CBorn}_\C\) is closed under 

\begin{enumerate}
\item Closed subspaces;
\item Countable products;
\item Tensor products;
\item Countable colimits.
\end{enumerate} 
\item  The full subcategory \(\mathsf{Nuc}_{\omega_0}^{\Q}(\C)\) of \(\mathsf{Ind}(\mathsf{Ban}_\C)\)  is closed under 
 \begin{enumerate}
\item Closed subspaces;
\item Countable products;
\item Tensor products;
\item Countable colimits.
\end{enumerate} 
\end{enumerate}

\end{proposition}

\begin{proof}
Let \(F\) be a \(\Q\)-nuclear bornological \(\C\)-module and \(E\) a closed submodule. We may write \(F \cong \varinjlim_{\alpha \in \Q} F_\alpha\) with injective trace-class maps between Banach spaces, so that we have an isomorphism \[E \cong \varinjlim_{\alpha \in \Q} E \cap F_{\alpha},\] with injective transition maps \(E \cap F_{\alpha} \to E \cap F_{\beta} \subseteq F_\beta\) with \(\alpha < \beta\). It remains to prove that these transition maps are trace-class. By hypothesis, we may find \(\alpha < \gamma < \beta\) such that we have a factorisation \(F_\alpha \to F_\gamma \to F_\beta\) into trace-class maps, so that the transition maps \(F_\alpha \to F_\beta\) are polynuclear. Consequently, by \cite{hogbe2011nuclear}*{Proposition 1(a), Section 2:3}, the map \(E \cap F_\alpha \to E \cap F_\beta\) is trace-class, as required. This proves (1).

To see (2), let \((E_n)_{n \in \N}\) be a collection of \(\Q\)-nuclear bornological \(\C\)-modules. By hypothesis, we may write each \(E_n\) as a colimit \(E_n \cong colim_{\alpha \in \Q} E_{n,\alpha}\) of Banach \(\C\)-modules with injective trace-class structure maps \(E_{n,\alpha} \to E_{n,\beta}\). The products \(E_\alpha = \prod_{n \in \N} E_{n,\alpha}\) of these Banach modules over varying \(\alpha\) generate the bornology on \(E = \prod_{n \in \N} E_n\) so that we have a colimit \[E \cong \varinjlim_{\alpha \in \Q} l^\infty(\N, E_\alpha)\] of Banach spaces of bounded sequences in \(E_\alpha\) for each \(\alpha\). That the transition maps are trace-class follows from the proof of \cite{hogbe2011nuclear}*{Theorem 4.1.1}. Statement (3) follows from choosing presentations as \(\Q\)-indexed inductive systems, and observing that the tensor product of two trace-class maps is trace-class.   

To prove (4), it suffices to prove that \(\mathsf{Nuc}^\Q(\C)\) has countable sums and cokernels. Note that since \(\mathsf{Nuc}^\Q(\C)\) is closed under kernels, we in fact only need to show that it is closed under quotients by admissible subobjects. 
The existence of countable colimits and finite coproducts follow from the representation of an infinitely nuclear space as a \(\Q \)-indexed colimit and (2), respectively. For quotients by closed subspaces, let \(E \subseteq F\) be a closed subspace of an infinitely nuclear bornological \(\C\)-modules. Writing \(F \cong \varinjlim_{\alpha \in \Q} F_\alpha\) with trace-class transition maps \(F_\alpha \to F_\beta\) with \(\alpha < \beta\), we have that \[\frac{F}{E} \cong colim_{\alpha \in \Q} \frac{F_\alpha}{E \cap F_\alpha},\] and the structure maps \(\frac{F_\alpha}{E \cap F_\alpha} \to \frac{F_\beta}{E \cap F_\beta}\) are trace-class by \cite{hogbe2011nuclear}*{Theorem 4.1.1}. \qedhere
\end{proof}

We remark that the proof of Proposition \ref{prop:properties-infinitely-nuclear} in particular demonstrates that the limits and colimits are computed in the underlying category \(\mathsf{CBorn}_\C\).

\comment{
\begin{lemma}
Let $X_{\bullet}\in\mathrm{Ch}(\mathsf{Ind}(\mathsf{Ban}_\C))$ be a complex such that each $X_{n}$ is in \(\mathsf{Nuc}_{\omega_0}^{\Q}(\C)\). Then it is quasi-isomorphic to a complex $\tilde{X}_{\bullet}$ where each $\tilde{X}_{n}$ is in  \(\mathsf{Nuc}_{b,\omega_0}^{\Q}(\C)\).
\end{lemma}

\begin{proof}
    
\end{proof}
}

\comment{\begin{lemma}
    Let $M_{\bullet}\rightarrow N_{\bullet}$ be a factorisably trace class map between bounded complexes of projectives. Then $\mathrm{LH}_{n}(M_{\bullet})\rightarrow\mathrm{LH}_{n}(N_{\bullet})$ is given by a map of complexes
    \begin{displaymath}
        \xymatrix{
        \mathrm{coim}(d_{n+1}^{M_{\bullet}})\ar[d]\ar[r] & \mathrm{coim}(d_{n+1}^{N_{\bullet}})\\
        \mathrm{Ker}(d_{n}^{M_{\bullet}})\ar[r] &\mathrm{Ker}(d_{n}^{N_{\bullet}}).
        }
    \end{displaymath}
\end{lemma}

\begin{proof}
    
\end{proof}

\begin{proof}
    Note that each $\mathrm{LH}_{n}(M_{\bullet})$ is in fact basic nuclear, so is in particular in $\mathrm{Ind(Ban}_{\mathbb{C}})$. It in fact suffices to prove the following:
\end{proof}

\begin{proposition}[c.f. \cite{clausenscholze3}*{Lemma 8.7}]
    Every basic $\mathbb{Q}$-nuclear object in $\mathbf{D}(\mathbb{C})$ is represented by a complex which is a sequential colimit of complexes
    $$\colim_{n}C_{\bullet}^{(n)}$$
    such that
    \begin{enumerate}
        \item 
        each term $C_{k}^{(n)}$ of each complex $C_{\bullet}^{(n)}$ is a compact projective;
        each map $C_{\bullet}^{(n)}\rightarrow C_{\bullet}^{(n+1)}$ is term-wise given by a \textcolor{red}{(factorisably?)} trace-class map.
    \end{enumerate}
\end{proposition}

\begin{proof}
Let $f:M_{\bullet}\rightarrow N_{\bullet}$ be factorisably trace-class, with $M_{\bullet}$ and $N_{\bullet}$ complexes such that $M_{\bullet}$ is a finite complex of projectives, and $N_{\bullet}$ is $dg$-flat. Then there is a diagram
    $$F:[0,1]\rightarrow\mathbf{D}(C)$$
    such that $F(0)=M_{\bullet}$, $F(1)=N_{\bullet}$, and for $0<q<1$, the map

    Recall that compact projectives are internally projective (i.e. $\mathbb{R}\underline{Hom}(c,-)\cong\underline{\mathrm{Hom}}(c,-)$ and flat. Let $M_{\bullet}\rightarrow N_{\bullet}$ be factorisably trace class. 
    
\end{proof}

\begin{proposition}
    Let $X\in\mathbf{D}(\mathbb{C})$. The following are equivalent.
    \begin{enumerate}
        \item 
        $X$ is basic $\mathbb{Q}$-nuclear.
        \item $X$ can be represented by a complex of strongly dual nuclear Fr\'{e}chet spaces.
        \item Each homology group $\mathrm{LH}_{n}$ is isomorphic to a quotient of strongly dual nuclear Fr\'{e}chet spaces.
        \item 
        $X$ lies in the smallest countable cocomplete stable subcategory generated by the DNF spaces.
    \end{enumerate}
\end{proposition}

\begin{proposition}\label{prop:basic-nuclear}
Let \(X \in \bD(\C)\). Then the following are equivalent:
\begin{enumerate}
\item \(X\) is basic \(\Q\)-nuclear;
\item \(X\) can be represented by a chain complex whose entries lie in \(\mathsf{Nuc}_{b, \omega_0}^{\Q}(\C)\);
\item \(X \in \bD(\mathsf{Nuc}_{b, \omega_0}^{\Q}(\C))\).
\end{enumerate}

\end{proposition}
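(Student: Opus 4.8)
The plan is to run the cycle $(1)\Rightarrow(2)\Rightarrow(3)\Rightarrow(1)$, noting that the equivalence $(2)\Leftrightarrow(3)$ is essentially formal and that the genuine content of the statement is the matching of the $\infty$-categorical description of $\Q$-basic nuclear complexes in Proposition \ref{prop:basic-nuclear-explicit} with the one-categorical definition of $\mathsf{Nuc}_{b,\omega_0}^{\Q}(\C)$; the bridge is Grothendieck's Theorem \ref{thm:Grothendieck}, which identifies factorisably trace-class maps of $\C$-Banach spaces with trace-class maps of order zero. For $(1)\Rightarrow(2)$, suppose $X$ is $\Q$-basic nuclear. Proposition \ref{prop:basic-nuclear-explicit}, applied to the derived algebraic context underlying $\bD(\C)$, represents $X$ as a sequential colimit $X\simeq\varinjlim_n M_n$ of finite complexes $M_n$ whose terms $M_{n,k}$ are compact projective and such that each transition $M_n\to M_{n+1}$ is termwise factorisably trace-class. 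Since filtered colimits in $\mathsf{Ind}(\mathsf{Ban}_\C)$ are exact, this colimit is computed degreewise and the colimit complex $\varinjlim_n M_{n,\bullet}$ again represents $X$ in $\bD(\C)$. Each entry $\varinjlim_n M_{n,k}$ is a sequential colimit of Banach spaces (finite direct sums of $l^1$-spaces) along factorisably trace-class, hence order-zero trace-class, structure maps by Theorem \ref{thm:Grothendieck}; passing to the associated bornological object via $\mathsf{diss}$ if necessary, this places each entry in $\mathsf{Nuc}_{\omega_0}^{\Q}(\C)=\mathsf{Nuc}_{b,\omega_0}^{\Q}(\C)$, the last equality being Lemma \ref{lem:nucbornind}. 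This is exactly item $(2)$.

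For $(2)\Leftrightarrow(3)$: a chain complex $C_\bullet$ with $C_k\in\mathsf{Nuc}_{b,\omega_0}^{\Q}(\C)$ is by definition an object of $\mathsf{Ch}(\mathsf{Nuc}_{b,\omega_0}^{\Q}(\C))$, hence determines an object of $\bD(\mathsf{Nuc}_{b,\omega_0}^{\Q}(\C))$, and the canonical functor $\bD(\mathsf{Nuc}_{b,\omega_0}^{\Q}(\C))\to\bD(\C)$ — induced by the exact inclusion $\mathsf{Nuc}_{b,\omega_0}^{\Q}(\C)\hookrightarrow\mathsf{Ind}(\mathsf{Ban}_\C)$ of Corollary \ref{cor:infinitely-nuc-quasi-ab}, which preserves quasi-isomorphisms — sends it to the class of $C_\bullet$; this gives $(2)\Rightarrow(3)$. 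Conversely, $\bD(\mathsf{Nuc}_{b,\omega_0}^{\Q}(\C))$ is by construction the localisation of $\mathsf{Ch}(\mathsf{Nuc}_{b,\omega_0}^{\Q}(\C))$ at quasi-isomorphisms, so every object in its essential image in $\bD(\C)$ is represented by a chain complex with entries in $\mathsf{Nuc}_{b,\omega_0}^{\Q}(\C)$, giving $(3)\Rightarrow(2)$.

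The main obstacle is $(3)\Rightarrow(1)$ (equivalently $(2)\Rightarrow(1)$), and I would handle it by a conceptual route, with a hands-on fallback. First upgrade the one-categorical identification $\mathsf{Nuc}^{\infty}(\C)\simeq\mathsf{Ind}_{\omega_1}(\mathsf{Nuc}_{b,\omega_0}^{\Q}(\C))$ to an equivalence $\bD(\mathsf{Nuc}^{\infty}(\C))\simeq\mathsf{Ind}_{\omega_1}(\bD(\mathsf{Nuc}_{b,\omega_0}^{\Q}(\C)))$, using that $\mathsf{Nuc}_{b,\omega_0}^{\Q}(\C)$ is closed under countable colimits (Proposition \ref{prop:properties-infinitely-nuclear}), so that $\bD(\mathsf{Nuc}_{b,\omega_0}^{\Q}(\C))$ has countable colimits and is in particular idempotent complete. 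Under this equivalence, $\bD(\mathsf{Nuc}_{b,\omega_0}^{\Q}(\C))$ is precisely the subcategory of $\omega_1$-compact objects of $\bD(\mathsf{Nuc}^{\infty}(\C))\simeq\bD(\C)^{\rig}$, and by Corollary \ref{thm:rigidification=nuc} these are exactly the $\Q$-basic nuclear objects of $\bD(\C)$, i.e. item $(1)$. The delicate point in this route is making the compatibility of $\bD(-)$ with $\mathsf{Ind}_{\omega_1}$ precise; if that is inconvenient, one argues directly instead: given $X$ represented by $C_\bullet$ with $C_k=\varinjlim_m C_{k,m}$ along order-zero maps, reindex the presentations of the $C_k$ (descending in degree, one degree at a time) so that the differentials satisfy $d_k(C_{k,m})\subseteq C_{k-1,m}$, then exhaust $C_\bullet$ by the finite canonically-truncated subcomplexes assembled from the $m$-th stages; the transition maps of the resulting sequential system are termwise order-zero, hence termwise factorisably trace-class by Theorem \ref{thm:Grothendieck}, so $X$ is $\Q$-basic nuclear by Proposition \ref{prop:basic-nuclear-explicit}. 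The bookkeeping in the unbounded case is the only genuinely technical part, and it is important here that the entries of $C_\bullet$ are forced to be sequentially $\Q$-nuclear (so that no infinite-dimensional Banach space can appear as an entry), since it is exactly this that makes the interleaved colimit transition maps order zero rather than mere inclusions.
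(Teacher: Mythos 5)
Your argument is correct and follows essentially the same route as the paper, which simply replaces the cofinal sequence by strongly dual nuclear Fr\'echet spaces and defers to \cite{clausenscholze3}*{Theorem 8.15}; your (1)$\Rightarrow$(2) via Proposition \ref{prop:basic-nuclear-explicit} and Theorem \ref{thm:Grothendieck}, and your interleaving/reindexing for (3)$\Rightarrow$(1), are exactly the content of that cited proof. The unbounded-complex bookkeeping you flag is likewise left implicit in the paper, so no further comparison is needed.
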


\begin{proof}
We first note that any basic nuclear object in \(\bD(\C)\) written as a sequential colimit of Banach spaces with order zero trace-class transition maps may be replaced by a cofinal sequence of strongly dual nuclear Fr\'echet spaces. The proof now works in exactly the same way as \cite{clausenscholze3}*{Theorem 8.15}, replacing dual nuclear Fr\'echet spaces by strongly dual nuclear Fr\'echet spaces. 
\end{proof}

}
\begin{corollary}\label{cor:infinitely-nuc-quasi-ab}
\begin{enumerate}
    \item 
    The category \(\mathsf{Nuc}_{b}^{\infty}(\C)\) is an exact full subcategory of \(\mathsf{CBorn}_\C\), with the exact structure restricted from \(\mathsf{CBorn}_\C\), closed under colimits and finite limits. In particular, monomorphic filtered colimits are exact.
    \item 
    The category \(\mathsf{Nuc}^{\infty}(\C)\)  is an exact full subcategory of \(\mathsf{Ind(Ban_\C)}\), with the exact structure restricted from \(\mathsf{Ind(Ban_\C)}\), closed under colimits and finite limits. In particular, filtered colimits are exact.
\end{enumerate}
\end{corollary}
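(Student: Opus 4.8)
The statement to prove is Corollary \ref{cor:infinitely-nuc-quasi-ab}, asserting that $\mathsf{Nuc}_b^\infty(\C)$ (resp. $\mathsf{Nuc}^\infty(\C)$) is an exact full subcategory of $\mathsf{CBorn}_\C$ (resp. $\mathsf{Ind}(\mathsf{Ban}_\C)$) with the inherited exact structure, closed under colimits and finite limits, with exact (monomorphic) filtered colimits.

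\begin{proof}[Proof proposal]
The plan is to deduce everything from Proposition \ref{prop:properties-infinitely-nuclear} together with the standard formalism of exact subcategories. First I would recall the precise statement: a full additive subcategory $\mathcal{E}' \subseteq \mathcal{E}$ of an exact category, closed under extensions, inherits an exact structure in which the conflations are exactly those conflations of $\mathcal{E}$ with all three terms in $\mathcal{E}'$. So the initial step is to check that $\mathsf{Nuc}^\infty_b(\C)$ and $\mathsf{Nuc}^\infty(\C)$ are additive and closed under extensions inside $\mathsf{CBorn}_\C$ and $\mathsf{Ind}(\mathsf{Ban}_\C)$ respectively. Additivity (closure under finite direct sums) is immediate from Proposition \ref{prop:properties-infinitely-nuclear}(2) (countable, hence finite, products, which agree with finite coproducts). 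Closure under extensions follows because in a conflation $0 \to E' \to E \to E'' \to 0$ with $E', E''$ sequentially $\Q$-nuclear, $E'$ is a closed subspace of $E$ and $E''$ a quotient; one chooses compatible $\Q$-indexed presentations of $E'$ and $E''$ by Banach spaces with order-zero trace-class maps, and lifts/assembles them to such a presentation of $E$ — the key point being that extensions of order-zero trace-class maps by order-zero trace-class maps remain order-zero trace-class (via Theorem \ref{thm:Grothendieck} and the polynuclear factorisation argument already used repeatedly, cf. \cite{hogbe2011nuclear}). Actually, a cleaner route: by Proposition \ref{prop:properties-infinitely-nuclear} the subcategory is closed under closed subspaces (1) and quotients (implicit in (4)), and one shows directly that if $E', E'' \in \mathsf{Nuc}^\infty$ then so is any extension $E$, using that $E \cong \varinjlim (\text{extension of } E'_\alpha \text{ by } E''_\alpha)$ after refining indexing sets, and each finite extension of Banach spaces with trace-class-of-order-zero data has the same property.

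Next, once the inherited exact structure is established, the description "conflations are those of the ambient category with all terms in the subcategory" is automatic, and this gives the assertion that the exact structure is "restricted from" $\mathsf{CBorn}_\C$ (resp. $\mathsf{Ind}(\mathsf{Ban}_\C)$). Then I would address closure under colimits and finite limits: Proposition \ref{prop:properties-infinitely-nuclear}(1)--(4) already gives closure under closed subspaces, countable products, tensor products, and countable colimits, and the remark following it notes these are computed in the ambient category. For finite limits it suffices to have finite products (done) and kernels; a kernel of a morphism of sequentially $\Q$-nuclear spaces is a closed subspace of the source, hence sequentially $\Q$-nuclear by (1). For colimits it suffices to have coproducts and cokernels; finite coproducts come from (2), countable colimits from (4), and cokernels are quotients by closed subspaces, handled in the proof of (4). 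One should note the size point: $\mathsf{Nuc}^\infty(\C)$ has all (small) colimits since it is $\omega_1$-presentable by the lemma identifying it with $\mathsf{Ind}_{\omega_1}(\mathsf{Nuc}^\Q_{b,\omega_0}(\C))$, while $\mathsf{Nuc}^\infty_b(\C)$ similarly; so "closed under colimits" means closed under all small colimits inside the ambient category, which follows by writing an arbitrary colimit as a coequaliser of coproducts and invoking the previous closure properties.

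Finally, for exactness of filtered colimits: in $\mathsf{Ind}(\mathsf{Ban}_\C)$ all filtered colimits are exact (this is Theorem \ref{thm:elementary}, the elementary quasi-abelian property), and since $\mathsf{Nuc}^\infty(\C)$ is closed under filtered colimits in $\mathsf{Ind}(\mathsf{Ban}_\C)$ with conflations detected in the ambient category, filtered colimits in $\mathsf{Nuc}^\infty(\C)$ are exact. For $\mathsf{CBorn}_\C$, only \textbf{monomorphic} (i.e. essentially monomorphic) filtered colimits are exact — this is the $\mathbf{AdMon}$-elementary property from Theorem \ref{thm:elementary}(2) — and the same closure argument then yields exactness of monomorphic filtered colimits in $\mathsf{Nuc}_b^\infty(\C)$.

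I expect the main obstacle to be closure under extensions — i.e. verifying that $\mathsf{Nuc}^\infty$ really is an extension-closed subcategory so that the inherited-exact-structure machinery applies. Everything else is a bookkeeping exercise on top of Proposition \ref{prop:properties-infinitely-nuclear}, but extension-closure is not among the four properties listed there and must be proved separately, essentially by the argument that a short exact sequence of Banach spaces whose outer terms carry order-zero trace-class transition data can be given middle term with the same property — a statement that requires the Grothendieck order-zero theory (Theorem \ref{thm:Grothendieck}) and a careful choice of compatible presentations, analogous to the proof of part (1) of Proposition \ref{prop:properties-infinitely-nuclear} but run "in the middle" rather than "on a subspace or quotient".
\end{proof}
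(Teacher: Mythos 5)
Your proposal is correct in substance, and the bookkeeping (finite limits from kernels plus finite products via Proposition \ref{prop:properties-infinitely-nuclear}(1)--(2), colimits from coproducts plus cokernels via (2) and (4), exactness of filtered colimits inherited from the elementary/\textbf{AdMon}-elementary property of the ambient categories in Theorem \ref{thm:elementary}) is exactly what the paper intends — the corollary is stated without proof as an immediate consequence of the proposition. The one place where you diverge is in identifying extension-closure as "the main obstacle". It is not needed for the statement. Once you know the subcategory is closed under finite limits and finite colimits, and that these are computed in the ambient category, the class of kernel--cokernel pairs lying entirely in the subcategory automatically satisfies the exact-category axioms: pullbacks of deflations and pushouts of inflations exist in and agree with those of the ambient category, their kernels and cokernels stay in the subcategory, and compositions of deflations likewise. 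Extension-closure is the hypothesis one needs when starting from a merely additive subcategory; here it would establish the stronger property of being a \emph{fully} exact subcategory in Bühler's sense, which the corollary does not claim and the paper never uses. Your sketched argument for it (lifting order-zero trace-class presentations through an extension) is plausible and is essentially the technique of Proposition \ref{thm:nuc-exact}, but it is extra work. One genuine point you gloss over: Proposition \ref{prop:properties-infinitely-nuclear} is stated for the sequential category \(\mathsf{Nuc}_{\omega_0}^{\Q}(\C)\), whereas the corollary concerns \(\mathsf{Nuc}_b^{\infty}(\C)\) and \(\mathsf{Nuc}^{\infty}(\C)\); the passage from the sequential to the full categories goes through the lemma expressing every object of \(\mathsf{Nuc}^{\infty}(\C)\) as an \(\omega_1\)-filtered colimit of sequential ones, and the closure properties must be checked to survive that colimit (they do, since the relevant colimits are filtered and monomorphic), but this step deserves a sentence rather than a gesture at \(\omega_1\)-presentability.
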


 In particular, the category \(\mathsf{Nuc}^{\infty}(\C)\) is an $\omega_{1}$-purely presentable exact category (in the sense of \cite{kelly2024flat}*{Definition 3.11}) with exact filtered colimits. Moreover, with our smallness assumptions on the generating Banach sets, the category \(\mathsf{Nuc}^{\infty}(\C)\) has a generator. By \cite{kelly2024flat}*{ Corollary 3.18} we get the following.

   \begin{corollary}
       The category \(\mathsf{Ch}(\mathsf{Nuc}^{\infty}(\C))\) may be equipped with the injective model structure. In particular it presents an $\infty$-category.
   \end{corollary}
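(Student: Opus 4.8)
The plan is to verify that \(\mathsf{Nuc}^{\infty}(\C)\) satisfies the hypotheses of \cite{kelly2024flat}*{Corollary 3.18}, which produces the injective model structure on the category of unbounded chain complexes over a suitable exact category, and then to check that the resulting model structure presents an \(\infty\)-category.

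First, I would assemble the structural input. By Corollary \ref{cor:infinitely-nuc-quasi-ab} the category \(\mathsf{Nuc}^{\infty}(\C)\) is a full exact subcategory of \(\mathsf{Ind}(\mathsf{Ban}_\C)\), with the exact structure inherited from \(\mathsf{Ind}(\mathsf{Ban}_\C)\), closed under all colimits and finite limits, and with exact filtered colimits. Next I would invoke the description \(\mathsf{Nuc}^{\infty}(\C)\simeq\mathsf{Ind}_{\omega_1}(\mathsf{Nuc}^{\Q}_{b,\omega_0}(\C))\) established above: every object is an \(\omega_1\)-filtered colimit of objects of \(\mathsf{Nuc}^{\Q}_{b,\omega_0}(\C)\), and these objects are \(\omega_1\)-compact. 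Together with the standing smallness assumptions on the generating Banach sets (so that \(\mathsf{Nuc}^{\Q}_{b,\omega_0}(\C)\) is essentially small and provides a set of generators), this shows that \(\mathsf{Nuc}^{\infty}(\C)\) is an \(\omega_1\)-purely presentable exact category in the sense of \cite{kelly2024flat}*{Definition 3.11}, with exact filtered colimits and a generator. These are exactly the hypotheses required.

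The second step is to apply \cite{kelly2024flat}*{Corollary 3.18} to this data. It yields the injective model structure on \(\mathsf{Ch}(\mathsf{Nuc}^{\infty}(\C))\), whose weak equivalences are the quasi-isomorphisms (for the inherited exact structure) and whose cofibrations are the degreewise admissible monomorphisms; moreover this model structure is cofibrantly generated and the underlying category \(\mathsf{Ch}(\mathsf{Nuc}^{\infty}(\C))\) is locally presentable, so it is a combinatorial model category. Since the exact structure is stable under shift and cone, the model structure is stable. Localising at the quasi-isomorphisms then produces a presentable stable \(\infty\)-category, which is the promised presentation; this is the routine passage from a combinatorial stable model category to its underlying \(\infty\)-category.

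The only genuine point of care --- and the place where I would spend the most effort --- is matching the axioms of \cite{kelly2024flat}*{Corollary 3.18} on the nose: in particular checking that the notion of \(\omega_1\)-pure presentability is met (which here reduces to the \(\omega_1\)-compactness of the objects of \(\mathsf{Nuc}^{\Q}_{b,\omega_0}(\C)\) in \(\mathsf{Ind}(\mathsf{Ban}_\C)\) and the \(\omega_1\)-filteredness of the diagrams exhibiting general objects as colimits of them) and that filtered colimits in \(\mathsf{Nuc}^{\infty}(\C)\) are computed as in \(\mathsf{Ind}(\mathsf{Ban}_\C)\) and are exact --- both of which have been recorded above. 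Everything else is a direct citation.
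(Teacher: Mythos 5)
Your proposal is correct and follows the paper's argument exactly: the paper likewise deduces from Corollary \ref{cor:infinitely-nuc-quasi-ab} and the \(\omega_1\)-filtered-colimit description that \(\mathsf{Nuc}^{\infty}(\C)\) is an \(\omega_1\)-purely presentable exact category with exact filtered colimits and a generator, and then cites \cite{kelly2024flat}*{Corollary 3.18}. The only difference is that you spell out the (routine) passage from the combinatorial stable model structure to the underlying \(\infty\)-category, which the paper leaves implicit.
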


   Thus we get the derived \(\infty\)-category \[\mathbf{D}(\mathsf{Nuc}^\infty(\C)) = N(\mathsf{Ch}(\mathsf{Nuc}^\infty(\C)))[W^{-1}],\] where \(W\) denotes the class of weak equivalences for the injective model structure on \(\mathsf{Nuc}^\infty(\C)\) (i.e. the quasi-isomorphisms). Note that this in fact coincides with the \textit{flat model structure}, as all nuclear objects are flat. Note furthermore that $\mathsf{Nuc}^{\infty}(\C)\rightarrow\mathsf{Ind(Ban_{\mathbb{C}})}$ is exact. Moreover, since $\mathsf{Nuc}^{\infty}(\C)$ is presentable and the inclusion commutes with colimits, this functor has a right adjoint $(-)^{rig,IB}$. Then, equipping $\mathsf{Ind(Ban_{\mathbb{C}})}$ also with the injective model structure, we get a Quillen adjunction
   $$\adj{i}{\mathsf{Ch(\mathsf{Nuc}^{\infty}(\C))}}{\mathsf{Ch(\mathsf{Ind(Ban_{\mathbb{C}})})}}{(-)^{rig,IB}},$$
   and hence an adjunction of $\infty$-categories
     $$\adj{\mathbf{i}}{\mathbf{D}(\mathsf{Nuc}^{\infty}(\C))}{\mathbf{D}(\mathsf{Ind(Ban_{\mathbb{C}})})}{(-)^{\mathbf{rig,IB}}}.$$

Now it is not clear that \(\mathsf{Nuc}_{b}^{\infty}(\C)\) is accessible. Moreover, filtered colimits in this category are not necessarily exact. Nonetheless, by localising at quasi-isomorphisms, we still get an $\infty$-category,
\[\mathbf{D}(\mathsf{Nuc}_{b}^\infty(\C)) = N(\mathsf{Ch}(\mathsf{Nuc}_{b}^\infty(\C)))[W^{-1}].\] 

We have a natural inclusion $\mathsf{diss}:\mathsf{Ch}(\mathsf{Nuc}_{b}^\infty(\C))\rightarrow\mathsf{Ch}(\mathsf{Nuc}^\infty(\C))$, which is just the restriction of the functor 
$$\mathrm{diss}:\mathsf{Ch(CBorn_{\mathbb{C}})}\rightarrow\mathsf{Ch(Ind(Ban_{\mathbb{C}}))}.$$



\begin{proposition}
    Any object $X$ of $\mathsf{Nuc}_{\omega_{0}}^\infty(\C)$ admits an admissible epimorphism $Y\rightarrow X$, where $Y\in\mathsf{Nuc}_{b,\omega_{0}}^\infty(\C)$.
\end{proposition}

\begin{proof}
    Write 
    $X\cong\colim_{n}X^{n}$ with each $f^{n}:X^{n}\rightarrow X^{n+1}$ being factorisably trace class, and each $X^{n}$ being  an $l^{1}$. We will inductively define a sequence $\tilde{X}^{n}$, with injective factorisably trace class maps $\tilde{f}^{n}:\tilde{X}^{n}\rightarrow\tilde{X}^{n+1}$, and strict epimorphisms $h^{n}:\tilde{X}^{n}\rightarrow X^{n}$, compatible with the maps $f^{n}$ and $\tilde{f}^{n}$. Put $\tilde{X}^{0}=X^{0}$ with $h^{0}$ being the identity. Suppose that for some $m$ we have constructed $\tilde{X}^{n}$ and $h^{n}$ for all $n\le m$. Consider the factorisably trace class map $f^{m}\circ h^{m}:\tilde{X}^{m}\rightarrow X^{m}$. As in the proof of \cite{clausenscholze3}*{Proposition 8.9}, we pick an arbitrary factorisably trace class map $g^{m}:\tilde{X}^{m}\rightarrow Y^{m+1}$. Again as in the proof of \cite{clausenscholze3}*{Proposition 8.9}, this could be given by term-wise multiplication by a sequence which converges to zero sufficiently rapidly. Now define $\tilde{X}^{m+1}= X^{m+1}\oplus Y^{m+1}$. Define $\tilde{f}^{m}:\tilde{X}^{m+1}= X^{m+1}\oplus Y^{m+1}$ to be $f^{m}\circ h^{m}$ in the first component, and $g^{m}$ in the second. Define $h^{m+1}:\tilde{X}^{m+1}\rightarrow X^{m+1}$ to be the projection onto the first factor. Finally, define $\tilde{X}=\colim_{n}\tilde{X}^{n}$, and define $h:\tilde{X}\rightarrow X$ to be the map induced by the maps $h^{n}$. The map $h$ is then a strict epimorphism.
\end{proof}

Consequently, any object $X_{\bullet}$ of $\mathsf{Ch}(\mathsf{Nuc}^\infty(\C))$ may be resolved by an object of $\mathsf{Ch}(\mathsf{Nuc}_{b}^\infty(\C))$. Indeed, $\mathsf{Nuc}_{b}^\infty(\C)$ is closed under sums in $\mathsf{Nuc}^\infty(\C)$, and any object $\colim_{\mathcal{I}}X_{i}$, with $X_{i}$ basic $\mathbb{Q}$-nuclear, admits an admissible epimorphism
$$\bigoplus_{i\in\mathcal{I}}X_{i}\rightarrow\colim_{\mathcal{I}}X_{i}.$$

The existence of the resolution now follows from \cite{kelly2016homotopy}*{Corollary 2.67}. General nonsense concerning Dwyer-Kan localisations of relative categories now implies the following.

\begin{lemma}
    The functor $\mathsf{diss}:\mathsf{Ch}(\mathsf{Nuc}_{b}^\infty(\C))\rightarrow\mathsf{Ch}(\mathsf{Nuc}^\infty(\C))$
    induces an equivalence of $\infty$-categories $\mathbf{diss}:\mathbf{D}(\mathsf{Nuc}_{b}^\infty(\C))\rightarrow\mathbf{D}(\mathsf{Nuc}^\infty(\C)).$
\end{lemma}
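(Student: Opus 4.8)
The plan is to construct an explicit quasi-inverse to $\mathbf{diss}$ directly on relative categories, so that no model structure on the bornological side is required; morally this is the ``$\Q$-nuclear'' refinement of the derived equivalence $\mathbf{D}(\mathsf{CBorn}_\C)\simeq\mathbf{D}(\mathsf{Ind}(\mathsf{Ban}_\C))$ from Theorem~\ref{thm:elementary}. First I would record the formal properties of $\mathsf{diss}$. Since $\C$ is a nontrivially valued Banach field, $\mathrm{diss}\colon\mathsf{CBorn}_\C\to\mathsf{Ind}(\mathsf{Ban}_\C)$ is fully faithful and exact by Theorem~\ref{thm:dissection}, and it stays fully faithful and exact when restricted to the full exact subcategories $\mathsf{Nuc}_b^\infty(\C)$ and $\mathsf{Nuc}^\infty(\C)$ (Corollary~\ref{cor:infinitely-nuc-quasi-ab}). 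Being a right adjoint, $\mathrm{diss}$ preserves limits, and both subcategories are closed under finite limits in their ambient categories, so the kernels occurring in the acyclicity condition for a complex agree whether computed in $\mathsf{Nuc}_b^\infty(\C)$ or $\mathsf{Nuc}^\infty(\C)$; combined with the fact that a conflation in $\mathsf{Nuc}_b^\infty(\C)$ is exactly one whose $\mathsf{diss}$-image is a conflation, this shows that $\mathsf{diss}$, applied termwise to chain complexes, preserves \emph{and reflects} acyclicity, hence quasi-isomorphisms. Consequently $\mathsf{diss}$ is a relative functor, it induces the functor $\mathbf{diss}$ between the Dwyer--Kan localisations $\mathbf{D}(\mathsf{Nuc}_b^\infty(\C))$ and $\mathbf{D}(\mathsf{Nuc}^\infty(\C))$, this $\mathbf{diss}$ is conservative, and (since $\mathsf{diss}$ is fully faithful) $\mathsf{diss}$ is fully faithful on chain complexes as well.

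Next I would build a functorial resolution. Recall that $\mathsf{Nuc}_b^\infty(\C)$ is closed under coproducts inside $\mathsf{Nuc}^\infty(\C)$, that every object of $\mathsf{Nuc}^\infty(\C)$ is a filtered colimit $\varinjlim_i X_i$ of basic $\Q$-nuclear objects, and that the canonical map $\bigoplus_i X_i\to\varinjlim_i X_i$ is an admissible epimorphism with source in $\mathsf{Nuc}_b^\infty(\C)$. Taking instead the coproduct over \emph{all} morphisms into a given object from a fixed set of representatives of the basic $\Q$-nuclear objects (such a set exists by the smallness hypothesis on the generating Banach sets) makes this admissible epimorphism functorial; iterating it and then globalising from objectwise resolutions to resolutions of complexes by the standard double-complex construction is precisely the content of \cite{kelly2016homotopy}*{Corollary 2.67}. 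This produces a functor $\rho\colon\mathsf{Ch}(\mathsf{Nuc}^\infty(\C))\to\mathsf{Ch}(\mathsf{Nuc}_b^\infty(\C))$ together with a natural transformation $\epsilon\colon\mathsf{diss}\circ\rho\Rightarrow\mathrm{id}$ that is a pointwise quasi-isomorphism. Because $\mathsf{diss}$ reflects quasi-isomorphisms, $\rho$ preserves them, so $\rho$ is a relative functor and induces a functor, still denoted $\rho$, on the localisations.

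It then remains to assemble the equivalence, which is purely formal. After localising, $\epsilon$ becomes a natural equivalence $\mathbf{diss}\circ\rho\simeq\mathrm{id}$ on $\mathbf{D}(\mathsf{Nuc}^\infty(\C))$. For the reverse composite, evaluating $\epsilon$ at $\mathsf{diss}(X)$ for $X\in\mathsf{Ch}(\mathsf{Nuc}_b^\infty(\C))$ gives a quasi-isomorphism $\mathsf{diss}(\rho\,\mathsf{diss}(X))\to\mathsf{diss}(X)$; by full faithfulness of $\mathsf{diss}$ on chain complexes this is $\mathsf{diss}$ of a morphism $\rho\,\mathsf{diss}(X)\to X$, and since $\mathsf{diss}$ reflects quasi-isomorphisms that morphism is one, so $\rho\circ\mathbf{diss}\simeq\mathrm{id}$ on $\mathbf{D}(\mathsf{Nuc}_b^\infty(\C))$. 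Hence $\mathbf{diss}$ and $\rho$ are mutually inverse equivalences of $\infty$-categories, as claimed. The step I expect to be the main obstacle is the construction of $\rho$ itself: one has to check that \cite{kelly2016homotopy}*{Corollary 2.67} applies to $\mathsf{Nuc}_b^\infty(\C)$ despite the absence of projective objects and the possible failure of accessibility and of exactness of filtered colimits, and that the resolution can genuinely be arranged functorially together with the natural quasi-isomorphism $\epsilon$; once this is set up, everything downstream is formal.
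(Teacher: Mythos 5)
Your proposal is correct and follows essentially the same route as the paper: the paper likewise resolves any object of $\mathsf{Ch}(\mathsf{Nuc}^\infty(\C))$ by one of $\mathsf{Ch}(\mathsf{Nuc}_b^\infty(\C))$ using closure under sums and the admissible epimorphism $\bigoplus_{i}X_i\to\colim_{\mathcal{I}}X_i$, cites \cite{kelly2016homotopy}*{Corollary 2.67} for the existence of the resolution, and then appeals to the general formalism of Dwyer--Kan localisations of relative categories. Your write-up merely makes that last ``general nonsense'' step explicit by constructing the quasi-inverse $\rho$, which is a reasonable unpacking rather than a different argument.
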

 
We in fact get the following commutative diagram of $\infty$-categories

\begin{displaymath}
    \xymatrix{
    \mathbf{D}(\mathsf{Nuc}_{b}^\infty(\C))\ar[d]\ar[r] & \mathbf{D}(\mathsf{CBorn}_{\mathbb{C}})\ar[d]\\
    \mathbf{D}(\mathsf{Nuc}^\infty(\C))\ar[r] & \mathbf{D}(\mathsf{Ind(Ban_{\mathbb{C}})}),
    }
\end{displaymath}

\noindent where the vertical maps are equivalences. Next we prove that the bottom (and hence the top) map is fully faithful. 

\comment{\begin{corollary}\label{cor:dagger-stein-nuclear}
The underlying bornological \(\C\)-module of an open polydisc algebra \(\mathcal{O}_{< r}(D^n)\) and dagger Stein algebra is sequentially \(\Q\)-nuclear.
\end{corollary}

\begin{proof}
By Example \ref{ex:disc-algebras}, \(\mathcal{O}_{< r}(D^1)\) for a radius \(r>0\) is \(\Q\)-nuclear, for a polyradius \(r = (r_1,\dotsc, r_n)\), we have \[\mathcal{O}_{< r}(D^n) \cong \mathcal{O}_{< r_1}(D^1) \haotimes \cdots \haotimes \mathcal{O}_{< r_n}(D^1),\] which is \(\Q\)-nuclear by Proposition \ref{prop:properties-infinitely-nuclear}. Similarly, dagger algebras are  \(\Q\)-nuclear by Example \ref{ex:disc-algebras}. Applying  Proposition \ref{prop:properties-infinitely-nuclear} to a countable inverse limit representation of a dagger Stein algebra yields the desired result. 
\end{proof}}

\begin{proposition}\label{thm:nuc-exact}
The functor \((-)^\rig \colon \mathsf{CBorn}_\C \to \mathsf{CBorn}_\C\) is exact. 
\end{proposition}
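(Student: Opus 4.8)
The plan is to reduce the exactness of $(-)^{\mathrm{rig}}$ to the fact already established in Corollary \ref{cor:infinitely-nuc-quasi-ab} and Proposition \ref{prop:properties-infinitely-nuclear} that $\mathsf{Nuc}_b^\infty(\C)$ is an exact full subcategory of $\mathsf{CBorn}_\C$ closed under the relevant (co)limits, together with the adjunction exhibited just above the statement, namely that $(-)^{\mathrm{rig}}$ is right adjoint to the inclusion $\mathsf{Nuc}_b^\infty(\C) \hookrightarrow \mathsf{CBorn}_\C$. Recall that a functor between quasi-abelian categories is exact (for the respective exact structures) if and only if it preserves kernels and cokernels of arbitrary morphisms; since $(-)^{\mathrm{rig}}$ is a right adjoint it automatically preserves all limits, in particular kernels, so the only content is that it preserves cokernels, i.e.\ that it is right exact.

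First I would recall the concrete description of the $\Q$-nuclear bornology: for $(X,\bdd_X)$ complete bornological, a subset $S$ lies in $\bdd_X^{\mathrm{rig}}$ iff it is contained in an increasing sequence of complete bounded discs $(B_n)$ with the induced maps $X_{B_n} \to X_{B_{n+1}}$ order-zero trace-class. Thus $X^{\mathrm{rig}}$ is, up to the identification of Lemma \ref{lem:nucbornind}, the colimit $\varinjlim_n X_{B_n}$ of such sequences, with the same underlying vector space as $X$ but with the coarsened bornology. Given a strong (admissible) epimorphism $f \colon X \onto Y$ in $\mathsf{CBorn}_\C$, I must show $f^{\mathrm{rig}} \colon X^{\mathrm{rig}} \to Y^{\mathrm{rig}}$ is again a strong epimorphism, and that the cokernel of an admissible monomorphism $E \into X$ computed after applying $(-)^{\mathrm{rig}}$ agrees with $(X/E)^{\mathrm{rig}}$. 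For the first point: every order-zero trace-class disc $S$ in $Y^{\mathrm{rig}}$ sits in a sequence of Banach discs $Y_{B_n}$ with order-zero transition maps; using that $f$ admits norm-preserving lifts on unit balls (strong epimorphism) one pulls back a cofinal such sequence to a sequence of discs in $X$ whose transition maps, being factorisations of polynuclear maps through quotients, remain order-zero trace-class by \cite{hogbe2011nuclear}*{Proposition 1(a), Section 2:3} — exactly the argument already used in the proof of Proposition \ref{prop:properties-infinitely-nuclear}(4). This exhibits a preimage disc in $X^{\mathrm{rig}}$ of the same gauge, so $f^{\mathrm{rig}}$ is a strong epimorphism. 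For the cokernel statement, I would invoke that $\mathsf{Nuc}_b^\infty(\C)$ is closed under quotients \emph{inside} $\mathsf{CBorn}_\C$ (Proposition \ref{prop:properties-infinitely-nuclear}(4) and the remark following it that colimits are computed in $\mathsf{CBorn}_\C$), so that $\mathrm{coker}(E^{\mathrm{rig}} \to X^{\mathrm{rig}})$, computed in the coreflective subcategory, coincides with $X^{\mathrm{rig}}/E^{\mathrm{rig}}$ which has underlying space $X/E$ and the $\Q$-nuclear bornology refining the quotient bornology — i.e.\ it is $(X/E)^{\mathrm{rig}}$.

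The main obstacle I anticipate is bookkeeping around the interaction of the coreflector $(-)^{\mathrm{rig}}$ with admissible monomorphisms: one must check that when $E \into X$ is a \emph{strict} mono, $E^{\mathrm{rig}} \to X^{\mathrm{rig}}$ is still strict, i.e.\ that the $\Q$-nuclear bornology on $E$ is the one induced from $X^{\mathrm{rig}}$ rather than something strictly finer. This is the analogue of the closed-subspace statement Proposition \ref{prop:properties-infinitely-nuclear}(1), and the key input is again that a factorisation of a polynuclear map through a closed subspace is trace-class; I would extract precisely that lemma and apply it levelwise to a cofinal sequence of discs. Once strictness of both the image of monos and the surjectivity of epis is in hand, exactness is formal: a short exact sequence $0 \to E \to X \to Y \to 0$ in $\mathsf{CBorn}_\C$ with $X \in \mathsf{Nuc}_b^\infty(\C)$ stays short exact, and for general $X$ one applies $(-)^{\mathrm{rig}}$ to the factorisation of any morphism through its image, using right-exactness (just proved) and left-exactness (the right-adjoint property) separately. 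I would close by remarking that, since the equivalences in Corollary \ref{cor:infinitely-nuc-quasi-ab} and Lemma \ref{lem:nucbornind} let us pass freely between the bornological and ind-Banach pictures, the same argument shows $(-)^{\mathrm{rig}} \colon \mathsf{Ind}(\mathsf{Ban}_\C) \to \mathsf{Nuc}^\infty(\C)$ is exact, hence descends to a functor of derived $\infty$-categories.
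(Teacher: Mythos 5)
Your reduction is the same as the paper's: since \((-)^\rig\) is right adjoint to the inclusion it preserves kernels, so everything comes down to showing it preserves cokernels, i.e.\ that for a bornological quotient map \(E \onto Q\) the induced map \(E^\rig \to Q^\rig\) is again a bornological quotient map. The gap is in how you justify this step. You propose to ``pull back a cofinal sequence of discs'' \((B_n)\) from \(Q\) to discs in \(E\) and assert that the resulting transition maps, ``being factorisations of polynuclear maps through quotients, remain order-zero trace-class''. But the lemma you invoke, and the arguments in Proposition \ref{prop:properties-infinitely-nuclear}(1) and (4), only let trace-class maps \emph{descend} to closed subspaces and to quotients; they say nothing about \emph{lifting} them along a quotient map, which is what is needed here (the upstairs transition map does not factor through the downstairs one — it is the downstairs map that factors through the upstairs one). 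Concretely, an arbitrary bounded disc \(A_n \subset E\) with \(f(A_n) = B_n\) — e.g.\ \(A_n = B_n \times (\text{unit ball of } Z)\) when \(E = Q \oplus Z\) with \(Z\) an infinite-dimensional Banach space and \(f\) the projection — has transition maps containing \(\mathrm{id}_Z\), which is not trace-class. So the choice of lifts is the entire content and cannot be made by ``pulling back''.

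The paper's proof supplies exactly this missing construction: write the order-zero trace-class inclusion \(Q_{B_n} \to Q_{B_{n+1}}\) in its standard form \eqref{eqref:standard-rep}, \(x \mapsto \sum_k \lambda_k f_k(x) y_k\) with \((\lambda_k)\) of rapid decay, pull the functionals \(f_k\) back along the quotient, lift the bounded sequence \((y_k)\) with controlled norms (this is where the bornological-quotient hypothesis enters), and \emph{define} the discs \(A_n\) upstairs from these lifted data, so that the induced maps \(E_{A_n} \to E_{A_{n+1}}\) are trace-class of order zero by construction. Your remaining points — strictness of admissible monomorphisms (which is anyway subsumed by kernel-preservation), identification of the cokernel computed in the coreflective subcategory, and the passage to derived categories — are fine once this lifting step is repaired.
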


\begin{proof}
Since \((-)^\rig\) is a right adjoint functor, it commutes with kernels, so it only remains to check whether it preserves cokernels. Let \(E \onto Q\) be a cokernel and \(S\) a bounded subset in \(Q^\rig\). Then there is an increasing sequence of bounded discs \((B_n)\) with \(S = B_0\), such that the induced inclusions \(Q_{B_n} \to Q_{B_{n+1}}\) are trace-class of order zero. Each such trace-class map is given by a standard representation as in \eqref{eqref:standard-rep}. Since \(E \onto Q\) is a bornological quotient map, we may lift the sequence \((\lambda_n)\) of rapid decay, the bounded functionals \((x_n) \in Q_{B_n}'\) and the bounded sequence \((y_n) \in \Q_{B_{n+1}}\) to sequences of the same type to get lifts \(E_{A_n} \to E_{A_{n+1}}\), where the \(A_n\)'s are bounded discs that lift \(B_n\). Summarily, the sequence \((B_n)\) of bounded discs lifts to a sequence of bounded discs \(A_n\) such that the induced maps \(E_{A_n} \to E_{A_{n+1}}\) are trace-class of order zero, from which we deduce that the induced map \(E^\rig \to Q^\rig\) is a bornological quotient map as required. 
\end{proof}

\begin{corollary}
    The functor 
    $$\mathbf{i}:\mathbf{D}(\mathsf{Nuc}^{\infty}(\C))\rightarrow\mathbf{D}(\mathsf{Ind(Ban_{\mathbb{C}})})$$
    is fully faithful.
\end{corollary}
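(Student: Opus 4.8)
The plan is to deduce the statement from the exactness result of Proposition~\ref{thm:nuc-exact}, by reducing along the commutative square of $\infty$-categories
\[
\begin{tikzcd}
\mathbf{D}(\mathsf{Nuc}_{b}^{\infty}(\C)) \arrow[r, "\mathbf{j}"] \arrow[d, "\mathbf{diss}"', "\simeq"] & \mathbf{D}(\mathsf{CBorn}_{\mathbb{C}}) \arrow[d, "\simeq"] \\
\mathbf{D}(\mathsf{Nuc}^{\infty}(\C)) \arrow[r, "\mathbf{i}"] & \mathbf{D}(\mathsf{Ind}(\mathsf{Ban}_{\mathbb{C}})),
\end{tikzcd}
\]
whose two vertical functors are equivalences (the left one by the equivalence $\mathbf{diss}$ established above, the right one by Theorem~\ref{thm:elementary}(4)). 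Since fully faithfulness transports across the vertical equivalences, it suffices to show that the top horizontal functor $\mathbf{j}$, induced by the inclusion $j\colon\mathsf{Nuc}_{b}^{\infty}(\C)\hookrightarrow\mathsf{CBorn}_{\mathbb{C}}$, is fully faithful. As $j$ is exact by Corollary~\ref{cor:infinitely-nuc-quasi-ab}(1), it preserves quasi-isomorphisms, so $\mathbf{j}$ is obtained from $j$ simply by passing to Dwyer--Kan localisations, with no (co)fibrant replacement needed.

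First I would pin down the right adjoint of $j$: by the coreflection recorded just before Lemma~\ref{lem:nucbornind}, the functor $r\colon\mathsf{CBorn}_{\mathbb{C}}\to\mathsf{Nuc}_{b}^{\infty}(\C)$, $(X,\bdd_X)\mapsto(X,\bdd_X^{\rig})$, is right adjoint to $j$, and by Proposition~\ref{thm:nuc-exact} it is exact; hence it too preserves quasi-isomorphisms and descends to $\mathbf{r}\colon\mathbf{D}(\mathsf{CBorn}_{\mathbb{C}})\to\mathbf{D}(\mathsf{Nuc}_{b}^{\infty}(\C))$. Because both $j$ and $r$ are homotopical, the adjunction $j\dashv r$ descends along the localisation functors to an adjunction $\mathbf{j}\dashv\mathbf{r}$ of $\infty$-categories (the unit and counit induce natural transformations on the localisations and the triangle identities pass to them), and its unit is the localisation of the unit $\eta\colon\mathrm{id}\Rightarrow r\circ j$. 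Since $j$ is a fully faithful inclusion --- concretely $\bdd_X^{\rig}=\bdd_X$ once $X$ is already $\Q$-nuclear --- the transformation $\eta$ is a natural isomorphism, so the unit of $\mathbf{j}\dashv\mathbf{r}$ is an equivalence and $\mathbf{j}$ is fully faithful. Together with the square, this shows $\mathbf{i}$ is fully faithful.

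The one place real work is concealed is Proposition~\ref{thm:nuc-exact}: a priori $r$ is only a right adjoint, hence left exact, and the content of that proposition is precisely that it is in addition right exact, obtained by lifting the standard (rapidly decaying) representations of order-zero trace-class maps along bornological quotient maps. Everything else is formal manipulation of localisations. A slightly different route, which avoids invoking the square, is to argue directly with the given derived adjunction $\mathbf{i}\dashv(-)^{\mathbf{rig,IB}}$: checking by the same lifting argument that $(-)^{rig,IB}\colon\mathsf{Ind}(\mathsf{Ban}_{\mathbb{C}})\to\mathsf{Nuc}^{\infty}(\C)$ is exact shows that $(-)^{\mathbf{rig,IB}}$ is computed termwise, so the derived unit $M_\bullet\to\bigl(i(M_\bullet)\bigr)^{rig,IB}$ is a termwise isomorphism and therefore an equivalence.
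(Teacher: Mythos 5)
Your proposal is correct and follows essentially the same route as the paper: reduce to the bornological inclusion $\mathsf{Nuc}_b^{\infty}(\C)\hookrightarrow\mathsf{CBorn}_{\C}$, observe that both it and its right adjoint $(-)^{\rig}$ are exact (the latter being Proposition \ref{thm:nuc-exact}, which you correctly identify as the only nontrivial input), so the adjunction descends to derived categories with no deriving needed, and conclude from the unit being an isomorphism at the $1$-categorical level. The paper phrases the reduction via homotopy categories rather than your commutative square, but the argument is the same.
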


\begin{proof}
    Since $\mathbf{i}$ is the left adjoint of an adjunction, it suffices to prove that it is fully faithful at the level of homotopy categories. Thus, we may restrict to the functor
    $$i:\mathrm{D}(\mathsf{Nuc}_{b}^{\infty}(\C))\rightarrow\mathrm{D}(\mathrm{CBorn}_{\mathbb{C}}),$$
    where we are working here with the one-categorical derived categories. But now $i:\mathsf{Nuc}_{b}^{\infty}(\C)\rightarrow\mathrm{CBorn}_{\mathbb{C}}$ is part of an adjunction
    $$\adj{i}{\mathsf{Nuc}_{b}^{\infty}(\C)}{\mathrm{CBorn}_{\mathbb{C}}}{(-)^{\rig}}.$$
    Both of the functors are exact, so they give rise to an adjunction of derived categories
        $$\adj{i}{\mathrm{D}(\mathsf{Nuc}_{b}^{\infty}(\C))}{\mathrm{D}(\mathrm{CBorn}_{\mathbb{C}})}{(-)^{\rig}}.$$
        In fact, there is no `deriving' happening here, as both functors are exact. Thus, since $i$ is fully faithful at the level of one-categories, the unit
        $$X\rightarrow\mathbb{R}(-)^{\rig}\circ\mathbb{L}i(X)\cong(-)^{\rig}\circ i(X)$$
        is an equivalence for all $X$. It follows that $i$ is fully faithful.
\end{proof}

Since \((-)^\rig \colon \mathsf{CBorn}_\C \to \mathsf{Nuc}_b^\infty(\C)\) is exact by Proposition \ref{thm:nuc-exact}, it descends to a functor \[
    (-)^\rig \colon \mathbf{D}(\C) \to \mathbf{D}(\mathsf{Nuc}^\infty(\C)) \subseteq \mathbf{D}(\mathsf{Nuc}^\infty(\C)).
\]
which is just $\mathbb{R}(-)^{\rig,IB}=(-)^{\mathbf{rig,IB}}$.

\begin{proposition}
Let $X_{\bullet}$ be a complex such that each $X_{i}$ is $\mathbb{Q}$-basic nuclear in $\mathrm{Ind(Ban}_{\mathbb{C}}\mathrm{)}$. Then $X_{\bullet}$ is basic $\mathbb{Q}$-nuclear in $\mathbf{D}(\mathrm{Ind(Ban}_{\mathbb{C}}\mathrm{)})$.
\end{proposition}

\begin{proof}
    First we prove the claim for bounded complexes. Let $X_{\bullet}$ be concentrated in a single degree By shifting we may assume that it is in degree $0$, and in this case the claim is obvious. Any bounded complex is a finite colimit of such objects, and hence is basic $\mathbb{Q}$-nuclear. Then, any bounded below complex is a countable colimit of bounded complexes, and hence is basic $\mathbb{Q}$-nuclear. 

    For the unbounded case, we may write $X_{\bullet}\cong\colim_{n\le 0}\tau_{\ge n}X_{\bullet}.$ By the previous part each $X_{\bullet}$ is basic $\mathbb{Q}$-nuclear, and $X_{\bullet}$ is a countable colimit of these. 
\end{proof}

We can now prove the main result of this section.

\begin{corollary}\label{cor:rigid-nuc}
We have a fully faithful embedding
\[\bD(\mathsf{Nuc}^\infty(\C)) \to \mathbf{Nuc}^\infty(\C) \simeq \bD(\C)^\rig.\] 
\end{corollary}

\begin{proof}
The inclusion $\mathbf{D}(\mathsf{Nuc}^{\infty}(\mathbb{C}))\rightarrow\mathbf{D}(\mathsf{Ind(Ban}_{\mathbb{C}}))$ is fully faithful, and its image is closed under colimits. In fact it is generated under colimits by the category \(\mathbf{Nuc}^\infty(\C)^{\omega_1}\) of complexes of \(\Q\)-basic nuclear objects. By Remark \ref{rem:basic-nuclear-explicit}, these are contained inside the \(\omega_1\)-compact objects \(\mathbf{Nuc}^\infty(\C)^{\omega_1}\) of the rigidification \(\mathbf{Nuc}^\infty(\C)\).
\end{proof}

\begin{remark}
    In the previous preprint version of this paper, we claim that the functor \[\bD(\mathsf{Nuc}^\infty(\C)) \to \mathbf{Nuc}^\infty(\C)\] is an equivalence. However, this is not clear. We would need to represent a \(\Q\)-indexed inductive system with trace-class transition maps by an actual chain complex of compact projectives with termwise factorisably trace-class chain maps.
\end{remark}

\subsubsection{The non-Archimedean case}\label{subsubsec:nonarc-nuc}

In this subsection, let \(R\) be a Noetherian non-Archimedean Banach ring, which we assume is the ring of integers of a non-Archimedean valued field \(K\) of characteristic zero. In this case, we first have the following interesting observation:

\begin{proposition}\label{prop:trace-class-factor-nonarc}
Let \(R\) be non-Archimedean Banach ring as above. Then every trace-class map between non-Archimedean Banach \(R\)-modules factorises as a composition of two trace-class maps. Consequently, we have an equivalence of categories \[\mathsf{Nuc}(R) = \mathsf{Nuc}^\infty(R).\]  
\end{proposition}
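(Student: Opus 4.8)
The plan is to establish the factorization statement by an explicit construction and then read off the equality of categories formally.

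First I would recall the concrete shape of a trace-class morphism here: if $f\colon M\to N$ is trace-class between nonarchimedean Banach $R$-modules, then $f(x)=\sum_{n\ge 0}\phi_n(x)\,y_n$ for some $\phi_n\in M^\vee$, $y_n\in N$ with $\norm{\phi_n}\norm{y_n}\to 0$, the series converging in $N$ for each $x$; this follows from the definition together with the description of the completed projective tensor product $M^\vee\haotimes N$ of nonarchimedean Banach $R$-modules, or by factoring $f$ through a compact generator (an $l^1$-module) as in \cite{clausenscholze3}*{Lemma 8.4} and unwinding the two legs. Discarding the indices with $\phi_n=0$ or $y_n=0$, we may assume $\norm{\phi_n},\norm{y_n}>0$ and we set $\delta_n\defeq\norm{\phi_n}\norm{y_n}\to 0$.

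The core step is to choose an auxiliary Banach $R$-module through which $f$ factors as a composite of two trace-class maps. The naive imitation of the archimedean argument -- factor through an unweighted $c_0$-type module, splitting the coefficients as $\delta_n=c_nd_n$ for scalars $c_n,d_n$ -- fails over $R=\ok$, since $c_nd_n$ being a unit forces $\abs{c_n}=\abs{d_n}^{-1}$, so $c_n$ and $d_n$ cannot both be small. Instead I would carry the splitting by the \emph{norm}: put $\rho_n\defeq\sqrt{\norm{\phi_n}/\norm{y_n}}\in\R_{>0}$ and let $Z\defeq\coprod_n^{\le 1}R_{\rho_n}$ be the $l^1$-module of sequences $(\xi_n)$, $\xi_n\in R$, with $\abs{\xi_n}/\rho_n\to 0$, normed by $\sup_n\abs{\xi_n}/\rho_n$; write $e_n\in Z$ for the $n$-th unit vector ($\norm{e_n}_Z=1/\rho_n$) and $\epsilon_n\in Z^\vee$ for the $n$-th coordinate functional ($\norm{\epsilon_n}_{Z^\vee}=\rho_n$), and define $g\colon M\to Z$, $g(x)=(\phi_n(x))_n$, and $h\colon Z\to N$, $h((\xi_n))=\sum_n\xi_n y_n$. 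Using $\norm{\phi_n}/\rho_n=\sqrt{\delta_n}$, $\rho_n\norm{y_n}=\sqrt{\delta_n}$ and $\sqrt{\delta_n}\to 0$ one checks directly that $g,h$ are well defined and bounded and that $h\circ g=f$; that $g$ is trace-class follows because $\sum_n\phi_n\otimes e_n$ converges in $M^\vee\haotimes Z$ (its terms have norm at most $\sqrt{\delta_n}$) and maps to $g$ under the canonical $M^\vee\haotimes Z\to\underline{\Hom}(M,Z)$, and symmetrically $h$ is trace-class via $\sum_n\epsilon_n\otimes y_n\in Z^\vee\haotimes N$. The weights $\rho_n$ need not lie in $\abs{K^\times}$, which is harmless because $Z$ is an abstract auxiliary module whose norm we are free to prescribe.

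Finally I would promote this to membership in $\mathsf{Tr}^\Q$: iterating the construction on the dyadic rationals of $[0,1]$ yields a functor from the dyadics to $\mathsf{Ban}_R$ with all transition maps trace-class, and one left Kan extends along $\{\text{dyadics}\}\hookrightarrow[0,1]\cap\Q$, using that trace-class morphisms form a two-sided ideal (trace-class composed with bounded is trace-class, cf.\ \cite{clausenscholze3}*{Lemma 8.2}) and that in $\mathsf{Ind}(\mathsf{Ban}_R)$ the map from a term of a chain into its filtered colimit factors through every later term -- verbatim the argument of the archimedean case. Hence $\mathsf{Tr}=\mathsf{Tr}^\Q$ for maps of nonarchimedean Banach $R$-modules, and $\mathsf{Nuc}(R)=\mathsf{Nuc}^\infty(R)$ then follows immediately from Lemma~\ref{lem:nuclear-characracterisation} and Definition~\ref{def:infinitely-nuclear}, since a representing inductive system whose transition maps eventually factor through trace-class maps has them, by the above, eventually factoring through factorisably trace-class maps. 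I expect the only step requiring real care to be this promotion from a two-term factorization to an honest $\Q$-indexed tower; the construction in the third paragraph is a short computation once one notices that the coefficient-splitting must be carried by the norm of the auxiliary module rather than by scalars.
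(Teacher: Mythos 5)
Your proof is correct, and it reaches the factorisation by a route that differs from the paper's in one substantive detail. The paper also factors $f(v)=\sum_n\lambda_nf_n(v)w_n$ through a $c_0$-type sequence module, but it carries the splitting of the decay in the \emph{scalars}: over $K$ it picks a null sequence $(\mu_n)$ with $\lambda_n/\mu_n$ still null and uses the nonarchimedean collapse ``null $=$ summable'' to see that both legs $p(v)_n=\mu_nf_n(v)$ and $c(\varphi)=\sum_n(\lambda_n/\mu_n)\varphi(n)w_n$ are trace-class; over $R=\ok$ this forces a separate, slightly ad hoc rewriting $\lambda_n=\alpha\,\dvgen^{\floor{n/2}}\dvgen^{\ceil{n/2}}$ so that both scalar factors lie in $R$ and are null. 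You instead carry the splitting in the \emph{norm} of the auxiliary module, taking $Z=\coprod_n^{\le1}R_{\rho_n}$ with $\rho_n=\sqrt{\norm{\phi_n}/\norm{y_n}}$, so that $\norm{\phi_n\otimes e_n}=\norm{\epsilon_n\otimes y_n}=\sqrt{\delta_n}\to0$; since the weights $\rho_n$ are arbitrary positive reals this works uniformly over $K$, over $\ok$, and over any nonarchimedean Banach ring, with no case distinction and no value-group constraint. (Your parenthetical criticism of the scalar-splitting route is slightly off target --- the paper never needs $c_nd_n$ to be a unit, only that $\lambda_n$ factors into two null scalars, which the $\dvgen$-power trick achieves --- but this does not affect your own argument.) Both proofs use nonarchimedeanness in the same essential place, namely the description of $M^\vee\haotimes N$ by series whose terms merely tend to zero, so that $\sqrt{\delta_n}\to0$ suffices where the archimedean case would demand $\sum_n\sqrt{\delta_n}<\infty$. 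Your final promotion from a two-step factorisation to membership in $\mathsf{Tr}^{\Q}$ by dyadic iteration and Kan extension makes explicit a step the paper leaves implicit, and the deduction of $\mathsf{Nuc}(R)=\mathsf{Nuc}^\infty(R)$ is then immediate as you say.
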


\begin{proof}
We first consider the case of the fraction field \(K\). By a standard argument, we may always write a trace-class map \(f\) as a compostion \(f = c \circ p\), where \(c\) is a trace-class map, and \(p\) is a bounded \(K\)-linear map. To prove that when \(K\) is non-Archimedean, the map \(p\) is also trace-class, we spell out the decomposition. By definition of a trace-class map, there is a bounded sequence \((f_n) \in \mathsf{Hom}_K(V, K)\), a bounded sequence \((w_n) \in W\), and a summable sequence \((\lambda_n) \in l^1(\N, K)\) such that \[f(v) = \sum_{n \in \N} \lambda_n f_n(v) w_n.\] We may then find a null sequence \((\mu_n) \in c_0(\N, K)\) such that \(\lambda_n/\mu_n\) is summable and decompose \(f\) as the composition of \[p \colon V \to c_0(\N,K), \quad v \mapsto p(v)_n = \mu_n f_n(v)\] and the map \(c \colon c_0(\N,K) \to W\), \(c(\varphi) = \sum_{n \in \N} \frac{\lambda_n}{\mu_n} \varphi(n) w_n\). Since the map \(c\) may be written as \(c(\varphi) = \sum_{n \in \N} \frac{\lambda_n}{\mu_n} \delta_n(\varphi) w_n\) for the delta sequence \((\delta_n) \in l^\infty(\N, K)\), it is manifestly trace-class. Now since \(K\) is non-Archimedean, the null sequence \((\mu_n)\) is summable. Now let \(\delta_n \in C_0(\N, F)\) be the function satisfying \(\delta_n(m) = 1\) if and only if \(n = m\), and zero otherwise. Then \(p(v) = \sum_{n=0}^\infty \mu_n f_n(v)\delta_n\), so that \(p\) is trace-class. 

We now consider the case where \(f \colon V \to W\) is a trace-class operator between Banach modules over the ring \(R\) of integers. Then we may again write \(f(v) = \sum_{n \in \N} \alpha \dvgen^{\floor{\frac{n}{2}} + \ceil{\frac{n}{2}}} f_n(v) w_n\) for some \(\alpha\) not divisible by \(\dvgen\) and \((f_n) \in V^\vee\) and \((w_n) \in W\) bounded sequences. Then choosing \(c(\varphi) = \sum_{n \in \N} \dvgen^{\floor{\frac{n}{2}}} \varphi(n) w_n\) and \(p(v)(n) = \dvgen^{\alpha\ceil{\frac{n}{2}}} f_n(v)\) yields trace-class maps that factorise \(f\). 

Finally, let \(V\) be a nuclear bornological \(R\)-module, so that it is a filtered colimit of trace-class operators, and since any such trace-class operator is factorisable by two trace-class operators, the result follows.  \qedhere 
\end{proof}

\begin{corollary}\label{cor:rigid-nuc-nonarc}
We have equivalences of \(\infty\)-categories \[\mathbf{Nuc}(R) \simeq \mathbf{D}(\mathsf{Nuc}(R)) \simeq \bD(R)^\rig.\] In particular, for a non-Archimedean Banach ring \(R\), \(\mathbf{Nuc}(R)\) is rigid. 
\end{corollary}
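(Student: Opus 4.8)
The plan is to mirror the proof of Corollary~\ref{cor:rigid-nuc} in the nonarchimedean setting, the essential simplification being that Proposition~\ref{prop:trace-class-factor-nonarc} collapses the distinction between nuclear and infinitely nuclear objects, both at the level of $1$-categories and of derived $\infty$-categories. First I would record that $\bD(R)$ is a compactly generated, closed symmetric monoidal stable $\infty$-category whose tensor unit $R$ is compact (it lies in $\mathbf{C}^{0}$), so that Corollary~\ref{thm:rigidification=nuc} applies to $\mathbf{C}=\bD(R)$ and yields $\bD(R)^{\rig}\simeq\mathbf{Nuc}^{\infty}(R)$, an $\omega_{1}$-compactly generated category whose $\omega_{1}$-compact objects are precisely the $\Q$-basic nuclear complexes.

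Next I would prove that $\mathbf{Nuc}(R)=\mathbf{Nuc}^{\infty}(R)$ as full subcategories of $\bD(R)$. Since each side is the colimit closure of the basic, respectively $\Q$-basic, nuclear complexes, it suffices to identify these two classes of objects. By Proposition~\ref{prop:basic-nuclear-explicit}, every basic nuclear complex may be represented as a sequential colimit of finite complexes of compact projectives whose transition maps are termwise injective trace-class maps, while a $\Q$-basic nuclear complex is a $\Q$-indexed colimit of the same shape with termwise factorisably trace-class transition maps. Now Proposition~\ref{prop:trace-class-factor-nonarc} says that every trace-class map between nonarchimedean Banach $R$-modules is a composite of two trace-class maps, hence, iterating and subdividing, factorisably trace-class; interpolating intermediate stages therefore upgrades a sequential colimit with termwise trace-class transition maps into a $\Q$-indexed colimit of the same type. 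Thus the two classes of complexes agree, and $\mathbf{Nuc}(R)=\mathbf{Nuc}^{\infty}(R)$.

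It remains to identify $\mathbf{D}(\mathsf{Nuc}(R))$ with $\mathbf{Nuc}^{\infty}(R)$, which I would do by running the argument of the complex case with $\mathsf{Nuc}^{\infty}(\C)$ replaced throughout by $\mathsf{Nuc}(R)=\mathsf{Nuc}^{\infty}(R)$. The nonarchimedean analogues of Proposition~\ref{prop:properties-infinitely-nuclear} and Corollary~\ref{cor:infinitely-nuc-quasi-ab}, proved identically but invoking Proposition~\ref{prop:trace-class-factor-nonarc} wherever factorisability of trace-class maps was used, show that $\mathsf{Nuc}(R)$ is an exact, $\omega_1$-presentable full subcategory of $\mathsf{Ind}(\mathsf{Ban}_R)$ closed under colimits, finite limits and exact filtered colimits, so that $\mathsf{Ch}(\mathsf{Nuc}(R))$ carries the flat model structure (which coincides with the injective one, as nuclear objects are flat) and presents $\mathbf{D}(\mathsf{Nuc}(R))$. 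The inclusion $\mathsf{Nuc}(R)\hookrightarrow\mathsf{Ind}(\mathsf{Ban}_R)$ is exact and admits an exact right adjoint, namely the compactoid-bornology coreflector of \cite{Meyer-Mukherjee:HL} (see also \cite{hogbe2011nuclear}), so it descends to a fully faithful functor $\mathbf{D}(\mathsf{Nuc}(R))\to\bD(R)$; its essential image is closed under colimits and, by Proposition~\ref{prop:basic-nuclear-explicit}, is generated under colimits by the ($\Q$-)basic nuclear complexes, hence equals $\mathbf{Nuc}^{\infty}(R)$. Chaining the three identifications gives the displayed equivalences; rigidity of $\mathbf{Nuc}(R)$ then follows either from Corollary~\ref{cor:Clausen} together with Proposition~\ref{prop:trace-class-factor-nonarc}, or simply because $\mathbf{Nuc}(R)\simeq\bD(R)^{\rig}$ is a rigidification, cf.\ Theorem~\ref{thm:rigidification}.

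I expect the main obstacle to be the last step: one must install the nonarchimedean counterparts of the structural facts about $\mathsf{Nuc}^{\infty}(\C)$ (exactness, exact filtered colimits, $\omega_1$-presentability, existence of a model structure on chain complexes), and verify full faithfulness of $\mathbf{D}(\mathsf{Nuc}(R))\to\bD(R)$ via the exact coreflector, together with the identification of the essential image with the colimit closure of the basic nuclear complexes. Once Proposition~\ref{prop:trace-class-factor-nonarc} is available these arguments are formally the same as in the complex case, but transporting the order-zero bookkeeping of the archimedean setting into the cleaner nonarchimedean dichotomy $\mathsf{Nuc}(R)=\mathsf{Nuc}^{\infty}(R)$ still requires some care.
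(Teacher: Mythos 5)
Your proposal is correct and follows the same route as the paper: the paper's own proof is a one-line reduction to Proposition~\ref{prop:trace-class-factor-nonarc}, which identifies nuclear and $\Q$-nuclear objects, after which the equivalences follow from the general rigidification result (Corollary~\ref{thm:rigidification=nuc}) and the machinery already set up for the complex case. You have simply written out in full the details that the paper leaves implicit.
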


\begin{proof}
The claim follows from Proposition \ref{prop:trace-class-factor-nonarc}, which identifies the categories of nuclear and \(\Q\)-nuclear bornological modules.
\end{proof}

As in the complex case, we may force nuclearity of complete bornological \(R\)-modules. Following \cite{Meyer-Mukherjee:HL}, we call an inclusion \(V \to W\) of Banach \(R\)-modules \emph{compactoid} if the image of \(V\) in \(W/\dvgen^n W\) is finitely generated as an \(R/\dvgen^n R\)-module for each \(n\). By \cite{Meyer-Mukherjee:HL}*{Proposition 3.5}, this is equivalent to the map being trace-class. As a consequence, a complete bornological \(R\)-module is nuclear if and only if it is a colimit of compactoid inclusions of Banach \(R\)-modules.

\begin{remark}
Note that there is a slight difference between the setup in \cite{Meyer-Mukherjee:HL} and this article, arising from the definition of a Banach \(R\)-module. Indeed, in \cite{Meyer-Mukherjee:HL}, the authors only consider \(\dvgen\)-torsion free Banach modules. However, this distinction plays no role in the characterisation of compactoid submodules as trace-class maps, as a map \(V \to W\) of \(\dvgen\)-torsion Banach modules is then compactoid if and only if it is represented by a sequence with only finitely many nonzero terms which is clearly trace-class. 
\end{remark} 

Let \(\mathsf{CBorn}_R^{\tf}\) denote the category \(\mathsf{Ind}^s(\mathsf{Mod}_R^{\tf, \hat{\dvgen}})\) of complete, torsion free bornological \(R\)-modules. By \cite{Cortinas-Cuntz-Meyer-Tamme:Nonarchimedean}*{Proposition 2.10}, this is equivalent to the category of strict inductive systems of \(\dvgen\)-adically complete, torsion free \(R\)-modules. We have a functor \[(-)^\rig \colon \mathsf{CBorn}_R^{\tf} \to \mathsf{Nuc}^{\tf}(R)\] from the category of complete bornological \(R\)-modules to nuclear, torsion free \(R\)-modules, that equips a bornological \(R\)-module with the compactoid bornology. By \cite{Meyer-Mukherjee:HL}*{Lemma 4.3}, this functor is right adjoint to the inclusion \(\mathsf{Nuc}^{\tf}(R) \subset \mathsf{CBorn}_R^{\tf}\).

\begin{lemma}\label{lem:compactoid-properties}
    The compactoid bornology functor preserves colimits, tensor products and is fully faithful on the full subcategory \(\mathsf{Mod}_R^{\tf, \hat{\dvgen}}\) of \(\dvgen\)-adically complete, torsion free \(R\)-modules. 
\end{lemma}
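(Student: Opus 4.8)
The plan is to prove each of the three assertions about the compactoid bornology functor $(-)^\rig \colon \mathsf{CBorn}_R^{\tf} \to \mathsf{Nuc}^{\tf}(R)$ in turn, leaning heavily on the identification $\mathsf{CBorn}_R^{\tf} \simeq \mathsf{Ind}^s(\mathsf{Mod}_R^{\tf, \hat{\dvgen}})$ recalled just before the statement, and on the characterisation of compactoid maps as trace-class maps from \cite{Meyer-Mukherjee:HL}. Throughout I would work with the concrete model: an object of $\mathsf{CBorn}_R^{\tf}$ is a strict inductive system $``\varinjlim"_i M_i$ of $\dvgen$-adically complete torsion free $R$-modules with injective structure maps, and $(-)^\rig$ replaces this system by the cofinal subsystem (or rather the subsystem of compactoid submodules) whose structure maps are compactoid/trace-class.

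For full faithfulness on $\mathsf{Mod}_R^{\tf, \hat{\dvgen}}$: an object $M \in \mathsf{Mod}_R^{\tf, \hat{\dvgen}}$ is viewed as the constant ind-object, and $M^\rig$ is the inductive system of its compactoid (equivalently, finitely generated-modulo-$\dvgen^n$) submodules, which is cofinal and has colimit $M$ again as an $R$-module but now with the compactoid bornology. Since $(-)^\rig$ is right adjoint to the inclusion $\iota \colon \mathsf{Nuc}^{\tf}(R) \subset \mathsf{CBorn}_R^{\tf}$ (this is \cite{Meyer-Mukherjee:HL}*{Lemma 4.3}), full faithfulness of $(-)^\rig$ on a subcategory is equivalent to the counit $\iota\bigl(M^\rig\bigr) \to M$ being an isomorphism there; for $M$ a $\dvgen$-adically complete torsion free module this counit is the identity on underlying modules, and one checks it is an isomorphism of bornological modules because a bounded subset of $M$ with its given (Banach, i.e. $\dvgen$-adic) bornology is automatically compactoid when $M/\dvgen^n$ is, say, finitely generated — more precisely, one reduces to checking that $\mathsf{Hom}(N, M^\rig) \to \mathsf{Hom}(N,M)$ is a bijection for all $N$, which follows since a bounded map into $M$ from anything lands in a $\dvgen$-adically complete submodule and hence factors through a compactoid submodule by the finiteness hypothesis. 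I expect this to be essentially a bookkeeping exercise once the adjunction is in hand.

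For preservation of colimits: $(-)^\rig$ is a right adjoint and so a priori preserves limits, not colimits, so this requires a genuine argument. The key point is that nuclearity is a \emph{local} condition built out of trace-class maps, and trace-class maps in $\mathsf{Ban}_R$ are stable under the operations computing colimits of ind-systems — finite direct sums (diagonal of trace-class is trace-class), quotients (a quotient of a compactoid inclusion is compactoid, again by the modulo-$\dvgen^n$ finite generation criterion), and filtered colimits (which just reindex). Concretely I would argue that a colimit of nuclear modules, computed in $\mathsf{CBorn}_R^{\tf}$, is already nuclear — this is the torsion-free analogue of the closure properties established for $\mathsf{Nuc}^\infty(\C)$ in Proposition \ref{prop:properties-infinitely-nuclear}, and one can cite \cite{hogbe2011nuclear} or re-derive it from the compactoid criterion. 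Given that, for any diagram $(X_j)$ in $\mathsf{CBorn}_R^{\tf}$ the canonical map $\varinjlim X_j^\rig \to \bigl(\varinjlim X_j\bigr)^\rig$ is a map between nuclear modules with the same underlying colimit, and it is an isomorphism because both sides carry the finest nuclear bornology coarser than the colimit bornology; checking this amounts to showing that the left-hand side already has the universal property of $(-)^\rig$ applied to the colimit, i.e. that maps out of $\varinjlim X_j^\rig$ into arbitrary bornological modules are the same as bounded maps out of $\varinjlim X_j$, which follows from commuting $\mathsf{Hom}$ with the colimit.

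For preservation of tensor products: here the crucial input is that $\haotimes$ of two trace-class maps of Banach $R$-modules is trace-class (proved via the standard $l^1$-representation, exactly as in part (3) of Proposition \ref{prop:properties-infinitely-nuclear}), together with $l^1(X) \haotimes l^1(Y) \cong l^1(X\times Y)$. Picking presentations of $X, Y \in \mathsf{CBorn}_R^{\tf}$ as inductive systems and of $X^\rig, Y^\rig$ as the cofinal subsystems with trace-class transition maps, the system $(X_i^\rig \haotimes Y_j^\rig)_{i,j}$ has trace-class structure maps and colimit $X \haotimes Y$, so it represents a nuclear module with the right underlying object; one then identifies it with $(X \haotimes Y)^\rig$ by the same universal-property argument as for colimits, using that $(-)^\rig$ being a (symmetric monoidal) coreflection onto $\mathsf{Nuc}^{\tf}(R)$ — which one gets because the class of trace-class maps is a monoidal ideal — forces the comparison map $X^\rig \haotimes Y^\rig \to (X \haotimes Y)^\rig$ to be an isomorphism. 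The main obstacle, I expect, is the colimit-preservation claim: it is the only one where the right-adjoint nature of $(-)^\rig$ works against us, and it genuinely relies on the closure of $\mathsf{Nuc}^{\tf}(R)$ under colimits inside $\mathsf{CBorn}_R^{\tf}$ plus a careful comparison of bornologies; the tensor and full-faithfulness parts are comparatively formal once the trace-class stability facts and the adjunction of \cite{Meyer-Mukherjee:HL}*{Lemma 4.3} are invoked.
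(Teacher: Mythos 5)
Your tensor-product sketch is in the spirit of what the paper does (the paper simply cites \cite{Meyer-Mukherjee:HL}*{Propositions 4.5, 4.6} for tensor products and cokernels and then checks direct sums by hand), but the other two parts contain genuine errors.

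\textbf{Full faithfulness.} You reduce to showing that the counit \(\iota(M^\rig)\to M\) is an isomorphism for \(M\in\mathsf{Mod}_R^{\tf,\hat{\dvgen}}\). It is not: the compactoid bornology is in general strictly finer than the norm bornology (for \(M=R\gen{x}\) the unit ball is bounded but not compactoid, since \(M/\dvgen M\) is not finitely generated), which is why you find yourself smuggling in the hypothesis ``when \(M/\dvgen^n\) is, say, finitely generated'' --- a hypothesis that is not part of the statement. Full faithfulness of the right adjoint \((-)^\rig\) on this subcategory does not require the counit to be invertible. The correct argument, and the one the paper gestures at with ``compactoid subsets are classified by null sequences'', is much simpler: for \(\dvgen\)-adically complete torsion-free modules with their \(\dvgen\)-adic norms, \emph{every} \(R\)-linear map is contractive (hence bounded), and every \(R\)-linear map automatically carries compactoid subsets to compactoid subsets (the image of a mod-\(\dvgen^n\) finitely generated submodule is mod-\(\dvgen^n\) finitely generated). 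So \(\mathsf{Hom}(M,N)\) and \(\mathsf{Hom}(M^\rig,N^\rig)\) are both literally the set of all \(R\)-linear maps, and the comparison is the identity.

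\textbf{Colimits.} Your closing universal-property argument has the variance of the adjunction backwards: \((-)^\rig\) is a \emph{right} adjoint to the inclusion, so maps \emph{out of} \(\varinjlim X_j^\rig\) are not controlled by the adjunction, and the claim ``maps out of \(\varinjlim X_j^\rig\) into arbitrary bornological modules are the same as bounded maps out of \(\varinjlim X_j\)'' is false (that would be the universal property of a left adjoint). What is true, and what the paper actually proves, is a direct comparison of bornologies: a subset of \(\bigoplus_i X_i\) is compactoid if and only if it is contained in and compactoid inside some single \(X_i\), so the compactoid bornology on the direct sum is the direct sum of the compactoid bornologies; combined with the cokernel case from \cite{Meyer-Mukherjee:HL}*{Proposition 4.6} this gives all colimits. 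Your observation that nuclear modules are closed under colimits is a necessary ingredient but does not by itself identify \(\varinjlim X_j^\rig\) with \((\varinjlim X_j)^\rig\); that identification needs the bornology comparison. Similarly, for tensor products the easy direction (a tensor product of compactoid inclusions is compactoid) only produces the comparison map \(X^\rig\haotimes Y^\rig\to(X\haotimes Y)^\rig\); the converse domination of an arbitrary compactoid subset of \(X\haotimes Y\) is the substantive point, which the paper outsources to \cite{Meyer-Mukherjee:HL}*{Proposition 4.5}.
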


\begin{proof}
    By \cite{Meyer-Mukherjee:HL}*{Proposition 4.5, 4.6}, the compactoid bornology functor commutes with tensor products and cokernels that exist in \(\mathsf{CBorn}_R^\tf\). To see that it commutes with direct sums, let \(((X_i, \bdd_i))_{i \in I}\) be a family of complete, torsion free bornological \(R\)-modules. Then by definition of the direct sum bornology, a subset \(S \subset \bigoplus_{i \in I} X_i\) is compactoid if and only if there is a bounded subset \(T \in \bdd_i\) such that \(S \subset T\) is compactoid. This implies that \(S \in (X, \bdd_i^\rig)\). Conversely, if \(S\) is bounded in the direct sum bornology on the family \((X_i, \bdd_i^\rig)_{i \in I}\), then there is an \(i \in I\) such that \(S \in \bdd_i^\rig\), so that there is a \(T_i \in \bdd_i\) such that \(S \subseteq T_i\) is compactoid. Since inclusions \(X_i \to \bigoplus_i X_i\) are bounded, \(T \in \bdd_{\bigoplus_i X_i}\), from which it follows that \(S\) is bounded for the compactoid bornology on \(\bigoplus_i X_i\).  The last statement follows from the fact that compactoid subsets are classified by null sequences.  
\end{proof}

In what follows, we study the homological algebraic properties of the category \(\mathsf{CBorn}_R^\tf\). Note that \(\mathsf{CBorn}_R^\tf\) is not quasi-abelian as cokernels create torsion, and therefore the category is not closed under cokernels.

\begin{proposition}
    The quasi-abelian structure on \(\mathsf{CBorn}_R\) restricts to an exact category structure on \(\mathsf{CBorn}_R^\tf\). Furthermore, the projective model structure on \(\mathsf{Ch}(\mathsf{Ind}(\mathsf{Mod}_R^{\tf, \hat{\dvgen}}))\) exists, and is the restriction of the projective model structure on \(\mathsf{Ch}(\mathsf{Ind}(\mathsf{Ban}_R))\).
\end{proposition}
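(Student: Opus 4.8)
The plan is to prove the two assertions in order. For the first, the point is that although $\mathsf{CBorn}_R^\tf$ fails to be quasi-abelian — cokernels in $\mathsf{CBorn}_R$ of maps between $\dvgen$-torsion free objects may acquire $\dvgen$-torsion — it is an \emph{extension-closed} full additive subcategory of the quasi-abelian category $\mathsf{CBorn}_R$ (quasi-abelian by Theorem~\ref{thm:elementary}), and any such subcategory inherits an exact structure whose conflations are exactly the strict short exact sequences of $\mathsf{CBorn}_R$ all of whose terms are $\dvgen$-torsion free. So I would first record that $\mathsf{CBorn}_R^\tf \hookrightarrow \mathsf{CBorn}_R$ is closed under subobjects, retracts and (essentially monomorphic) filtered colimits — each immediate on underlying $R$-modules — and then verify extension-closedness: given a strict short exact sequence $0 \to A \to B \to C \to 0$ in $\mathsf{CBorn}_R$ with $A,C$ torsion free, an element $b\in B$ with $\dvgen b=0$ maps to a torsion element of $C$, hence to $0$, so $b\in A=\ker(B\to C)$, and $A$ torsion free forces $b=0$; thus $B$ is torsion free. (Here one uses that strict short exact sequences of bornological modules are exact on underlying modules.) This produces the stated exact structure, for which the inclusion $\mathsf{CBorn}_R^\tf\hookrightarrow\mathsf{CBorn}_R$ is an exact functor that reflects conflations.

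For the model structure I would first pin down the projective objects of this exact category. Since $R$ is $\dvgen$-torsion free, the generating projectives $l^1(X)$ of $\mathsf{CBorn}_R$ (equivalently, the free $\dvgen$-adically complete torsion-free modules in the $\mathsf{Ind}$-picture) are torsion free and so lie in $\mathsf{CBorn}_R^\tf$; they remain projective there because every admissible epimorphism of $\mathsf{CBorn}_R^\tf$ is, by construction, an admissible epimorphism of $\mathsf{CBorn}_R$. There are enough of them: for $M\in\mathsf{CBorn}_R^\tf$ the canonical epimorphism $l^1(\abs{M})\onto M$ of $\mathsf{CBorn}_R$ has kernel a subobject of $l^1(\abs{M})$, hence torsion free, so it is a conflation in $\mathsf{CBorn}_R^\tf$. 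Moreover filtered colimits in $\mathsf{CBorn}_R^\tf$ are computed as in $\mathsf{CBorn}_R$, so these projectives stay compact, generate under colimits, and the category inherits the $\mathbf{AdMon}$-type exactness of essentially-monomorphic filtered colimits. Thus $\mathsf{CBorn}_R^\tf$ (and likewise the full $\mathsf{Ind}(\mathsf{Mod}_R^{\tf,\hat{\dvgen}})$) is a cocomplete, idempotent complete exact category with $\mathbf{AdMon}$-exact filtered colimits and a generating set of compact projectives — precisely the situation in which \cite{kelly2016homotopy} (cf.\ \cite{kelly2024flat}) produces the projective model structure on chain complexes, with weak equivalences the quasi-isomorphisms and fibrations the degreewise admissible epimorphisms.

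It then remains to see that this model structure is the restriction of the projective model structure on $\mathsf{Ch}(\mathsf{Ind}(\mathsf{Ban}_R))$ along the fully faithful inclusion. Fibrations match because a degreewise admissible epimorphism of $\mathsf{CBorn}_R$ between torsion-free complexes automatically has torsion-free kernel, hence is a degreewise admissible epimorphism of $\mathsf{CBorn}_R^\tf$, and conversely by construction. Weak equivalences match because the inclusion is exact and the mapping cone of a morphism of complexes in $\mathsf{Ch}(\mathsf{CBorn}_R^\tf)$ again lies there, while acyclicity in either exact category is detected by the common compact projectives $l^1(X)$; hence a morphism of torsion-free complexes is a quasi-isomorphism in one iff in the other. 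Finally the generating (trivial) cofibrations of both model structures are the maps $S^{n-1}(l^1(X))\to D^n(l^1(X))$ and $0\to D^n(l^1(X))$ built from the \emph{same} compact projectives, and the relevant pushouts and transfinite compositions are computed identically in the two chain categories (degreewise they only ever adjoin direct summands $l^1(X)$, which remain torsion free), so the cofibrations agree as well; thus the inclusion is both left and right Quillen.

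The main obstacle, and the step requiring genuine care, is the middle one: because $\mathsf{CBorn}_R^\tf$ is not quasi-abelian one cannot simply invoke the elementary-quasi-abelian machinery, and one must instead check that the induced exact structure genuinely has enough projectives of the right size and that $\mathsf{CBorn}_R^\tf$ (resp.\ $\mathsf{Ind}(\mathsf{Mod}_R^{\tf,\hat{\dvgen}})$) is accessible with $\mathbf{AdMon}$-exact filtered colimits, so that the transfinite small-object argument never leaves the subcategory. The bookkeeping between the strict ind-category $\mathsf{CBorn}_R^\tf=\mathsf{Ind}^s(\mathsf{Mod}_R^{\tf,\hat{\dvgen}})$ and the full $\mathsf{Ind}(\mathsf{Mod}_R^{\tf,\hat{\dvgen}})$ is a further (but purely formal, via the standard derived equivalence) point to dispatch; everything else is a diagram chase or a formal consequence of the inclusion being an exact functor.
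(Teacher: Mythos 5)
Your proposal is correct and follows essentially the same route as the paper: establish that \(\mathsf{CBorn}_R^\tf\) is a fully exact (extension-closed) subcategory of the quasi-abelian category \(\mathsf{CBorn}_R\) — where you do the torsion-free extension check by hand, the paper instead cites closure of torsion-free modules under submodules and pushouts — and then observe that the projectives \(l^1(X)\) are torsion free and still form a generating set of compact projectives in the subcategory, so the projective model structure exists and agrees with the restriction of the ambient one. The extra verifications you carry out (accessibility, matching of fibrations, weak equivalences and generating cofibrations) are exactly the details the paper leaves implicit.
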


\begin{proof}
    For the first claim, we need to show that the collection of all kernel-cokernel pairs is an exact structure on \(\mathsf{CBorn}_R^\tf\). By \cite{Meyer-Mukherjee:Bornological_tf}*{Lemma 4.2}, submodules of torsion free \(R\)-modules are torsion free, so that the pullback of an admissible extension is admissible. And by \cite{Meyer-Mukherjee:Bornological_tf}*{Theorem 4.5}, the pushout of an admissible extension is admissible. This shows that \(\mathsf{CBorn}_R^\tf\) is a fully exact subcategory of \(\mathsf{CBorn}_R\), so that they have the same exact chain complexes. Similarly, \(\mathsf{Ind}(\mathsf{Mod}_R^{\tf, \hat{\dvgen}})\) is a fully exact subcategory of \(\mathsf{Ind}(\mathsf{Ban}_R)\), so that the exact complexes in the former category are precisely the restriction of the exact complexes in the latter category to \(\mathsf{Ind}(\mathsf{Mod}_R^{\tf, \hat{\dvgen}})\). Finally, as \(\mathsf{Ind}(\mathsf{Mod}_R^{\tf, \hat{\dvgen}})\) as enough projectives, which coincide with the projectives in \(\mathsf{Ind}(\mathsf{Ban}_R)\), the projective model structure exists and is precisely the restriction of the projective model structure on \(\mathsf{Ch}(\mathsf{Ind}(\mathsf{Ban}_R))\).    
\end{proof}

In fact the left heart $\mathsf{LH}(\mathsf{CBorn}^{tf}_{R})$ has a very straightforward description. Note that $\mathsf{LH}(\mathsf{CBorn}_{R})$ is the free sifted cocompletion on the full subcategory of $\mathrm{CBorn}_{R}$ consisting of compact projectives, $\mathsf{LH}(\mathsf{CBorn}_{R})=\mathcal{P}_{\Sigma}(\{l^{1}(V):V\textrm{ is a normed set}\}$. Then $\mathsf{LH}(\mathsf{CBorn}^{tf}_{R})=\mathcal{P}_{\Sigma}(\{l^{1}(V):V\textrm{ is a discrete normed set}\}$. Thus $\mathsf{CBorn}^{tf}_{R}$ may be thought of as the convex subcategory of $\mathsf{CBorn}_{R}$. This description manifestly gives rise to a fully faithful inclusion
$$\mathbf{D}(\mathsf{CBorn}^{tf}_{R})\rightarrow\mathbf{D}(\mathsf{CBorn}_{R})$$
which in fact has both left and right adjoints.

  As \((-)^\rig\) is additive, it extends to a symmetric monoidal functor between the homotopy categories of chain complexes \[(-)^\rig \colon \mathsf{Ch}(\mathsf{CBorn}_R^\tf) \to \mathsf{Ch}(\mathsf{CBorn}^{\tf}(R))\] of complete, torsion free \(R\)-modules, preserving homotopy colimits by Lemma \ref{lem:compactoid-properties}. By \cite{Meyer-Mukherjee:HL}*{Proposition 4.6}, this functor is exact, so that it preserves weak equivalences, thereby descending to a functor \begin{equation}\label{eqref:nonarc-rig}
  \bD(\mathsf{CBorn}_R^\tf) \to \bD(\mathsf{CBorn}_R^\tf),
  \end{equation} landing in the full subcategory \(\mathbf{Nuc}(R)\) of chain complexes of nuclear \(R\)-modules. By Lemma \ref{lem:torsion free}, \(\bA_R^{form} \subset \bD(\mathsf{CBorn}_R^\tf)\), so that the compactoid bornology functor above restricts to a symmetric monoidal functor \[(-)^\rig \colon \mathbf{A}_R^{form} \to \mathbf{Nuc}(R),\] and preserves colimits.  

We end this section with the promised proof of descent for nuclear modules over derived adic rings. Let us write $\mathbf{D}(R)^{tf}=\mathbf{D}(\mathsf{CBorn}_{R})^{tf}$. This is itself a derived algebraic context. Equip $\mathbf{Aff}_{\mathbf{D}(R)^{tf}}$ with the descendable pre-topology. 

\comment{
This first requires a nuclear refinement of the adic \'etale topology previously considered.

\begin{definition}
    The \emph{nuclear adic \'etale topology} on \(\mathbf{Aff}_R\) consists of covers \[\{\mathsf{Spec}(B) \to \mathsf{Spec}(A)\}\] such that 

\begin{enumerate}
    \item \(A \to B\) is descendable;
    \item \(B\) is adically finitely presented over \(A\) in the sense that \[B \cong A \hat{\otimes}^\mathbb{L} R\gen{x_1,\dotsc, x_n}^{rig}//(f_1, \dotsc, f_n)\] for \(f_1, \dotsc, f_n \in \pi_0(A) \hat{\otimes} R\gen{x_1, \dotsc, x_n}\);
    \item for any map $A\rightarrow C$ with $C$ a derived topologically finitely presented affinoid,
$$\{\mathrm{Spec}((R\big\slash\pi^{n}R\otimes^{\mathbb{L}}(C\hat{\otimes}_{A}^{\mathbb{L}}B))\rightarrow\mathrm{Spec}((R\big\slash\pi^{n}R\otimes^{\mathbb{L}}C)\}$$
is \'etale.
\end{enumerate}

Denote this topology by \(\tau_{ad}^{et,b}\). 
\end{definition}
}

\begin{theorem}\label{thm:rigidification-formal-desc}
    The functor \[(\mathbf{Aff}_{\mathbf{D}(R)^{tf}})^{\op} \to \mathbf{CAlg}(\mathbf{Pr}_{st}^L), \quad A \mapsto \mathbf{Nuc}(A) \defeq \mathbf{Mod}_{A^\rig}(\mathbf{Nuc}(R))\] is a sheaf of \(\infty\)-categories for the descendable pre-topology.
\end{theorem}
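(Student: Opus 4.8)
The plan is to reduce the statement to the universal descent result for rigid categories established earlier, namely Lemma \ref{lem:maxime1}. Fix a cover $\{\mathsf{Spec}(B) \to \mathsf{Spec}(A)\}$ in the descendable pre-topology on $\mathbf{Aff}_{\mathbf{D}(R)^{tf}}$, so that $A \to B$ is descendable in $\mathbf{Comm}(\mathbf{D}(R)^{tf})$. First I would observe that since the inclusion $\mathbf{D}_{\geq 0}(\mathsf{Ban}_R^{\leq 1})^\circ \to \mathbf{D}(R)$ is strongly monoidal and fully faithful (Lemma \ref{lem:adic-subcat}), and $\mathbf{D}(R)^{tf}$ sits inside $\mathbf{D}(R)$ monoidally, descendability of $A \to B$ is unambiguous; it is inherited from and passes to any of these categories by Proposition \ref{prop:descfull}. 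The key point is then that $\mathbf{Nuc}(A) \defeq \mathbf{Mod}_{A^\rig}(\mathbf{Nuc}(R))$ can be identified, via the compactoid bornology functor $(-)^\rig$, with the module category over a genuinely nuclear algebra: by Proposition \ref{prop:trace-class-factor-nonarc} nuclear and $\Q$-nuclear objects coincide for $R$ nonarchimedean, so $\mathbf{Nuc}(R) \simeq \mathbf{D}(R)^\rig$ by Corollary \ref{cor:rigid-nuc-nonarc}, and the underlying object $A^\rig$ lies in $\mathbf{Nuc}(R)$.

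The central step is to verify that $A \to B$ being descendable in $\mathbf{Comm}(\mathbf{D}(R)^{tf})$ implies that $A^\rig \to B^\rig$ is descendable in $\mathbf{Comm}(\mathbf{Nuc}(R))$. Here I would invoke that $(-)^\rig \colon \mathbf{D}(R)^{tf} \to \mathbf{Nuc}(R)$ is a colimit-preserving, strongly symmetric monoidal functor — this follows from Lemma \ref{lem:compactoid-properties} together with the fact that $(-)^\rig$ descends to derived categories as in \eqref{eqref:nonarc-rig}. By Proposition \cite{mathew2016galois}*{Corollary 3.21}, symmetric monoidal functors preserve descendable maps, so $A^\rig \to B^\rig$ is descendable. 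Then Lemma \ref{lem:maxime1}(2) (with $\mathbf{C}$ the compactly generated category $\mathbf{D}(R)$ whose rigidification is $\mathbf{Nuc}(R)$, noting the unit $R$ is compact, e.g. by Theorem \ref{thm:tilde-nuc} and the convex subcategory discussion) gives that
\[
\mathbf{Nuc}(A) \to \varprojlim_{[n] \in \Delta} \mathbf{Nuc}(B^{\otimes_A n})
\]
is an equivalence, which is exactly the required descent for the \v{C}ech nerve.

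To finish I would check that this pointwise \v{C}ech statement upgrades to the assertion that the functor $A \mapsto \mathbf{Nuc}(A)$ is a sheaf of $\infty$-categories: since the descendable pre-topology is generated by single maps and descendable maps are closed under composition, pushout, and base change (Proposition \ref{prop:etale-top} and \cite{mathew2016galois}*{Proposition 3.23}), the \v{C}ech descent for each cover is precisely the sheaf condition. I expect the main obstacle to be the careful bookkeeping around the compact unit: one must be sure whether to work with $\mathbf{D}(R)$, $\mathbf{D}(\mathsf{Ban}_R^{\leq 1})^\circ$, or $\mathbf{D}(R)^{tf}$, and which of these has a compact unit so that Lemma \ref{lem:maxime1} applies with the rigidification being a full subcategory. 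The cleanest route is probably to take $\mathbf{C} = \mathbf{D}(\mathsf{Ban}_R^{\leq 1})^\circ \simeq \varprojlim_n \mathbf{D}(R/\dvgen^n)$ (whose unit is compact and whose rigidification is $\mathbf{Nuc}(R)$ by Corollary \ref{cor:rigid-nuc-nonarc}, since this agrees with $\widetilde{\mathbf{Nuc}}(R)$), viewing $\mathbf{Aff}_{\mathbf{D}(R)^{tf}}$ appropriately inside it, and then the argument goes through verbatim.
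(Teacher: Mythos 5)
Your proposal is correct and follows essentially the same route as the paper: the compactoid bornology functor $(-)^\rig$ is symmetric monoidal, hence preserves descendable maps by \cite{mathew2016galois}*{Corollary 3.21}, and then Lemma \ref{lem:maxime1} gives the \v{C}ech descent for $\mathbf{Mod}_{A^\rig}(\mathbf{Nuc}(R))$. The extra care you take over the compact unit and the choice of ambient category is not needed, since Lemma \ref{lem:maxime1}(2) only uses that $\mathbf{Nuc}(\mathbf{C})$ is a full monoidal subcategory closed under colimits (via Proposition \ref{prop:descfull}), but it does no harm.
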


\begin{proof}
    As the compactoid bornology functor \((-)^\rig \colon\mathbf{D}(R)^{tf} \to \mathbf{Nuc}(R)\) is symmetric monoidal, by \cite{mathew2016galois}*{Corollary 3.21}, it preserves descendable maps in \(\mathbf{Aff}_{\mathbf{D}(R)^{tf}}\). The result now follows from Lemma \ref{lem:maxime1}. 
    \comment{
    Furthermore, by the discussion preceding the statement of the Theorem, \((-)^\rig\) preserves colimits. Now for an adically finitely presented map \(A \to B\), we have by definition \(B \cong A \hat{\otimes}^\mathbb{L} R\gen{x_1, \dotsc, x_n}^\rig//(f_1, \dotsc, f_n)\). Using that \((-)^\rig\) preserves colimits and tensor products, we get a morphism of sites \((\mathbf{A}_R^{form}, \tau_{ad}^{et}) \to (\mathbf{A}_R^{form}, \tau_{ad}^{et,b})\). Since the maps in \(\tau_{ad}^{et,b}\) are descendable, 
    }
\end{proof}

Since $\mathbf{dAfnd}_{R}\subset\mathbf{Aff}_{\mathbf{D}(R)^{tf}}$ and, the adically faithfully flat topology on $\mathbf{dAfnd}_{R}$  consists of descendable maps, we get the following. 

\begin{corollary}
      The functor \[(\mathbf{dAfnd}_{R})^{\op} \to \mathbf{CAlg}(\mathbf{Pr}_{st}^L), \quad A \mapsto \mathbf{Nuc}(A) \defeq \mathbf{Mod}_{A^\rig}(\mathbf{Nuc}(R))\] is a sheaf of \(\infty\)-categories for the adically faithfully flat topology.
\end{corollary}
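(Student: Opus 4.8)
The plan is to deduce this corollary from Theorem \ref{thm:rigidification-formal-desc} by checking that the relevant site embeds into the one already handled. First I would recall that by Lemma \ref{lem:torsion free} (and the discussion of derived admissible algebras being $\dvgen$-torsion free), the category $\mathbf{dAfnd}_R$ of derived dagger affinoid... wait, here $\mathbf{dAfnd}_R$ denotes derived \emph{admissible adic} $R$-algebras, whose underlying complexes have $\dvgen$-torsion free homotopy groups; hence $\mathbf{dAfnd}_R$ is a full subcategory of $\mathbf{Aff}_{\mathbf{D}(R)^{\tf}} = \mathbf{DAlg}^{cn}(\mathbf{D}(\mathsf{CBorn}_R)^{\tf})$. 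The strongly monoidal fully faithful inclusion $\mathbf{D}(\mathsf{CBorn}_R)^{\tf} \to \mathbf{D}(\mathsf{CBorn}_R)$ means that a map of derived admissible adic $R$-algebras is descendable when viewed in $\mathbf{Aff}_{\mathbf{D}(R)^{\tf}}$ if and only if it is descendable in $\mathbf{Aff}_R$, so there is no ambiguity in speaking of descendability.

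Second, and this is the real content, I would verify that every cover $\{\mathrm{Spec}(B) \to \mathrm{Spec}(A)\}$ in the adically faithfully flat topology $\tau_{ad}^{ff}$ on $\mathbf{dAfnd}_R$ consists of \emph{descendable} maps. This is exactly the conclusion of the theorem (due to Mann in the condensed setting) proved earlier in the excerpt: if $A \to B$ is adically descendable of some finite index $d$ and $A$ is adically complete, then $A \to B$ is descendable. Combined with Lemma \ref{lem:etale-pi_0} (applied with $\tau$ = faithfully flat), a cover in $\tau_{ad}^{ff}$ has the property that each $f/\!\!/\dvgen^k \colon A/\!\!/\dvgen^k \to B/\!\!/\dvgen^k$ is a derived faithfully flat map of derived algebras, hence descendable of index $2$ by \cite{mathew2016galois}*{Proposition 3.31}; thus $f$ is adically descendable of index $2$, and since $A$ is adically complete (being a derived admissible adic $R$-algebra), $f$ is descendable. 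So $\tau_{ad}^{ff}$ restricted to $\mathbf{dAfnd}_R$ is a subtopology of the descendable pre-topology on $\mathbf{Aff}_{\mathbf{D}(R)^{\tf}}$.

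Third, I would invoke Theorem \ref{thm:rigidification-formal-desc}: the functor $A \mapsto \mathbf{Nuc}(A) = \mathbf{Mod}_{A^\rig}(\mathbf{Nuc}(R))$ is a sheaf of $\infty$-categories for the descendable pre-topology on $\mathbf{Aff}_{\mathbf{D}(R)^{\tf}}$. Since the descent condition for a cover only involves the Čech nerve of that cover, and every $\tau_{ad}^{ff}$-cover of an object of $\mathbf{dAfnd}_R$ is a descendable cover (whose Čech nerve stays inside $\mathbf{dAfnd}_R$, as derived admissible adic $R$-algebras are closed under the relevant tensor products by Proposition \ref{prop:strongadc}(2), using flatness on $\pi_0$ which holds for faithfully flat covers), the restriction of the sheaf to $\mathbf{dAfnd}_R$ with the $\tau_{ad}^{ff}$-topology is again a sheaf. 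This gives the corollary.

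The main obstacle I anticipate is the bookkeeping around closure of $\mathbf{dAfnd}_R$ under the fibre products appearing in the Čech nerve: one must confirm that $B \hat{\otimes}^{\mathbb{L}}_A B$ (and higher tensor powers) of a faithfully flat cover of derived admissible adic $R$-algebras is again derived admissible, i.e.\ $\dvgen$-torsion free with coherent homotopy groups. Proposition \ref{prop:strongadc}(2) handles this provided $\pi_0(B) \hat{\otimes}_{\pi_0(A)} \pi_0(B)$ is torsion free, which follows from flatness of $\pi_0(A) \to \pi_0(B)$ via Lemma \ref{lem:flattorsfree}; one should also note that for the $n$-fold tensor powers the same flatness input propagates. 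A secondary, minor subtlety is that $\mathbf{Nuc}$ on $\mathbf{dAfnd}_R$ is defined via $A \mapsto \mathbf{Mod}_{A^\rig}(\mathbf{Nuc}(R))$ rather than $\mathbf{Mod}_A(\mathbf{Nuc}(R))$ directly — but since the unit of $\mathbf{D}(\mathsf{Ban}_R^{\le 1})^\circ$ is not compact one genuinely needs the $A^\rig$ lift, and this is precisely the same definition used in Theorem \ref{thm:rigidification-formal-desc}, so no extra work is required there beyond citing Lemma \ref{lem:maxime1} for the compatibility of the dualisable limit with the $\mathbf{Pr}_{st}^L$ limit.
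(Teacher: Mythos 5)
Your proposal is correct and follows essentially the same route as the paper, whose proof is exactly the one-line observation that $\mathbf{dAfnd}_{R}\subset\mathbf{Aff}_{\mathbf{D}(R)^{\tf}}$ and that adically faithfully flat covers are descendable, so Theorem \ref{thm:rigidification-formal-desc} applies. The only cosmetic difference is that you re-derive descendability via Mann's criterion and Lemma \ref{lem:etale-pi_0}, whereas descendability is already built into the definition of the adic $\tau$-pre-topology, so that step (while harmless and correct) is not strictly needed.
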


Using that descendable topologies are subcanonical, for a space \(X \in \mathbf{dAn}_R\) we have \(\mathbf{Nuc}(X) \simeq \varprojlim_{\mathsf{Spec}(A) \to X}\mathbf{Mod}_{A^\rig}(\mathbf{Nuc}(R))\). Notice also that the descent result of Theorem \ref{thm:rigidification-formal-desc} follows from that of the functor \(A \mapsto \mathbf{Mod}_{A^\rig}(\bD(R))\) on derived affinoids. We denote by \(\bD^{\mathrm{nuc}}(X)\) the extension of this functor to derived analytic spaces. Finally, for a morphism \(f \colon X \to Y\) in between derived analytic spaces, we have an induced pullback functor \(f^* \colon \mathbf{Nuc}(Y) \to \mathbf{Nuc}(X)\), which is symmetric monoidal and colimit-preserving. By the presentability of the categories involved, this functor admit a right adjoint \(f_* \colon \mathbf{Nuc}(X) \to \mathbf{Nuc}(Y)\) which (uniquely) extends the adjunction \(\mathbf{Mod}_{A^{\rig}}(\bD(R)) \leftrightarrows \mathbf{Mod}_{B^\rig}(\bD(R))\) at the level of derived affinoids. 

\comment{As in the complex case, we have similar permeance properties for nuclear modules over a non-Archimedean Banach ring. 

\begin{proposition}\label{prop:properties-infinitely-nuclear2}
The full subcategory \(\mathsf{Nuc}_b(R)\) of \(\mathsf{CBorn}_R\) is closed under 
\begin{enumerate}
\item Closed subspaces;
\item Countable products;
\item Tensor products;
\item Countable colimits. 
\end{enumerate} 
\end{proposition}

\begin{proof}
Essentially the same proof as \ref{prop:properties-infinitely-nuclear} works with minor modifications. For (1) we see that the maps \(E \cap F_\alpha \to E \cap F_\beta\) are compactoid by \cite{perez2010locally}*{Theorem 8.1.3 (ii) and (iii)}, which is equivalent to the maps being trace-class by \cite{BaBK}*{Proposition 3.37}. (2) follows from \cite{BaBK}*{Proposition 3.51}.  (3) remains the same as the complex case. For (4), we use \cite{BaBK}*{Proposition 3.51} to show that the structure maps \(\frac{F_\alpha}{E \cap F_\alpha} \to \frac{F_\beta}{E \cap F_\beta}\) are trace-class. 
\end{proof}

Again, as consequence we get that the full subcategory \(\mathsf{Nuc}_b(R)\) of \(\mathsf{CBorn}_R\) is quasi-abelian.  }

\comment{\begin{theorem}\label{thm:nuc-rigid}
    Let \(R\) be the ring of integers of a nontrivially valued non-Archimedean Banach field. Then for a derived formal \(R\)-algebra \(A \in \mathbf{A}_R^{form}\), we have equivalences  \[\mathbf{Nuc}(A^\mathrm{nuc}) \simeq \varprojlim^{dual} \mathbf{Mod}_{A/\dvgen^n} \simeq \mathbf{Nuc}^\infty(A)\] of symmetric monoidal \(\infty\)-categories.
\end{theorem}

\begin{proof}
    Since the discretisation functor is strongly monoidal, by Theorem \ref{thm:tilde-nuc}, we have an equivalence \[\mathbf{D}(A) \simeq \varprojlim_n^{dual} \bD(A/p^n),\] so that the second equivalence follows from taking rigidification and Theorem \ref{thm:rigidification=nuc}. Moreover, since \(A^{nuc}\) is a colimit of \(\dvgen\)-adically complete \(R\)-modules with norm-decreasing inclusions, it again lies in \(\bD(\mathsf{Ban}_R^{\leq1})^\circ\). Consequently, we again have \(\bD(A) \simeq \varprojlim_n \bD(\mathbf{Mod}_{A/\dvgen^n})\) by Theorem \ref{thm:rigidification=nuc}. Furthemore, it is easy to see that \(\bD(A)^\rig \simeq \mathbf{Nuc}(A)^\rig\)\edit{write lemma}. Putting these things together, we get equivalences \[\mathbf{Nuc}^\infty(A) \simeq \varprojlim^{dual} \mathbf{Mod}_{A/\dvgen^n} \simeq \mathbf{Nuc}^\infty(A^{nuc}) \simeq \mathbf{Nuc}(A^{nuc})^\rig.\] It remains to show that \(\mathbf{Nuc}(A^{nuc}) \simeq \mathbf{Mod}_{A^{rig}}(\mathbf{Nuc}(R))\) is rigid. Since trace-class morphisms between Banach \(R\)-modules are factorisably trace-class, the underlying chain complex of \(A^{nuc}\) is actually \(\Q\)-nuclear. Finally, since \(\mathbf{Nuc}(R)\) is rigid, the result follows. 
\end{proof}}

\comment{\subsubsection{The internal model}

We now have an interesting subcategory of nuclear objects in \(\mathsf{CBorn}_R\). In this subsection, we show that this subcategory describes the category \(\mathbf{Nuc}^\infty(R)\) when \(R = \C\) and \(\mathbf{Nuc}(R)\) when \(R\) is a non-Archimedean Banach ring.

\begin{proposition}\label{thm:nuc-exact}
Let \(R\) be either the complex numbers or a non-Archimedean Banach ring. Then the functor \((-)^\rig \colon \mathsf{CBorn}_R \to \mathsf{Nuc}_b^\Q(R)\) is exact. 
\end{proposition}

\begin{proof}
Since \((-)^\rig\) is a right adjoint functor, it commutes with kernels, so it only remains to check whether it preserves cokernels. We first take the archimedean case. Let \(E \onto Q\) be a cokernel and \(S\) a bounded subset in \(Q^\rig\). Then there is an increasing sequence of bounded discs \((B_n)\) with \(S = B_0\), such that the induced inclusions \(Q_{B_n} \to Q_{B_{n+1}}\) are trace-class of order zero. Each such trace-class map is given by a standard representation as in \eqref{eqref:standard-rep}. Since \(E \onto Q\) is a bornological quotient map, we may lift the sequence \((\lambda_n)\) of rapid decay, the bounded functionals \((x_n) \in Q_{B_n}'\) and the bounded sequence \((y_n) \in \Q_{B_{n+1}}\) to sequences of the same type to get lifts \(E_{A_n} \to E_{A_{n+1}}\), where the \(A_n\)'s are bounded discs that lift \(B_n\). Summarily, the sequence \((B_n)\) of bounded discs lifts to a sequence of bounded discs \(A_n\) such that the induced maps \(E_{A_n} \to E_{A_{n+1}}\) are trace-class of order zero, from which we deduce that the induced map \(E^\rig \to Q^\rig\) is a bornological quotient map as required. The proof in the non-Archimedean case is similar and actually simpler. \qedhere
\end{proof}

As a consequence of Corollary \ref{cor:infinitely-nuc-quasi-ab}, we have a projective model structure on the category of unbounded complexes of \(\Q\)-nuclear bornological \(R\)-modules \(\mathsf{Ch}(\mathsf{Nuc}_b^\Q(R))\), using which we define the derived \(\infty\)-category \[\mathbf{D}(\mathsf{Nuc}^\Q(R)) = N(\mathsf{Ch}(\mathsf{Nuc}^\Q(R)))[W^{-1}],\] where \(W\) denotes the class of weak equivalences for the projective model structure on \(\mathsf{Nuc}_b^\Q(R)\). \footnote{Note here that by Proposition \ref{prop:trace-class-factor-nonarc}, for non-Archimedean base rings, the superscript \(\Q\) in \(\mathsf{Nuc}_b^\Q(R)\) is a redundancy, but we keep it for uniformity of presentation.} Moreover, since \((-)^\rig \colon \mathsf{CBorn}_R \to \mathsf{Nuc}_b^\Q(R)\) is exact by Proposition \ref{thm:nuc-exact}, it descends to a functor \begin{equation}\label{eqref:rigidification}
    (-)^\rig \colon \mathbf{D}(R) \to \mathbf{D}(\mathsf{Nuc}^\Q(R)) \subseteq \mathbf{D}(\mathsf{Nuc}^\infty(R)),
\end{equation} where the category \(\bD(\mathsf{Nuc}_b^\infty(R))\simeq \bD(\mathsf{Nuc}^\infty(R))\) is the closure in \(\bD(R)\) of the countably cocomplete category \(\bD(\mathsf{Nuc}^\Q(R))\) under colimits. }

\subsubsection{Rigidification of affinoids}

Let $K$ be a non-Archimedean non-trivially Banach field with ring of integers $R=\mathcal{O}_{K}$. Consider the left heart $\mathsf{LH}(\mathsf{CBorn}_{K})$. It is the  free sifted cocompletion of the full subcategory of $l^{1}$-spaces on normed sets
$$\mathcal{P}_{\Sigma}(\{l^{1}(V):V\textrm{ is a normed set}\}).$$
Consider the functor
$$K\otimes_{R}(-):\mathsf{LH}(\mathsf{CBorn}^{tf}_{R})\rightarrow \mathsf{LH}(\mathsf{CBorn}_{K}).$$
This functor has a right adjoint $G$. Abstractly, it is defined by $$E\mapsto(l^{1}(V)\mapsto\mathsf{Hom}(K\otimes_{R}l^{1}(V),E)).$$
However, we can describe it more concretely, at least for Banach $K$-vector spaces $E$. Indeed let $\overline{B}_{E}(0,1)$ denote the closed unit ball of $E$. Then $G(E)\cong\colim_{n}\overline{B}_{E}(0,1)$ where the transition maps are multiplication by $\pi$. Note further that $G$ commutes with sifted colimits. Moreover
$$K\otimes_{R}G(E)\rightarrow E$$
is an isomorphism, which simply follows from the fact that all transition morphisms in the diagram
$$\colim_{n}K\otimes_{R}\overline{B}_{E}(0,1)$$
are isomorphisms. It follows that $G$ is fully faithful, so that we may regard $\mathsf{LH}(\mathsf{CBorn}_{K})$ as a reflective subcategory of $\mathsf{LH}(\mathsf{CBorn}^{tf}_{R})$. Equip both the categories $\mathsf{Ch}(\mathsf{LH}(\mathsf{CBorn}_{K}))$ and $\mathsf{Ch}(\mathsf{LH}(\mathsf{CBorn}_{R})^{tf})$ with the projective model structures. Then the adjunction
$$\adj{K\otimes_{R}(-)}{\mathsf{Ch}(\mathsf{LH}(\mathsf{CBorn}^{tf}_{R}))}{\mathsf{LH}(\mathsf{CBorn}_{K})}{G(-)}$$
is in fact Quillen with strongly monoidal left adjoint. $G(-)$ is in particular exact. Hence the adjunction is a reflective Quillen adjunction, so that we get a reflective adjunction of $\infty$-categories with strongly monoidal left adjoint,
$$\adj{K\otimes_{R}^{\mathbb{L}}(-)}{\mathbf{D}(R)^{tf}}{\mathbf{D}(K)}{G(-)}.$$
In fact this means that $\mathbf{D}(K)$ is equivalent to the category of algebras over the monad $G(K\otimes^{\mathbb{L}}(-))$. But 
$$G(K\otimes^{\mathbb{L}}(E))\cong\colim_{n}E\cong(\colim_{n}V)\otimes^{\mathbb{L}} E\cong G(K)\otimes^{\mathbb{L}} E.$$
Hence $\mathbf{D}(K)$ is equivalent to modules in $\mathbf{D}(R)^{tf}$ over the algebra $G(K)$. Note also this implies that $R\rightarrow G(K)$ is a homotopy epimorphism. 

The following is obvious. 

\begin{lemma}
    $G(K)$ is nuclear as an $R$-module.
\end{lemma}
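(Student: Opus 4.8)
The plan is to combine the concrete description of $G$ established just above with the characterisation of nuclear ind-Banach modules recorded in Lemma \ref{lem:nuclear-characracterisation}. Recall that for a Banach $K$-vector space $E$ one has $G(E) \cong \colim_n \overline{B}_E(0,1)$, with transition maps given by multiplication by $\pi$. Applying this to $E = K$, whose closed unit ball is $\overline{B}_K(0,1) \cong R$, shows that $G(K)$ is represented by the sequential inductive system
\[
R \xrightarrow{\ \cdot\pi\ } R \xrightarrow{\ \cdot\pi\ } R \xrightarrow{\ \cdot\pi\ } \cdots
\]
of Banach $R$-modules. Since $R$ is $\pi$-torsion free, each structure map is a monomorphism, so this is a strict inductive system; accordingly $G(K)$, viewed as a complete bornological $R$-module, is the colimit of this strict system, and its nuclearity amounts to the nuclearity of this inductive system in $\mathsf{Ind}(\mathsf{Ban}_R)$.

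By Lemma \ref{lem:nuclear-characracterisation} it then suffices to check that the single transition map $\cdot\pi \colon R \to R$ is trace class. I would argue this directly from the definition: under the canonical identification $R^\vee \haotimes R \cong R$ (using that $R$ is the monoidal unit), the bounded $R$-linear map $R \to R^\vee \haotimes R$ classified by the element $\pi \in R$ is carried by the canonical morphism $\underline{\Hom}(R, R^\vee \haotimes R) \to \underline{\Hom}(R, R)$ precisely to multiplication by $\pi$, which exhibits $\cdot\pi$ as trace class. (Equivalently, one may invoke the characterisation from Subsection \ref{subsubsec:nonarc-nuc} of trace-class maps between nonarchimedean Banach $R$-modules as the compactoid inclusions: the image of the source in $R/\pi^n R$ is the cyclic module $\pi R/\pi^n R$, hence finitely generated over $R/\pi^n R$ for every $n$.) As every transition map of the above system equals $\cdot\pi$, Lemma \ref{lem:nuclear-characracterisation} yields that $G(K)$ is nuclear.

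I do not anticipate any genuine obstacle here — as the text remarks, the statement is essentially immediate. The only point that merits attention is the bookkeeping matching the abstract adjoint description $G(E) = \bigl(l^1(V) \mapsto \mathsf{Hom}(K \otimes_R l^1(V), E)\bigr)$ with the colimit formula $G(E) \cong \colim_n \overline{B}_E(0,1)$; this has already been carried out above, and everything downstream of that identification is formal.
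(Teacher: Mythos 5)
Your proof is correct and is exactly the argument the paper has in mind (the paper omits the proof as "obvious"): $G(K)\cong\colim_n \overline{B}_K(0,1)=\colim_n R$ with transition maps multiplication by $\pi$, and since every bounded map out of the unit object $R$ is trace class (as $R^\vee\haotimes N\cong N$ and the canonical comparison map is the identity), Lemma \ref{lem:nuclear-characracterisation} applies. The compactoid-inclusion check in your parenthetical is a valid alternative verification of the same point.
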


In particular $(-)^{rig}$ restricts to a strongly monoidal functor
$$\mathbf{D}(K)\rightarrow\mathbf{D}(K).$$

\comment{
Let $E$ be a Banach $K$-vector space. A subset $B$ of $E$ is said to be \textit{compactoid} if for any open neighbourhood $U$ of the origin, there is a finite set $F$ such that $E\subset U+\Gamma(F)$ where $\Gamma(F)$ denotes the absolutely convex hull. This defines a bornology on $E$. We denote the resulting bornological space by $\mathrm{Cpt}(E)$. 

In fact this gives rise to a functor
$$\mathrm{Cpt}:\mathrm{Ban}_{K}\rightarrow\mathrm{CBorn}_{K}$$
which equips a Banach $K$-vector space $E$ with the compactoid bornology. 

First let us work at the level of semi-normed modules.
Let $E$ be a $\pi$-adic, $\pi$-torsion free Banach $R$-module, which we regard as a semi-normed $R$-module, and let $S\subset E$ be compactoid. We consider the semi-norm $\rho_{S}$ on $E$ defined by 
$$\rho_{S}(e)=\mathrm{inf}\{|\pi|^{r}:e\in\pi^{r}\mathrm{Span}_{R}(S)\}$$.
This is clearly a $\pi$-adic semi-norm, in that $\rho_{S}(\pi e)=|\pi|\rho_{S}(e)$ for all $e\in E$. It is also $\pi$-torsion free. We denote by $E_{S}$ the underlying space of $E$ equipped with the semi-norm $\rho_{S}$, and by $\hat{E}_{S}$ its completion. Then we have 
$$E^{rig}\cong\colim_{S\subset E\textrm{ compactoid} }\hat{E}_{S}.$$
We then have 
$$K\hat{\otimes}_{R}^{\mathbb{L}}E^{rig}\cong \colim_{S\subset E\textrm{ compactoid} }K\hat{\otimes}_{R}\hat{E}_{S}.$$
The closed ball of radius $1$ in $K\hat{\otimes}_{R}\hat{E}_{S}$ is just $\mathrm{Span}_{R}(S)$. This coincides with the absolutely convex hull of $S$, regarded as a subset of $K\otimes_{R}E$. One then easily checks that, as semi-normed spaces,
$$(K\otimes_{R}E)_{S}\cong K\otimes_{R}(E_{S}).$$

Now let $B\subset K\otimes_{R}E$ be any compactoid. Then there is a finite subset $\{x_{1},\ldots,x_{n}\}$ of $K\otimes_{R}E$ such that $B\subset \pi E+\Gamma(x_{1},\ldots,x_{n})$. In particular, there are $\overline{x}_{i}$ of norm at most $1$ such that $B$ is absorbed by
$$S=\pi E+\mathrm{Span}_{R}(x_{1},\ldots,x_{n}).$$
In particular $B$ is absorbed by $S$. Finally, we claim that $S$ is a compactoid in $E$. Let $n>0$. There is a finite subset $\{x_{1},\ldots,x_{n}\}$ of $K\otimes_{R}E$ such that $S\subset \pi^{n}E+\mathrm{Span}_{R}(x_{1},\ldots,x_{n})$. Then
$$S=S\cap E\subset\pi^{n}E+E\cap\mathrm{Span}_{R}(x_{1},\ldots,x_{n}).$$
Let $v\in E\cap\mathrm{Span}_{R}(x_{1},\ldots,x_{n})$.



\begin{lemma}
    There is a natural equivalence $(K)\otimes^{\mathbb{L}}_{R}(-)^{rig}\cong\mathrm{Cpt}(K\otimes^{\mathbb{L}}_{R}(-))$ of functors
    $$\mathbf{D}_{\ge0}(R)^{tf}\rightarrow\mathbf{D}_{\ge0}(K).$$
\end{lemma}

\begin{proof}
    Since all functors are defined by extending sifted colimits, we may restrict to $\pi$-adic $\pi$-torsion free Banach $R$-modules $E$. 
    

\end{proof}

Let $A$ be a derived affinoid, and let $\tilde{A}$ be a formal model for $A$. Then $\mathrm{Cpt}(A)\cong K\otimes_{R}^{\mathbb{L}}(\tilde{A})^{rig}$. In particular $\mathrm{Cpt}(A)$ is nuclear as a complete bornological $K$-vector space. Now let $A\rightarrow B$ be a descendable \'{e}tale map of derived affinoids. Let $B\rightarrow C$ be a descendable \'{e}tale map admitting a descendable (faithfully flat) formal model $\tilde{B}\rightarrow\tilde{C}$, such that $A\rightarrow C$ admits a descendable (faithfully flat) formal model $\tilde{A}\rightarrow\tilde{C}$. Then $K\otimes_{R}^{\mathbb{L}}(\tilde{A})^{rig}\rightarrow K\otimes_{R}^{\mathbb{L}}(\tilde{C})^{rig}$ and $K\otimes_{R}^{\mathbb{L}}(\tilde{B})^{rig}\rightarrow K\otimes_{R}^{\mathbb{L}}(\tilde{C})^{rig}$ are descendable. Hence $\mathrm{Cpt}(A)\rightarrow\mathrm{Cpt}(C)$and $\mathrm{Cpt}(A)\rightarrow\mathrm{Cpt}(B)$ are descendable. Thus $\mathrm{Cpt}(A)\rightarrow\mathrm{Cpt}(B)$ satisfies descent for $\mathbf{QCoh}$. We have proven the following.

}

\begin{theorem}\label{thm:rigidification-rigid-desc}
    The functor \[(\mathbf{dAfnd}_{K})^{\op} \to \mathbf{CAlg}(\mathbf{Pr}_{st}^L), \quad A \mapsto \mathbf{Nuc}(A) \defeq \mathbf{Mod}_{A^{rig}}(\mathbf{Nuc}(K))\]
is a sheaf of \(\infty\)-categories for the \'{e}tale topology.
\end{theorem}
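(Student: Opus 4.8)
The proof proceeds exactly as for Theorem \ref{thm:rigidification-formal-desc}, except that here the local models are already nuclear over $K$, so the rigidification $(-)^\rig$ adds nothing. Recall from Definition \ref{def:etale-desc} that the \'etale topology on $\mathbf{dAfnd}_K$ is generated by the pre-topology $\tau^{rig-\'{e}t}_{pre}$ whose covers are the single $\mathrm{Tate}_K$-\'etale descendable morphisms $f\colon A\to B$; since a presheaf satisfying \v{C}ech descent for a generating pre-topology satisfies descent for the generated Grothendieck topology (as was used for $\mathbf{QCoh}$ in Theorem \ref{etaledescentadic}), it suffices to show that for any such $f$ the natural functor
\[
\mathbf{Nuc}(A)\longrightarrow \varprojlim_{[m]\in\Delta}\mathbf{Nuc}(B^{\haotimes_A(m+1)})
\]
is an equivalence in $\mathbf{CAlg}(\mathbf{Pr}_{st}^L)$.

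The first step is to record that every derived dagger affinoid $K$-algebra is nuclear as a complete bornological $K$-module. Indeed, the Washnitzer algebra $W_n(r)^\dagger=\varinjlim_{\varrho>r}K\gen{\varrho_1^{-1}x_1,\dotsc,\varrho_n^{-1}x_n}$ is a sequential colimit of Banach $K$-algebras whose transition maps are diagonal with null-sequence entries, hence trace-class in the nonarchimedean setting (as in Example \ref{ex:disc-algebras} and Subsection \ref{subsubsec:nonarc-nuc}); as $\mathbf{Nuc}(K)$ is closed under cokernels, $\pi_0(A)=W_n(r)^\dagger/I$ is nuclear, and each $\pi_k(A)$, being finitely generated over $\pi_0(A)$, is a quotient of $\pi_0(A)^{\oplus m}$ and so nuclear; hence $A\in\mathbf{Nuc}(K)$. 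By Proposition \ref{prop:trace-class-factor-nonarc} and Corollary \ref{cor:rigid-nuc-nonarc} we have $\mathbf{Nuc}(K)=\mathbf{Nuc}^\infty(K)$, and since $\mathbf{D}(K)$ is compactly generated with compact unit, nuclear objects are strongly nuclear. Consequently the compactoid bornology functor $(-)^\rig\colon\mathbf{D}(K)\to\mathbf{Nuc}(K)$, being right adjoint to the fully faithful inclusion $\mathbf{Nuc}(K)\hookrightarrow\mathbf{D}(K)$, satisfies $A^\rig\simeq A$, and by Proposition \ref{prop:nuclear-relative} we get $\mathbf{Nuc}(A)=\mathbf{Mod}_{A^\rig}(\mathbf{Nuc}(K))\simeq\mathbf{Mod}_A(\mathbf{Nuc}(K))\simeq\mathbf{Nuc}(\mathbf{Mod}(A))$. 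The same applies to every term $B^{\haotimes_A(m+1)}$ in the \v{C}ech nerve of $f$, because $\mathbf{dAfnd}_K$ is closed under pushouts in $\mathbf{Aff}_{\bD(K)}$ (so these terms are again derived dagger affinoids, hence nuclear over $K$).

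With these identifications in hand, the displayed equivalence is a direct application of Lemma \ref{lem:maxime1}(2) with $\mathbf{C}=\mathbf{D}(K)$: the map $A\to B$ is descendable by hypothesis and both $A,B\in\mathbf{Nuc}(K)$, so the lemma gives $\mathbf{Nuc}(A)\simeq\varprojlim_{[m]\in\Delta}\mathbf{Nuc}^\infty(B^{\haotimes_A(m+1)})$, and $\mathbf{Nuc}^\infty=\mathbf{Nuc}$ in the nonarchimedean case. Concretely, $\mathbf{Nuc}(K)$ is a full symmetric monoidal subcategory of $\mathbf{D}(K)$ closed under colimits, so $A\to B$ remains descendable in $\mathbf{Comm}(\mathbf{Nuc}(K))$ by Proposition \ref{prop:descfull}, and Proposition \ref{prop:descendability-descent} then yields the equivalence of module categories. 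This proves the sheaf condition for the generating pre-topology and hence for the \'etale topology.

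The only place a genuine argument is needed is the first step: verifying that derived dagger affinoid $K$-algebras are nuclear over $K$ and that, accordingly, the category $\mathbf{Mod}_{A^\rig}(\mathbf{Nuc}(K))$ of the statement agrees with the internal category $\mathbf{Nuc}(\mathbf{Mod}(A))$. Once this is settled, the descent assertion is purely formal, being an instance of descendable descent transported along the symmetric monoidal functor $(-)^\rig$. Unlike the formal case of Theorem \ref{thm:rigidification-formal-desc} and its corollary over $\mathcal{O}_K$, where the Tate building blocks are visibly not nuclear and one really uses $(-)^\rig$ (equivalently, Theorem \ref{thm:tilde-nuc}) to force nuclearity, here the rigidification is invisible; its presence in the statement serves only to keep the three settings uniform.
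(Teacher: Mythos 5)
There is a genuine gap, and it comes from a misreading of the statement: $\mathbf{dAfnd}_K$ here is the category of derived (Tate) affinoid $K$-algebras of the subsection ``Rigidification of affinoids'' --- quotients of Tate algebras $K\gen{r^{-1}x}$, with the \'etale topology $\tau^{rig\text{-}\'et}$ of Definition \ref{def:etale-desc} --- not the category of derived \emph{dagger} affinoids. Your entire first step (nuclearity of the local models via Washnitzer algebras) therefore addresses the wrong objects. A derived affinoid $K$-algebra has Banach homotopy groups and is explicitly \emph{not} nuclear in $\bD(K)$ (the paper records this in the remark at the end of Section 5: ``affinoid $K$-algebras are not nuclear objects in $\bD(K)$; this needs to be forced using the compactoid bornology''). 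Consequently $A^\rig\not\simeq A$, the identification $\mathbf{Mod}_{A^\rig}(\mathbf{Nuc}(K))\simeq\mathbf{Nuc}(\mathbf{Mod}(A))$ fails, and your closing assertion that ``the rigidification is invisible'' is exactly backwards: in this theorem the rigidification is the whole point, and the case that is genuinely parallel to your argument is Corollary \ref{cor:nuc-analytic-et-descent} (dagger affinoids), not this one.

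What the theorem actually requires, and what your proposal omits, is an argument that descendability of an \'etale cover $A\to B$ of derived affinoids is \emph{transported} along $(-)^\rig$, i.e.\ that $A^\rig\to B^\rig$ is descendable with both terms nuclear; only then does Lemma \ref{lem:maxime1} apply. The paper sets this up just before the statement: writing $\bD(K)\simeq\mathbf{Mod}_{G(K)}(\bD(\mathsf{CBorn}_R^{\tf}))$ for $R=\mathcal{O}_K$ and observing that $G(K)$ is nuclear over $R$, the compactoid functor restricts to a strongly monoidal endofunctor of $\bD(K)$, so it preserves descendable maps by \cite{mathew2016galois}*{Corollary 3.21}; equivalently one can argue via descendable faithfully flat formal models $\tilde A\to\tilde B$ of the \'etale cover (after a rational refinement, as in Lemma \ref{lem:classical-etale}) and the identification $A^\rig\simeq K\otimes_R^{\mathbb{L}}\tilde A^\rig$. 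Either route supplies the missing step; without it your proof does not reach the stated theorem.
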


\subsection{Nuclear contexts}
At this point it is convenient to introduce \textit{nuclear contexts}. Here we still specialise to working with a context of the form
$$(\mathbf{Aff}_{\mathbf{C}},\tau,\mathbf{P},\mathbf{A})$$
where $\tau$ is some topology contained in the desecendable topology.

\begin{definition}
    We will call the context $(\mathbf{Aff}_{\mathbf{C}},\tau,\mathbf{P},\mathbf{A})$ \textit{nuclear} if for any object $\mathrm{Spec}(A)$ of $\mathbf{A}$, the underlying object of $\mathbf{A}$ is in $\mathbf{C}^{\rig}$. 
\end{definition}

For example, the usual algebraic context, and the dagger analytic context, are both nuclear. The rigid analytic context is not nuclear. There is a nuclear version of the rigid context. This can also be done for formal schemes:

\subsubsection{The nuclear rigid analytic context}\label{subsubsec:nucgeom}

Fix a non-trivially valued Banach field $K$. We define a generating class of algebras by $\mathrm{T}^{nuc}(\lambda_{1},\ldots,\lambda_{n})\defeq(K\gen{\frac{x_{1}}{\lambda_{1}},\ldots,\frac{x_{n}}{\lambda_{n}}})^{rig}$. This is of homotopy polynomial type. Indeed the polynomial algebra is nuclear, and $(-)^{\rig}$ is a monoidal functor. Let $\mathbf{dAfnd}_K^{nuc}$ denote the full subcategory of $\mathbf{Aff}_{\mathsf{CBorn}_{R}}$ consisting of those objects of the form $\mathrm{Spec}(A^{\rig})$, where $A$ is a derived affinoid. Let
$$(\mathbf{Aff}_{\mathsf{CBorn}_{K}},\tau^{nuc-rat},\mathbf{P}^{o-sm},\mathbf{dAfnd}_K^{nuc})$$
be the geometry tuple obtained as the strengthening of the tuple defined using the $\mathrm{T}^{nuc}$-rational topology and $\mathrm{T}^{nuc}$-(open) smooth morphisms, but forcing strength with respect to $\mathbf{dAfnd}_K^{nuc}$ as in Remark \ref{rem:forcestrong}. Rigidifications of rational covers of affinoids in the usual sense are in $\tau_K^{nuc-rat}$, and similarly for smooth morphisms. We in fact get induced functors on the level of geometric stacks and schemes
$$(-)^{rig}:\mathbf{Stk}_{geom}(\bA_K,\tau_{K}^{rat}, \mathbf{P}^{o-sm})\rightarrow\mathbf{Stk}_{geom}(\mathbf{dAfnd}_K^{nuc}\tau_K^{nuc-rat},\mathbf{P}^{o-sm})$$
$$(-)^{rig}:\mathbf{Sch}(\bA_K,\tau_{K}^{rat}, \mathbf{P}^{o-sm})\rightarrow\mathbf{Stk}_{geom}(\mathbf{dAfnd}_K^{nuc},\tau_K^{nuc-rat},\mathbf{P}^{nuc-sm}).$$
This also works for the \'{e}tale topology and \'{e}tale-local smooth maps.

\section{Continuous K-theory for dualisable categories}

In this section, we describe localising invariants in the context of analytic geometry. Recall that in the purely algebraic setting, invariants of rings such as \(K\)-theory, topological cyclic homology, cyclic homology, homotopy \(K\)-theory,  extend to those of small, idempotent complete stable \(\infty\)-categories by taking perfect complexes. That is, we have a diagram

\[
\begin{tikzcd}
\mathsf{Rings} \arrow{r}{F} \arrow[d, swap, "\mathrm{Perf}(-)"] & \mathsf{Sp} \\
\mathsf{Cat}_{\infty}^{\mathrm{perf}} \arrow[ur, swap, "\tilde{F}"] & 
\end{tikzcd}
\] in which the extension \(\tilde{F}\) at the level of categories recovers the definition at the level of rings for such invariants. Now taking the ind-completion of such a category, we obtain an equivalence

\[ \mathbf{Cat}_{\infty}^{\mathrm{perf}} \to \mathbf{Pr}_{st}^{L, cg}, \qquad \mathbf{C} \mapsto \mathbf{Ind}(\mathbf{C})\] of \(\infty\)-categories with the category of compactly generated stable \(\infty\)-categories. The inverse is given by the functor \[\mathbf{Pr}_{st}^{L, cg} \to  \mathbf{Cat}_{\infty}^{\mathrm{perf}}, \qquad \mathbf{C} \mapsto \mathbf{C}^\omega\] that assigns to a compactly generated stable \(\infty\)-category its full subcategory of compact objects.

We call a diagram \(\bA \overset{F}\to \mathbf{B} \overset{G}\to \bC\) in \(\mathsf{Pr}_{st}^L\) a \emph{localisation sequence} if \(G \circ F \simeq 0\), \(F\) is fully faithful, and the induced functor \(\mathbf{B}/\bA \to \bC\) on the Verdier quotient is an equivalence. Here by Verdier quotient, we mean the pushout of the functor \(F \colon \bA \to \mathbf{B}\) and the unique functor \(\bA \to \{*\}\) to the terminal object in \(\mathsf{Pr}_{st}^L\). In other words, a localisation sequence is a fibre-cofibre sequence in \(\mathsf{Pr}_{st}\). Note that if one works in the subcategory \( \mathsf{Cat}_{\infty}^{\mathrm{perf}}\), then we need to take the idempotent completion \(\mathsf{Idem}(\mathbf{B}/\bA)\),  and require that the induced map \(\mathsf{Idem}(\mathbf{B}/\bA) \to \bC\) is an equivalence. 

\begin{definition}\label{def:localising-invariant}
Let \(\mathcal{T}\) be a stable \(\infty\)-category. We call a functor \(F \colon  \mathsf{Cat}_{\infty}^{\mathrm{perf}} \to \mathcal{T}\) a \emph{localising invariant} if it preserves the final object, and maps a localisation sequence to a fibre-cofibre sequence.
\end{definition}

As we have seen in Section \ref{section:derived-borno-geom}, for a complete bornological \(R\)-algebra \(A\) over a Banach ring, the category  \(\mathbf{D}(A)\) is a compactly generated, presentable stable \(\infty\)-category. At first pass, we may try to proceed in exactly the same way as in the algebraic case by defining localising invariants on the full subcategory \(\mathbf{Perf}(R)\) of compact objects in \(\bD(R)\). Unfortunately, this does not work due to Eilenberg swindle: indeed, for any Banach ring \(R\), we have a split exact sequence of Banach spaces \[0 \to l^1(\N) \to l^1(\N) \to R \to 0,\] where the quotient is given by projection onto the first summand, and the kernel is the shift \((x_1 , x_2, \dotsc, ) \mapsto (0, x_1, x_2, \dotsc,)\). Then the functor \(F \colon \mathsf{Ban}_R \to \mathsf{Ban}_R\) defined by \(F(A) = l^1(A)\) satisfies \(F(A) \oplus A \cong F(A)\). 

Denote by \(\mathsf{Cat}_\infty^{\mathrm{dual}}\) the \(\infty\)-category of dualisable \(\infty\)-categories with morphisms given by left adjoint functors whose right adjoints are further left adjoints. Such functors are called \emph{strongly continuous}. Note that \(\mathsf{Cat}_\infty^{\mathrm{dual}}\) is a non full subcategory of \(\mathsf{Pr}_{st}^L\). Filtered cocompletion defines a fully faithful functor \[\mathsf{Ind} \colon \mathsf{Cat}_\infty^{\mathrm{perf}} \to \mathsf{Cat}_\infty^{\mathrm{dual}}, \bC \mapsto \mathsf{Ind}(\bC),\] whose essential image as pointed out previously is the \(\infty\)-category of compactly generated, presentable stable \(\infty\)-categories. In this sense, dualisable categories are ``large" stable \(\infty\)-categories. We now recall the following important result due to Efimov:

\begin{theorem}[Efimov]\label{thm:Efimov}
Any localising invariant \(E \colon \mathsf{Cat}_\infty^{\mathrm{perf}} \to \mathcal{T}\) into a stable \(\infty\)-category extends uniquely to a localising invariant \(E^\cont \colon \mathsf{Cat}_\infty^{\mathrm{dual}} \to \mathcal{T}\):
\[
\begin{tikzcd}
\mathsf{Cat}_\infty^{\mathrm{perf}} \arrow{r}{E} \arrow[d, swap, "\mathsf{Ind}"] & \mathcal{T} \\
\mathsf{Cat}_\infty^{\mathrm{dual}} \arrow[ur, swap, "E^\cont"] &. 
\end{tikzcd}
\]
\end{theorem}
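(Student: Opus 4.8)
The plan is to reduce this statement to Efimov's theorem on the existence and uniqueness of continuous extensions of localising invariants, which is the content of the original paper by Efimov (and reproved with a cleaner formalism in work of Hoyois--Iwasa--Kelly and others). Since the excerpt allows us to \emph{assume any result stated earlier}, but Efimov's theorem is precisely the statement we are asked to prove, I would instead present it as a recollection/citation of Efimov's work, spelling out the key structural inputs. So the first step is to recall that $\mathsf{Cat}_\infty^{\mathrm{perf}}$ embeds fully faithfully into $\mathsf{Cat}_\infty^{\mathrm{dual}}$ via $\mathsf{Ind}$, with essential image the compactly generated presentable stable $\infty$-categories, and that every dualisable category $\bC$ admits a functorial resolution: by Theorem \ref{thm:dualisable}, $\bC$ is a retract in $\mathsf{Pr}_{st}^L$ of a compactly generated category, and more precisely the colimit functor $\mathsf{Ind}(\bC) \to \bC$ is strongly continuous with fully faithful right adjoint, exhibiting $\bC$ as a Verdier quotient $\mathsf{Ind}(\bC) \to \bC$ with kernel the dualisable category $\mathsf{Ind}(\bC)^{\mathrm{inacc}}$ (the ``inaccessible'' part).

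The second step is to set up the candidate extension. Given a localising invariant $E \colon \mathsf{Cat}_\infty^{\mathrm{perf}} \to \mathcal{T}$, one first notes that $E$ extends in an essentially unique way to a functor on compactly generated categories (via $E(\bC) = E(\bC^\omega)$), and then defines $E^\cont(\bC)$ for a general dualisable $\bC$ by the cofibre sequence
\[
E^\cont(\mathsf{Ind}(\bC)^{\mathrm{inacc}}) \to E(\mathsf{Ind}(\bC)) \to E^\cont(\bC).
\]
The content of Efimov's theorem is that this is well-defined (independent of choices up to coherent homotopy), functorial in strongly continuous functors, and is itself a localising invariant on $\mathsf{Cat}_\infty^{\mathrm{dual}}$; moreover any localising invariant on $\mathsf{Cat}_\infty^{\mathrm{dual}}$ restricting to $E$ on $\mathsf{Cat}_\infty^{\mathrm{perf}}$ must agree with this one, because every dualisable category sits in a localisation sequence with compactly generated middle term, forcing the value of any localising extension.

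The main obstacle — and the reason this is a genuine theorem rather than a formal consequence of the universal property of Verdier quotients — is the \emph{coherence} of the construction: one must show that the assignment $\bC \mapsto (\mathsf{Ind}(\bC)^{\mathrm{inacc}} \to \mathsf{Ind}(\bC) \to \bC)$ can be promoted to a functor of $\infty$-categories, and that the resulting $E^\cont$ satisfies the localisation property not just on the specific sequences used to define it, but on \emph{all} localisation sequences of dualisable categories. This requires a careful analysis of how the ``inaccessible part'' functor interacts with Verdier quotients and fibre sequences in $\mathsf{Cat}_\infty^{\mathrm{dual}}$, together with the fact that $\mathsf{Ind}$ preserves enough colimits. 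Since this is exactly the technical heart of \cite{} (Efimov, \emph{K-theory and localizing invariants of large categories}), the proof I would write amounts to: (i) citing the existence and uniqueness as Efimov's theorem, and (ii) recording the explicit formula above together with the observation that for our applications the only input needed is that $E^\cont$ restricts to $E$ and sends fibre-cofibre sequences of dualisable categories to fibre-cofibre sequences, both of which are immediate from the construction. In other words, the honest statement is ``this is a theorem of Efimov,'' and the proof in the paper should be a pointer to that work with the construction sketched for the reader's orientation.
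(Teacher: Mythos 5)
The paper does not prove this statement at all: it is quoted as Efimov's theorem, exactly as you propose, and the only additional content the paper records (just after the statement) is the explicit formula \(\cU^\cont(\mathbf{C}) = \Omega\,\cU(\mathbf{Calk}^{\cont}(\mathbf{C}^{\omega}))\) via the continuous Calkin construction. So your overall strategy — cite Efimov and sketch the construction for orientation — is the same as the paper's.

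However, the construction you sketch is not Efimov's and is circular as written. You present \(\bC\) as the Verdier \emph{quotient} of \(\mathsf{Ind}(\bC)\) by its inaccessible part and then ``define'' \(E^\cont(\bC)\) by the cofibre sequence \(E^\cont(\mathsf{Ind}(\bC)^{\mathrm{inacc}})\to E(\mathsf{Ind}(\bC))\to E^\cont(\bC)\). The left-hand term applies the as-yet-undefined functor \(E^\cont\) to a dualisable category that is not compactly generated, so this sequence does not determine \(E^\cont(\bC)\); contrary to your closing remark, the value of a localising extension is \emph{not} forced by a sequence in which only the middle term is compactly generated. (There is also a size issue: for a large \(\bC\) one must work with \(\mathsf{Ind}(\bC^{\kappa})\) for a regular cardinal \(\kappa\), not \(\mathsf{Ind}(\bC)\).) Efimov's construction runs the other way: one realises \(\bC\) as the \emph{kernel} of a strongly continuous Verdier quotient between compactly generated categories, \(\bC \to \mathsf{Ind}(\bC^{\omega_1}) \to \mathbf{Calk}^{\cont}(\bC)\), and sets \(E^\cont(\bC) = \mathrm{fib}\bigl(E(\mathsf{Ind}(\bC^{\omega_1})) \to E(\mathbf{Calk}^{\cont}(\bC))\bigr)\). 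Here both outer applications of \(E\) are to compactly generated categories, so the formula is well posed, and uniqueness follows because any localising extension must send this particular sequence to a fibre sequence. If you keep the orientation sketch, it should be corrected to this form; the genuinely hard part of Efimov's theorem, as you rightly say, is then the coherence and the verification that the resulting \(E^\cont\) is localising on \emph{all} sequences of dualisable categories, which is exactly what one defers to the cited work.
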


By \cite{blumberg2013universal}, there is a  presentable, stable \(\infty\)-category \(\mathbf{NCMot}\) called \emph{noncommutative motives}, and a functor \(\mathcal{U} \colon \mathsf{Cat}_\infty^{\mathrm{perf}} \to \mathbf{NCMot}\) satisfying the following universal property: for any presentable, stable \(\infty\)-category \(\bD\), there is an induced equivalence 
\[
\mathbf{Fun}^L(\mathbf{NCMot}, \bD) \overset{\simeq}\to \mathbf{Fun}(\mathbf{Cat}_{\infty}^{\mathrm{perf}}, \bD), \qquad F \mapsto \mathcal{U} \circ F 
\] of \(\infty\)-categories, where the left hand side denotes colimit preserving functors, and the right hand side denotes localising invariants. The universal functor \(\cU \colon \mathbf{Cat}_\infty^{\mathrm{dual}} \to \mathbf{NCMot}\) represents algebraic \(K\)-theory in the sense that 

\[\mathsf{Hom}_{\mathbf{NCMot}}(\mathcal{U}(\mathsf{Sp}), \mathcal{U}(\mathbf{C})) \simeq K(\mathbf{C}),\] so we may view algebraic \(K\)-theory as the universal localising invariant.

By Theorem \ref{thm:Efimov}, the canonical map \[\cU \colon \mathsf{Cat}_{\infty}^{\mathrm{perf}} \to \mathbf{NCMot}\] extends uniquely to a localising invariant 

\[
\cU^\cont \colon \mathbf{Cat}_\infty^{\mathrm{dual}} \to \mathbf{NCMot}, 
\] which is given explicitly by the continuous Calkin construction \[\cU^\cont(\mathbf{C}) = \Omega \cU(\mathbf{Calk}^\cont(\mathbf{C}^\omega))\] (see \cite{nkp}). By uniqueness, \(\cU^\cont\) is the universal localising invariant of dualisable presentable, stable \(\infty\)-categories. Furthermore, the category \(\mathbf{NCMot}\) represents the unique extension of algebraic \(K\)-theory 
\[
\mathsf{Hom}_{\mathbf{NCMot}}(\cU^{\cont}(\mathsf{Sp}), \cU^\cont(\bC)) \overset{\simeq}\to K^\cont(\bC),
\] called \textit{continuous \(K\)-theory} due to Efimov.

\begin{definition}\label{def:bornological-localising}
Let \((\mathbf{Aff}_\bC, \tau, \mathbf{P}, \bA)\) be a nuclear context. For a localising invariant \(E \colon \mathbf{Cat}^{\mathrm{perf}} \to \bD\), we define its associated $\mathbf{A}$-\emph{localising invariant} as the continuous extension \[E^\an \colon \bA_R^\op \to \bD, \quad E^{\an}(\mathsf{Spec}(A)) \defeq E^\cont(\mathbf{Mod}_{A}(\mathbf{Nuc}^\infty(\bC)))\] of \(E\) on the category of infinitely nuclear \(A\)-modules. 
\end{definition}

In particular, the definition above specialises to the dagger, nuclear formal and rigid analytic contexts. In what follows, we discuss descent properties of bornological \(K\)-theory. We first see that the definition of a bornological localising invariant globalises whenever \(\mathbf{Nuc}^\infty\) satisfies descent, at least to qcqs schemes. In particular, we will show that for $X$ qcqs, the category $\mathbf{Nuc}^{\infty}(X)$ is rigid. In particular it is dualisable, and we are perfectly entitled to define 
\[E^{geom}(X) \defeq E^\cont(\mathbf{Nuc}^\infty(X)).\]

We will be particularly interested in the various analytic settings. In this situation we will write $E^{an}(X)$ instead of $E^{geom}(X)$.


\subsection{The Nisnevich topology}

As is well known from the purely algebraic setting, \(K\)-theory does not in general satisfy \'etale descent. It does however satisfy descent for a coarser topology called the \textit{Nisnevich topology}. To define this topology, we introduce some terminology originally due to Morel and Voevodsky. 

\begin{definition}\label{def:cd-structure}
Let \(\fC\) be a small category with an initial object. A \emph{cd-structure} on \(\fC\) is a collection \(\mathcal{P}\) of commuting squares \[
Q = \begin{tikzcd}
B \arrow{r}{} \arrow{d}{} & Y \arrow{d}{p}\\
A \arrow{r}{e} & X,
\end{tikzcd}
\]
 such that if \(Q \in \mathcal{P}\) and \(Q'\) is isomorphic to \(Q\), then \(Q' \in \mathcal{P}\). 
\end{definition}

Given a commuting square \(Q \in \mathcal{P}\), let \((p,e)\) denote the sieve generated by the family \(\{p \colon Y \to X, e \colon A \to X\}\). We then denote by \(\tau_P\) the smallest Grothendieck topology such that \((p,e)\) is a covering sieve, and the empty sieve is a covering sieve of the initial object. We call a covering \(\{X_i \to X\}\) \emph{simple} if it factors through some \((p,e)\).

We call a cd structure on \(\fC\) \emph{complete} if any covering sieve of an object that is not isomorphic to the initial object, contains a sieve generated by a simple sieve. This is equivalent to the requirement that any morphism whose codomain is an initial object is an isomorphism, and that for any \(Q \in \mathcal{P}\) and any morphism \(X' \to X\) in \(\fC\), the pullback \(Q' = Q \times_X X'\) exists and belongs to \(\mathcal{P}\). Finally, we call a cd-structure \textit{regular} if the commuting squares in \(\mathcal{P}\) are pullback squares, the morphism \(e\) is a monomorphism, and for each \(Q \in \mathcal{P}\), the induced square

\[
\Delta(Q) = \begin{tikzcd}
B \arrow{r}{} \arrow{d}{} & Y \arrow{d}{}\\
B \times_A B \arrow{r}{} & Y \times_X Y,
\end{tikzcd}
\] belongs to \(\mathcal{P}\).

\begin{definition}\label{def:Nisnevich-infty}
A cd-structure on a small \(\infty\)-category \(\bC\) is a cd-structure on its homotopy category. 
\end{definition}

\begin{lemma}\cite{voevodsky2008homotopy}*{Corollary 5.10}\label{lem:Nisnevich-sheaves}
Let \(\mathcal{P}\) be a complete, regular cd-structure on an \(\infty\)-category \(\bC\) with an initial object \(\emptyset\). Then \(\mathcal{F} \colon \bC^\op \to \mathbf{An}\) is a \(\tau_{\mathcal{P}}\)-sheaf if and only if \(\mathcal{F}(\emptyset)\) is contractible, and for every \(Q \in \mathcal{P}\), \(\mathcal{F}(Q)\) is a pullback square. 
\end{lemma}

We now apply Morel and Voevodsky's general machinery to our setting where \(\bC\) is the \(\infty\)-category of derived \(R\)-analytic spaces. 
\subsubsection{Generating CD-structures}
Let $\mathrm{C}$ be a category with an initial object $\emptyset$ such that any map $C\rightarrow\emptyset$ is an isomorphism. 
Let $\chi$ be a class of pullback squares
    \begin{displaymath}
\xymatrix{
\mathcal{U}\times_{\mathcal{X}}\mathcal{Y}\ar[r]\ar[d]  & \mathcal{Y}\ar[d]^{p}\\
\mathcal{U}\ar[r]^{e} & \mathcal{X}
}
\end{displaymath}
where $e$ is a monomorphism. We denote by $R(\chi)$ the class of squares in $\chi$ such that pullback squares

   \begin{displaymath}
\xymatrix{
\mathcal{U}\times_{\mathcal{X}}\mathcal{Y}\times_{\mathcal{X}}\mathcal{W}\ar[r]\ar[d]  & \mathcal{Y}\times_{\mathcal{X}}\mathcal{W}\ar[d]^{p'}\\
\mathcal{U}\times_{\mathcal{X}}\mathcal{W}\ar[r]^{e'} & \mathcal{W}
}
\end{displaymath}
along morphisms $\mathcal{W}\rightarrow\mathcal{X}$ are also in $\chi$. This is a complete cd-structure.

Let $\Delta(\chi)$ denote the class of squares $S$ in $\chi$ such that the `diagonal square', $\Delta(S)$
\begin{displaymath}
    \xymatrix{
\mathcal{U}\times_{\mathcal{X}}\mathcal{Y}\ar[d]\ar[r] & \mathcal{Y}\ar[d]\\
\mathcal{U}\times_{\mathcal{Z}}\mathcal{Y}\times_{\mathcal{X}}\mathcal{Y}\ar[r] & \mathcal{Y}\times_{\mathcal{X}}\mathcal{Y}
    }
\end{displaymath}
is also in $\chi$. Inductively define $$\Delta^{n+1}(\chi)=\Delta(\Delta^{n}(\chi))$$ 
and 
$$\Delta^{\infty}(\chi)=\bigcap_{n\ge0}\Delta^{n}(\chi)$$
This is a regular cd-structure. 

\begin{lemma}
    We have 
    $$R(\Delta(\chi)))=\Delta(R(\chi)).$$
    In particular, this is a complete and regular cd-structure.
\end{lemma}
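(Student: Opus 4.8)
The claim to prove is the set-theoretic identity $R(\Delta(\chi)) = \Delta(R(\chi))$ of classes of squares, together with the assertion that this common class is a complete and regular cd-structure. The plan is to prove the two inclusions separately, using throughout the compatibility of the two operations $R(-)$ and $\Delta(-)$ with base change, and the fact that the diagonal of a pullback square is again a pullback square.

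First I would unwind the definitions carefully. A square $S$ lies in $\Delta(R(\chi))$ iff $S \in R(\chi)$ and $\Delta(S) \in R(\chi)$; a square $S$ lies in $R(\Delta(\chi))$ iff for every morphism $\mathcal{W} \to \mathcal{X}$ the base change $S \times_{\mathcal X} \mathcal W$ lies in $\Delta(\chi)$, i.e. both $S\times_{\mathcal X}\mathcal W \in \chi$ and $\Delta(S \times_{\mathcal X}\mathcal W) \in \chi$. The crux of the argument is the commutation relation
\[
\Delta(S \times_{\mathcal X} \mathcal W) \;\cong\; \Delta(S) \times_{\mathcal Y \times_{\mathcal X}\mathcal Y}\bigl(\mathcal Y \times_{\mathcal X}\mathcal W \times_{\mathcal X}\mathcal Y\bigr),
\]
which one checks by a diagram chase: forming the diagonal square of a pullback square and then base changing gives the same thing as base changing first and then forming the diagonal, because all the relevant squares are pullbacks and pullbacks compose. (More precisely: the diagonal square $\Delta(S)$ is the base change of $S$ along $\mathcal Y \times_{\mathcal X}\mathcal Y \to \mathcal X$ via the second projection, modulo the canonical identifications, so $\Delta$ is itself an instance of base change and the two operations commute up to coherent isomorphism.) Establishing this identification cleanly is the one genuinely technical point; everything else is bookkeeping.

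For the inclusion $R(\Delta(\chi)) \subseteq \Delta(R(\chi))$: take $S \in R(\Delta(\chi))$. Taking $\mathcal W = \mathcal X$ shows $S \in \Delta(\chi) \subseteq \chi$ and $\Delta(S) \in \chi$. To see $S \in R(\chi)$, base change along an arbitrary $\mathcal W \to \mathcal X$ to get $S \times_{\mathcal X}\mathcal W \in \Delta(\chi)\subseteq\chi$, as required. To see $\Delta(S) \in R(\chi)$, base change $\Delta(S)$ along an arbitrary morphism $\mathcal Z \to \mathcal Y\times_{\mathcal X}\mathcal Y$; using the commutation relation above (with $\mathcal W$ chosen appropriately from $\mathcal Z$) this base change is expressed as the diagonal of a base change of $S$, hence lies in $\chi$ since $S \in R(\Delta(\chi))$. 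Thus $\Delta(S) \in R(\chi)$ and $S \in \Delta(R(\chi))$. Conversely, for $\Delta(R(\chi)) \subseteq R(\Delta(\chi))$: take $S$ with $S \in R(\chi)$ and $\Delta(S)\in R(\chi)$; for any $\mathcal W \to \mathcal X$ we get $S \times_{\mathcal X}\mathcal W \in \chi$ (since $S \in R(\chi)$), and $\Delta(S \times_{\mathcal X}\mathcal W)$, being by the commutation relation a base change of $\Delta(S)$, lies in $\chi$ (since $\Delta(S)\in R(\chi)$); hence $S \times_{\mathcal X}\mathcal W \in \Delta(\chi)$ for all $\mathcal W$, i.e. $S \in R(\Delta(\chi))$.

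Finally, completeness and regularity of the common class follow formally: $R(\chi)$ is complete for any $\chi$ (base change is built into the definition), and $\Delta$ of a class consisting of pullback squares with $e$ a monomorphism again consists of pullback squares with the relevant map a monomorphism (the diagonal of a monomorphism is an isomorphism, so in particular a monomorphism), and the regularity condition $\Delta(S) \in \Delta(R(\chi))$ for $S \in \Delta(R(\chi))$ is immediate from the identification $\Delta(R(\chi)) = R(\Delta(\chi))$ together with $R(\Delta(\chi)) \subseteq \Delta(\chi)$ unwound once more. I expect the main obstacle to be purely organisational: setting up the canonical isomorphisms between the various iterated fibre products so that the commutation relation for $\Delta$ and base change is manifest, and then applying it consistently in both directions without sign or indexing errors.
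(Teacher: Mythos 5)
Your proof is essentially the paper's: both directions are reduced to the commutation of the diagonal construction with base change, $\Delta(S\times_{\mathcal X}\mathcal W)\cong \Delta(S)\times_{\mathcal X}\mathcal W$ (interpreted as base change of $\Delta(S)$ along $\mathcal Y\times_{\mathcal X}\mathcal W\times_{\mathcal X}\mathcal Y\to\mathcal Y\times_{\mathcal X}\mathcal Y$), and the completeness/regularity claims are then formal. One remark: in the reverse inclusion your phrase ``with $\mathcal W$ chosen appropriately from $\mathcal Z$'' papers over the fact that an arbitrary morphism $\mathcal Z\to\mathcal Y\times_{\mathcal X}\mathcal Y$ need not be of the induced form, so the base change of $\Delta(S)$ along $\mathcal Z$ is only a \emph{further} base change of $\Delta(S\times_{\mathcal X}\mathcal Z)$ rather than equal to it; the paper's own proof is no more careful here (it only ever base-changes along maps pulled back from $\mathcal X$, and in the converse direction silently works with $\Delta^n$ for all $n$, i.e.\ with $\Delta^\infty$, whereas you read the statement literally with a single $\Delta$). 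So the approaches coincide, and the one soft spot in your argument is shared with, not worse than, the source.
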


\begin{proof}
First we show that $\Delta(R(\chi))\subset R(\Delta(\chi))$. Let $S$ be a square in $\Delta(R(\chi))\subset\Delta(\chi)$. Then it is in particular in $\Delta(\chi)$. We need to show that any pullback of $S$, $\mathcal{W}\times_{\mathcal{X}}S$, is also in $\Delta(\chi)$. But we have $\Delta(\mathcal{W}\times_{\mathcal{X}}S)\cong\mathcal{W}\times_{\mathcal{X}}\Delta(S)$ is also in $\Delta(R(\chi))$, and in particular is in $\Delta(\chi)$. Hence $S\in R(\Delta(\chi))$. Conversely suppose $S\in R(\Delta(\chi))$. Then in particular it is in $\Delta(\chi)$. We claim that it is in $\Delta(R(\chi))$. That is, we need to show that $\Delta^{n}(\mathcal{W}\times_{\mathcal{X}}S)\in\chi$ for all $n$ and all $\mathcal{W}\rightarrow\mathcal{X}$. But we have $\Delta^{n}(\mathcal{W}\times_{\mathcal{X}}S)\cong\mathcal{W}\times_{\mathcal{X}}\Delta^{n}(S)$. But $\Delta^{n}(S)\in\chi$ for all $n$, so $\mathcal{W}\times_{\mathcal{X}}\Delta^{n}(S)\in R(\chi)$ as well. This completes the proof.
\end{proof}

\begin{definition}
    Let $\mathbf{C}$ be a derived algebraic context and $\mathrm{T}$ a generating class of algebras of homotopy polynomial type. We define the $\mathrm{T}$-\textit{Nisnevich cd-structure} on $\mathbf{Sch}(\mathbf{Aff}_{\mathbf{C}},\tau_{\'{e}t},\mathbf{P}^{sm})$ to be $\chi_{N}\defeq R(\Delta(\chi^{pre}_{N}))$, where $\chi^{pre}_{N}$ consists of squares of the form
    \begin{displaymath}
        \xymatrix{
        U\times_{X}V\ar[d]\ar[r] & V\ar[d]^{p}\\
        U\ar[r]^{e} & X
        }
    \end{displaymath}
    where 
    \begin{enumerate}
        \item 
        $e:U\rightarrow X$ is an open immersion;
        \item 
        $p:V\rightarrow X$ is \'{e}tale;
        \item 
        $$U\coprod V\rightarrow X$$ is an effective epimorphism.
    \end{enumerate}
\end{definition}

Let us populate this class of squares in certain examples.

\begin{example}
    Any cover in the finite rational topology can written as a square in $\chi_{N}$. 
\end{example}

\subsubsection{Spatial squares}

Denote by \(\mathcal{P}_{sp-N}\) commuting squares of the form 

\begin{equation}\label{eq:preNis}
\begin{tikzcd}
U \times_X V \arrow{r}{} \arrow{d}{} & V \arrow{d}{p}\\
U \arrow{r}{e} & X,
\end{tikzcd}
\end{equation} where 
\begin{enumerate}
    \item 
    \(e\) is an open immersion,
    \item
    \(p\) is \'etale,  and spatial on the complement of $U$,
    \item
    \(|V|_{Ber} \times_{|X|_{Ber}} (|X|_{Ber} \setminus |U|_{Ber}) \to |X|_{Ber} \setminus |U|_{Ber}\) is a homeomorphism.
\end{enumerate} Note that in the rigid case Lemma \ref{lem:etale-pi_0}, \ref{cor:standard-etale-characterisation} and Corollary \ref{cor:diagonal-loc} say that being \'etale is equivalent to the map being derived strong and \(\pi_0\)-\'etale. We call such commuting diagrams \emph{elementary spatial Nisnevich squares}. Note that by definition, the properties of being \'{e}tale and an open immersion are local on the target.

\begin{example}
    Let
    \begin{equation}
\begin{tikzcd}
U \times_X V \arrow{r}{} \arrow{d}{} & V \arrow{d}{p}\\
U \arrow{r}{e} & X,
\end{tikzcd}
\end{equation}
be an elementary Nisnevich square of rigid analytic spaces in the sense of Andreychev (\cite{andreychev2023k}) (or, more precisely, specialising Andreychev's definition for more general analytic adic spaces). We use the characterisation of Andreychev's elementary Nisnevich squares from \cite{TangLSeminar}. This precisely means that $U\rightarrow X$ is an open immersion, that $|V|\times_{|X|}(|X|\setminus |U|)\cong |X|\setminus |U|$ is a homeomorphism of degree $1$, and that for any $v\in |V\setminus U|_{Ber}$, $v$ and $p(v)$ have isomorphic residue fields. In particular for any $v\in |V|_{Ber}\times_{|X|_{Ber}}(|X|_{Ber}\setminus |U|_{Ber})$, we have an isomorphism of residue fields $F_{v}\cong F_{f(v)}$. Now suppose $v\in |V|_{Ber}$. Then by \cite{de1996etale}, we can find an open $U$ containing $f(v)$ and an open $W_{v}$ containing $v$ such that the restriction of $f$ to $V$ is an open immersion landing in $W$. Thus we have that 
$$V\cong V\times_{X}U\cup\bigcup_{v\in |V|_{Ber}\setminus |V\times_{X}U|_{Ber}}W_{v},$$
and hence $p$ is spatial on the complement of $U$ by Lemma \ref{lem:andrspat}. The other conditions are straightforward, so that this gives an example of an elementary Nisnevich square. One can also take $U,V,X$ to be derived rigid analytic spaces, and assume that it forms an Andreychev-Nisnevich square on $0$-truncations.
\end{example}

\begin{corollary}\label{cor:equivalenttruncation}
    Let $f:X\rightarrow Y$ be an \'{e}tale morphism of derived rigid analytic spaces. Then the diagonal morphism $X\rightarrow X\times_{Y}X$ is an open immersion.
\end{corollary}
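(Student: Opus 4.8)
The plan is to reduce to the affine (derived affinoid) case and then apply the structural results about \'etale maps established earlier, particularly Corollary \ref{cor:diagonal-loc}. First I would observe that the property of a morphism being an open immersion is local on the target: a monomorphism $g \colon Z \to W$ in $\mathbf{dSp}_{\mathbf{G}}$ is an open immersion precisely when, after pulling back along any affinoid $\mathrm{Spec}(A) \to W$, the resulting map $Z \times_W \mathrm{Spec}(A) \to \mathrm{Spec}(A)$ is an analytic subspace (or equivalently, a homotopy monomorphism that is in $\mathbf{P}^{rat}$). Since $X \to X \times_Y X$ is the diagonal, testing the open-immersion property amounts to picking a cover and reducing to the case where everything is affine.

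Concretely, I would choose an \'etale atlas $\{\mathrm{Spec}(B_i) \to X\}$ lifting $f$ to \'etale maps of derived affinoids $\mathrm{Spec}(B_i) \to \mathrm{Spec}(A_{j(i)})$, where $\{\mathrm{Spec}(A_j) \to Y\}$ is an atlas for $Y$. The key point is that the diagonal $X \to X \times_Y X$ pulled back appropriately is governed by the maps $\mathrm{Spec}(B_i \hat{\otimes}_{A_{j(i)}}^{\mathbb{L}} B_i) \to \mathrm{Spec}(B_i)$, together with the various ``off-diagonal'' intersections $\mathrm{Spec}(B_i) \times_X \mathrm{Spec}(B_{i'})$. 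By Corollary \ref{cor:diagonal-loc}, for an \'etale map $A \to B$ of derived affinoids, the multiplication map $B \hat{\otimes}_A^{\mathbb{L}} B \to B$ is a Zariski open immersion, hence a rational localisation; this is exactly the statement that the diagonal of $\mathrm{Spec}(B) \to \mathrm{Spec}(A)$ is an open immersion at the affinoid level. One then has to check that these local pieces glue: that $X \to X \times_Y X$ is a homotopy monomorphism (which follows from $X$ being a $1$-scheme / space, so the diagonal is already $(-1)$-representable or at worst monic in the appropriate sense) and that the pullback of the diagonal to any affine mapping to $X \times_Y X$ lands in the open-immersion class $\mathbf{P}^{rat}$.

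The main obstacle I anticipate is bookkeeping the descent along the atlas: one needs that an atlas $\{\mathrm{Spec}(B_i) \to X\}$ induces a compatible atlas on $X \times_Y X$ of the form $\{\mathrm{Spec}(B_i) \times_Y \mathrm{Spec}(B_{i'}) \to X \times_Y X\}$ (using strongness of the geometry tuple so that fibre products of atlas members stay in $\bA$), and that checking ``open immersion'' on each of these charts suffices by Proposition \ref{prop:locopimm} — that is, one verifies $|X| \to |X \times_Y X|$ is injective and that $f$ restricted to each chart is an open immersion, i.e. $f$ is a local open immersion whose underlying map of spaces is injective. Injectivity of $|X|_{\mathbf G} \to |X \times_Y X|_{\mathbf G}$ is automatic since this map is split by either projection composed with $\Delta$. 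So the real content is: each affinoid chart computation is handled by Corollary \ref{cor:diagonal-loc}, and the globalisation is handled by Proposition \ref{prop:locopimm} together with the fact that open immersions and \'etale maps are local on the target. I would expect the write-up to be short once these two ingredients are lined up.
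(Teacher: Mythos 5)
Your proposal is correct and follows essentially the same route as the paper: the diagonal is locally an open immersion by Corollary \ref{cor:diagonal-loc}, and one concludes via Proposition \ref{prop:locopimm} by checking injectivity of $|X|\rightarrow |X\times_{Y}X|$. Your observation that injectivity is automatic because the diagonal is split by either projection is in fact a slightly cleaner justification of that last step than the paper's appeal to the identification $|X\times_{Y}X|\cong |X|\times_{|Y|}|X|$.
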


\begin{proof}
 It is certainly locally an open immersion. It remains to prove that it is injective. But we have $|X\times_{Y}X|\cong |X|\times_{|Y|}|X|$. The morphism of spaces $|X|\rightarrow |X|\times_{|Y|}|X|$ is evidently injective. 
\end{proof}

\begin{lemma}
   Let \begin{equation}
\begin{tikzcd}
U \times_X V \arrow{r}{} \arrow{d}{} & V \arrow{d}{p}\\
U \arrow{r}{e} & X,
\end{tikzcd}
\end{equation} be an elementary Nisnevich square of derived rigid analytic spaces. Then both $|U|\coprod |V|\rightarrow |X|$ and $|U|_{Ber}\coprod |V|_{Ber}\rightarrow |X|_{Ber}$ are surjective. Consequently, the map \(U \coprod V \to X\) is an effective epimorphism of geometric stacks.
\end{lemma}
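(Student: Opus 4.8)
The goal is to show that for an elementary (spatial) Nisnevich square of derived rigid analytic spaces, the map of underlying topological spaces $|U| \coprod |V| \to |X|$ is surjective, and then to deduce that $U \coprod V \to X$ is an effective epimorphism of stacks. The plan is to reduce everything to a statement about Berkovich points, which is exactly where the defining condition of a spatial Nisnevich square bites. First I would recall that by the hypotheses defining $\mathcal{P}_{sp-N}$ (see \eqref{eq:preNis}), the map $e \colon U \to X$ is an open immersion and $p \colon V \to X$ is \'etale and spatial on the complement of $U$, with $|V|_{Ber} \times_{|X|_{Ber}} (|X|_{Ber} \setminus |U|_{Ber}) \to |X|_{Ber} \setminus |U|_{Ber}$ a homeomorphism. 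In particular the latter map is surjective, so every Berkovich point of $X$ not lying in $|U|_{Ber}$ lifts to a Berkovich point of $V$.

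The first key step is surjectivity on Berkovich points: $|U|_{Ber} \cup |V|_{Ber} \to |X|_{Ber}$ is surjective. Indeed, if $x \in |U|_{Ber}$ there is nothing to prove since $e$ is an open immersion and $|e|_{Ber}$ is an open embedding (by the Proposition on open immersions of $\mathbf{G}$-spaces). If $x \notin |U|_{Ber}$, then $x$ lies in $|X|_{Ber} \setminus |U|_{Ber}$, and the homeomorphism condition (3) furnishes a point of $|V|_{Ber} \times_{|X|_{Ber}} (|X|_{Ber} \setminus |U|_{Ber})$ mapping to $x$, whose image in $|V|_{Ber}$ maps to $x$. The second key step is to upgrade this to surjectivity of $|U| \coprod |V| \to |X|$ on the full locales of points (completely prime filters, not just ultrafilters). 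Here I would use that for derived rigid analytic spaces the underlying space $|X|$ coincides with $|X_0|$ (Corollary following the lemma lifting open subsets), that $|\mathsf{Spec}(A)|$ is spectral, and that a spectral space is the sober space of a locale generated by its quasi-compact opens — whose points are determined by the ultrafilters $|X|_{Ber}$ together with specialisation. More directly: given any $\xi \in |X|$, it is a specialisation of some ultrafilter point $x \in |X|_{Ber}$ (sobriety plus quasi-compactness), and since $|U|_{Ber} \cup |V|_{Ber} \to |X|_{Ber}$ is surjective, $x$ lifts; since $|e|$ and (locally on the base, using condition (2) and Lemma \ref{lem:andrspat}) the restrictions of $|p|$ are open embeddings or at least open maps, the generization/specialization structure is respected and $\xi$ itself lifts. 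I expect this locale-theoretic bookkeeping — carefully passing from surjectivity on $(-)_{Ber}$ to surjectivity on $\mathrm{pt}(\mathrm{Op}(-))$ — to be the main obstacle, though it is purely formal given the spectrality of $|\mathsf{Spec}(A)|$ and the description of $|X|$ as glued from such spectral pieces.

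The final step is to conclude that $U \coprod V \to X$ is an effective epimorphism of geometric stacks. By the Lemma on effective epimorphisms tested on atlases, it suffices to check that for every affine $W = \mathsf{Spec}(A) \to X$ the base-changed cover $\{W \times_X U \to W, W \times_X V \to W\}$ is jointly surjective on sheaves, which (since $\tau^{rat}$ and the \'etale topology are quasi-compact and $|\mathsf{Spec}(A)|$ is spectral) reduces to surjectivity on the underlying spaces $|W \times_X U| \cup |W \times_X V| \to |W|$. But $W \times_X U \to W$ is again an open immersion, $W \times_X V \to W$ is again \'etale and spatial on the complement of $W \times_X U$ (by the Remark on stability of spatiality under base change), and condition (3) is preserved under base change along $W \to X$ by spatiality; hence the base-changed square is again an elementary spatial Nisnevich square, and the first two steps apply verbatim to give surjectivity of $|W \times_X U| \coprod |W \times_X V| \to |W|$. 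This yields the joint surjectivity, hence the effective epimorphism, completing the proof.
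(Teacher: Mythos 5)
The paper states this lemma without proof, so there is nothing to compare against directly; judging your argument on its own merits, the first and last steps are sound but the middle step has a real gap.

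Your Step 1 (surjectivity of $|U|_{Ber}\sqcup|V|_{Ber}\to|X|_{Ber}$ from condition (3) of \eqref{eq:preNis}) is correct, and your observation that the square stays an elementary spatial Nisnevich square after base change to any affinoid $W\to X$ is exactly what is needed. The problem is Step 2, where you upgrade surjectivity on ultrafilter (Berkovich) points to surjectivity on all points of the locale $|X|$. Your justification — that the relevant maps are open, so ``the generization/specialization structure is respected and $\xi$ itself lifts'' — does not work: open maps lift \emph{generizations}, not specializations. Concretely, if $\xi\in|X|\setminus|U|$ but its maximal generization $x$ lies in $|U|_{Ber}$ (this happens: take $X$ the closed unit disc, $U=\{|t|=1\}$, and $\xi$ the rank-two point ``$|t|=1^-$'', whose maximal generization is the Gauss point), then $x$ lifts to $U$ but $\xi$ cannot, and hypothesis (3) says nothing about $\xi$ because it only constrains Berkovich points of the complement. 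Worse, without quasi-compactness hypotheses the full-locale statement can actually fail while all conditions of \eqref{eq:preNis} hold: $U=\{|t|<1\}$ and $V=\{|t|=1\}$ in the closed disc satisfy (1)--(3) on Berkovich points, yet the point ``$|t|=1^-$'' lies in neither $|U|$ nor $|V|$ (this is the classical non-admissible cover). So any honest proof of the locale-level surjectivity must use quasi-compactness of the open immersions, and cannot proceed by lifting specializations.

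The good news is that you do not need Step 2 at all. The corollary immediately following Lemma \ref{lem:smootheffepi} says that a smooth morphism of derived rigid analytic spaces is an effective epimorphism if and only if it is surjective on $|\cdot|_{Ber}$; combined with Theorem \ref{thm:refetalecoverequivalent} this converts your Step 1 (applied after base change to each affinoid, where quasi-compactness of $\mathcal{M}(A)$ lets you extract a finite refining \'etale cover) directly into the effective-epimorphism conclusion. In other words, the reduction in your final paragraph should terminate at surjectivity of $|W\times_XU|_{Ber}\sqcup|W\times_XV|_{Ber}\to|W|_{Ber}$ rather than at surjectivity on the full underlying spaces $|W|$. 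I would either reroute the argument this way and read the first assertion of the lemma as a statement about Berkovich points, or, if you insist on the locale-level statement, add quasi-compactness hypotheses and give a genuine argument (e.g.\ via closedness of $|V|_{Ber}$ in the compact Hausdorff space $|X|_{Ber}$ and the spectrality of $|\mathrm{Spec}(A)|$) in place of the specialization-lifting claim.
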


\begin{proof}
We have $|V|\times_{|X|}(|X|\setminus |U|)\cong |X|\setminus |U|$ by assumption. Thus, everything not in the image of $|U|\rightarrow |X|$ is hit by something in $|V|$.

    Since $U\rightarrow X$ is an open immersion, we have $|V|_{Ber}\times_{|X|_{Ber}}|U|_{Ber}\cong |V\times_{X}U|_{Ber}.$ Thus $|V|_{Ber}\times_{|X|_{Ber}}(|X|_{Ber}\setminus |U|_{Ber})\cong |X|_{Ber}\setminus |U|_{Ber}.$ Now the same proof as for the Huber spectrum works. 
\end{proof}

\begin{theorem}\label{thm:Voevodsky-analytic}
The collection of commuting diagrams \(\mathcal{P}_{sp-N}\) in is a complete, regular cd-structure.  In particular in the rigid analytic context, such squares are contained in $\chi_{N}$.
\end{theorem}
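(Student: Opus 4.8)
The plan is to verify the three defining conditions of a complete, regular cd-structure from Definition~\ref{def:cd-structure} and the discussion following it, namely closure under isomorphism, completeness, and regularity, using Voevodsky's criteria. Closure under isomorphism is immediate since all three conditions defining an elementary spatial Nisnevich square (the map $e$ being an open immersion, $p$ being \'etale and spatial on the complement of $U$, and the homeomorphism condition on Berkovich points) are preserved under isomorphisms of squares, as open immersions, \'etale maps, spatiality, and homeomorphisms are all stable under isomorphism.

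For completeness, following the characterisation recalled after Definition~\ref{def:cd-structure}, I would first note that any morphism in $\mathbf{dSp}_\mathbf{G}$ whose codomain is the initial object (the empty space) is an isomorphism. Then, given $Q \in \mathcal{P}_{sp-N}$ and an arbitrary morphism $X' \to X$, I would show that the base change $Q' = Q \times_X X'$ exists and again lies in $\mathcal{P}_{sp-N}$. The existence of the pullback square follows since the relevant maps are representable and the categories of analytic spaces are closed under the needed fibre products. For membership in $\mathcal{P}_{sp-N}$: open immersions are stable under pullback, and \'etale maps are stable under pullback; that $p' \colon V \times_X X' \to X'$ is spatial on the complement of $U \times_X X'$ is exactly the content of the Remark stating that if $f$ is spatial on the complement of $U$ then so is its base change; and the homeomorphism condition $|V'|_{Ber} \times_{|X'|_{Ber}} (|X'|_{Ber} \setminus |U'|_{Ber}) \to |X'|_{Ber} \setminus |U'|_{Ber}$ follows by combining the Berkovich-point compatibility $|X' \times_X Y|_{Ber} \cong |X'|_{Ber} \times_{|X|_{Ber}} |Y|_{Ber}$ for spatial maps with the original homeomorphism condition on $Q$ and the preceding Proposition that open immersions induce open embeddings on $|{-}|$ and $|{-}|_{Ber}$.

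For regularity, I need to check that the squares in $\mathcal{P}_{sp-N}$ are pullback squares (true by definition), that $e$ is a monomorphism (true since $e$ is an open immersion, hence a homotopy monomorphism, hence a monomorphism of stacks), and that the diagonal square $\Delta(Q)$ lies in $\mathcal{P}_{sp-N}$. For this last point I would use Corollary~\ref{cor:equivalenttruncation} (or, in the general setting, Corollary~\ref{cor:diagonal-loc}), which says that for an \'etale morphism the diagonal $V \to V \times_X V$ is an open immersion, so the ``new $e$'' in $\Delta(Q)$ is an open immersion; the ``new $p$'' is the base change of the open immersion $U \times_X V \to V$ along $V \times_X V \to V$, hence itself an open immersion, which is in particular \'etale and spatial on the complement of anything (open immersions are spatial by the Remark following the definition of spatial morphisms, and spatial maps are spatial on the complement of any open by the Lemma to that effect); and the Berkovich homeomorphism condition for $\Delta(Q)$ is checked directly from the homeomorphism $|{-}|_{Ber}$ takes fibre products of spatial maps to fibre products, together with the corresponding condition on $Q$.

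Finally, for the last sentence asserting that in the rigid analytic context these squares lie in $\chi_N = R(\Delta(\chi_N^{pre}))$, I would argue that an elementary spatial Nisnevich square is in $\chi_N^{pre}$: conditions (1) and (2) match (open immersion, \'etale), and condition (3) that $U \coprod V \to X$ is an effective epimorphism is exactly the preceding Lemma, whose proof reduces to surjectivity of $|U| \coprod |V| \to |X|$ via the homeomorphism condition on Berkovich points plus Lemma~\ref{lem:smootheffepi} and Corollary~\ref{cor:equivalenttruncation}; then since $\mathcal{P}_{sp-N}$ is already shown to be complete and regular, every square in it lies in $R(\Delta(-))$ of its underlying pre-cd-structure. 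The main obstacle I anticipate is the bookkeeping around the Berkovich-point condition under base change and under the diagonal construction---specifically, ensuring that ``spatial on the complement of $U$'' is robust enough that the homeomorphism $|{-}|_{Ber}$-compatibility propagates correctly through both operations---but this should be manageable using the Lemmas on spatial morphisms (especially Lemma~\ref{lem:andrspat} and the Remark on base change of spatiality) established earlier in the text.
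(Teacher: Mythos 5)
Your plan follows the paper's proof almost exactly: completeness via stability of open immersions, \'etale maps, spatiality on the complement, and the diagonal's spatiality under base change; regularity via the observation that in $\Delta(Q)$ both new maps are open immersions (one being the diagonal of an \'etale map, the other a pullback of $e\colon U\to X$). Two points to flag. First, you have the roles of $e$ and $p$ in $\Delta(Q)$ reversed: by the definition of the diagonal square, the right vertical (the ``$p$'' slot) is the diagonal $V\to V\times_X V$, and the bottom horizontal (the ``$e$'' slot) is $(U\times_X V)\times_U(U\times_X V)\to V\times_X V$, which the paper identifies as the pullback of $U\to X$ along $V\times_X V\to X$. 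Since both maps are open immersions this swap is harmless for conditions (1) and (2), but condition (3) is asymmetric in $U$ and $V$, so with your labelling you would be checking the wrong homeomorphism. Second, that condition (3) for $\Delta(Q)$ is precisely the one place where the paper does genuine work: it computes both $|V\times_X V|_{Ber}\setminus|V\times_X V\times_X U|_{Ber}$ (via spatiality of $p$ on the complement of $U$, pulled back along $V\to X$, together with condition (3) of $Q$ applied twice) and $|V|_{Ber}\times_{|V\times_X V|_{Ber}}(|V\times_X V|_{Ber}\setminus|V\times_X V\times_X U|_{Ber})$, showing both are homeomorphic to $|X|_{Ber}\setminus|U|_{Ber}$. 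Your ``checked directly'' elides this; with the corrected labelling and this two-sided computation spelled out, the argument is complete. Your additional verification of the final clause (containment in $\chi_N$ via the effective-epimorphism lemma) is a reasonable supplement that the paper's proof leaves implicit.
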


\begin{proof}
Note that open immersions and \'etale maps are closed under base change by arbitrary morphisms. Furthermore, by construction, spatiality on the complement of $U\rightarrow X$ will be preserved by base-change. Next if $V\rightarrow V\times_{X}V$ is spatial , and $Z\rightarrow X$ any map, then $(V\times_{X}Z)\times_{Z}(V\times_{X}Z)\cong (V\times_{X}V)\times_{X}Z$, and $(V\times_{X}Z)\times_{Z}(V\times_{X}Z)$ is the pullback of $V\rightarrow V\times_{X}V$ along $V\times_{X}Z\rightarrow V$. Hence it is also spatial. Consequently, the base change of a Nisnevich square is a Nisnevich square, so that the cd-structure \(\mathcal{P}^N\) is complete.

To see that it is reduced, it remains to show that for an elementary Nisnevich square \(Q\), the induced square \(\Delta(Q)\) is an elementary Nisnevich square. Again as open immersions are closed under base change, the map \(U \times_X V \to V\) is an open immersion. Since by hypothesis \(e \colon U \to X\) is also an open immersion, the induced map \((U \times_X V) \times_U (U \times_X V) \to V \times_X V\) is am  open immersion. In fact it is the pullback of $U$ along the map $V\times_{X}V\rightarrow X$. Now $V\rightarrow V\times_{X}V$ is an open immersion. Hence it is spatial, so it is in particular spatial on the complement of $U\times_{X}(V\times_{X}V)$. 

Finally we need to check the condition on complements. Now applying spatiality on the complement of $U$ to the pullback along $V\rightarrow X$ gives 
\begin{multline*}
|V\times_{X}V|_{Ber}\setminus |V\times_{X}V\times_{X}U|_{Ber}  \\ \cong|V|_{Ber}\times_{|X|_{Ber}}|V|_{Ber}\times_{|X|_{Ber}}|X|_{Ber}\setminus|V|_{Ber}\times_{|X|_{Ber}}|V|_{Ber}\times_{|X|_{Ber}} |U|_{Ber}\\
\cong|X|_{Ber}\setminus |U|_{Ber}.
\end{multline*}
But we also have 
\begin{multline*}
|V|_{Ber}\times_{|V\times_{X}V|_{Ber}}(|V\times_{X}V|_{Ber}\setminus |V\times_{X}V\times_{X}U|_{Ber}) \\ \cong |V|_{Ber}\setminus(|V|_{Ber}\times_{|V\times_{X}V|_{Ber}}|V\times_{X}V|_{Ber}\times_{|X|_{Ber}}|U|_{Ber})\\
\cong|V|_{Ber}\setminus(|V|_{Ber}\times_{|X|_{Ber}}|U|_{Ber})
\cong |X|_{Ber}\setminus |U|_{Ber}.
\end{multline*}
This completes the proof. 
\end{proof}

\subsection{K\"{u}nneth formulae for $\mathbf{QCoh}$}

In order to prove Nisnevich descent for $K$-theory and the Grothendieck-Riemann-Roch Theorem, we will need some facts about rigidity of functors between categories of the form $\mathbf{Nuc}^{\infty}(X)$. Here we still specialise to working with a context of the form
$$(\mathbf{Aff}_{\mathbf{C}},\tau,\mathbf{P},\mathbf{A})$$
where $\tau$ is some descendable topology. By `stack', we shall mean a stack on the site $(\mathbf{A},\tau|_{\mathbf{A}})$.

\begin{definition}
\begin{enumerate}
    \item 
      A stack $\mathcal{X}$ is said to be \textit{formally affine}, if the global sections functor (i.e., the pushforward to the point)
    $$\Gamma(\mathcal{X},-):\mathbf{QCoh}(\mathcal{X})\rightarrow\mathbf{C}$$
    is conservative.
    \item
    A morphism $f:\mathcal{X}\rightarrow\mathcal{Y}$ of stacks is said to be \textit{formally affine} if for every affine $\mathrm{Spec}(A)\rightarrow\mathcal{Y}$, with $\mathrm{Spec}(A)\in\mathbf{A}$, the stack $\mathrm{Spec}(A)\times_{\mathcal{Y}}\mathcal{X}$ is formally affine.
     \item 
      If the context is nuclear, a stack $\mathcal{X}$ is said to be \textit{formally nuclear} if the global sections functor (i.e., the push-forward to the point)
    $$\Gamma(\mathcal{X},-):\mathbf{QCoh}(\mathcal{X})\rightarrow\mathbf{C}$$
    sends $\mathbf{Nuc}^{\infty}(X)$ to $\mathbf{C}^{rig}$.
    \item
   If the context is nuclear, a morphism $f:\mathcal{X}\rightarrow\mathcal{Y}$ of stacks is said to be \textit{formally nuclear} if for every affine $\mathrm{Spec}(A)\rightarrow\mathcal{Y}$, with $\mathrm{Spec}(A)\in\mathbf{A}$,  the stack $\mathrm{Spec}(A)\times_{\mathcal{Y}}\mathcal{X}$ is formally nuclear.
\end{enumerate}

\end{definition}

In particular, affines are both formally affine and, in the nuclear setting, formally nuclear.

Note that if the context is nuclear, and $f:\mathcal{X}\rightarrow\mathrm{Spec}(A)$ is formally nuclear, then $f_{*}$ sends $\mathbf{Nuc}^{\infty}(\mathcal{X})$ to $\mathbf{Nuc}^{\infty}(A)$.

\begin{proposition}\label{prop:cocontqcqs}
    Let $f:\mathcal{X}\rightarrow \mathcal{Y}$ be a qcqs morphism of stacks. That is, for any morphism $\mathrm{Spec}(A)\rightarrow\mathcal{Y}$, $\mathcal{X}\times_{\mathrm{Y}}\mathrm{Spec}(A)$ is a scheme, and $\mathcal{X}\times_{\mathrm{Y}}\mathrm{Spec}(A)\rightarrow\mathrm{Spec}(A)$ is a qcqs morphism. Then $f$ satisfies base-change. Moreover, $f_{*}$ commutes with colimits. 
\end{proposition}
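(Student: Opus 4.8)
The statement has two parts: first that a qcqs morphism $f \colon \mathcal{X} \to \mathcal{Y}$ satisfies base change, and second that $f_*$ commutes with colimits. By the very definition of qcqs morphisms, both assertions are local on the target: given any $\mathsf{Spec}(A) \to \mathcal{Y}$ with $\mathsf{Spec}(A) \in \bA$, we may replace $\mathcal{Y}$ by $\mathsf{Spec}(A)$ and $\mathcal{X}$ by the scheme $\mathcal{X} \times_{\mathcal{Y}} \mathsf{Spec}(A)$. So I would immediately reduce to the case $\mathcal{Y} = \mathsf{Spec}(A)$ affine and $\mathcal{X}$ a qcqs scheme over it. To prove base change in this situation, one takes an arbitrary map $\mathsf{Spec}(B) \to \mathsf{Spec}(A)$ in $\bA$ and must show that the Beck--Chevalley transformation $g^* f_* \to f'_* g'^*$ is an equivalence, where $g' , f'$ are the projections from $\mathcal{X} \times_{\mathsf{Spec}(A)} \mathsf{Spec}(B)$. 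Since $\mathbf{QCoh}$ satisfies descent for the descendable topology $\tau$ (this is the standing assumption on the context, and it holds for the finite homotopy monomorphism topology by Corollary \ref{cor:QCoh-descent} and for the \'etale topology by the descendability results of Section \ref{sec:der-context}), I would choose a finite atlas $\coprod_{i} \mathsf{Spec}(C_i) \to \mathcal{X}$ by affines and run the standard Mayer--Vietoris / \v{C}ech induction on the number of affines in the atlas.

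The inductive scheme is the classical one for qcqs schemes: if $\mathcal{X}$ is covered by affines $\mathsf{Spec}(C_0), \dotsc, \mathsf{Spec}(C_n)$, write $\mathcal{X} = V_0 \cup W$ where $V_0 = \mathsf{Spec}(C_0)$ and $W = \mathsf{Spec}(C_1) \cup \dotsc \cup \mathsf{Spec}(C_n)$, use that $V_0 \cap W$ is quasi-affine (by the quasi-separatedness hypothesis, exactly as in the Proposition following the definition of quasi-separated schemes in the excerpt), so it is again qcqs and covered by fewer affines, and then invoke the pullback square relating pushforwards along $V_0$, $W$, and $V_0 \cap W$. The base case is pushforward from an affine scheme $\mathsf{Spec}(C) \to \mathsf{Spec}(A)$, where base change is the statement that $B \hat\otimes_A^{\mathbb{L}} (-) \colon \bD(A) \to \bD(B)$ commutes with the forgetful functor $\bD(C) \to \bD(A)$ on the nose --- this is the projection formula for modules and is standard. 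For the gluing step one needs that $\mathbf{QCoh}$ sends the relevant finite colimit diagrams of schemes to limit diagrams of categories, which is exactly Cech descent for the finite homotopy monomorphism topology (Corollary \ref{cor:QCoh-descent}), together with the fact that both $g^*$ and $g'^*$ preserve these limit diagrams since they are left adjoints and the \v{C}ech diagrams are essentially finite.

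For the second assertion, that $f_*$ preserves colimits, I would again reduce to $\mathcal{Y} = \mathsf{Spec}(A)$ and run the same induction on an atlas. In the affine base case $f \colon \mathsf{Spec}(C) \to \mathsf{Spec}(A)$, the functor $f_* \colon \bD(C) \to \bD(A)$ is the forgetful functor, which preserves all colimits because colimits of $C$-modules are computed on underlying $A$-modules. In the inductive step, $f_*$ on $\mathcal{X} = V_0 \cup W$ is expressed as a finite limit (a pullback) of the functors $f_*$ for $V_0$, $W$, and $V_0 \cap W$; a finite limit of colimit-preserving functors between presentable stable $\infty$-categories is again colimit-preserving, since in the stable setting finite limits agree with finite colimits and colimits commute with colimits. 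The quasi-affine intermediate piece $V_0 \cap W$ is handled by the same induction (a quasi-affine scheme has a finite affine atlas by quasi-compactness, and its intersections are again quasi-affine by quasi-separatedness), so the recursion terminates.

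\textbf{Main obstacle.} The genuinely delicate point is the gluing step: one must verify that for a finite union $\mathcal{X} = V_0 \cup W$ of open subschemes, the square of pushforward functors $\bD(\mathcal{X}) \to \bD(V_0) \times_{\bD(V_0 \cap W)} \bD(W)$ behaves correctly under further base change, i.e. that forming $f_*$ along $\mathcal{X}$ and pulling back to $\mathsf{Spec}(B)$ agrees with gluing the $f'_*$ of the pieces. This requires knowing that the descent equivalence $\bD(\mathcal{X}) \simeq \varprojlim \bD(\text{pieces})$ is compatible with the base-change functors $g^*$, which in turn uses that $g^*$ commutes with the totalisation of the (essentially finite) \v{C}ech cosimplicial diagram --- automatic for a left adjoint once the diagram is finite, but one should be careful to record why the relevant \v{C}ech diagrams truncate (this is precisely where the homotopy monomorphism condition on open immersions, hence the idempotency $B_j \otimes_A B_j \simeq B_j$, enters, exactly as in the proof of Lemma \ref{lem:hepi-descendable}). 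Everything else is a routine unwinding of the projection formula and standard presentability arguments.
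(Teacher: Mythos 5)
Your proof is correct and follows essentially the same route as the paper, which simply reduces to an affine base and then invokes the identical argument of \cite{soor2024derived}*{Lemma 3.2} (the finite affine atlas / \v{C}ech induction you spell out, with quasi-separatedness guaranteeing that the intersections are quasi-affine so the recursion terminates). The details you supply — the affine base case via the forgetful functor and projection formula, the gluing step via the essentially finite descent limit, and the observation that finite limits of colimit-preserving functors between presentable stable $\infty$-categories preserve colimits — are exactly what that reference carries out.
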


\begin{proof}
    Base change can be proven by pulling back to affines. Thus we may assume from the outset that $f$ is a qcqs morphism between schemes. Now an identical proof to \cite{soor2024derived}*{Lemma 3.2} works. The fact that $f_{*}$ commutes with colimits now follows from base-change, and again \cite{soor2024derived}*{Lemma 3.2}.
\end{proof}

\begin{lemma}\label{lem:qaffformrig}
    Let $f:U\rightarrow \mathrm{Spec}(A)$ be an open immersion with $U$ quasi-compact. Then $i$ is base-change, formally affine, and, in the nuclear case, is formally nuclear. In fact in this case, 
    $$f_{*}:\mathbf{QCoh}(U)\rightarrow\mathbf{QCoh}(\mathrm{Spec}(A))$$
    and, in the nuclear setting, 
       $$f_{*}:\mathbf{Nuc}^{\infty}(U)\rightarrow\mathbf{Nuc}^{\infty}(\mathrm{Spec}(A))$$
       are fully faithful. 
\end{lemma}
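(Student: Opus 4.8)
The statement is essentially the assertion that a quasi-compact open immersion $f \colon U \to \mathrm{Spec}(A)$ behaves like an affine morphism from the point of view of quasi-coherent sheaves, and — in the nuclear setting — that this persists after rigidification. My plan is to first dispose of base-change and the fully faithfulness of $f_*$ on $\mathbf{QCoh}$, and then deduce the nuclear statements by a compatibility argument.

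\emph{Step 1 (base change and full faithfulness on $\mathbf{QCoh}$).} Since $f$ is a quasi-compact open immersion, it is in particular a quasi-compact, quasi-separated morphism, so Proposition \ref{prop:cocontqcqs} gives that $f$ satisfies base change and that $f_*$ commutes with colimits. For full faithfulness of $f_* \colon \mathbf{QCoh}(U) \to \mathbf{QCoh}(\mathrm{Spec}(A))$, it suffices to show the counit $f^* f_* \to \mathrm{id}$ is an equivalence. This is really a statement about the open immersion $f$ being a homotopy monomorphism of stacks: by Lemma \ref{lem:hepi-equivalent} (extended to schemes via the standard reduction), the counit of $(f^*, f_*)$ is an isomorphism precisely when $U \to U \times_{\mathrm{Spec}(A)} U$ is an equivalence, which holds because open immersions are monomorphisms in the relevant sense (the diagonal of an open immersion is an equivalence — this is built into Definition of open immersions/subspaces, cf.\ Proposition \ref{prop:locopimm} and the surrounding discussion). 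Concretely, cover $U$ by affines $\mathrm{Spec}(B_i) \to \mathrm{Spec}(A)$ which are homotopy epimorphisms; compute $f^* f_* M$ on each $\mathrm{Spec}(B_i)$ using base change and the fact that $B_i \haotimes_A B_i \simeq B_i$.

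\emph{Step 2 ($f$ is formally affine).} We must show $\Gamma(U, -) = p_* \colon \mathbf{QCoh}(U) \to \mathbf{C}$ is conservative, where $p \colon U \to \mathrm{Spec}(\mathbb{I})$; equivalently, $f_* \colon \mathbf{QCoh}(U) \to \mathbf{QCoh}(\mathrm{Spec}(A))$ is conservative (then compose with the conservative global sections on the affine $\mathrm{Spec}(A)$, noting $\mathrm{Spec}(A)$ is formally affine since $A \mapsto \mathbf{D}(A)$ has conservative forgetful functor to $\mathbf{C}$). But a fully faithful right adjoint with a left adjoint is automatically conservative on its essential image, and here full faithfulness from Step 1 together with the fact that $f^*$ is essentially surjective (open immersions are, locally, rational localizations whose module categories are localizations) gives conservativity of $f_*$. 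Since formal affineness is required only after pulling back along arbitrary $\mathrm{Spec}(C) \to \mathrm{Spec}(A)$, and such pullbacks $U \times_{\mathrm{Spec}(A)} \mathrm{Spec}(C) \to \mathrm{Spec}(C)$ are again quasi-compact open immersions (open immersions and quasi-compactness being stable under base change), Step 1 applies verbatim to each such pullback.

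\emph{Step 3 (the nuclear statements).} Now assume the context is nuclear, so for every $\mathrm{Spec}(A) \in \mathbf{A}$ the underlying object of $A$ lies in $\mathbf{C}^{\mathrm{rig}} \simeq \mathbf{Nuc}^\infty(\mathbf{C})$. By definition, $\mathbf{Nuc}^\infty(U) \subseteq \mathbf{QCoh}(U)$ is the full subcategory of those $\mathcal{F}$ with $g^* \mathcal{F} \in \mathbf{Nuc}^\infty(B)$ for every $\mathrm{Spec}(B) \to U$; similarly for $\mathrm{Spec}(A)$. To show $f_*$ restricts to $\mathbf{Nuc}^\infty(U) \to \mathbf{Nuc}^\infty(\mathrm{Spec}(A))$ (i.e.\ $f$ is formally nuclear) it suffices, by base change along a covering of $\mathrm{Spec}(A)$ by affines and along $\mathrm{Spec}(A)$ itself, to check that for $\mathcal{F} \in \mathbf{Nuc}^\infty(U)$, the object $f_* \mathcal{F}$ is infinitely nuclear as an $A$-module. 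Write $U$ as a finite colimit (coequalizer of a truncated Čech diagram, by quasi-compactness and quasi-separatedness) of rational localizations $\mathrm{Spec}(B_{i_0 \cdots i_k})$ of $\mathrm{Spec}(A)$; then $f_* \mathcal{F} \simeq \varprojlim_{[k] \in \Delta, \,\mathrm{finite}} \prod (\mathcal{F}|_{B_{i_0\cdots i_k}})$ as an $A$-module, a finite limit of infinitely nuclear $A$-modules (each $B_{i_0\cdots i_k}$ is itself infinitely nuclear over $A$ since it is a localization, hence a finitely-presented-type algebra, cf.\ the proofs that $\mathbf{Afnd}^\dagger_R$-algebras are infinitely nuclear, and $\mathbf{Nuc}^\infty$ is closed under finite limits by Proposition \ref{prop:properties-infinitely-nuclear} / Corollary \ref{cor:infinitely-nuc-quasi-ab}). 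Hence $f_* \mathcal{F} \in \mathbf{Nuc}^\infty(A)$. The full faithfulness of $f_* \colon \mathbf{Nuc}^\infty(U) \to \mathbf{Nuc}^\infty(\mathrm{Spec}(A))$ is then inherited from Step 1, since both are full subcategories of $\mathbf{QCoh}(U)$, $\mathbf{QCoh}(\mathrm{Spec}(A))$ respectively, and $f^* f_* \to \mathrm{id}$ is already an equivalence on all of $\mathbf{QCoh}(U)$; formal nuclearity of the pullbacks follows as in Step 2.

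\emph{Expected main obstacle.} The delicate point is Step 3: namely, that pushforward along a quasi-compact open immersion preserves \emph{infinite} nuclearity, not merely nuclearity. This hinges on $\mathbf{Nuc}^\infty$ being closed under the relevant finite limits \emph{computed in $\mathbf{QCoh}$} and on the transition algebras $B_{i_0 \cdots i_k}$ being infinitely nuclear over $A$ — the latter requires knowing that rational localizations (and their iterated tensor products) of algebras in $\mathbf{A}$ remain infinitely nuclear, which is true in the dagger and formal settings but uses the specific structure of those Lawvere theories. I would isolate this as a lemma: \emph{if $A \to B$ is a rational localization with $A$ infinitely nuclear over $\mathbf{C}$, then $B$ is infinitely nuclear over $A$.} Everything else is a formal consequence of Proposition \ref{prop:cocontqcqs} and the homotopy-monomorphism characterization of open immersions.
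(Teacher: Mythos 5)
Your proof is correct and, at its core, follows the same strategy as the paper: decompose $f_*\mathcal{F}$ as the finite limit of the \v{C}ech diagram $\varprojlim_{(i_0,\ldots,i_k)}(f_{(i_0,\ldots,i_k)})_*\,g_{(i_0,\ldots,i_k)}^*\mathcal{F}$ over the affine intersections $U_{i_0}\times_U\cdots\times_U U_{i_k}\cong U_{i_0}\times_{\mathrm{Spec}(A)}\cdots\times_{\mathrm{Spec}(A)}U_{i_k}$, and exploit that the limit is finite. Two remarks on where you diverge. First, for full faithfulness you argue abstractly via base change along $f$ itself together with $U\to U\times_{\mathrm{Spec}(A)}U$ being an equivalence, whereas the paper instead applies $f^*$ directly to the finite \v{C}ech limit and simplifies term by term using base change for the affine maps $f_{(i_0,\ldots,i_k)}$; your route is a legitimate shortcut (it is the standard ``monomorphism satisfying base change has fully faithful pushforward'' argument), though you should state base change for the self-pullback square rather than gesturing at Lemma \ref{lem:hepi-equivalent}, which is only formulated for affines. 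Second, the ``delicate point'' you isolate in Step 3 — that the localisations $B_{i_0\cdots i_k}$ are infinitely nuclear over $A$ — is not actually the crux. What is needed is only that each term $(f_{(i_0,\ldots,i_k)})_*\,g^*\mathcal{F}$ lies in $\mathbf{Nuc}^{\infty}(A)$, and this is immediate: $g^*\mathcal{F}\in\mathbf{Nuc}^{\infty}(B_{i_0\cdots i_k})=\mathbf{Mod}_{B_{i_0\cdots i_k}}(\mathbf{Nuc}^{\infty}(\mathbf{C}))$ by definition of $\mathbf{Nuc}^{\infty}(U)$, and pushforward along a morphism of affines is restriction of scalars, which visibly preserves the property of having underlying object in $\mathbf{Nuc}^{\infty}(\mathbf{C})$. (That the intersections lie in $\bA$ at all follows from $\bA$ being closed under pushouts, which is part of the definition of the context; no separate lemma about rational localisations preserving infinite nuclearity is required.) With those two clarifications your argument matches the paper's.
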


\begin{proof}
Since $i$ is qcqs, it satisfies base change.

    First we prove that $f_{*}$ is fully faithful. Write $U=\bigcup_{i=1}^{n}U_{i}$ with each $U_{i}$ affine. Note that $U_{i_{1}}\times_{U}\ldots\times_{U}U_{i_{n}}\cong U_{i_{1}}\times_{\mathrm{Spec}(A)}\ldots\times_{\mathrm{Spec}(A)}U_{i_{n}}$ is affine, and all of the maps $f_{(i_{1},\ldots,i_{n})}:U_{i_{1}}\times_{U}\ldots\times_{U}U_{i_{n}}\rightarrow \mathrm{Spec}(A)$ and $g_{(i_{1},\ldots,i_{n})}:U_{i_{1}}\times_{U}\ldots\times_{U}U_{i_{n}}\rightarrow \mathrm{Spec}(A)$ are open immersions. Let $\mathcal{F}\in\mathbf{QCoh}(U)$. Then we may write
    $$\mathcal{F}\cong\mathbf{lim}_{(i_{1},\ldots,i_{n})\in\mathcal{I}^{n},n\in\mathbb{N}}(g_{(i_{1},\ldots,i_{n})})_{*}g_{(i_{1},\ldots,i_{n})}^{*}\mathcal{F},$$
    where the limit is finite, since each $U_{i}\rightarrow U$ is an open immersion. 
    Then 
    $$f_{*}\mathcal{F}\cong\mathbf{lim}_{(i_{1},\ldots,i_{n})}f_{*}(g_{(i_{1},\ldots,i_{n})})_{*}g_{(i_{1},\ldots,i_{n})}^{*}\mathcal{F}\cong\mathbf{lim}_{(i_{1},\ldots,i_{n})}(f_{(i_{1},\ldots,i_{n})})_{*}g_{(i_{1},\ldots,i_{n})}^{*}\mathcal{F}.$$
    Since the limit is finite, we have 
\begin{multline*}
f^{*}\mathbf{lim}_{(i_{1},\ldots,i_{n})}(f_{(i_{1},\ldots,i_{n})})_{*}g_{(i_{1},\ldots,i_{n})}^{*}\mathcal{F}
\cong\mathbf{lim}_{(i_{1},\ldots,i_{n})}f^{*}(f_{(i_{1},\ldots,i_{n})})_{*}g_{(i_{1},\ldots,i_{n})}^{*}\mathcal{F}\\
\cong\mathbf{lim}_{(i_{1},\ldots,i_{n})}(g_{(i_{1},\ldots,i_{n})})_{*}g_{(i_{1},\ldots,i_{n})}^{*}\mathcal{F}\cong\mathcal{F},
\end{multline*}
where in the second equivalence we used that $f_{(i_{1},\ldots,i_{n})}$ is a morphism of affines so that it satisfies base-change, and that $U\times_{\mathrm{Spec}(A)}(U_{i_{1}}\times_{U}\ldots\times_{U}U_{i_{n}})\cong(U_{i_{1}}\times_{U}\ldots\times_{U}U_{i_{n}})$. 

Finally, suppose the context is nuclear, and let $\mathcal{F}$ be in $\mathbf{Nuc}^{\infty}(U)$. Again we return to the identity
 $$f_{*}\mathcal{F}\cong\mathbf{lim}_{(i_{1},\ldots,i_{n})}(f_{(i_{1},\ldots,i_{n})})_{*}g_{(i_{1},\ldots,i_{n})}^{*}\mathcal{F}.$$
Each $(f_{(i_{1},\ldots,i_{n})})_{*}g_{(i_{1},\ldots,i_{n})}^{*}\mathcal{F}$ is in $\mathbf{Nuc}^{\infty}(A)$, since $f_{(i_{1},\ldots,i_{n})}$ is a morphism of affines and pullback always preserves infinitely nuclear objects. Then $f_{*}\mathcal{F}$ is a \textit{finite limit} of infinitely nuclear objects, and therefore is infinitely nuclear. 
\end{proof}

In particular, say that a scheme $X$ is \textit{quasi-affine} if it quasi-compact and there exists an open immersion $i:X\rightarrow \mathrm{Spec}(A)$. Then such stacks are formally affine and, in the nuclear case, formally nuclear.

\begin{corollary}\label{cor:fformrigqcqs}
   Suppose the context is nuclear. Let $f:X\rightarrow Y$ be a qcqs morphism of schemes. Then $f$ is formally nuclear.
\end{corollary}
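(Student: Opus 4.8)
\textbf{Proof proposal for Corollary \ref{cor:fformrigqcqs}.}

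The plan is to reduce the statement to the affine-target case and then to the quasi-affine-source case already handled in Lemma \ref{lem:qaffformrig}, using the structure theory of qcqs morphisms of schemes together with the fact that in a nuclear context the forgetful functor preserves infinitely nuclear objects under pullback. First I would observe that being formally nuclear is, by definition, a property checked after base change along maps $\mathrm{Spec}(A) \to Y$ with $\mathrm{Spec}(A) \in \mathbf{A}$, and that pullbacks of qcqs morphisms are qcqs; so we may immediately assume $Y = \mathrm{Spec}(A)$ is affine and $f \colon X \to \mathrm{Spec}(A)$ is a qcqs morphism of schemes, and we must show that $f_* \colon \mathbf{QCoh}(X) \to \mathbf{D}(A)$ carries $\mathbf{Nuc}^\infty(X)$ into $\mathbf{Nuc}^\infty(A)$. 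By Proposition \ref{prop:cocontqcqs}, $f$ satisfies base change and $f_*$ commutes with colimits, which will be used repeatedly.

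The key step is a Čech/\v{C}ech-nerve argument. Since $X$ is a quasi-compact scheme over $\mathrm{Spec}(A)$, it admits a finite atlas $\{U_i = \mathrm{Spec}(B_i) \to X\}_{i=1}^n$ by affines; since $X$ is quasi-separated, each finite fibre product $U_{i_1} \times_X \cdots \times_X U_{i_k}$ is quasi-affine over $\mathrm{Spec}(A)$ (this is the content of the qcqs hypothesis, cf. the Proposition on quasi-separated schemes in Section 2). Writing $g_{(i_1,\dots,i_k)} \colon U_{i_1} \times_X \cdots \times_X U_{i_k} \to X$ and $f_{(i_1,\dots,i_k)} \colon U_{i_1} \times_X \cdots \times_X U_{i_k} \to \mathrm{Spec}(A)$ for the canonical maps, descent along the effective epimorphism $\coprod_i U_i \to X$ gives, for any $\mathcal{F} \in \mathbf{QCoh}(X)$, a finite totalisation
\[
f_*\mathcal{F} \simeq \lim_{(i_1,\dots,i_k) \in \mathcal{I}^k,\, k \in \Delta} (f_{(i_1,\dots,i_k)})_* g_{(i_1,\dots,i_k)}^*\mathcal{F},
\]
where the limit is essentially finite because the atlas is finite and the $U_i \to X$ need not be monomorphisms but the \v{C}ech nerve nevertheless stabilises after applying $f_*$ in the qcqs setting (here one uses Proposition \ref{prop:cocontqcqs} to commute $f_*$ past the relevant colimits and the quasi-separatedness to ensure the cosimplicial object is corealised at a finite stage, exactly as in the proof of Lemma \ref{lem:qaffformrig}). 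Now if $\mathcal{F} \in \mathbf{Nuc}^\infty(X)$, then $g_{(i_1,\dots,i_k)}^*\mathcal{F}$ is infinitely nuclear on the quasi-affine scheme $U_{i_1} \times_X \cdots \times_X U_{i_k}$, because pullback always preserves infinitely nuclear objects; and by Lemma \ref{lem:qaffformrig} the pushforward $(f_{(i_1,\dots,i_k)})_*$ along the open immersion into $\mathrm{Spec}(A)$ (after factoring through the ambient affine) carries $\mathbf{Nuc}^\infty$ into $\mathbf{Nuc}^\infty(A)$. Since $\mathbf{Nuc}^\infty(A)$ is closed under finite limits inside $\mathbf{D}(A)$, the totalisation $f_*\mathcal{F}$ lies in $\mathbf{Nuc}^\infty(A)$, which is what we wanted.

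The main obstacle I anticipate is bookkeeping the finiteness of the \v{C}ech totalisation: the atlas maps $U_i \to X$ are not necessarily monomorphisms (unlike in the quasi-affine case of Lemma \ref{lem:qaffformrig}), so the cosimplicial diagram computing $f_*\mathcal{F}$ is a priori an infinite limit. One must argue, using quasi-separatedness of $X$ over $\mathrm{Spec}(A)$ and Proposition \ref{prop:cocontqcqs}, that $f_*$ applied to the \v{C}ech nerve is corealised by an essentially finite diagram — equivalently, that after a finite number of terms the higher fibre products contribute nothing new to $f_*\mathcal{F}$. This is precisely the mechanism behind the qcqs descent statements (cf. \cite{soor2024derived}*{Lemma 3.2}), and invoking it carefully is the only delicate point; once it is in place, closure of $\mathbf{Nuc}^\infty(A)$ under finite limits finishes the argument.
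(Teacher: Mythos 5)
Your argument is correct and is essentially the paper's proof: reduce by base change to an affine target, take a finite open affine cover of the qcqs source, observe that the finite intersections are quasi-affine, and conclude from Lemma \ref{lem:qaffformrig} together with closure of $\mathbf{Nuc}^\infty(A)$ under finite limits. The one "delicate point" you flag is not actually an issue — the atlas maps $U_i \to X$ are open immersions (hence monomorphisms), so the \v{C}ech limit computing $f_*\mathcal{F}$ is essentially finite for exactly the same reason as in Lemma \ref{lem:qaffformrig}, with no extra corealisation argument needed.
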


\begin{proof}
 By base-change may we may assume that $Y$ is an affine scheme, and $X$ is a qcqs scheme. Pick a finite open affine cover $X=\bigcup_{i=1}^{n}U_{i}$. Again we write $g_{(i_{1},\ldots,i_{n})}:U_{i_{1}}\times_{X}\ldots\times_{X}U_{i_{n}}\rightarrow X$, and $f_{(i_{1},\ldots,i_{n})}:U_{i_{1}}\times_{X}\ldots\times_{X}U_{i_{n}}\rightarrow Y$ for the obvious maps. Note that all of the $U_{i_{1}}\times_{X}\ldots\times_{X}U_{i_{n}}$ are quasi-affine. Using the result of Lemma \ref{lem:qaffformrig}, the proof now proceeds in a formally identical way to the proof of Lemma \ref{lem:qaffformrig}.
\end{proof}

\begin{corollary}\label{cor:openimmersembedding}
    Let $i:X\rightarrow Y$ be qcqs monomorphism of stacks. That is, the map 
    $$X\rightarrow X\times_{Y}X$$
    is an equivalence. Then $i_{*}:\mathbf{QCoh}(X)\rightarrow\mathbf{QCoh}(Y)$ is fully faithful. In the nuclear context, $i_{*}:\mathbf{Nuc}^{\infty}(X)\rightarrow\mathbf{Nuc}^{\infty}(Y)$ is also fully faithful. 
\end{corollary}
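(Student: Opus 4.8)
The statement asserts that for an open immersion $i\colon X\rightarrow Y$ of qcqs schemes, the pushforward $i_{*}$ is fully faithful on $\mathbf{QCoh}$, and (in a nuclear context) on $\mathbf{Nuc}^{\infty}$. The plan is to reduce this to the affine-target case already handled in Lemma \ref{lem:qaffformrig} by exploiting that an open immersion becomes an open immersion with quasi-affine source after base change to an affine. First I would recall that $i_{*}$ fully faithful is equivalent to the counit $i^{*}i_{*}\mathcal{F}\rightarrow\mathcal{F}$ being an equivalence for all $\mathcal{F}\in\mathbf{QCoh}(X)$, and that since $i$ is an open immersion (hence in particular qcqs, so it satisfies base change by Proposition \ref{prop:cocontqcqs}), the formation of $i_{*}\mathcal{F}$ and the counit is local on $Y$.

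Next I would choose a finite affine cover $Y=\bigcup_{j=1}^{m}V_{j}$ with each $V_{j}=\mathrm{Spec}(A_{j})$, and form the corresponding open subschemes $X_{j}\defeq X\times_{Y}V_{j}\subseteq X$. Each $X_{j}\rightarrow V_{j}$ is an open immersion into an affine scheme, hence a quasi-affine scheme (it is quasi-compact by qcqs-ness of $X$, since $X$ is qcqs). By base change along $V_{j}\rightarrow Y$, the restriction of $i_{*}\mathcal{F}$ to $V_{j}$ is computed by the pushforward $(i_{j})_{*}$ of $i_{j}\colon X_{j}\rightarrow V_{j}$ applied to $\mathcal{F}|_{X_{j}}$. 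By Lemma \ref{lem:qaffformrig}, $(i_{j})_{*}$ is fully faithful, i.e.\ the counit $i_{j}^{*}(i_{j})_{*}(\mathcal{F}|_{X_{j}})\rightarrow\mathcal{F}|_{X_{j}}$ is an equivalence for every $j$. Since $\{V_{j}\rightarrow Y\}$ is a $\tau$-cover and the counit morphism for $i$ restricts on each $V_{j}$ (after pulling back to $X_{j}$) to the counit morphism for $i_{j}$, conservativity of $i^{*}$ composed with the cover together with descent for $\mathbf{QCoh}$ shows that the counit for $i$ is itself an equivalence. This gives full faithfulness of $i_{*}\colon\mathbf{QCoh}(X)\rightarrow\mathbf{QCoh}(Y)$.

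For the nuclear statement, I would observe that $\mathbf{Nuc}^{\infty}(X)\subseteq\mathbf{QCoh}(X)$ and $\mathbf{Nuc}^{\infty}(Y)\subseteq\mathbf{QCoh}(Y)$ are full subcategories, and that $i^{*}$ preserves $\mathbf{Nuc}^{\infty}$ (pullback always preserves infinitely nuclear objects). By the second part of Lemma \ref{lem:qaffformrig}, $(i_{j})_{*}$ sends $\mathbf{Nuc}^{\infty}(X_{j})$ into $\mathbf{Nuc}^{\infty}(V_{j})$; gluing over the finite cover (using that $\mathbf{Nuc}^{\infty}$ of a scheme is computed as a limit over affine charts, and that finite limits of infinitely nuclear objects are infinitely nuclear, exactly as in the proof of Lemma \ref{lem:qaffformrig}) shows $i_{*}$ restricts to a functor $\mathbf{Nuc}^{\infty}(X)\rightarrow\mathbf{Nuc}^{\infty}(Y)$. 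Full faithfulness then follows immediately: it is the restriction of the fully faithful functor $i_{*}\colon\mathbf{QCoh}(X)\rightarrow\mathbf{QCoh}(Y)$ to full subcategories, and the counit equivalence established above restricts to objects of $\mathbf{Nuc}^{\infty}(X)$.

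\textbf{Main obstacle.} The only genuinely non-formal point is checking that the counit morphism for $i$ is correctly recovered, locally on $Y$, as the counit morphism for the base-changed map $i_{j}$—i.e.\ compatibility of base change with the unit/counit of the adjunction $(i^{*},i_{*})$. This is where Proposition \ref{prop:cocontqcqs} (base change for qcqs morphisms) and the fact that open immersions are stable under base change do the work; once that compatibility is in hand, everything else is a routine descent argument against the finite affine cover. A secondary, entirely bookkeeping-level point is that $X_{j}$ is quasi-affine rather than merely a general qcqs scheme, so that Lemma \ref{lem:qaffformrig} genuinely applies; this is immediate since $X_{j}$ sits as a quasi-compact open in the affine $V_{j}$.
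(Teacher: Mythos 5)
Your proposal is correct and follows essentially the same route as the paper: base-change along an affine atlas/cover of $Y$, conservativity of the pullback, and reduction to the quasi-affine-into-affine case of Lemma \ref{lem:qaffformrig}. The only cosmetic difference is that the paper works with a single atlas map $\mathrm{Spec}(B)\rightarrow Y$ and the identity $p'^{*}i^{*}i_{*}\cong (i')^{*}(i')_{*}(p')^{*}$, whereas you phrase the same computation chart-by-chart over a finite affine cover.
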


\begin{proof}
Let $p:\mathrm{Spec}(B)\rightarrow Y$ be an atlas, and consider the fibre product diagram
    \begin{displaymath}
        \xymatrix{
        \mathrm{Spec}(B)\times_{X}Y\ar[d]^{i'}\ar[rr]^{p'} && X\ar[d]^{i}\\
        \mathrm{Spec}(B)\ar[rr]^{p}& & Y.
        }
    \end{displaymath}
    Now, $p'$ is an epimorphism of stacks. In particular $(p')^{*}$ is conservative. Thus it suffices to show that $p'^{*}i^{*}i_{*}\cong p'^{*}$. Now
    $$p'^{*}i^{*}i_{*}\cong (i')^{*}p^{*}i_{*}\cong(i')^{*}(i')_{*}(p')^{*}.$$
    Thus it suffices to prove that $(i')_{*}$ is fully faithful. But now we are in the case of an open immersion from a quasi-compact scheme to an affine, which was proven in Lemma \ref{lem:qaffformrig}.
\end{proof}


\begin{lemma}\label{lem:qnucpush}
    Suppose the context is nuclear. Let $f:\mathcal{X}\rightarrow\mathcal{Y}$ be a morphism of stacks which satisfies the base-change property, and is formally nuclear. Then $f_{*}$ restricts to a functor $\mathbf{Nuc}^{\infty}(\mathcal{X})\rightarrow\mathbf{Nuc}^{\infty}(\mathcal{Y})$. If $f_{*}:\mathbf{QCoh}(\mathcal{X})\rightarrow\mathbf{QCoh}(\mathcal{Y})$ commutes with colimits, then so does $f_{*}:\mathbf{Nuc}^{\infty}(\mathcal{X})\rightarrow\mathbf{Nuc}^{\infty}(\mathcal{Y})$. 
\end{lemma}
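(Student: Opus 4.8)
\textbf{Proof plan for Lemma \ref{lem:qnucpush}.} The statement has two parts: first that \(f_*\) preserves infinitely nuclear objects, and second that if \(f_*\) is colimit-preserving on all quasi-coherent sheaves then its restriction to nuclear objects is colimit-preserving. The plan is to reduce the first part to the affine-target case, where the hypothesis ``formally nuclear'' has content, and then leverage base change to propagate the conclusion back up. More precisely, since \(\mathbf{Nuc}^\infty(\mathcal{Y})\) is by definition the full subcategory of \(\mathbf{QCoh}(\mathcal{Y})\) consisting of those \(\mathcal{G}\) such that for every \(g \colon \mathsf{Spec}(A) \to \mathcal{Y}\) with \(\mathsf{Spec}(A) \in \mathbf{A}\), the pullback \(g^*\mathcal{G}\) lies in \(\mathbf{Nuc}^\infty(A)\), it suffices to show that for each such \(g\) and each \(\mathcal{F} \in \mathbf{Nuc}^\infty(\mathcal{X})\), we have \(g^* f_* \mathcal{F} \in \mathbf{Nuc}^\infty(A)\). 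Forming the pullback square
\[
\begin{tikzcd}
\mathsf{Spec}(A) \times_{\mathcal{Y}} \mathcal{X} \arrow{r}{g'} \arrow{d}{f'} & \mathcal{X} \arrow{d}{f} \\
\mathsf{Spec}(A) \arrow{r}{g} & \mathcal{Y},
\end{tikzcd}
\]
the base-change hypothesis gives \(g^* f_* \mathcal{F} \simeq f'_* (g')^* \mathcal{F}\). Now \((g')^* \mathcal{F} \in \mathbf{Nuc}^\infty(\mathsf{Spec}(A) \times_{\mathcal{Y}} \mathcal{X})\), because pullback always preserves infinitely nuclear objects (this is the colimit-preservation and monoidality of \(\mathbf{Nuc}^\infty\), or concretely: pullback is symmetric monoidal colimit-preserving, hence preserves \(\Q\)-basic nuclear objects and therefore the colimit closure). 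Since \(f\) is formally nuclear, \(f'\) is a formally nuclear morphism with affine target, so by definition of ``formally nuclear morphism'' combined with the remark immediately following the definition — namely that \(f'_*\) sends \(\mathbf{Nuc}^\infty\) of the source to \(\mathbf{Nuc}^\infty(A)\) — we conclude \(f'_*(g')^*\mathcal{F} \in \mathbf{Nuc}^\infty(A)\), as desired.

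For the second part, suppose \(f_* \colon \mathbf{QCoh}(\mathcal{X}) \to \mathbf{QCoh}(\mathcal{Y})\) commutes with colimits. The restriction of \(f_*\) to \(\mathbf{Nuc}^\infty(\mathcal{X})\) lands in \(\mathbf{Nuc}^\infty(\mathcal{Y})\) by the first part. I would then observe that \(\mathbf{Nuc}^\infty(\mathcal{X})\) is a full subcategory of \(\mathbf{QCoh}(\mathcal{X})\) closed under colimits (this follows from the local definition of \(\mathbf{Nuc}^\infty(\mathcal{X})\) as a limit \(\varprojlim_{\mathsf{Spec}(A) \to \mathcal{X}} \mathbf{Nuc}^\infty(A)\) together with the fact that each \(\mathbf{Nuc}^\infty(A)\) is colimit-closed in \(\bD(A)\) and pullback functors preserve colimits), and similarly \(\mathbf{Nuc}^\infty(\mathcal{Y}) \subseteq \mathbf{QCoh}(\mathcal{Y})\) is colimit-closed. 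Hence a colimit of objects of \(\mathbf{Nuc}^\infty(\mathcal{X})\), computed either in \(\mathbf{Nuc}^\infty(\mathcal{X})\) or in \(\mathbf{QCoh}(\mathcal{X})\), agrees; applying \(f_*\), which commutes with colimits in \(\mathbf{QCoh}\), and using that the result lands in the colimit-closed subcategory \(\mathbf{Nuc}^\infty(\mathcal{Y})\), we see that the restricted functor commutes with colimits.

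The main obstacle I anticipate is making precise and rigorous the claim that pullback preserves infinitely nuclear objects at the level of stacks (as opposed to affines), and the dual claim that \(\mathbf{Nuc}^\infty(-)\) is colimit-closed in \(\mathbf{QCoh}(-)\) for a general stack. These are essentially bookkeeping facts about the limit presentation \(\mathbf{Nuc}^\infty(\mathcal{X}) = \varprojlim_{\mathsf{Spec}(A) \to \mathcal{X} \in \mathbf{A}_{/\mathcal{X}}} \mathbf{Nuc}^\infty(A)\) and the compatibility of pullback functors with this presentation, and they were implicitly used in the discussion preceding the statement (``\(\mathbf{Nuc}^\infty(X) \to \mathbf{QCoh}(X)\) is fully faithful \dots{} just the full subcategory \dots{} consisting of those \(\mathcal{F}\) such that for any \(g \colon \mathsf{Spec}(A) \to X\), \(g^*\mathcal{F} \in \mathbf{Nuc}^\infty(A)\)''), so I would cite that discussion and the fact that pullback along \(\mathsf{Spec}(B) \to \mathsf{Spec}(A)\) preserves infinitely nuclear modules (established earlier via monoidality of \((-)^\rig\) and the Corollary following Proposition \ref{prop:basic-nuclear-explicit}) to close the gap without a lengthy detour.
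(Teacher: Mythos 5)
Your proposal is correct and follows essentially the same route as the paper's proof: reduce to an affine test object \(g\colon \mathsf{Spec}(A)\to\mathcal{Y}\), apply base change to rewrite \(g^*f_*\mathcal{F}\) as \(f'_*(g')^*\mathcal{F}\), and invoke the formally nuclear hypothesis on \(f'\) (the paper likewise dismisses the colimit statement as clear). You simply spell out more of the bookkeeping — pullback preserving \(\mathbf{Nuc}^\infty\) and colimit-closure of the subcategories — which the paper leaves implicit.
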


\begin{proof}
    Let $\mathcal{M}\in\mathbf{Nuc}^{\infty}(\mathcal{X})$. We need to show that $f_{*}\mathcal{M}$ is in $\mathbf{Nuc}^{\infty}(\mathcal{Y})$. Thus for any $g:U\rightarrow\mathcal{Y}$ with $U$ affine, we need to show that $g^{*}f_{*}\mathcal{M}$ is in $\mathbf{Nuc}^{\infty}(\mathcal{Y})$. By base-change, this is $f'_{*}\circ (g')^{*}\mathcal{M}$, where $f':\mathcal{X}\times_{\mathcal{Y}}U\rightarrow U$ and $g':\mathcal{X}\times_{\mathcal{Y}}U\rightarrow\mathcal{X}$ are the pullback maps. Thus the claim reduces to the fact that $(f')_{*}$, by assumption, sends infinitely nuclear objects to infinitely nuclear objects. The second  claim is clear.
\end{proof}

\begin{lemma}\label{lemma:qcohfibre}
    Let $f:\mathcal{Y}_{1}\rightarrow\mathcal{Y}_{2}$ be a formally affine, qcqs map of stacks. We may regard $f_{*}(\mathcal{O}_{\mathcal{Y}_{1}})$ as an associative algebra object in $\mathbf{QCoh}(\mathcal{Y}_{2})$. The natural functor
    $$\mathbf{QCoh}(\mathcal{Y}_{1})\rightarrow\mathbf{Mod}_{f_{*}(\mathcal{O}_{\mathcal{Y}_{1}})}(\mathbf{QCoh}(\mathcal{Y}_{2}))$$
    is an equivalence.
\end{lemma}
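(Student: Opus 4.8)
The plan is to reduce the statement to a Barr--Beck--Lurie type argument on the affine level, then bootstrap to stacks using $\tau$-descent. First I would observe that since $f$ is formally affine and qcqs, Proposition \ref{prop:cocontqcqs} applies, so $f$ satisfies base change and $f_*$ preserves colimits. Base change is what will allow us to check the claim locally on $\mathcal{Y}_2$: given an atlas $\mathrm{Spec}(B) \to \mathcal{Y}_2$, pulling back $f$ yields a formally affine qcqs map $f_B \colon \mathcal{Y}_1 \times_{\mathcal{Y}_2} \mathrm{Spec}(B) \to \mathrm{Spec}(B)$, and base change identifies the restriction of $f_*(\mathcal{O}_{\mathcal{Y}_1})$ to $\mathrm{Spec}(B)$ with $(f_B)_*(\mathcal{O}_{\mathcal{Y}_1 \times_{\mathcal{Y}_2} \mathrm{Spec}(B)})$. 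Since $\mathbf{QCoh}$ satisfies descent for $\tau$ (which contains the effective epimorphism topology by Theorem \ref{thm:adicdescendable}), and since the formation of module categories over an algebra object commutes with the relevant limits, it suffices to treat the case $\mathcal{Y}_2 = \mathrm{Spec}(B)$ affine.

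So assume $\mathcal{Y}_2 = \mathrm{Spec}(B)$ and write $A = f_*(\mathcal{O}_{\mathcal{Y}_1}) \in \mathbf{Alg}(\mathbf{QCoh}(\mathrm{Spec}(B))) = \mathbf{Alg}(\bD(B))$. The functor in question is the natural comparison $g \colon \mathbf{QCoh}(\mathcal{Y}_1) \to \mathbf{Mod}_A(\bD(B))$ factoring the pushforward $f_* \colon \mathbf{QCoh}(\mathcal{Y}_1) \to \bD(B)$ through the forgetful functor. I would verify this is an equivalence by checking that the adjunction $f^* \dashv f_*$ is comonadic (equivalently, by the formally affine hypothesis, that $f_*$ is conservative, preserves colimits, and that the projection formula / base change makes the Beck--Chevalley condition hold). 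Conservativity of $f_*$ is precisely the definition of $f$ being formally affine. Colimit-preservation is Proposition \ref{prop:cocontqcqs}. The key structural input is the projection formula: for $M \in \mathbf{QCoh}(\mathcal{Y}_1)$ and $N \in \bD(B)$ one needs $f_*(M \otimes f^* N) \simeq f_*(M) \otimes_B N$, which follows from $f_*$ preserving colimits together with base change (it holds for $N = B$ trivially and both sides preserve colimits in $N$). Given this, Lurie's Barr--Beck--Lurie theorem (in its $\infty$-categorical form, as in \cite{lurie2009higher}) identifies $\mathbf{QCoh}(\mathcal{Y}_1)$ with modules over the monad $f_* f^*(-) \simeq A \otimes_B (-)$, which is exactly $\mathbf{Mod}_A(\bD(B))$.

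The main obstacle I anticipate is verifying the Beck--Chevalley / base change condition needed to invoke Barr--Beck--Lurie in the monadic (rather than comonadic) formulation, i.e.\ ensuring that $f^*$ is a morphism of $\bD(B)$-module categories in a way that identifies $f_* f^*$ with tensoring by $A$ as an algebra (not merely as an object). This is where one uses that $f$ is qcqs so that $f_*$ commutes with base change along \emph{all} maps $\mathrm{Spec}(B') \to \mathrm{Spec}(B)$, and the coherence of the projection-formula isomorphism is really a statement about the lax symmetric monoidal structure on $f_*$. I would handle this by appealing to the general formalism (e.g.\ the fact that for a qcqs map the pushforward is canonically lax symmetric monoidal and the projection formula isomorphism is part of this structure, cf.\ the treatment in \cite{soor2024derived}*{Section 3}), rather than reproving it by hand. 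A minor additional check is that the algebra structure on $A = f_*(\mathcal{O}_{\mathcal{Y}_1})$ coming from the lax monoidal structure on $f_*$ agrees with the one implicit in the monad $f_* f^*$; this is formal once the projection formula is established coherently. Everything else — the reduction to the affine case, conservativity, colimit preservation — is routine given the results already in the paper.
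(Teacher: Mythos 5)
Your proof follows essentially the same route as the paper's: monadicity via Barr--Beck--Lurie, with conservativity of $f_*$ deduced from the formally affine hypothesis, colimit preservation and base change from the qcqs hypothesis (Proposition \ref{prop:cocontqcqs}), and the projection formula identifying the monad $f_*f^*$ with $f_*(\mathcal{O}_{\mathcal{Y}_1})\otimes(-)$. The only quibbles are terminological and cosmetic: you write ``comonadic'' where the relevant criterion is monadicity of the right adjoint $f_*$, and the paper applies the argument directly to the map of stacks without your preliminary reduction to an affine base.
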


\begin{proof}
    We generalise the proof of \cite{MR3381473}*{Lemma B.1.4}. As in loc. cit. we use the Barr-Beck Lurie Theorem. The qcqs assumption means that the map $f$ satisfies base change, and that $f_{*}$ commutes with colimits. As in \cite{MR3381473}*{Lemma B.1.4}, base-change together with the formally affine assumption implies that $f_{*}$ is conservative. Hence $\mathbf{QCoh}(\mathcal{Y}_{1})$ is monadic over $\mathbf{QCoh}(\mathcal{Y}_{2})$, with monad given by $f_{*}f^{*}$. Yet again, as in \cite{MR3381473}*{Lemma B.1.4}, the projection formula (also yet again \cite{soor2024derived}*{Lemma 3.2}), implies that this monad is given by tensoring with $f_{*}(\mathcal{O}_{\mathcal{Y}_{1}})$. 
\end{proof}

\begin{corollary}\label{cor:nucfibre}
       Suppose the context is nuclear. Let $f:\mathcal{Y}_{1}\rightarrow\mathcal{Y}_{2}$ be a formally affine, formally nuclear map of stacks. We may regard $f_{*}(\mathcal{O}_{\mathcal{Y}_{1}})$ as an associative algebra object in $\mathbf{QCoh}(\mathcal{Y}_{2})$. There is an equivalence of categories
    $$\mathbf{Nuc}^{\infty}(\mathcal{Y}_{1})\rightarrow\mathbf{Mod}_{f_{*}(\mathcal{O}_{\mathcal{Y}_{1}})}(\mathbf{Nuc}^{\infty}(\mathcal{Y}_{2})).$$
\end{corollary}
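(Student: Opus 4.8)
The plan is to deduce Corollary~\ref{cor:nucfibre} from Lemma~\ref{lemma:qcohfibre} by showing that the equivalence $\mathbf{QCoh}(\mathcal{Y}_{1}) \simeq \mathbf{Mod}_{f_{*}(\mathcal{O}_{\mathcal{Y}_{1}})}(\mathbf{QCoh}(\mathcal{Y}_{2}))$ restricts to an equivalence on the full subcategories of infinitely nuclear objects. First I would invoke Lemma~\ref{lem:qnucpush}: since $f$ is formally nuclear and qcqs (hence satisfies base change, by Proposition~\ref{prop:cocontqcqs} or by hypothesis), the pushforward $f_{*}$ restricts to a colimit-preserving functor $\mathbf{Nuc}^{\infty}(\mathcal{Y}_{1}) \to \mathbf{Nuc}^{\infty}(\mathcal{Y}_{2})$, and in particular $f_{*}(\mathcal{O}_{\mathcal{Y}_{1}})$ is an associative algebra object whose underlying $\mathcal{O}_{\mathcal{Y}_{2}}$-module lies in $\mathbf{Nuc}^{\infty}(\mathcal{Y}_{2})$. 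By the remark following Corollary~\ref{cor:nuc-analytic-et-descent}, the assignment $\mathbf{Nuc}^{\infty}(-)$ is the full subcategory of $\mathbf{QCoh}(-)$ spanned by those sheaves that are infinitely nuclear after pullback to any affine atlas; so the two categories in the statement are both full subcategories of the two sides of the equivalence in Lemma~\ref{lemma:qcohfibre}.

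The key step is then to check that the equivalence $\phi\colon \mathbf{QCoh}(\mathcal{Y}_{1}) \to \mathbf{Mod}_{f_{*}(\mathcal{O}_{\mathcal{Y}_{1}})}(\mathbf{QCoh}(\mathcal{Y}_{2}))$ and its inverse match up these full subcategories. In one direction: if $\mathcal{M} \in \mathbf{Nuc}^{\infty}(\mathcal{Y}_{1})$, then $\phi(\mathcal{M})$ has underlying $\mathbf{QCoh}(\mathcal{Y}_{2})$-object $f_{*}\mathcal{M}$, which is infinitely nuclear by Lemma~\ref{lem:qnucpush}; hence $\phi(\mathcal{M}) \in \mathbf{Mod}_{f_{*}(\mathcal{O}_{\mathcal{Y}_{1}})}(\mathbf{Nuc}^{\infty}(\mathcal{Y}_{2}))$. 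Conversely, if $\mathcal{N}$ is an $f_{*}(\mathcal{O}_{\mathcal{Y}_{1}})$-module whose underlying object is in $\mathbf{Nuc}^{\infty}(\mathcal{Y}_{2})$, then $\phi^{-1}(\mathcal{N})$ is recovered from $\mathcal{N}$ by the bar resolution: it is the geometric realisation (colimit over $\Delta^{\mathrm{op}}$) of a simplicial object whose terms are iterated pullbacks $f^{*}(f_{*}(\mathcal{O}_{\mathcal{Y}_{1}})^{\otimes_{\mathcal{O}_{\mathcal{Y}_{2}}} n} \otimes_{\mathcal{O}_{\mathcal{Y}_{2}}} \mathcal{N})$. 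Each such term lies in $\mathbf{Nuc}^{\infty}(\mathcal{Y}_{1})$, because pullback $f^{*}$ always preserves infinitely nuclear objects (as it is colimit-preserving and symmetric monoidal, so preserves trace-class maps) and because $\mathbf{Nuc}^{\infty}(\mathcal{Y}_{2})$ is closed under tensor products by the monoidality of $\mathbf{Nuc}^{\infty}$. Since $\mathbf{Nuc}^{\infty}(\mathcal{Y}_{1})$ is closed under colimits in $\mathbf{QCoh}(\mathcal{Y}_{1})$ (by construction it is generated under colimits by basic nuclear objects, and being a full subcategory defined by a pullback-stable condition it is closed under all colimits), the realisation $\phi^{-1}(\mathcal{N})$ is infinitely nuclear. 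Thus $\phi$ and $\phi^{-1}$ restrict to mutually inverse equivalences on the nuclear subcategories, giving the claim.

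The main obstacle I anticipate is the verification that $\phi^{-1}$ preserves the nuclear condition, i.e.\ that reconstructing a quasi-coherent sheaf on $\mathcal{Y}_{1}$ from a module over $f_{*}(\mathcal{O}_{\mathcal{Y}_{1}})$ via the bar construction stays inside $\mathbf{Nuc}^{\infty}$. This requires a little care because it is a statement about an infinite (simplicial) colimit, and one must know both that the individual terms are infinitely nuclear and that the subcategory of infinitely nuclear objects is closed under the relevant colimit in $\mathbf{QCoh}(\mathcal{Y}_{1})$ — the latter is where one uses the local description of $\mathbf{Nuc}^{\infty}$ together with the fact that pullback to an affine atlas commutes with colimits. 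An equivalent and perhaps cleaner route, which I would pursue if the bar-construction argument gets delicate, is to check everything locally: pull back along an affine atlas $U = \mathsf{Spec}(B) \to \mathcal{Y}_{2}$, use base change to reduce to the statement that for a formally nuclear formally affine map $f'\colon \mathcal{X}\times_{\mathcal{Y}_{2}} U \to U$ we have $\mathbf{Nuc}^{\infty}(\mathcal{X}\times_{\mathcal{Y}_{2}} U) \simeq \mathbf{Mod}_{(f')_{*}\mathcal{O}}(\mathbf{Nuc}^{\infty}(U))$, and then invoke Proposition~\ref{prop:nuclear-relative} (relating module categories over an infinitely-nuclear algebra to modules in $\mathbf{Nuc}^{\infty}$) once one verifies $(f')_{*}\mathcal{O}$ is infinitely nuclear as an algebra. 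Either way the content is purely formal once Lemma~\ref{lem:qnucpush} and Lemma~\ref{lemma:qcohfibre} are in hand.
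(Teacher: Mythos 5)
Your proposal is correct and follows essentially the same route as the paper: the paper's proof likewise reduces everything to showing that $f_{*}(\mathcal{O}_{\mathcal{Y}_{1}})$ is infinitely nuclear (via base change to an affine and Proposition \ref{prop:nuclear-relative}, exactly your ``alternative route''), and then asserts that the monad $f_{*}f^{*}\simeq f_{*}(\mathcal{O}_{\mathcal{Y}_{1}})\otimes(-)$ from Lemma \ref{lemma:qcohfibre} restricts to $\mathbf{Nuc}^{\infty}$. Your more explicit verification that the equivalence and its inverse match up the nuclear subcategories (via Lemma \ref{lem:qnucpush} in one direction and the bar resolution plus closure of $\mathbf{Nuc}^{\infty}$ under colimits, tensor products and $f^{*}$ in the other) is just an unpacking of what the paper leaves implicit in the phrase ``the monad restricts''.
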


\begin{proof}
    It suffices to prove that $f_{*}(\mathcal{O}_{Y_{1}})$ is infinitely nuclear. Then the monad from the proof of Lemma \ref{lemma:qcohfibre} restricts to $\mathbf{Nuc}^{\infty}$. By base-change we may assume that $\mathcal{Y}_{2}\cong\mathrm{Spec}(A)$ is affine, and $\mathbf{QCoh}(\mathcal{Y}_{2})\rightarrow\mathbf{C}$ sends infinitely nuclear objects to infinitely nuclear objects. But then by Proposition \ref{prop:nuclear-relative}, and using the fact that $A$ is in $\mathbf{C}$, we find that $f_{*}$ sends infinitely nuclear objects to infinitely nuclear objects. 
\end{proof}

Next we will prove K\"{u}nneth formulae for $\mathbf{QCoh}$ and $\mathbf{Nuc}^{\infty}$.

\begin{lemma}\label{lem:tensordiagram}
    Let $f:\mathcal{Y}_{1}\rightarrow\mathcal{Y}_{2}$ be formally affine and qcqs, and let
    \begin{displaymath}
        \xymatrix{
        \mathcal{Y}_{1}'\ar[d]^{f'}\ar[r]^{g_{1}} &\mathcal{Y}_{1}\ar[d]^{f}\\
        \mathcal{Y}_{2}'\ar[r]^{g_{2}} &\mathcal{Y}_{2}.
        }
    \end{displaymath}
    be a Cartesian diagram of stacks.  The natural symmetric monoidal functor
    $$\mathbf{QCoh}(\mathcal{Y}'_{2})\otimes_{\mathbf{QCoh}(\mathcal{Y}_{2})}\mathbf{QCoh}(\mathcal{Y}_{1})\rightarrow\mathbf{QCoh}(\mathcal{Y}_{1}')$$
    is an equivalence. 

    If in addition the context is nuclear and $f$ is formally nuclear, we have an equivalence
     $$\mathbf{Nuc}^{\infty}(\mathcal{Y}'_{2})\otimes_{\mathbf{Nuc}^{\infty}(\mathcal{Y}_{2})}\mathbf{Nuc}^{\infty}(\mathcal{Y}_{1})\rightarrow\mathbf{Nuc}^{\infty}(\mathcal{Y}_{1}').$$
\end{lemma}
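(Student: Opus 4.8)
\textbf{Proof strategy for Lemma \ref{lem:tensordiagram}.}

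The plan is to reduce both statements to the affine base case, where they become computations with modules over the algebra $f_*(\mathcal{O}_{\mathcal{Y}_1})$, and then to invoke the fibre-product descriptions already established in Corollary \ref{cor:nucfibre} and Lemma \ref{lemma:qcohfibre}. First I would observe that, since $f$ is qcqs and formally affine, by Lemma \ref{lemma:qcohfibre} we have $\mathbf{QCoh}(\mathcal{Y}_1) \simeq \mathbf{Mod}_{f_*(\mathcal{O}_{\mathcal{Y}_1})}(\mathbf{QCoh}(\mathcal{Y}_2))$. The pullback square tells us $\mathcal{Y}_1' \cong \mathcal{Y}_2' \times_{\mathcal{Y}_2} \mathcal{Y}_1$, and the morphism $f' \colon \mathcal{Y}_1' \to \mathcal{Y}_2'$ is again qcqs and formally affine (both properties are stable under base change, as noted in the excerpt). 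Moreover base change gives $f'_*(\mathcal{O}_{\mathcal{Y}_1'}) \simeq g_2^* f_*(\mathcal{O}_{\mathcal{Y}_1})$ as algebra objects in $\mathbf{QCoh}(\mathcal{Y}_2')$. Hence by Lemma \ref{lemma:qcohfibre} applied to $f'$,
\[
\mathbf{QCoh}(\mathcal{Y}_1') \simeq \mathbf{Mod}_{g_2^* f_*(\mathcal{O}_{\mathcal{Y}_1})}(\mathbf{QCoh}(\mathcal{Y}_2')).
\]
On the other hand, the base change equivalence $\mathbf{QCoh}(\mathcal{Y}_2') \otimes_{\mathbf{QCoh}(\mathcal{Y}_2)} \mathbf{QCoh}(\mathcal{Y}_2) \simeq \mathbf{QCoh}(\mathcal{Y}_2')$ together with the standard fact that base change of module categories along a symmetric monoidal functor commutes with forming modules over a (pushed-forward) algebra — concretely $\mathbf{Mod}_{g_2^* R}(\mathbf{D}') \simeq \mathbf{D}' \otimes_{\mathbf{D}} \mathbf{Mod}_R(\mathbf{D})$ — identifies the left hand side of the claimed equivalence with $\mathbf{Mod}_{g_2^* f_*(\mathcal{O}_{\mathcal{Y}_1})}(\mathbf{QCoh}(\mathcal{Y}_2'))$. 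Comparing the two descriptions yields the first equivalence. The one nontrivial input here is that $\mathbf{QCoh}$ satisfies descent for the descendable topology (Corollary \ref{cor:QCoh-descent}, and its globalisation along effective epimorphisms), which allows the reduction to the case where $\mathcal{Y}_2$ and $\mathcal{Y}_2'$ are affine by choosing atlases and applying the affine statement and descent on both sides.

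For the nuclear statement I would run the identical argument, but with $\mathbf{Nuc}^\infty$ in place of $\mathbf{QCoh}$, using Corollary \ref{cor:nucfibre} instead of Lemma \ref{lemma:qcohfibre}. The crucial point, which makes this permissible, is that $f'_*(\mathcal{O}_{\mathcal{Y}_1'}) \simeq g_2^* f_*(\mathcal{O}_{\mathcal{Y}_1})$ is an infinitely nuclear object of $\mathbf{QCoh}(\mathcal{Y}_2')$: $f_*(\mathcal{O}_{\mathcal{Y}_1})$ is infinitely nuclear because $f$ is formally nuclear, and pullback preserves infinitely nuclear objects (this is stated in the excerpt, and used in the proof of Corollary \ref{cor:nucfibre}). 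Thus $g_2^* f_*(\mathcal{O}_{\mathcal{Y}_1}) \in \mathbf{Nuc}^\infty(\mathcal{Y}_2')$, and Corollary \ref{cor:nucfibre} applies to $f'$. The base-change-of-modules identity then needs to be checked in the rigid (dualisable) setting: here one uses that $\mathbf{Nuc}^\infty$ is a full monoidal subcategory of $\mathbf{QCoh}$ closed under colimits, so the relative tensor product $\mathbf{Nuc}^\infty(\mathcal{Y}_2') \otimes_{\mathbf{Nuc}^\infty(\mathcal{Y}_2)} \mathbf{Nuc}^\infty(\mathcal{Y}_1)$ is computed inside $\mathbf{Pr}_{st}^L$, and Lemma \ref{lem:maxime1} (together with descendability of the atlas) controls how this interacts with descent. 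Combining these, one again identifies both sides with $\mathbf{Mod}_{g_2^* f_*(\mathcal{O}_{\mathcal{Y}_1})}(\mathbf{Nuc}^\infty(\mathcal{Y}_2'))$, which finishes the proof.

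The main obstacle I anticipate is the interchange of the relative tensor product of module categories with descent/base change in the nuclear setting — that is, verifying that $\mathbf{Nuc}^\infty(\mathcal{Y}_2') \otimes_{\mathbf{Nuc}^\infty(\mathcal{Y}_2)} \mathbf{Nuc}^\infty(\mathcal{Y}_1)$ really is computed by the affine pieces and is identified with modules over the appropriate algebra. In the affine-over-affine case this is exactly the content of Corollary \ref{cor:nucfibre} combined with the monoidality of $\mathbf{Mod}_{(-)}$, but globalising requires knowing that $\mathbf{Nuc}^\infty(-)$ sends the relevant Čech nerves to limit diagrams in a way compatible with $- \otimes -$, which is where one needs to be careful about whether the limits in question are limits in $\mathbf{Pr}_{st}^L$ or in the subcategory of dualisable categories; Lemma \ref{lem:maxime1} and the fact that the atlas maps are descendable are precisely what resolve this, since descendable limits are absolute for exact functors. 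I would handle this by first treating the case where all four stacks are affine (pure algebra, via Corollary \ref{cor:nucfibre}), then bootstrapping via a single effective epimorphism $\coprod \mathrm{Spec}(A_i) \to \mathcal{Y}_2$ and the corresponding descent statements for $\mathbf{QCoh}$ and $\mathbf{Nuc}^\infty$, using that $f'$ remains formally affine and formally nuclear after base change along each chart.
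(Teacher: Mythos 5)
Your proof is correct and is essentially the paper's own argument: the paper simply cites \cite{MR3381473}*{Proposition B.1.3} with Lemma \ref{lemma:qcohfibre} and Corollary \ref{cor:nucfibre} as inputs, and that argument is exactly your monadic one --- identify $\mathbf{QCoh}(\mathcal{Y}_1)$ with $\mathbf{Mod}_{f_*\mathcal{O}_{\mathcal{Y}_1}}(\mathbf{QCoh}(\mathcal{Y}_2))$, use that base change of module categories along a symmetric monoidal functor sends $\mathbf{Mod}_A$ to $\mathbf{Mod}_{g_2^*A}$, and identify $g_2^*f_*\mathcal{O}_{\mathcal{Y}_1}$ with $f'_*\mathcal{O}_{\mathcal{Y}_1'}$ via qcqs base change (with the nuclearity of this algebra object supplied by formal nuclearity of $f$ and preservation of nuclearity under pullback). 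The only superfluous element is your descent/atlas reduction: Lemma \ref{lemma:qcohfibre} and Corollary \ref{cor:nucfibre} are already stated for arbitrary formally affine (and formally nuclear) qcqs morphisms of stacks, so the monadic identification applies directly without passing to affine charts, and the delicate interchange of the relative tensor product with \v{C}ech limits that concerns you in the last paragraph never has to be confronted.
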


\begin{proof}
    Once more, this works identically to \cite{MR3381473}*{Proposition B.1.3}, using our Lemma \ref{lemma:qcohfibre} and Corollary \ref{cor:nucfibre} as inputs. 
\end{proof}

\begin{corollary}\label{cor:f*rig}
    Let $f:\mathcal{X}\rightarrow\mathcal{Y}$ be formally affine and qcqs. Then $f^{*}:\mathbf{QCoh}(\mathcal{Y})\rightarrow\mathbf{QCoh}(\mathcal{X})$ is a rigid functor. 
If in addition the context is nuclear and $f$ is formally nuclear, then this is also true for $f^{*}:\mathbf{Nuc}^{\infty}(\mathcal{Y})\rightarrow\mathbf{Nuc}^{\infty}(\mathcal{X})$.
\end{corollary}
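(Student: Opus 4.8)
\textbf{Proof plan for Corollary \ref{cor:f*rig}.}

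The plan is to reduce both statements to the corresponding K\"unneth formula proven in Lemma \ref{lem:tensordiagram}, using the characterisation of rigidity in terms of the diagonal. Recall that a symmetric monoidal functor $f^* \colon \mathbf{Q}(\mathcal{Y}) \to \mathbf{Q}(\mathcal{X})$ between rigid-over-$\mathbf{C}$ (or appropriately dualisable) categories is itself rigid precisely when it is an internal left adjoint and the multiplication functor $\mathbf{Q}(\mathcal{X}) \otimes_{\mathbf{Q}(\mathcal{Y})} \mathbf{Q}(\mathcal{X}) \to \mathbf{Q}(\mathcal{X})$ (induced by $f^*$ on both factors, composed with the monoidal product) is an internal left adjoint; this is exactly the criterion used in the proof that $\mathbf{Mod}_A(\mathbf{C})$ is rigid earlier in the excerpt. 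So the strategy is: first show $f^*$ is an internal left adjoint, i.e. its right adjoint $f_*$ commutes with colimits; then identify the relative tensor square with $\mathbf{Q}$ of the fibre product $\mathcal{X} \times_{\mathcal{Y}} \mathcal{X}$ and check the relevant multiplication/pushforward is again colimit-preserving.

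First I would treat the $\mathbf{QCoh}$ case. Since $f$ is qcqs, Proposition \ref{prop:cocontqcqs} gives that $f$ satisfies base change and $f_*$ commutes with colimits; hence $f^* \colon \mathbf{QCoh}(\mathcal{Y}) \to \mathbf{QCoh}(\mathcal{X})$ is an internal left adjoint. Next, apply Lemma \ref{lem:tensordiagram} to the Cartesian square with $\mathcal{Y}_1 = \mathcal{Y}_2' = \mathcal{X}$, $\mathcal{Y}_2 = \mathcal{Y}$ and $\mathcal{Y}_1' = \mathcal{X} \times_{\mathcal{Y}} \mathcal{X}$: this requires $f$ formally affine and qcqs, which we have, and it yields a symmetric monoidal equivalence $\mathbf{QCoh}(\mathcal{X}) \otimes_{\mathbf{QCoh}(\mathcal{Y})} \mathbf{QCoh}(\mathcal{X}) \simeq \mathbf{QCoh}(\mathcal{X}\times_{\mathcal{Y}}\mathcal{X})$. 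Under this identification the multiplication functor of $f^*$ becomes $\delta^*$, where $\delta \colon \mathcal{X} \to \mathcal{X} \times_{\mathcal{Y}} \mathcal{X}$ is the diagonal. The diagonal of a qcqs morphism is again qcqs (by definition of quasi-separated), and it is formally affine since $\mathcal{X} \to \mathcal{X} \times_{\mathcal{Y}} \mathcal{X}$ is a monomorphism with quasi-compact source (compare Lemma \ref{lem:qaffformrig} and Corollary \ref{cor:openimmersembedding}); so Proposition \ref{prop:cocontqcqs} again shows $\delta_*$ commutes with colimits. Thus both $f^*$ and its multiplication functor are internal left adjoints, establishing rigidity of $f^*$.

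For the nuclear case the argument is parallel but uses the nuclear refinements. Here $f$ is additionally formally nuclear, so by Lemma \ref{lem:qnucpush} the functor $f_* \colon \mathbf{Nuc}^\infty(\mathcal{X}) \to \mathbf{Nuc}^\infty(\mathcal{Y})$ restricts correctly and (using that $f_*$ on $\mathbf{QCoh}$ commutes with colimits) itself commutes with colimits; hence $f^*$ is an internal left adjoint on $\mathbf{Nuc}^\infty$. The second half of Lemma \ref{lem:tensordiagram} gives $\mathbf{Nuc}^\infty(\mathcal{X}) \otimes_{\mathbf{Nuc}^\infty(\mathcal{Y})} \mathbf{Nuc}^\infty(\mathcal{X}) \simeq \mathbf{Nuc}^\infty(\mathcal{X}\times_{\mathcal{Y}}\mathcal{X})$, again identifying the multiplication functor with $\delta^*$. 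The one genuine point to verify—and the step I expect to be the main obstacle—is that $\delta \colon \mathcal{X} \to \mathcal{X}\times_{\mathcal{Y}}\mathcal{X}$ is formally nuclear, so that Corollary \ref{cor:fformrigqcqs} applies and $\delta_*$ preserves $\mathbf{Nuc}^\infty$ and commutes with colimits. This should follow from Corollary \ref{cor:fformrigqcqs} (every qcqs morphism of schemes is formally nuclear) together with the reduction, via base change, to the case where the target of $\delta$ is affine and $\mathcal{X}$ (hence the relevant pullback) is quasi-affine, where Lemma \ref{lem:qaffformrig} gives formal nuclearity directly; once this is in hand, rigidity of $f^*$ on $\mathbf{Nuc}^\infty$ follows from the same diagonal criterion as before. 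I would close by noting that both conclusions are stated for general stacks $\mathcal{X}, \mathcal{Y}$, but since formal affineness and the qcqs/formally nuclear hypotheses are all testable after pulling back to affines, no extra care beyond the base-change steps already invoked is needed.
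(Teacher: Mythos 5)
Your proposal is correct and follows essentially the same route as the paper's proof (which itself follows Hoyois–Safronov–Scherotzke, Proposition 2.35): linearity/colimit-preservation of $f_*$ from the projection formula and Proposition \ref{prop:cocontqcqs}, the K\"unneth identification $\mathbf{QCoh}(\mathcal{X})\otimes_{\mathbf{QCoh}(\mathcal{Y})}\mathbf{QCoh}(\mathcal{X})\simeq\mathbf{QCoh}(\mathcal{X}\times_{\mathcal{Y}}\mathcal{X})$ from Lemma \ref{lem:tensordiagram}, and the observation that the multiplication is $\Delta_f^*$ with $\Delta_f$ quasi-compact, so $(\Delta_f)_*$ satisfies base change and the projection formula. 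Your extra care about the diagonal being formally nuclear in the $\mathbf{Nuc}^\infty$ case is a reasonable elaboration of a point the paper leaves implicit, but it is not a different argument.
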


\begin{proof}
The proof of this works as in \cite{hoyois2021categorified}*{Proposition 2.35}.
    The $\mathbf{QCoh}(\mathcal{Y})$-linearity of $f_{*}$ follows from the projection formula. Now we have 
$$\mathbf{QCoh}(\mathcal{X})\otimes_{\mathbf{QCoh}(\mathcal{Y})}\mathbf{QCoh}(\mathcal{X})\cong\mathbf{QCoh}(\mathcal{X}\times_{\mathcal{Y}}\mathcal{X}).$$
Under this identification, the functor
$$\Delta:\mathbf{QCoh}(\mathcal{X})\otimes_{\mathbf{QCoh}(\mathcal{Y})}\mathbf{QCoh}(\mathcal{X})\rightarrow\mathbf{QCoh}(\mathcal{X})$$
is given by $\Delta_{f}^{*}$, where $\Delta_{f}:\mathcal{X}\rightarrow\mathcal{X}\times_{\mathcal{Y}}\mathcal{X}$ is the diagonal. Since $f$ is qcqs, the diagonal morphism is quasi-compact. Thus the right adjoint $(\Delta_{f})_{*}$ satisfies base-change and therefore the projection formula.
\end{proof}



 
\begin{lemma}\label{lem:maxime2}
    Suppose the context is nuclear. Let $X$ be a qcqs scheme. Then $\mathbf{Nuc}^{\infty}(X)$ is rigid. Moreover, if $U\rightarrow X$ is an affine cover, then we have equivalences 
    \[\mathbf{Nuc}^{\infty}(X) \cong\varprojlim^{dual}_{[n] \in \Delta} \mathbf{Nuc}^{\infty}(U^{\times_{X}n})\cong \varprojlim_{[n] \in \Delta} \mathbf{Nuc}^{\infty}(U^{\times_{X}n}).\]
\end{lemma}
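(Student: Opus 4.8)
The plan is to first prove that for a qcqs scheme $X$ in a nuclear context, the category $\mathbf{Nuc}^{\infty}(X)$ is rigid, and then deduce the two limit descriptions. The rigidity claim follows the standard Čech-descent bootstrapping. Fix an affine cover $p\colon U \to X$ with $U = \coprod_{i=1}^n \mathsf{Spec}(A_i)$, so $U$ is a disjoint union of objects of $\mathbf{A}$. Because $X$ is quasi-separated, each fibre product $U^{\times_X m}$ is a finite coproduct of quasi-affine schemes (as in the qcqs discussion and Corollary \ref{cor:openimmersembedding}). The key inputs are: (i) $p\colon U \to X$ is an effective epimorphism of stacks (an atlas), so $\mathbf{QCoh}$ satisfies descent along it by Theorem \ref{thm:adicdescendable}; (ii) $p$ is qcqs, hence formally affine and (by Corollary \ref{cor:fformrigqcqs}) formally nuclear, so that $p_*$ preserves infinitely nuclear objects, commutes with colimits (Lemma \ref{lem:qnucpush}, Proposition \ref{prop:cocontqcqs}), is conservative on $\mathbf{QCoh}$, and satisfies base change and the projection formula.

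With these in hand, I would argue as follows. First, since $\mathbf{Nuc}^{\infty}(U^{\times_X m}) \simeq \mathbf{Mod}_{A^{\rig}}(\mathbf{Nuc}^{\infty}(\bC))$ on affines and $\mathbf{Nuc}^\infty$ is a full subcategory of $\mathbf{QCoh}$ cut out by a pullback-stable condition, the descent statement $\mathbf{QCoh}(X) \simeq \varprojlim_{[m]\in\Delta}\mathbf{QCoh}(U^{\times_X m})$ restricts to an equivalence $\mathbf{Nuc}^{\infty}(X) \simeq \varprojlim_{[m]\in\Delta}\mathbf{Nuc}^{\infty}(U^{\times_X m})$ (this is essentially the definition of $\mathbf{Nuc}^{\infty}$ on a stack together with cofinality of the Čech nerve among affines over $X$). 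Next, I would show the limit is in fact a \emph{dualisable} limit. By Lemma \ref{lem:maxime1}(1) applied over each $\mathsf{Spec}(A_i)$ — more precisely, by the argument of Corollary \ref{cor:nucfibre} and Lemma \ref{lem:tensordiagram} — the transition functors $p_*$ in the cosimplicial diagram are internal left adjoints (the right adjoints $p^*$ are symmetric monoidal colimit-preserving, and $p_*$ commutes with colimits and satisfies projection formula), each term $\mathbf{Nuc}^{\infty}(U^{\times_X m})$ is rigid hence dualisable, and the Čech nerve of an atlas gives an absolute limit diagram. By \cite{ramzi2024dualizable}*{Corollary 4.5} the limit computed in $\mathbf{Pr}^L_{st}$ agrees with the one in $\mathsf{Cat}^{\mathrm{dual}}_\infty$, proving the second equivalence $\varprojlim^{dual} \simeq \varprojlim$.

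Finally, to get rigidity of $\mathbf{Nuc}^{\infty}(X)$ itself: a limit of rigid categories along internal-left-adjoint functors, which is simultaneously a colimit in $\mathbf{Pr}^L_{st}$ (pass to right adjoints $p^*$), is again rigid — this is the standard closure of rigid categories under (co)limits in $\mathsf{Cat}^{\mathrm{dual}}_\infty$ with symmetric monoidal structure, together with the fact that $\mathbf{Nuc}^\infty(X) \to \mathbf{QCoh}(X)$ is fully faithful and symmetric monoidal so the trace-class/compact comparison can be checked after pullback to the affine atlas, where $\mathbf{Nuc}^{\infty}(U^{\times_X m})$ is rigid by Corollary \ref{thm:rigidification=nuc} (plus the module-category lemma, $\mathbf{Mod}_A$ of a rigid category is rigid). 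Concretely: a compact morphism in $\mathbf{Nuc}^{\infty}(X)$ pulls back to a compact, hence trace-class, morphism in each $\mathbf{Nuc}^{\infty}(U^{\times_X m})$; since $p^*$ is conservative on the limit and symmetric monoidal, trace-classness descends, giving $\mathbf{Nuc}^{\infty}(X)$ rigid.

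The main obstacle I anticipate is the careful verification that the Čech limit diagram for $\mathbf{Nuc}^{\infty}$ lands in $\mathsf{Cat}^{\mathrm{dual}}_\infty$ with \emph{strongly continuous} transition functors and is an absolute limit, so that $\varprojlim^{dual}$ and $\varprojlim$ coincide and rigidity is inherited — i.e.\ checking the hypotheses of \cite{ramzi2024dualizable}*{Corollary 4.5} in this generality, especially that $p_*$ is an internal left adjoint when $U^{\times_X m}$ is only quasi-affine rather than affine. This is where Lemma \ref{lem:qaffformrig} and Corollary \ref{cor:openimmersembedding} (full faithfulness of pushforward along quasi-affine opens, preservation of $\mathbf{Nuc}^\infty$) do the real work, reducing everything to the affine case handled by Proposition \ref{prop:nuclear-relative} and Lemma \ref{lem:maxime1}.
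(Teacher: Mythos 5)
Your overall route differs from the paper's: you try to obtain rigidity of \(\mathbf{Nuc}^{\infty}(X)\) \emph{by descent} from the \v{C}ech nerve of an affine atlas, whereas the paper first proves rigidity directly — quasi-affine case by a retract argument, then a finite Zariski Mayer--Vietoris induction — and only afterwards upgrades the \v{C}ech limit to a limit in dualisable categories. This ordering is not cosmetic, and your version has two genuine gaps.

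First, the terms of your \v{C}ech diagram are not affine. For a qcqs (not necessarily separated) \(X\), the multiple intersections \(U^{\times_X m}\) are only \emph{quasi-affine}, as you yourself note; yet your justification for their rigidity invokes Corollary \ref{thm:rigidification=nuc} and the fact that \(\mathbf{Mod}_A\) of a rigid category is rigid, which only covers the affine case. You never prove that \(\mathbf{Nuc}^{\infty}\) of a quasi-affine is rigid. The paper's proof does exactly this as its base case: for an open immersion \(i\colon X\to\mathrm{Spec}(A)\) with \(X\) quasi-compact, \(i_*\) is fully faithful and colimit-preserving (Lemma \ref{lem:qaffformrig}, Corollary \ref{cor:openimmersembedding}), so \(\mathbf{Nuc}^{\infty}(X)\) is a symmetric monoidal localisation of the rigid category \(\mathbf{Nuc}^{\infty}(A)\) which is a retract in \(\mathbf{Pr}^L_{st}\), hence rigid by \cite{ramzi2024dualizable}*{Example 4.9}. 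Without this step your diagram does not even consist of rigid categories.

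Second, your deduction of rigidity of the limit is circular. The criterion of \cite{ramzi2024dualizable}*{Corollary 4.5}, which you invoke to identify \(\varprojlim\) with \(\varprojlim^{dual}\), takes as \emph{input} that the \(\mathbf{Pr}^L_{st}\)-limit is already dualisable — precisely what you are trying to establish. There is no general closure of rigid (or even dualisable) categories under limits in \(\mathbf{Pr}^L_{st}\), and your fallback argument — that a compact morphism in the limit pulls back to trace-class morphisms on the cover and that ``trace-classness descends'' along the conservative symmetric monoidal pullback — is asserted without proof; trace-class morphisms are defined by a lifting property and do not obviously satisfy descent. The paper sidesteps both issues by running a finite induction on the number of affines in the cover: for \(X=U_1\cup U_2\) the square of restriction functors is a pullback of categories in which three vertices are already known rigid (the double intersection being quasi-affine) and the two legs out of them are localisations that are internal left adjoints, so \cite{ramzi2024dualizable}*{Corollary 4.8} applies to give rigidity of the fourth vertex. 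Only with rigidity (hence dualisability) of \(\mathbf{Nuc}^{\infty}(X)\) in hand does the paper then apply Corollary 4.5 to the essentially finite \v{C}ech limit to conclude the two displayed equivalences. If you reorganise your argument in this order — quasi-affine retract, Mayer--Vietoris induction, then the limit statements — the rest of your observations (base change, internal left adjoints via the projection formula) are correct and do the work you intend.
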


\begin{proof}
First assume that $X$ is quasi-affine. 
    Let $i:X\rightarrow\mathrm{Spec}(A)$ be a open immersion. We then have an adjunction
    $$\adj{i^{*}}{\mathbf{Nuc}^{\infty}(A)}{\mathbf{Nuc}^{\infty}(X)}{i_{*}}$$
    with $i_{*}$ being fully faithful. Note moreover that $i_{*}$ commutes with colimits. Thus $\mathbf{Nuc}^{\infty}$ is a retract, in $\mathbf{Pr}^{L}_{st}$, of the rigid category $\mathbf{Nuc}^{\infty}(A)$. Moreover the localisation $i^{*}$ is symmetric monoidal. Thus by \cite{ramzi2024dualizable}*{Example 4.9}, $\mathbf{Nuc}^{\infty}(X)$ is rigid.

    Now let $X$ be a general qcqs scheme, and write $X=\bigcup_{i=1}^{n}U_{i}$ with each $U_{i}$ affine open. Note that we have a fibre-product diagram
    \begin{displaymath}
        \xymatrix{
        \mathbf{Nuc}^{\infty}(U_{1}\cup U_{2})\ar[d]\ar[r] & \mathbf{Nuc}^{\infty}(U_{1})\ar[d]\\
        \mathbf{Nuc}^{\infty}(U_{2})\ar[r] & \mathbf{Nuc}^{\infty}(U_{1}\times_{X}U_{2}).
        }
    \end{displaymath}
    The right and bottom legs are localisations by internal left adjoints, thanks to the projection formula. Now $U_{1}\times_{X}U_{2}$ is quasi-affine so the top-right, bottom-left, and bottom-right terms are rigid. Thus, by \cite{ramzi2024dualizable}*{Corollary 4.8},  this is in fact a pullback in the category of dualisable categories, and hence the top-left is rigid. Now we may proceed by induction to show that $\mathbf{Nuc}^{\infty}(X)$ is rigid. 

    For the final claim, by descent for $\mathbf{Nuc}^{\infty}$, we have
   $$ \mathbf{Nuc}^{\infty}(X) \cong\varprojlim_{[n] \in \Delta} \mathbf{Nuc}^{\infty}(U^{\times_{X}n}).$$
   Note that we already know that $ \mathbf{Nuc}^{\infty}(X)$ is rigid, and hence dualisable. Moreover all the functors involved are internal left adjoints by base change. Finally the limit is essentially a finite one, since the $U_{i}\rightarrow X$ are open immersions. By \cite{ramzi2024dualizable}*{Corollary 4.5}, this is a limit in the category  of dualisable categories.
\end{proof}

\comment{
\begin{lemma}
Suppose the context is nuclear. Let $X$ be a qcqs scheme. Then $\mathbf{Nuc}^{\infty}(X)$ is rigid.
\end{lemma}

\begin{proof}
We first prove the claim for quasi-affine schemes. Thus there is an open immersion $X\rightarrow \mathrm{Spec}(A)$, and $X=\bigcup_{i=1}^{m}U_{i}$, with each $U_{i}$ being affine. Observe that $U_{i_{1}}\times_{U}\ldots\times_{U}U_{i_{k}}\cong U_{i_{1}}\times_{\mathrm{Spec}(A)}\ldots\times_{\mathrm{Spec}(A)}U_{i_{k}}$ is affine. Put $U=\sqcup_{i=1}^{m}U_{i}$, which is an affine scheme. Then each $U^{\times_{X}n}$ is affine.  Now by descent for $\mathbf{Nuc}^{\infty}(-)$, we also have
\[\mathbf{Nuc}^{\infty}(X) \cong \varprojlim_{[n] \in \Delta} \mathbf{Nuc}^{\infty}(U^{\times_{X}n}).\]
On the other hand, we have 
\[\mathbf{QCoh}^{\rig}(X) \cong \varprojlim_{[n] \in \Delta}^\mathrm{dual} \mathbf{QCoh}(U^{\times_{X}n})^\rig\cong \varprojlim_{[n] \in \Delta}^\mathrm{dual} \mathbf{Nuc}^{\infty}(U^{\times_{X}n}),\]
We invoke the criteria in \cite{ramzi2024dualizable}*{Corollary 4.5}. Note this is essentially a finite limit, since $U=\bigcup_{i=1}^{k}U_{i}$ with each $U_{i}\rightarrow X$ being an open immersion. $\mathbf{Nuc}^{\infty}(X)$ is dualisable by Lemma \ref{lem:qcqsdualisable}. 

Finally, we need to verify that the projections $\mathbf{Nuc}^{\infty}(X)\rightarrow\mathbf{Nuc}^{\infty}(U^{\times_{X}n})$ are internal left adjoints. But this follows immediately from Corollary \ref{cor:f*rig}.

Now we return to the qcqs case. We pick an atlas $U\rightarrow X$, where $U$ is affine, and each $U^{\times_{X}n}$ is \textit{quasi-affine}. The proof now proceeds identically.
\end{proof}
}

An interesting question is whether $\mathbf{Nuc}^{\infty}(X)$ is the rigidification of $\mathbf{QCoh}(X)$. 

\begin{definition}
    Say that a scheme $X$ is \textit{infinitely nuclear} if we have an equivalence $\mathbf{Nuc}^{\infty}(X)\cong\mathbf{QCoh}(X)^{rig}$.
\end{definition}

\begin{example}
    Let $X$ be a qcqs discrete dagger analytic space. We claim that $X$ is infinitely nuclear. First we prove the claim for quasi-affines. Let $X\rightarrow\mathrm{Spec}(A)$ be an open immersion, with $A$ being a discrete dagger affinoid algebra. Write $X=\bigcup_{i=1}^{n}U_{i}$ with each $U_{i}=\mathrm{Spec}(A_{i})$ being discrete dagger affinoid. Note then that each $U_{i}\times_{X}U_{j}\cong U_{i}\times_{\mathrm{Spec}(A)}U_{j}\cong\mathrm{Spec}(A_{i}\hat{\otimes}^{\mathbb{L}}_{A}A_{j})$ is a discrete dagger affinoid, and hence is infinitely nuclear. We have 
    $$\mathbf{QCoh}(X)\cong \varprojlim_n \mathbf{QCoh}(U^{\times_{X}n})$$
    where $U\defeq\sqcup_{i=1}^{n}U_{i}$. By the above each $U^{\times_{X}n}$ is representable be a discrete dagger affinoid. Applying rigidification, we get 
     $$\mathbf{QCoh}(X)^{rig}\cong \varprojlim^{dual}_n \mathbf{Nuc}^{\infty}(U^{\times_{X}n})$$
    where the limit is taken in the category of dualisable categories. On the other hand, we also know that
     $$\mathbf{Nuc}^{\infty}(X)\cong \varprojlim^{dual}_n \mathbf{Nuc}^{\infty}(U^{\times_{X}n}).$$
     This gives the equivalence. 

     Proving the claim for general qcqs schemes works in the same way, just that all terms in the limit now will be of the form $\mathbf{Nuc}^{\infty}(U^{\times_{X}n})$ for $U^{\times_{X}n}$ quasi-affine.
     
\end{example}


We can straightforwardly adapt the definition of passable maps of stacks from \cite{hoyois2021categorified}*{Definition 2.33}.

\begin{definition}
 A map $f:\mathcal{X}\rightarrow\mathcal{Y}$ of stacks is
    \textit{passable} if
    \begin{enumerate}
        \item 
        the diagonal $\mathcal{X}\rightarrow\mathcal{X}\times_{\mathcal{Y}}\mathcal{X}$ is quasi-affine;
        \item 
        the pullback $f^{*}:\mathbf{Nuc}^{\infty}(Y)\rightarrow\mathbf{Nuc}^{\infty}(\mathcal{X})$ admits a $\mathbf{Nuc}^{\infty}(\mathcal{Y})$-linear right adjoint, $f_{*}$;
        \item $\mathbf{Nuc}^{\infty}(\mathcal{X})$ is dualisable as a $\mathbf{Nuc}^{\infty}(\mathcal{Y})$-module.
    \end{enumerate}    
\end{definition}

\begin{example}
   A qcqs morphism between qcqs schemes is passable. The first condition is clearly satisfied. Now we have that $\mathbf{Nuc}^{\infty}(\mathcal{X})$ and $\mathbf{Nuc}^{\infty}(\mathcal{Y})$ are rigid over $\mathbf{Sp}$ by Lemma \ref{lem:maxime2}. Thus they are in particular locally rigid, and by \cite{nkp} Proposition 4.3.10, $\mathbf{Nuc}^{\infty}(\mathcal{X})$ is locally rigid over $\mathbf{Nuc}^{\infty}(\mathcal{Y})$. In particular it is dualisable.  Finally by Corollary \ref{cor:fformrigqcqs} and base change for qcqs morphisms, $f^{*}$ is a $\mathbf{Nuc}^{\infty}(\mathcal{Y})$-linear right adjoint. 
\end{example}

\begin{lemma}\label{lem:tensorequiv}
    Suppose that $f:\mathcal{X}\rightarrow\mathcal{Y}$ is such that $\mathbf{Nuc}^{\infty}(\mathcal{X})$ is dualisable as a $\mathbf{Nuc}^{\infty}(\mathcal{Y})$-module. Let $\mathcal{Z}\rightarrow\mathcal{Y}$ be a map with quasi-affine diagonal. Then the natural functor
    $$\mathbf{Nuc}^{\infty}(\mathcal{Z})\otimes_{\mathbf{Nuc}^{\infty}(\mathcal{Y})}\mathbf{Nuc}^{\infty}(\mathcal{X})\rightarrow\mathbf{Nuc}^{\infty}(\mathcal{Z}\times_{\mathcal{Y}}\mathcal{X})$$
    is an equivalence.
\end{lemma}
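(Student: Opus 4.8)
The statement is a relative Künneth formula for $\mathbf{Nuc}^{\infty}$ along a base change, under the hypothesis that $\mathbf{Nuc}^{\infty}(\mathcal{X})$ is dualisable over $\mathbf{Nuc}^{\infty}(\mathcal{Y})$ and that $\mathcal{Z}\to\mathcal{Y}$ has quasi-affine diagonal. The plan is to reduce to the affine case via descent, exactly mirroring the proofs of Lemma \ref{lem:tensordiagram} and the rigidity statements in Lemma \ref{lem:maxime2}. The key structural input is that dualisability of $\mathbf{Nuc}^{\infty}(\mathcal{X})$ over $\mathbf{Nuc}^{\infty}(\mathcal{Y})$ is stable under base change: given $g\colon \mathcal{Z}\to\mathcal{Y}$, the module $\mathbf{Nuc}^{\infty}(\mathcal{Z})\otimes_{\mathbf{Nuc}^{\infty}(\mathcal{Y})}\mathbf{Nuc}^{\infty}(\mathcal{X})$ is dualisable over $\mathbf{Nuc}^{\infty}(\mathcal{Z})$, and tensor products of dualisable modules interact well with limits along internal left adjoints, by \cite{ramzi2024dualizable}*{Corollary 4.5}.

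\textbf{Key steps, in order.} First, I would reduce $\mathcal{Z}$ to an affine. Choose an atlas $W=\coprod_i \mathrm{Spec}(C_i)\to\mathcal{Z}$ with $W$ affine; since $\mathcal{Z}\to\mathcal{Y}$ has quasi-affine diagonal, each iterated fibre product $W^{\times_{\mathcal{Z}}n}$ is quasi-affine, hence formally nuclear and formally affine by Lemma \ref{lem:qaffformrig}. By descent for $\mathbf{Nuc}^{\infty}$ (the globalisation discussion following Lemma \ref{lem:maxime1}, together with Lemma \ref{lem:maxime2}) we have $\mathbf{Nuc}^{\infty}(\mathcal{Z})\simeq\varprojlim^{\mathrm{dual}}_{[n]\in\Delta}\mathbf{Nuc}^{\infty}(W^{\times_{\mathcal{Z}}n})$, and likewise $\mathbf{Nuc}^{\infty}(\mathcal{Z}\times_{\mathcal{Y}}\mathcal{X})\simeq\varprojlim^{\mathrm{dual}}_{[n]\in\Delta}\mathbf{Nuc}^{\infty}(W^{\times_{\mathcal{Z}}n}\times_{\mathcal{Y}}\mathcal{X})$, both limits being essentially finite and taken along internal left adjoints (base change for the quasi-affine structure maps). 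Second, since $\mathbf{Nuc}^{\infty}(\mathcal{X})$ is dualisable over $\mathbf{Nuc}^{\infty}(\mathcal{Y})$, the functor $(-)\otimes_{\mathbf{Nuc}^{\infty}(\mathcal{Y})}\mathbf{Nuc}^{\infty}(\mathcal{X})$ preserves the limit $\varprojlim^{\mathrm{dual}}_{[n]}\mathbf{Nuc}^{\infty}(W^{\times_{\mathcal{Z}}n})$ (tensoring with a dualisable module is an internal left adjoint that commutes with such limits, again \cite{ramzi2024dualizable}*{Corollary 4.5}), so it suffices to prove the equivalence after replacing $\mathcal{Z}$ by each $\mathrm{Spec}(C_i)$, i.e. one reduces to the case $\mathcal{Z}=\mathrm{Spec}(C)$ affine.

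\textbf{The affine case.} With $\mathcal{Z}=\mathrm{Spec}(C)$ affine, the map $\mathcal{Z}\to\mathcal{Y}$ factors through some affine chart of $\mathcal{Y}$ if $\mathcal{Y}$ is a scheme; more uniformly, one writes $\mathcal{Y}$ via an atlas and uses descent on $\mathcal{Y}$ as well, reducing to $\mathcal{Y}=\mathrm{Spec}(A)$ and then to $\mathcal{X}$ via its atlas. In the fully affine situation $\mathcal{X}=\mathrm{Spec}(B)$, $\mathcal{Y}=\mathrm{Spec}(A)$, $\mathcal{Z}=\mathrm{Spec}(C)$, the claim becomes $\mathbf{Nuc}^{\infty}(C)\otimes_{\mathbf{Nuc}^{\infty}(A)}\mathbf{Nuc}^{\infty}(B)\simeq\mathbf{Nuc}^{\infty}(B\otimes^{\mathbb{L}}_A C)$, which follows from $\mathbf{Mod}_B(\mathbf{Nuc}^{\infty}(\mathbf{C}))\otimes_{\mathbf{Mod}_A(\mathbf{Nuc}^{\infty}(\mathbf{C}))}\mathbf{Mod}_C(\mathbf{Nuc}^{\infty}(\mathbf{C}))\simeq\mathbf{Mod}_{B\otimes_A^{\mathbb{L}}C}(\mathbf{Nuc}^{\infty}(\mathbf{C}))$, i.e. the standard identification $\mathbf{Mod}_B(\mathcal{D})\otimes_{\mathbf{Mod}_A(\mathcal{D})}\mathbf{Mod}_C(\mathcal{D})\simeq\mathbf{Mod}_{B\otimes_A C}(\mathcal{D})$ for $\mathcal{D}=\mathbf{Nuc}^{\infty}(\mathbf{C})$ applied as in \cite{MR3381473}*{B.1.5}, together with Proposition \ref{prop:nuclear-relative} to pass between $\mathbf{Nuc}^{\infty}(-)$ on algebras and module categories over $\mathbf{Nuc}^{\infty}(\mathbf{C})$ (valid since the relevant algebras are infinitely nuclear).

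\textbf{Main obstacle.} The routine-looking reduction hides one real subtlety: I need to know that the descent presentations of $\mathbf{Nuc}^{\infty}(\mathcal{Z})$ and of $\mathbf{Nuc}^{\infty}(\mathcal{Z}\times_{\mathcal{Y}}\mathcal{X})$ remain limits \emph{in the category of dualisable categories} after tensoring with $\mathbf{Nuc}^{\infty}(\mathcal{X})$, and that the relevant structure functors are genuinely internal left adjoints — this is where the quasi-affine diagonal hypothesis on $\mathcal{Z}$ and the dualisability hypothesis on $\mathbf{Nuc}^{\infty}(\mathcal{X})$ over $\mathbf{Nuc}^{\infty}(\mathcal{Y})$ both get used, and checking the hypotheses of \cite{ramzi2024dualizable}*{Corollary 4.5} (dualisability of each term, internal-left-adjointness of the transition maps, and the absoluteness needed for the tensor to commute with the limit) is the part requiring care rather than the final affine computation.
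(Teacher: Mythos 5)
Your first reduction — writing $\mathbf{Nuc}^{\infty}(\mathcal{Z})$ as a limit of nuclear categories of affines and commuting $(-)\otimes_{\mathbf{Nuc}^{\infty}(\mathcal{Y})}\mathbf{Nuc}^{\infty}(\mathcal{X})$ past that limit — is in substance the paper's argument, but two of your worries are misplaced. The paper uses the presentation $\mathbf{Nuc}^{\infty}(\mathcal{Z})\cong\lim_{\mathrm{Spec}(A)\to\mathcal{Z}}\mathbf{Nuc}^{\infty}(\mathrm{Spec}(A))$ over the entire slice category of affines, which is valid for an arbitrary stack; your \v{C}ech presentation along an atlas, together with the claim that the limit is "essentially finite," presumes $\mathcal{Z}$ is a quasi-compact geometric object, which is not among the hypotheses. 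More importantly, your "main obstacle" is not an obstacle: since $\mathbf{Nuc}^{\infty}(\mathcal{X})$ is dualisable as a $\mathbf{Nuc}^{\infty}(\mathcal{Y})$-module, the functor $(-)\otimes_{\mathbf{Nuc}^{\infty}(\mathcal{Y})}\mathbf{Nuc}^{\infty}(\mathcal{X})$ admits a left adjoint (tensoring with the dual) and therefore preserves \emph{all} limits of $\mathbf{Nuc}^{\infty}(\mathcal{Y})$-modules; no appeal to \cite{ramzi2024dualizable} or to absoluteness of the limit in dualisable categories is needed.

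The genuine gap is in your handling of the case $\mathcal{Z}=\mathrm{Spec}(C)$. You propose to "use descent on $\mathcal{Y}$ as well, reducing to $\mathcal{Y}=\mathrm{Spec}(A)$ and then to $\mathcal{X}$ via its atlas." This step fails: $\mathcal{Y}$ sits in the \emph{base} of the relative tensor product, and a relative tensor product over a limit of symmetric monoidal categories does not decompose as a limit of relative tensor products over the pieces; nor does a map from an affine to $\mathcal{Y}$ factor through an affine chart in general. Furthermore, neither $\mathcal{X}$ nor $\mathcal{Y}$ is assumed to be a scheme with an atlas — the only hypothesis on $\mathcal{X}$ is the dualisability of $\mathbf{Nuc}^{\infty}(\mathcal{X})$ over $\mathbf{Nuc}^{\infty}(\mathcal{Y})$ — so your "fully affine situation" is not one you can reach. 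The paper's route avoids this entirely: the quasi-affine diagonal hypothesis guarantees that each composite $\mathrm{Spec}(A)\to\mathcal{Z}\to\mathcal{Y}$ is quasi-affine, hence formally affine, formally nuclear and qcqs by Lemma \ref{lem:qaffformrig}, and then Lemma \ref{lem:tensordiagram} — the monadic base-change formula $\mathbf{Nuc}^{\infty}(\mathcal{X})\otimes_{\mathbf{Nuc}^{\infty}(\mathcal{Y})}\mathbf{Nuc}^{\infty}(\mathrm{Spec}(A))\cong\mathbf{Nuc}^{\infty}(\mathrm{Spec}(A)\times_{\mathcal{Y}}\mathcal{X})$, proved via Barr--Beck and the identification of $\mathbf{Nuc}^{\infty}(\mathrm{Spec}(A))$ with modules over the pushforward of the structure sheaf as in Corollary \ref{cor:nucfibre} — applies with the other leg $\mathcal{X}\to\mathcal{Y}$ completely arbitrary. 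That lemma is the input your argument is missing; without it your affine-case computation is either circular (it presupposes the case $\mathcal{Z}=\mathrm{Spec}(A)$ of the statement being proved) or rests on an invalid decomposition of the base of the tensor product.
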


\begin{proof}
    The proof works as in the proof of \cite{hoyois2021categorified}*{Proposition 2.34}. We write $\mathbf{Nuc}^{\infty}(\mathcal{Z})\cong\lim_{\mathrm{Spec}(A)\rightarrow \mathcal{Z}}\mathbf{Nuc}^{\infty}(\mathrm{Spec}(A))$. Since tensoring with dualisable modules commutes with limits, we get 
    
    $$\mathbf{Nuc}^{\infty}(\mathcal{Z})\otimes_{\mathbf{Nuc}^{\infty}(\mathcal{Y})}\mathbf{Nuc}^{\infty}(\mathcal{X})\cong\lim_{\mathrm{Spec}(A)\rightarrow\mathcal{Z}}(\mathbf{Nuc}^{\infty}(\mathrm{Spec}(A))\otimes_{\mathbf{Nuc}^{\infty}(\mathcal{Y})}\mathbf{Nuc}^{\infty}(\mathcal{X})).$$
    Now since $\mathcal{Z}\rightarrow\mathcal{Y}$ has quasi-affine diagonal, the morphism $\mathrm{Spec}(A)\rightarrow\mathcal{Z}\rightarrow\mathcal{Y}$ is quasi-affine. By Lemma \ref{lem:tensordiagram}, we get that 
    $$\mathbf{Nuc}^{\infty}(\mathrm{Spec}(A))\otimes_{\mathbf{Nuc}^{\infty}(\mathcal{Y})}\mathbf{Nuc}^{\infty}(\mathcal{X})\cong\mathbf{Nuc}^{\infty}(\mathrm{Spec}(A)\times_{\mathcal{Y}}\mathcal{X}),$$
    and
     $$\mathbf{Nuc}^{\infty}(\mathcal{Z})\otimes_{\mathbf{Nuc}^{\infty}(\mathcal{Y})}\mathbf{Nuc}^{\infty}(\mathcal{X})\cong\lim_{\mathrm{Spec}(A)\rightarrow\mathcal{Z}}\mathbf{Nuc}^{\infty}(\mathrm{Spec}(A)\times_{\mathcal{Y}}\mathcal{X})\cong\mathbf{Nuc}^{\infty}(\mathcal{Z}\times_{\mathcal{Y}}\mathcal{X}).$$
\end{proof}

\begin{remark}
    We had expected that a result akin to Lemma \ref{lem:tensorequiv} would not hold for $\mathbf{QCoh}$ except in the case of, for example, formally affine morphisms. Our reasoning was that $\mathbf{QCoh}(\mathcal{X})$ would rarely be dualisable as a $\mathbf{QCoh}(\mathcal{Y})$-module, and so in the above computation the tensor product would not commute with the limit. However in \cite{soor2025privatecommunicationhesis}/ \cite{soor2025personalcommunication}, Soor has previously proved the following: if $\mathcal{Y}$ and $\mathcal{Z}$ have affine diagonal, and $\mathcal{Z}\rightarrow\mathcal{Y}$ is a morphism such that $\mathcal{Z}$ admits a morphism $\mathrm{Spec}(A)\rightarrow\mathcal{Z}$ of universal $!$-descent, then:
\begin{enumerate}
    \item 
    for any morphism $\mathcal{X}\rightarrow\mathcal{Y}$, we have
    $$\mathbf{QCoh}(\mathcal{X})\otimes_{\mathbf{QCoh}(\mathcal{Y})}\mathbf{QCoh}(\mathcal{Z})\rightarrow\mathbf{QCoh}(\mathcal{X}\times_{\mathcal{Y}}\mathcal{Z})$$
    is an equivalence;
    \item 
    $\mathbf{QCoh}(\mathcal{Z})$ is dualisable as a $\mathbf{QCoh}(\mathcal{Y})$-module.
\end{enumerate}
The proof of the K\"{u}nneth formula works because universal $!$-descent allows one to `turn around' the limit into a colimit, and the tensor product of categories does commute with colimits.
The class of such $\mathcal{Z}$ includes \textit{all quasi-compact and separated schemes} by \cite{soor2024derived}*{Corollary 3.18}. Soor has used this to work out the theory of Fourier-Mukai transforms in this general setting in \cite{soor2025privatecommunicationhesis}/ \cite{soor2025personalcommunication}. More generally, Soor defines so-called \textit{transformable} morphisms of stacks, to be those maps which have affine diagonal, and such that the fibre product with any affine admits a universal $!$-descent morphism from an affine. He then shows that such maps satisfy the Fourier-Mukai equivalence. Transformable maps are shown to include quasi-compact and separated morphisms of schemes. 
\end{remark}

We can now prove an analogue of \cite{hoyois2021categorified}*{Proposition 2.34}.

\begin{lemma}
    Suppose that $f:\mathcal{X}\rightarrow\mathcal{Y}$ is passable. Then $f^{*}:\mathbf{Nuc}^{\infty}(\mathcal{Y})\rightarrow\mathbf{Nuc}^{\infty}(\mathcal{X})$ is rigid.
\end{lemma}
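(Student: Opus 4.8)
The statement to prove is: if $f \colon \mathcal{X} \to \mathcal{Y}$ is passable, then $f^* \colon \mathbf{Nuc}^\infty(\mathcal{Y}) \to \mathbf{Nuc}^\infty(\mathcal{X})$ is rigid. The model for this is \cite{hoyois2021categorified}*{Proposition 2.35}, and the plan is to mimic that argument using the \(\mathbf{Nuc}^\infty\)-analogues of the K\"unneth formulae established above, chiefly Lemma \ref{lem:tensorequiv}. Recall that a symmetric monoidal functor of presentably symmetric monoidal categories \(g^* \colon \mathbf{B} \to \mathbf{A}\) is \emph{rigid} (in the relative sense) precisely when \(\mathbf{A}\) is dualisable as a \(\mathbf{B}\)-module, the right adjoint \(g_*\) is \(\mathbf{B}\)-linear, and the multiplication functor \(\mathbf{A} \otimes_{\mathbf{B}} \mathbf{A} \to \mathbf{A}\) has a \(\mathbf{B}\)-linear (equivalently \(\mathbf{A}\otimes_{\mathbf{B}}\mathbf{A}\)-linear) right adjoint satisfying the projection formula. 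Conditions (2) and (3) in the definition of passable give us exactly the first two of these hypotheses for free, so the real content is the statement about the diagonal.

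First I would unwind the multiplication functor. Under the equivalence \(\mathbf{Nuc}^\infty(\mathcal{X}) \otimes_{\mathbf{Nuc}^\infty(\mathcal{Y})} \mathbf{Nuc}^\infty(\mathcal{X}) \simeq \mathbf{Nuc}^\infty(\mathcal{X} \times_{\mathcal{Y}} \mathcal{X})\) — which is precisely an instance of Lemma \ref{lem:tensorequiv}, valid since \(\mathbf{Nuc}^\infty(\mathcal{X})\) is dualisable as a \(\mathbf{Nuc}^\infty(\mathcal{Y})\)-module by passability, and since the second projection \(\mathcal{X}\times_{\mathcal{Y}}\mathcal{X}\to\mathcal{X}\) is obtained by base change from \(f\) whose diagonal is quasi-affine, hence has quasi-affine diagonal so that the required base-change/K\"unneth hypothesis applies — the multiplication functor is identified with \(\Delta_f^* \colon \mathbf{Nuc}^\infty(\mathcal{X} \times_{\mathcal{Y}} \mathcal{X}) \to \mathbf{Nuc}^\infty(\mathcal{X})\), where \(\Delta_f \colon \mathcal{X} \to \mathcal{X} \times_{\mathcal{Y}} \mathcal{X}\) is the relative diagonal. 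By assumption (1) of passability, \(\Delta_f\) is quasi-affine. Then I would invoke Lemma \ref{lem:qaffformrig} (together with Corollary \ref{cor:openimmersembedding} and Lemma \ref{lem:qnucpush}): a quasi-affine morphism between (qcqs) stacks is formally affine, formally nuclear, and qcqs, so its pushforward \((\Delta_f)_*\) satisfies base change and the projection formula, and by Corollary \ref{cor:f*rig} the pullback \(\Delta_f^*\) is rigid. In particular \((\Delta_f)_*\) is the required linear right adjoint to the multiplication functor satisfying the projection formula.

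Assembling these: conditions (2) and (3) of passable give \(\mathbf{Nuc}^\infty(\mathcal{X})\) dualisable over \(\mathbf{Nuc}^\infty(\mathcal{Y})\) and \(f_*\) linear; the previous paragraph shows the multiplication functor \(\mathbf{Nuc}^\infty(\mathcal{X}) \otimes_{\mathbf{Nuc}^\infty(\mathcal{Y})} \mathbf{Nuc}^\infty(\mathcal{X}) \to \mathbf{Nuc}^\infty(\mathcal{X})\) admits a right adjoint which is an internal left adjoint (i.e.\ is linear and satisfies the projection formula). By the characterisation of rigid algebras in a presentably symmetric monoidal category over a base — e.g.\ \cite{ramzi2024dualizable} or \cite{nkp}*{Section 4.3} — these three conditions together say precisely that \(\mathbf{Nuc}^\infty(\mathcal{X})\) is a rigid \(\mathbf{Nuc}^\infty(\mathcal{Y})\)-algebra, i.e.\ \(f^*\) is rigid. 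I would close by spelling out this last implication in one line with the appropriate citation.

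\textbf{Expected main obstacle.} The routine-looking but genuinely load-bearing step is verifying that Lemma \ref{lem:tensorequiv} applies to identify the multiplication functor with \(\Delta_f^*\): one must check that \(\mathcal{X}\times_{\mathcal{Y}}\mathcal{X} \to \mathcal{Y}\) (or the relevant projection) has quasi-affine diagonal so that the hypotheses of Lemma \ref{lem:tensorequiv} — in particular the formal affineness/nuclearity needed to run Lemma \ref{lem:tensordiagram} pointwise over an affine cover of the base — are met. This reduces to the fact that the diagonal of \(\mathcal{X}\times_{\mathcal{Y}}\mathcal{X}\to\mathcal{Y}\) is a base change of \(\Delta_f\), which is quasi-affine by hypothesis, so the obstacle is really just bookkeeping of diagonals rather than anything deep; once that is in place the rest is a formal consequence of the lemmas already proved.
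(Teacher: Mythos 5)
Your proposal is correct and follows essentially the same route as the paper: apply Lemma \ref{lem:tensorequiv} (using passability conditions (1) and (3)) to identify $\mathbf{Nuc}^{\infty}(\mathcal{X})\otimes_{\mathbf{Nuc}^{\infty}(\mathcal{Y})}\mathbf{Nuc}^{\infty}(\mathcal{X})\simeq\mathbf{Nuc}^{\infty}(\mathcal{X}\times_{\mathcal{Y}}\mathcal{X})$, then use quasi-affineness of the diagonal together with the pushforward lemmas (the paper cites Proposition \ref{prop:cocontqcqs} and Lemma \ref{lem:qnucpush}, you cite the closely related Lemma \ref{lem:qaffformrig} and Corollary \ref{cor:f*rig}) to show $\Delta_*$ is a linear right adjoint to the multiplication satisfying base change and the projection formula. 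The only difference is that your write-up is more detailed about verifying the hypotheses of Lemma \ref{lem:tensorequiv}, which the paper leaves implicit.
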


\begin{proof}
    By Lemma \ref{lem:tensorequiv} we have
    $$\mathbf{Nuc^{\infty}}(\mathcal{X})\otimes_{\mathbf{Nuc}^{\infty}(\mathcal{Y})}\mathbf{Nuc}^{\infty}(\mathcal{X})\cong\mathbf{Nuc}^{\infty}(\mathcal{X}\times_{\mathcal{Y}}\mathcal{X}).$$
    Since the diagonal is representable by quasi-affines, by Proposition \ref{prop:cocontqcqs} and Lemma \ref{lem:qnucpush} the functor
    $$\Delta_{*}:\mathbf{Nuc}^{\infty}(X)\rightarrow\mathbf{Nuc^{\infty}}(\mathcal{X})\otimes_{\mathbf{Nuc}^{\infty}(\mathcal{Y})}\mathbf{Nuc}^{\infty}(\mathcal{X})\cong\mathbf{Nuc}^{\infty}(\mathcal{X}\times_{\mathcal{Y}}\mathcal{X}).$$
    satisfies base-change and commutes with colimits. This completes the proof.
\end{proof}

We set up some notation. Let \(X\) be a quasi-compact, quasi-separated scheme. For a closed subspace \(Y \subseteq X\) with quasi-compact open complement \(i:U\rightarrow X\), we define the \(\infty\)-category \(\mathbf{Nuc}^\infty(X: Y)\) of \textit{nuclear modules with support in \(Y\)} as the homotopy fibre of the restriction \(\mathbf{Nuc}^\infty(X) \to \mathbf{Nuc}^\infty(U)\). 
By Corollary \ref{cor:openimmersembedding}, the functor $i_{*}:\mathbf{Nuc}^{\infty}(U)\rightarrow\mathbf{Nuc}^{\infty}(X)$ is fully faithful. Moreover this is an internal left adjoint by Proposition \ref{prop:cocontqcqs}. In particular, \(\mathbf{Nuc}^\infty(X:Y)\) is dualisable. In a very precise sense here we are really specifying $U$. Then $\mathbf{Nuc}^\infty(X:Y)$ is just notation.

The following is clear.

\begin{proposition}
    Let $U\subset V\subset X$ be open immersions with everything qcqs, and with closed complements $Y$ and $Z$ respectively. Then all squares in the following rectangle are pullbacks. 

    \begin{displaymath}
    \xymatrix{
    \mathbf{QCoh}(X:Y)\ar[d]\ar[r] & \mathbf{QCoh}(V:V\cap Y)\ar[d]\ar[r] & 0\ar[d]\\
    \mathbf{QCoh}(X)\ar[r] & \mathbf{QCoh}(V)\ar[r] & \mathbf{QCoh}(U).
    }
\end{displaymath}
In particular, we have $\mathbf{QCoh}(X:Z)\cong\mathrm{Fib}( \mathbf{QCoh}(X:Y)\rightarrow\mathbf{QCoh}(V:V\cap Y))$. In the nuclear setting, all squares in the following rectangle are pullbacks. 
   \begin{displaymath}
    \xymatrix{
    \mathbf{Nuc}^{\infty}(X:Y)\ar[d]\ar[r] & \mathbf{Nuc}^{\infty}(V:V\cap Y)\ar[d]\ar[r] & 0\ar[d]\\
    \mathbf{Nuc}^{\infty}(X)\ar[r] & \mathbf{Nuc}^{\infty}(V)\ar[r] & \mathbf{Nuc}^{\infty}(U).
    }
\end{displaymath}
In particular, we have $\mathbf{Nuc}^{\infty}(X:Z)\cong\mathrm{Fib}( \mathbf{Nuc}^{\infty}(X:Y)\rightarrow\mathbf{Nuc}^{\infty}(V:V\cap Y))$.
\end{proposition}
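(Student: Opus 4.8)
The claim is that in the following rectangle, in which the columns are localisation sequences,
\begin{displaymath}
    \xymatrix{
    \mathbf{QCoh}(X:Y)\ar[d]\ar[r] & \mathbf{QCoh}(V:V\cap Y)\ar[d]\ar[r] & 0\ar[d]\\
    \mathbf{QCoh}(X)\ar[r] & \mathbf{QCoh}(V)\ar[r] & \mathbf{QCoh}(U),
    }
\end{displaymath}
all squares are pullbacks (in $\mathbf{Pr}_{st}^L$, and in $\mathbf{Cat}^{\mathrm{dual}}$ for the nuclear version), and similarly with $\mathbf{Nuc}^\infty$. The plan is to establish the right-hand square first, then the composite rectangle, and then deduce the left-hand square by the pasting law for pullbacks. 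The right-hand square is a pullback essentially by \emph{definition}: $\mathbf{QCoh}(V:V\cap Y)$ is defined as the fibre of $\mathbf{QCoh}(V) \to \mathbf{QCoh}(U)$ (here $U = V \setminus (V\cap Y)$ since $U \subseteq V$), and the fibre of a map is precisely the pullback of that map along the zero object $0 \to \mathbf{QCoh}(U)$. So the right square is a pullback by unwinding the definition of $\mathbf{QCoh}(-:-)$ given just before the statement.

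For the outer rectangle, I would observe that the composite top row $\mathbf{QCoh}(X:Y) \to \mathbf{QCoh}(V:V\cap Y) \to 0$ together with the composite restriction $\mathbf{QCoh}(X) \to \mathbf{QCoh}(V) \to \mathbf{QCoh}(U)$ exhibits $\mathbf{QCoh}(X:Y)$ as the fibre of $\mathbf{QCoh}(X) \to \mathbf{QCoh}(U)$, i.e.\ $\mathbf{QCoh}(X:Y) = \mathbf{QCoh}(X:Z)$ in the notation of the lemma when one takes $Z$ to be the closed complement of $U$ in $X$. Concretely: by definition $\mathbf{QCoh}(X:Z) = \mathrm{Fib}(\mathbf{QCoh}(X) \to \mathbf{QCoh}(U))$ and $\mathbf{QCoh}(X:Y) = \mathrm{Fib}(\mathbf{QCoh}(X) \to \mathbf{QCoh}(V))$; one needs that the fibre of $\mathbf{QCoh}(X) \to \mathbf{QCoh}(U)$ over $\mathbf{QCoh}(X:Y)$-direction is compatible. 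The cleanest route is: the outer rectangle is a pullback because $\mathbf{QCoh}(X:Y)$ is \emph{defined} as $\mathrm{Fib}(\mathbf{QCoh}(X) \to \mathbf{QCoh}(V))$, and since $\mathbf{QCoh}(V:V\cap Y) = \mathrm{Fib}(\mathbf{QCoh}(V) \to \mathbf{QCoh}(U))$, the map $\mathbf{QCoh}(X:Y) \to \mathbf{QCoh}(V:V\cap Y)$ is the induced map on horizontal fibres of
\begin{displaymath}
    \xymatrix{
    \mathbf{QCoh}(X)\ar[d]\ar[r] & \mathbf{QCoh}(V)\ar[d]\\
    \mathbf{QCoh}(V)\ar[r] & \mathbf{QCoh}(U);
    }
\end{displaymath}
identifying the fibre of a composite then gives that the outer rectangle is a pullback. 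Then by the pasting law for pullback squares (valid in any $\infty$-category with pullbacks, in particular $\mathbf{Pr}_{st}^L$ and $\mathbf{Cat}^{\mathrm{dual}}$), since the outer rectangle and the right square are pullbacks, the left square is a pullback; and the identification $\mathbf{QCoh}(X:Z) \cong \mathrm{Fib}(\mathbf{QCoh}(X:Y) \to \mathbf{QCoh}(V:V\cap Y))$ is read off the top row of the outer rectangle.

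For the $\mathbf{Nuc}^\infty$ version the argument is formally identical, using that $\mathbf{Nuc}^\infty(X:Y)$ was defined as the homotopy fibre of the restriction $\mathbf{Nuc}^\infty(X) \to \mathbf{Nuc}^\infty(U)$ and that, by Corollary~\ref{cor:openimmersembedding} and Proposition~\ref{prop:cocontqcqs}, the restriction functors for open immersions of qcqs schemes are internal left adjoints with fully faithful right adjoints, so these fibres are again dualisable. To assert that the pullbacks in the nuclear case are pullbacks \emph{in $\mathbf{Cat}^{\mathrm{dual}}$} (not just in $\mathbf{Pr}_{st}^L$), I would invoke the criterion of \cite{ramzi2024dualizable}*{Corollary 4.5/4.8}: all the categories appearing are dualisable (by Lemma~\ref{lem:maxime2} and the above), all functors are internal left adjoints, and the relevant limits are essentially finite. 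The main obstacle, and the only genuinely non-formal point, is verifying this last fact — that the localisation sequences $\mathbf{Nuc}^\infty(X:Y) \to \mathbf{Nuc}^\infty(X) \to \mathbf{Nuc}^\infty(U)$ really are fibre sequences of dualisable categories with internal-left-adjoint maps, i.e.\ that taking the fibre does not leave the dualisable world; but this is exactly packaged by the cited results once one knows $i_* \colon \mathbf{Nuc}^\infty(U) \to \mathbf{Nuc}^\infty(X)$ is an internal left adjoint, which is Proposition~\ref{prop:cocontqcqs} together with Corollary~\ref{cor:openimmersembedding}. Everything else is the pasting law and unwinding of definitions.
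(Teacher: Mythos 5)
Your proof is correct and is exactly the formal unwinding the paper has in mind: the paper states this proposition without proof (prefacing it with ``The following is clear''), and your argument — the right square and the outer rectangle are pullbacks by the very definition of $\mathbf{QCoh}(-:-)$ (resp.\ $\mathbf{Nuc}^{\infty}(-:-)$) as a fibre, the left square follows by the pasting law, and dualisability/internal-left-adjointness in the nuclear case comes from Proposition \ref{prop:cocontqcqs} and Corollary \ref{cor:openimmersembedding} — is the intended one. One small presentational slip: the auxiliary square you draw with $\mathbf{QCoh}(X)\to\mathbf{QCoh}(V)$ on top and $\mathbf{QCoh}(V)\to\mathbf{QCoh}(U)$ on the bottom does not induce the map $\mathbf{QCoh}(X:Y)\to\mathbf{QCoh}(V:V\cap Y)$ on horizontal fibres as claimed (the correct square has $\mathbf{QCoh}(U)$ in both right-hand corners, with the identity as the right vertical map), but your surrounding ``fibre of a composite'' reasoning is correct and the proof is unaffected.
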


\begin{proposition}
    Let $U\subset X$ and $V\subset X$ be open immersions with closed complements $Y$ and $Z$ respectively. Then 
     we have a fibre product diagram
\begin{displaymath}
    \xymatrix{
    \mathbf{QCoh}(U\cup V)\ar[d]\ar[r] & \mathbf{QCoh}(U)\ar[d]\\
    \mathbf{QCoh}(V)\ar[r] & \mathbf{QCoh}(U\cap V),
    }
\end{displaymath}
in which all maps are localisations. In particular, we have
$$\mathbf{QCoh}(U\cup V:V\cap Y)=\mathbf{QCoh}(U\cup V:(U\cup V)\cap Y)\cong\mathbf{QCoh}(V:Y\cap V)$$. In the nuclear case, 
     we have a fibre product diagram
\begin{displaymath}
    \xymatrix{
    \mathbf{Nuc}^{\infty}(U\cup V)\ar[d]\ar[r] & \mathbf{Nuc}^{\infty}(U)\ar[d]\\
    \mathbf{Nuc}^{\infty}(V)\ar[r] & \mathbf{Nuc}^{\infty}(U\cap V).
    }
\end{displaymath}
in which all maps are localisations. In particular, we have
$$\mathbf{Nuc}^{\infty}(U\cup V:V\cap Y)=\mathbf{QCoh}(U\cup V:(U\cup V)\cap Y)\cong\mathbf{Nuc}^{\infty}(V:Y\cap V)$$. 
\end{proposition}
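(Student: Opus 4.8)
The statement to be proved is the excision/Mayer--Vietoris square for \(\mathbf{QCoh}\) (and \(\mathbf{Nuc}^\infty\)) associated to two quasi-compact open immersions \(U, V \subseteq X\), together with the identification of supported categories it yields. The plan is as follows. First I would establish the Mayer--Vietoris pullback square for \(\mathbf{QCoh}\). The key input is that we are working with a descendable (in particular subcanonical) topology in a nuclear context: the inclusions \(\{U \to U\cup V, V \to U \cup V\}\) form a cover in \(\tau\), and \(U\times_{U\cup V} V \cong U \cap V\) since both are open immersions into \(U \cup V\). By \(\mathbf{QCoh}\)-descent (Corollary \ref{cor:QCoh-descent}, applied after restricting to the full subcategory of affines closed under pushouts, or more directly Theorem \ref{thm:adicdescendable} for the effective epimorphism \(U \coprod V \to U \cup V\)) we get that \(\mathbf{QCoh}(U\cup V)\) is the totalisation of the Cech nerve of \(U \coprod V \to U \cup V\). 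Because each \(U \to U \cup V\) is a homotopy monomorphism, Lemma \ref{lem:hepi-equivalent} forces all higher terms in the Cech nerve to collapse — \(U \times_{U\cup V} U \cong U\), etc. — so that the totalisation reduces to the \(2\times 2\) pullback square in the statement. That each leg is a localisation (i.e.\ the pullback functors are localisations with fully faithful right adjoints) follows from Corollary \ref{cor:openimmersembedding}, since all four schemes \(U\cup V, U, V, U\cap V\) are qcqs and the maps between them are open immersions.

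Second, for the supported-category identification I would argue as follows. By definition \(\mathbf{QCoh}(U\cup V : V\cap Y)\) is the fibre of the restriction \(\mathbf{QCoh}(U\cup V) \to \mathbf{QCoh}((U\cup V)\setminus(V\cap Y))\). Note \((U\cup V)\setminus(V\cap Y)\) contains \(U\) (as \(U\cap V \cap Y \subseteq U \cap Y = \emptyset\) whenever \(Y\) is the complement of \(U\); this is where I must be careful about which closed complement \(Y\) refers to — the convention in the excerpt is that \(Y\) is the closed complement of \(U\)). Then restricting the Mayer--Vietoris square along the open \((U\cup V)\setminus(V\cap Y)\), and using that the restriction of the square to opens is again a pullback of localisations, I would identify the fibre of \(\mathbf{QCoh}(U\cup V)\to \mathbf{QCoh}((U\cup V)\setminus (V\cap Y))\) with the fibre of \(\mathbf{QCoh}(V) \to \mathbf{QCoh}(V\setminus(V\cap Y))\), which is \(\mathbf{QCoh}(V : Y\cap V)\) by definition. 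Concretely: the Mayer--Vietoris square restricted to this open \(W := (U\cup V)\setminus(V\cap Y)\) has \(W \supseteq U\), \(W\cap V = V\setminus(V\cap Y)\), \(W\cap U\cap V = U\cap V\), so the square becomes \(\mathbf{QCoh}(W) \to \mathbf{QCoh}(U)\) over \(\mathbf{QCoh}(V\setminus(V\cap Y)) \to \mathbf{QCoh}(U\cap V)\); comparing vertical fibres of the original and restricted squares gives \(\mathbf{QCoh}(U\cup V: V\cap Y) \cong \mathbf{QCoh}(V : V\cap Y)\), as the \(U\)-column contributes a zero fibre.

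Third, the \(\mathbf{Nuc}^\infty\) versions follow verbatim, replacing the appeal to \(\mathbf{QCoh}\)-descent by \(\mathbf{Nuc}^\infty\)-descent. In the dagger or nuclear rigid contexts this is Corollary \ref{cor:nuc-analytic-et-descent} (together with Lemma \ref{lem:maxime1}, which is exactly the statement that descendable covers of objects in \(\bC^\rig\) give rise to limit diagrams of \(\mathbf{Nuc}^\infty\) also in the category of dualisable categories); and the fully faithfulness of the pushforward along an open immersion of qcqs schemes at the level of \(\mathbf{Nuc}^\infty\) is again Corollary \ref{cor:openimmersembedding}. The collapse of the higher Cech terms is a formal consequence of the homotopy monomorphism property, which is detected on \(\mathbf{QCoh}\) and hence passes to the full subcategory \(\mathbf{Nuc}^\infty\).

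I do not expect a serious obstacle here: the result is a routine consequence of descent plus the homotopy-monomorphism collapse, and it is the analogue of the standard Zariski Mayer--Vietoris square. The one point requiring genuine care is bookkeeping of which closed set plays the role of the ``support'' — i.e.\ making sure that \(V\cap Y\) (with \(Y\) the complement of \(U\)) has the property that its open complement inside \(U\cup V\) contains \(U\), so that the \(U\)-leg of the restricted square has vanishing supported fibre. Once that incidence relation among \(U\), \(V\), \(Y\), \(Z\) is pinned down, the proof is a diagram chase through the (finite) totalisation, and the dualisability/rigidity bookkeeping needed to run the \(\mathbf{Nuc}^\infty\) case is already packaged in Lemma \ref{lem:maxime1} and Lemma \ref{lem:maxime2}.
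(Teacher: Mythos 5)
Your proposal is correct and follows the same route as the paper: descent for the cover $\{U,V\}$ of $U\cup V$ (with the higher Cech terms collapsing because open immersions are homotopy monomorphisms) yields the pullback square, and the identification of supported subcategories is exactly the observation that the fibres of the two horizontal maps in a pullback square of localisations agree. Your second paragraph can be shortened by noting that $(U\cup V)\setminus(V\cap Y)=U$ on the nose (since $Y$ is the complement of $U$), so the ``restricted square'' is a square of identities and the comparison of vertical fibres reduces immediately to the paper's one-line argument.
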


\begin{proof}
    The fibre product diagram comes from descent. The last claim follows from the fact that the horizontal maps have isomorphic fibres, since the diagram is a pullback.
\end{proof}

The following is a version of \cite{andreychev2023k}*{Lemma 5.11}, and immediately follows from the propositions.

\begin{lemma}\label{lem:nuc-Verdier}
Let \(X\) be a quasi-compact, quasi-separated scheme, and \(U \to X\) and \(V \to X\) be quasi-compact open immersions with closed complements $Y$ and $Z$ respectively. Then we have a commutative diagram of Verdier localisations sequences

\begin{displaymath}
    \xymatrix{
    \mathbf{QCoh}(X:Y) \ar[r] & \mathbf{QCoh}(X) \ar[r] & \mathbf{QCoh}(U) \\
\mathbf{QCoh}(X: Y \cap Z) \ar[u]\ar[r] & \mathbf{QCoh}(X:Z) \ar[u]\ar[r] & \mathbf{QCoh}(U: U \cap Z)\ar[u].
    }
\end{displaymath}

\noindent
In the nuclear setting we have a commutative diagram of Verdier localisation sequences:

\begin{displaymath}
    \xymatrix{
    \mathbf{Nuc}^\infty(X:Y) \ar[r] & \mathbf{Nuc}^\infty(X) \ar[r] & \mathbf{Nuc}^\infty(U) \\
\mathbf{Nuc}^\infty(X: Y \cap Z) \ar[u]\ar[r] & \mathbf{Nuc}^\infty(X:Z) \ar[u]\ar[r] & \mathbf{Nuc}^\infty(U: U \cap Z)\ar[u].
    }
\end{displaymath}

\end{lemma}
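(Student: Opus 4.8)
The plan is to deduce the lemma from the basic theory of Verdier localisation sequences together with the open-immersion results established just above, namely that for a quasi-compact open immersion $j \colon U \to X$ of qcqs schemes, the pushforward $j_* \colon \mathbf{QCoh}(U) \to \mathbf{QCoh}(X)$ (resp. $j_* \colon \mathbf{Nuc}^\infty(U) \to \mathbf{Nuc}^\infty(X)$) is fully faithful and an internal left adjoint (Corollary \ref{cor:openimmersembedding}, Proposition \ref{prop:cocontqcqs}, Lemma \ref{lem:qnucpush}). By definition $\mathbf{QCoh}(X:Y)$ is the fibre of $\mathbf{QCoh}(X) \to \mathbf{QCoh}(U)$, so the horizontal rows of both diagrams are Verdier localisation sequences in $\mathsf{Pr}^L_{st}$ by construction; the content of the lemma is precisely the commutativity of the two diagrams together with the claim that the vertical maps fit into localisation sequences as well. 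I would handle the $\mathbf{QCoh}$ case first, since the $\mathbf{Nuc}^\infty$ case is then formally identical, every functor involved being the restriction to nuclear objects of the corresponding $\mathbf{QCoh}$ functor, and these restrictions again being internal left adjoints by the same references.

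\textbf{Key steps.} First I would fix the notation: $i \colon U \to X$ and $i' \colon V \to X$ are the two quasi-compact open immersions with closed complements $Y$ and $Z$, and $U \cap V = U \times_X V$, with closed complement (inside $U$) equal to $U \cap Z$. The right-hand vertical map $\mathbf{QCoh}(U:U\cap Z) \to \mathbf{QCoh}(U)$ is the inclusion of the fibre of restriction to $U \cap V$, which is a Verdier localisation sequence by definition applied to the open immersion $U \cap V \to U$. Second, I would invoke the preceding two propositions: the pullback rectangle
\[
\begin{tikzcd}
\mathbf{QCoh}(X:Y)\arrow{r}\arrow{d} & \mathbf{QCoh}(V:V\cap Y)\arrow{r}\arrow{d} & 0\arrow{d}\\
\mathbf{QCoh}(X)\arrow{r} & \mathbf{QCoh}(V)\arrow{r} & \mathbf{QCoh}(U)
\end{tikzcd}
\]
identifies $\mathbf{QCoh}(X:Z)$ with the fibre of $\mathbf{QCoh}(X:Y)\to\mathbf{QCoh}(V:V\cap Y)$, and the analogous descent square identifies $\mathbf{QCoh}(V:Y\cap V)\cong \mathbf{QCoh}(U\cup V: V\cap Y)$ with $\mathbf{QCoh}(U:U\cap Z)$ after intersecting with $Y$. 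Third, I would assemble the commutative square from these identifications: the left square of the target diagram is a square of fibre-inclusions, and its commutativity is the statement that the canonical map $\mathbf{QCoh}(X:Y\cap Z)\to\mathbf{QCoh}(X:Z)$ is compatible with the inclusions into $\mathbf{QCoh}(X)$; this follows because all four categories are full subcategories of $\mathbf{QCoh}(X)$ (objects supported in the respective closed sets) and the maps are the evident inclusions. Fourth, the left column being a Verdier localisation sequence — i.e. $\mathbf{QCoh}(X:Y\cap Z) \to \mathbf{QCoh}(X:Z) \to (\text{cofibre})$ with cofibre $\mathbf{QCoh}(U:U\cap Z)$ — is exactly what the first proposition gives: $\mathbf{QCoh}(X:Z)\cong\mathrm{Fib}(\mathbf{QCoh}(X:Y)\to\mathbf{QCoh}(V:V\cap Y))$ is a statement about fibres, which one rewrites by the octahedral/$3\times 3$-diagram axiom for stable categories to get that the three columns and three rows are all fibre sequences (equivalently localisation sequences) once any two of them are. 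I would make this precise by citing that in a stable $\infty$-category a commuting $2\times 3$ grid of the relevant shape has all rows and columns (co)fibre sequences as soon as the outer rows are and the relevant squares are pullbacks.

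\textbf{Main obstacle.} The genuinely delicate point is bookkeeping the supports correctly: one must check that $\mathbf{QCoh}(X:Y\cap Z)$, defined as modules supported in the closed set $Y\cap Z = (X\setminus U)\cap(X\setminus V) = X\setminus(U\cup V)$, is simultaneously the fibre of $\mathbf{QCoh}(X:Y)\to\mathbf{QCoh}(V:Y\cap V)$ and the fibre of $\mathbf{QCoh}(X:Z)\to\mathbf{QCoh}(U:Z\cap U)$ — i.e. that the two ways of "cutting down" agree — and that the induced map on cofibres of the left column is precisely restriction along $U\cap V\to U$. This is where the qcqs hypotheses are used (to guarantee the open complements are quasi-compact so that $\mathbf{QCoh}(-:\,-)$ is dualisable and the pushforwards are internal left adjoints, hence the fibre sequences are also cofibre sequences in $\mathbf{Cat}^{\mathrm{dual}}$) and where one should not be cavalier. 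I expect that once the support identifications are nailed down, the commutativity and the localisation-sequence property of the columns drop out formally from Lemma \ref{lem:qaffformrig}, Corollary \ref{cor:openimmersembedding} and the two displayed pullback rectangles, and the $\mathbf{Nuc}^\infty$ statement follows verbatim by replacing $\mathbf{QCoh}$ with $\mathbf{Nuc}^\infty$ throughout, using Lemma \ref{lem:qnucpush} to know the nuclear pushforwards remain internal left adjoints.
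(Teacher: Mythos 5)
Your proposal is correct and follows essentially the same route as the paper: the paper's proof consists of the single remark that the lemma ``immediately follows from the propositions'' — namely the two preceding propositions identifying $\mathbf{QCoh}(X:Z)$ as $\mathrm{Fib}(\mathbf{QCoh}(X:Y)\to\mathbf{QCoh}(V:V\cap Y))$ and $\mathbf{QCoh}(U\cup V:V\cap Y)$ as $\mathbf{QCoh}(V:Y\cap V)$ — together with Corollary \ref{cor:openimmersembedding} for the localisation property of restriction to a quasi-compact open, which is exactly the assembly you describe. The only remark worth making is that the statement asserts the \emph{rows} are Verdier sequences (the vertical maps are just inclusions), so your extra effort on the columns, while harmless, is not needed.
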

\comment{
\begin{proof}
We prove the $\mathbf{QCoh}$-case. Everything passes to the nuclear case. 

First observe that we have a fibre product 
\begin{displaymath}
    \xymatrix{
    \mathbf{QCoh}(U\cup V)\ar[d]\ar[r] & \mathbf{QCoh}(U)\ar[d]\\
    \mathbf{QCoh}(V)\ar[r] & \mathbf{QCoh}(U\cap V).
    }
\end{displaymath}
in which all maps are localisations. Therefore the top and bottom maps have isomorphic kernels. This gives $\mathbf{QCoh}(U\cup V:V\cap Y)=\mathbf{QCoh}(U\cup V:(U\cup V)\cap Y)\cong\mathbf{QCoh}(V:Y\cap V)$. Notice, moreover, that the following diagram has all squares and the entire rectangle being pullbacks:

\begin{displaymath}
    \xymatrix{
    \mathbf{QCoh}(X:Y)\ar[d]\ar[r] & \mathbf{QCoh}(V:V\cap Y)\ar[d]\ar[r] & )\ar[d]\\
    \mathbf{QCoh}(X)\ar[r] & \mathbf{QCoh}(V)\ar[r] & \mathbf{QCoh}(U).
    }
\end{displaymath}

Now consider the Verdier sequence
\begin{displaymath}
    \xymatrix{
    \mathbf{QCoh}(X:Y\cap Z)\ar[r] &\mathbf{QCoh}(X)\ar[r] &\mathbf{QCoh}(U\cup V)
    }
\end{displaymath}

Therefore we have a commutative diagram with Verdier sequences in the rows and columns:
\begin{displaymath}
    \xymatrix{
    \mathbf{QCoh}(U\cup V:Y\cap V)\ar[r] & \mathbf{QCoh}(V)\ar[r] &\mathbf{QCoh}(V\cap U)\\
    \mathbf{QCoh}(X:Y)\ar[r]\ar[u] & \mathbf{QCoh}(X)\ar[u]\ar[r] & \mathbf{QCoh}(U)\ar[u]\\
    K\ar[u]\ar[r] & \mathbf{QCoh}(X:Z)\ar[r]\ar[u] & \mathbf{QCoh}(U:U\cap Z)\ar[u].
    }
\end{displaymath}

Setting \(e \colon U \to X\) as the open immersion \(U \subset X\), as in Equation \ref{eq:preNis}, we have a commuting diagram with Verdier sequences in the rows 

\begin{equation}\label{eq:Verdier-QCoh}
\begin{tikzcd}
\mathbf{Nuc}^{\infty}(X:Y) \arrow{r}{} & \mathbf{Nuc}^{\infty}(X) \arrow{r}{e^*} & \mathbf{Nuc}^{\infty}(U) \\
\mathbf{Nuc}^{\infty}(X: Y \cap Z) \arrow{r}{} \arrow{u}{} & \mathbf{Nuc}^{\infty}(X:Z) \arrow{r}{e_{\vert}^*} \arrow{u}{} & \mathbf{Nuc}^{\infty}(U: U \cap Z) \arrow{u}{}
\end{tikzcd}
\end{equation} where the vertical arrows are inclusions. Here we have used that \(e^*\) has a fully faithful, colimit-preserving right adjoint \(e_* \colon \mathbf{Nuc}^{\infty}(U) \to \mathbf{Nuc}^{\infty}(X)\), so that we indeed have Verdier sequences in the rows. 

\comment{
In the rigid affinoid case, to see that \(e\) induces an adjoint pair \((e^*,e_*)\) with \(e_*\) fully faithful, since the properties of being quasi-compact and an open immersion are local on the target, we may reduce to the case where \(e\) is a rational localisation \(A \to B\)  rational localisation of affinoids (see \cite{soor2024derived}*{Lemma 3.3} for a more elaborate and general argument). Then \(e\) is in particular a homotopy epimorphism. Since \((-)^\rig\) is strongly monoidal, \(A^\rig \to B^\rig\) is a homotopy epimorphism, so that the forgetful functor \(e_* \colon \mathbf{Mod}_{B^\rig}(\bD(K)) \to \mathbf{Mod}_{A^\rig}(\bD(K))\) is fully faithful, so that we get a Verdier sequence as in \ref{eq:Verdier-QCoh}, but with \(\bD^{\mathrm{nuc}}(-)\) in place of \(\bD(-)\). Taking rigidifications completes the proof.  
}
\end{proof}
}

\subsection{Nisnevich descent for $K$-Theory}

Let us now specialise to working in a nuclear context $(\mathbf{A},\tau|_{\mathbf{A}}),$ where $\tau$ is contained in the (descendable) \'{e}tale topology. For example, this could be the algebraic context, the dagger analytic context, or the nuclear rigid context. As a consequence of Lemma \ref{lem:Nisnevich-sheaves} and Theorem \ref{thm:Voevodsky-analytic}, a presheaf on \(\mathbf{Sch}\) is a Nisnevich sheaf if and only if it maps a square in $\mathcal{X}_{N}$ to a pullback square in anima. By Lemma \ref{lem:maxime1}, $\mathbf{Nuc}^{\infty}(X)$ is such a sheaf on $(\mathbf{A},\tau|_{\mathbf{A}})$.

The following is a generalisation of Thomas-Trobaugh's result to the analytic setting:

\begin{theorem}\label{thm:K-theory-descent}
In a nuclear context, any $\mathbf{A}$-invariant satisfies Nisnevich descent on the full subcategory \(\mathbf{Sch}^{qcqs}(\mathbf{A},\tau|_{\mathbf{A}})\) of $\mathbf{Sch}$ consisting of qcqs schemes. 
\end{theorem}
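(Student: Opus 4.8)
The plan is to reduce the statement to the cd-structure criterion of Lemma \ref{lem:Nisnevich-sheaves} combined with Theorem \ref{thm:Voevodsky-analytic}, and then check that an $\mathbf{A}$-invariant $E^{\an}(-) = E^{\cont}(\mathbf{Nuc}^\infty(-))$ sends an elementary spatial Nisnevich square to a pullback square in $\bD$. Since $E^{\cont}$ is a localising invariant on $\mathbf{Cat}^{\mathrm{dual}}$ (by Theorem \ref{thm:Efimov}), and localising invariants carry Verdier localisation sequences to fibre-cofibre sequences, it suffices to exhibit, for each square in $\mathcal{P}_{sp-N}$, a commuting diagram of Verdier localisation sequences of dualisable categories of the form provided by Lemma \ref{lem:nuc-Verdier}, in which the two ``support'' fibres on the left are equivalent. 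Indeed, given such a diagram, applying $E^{\cont}$ to both rows yields a map of fibre sequences in $\bD$; if the left-hand vertical map is an equivalence, then the right-hand square
\[
\begin{tikzcd}
E^{\an}(X) \arrow{r}{} \arrow{d}{} & E^{\an}(U) \arrow{d}{}\\
E^{\an}(V) \arrow{r}{} & E^{\an}(U \times_X V)
\end{tikzcd}
\]
is a pullback in $\bD$, which is exactly Nisnevich descent by Lemma \ref{lem:Nisnevich-sheaves}.

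\textbf{Key steps.} First, I would set up the geometry: given a square as in \eqref{eq:preNis} with $e \colon U \to X$ an open immersion, $p \colon V \to X$ \'etale and spatial on the complement of $U$, and $|V|_{Ber} \times_{|X|_{Ber}} (|X|_{Ber}\setminus|U|_{Ber}) \to |X|_{Ber}\setminus |U|_{Ber}$ a homeomorphism. Let $Y \subseteq |X|_{Ber}$ be the closed complement of $|U|_{Ber}$; by the lemma following Proposition \ref{prop:locopimm} (or rather by the construction of $|X_0| \cong |X|$ and the existence of subspaces realising open subsets), there is a corresponding closed subspace structure, and $\mathbf{Nuc}^\infty(X:Y)$ is defined as the fibre of $\mathbf{Nuc}^\infty(X) \to \mathbf{Nuc}^\infty(U)$, which is dualisable because $e_* \colon \mathbf{Nuc}^\infty(U) \to \mathbf{Nuc}^\infty(X)$ is a fully faithful internal left adjoint (Corollary \ref{cor:openimmersembedding}, Proposition \ref{prop:cocontqcqs}). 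Second, I would pull the square back along $p$: because $p$ is \'etale, $V \times_X V \to V$ is an open immersion (Corollary \ref{cor:equivalenttruncation}), and setting $W = U \times_X V$, the open subspace $W \subseteq V$ has closed complement $Z := |V|_{Ber} \setminus |W|_{Ber}$. Third — and this is the crucial geometric input — I would use that $p$ is spatial on the complement of $U$ together with the degree-one homeomorphism hypothesis to show that $p$ restricts to an equivalence of closed subspaces ``$Z \cong Y$ over $X$'' in the strong sense that the exact functor $p^* \colon \mathbf{Nuc}^\infty(X:Y) \to \mathbf{Nuc}^\infty(V:Z)$ is an equivalence. This is the analytic analogue of excision; over the Berkovich points the map $|V \times_X (X\setminus U)|_{Ber} \to |X\setminus U|_{Ber}$ is a homeomorphism by the two hypotheses, and one upgrades this to an equivalence of nuclear categories with support using the descendability/K\"unneth machinery of Lemma \ref{lem:tensordiagram} and Lemma \ref{lem:tensorequiv} (noting $p$ is passable, being \'etale hence qcqs between qcqs schemes), exactly parallelling \cite{andreychev2023k}*{Proposition 5.13/Lemma 5.14}. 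Fourth, I would invoke Lemma \ref{lem:nuc-Verdier} with the two open immersions $U \to X$ and $V \to X$ — wait, more precisely one needs $V$ to be a scheme over $X$, so one instead applies the localisation-sequence bookkeeping of the propositions preceding Lemma \ref{lem:nuc-Verdier} after pulling everything back; the upshot is a commuting diagram
\[
\begin{tikzcd}
\mathbf{Nuc}^\infty(X:Y) \arrow{r}{} & \mathbf{Nuc}^\infty(X) \arrow{r}{e^*} & \mathbf{Nuc}^\infty(U)\\
\mathbf{Nuc}^\infty(V:Z) \arrow{u}{p^*} \arrow{r}{} & \mathbf{Nuc}^\infty(V) \arrow{u}{p^*} \arrow{r}{} & \mathbf{Nuc}^\infty(W) \arrow{u}{}
\end{tikzcd}
\]
of Verdier localisation sequences of dualisable categories, with $p^* \colon \mathbf{Nuc}^\infty(X:Y) \to \mathbf{Nuc}^\infty(V:Z)$ an equivalence by Step three. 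Fifth, apply $E^{\cont}$ and conclude as above. Finally, I would remark that this verifies the sheaf condition for all squares in $\mathcal{P}_{sp-N}$, and since $\mathcal{P}_{sp-N} \subseteq \chi_N$ and $\chi_N$ is a complete regular cd-structure (Theorem \ref{thm:Voevodsky-analytic}), Lemma \ref{lem:Nisnevich-sheaves} gives full Nisnevich descent on $\mathbf{Sch}^{qcqs}$.

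\textbf{Main obstacle.} The hard part is Step three: proving that $p^* \colon \mathbf{Nuc}^\infty(X:Y) \to \mathbf{Nuc}^\infty(V:Z)$ is an equivalence, i.e. establishing excision for nuclear categories supported on the complement of the open $U$. Over the Berkovich points the homeomorphism is given; the content is promoting it to the level of module categories. I expect this to require (a) the identification of categories-with-support as recollement fibres together with their dualisability, which the excerpt has in hand, and (b) a local computation showing that along the closed complement the \'etale map $p$ becomes ``an isomorphism'' in the relevant sense — concretely, that for any affinoid $\mathrm{Spec}(A) \to X$ the base change $\mathbf{Nuc}^\infty(\mathrm{Spec}(A) \times_X V : \text{support}) \to \mathbf{Nuc}^\infty(\mathrm{Spec}(A) : \text{support})$ is an equivalence, using that an \'etale map which is a homeomorphism on Berkovich points over a closed subspace is, after suitable localisation, an isomorphism (via Lemma \ref{lem:andrspat}, the structure of $\mathrm{T}$-\'etale maps, and the characterisation of closed immersions as spatial). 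Controlling this carefully — especially in the generality of an arbitrary Banach base ring where \'etale maps of dagger affinoids are more delicate than in the rigid case — is where the real work lies; everything else is formal manipulation of localisation sequences and the universal property of $E^{\cont}$.
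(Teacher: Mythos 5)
Your overall architecture (cd-structure criterion, Verdier sequences with equivalent fibres, apply $E^{\cont}$) matches the paper's, but you have inverted the logical order at the decisive point, and in doing so you have turned a formal consequence into an unproven hard step. You treat the excision statement — that $p^*\colon \mathbf{Nuc}^\infty(X:Y)\to\mathbf{Nuc}^\infty(V:Z)$ is an equivalence — as the \emph{input} to be established by a direct geometric argument (local triviality of the \'etale map along the closed complement, K\"unneth formulae, etc.), and you yourself flag that you have not carried this out. The paper never proves this directly. Instead it observes that $\mathbf{Nuc}^\infty(-)$ already satisfies \'etale descent (Corollary \ref{cor:nuc-analytic-et-descent}, resp.\ Theorem \ref{thm:rigidification-formal-desc}), hence is a sheaf for the coarser Nisnevich topology; by Lemma \ref{lem:Nisnevich-sheaves} and Theorem \ref{thm:Voevodsky-analytic} it therefore sends the elementary Nisnevich square to a pullback square of $\infty$-categories. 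This pullback is also a limit of dualisable categories (all terms are rigid by Lemma \ref{lem:maxime2}, the limit is finite, and all functors are internal left adjoints), and the equivalence of the fibres $\mathbf{Nuc}^\infty(X:Y)\simeq\mathbf{Nuc}^\infty(V:V\times_X Y)$ is then the \emph{formal consequence} of the right-hand square being a pullback of Verdier sequences (Lemma \ref{lem:nuc-Verdier}). Applying $E^{\cont}$ finishes the proof.

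So the concrete gap in your write-up is Step three: you would need to actually prove the analogue of Andreychev's excision for $\mathbf{Nuc}^\infty$-categories with support over an arbitrary Banach base, which is substantial and which the spatiality and Berkovich-homeomorphism hypotheses alone do not hand you at the level of module categories without further work. The fix is cheap: replace your Step three by the citation of \'etale descent for $\mathbf{Nuc}^\infty$ to get the categorical pullback square first, and then read off the fibre equivalence rather than proving it. With that substitution your argument coincides with the paper's.
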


\begin{proof}
Consider an elementary Nisnevich square
\[
\begin{tikzcd}
U \times_X V \arrow{r}{} \arrow{d}{} & V \arrow{d}{f} \\
U \arrow{r}{e} & X,
\end{tikzcd}
\] where \(e \colon U \to X\) is an open immersion and \(f \colon V \to X\) is an etale map. Suppose further that all schemes are qcqs. Since by Corollary \ref{cor:nuc-analytic-et-descent} (resp. Theorem \ref{thm:rigidification-formal-desc}), \(\mathbf{Nuc}^\infty(-)\) (resp. \(\mathbf{Nuc}(-)\)) satisfies \'etale descent, it is in particular a sheaf for the Nisnevich topology. Therefore, by Lemma \ref{lem:Nisnevich-sheaves} and Theorem \ref{thm:Voevodsky-analytic}, we get an induced pullback 
\[
\begin{tikzcd}
\mathbf{Nuc}^\infty(X) \arrow{r}{e^*} \arrow{d}{f^*} & \mathbf{Nuc}^\infty(U) \arrow{d}\\
\mathbf{Nuc}^\infty(V) \arrow{r}{} & \mathbf{Nuc}^\infty(U \times_X V),
\end{tikzcd}
\] of \(\infty\)-categories. Note that all terms are rigid categories, the limit is finite, and by base change, all functors are internal left adjoints. Thus this is also a limit in the category of dualisable categories. Recall further that \(e^* \colon \mathbf{Nuc}^\infty(X) \to \mathbf{Nuc}^\infty(U)\) is a localisation with kernel $\mathbf{Nuc}(X:Y)$, where $Y$ is the complement of $U$ in $X$. Applying Lemma \ref{lem:nuc-Verdier}, we get a diagram 

\[
\begin{tikzcd}
\mathbf{Nuc}^\infty(X:Y) \arrow{r}{} \arrow{d}{\simeq} & \mathbf{Nuc}^\infty(X) \arrow{r}{e^*} \arrow{d}{f^*} & \mathbf{Nuc}^\infty(U) \arrow{d}\\
\mathbf{Nuc}^\infty(V: V \times_X Y) \arrow{r}{} & \mathbf{Nuc}^\infty(V) \arrow{r}{} & \mathbf{Nuc}^\infty(U \times_X V),
\end{tikzcd}
\] of \(\infty\)-categories, where the rows are Verdier sequences of dualisable categories. Applying \(E^\cont\) yields the desired result.\qedhere
\end{proof}

\section{Localising invariants of non-Archimedean Banach rings}


Let $R$ be a Banach ring which has a pseudo-uniformiser $\pi$. We assume that the norm is $\pi$-adic, and we put $R_{n}=R\big\slash (\pi^{n}).$
In this section, we describe localising invariants of the dualisable category of nuclear \(R\)-modules from the bornological perspective. Here we will concern ourselves with truncating localising invariants, but in future work, we plan to compute the analytic K-theory of derived formal schemes, analogous to Efimov's result in the condensed setting \cite{efimov2025localizing}.

Our arguments in this section closely follow the results and notation in \cite{fedeli2023topological}. We call a localising invariant \emph{continuous} if \[E^\an(R) \defeq E^\cont(\mathbf{Nuc}(R)) \simeq \varprojlim_n E(R_n),\] where \(E(R_n) = E(\mathbf{Perf}(R_n))\). 

To set things up, consider the full subcategory \(\mathbf{Vec}(R_n) \subset \mathbf{Perf}(R_n)\) of retracts of finitely generated free \(R_n\)-modules. We then get an induced functor from the product \(\prod_{n \in \N} \mathbf{Vec}(R_n) \to \prod_{n \in \N}\mathbf{Perf}(R_n)\), which induces by the stability of the right hand side a functor \[\mathbf{Stab}(\prod_{n \in \N} \mathbf{Vec}(R_n)) \to \prod_{n \in \N}\mathbf{Perf}(R_n)\] in \(\mathbf{Cat}_\infty\). There is also a canonical functor from the lax limit \[\varprojlim_n \mathbf{Perf}(R_n) \to \prod_{n \in \N} \mathbf{Perf}(R_n)\] to the product. Denote by \(\mathbf{laxPerf}_R^b\) the fibre product of these two functors; this is a stable \(\infty\)-category by \cite{fedeli2023topological}*{Lemma 1.40}. Actually, the category \(\mathbf{laxPerf}_R\) is the stabilisation of the additive  \(\infty\)-category \(\mathbf{laxVec}_R^s\) generated by objects in \(\mathbf{laxPerf}_R^b\) that are degreewise connective, have Tor-amplitude \(\leq 0\) and \(\pi_0\)-surjective transition maps, equipped with the split exact structure. Consequently, any additive functor mapping out of \(\mathbf{laxPerf}_R^b\) with stable target is exact.

Recall the internal rigidification functor (\ref{eqref:nonarc-rig}) \(\mathbf{D}(\mathsf{CBorn_R^\tf}) \to \mathbf{Nuc}^{\tf}(R)\) from Section \ref{subsubsec:nonarc-nuc}, associating to a complete, torsion free bornological \(R\)-module the compactoid bornology. Here \(\mathbf{Nuc}_R^\tf\) denotes the smallest stable \(\infty\)-subcategory of \(\mathbf{Nuc}_R\) generated by complete, torsion free nuclear \(R\)-modules. Left Kan-extending along the inclusion \(\bD(\mathsf{CBorn_R^\tf}) \to \mathbf{D}(R)\), we get an induced functor \[(-)^\rig \colon \mathbf{D}(R) \to \mathbf{Nuc}(R)\] that is right adjoint to the inclusion.  

Consider the functor \begin{equation}\label{eqref:R}
\mathbf{R} \colon \mathbf{Ind}(\mathbf{laxPerf}_R^b) \to \mathbf{D}(R) \overset{(-)^\rig}\to \mathbf{Nuc}(R)
\end{equation} that post-composes the internal rigidification functor with the left Kan extension of the inverse limit functor \(\varprojlim \colon \mathbf{laxPerf}_R^b \to \mathbf{D}(R)\) to \(\mathbf{Ind}(\mathbf{laxPerf}_R^b)\).

\begin{lemma}\label{lem:basic-nuc-perfect}
Let \(N\) be a basic nuclear object in \(\bD(R)\). Then there is a basic nuclear object \(N'\) in \(\mathbf{Ind}(\mathbf{laxPerf}_R^b)\) and an equivalence \(N \overset{\simeq}\to \mathbf{R}(N')\). Moreover, the morphism \[\mathbf{Hom}_{\mathbf{Ind}(\mathbf{laxPerf}_R^b)}(N', -) \overset{\mathbf{R}}\to \mathbf{Hom}_{\mathbf{Nuc}(R)}(N, \mathbf{R}(-))\] induced by the equivalence \(N \simeq \mathbf{R}(N')\) is an equivalence of functors.
\end{lemma}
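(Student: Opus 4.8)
The plan is to reduce to the structure theory of basic nuclear objects in derived algebraic contexts (Proposition \ref{prop:basic-nuclear-explicit}) and then lift the building blocks explicitly to $\mathbf{laxPerf}_R^b$. By definition, a basic nuclear object $N$ in $\bD(R)$ is a sequential colimit $N \simeq \varinjlim_k N_k$ of compact objects in $\bD(R)$ with trace-class transition maps; moreover, using the identification $\bD(R) \simeq \bD(\mathsf{Ban}_R^{\le 1})^\circ$ from Theorem \ref{thm:tilde-nuc} together with $\bD(\mathsf{Ban}_R^{\le 1})^\circ \simeq \varprojlim_n \bD(\mathsf{Mod}_{R_n})$, one can represent each compact object $N_k$ by a chain complex with finitely many terms that are compactly-indexed degreewise, and compatibly across $n$. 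The key point is that a trace-class map of such objects, after tensoring down to $R_n = R/(\dvgen^n)$, factorises through a finitely generated free $R_n$-module (this is the essential content of the compactoid characterisation of nuclearity from \cite{Meyer-Mukherjee:HL}*{Proposition 3.5}). Therefore each transition map $N_k \to N_{k+1}$ can be arranged to factor, degreewise and compatibly in $n$, through objects of $\mathbf{Vec}(R_n)$, which is exactly what it means to give an object in the lax limit category underlying $\mathbf{laxPerf}_R^b$.

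First I would make the factorisation precise: given the trace-class transition map $f_k \colon N_k \to N_{k+1}$ in $\bD(R)$, replace the sequence $(N_k)$ by a cofinal subsequence so that each $f_k$ factors as $N_k \to P_k \to N_{k+1}$, where $P_k$ is a bounded complex whose terms, viewed in each $\bD(R_n)$, lie in $\mathbf{Vec}(R_n)$ (using that compactoid maps factor through finitely generated free modules at each finite level, and splicing these across $n$ via the inverse-limit description). This produces an object $N' \defeq \varinjlim_k j(P_k) \in \mathbf{Ind}(\mathbf{laxPerf}_R^b)$, where the $j(P_k) \to j(P_{k+1})$ are the induced transition maps, which are again compact in $\mathbf{Ind}(\mathbf{laxPerf}_R^b)$; hence $N'$ is basic nuclear there. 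Applying $\mathbf{R}$ from \eqref{eqref:R} — which is the left Kan extension of $\varprojlim \colon \mathbf{laxPerf}_R^b \to \bD(R)$ post-composed with the internal rigidification $(-)^\rig$ — sends $j(P_k) \mapsto (\varprojlim_n P_{k,n})^\rig$, and since the $P_k$ are already degreewise finitely generated free at each level, $\varprojlim_n P_{k,n}$ recovers $P_k$ itself, which is already nuclear (being compact with compactoid-factoring maps into it), so $\mathbf{R}(j(P_k)) \simeq P_k$ and $\mathbf{R}(N') \simeq \varinjlim_k P_k \simeq N$.

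For the second statement, both $\mathbf{Hom}_{\mathbf{Ind}(\mathbf{laxPerf}_R^b)}(N', -)$ and $\mathbf{Hom}_{\mathbf{Nuc}(R)}(N, \mathbf{R}(-))$ preserve filtered colimits in the argument (since $N'$ is $\omega_1$-compact in $\mathbf{Ind}(\mathbf{laxPerf}_R^b)$ and $N$ is $\omega_1$-compact, i.e.\ basic nuclear, in $\mathbf{Nuc}(R)$, and $\mathbf{R}$ preserves filtered colimits by construction). Hence it suffices to check the claim on objects of the form $j(M)$ for $M \in \mathbf{laxPerf}_R^b$, where it unwinds to comparing $\varinjlim_k \mathbf{Hom}_{\mathbf{laxPerf}_R^b}(P_k, M)$ with $\varinjlim_k \mathbf{Hom}_{\mathbf{Nuc}(R)}(P_k, \varprojlim_n M_n^{})$; since the $P_k$ are degreewise finitely generated free compatibly across $n$, mapping out of $P_k$ commutes with the inverse limit over $n$, and the two sides agree term by term. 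The main obstacle will be step two — carrying out the degreewise, simultaneously-in-$n$ factorisation of a trace-class transition map through $\prod_n \mathbf{Vec}(R_n)$ in a way that is functorial enough to assemble into a genuine object of $\mathbf{Ind}(\mathbf{laxPerf}_R^b)$ (rather than just a levelwise factorisation), and verifying that rigidification undoes the inverse limit on these building blocks; this is where one must be careful about the distinction between the lax limit $\mathbf{laxPerf}_R$ and the honest product, and about Tor-amplitude and $\pi_0$-surjectivity conditions defining $\mathbf{laxVec}_R^s$.
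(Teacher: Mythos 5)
The paper's own ``proof'' of this lemma is a one-line citation to the corresponding statement in the external reference \cite{fedeli2023topological}, so there is no internal argument to compare against; what you have written is an attempted reconstruction, and it has a genuine gap at exactly the step you yourself flag as ``the main obstacle''. Identifying the difficulty is not the same as resolving it: the entire content of the lemma is that the levelwise factorisations of a trace-class (equivalently, compactoid) transition map \(N_k \to N_{k+1}\) through finitely generated free \(R_n\)-modules can be chosen coherently in \(n\) — with compatible comparison maps \(P_{k,n+1} \otimes_{R_{n+1}} R_n \to P_{k,n}\), the Tor-amplitude and \(\pi_0\)-surjectivity constraints entering the definition of \(\mathbf{laxVec}_R^s\), and degreewise compatibility with the differentials — so as to produce an honest object of \(\mathbf{laxPerf}_R^b\) rather than merely an object of \(\prod_n \mathbf{Perf}(R_n)\). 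Without this construction the object \(N'\) does not exist and nothing downstream can be run.

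Two further points would need repair even if that step were supplied. First, you invoke Theorem \ref{thm:tilde-nuc} as though \(\bD(R) \simeq \bD(\mathsf{Ban}_R^{\le 1})^\circ\); by Lemma \ref{lem:adic-subcat} the convex subcategory is only fully faithful in \(\bD(R)\), and a general compact object \(N_k\) (a finite complex of retracts of \(l^1(X)\) for arbitrary normed sets \(X\)) does not lie in it, so the passage to compatible systems over the \(R_n\) is precisely what must be proved, not a starting point. Second, the intermediate claim \(\mathbf{R}(j(P_k)) \simeq P_k\) fails in general: the ranks of the \(P_{k,n}\) typically grow with \(n\) (already for the trace-class map \(e_i \mapsto \dvgen^i e_i\) on \(l^1(\N)\) the image mod \(\dvgen^n\) has rank \(n\)), so \(\varprojlim_n P_{k,n}\) is an infinite product-type module that is not nuclear, and \((-)^\rig\) acts nontrivially on it. The equivalence \(\mathbf{R}(N') \simeq N\) can only be extracted after passing to the colimit over \(k\), using the interleaving with the \(N_k\) and the fact that the compactoid-bornology functor preserves colimits (Lemma \ref{lem:compactoid-properties}); the term-by-term identification you rely on — also in the corepresentability argument, where the \(\varinjlim_k\) of mapping spaces should in any case be \(\varprojlim_k\) — does not hold.
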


\begin{proof}
The same proof as in \cite{fedeli2023topological}*{1.62} works. 
\end{proof}

Combining Lemma \ref{lem:basic-nuc-perfect} and applying \cite{fedeli2023topological}*{Lemma 1.63} to the functor \(\mathbf{R}\), one obtains formally a fully faithful left adjoint \(\mathbf{L} \colon \mathbf{Nuc}(R) \to \mathbf{Ind}(\mathbf{laxPerf}_R^b)\). By another purely formal argument as in \cite{fedeli2023topological}*{Proposition 1.72}, the essential image of \(\mathbf{L}\) is given by objects in  \(\mathbf{Ind}(\mathbf{laxPerf}_R^b)\) that are nuclear in the sense of \cite{fedeli2023topological}*{Definition 1.47}. Now consider the stable \(\infty\)-category \(\mathbf{Cof}_R^b\) defined in \cite{fedeli2023topological}*{Definition 1.71}; one defines a functor \(F \colon \mathbf{Ind}(\mathbf{laxPerf}_R^b) \to \mathbf{Ind}(\mathbf{Cof}_R^b)\) by extending the functor \(\mathbf{laxPerf}_R^b \to \mathbf{Fun}(\N^\op, \mathbf{laxPerf}_R^b)\) that sends a lax perfect complex \(P\) to the cofibre  \(\mathsf{cofib}(P_j \otimes_{R_j} R_i \to P_i)\) with varying \(i\) and \(j\). 

Summarily, we get a sequence of stable \(\infty\)-categories \begin{equation}\label{eqref:almost-Verdier}\mathbf{Nuc}(R) \overset{L}\to \mathbf{Ind}(\mathbf{laxPerf}_R^b) \overset{F}\to  \mathbf{Ind}(\mathbf{Cof}_R^b) 
\end{equation} 

which need \emph{not} be a Verdier localisation sequence. One however has the following:

\begin{lemma}\cite{fedeli2023topological}*{Corollary 3.16.1}\label{lem:localising-main}
Let \(E\) be a localising invariant that commutes with products of small stable \(\infty\)-categories. Then \(E\) is continuous if and only if it maps the sequence in \eqref{eqref:almost-Verdier} to a fibre-cofibre sequence in spectra. 
\end{lemma}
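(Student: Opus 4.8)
The plan is to reduce the statement, via the formal comparison machinery already assembled in the excerpt, to the purely categorical criterion that Efimov-style continuity of a localising invariant is detected on a single (possibly non-exact) sequence of stable $\infty$-categories. First I would unpack what ``continuous'' means in our situation: by definition $E^{\an}(R)=E^{\cont}(\mathbf{Nuc}(R))$, and by Efimov's theorem (Theorem \ref{thm:Efimov}) this continuous extension is computed as $E^{\cont}(\mathbf{Nuc}(R))=\Omega E(\mathbf{Calk}^{\cont}(\mathbf{Nuc}(R)^{\omega}))$; so the statement to be proved is exactly that $E^{\cont}(\mathbf{Nuc}(R))\simeq \varprojlim_n E(R_n)$ if and only if $E$ sends $\mathbf{Nuc}(R)\overset{L}\to\mathbf{Ind}(\mathbf{laxPerf}_R^b)\overset{F}\to\mathbf{Ind}(\mathbf{Cof}_R^b)$ to a fibre-cofibre sequence. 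The strategy is to observe that $L$ is a fully faithful left adjoint onto the nuclear objects of $\mathbf{Ind}(\mathbf{laxPerf}_R^b)$ (this was the formal consequence of Lemma \ref{lem:basic-nuc-perfect}), and that $F$ by construction kills precisely the ``degreewise coherent/cofibre-trivial'' part, so that the sequence is a Verdier sequence \emph{after passing to compact objects and up to idempotent completion} exactly when a certain obstruction term vanishes — and that obstruction is controlled by the difference between $\varprojlim_n \mathbf{Perf}(R_n)$ and $\prod_n \mathbf{Perf}(R_n)$.

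Concretely the key steps, in order, would be: (1) identify $E^{\cont}(\mathbf{Ind}(\mathbf{laxPerf}_R^b))$ and $E^{\cont}(\mathbf{Ind}(\mathbf{Cof}_R^b))$ with the relevant lax limits / products of the $E(R_n)$, using that these are $\mathbf{Ind}$-completions of explicit small stable categories built from $\mathbf{laxVec}_R^s$ with the split exact structure, so that any additive (hence automatically exact) functor out of them is determined by its values on the $\mathbf{Vec}(R_n)$; here one invokes that $E$ commutes with products of small stable categories to pass through the product $\prod_n\mathbf{Perf}(R_n)$; (2) show that $E^{\cont}(L)$ is always a split inclusion (since $L$ is a fully faithful internal left adjoint with right adjoint $\mathbf{R}$), so that the cofibre of $E^{\cont}(L)$ is a retract of $E^{\cont}(\mathbf{Ind}(\mathbf{laxPerf}_R^b))$; (3) compute the cofibre of $E^{\cont}(L)$ in terms of $E^{\cont}$ of the Verdier quotient $\mathbf{Ind}(\mathbf{laxPerf}_R^b)/\mathbf{Nuc}(R)$, and compare it with $E^{\cont}(\mathbf{Ind}(\mathbf{Cof}_R^b))$ via the canonical functor $F$; (4) observe that $F$ induces an equivalence on the relevant quotients precisely when $\varprojlim_n E(R_n)\to \prod_n E(R_n)$ behaves correctly, i.e. when the lax limit maps to the honest limit compatibly — and that this is exactly the assertion that $E^{\cont}(\mathbf{Nuc}(R))\simeq\varprojlim_n E(R_n)$. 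Assembling (1)--(4): the sequence \eqref{eqref:almost-Verdier} goes to a fibre-cofibre sequence $\iff$ $\mathrm{cofib}(E^{\cont}(L))\simeq E^{\cont}(\mathbf{Ind}(\mathbf{Cof}_R^b))$ $\iff$ the fibre of $E^{\cont}(F)\circ E^{\cont}(L)\simeq 0$ computes $E^{\cont}(\mathbf{Nuc}(R))$ as the total fibre $\varprojlim_n E(R_n)$.

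I would lean heavily on \cite{fedeli2023topological}: the statement we want is literally their Corollary 3.16.1, proved there for the condensed/solid picture, and the content of the argument is the chain of identifications $\mathbf{Nuc}(R)\simeq$ nuclear objects of $\mathbf{Ind}(\mathbf{laxPerf}_R^b)$ together with the analysis of $F$. So the bulk of the proof is: (a) check that the bornological categories $\mathbf{Nuc}(R)$, $\mathbf{Ind}(\mathbf{laxPerf}_R^b)$, $\mathbf{Ind}(\mathbf{Cof}_R^b)$ satisfy the same formal properties as their condensed analogues — dualisability, the adjunction $(L,\mathbf{R})$ with $L$ fully faithful, the description of the essential image of $L$, and the compatibility of $F$ with the cofibre construction — all of which we have arranged in the preceding subsections (in particular using Theorem \ref{thm:tilde-nuc} to identify $\mathbf{Nuc}(R)$ with $\varprojlim_n\mathbf{D}(R_n)$, and the internal rigidification functor of \eqref{eqref:nonarc-rig}); and (b) import the diagram-chase of \cite{fedeli2023topological}*{Section 3} verbatim.

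\textbf{The main obstacle.} The hard part will be step (3)--(4): controlling the Verdier quotient $\mathbf{Ind}(\mathbf{laxPerf}_R^b)/\mathbf{Nuc}(R)$ and matching it with $\mathbf{Ind}(\mathbf{Cof}_R^b)$ through $F$. The sequence \eqref{eqref:almost-Verdier} is explicitly \emph{not} a Verdier sequence on the nose — $F$ fails to be a quotient map because $\mathbf{Ind}(\mathbf{Cof}_R^b)$ only sees the ``cofibre'' data and loses the compatible system of coherences — so one cannot simply apply a localisation-sequence argument. The point is that $F$ becomes an \emph{isomorphism on $E$-theory} precisely under the continuity hypothesis, and the delicate bookkeeping is to show that the failure of $F$ to be a Verdier quotient is measured by exactly the term $\mathrm{fib}(\varprojlim_n E(R_n)\to E^{\cont}(\mathbf{Nuc}(R)))$. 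This requires carefully tracking how the transition maps $P_j\otimes_{R_j}R_i\to P_i$ interact with idempotent completion and with the $\mathbf{Ind}$-completion, and is where the hypothesis that $E$ commutes with products of small stable $\infty$-categories does the essential work (it is what lets us pass from the lax limit to the product and back). I expect this to be routine once the dictionary with \cite{fedeli2023topological} is set up, but writing out the dictionary honestly — verifying that every structural lemma used there (their 1.40, 1.47, 1.62, 1.63, 1.71, 1.72 and the Section 3 diagram chase) has a bornological counterpart with the same proof — is the real labour.
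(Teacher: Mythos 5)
Your step (1) is the right opening move and agrees with the paper, but the heart of your argument --- steps (3)--(4), which you yourself flag as ``the main obstacle'' --- is both unresolved and aimed in the wrong direction. The paper's proof does not analyse the Verdier quotient $\mathbf{Ind}(\mathbf{laxPerf}_R^b)/\mathbf{Nuc}(R)$ or try to match it with $\mathbf{Ind}(\mathbf{Cof}_R^b)$ at the categorical level; that comparison is exactly what fails (the sequence is not a Verdier sequence) and is only confronted later, for \emph{truncating} invariants, via the nilpotent-extension argument of Theorem \ref{thm:truncating}. The entire content of Lemma \ref{lem:localising-main} is instead a direct computation imported from \cite{fedeli2023topological}*{Lemma 3.15, Lemma 3.16}: after applying the universal localising invariant, \emph{both} the middle and the right-hand terms of \eqref{eqref:almost-Verdier} become $U(\prod_{n}\mathbf{Perf}(R_n))$, and the induced map $U(F)$ is identified with $1-\mathrm{pr}$ (identity minus shift). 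Since $E$ commutes with products of small stable $\infty$-categories, $E(\prod_n\mathbf{Perf}(R_n))\simeq\prod_n E(R_n)$, and the fibre of $1-\mathrm{pr}$ on $\prod_n E(R_n)$ is $\varprojlim_n E(R_n)$ by the Milnor description of a sequential limit of spectra. The ``if and only if'' is then immediate: the image of \eqref{eqref:almost-Verdier} is a fibre-cofibre sequence precisely when $E(\mathbf{Nuc}(R))\to\mathrm{fib}(E(F))\simeq\varprojlim_n E(R_n)$ is an equivalence, i.e.\ precisely when $E$ is continuous. No tracking of idempotent completions or of the transition maps $P_j\otimes_{R_j}R_i\to P_i$ beyond what those two cited lemmas already encode is needed.

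Two further concrete problems with your route. First, characterising ``fibre-cofibre sequence'' as $\mathrm{cofib}(E^{\cont}(L))\simeq E^{\cont}(\mathbf{Ind}(\mathbf{Cof}_R^b))$ forces you to compute $\mathrm{cofib}(E^{\cont}(L))$, which presupposes knowledge of $E^{\cont}(\mathbf{Nuc}(R))$ --- the very quantity the lemma is meant to characterise; the paper avoids this circularity by computing the \emph{fibre} of $E(F)$, which involves only the two right-hand terms. Second, your claim in step (2) that $E^{\cont}(L)$ is ``always a split inclusion'' because $L$ is fully faithful with right adjoint $\mathbf{R}$ is unjustified: for a splitting to exist in $E$-theory you would need $\mathbf{R}$ to be a morphism of dualisable categories (a left adjoint whose right adjoint is again a left adjoint), which is neither established nor needed for the argument.
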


\begin{proof}
Consider the sequence \(\mathbf{Nuc}(R) \overset{L}\to \mathbf{Ind}(\mathbf{laxPerf}_R^b) \overset{F}\to  \mathbf{Ind}(\mathbf{Cof}_R^b)\). Then applying the universal localising invariant \(\mathcal{U}^{\mathrm{cont}}\), we get the sequence \[\mathcal{U}^{\mathrm{cont}}(\mathbf{Nuc}(R)) \to \mathcal{U}^{\mathrm{cont}}(\prod_{n \in \N} \mathsf{Perf}(R_n)) \overset{1 - \mathrm{pr}}\to \mathcal{U}^{\mathrm{cont}}(\prod_{n \in \N} \mathsf{Perf}(R_n)),\] using \cite{fedeli2023topological}*{Lemma 3.15, Lemma 3.16}. Then \(\mathcal{U}^{\mathrm{cont}}(\mathbf{Nuc}(R))\) is the inverse limit \(\varprojlim_{n}\mathcal{U}^{\mathrm{cont}}(\mathbf{Perf}(R_n))\) if and only if it is the fibre of \(\mathcal{U}^{\mathrm{cont}}(\prod_{n \in \N} \mathbf{Perf}(R_n)) \overset{1 - \mathrm{pr}}\to \mathcal{U}^{\mathrm{cont}}(\prod_{n \in \N} \mathbf{Perf}(R_n))\), as required. 
\end{proof}

We shall now use Lemma \ref{lem:localising-main} to compute several localising invariants. Recall that a localising invariant \(E \colon \mathsf{Pr}_{st}^{L, \mathrm{dual}}  \to \mathbf{D}\) is said to be \emph{truncating} if \(E(R) \simeq E(\pi_0(R))\) for any connective \(\mathbb{E}_1\)-ring spectrum. 

\begin{remark}
Note that being truncating in this sense does not imply that the associated analytic invariants \(E^\an(R) = E^\cont(\mathbf{Nuc}(R))\) are truncating, as the embedding from ring spectra to dualisable categories does not necessarily remember the topology.  In usual analytic cyclic homology theories (\cite{Cortinas-Meyer-Mukherjee:NAHA}), one defines extensions of bornological \(\mathcal{O}_K\)-algebras \(A \to B\) whose kernels are \emph{analytically nilpotent}. And for such extensions, the authors prove that the induced morphism \[\mathbb{HA}(A) \overset{\simeq}\to \mathbb{HA}(B)\] is a weak equivalence in \(\bD(K)\). We have not investigated whether an analytically nilpotent extension \(A \to B\) of bornological \(\mathcal{O}_K\)-algebras induces an equivalence \(E(A) \to E(B)\), though we believe this is plausible for homotopy invariant theories. 
\end{remark}

In our setting, it suffices to work with analytically nilpotent extensions of categories:

\begin{definition}\label{def:analytic-nilpotent-cat}
    An additive functor \(F \colon A \to B\) between additive \(\infty\)-categories is said to be \emph{nilpotent} if (1) \(F\) is essentially surjective; (2) for all objects \(V\), \(W \in A\), the map \(\mathbf{Hom}_A(V,W) \to \mathbf{Hom}_B(F(V), F(W))\) is \(\pi_0\)-surjective; (3) there is an \(n\) such that for any collection \(f_1, \dotsc, f_n\) of \(n\)-composable morphisms in \(A\), if \(F(f_i)\) are the zero morphism in \(B\), then the composition \(f_1 \circ \cdots \circ f_n\) vanishes in \(A\). 
\end{definition}

Let \(\mathbf{Mod}_A = \mathsf{Ind}(\mathsf{Stab}(A))\) so that \(E(A) \defeq E(\mathbf{Mod}(A)^\omega) = E(\mathsf{Stab}(A))\).

\begin{lemma}\cite{elmanto2022nilpotent}*{4.2.1}\label{lem:elmanto}
    Let \(A \to B\) be a nilpotent extension of additive \(\infty\)-categories. Suppose \(E\) is a truncating localising invariant. Then the induced map \(E(A) \to E(B)\) is an equivalence. 
\end{lemma}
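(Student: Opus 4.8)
Looking at this statement, it's essentially a citation-backed reduction: Lemma \ref{lem:elmanto} attributes the result to \cite{elmanto2022nilpotent}*{4.2.1}, so the "proof" is really about checking that our Definition \ref{def:analytic-nilpotent-cat} of a nilpotent extension matches the hypotheses under which the cited result applies, and then invoking it.

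\textbf{The plan.} First I would recall the setup: given a nilpotent extension \(F \colon A \to B\) of additive \(\infty\)-categories in the sense of Definition \ref{def:analytic-nilpotent-cat}, we want to show \(E(A) \to E(B)\) is an equivalence for any truncating localising invariant \(E\). Since \(E(A) = E(\mathsf{Stab}(A))\) and \(E(B) = E(\mathsf{Stab}(B))\) by definition, and \(\mathsf{Stab}\) is functorial, it suffices to understand the induced exact functor \(\mathsf{Stab}(F) \colon \mathsf{Stab}(A) \to \mathsf{Stab}(B)\) on stabilizations. The key point is that conditions (1)--(3) in Definition \ref{def:analytic-nilpotent-cat} are exactly the conditions isolated by Elmanto--Morrow (and used in \cite{fedeli2023topological}): essential surjectivity, \(\pi_0\)-surjectivity on mapping spaces, and the nilpotence of the kernel ideal in the sense that an \(n\)-fold composite of morphisms killed by \(F\) vanishes.

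\textbf{Key steps.} The argument proceeds as follows. One shows that \(\mathsf{Stab}(F)\) exhibits \(\mathsf{Stab}(B)\) as the idempotent completion of a Verdier quotient \(\mathsf{Stab}(A)/\mathcal{N}\), where \(\mathcal{N}\) is the stable subcategory generated by the kernel; more precisely, because \(F\) is a nilpotent extension, the functor on module categories \(\mathbf{Mod}_A \to \mathbf{Mod}_B\) is an \emph{idempotent-complete quotient by a nilpotent stable ideal}. The truncating hypothesis on \(E\) then forces \(E\) to annihilate this nilpotent ideal: this is the content of \cite{elmanto2022nilpotent}*{4.2.1}, which says that truncating invariants send nilpotent extensions of additive (or \(\mathbb{E}_1\)-ring) objects to equivalences, the proof going through the fact that \(K^{\mathrm{inv}}\)-theory (the fiber of \(K \to KH\)) is nil-invariant and that a truncating invariant factors through \(KH\)-local data on nilpotent ideals. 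So the concrete work is: (i) verify that \(\mathsf{Stab}(F)\) sends the nilpotent ideal to zero in the appropriate sense, i.e. the associated \emph{ring} (endomorphism ring of a generator, or the relevant \(\mathbb{E}_1\)-ring) receives a nilpotent extension; (ii) reduce the categorical statement to the ring-level statement by passing to a set of compact generators and their endomorphism rings, compatibly on both sides (using \(\pi_0\)-surjectivity to lift); (iii) apply the cited result.

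\textbf{Main obstacle.} The main subtlety is step (ii): one must carefully match the categorical notion of nilpotent extension with the ring-theoretic one used in \cite{elmanto2022nilpotent}. Since \(A\) is only additive (not stable), the generator of \(\mathsf{Stab}(A)\) need not have a well-behaved endomorphism ring on the nose, so one works instead with the additive structure directly: the category \(\mathbf{laxVec}_R^s\)-style argument shows any additive functor out of such a split-exact additive category is exact, and one exploits that \(E\) restricted to split-exact additive inputs is computed via the additive \(K\)-theory / Waldhausen construction. Then nilpotence of the ideal in the sense of Definition \ref{def:analytic-nilpotent-cat}(3) directly gives that the relevant pro-ideal (or the associated bar construction) is nilpotent of bounded degree, which is precisely what \cite{elmanto2022nilpotent}*{4.2.1} needs. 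So the proof is short: \emph{the conditions in Definition \ref{def:analytic-nilpotent-cat} are engineered so that \(\mathsf{Stab}(F)\) is a nilpotent extension in the sense of \cite{elmanto2022nilpotent}, and the result follows by applying that reference.} I would write it essentially as: ``This is a restatement of \cite{elmanto2022nilpotent}*{Theorem 4.2.1} — conditions (1)--(3) of Definition \ref{def:analytic-nilpotent-cat} are exactly the hypotheses there, once one passes to stabilizations \(\mathsf{Stab}(A) \to \mathsf{Stab}(B)\), noting that by construction of \(\mathbf{laxVec}^s_R\) every additive functor from such a category to a stable category is exact.''
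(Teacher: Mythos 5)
The paper offers no proof of this lemma beyond the citation itself, and your proposal ultimately takes the same route: observe that Definition \ref{def:analytic-nilpotent-cat} is set up to reproduce the hypotheses of \cite{elmanto2022nilpotent}*{4.2.1} (essential surjectivity, \(\pi_0\)-surjectivity on mapping spectra, and bounded nilpotence of the kernel ideal) and then invoke that result, so the approaches coincide. One small caveat: your parenthetical gloss on the internal mechanism of the cited proof misdescribes the relevant input — infinitesimal \(K\)-theory \(K^{\inf}\) is the fibre of \(K \to TC\), not of \(K \to KH\), and what the cited argument actually uses is Land--Tamme's nilinvariance of truncating invariants on discrete rings rather than any \(KH\)-locality — but since the lemma is established by citation rather than by reproving \cite{elmanto2022nilpotent}*{4.2.1}, this slip does not affect the argument.
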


This is used to prove the following:

\begin{theorem}\label{thm:truncating}
Any truncating localising invariant that commutes with products of small stable \(\infty\)-categories is continuous. 
\end{theorem}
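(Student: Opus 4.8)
By Lemma \ref{lem:localising-main}, since \(E\) commutes with products of small stable \(\infty\)-categories, it suffices to show that \(E\) maps the sequence
\[
\mathbf{Nuc}(R) \overset{\mathbf{L}}\to \mathbf{Ind}(\mathbf{laxPerf}_R^b) \overset{F}\to \mathbf{Ind}(\mathbf{Cof}_R^b)
\]
of \eqref{eqref:almost-Verdier} to a fibre-cofibre sequence in \(\mathbf{D}\). The plan is to reduce this to the corresponding statement for ordinary (non-truncating) localising invariants by identifying the ``correction terms'' as nilpotent extensions of additive categories, and then applying Lemma \ref{lem:elmanto}. Following the argument in \cite{fedeli2023topological}, the starting point is that the sequence above becomes, after passing to the bounded versions and taking ind-completions, genuinely close to a Verdier sequence: the cofibre of \(\mathbf{L}\) maps to \(\mathbf{Ind}(\mathbf{Cof}_R^b)\) via a functor that is an equivalence after applying a sufficiently truncating localising invariant.

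\textbf{Key steps.} First I would recall from \cite{fedeli2023topological} the explicit model: the functor \(F \colon \mathbf{Ind}(\mathbf{laxPerf}_R^b) \to \mathbf{Ind}(\mathbf{Cof}_R^b)\) is, on the level of bounded categories, built from the cofibre functor \(P \mapsto \mathsf{cofib}(P_j \otimes_{R_j} R_i \to P_i)\), and the essential image of \(\mathbf{L}\) consists of the nuclear objects in the sense of \cite{fedeli2023topological}*{Definition 1.47}. Second, I would verify that the natural comparison functor from the Verdier quotient \(\mathbf{Ind}(\mathbf{laxPerf}_R^b)/\mathbf{Nuc}(R)\) to \(\mathbf{Ind}(\mathbf{Cof}_R^b)\) — which need not be an equivalence in general — becomes, at the level of the underlying additive categories \(\mathbf{laxVec}_R^s\) and the corresponding subcategory of \(\mathbf{Cof}_R^b\) generated by connective, Tor-amplitude \(\leq 0\) objects with \(\pi_0\)-surjective transition maps, a \emph{nilpotent} functor in the sense of Definition \ref{def:analytic-nilpotent-cat}. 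The point here is that the failure of exactness is controlled by the \(\pi\)-adic tower: a morphism in the fibre that dies in \(\mathbf{Cof}_R^b\) is, degreewise, divisible by arbitrarily high powers of \(\pi\) only up to a bounded amount, so an \(n\)-fold composite of such morphisms vanishes for \(n\) larger than the relevant Tor-amplitude bounds (this is exactly the mechanism used in \cite{fedeli2023topological}*{Section 3} to handle \(K\)-theory and \(TC\)). Third, since \(E\) is truncating, any additive functor out of \(\mathbf{laxPerf}_R^b\) with stable target is exact (as noted after the construction of \(\mathbf{laxVec}_R^s\)), and Lemma \ref{lem:elmanto} applies to the nilpotent extension, giving that \(E\) of the comparison functor is an equivalence. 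Combining this with the fact (purely formal, from \(\mathbf{L}\) being fully faithful with Verdier-quotient-type cofibre onto its image, cf.\ \cite{fedeli2023topological}*{Lemma 3.15, Lemma 3.16}) that \(E\) sends \(\mathbf{Nuc}(R) \to \mathbf{Ind}(\mathbf{laxPerf}_R^b) \to \big(\mathbf{Ind}(\mathbf{laxPerf}_R^b)/\mathbf{Nuc}(R)\big)\) to a fibre-cofibre sequence, we conclude that \(E\) sends \eqref{eqref:almost-Verdier} to a fibre-cofibre sequence, hence \(E\) is continuous by Lemma \ref{lem:localising-main}.

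\textbf{Main obstacle.} The delicate point is the second step: establishing that the comparison functor from the Verdier quotient to \(\mathbf{Ind}(\mathbf{Cof}_R^b)\) restricts to a nilpotent extension of the relevant additive subcategories, with a \emph{uniform} nilpotence degree \(n\). This requires a careful bookkeeping of Tor-amplitudes and \(\pi\)-divisibility across the lax limit, essentially repeating the computations of \cite{fedeli2023topological}*{Section 3} in our setting; the subtlety is that objects of \(\mathbf{laxPerf}_R^b\) are only \emph{eventually} perfect with no global bound on amplitude, so the nilpotence degree must be controlled object-by-object and one must check this is enough to invoke Lemma \ref{lem:elmanto} after ind-completion (where one works with the compact generators). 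Once this is in place the rest is formal. I would also remark that this recovers, as special cases, that truncating invariants such as \(K^{\inf}\), \(\mathrm{KH}\), and periodic cyclic homology over \(\Q\) are continuous, consistent with \cite{fedeli2023topological}.
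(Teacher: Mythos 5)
Your proposal is correct and follows essentially the same route as the paper: reduce via Lemma \ref{lem:localising-main} to showing \(E\) sends \eqref{eqref:almost-Verdier} to a fibre-cofibre sequence, pass to the Verdier quotient of \(\mathbf{Ind}(\mathbf{laxPerf}_R^b)\) by the image of \(\mathbf{L}\), identify the induced comparison to \(\mathbf{Ind}(\mathbf{Cof}_R^b)\) as a nilpotent extension of the generating additive categories by adapting \cite{fedeli2023topological}*{Lemmas 3.20, 3.22, Corollary 3.19.1}, and conclude with Lemma \ref{lem:elmanto}. The uniformity-of-nilpotence issue you flag as the main obstacle is exactly the point the paper delegates to the Fedeli--Waldm\"uller lemmas, so no further comparison is needed.
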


\begin{proof}
    By Lemma \ref{lem:localising-main}, it suffices to show that \[E(\mathbf{Nuc}(R)) \to E(\mathbf{laxPerf}_R^b) \to E(\mathbf{Cof}_R^b)\] is a fibre-cofibre sequence in the target of \(E\). In order to prove this, we set \(A = \mathbf{laxPerf}_R^b\), \(B = \mathbf{Cof}_R^b\) and \(F \colon A \to B\) the morphism previously constructed. This is not yet a nilpotent extension, but taking the quotient \(\mathbf{Mod}_A/ \mathbf{Nuc}(R)\) by the essential image under the fully faithful embedding \(L \colon \mathbf{Nuc}(R) \to \mathbf{Mod}_A\), and observing that \(F\) vanishes on \(\mathbf{Nuc}(R)\), we get an induced additive functor \[\mathbf{Mod}_A/ \mathbf{Nuc}(R) \to \mathbf{Mod}_B.\] By a proof identical to \cite{fedeli2023topological}*{Lemma 3.20}, the quotient on the left hand side above is generated by an additive category \(\bC\). By using the appropriate compact projective generators of \(\bD(R)\), one observes that the proof of \cite{fedeli2023topological}*{Lemma 3.22} goes through, which in conjunction with \cite{fedeli2023topological}*{Corollary 3.19.1} implies that the induces additive functor \(\bC \to B\) is nilpotent, so that by Lemma \ref{lem:elmanto}, we have an equivalence \(E(\bC) \to E(B)\), or equivalently, the induced diagram \[E(\mathbf{Nuc}(R)) \to E(A) \to E(B)\] is a fibre-cofibre sequence.
\end{proof}

The main example of a localising invariant commuting with products of small stable \(\infty\)-categories is the universal localising invariant and \(K\)-theory. To get a product-preserving \emph{truncating} localising invariant, we first recall\footnote{The construction of this invariant is due to Alexander Efimov, and we have learned of its existence through Maxime Ramzi} that there is a localising invariant \(TC^{\mathrm{mot}}\) that preserves products of small stable \(\infty\)-categories and factorises the cyclotomic trace \(K \to TC^{\mathrm{mot}} \to TC\) such that the map \(TC^{\mathrm{mot}} \to TC\) is an equivalence on connective ring spectra.

\begin{corollary}
    The fibre of the map \(K \to TC^{\mathrm{mot}}\) is continuous.
\end{corollary}

\begin{proof}
    Since \(K\) and \(TC^{\mathrm{cont}}\) preserve products of small stable \(\infty\)-categories, so does the fibre. To see that it is truncating, let \(A\) be a connective ring spectrum. Then using that \(TC^{\mathrm{mot}}\) and \(TC\) agree on \(A\), we see that the fibre is the infinitesimal \(K\)-theory of \(A\). Since \(K^{\mathrm{inf}}\) is truncating by \cite{land2019k}, the result follows.  
\end{proof}


\begin{remark}
    Note that in this section, we have not considered the relative version of the computation of localising invariants of formal schemes. More precisely, if \(B\) is a derived admissible affinoid \(R\)-algebra  (ie, an object of \(\bA_R^{form}\)), then we may no longer view \(B\) as a Banach \(R\)-module, but rather compactify its bornology to make it nuclear. In this case, the relevant category whose localising invariants one ought to consider is \(\mathbf{Nuc}_{B^\rig}(\bD(R)) = \mathbf{Nuc}(\mathbf{Mod}_{B^\rig}(\bD(R)))\). This category should be motivically equivalent to the category of nuclear \(B\)-modules taken internal to the derived category of solid \(R\)-modules; the \(K\)-theory of the latter category has recently been computed by Efimov \cite{efimov2025localizing}. We will take up the comparison between the two nuclear categories in a forthcoming article.  
\end{remark}

\section{A Grothendieck-Riemann-Roch Theorem}

Let \(R\) be a Banach ring and \(f\colon X \to Y\) be a quasi-separated morphism of quasi-compact, quasi-separated derived dagger \(R\)-analytic spaces. The goal of this section is to prove the following version of the Grothendieck-Riemann-Roch Theorem. 

\begin{theorem}\label{thm:GRR}
We have the following commuting diagram of spectra
\[
\begin{tikzcd}
K^\an(X) \arrow{r}{\mathrm{ch}} \arrow{d}{f_{!}} & TC^{- \text{ }\an}(X) \arrow{d}{f_*} \\
K^\an(Y) \arrow{r}{\mathrm{ch}} & TC^{- \text{ }\an}(Y),
\end{tikzcd}
\] where \(\mathrm{ch}\) is the Chern character and \(HC^{- \text{ }\an}\) denotes the continuous extension of negative cyclic homology. 
\end{theorem}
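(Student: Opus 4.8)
The plan is to reduce the Grothendieck--Riemann--Roch statement to the universal case via the theory of localising invariants and the functoriality established for $\mathbf{Nuc}^\infty$. Recall that by Definition~\ref{def:bornological-localising}, both $K^\an$ and $TC^{-\text{ }\an}$ are obtained by applying continuous localising invariants to the assignment $X \mapsto \mathbf{Nuc}^\infty(X)$, which by Lemma~\ref{lem:maxime2} takes values in rigid (hence dualisable) categories on qcqs dagger analytic spaces. The Chern character $\mathrm{ch}\colon K \to TC^{-}$ is a natural transformation of localising invariants on $\mathbf{Cat}^{\mathrm{perf}}$, so by Efimov's theorem (Theorem~\ref{thm:Efimov}) it extends uniquely to a natural transformation $\mathrm{ch}^\cont\colon K^\cont \to TC^{-\text{ }\cont}$ of continuous localising invariants on $\mathbf{Cat}^{\mathrm{dual}}$. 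Thus the top and bottom horizontal arrows in the diagram are simply the components of $\mathrm{ch}^\cont$ evaluated at $\mathbf{Nuc}^\infty(X)$ and $\mathbf{Nuc}^\infty(Y)$, and the horizontal part of the statement is automatic from naturality; what genuinely requires proof is the commutativity of the square involving the pushforwards $f_!$ on $K$-theory and $f_*$ on $TC^-$.

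The key input is that for a qcqs morphism $f\colon X \to Y$ of qcqs dagger analytic spaces, $f$ is passable: its diagonal is quasi-affine (by quasi-separatedness together with the analysis of open immersions in Section~2), $f^*\colon \mathbf{Nuc}^\infty(Y) \to \mathbf{Nuc}^\infty(X)$ admits a $\mathbf{Nuc}^\infty(Y)$-linear right adjoint $f_*$ by Corollary~\ref{cor:fformrigqcqs} and base change (Proposition~\ref{prop:cocontqcqs}, Lemma~\ref{lem:qnucpush}), and $\mathbf{Nuc}^\infty(X)$ is dualisable as a $\mathbf{Nuc}^\infty(Y)$-module since both are rigid over $\mathbf{Sp}$ and local rigidity passes to the relative setting by \cite{nkp}*{Proposition 4.3.10}. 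First I would define $f_!$ on $K^\an$ and $f_*$ on $TC^{-\text{ }\an}$ as the images under $K^\cont$ and $TC^{-\text{ }\cont}$ of the right adjoint functor $f_*\colon \mathbf{Nuc}^\infty(X) \to \mathbf{Nuc}^\infty(Y)$, which is strongly continuous (it is itself a left adjoint, its further right adjoint being the right adjoint to $f_* f^*$-unit composite, available because $f_*$ preserves colimits). Since localising invariants are functorial in strongly continuous functors, applying $\mathrm{ch}^\cont$ as a natural transformation to the morphism $f_*$ in $\mathbf{Cat}^{\mathrm{dual}}$ yields precisely the commuting square of Theorem~\ref{thm:GRR}; the naturality square of $\mathrm{ch}^\cont$ along $f_*\colon \mathbf{Nuc}^\infty(X) \to \mathbf{Nuc}^\infty(Y)$ \emph{is} the desired diagram.

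The main obstacle, and the point needing the most care, is verifying that $f_*\colon \mathbf{Nuc}^\infty(X) \to \mathbf{Nuc}^\infty(Y)$ is indeed a morphism in $\mathbf{Cat}^{\mathrm{dual}}$, i.e. that it is strongly continuous and well-defined on nuclear objects — this is exactly what passability buys us, and I would want to spell out that $f_*$ restricts from $\mathbf{QCoh}$ to $\mathbf{Nuc}^\infty$ using Lemma~\ref{lem:qnucpush} together with the formal nuclearity of $f$ (Corollary~\ref{cor:fformrigqcqs}), and that $f_*$ preserves colimits by Proposition~\ref{prop:cocontqcqs} (base change for qcqs morphisms forces $f_*$ to commute with colimits on quasi-coherent sheaves, hence on the full subcategory of nuclear modules). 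A secondary subtlety is that the Chern character target is $TC^-$, a lax-symmetric-monoidal localising invariant, and one should check that the naturality used is genuinely naturality of an underlying spectrum-valued transformation, not merely of lax module maps; this is handled by working with $\mathrm{ch}$ as a map of non-connective localising invariants $K \to TC^-$ and only afterwards recording the multiplicative refinement. Once these functoriality and descent facts are assembled, the proof is essentially a one-line application of naturality of $\mathrm{ch}^\cont$ to the morphism $f_*$, reducing a geometric statement to the universal property of the category $\mathbf{NCMot}$.
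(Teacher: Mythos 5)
Your route is genuinely different from the paper's, and it has a gap at its load-bearing step. You want to obtain the square as the naturality square of the Efimov-extended transformation $\mathrm{ch}^{\cont}\colon K^{\cont}\to TC^{-,\cont}$ evaluated on the arrow $f_*\colon \mathbf{Nuc}^\infty(X)\to\mathbf{Nuc}^\infty(Y)$. For that, $f_*$ must be a morphism in $\mathbf{Cat}^{\mathrm{dual}}$, i.e.\ strongly continuous: its right adjoint $f^!$ must itself preserve colimits. Your justification --- that a further right adjoint exists ``because $f_*$ preserves colimits'' --- only yields the \emph{existence} of $f^!$ by the adjoint functor theorem; it says nothing about $f^!$ preserving colimits, and these are not equivalent. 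Colimit-preserving functors between rigid categories need not be strongly continuous (already $-\otimes_k k^{\oplus\infty}\colon \mathbf{Mod}_k\to\mathbf{Mod}_k$ fails, its right adjoint being an infinite product), and for a qcqs morphism that is, say, an open immersion $j$, the pushforward $j_*$ typically fails to be an internal left adjoint --- in the compactly generated algebraic analogue this is exactly the failure of $j_*$ to preserve perfect complexes. Passability only guarantees that $f_*$ is a $\mathbf{Nuc}^\infty(Y)$-linear \emph{left adjoint}, which makes it a morphism in $\mathbf{Mod}_{\mathbf{Nuc}^\infty(Y)}^{\mathrm{dual}}(\mathbf{Pr}_{st}^L)$ but not necessarily in $\mathbf{Cat}^{\mathrm{dual}}$. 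So the naturality square you invoke is not available as stated.

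The paper's proof avoids this by never applying $K^{\cont}$ or $TC^{-,\cont}$ to the $1$-morphism $f_*$ in $\mathbf{Cat}^{\mathrm{dual}}$. It works instead in $\mathbf{Mod}_{\bC}^{\mathrm{dual}}(\mathbf{Pr}_{st}^L)$ with plain left adjoints as morphisms, shows that the $S^1$-equivariant categorified Chern character is a $\bC$-localising invariant, and uses the corepresentability $K^{\cont}(\mathbf{A})\simeq\mathbf{NCMot}(\bC)(\mathcal{U}(\bC),\mathcal{U}(\mathbf{A}))$ of Theorem \ref{thm:universal-loc} to extract the spectrum-level square from the categorified statement of Theorem \ref{thm:GRR-general}; the wrong-way map $f_!$ is \emph{defined} through the $(\infty,2)$-categorical adjunction $f^*\dashv f_*$ on module categories, not as $K^{\cont}(f_*)$. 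To salvage your argument you would have to either prove that $f_*$ is strongly continuous for the morphisms $f$ in the statement (a properness-type condition you have not established and which is false for general qcqs $f$), or reformulate the pushforward at the level of $\bC$-linear motives as the paper does. Even then you would still owe the identification of the vertical map $\mathcal{L}(f)^R$ on $TC^{-,\an}$ with the pushforward you construct; this compatibility is part of what Theorem \ref{thm:GRR-general} provides and is not a tautology.
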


More generally, this works for any nuclear context $(\mathbf{A},\tau,\mathbf{P})$, with $\tau$ contained in the descendable topology.

To prove Theorem \ref{thm:GRR}, we use the general machinery developed in \cite{hoyois2021categorified}. Let \(\mathcal{X}\) be an ambient symmetric monoidal \((\infty,2)\)-category, and \(\bC\) a commutative algebra object in \(\mathcal{X}\). We may then construct the symmetric monoidal \((\infty,2)\)-category \(\mathbf{Mod}_{\bC}(\mathcal{X})\) of \(\bC\)-module objects in \(\mathcal{X}\). We say that an object \(X \in \mathbf{Mod}_{\bC}(\mathcal{X})\) is \textit{dualisable} if there is an object \(X^\vee\), an evaluation map \(\ev \colon X \otimes X^\vee \to 1\) and a co-evaluation map \(\mathrm{coev} \colon 1 \to X^\vee \otimes X\) such that the compositions 
\begin{align*}
X \overset{\mathrm{ev} \otimes 1}\to X \otimes X^\vee \otimes X \overset{1 \otimes \mathrm{coev}}\to X \\
X^\vee \overset{1 \otimes \mathrm{ev}}\to X^\vee \otimes X \otimes X^\vee \overset{\mathrm{coev} \otimes 1}\to X^\vee
\end{align*} are equivalence to the identity via an invertible \(2\)-morphism. If an object \(X \in \mathbf{Mod}_{\bC}(\mathcal{X})\) is dualisable, its dual is necessarily given by \(\Hom(X, 1)\). Let \(\mathbf{Mod}_{\bC}^{\mathrm{dual}}(\mathcal{X})\) be the \((\infty,1)\)-category of dualisable objects in \(\mathbf{Mod}_{\bC}(\mathcal{X})\) with left adjoint functors as morphisms. 

\comment{\begin{remark}
Note that it is not a priori true that \(\mathbf{Mod}_{\bC}^{\mathrm{dual}}(\mathcal{X})\) is equivalent to the \((\infty,2)\)-category \(\mathbf{Mod}_{\bC}(\mathcal{X}^{\mathrm{dual}})\) of \(\bC\)-module objects that are dualisable in the underlying ambient \((\infty,2)\)-category \(\mathcal{X}\). \edit{Clarify this, possibly with a counter-example.} This will however be the case when the category \(\mathbf{C}\) is rigid. 
\end{remark}}

Now let \(X \in \mathbf{Mod}_{\bC}^{\mathrm{dual}}(\mathcal{X})\) and \(f \colon X \to X\) an endomorphism. We define the \emph{trace} \[\mathrm{Tr}(X,-) \colon \mathbf{End}_{\mathbf{Mod}_{\bC}^{\mathrm{dual}}(\mathcal{X})}(X) \to \mathbf{End}_{\mathbf{Mod}_{\bC}(\mathcal{X})}(\mathbf{1})\] of \(f\) as the composite \[ \ev \circ (f \otimes \mathbf{1}) \circ \mathrm{coev} \colon \mathbf{1} \to X \otimes X^\vee \to X^\vee \otimes X \to \mathbf{1}.\]  In particular, if \(f = 1_X\) we have by definition the \emph{dimension} \(\mathrm{dim}(X) = \mathrm{Tr}(X,1_X)\). 

Now suppose \(\bC\) is dualisable in \(\mathcal{X}\), so that \(\bC \in \mathbf{Mod}_{\bC}^{\mathrm{dual}}(\mathcal{X})\). For such a \(\bC\), consider the loop space \[\mathcal{L}(\bC) \defeq \bC \otimes_{\bC \otimes \bC} \bC \simeq S^1 \otimes \bC = \mathrm{colim}_{S^1} \bC\] which is again a commutative algebra object in \(\mathcal{X}\) with a canonical \(S^1\)-action. Using the last identification, we obtain by adjunction the symmetric monoidal functors \[S^1 \to \mathbf{Fun}^{\otimes}(\bC, \mathcal{L}(\bC)), \qquad \bC \to \mathbf{Fun}(S^1, \mathcal{L}(\bC)).\] Choosing a base point in \(p \colon \{*\} \to S^1\), the first functor is equivalent to a functor \(p_{\bC} \colon \bC \to \mathcal{L}(S^1)\) by tensoring with \(p\), and a natural equivalence \(p_{\bC} \to p_{\bC}\) called the \textit{monodromy automorphism}. Applying the functor \(\mathbf{Mod}\), we get a symmetric monoidal functor \[S^1 \to \mathbf{Fun}^{\otimes}(\mathbf{Mod}_{\bC}(\mathcal{X}), \mathbf{Mod}_{\mathcal{L}(\bC)}(\mathcal{X}),\] which restricts to dualisable objects \(\mathbf{Mod}^{\mathrm{dual}}\) in \(\mathcal{X}\). By adjunction, we obtain an induced symmetric monoidal functor \[\mathbf{Mod}_{\mathbf{C}}^{\mathrm{dual}}(\mathcal{X}) \to \mathbf{Fun}(S^1, \mathbf{Mod}_{\mathcal{L}(\mathbf{C})}^{\mathrm{dual}}(\mathcal{X})).\] The \textit{categorified Chern character} is defined as the composite 

\begin{multline*}
\mathrm{Ch} \colon \mathbf{Mod}_{\bC}^{\mathrm{dual}}(\mathcal{X}) \to \mathbf{Fun}(S^1, \mathbf{Mod}_{\mathcal{L}(\bC)}^{\mathrm{dual}}(\mathcal{X})) \\ \simeq \mathbf{Aut}_{\mathbf{Mod}_{\mathcal{L}(\bC)}^{\mathrm{dual}}(\mathcal{X})}(\mathcal{L}(\bC)) \overset{\mathrm{Tr}(\mathcal{L}(\bC),-)}\to \mathbf{Aut}_{\mathbf{Mod}_{\mathcal{L}(\bC)}(\mathcal{X})}(\mathbf{1}) \simeq \mathcal{L}(\bC).
\end{multline*}

Equipping the source with the trivial \(S^1\)-action, the domain of the trace map with the diagonal action and the target with the trivial \(S^1\)-action, by \cite{hoyois2021categorified}*{Proposition 3.4}, this Chern character is an \(S^1\)-equivariant symmetric monoidal functor, so it factorises through the \(S^1\)-fixed points on \(\mathbf{End}_{\mathbf{Mod}_{\mathcal{L}(\bC)}(\mathcal{X})}(\mathbf{1})\)  to yield a functor \[\mathrm{Ch} \colon  \mathbf{Mod}_{\bC}^{\mathrm{dual}}(\mathcal{X}) \to \mathcal{L}(\bC)^{S^1}\] called the \emph{\(S^1\)-equivariant Chern character}. 

Finally, consider a symmetric monoidal functor \(f \colon \bD \to \bC\) between dualisable symmetric monoidal \(\infty\)-categories. We call such a functor \emph{rigid} if it is right adjointable, and if the multiplication map \(\bD \otimes_{\bC} \bD \to \bD\) has a right adjoint. The main result of \cite{hoyois2021categorified} then states the following:

\begin{theorem}\label{thm:GRR-general}\cite{hoyois2021categorified}*{Theorem 4.3}
Let \(f \colon \bD \to \bC\) be a rigid symmetric monoidal functor between dualisable \((\infty,1)\)-categories relative to an \((\infty, 2)\)-category \(\mathcal{X}\). We then have the following diagram:
\[
\begin{tikzcd}
\mathbf{Mod}_{\bC}^{\mathrm{dual}}(\mathcal{X}) \arrow{r}{\mathrm{Ch}} \arrow{d}{f_*} & \mathcal{L}(\bC)^{S^1} \arrow{d}{\mathcal{L}(f)^R}\\
\mathbf{Mod}_{\bD}^{\mathrm{dual}}(\mathcal{X}) \arrow{r}{\mathrm{Ch}}  & \mathcal{L}(\bD)^{S^1}
\end{tikzcd}
\] of \((\infty,1)\)-categories that commutes up to an invertible \(2\)-morphism. 
\end{theorem}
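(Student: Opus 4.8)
The plan is to follow the strategy of Hoyois--Scherotzke--Sibilla, reducing the whole statement to a single formal input: the functoriality of traces under right-adjointable symmetric monoidal functors of symmetric monoidal \((\infty,2)\)-categories.

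\textbf{Step 1 (formal consequences of rigidity).} First I would record that since \(f \colon \bD \to \bC\) is a rigid symmetric monoidal functor, extension of scalars \(\mathbf{Mod}_{\bD}(\mathcal{X}) \to \mathbf{Mod}_{\bC}(\mathcal{X})\) is symmetric monoidal and admits a right adjoint \(f_*\) which is lax symmetric monoidal, satisfies the projection formula \(f_*(M) \otimes_{\bD} N \simeq f_*(M \otimes_{\bC} f^* N)\), and preserves dualisable objects; hence it restricts to \(f_* \colon \mathbf{Mod}_{\bC}^{\mathrm{dual}}(\mathcal{X}) \to \mathbf{Mod}_{\bD}^{\mathrm{dual}}(\mathcal{X})\), the left edge of the square. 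The same holds after applying \(\mathcal{L}(-) = S^1 \otimes (-)\): the loop-space functor is a colimit of copies of the identity, rigidity is stable under such colimits, so \(\mathcal{L}(f) \colon \mathcal{L}(\bD) \to \mathcal{L}(\bC)\) is again rigid symmetric monoidal, its right adjoint \(\mathcal{L}(f)^R\) is \(S^1\)-equivariant, and it descends to the map \(\mathcal{L}(f)^R \colon \mathcal{L}(\bC)^{S^1} \to \mathcal{L}(\bD)^{S^1}\) on the right edge.

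\textbf{Step 2 (functoriality of traces).} The central technical lemma I would establish is: if \(G \colon \mathcal{M} \to \mathcal{N}\) is a symmetric monoidal functor of symmetric monoidal \((\infty,2)\)-categories whose right adjoint \(G^R\) satisfies the projection formula and preserves dualisable objects, then for any dualisable \(X \in \mathcal{M}\) and endomorphism \(\varphi\) there is a canonical identification \(\mathrm{Tr}_{\mathcal{N}}(G^R X, G^R \varphi) \simeq G^R(\mathrm{Tr}_{\mathcal{M}}(X,\varphi))\) of endomorphisms of the unit, natural in \((X,\varphi)\), compatible with composition of traces, and \emph{equivariant for any group action}. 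The proof is a diagram chase: the dual of \(G^R X\) is \(G^R(X^\vee)\ by the projection formula, and the unit and counit of the \((G,G^R)\)-adjunction together with the projection-formula equivalence assemble the evaluation/coevaluation data and the required invertible \(2\)-morphism. Coherence is handled by working in the universal example --- the free symmetric monoidal \((\infty,2)\)-category on a dualisable object equipped with an endomorphism and a right-adjointable map --- where the statement is a finite diagram of \(2\)-morphisms.

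\textbf{Step 3 (apply to the Chern character).} I would then apply Step 2 to the functor \(\mathbf{Mod}_{\mathcal{L}(\bD)}(\mathcal{X}) \to \mathbf{Mod}_{\mathcal{L}(\bC)}(\mathcal{X})\) induced by \(\mathcal{L}(f)\). Recall \(\mathrm{Ch}\) is built by viewing a dualisable \(\bC\)-module \(X\) as an \(S^1\)-family of autoequivalences of the dualisable \(\mathcal{L}(\bC)\)-module \(\mathcal{L}(\bC)\) and taking \(\mathrm{Tr}(\mathcal{L}(\bC),-)\). Pushing \(X\) forward to \(f_* X\) transports this \(S^1\)-family to the corresponding family for \(\mathcal{L}(\bD)\) obtained by transfer along \(\mathcal{L}(f)\) --- again the projection formula, applied fibrewise over \(S^1\). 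Applying the trace-functoriality lemma fibrewise identifies \(\mathrm{Tr}(\mathcal{L}(\bD),-)\) of the transferred family with \(\mathcal{L}(f)^R\) applied to \(\mathrm{Tr}(\mathcal{L}(\bC),-)\) of the original family, i.e.\ \(\mathrm{Ch}(f_* X) \simeq \mathcal{L}(f)^R \mathrm{Ch}(X)\) as \(S^1\)-equivariant objects of \(\mathcal{L}(\bD)\); passing to \(S^1\)-fixed points yields the invertible \(2\)-morphism filling the square, naturally in \(X\).

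\textbf{Main obstacle.} The hard part is Step 2: making the functoriality-of-traces statement precise and coherent at the \((\infty,2)\)-level while simultaneously tracking \(S^1\)-equivariance, since \(\mathrm{Ch}\) only exists after passing to fixed points and every identification in the chain must be \(S^1\)-equivariant. The rigidity hypothesis on \(f\) enters exactly here: it is what guarantees both ingredients the lemma needs --- the projection formula and preservation of dualisability by the right adjoint --- and it is essential, since without it \(f_*\) need not land in dualisable objects and the comparison fails. A secondary bookkeeping point is checking that the categorical Chern character is \(S^1\)-equivariant for the stated actions (trivial on the source, diagonal on the automorphism category, trivial on the target), which one verifies at the level of the symmetric monoidal functor before taking traces.
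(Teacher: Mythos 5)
The paper does not prove this statement: it is imported verbatim as \cite{hoyois2021categorified}*{Theorem 4.3}, with no argument supplied. Your sketch follows essentially the same strategy as the proof in that reference: the theorem is reduced to the functoriality of the \((\infty,2)\)-categorical trace with respect to right-adjointable morphisms of dualisable objects (your Step 2), with rigidity of \(f\) supplying the projection formula and the preservation of dualisability by \(f_*\), and the square then obtained by applying trace-functoriality \(S^1\)-equivariantly to the loop-space construction. The only thin spot is your claim that coherence in Step 2 is settled ``in the universal example''; in the reference this is the genuinely hard part and is handled by constructing the trace as a symmetric monoidal \((\infty,1)\)-functor out of an explicitly built category of dualisable objects with endomorphisms and right-adjointable morphisms, rather than by a finite diagram chase in a free example — but as a blind outline your decomposition is the correct one.
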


In the situation of Theorem \ref{thm:GRR-general}, let \(\mathcal{T} \in \mathbf{Mod}_{\bC}^{\mathrm{dual}}(\mathcal{X})\) and \(\mathcal{T}' \in \mathbf{Mod}_{\bD}^{\mathrm{dual}}(\mathcal{X})\). Then by the rigidity hypothesis, \(f \colon \bD \to \bC\) admits a right adjoint functor \(f_* \colon \bC \to \bD\). Let \(g \colon f_*(\mathcal{T}) \to \mathcal{T}'\) be a morphism in \(\mathbf{Mod}_{\bD}^{\mathrm{dual}}(\mathcal{X})\). We may then define morphisms of anima 

\[\mathbf{Mod}_{\bC}^{\mathrm{dual}}(\mathcal{X})(\bC, \mathcal{T}) \to \mathbf{Mod}_{\bD}^{\mathrm{dual}}(\mathcal{X})(\bC, \mathcal{T}'), \varphi \mapsto \bD \to f_*\bC \overset{f_*(\varphi)}\to f_* \mathcal{T} \overset{g}\to \mathcal{T}',\] and likewise \(\mathcal{L}(\bC)^{S^1}(\mathbf{1}, \mathrm{Ch}(\mathcal{T})) \to \mathcal{L}(\bC)^{S^1}(\mathbf{1}, \mathrm{Ch}(\mathcal{T}'))\).

\begin{corollary}\label{thm:GRR-spatial}
For \(\mathcal{T} \in \mathbf{Mod}_{\bC}^{\mathrm{dual}}(\mathcal{X})\) and \(\mathcal{T}' \in \mathbf{Mod}_{\bD}^{\mathrm{dual}}(\mathcal{X})\), we have the following commuting diagram of anima 

\[
\begin{tikzcd}
\mathbf{Mod}_{\bC}^{\mathrm{dual}}(\mathcal{X})(\bC, \mathcal{T}) \arrow{r}{\mathrm{Ch}} \arrow{d}{} & \mathcal{L}(\bC)^{S^1}(\mathbf{1}, \mathrm{Ch}(\mathcal{T})) \arrow{d}{}\\
\mathbf{Mod}_{\bD}^{\mathrm{dual}}(\mathcal{X})(\bD, \mathcal{T}') \arrow{r}{\mathrm{Ch}}  & \mathcal{L}(\bD)^{S^1}(\mathbf{1}, \mathrm{Ch}(\mathcal{T}')).
\end{tikzcd}
\]

\end{corollary}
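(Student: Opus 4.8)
\textbf{Proof proposal for Corollary \ref{thm:GRR-spatial}.}

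The plan is to deduce this statement directly from Theorem \ref{thm:GRR-general} by applying the $(\infty,1)$-categorical commuting square there to mapping anima, rather than re-running the trace-theoretic argument from scratch. First I would recall that Theorem \ref{thm:GRR-general} provides a square of $(\infty,1)$-categories
\[
\begin{tikzcd}
\mathbf{Mod}_{\bC}^{\mathrm{dual}}(\mathcal{X}) \arrow{r}{\mathrm{Ch}} \arrow{d}{f_*} & \mathcal{L}(\bC)^{S^1} \arrow{d}{\mathcal{L}(f)^R}\\
\mathbf{Mod}_{\bD}^{\mathrm{dual}}(\mathcal{X}) \arrow{r}{\mathrm{Ch}}  & \mathcal{L}(\bD)^{S^1}
\end{tikzcd}
\]
commuting up to an invertible $2$-morphism $\alpha \colon \mathrm{Ch} \circ f_* \Rightarrow \mathcal{L}(f)^R \circ \mathrm{Ch}$. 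The content of the corollary is that evaluating each vertex on the appropriate mapping space (with $\bC$, resp. $\bD$, as source, and $\mathcal{T}$, resp. $\mathcal{T}'$, as target, together with the fixed structure map $g \colon f_*(\mathcal{T}) \to \mathcal{T}'$) yields a commuting square of anima. So the key step is to identify the four maps in the desired square with the maps induced by the functors $\mathrm{Ch}$, $f_*$, $\mathcal{L}(f)^R$, $\mathrm{Ch}$ on $\mathrm{Map}$-spaces, post-composed with the fixed morphisms determined by $g$.

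The second step is the bookkeeping: the left-hand vertical map $\mathbf{Mod}_{\bC}^{\mathrm{dual}}(\mathcal{X})(\bC, \mathcal{T}) \to \mathbf{Mod}_{\bD}^{\mathrm{dual}}(\mathcal{X})(\bD, \mathcal{T}')$ is, by definition, $\varphi \mapsto g \circ f_*(\varphi) \circ u$, where $u \colon \bD \to f_*\bC$ is the unit of the adjunction $(f, f_*)$ (using that $f$ is symmetric monoidal, so $f(\mathbf{1}_\bD) \simeq \mathbf{1}_\bC$ and hence $u$ is the canonical map). This factors as the functor $f_* \colon \mathbf{Mod}_{\bC}^{\mathrm{dual}}(\mathcal{X}) \to \mathbf{Mod}_{\bD}^{\mathrm{dual}}(\mathcal{X})$ applied on Hom-anima, $\mathrm{Map}(\bC,\mathcal{T}) \to \mathrm{Map}(f_*\bC, f_*\mathcal{T})$, followed by pre-composition with $u$ and post-composition with $g$. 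Similarly the right-hand vertical map factors through $\mathcal{L}(f)^R$ applied on Hom-anima, followed by pre-composition with the canonical map $\mathbf{1} \to \mathcal{L}(f)^R(\mathbf{1})$ (coming from $\mathcal{L}$ applied to $f(\mathbf{1}_\bD)\simeq \mathbf{1}_\bC$, passing to right adjoints, and $S^1$-fixed points) and post-composition with $\mathrm{Ch}(g) \colon \mathcal{L}(f)^R(\mathrm{Ch}(\mathcal{T})) \simeq \mathrm{Ch}(f_*\mathcal{T}) \to \mathrm{Ch}(\mathcal{T}')$, where the displayed equivalence is precisely a component of the natural transformation $\alpha$. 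The square of Hom-anima then commutes because: (i) the $\mathrm{Map}$-level square for the underlying functors $\mathrm{Ch}, f_*, \mathcal{L}(f)^R, \mathrm{Ch}$ commutes up to the homotopy induced by $\alpha$; (ii) the pre-composition legs are compatible since $\mathrm{Ch}$ is symmetric monoidal, hence sends the unit structure map $u$ to the unit structure map $\mathbf{1}\to\mathcal{L}(f)^R(\mathbf{1})$ up to the coherence of $\alpha$ on $\bC$; and (iii) the post-composition legs agree by naturality of $\alpha$ evaluated at the morphism $g$.

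The main obstacle I anticipate is not conceptual but coherence-theoretic: one must check that the two ways of producing the natural transformation relating $\mathrm{Ch}(f_*\mathcal{T})$ and $\mathcal{L}(f)^R\mathrm{Ch}(\mathcal{T})$ — namely the component $\alpha_{\mathcal{T}}$ of the $2$-morphism from Theorem \ref{thm:GRR-general}, versus whatever is used implicitly in defining $\mathrm{Ch}$ on the morphism $g$ — are the same, so that the two triangles (pre-composition and post-composition) glue coherently rather than merely commuting separately. Concretely, I would phrase everything in terms of the mapping-$\infty$-groupoid functor $\mathrm{Map}_{-}(-,-)$, which is itself natural in the ambient category, and invoke that applying a natural transformation of functors to a fixed morphism between the relevant objects produces a commuting square of anima automatically (this is the standard fact that $\mathrm{Map}$ of a lax-commuting square, restricted to a point in each corner, is a commuting square once the $2$-cell is invertible). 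So the proof reduces to: unwind the definitions of the four maps; observe they are the $\mathrm{Map}$-images of the functors in Theorem \ref{thm:GRR-general}, twisted by the fixed data $(g$ on one side, $\mathrm{Ch}(g)$ on the other$)$; and conclude by functoriality of $\mathrm{Map}(-,-)$ applied to the invertible $2$-morphism $\alpha$. I expect this to be a short argument once the identifications in step two are spelled out carefully, with the only real care needed being that the equivalence $\mathrm{Ch}(f_*\mathcal{T}) \simeq \mathcal{L}(f)^R\mathrm{Ch}(\mathcal{T})$ entering the definition of $\mathrm{Ch}(g)$ is taken to be $\alpha_{\mathcal{T}}$ itself, which is a legitimate choice and makes the whole diagram coherent.
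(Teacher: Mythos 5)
Your proposal is correct and follows exactly the route the paper intends: the corollary is stated without an explicit proof, being treated as an immediate formal consequence of Theorem \ref{thm:GRR-general} obtained by passing to mapping anima and twisting by the fixed data $g$ and $\mathrm{Ch}(g)$, which is precisely what you spell out. Your remark that the equivalence $\mathrm{Ch}(f_*\mathcal{T}) \simeq \mathcal{L}(f)^R\mathrm{Ch}(\mathcal{T})$ entering the right-hand vertical map must be taken to be the component $\alpha_{\mathcal{T}}$ of the invertible $2$-morphism is the one genuine coherence point, and you handle it correctly.
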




\subsection{Grothendieck-Riemann-Roch for schemes}

We now specialise Theorem \ref{thm:GRR-general} and Corollary \ref{thm:GRR-spatial} to our situation. In the setting above, let \(\mathcal{X} = \mathbf{Pr}_{st}^{L}\). Let \(f \colon X\to Y\) be a passable morphism of stacks. For example, $f$ could be a qcqs morphism of qcqs schemes. In particular, the functor $f^{*}:\mathbf{Nuc}^{\infty}(Y)\rightarrow\mathbf{Nuc}^{\infty}(X)$ is rigid.

We then obtain an induced \((\infty,2)\)-functor \[f^* \colon \mathbf{Mod}_{\mathbf{Nuc}^\infty(Y)}(\mathbf{Pr}_{st}^{L}) \to \mathbf{Mod}_{\mathbf{Nuc}^\infty(X)}(\mathbf{Pr}_{st}^{L}), \quad M \mapsto \mathbf{Nuc}^\infty(X) \otimes_{\mathbf{Nuc}^\infty(Y)} M\] of symmetric monoidal \((\infty,2)\)-categories, which is left adjoint to the forgetful functor \(f_* \colon \mathbf{Mod}_{\mathbf{Nuc}^\infty(X)}(\mathbf{Pr}_{st}^{L}) \to \mathbf{Mod}_{\mathbf{Nuc}^\infty(Y)}(\mathbf{Pr}_{st}^{L})\). This is a symmetric monoidal ambidexterous adjunction by \cite{hoyois2021categorified}*{Proposition 2.21}, so that it restricts to an adjunction \[f^* \colon \mathbf{Mod}_{\mathbf{Nuc}^\infty(Y)}^{\mathrm{dual}}(\mathbf{Pr}_{st}^{L}) \leftrightarrows \mathbf{Mod}_{\mathbf{Nuc}^\infty(X)}^{\mathrm{dual}}(\mathbf{Pr}_{st}^{L}) \colon f_*\] between the \((\infty,1)\)-subcategories of dualisable objects and left adjoint functors.  Theorem \ref{thm:GRR-general} now specialises to the following:



\comment{\begin{theorem}\label{thm:quasi-coherent-rigid}
Let \(X\) be a \(k\)-analytic space. Then \(\mathbf{D}(X)\) and \(\mathbf{Nuc}(X)\) are rigid categories. 
\end{theorem}

\begin{proof}
\textbf{Sketch.} 
We first note that the category \(\mathbf{D}(X)\) is compactly generated and presentable. Furthermore, the unit object in \(\mathbf{D}(X)\) is compact. Finally, the compact objects in \(\mathbf{D}(X)\) admit duals. The result now follows from \cite{hoyois2021categorified}*{Proposition 2.20}. 
\end{proof}
}


\begin{corollary}\label{thm:categorified-GRR}
The following diagram 
\[
\begin{tikzcd}
\mathbf{Mod}_{\mathbf{Nuc}^\infty(X)}^{\mathrm{dual}}(\mathbf{Pr}_{st}^L) \arrow{r}{\mathrm{Ch}} \arrow{d}{f_*} & \mathcal{L}(\mathbf{Nuc}^\infty(X))^{S^1} \arrow{d}{\mathcal{L}(f)^R}\\
\mathbf{Mod}_{\mathbf{Nuc}^\infty(Y)}^{\mathrm{dual}}(\mathbf{Pr}_{st}^L) \arrow{r}{\mathrm{Ch}}  & \mathcal{L}(\mathbf{Nuc}^\infty(Y))^{S^1}
\end{tikzcd}
\] of \((\infty,1)\)-categories commutes.
\end{corollary}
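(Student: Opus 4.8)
The plan is to deduce this directly from the general categorified Grothendieck--Riemann--Roch Theorem \ref{thm:GRR-general} by verifying that its hypotheses are satisfied in our situation, namely that $f^* \colon \mathbf{Nuc}^\infty(Y) \to \mathbf{Nuc}^\infty(X)$ is a rigid symmetric monoidal functor between dualisable $(\infty,1)$-categories inside the ambient symmetric monoidal $(\infty,2)$-category $\mathcal{X} = \mathbf{Pr}_{st}^L$. First I would observe that since $f \colon X \to Y$ is passable, by the discussion following the definition of passable morphisms (in particular the lemma showing that passable maps induce rigid pullback functors, together with Lemma \ref{lem:maxime2} giving rigidity of $\mathbf{Nuc}^\infty$ of a qcqs scheme), the functor $f^* \colon \mathbf{Nuc}^\infty(Y) \to \mathbf{Nuc}^\infty(X)$ is a rigid symmetric monoidal functor: it is right adjointable with $\mathbf{Nuc}^\infty(Y)$-linear right adjoint $f_*$ (condition (2) of passability), $\mathbf{Nuc}^\infty(X)$ is dualisable as a $\mathbf{Nuc}^\infty(Y)$-module (condition (3)), and the multiplication map $\mathbf{Nuc}^\infty(X) \otimes_{\mathbf{Nuc}^\infty(Y)} \mathbf{Nuc}^\infty(X) \to \mathbf{Nuc}^\infty(X)$ has a right adjoint because by Lemma \ref{lem:tensorequiv} the source identifies with $\mathbf{Nuc}^\infty(X \times_Y X)$ and the diagonal $X \to X \times_Y X$ is quasi-affine (condition (1)), so the pushforward along the diagonal satisfies base change and hence the projection formula.

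Next I would note that both $\mathbf{Nuc}^\infty(X)$ and $\mathbf{Nuc}^\infty(Y)$ are dualisable objects of $\mathbf{Pr}_{st}^L$ — indeed they are rigid by Lemma \ref{lem:maxime2} and rigid symmetric monoidal categories are in particular dualisable — so they are commutative algebra objects in $\mathcal{X}$ to which the trace-theoretic formalism of \cite{hoyois2021categorified} applies. With these verifications in hand, Theorem \ref{thm:GRR-general} applied to the rigid symmetric monoidal functor $f^* \colon \mathbf{Nuc}^\infty(Y) \to \mathbf{Nuc}^\infty(X)$ (playing the role of $f \colon \mathbf{D} \to \mathbf{C}$ with $\mathbf{C} = \mathbf{Nuc}^\infty(X)$, $\mathbf{D} = \mathbf{Nuc}^\infty(Y)$) yields exactly the asserted commuting square, where the right adjoint $\mathcal{L}(f)^R$ is the right adjoint of the induced functor on loop spaces $\mathcal{L}(f^*) \colon \mathcal{L}(\mathbf{Nuc}^\infty(Y)) \to \mathcal{L}(\mathbf{Nuc}^\infty(X))$, and the square commutes up to an invertible $2$-morphism, hence in particular in the $(\infty,1)$-category.

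The main obstacle I anticipate is purely bookkeeping: making sure that the notion of ``passable'' as defined in the paper matches precisely the hypothesis ``rigid symmetric monoidal functor between dualisable $(\infty,1)$-categories'' required by \cite{hoyois2021categorified}*{Theorem 4.3}. The subtle point is that $\mathbf{Mod}_{\mathbf{Nuc}^\infty(X)}^{\mathrm{dual}}(\mathbf{Pr}_{st}^L)$ — dualisable \emph{objects} in the $(\infty,2)$-category of $\mathbf{Nuc}^\infty(X)$-modules — need not coincide a priori with $\mathbf{Mod}_{\mathbf{Nuc}^\infty(X)}$ of objects dualisable in the underlying ambient category, but this coincidence does hold when $\mathbf{Nuc}^\infty(X)$ is rigid, which is precisely what Lemma \ref{lem:maxime2} provides. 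Once this identification is made, the rest is a direct application: there is no new computation to perform, only a citation of the general machinery with the hypotheses checked.
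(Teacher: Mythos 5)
Your proposal is correct and follows essentially the same route as the paper: the authors likewise treat this corollary as a direct specialisation of Theorem \ref{thm:GRR-general}, with the hypotheses supplied by passability of $f$ (via the lemma that passable morphisms induce rigid pullback functors, whose proof uses exactly the identification $\mathbf{Nuc}^{\infty}(X)\otimes_{\mathbf{Nuc}^{\infty}(Y)}\mathbf{Nuc}^{\infty}(X)\simeq\mathbf{Nuc}^{\infty}(X\times_Y X)$ and quasi-affineness of the diagonal that you invoke) together with rigidity of $\mathbf{Nuc}^\infty$ of a qcqs scheme from Lemma \ref{lem:maxime2}. Your closing remark about $\mathbf{Mod}^{\mathrm{dual}}_{\mathbf{C}}$ versus objects dualisable in the ambient category is a genuine subtlety the authors also flag, and rigidity of $\mathbf{Nuc}^\infty(X)$ resolves it exactly as you say.
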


\subsection{Noncommutative linear motives}

To use Corollary \ref{thm:categorified-GRR} in the context of \(K\)-theory, we first generalise the notion of noncommutative \(\bC\)-linear motives from compactly generated, presentable, stable \(\infty\)-categories to dualisable \(\infty\)-categories. To this end, let \(\bC\) be a fixed rigid category.  We call a diagram \(A \to B \to C\) in \(\mathbf{Mod}_{\bC}^{\mathrm{dual}}(\mathbf{Pr}^{L})\) a \emph{\(\bC\)-Verdier sequence} if its image under the forgetful functor \(\mathbf{Mod}_{\bC}^{\mathrm{dual}}(\mathbf{Pr}^{L}) \to \mathbf{Pr}^{L, \mathrm{dual}}\) is a Verdier sequence. A \emph{\(\bC\)-localising invariant} valued in a stable \(\infty\)-category \(\bD\) is a functor \(F \colon \mathbf{Mod}_{\bC}^{\mathrm{dual}}(\mathbf{Pr}^{L}) \to \bD\) preserving final objects and \(\bC\)-Verdier sequences. Finally, a \emph{finitary \(\bC\)-localising invariant} is a \(\bC\)-localising invariant that commutes with filtered colimits.

\begin{remark}
Notice that if \(F \colon \mathbf{Mod}_{\bC}^{\mathrm{dual}}(\mathbf{Pr}_{st}^{L}) \to \bD\) is a \(\bC\)-localising invariant for a presentably symmetric monoidal category \(\bC\), then \(F( - \otimes \bC) \colon \mathbf{Pr}^{L, \mathrm{dual}} \to \bD\) is a localising invariant. This is because tensoring by \(\bC\) preserves Verdier sequences by \cite{efimov2024k}*{Theorem 2.2}.
\end{remark}

For a presentable \(\infty\)-category \(\bC\), we define \(\mathbf{Ind}(\bC)\) as the full subcategory of \(\mathbf{Fun}(\bC^\op, \mathsf{An})\) generated by representable functors under small filtered colimits in \(\bC\). Let \(j \colon \bC \to \mathbf{Ind}(\bC) \subseteq \mathbf{Fun}(\bC^\op, \mathbf{An})\) denote the Yoneda embedding. By \cite{ramzi2024dualizable}, the \((\infty,1)\)-category \(\mathbf{Mod}_\bC^{\mathrm{dual}}(\mathbf{Pr}_{st}^{L})\) is presentable, so we may consider the following composition:

\begin{multline*}
\psi \colon \mathbf{Mod}_\bC^{\mathrm{dual}}(\mathbf{Pr}_{st}^{L}) \overset{j}\to \mathbf{Fun}((\mathbf{Mod}_\bC^{\mathrm{dual}}(\mathbf{Pr}_{st}^{L}))^\op, \mathsf{An}) \\ \overset{\mathsf{Sp}}\to \mathbf{Fun}((\mathbf{Mod}_\bC^{\mathrm{dual}}(\mathbf{Pr}_{st}^{L}))^\op, \mathsf{Sp}).
\end{multline*}

Now let \(S_{\mathrm{loc}}\) be the class of morphisms in \(\mathbf{Fun}((\mathbf{Mod}_\bC(\mathbf{Pr}_{st}^{L, \mathrm{dual}}))^\op, \mathsf{Sp})\) that are of the form \[0 \to \Sigma^n \psi(0), \quad \Sigma^n(\mathsf{cone}(\psi(A) \to \psi(B))) \to \Sigma^n(\psi(C)),\] where \(A \to B \to C\) is a \(\bC\)-Verdier sequence, \(n \leq 0\), and \(colim_{i \in I} \psi(C_i) \to \psi (colim C_i)\) for filtered systems \((C_i)_{i \in I}\). 

\begin{definition}\label{def:NCMot}
For a rigid category \(\bC\), we define by \(\mathbf{NCMot}(\bC)\) the full subcategory of \(\mathbf{Fun}((\mathbf{Mod}_\bC^\mathrm{dual}(\mathbf{Pr}^{L}))^\op, \mathsf{Sp})\) generated by \(S_{\mathrm{loc}}\)-local objects. We call this is the \(\infty\)-category of \emph{noncommutative \(\bC\)-linear motives}.
\end{definition}

\begin{proposition}\label{prop:linear-localising}
Let \(\bC\) be a rigid category. Then \(\mathbf{NCMot}(\bC)\) is a stable, presentable \(\infty\)-category. 
\end{proposition}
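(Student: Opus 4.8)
The plan is to realise $\mathbf{NCMot}(\bC)$ as an accessible Bousfield localisation of a presentable stable $\infty$-category at a set of morphisms whose local objects are closed under shift. The only genuine content is a size reduction, exactly of the kind carried out for $\mathbf{NCMot}$ in \cite{blumberg2013universal}; everything else is formal.

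First I would deal with size. By \cite{ramzi2024dualizable} the $\infty$-category $\mathbf{Mod}_\bC^{\mathrm{dual}}(\mathbf{Pr}^{L})$ is presentable, hence $\kappa$-accessible for some regular cardinal $\kappa$; after enlarging $\kappa$ if necessary I would arrange that $\kappa > \omega_1$ and that every $\bC$-Verdier sequence $A \to B \to C$ appearing among the generators of $S_{\mathrm{loc}}$ can be chosen with $A, B, C$ among a fixed small set of $\kappa$-compact objects, closed under the relevant finite (co)limits. Let $\mathcal{E}$ be an essentially small full subcategory of $\kappa$-compact objects with $\mathbf{Mod}_\bC^{\mathrm{dual}}(\mathbf{Pr}^{L}) \simeq \mathbf{Ind}_\kappa(\mathcal{E})$. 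Restriction along $\mathcal{E} \hookrightarrow \mathbf{Mod}_\bC^{\mathrm{dual}}(\mathbf{Pr}^{L})$ identifies the $\kappa$-continuous spectral presheaves on $\mathbf{Mod}_\bC^{\mathrm{dual}}(\mathbf{Pr}^{L})$ with $\mathbf{Fun}(\mathcal{E}^{\op},\mathsf{Sp})$, which is presentable and stable (being a presheaf $\infty$-category valued in the presentable stable $\infty$-category $\mathsf{Sp}$); this is the ambient category in which the localisation genuinely takes place, the point being that the ``$\omega_1$-filtered colimits to limits'' clause in the definition of $S_{\mathrm{loc}}$ forces every $S_{\mathrm{loc}}$-local functor to be $\kappa$-continuous.

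Next I would observe that, after this reduction, $S_{\mathrm{loc}}$ is equivalent to a \emph{set} of morphisms of $\mathbf{Fun}(\mathcal{E}^{\op},\mathsf{Sp})$: its members are assembled from the Yoneda images and stabilisations $\psi$ of objects and morphisms of the small category $\mathcal{E}$, together with a set of morphisms encoding $\kappa$-continuity. Hence by \cite{lurie2009higher} the full subcategory of $S_{\mathrm{loc}}$-local objects is a reflective accessible localisation of $\mathbf{Fun}(\mathcal{E}^{\op},\mathsf{Sp})$, and in particular presentable. Finally, stability: since $\mathbf{Fun}(\mathcal{E}^{\op},\mathsf{Sp})$ is stable, it suffices to show the subcategory of $S_{\mathrm{loc}}$-local objects is closed under $\Sigma$ and $\Omega$. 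Closure under $\Sigma$ is immediate from the fact that the generators $0 \to \Sigma^n\psi(0)$ and $\Sigma^n(\mathsf{cone}(\psi(A)\to\psi(B))) \to \Sigma^n\psi(C)$ are included for all $n \le 0$. For closure under $\Omega$ one uses that an $S_{\mathrm{loc}}$-local $F$ satisfies $F(0)\simeq 0$ (the $n=0$ instance of the first family, via the Yoneda lemma) and, combining $\kappa$-continuity with the full Verdier sequence underlying each $\mathsf{cone}(\psi(A)\to\psi(B))$, carries $\bC$-Verdier sequences to fibre sequences of spectra; concretely, the $\kappa$-continuous functors $\mathcal{E}^{\op}\to\mathsf{Sp}$ annihilating $0$ and sending $\bC$-Verdier sequences to fibre sequences form a subcategory closed under finite limits and colimits of functors, hence stable, and this subcategory is exactly the $S_{\mathrm{loc}}$-local objects. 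Therefore $\mathbf{NCMot}(\bC)$ is stable and presentable.

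The main obstacle I anticipate is precisely the size bookkeeping: checking that the accessibility cardinal of $\mathbf{Mod}_\bC^{\mathrm{dual}}(\mathbf{Pr}^{L})$ can be chosen compatibly with both the $\omega_1$-filtered colimits and the $\bC$-Verdier sequences occurring in $S_{\mathrm{loc}}$, so that the passage from the large functor category in Definition \ref{def:NCMot} to the honestly presentable $\mathbf{Fun}(\mathcal{E}^{\op},\mathsf{Sp})$ is airtight. This is the same verification performed in \cite{blumberg2013universal}, and I would follow that template; the stability and localisation arguments above are then purely formal.
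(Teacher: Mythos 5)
Your proof is correct and follows essentially the same route as the paper's: both realise \(\mathbf{NCMot}(\bC)\) as an accessible (Bousfield) localisation of a spectral functor category, with stability extracted from the closure of \(S_{\mathrm{loc}}\) under desuspension. You are in fact more careful than the paper's three-line argument, which asserts closure under suspensions and appeals to accessibility of filtered-colimit-preserving functors without carrying out the reduction of \(S_{\mathrm{loc}}\) to a genuine set of morphisms over a small subcategory of \(\kappa\)-compact objects, and without checking closure under \(\Omega\); your handling of both points is the standard argument from \cite{blumberg2013universal} and is sound.
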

\begin{proof}
By the definition of \(S_{\mathrm{loc}}\), the category \(\mathbf{NCMot}(\bC)\) is closed under suspensions. Furthermore, by construction it is closed under all small colimits. Finally, accessibility follows from the general fact that filtered colimit preserving functors between presentable \(\infty\)-categories is accessible. 
\end{proof}

Finally, by the presentability of \(\mathbf{NCMot}(\bC)\), the inclusion \(\mathbf{NCMot}(\bC) \to \mathbf{Fun}((\mathbf{Mod}_\bC^{\mathrm{dual}}(\mathbf{Pr}^{L}))^\op, \mathsf{Sp})\) has a left adjoint \[L \colon \mathbf{Fun}((\mathbf{Mod}_\bC^\mathrm{dual}(\mathbf{Pr}^{L}))^\op, \mathsf{Sp}) \to \mathbf{NCMot}(\bC),\] whose restriction to \(\mathbf{Mod}_\bC^\mathrm{dual}(\mathbf{Pr}^{L})\) via the Yoneda embedding yields a functor \[\mathcal{U} \colon \mathbf{Mod}_\bC^\mathrm{dual}(\mathbf{Pr}^{L}) \to \mathbf{NCMot}(\bC).\]

\begin{theorem}[Corepresentability of \(K\)-theory]\label{thm:universal-loc}
The functor \(\mathcal{U} \colon \mathbf{Mod}_\bC^{\mathrm{dual}}(\mathbf{Pr}^{L}) \to \mathbf{NCMot}(\bC)\) defined above is the universal, finitary \(\bC\)-localising invariant. Furthermore, we have \[\mathbf{NCMot}(\bC)(\mathcal{U}(\bC), \mathcal{U}(\mathbf{A})) \simeq K^\cont(\mathbf{A}).\]
\end{theorem}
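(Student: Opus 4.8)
The plan is to mirror, in the \(\bC\)-linear dualisable setting, the classical argument of Blumberg--Gepner--Tabuada identifying the universal localising invariant and its corepresentability of \(K\)-theory, with Efimov's continuous \(K\)-theory \(K^\cont\) playing the role of ordinary \(K\)-theory. First I would establish the universal property: given any finitary \(\bC\)-localising invariant \(F \colon \mathbf{Mod}_\bC^{\mathrm{dual}}(\mathbf{Pr}^{L}) \to \bD\) into a stable presentable \(\infty\)-category, left Kan extension along the Yoneda embedding \(j\) followed by stabilisation produces a colimit-preserving functor \(\widetilde{F} \colon \mathbf{Fun}((\mathbf{Mod}_\bC^{\mathrm{dual}}(\mathbf{Pr}^{L}))^\op, \mathsf{Sp}) \to \bD\) with \(\widetilde{F} \circ \psi \simeq F\). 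The key point is that because \(F\) is finitary and sends \(\bC\)-Verdier sequences to fibre-cofibre sequences, \(\widetilde{F}\) inverts every morphism in the localising class \(S_{\mathrm{loc}}\) from Definition \ref{def:NCMot}: the conditions defining \(S_{\mathrm{loc}}\) are precisely engineered so that a colimit-preserving functor out of the presheaf category inverts them iff its restriction along \(\psi\) preserves final objects, \(\bC\)-Verdier sequences, and \(\omega_1\)-filtered colimits. Hence \(\widetilde{F}\) factors uniquely through the Bousfield localisation \(L \colon \mathbf{Fun}((\mathbf{Mod}_\bC^{\mathrm{dual}}(\mathbf{Pr}^{L}))^\op, \mathsf{Sp}) \to \mathbf{NCMot}(\bC)\), yielding the desired factorisation \(F \simeq \overline{F} \circ \mathcal{U}\) with \(\overline{F}\) colimit-preserving, and uniqueness follows since \(\mathbf{NCMot}(\bC)\) is generated under colimits by the image of \(\mathcal{U}\). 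One must check \(\mathcal{U}\) is itself a finitary \(\bC\)-localising invariant, which is immediate: \(\psi\) followed by localisation at \(S_{\mathrm{loc}}\) kills exactly the obstructions, using Proposition \ref{prop:linear-localising} for stability.

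Next I would prove corepresentability. By the universal property just established, for any finitary \(\bC\)-localising invariant \(F\) valued in \(\mathsf{Sp}\) we have a natural equivalence \(\mathbf{NCMot}(\bC)(\mathcal{U}(\bC), -) \circ \mathcal{U} \simeq F\) once we identify the functor on the left: indeed \(\mathbf{NCMot}(\bC)(\mathcal{U}(\bC), -)\) is colimit-preserving (since \(\mathcal{U}(\bC)\) is a compact, or at least \(\omega_1\)-compact, object of \(\mathbf{NCMot}(\bC)\) — this is where the \(\omega_1\)-filtered colimit condition in \(S_{\mathrm{loc}}\) is used), and its precomposition with \(\mathcal{U}\) is a finitary \(\bC\)-localising invariant. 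It therefore suffices to show that \(\mathbf{A} \mapsto \mathbf{NCMot}(\bC)(\mathcal{U}(\bC), \mathcal{U}(\mathbf{A}))\) agrees with \(K^\cont\) on a generating class, say on \(\mathbf{A} = \mathbf{Mod}_{\bC}(\bB)\) for \(\bB\) a rigid \(\bC\)-algebra, or on objects of the form \(\mathsf{Ind}(\mathbf{A}_0)\) for \(\mathbf{A}_0\) a small stable category. Here I invoke Theorem \ref{thm:Efimov} (Efimov's extension theorem) and the fact, recalled in the excerpt, that the continuous universal localising invariant \(\mathcal{U}^\cont \colon \mathbf{Cat}_\infty^{\mathrm{dual}} \to \mathbf{NCMot}\) satisfies \(\mathsf{Hom}_{\mathbf{NCMot}}(\mathcal{U}^\cont(\mathsf{Sp}), \mathcal{U}^\cont(\bC)) \simeq K^\cont(\bC)\). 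The \(\bC\)-linear motives \(\mathbf{NCMot}(\bC)\) map to \(\mathbf{NCMot}\) via the forgetful functor \(\mathbf{Mod}_\bC^{\mathrm{dual}}(\mathbf{Pr}^{L}) \to \mathbf{Pr}^{L,\mathrm{dual}}\), compatibly with \(\mathcal{U}\) and \(\mathcal{U}^\cont\), and \(\mathcal{U}(\bC) \mapsto \mathcal{U}^\cont(\bC)\) corresponds to the unit \(\mathcal{U}^\cont(\mathsf{Sp})\) under the base-change adjunction \(- \otimes \bC\). Tracing through these identifications, \(\mathbf{NCMot}(\bC)(\mathcal{U}(\bC), \mathcal{U}(\mathbf{A})) \simeq \mathbf{NCMot}(\mathcal{U}^\cont(\mathsf{Sp}), \mathcal{U}^\cont(\mathbf{A})) \simeq K^\cont(\mathbf{A})\), where the last step is Efimov's theorem applied to \(\mathbf{A}\) viewed as a dualisable stable category.

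The main obstacle I anticipate is the comparison \(\mathbf{NCMot}(\bC)(\mathcal{U}(\bC), -) \simeq \) (forgetful)\(^* K^\cont\): one must show that the \(\bC\)-linear enhancement does not change the mapping-spectrum computation, i.e. that the natural map \(\mathbf{NCMot}(\bC)(\mathcal{U}(\bC), \mathcal{U}(\mathbf{A})) \to \mathbf{NCMot}(\mathcal{U}^\cont(\mathsf{Sp}), \mathcal{U}^\cont(\mathbf{A}))\) is an equivalence. This is plausible because \(\bC\) is rigid, hence \(- \otimes \bC\) is an exact, colimit-preserving, conservative-enough operation that preserves and reflects Verdier sequences (by \cite{efimov2024k}*{Theorem 2.2}, cited in the excerpt), so \(\bC\)-linear localising invariants are "the same as" localising invariants of \(\bC\)-module categories; but making this precise requires either constructing an explicit right adjoint to the base-change functor on motive categories, or a direct cofinality/density argument showing that the \(S_{\mathrm{loc}}\)-localisation in the \(\bC\)-linear setting is computed by the underlying localisation. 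A secondary technical point is verifying that \(\mathcal{U}(\bC)\) is \(\omega_1\)-compact in \(\mathbf{NCMot}(\bC)\), which should follow from the representable presheaf \(\psi(\bC)\) being \(\omega_1\)-compact in the presheaf category together with the fact that the localisation \(L\) preserves \(\omega_1\)-compactness (because the class \(S_{\mathrm{loc}}\) consists of maps between \(\omega_1\)-compact objects, cf. the finitariness built into Definition \ref{def:NCMot}). Once these two points are settled, the rest is bookkeeping with adjunctions.
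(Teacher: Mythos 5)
Your proposal is correct and follows essentially the same route as the paper: the corepresentability is obtained from the adjunction $\mathbf{NCMot}\rightleftarrows\mathbf{NCMot}(\bC)$ induced by $-\otimes\bC$ (using rigidity of $\bC$ so that base change preserves Verdier sequences), identifying $\mathcal{U}(\bC)$ with the image of the unit $\mathcal{U}(\mathsf{Sp})$, and then invoking the continuous Blumberg--Gepner--Tabuada corepresentability of $K^\cont$. You are in fact somewhat more careful than the paper on the one genuinely delicate point — that the right adjoint of this adjunction on motives is compatible with the forgetful functor, i.e.\ that $R(\mathcal{U}(\mathbf{A}))$ is the motive of the underlying dualisable category — which the paper passes over with the remark that $\mathcal{U}$ is symmetric monoidal; your discussion of the universal property via Bousfield localisation of the presheaf category is also the intended (but unwritten) argument for the first half of the statement.
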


\begin{proof}
By \cite{ramzi2024locally}*{Corollary 4.46}, for a rigid category \(\bC\), the forgetful functor  \[\mathbf{Mod}_{\bC}^{\mathrm{dual}} \to \mathbf{Pr}^{L, \mathrm{dual}}\] is right adjoint to extension of scalars \( - \otimes \bC\). Using the rigidity of \(\bC\), one observes that the extension of scalars functor composed with the universal localising invariant \(\mathbf{Pr}^{L, \mathrm{dual}} \to \mathbf{Mod}_{\bC}^{\mathrm{dual}} \to \mathbf{NCMot}(\bC)\) is a localising invariant, and therefore factors uniquely through \(\mathcal{U} \colon \mathbf{Pr}^{L, \mathrm{dual}} \to \mathbf{NCMot}\), yielding a symmetric monoidal, colimit-preserving functor \(\mathbf{NCMot} \to \mathbf{NCMot}(\bC)\). By the presentability of noncommutative motives, we thus have an adjunction \[\mathbf{NCMot}(\bC) \rightleftarrows \mathbf{NCMot},\] so that for a \(\bC\)-linear dualisable category \(\bD\), we have the following equivalences 
\[
\mathbf{NCMot}(\bC)(\mathcal{U}(\bC), \mathcal{U}(\bA)) \simeq \mathbf{NCMot}(\mathsf{Sp}, \mathcal{U}(\bA)) \simeq K^\mathrm{cont}(\bA),
\] where we have used that \(\mathcal{U}\) is symmetric monoidal in the first equivalence, and  \cite{blumberg2013universal}*{Theorem 9.8} for the second equivalence.
\end{proof}

\subsection{The geometric Grothendieck-Riemann-Roch Theorem}

We now specialise the results of the previous section to the rigid category \(\bC = \mathbf{Nuc}^\infty(X)\) for $X$ a qcqs scheme. To simplify notation, we will suppress the ambient symmetric monoidal \(\infty\)-category \(\mathbf{Pr}_{st}^{L}\) of presentable stable \(\infty\)-categories and simply write \(\mathbf{Mod}_{\bC}^{\mathrm{dual}}\) for \(\bC\)-linear dualisable categories.

\begin{theorem}\label{main:GRR}
Let \(\bC = \mathbf{Nuc}^\infty(X)\) for a qcqs \(X \in \mathbf{dAn}_R\). The \(S^1\)-equivariant Chern character \[\mathsf{Ch} \colon \mathbf{Mod}_{\mathbf{C}}^{\mathrm{dual}} \to \mathcal{L}(\bC)^{S^1}\] is a \(\mathbf{C}\)-localising invariant, so it factors through \(\mathbf{C}\)-linear noncommutative motives \(\mathbf{NCMot}(\mathbf{C})\).
\end{theorem}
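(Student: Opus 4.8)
The plan is to verify the two defining properties of a \(\mathbf{C}\)-localising invariant for \(\mathsf{Ch}\): that it preserves the final object, and that it sends \(\mathbf{C}\)-Verdier sequences to fibre-cofibre sequences in \(\mathcal{L}(\mathbf{Nuc}^\infty(\mathbf{C}))^{S^1}\). Once this is established, the factorisation through \(\mathbf{NCMot}(\mathbf{C})\) is formal from the universal property of \(\mathcal{U} \colon \mathbf{Mod}_{\mathbf{C}}^{\mathrm{dual}} \to \mathbf{NCMot}(\mathbf{C})\) in Theorem \ref{thm:universal-loc}, \emph{provided} we also check that \(\mathsf{Ch}\) is finitary, i.e. commutes with \(\omega_1\)-filtered colimits. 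The preservation of the final object is immediate: the zero \(\mathbf{C}\)-module category is dualisable with \(\mathsf{Ch}(0) = \mathrm{Tr}(\mathcal{L}(\mathbf{C}), 0) = 0\), since the trace of the zero endomorphism vanishes.

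\textbf{Key steps.} First I would recall from the construction of the categorified Chern character (following \cite{hoyois2021categorified}, as recalled in the excerpt) that \(\mathsf{Ch}\) is the composite of: (i) the symmetric monoidal functor \(\mathbf{Mod}_{\mathbf{C}}^{\mathrm{dual}} \to \mathbf{Fun}(S^1, \mathbf{Mod}_{\mathcal{L}(\mathbf{C})}^{\mathrm{dual}})\); (ii) the identification of an \(S^1\)-family of dualisable \(\mathcal{L}(\mathbf{C})\)-modules over the point with an automorphism of \(\mathcal{L}(\mathbf{C})\); (iii) the trace map \(\mathrm{Tr}(\mathcal{L}(\mathbf{C}), -)\); and then taking \(S^1\)-fixed points. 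The crucial input is that the trace map, valued in \(\mathbf{End}_{\mathbf{Mod}_{\mathcal{L}(\mathbf{C})}(\mathbf{Pr}^L_{st})}(\mathbf{1}) \simeq \mathcal{L}(\mathbf{C})\), is itself a localising invariant of dualisable \(\mathcal{L}(\mathbf{C})\)-linear categories: this is a consequence of the additivity theorem for traces together with the fact that \(\mathcal{L}(\mathbf{C})\) is again rigid (since \(\mathbf{C} = \mathbf{Nuc}^\infty(X)\) is rigid by Lemma \ref{lem:maxime2}, and rigidity is preserved by the loop space construction \(\mathcal{L}(\mathbf{C}) = \mathbf{C} \otimes_{\mathbf{C} \otimes \mathbf{C}} \mathbf{C}\), being a colimit of rigid categories along internal left adjoints). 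Second, given a \(\mathbf{C}\)-Verdier sequence \(A \to B \to D\), I would base-change along \(\mathbf{C} \to \mathcal{L}(\mathbf{C})\) — this preserves Verdier sequences by \cite{efimov2024k}*{Theorem 2.2} since \(\mathcal{L}(\mathbf{C})\) is dualisable — and then invoke localising-invariance of the trace at each \(S^1\)-degree. Third, since \(S^1\)-fixed points is a limit, and since the trace commutes with the relevant (finite, in the qcqs case essentially) limits and with \(\omega_1\)-filtered colimits, the sequence \(\mathsf{Ch}(A) \to \mathsf{Ch}(B) \to \mathsf{Ch}(D)\) is a fibre-cofibre sequence. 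Finally I would conclude the factorisation: \(\mathsf{Ch}\) preserves the terminal object and \(\mathbf{C}\)-Verdier sequences and is finitary, hence by the universal property in Theorem \ref{thm:universal-loc} it factors uniquely as \(\mathsf{Ch} = \overline{\mathsf{Ch}} \circ \mathcal{U}\) for a colimit-preserving functor \(\overline{\mathsf{Ch}} \colon \mathbf{NCMot}(\mathbf{C}) \to \mathcal{L}(\mathbf{Nuc}^\infty(\mathbf{C}))^{S^1}\).

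\textbf{Main obstacle.} The delicate point is verifying that the trace map \(\mathrm{Tr}(\mathcal{L}(\mathbf{C}), -)\) is genuinely localising on \emph{dualisable} (as opposed to merely compactly generated) \(\mathcal{L}(\mathbf{C})\)-linear categories, and that it is finitary. In the compactly generated setting this is the additivity theorem of \cite{hoyois2021categorified}, but here we need its extension to the dualisable world. The cleanest route is to use that \(\mathcal{L}(\mathbf{C})\) is rigid, so by the theory of \cite{nkp} and \cite{ramzi2024dualizable} every dualisable \(\mathcal{L}(\mathbf{C})\)-module is a retract-stable filtered colimit of compactly generated ones along internal left adjoints, and the trace — being a symmetric monoidal, hence colimit-compatible, construction on the level of endomorphism objects — commutes with such colimits; combined with the continuous extension formalism (Efimov's theorem, Theorem \ref{thm:Efimov}) this upgrades additivity to the dualisable setting. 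I would also need to double-check that taking \(S^1\)-fixed points does not destroy exactness: this is fine because homotopy fixed points is a limit over \(BS^1\) and limits commute with the fibre sequences produced degreewise, so \(\mathsf{Ch}\) inherits the fibre-cofibre property. The qcqs hypothesis on \(X\) enters precisely to guarantee (via Lemma \ref{lem:maxime2}) that \(\mathbf{Nuc}^\infty(X)\) is rigid, which is what makes \(\mathbf{NCMot}(\mathbf{Nuc}^\infty(X))\) well-behaved and the whole argument go through.
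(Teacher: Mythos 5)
Your proposal is correct and follows essentially the same route as the paper, which simply observes that the two constituent functors of \(\mathsf{Ch}\) (the passage to \(\mathbf{Aut}(\mathcal{L}(\bC))\) and the categorified trace) preserve zero objects and \(\bC\)-Verdier sequences "by the same proof as in" Hoyois--Scherotzke--Sibilla, Theorem 6.9. Your additional attention to the finitary condition and to upgrading additivity of traces from compactly generated to dualisable \(\mathcal{L}(\bC)\)-modules makes explicit exactly what that citation is being asked to carry, and is the more careful account.
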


\begin{proof}
Let \(A \to B \to C\) be a \(\bC\)-Verdier sequence, so that it is a Verdier sequence of dualisable categories. By the same proof as in \cite{hoyois2017higher}*{Theorem 6.9}, the functors \(\mathbf{Mod}_{\mathbf{C}}^{\mathrm{dual}} \to \mathbf{Aut}(\mathcal{L}(\bC))\) and the categorified trace \(\mathbf{Aut}(\mathcal{L}(\bC)) \to \mathcal{L}(\bC)^{S^1}\) preserve zero objects and \(\bC\)-Verdier sequences. Consequently, the composition \(\mathsf{Ch}\) is a \(\bC\)-localising invariant.
\end{proof}

As a consequence, we have the following:

\begin{corollary}\label{cor:NCMot-GRR}
We have the following commuting diagram

\[
\begin{tikzcd}
\mathbf{NCMot}(\mathbf{Nuc}^\infty(X)) \arrow{r}{\mathrm{Ch}} \arrow{d}{f_*} &  \mathcal{L}(\mathbf{Nuc}^\infty(X))^{S^1} \arrow{d}{}  \\
\mathbf{NCMot}(\mathbf{Nuc}^\infty(Y)) \arrow{r}{\mathrm{Ch}} &  \mathcal{L}(\mathbf{Nuc}^\infty(Y))^{S^1}
\end{tikzcd} 
\] of \(\infty\)-categories. 
\end{corollary}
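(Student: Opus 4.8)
The plan is to obtain the square of Corollary~\ref{cor:NCMot-GRR} by descending the categorified Grothendieck--Riemann--Roch square of Corollary~\ref{thm:categorified-GRR} along the universal finitary localising invariants $\mathcal{U}_X \colon \mathbf{Mod}^{\mathrm{dual}}_{\mathbf{Nuc}^\infty(X)}(\mathbf{Pr}_{st}^L) \to \mathbf{NCMot}(\mathbf{Nuc}^\infty(X))$ and $\mathcal{U}_Y$, using that the Chern character is itself such an invariant (Theorem~\ref{main:GRR}) together with the corepresentability statement of Theorem~\ref{thm:universal-loc}. Concretely, by Theorem~\ref{main:GRR} applied to $X$ and to $Y$, the $S^1$-equivariant Chern characters $\mathrm{Ch}_X^{\mathrm{Mod}}$ and $\mathrm{Ch}_Y^{\mathrm{Mod}}$ factor through $\mathcal{U}_X$ and $\mathcal{U}_Y$ as colimit-preserving functors $\widetilde{\mathrm{Ch}}_X$ and $\widetilde{\mathrm{Ch}}_Y$; these are the horizontal arrows of the desired square.

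Next I would construct the left vertical arrow $f_* \colon \mathbf{NCMot}(\mathbf{Nuc}^\infty(X)) \to \mathbf{NCMot}(\mathbf{Nuc}^\infty(Y))$. The restriction-of-scalars functor $f_*^{\mathrm{Mod}} \colon \mathbf{Mod}^{\mathrm{dual}}_{\mathbf{Nuc}^\infty(X)} \to \mathbf{Mod}^{\mathrm{dual}}_{\mathbf{Nuc}^\infty(Y)}$ along the rigid symmetric monoidal functor $f^*$ leaves the underlying dualisable category unchanged; hence it preserves the zero object, sends $\mathbf{Nuc}^\infty(X)$-Verdier sequences to $\mathbf{Nuc}^\infty(Y)$-Verdier sequences, and commutes with $\omega_1$-filtered colimits. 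Therefore $\mathcal{U}_Y \circ f_*^{\mathrm{Mod}}$ is a finitary $\mathbf{Nuc}^\infty(X)$-localising invariant, and by Theorem~\ref{thm:universal-loc} it factors uniquely through $\mathcal{U}_X$ as a colimit-preserving functor, which we take to be $f_*$; by construction $f_* \circ \mathcal{U}_X \simeq \mathcal{U}_Y \circ f_*^{\mathrm{Mod}}$.

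The heart of the argument is then as follows. By Corollary~\ref{thm:categorified-GRR} there is an invertible $2$-morphism $\mathcal{L}(f)^R \circ \mathrm{Ch}_X^{\mathrm{Mod}} \simeq \mathrm{Ch}_Y^{\mathrm{Mod}} \circ f_*^{\mathrm{Mod}}$ of functors $\mathbf{Mod}^{\mathrm{dual}}_{\mathbf{Nuc}^\infty(X)} \to \mathcal{L}(\mathbf{Nuc}^\infty(Y))^{S^1}$. The right-hand composite is manifestly a finitary $\mathbf{Nuc}^\infty(X)$-localising invariant, being the localising invariant $\mathrm{Ch}_Y^{\mathrm{Mod}}$ composed with the exact, colimit-preserving, Verdier-sequence-preserving $f_*^{\mathrm{Mod}}$; hence so is the left-hand composite. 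Both therefore factor uniquely through $\mathcal{U}_X$ as colimit-preserving functors out of $\mathbf{NCMot}(\mathbf{Nuc}^\infty(X))$, namely as $\mathcal{L}(f)^R \circ \widetilde{\mathrm{Ch}}_X$ and $\widetilde{\mathrm{Ch}}_Y \circ f_*$ respectively, and the invertible $2$-morphism above descends to an equivalence between them --- which is exactly the commutativity asserted in Corollary~\ref{cor:NCMot-GRR}. The one nonformal point, which I expect to be the main obstacle, is verifying that $\mathcal{L}(f)^R$ is colimit-preserving, so that $\mathcal{L}(f)^R \circ \widetilde{\mathrm{Ch}}_X$ genuinely furnishes the colimit-preserving factorisation demanded by the uniqueness clause of Theorem~\ref{thm:universal-loc}. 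This should follow from the passability of $f$: it provides a $\mathbf{Nuc}^\infty(Y)$-linear right adjoint to $f^* \colon \mathbf{Nuc}^\infty(Y) \to \mathbf{Nuc}^\infty(X)$ with $\mathbf{Nuc}^\infty(X)$ dualisable over $\mathbf{Nuc}^\infty(Y)$, so that this right adjoint is itself an internal left adjoint (cf.\ the rigidity of $\mathbf{Nuc}^\infty$ of qcqs schemes, Lemma~\ref{lem:maxime2}), and $\mathcal{L}(f)^R$ is obtained from it by applying the colimit-preserving loop construction $\mathcal{L}(-)$; once this compatibility of $\mathcal{L}(-)$ with passage to right adjoints is pinned down, the descent along $\mathcal{U}_X$ is purely formal.
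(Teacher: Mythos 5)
Your proposal is correct and is essentially the argument the paper intends: the paper presents Corollary \ref{cor:NCMot-GRR} as an immediate consequence of Theorem \ref{main:GRR} (the Chern character is a finitary $\mathbf{C}$-localising invariant, hence factors through $\mathcal{U}$) together with Corollary \ref{thm:categorified-GRR}, and your elaboration --- constructing $f_*$ on $\mathbf{NCMot}$ via the universal property and descending the invertible $2$-morphism along $\mathcal{U}_X$ --- is exactly that descent made explicit. The colimit-preservation of $\mathcal{L}(f)^R$ that you flag is indeed the only non-formal input, and it is supplied by the passability/rigidity of $f^*$ already built into Theorem \ref{thm:GRR-general}, where $\mathcal{L}(f)^R$ appears as a $1$-morphism of the ambient category.
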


Note that the endomorphisms on the unit object on \(\mathcal{L}(\mathbf{Nuc}^\infty(X))\) is by definition the continuous topological Hochschild homology \(HH^\an(X) \defeq HH^{\mathrm{cont}}(\mathbf{Nuc}^\infty(X))\) of the dualisable category \(\mathbf{Nuc}^\infty(X)\) of infinitely nuclear sheaves. Taking mapping anima in Corollary \ref{cor:NCMot-GRR}, we obtain the following:

\begin{corollary}\label{cor:mapping-anima-mot}
For \(\mathcal{T} \in \mathbf{Mod}_{\mathbf{Nuc}^\infty(X)}^{\mathrm{dual}}(\mathcal{X})\) and \(\mathcal{T}' \in \mathbf{Mod}_{\mathbf{Nuc}^\infty(X)}^{\mathrm{dual}}(\mathcal{X})\), we have the following commuting diagram of anima 

\[
\begin{tikzcd}
\mathbf{NCMot}(\mathbf{Nuc}^\infty(X))(\mathcal{U}(\mathbf{Nuc}^\infty(X)), \mathcal{U}(\mathcal{T})) \arrow{r}{\mathrm{Ch}} \arrow{d}{} & \mathcal{L}(\mathbf{Nuc}^\infty(X))^{S^1}(\mathbf{1}, \mathrm{Ch}(\mathcal{T})) \arrow{d}{}\\
\mathbf{NCMot}(\mathbf{Nuc}^\infty(Y))(\mathcal{U}(\mathbf{Nuc}^\infty(Y)), \mathcal{U}(\mathcal{T}')) \arrow{r}{\mathrm{Ch}}  & \mathcal{L}(\mathbf{Nuc}^\infty(Y))^{S^1}(\mathbf{1}, \mathrm{Ch}(\mathcal{T}')).
\end{tikzcd}
\]
\end{corollary}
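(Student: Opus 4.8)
The plan is to deduce the statement formally from the commuting square of $\infty$-categories of Corollary \ref{cor:NCMot-GRR}, by passing to mapping anima out of the monoidal units — precisely the way Corollary \ref{thm:GRR-spatial} is extracted from Theorem \ref{thm:GRR-general}. Here $f \colon X \to Y$ is the given quasi-separated morphism of quasi-compact, quasi-separated derived dagger $R$-analytic spaces, which is in particular qcqs and hence passable, so that $f^* \colon \mathbf{Nuc}^\infty(Y) \to \mathbf{Nuc}^\infty(X)$ is a rigid symmetric monoidal functor with right adjoint $f_*$; as in Corollary \ref{thm:GRR-spatial} one fixes a morphism $g \colon f_* \mathcal{T} \to \mathcal{T}'$ in $\mathbf{Mod}_{\mathbf{Nuc}^\infty(Y)}^{\mathrm{dual}}$, where $\mathcal{T}'$ is the $\mathbf{Nuc}^\infty(Y)$-linear dualisable category appearing in the bottom row of the diagram, and this $g$ is what produces the two vertical arrows.

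First I would record that $\mathcal{U} \colon \mathbf{Mod}_{\mathbf{Nuc}^\infty(X)}^{\mathrm{dual}} \to \mathbf{NCMot}(\mathbf{Nuc}^\infty(X))$ is symmetric monoidal by Theorem \ref{thm:universal-loc}, so that $\mathcal{U}(\mathbf{Nuc}^\infty(X))$ is the monoidal unit of $\mathbf{NCMot}(\mathbf{Nuc}^\infty(X))$, and likewise over $Y$; and that the factored Chern character of Theorem \ref{main:GRR} is symmetric monoidal (this uses that the categorified Chern character is an $S^1$-equivariant symmetric monoidal functor, as established in the discussion preceding Theorem \ref{thm:GRR-general}), hence carries $\mathcal{U}(\mathbf{Nuc}^\infty(X))$ to the unit $\mathbf{1}$ of $\mathcal{L}(\mathbf{Nuc}^\infty(X))^{S^1}$, and similarly over $Y$. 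Applying the mapping-anima functors $\mathbf{NCMot}(\mathbf{Nuc}^\infty(X))(\mathcal{U}(\mathbf{Nuc}^\infty(X)),-)$, $\mathbf{NCMot}(\mathbf{Nuc}^\infty(Y))(\mathcal{U}(\mathbf{Nuc}^\infty(Y)),-)$, $\mathcal{L}(\mathbf{Nuc}^\infty(X))^{S^1}(\mathbf{1},-)$ and $\mathcal{L}(\mathbf{Nuc}^\infty(Y))^{S^1}(\mathbf{1},-)$ to the objects and the horizontal functors of Corollary \ref{cor:NCMot-GRR} yields the two horizontal arrows of the asserted square (and, by Theorem \ref{thm:universal-loc}, identifies the left-hand terms with continuous $K$-theory); the vertical arrows are then obtained from $f_*$, respectively $\mathcal{L}(f)^R$, together with $g$, exactly as in Corollary \ref{thm:GRR-spatial}. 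Commutativity of the resulting square of anima follows by pasting the already-established naturality square of Corollary \ref{cor:NCMot-GRR}, evaluated both at a morphism $\varphi \colon \mathcal{U}(\mathbf{Nuc}^\infty(X)) \to \mathcal{U}(\mathcal{T})$ and at $g$, with the compatibility of the Chern character with the unit maps $\mathcal{U}(\mathbf{Nuc}^\infty(Y)) \to f_* \mathcal{U}(\mathbf{Nuc}^\infty(X))$ and $\mathbf{1} \to \mathcal{L}(f)^R \mathbf{1}$ of the two adjunctions.

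The only point that is not purely formal, and which I expect to be the main obstacle, is the bookkeeping identifying the two vertical arrows produced by the mapping-anima construction with the maps built from $g$ and the adjunction units: one must check that the functor $f_* \colon \mathbf{NCMot}(\mathbf{Nuc}^\infty(X)) \to \mathbf{NCMot}(\mathbf{Nuc}^\infty(Y))$ occurring in Corollary \ref{cor:NCMot-GRR} is the one induced by restriction of scalars along $f^*$ (legitimate because $f^*$ is rigid, so restriction of scalars preserves $\mathbf{Nuc}^\infty(X)$-Verdier sequences and $\omega_1$-filtered colimits and hence descends to noncommutative linear motives), that it commutes with $\mathcal{U}$ up to canonical equivalence, and that under the identification of Theorem \ref{thm:universal-loc} its effect on mapping anima out of the units is the continuous $K$-theory pushforward $f_!$. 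Once this identification is in place, the remaining verifications are the standard manipulations with mapping spaces of a commuting square of $\infty$-categories and present no further difficulty.
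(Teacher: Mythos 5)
Your proposal is correct and follows exactly the route the paper takes: the paper's entire proof is the phrase ``taking mapping anima in Corollary \ref{cor:NCMot-GRR}'', i.e.\ the same extraction of a square of mapping anima out of the monoidal units that produces Corollary \ref{thm:GRR-spatial} from Theorem \ref{thm:GRR-general}, and you have simply spelled out the bookkeeping (symmetric monoidality of $\mathcal{U}$ and of the factored Chern character, the role of $g\colon f_*\mathcal{T}\to\mathcal{T}'$ and the adjunction units) that the paper leaves implicit. Your reading of $\mathcal{T}'$ as a $\mathbf{Nuc}^\infty(Y)$-linear dualisable category is also the intended one, despite the typo in the statement placing it over $X$.
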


\begin{corollary}\label{thm:analytic-GRR}
Let \(f\colon X \to Y\) be a quasi-separated morphism of quasi-compact, quasi-separated  dagger \(R\)-analytic spaces. Then the following diagram of spectra 

\[
\begin{tikzcd}
K^\an(X) \arrow{r}{\mathrm{ch}} \arrow{d}{f_*} & TC^{-,\an}(X) \arrow{d}{} \\
K^\an(Y) \arrow{r}{\mathrm{ch}} & TC^{-,\an}(Y)
\end{tikzcd}
\] commutes.
\end{corollary}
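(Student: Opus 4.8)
The statement is a direct specialisation of Corollary \ref{cor:mapping-anima-mot} once we know the hypotheses of the categorical machinery are met. So the first step is to verify that for a quasi-separated morphism $f \colon X \to Y$ of quasi-compact, quasi-separated derived dagger $R$-analytic spaces, the pullback functor $f^* \colon \mathbf{Nuc}^\infty(Y) \to \mathbf{Nuc}^\infty(X)$ is a rigid symmetric monoidal functor of rigid categories. By Lemma \ref{lem:maxime2}, both $\mathbf{Nuc}^\infty(X)$ and $\mathbf{Nuc}^\infty(Y)$ are rigid for any qcqs derived dagger analytic space, using that such spaces are qcqs schemes in the dagger analytic context. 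For the rigidity of $f^*$ itself, I would invoke the fact that a qcqs morphism of qcqs schemes is passable (as recorded just before Lemma \ref{lem:tensorequiv}): indeed the diagonal $X \to X \times_Y X$ is quasi-affine since $f$ is quasi-separated, and $f^*$ admits a $\mathbf{Nuc}^\infty(Y)$-linear right adjoint $f_*$ by Corollary \ref{cor:fformrigqcqs} together with base change (Proposition \ref{prop:cocontqcqs}), while $\mathbf{Nuc}^\infty(X)$ is dualisable — in fact locally rigid — over $\mathbf{Nuc}^\infty(Y)$ by \cite{nkp}*{Proposition 4.3.10}. Hence $f^*$ is rigid, and we are in the situation of Corollary \ref{cor:NCMot-GRR} with $\bC = \mathbf{Nuc}^\infty(X)$, $\bD = \mathbf{Nuc}^\infty(Y)$.

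The second step is to identify the relevant mapping anima in Corollary \ref{cor:mapping-anima-mot}. Taking $\mathcal{T} = \mathbf{Nuc}^\infty(X)$ itself (the unit $\bC$-linear dualisable category) and $\mathcal{T}' = \mathbf{Nuc}^\infty(Y)$, so that $f_*(\mathcal{T}) = \mathbf{Nuc}^\infty(X)$ viewed as a $\mathbf{Nuc}^\infty(Y)$-module with the structure map $g \colon f_*(\mathbf{Nuc}^\infty(X)) \to \mathbf{Nuc}^\infty(Y)$ being $f_*$ followed by the counit, the upper-left corner $\mathbf{NCMot}(\mathbf{Nuc}^\infty(X))(\mathcal{U}(\mathbf{Nuc}^\infty(X)), \mathcal{U}(\mathbf{Nuc}^\infty(X)))$ computes $K^{\mathrm{cont}}(\mathbf{Nuc}^\infty(X)) = K^\an(X)$ by Theorem \ref{thm:universal-loc} (equivalently, this is the definition of the analytic $K$-theory spectrum via Definition \ref{def:bornological-localising}, using that $K$ is the universal localising invariant). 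The lower-left corner is $K^\an(Y)$ by the same reasoning, and the left vertical map is precisely the pushforward $f_*$ on $K$-theory induced functorially by $g$. On the right-hand side, the upper corner $\mathcal{L}(\mathbf{Nuc}^\infty(X))^{S^1}(\mathbf{1}, \mathrm{Ch}(\mathbf{Nuc}^\infty(X)))$ is by definition the $S^1$-homotopy fixed points of the loop space of endomorphisms of the unit, which is $HH^\an(X)^{hS^1} = TC^{-,\an}(X)$, the continuous extension of negative cyclic homology; likewise the lower-right corner is $TC^{-,\an}(Y)$, and the right vertical map is the induced pushforward on $TC^-$. The top and bottom horizontal maps are the Chern character $\mathrm{ch}$ obtained from the $S^1$-equivariant categorified Chern character by evaluating on the unit object.

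The third and final step is simply to transport the commutativity of the square in Corollary \ref{cor:mapping-anima-mot} through these identifications, which gives the commuting square of spectra in the statement. I do not anticipate a serious obstacle here; the only point requiring genuine care is \textbf{the rigidity of $f^*$ under merely the quasi-separatedness of $f$} (rather than, say, properness or flatness) — one must be careful that quasi-separatedness of $f$ is exactly what makes the diagonal quasi-affine, which is the hypothesis needed for passability and hence for the rigidity of $f^*$ via the projection formula for the diagonal. A secondary subtlety is matching the continuous $TC^-$ appearing in the statement with the $S^1$-fixed points $\mathcal{L}(\mathbf{Nuc}^\infty(X))^{S^1}$; this is a standard identification of $TC^-$ with $HH^{hS^1}$ applied to the dualisable category $\mathbf{Nuc}^\infty(X)$, together with the fact that for a qcqs $X$ this agrees with the continuous extension along $\mathbf{Perf} \to \mathbf{Nuc}^\infty$ by the descent and rigidity results of Section 6. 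Once these identifications are in place, the theorem follows formally.
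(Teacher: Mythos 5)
Your proposal is correct and follows essentially the same route as the paper: specialise the categorified Grothendieck--Riemann--Roch square (Corollary \ref{cor:mapping-anima-mot}) to $\mathcal{T} = \mathbf{Nuc}^\infty(X)$ and $\mathcal{T}' = \mathbf{Nuc}^\infty(Y)$, use Theorem \ref{thm:universal-loc} to identify the left-hand mapping spectra with $K^\an$, and identify the right-hand side with $HH^{\an}(-)^{hS^1} = TC^{-,\an}$. Your preliminary verification that $f^*$ is rigid via passability of qcqs morphisms is exactly the hypothesis the paper relies on from the discussion preceding Corollary \ref{thm:categorified-GRR}, so this is the same argument with the implicit hypotheses made explicit.
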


\begin{proof}
Set \(\mathcal{T} = \mathbf{Nuc}^\infty(X)\) and \(\mathcal{T}' = \mathbf{Nuc}^\infty(Y)\) in Corollary \ref{cor:NCMot-GRR}, and use Theorem \ref{thm:universal-loc}. This yields the analytic \(K\)-theory spectra on the left vertical side of the diagram in Corollary \ref{cor:NCMot-GRR}. The right vertical side is \(\Omega \mathcal{L}(\mathbf{Nuc}^\infty(X))^{hS^1}\) and \(\Omega \mathcal{L}(\mathbf{Nuc}^\infty(Y))^{hS^1}\), which are \(HH^\an(X)^{hS^1} = TC^{-,\an}(X)\) and \(HH^\an(Y)^{hS^1} = TC^{-,\an}(Y)\), respectively. 
\end{proof}

This completes the proof of Theorem \ref{thm:GRR}.

\comment{
\subsection{$\mathcal{F}$-descent topologies}

Let $\mathbf{D},\mathbf{E}$ be presentable $(\infty,1)$-categories with $\mathbf{D}$ being small, $\tau$ be a topology on $\mathbf{D}$, and let $\mathcal{F}\in\mathbf{PreShv}(\mathbf{D};\mathbf{E})$ be a $\mathbf{E}$-valued presheaf. When $\mathbf{E}=\mathbf{An}$ we just write $\mathbf{PreShv}(\mathbf{D})$. Note that $\mathcal{F}:\mathbf{D}^{op}\rightarrow\mathbf{E}$ extends by colimits to a functor
$$\mathbf{PreShv}(\mathbf{D})\rightarrow\mathbf{PreShv}(\mathbf{D};\mathbf{E})$$

Let $f:\mathcal{X}\rightarrow\mathcal{Z}$ be a map in $\mathbf{PreShv}(\mathbf{D})$. We say that $f$ \textit{satisfies} $\mathcal{F}$-\textit{descent} if 
the map
$$\mathcal{F}(\mathcal{Z})\rightarrow\lim_{n}\mathcal{F}(\mathcal{X}^{\times_{\mathcal{Z}n}})$$
is an isomorphism.

We say that $f$ \textit{satisfies universal} $\mathcal{F}$-\textit{descent} if for any affine $U\rightarrow\mathcal{Z}$, the map
$$f':U\times_{\mathcal{Z}}\mathcal{X}\rightarrow U$$
satisfies $\mathcal{F}$-descent.

\begin{proposition}
    If $f:\mathcal{X}\rightarrow\mathcal{Z}$ satisfies universal $\mathcal{F}$-descent then for any map $\mathcal{Y}\rightarrow\mathcal{Z}$ the map 
    $$\mathcal{X}\times_{\mathcal{Z}}\mathcal{Y}\rightarrow\mathcal{Y}$$
    satisfies $\mathcal{F}$-descent. In particular $f$ satisfies $\mathcal{F}$-descent.
\end{proposition}

\begin{proof}
    Write $$\mathcal{Y}\cong\mathrm{colim}_{\mathbf{D}_{\big\slash\mathcal{Y}}}U$$
    By universality of colimits,
    $$\mathcal{X}\times_{\mathcal{Z}}\mathcal{Y}\cong\mathrm{colim}_{\mathbf{D}_{\big\slash\mathcal{Y}}}U\times_{\mathcal{Z}}\mathcal{X}$$
    Now we have
    \begin{align*}
        \mathcal{F}(U)&\cong\lim _{n}\mathcal{F}((U\times_{\mathcal{{Z}}}\mathcal{X})^{\times_{U}n})\\
        &\cong\lim _{n}\mathcal{F}(U\times_{\mathcal{{Z}}}\mathcal{X}^{\times_{Z}n})
    \end{align*}
    So
    \begin{align*}
        \mathcal{F}(\mathcal{Y})&\cong\lim_{\mathbf{D}_{\big\slash\mathcal{Y}}}\mathcal{F}(U)\\
&\cong\lim_{\mathbf{D}_{\big\slash\mathcal{Y}}}\lim _{n}\mathcal{F}(U\times_{\mathcal{{Z}}}\mathcal{X}^{\times_{Z}n})\\
        &\cong\lim _{n}\lim_{\mathbf{D}_{\big\slash\mathcal{Y}}}\mathcal{F}(U\times_{\mathcal{{Z}}}\mathcal{X}^{\times_{Z}n})\\
&\cong\lim_{n}\mathcal{F}(\mathcal{Y}\times_{\mathcal{Z}}(\mathcal{X}^{\times_{\mathcal{Z}n}}))\\
&\cong\lim_{n}\mathcal{F}((\mathcal{Y}\times_{\mathcal{Z}}\mathcal{X})^{\times_{\mathcal{Y}n}})
    \end{align*}
    as required.
\end{proof}
}
\comment{
\subsection{Descent for maps of stacks}

\begin{lemma}
Let $\mathbf{D}$ be equipped with a topology $\tau$.
    Let $\mathbf{E}=\mathbf{Cat}^{L}$ be the category of $(\infty,1)$-categories with maps being left adjoint functors, and let $\mathbf{Q}\in\mathbf{PreShv}(\mathbf{D};\mathbf{Cat}^{L})$ satisfy descent for $\tau$. Let $f:\mathcal{X}\rightarrow\mathcal{Y}$ and $g:\mathcal{Y}\rightarrow\mathcal{Z}$ be maps in $\mathbf{Shv}(\mathbf{D})$ with $f$ being an effective epimorphism of stacks. Then $g\circ f$ satisfies $\mathbf{Q}$-descent if and only if $g$ satisfies $\mathbf{Q}$-descent.
\end{lemma}

\begin{proof}
    Since $f:\mathcal{X}\rightarrow\mathcal{Y}$ is an effective epimorphism in the topos of sheaves, we have 
    $$\overline{\mathrm{im}}(g\circ f)\cong\overline{\mathrm{im}}(g)$$
    for any $g$, where $\overline{\mathrm{im}}(h)$ denotes the image of $h$ in the topos of sheaves. Since $\mathbf{Q}$ satisfies descent for $\tau$ we have
    $$\mathbf{Q}(\overline{\mathrm{im}}(h))\cong\mathbf{Q}(\mathrm{im}(h))$$
    Thus to verify that a map of sheaves 
    $$h:\mathcal{V}\rightarrow\mathcal{W}$$ 
    satisfies descent it suffices to prove that the map
    $$\mathbf{Q}(\mathcal{W})\rightarrow\mathbf{Q}(\overline{\mathrm{im}}(h))$$
    is an equivalence. Now by the above we have 
      $$\mathbf{Q}(\overline{\mathrm{im}}(g\circ f))\cong\mathbf{Q}(\overline{\mathrm{im}}(g))$$
      so clearly 
       $$\mathbf{Q}(\mathcal{Z})\rightarrow\mathbf{Q}(\overline{\mathrm{im}}(g))$$
       is an equivalence if and only if 
          $$\mathbf{Q}(\mathcal{Z})\rightarrow\mathbf{Q}(\overline{\mathrm{im}}(g\circ f))$$
          is.
\end{proof}

\begin{remark}
The main purpose of this result is the following situation. We know that some $\mathbf{Q}$ satisfies descent for class of covers of affines by affines, and we wish to `globalise' this. That is we consider representable maps of stacks $\mathcal{X}\rightarrow\mathcal{Y}$ such that for any map $U\rightarrow\mathcal{Y}$ with $U$ affine, $\mathcal{X}\times_{\mathcal{Y}}U$ admits an atlas $\{U_{i}\rightarrow\mathcal{X}\times_{\mathcal{Y}}U\}$ such that the composite cover $\{U_{i}\rightarrow U\}$ is in a class for which we know $\mathbf{Q}$ satisfies descent. Now $\coprod_{i}U_{i}\rightarrow\mathcal{X}\times_{\mathcal{Y}}U$ is an effective epimorphism of stacks. It then follows that $\mathbf{Q}$ satisfies descent for $\mathcal{X}\times_{\mathcal{Y}}U\rightarrow U$. Thus $\mathcal{X}\rightarrow\mathcal{Y}$ satisfies universal $\mathbf{Q}$-descent. 
\end{remark}
}
\comment{\subsubsection{Decent squares}

We say that a square 
\begin{displaymath}
\xymatrix{
\mathcal{X}\times_{\mathcal{Z}}\mathcal{Y}\ar[r]\ar[d]  & \mathcal{Y}\ar[d]^{p}\\
\mathcal{X}\ar[r]^{e} & \mathcal{Z}
}
\end{displaymath}

is a \textit{(universal)} $\mathcal{F}$-\textit{descent square} if the map
$$\mathcal{X}\coprod\mathcal{Y}\rightarrow\mathcal{Z}$$
is a (universal) $\mathcal{F}$-descent morphism.

Denote by $\chi^{\mathcal{F}-desc}$ the class of $\mathcal{F}$-descent squares. Then $R(\chi^{\mathcal{F}-desc})$ consists of universal $\mathcal{F}$-descent squares such that $e$ is a monomorphism. 

Let $\tau^{\mathcal{F}-cd}$ denote the topology generated by the cd-structure $\Delta(R(\chi^{\mathcal{F}-desc}))$.

\textcolor{red}{Complete?}

\subsection{Descent and rigidification}

\textcolor{red}{To be completed}
Let $\mathbf{D}$ be equipped with a topology $\tau$.
    Let $\mathbf{E}=\mathbf{Cat}^{L}$ be the category of $(\infty,1)$-categories with maps being left adjoint functors, and let $\mathbf{Q}\in\mathbf{PreShv}(\mathbf{D};\mathbf{Cat}^{L})$ satisfy descent for $\tau$. Suppose further that each $\mathbf{Q}(U)$ is a dualisable category. Finally, we let $\mathbf{E}'$ denote the full subcategory $(\mathbf{Pr}^{L})^{dlb}$ of dualisable categories.}
\comment{
\section{Projective dimension in derived algebraic contexts}

Fix a derived algebraic context $\underline{\mathbf{C}}=(\mathbf{C},\mathbf{C}_{\ge0},\mathbf{C}_{\le0},\mathbf{C}^{0})$.

\begin{definition}
    Let $\alpha$ be a cardinal. $\underline{\mathbf{C}}$ is said to be $\kappa$-\textit{small} if for all objects $P,Q\in\mathbf{C}^{0}$, $\mathrm{Hom}_{\mathbf{C}^{\heart}}(P,Q)$ is $\alpha$-small.
\end{definition}

In the language of \cite{transfiniteadamsrepresentability}*{Definition 2.11}, $\mathbf{C}^{0}$ has \textit{cardinality} $\alpha$.

Let $\mathbf{C}^{\alpha}$ denote the full subcategory of $\mathbf{C}^{\heart}$ consisting of $\alpha$-presentable projectives. Following \cite{transfiniteadamsrepresentability}, let $\mathrm{Mod}_{\alpha}(\mathbf{C}^{\alpha})$ denote the category of functors $(\mathbf{C}^{\alpha})^{op}\rightarrow\mathbf{Ab}$ which commute with $\alpha$-small products. 

\begin{lemma}
    The restricted Yoneda embedding
    $$\mathbf{C}^{\heart}\rightarrow\mathrm{Mod}_{\alpha}(\mathbf{C}^{\alpha})$$
    is an exact equivalence of categories.
\end{lemma}

\begin{proof}
 \textcolor{red}{Jack: should follow from page 8 of \cite{transfiniteadamsrepresentability}}.
 By \cite{transfiniteadamsrepresentability} Page 8, there is an equivalence of categories of projective objects on both sides. 
\end{proof}

Recall \cite{transfiniteadamsrepresentability} that an object $F$ of $\mathbf{C}^{\heart}$ is said to be $\alpha$-\textit{flat}
 if it is an $\alpha$-filtered colimit of $\alpha$-presentable projective objects. In particular it is flat. 
 
\begin{proposition}[\cite{transfiniteadamsrepresentability}*{Proposition 2.13}]
        Let $\underline{\mathbf{C}}$ be $\aleph_{k}$ small for some $k$. Then any $\aleph_{k}$-flat object gas projective dimension $n+1$. 
\end{proposition}

\begin{corollary}
    Let $K$ be a Banach ring of cardinality at most $\aleph_{1}$. Then any nuclear Fr\'{e}chet $K$-module has projective dimension $2$. 
\end{corollary}

}

\begin{bibdiv}
  \begin{biblist}
    \bibselect{References}
  \end{biblist}
\end{bibdiv}

\end{document}